\numberwithin{equation}{section}
\theoremstyle{plain}
\newtheorem{theo}{Theorem}[section]
\newtheorem{prop}[theo]{Proposition}
\newtheorem{coro}[theo]{Corollary} 
\newtheorem{lemm}[theo]{Lemma}
\theoremstyle{definition}
\newtheorem{defi}[theo]{Definition}
\newtheorem{rema}[theo]{Remark}
\newtheorem{theo-defi}[theo]{Theorem-Definition}
\newtheorem{prop-defi}[theo]{Proposition-Definition}
\newtheorem{rema-defi}[theo]{Remark-Definition}
\newtheorem{exem-defi} [theo]{Example-Definition}
\newtheorem{exem}[theo]{Example}
\newtheorem{conj}[theo]{Conjecture}
\def \al{\alpha}
\def \bet{\beta}
\def \bul{\bullet}
\def \col{\colon}
\def \Del{\Delta}
\def \del{\delta}
\def \eps{\epsilon}
\def \Gam{\Gamma}
\def \gam{\gamma}
\def \inf{\infty}
\def \kap{\kappa}
\def \Lam{\Lambda}
\def \lam{\lambda}
\def \Lo{\Longrightarrow}
\def \lo{\longrightarrow}
\def \lom{\longmapsto}
\def \mab{\mathbb}
\def \nat{\natural} 
\def \Om{\Omega}
\def \om{\omega}
\def \ol{\overline}
\def \os{\overset}
\def \parno{\par\noindent}
\def \sig{\sigma}
\def \sq{\square}
\def \sus{\subset}
\def \ul{\underline}
\def \us{\underset}
\def \vp{\varpi}
\def \vpl{\varprojlim}
\def \wh{\widehat}
\def \wt{\widetilde}
\newcommand{\getsfrom}{\ensuremath{
\longleftarrow\kern-.50em\lower.0ex\hbox%
{$\shortmid\,$}}}
\begin{document}
\title{Hirsch weight-filtered log crystalline complex and 
Hirsch weight-filtered log crystalline dga of a proper SNCL scheme 
in characteristic $p>0$}
\author{Yukiyoshi Nakkajima
\date{}\thanks{2020 Mathematics subject 
classification number: Primary subject: 14F30,  Secondary subject: 14F40. 
Keywords: log crystalline cohomology, $p$-adic weight filtration,  
$p$-adic weight spectral sequence,  cup product, Hirsch extension, 
$p$-adic log hard Lefschetz conjecture. 
\endgraf}}
\maketitle

\bigskip
\parno
{\bf Abstract.---}
For a $p$-adic PD-formal family $(S,{\cal I},\gam)$ of log points,  
let $S_0$ be an exact closed log subscheme of $S$ defined by ${\cal I}$. 
For a proper simple normal crossing log scheme $X$ over $S_0$, 
we construct a theory of the derived PD-Hirsch extension 
of the modification of the log crystalline complex of $X$ over 
the underlying PD-formal scheme 
$(\os{\circ}{S},{\cal I},\gam)$ of $(S,{\cal I},\gam)$.   
By using this theory, 
we construct a filtered complex $(H_{\rm zar}(X/S),P)$ 
in order to overcome obstacles arising from the incompatibility of 
the $p$-adic Steenbrink weight-filtered complex defined in 
\cite{nb} (cf.~\cite{msemi}) with the cup product of the log crystalline complex of $X$ over 
$(S,{\cal I},\gam)$.  We also construct a filtered dga $(H_{{\rm zar},{\rm TW}}(X/S),P)$ 
whose underlying filtered complex is isomorphic to $(H_{\rm zar}(X/S),P)\otimes^L_{\mab Z}{\mab Q}$. 
%(We call $(H_{\rm zar}(X/S),P)$ and $(H_{{\rm zar},{\rm TW}}(X/S),P)$ 
%the PD-Hirsch weight-filtered log crystalline complex of $X$ over $(S,{\cal I},\gam)$ and 
%the Hirsch weight-filtered log crystalline dga of $X$ over $(S,{\cal I},\gam)$, respectively.)
When $(S,{\cal I},\gam)$ is the canonical lift with the canonical PD-structure 
of the log point of a perfect field $\kap$ of characteristic $p>0$ over the 
formal spectrum of the Witt ring of $\kap$, 
we prove that $(H_{\rm zar}(X/S),P)$ and $(H_{{\rm zar},{\rm TW}}(X/S),P)$ 
are canonically isomorphic to Kim and Hain's filtered complex and 
their filtered dga in \cite{kiha}, respectively. 
\par 
As applications of this theory, we show fundamental properties of 
the weight filtration on the log crystalline cohomology sheaf of $X$ over $(S,{\cal I},\gam)$ 
when the underlying scheme $\os{\circ}{X}$ of $X$ is projective over 
the underlying scheme $\os{\circ}{S}_0$ of $S_0$. 
When $\os{\circ}{S}_0$ is connected,  
we prove that the $p$-adic variational filtered log hard Lefschetz conjecture 
with respect to the weight filtration conjectured in \cite{nb} is true 
when there exists a log fiber $X\times_Ss$ for an exact closed point $s$ of $S$ 
which is the log special fiber of a
projective strict semistable family over a complete discrete valuation ring 
of any characteristic. Using this result, 
we show the existence of the bilinear form on the primitive part of 
the log isocrystalline cohomology sheaf which is compatible with the weight filtration. 
%We also define the cohomology class of a line bundle in the log crystalline cohomology 
%of $X$ by using $(H_{\rm zar},P)$ and 
%the monodromy operator on the cohomology by using it. 
%We also discuss a $p$-adic analogue of the variation of 
%polarized mixed Hodge structures 
%(without discussing the positivity in a general case).  
%Especially we give a conjecture which generalizes the Hodge standard conjecture
%by Grothendieck to a {\it mixed }Hodge standard conjecture. 

$${\bf Contents}$$
\bigskip
\parno
\S\ref{sec:int}. Introduction
\parno
\S\ref{sec:snclv}. SNCL schemes
\parno 
\S\ref{sec:stpd}. PD-Hirsch extensions of dga's
\parno 
\S\ref{sec:ldfc}. PD-Hirsch extensions of log crystalline complexes
\parno 
\S\ref{sec:pwf}. Preweight filtrations on PD-Hirsch extensions of log crystalline complexes
\parno 
\S\ref{sec:psc}. PD-Hirsch pre-weight-filtered log crystalline complexes 
\parno 
\S\ref{sec:fcd}. Hirsch weight-filtered log crystalline dga's 
\parno 
\S\ref{sec:fc}. Contravariant functorialities of 
PD-Hirsch pre-weight-filtered log crystalline complexes 
and Hirsch weight-filtered log crystalline dga's 
\parno
\S\ref{sec:bckf}. Filtered base change theorem of PD-Hirsch pre-weight-filtered complexes 
\parno 
\S\ref{sec:mod}.  Monodromy operators of PD-Hirsch pre-weight-filtered log crystalline complexes
\parno 
\S\ref{sec:p}. Cup products of PD-Hirsch pre-weight-filtered 
log crystalline complexes
\parno 
\S\ref{sec:infhdi}. Infinitesimal deformation invariance of Hirsch weight-filtered log crystalline complexes 
and Hirsch weight-filtered log crystalline dga's
\parno 
\S\ref{sec:filbo}. The $E_2$-degeneration of the $p$-adic weight spectral sequence
\parno 
\S\ref{sec:e2}. Convergence of the weight filtration
\parno 
\S\ref{sec:st}. Strict compatibility
\parno 
\S\ref{sec:pssc}.  $p$-adic filtered Steenbrink complexes and trace morphisms 
\parno 
\S\ref{sec:cwt}.  Coincidence of the weight filtrations I
\parno 
\S\ref{sec:csc}. Semi-cosimplicial $p$-adic Steenbrink complexes and 
coincidence of the weight filtrations II
%\parno 
%\S\ref{sec:trm}. Trace morphisms II
\parno 
\S\ref{sec:vlc}. $p$-adic variational filtered  log hard Lefschetz conjecture 
\parno 
\S\ref{sec:pol}. 
$p$-adic polarizations on log crystalline cohomology sheaves of projective SNCL schemes
\parno 
\S\ref{sec:ofc}. Comparison theorems between our filtered complex (resp.~
our filtered dga) with Kim-Hain's filtered complex (resp.~their filtered dga)
%\parno 
%\S\ref{sec:a}. Appendix

\bigskip
\parno

\section{Introduction}\label{sec:int} 
In this book, for a formal family of log points in characteristic $p>0$, 
we define the log crystalline analogue 
of the log de Rham complex of the Kato-Nakayama space(=the real blow up) 
$T^{\log}$ (\cite{kn}) of a log analytic family $T$ of log points. 
We use the PD-Hirsch extension to define it. 
We show that this very simple complex has an important application  
for the cup product of the log crystalline cohomology sheaf of 
a proper SNCL(=simple normal crossing log) scheme over a family of log points 
in characteristic $p>0$. 
\par 
First let us recall a classical result (\cite{sga72}, \cite{sti}, \cite{gn}). 
\par 
Let 
$\os{\circ}{\Del}:=\{\tau\in {\mab C}~\vert~\vert \tau \vert <1\}$ 
be the open unit disk over ${\mab C}$.
Let $\os{\circ}{\cal X}$ 
be an analytic (not necessarily strict) semistable family over $\os{\circ}{\Del}$. 
Let $\os{\circ}{f}\col \os{\circ}{\cal X}\lo \os{\circ}{\Del}$ be the structural morphism. 
Endow $\os{\circ}{\cal X}$ with the canonical log structure obtained by the special fiber 
of $\os{\circ}{\cal X}/\os{\circ}{\Del}$ 
and let ${\cal X}$ be the resulting log analytic space. 
(See \cite{klog1} and \cite{kn} for the definitions of log schemes and log analytic spaces.) 
In particular, we obtain $\Del$. 
Let $f\col {\cal X}\lo \Del$ be the structural morphism. 
Let $\os{\circ}{O}$ be the set of the origin of $\os{\circ}{\Del}$. 
Let $O:=(\os{\circ}{O}, {\mab N}\oplus {\mab C}^*\lo {\mab C})$ 
be the log point of $\os{\circ}{O}$, where the morphism 
${\mab N}\oplus {\mab C}^*\lo {\mab C}$ is 
given by $(n,a)\lom 0^na$ 
$(n\in {\mab N}, a\in {\mab C}^*)$, where $0^n=0\in {\mab C}$ 
for $n\not =0$ and $0^0:=1\in {\mab C}$. 
Let $X$ be the log special fiber of ${\cal X}/{\Del}$ over $O$. 
%Let $\os{\circ}{X}$ be the underlying scheme of $X$. 
Set $\Del^*:={\Del}\setminus O$ 
and let $j_{\Del}\col \Del^*\os{\sus}{\lo} \Del$ 
be the natural inclusion.  
Let ${\mathfrak H}\lo \Del^*$ be the universal cover of $\Del^*$. 
Let us consider the following diagram whose all squares are cartesian: 
\begin{equation*}
\begin{CD}
X@>{i}>> {\cal X}@<{j}<< {\mathfrak X}@<<< \ol{\mathfrak X}\\
@VVV @V{f}VV @VVV @VVV\\
O@>{\subset}>> \Del @<{j_\Del}<< \Del^*@<<< {\mathfrak H}. 
\end{CD}
\tag{1.0.1}\label{cd:comp}
\end{equation*}
Let $\ol{j} \col \ol{\mathfrak X}\lo {\mathfrak X}\lo {\cal X}$ be the composite morphism. 
Let $R\Psi({\mab C}):=i^{-1}R\ol{j}_*({\mab C}_{\ol{\mathfrak X}})$ be 
the nearby cycle sheaf 
of the constant sheaf ${\mab C}_{\ol{\mathfrak X}}$ on $\ol{\mathfrak X}$ 
and let $\Om^{\bul}_{{\cal X}/{\mab C}}$ 
be the log de Rham complex of ${\cal X}$ over ${\mab C}$ 
(not the classical de Rham complex of $\os{\circ}{\cal X}$ over ${\mab C}$). 
Then $R\Psi({\mab C})$ is isomorphic to 
the complex $i^{-1}(\Om^{\bul}_{{\cal X}/{\mab C}}[\log \tau])$ 
(\cite{sga72}, \cite{sti}, \cite{gn}), 
where 
\begin{align*}
\Om^{\bul}_{{\cal X}/{\mab C}}[\log \tau]
:={\mab C}[\log \tau]
\otimes_{\mab C}\Om^{\bul}_{{\cal X}/{\mab C}}
\tag{1.0.2}\label{ali:hi}
\end{align*} 
is considered 
as a subdga of $\ol{j}_*(\Om^{\bul}_{\ol{\mathfrak X}/{\mab C}})$.
Furthermore, as proved in \cite{ns},  
it is not difficult to prove that 
the following natural morphism 
\begin{align*}
i^{-1}(\Om^{\bul}_{{\cal X}/{\mab C}}[\log \tau])
\lo \Om^{\bul}_{X/{\mab C}}[\log \tau]:={\mab C}[\log \tau]
\otimes_{\mab C}\Om^{\bul}_{X/{\mab C}}
\tag{1.0.3}\label{ali:his}
\end{align*} 
is a quasi-isomorphism.  
Here $\Om^{\bul}_{X/{\mab C}}$ is 
the log de Rham complex of $X$ over ${\mab C}$ 
and $\Om^{\bul}_{X/{\mab C}}[\log \tau]$ is 
the complex defined by the following ${\mab C}$-linear differential 
\begin{align*}
d((\log \tau)^n \otimes \om ):=n(\log \tau)^{n-1}\otimes d\log \tau\wedge \om+
(\log \tau)^n \otimes d\om \quad (\om \in \Om^i_{X/{\mab C}}).
\end{align*} 
The complex $\Om^{\bul}_{X/{\mab C}}[\log \tau]$ 
is, what is called, the Hirsch extension of 
$\Om^{\bul}_{X/{\mab C}}$ 
by a vector space ${\mab C}\log \tau$ over ${\mab C}$ of rank 1 
with respect to the following composite morphism 
\begin{align*}
{\mab C}\log \tau \lo
{\rm Ker}(\Om^1_{{\cal X}/{\mab C}}\lo \Om^2_{{\cal X}/{\mab C}})
\lo{\rm Ker}(\Om^1_{X/{\mab C}}\lo \Om^2_{X/{\mab C}}),
\tag{1.0.4}\label{ali:hios}
\end{align*} 
where the first morphism is defined by 
$\log \tau \lom d\log \tau$. 
The definition of the Hirsch extension of a dga(=differential graded algebra) is as follows. 
\par 
Let $A$ be a commutative ring with unit element 
and let $(B^{\bul},d)$ be a dga over $A$.  
The Hirsch extension of $B^{\bul}$ by 
a free $A$-module $M$ is, by definition, 
a natural inclusion 
\begin{align*} 
B^{\bul} \os{\sus}{\lo} {\rm Sym}_A(M)\otimes_AB^{\bul}, 
\end{align*} 
where $M$ is placed at degree $0$ 
with an $A$-linear map $\varphi \col M\lo {\rm Ker}(d\col B^1\lo B^2)$  
and ${\rm Sym}_A(M)$ is a symmetric algebra over $A$ generated by $M$;   
${\rm Sym}_A(M)\otimes_AB^{\bul}$ becomes a dga over $A$ by 
$d\col B^{\bul}\lo B^{\bul+1}$ 
and $d_M(x):=\varphi(x)$ $(x\in M)$. 
%We denote the resulting dga by 
%$B^{\bul}[M]$ in the text. 
%because the usual notation 
%${\rm Sym}_A(M)\otimes_AB^{\bul}$ is misleading: 
%$B^{\bul}[M]$ is not equal to the tensor product of 
%${\rm Sym}_A(M)$ with trivial derivation and $B^{\bul}$.  
(See \cite{su} for a more general notion of the Hirsch extension.) 
\par 
%More generally, 
%The Hirsch extension is closely related with 
%the log de Rham complex of the Kato-Nakayama space $T^{\log}$ of  
%an fs log analytic space $T$ over ${\mab C}$. 
%In the case ${\cal X}=\Del$, 
%the Hirsch extension $\Om^{\bul}_{\Del/{\mab C}}[\log \tau]$ above 
%is ``almost equal'' to $\Om^{\bul}_{\Del^{\log}/{\mab C}}$. 
%We also recall this quickly as follows 
%because no one has pointed out 
%a relation between the Hirsch extension of $\Om^{\bul}_{Y/{\mab C}}$
%and the log de Rham complex of $Y^{\log}$. 
%\par 
%Let $s_{\mab C}$ be the log point whose underlying space is 
%$\{O\}$. 
%Let $X$ be a log smooth log analytic space over $O$. 
The complex $\Om^{\bul}_{X/{\mab C}}[\log \tau]$ 
has a close relation with the log de Rham complex 
of the Kato-Nakayama space $T^{\log}$ of  
an fs log analytic space $T$ over ${\mab C}$.
Indeed,  
let $\eps_T \col T^{\log} \lo T$ 
be the natural morphism of topological spaces defined in \cite{kn}. 
Then $\Om^{\bul}_{O^{\log}/{\mab C},x}=({\mab C}[\log \tau]\lo {\mab C}[\log \tau]d\log \tau)$  
for any point $x\in O^{\log}={\mab S}^1$
and $\Om^{\bul}_{O^{\log}/{\mab C},x}$ is the Hirsch extension of 
$\Gam(O,\Om^{\bul}_{O/{\mab C}})=({\mab C}\lo {\mab C}d\log \tau)$ 
by ${\mab C}\log \tau$ with respect to the morphism (\ref{ali:hios}) in the case $X=O$.
%: $\Om^{\bul}_{O^{\log}/{\mab C},x}=\Gam(O,\Om^{\bul}_{O/{\mab C}})[{\mab C}\log \tau]$. 
In \cite{fn} Fujisawa and Nakayama have proved that 
\begin{align*} 
\Om^{\bul}_{X/{\mab C}}[\log \tau]=
R(\eps_X \pi_X)_*\pi^{-1}_X
(\Om^{\bul}_{X^{\log}/{\mab C}}), 
%=(f\vert_X)^{-1}(\eps_{O*}({\cal O}_{O^{\log}})\otimes_{{\mab C}}
%\Om^{\bul}_{X/{\mab C}}. 
\tag{1.0.5}\label{ali:hokanl}
\end{align*}
where $\pi_X$ is the natural morphism 
$X^{\log}\times_{O^{\log}}{\mab R}=X^{\log}\times_{{\mab S}^1}{\mab R}\lo X^{\log}$. 
Though we give up defining the $p$-adic analogue of $X^{\log}$ and 
$X^{\log}\times_{O^{\log}}{\mab R}$, we define the $p$-adic analogue of 
$\Om^{\bul}_{X/{\mab C}}[\log \tau]$ instead. 
This is a case in which it is possible to define an appropriate sheaf of dga's
even when it is difficult to define an appropriate space. 
%This is an example of 
%a general principle 
%Hence $\Om^{\bul}_{X/{\mab C}}[\log \tau]$ is 
%``the log de Rham complex 
%of the real blow up of $X$ with respect to the sub log structure 
%${\mab N}\oplus {\mab C}^* \lo {\mab C}\os{\sus}{\lo} {\cal O}_X$ 
%of the log structure of $X$''. 
%There is  another interpretation of 
%$\Om^{\bul}_{X/{\mab C}}[\log \tau]$ by using 
%$\Om^{\bul}_{X^{\log}/{\mab C}}$ in \cite{fn}: 
%$\Om^{\bul}_{X/{\mab C}}[\log \tau]=R(\eps_X \pi_X)\pi^{-1}_X
%(\Om^{\bul}_{X^{\log}/{\mab C}})$, 
%where $\pi_X$ is the natural morphism 
%$X^{\log}\times_{O^{\log}}{\mab R}\lo X^{\log}$. 
%Obviously the complex $\Om^{\bul}_{O/{\mab C}}[\log \tau]$ 
%is obtained by the Hirsch extension of 
%$\Om^{\bul}_{O/{\mab C}}$ 
%by ${\mab C}\log \tau$ 
%with respect to the following composite morphism 
%$${\mab C}\log \tau \owns \log \tau \lom d\log \tau \in {\rm Ker}(\Om^1_{O/{\mab C}}.$$
\par 
In this book, in order to give applications for the (integral) log crystalline cohomology sheaf 
of a proper SNCL scheme in characteristic $p>0$,  
we define the (derived) Hirsch extension  obtained 
by the PD-polynomial algebra $\Gam_A(M)$
generated by $M$ over $A$ (\cite[Appendix A]{bob}) for a certain complex
instead of the Hirsch extension of a dga obtained by ${\rm Sym}_A(M)$. 
We call this extension the {\it $($derived$)$ PD-Hirsch extension} of the complex.    
An example of a PD-Hirsch extension has first appeared in an article 
by Kim and Hain (\cite{kiha}), which will be explained later. 
%More generally, we define the derived PD-Hirsch extension of 
%the log de Rham complex obtained by a crystal without log poles. 

\par  
To explain our motivation for studying the (derived) PD-Hirsch extension, 
let us raise a fundamental problem on 
the log crystalline cohomology sheaf of 
a proper SNCL scheme 
over a family of log points in characteristic $p>0$. 
\par 
For a log (formal) scheme $Y$, denote by $\os{\circ}{Y}$ the underlying 
(formal) scheme of $Y$. 
Let $S$ be a $p$-adic formal family of log points defined in \cite{nb}; 
locally on $S$, $S$ is isomorphic to a log $p$-adic formal scheme 
$(\os{\circ}{S}, {\mab N}\oplus {\cal O}_S^*\lo {\cal O}_S)$, 
where the morphism ${\mab N}\oplus {\cal O}_S^*\lo {\cal O}_S$ 
is defined by the morphism
$(n,a)\lom 0^na$ $(n\in {\mab N}, a\in {\cal O}_S^*)$, where $0^n=0\in {\cal O}_S$ 
for $n\not =0$ and $0^0:=1\in {\cal O}_S$. 
Let $(S,{\cal I},\gam)$ be a $p$-adic formal PD-family of log points 
($S$ is a $p$-adic formal family of log points and 
${\cal I}$ is a quasi-coherent $p$-adic PD-ideal sheaf of ${\cal O}_S$ 
with PD-structure $\gam$). 
Let $S_0$ be an exact closed log subscheme of $S$ defined by ${\cal I}$. 
Let $X/S_0$ be a proper SNCL scheme with structural morphism 
$f\col X\lo S_0\os{\sus}{\lo} S$. 
(In \S\ref{sec:snclv} below we recall the definition of the SNCL scheme.)
Let $\{\os{\circ}{X}_{\lam}\}_{\lam \in \Lam}$ be 
the set of smooth components of $X/S_0$ defined in \cite{nb}.  
(When $\os{\circ}{S}_0$ is the spectrum of a field of characteristic $p>0$, 
$\{\os{\circ}{X}_{\lam}\}_{\lam \in \Lam}$ can be taken as 
the set of the irreducible components of $\os{\circ}{X}$.) 
For a nonnegative integer $k$, let 
\begin{align*} 
\os{\circ}{X}{}^{(k)}:=
\coprod_{\{\{\lam_0,\ldots, \lam_k\}~\vert \lam_i\in \Lam, \lam_i\not=\lam_j (i\not=j)\}} 
\os{\circ}{X}_{\lam_0}\cap \cdots \cap \os{\circ}{X}_{\lam_k}
\tag{1.0.6}\label{ali:hkis}
\end{align*} 
be a scheme over $\os{\circ}{S}_0$ well-defined in \cite{nb}. 
%For example, $\os{\circ}{X}{}^{(0)}=$. 
Let $a^{(k)} \col \os{\circ}{X}{}^{(k)}\lo \os{\circ}{X}$ be the natural morphism. 
Let $F_{\os{\circ}{S}_0}\col \os{\circ}{S}_0\lo \os{\circ}{S}_0$ be the absolute Frobenius 
endomorphism of $\os{\circ}{S}_0$ and 
set $S_0^{[p]}:=S_0\times_{\os{\circ}{S}_0,F_{\os{\circ}{S}_0}}\os{\circ}{S}_0$.  
Let 
$F_{S_0/\os{\circ}{S}_0}\col  S_0\lo S_0^{[p]}$ 
be the relative Frobenius morphism of $S_0$ over $\os{\circ}{S}_0$. 
Let $S_0^{[p]}(S)$ be a log formal scheme whose underlying formal scheme 
is $\os{\circ}{S}$ and whose log structure $M_{S_0^{[p]}(S)}$ 
is a unique sub-log structure of $S$ 
such that the isomorphism 
$M_S/{\cal O}_S^*\os{\sim}{\lo} M_{S_0}/{\cal O}_{S_0}^*$ induces 
the following isomorphism 
\begin{align*} 
M_{S_0^{[p]}(S)}/{\cal O}_S^*\os{\sim}{\lo} 
{\rm Im}(F_{S_0/\os{\circ}{S}_0}^*\col 
F_{S_0/\os{\circ}{S}_0}^*(M_{S_0\times_{\os{\circ}{S}_0,F_{\os{\circ}{S}_0}}\os{\circ}{S}_0})
\lo M_{S_0})/{\cal O}_{S_0}^*. \tag{1.0.7}\label{ali:rus0avp}
\end{align*} 
(The structural morphism of $M_{S_0^{[p]}(S)}$ is the composite morphism 
$M_{S_0^{[p]}(S)}\os{\sus}{\lo} M_S\lo {\cal O}_S$.) 
We have an obvious morphism $(S,{\cal I},\gam)
\lo (S_0^{[p]}(S),{\cal I},\gam)$ of log PD-formal schemes. 
Let $(X/S)_{\rm crys}$ be the log crystalline topos of $X/(S,{\cal I},\gam)$ 
defined in \cite{klog1} and let ${\cal O}_{X/S}$ be the structure sheaf of 
$(X/S)_{\rm crys}$. 
Let $X_{\rm zar}$ be the zariski topos  of $\os{\circ}{X}$. 
Let $u_{X/S} \col (X/S)_{\rm crys}\lo X_{\rm zar}$ be the canonical projection. 
Set $f_{X/S}:=f\circ u_{X/S}$. 
Let ${\rm D}^+{\rm F}(f^{-1}({\cal O}_S))$ be the derived category of 
bounded below filtered complexes of $f^{-1}({\cal O}_S)$-modules 
and let $D^+(f^{-1}({\cal O}_S))$ be the derived category of 
bounded below complexes of $f^{-1}({\cal O}_S)$-modules.  
In \cite{nb} we have proved the following:

\begin{theo}[{\bf \cite[Existence of the zarisikian $p$-adic Steenbrink complex]{nb}}]\label{theo:wdpc}
Let $\vp_{\rm crys}^{(m)}(\os{\circ}{X}/\os{\circ}{S})$ $(m\in {\mab N})$ 
be the crystalline orientation sheaf associated to 
the set $\{\os{\circ}{X}_{\lam}\}_{\lam \in \Lam}$. That is, 
$\vp_{\rm crys}^{(m)}(\os{\circ}{X}/\os{\circ}{S})$ is 
the extension to $(\os{\circ}{X}{}^{(m)}/\os{\circ}{S})_{\rm crys}$ 
of the direct sum of  
$\os{m}{\bigwedge}{\mab Z}^E_{\os{\circ}{X}_{\lam_0}\cap \cdots \cap \os{\circ}{X}_{\lam_m}}$ 
in the Zariski topos $\os{\circ}{X}{}^{(m)}_{\rm zar}$ of $\os{\circ}{X}{}^{(m)}$ 
for the subsets $E=\{\os{\circ}{X}_{\lam_0}, \ldots, \os{\circ}{X}_{\lam_m}\}$'s 
of $\{\os{\circ}{X}_{\lam}\}_{\lam \in \Lam}$
with $\# E=m+1$.  
Then there exists a filtered complex 
%\begin{align*} 
%\wt{R}u_{X^{(\star)}/\os{\circ}{S}*}
%({\cal O}_{X^{(\star)}/\os{\circ}{S}}\langle U_S \rangle) 
%\in D^+(f^{-1}({\cal O}_S))
%\tag{1.3.1}\label{ali:rusxs}
%\end{align*}  
%and a filtered complex 
\begin{align*} 
(A_{\rm zar}(X/S),P)
%:=(\wt{R}u_{X^{(\star)}/\os{\circ}{S}*}
%({\cal O}_{X^{(\star)}/\os{\circ}{S}}\langle U_S \rangle),P)
\in {\rm D}^+{\rm F}(f^{-1}({\cal O}_S))
\tag{1.1.1}\label{ali:raxs}
\end{align*} 
with a canonical isomorphism 
\begin{align*} 
\theta \wedge \col Ru_{X/S*}({\cal O}_{X/S})\os{\sim}{\lo} A_{\rm zar}(X/S)
\tag{1.1.2}\label{ali:ixuaoa} 
\end{align*} 
in $D^+(f^{-1}({\cal O}_S))$ 
such that 
\begin{align*} 
{\rm gr}^P_kA_{\rm zar}(X/S)\os{\sim}{\lo} \bigoplus_{j\geq \max \{-k,0\}} 
&a^{(2j+k)}_* 
(Ru^{\rm crys}_{\os{\circ}{X}{}^{(2j+k)}/\os{\circ}{S}*}
({\cal O}_{{\os{\circ}{X}{}^{(2j+k)}/\os{\circ}{S}}}) \tag{1.1.3}\label{ali:ruoovp}\\
&\otimes_{\mab Z}\vp_{\rm crys}^{(2j+k)}(\os{\circ}{X}/\os{\circ}{S}))(-j-k)[-2j-k]
\end{align*} 
in $D^+(f^{-1}({\cal O}_S))$.  
Here the Tate twist $(-j-k)$ means the Tate twist 
with respect to 
the morphism $X\lo X\times_{\os{\circ}{S}_0,F_{\os{\circ}{S}_0}}\os{\circ}{S}_0$ 
over $(S,{\cal I},\gam) \lo (S_0^{[p]}(S),{\cal I},\gam)$
induced by the absolute Frobenius endomorphism $F_X\col X\lo X$ of $X$. 
%$($Note that we do not consider the relative Frobenius morphism 
%$X\lo X\times_{S_0,F_{S_0}}S_0$ over $(S,{\cal I},\gam)$.$)$  
%the complex $H_{\rm zar}(X/S)=
%\wt{R}u_{X^{(\star)}/\os{\circ}{S}*}
%({\cal O}_{X^{(\star)}/\os{\circ}{S}}\langle U_S \rangle)$ fits into the following diagram 
%\begin{equation*} 
%Ru_{X/S*}({\cal O}_{X/S})
%\us{(\ref{ali:uysys})}
%{\os{\sim}{\longleftarrow}}  
%\wt{R}u_{X/\os{\circ}{S}*}({\cal O}_{X/\os{\circ}{S}}\langle U_S \rangle)
%\os{\sim}{\lo} 
%H_{\rm zar}(X/S)
%\wt{R}u_{X^{(\star)}/\os{\circ}{S}*}
%({\cal O}_{X^{(\star)}/\os{\circ}{S}}\langle U_S \rangle) 
%\tag{1.3.2}\label{eqn:rus}
%\end{equation*}  
%in $D^+(f^{-1}({\cal O}_S))$.  
%Here the right isomorphism in {\rm (\ref{eqn:rus})} is 
%a contravariantly functorial isomorphism with respect to certain morphisms of $X$'s. 
%$($See {\rm(\ref{theo:ffp})} below what certain morphisms are.$)$ 
\end{theo}
As a corollary of this theorem, we obtain 
the weight filtration $P$ on $R^qf_{X/S*}({\cal O}_{X/S})$ $(q\in {\mab N})$: 
\begin{align*} 
P_{k+q}R^qf_{X/S*}({\cal O}_{X/S})&:={\rm Im}
(R^qf_{X/S*}(P_kA_{\rm zar}(X/S))\lo R^qf_{X/S*}(A_{\rm zar}(X/S)))\tag{1.1.4}\label{ali:wacp} \\
&\simeq {\rm Im}(R^qf_{X/S*}(P_kA_{\rm zar}(X/S))\lo R^qf_{X/S*}({\cal O}_{X/S})).
\end{align*} 
See \cite{msemi} and \cite{ndw} for another approach for the construction of 
$P$ on $R^qf_{X/S*}({\cal O}_{X/S})$ 
by the use of log de Rham-Witt complexes in the case where $S$ is the canonical lift of 
the log point of a perfect field $\kap$ of characteristic $p>0$ over 
the Witt ring ${\cal W}$ of $\kap$. In this case, in \cite{nb} 
we have proved that $(A_{\rm zar}(X/S),P)$ is 
canonically isomorphic to the weight-filtered complex 
$({\cal W}A^{\bul}_X,P)$ constructed in 
\cite{msemi} and \cite{ndw}. 
\par 
It is  natural to raise the following fundamental problem 
for the filtration $P$ on $R^qf_{X/S*}({\cal O}_{X/S})$: does the cup product 
\begin{align*} 
\cup  \col R^qf_{X/S*}({\cal O}_{X/S})\otimes_{{\cal O}_S}
R^{q'}f_{X/S*}({\cal O}_{X/S})
\lo R^{q+q'}f_{X/S*}({\cal O}_{X/S})
\tag{1.1.5}\label{ali:cp} 
\end{align*} 
induce the following morphism 
\begin{align*} 
\cup  \col P_kR^qf_{X/S*}({\cal O}_{X/S})
\otimes_{{\cal O}_S}P_{k'}R^{q'}f_{X/S*}({\cal O}_{X/S})
\lo P_{k+k'}R^{q+q'}f_{X/S*}({\cal O}_{X/S})~?
\tag{1.1.6}\label{ali:qqxs}
\end{align*} 
Let $(T,{\cal J},\del)$ be a $p$-adic formal PD-scheme and let 
$T_0$ be a closed subscheme of $T$ defined by ${\cal J}$. 
Let $(Y, D)$ be a proper smooth scheme with a relative SNCD(=simple normal crossing divisor) 
over $T_0$. By using the theory in \cite{nh2}, 
we can give the analogous problem for $(Y,D)/(T,{\cal J},\del)$. 
In this case, it is very easy to solve the problem affirmatively. 
Contrary to this case, the problem for $X/(S,{\cal I},\gam)$ is a serious one. 
A filtered complex $(A^{\bul},P)$ representing 
the filtered complex $(A_{\rm zar}(X/S),P)$ 
is constructed by a certain log de Rham complex.  
Because the wedge product of this de Rham complex 
induces a morphism $A^i\times A^j\lo A^{i+j+1}$,  
%(and ${\cal W}A^i_X\times {\cal W}A^i_X\lo {\cal W}A^{i+j+1}_X$), 
this morphism does not even induce the morphism (\ref{ali:cp}) 
and hence  
we do not know whether the problem above is affirmatively solved. 
If $\os{\circ}{S}$ is a $p$-adic formal ${\cal V}$-scheme in the sense of \cite{od}
($\os{\circ}{S}$ is a noetherian formal scheme over ${\rm Spf}({\cal V})$ 
with the $p$-adic topology which is topologically of finite type 
over ${\rm Spf}({\cal V})$), 
where ${\cal V}$ is a complete discrete valuation ring of mixed characteristics 
$(0,p)$ with perfect residue field, and if one ignores the torsion in (\ref{ali:qqxs}), 
the problem can be solved affirmatively by 
using theory of convergent isocrystals 
in \cite{od}, using the well-known specialization argument explained 
in \cite{ndw}, \cite{nh2} and \cite{nb} and using the 
purity of the weight 
for the crystalline cohomology of a proper smooth scheme 
(this classical fact has been proved completely in 
\cite{kme}, \cite{clpu} and \cite{ndw} with an indispensable work in \cite{ny}) and 
the weight spectral sequence of $R^qf_{X/S*}({\cal O}_{X/S})$
arising from $(A_{\rm zar}(X/S),P)$. 
However we are not satisfied with this (ad hoc) method. 
We would like to construct the filtered cup product of a weight-filtered complex 
such that the cup product of the underlying complex of this filtered complex 
is compatible with the cup product of $R^qf_{X/S*}({\cal O}_{X/S})$.  

\par 
%Let $S$ be a family of log points. 
%For simplicity, assume that $\ol{M}_S:=M_S/{\cal O}_S^*$ is isomorphic to 
%the constant sheaf ${\mab N}$ on $\os{\circ}{S}$. 
%Let $g\col Y\lo S_0$ be a log smooth morphism of fine log schemes. 
For simplicity, assume that there exists an immersion 
$X\os{\sus}{\lo} {\cal P}$ into a log smooth  log $p$-adic formal scheme 
over $S$ in this introduction. 
(In the text we do not assume the existence of this immersion.)
Let ${\cal P}^{\rm ex}$ be the exactification of this immersion  (\cite{s3}).  
%and let $g\col {\cal Q}^{\rm ex}\lo S$ be the structural morphism. 
%Let ${\mathfrak E}$ be the log PD-envelope of the exact closed immersion 
%$Y\os{\sus}{\lo} {\cal P}^{\rm ex}$ over $(S,{\cal I},\gam)$. 
%Let $(Y/S)_{\rm crys}$ be the log crystalline topos of $Y/(S,{\cal I},\gam)$ 
%and let ${\cal O}_{Y/S}$ be the structure sheaf of $(Y/S)_{\rm crys}$. 
Let ${\mathfrak D}$ be the log PD-envelope of the exact closed immersion 
$X\os{\sus}{\lo} {\cal P}^{\rm ex}$ over $(S,{\cal I},\gam)$. 
%Let 
%$$u_{Y/S}\col ((Y/S)_{\rm crys},{\cal O}_{Y/S})\lo (Y_{\rm zar},f^{-1}({\cal O}_S))$$ 
%be the canonical projection of ringed topoi. 
Set 
$$\wt{R}u_{X/\os{\circ}{S}*}({\cal O}_{X/\os{\circ}{S}})
:={\cal O}_{\mathfrak D}\otimes_{{\cal O}_{{\cal P}^{\rm ex}}}
\Om^{\bul}_{{\cal P}^{\rm ex}/\os{\circ}{S}}
\in D^+(f^{-1}({\cal O}_S)).$$ 
One can show that 
$\wt{R}u_{X/\os{\circ}{S}*}({\cal O}_{X/\os{\circ}{S}})$ is independent of 
the choice of the immersion $X\os{\sus}{\lo} {\cal P}$. 
The complex $\wt{R}u_{X/\os{\circ}{S}*}({\cal O}_{X/\os{\circ}{S}})$ 
is not equal to $Ru_{X/\os{\circ}{S}*}({\cal O}_{X/\os{\circ}{S}})$ 
since ${\cal P}^{\rm ex}$ is not log smooth over $\os{\circ}{S}$ in general. 
In \cite{nb} we have called $\wt{R}u_{X/\os{\circ}{S}*}({\cal O}_{X/\os{\circ}{S}})$ the 
modified log crystalline complex of $X/(\os{\circ}{S},{\cal I},\gam)$. 
%In [loc.~cit.] we have proved that 
%$\wt{R}u_{X/\os{\circ}{S}*}({\cal O}_{X/\os{\circ}{S}})$ 
%fits into the following triangle 
%\begin{align*} 
%Ru_{X/S*}({\cal O}_{X/S})[-1]
%\lo \wt{R}u_{X/\os{\circ}{S}*}({\cal O}_{X/\os{\circ}{S}})
%\lo Ru_{X/S*}({\cal O}_{X/S})\os{+1}{\lo} 
%\end{align*} 
%(cf.~\cite{hk}). Using this triangle, 
Though the notation $\wt{R}u_{X/\os{\circ}{S}*}
({\cal O}_{X/\os{\circ}{S}})$ is misleading (because 
this complex may depend on the morphism $X\lo S$), 
we use this notation by mimicking  the notation 
``$W\wt{\Om}_X^{\bul}$'' of the log de Rham-Witt complex 
in the case where $\os{\circ}{S}_0$ is the spectrum of a perfect field of characteristic $p>0$. 
\par 
Let $\wt{t}$ be a local section of $M_S$ 
whose image $t$ in $\ol{M}_S:=M_S/{\cal O}_S^*$ is the local generator. 
Let $U_S$ be a free ${\cal O}_S$-module of rank $1$ with a basis $u$: 
$U_S={\cal O}_Su$. 
Though we do not ask what $u$ is, we consider $u$ as 
``$\log t$'' (not ``$\log \wt{t}$'') in our mind. 
For a local section $\wt{t}{}'=a\wt{t}$ $(a\in {\cal O}_S^*)$ of $M_S$, 
we do not change $U_S$, though it may be very strange at first glance. 
Let $\Gam_{{\cal O}_S}(U_S)$ 
be the PD-polynomial algebra generated by $U_S$ over ${\cal O}_S$: 
$\Gam_{{\cal O}_S}(U_S):={\cal O}_S\langle u \rangle 
=\bigoplus_{n=0}^{\infty}{\cal O}_S u^{[n]}$. 
Because 
${\cal O}_{\mathfrak D}\otimes_{{\cal O}_{{\cal P}^{\rm ex}}}
\Om^{\bul}_{{\cal P}^{\rm ex}/\os{\circ}{S}}$ 
is a dga over $S$, 
we obtain the PD-Hirsch extension 
$$
{\cal O}_{\mathfrak D}\otimes_{{\cal O}_{{\cal P}^{\rm ex}}}
\Om^{\bul}_{{\cal P}^{\rm ex}/\os{\circ}{S}}\langle u\rangle 
:=\Gam_{{\cal O}_S}(U_S)\otimes_{{\cal O}_S}
{\cal O}_{\mathfrak D}\otimes_{{\cal O}_{{\cal P}^{\rm ex}}}
\Om^{\bul}_{{\cal P}^{\rm ex}/\os{\circ}{S}}$$
of ${\cal O}_{\mathfrak D}\otimes_{{\cal O}_{{\cal P}^{\rm ex}}}
\Om^{\bul}_{{\cal P}^{\rm ex}/\os{\circ}{S}}$ 
by using a $f^{-1}({\cal O}_S)$-linear map 
\begin{align*} 
\varphi \col \Gam_{{\cal O}_S}(U_S)\owns au^{[n]}\lom 
au^{[n-1]}d\log \wt{t}\in &
{\rm Ker}({\cal O}_{\mathfrak D}\otimes_{{\cal O}_{{\cal P}^{\rm ex}}}
\Om^1_{{\cal P}^{\rm ex}/\os{\circ}{S}}\lo 
{\cal O}_{\mathfrak D}\otimes_{{\cal O}_{{\cal P}^{\rm ex}}}\Om^2_{{\cal P}^{\rm ex}/\os{\circ}{S}})\\
& (a\in f^{-1}({\cal O}_S)),
\end{align*} 
which is independent of the choice of $\wt{t}$.   
(Strictly speaking, we should denote two  $\Gam_{{\cal O}_S}(U_S)$'s above 
by $f^{-1}(\Gam_{{\cal O}_S}(U_S))$.)
The first result needed for the construction of our desired filtered complex 
is the following: 

\begin{theo}\label{theo:dpe}
Set 
$$\wt{R}u_{X/\os{\circ}{S}*}({\cal O}_{X/\os{\circ}{S}}\langle u\rangle )
:={\cal O}_{\mathfrak D}\otimes_{{\cal O}_{{\cal P}^{\rm ex}}}
\Om^{\bul}_{{\cal P}^{\rm ex}/\os{\circ}{S}}\langle u\rangle 
\in D^+(f^{-1}({\cal O}_S)).$$ 
Then the natural morphism defined by 
$u^{[n]}\otimes \om \lom 0$ $(n\geq 1, 
\om \in {\cal O}_{\mathfrak D}\otimes_{{\cal O}_{{\cal P}^{\rm ex}}}
\Om^{\bul}_{{\cal P}^{\rm ex}/\os{\circ}{S}})$ and 
the projection $\Om^{\bul}_{{\cal P}^{\rm ex}/\os{\circ}{S}}
\lo 
\Om^{\bul}_{{\cal P}^{\rm ex}/S}$ 
induce the following canonical isomorphism$:$ 
\begin{align*} 
\wt{R}u_{X/\os{\circ}{S}*}({\cal O}_{X/\os{\circ}{S}}\langle u\rangle) 
\os{\sim}{\lo} Ru_{X/S*}({\cal O}_{X/S}). 
\tag{1.2.1}\label{ali:uysys}
\end{align*} 
%In particular, $\wt{R}u_{X/\os{\circ}{S}*}({\cal O}_{X/\os{\circ}{S}}\langle u\rangle)$ 
%is well-defined. 
\end{theo}   
The complex $\wt{R}u_{X/\os{\circ}{S}*}({\cal O}_{X/\os{\circ}{S}}\langle u\rangle )$ 
in the case where $\os{\circ}{S}_0$ is a point of characteristic $p$ 
is a log crystalline analogue of the complex 
$\Om^{\bul}_{X/{\mab C}}[\log \tau]$ in (\ref{ali:his}).  
Moreover we can define a filtered complex 
$(\wt{R}u_{X/\os{\circ}{S}*}({\cal O}_{X/\os{\circ}{S}}\langle u\rangle ),P)\in 
{\rm D}^+{\rm F}(f^{-1}({\cal O}_S))$
by counting the number of log poles of local sections of 
$\Om^{\bul}_{{\cal P}^{\rm ex}/\os{\circ}{S}}$ 
and by giving the weight $2n$ for $u^{[n]}$ $(n\in {\mab N})$. 
However we see that the graded complex 
${\rm gr}^P_0\wt{R}u_{X/\os{\circ}{S}*}({\cal O}_{X/\os{\circ}{S}}\langle u\rangle )$ 
is ``mixed''. 
Consequently we need another filtered complex. 
Because we can prove that 
${\cal P}^{\rm ex}$ is a formal SNCL scheme, we 
can define $\os{\circ}{\cal P}{}^{{\rm ex},(m)}$ $(m\in {\mab N})$ as in (\ref{ali:hkis}). 
Let ${\cal P}{}^{{\rm ex},(m)}$ be the log formal scheme whose underlying formal scheme is 
$\os{\circ}{\cal P}{}^{{\rm ex},(m)}$ and whose log structure 
is the inverse image of ${\cal P}^{\rm ex}$ 
by the natural morphisms
$\os{\circ}{\cal P}{}^{(m)}\lo \os{\circ}{\cal P}$. 
Because $\{{\cal P}{}^{{\rm ex},(m)}\}_{m\in {\mab N}}$  defines 
a semi-simplicial log formal scheme,  
we can consider the double complex 
${\cal O}_{\mathfrak D}\otimes_{{\cal O}_{{\cal P}^{{\rm ex}}}}
\Om^{\bul}_{{\cal P}^{{\rm ex},(\bul)}/\os{\circ}{S}}\langle u\rangle$ 
with appropriate signs and the associated  
single complex 
$s({\cal O}_{\mathfrak D}\otimes_{{\cal O}_{{\cal P}^{{\rm ex}}}}
\Om^{\bul}_{{\cal P}^{{\rm ex},(\bul)}/\os{\circ}{S}}\langle u\rangle)$.  
Let $P$ be the diagonal filtration (\cite{dh2}) on 
$s({\cal O}_{\mathfrak D}\otimes_{{\cal O}_{{\cal P}^{{\rm ex}}}}
\Om^{\bul}_{{\cal P}^{{\rm ex},(\bul)}/\os{\circ}{S}}\langle u\rangle)$ 
by counting the number of log poles 
$\Om^{\bul}_{{\cal P}^{{\rm ex},(m)}/\os{\circ}{S}}$ 
of local sections of 
${\cal O}_{\mathfrak D}\otimes_{{\cal O}_{{\cal P}^{{\rm ex}}}}
\Om^{\bul}_{{\cal P}^{{\rm ex},(m)}/\os{\circ}{S}}$ for each $m\in {\mab N}$, 
by giving the weight $2n$ for $u^{[n]}$ and by raising the 
weight according to the semi-simplicial degree of 
${\cal P}^{{\rm ex},(\bul)}$. 
Then we prove the following:

\begin{theo}[{\bf Existence of the PD-Hirsch weight-filtered complex}]\label{theo:dpfe}
Set 
$$(H_{\rm zar}(X/S),P):=
(s({\cal O}_{\mathfrak D}\otimes_{{\cal O}_{{\cal P}^{{\rm ex}}}}
\Om^{\bul}_{{\cal P}^{{\rm ex},(\bul)}/\os{\circ}{S}}\langle u\rangle),P)
\in {\rm D}^+{\rm F}(f^{-1}({\cal O}_S)).$$ 
The natural morphism ${\cal P}^{{\rm ex},(0)}\lo {\cal P}^{\rm ex}$ 
induces the following isomorphism
\begin{align*} 
\wt{R}u_{X/\os{\circ}{S}*}({\cal O}_{X/\os{\circ}{S}}\langle u\rangle) 
\os{\sim}{\lo} H_{\rm zar}(X/S) 
\tag{1.3.1}\label{ali:uyhsys}
\end{align*} 
in $D^+(f^{-1}({\cal O}_S))$. 
The graded complex ${\rm gr}_k^P(H_{\rm zar}(X/S))$ is canonically isomorphic to 
\begin{align*} 
&\bigoplus_{m\geq 0}
\bigoplus_{\# \ul{\lam}=m+1}
\bigoplus_{j\geq 0}
\bigoplus_{\# \ul{\mu}=k+m-2j}\tag{1.3.2}\label{ali:kmj}\\
&a_{\ul{\lam}\cup \ul{\mu}*}
Ru_{\os{\circ}{X}_{\ul{\lam}\cup \ul{\mu}}/\os{\circ}{S}*} 
({\cal O}_{\os{\circ}{X}_{\ul{\lam}\cup \ul{\mu}}/\os{\circ}{S}}
\otimes_{\mab Z}\vp_{{\rm crys},\ul{\mu}}
(\os{\circ}{X}_{\ul{\lam}\cup \ul{\mu}}/\os{\circ}{S}))(-k-m+j)[-k-2m+2j], 
\end{align*} 
where 
$\ul{\lam}:=\{\lam_0,\ldots, \lam_m\}$, $\ul{\mu}:=\{\mu_0,\ldots, \mu_{k+m-2j-1}\}$, 
$\os{\circ}{X}_{\ul{\lam}\cup \ul{\mu}}:=
\os{\circ}{X}_{\lam_0}\cap \cdots \cap \os{\circ}{X}_{\lam_m}\cap \os{\circ}{X}_{\mu_0}\cap \cdots 
\cap \os{\circ}{X}_{\mu_{k+m-2j-1}}$, 
$a_{\ul{\lam}\cup \ul{\mu}} \col \os{\circ}{X}_{\ul{\lam}\cup \ul{\mu}}\os{\subset}{\lo} 
\os{\circ}{X}$ is the natural closed immersion 
and $\vp_{{\rm crys},\ul{\mu}}
(\os{\circ}{X}_{\ul{\lam}\cup \ul{\mu}}/\os{\circ}{S})$ 
is the crystalline orientation sheaf. 
The filtered complex $(H_{\rm zar}(X/S),P)$  
and the isomorphism {\rm (\ref{ali:uyhsys})} 
are independent of the choice of the immersion 
$X\os{\sus}{\lo} {\cal P}$ over $S$. 
If $\os{\circ}{X}$ is quasi-compact, then there exists the following morphism 
\begin{equation*} 
\cup \col (H_{\rm zar}(X/S),P)
\otimes^L_{f^{-1}({\cal O}_S)}
(H_{\rm zar}(X/S),P)\lo 
(H_{\rm zar}(X/S),P)
\tag{1.3.3}\label{eqn:pxs}
\end{equation*} 
in ${\rm D}^+{\rm F}(f^{-1}({\cal O}_S))$  
fitting into the following commutative diagram 
\begin{equation*} 
\begin{CD} 
H_{\rm zar}(X/S)
\otimes^L_{f^{-1}({\cal O}_S)}H_{\rm zar}(X/S)
@>{\cup}>> H_{\rm zar}(X/S)\\
@A{\simeq}AA  @A{\simeq}AA\\
\wt{R}u_{X/S*}({\cal O}_{X/S}\langle u\rangle)\otimes^L_{f^{-1}({\cal O}_S)}
\wt{R}u_{X/S*}({\cal O}_{X/S}\langle u\rangle) @. 
\wt{R}u_{X/S*}({\cal O}_{X/S}\langle u\rangle)
\\
@V{\simeq}VV  @V{\simeq}VV\\
Ru_{X/S*}({\cal O}_{X/S})\otimes^L_{f^{-1}({\cal O}_S)}Ru_{X/S*}({\cal O}_{X/S})
@>{\cup}>>  Ru_{X/S*}({\cal O}_{X/S}). 
\end{CD}
\tag{1.3.4}\label{cd:ixtt}
\end{equation*} 
Here the lower $\cup$ in {\rm (\ref{cd:ixtt})} is the usual cup product 
of $Ru_{X/S*}({\cal O}_{X/S})$. 
\end{theo} 
We call $(H_{\rm zar}(X/S),P)$ 
the {\it PD-Hirsch weight-filtered log crystalline complex} of $X/(S,{\cal I},\gam)$. 
By using (\ref{theo:dpe}) and (\ref{theo:dpfe}),  
we obtain the following:  

\begin{coro}\label{coro:wdc}
There exists a canonical isomorphism 
\begin{align*} 
Ru_{X/S*}({\cal O}_{X/S})\os{\sim}{\lo} H_{\rm zar}(X/S)
\tag{1.4.1}\label{ali:ixuoa} 
\end{align*} 
in $D^+(f^{-1}({\cal O}_S))$.  
If $\os{\circ}{X}$ is quasi-compact, then there exists the following convergent 
spectral sequence 
\begin{align*}
E_1^{-k,q+k}&=\bigoplus_{m\geq 0}
\bigoplus_{\# \ul{\lam}=m+1}
\bigoplus_{j\geq 0}
\bigoplus_{\# \ul{\mu}=k+m-2j}\tag{1.4.2}\label{ali:ixsu} \\
&R^{q+2j-k-2m}
f_{\os{\circ}{X}_{\ul{\lam}\cup \ul{\mu}}/\os{\circ}{S}*} 
({\cal O}_{\os{\circ}{X}_{\ul{\lam}\cup \ul{\mu}}/\os{\circ}{S}}
\otimes_{\mab Z}\vp_{{\rm crys},\ul{\mu}}
(\os{\circ}{X}_{\ul{\lam}\cup \ul{\mu}}/\os{\circ}{S}))(-k-m+j)\Lo  R^qf_{X/S*}({\cal O}_{X/S}). 
\end{align*}
\end{coro} 
We should remark that 
``the filtration $P$ on $H_{\rm zar}(X/S)$''
is not ``finite'' in general. 
However, because the filtration $P$ on $H_{\rm zar}(X/S)$
is exhaustive and  bounded below if $\os{\circ}{X}$ is quasi-compact, 
we obtain the convergent weight spectral sequence (\ref{ali:ixsu}). 
If $\os{\circ}{S}$ is a $p$-adic formal ${\cal V}$-scheme, then 
we can prove that the spectral sequence {\rm (\ref{ali:ixsu})} 
modulo torsion degenerates at $E_2$. 
%\par 
%By using Dwork's trick 
%using the iteration of Frobenius morphisms as in \cite{boi} 
%(we use the iteration of the unusual Frobenius morphism
%$X\lo X\times_{\os{\circ}{S}_0,F_{\os{\circ}{S}_0}}\os{\circ}{S}_0$ in (\ref{theo:wdpc}))
%and a fact that the $E_1$-terms modulo torsions 
%are ``pure'', we prove the following: 

%\begin{theo}[{\bf $E_2$-degeneration}]\label{theo:e2nd}
%Let ${\cal V}$ be as in {\rm (\ref{theo:intro})}. 
%If $X/S_0$ is proper and if $\os{\circ}{S}$ is a $p$-adic formal ${\cal V}$-scheme, 
%\end{theo} 

%The existence of the spectral sequence (\ref{ali:ixsu}) has an important role 
%in this book.  However the $E_1$-terms in (\ref{ali:ixsu}) have ``too much'' direct summands 
%(this is the only disadvantage of $(H_{\rm zar}(X/S), P)$ compared with 
%$(A_{\rm zar}(X/S),P)$)
%and we do not use the explicit  ``too complicated '' descriptions of 
%the $E_1$-terms of the spectral sequence.
%(However we use (\ref{ali:kmj}) in the proofs of 
%the well-definedness of $(H_{\rm zar}(X/S),P)$ and 
%(\ref{theo:cpiso}) below.)
\par 
Let $P'$ be the induced filtration on $R^qf_{X/S*}({\cal O}_{X/S})$ by 
$(H_{\rm zar}(X/S),P)$: 
\begin{align*} 
P'_{k+q}R^qf_{X/S*}({\cal O}_{X/S})&:={\rm Im}
(R^qf_{X/S*}(P_kH_{\rm zar}(X/S))\lo R^qf_{X/S*}(H_{\rm zar}(X/S)))\tag{1.4.3}\label{ali:whacp} \\
&\simeq {\rm Im}(R^qf_{X/S*}(P_kH_{\rm zar}(X/S))\lo R^qf_{X/S*}({\cal O}_{X/S})).
\end{align*} 
As an immediate corollary of (\ref{theo:dpfe}) and (\ref{coro:wdc}),  
if $\os{\circ}{X}$ is quasi-compact, then 
we see that the cup product (\ref{ali:cp})
induces the following morphism 
\begin{align*} 
\cup  \col P'_kR^qf_{X/S*}({\cal O}_{X/S})
\otimes_{{\cal O}_S}P'_{k'}R^{q'}f_{X/S*}({\cal O}_{X/S})
\lo P'_{k+k'}R^{q+q'}f_{X/S*}({\cal O}_{X/S}),  
\tag{1.4.4}\label{ali:qppqxs}
\end{align*}  
which answers the problem (\ref{ali:qqxs}) affirmatively if one replaces 
$P$ with $P'$.

\par 
Though we think that there is no direct simple relation 
between $(H_{\rm zar}(X/S),P)$ and 
$(A_{\rm zar}(X/S),P)$, we can prove the following:

\begin{theo}[{\bf cf.~\cite{fup}}]\label{theo:crf}
%Assume that the log structure of $S$ 
%is isomorphic to ${\mab N}\oplus {\cal O}_S^*\owns (n,u)\lom 0^nu\in  {\cal O}_S$. 
%Assume that
%$M_S/{\cal O}_S^*$ is isomorphic to ${\mab N}$ on $\os{\circ}{S}$. 
There exists a filtered morphism 
\begin{align*}
\psi \col (A_{\rm zar}(X/S),P)
\lo 
(H_{\rm zar}(X/S),P)
\tag{1.5.1}\label{ali:ahuu} 
\end{align*} 
such that the underlying morphism 
$A_{\rm zar}(X/S)\lo H_{\rm zar}(X/S)$ 
is an isomorphism in $D^+(f^{-1}({\cal O}_S))$ 
fitting into the following commutative diagram
\begin{equation*} 
\begin{CD}
A_{\rm zar}(X/S)@>{\psi,~\sim}>>
H_{\rm zar}(X/S)\\
@A{\theta \wedge}A{\simeq}A @A{(\ref{ali:ixuoa})}A{\simeq}A\\
Ru_{X/S*}({\cal O}_{X/S})
@= Ru_{X/S*}({\cal O}_{X/S}). 
\end{CD}
\tag{1.5.2}\label{cd:sesti}
\end{equation*} 
\end{theo} 
The morphism $\psi$ is the log crystalline analogue of the morphism constructed by 
Fujisawa in  \cite{fup}. (\ref{theo:crf}) tells us that 
$P_kR^qf_{X/S*}({\cal O}_{X/S})\subset P'_kR^qf_{X/S*}({\cal O}_{X/S})$.
Though we do not know whether $P_k=P'_k$ on $R^qf_{X/S*}({\cal O}_{X/S})$,  
we obtain the following:

\begin{theo}[{\bf Comparison theorem of weight filtrations}]\label{theo:intro}
%Assume that $L_{S}$ is a free ${\cal O}_S$-module of rank $1$. 
Let ${\cal V}$ be a complete discrete valuation ring of mixed characteristics $(0,p)$ 
with perfect residue field. 
Assume that $\os{\circ}{S}$ is a $p$-adic formal ${\cal V}$-scheme. 
Let $P$ and $P'$ be the filtrations {\rm (\ref{ali:wacp})} and {\rm (\ref{ali:whacp})} 
on $R^qf_{X/S*}({\cal O}_{X/S})$ $(q\in {\mab N})$, respectively. 
Then $P\otimes_{\cal V}K=P'\otimes_{\cal V}K$. 
%$($We denote ${}_iP\otimes_{\cal V}K$ simply by $P$.$)$
\end{theo}

\par 
Next we would like to give relations  
between (\ref{theo:dpfe}) and preceding results. 
\par 
Let $s=({\rm Spec}(\kap), {\mab N}\oplus \kap^*\lo \kap)$ be the log point of $\kap$. 
Let ${\cal W}$ be the Witt ring of $\kap$. 
Let ${\cal W}(s)$ be the canonical lift of $s$ over ${\cal W}$ of $\kap$. 
Let $u$ be a variable over ${\cal W}$. 
Let $Y$ be a proper SNCL scheme  over $s$. 
Let $g\col Y\lo s\os{\sus}{\lo} {\cal W}(s)$ be the structural morphism. 
Let ${\cal W}{\Om}^{\bul}_{Y}$ and 
${\cal W}\wt{\Om}^{\bul}_{Y}$ be the log de Rham-Witt complexes of $Y/\kap$ 
constructed in \cite{msemi} (cf.~\cite{hyp}). 
Let ${\cal W}\wt{\Om}^{\bul}_{Y^{(\bul)}}$ be the semi-simplicial version of 
${\cal W}\wt{\Om}^{\bul}_{Y}$, where $Y^{(\bul)}$ is the semi-simplicial log scheme 
whose underlying semi-simplicial scheme is 
$\{\os{\circ}{Y}{}^{(k)}\}_{k\in {\mab N}}$ and whose log structure 
is the inverse image of $Y$ given by the natural morphisms
$\os{\circ}{Y}{}^{(k)}\lo \os{\circ}{Y}$'s.
%(The right definition of ${\cal W}\wt{\Om}^{\bul}_{Y^{(\bul)}}$ will be given 
%in \S\ref{sec:ofc} below.)
Let $b^{(\bul)} \col Y^{(\bul)} \lo Y$ be the augmentation morphism. 
In \cite{kiha} Kim and Hain have constructed a complex 
${\cal W}\wt{\Om}{}^{\bul}_{Y}\langle u \rangle$ and 
a filtered complex $(s(b^{(\bul)}_*({\cal W}\wt{\Om}{}^{\bul}_{Y^{(\bul)}}\langle u \rangle)),P)$ 
(in their article $(s(b^{(\bul)}_*({\cal W}\wt{\Om}{}^{\bul}_{Y^{(\bul)}}\langle u \rangle)),P)$ 
is denoted by $(s(C(W\wt{\om}[u])),P)$).  
For simplicity of notation, let us denote 
$(s(b^{(\bul)}_*({\cal W}\wt{\Om}{}^{\bul}_{Y^{(\bul)}}\langle u \rangle)),P)$ by 
$({\cal W}\wt{\Om}{}^{\bul}_{Y^{(*)}}\langle u \rangle,P)$. 
We can prove that there exist the following canonical isomorphisms  
\begin{align*} 
{\cal W}\Om^{\bul}_Y \os{\sim}{\longleftarrow} {\cal W}\wt{\Om}{}^{\bul}_{Y}\langle u \rangle 
\os{\sim}{\lo}
{\cal W}\wt{\Om}{}^{\bul}_{Y^{(*)}}\langle u \rangle
\tag{1.6.1}\label{ali:dwy}
\end{align*} 
in $D^+(g^{-1}({\cal W}))$. 
(See (\ref{rema:pst}) and 
(\ref{rema:p}) (3) below for the problem for the construction of 
$({\cal W}\wt{\Om}{}^{\bul}_{Y^{(*)}}\langle u\rangle,P)$ 
and the problem for the proof of the existence of the isomorphisms (\ref{ali:dwy}).)
In the text we give a simpler construction of 
$({\cal W}\wt{\Om}{}^{\bul}_{X^{(*)}}\langle u \rangle,P)$ 
than Kim and Hain's construction by replacing the local admissible lift defined in \cite{msemi}
by the local immersion into a log smooth scheme over ${\rm Spf}({\cal W}\{t\})$ 
with the log structure defined by the ideal $(t) \subset {\cal W}\{t\}$. 
We show that the complex ${\cal W}\wt{\Om}{}^{\bul}_{Y^{(*)}}\langle u \rangle$ 
has a natural product $\wedge$ 
%(this product structure is {\it not} defined in \cite{kiha}) 
and the following diagram is commutative: 
\begin{equation*} 
\begin{CD} 
{\cal W}\wt{\Om}{}^{\bul}_{Y^{(*)}}\langle u \rangle
\otimes_{g^{-1}({\cal W})}{\cal W}\wt{\Om}{}^{\bul}_{Y^{(*)}}\langle u \rangle
@>{\wedge}>> {\cal W}\wt{\Om}{}^{\bul}_{Y^{(*)}}\langle u \rangle
\\
@A{\simeq}AA  @A{\simeq}AA\\
{\cal W}\Om^{\bul}_Y\otimes_{g^{-1}({\cal W})}{\cal W}\Om^{\bul}_Y
@>{\wedge}>>  {\cal W}\Om^{\bul}_Y. 
\end{CD}
\tag{1.6.2}\label{cd:ixwctt}
\end{equation*} 

In this book we prove the following comparison theorem: 

\begin{theo}[{\bf Comparison theorem of the filtered complexes}]\label{theo:cpiso}
Let the notations be as in {\rm (\ref{theo:dpfe})}. 
Assume that $\os{\circ}{S}={\rm Spf}({\cal W})$. 
Then there exists a canonical isomorphism 
\begin{align*} 
(H_{\rm zar}(X/S),P)
\os{\sim}{\lo} 
({\cal W}\wt{\Om}{}^{\bul}_{X^{(*)}}\langle u \rangle,P) 
\tag{1.7.1}\label{ali:icxpsu} 
\end{align*}
in ${\rm D}^+{\rm F}(f^{-1}({\cal W}))$. 
If $\os{\circ}{X}$ is quasi-compact, then the following diagram 
\begin{equation*} 
\begin{CD} 
(H_{\rm zar}(X/S),P)
\otimes^L_{f^{-1}({\cal O}_S)}(H_{\rm zar}(X/S),P)
@>{\cup}>> H_{\rm zar}(X/S)\\
@V{\simeq}VV @VV{\simeq}V \\
{\cal W}\wt{\Om}{}^{\bul}_{X^{(*)}}\langle u \rangle
\otimes_{f^{-1}({\cal W})}{\cal W}\wt{\Om}{}^{\bul}_{X^{(*)}}\langle u \rangle
@>{\wedge}>> {\cal W}\wt{\Om}{}^{\bul}_{X^{(*)}}\langle u \rangle
\end{CD}
\tag{1.7.2}\label{cd:ixwhwctt}
\end{equation*} 
is commutative. $($Note that 
${\cal W}\wt{\Om}{}^{\bul}_{X^{(*)}}\langle u \rangle$ is 
a complex of flat $f^{-1}({\cal W})$-modules.$)$
The underlying isomorphism $H_{\rm zar}(X/S)\os{\sim}{\lo} 
{\cal W}\wt{\Om}{}^{\bul}_{X^{(*)}}\langle u \rangle$ 
and the isomorphism 
$Ru_{X/S*}({\cal O}_{X/S})\os{\sim}{\lo} {\cal W}\Om^{\bul}_X$ 
in {\rm \cite{hk}} and {\rm \cite{ndw}}
give the compatibility of {\rm (\ref{cd:ixtt})} 
with {\rm (\ref{cd:ixwctt})} in the obvious sense. 
%\par 
%$(2)$ There exists a canonical isomorphism 
%\begin{align*} 
%(H_{{\rm zar},{\rm TW}}(X/S),P)
%\os{\sim}{\lo} 
%(s_{\rm TW}({\cal W}\wt{\Om}{}^{\bul}_{X^{(\bul)}}\langle u \rangle 
%\otimes_{\mab Z}{\mab Q})),P)
%\tag{1.3.2}\label{ali:icasu} 
%\end{align*}
%in ${\rm D}({\rm A}^{\geq 0}{\rm F}(f^{-1}({\cal O}_{S})\otimes_{\mab Z}{\mab Q}))$. 
%Here $({\cal W}\wt{\Om}{}^{\bul}_{X^{(\bul)}}\langle u \rangle, P)$ 
%is shown to have a natural filtered product 
%and $s_{\rm TW}$ is the Thom-Whitney single functor defined in {\rm \cite{nav}}. 
\end{theo}
\parno
Consequently we see that 
our filtered complex $(H_{\rm zar}(X/S),P)$ is 
a generalization of $({\cal W}\wt{\Om}{}^{\bul}_{Y^{(*)}}\langle u \rangle,P)$ 
to the case where the base log scheme is more general. 
\par 
On the other hand, in \cite{fup}
Fujisawa has defined a filtered complex $(K_{\mab C},W)$ for 
a proper SNCL scheme $X$ over the log point $O$. 
In fact, he has defined a mixed Hodge complex for $X/O$ 
under the assumption that the analytifications 
of the irreducible components of $\os{\circ}{X}$ are K\"{a}hler.  
Our filtered complex $(H_{\rm zar}(X/S),P)$ 
is the log crystalline analogue of a generalization of 
$(K_{\mab C},W)$. 
\par 
Next we would like to raise another problem. 
Assume that the relative dimension of 
$\os{\circ}{X} \lo \os{\circ}{S}_0$ is of pure dimension $d$. 
Let $L$ be a relatively ample line bundle on 
$\os{\circ}{X}/\os{\circ}{S}_0$. 
%Let $L_{T_1}$ be the pull-back of $L$ 
%to $\os{\circ}{X}_{T_1}/\os{\circ}{T}_1$. 
As in \cite[\S3]{boi},   
we obtain the Chern class 
$\eta:=c_{1,{\rm crys}}(L)$ of $L$
in $R^2f_{X/S*}({\cal O}_{X/S})$. 
(In the text we recall the construction of $\eta$.)
Stimulated by a letter of K.~Kato to me (\cite{kln}), 
I have conjectured the following in \cite{nb}: 

\begin{conj}[{\bf $p$-adic variational filtered log hard Lefschetz conjecture}]\label{conj:lhilc}
$(1)$ The following cup product 
\begin{equation*} 
\eta^i \col 
R^{d-i}f_{X/S*}
({\cal O}_{X/S})\otimes_{\mab Z}{\mab Q} 
\lo R^{d+i}f_{X/S*}({\cal O}_{X/S})
\otimes_{\mab Z}{\mab Q}
\tag{1.8.1}\label{eqn:fcvilpl} 
\end{equation*}
is an isomorphism. 
\par 
$(2)$ 
In fact, $\eta^i$ is the following isomorphism of filtered sheaves: 
\begin{equation*} 
\eta^i \col 
(R^{d-i}f_{X/S*}
({\cal O}_{X/S})\otimes_{\mab Z}{\mab Q},P) 
\os{\sim}{\lo} ((R^{d+i}f_{X/S*}
({\cal O}_{X/S})\otimes_{\mab Z}{\mab Q})(i),P). 
\tag{1.8.2}\label{eqn:filfilpl} 
\end{equation*}
Here $(i)$ means the Tate twist: 
%$$P_k(R^{d+i}f_{X_{\os{\circ}{T}_1}/S(T)^{\nat}*}
%({\cal O}_{X_{\os{\circ}{T}_1}/S(T)^{\nat}})\otimes_{\mab Z}{\mab Q})(i))
%=P_{k+2i}R^{d+i}f_{X_{\os{\circ}{T}_1}/S(T)^{\nat}*}
%({\cal O}_{X_{\os{\circ}{T}_1}/S(T)^{\nat}})\otimes_{\mab Z}{\mab Q}.$$ 
$$P_k(R^{d+i}f_{X/S*}({\cal O}_{X/S})\otimes_{\mab Z}{\mab Q})(i))
=P_{k+2i}R^{d+i}f_{X/S*}({\cal O}_{X/S})\otimes_{\mab Z}{\mab Q}.$$ 
\end{conj}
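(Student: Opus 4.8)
The plan is to prove the conjecture at one geometrically accessible fiber and then to spread the conclusion over all of $\os{\circ}{S}$ using the horizontality built into the crystalline formalism. First, by the filtered base change theorem (\S\ref{sec:bckf}) together with the $E_2$-degeneration of the $p$-adic weight spectral sequence (\S\ref{sec:filbo}), the sheaf $R^qf_{X/S*}({\cal O}_{X/S})\otimes_{\mab Z}{\mab Q}$ is locally free with locally free weight-graded pieces ${\rm gr}^P_k$ and its formation commutes with base change, so that both (\ref{eqn:fcvilpl}) and (\ref{eqn:filfilpl}) may be checked fiber by fiber. Moreover $\eta=c_{1,{\rm crys}}(L)$ is a global crystalline Chern class lying in $P_2R^2f_{X/S*}({\cal O}_{X/S})$, so cup product with $\eta^i$ is a morphism of filtered locally free sheaves raising the weight by $2i$, which is exactly what is needed to match the Tate twist $(i)$ in (\ref{eqn:filfilpl}).

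Next I would treat the distinguished fiber, assumed to be the log special fiber of a projective strict semistable family ${\cal X}/{\cal O}_V$ over a complete discrete valuation ring. The Hyodo--Kato isomorphism identifies its log crystalline cohomology with the de Rham cohomology of the smooth projective generic fiber ${\cal X}_{\rm gen}$, compatibly with cup products and with the image of $\eta$, so the bare isomorphism (\ref{eqn:fcvilpl}) at this fiber follows from hard Lefschetz for ${\cal X}_{\rm gen}$. For the filtered refinement (\ref{eqn:filfilpl}) I would argue that $\eta^i$ is a morphism respecting the weight filtration with the shift $2i$ — this uses the multiplicative structure on the zariskian Kim--Hain filtered complex $(H_{\rm zar},P)$ of \S\ref{sec:p} and the fact that $\eta$ has weight $2$ — and is bijective by the previous sentence; the strict compatibility of morphisms with the weight filtration (\S\ref{sec:st}), which is the $p$-adic counterpart of Deligne's strictness for morphisms of mixed Hodge structures, then forces $\eta^i$ to induce an isomorphism on every ${\rm gr}^P_k$ and hence to be a filtered isomorphism. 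The monodromy operator $N$ of \S\ref{sec:mod}, for which $P$ is the monodromy-weight filtration, and the weight-compatible Poincar\'{e} pairing of \S\ref{sec:pol} are the structures that make this strictness available.

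Finally I would propagate both statements to every fiber of $X/S$ using the infinitesimal deformation invariance of \S\ref{sec:infhdi}. The crystal structure equips $R^qf_{X/S*}({\cal O}_{X/S})$ with an integrable connection for which $P$ and $\eta$ are horizontal, so $\eta^i$ and each ${\rm gr}^P_k(\eta^i)$ are horizontal morphisms of locally free sheaves with connection. The determinant of such a morphism is a horizontal section of a line bundle with connection, hence vanishes either everywhere or nowhere on $\os{\circ}{S}$ (which we may assume connected); as it is nonzero at the distinguished fiber by the previous step, $\eta^i$ is an isomorphism on all of $\os{\circ}{S}$, proving (\ref{eqn:fcvilpl}), and the same argument applied to the graded pieces, whose ranks are locally constant by $E_2$-degeneration, proves the filtered statement (\ref{eqn:filfilpl}).

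The hard part will be the filtered statement (\ref{eqn:filfilpl}) at the distinguished fiber, namely the strict compatibility of $\eta^i$ with the weight filtration — precisely the incompatibility that rendered (\ref{ali:wh}) intractable with $({\cal W}A_X,P)$ alone, and the reason the multiplicative PD-Hirsch extension of this paper is needed. Establishing it requires representing $\eta$ by a cocycle in the correct filtration level of $H_{\rm zar}$ and controlling the interplay of $\eta$, $N$ and $P$ in the $p$-adic weight-monodromy formalism (\S\ref{sec:pssc}, \S\ref{sec:cwt}). A further subtlety, reflected in the phrase ``of any characteristic'', is that when ${\cal O}_V$ has equal characteristic $p$ the purity of the weights — and hence both hard Lefschetz for ${\cal X}_{\rm gen}$ and the strictness — must be supplied through a Weil II and specialization argument rather than by classical Hodge theory. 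Finally, the spreading-out of the filtered isomorphism over $\os{\circ}{S}$ rests on the horizontality of $P$ under the Gauss--Manin connection, which is exactly the genuinely \emph{variational} content furnished by \S\ref{sec:infhdi}.
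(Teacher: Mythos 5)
What you are actually proving is Theorem \ref{theo:a}; the statement itself is a conjecture, and the paper's contribution is the implication $(1)\Rightarrow(2)$ (Theorems \ref{theo:12}, \ref{theo:fdd}, \ref{theo:llh}), with $(1)$ under the semistable-fiber hypothesis quoted from \cite{np} and \cite{nb}. Your skeleton for $(1)\Rightarrow(2)$ does coincide with the paper's: the inclusion $\eta^i(P_{k-2i})\subset P_k$ is formal because $c_{1,{\rm crys}}(L)$ lies in $P_2$ and the cup product is filtered ((\ref{prop:cf}), (\ref{ali:otp})); the reverse inclusion, i.e.\ the equality (\ref{ali:bkih}), is verified at a closed point with finite residue field, where the Frobenius purity of the $E_1$-terms of the weight spectral sequence (\ref{eqn:espsp}) identifies $P$ with the filtration by Frobenius weights; and the equality is then spread over $\os{\circ}{S}$.

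The genuine gap is in your spreading step. You claim that the determinant of the horizontal morphism ${\rm gr}^P_k(\eta^i)$, being a horizontal section of a line bundle with integrable connection, ``vanishes either everywhere or nowhere.'' In mixed or positive characteristic this is false for a bare module with connection: a horizontal section is not determined by its value at one point ($p$-curvature, divided-power phenomena). What makes the argument work in the paper is that $P_kR^qf_*$ and $\eta^i(P_{k-2i}R^{q-2i}f_*)$ are shown to be objects of the abelian category $F{\textrm -}{\rm Isoc}(\os{\circ}{S}/{\mab Z}_p)$ of convergent $F$-isocrystals ((\ref{exam:ofl}), (\ref{theo:pwfaec})), for which Ogus's theorem \cite[(4.1)]{od} makes the pullback to a single closed point a faithful functor; equality of the two subobjects can then be tested at one point, and the Dwork trick of \S\ref{sec:infhdi} reduces an arbitrary enlargement to the Witt-vector case. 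So ``horizontality of $P$'' is necessary but not sufficient; you must invoke convergence and the faithfulness of the fiber functor. A second, smaller defect: at the distinguished fiber you derive the filtered statement from a ``strict compatibility'' analogous to Deligne's strictness for mixed Hodge structures, but no such strictness is available a priori here — it is precisely the \emph{output} of the finite-field purity computation, not an input, so that paragraph is circular unless you replace the appeal to strictness by the explicit verification of $P_k\cap{\rm Im}(\eta^i)=\eta^i(P_{k-2i})$ over a finite field, as in the proof of (\ref{theo:fdd}).
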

In \cite{np} and \cite{nb} we have proved that (\ref{conj:lhilc}) (1) is true 
if there exists an exact closed point $s$ of each connected component of $S$ 
such that $X_s$ is the log special fiber of 
a projective strict semistable family over a complete discrete valuation ring 
of any characteristic. 
However we have proved nothing about (\ref{conj:lhilc}) (2) in [loc.~cit.]. 
%because there does {\it not} exist the following natural product of filtered complexes: 
%$$(A_{\rm zar}(X/S),P)\otimes^L_{f^{-1}({\cal O}_S)}(A_{\rm zar}(X/S),P)\lo 
%(A_{\rm zar}(X/S),P).$$ 
%Though we can define a product 
%\begin{align*} 
%{\cal O}_X^*[-1]\otimes^L_{\mab Z}(A_{\rm zar}(X/S),P)
%\lo (A_{\rm zar}(X/S),P)
%\tag{1.8.3}\label{ali:azoix}
%\end{align*} 
%(see \S\ref{sec:vlc} in the text for details), 
%it is unclear that this product is a part of a more general product. 
%Here we have endowed ${\cal O}_X^*[-1]$ with the trivial filtration. 
%In the text we prove the following: 

%\begin{theo}
%Assume that $\os{\circ}{X}$ is quasi-compact. 
%Then there exists a morphism 
%$$c^H_{1,{\rm crys}} \col 
%{\cal O}_X^*[-1] \lo (H_{\rm zar}(X/S),P)$$ 
%fitting into the following commutative diagram 
%\begin{equation*}
%\begin{CD}
%{\cal O}_X^*[-1]\otimes^L_{\mab Z}(A_{\rm zar}(X/S),P)@>{{\rm (\ref{ali:azoix})}}>>
%(A_{\rm zar}(X/S),P)\\
%@V{c^H_{1,{\rm crys}}\otimes^L\psi}VV  @VV{\psi}V\\
%(H_{\rm zar}(X/S),P)\otimes^L_{f^{-1}({\cal O}_S)}
%(H_{\rm zar}(X/S),P)@>{\cup}>> (H_{\rm zar}(X/S),P).  
%\end{CD}
%\tag{1.9.1}\label{cd:hap}
%\end{equation*}
%\end{theo} 

\par 
In this book we prove the following:

\begin{theo}\label{ltheo:12}
If $\os{\circ}{S}$ is a $p$-adic formal ${\cal V}$-scheme, 
then {\rm (\ref{conj:lhilc}) (1)} implies {\rm (\ref{conj:lhilc}) (2)}. 
\end{theo}

\par 
Though the $p$-adic monodromy-weight conjecture 
in mixed characteristics in \cite{msemi} 
has not been solved yet in general
(the conjecture is solved in various special cases), 
we can prove (\ref{conj:lhilc}) (2) for the special fiber of a projective semistable family 
in mixed characteristics by combining (\ref{ltheo:12}) and  the result for (\ref{conj:lhilc}) (1):

\begin{theo}\label{theo:a}
If there exists a point $s$ of each connected component of $S$
such that $X_s$ is the log special fiber of 
a projective strict semistable family over a complete discrete valuation ring, 
then the conjecture {\rm (\ref{conj:lhilc}) (2)} is true. 
\end{theo}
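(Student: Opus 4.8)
The plan is to obtain Theorem~\ref{theo:a} as an immediate consequence of the implication established in Theorem~\ref{theo:12}, combined with the already-known validity of part~$(1)$ of Conjecture~\ref{conj:lhilc} under the stated hypothesis. In other words, the substantive work has been carried out beforehand, and what remains is to assemble the two inputs into the desired conclusion.

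First I would record that the hypothesis of Theorem~\ref{theo:a}---the existence of a point $s$ of $S$ whose fiber $X_s$ is the log special fiber of a projective strict semistable family over a complete discrete valuation ring---is exactly the hypothesis under which \cite{np} and \cite{nb} prove that the cup-product map $\eta^i$ of $(\ref{eqn:fcvilpl})$ is an isomorphism. Thus part~$(1)$ of Conjecture~\ref{conj:lhilc} holds for $X/S$. The mechanism behind this is that $\eta^i$ extends to a morphism of convergent $F$-isocrystals on $S/{\cal V}$, so that the property of being an isomorphism propagates from the single fiber $X_s$---where projective strict semistable reduction furnishes the hard Lefschetz isomorphism---to the whole of $\os{\circ}{S}$.

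Next I would invoke Theorem~\ref{theo:12}, which asserts that when $\os{\circ}{S}$ is a $p$-adic formal ${\cal V}$-scheme, part~$(1)$ implies part~$(2)$. Here the filtered cup product $(\ref{eqn:pxs})$ on $(H_{\rm zar}(X/\os{\circ}{S}),P)$ together with the chern-class compatibility $(\ref{cd:hap})$ makes $\eta^i$ a morphism of filtered sheaves, and once $\eta^i$ is known to be an isomorphism the strict compatibility with the weight filtration upgrades it to the filtered isomorphism of $(\ref{eqn:filfilpl})$ after the Tate twist, which is precisely part~$(2)$. The only point requiring care is to confirm that the standing hypotheses of Theorem~\ref{theo:12} are in force, namely that $\os{\circ}{S}$ is a $p$-adic formal ${\cal V}$-scheme over a complete discrete valuation ring ${\cal V}$ of mixed characteristics $(0,p)$ with perfect residue field; I would note that this is part of the working framework for the variational conjecture and that it is not in tension with the ``any characteristic'' condition on the discrete valuation ring at the special point $s$, since the latter controls only the fiber $X_s$ and not the base $\os{\circ}{S}$.

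The substantive content thus lies entirely upstream, in Theorem~\ref{theo:12} and in the cited proof of part~$(1)$; at the present level the only genuine obstacle is the bookkeeping that the hypotheses match---in particular that the base $\os{\circ}{S}$ may be taken to be a $p$-adic formal ${\cal V}$-scheme of the required type. Once this is settled, Theorem~\ref{theo:a} follows formally: combining the cited validity of part~$(1)$ with the implication of Theorem~\ref{theo:12} yields part~$(2)$ of Conjecture~\ref{conj:lhilc}, which is the assertion to be proved.
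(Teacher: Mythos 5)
Your proof is correct and follows the paper's own route exactly: the paper derives Theorem \ref{theo:a} precisely by combining the validity of Conjecture \ref{conj:lhilc} (1) under the stated hypothesis (cited from \cite{np} and \cite{nb}) with the implication of Theorem \ref{theo:12}. The extra remarks on the mechanisms behind the two inputs are not needed since both are invoked as established results, but they do not affect the correctness of the argument.
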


\par 
We obtain the following as a corollary of (\ref{theo:a}). 

\begin{coro}[{\bf Lefschetz decomposition with weight filtrations}]\label{coro:inp}
Let the assumptions be as in {\rm (\ref{theo:a})}. 
Let $k$ and  $i$ be a nonnegative integers. 
Set
\begin{align*} 
(P_kR^{d-i}f_{X/S*}
({\cal O}_{X/S})\otimes_{\mab Z}{\mab Q})_0 
:={\rm Ker}(\eta^{i+1} \col &
P_kR^{d-i}f_{X/S*}
({\cal O}_{X/S})\otimes_{\mab Z}{\mab Q} \tag{1.11.1}\label{ali:fclpl} \\
&\lo P_k\{R^{d+i+2}f_{X/S*}({\cal O}_{X/S})
\otimes_{\mab Z}{\mab Q})(i+1)). 
\end{align*}
Moreover, assume that $\os{\circ}{S}$ is connected. 
Let $b_k^q$ be the rank of locally free sheaf 
$P_kR^qf_{X/S*}
({\cal O}_{X/S})\otimes_{\mab Z}{\mab Q}$ on $\os{\circ}{S}$. 
Then 
\begin{align*}
P_kR^qf_{X/S*}
({\cal O}_{X/S})\otimes_{\mab Z}{\mab Q}
=&(P_kR^qf_{X/S*}
({\cal O}_{X/S})\otimes_{\mab Z}{\mab Q})_0  
\oplus 
\eta (P_{k-2}R^{q-2}f_{X/S*}
({\cal O}_{X/S})\otimes_{\mab Z}{\mab Q})_0 \\
&\oplus \eta (P_{k-4}R^{q-2}f_{X/S*}
({\cal O}_{X/S})\otimes_{\mab Z}{\mab Q})_0 
\oplus \cdots.   
\end{align*} 
Consequently 
$b_k^0\leq b_{k+2}^2\leq b_{k+4}^4 \leq \cdots $ 
up to $b_*^d$ and 
$b_k^1\leq b_{k+2}^3\leq b_{k+4}^5 \leq \cdots$ 
up to $b_{*}^d$. 
In particular, 
$b_{\infty}^0\leq b_{\inf}^2\leq b_{\inf}^4 \leq \cdots $ 
up to $b_{\inf}^d$ and 
$b_{\inf}^1\leq b_{\inf}^3\leq b_{\inf}^5 \leq \cdots$ 
up to $b_{\inf}^d$. 
\end{coro}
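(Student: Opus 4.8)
The plan is to deduce the corollary as a filtered version of the classical Lefschetz decomposition, which is a purely formal consequence of the Hard Lefschetz isomorphism. The essential input is Theorem \ref{theo:a}, which asserts that Conjecture \ref{conj:lhilc} (2) holds under the present hypotheses: for every $i\ge 0$ the cup product $\eta^i$ is a \emph{filtered} isomorphism $(R^{d-i}f_{X/S*}({\cal O}_{X/S})\otimes_{\mab Z}{\mab Q},P)\os{\sim}{\lo}((R^{d+i}f_{X/S*}({\cal O}_{X/S})\otimes_{\mab Z}{\mab Q})(i),P)$, i.e. $\eta^i(P_kR^{d-i})=P_{k+2i}R^{d+i}$ (abbreviating $R^q:=R^qf_{X/S*}({\cal O}_{X/S})\otimes_{\mab Z}{\mab Q}$). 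I would combine this with two auxiliary facts already available: first, by the filtered cup product of (\ref{coro:intro}) together with $\eta\in P_2R^2$ (since $\eta$ is a first crystalline Chern class, cf. (\ref{cd:hap})), cupping with $\eta$ carries $P_{k-2}R^{q-2}$ into $P_kR^q$; second, each $P_kR^q$ is locally free on $\os{\circ}{S}$, so that ranks and direct-sum splittings may be taken sheaf-theoretically.

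First I would prove the one-step decomposition $P_kR^q=(P_kR^q)_0\oplus\eta(P_{k-2}R^{q-2})$ for $q\le d$; here $(P_kR^q)_0={\rm Ker}(\eta^{d-q+1})\cap P_kR^q$ is the primitive part of (\ref{ali:fclpl}) with $i=d-q$. Writing $\eta^{d-q+1}=\eta\circ\eta^{d-q}$ and applying Theorem \ref{theo:a} to $\eta^{d-q}$ and to the central power $\eta^{d-q+2}$ in degree $q-2$, one checks that $\eta^{d-q+1}$ maps $P_kR^q$ into $P_{k+2(d-q)+2}R^{2d-q+2}$ and that $\eta^{d-q+2}(P_{k-2}R^{q-2})=P_{k+2(d-q)+2}R^{2d-q+2}$. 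Since $\eta^{d-q+2}$ is injective, $\eta$ is injective on $P_{k-2}R^{q-2}$, and the intersection $(P_kR^q)_0\cap\eta(P_{k-2}R^{q-2})=0$ already at the level of the whole module by the classical decomposition. For the spanning, any $x\in P_kR^q$ has $\eta^{d-q+1}x\in P_{k+2(d-q)+2}R^{2d-q+2}=\eta^{d-q+1}(\eta(P_{k-2}R^{q-2}))$, so $x-\eta y\in(P_kR^q)_0$ for a suitable $y\in P_{k-2}R^{q-2}$; every map in sight being ${\cal O}_{\os{\circ}{S}}\otimes_{\mab Z}{\mab Q}$-linear, this is a splitting of locally free sheaves. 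Iterating the one-step decomposition yields the full decomposition of the corollary, with summands $\eta^j(P_{k-2j}R^{q-2j})_0$.

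The rank inequalities then follow at once. Because $\eta$ is injective on $P_{k-2}R^{q-2}$, the summand $\eta(P_{k-2}R^{q-2})$ has rank $b_{k-2}^{q-2}$, whence $b_k^q={\rm rank}(P_kR^q)_0+b_{k-2}^{q-2}\ge b_{k-2}^{q-2}$ for $q\le d$, that is $b_k^q\le b_{k+2}^{q+2}$ after reindexing. Chaining within each parity gives $b_k^0\le b_{k+2}^2\le b_{k+4}^4\le\cdots$ and $b_k^1\le b_{k+2}^3\le\cdots$, up to degree $d$. Passing to the limit $k\to\inf$, and using that $P$ is exhaustive so that $P_kR^q=R^q$ for $k\gg 0$, gives the stated inequalities among the $b_{\inf}^q$.

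The genuinely difficult content is the filtered Hard Lefschetz theorem itself, which is already in hand as Theorem \ref{theo:a}, so the remaining work is essentially weight-bookkeeping. The one place where care is needed, and the step I expect to be the crux, is the verification that $\eta^{d-q+1}$ lands \emph{exactly} in the filtration level $P_{k+2(d-q)+2}R^{2d-q+2}$ reached by $\eta^{d-q+2}$ on $R^{q-2}$. This is precisely where strictness, the filtered-isomorphism property rather than merely the isomorphism property of Theorem \ref{theo:a}, is indispensable: without it the primitive part $(P_kR^q)_0$ and the image $\eta(P_{k-2}R^{q-2})$ need not together span $P_kR^q$, and the decomposition would fail. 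Once this is secured, globalization to locally free sheaves is automatic, since every morphism appearing is a morphism of locally free ${\cal O}_{\os{\circ}{S}}\otimes_{\mab Z}{\mab Q}$-modules by Theorem \ref{theo:a} and (\ref{coro:intro}).
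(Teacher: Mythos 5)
Your argument is correct and is precisely the standard formal deduction of the Lefschetz decomposition from the filtered hard Lefschetz isomorphism of (\ref{theo:a}) together with $\eta\in P_2R^2f_{X/S*}({\cal O}_{X/S})\otimes_{\mab Z}{\mab Q}$ and the filtered cup product of (\ref{coro:intro}), which is exactly the route the paper leaves implicit (and is the same mechanism used in the proof of (\ref{theo:fdd})). You also correctly read the iterated summands as $\eta^j(P_{k-2j}R^{q-2j}f_{X/S*}({\cal O}_{X/S})\otimes_{\mab Z}{\mab Q})_0$, which is the intended form of the displayed decomposition.
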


\par 
Let the notations be as in (\ref{theo:intro}). 
Set ${\cal K}_S:={\cal O}_S\otimes_{\cal V}K$. 
For an ${\cal O}_S$-module ${\cal M}$, denote 
${\cal M}\otimes_{\cal V}K$ by ${\cal M}_K$. 
Assume that any smooth component $\os{\circ}{X}_{\lam}$ 
is projective over $\os{\circ}{S}$. 
Assume also that $\os{\circ}{X}$ is of relative pure dimension $d$ over $\os{\circ}{S}$.  
Then, by using the trace morphism constructed in \cite{od}, 
we define the following trace morphism
\begin{align*} 
{\rm Tr}_{X/S}\col R^{2d}f_{X/S*}({\cal O}_{X/S})_K \lo {\cal K}_S(-d).
\tag{1.11.2}\label{ali:ptrka}
\end{align*}
%in two ways 
Our construction of the trace morphism (\ref{ali:ptrka}) is different from 
the log crystalline analogue of Fujisawa's trace morphism and 
it is simpler than Fujisawa's trace morphism in the case where the base field 
is the complex number field (\cite{fup}).
%following Fujisawa's method 
%in the case where the base field 
%is the complex number field (\cite{fup}). 
%The morphism (\ref{ali:ptrka}) extends to a morphism of convergent $F$-isocrystals. 
%and 
%this is an isomorphism if the underlying scheme of each fiber $X_{s}$ 
%for any exact closed point of $s$ is geometrically connected.  
%this is an isomorphism by the existence of Tsuji's trace isomorphism 
%for log crystalline cohomologies (\cite{tsp}). 

\begin{theo}\label{coro:intrgo}
Let the notations be as in {\rm (\ref{theo:intro})}. 
Assume that $\os{\circ}{X}_{\lam}$ for any $\lam$ is projective over $\os{\circ}{S}$ 
and that $\os{\circ}{X}$ is of pure dimension $d$ over $\os{\circ}{S}$.  
Then the following composite morphism 
\begin{align*} 
\langle ~.~\rangle:={\rm Tr}_{X/S} \circ (~\cup~)  \col R^qf_{X/S*}({\cal O}_{X/S})_K
\otimes_{{\cal K}_S}R^{2d-q}f_{X/S*}({\cal O}_{X/S})_K
\lo {\cal K}_S(-d) 
\tag{1.12.1}\label{ali:pkq}
\end{align*} 
is a perfect pairing and it is 
strictly compatible with the weight filtration, that is, 
the following isomorphism induced by the pairing $\langle~ , ~\rangle $
\begin{align*} 
R^qf_{X/S*}({\cal O}_{X/S})_K\lo {\cal H}{\it om}_{{\cal K}_S}
(R^{2d-q}f_{X/S*}({\cal O}_{X/S})_K, {\cal K}_S(-d))
\tag{1.12.2}\label{ali:pkkq}
\end{align*} 
is an isomorphism of filtered  ${\cal K}_S$-modules with respect to $P$. 
%In particular, 
%\begin{align*} 
%\langle ?.?\rangle  \col {\rm gr}^P_kR^qf_{X/S*}({\cal O}_{X/S})_K
%\otimes_{K}{\rm gr}^P_{2d-k}R^{2d-q}f_{X/S*}({\cal O}_{X/S})_K
%\lo {\cal K}_S(-d)
%\tag{1.9.1}\label{ali:pkq}
%\end{align*} 
\end{theo}

\par 
As is well-known, in the complex analytic case, the (log) polarization on 
the (mixed) Hodge structure is obviously an indispensable notion for the moduli 
problem of polarized (mixed) Hodge structures 
(\cite{gr}, \cite{us}, \cite{ku}). 
To define the (log) polarization, we obviously need the (positive definite) 
bilinear forms for cohomologies. 
Especially we need the product structure for cohomologies 
which behaves well with respect to the weight filtration, 
the monodromy operator and the cohomology of an ample line bundle. 
Using (\ref{coro:intrgo}) and (\ref{theo:a}), 
we obtain the following bilinear form on the primitive part of 
the log crystalline cohomology sheaf of $X/(S,{\cal I},\gam)$
for $q\leq d$ if $\os{\circ}{X}$ is projective over $\os{\circ}{S}$ 
as in the classical case: 
%(without using the $E_2$-degeneration proved in \cite{nb}): 
\begin{align*}  
&(P_kR^{d-q}f_{X/S*}({\cal O}_{X/S})_K)_0  
\otimes_{{\cal K}_S}(P_kR^{d-q}f_{X/S*}
({\cal O}_{X/S})_K)_0  \\
&\os{{\rm id}\otimes \eta^{q}}{\lo} 
(P_kR^{d-q}f_{X/S*}
({\cal O}_{X/S})_K)_0  \otimes_{{\cal K}_S}
R^{d+q}f_{X/S*}({\cal O}_{X/S})_K(q)
\os{\cup}{\lo} {\cal K}_S(-(d-q)). 
\end{align*} 
%It is also a very good thing if one can give a formulation of ``log standard conjecture'' 
%by using this bilinear form.  
%(However we use the explicit descriptions of 
%${\rm gr}_k^PH_{\rm zar}(X/S)$ in the proof of (\ref{theo:cpiso}).) 

\par  
Lastly in this introduction, we give the dga version of $(H_{\rm zar}(X/S),P)$. 
\par  
Let the notations be as in (\ref{theo:dpfe}). 
Set ${\cal K}_S:={\cal O}_S\otimes_{\mab Z}{\mab Q}$. 
Let ${\rm A}^{\geq 0}{\rm F}(f^{-1}({\cal K}_{S}))$ 
be the category of 
filtered positively-graded graded commutative dga's over $f^{-1}({\cal K}_{S})$ 
and let ${\rm Ho}({\rm A}^{\geq 0}{\rm F}(f^{-1}({\cal K}_{S})))$ 
be the localized category of  
${\rm A}^{\geq 0}{\rm F}(f^{-1}({\cal K}_{S}))$  
inverting the weakly equivalent filtered isomorphisms 
in ${\rm A}^{\geq 0}{\rm F}(f^{-1}({\cal K}_{S}))$ 
(cf.~\cite{nav}, \cite{gelma}).   
Let 
\begin{align*} 
\Phi \col {\rm Ho}({\rm A}^{\geq 0}{\rm F}(f^{-1}({\cal K}_{S})))
\lo {\rm D}^+{\rm F}(f^{-1}({\cal K}_{S}))
\end{align*} 
be an obvious forgetful functor.   
In the text we prove the following: 
\begin{theo}\label{theo:etw}
There exists an object
$$(H_{{\rm zar},{\rm TW}}(X/S),P)\in 
{\rm Ho}({\rm A}^{\geq 0}{\rm F}(f^{-1}({\cal K}_{S})))$$ 
such that 
$$\Phi((H_{{\rm zar},{\rm TW}}(X/S),P))
=(H_{\rm zar}(X/S),P)\otimes^L_{\mab Z}{\mab Q}.$$   
\end{theo}   
We call  $(H_{{\rm zar},{\rm TW}}(X/S),P)$ 
the {\it Hirsch weight-filtered log crystalline dga} of $X/(S,{\cal I},\gam)$. 
Because this book is long enough as a book, 
we do not give an application of 
$(H_{{\rm zar},{\rm TW}}(X/S),P)$ for 
the rational homotopy theory of a proper SNCL scheme 
in characteristic $p>0$ in this book unlike \cite{kiha}.

\par
\bigskip
\parno
{\bf Acknowledgment.}   
I would like to express my sincere thanks to 
T.~Fujisawa for informing me of results in his article \cite{fup} when it was a preprint.  
I am deeply grateful to the referees for giving me an advice on the organization of the introduction 
and for pointing a lot of mistypes, several unclear points in the previous version of this book 
and a quite serious gap of the proof of (\ref{theo:saih}) in it. 
Thanks to their valuable suggestions, I can improve my book.

\bigskip
\parno
{\bf Notations.} 
(1) For a log (formal) scheme $X$ in the sense of 
Fontaine-Illusie-Kato (\cite{klog1}, \cite{klog2}), 
we denote by $\os{\circ}{X}$ (resp.~$M_X=(M_X,\al_X)$)
the underlying (formal) scheme (resp.~the log structure) of $X$. 
In this book 
we consider the log structure on the Zariski site 
on $\os{\circ}{X}$.  
%unless otherwise stated.  
We denote $M_X/{\cal O}_X^*$ by $\ol{M}_X$. 
For a closed point $\os{\circ}{x}$ of $\os{\circ}{X}$, 
we endow $\os{\circ}{x}$ with the inverse image of 
the log structure of $M_X$ and we denote it by 
$x$ and call it an exact closed point of $X$. 
More generally, for a quasi-coherent ideal 
${\cal J}$ of ${\cal O}_X$, we denote 
by $X~{\rm mod}~{\cal J}$ the exact closed log subscheme of $X$ 
defined by ${\cal J}$. 
For a morphism $f\col X\lo Y$ of log schemes, 
$\os{\circ}{f}$ denotes the underlying morphism 
$\os{\circ}{X} \lo \os{\circ}{Y}$ of $f$. 
\par
(2) For a (formal) scheme $T$ 
and a commutative monoid $P$ with 
unit element $e$, we denote by 
$(T,P\oplus {\cal O}_T^*)$ a log (formal) scheme 
whose underlying (formal) scheme is $T$ and whose log structure is the 
association of a morphism 
$P \owns x \lom 0 \in {\cal O}_T$ $(x\not= e)$ 
with natural morphism $P\oplus {\cal O}_T^*\lo {\cal O}_T$ 
of sheaves of monoids in $X_{\rm zar}$. 
\par
(3) For a commutative monoid $P$ with unit element and 
for a commutative ring $A$ with unit element, 
${\rm Spec}^{\log}(A[P])$ is, by definition, the log scheme 
whose underlying scheme is ${\rm Spec}(A[P])$ 
and whose log structure is the association of the natural 
inclusion $P \os{\sus}{\lo} A[P]$. If $A$ has 
an $I$-adic topology ($I$ is an ideal of $A$), then 
$A\{P\}$ denotes $\vpl_n(A/I^n[P])$ and 
${\rm Spf}^{\log}(A\{P\})$ denotes the log scheme 
whose underlying scheme is ${\rm Spf}(\vpl_{n}(A/I^n[P]))$ 
and whose log structure is the association of the natural 
inclusion $P \os{\sus}{\lo} A\{P\}$.
\par
(4) 
%Following  Friedman (\cite{fr1}),  
For a morphism $Y \lo T$ of log 
schemes, we denote by 
${\Om}^i_{Y/T}$ ($=\om^i_{Y/T}$ in 
\cite{klog1}) $(i\in {\mab N})$ 
the sheaf of  relative logarithmic differential 
forms on $Y/T$
of degree $i$. 
For a log point $s$ whose underlying scheme is 
the spectrum of a perfect field of characteristic $p>0$ and 
for a morphism $Y\lo s$ of fine log schemes, 
${\cal W}_n{\Om}^i_Y(=W_n\om^i_{Y}$ in 
\cite{hk} and \cite{msemi}) $(n\in {\mab Z}_{\geq 1}, i\in {\mab N})$ 
denotes the log de Rham-Witt sheaf of logarithmic differential 
forms on $Y/s$ of degree $i$.
\par 
(5) (S)NCL=(simple) normal crossing log, 
(S)NCD=(simple) normal crossing divisor.
\par 
(6) For a module $M$ over a commutative ring $A$ 
with unit element and for a commutative $A$-algebra 
$B$ with unit element, $M_B$ denotes the tensor product 
$M\us{A}{\otimes}B$. 
\par 
(7) For a log scheme $T$ and 
a quasi-coherent ideal sheaf ${\cal J}$ of 
${\cal O}_T$, we denote by 
$\ul{\rm Spec}^{\log}_T({\cal O}_T/{\cal J})$ 
(resp.~
$\ul{\rm Spec}^{\log}_T({\cal O}_T/{\cal J})_{\rm red}$) 
the log scheme whose underlying scheme is 
$\ul{\rm Spec}_{\os{\circ}{T}}({\cal O}_T/{\cal J})$ 
(resp.~
$\ul{\rm Spec}_{\os{\circ}{T}}({\cal O}_T/{\cal J})_{\rm red})$
and whose log structure is the inverse image of 
the log structure of $T$. 
As already stated in (1), we denote 
$\ul{\rm Spec}^{\log}_T({\cal O}_T/{\cal J})$ 
by $T~{\rm mod}~{\cal J}$, too. 
\par 
(8) By following \cite{od}, 
for a fine log formal scheme $S$ with $p$-adic 
topology, $S_1$  and $S_0$ 
denote the log schemes whose underlying schemes are 
$\ul{\rm Spec}^{\log}_S({\cal O}_S/p{\cal O}_S)$ 
and $\ul{\rm Spec}^{\log}_S({\cal O}_S/p{\cal O}_S)_{\rm red}$, 
respectively, and whose log structures are the inverse image of the log structure of $S$.
\par 
(9) Let $(T,{\cal J},\del)$ be a fine log PD-scheme with quasi-coherent PD-ideal sheaf 
${\cal J}$ of ${\cal O}_T$ and divided power structure $\del$. 
%Set $T_0:=T~{\rm mod}~{\cal J}$.   
Let $g\col Y\lo (T~{\rm mod}~{\cal J})$ be 
a morphism of fine log schemes. 
In the previous articles \cite{nh2} and \cite{nh3}, 
following \cite{klog1}, 
we have denoted the log crystalline site 
(resp.~the log crystalline topos) of $Y/(T,{\cal J},\del)$
by $(Y/T)^{\log}_{\rm crys}$ 
(resp.~$(\wt{Y/T})^{\log}_{\rm crys}$) 
of a fine log scheme $Y$
over a fine log PD-scheme $(T,{\cal J},\del)$. 
In this book, following \cite{bb} and \cite{bob}, 
we denote the log crystalline site 
(resp.~the log crystalline topos) 
of $Y/(T,{\cal J},\del)$ by ${\rm Crys}(Y/T)$ 
(resp.~$(Y/T)_{\rm crys}$). 
For a scheme $Z$, we denote the zariski site on 
$Z$ by ${\rm Zar}(Z)$ and the associated topos to 
${\rm Zar}(Z)$ by $Z_{\rm zar}$. 
\par 
(10) Let $(T,{\cal J},\del)$ be as in (9). 
In this remark we denote $T~{\rm mod}~{\cal J}$ by $T_0$. 
For a log scheme $Y$ over $T_0$, 
we denote by 
$\eps_{Y/T_0}\col Y\lo \os{\circ}{Y}$ 
(resp.~$\eps_{Y/\os{\circ}{T}_0}\col Y\lo \os{\circ}{Y}$) 
the morphism of log schemes over $T_0\lo \os{\circ}{T}_0$ 
(resp~$\os{\circ}{T}_0$) forgetting the log structure of $Y$. 
We also denote by
$\eps_{Y/T}\col Y\lo \os{\circ}{Y}$ 
(resp.~$\eps_{Y/\os{\circ}{T}_0}\col Y\lo \os{\circ}{Y}$) 
the morphism of log schemes over $T\lo \os{\circ}{T}_0$ 
(resp.~$\os{\circ}{T}_0$) forgetting the log structure of $Y$. 
\par 
(11) For a ringed topos $({\cal T},{\cal A})$, 
let $C^+({\cal A})$ 
(resp.~$D^+({\cal A})$) be 
the category of bounded below 
complexes of ${\cal A}$-modules 
(resp.~the derived category of bounded below 
complexes of ${\cal A}$-modules). 
Let 
${\rm C}^+{\rm F}({\cal A})$ 
(resp.~${\rm D}^+{\rm F}({\cal A})$) 
be the category of 
bounded below filtered complexes of ${\cal A}$-modules 
(resp.~the derived category of 
bounded below filtered complexes of 
${\cal A}$-modules).  
\par 
(12) For a short exact sequence 
$$0\lo E^{\bul} \os{f}{\lo} F^{\bul} \os{g}{\lo} G^{\bul} \lo 0$$
of bounded below complexes of objects of ${\rm C}^+({\cal A})$, 
let ${\rm MC}(f):=E^{\bul}[1]\oplus F^{\bul}$ be the mapping cone of $f$.
We fix an isomorphism 
``$E^{\bul}[1]\oplus F^{\bul}\owns (x,y) \lom g(y)\in G^{\bul}$'' 
in the derived category $D^+({\cal A})$.
\par
Let ${\rm MF}(g):=F^{\bul}\oplus G^{\bul}[-1]$ be 
the mapping fiber of $g$. 
We fix an isomorphism 
``$E^{\bul}\owns x \lom (f(x),0)\in F^{\bul}\oplus G^{\bul}[-1]$'' 
in the derived category $D^+({\cal A})$.
\par
(13) For a morphism $f\col E^{\bul}\lo F^{\bul}$ 
of complexes, we identify 
${\rm MF}(f)=E^{\bul}\oplus F^{\bul}[-1]$ and 
${\rm MC}(f)[-1]=E^{\bul}\oplus F^{\bul}[-1]$ 
with the following isomorphism 
``${\rm MF}(f) \owns (x,y)\lom (-x,y)\in   
{\rm MC}(f)[-1]$''.   
\par
(14) For a semi-cosimplicial complex $A^{\bul \bul}$ in an abelian category 
(the first $\bul$ is the semi-cosimplicial degree and the second 
$\bul$ is the complex degree), 
the boundary morphisms of $A^{\bul \bul}$ are the following: 
\begin{equation*} 
\begin{CD}
\cdots @>>> \cdots @>>> \cdots @>>> \cdots\\
@A{d}AA @A{-d}AA @A{d}AA \\
A^{0,m+1}@>>> A^{1,m+1} @>>> A^{1,m+2}@>>> \cdots \\
@A{d}AA @A{-d}AA @A{d}AA \\
A^{0m}@>>>A^{1m} @>>> A^{1,m+2}@>>> \cdots \\
@A{d}AA @A{-d}AA @A{d}AA\\
\cdots @>>> \cdots @>>> \cdots @>>> \cdots, \\
\end{CD}
\end{equation*}
where the horizontal morphisms 
are standard \v{C}ech morphisms. 
\par 
(15) For a ringed topos $({\cal T},{\cal A})$ and 
for two complexes $(A^{\bul},d^{\bul})$ and $(B^{\bul},d^{\bul})$ 
of ${\cal A}$-modules, the boundary morphism $d=\{d^n\}_{n\in {\mab Z}}$ of 
$A^{\bul}\otimes_{\cal A}B^{\bul}$ is defined as usual: 
$$d^n=\sum_{p+q=n}(d^p\otimes {\rm id}+(-1)^p{\rm id}\otimes d^q).$$
%\par 
%(17) For a log scheme $X/S$, we denote by $\eps_{X/S}\col X\lo \os{\circ}{X}$  
%(resp.~$\eps_{X/\os{\circ}{T}}\col X\lo \os{\circ}{X}$) 
%the morphism of log schemes over $S\lo \os{\circ}{S}$ 
%(resp~$\os{\circ}{S}$) forgetting the log structure of $X$. 
\bigskip
\par\noindent
{\bf Conventions.}
We omit the second ``log'' in the word 
``log smooth log scheme''. 
Following \cite{oc}, 
we say that a morphism $X \lo Y$ of log schemes is solid if the log 
structure of $X$ is the inverse image of that of $Y$.

\section{SNCL schemes}\label{sec:snclv} 
%Let $N$ be a nonnegative integer. 
In this section we recall the definition of 
a 
%$N$-truncated simplicial 
(formal) SNCL(=simple normal crossing log) scheme 
in \cite{nb}.  
\par  
Let $S$ be a log (formal) scheme whose log structure 
is ${\mab N}\oplus {\cal O}_S^*$ with an augmentation 
${\mab N}\owns x \lom 0 \in {\cal O}_S$ $(x\not=0)$.   
We have called this log structure 
the {\it free hollow log structure of rank} 1 
on $\os{\circ}{S}$. This is a special case of 
the hollow constant log structure defined in 
\cite[Definition 4]{oc}.  
\par  
More generally, we consider a fine log (formal) scheme $S$ 
such that there exists an open covering 
$S=\bigcup_{i\in I}S_i$ such that $M_{S_i}$ 
is the free hollow log structure of rank 1. 
%More generally, we consider a fine log scheme $S$ 
%such that $M_{S,t}/{\cal O}_{S,t}^*$ 
%$(\forall t \in \os{\circ}{S})$ is isomorphic to ${\mab N}$  
%and such that the image of 
%any element of $M_{S,t}\setminus {\cal O}_{S,t}^*$ 
%by the structural morphism 
%$M_{S,t}\lo {\cal O}_{S,t}$  
%is mapped to $0$ in ${\cal O}_{S,t}$.
In \cite{nb} we have called $S$ {\it a $($formal$)$ family of log points}. 
When $(\os{\circ}{S},{\cal I},\gam)$ is 
a (formal) PD-scheme with quasi-coherent PD-ideal sheaf 
and PD-structure, 
we call $(S,{\cal I},\gam)$ a {\it $($formal$)$ PD-family of log points}. 
\par 
Let $S=\bigcup_{i\in I}S_i$ be an open covering 
of $S$ such that 
$M_{S_i}/{\cal O}^*_{S_i}\simeq {\mab N}$.   
Take a local section
$t_i\in \Gam(S_i,M_S)$ $(i\in I)$ such that 
the image of $t_i$ in $\Gam(S_i,M_S/{\cal O}^*_S)$ is a generator.   
Set $S_{ij}:=S_i\cap S_j$. 
Then there exists a unique section 
$u_{ji}\in \Gam(S_{ij},{\cal O}^*_S)$ such that 
$t_j\vert_{S_{ij}}=u_{ji}(t_i \vert_{S_{ij}})$ 
in $\Gam(S_{ij},M_S)$. 
First consider the case 
where $\os{\circ}{S}$ is a scheme (not a formal scheme). 
Consider the scheme 
${\mab A}^1_{\os{\circ}{S}_i}=
\ul{\rm Spec}_{\os{\circ}{S}_i}({\cal O}_{S_i}[\tau_i])$  
and the log scheme 
$({\mab A}^1_{\os{\circ}{S}_i}, 
({\mab N}\owns 1 \lom \tau_i\in {\cal O}_{S_i}[\tau_i])^a)$. 
Denote this log scheme by $\ol{S}_i$. 
Then, by patching $\ol{S}_i$ and $\ol{S}_j$ along 
$\ol{S}_{ij}:=\ol{S}_i\cap \ol{S}_j$ by the equation 
$\tau_j\vert_{\ol{S}_{ij}}=u_{ji}\tau_i\vert_{\ol{S}_{ij}}$,
we have a log scheme $\ol{S}=\bigcup_{i\in I}\ol{S}_i$. 
The ideal sheaves $\tau_i{\cal O}_{\ol{S}_i}$'s ($i\in I$) 
patch together and we denote by 
${\cal I}_{\ol{S}}$ 
the resulting ideal sheaf of 
${\cal O}_{\ol{S}}$. 
The isomorphism class of the log scheme $\ol{S}$ 
is independent of the choice of the system of 
generators $\tau_i$'s. 
%By using this easy fact and by considering 
%the refinement of two open coverings of $S$, 
We see that the isomorphism class
of the log scheme $\ol{S}$ and the ideal sheaf 
${\cal I}_{\ol{S}}$ 
are also independent of 
the choice of the open covering $S=\bigcup_{i\in I}S_i$. 
The natural morphism $\ol{S} \lo \os{\circ}{S}$ is log smooth 
by the criterion of the log smoothness (\cite[(3.5)]{klog1}). 
For a log scheme $Y$ over $\ol{S}$, 
we denote 
${\cal I}_{\ol{S}}\otimes_{{\cal O}_{\ol{S}}}{\cal O}_Y$
by ${\cal I}_Y$ by abuse of notation. 
If $\os{\circ}{Y}$ is flat over $\os{\circ}{\ol{S}}$, 
then ${\cal I}_Y$ can be considered as 
an ideal sheaf of ${\cal O}_Y$. 
When $\os{\circ}{S}$ is a formal scheme with ideal sheaf of definition 
${\cal J}$, we obtain the analogue of 
the object $\ol{S}$ above, which we denote by $\ol{S}$ again. 
\par 
By killing ${\cal I}_{\ol{S}}$, 
we have a natural exact closed immersion 
$S\os{\sus}{\lo} \ol{S}$ over $\os{\circ}{S}$.  
 
In \cite[(1.1.5), (1.1.6), (1.1.7)]{nb} we have proved 
the following two easy propositions: 

\begin{prop}[{\bf A special case of \cite[(1.1.5)]{nb}}]\label{prop:bar}
Let $u\col S\lo S'$ be a morphism of families of log points. 
Then $u$ induces a morphism 
$\ol{u}\col \ol{S}\lo \ol{S}{}'$ 
fitting into the following commutative diagram 
\begin{equation*} 
\begin{CD}
S@>{u}>> S'\\ 
@V{\bigcap}VV @VV{\bigcap}V \\
\ol{S} @>{\ol{u}}>> \ol{S}{}'. 
\end{CD}
\end{equation*} 
\end{prop}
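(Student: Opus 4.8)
The plan is to build $\ol{u}$ Zariski-locally and then glue. First I would arrange the trivializing data compatibly: after refining, write $S=\bigcup_i S_i$ and $S'=\bigcup_k S'_k$ with $M_{S_i}/{\cal O}^*_{S_i}\simeq \mab{N}$ and $M_{S'_k}/{\cal O}^*_{S'_k}\simeq \mab{N}$, and so that for each $i$ there is an index $k(i)$ with $\os{\circ}{u}(S_i)\sus S'_{k(i)}$; then choose generators $\tau_i\in \Gam(S_i,M_S)$ and $\tau'_k\in \Gam(S'_k,M_{S'})$ as in the construction of $\ol{S}$ and $\ol{S}{}'$. The morphism $u$ supplies $\os{\circ}{u}{}^{\sharp}$ and a morphism $u^{\flat}\col \os{\circ}{u}{}^{-1}(M_{S'})\lo M_S$ of log structures. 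Because $S'$ is hollow, $\tau'_{k(i)}$ has image $0$ under $M_{S'}\lo {\cal O}_{S'}$; compatibility of $u^{\flat}$ with the structure maps forces the image of $u^{\flat}(\tau'_{k(i)})$ in ${\cal O}_S$ to be $\os{\circ}{u}{}^{\sharp}(0)=0$, so $u^{\flat}(\tau'_{k(i)})$ is a non-unit and can be written uniquely on $S_i$ as
\[
u^{\flat}(\tau'_{k(i)})=w_i\,\tau_i^{e_i},\qquad w_i\in \Gam(S_i,{\cal O}^*_S),
\]
with a locally constant integer $e_i\geq 1$. It is exactly the hollowness of a family of log points that rules out the degenerate value $e_i=0$ here.

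Next I would define the local morphism $\ol{u}_i\col \ol{S}_i\lo \ol{S}{}'_{k(i)}$. Recall that $\os{\circ}{\ol{S}}_i={\mab A}^1_{\os{\circ}{S}_i}$ has coordinate $t_i$ and log structure generated by $t_i$, and likewise for $t'_{k(i)}$ on $\ol{S}{}'_{k(i)}$. I let the underlying morphism be $\os{\circ}{u}$ on the base together with $t'_{k(i)}\lom w_i t_i^{e_i}$ (viewing $w_i$ as a unit of ${\cal O}_{\ol{S}_i}={\cal O}_{S_i}[t_i]$ through pullback along $\ol{S}_i\lo \os{\circ}{S}_i$), and I let $\ol{u}_i^{\flat}$ send the log generator $t'_{k(i)}$ to $w_it_i^{e_i}\in M_{\ol{S}_i}$. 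Since $w_it_i^{e_i}$ is a unit times a power of the generator it indeed lies in $M_{\ol{S}_i}$, and the structure-map compatibility is immediate; thus $\ol{u}_i$ is a well-defined morphism of log schemes over $\os{\circ}{u}$.

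The heart of the argument is the gluing. On an overlap I would compare $\ol{u}_i$ and $\ol{u}_j$ as morphisms into $\ol{S}{}'$. Applying $u^{\flat}$ to the transition relation $\tau'_{k(j)}=u'_{k(j)k(i)}\tau'_{k(i)}$ on $S'_{k(i)k(j)}$, and using $\tau_j=u_{ji}\tau_i$ together with $e_i=e_j=:e$ on the (connected) overlap, yields the cocycle identity
\[
w_j=\os{\circ}{u}{}^{\sharp}(u'_{k(j)k(i)})\,w_i\,u_{ji}^{-e}.
\]
This is precisely what is needed: under $\ol{u}_i$ the section $t'_{k(j)}=u'_{k(j)k(i)}t'_{k(i)}$ goes to $\os{\circ}{u}{}^{\sharp}(u'_{k(j)k(i)})\,w_i t_i^{e}$, whereas under $\ol{u}_j$ it goes to $w_j t_j^{e}=w_j u_{ji}^{e}t_i^{e}$, and the displayed identity makes the two agree on $\ol{S}_{ij}$. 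Hence the $\ol{u}_i$ patch to a global $\ol{u}\col \ol{S}\lo \ol{S}{}'$. I expect this cocycle check to be the main, though routine, obstacle, since it is exactly where the compatibility of $u^{\flat}$ with the gluing units $u_{ji}$ of $\ol{S}$ and $u'_{lk}$ of $\ol{S}{}'$ must be reconciled.

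It remains to read off the commutative square. Restricting $\ol{u}$ modulo ${\cal I}_{\ol{S}}$ and ${\cal I}_{\ol{S}'}$ (that is, setting $t_i=t'_{k(i)}=0$) turns the base map into $\os{\circ}{u}$ and sends the class of $t'_{k(i)}$ to the class of $w_it_i^{e_i}$, namely $w_i\tau_i^{e_i}=u^{\flat}(\tau'_{k(i)})$; therefore $\ol{u}|_S=u$, which is the assertion. Since only existence of such an $\ol{u}$ is claimed, I would not pursue uniqueness; the natural choice $w=w_i$ made above suffices, and independence of the auxiliary generators and open coverings follows from the already-established well-definedness of the isomorphism classes of $\ol{S}$ and $\ol{S}{}'$.
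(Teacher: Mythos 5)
Your proof is correct: the paper itself only cites this as an "easy" proposition proved in \cite{nb}, and your argument — extracting $u^{\flat}(\tau'_{k(i)})=w_i\tau_i^{e_i}$ from hollowness, defining $\ol{u}_i$ by $t'_{k(i)}\lom w_it_i^{e_i}$, and verifying the cocycle identity $w_j=\os{\circ}{u}{}^{\sharp}(u'_{k(j)k(i)})w_iu_{ji}^{-e}$ against the gluing data $t_j=u_{ji}t_i$ — is exactly the construction that the paper's definition of $\ol{S}$ sets up. The restriction check at the end (killing the $t_i$'s recovers $u^{\flat}$) correctly yields the commutative square, so nothing is missing.
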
  

%\begin{prop}\label{prop:ebtl}
%Let $S$ be a family of log points. 
%Let $S_0 \os{\sus}{\lo} S$ be 
%an exact closed nil-immersion.  
%Let $Y_0$ be a log smooth scheme over $S_0$. 
%Let $y$ be a point of $S_0$. 
%Assume that there exists a chart 
%${\mab N}\os{\sus}{\lo} P$ of $Y_0/S_0$ such that 
%$\#  (P^{\rm gp}/{\mab Z})_{\rm tor}$ 
%is invertible on $Y_0$ and such that 
%${\cal O}_{Y_0,y}
%\otimes_{\mab Z}(P^{\rm gp}/{\mab Z}) 
%\simeq {\Om}^1_{Y_0/S_0,y}$.   
%Then, on a neighborhood of $y$, 
%there exist log smooth schemes $Y/S$ and 
%$\ol{Y}/\ol{S}$ fitting into the following cartesian diagrams 
%\begin{equation*}
%\begin{CD} 
%Y_0 @>{\sus}>> Y @>{\sus}>> \ol{Y} \\
%@VVV @VVV @VVV \\
%S_0@>{\sus}>> S@>{\sus}>> \ol{S}. 
%\end{CD} 
%\tag{2.2.1}\label{cd:ytx} 
%\end{equation*}
%\end{prop} 
 
Let $B$ be a scheme. 
For two nonnegative integers $a$ and $d$ such that 
$a\leq d$,  
consider the following scheme 
\begin{equation*} 
\os{\circ}{\mab A}_B(a,d):=
\ul{{\rm Spec}}_B
({\cal O}_B[x_0, \ldots, x_d]/(\prod_{i=0}^ax_i)). 
\end{equation*}

\begin{defi}[{\bf \cite[(1.1.9)]{nb}}]\label{defi:zbsnc}
Let $Z$ be a scheme over $B$ 
with structural morphism $g \col Z \lo B$. 
We call $Z$ an 
{\it SNC$($=simple normal crossing$)$ scheme} over 
$B$ if $Z$ is a union of smooth schemes 
$\{Z_{\lam}\}_{\lam \in \Lam}$ over $B$ ($\Lam$ is a set) 
and if, for any point of $z \in Z$, 
there exist an open neighborhood $V$ of 
$z$ and an open neighborhood $W$ of 
$g(z)$ such that  
there exists an \'{e}tale morphism  
$V \lo \os{\circ}{\mab A}_W(a,d)$ 
such that 
\begin{equation*} 
\{Z_{\lam}\vert_{V}\}_{\lam \in \Lam} 
=\{\{x_i=0\}\}_{i=0}^a, 
\tag{2.2.1}\label{eqn:defilsnc}
\end{equation*}   
where $a$ and $d$ are nonnegative integers 
such that $a\leq d$,  
which depend on 
zariskian local neighborhoods in $Z$.  
Here $\{Z_{\lam}\vert_{V}\}_{\lam \in \Lam}$ means the set of 
$Z_{\lam}\vert_{V}$'s such that $Z_{\lam}\vert_{V}\not=\emptyset$ by abuse of notation. 
We call the set $\{Z_{\lam}\}_{\lam \in \Lam}$ 
a {\it decomposition of $Z$ by smooth components of} 
$Z$ over $B$. 
We call $Z_{\lam}$ a 
{\it smooth component} of $Z$ over $B$. 
\end{defi} 

\parno  
Set $\Del:=\{Z_{\lam}\}_{\lam \in \Lam}$.  
For an open subscheme $V$ of $Z$, 
set $\Del_{V}:=\{Z_{\lam}\vert_{V}\}_{\lam \in \Lam}$.

%\par 
%We have proved  the following 
%``local uniqueness of smooth components'' in \cite{nh2} 
%(see \cite[(2.1.7)]{nh2} for the definition of a relative SNCD):

%\begin{prop}[{\rm {\bf \cite[Proposition A.0.1]{nh2}}}]\label{prop:mainsncd}
%Let $B$ be a scheme. 
%Let $g\col (X,D) \lo B$ be a smooth scheme 
%with a relative SNCD over $B$. 
%Let $\Gam := \{D_{\nu}\}_{\nu \in N}$ be 
%a decomposition of $D$ by smooth components of $D$, 
%that is, each $D_{\nu}$ is smooth over $B$ and 
%$D = \sum_{\nu \in N}D_{\nu}$ in the monoid of  
%effective Cartier divisors on $X/B$ {\rm (\cite[(2.1.7)]{nh2})}.  
%Let $z$ be a point of $D$ and assume that 
%we are given a cartesian diagram 
%\begin{equation*}
%\begin{CD}
%D @>{\subset}>> X\\ 
%@V{}VV  @VV{g}V \\
%\ul{\rm Spec}_B({\cal O}_B[y_1, \ldots, y_e]/(y_1\cdots y_s)) @>{\sus}>> 
%\ul{\rm Spec}_B({\cal O}_B[y_1, \ldots, y_e]) 
%\end{CD}
%\tag{2.4.1}\label{cd:dvs}
%\end{equation*}
%for some $s\leq e$ such that 
%$z \in \bigcap_{i=1}^s\{y_i = 0\}$. 
%Then, by shrinking $X$, for any $1 \leq i \leq s$, 
%there exists a unique element 
%$\nu_i \in N$ satisfying $D_{\nu_i} = \{y_i=0\}$. 
%\end{prop}

\par 
%Using (\ref{prop:mainsncd}), 
In \cite{nb} 
we have proved the following proposition:

\begin{prop}[{\bf \cite[(1.1.11)]{nb}}]\label{prop:cc}
Let $\Del$ and  $\Del'$ be decompositions of $Z$  
by smooth components of $Z$. 
Then, for any point $z \in Z$, 
there exists an open neighborhood $V$ of 
$z$ in $Z$ such that $\Del_V = \Del'_V$. 
\end{prop}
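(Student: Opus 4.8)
The strategy is to show that, near any point, a decomposition by smooth components is forced upon us by the scheme $Z$ itself, so that two such decompositions cannot differ. Fix $z \in Z$. First I would pass to a suitable neighbourhood: by (\ref{defi:zbsnc}) choose a connected open $V \ni z$ carrying \'{e}tale morphisms to $\os{\circ}{\mab A}_{W}(a,d)$ and to $\os{\circ}{\mab A}_{W'}(a',d')$ that realize the simple normal crossing condition for $\Del$ and for $\Del'$ respectively; in particular only finitely many members of either decomposition meet $V$. Shrinking $V$ further around $z$, I would discard the finitely many members not passing through $z$ and arrange that every member meeting $V$ restricts to a connected closed subscheme of $V$. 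Write the resulting nonempty restrictions as $\Del_V = \{H_0,\dots,H_a\}$ and $\Del'_V = \{H'_0,\dots,H'_{a'}\}$; since both charts realize the same germ of $Z$ at $z$, each $H_i$ and each $H'_j$ is smooth over $B$ of the common relative dimension $d$ of the branches of $Z$ near $z$.

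The key local fact I would establish is the following: any closed subscheme $W \sus Z|_V$ that is smooth over $B$ of relative dimension $d$ coincides, on a neighbourhood of each of its points $y$, with exactly one branch of $Z$ at $y$. Indeed, in the \'{e}tale model $\{\prod_i x_i = 0\}$ the branches through $y$ are the coordinate divisors $\{x_i = 0\}$ passing through the image of $y$, and these are the only $B$-smooth closed subschemes of relative dimension $d$ through $y$: a union of two or more of them is singular at their crossing, so smoothness of $W$ pins it down to a single branch. Applying this to a member $H'_j$ of $\Del'_V$, together with the branches cut out by $\Del_V$, I obtain at each $y \in H'_j$ a unique $H_{i(y)} \in \Del_V$ having the same germ as $H'_j$ at $y$.

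Since distinct members have distinct germs, the assignment $y \mapsto i(y)$ is locally constant, hence constant on the connected set $H'_j$; thus $H'_j$ and a single $H_i$ have the same germ at every point of $H'_j$. Consequently $H'_j \sus H_i$, this inclusion is open, and since $H'_j$ is also closed in $V$ while $H_i$ is connected, $H'_j = H_i$. This yields $\Del'_V \sus \Del_V$, and the symmetric argument gives the reverse inclusion, so $\Del_V = \Del'_V$ on this neighbourhood of $z$, which is the assertion.

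I expect the main obstacle to be the local identification in the key fact, namely that a $B$-smooth closed subscheme of full relative dimension inside a normal crossing divisor must locally be a single branch; this is where smoothness of the components and the \emph{simplicity} of the crossings (the branches being honestly distinct subschemes rather than a self-crossing) are essential. Alongside this, some care is needed in the first step to shrink $V$ compatibly so that both charts are available, only components through $z$ survive, and each restricted component is connected; once these are arranged, the globalization by local constancy and connectedness is formal.
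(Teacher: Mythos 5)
Your overall route is the right one, and it is the route the paper itself relies on (the proposition is only quoted here from \cite{nb}, so there is no proof in this text to compare against line by line): pass to a chart, show that a $B$-smooth closed subscheme of $Z$ of full relative dimension coincides near each of its points with exactly one branch, and then propagate the pointwise identification along a connected component. The globalization steps --- local constancy of $y\mapsto i(y)$, constancy on the connected $H'_j$, and ``open and closed in the connected $H_i$'' --- are sound, as is the preliminary shrinking.

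The one place where what you write does not yet prove what you claim is the key local fact itself. Your justification (``a union of two or more branches is singular at their crossing'') only excludes the possibility that $W$ is a union of at least two branches; it does not explain why $W$, an a priori arbitrary closed subscheme of $Z|_V$ that is smooth over $B$ of relative dimension $d$, must be a union of branches near $y$ in the first place. Over a field the missing step is: $\mathcal{O}_{W,y}$ is regular, hence a domain, so the kernel of $\mathcal{O}_{Z,y}\to\mathcal{O}_{W,y}$ is a prime ideal, and equality of dimensions forces it to be a minimal prime of $\mathcal{O}_{Z,y}$; the minimal primes are exactly the ideals $(x_i)$ of the branches through $y$, and the quotient by such a prime is already the reduced smooth branch, so $W=Z_i$ near $y$. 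Moreover, since $B$ is allowed to be non-reduced (in the applications $B=\os{\circ}{S}_0$ typically is), this only identifies the fibres $W_b=(Z_i)_b$; to get the scheme-theoretic equality $W=Z_i$ near $y$ you still need a flatness/infinitesimal argument --- e.g. write $I_W=(f)$ with $f=x_i+m$, $m\in\mathfrak{m}_b\mathcal{O}_{M,y}$, and deduce $m\in(x_i)$ from $x_0\cdots x_c\in(f)$ by induction on the order of nilpotence, then conclude by Krull intersection. None of this is difficult, but as written the crux of the proof is asserted rather than proved.
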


\par 
Let $P(\Lam)$ be a category whose objects are 
nonempty subsets of $\Lam$ and whose morphisms 
are inclusions of subsets of $\Lam$.
In \cite[(2.4)]{fup} 
this category has been denoted by ${\cal S}^+(\Lam)$. 
For a nonnegative integer $m$ and an object 
$\ul{\lam}=\{\lam_0,\cdots, \lam_m\}\in P(\Lam)$ 
$(\lam_i \not= \lam_j~{\rm if}~i\not= j, \lam_i\in \Lam)$,  
set 
\begin{equation}
Z_{\ul{\lam}} 
:=Z_{\lam_0}\cap \cdots \cap Z_{\lam_m}.  
\tag{2.3.1}\label{eqn:parlm}
\end{equation}
Set 
%$Z^{(0)}:=\us{\lam}{\coprod}Z_{\lam}$ and 
\begin{equation}
Z^{(m)} := \us{\# \ul{\lam}=m+1}{\coprod}Z_{\ul{\lam}}  
\tag{2.3.2}\label{eqn:kfntd}
\end{equation} 
for $m\in {\mab N}$ and 
%\begin{equation}
%Z^{(m)} :={\rm cosk}_0^Z(Z^{(0)})_m
%\tag{2.4.2}\label{eqn:kfdintd}
%\end{equation} 
%In this book, for the convenience of notation, 
%for the case where $\ul{\lam}=\phi$, 
%we set
$Z_{\phi}=Z$ ($\phi$ is the empty set) and $Z^{(-1)}=Z$. 
We also set $Z^{(m)}=\emptyset$ for $m\leq -2$.  

%\begin{rema}\label{rema:md} 
%In \cite{nb} we have defined $Z^{(m)}$ as follows: 
%\begin{equation}
%Z^{(m)} := \us{\# \ul{\lam}=m+1}{\coprod}Z_{\ul{\lam}}.   
%\tag{2.5.1}\label{eqn:kfntd}
%\end{equation} 
%Though the $Z^{(m)}$ in (\ref{eqn:kfntd}) is contained in 
%the $Z^{(m)}$ in (\ref{eqn:kfdintd}), they are not equal in general. 
%\end{rema}

In \cite{nb} we have proved the following 
by using (\ref{prop:cc}):

\begin{prop}[{\bf \cite[(1.1.12)]{nb}}]\label{prop:zki}
The scheme $Z^{(m)}$ 
is independent of the choice of $\Del$.  
\end{prop}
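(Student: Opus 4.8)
The plan is to reduce the global statement to the local comparison provided by (\ref{prop:cc}) and then glue. Let $\Del=\{Z_{\lam}\}_{\lam\in\Lam}$ and $\Del'=\{Z'_{\mu}\}_{\mu\in M}$ be two decompositions of $Z$ by smooth components, and write $Z^{(m)}$ and ${Z'}^{(m)}$ for the corresponding disjoint unions of $(m+1)$-fold intersections. I want to produce a canonical isomorphism $Z^{(m)}\os{\sim}{\lo}{Z'}^{(m)}$ of $Z$-schemes, where $Z^{(m)}\lo Z$ is the coproduct of the locally closed immersions $Z_{\ul{\lam}}\hookrightarrow Z$, and to check that it is independent of all choices, so that in particular taking $\Del'=\Del$ recovers the identity.

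First I would invoke (\ref{prop:cc}): for each point $z\in Z$ there is an open neighborhood $V$ with $\Del_V=\Del'_V$, i.e.\ the finite sets of subschemes $\{Z_{\lam}\cap V\ :\ Z_{\lam}\cap V\neq\emptyset\}$ and $\{Z'_{\mu}\cap V\ :\ Z'_{\mu}\cap V\neq\emptyset\}$ coincide as sets of closed subschemes of $V$. Shrinking $V$ so that it admits an \'{e}tale chart to some $\os{\circ}{\mab A}_W(a,d)$, the components meeting $V$ are exactly the $a+1$ distinct divisors $\{x_i=0\}$, so within $V$ distinct nonempty components are genuinely distinct subschemes. Hence $\Del_V=\Del'_V$ determines a canonical bijection between the components of $\Del$ meeting $V$ and those of $\Del'$ meeting $V$, matching each $Z_{\lam}\cap V$ with the unique $Z'_{\mu}\cap V$ equal to it. This bijection induces one on $(m+1)$-element subsets, and therefore a canonical isomorphism $\phi_V\col Z^{(m)}\times_Z V\os{\sim}{\lo}{Z'}^{(m)}\times_Z V$ which is the identity on each underlying subscheme $Z_{\ul{\lam}}\cap V=Z'_{\ul{\mu}}\cap V$ of $V$; in particular $\phi_V$ is a morphism over $V$.

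Next I would glue. Cover $Z$ by such opens $\{V_i\}$ and check that on an overlap $V_i\cap V_j$ the restrictions of $\phi_{V_i}$ and $\phi_{V_j}$ agree. This is the step requiring care, since the local matchings of the (possibly different) global index sets $\Lam$ and $M$ may look different on $V_i$ and on $V_j$. The point is that both $\phi_{V_i}$ and $\phi_{V_j}$ restrict to the canonical isomorphism attached to $\Del_{V_i\cap V_j}=\Del'_{V_i\cap V_j}$, because each simply identifies a piece $Z_{\ul{\lam}}\cap(V_i\cap V_j)$ with the identical subscheme $Z'_{\ul{\mu}}\cap(V_i\cap V_j)$. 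Thus on underlying subschemes each $\phi$ is the identity relative to the immersion into $Z$, so there is no nontrivial transition to reconcile, and the $\phi_{V_i}$ patch to a global isomorphism $\phi\col Z^{(m)}\os{\sim}{\lo}{Z'}^{(m)}$ over $Z$.

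Finally I would record that $\phi$ is canonical: it is characterized by being the identity on underlying subschemes of $Z$, a property preserved under further restriction, so composites behave transitively and $\phi={\rm id}$ when $\Del'=\Del$. The main obstacle is precisely this gluing step, namely ensuring that the locally constructed relabelings are consistent on overlaps despite the freedom in matching the two index sets; it is resolved by observing that on underlying schemes every $\phi_V$ is forced to be the canonical inclusion-preserving identification, leaving no room for an incompatible cocycle.
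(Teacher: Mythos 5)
Your argument is correct and takes essentially the route the paper intends: the proposition is derived precisely from (\ref{prop:cc}) (the local coincidence $\Del_V=\Del'_V$), and your local matching of components followed by gluing --- with the key observation that each local identification is forced on underlying subschemes of $Z$, so no nontrivial transition data can arise on overlaps --- is exactly that proof. Nothing further is needed.
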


We have the natural morphism 
$b^{(m)}\col Z^{(m)}\lo Z$.

As in \cite[(3.1.4)]{dh2} and \cite[(2.2.18)]{nh2}, 
we have an orientation sheaf $\vp^{(m)}_{\rm zar}(Z/B)$ 
$(m\in {\mab N})$ in $Z^{(m)}_{\rm zar}$ 
associated to the set $\Del$. 
If $B$ is a closed subscheme of $B'$ defined by 
a quasi-coherent nil-ideal sheaf ${\cal J}$ 
which has a PD-structure $\del$, 
then $\vp^{(m)}_{\rm zar}(Z/B)$ 
extends to an abelian sheaf 
$\vp^{(m)}_{\rm crys}(Z/B')$ in 
$(Z^{(m)}/(B',{\cal J},\del))_{\rm crys}$. 
\par 
Assume that $S$ is a scheme until (\ref{defi:ddef}) 
and that $M_S$ is the free hollow log structure of rank 1 
for the time being. 
%before (\ref{defi:lfac}) below. 
We fix an isomorphism 
\begin{align*} 
(M_S,\al_S)\simeq ({\mab N}\oplus {\cal O}_S^*\lo {\cal O}_S) 
%\tag{1.1.15.1}\label{ali:nsos} 
\end{align*} 
globally on $S$.  
Let $M_S(a,d)$ be the log structure on 
${\mab A}_{\os{\circ}{S}}(a,d)$ associated to the following morphism 
\begin{equation*} 
{\mab N}^{{\oplus}(a+1)}\owns 
%e_i:=
(0, \ldots,0,\os{i}{1},0,\ldots, 0)\lom x_{i-1}\in 
{\cal O}_S[x_0, \ldots, x_d]/(\prod_{i=0}^ax_i).
\tag{2.4.1}  
\end{equation*} 
%Here we set $0^0:=1$. 
Let ${\mab A}_S(a,d)$ be the resulting log scheme over $S$.
%We call $M_S(a,d)$ the 
%{\it standard log structure} on 
%${\mab A}_{\os{\circ}{S}}(a,d)$. 
The diagonal morphism ${\mab N} \lo {\mab N}^{{\oplus}(a+1)}$ 
induces a morphism ${\mab A}_S(a,d) \lo S$ of log schemes.

\begin{defi}\label{defi:lfac}  
Let $S$ be a family of log points 
(we do not assume that $M_S$ is free).  
Let $f \col X(=(\os{\circ}{X},M_X)) \lo S$ 
be a morphism of log schemes 
such that $\os{\circ}{X}$ is 
an SNC scheme over $\os{\circ}{S}$ 
with a decomposition 
$\Del:=\{\os{\circ}{X}_{\lam}\}_{\lam \in \Lam}$ 
of $\os{\circ}{X}/\os{\circ}{S}$ by its smooth components. 
We call $f$ (or $X/S$) an 
{\it SNCL$($=simple normal crossing log$)$ scheme} if, 
for any point of $x \in \os{\circ}{X}$, 
there exist an open neighborhood $\os{\circ}{V}$ of 
$x$ and an open neighborhood $\os{\circ}{W}$ of 
$\os{\circ}{f}(x)$ such that $M_W$ is 
the free hollow log structure of rank $1$ 
and such that  $f\vert_V$ factors through 
a solid and \'{e}tale morphism 
$V {\lo} {\mab A}_W(a,d)$  
such that $\Del_{\os{\circ}{V}}
=\{x_i=0\}_{i=0}^a$ in $\os{\circ}{V}$.  
(Similarly we can give the definition of 
a formal SNCL scheme over $S$ 
in the case where $\os{\circ}{S}$ is a formal scheme.) 
\end{defi}

\par 
Let $X$ be a (formal) SNCL scheme over $S$ 
with a decomposition 
$\Del:=\{\os{\circ}{X}_{\lam}\}_{\lam \in \Lam}$ 
of $\os{\circ}{X}/\os{\circ}{S}$ 
by its smooth components. 
For an element $\lam$ of $\Lam$, 
let $a_{\lam} \col \os{\circ}{X}_{\lam} 
\os{\sus}{\lo} \os{\circ}{X}$  
be the natural closed immersion. 
For an element  
$\ul{\lam}=\{\lam_0,\cdots, \lam_m\}$ 
$(\lam_i \not= \lam_j~{\rm if}~i\not= j, \lam_i\in \Lam)$ of $P(\Lam)$,  
endow $\os{\circ}{X}_{\ul{\lam}}$ with the inverse image of 
the log structure of $X$ by  
the natural morphism $\os{\circ}{X}_{\ul{\lam}}\lo \os{\circ}{X}$ 
and let $X_{\ul{\lam}}$ be the resulting log scheme. 
We set $X_{\emptyset}:=X$ for convenience of notation. 
For an integer $m\geq -1$, 
let $X^{(m)}$ be the log scheme 
whose underlying scheme 
is $\os{\circ}{X}{}^{(m)}$ and 
whose log structure is the inverse image of the log structure of $X$ 
by the natural morphism $\os{\circ}{X}{}^{(m)}\lo \os{\circ}{X}$.
For a negative integer $m\leq -2$, 
set $X^{(m)}=\emptyset$.  
Let $a_{\ul{\lam}}\col X_{\ul{\lam}} \os{\sus}{\lo} X$ 
and $a^{(m)}\col X^{(m)} \lo X$ 
be the natural exact closed immersion and the natural morphism. 
\par 
Because the structural morphism $X\lo S$ is integral, 
$X_T:=X\times_ST$ is also integral for a morphism 
$T\lo S$ of fine log schemes. Note that $(X_T)^{\circ}=
\os{\circ}{X}\times_{\os{\circ}{S}}\os{\circ}{T}$. 
We denote $(X_T)^{\circ}$ by $\os{\circ}{X}_T$.

\par 
Assume that $M_S$ is the free hollow log structure of rank 1. 
Then $\ol{S}=({\mab A}^1_{\os{\circ}{S}}, 
({\mab N}\owns 1 \lom t \in {\cal O}_S[t])^a)$.  
%In this case, we say that $M_{\ol{S}}$ is free of rank $1$. 
Set 
\begin{equation*} 
\os{\circ}{\mab A}_{\ol{S}}(a,d):=
\ul{\rm Spec}_{\os{\circ}{S}}
({\cal O}_S[x_0, \ldots, x_{d},t]/(x_0\cdots x_a-t)). 
\end{equation*} 
Then we have a natural structural morphism 
$\os{\circ}{{\mab A}}_{\ol{S}}(a,d) \lo \os{\circ}{\ol{S}}$. 
Let $\ol{M}_{\ol{S}}(a,d)$ 
be the log structure associated to 
a morphism 
${\mab N}^{a+1} \owns e_i=
(0, \ldots,0,\os{i}{1},0,\ldots, 0) \lom x_{i-1} \in 
{\cal O}_S[x_0, \ldots, x_{d},t]/(x_0\cdots x_a-t)$. 
Set 
$${\mab A}_{\ol{S}}(a,d)
:=(\ul{\rm Spec}_{\os{\circ}{S}}
({\cal O}_S[x_0, \ldots, x_{d},t]
/(x_0\cdots x_a-t)),\ol{M}_{\ol{S}}(a,d)).$$  
Then we have the following natural morphism 
\begin{equation*} 
{\mab A}_{\ol{S}}(a,d) \lo \ol{S}.  
\tag{2.5.1}
\end{equation*}

\par 
By killing ``$t$'', we have the following natural exact closed immersion 
\begin{equation*} 
{\mab A}_S(a,d) \os{\sus}{\lo} 
{\mab A}_{\ol{S}}(a,d) 
\tag{2.5.2}
\end{equation*}
of fs(=fine and saturated) log schemes 
over $S\os{\sus}{\lo} \ol{S}$ 
if the log structure of $S$ is 
the free hollow log structure of rank 1.  
%we have the following commutative diagram of log schemes 
%\begin{equation*} 
%\begin{CD} 
%{\mab A}_S(a,d) @>{\sus}>> {\mab A}_{\ol{S}}(a,d) \\ 
%@VVV @VVV \\
%S @>{\sus}>> \ol{S} \\ 
%@VVV @VVV\\
%\os{\circ}{S} @= \os{\circ}{S}.  
%\end{CD} 
%\tag{2.8.2}\label{cd:aabss} 
%\end{equation*} 

In \cite{nb} we have proved the following (1) 
as a special case of [loc.~cit., (1.1.6)]
(the following (2) is a special case of (1)):

\begin{lemm}[{\bf A special case of \cite[(1.1.6)]{nb}}]\label{lemm:etl}
Let $S$ be a family of log points. Then the following hold$:$ 
\par 
$(1)$ Let $Y\lo S$ be a log smooth scheme 
which has a global chart ${\mab N}\lo P$. 
Then, Zariski locally on $Y$,  
there exists a log smooth scheme 
$\ol{Y}$ over $\ol{S}$ fitting into 
the following cartesian diagram 
\begin{equation*}
\begin{CD} 
Y @>{\sus}>> \ol{Y} \\
@VVV @VVV \\
S\times_{{\rm Spec}^{\log}({\mab Z}[{\mab N}])}
{\rm Spec}^{\log}({\mab Z}[P])
@>{\sus}>> \ol{S}\times_{{\rm Spec}^{\log}({\mab Z}[{\mab N}])}
{\rm Spec}^{\log}({\mab Z}[P]) \\ 
@VVV @VVV \\
S@>{\sus}>> \ol{S},  
\end{CD} 
\tag{2.6.1}\label{cd:xwtx} 
\end{equation*}
where the vertical morphism 
$\ol{Y} \lo \ol{S}
\times_{{\rm Spec}^{\log}({\mab Z}[{\mab N}])}
{\rm Spec}^{\log}({\mab Z}[P])$ 
is solid and \'{e}tale.  
\par 
$(2)$ Let $S$ be a family of log points 
and let $X$ be an SNCL scheme over $S$. 
Zariski locally on $X$, there exists a log scheme 
$\ol{X}$ over $\ol{S}$ fitting into 
the following cartesian diagram 
\begin{equation*}
\begin{CD} 
X @>{\sus}>> \ol{X} \\
@VVV @VVV \\
{\mab A}_S(a,d) 
@>{\sus}>> {\mab A}_{\ol{S}}(a,d) \\ 
@VVV @VVV \\
S@>{\sus}>> \ol{S},  
\end{CD} 
\tag{2.6.2}\label{cd:xwbtx} 
\end{equation*}
where the vertical morphism 
$\ol{X} \lo {\mab A}_{\ol{S}}(a,d)$ is solid and \'{e}tale.  
\end{lemm} 

\begin{defi}[{\bf A special case of \cite[(1.1.16)]{nb}}]\label{defi:lfeac}  
Let $\ol{f} \col \ol{X}\lo \ol{S}$ 
be a morphism of log schemes (on the Zariski sites). 
Set $X:=\ol{X}\times_{\ol{S}}S$.  
We call $\ol{f}$ (or $\ol{X}/\ol{S}$) 
a {\it strict semistable log scheme} over $\ol{S}$ 
if $\os{\circ}{\ol{X}}$ is a smooth scheme over $\os{\circ}{S}$, 
if $\os{\circ}{X}$ is a relative SNCD on 
$\os{\circ}{\ol{X}}/\os{\circ}{\ol{S}}$ 
(with some decomposition 
$\Del:=\{\os{\circ}{X}_{\lam}\}_{\lam \in \Lam}$ of 
$\os{\circ}{X}$ by smooth components of 
$\os{\circ}{X}$ over $\os{\circ}{S}$) 
and if, for any point of $x \in \os{\circ}{\ol{X}}$, 
there exist an open neighborhood $\os{\circ}{\ol{V}}$ of 
$x$ and an open neighborhood 
$\os{\circ}{\ol{W}}\simeq \ul{\rm Spec}_W({\cal O}_W[t])$ 
(where $W:=\ol{W}\times_{\ol{S}}S$) of 
$\os{\circ}{f}(x)$ such that 
$M_{\ol{W}}\simeq  
({\mab N}\owns 1 \lom t\in {\cal O}_W[t])^a$ and such that  
$\ol{f}\vert_{\ol{V}}$ factors through 
a solid and \'{e}tale morphism 
$\ol{V} {\lo} {\mab A}_{\ol{W}}(a,d)$ 
%(here we assume that $S$ is free of rank $1$) 
such that  
$\Del_{\os{\circ}{\ol{V}}}
=\{x_i=0\}_{i=0}^a$ in $\os{\circ}{\ol{V}}$.  
(Similarly we can give the definition of 
a strict semistable log formal scheme over $\ol{S}$.) 
\end{defi}

%\par 
%Let $T\lo S$ be a morphism of fine log schemes. 
%Set $\ol{T}:=

\par  
Next we recall the following(=the exactification) 
which has been proved in \cite{s3}.

\begin{prop}\label{prop:zpex}{\rm {\bf (\cite[Proposition 2.10]{s3})}}\label{prop:exad}
Let $T$ be a fine log formal ${\mab Z}_p$-scheme.   
%and let $Y \lo {\cal Q}$ be a morphism to 
%a fine log $($formal$)$ scheme over $T$. 
Let ${\cal C}^{\rm ex}_{\rm hom}$ $($resp.~${\cal C})$ 
be the category of homeomorphic exact immersions 
$($resp.~the category of immersions$)$ of 
fine log formal ${\mab Z}_p$-schemes over $T$.  
Let $\iota$ be the natural functor 
${\cal C}^{\rm ex}_{\rm hom} \lo {\cal C}$. 
Then $\iota$ has a right adjoint functor 
$(~)^{\rm ex}\col {\cal C}\lo {\cal C}^{\rm ex}_{\rm hom}$. 
\end{prop}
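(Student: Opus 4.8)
The plan is to exhibit $(~)^{\rm ex}$ as the coreflection of the inclusion $\io$, building it locally from charts and then gluing it by its own universal property. Observe first that $\io$ is the inclusion of a full subcategory: a homeomorphic exact immersion is in particular an immersion, and the morphisms in both categories are the commutative squares over $T$. Hence producing a right adjoint is the same as producing, for every object $D=(Z\os{\sus}{\lo}Y)\in {\cal C}$, a homeomorphic exact immersion $D^{\rm ex}=(Z\os{\sus}{\lo}Y^{\rm ex})$ together with a counit $\io(D^{\rm ex})\lo D$ in ${\cal C}$ that is universal among morphisms $\io(C)\lo D$ with $C\in {\cal C}^{\rm ex}_{\rm hom}$. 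I would establish the affine case with charts, verify independence of the chart, and glue.

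For the local construction I would first shrink $Y$ so that the immersion becomes a closed immersion, and then choose a chart $P\lo M_Y$ together with a compatible chart $Q\lo M_Z$ inducing a monoid homomorphism $\tha\col P\lo Q$. Define the exact hull $P^{\rm ex}:=P^{\rm gp}\times_{Q^{\rm gp}}Q=\{x\in P^{\rm gp}\mid \tha^{\rm gp}(x)\in Q\}$, so that $P\os{\sus}{\lo}P^{\rm ex}\os{\sus}{\lo}P^{\rm gp}$ is an isomorphism on groupifications while $P^{\rm ex}\lo Q$ is exact. Then set $Y':=Y\times_{{\rm Spf}^{\log}({\mab Z}_p\{P\})}{\rm Spf}^{\log}({\mab Z}_p\{P^{\rm ex}\})$; the projection $g\col Y'\lo Y$ is log \'etale, being the base change of a morphism that is an isomorphism on groupifications, and the $Q$-chart lifts $Z$ to an exact closed immersion $Z\os{\sus}{\lo}Y'$. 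Finally let $Y^{\rm ex}$ be the $p$-adic formal completion of $\os{\circ}{Y}{}'$ along $\os{\circ}{Z}$, with its induced log structure, so that $\os{\circ}{Z}\lo \os{\circ}{Y}{}^{\rm ex}$ is a homeomorphism and $Z\os{\sus}{\lo}Y^{\rm ex}$ is the desired homeomorphic exact immersion.

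For the universal property, given $C=(Z'\os{\sus}{\lo}Y')\in {\cal C}^{\rm ex}_{\rm hom}$ and a morphism $(\vphi,\psi)\col \io(C)\lo D$, I must produce a unique lift $\ti{\psi}\col Y'\lo Y^{\rm ex}$ over $Y$ compatible with $\vphi$. Pulling back the $P$-chart along $\psi$ and using that $Z'\lo Y'$ is exact, the resulting monoid map is forced to factor, necessarily uniquely, through $P^{\rm ex}$; this yields a lift of $\psi$ to the uncompleted fiber product, and since $\os{\circ}{Y}{}'$ is topologically $\os{\circ}{Z}{}'$, which maps into the center of completion $\os{\circ}{Z}$, the lift factors through $Y^{\rm ex}$. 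Exactness of the source immersion is exactly what makes this factorization both possible and unique, giving the adjunction bijection ${\rm Hom}_{\cal C}(\io(C),D)\cong {\rm Hom}_{{\cal C}^{\rm ex}_{\rm hom}}(C,D^{\rm ex})$.

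This universal property characterizes $(Z\os{\sus}{\lo}Y^{\rm ex})$ up to unique isomorphism, so the local constructions are canonically chart-independent and glue to a global $Y^{\rm ex}$, with functoriality and the triangle identities following formally. I expect the main obstacle to be the interplay of exactification with $p$-adic formal completion: one must check that forming $P^{\rm ex}$, base-changing along ${\rm Spf}^{\log}$, and completing along $\os{\circ}{Z}$ are mutually compatible and independent of the chosen charts, and, crucially, that the completion does not destroy the universal property, i.e.\ that every homeomorphic exact immersion over $D$ factors through the completed $Y^{\rm ex}$ rather than merely through the log \'etale, non-homeomorphic intermediate $Y'$.
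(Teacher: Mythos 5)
Your construction is the same as the one the paper recalls (following Shiho's Proposition 2.10): take the exact hull $P^{\rm ex}$ of the chart monoid as the preimage of $Q$ under $P^{\rm gp}\lo Q^{\rm gp}$, base change along ${\rm Spf}^{\log}({\mab Z}_p\{P^{\rm ex}\})\lo {\rm Spf}^{\log}({\mab Z}_p\{P\})$, and then complete formally along the closed subscheme to obtain the homeomorphic exact immersion, with the universal property handling chart-independence and gluing. This matches the paper's argument in both structure and detail.
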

%\begin{proof} 
We recall the construction of $(~)^{\rm ex}$ quickly. 
\par 
By the universality, the problem is local. 
We may assume that immersions are closed immersions. 
We may assume that 
an object 
$Y\os{\sus}{\lo} {\cal Q}$ in ${\cal C}$ has 
a global chart $(P\lo Q)$. 
Let $P^{\rm ex}$ be the inverse image of 
$Q$ by the morphism $P^{\rm gp} \lo Q^{\rm gp}$. 
Then we have a fine log formal ${\mab Z}_p$-scheme 
${\cal Q}^{{\rm prex}}:=
{\cal Q}\wh{\times}_{{\rm Spf}^{\log}({\mab Z}_p\{P\})}
{\rm Spf}^{\log}({\mab Z}_p\{P^{\rm ex}\})$ 
over ${\cal Q}$ with an exact immersion 
$Y\os{\sus}{\lo} {\cal Q}^{{\rm prex}}$. 
(``prex'' is the abbreviation of 
``pre-exactification''.) 
Let ${\cal Q}^{\rm ex}$ be the formal completion of 
${\cal Q}^{{\rm prex}}$ along $Y$. 
Since $\os{\circ}{\cal Q}{}^{\rm ex}=\os{\circ}{Y}$ 
and 
$M_{{\cal Q}^{\rm ex},x}/{\cal O}^*_{{\cal Q}^{\rm ex},x}
=M_{Y,x}/{\cal O}^*_{Y,x}$, 
we see that the morphism $Y\os{\sus}{\lo} {\cal Q}^{\rm ex}$ 
is the desired homeomorphic exact immersion over $T$.

\begin{prop}[{\bf Base change of exactifications (\cite[(1.1.27)]{nb})}]\label{prop:xpls}  
Let $T'\lo T$ be a morphism of 
fine log formal schemes. 
Let ${\cal Q}$ be a fine log formal scheme over $T$. 
Let $Y\os{\sus}{\lo} {\cal Q}$ be an immersion  
of fine log formal schemes over $T$. 
Let $Y \os{\sus}{\lo} {\cal Q}^{\rm ex}$ be 
the exactification of $Y \os{\sus}{\lo} {\cal Q}$. 
Then the immersion $Y\times_TT' \os{\sus}{\lo} {\cal Q}^{\rm ex}\times_TT'$ 
is the exactification of the immersion 
$Y\times_TT' \os{\sus}{\lo} {\cal Q}\times_TT'$ over $T'$.  
\end{prop}

We recall the following (\cite[(2.1.5)]{nh2}) describing 
the local structure of an exact closed immersion, 
which will be used in this section and later sections: 

\begin{prop}[{\bf \cite[(2.1.5)]{nh2}}]\label{prop:adla}
Let $T_0 \os{\sus}{\lo} T$ be a closed immersion of fine log schemes. 
Let $Y$ $($resp.~${\cal Q})$ be a log smooth scheme over $T_0$ 
$($resp.~$T)$, which can be considered as a log scheme over $T$.  
Let $\iota \col Y \os{\sus}{\lo} {\cal Q}$ 
be an exact closed immersion over $T$. 
Let $y$ be a point of $\os{\circ}{Y}$ and 
assume that there exists a chart $(Q \lo M_T, P \lo M_Y, Q 
\os{\rho}{\lo} P)$ of $Y \lo T_0 \os{\subset}{\lo} T$ 
on a neighborhood of $y$ such that 
$\rho$ is injective, that ${\rm Coker}(\rho^{\rm gp})$ is torsion free 
and that the natural homomorphism ${\cal O}_{Y,y} \otimes_{{\mab Z}} 
(P^{{\rm gp}}/Q^{{\rm gp}}) \lo \Om^1_{Y/T_0,y}$ is an isomorphism. 
Then, on a neighborhood of $y$, 
there exist a nonnegative integer $c$
and the following cartesian diagram 
\begin{equation}
{\small{\begin{CD}
Y @>>> {\cal Q}' @>>> {\cal Q}\\ 
@VVV  @VVV @VVV\\ 
(T_0\otimes_{{\mab Z}[Q]}{\mab Z}[P],P^a) 
@>{\sus}>> (T\otimes_{{\mab Z}[Q]}{\mab Z}[P],P^a)
@>{\sus}>>  
(T\otimes_{{\mab Z}[Q]}{\mab Z}[P],P^a)
\times_T{\mab A}^{c}_T, 
\end{CD}}}
\tag{2.10.1}\label{eqn:0txda}
\end{equation}
where the vertical morphisms are solid and \'{e}tale and 
the lower second horizontal morphism is the base change of 
the zero section $T \os{\sus}{\lo} {\mab A}^c_T$ 
and ${\cal Q}':={\cal Q}\times_{{\mab A}^c_T}T$. 
\end{prop}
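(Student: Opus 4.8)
The statement is Kato's structure theorem for log smooth morphisms, adapted to the situation of an exact closed immersion lying over a thickening $T_0\os{\sus}{\lo}T$. The plan is to apply the log smoothness criterion \cite[(3.5)]{klog1} twice — once to ${\cal Q}/T$ and once to $Y/T_0$ — and then to adjust the resulting affine coordinates so that $Y$ becomes the fiber of ${\cal Q}$ over the zero section of an affine space.

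First I would arrange the charts. After shrinking we may assume the chart $P\lo M_Y$ is exact at $y$, i.e. $P\os{\sim}{\lo}\ol{M}_{Y,y}$. Since $\iota\col Y\os{\sus}{\lo}{\cal Q}$ is an exact closed immersion we have $\ol{M}_{{\cal Q},\iota(y)}=\ol{M}_{Y,y}$, so the chart lifts Zariski locally to a chart $P\lo M_{\cal Q}$; together with $Q\lo M_T$ and $\rho$ this is a chart of ${\cal Q}/T$ satisfying the same hypotheses ($\rho$ injective, ${\rm Coker}(\rho^{\rm gp})$ torsion free). Because ${\cal Q}/T$ is log smooth with this chart, \cite[(3.5)]{klog1} gives that the induced morphism ${\cal Q}\lo(T\otimes_{{\mab Z}[Q]}{\mab Z}[P],P^a)$ is strict and smooth on underlying schemes; a strict smooth morphism of relative dimension $c$ factors Zariski locally through a strict \'{e}tale morphism to the affine space, producing the strict \'{e}tale morphism ${\cal Q}\lo(T\otimes_{{\mab Z}[Q]}{\mab Z}[P],P^a)\times_T{\mab A}^c_T$ — the right vertical arrow. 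Applying the same criterion to $Y/T_0$ gives a strict smooth morphism $Y\lo(T_0\otimes_{{\mab Z}[Q]}{\mab Z}[P],P^a)$; here the hypothesis that ${\cal O}_{Y,y}\otimes_{\mab Z}(P^{\rm gp}/Q^{\rm gp})\lo\Om^1_{Y/T_0,y}$ is an isomorphism forces the relative $\Om^1$ of this morphism to vanish, so it is already strict \'{e}tale, with no affine factor. This is the left vertical arrow.

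The main point is then to adjust the affine coordinates so that $Y$ sits in the zero fiber. Let $g_1,\dots,g_c\in{\cal O}_{\cal Q}$ be the coordinates of the morphism to ${\mab A}^c_T$ and let ${\cal I}$ be the ideal of $Y$ in ${\cal Q}$. Since $\iota$ is exact and $T_0\os{\sus}{\lo}T$ is an exact closed immersion, one has $\Om^1_{Y/T}=\Om^1_{Y/T_0}$ and $\Om^1_{Y/{\cal Q}}=0$, so the log conormal sequence reads ${\cal I}/{\cal I}^2\os{d}{\lo}\iota^*\Om^1_{{\cal Q}/T}\lo\Om^1_{Y/T_0}\lo 0$. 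Under the strict \'{e}tale model of ${\cal Q}$ one has $\iota^*\Om^1_{{\cal Q}/T}={\cal O}_Y\otimes_{\mab Z}(P^{\rm gp}/Q^{\rm gp})\oplus{\cal O}_Y^{\oplus c}$, and the surjection onto $\Om^1_{Y/T_0}={\cal O}_Y\otimes_{\mab Z}(P^{\rm gp}/Q^{\rm gp})$ is the identity on the first summand; hence $d$ carries ${\cal I}/{\cal I}^2$ onto the free summand ${\cal O}_Y^{\oplus c}$. By Nakayama I can therefore choose $f_1,\dots,f_c\in{\cal I}$ whose differentials $df_i$ form a basis of that summand near $y$. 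Replacing $g_i$ by $f_i$ keeps the morphism ${\cal Q}\lo(T\otimes_{{\mab Z}[Q]}{\mab Z}[P],P^a)\times_T{\mab A}^c_T$ strict \'{e}tale near $y$, since \'{e}taleness is detected on $\Om^1$ and the differentials are unchanged at $y$; but now $f_i\vert_Y=0$, so $Y$ factors through ${\cal Q}':={\cal Q}\times_{{\mab A}^c_T}T=V(f_1,\dots,f_c)$. The right square is cartesian by construction, because ${\cal Q}\times_{(T\otimes_{{\mab Z}[Q]}{\mab Z}[P],P^a)\times_T{\mab A}^c_T}(T\otimes_{{\mab Z}[Q]}{\mab Z}[P],P^a)={\cal Q}\times_{{\mab A}^c_T}T={\cal Q}'$, and the middle vertical ${\cal Q}'\lo(T\otimes_{{\mab Z}[Q]}{\mab Z}[P],P^a)$ is strict \'{e}tale as a base change of the right vertical. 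Finally $Y\lo{\cal Q}'$ is a closed immersion of two schemes each strict \'{e}tale over $(T_0\otimes_{{\mab Z}[Q]}{\mab Z}[P],P^a)$, hence \'{e}tale, hence an open and closed immersion; after a further shrinking it is an isomorphism onto its image, making the left square cartesian as well.

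I expect the adjustment step — replacing the affine coordinates $g_i$ by elements $f_i$ of the ideal ${\cal I}$ while preserving strict \'{e}taleness — to be the main obstacle, since it is what ties the two applications of the criterion together and it is where the exactness of $\iota$ and the differential hypothesis on $Y/T_0$ are genuinely used. The two invocations of \cite[(3.5)]{klog1} and the chart lifting are essentially formal.
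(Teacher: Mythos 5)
The paper does not prove this proposition: it is recalled verbatim from \cite[(2.1.5)]{nh2}, so there is no in-text argument to compare against. Your proof is essentially the standard one from that reference (two applications of the chart criterion of \cite[(3.5)]{klog1}, then replacement of the affine coordinates $g_i$ by generators $f_i$ of ${\cal I}$ with independent differentials via the conormal sequence, and finally the observation that a closed immersion between schemes \'{e}tale over a common base is open), and it is correct; the only places that deserve an extra line of justification are the lifting of the chart $P\lo M_Y$ to $P\lo M_{\cal Q}$ (one must lift through the surjection $M_{{\cal Q},\iota(y)}\lo M_{Y,y}$, whose kernel is $1+{\cal I}_{\iota(y)}$, not merely compare $\ol{M}$'s) and the smoothness of ${\cal Q}\lo (T\otimes_{{\mab Z}[Q]}{\mab Z}[P],P^a)$, which uses that ${\cal O}_{{\cal Q},\iota(y)}\otimes_{\mab Z}(P^{\rm gp}/Q^{\rm gp})\lo \Om^1_{{\cal Q}/T,\iota(y)}$ is a split injection — a fact you only extract later from the hypothesis on $\Om^1_{Y/T_0,y}$ via the conormal sequence.
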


\par 
Let $S_0 \os{\sus}{\lo} S$ 
be a nil-immersion of families of log points. 
Let $X/S_0$ be an SNCL scheme. 
Let 
$X \os{\sus}{\lo} {\cal P}$ be 
an immersion into a log smooth scheme over $S$. 
Let 
$X \os{\sus}{\lo} \ol{\cal P}$ be also 
an immersion into a log smooth scheme over $\ol{S}$. 
(We do not assume that there exists an immersion 
${\cal P}\os{\sus}{\lo} \ol{\cal P}$).

\begin{prop}[{\bf \cite[(1.1.40)]{nb}}]\label{prop:fsi} 
Assume that  
$X\os{\sus}{\lo} {\cal P}$ 
$($resp.~$X\os{\sus}{\lo} \ol{\cal P})$
has a global chart 
$P\lo Q$ $($resp.~$\ol{P}\lo Q)$. 
Let $P^{\rm ex}$ $($resp.~$\ol{P}{}^{\rm ex})$ 
be the inverse image of 
$Q$ by the morphism $P^{\rm gp}\lo Q^{\rm gp}$ 
$($resp.~$\ol{P}{}^{\rm gp}\lo Q^{\rm gp})$. 
Set ${\cal P}^{\rm prex}
:={\cal P}\times_{{\rm Spec}^{\log}({\mab Z}[P])}
{\rm Spec}^{\log}({\mab Z}[P^{\rm ex}])$ 
$($resp.~$\ol{\cal P}{}^{\rm prex}
:=\ol{\cal P}\times_{{\rm Spec}^{\log}({\mab Z}[\ol{P}])}
{\rm Spec}^{\log}({\mab Z}[\ol{P}{}^{\rm ex}]))$. 
Then, locally on $X$, there exists an open neighborhood 
${\cal P}^{\rm prex}{}'$ 
$($resp.~$\ol{\cal P}{}^{\rm prex}{}')$ 
of ${\cal P}^{\rm prex}$ 
$($resp.~$\ol{\cal P}{}^{\rm prex})$ 
fitting into 
the following cartesian diagram 
for some $0 \leq a \leq d \leq d':$  
\begin{equation*}
\begin{CD}
X @>{\subset}>> {\cal P}^{\rm prex}{}'\\ 
@VVV  @VVV \\
{\mab A}_{S_0}(a,d)
@>{\sus}>> {\mab A}_{S}(a,d')
\end{CD}
\tag{2.11.1}\label{eqn:xdplxda}
\end{equation*}
$($resp.~
\begin{equation*}
\begin{CD}
X @>{\subset}>> \ol{\cal P}{}^{\rm prex}{}'\\ 
@VVV  @VVV \\
{\mab A}_{S_0}(a,d)
@>{\sus}>> {\mab A}_{\ol{S}}(a,d')~),
\end{CD}
\tag{2.11.2}\label{eqn:xdpelda}
\end{equation*}
where the vertical morphisms are solid and \'{e}tale.  
\end{prop}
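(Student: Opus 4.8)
The plan is to deduce the statement from the local structure theorem for exact closed immersions, Proposition~\ref{prop:adla}, once one knows that passing to the pre-exactification makes the immersion exact without destroying log smoothness. Since both assertions are local on $X$, I may shrink $X$ freely; in particular I may assume that $X\os{\sus}{\lo}{\cal P}$ (resp.~$X\os{\sus}{\lo}\ol{\cal P}$) is a closed immersion admitting the given global chart. The two cases are treated uniformly, the only point to record being that $S_0\os{\sus}{\lo}\ol{S}$ is a closed immersion, so that Proposition~\ref{prop:adla} applies over it just as it does over $S_0\os{\sus}{\lo}S$.

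First I would analyze the pre-exactification itself. Because $P\sus P^{\rm ex}\sus P^{\rm gp}$, the inclusion $P\os{\sus}{\lo}P^{\rm ex}$ induces an isomorphism $P^{\rm gp}\os{\sim}{\lo}(P^{\rm ex})^{\rm gp}$; hence the morphism ${\rm Spec}^{\log}({\mab Z}[P^{\rm ex}])\lo {\rm Spec}^{\log}({\mab Z}[P])$ is log \'etale, so ${\cal P}^{\rm prex}\lo {\cal P}$ is log \'etale and ${\cal P}^{\rm prex}/S$ (resp.~$\ol{\cal P}{}^{\rm prex}/\ol{S}$) is log smooth. On the other hand, since $P^{\rm ex}=(P^{\rm gp}\lo Q^{\rm gp})^{-1}(Q)$, the induced chart $P^{\rm ex}\lo Q$ is exact, exactly as in the construction of $(\;)^{\rm ex}$ recalled after Proposition~\ref{prop:zpex}; therefore $X\os{\sus}{\lo}{\cal P}^{\rm prex}$ (resp.~$X\os{\sus}{\lo}\ol{\cal P}{}^{\rm prex}$) is an exact closed immersion into a log smooth scheme over $S$ (resp.~$\ol{S}$).

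Next I would feed this exact closed immersion into Proposition~\ref{prop:adla}, taking $T_0=S_0$ and $T=S$ (resp.~$T=\ol{S}$). For the base chart I use ${\mab N}$, the free hollow chart of the family of log points (with $1\mapsto 0$ over $S$ and $1\mapsto t$ over $\ol{S}$); for the chart of $X$ I use the one coming from the SNCL structure~(\ref{defi:lfac}), under which, Zariski locally, $X$ is strict \'etale over ${\mab A}_{S_0}(a,d)$. With these charts the injectivity of $\rho$, the torsion-freeness of ${\rm Coker}(\rho^{\rm gp})$, and the isomorphism ${\cal O}_X\otimes_{\mab Z}(P^{\rm gp}/Q^{\rm gp})\os{\sim}{\lo}\Om^1_{X/S_0}$ all hold, because $X/S_0$ is log smooth with the explicit normal-crossing local model. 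Proposition~\ref{prop:adla} then produces a nonnegative integer $c$ and a cartesian diagram exhibiting $X$ as strict \'etale over a standard log scheme of relative dimension $d$ and an open neighborhood ${\cal P}^{\rm prex}{}'$ of ${\cal P}^{\rm prex}$ as strict \'etale over a standard log scheme of relative dimension $d+c$, compatibly.

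The remaining and principal task is the identification of the two standard log schemes produced by Proposition~\ref{prop:adla} with ${\mab A}_{S_0}(a,d)$ and ${\mab A}_S(a,d')$, where $d'=d+c$: one must match the $a+1$ logarithmic crossing directions together with the remaining smooth directions of the monoid chart against the standard log structure $M_S(a,d')$ of (2.5.1), with the $c$ affine-line factors supplying the extra smooth coordinates and the bottom arrow becoming the natural exact closed immersion ${\mab A}_{S_0}(a,d)\os{\sus}{\lo}{\mab A}_S(a,d')$ (the composite of $S_0\os{\sus}{\lo}S$ with the zero section in the last $c$ variables). This chart-and-dimension bookkeeping, reconciling the rank-$d$ module $P^{\rm gp}/Q^{\rm gp}$ with the fact that only $x_0,\dots,x_a$ carry the log structure, is where I expect the real work to lie; cartesianness of (2.12.1) is then automatic since all vertical morphisms are strict \'etale. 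The case of $\ol{\cal P}$ is identical after replacing $S$ by $\ol{S}$ and ${\mab A}_S(a,d')$ by ${\mab A}_{\ol{S}}(a,d')$ of (2.6.1) (so that $x_0\cdots x_a=t$ replaces $x_0\cdots x_a=0$), which is compatible with the local model already recorded in Lemma~\ref{lemm:etl}(2).
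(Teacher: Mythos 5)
Your overall route — pre-exactify so that the immersion becomes exact, then invoke the local structure theorem (\ref{prop:adla}) and identify the resulting standard models with ${\mab A}_{S_0}(a,d)\os{\sus}{\lo}{\mab A}_S(a,d')$ — is the natural one suggested by the way (\ref{prop:adla}) is recalled just before this proposition, and your preliminary observations are correct: ${\cal P}^{\rm prex}\lo{\cal P}$ is log \'{e}tale because $P^{\rm gp}\os{\sim}{\lo}(P^{\rm ex})^{\rm gp}$, and the induced immersion $X\os{\sus}{\lo}{\cal P}^{\rm prex}$ is exact because the chart $P^{\rm ex}\lo Q$ is. Nevertheless the two steps that carry the content of the proposition are either incorrectly justified or missing.

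First, your verification of the hypotheses of (\ref{prop:adla}) is wrong as stated. For the chart of $X/S_0$ coming from the SNCL structure, namely ${\mab N}\lo {\mab N}^{\oplus(a+1)}\oplus_{{\mab N},n}{\mab N}$ as in the definition of the standard log structure before (\ref{defi:lfac}), the quotient $P^{\rm gp}/Q^{\rm gp}$ has rank $a$, whereas $\Om^1_{X/S_0}$ is locally free of rank $d$; so the required isomorphism ${\cal O}_{X,x}\otimes_{\mab Z}(P^{\rm gp}/Q^{\rm gp})\os{\sim}{\lo}\Om^1_{X/S_0,x}$ fails whenever $d>a$, which is the generic situation for an SNCL scheme. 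To apply (\ref{prop:adla}) one must first enlarge the chart, e.g.~by adjoining $d-a$ sections of $M_X$ mapping to units whose $d\log$'s complete $d\log x_0,\ldots,d\log x_a$ to a basis of $\Om^1_{X/S_0,x}$; this changes $T_0\otimes_{{\mab Z}[Q]}{\mab Z}[P]$ (one obtains an open subscheme of the shape ${\mab A}_{S_0}(a,a)\times{\mab G}_m^{d-a}$ rather than ${\mab A}_{S_0}(a,d)$ itself) and therefore has to be tracked through the identification. Second, that identification — matching $(T_0\otimes_{{\mab Z}[Q]}{\mab Z}[P],P^a)\os{\sus}{\lo}(T\otimes_{{\mab Z}[Q]}{\mab Z}[P],P^a)\times_T{\mab A}^c_T$ with ${\mab A}_{S_0}(a,d)\os{\sus}{\lo}{\mab A}_S(a,d')$ for $d'=d+c$ — is precisely the assertion of (\ref{prop:fsi}), and you explicitly leave it undone. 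Note also that cartesianness of (\ref{eqn:xdplxda}) is not ``automatic because the vertical morphisms are strict \'{e}tale''; a commutative square with \'{e}tale verticals need not be cartesian, and one must instead extract it from the cartesianness already built into (\ref{eqn:0txda}) together with ${\cal Q}'={\cal Q}\times_{{\mab A}^c_T}T$.
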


\begin{prop}[{\bf \cite[(1.1.41)]{nb}}]\label{prop:nexeo}
$(1)$ Let ${\cal P}^{\rm ex}$  
be the exactification of 
the immersion $X\os{\sus}{\lo}{\cal P}$.  
Then ${\cal P}^{\rm ex}$   
is a formal SNCL scheme over $S$.  
\par 
$(2)$ 
Let $\ol{\cal P}{}^{\rm ex}$ 
be the exactification of 
the immersion $X\os{\sus}{\lo}\ol{\cal P}$. 
Then $\ol{\cal P}{}^{\rm ex}$ 
is a strict semistable family over $\ol{S};$ 
$\os{\circ}{\ol{\cal P}}{}^{\rm ex}$
is a formally smooth scheme over $\os{\circ}{S}$  
such that 
$\os{\circ}{\cal D}:=\os{\circ}{\ol{\cal P}}{}^{\rm ex}
\times_{\os{\circ}{\ol{S}}}\os{\circ}{S}$ is 
a formal SNCD on 
$\os{\circ}{\ol{\cal P}}{}^{\rm ex}$ and such that 
$M_{{\ol{\cal P}}{}^{\rm ex}}=M(\os{\circ}{\cal D})$, 
where $M(\os{\circ}{\cal D})$ is the associated log structure to 
$\os{\circ}{\cal D}$ on $\os{\circ}{\ol{\cal P}}{}^{\rm ex}/\os{\circ}{S}$ 
$(${\rm \cite[p.~61]{nh2}}$)$. 
\end{prop}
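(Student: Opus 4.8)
The plan is to reduce everything to the explicit local models of (\ref{prop:fsi}) and to compute the exactification there. Both conclusions --- that ${\cal P}^{\rm ex}$ is a formal SNCL scheme over $S$ and that $\ol{\cal P}{}^{\rm ex}$ is a strict semistable family over $\ol{S}$ --- are local on $\os{\circ}{X}$ and are preserved under strict \'etale morphisms, and the exactification is itself local and compatible with strict \'etale morphisms (it is a right adjoint by (\ref{prop:zpex}), hence commutes with the relevant fibre products) as well as with base change along closed immersions by (\ref{prop:xpls}). Hence I may work Zariski locally and, by (\ref{prop:fsi}), assume that ${\cal P}^{\rm prex}{}'$ (resp.\ $\ol{\cal P}{}^{\rm prex}{}'$) is strict \'etale over ${\mab A}_S(a,d')$ (resp.\ ${\mab A}_{\ol{S}}(a,d')$), compatibly with $X\os{\sus}{\lo}{\cal P}^{\rm prex}{}'$ lying over the model immersion ${\mab A}_{S_0}(a,d)\os{\sus}{\lo}{\mab A}_S(a,d')$ (resp.\ ${\mab A}_{\ol{S}}(a,d')$). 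It therefore suffices to compute the exactifications of these two model immersions and to identify them.

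First I would analyze the charts. For the model immersions the relevant monoids are $P={\mab N}^{\oplus(a+1)}\oplus_{\mab N}{\mab N}$ (resp.\ $\ol{P}={\mab N}^{\oplus(a+1)}$) mapping to the chart $Q={\mab N}^{\oplus(a+1)}\oplus_{\mab N}{\mab N}$ of $X$, and a direct computation of $P^{\rm ex}$ --- the inverse image of $Q$ under the isomorphism $P^{\rm gp}\os{\sim}{\lo}Q^{\rm gp}$ --- shows that $P^{\rm ex}=P$ (resp.\ $\ol{P}{}^{\rm ex}=\ol{P}$), so the model immersions are already exact on charts. Consequently the pre-exactification changes nothing at the level of log structures and the exactification is simply the formal completion of the ambient log smooth scheme along $X$, the base thickening $S_0\os{\sus}{\lo}S$ being absorbed by (\ref{prop:xpls}). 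For part $(1)$ the smooth-component locus $\{x_0\cdots x_a=0\}$ is common to ${\mab A}_{S_0}(a,d)$ and ${\mab A}_S(a,d')$, so the ideal of $X$ in ${\mab A}_S(a,d')$ involves only the extra smooth coordinates $x_{d+1},\dots,x_{d'}$ (together with the nilpotent ideal of $S_0$ in $S$). The completion therefore completes only in these smooth directions and leaves the normal crossing structure untouched; the result is strict \'etale-locally the formal model ${\mab A}_S(a,d)$ completed in smooth directions, which by (\ref{defi:lfac}) is a formal SNCL scheme over $S$, and transporting back along the strict \'etale morphism gives the claim.

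For part $(2)$ the ambient $\os{\circ}{\mab A}_{\ol{S}}(a,d')=\ul{\rm Spec}_{\os{\circ}{S}}({\cal O}_S[x_0,\dots,x_{d'}])$ is formally smooth over $\os{\circ}{S}$, and $\os{\circ}{X}$ is cut out by $x_{d+1}=\cdots=x_{d'}=0$ and $t=x_0\cdots x_a=0$. Since the formal completion of a formally smooth $\os{\circ}{S}$-scheme along an arbitrary closed subscheme is again formally smooth over $\os{\circ}{S}$ (smoothness gives the infinitesimal lifting property, which passes to the adic completion), $\os{\circ}{\ol{\cal P}}{}^{\rm ex}$ is formally smooth over $\os{\circ}{S}$. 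The fibre over the zero section $\os{\circ}{S}\os{\sus}{\lo}\os{\circ}{\ol{S}}$, namely $\os{\circ}{\cal D}=\{t=0\}=\{x_0\cdots x_a=0\}$, is then a formal SNCD and $M_{\ol{\cal P}^{\rm ex}}$ is the associated log structure $M(\os{\circ}{\cal D})$; exhibiting the evident strict \'etale morphism to the formal model ${\mab A}_{\ol{S}}(a,d)$ shows by (\ref{defi:lfeac}) that $\ol{\cal P}{}^{\rm ex}$ is a strict semistable family over $\ol{S}$. As an independent check one can instead observe that ${\cal P}^{\rm prex}\lo{\cal P}$ is log \'etale (it induces an isomorphism on groupifications), so ${\cal P}^{\rm ex}$ is formally log smooth over $S$ and the homeomorphic exact closed immersion $X\os{\sus}{\lo}{\cal P}^{\rm ex}$ falls under (\ref{prop:adla}), whose conclusion directly exhibits the required local model $(S\otimes_{{\mab Z}[Q]}{\mab Z}[P],P^a)$.

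The main obstacle I expect is the bookkeeping in the middle step: verifying that the model immersions are exact on charts and, above all in part $(2)$, correctly recognizing that completing the smooth ambient along the singular subscheme $\{x_0\cdots x_a=0\}$ produces a \emph{formally smooth} formal scheme (whose $t=0$ fibre is the SNCD) rather than something singular, and then checking independence of all the choices (charts, the open coverings, and the auxiliary embeddings ${\cal P}$, $\ol{\cal P}$) so as to glue the local computations into the asserted global structural statement via the universal property of exactification in (\ref{prop:zpex}).
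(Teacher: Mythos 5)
Your argument is correct and is essentially the expected one: the paper itself gives no proof here (it simply cites \cite[(1.1.41)]{nb}), and the natural route — reduce via (\ref{prop:fsi}) to the model immersions ${\mab A}_{S_0}(a,d)\os{\sus}{\lo}{\mab A}_S(a,d')$ and ${\mab A}_{S_0}(a,d)\os{\sus}{\lo}{\mab A}_{\ol{S}}(a,d')$, observe these are already exact so that exactification reduces to formal completion, and compute the completion explicitly (yielding ${\cal O}_S[x_0,\dots,x_d][[x_{d+1},\dots,x_{d'}]]/(x_0\cdots x_a)$ in case $(1)$, and a formally smooth ambient with SNCD fibre $\{x_0\cdots x_a=0\}$ in case $(2)$) — is exactly what you carry out. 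You also correctly isolate the one delicate point in $(2)$, namely that completing the smooth ambient along the singular locus still gives a formally smooth formal scheme via the infinitesimal lifting property.
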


\par 
For a smooth component $\os{\circ}{X}_{\lam}$ of $\os{\circ}{X}$, 
we can define a closed formal subscheme 
$\os{\circ}{\ol{\cal P}}{}^{\rm ex}_{\lam}$ of 
$\os{\circ}{\ol{\cal P}}{}^{\rm ex}$ 
(resp.~$\os{\circ}{\cal P}{}^{\rm ex}_{\lam}$ of 
$\os{\circ}{\cal P}{}^{\rm ex}$) 
such that there exists an immersion  
$\os{\circ}{X}_{\lam} \os{\sus}{\lo} 
\os{\circ}{\ol{\cal P}}{}^{\rm ex}_{\lam}$
(resp.~ $\os{\circ}{X}_{\lam} \os{\sus}{\lo} 
\os{\circ}{\cal P}{}^{\rm ex}_{\lam}$) 
which is an isomorphism as topological spaces. 
In the case where we consider the immersion 
$X\os{\sus}{\lo}\ol{\cal P}$, we set 
$\os{\circ}{\cal P}{}^{\rm ex}_{\lam}:=
\os{\circ}{\ol{\cal P}}{}^{\rm ex}_{\lam}\times_{\os{\circ}{\ol{S}}}\os{\circ}{S}$. 
For an object $\ul{\lam}=\{\lam_0,\cdots, \lam_m\}\in P(\Lam)$ 
$(m\in {\mab Z}_{\geq 0}, \lam_i \in \Lam, \lam_i \not= \lam_j~{\rm if}~i\not= j)$,  
%for a set $\ul{\lam}=\{\lam_0,\cdots, \lam_m\}$ 
%$(m\in {\mab Z}_{\geq 0},
%\lam_i \in \Lam, \lam_i \not= \lam_j~{\rm if}~i\not= j)$,  
we can define a closed formal subscheme 
$\os{\circ}{\cal P}{}^{\rm ex}_{\ul{\lam}}$ of 
$\os{\circ}{\cal P}{}^{\rm ex}$ 
corresponding to $\os{\circ}{X}_{\ul{\lam}}$.  
%(We use the notation $\ul{\lam}$ in a different way from that of \cite{fup}.)
Because ${\cal P}^{\rm ex}$ is a formal SNCL scheme over $S$, 
we can define $\os{\circ}{\cal P}{}^{{\rm ex},(m)}$ 
for a nonnegative integer $m$. 
Indeed, 
\begin{equation}
\os{\circ}{\cal P}{}^{{\rm ex},(m)} =  
\us{\# \ul{\lam}=m+1}
{\coprod}\os{\circ}{\cal P}{}^{\rm ex}_{\ul{\lam}}.   
\tag{2.12.1}\label{eqn:kfexntd}
\end{equation} 
%\begin{equation}
%\os{\circ}{\cal P}{}^{{\rm ex},(m)} =  
%\us{\# \ul{\lam}}
%{\coprod}\os{\circ}{\cal P}{}^{\rm ex}_{\ul{\lam}}.   
%\tag{2.13.2}\label{eqn:kfntd}
%\end{equation} 
Endow $\os{\circ}{\cal P}{}^{\rm ex}_{\ul{\lam}}$ 
with the pull-back of the log structure of ${\cal P}^{\rm ex}$ 
and let ${\cal P}^{\rm ex}_{\ul{\lam}}$ be the resulting log formal scheme. 
Endow $\os{\circ}{\cal P}{}^{{\rm ex},(m)}$ 
with the pull-back of the log structure of ${\cal P}^{\rm ex}$ 
and let ${\cal P}{}^{{\rm ex},(m)}$ be the resulting log formal scheme. 
Then  
\begin{equation}
{\cal P}{}^{{\rm ex},(m)} =  \us{\# \ul{\lam}=m+1}
{\coprod}{\cal P}{}^{\rm ex}_{\ul{\lam}}.   
\tag{2.12.2}\label{eqn:kfxltd}
\end{equation}

\par 
%Let $T_0\os{\subset}{\lo} T$ be an exact nil immersion of fine log schemes 
%with morphism $z\col T_0\lo S$ of log schemes. 
Let $S$ be a family of log points. Let $M_S=(M_S,\al_S)$ be the log structure of $S$. 
In \cite{nb} we have defined a log PD-enlargement 
$((T,{\cal J},\del),z)$ of $S$ as follows (cf.~\cite{oc}): 
$(T,{\cal J},\del)$ is a fine log PD-scheme such that ${\cal J}$ is quasi-coherent and 
$z\col T_0\lo S$ is a morphism of fine log schemes, 
where $T_0:=T \mod {\cal J}$. 
When we are given a morphism $S\lo S'$ of families of log points, 
we can define a morphism of  log PD-enlargements over 
the morphism $S\lo S'$ in an obvious way. 
Endow $\os{\circ}{T}_0$ with the inverse image of the log structure of $S$. 
We denote the resulting log scheme by $S_{\os{\circ}{T}_0}$. 
It is easy to see that the natural morphism $z^*(M_S)\lo M_{T_0}$ is injective
(\cite[(1.1.4)]{nb}); 
we consider $z^*(M_S)$ is the sub log structure of $M_{T_0}$. 
Let $M$ be the sub log structure of the log structure $(M_T,\al_T)$ of $T$ such that 
the natural morphism $M_T\lo M_{T_0}$ induces an isomorphism 
$M/{\cal O}_T^*\os{\sim}{\lo} z^*(M_S)/{\cal O}_{T_0}^*$.  
Let $S(T)$ be the log scheme $(\os{\circ}{T},(M,\al_T\vert_M))$. 
Since $M/{\cal O}_T^*$ is constant, we can consider the hollowing out  
$S(T)^{\nat}$ of $S(T)$ (\cite[Remark 7]{oc}); 
the log scheme $S(T)^{\nat}$ is a family of log points. 
Set $X_{\os{\circ}{T}_0}:=X_{}\times_SS_{\os{\circ}{T}_0}=
X\times_{\os{\circ}{S}}\os{\circ}{T}_0$
(we can consider $X$ as a fine log scheme over $\os{\circ}{S}$).    
By abuse of notation, we denote by the same symbol $f$ 
the structural morphism 
$X_{\os{\circ}{T}_0} \lo S_{\os{\circ}{T}_0}$. 
\par 
Let $X'_{\os{\circ}{T}_0}=\coprod_{i\in I}X_i$ be the disjoint union 
of an affine open covering of $X_{\os{\circ}{T}_0}$ over $S_{\os{\circ}{T}_0}$ 
($X_i$ is a log open subscheme of $X_{\os{\circ}{T}_0}$). 
Assume that $f(\os{\circ}{X}_{i})$ is contained in 
an affine open subscheme of 
$\os{\circ}{T}=(S_{\os{\circ}{T}_0})^{\circ}$ 
such that the restriction of $M_{S_{\os{\circ}{T}_0}}$ 
to this open subscheme is free of rank $1$. 
Assume also that there exists a solid and \'{e}tale morphism 
$X_{i}\lo {\mab A}_{S_{\os{\circ}{T}_0}}(a,d)$. 
Then, replacing $X_{i}$ by a small log open subscheme of 
$X_{\os{\circ}{T}_0}$, 
we can assume that there exists a log smooth scheme 
$\ol{\cal P}{}'_{i}/\ol{S(T)^{\nat}}$ fitting into the following commutative diagram 
\begin{equation*}
\begin{CD} 
X_{i}@>{\subset}>> \ol{\cal P}{}'_{i}\\
@VVV @VVV \\ 
{\mab A}_{S_{\os{\circ}{T}_0}}(a,d) @>{\subset}>>
{\mab A}_{\ol{S(T)^{\nat}}}(a,d) \\
@VVV @VVV \\ 
S_{\os{\circ}{T}_0}@>{\subset}>> \ol{S(T)^{\nat}},   
\end{CD}
\tag{2.12.3}\label{cd:xnip} 
\end{equation*}
where the morphism 
$\ol{\cal P}{}'_{i}\lo {\mab A}_{\ol{S(T)^{\nat}}}(a,d)$ 
is solid and \'{e}tale ((\ref{lemm:etl})). 
%By the axiom of choice, there exists a map 
%$\tau \col I \lo J$ such that 
%$f(\os{\circ}{X}_{0i}) \subset \os{\circ}{S}_{\tau(i)}$ 
%for all $i$'s.  
Set $\ol{\cal P}{}':=\coprod_{i\in I}\ol{\cal P}{}'_{i}$. 
Set also 
\begin{equation*} 
X_{\os{\circ}{T}_0,n}:={\rm cosk}_0^{X_{\os{\circ}{T}_0}}(X'_{\os{\circ}{T}_0})_n 
\quad (0\leq m \leq N,n\in {\mab N})  
\tag{2.12.4}\label{eqn:iincl}
\end{equation*}   
and 
\begin{equation*} 
\ol{\cal P}_{n}:={\rm cosk}_0^{\ol{S(T)^{\nat}}}(\ol{\cal P}{}')_n\quad (n\in {\mab N}). 
\tag{2.12.5}\label{eqn:iigncl}
\end{equation*}  
Then we have a simplicial SNCL scheme $X_{\os{\circ}{T}_0\bul}$ 
and an immersion 
\begin{equation*}  
X_{\os{\circ}{T}_0\bul} \os{\sus}{\lo} \ol{\cal P}_{\bul} 
\tag{2.12.6}\label{eqn:eixd} 
\end{equation*} 
into a log smooth simplicial log scheme over $\ol{S(T)^{\nat}}$.  
Thus we have obtained the following:

\par 

\begin{prop}\label{prop:xbn}  
The following hold$:$ 
\par 
$(1)$ There exist a \v{C}ech diagram $X_{\os{\circ}{T}_0\bul}$ 
of $X_{\os{\circ}{T}_0}$ and an immersion 
\begin{equation*}  
\begin{CD} 
X_{\os{\circ}{T}_0\bul} 
@>{\sus}>> \ol{\cal P}_{\bul} \\
@VVV @VVV \\
S_{\os{\circ}{T}_0} @>{\subset}>> \ol{S(T)^{\nat}}
\end{CD} 
\tag{2.13.1}\label{eqn:eipxd} 
\end{equation*} 
into a log smooth simplicial log scheme over $\ol{S(T)^{\nat}}$. 
\par 
$(2)$ There exists a \v{C}ech diagram $X_{\os{\circ}{T}_0\bul}$ of $X_{\os{\circ}{T}_0}$ 
and an immersion 
\begin{equation*}  
\begin{CD} 
X_{\os{\circ}{T}_0\bul} 
@>{\sus}>> {\cal P}_{\bul} \\
@VVV @VVV \\
S_{\os{\circ}{T}_0} @>{\subset}>> S(T)^{\nat}
\end{CD} 
\tag{2.13.2}\label{eqn:eolnpxd} 
\end{equation*} 
into a log smooth simplicial log scheme over $S(T)^{\nat}$. 
\end{prop}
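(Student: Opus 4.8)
The plan is to record that the two simplicial immersions are precisely the ones assembled in the paragraph preceding the statement, and then to verify the three properties packaged in the phrase ``log smooth simplicial log scheme'': that the source is a \v{C}ech diagram, that the target is log smooth in each simplicial degree, and that the comparison map is a degreewise immersion over the indicated immersion of base log schemes. For (1) I would take $X_{\os{\circ}{T}_0\bul}:={\rm cosk}_0^{X_{\os{\circ}{T}_0}}(X'_{\os{\circ}{T}_0})$ and $\ol{\cal P}_{\bul}:={\rm cosk}_0^{\ol{S(T)^{\nat}}}(\ol{\cal P}{}')$ as in (\ref{eqn:iincl}) and (\ref{eqn:iigncl}); by construction $X_{\os{\circ}{T}_0\bul}$ is the nerve of the affine covering $X'_{\os{\circ}{T}_0}\lo X_{\os{\circ}{T}_0}$, hence a \v{C}ech diagram. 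In simplicial degree $n$ one has $\ol{\cal P}_n=(\ol{\cal P}{}')^{\times_{\ol{S(T)^{\nat}}}(n+1)}$, and since $\ol{\cal P}{}'=\coprod_i\ol{\cal P}{}'_i$ is log smooth over $\ol{S(T)^{\nat}}$ by the construction (\ref{cd:xnip}) and log smoothness is stable under base change and composition, each $\ol{\cal P}_n$ is log smooth over $\ol{S(T)^{\nat}}$; the face and degeneracy maps are those of the coskeleton, so $\ol{\cal P}_{\bul}$ is a log smooth simplicial log scheme over $\ol{S(T)^{\nat}}$.

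The immersion (\ref{eqn:eipxd}) is then obtained by applying ${\rm cosk}_0$ to the degree-$0$ immersion $X'_{\os{\circ}{T}_0}\os{\sus}{\lo}\ol{\cal P}{}'$ (componentwise $X_i\os{\sus}{\lo}\ol{\cal P}{}'_i$), which lies over $S_{\os{\circ}{T}_0}\os{\sus}{\lo}\ol{S(T)^{\nat}}$. In degree $n$ the resulting morphism factors as $(X'_{\os{\circ}{T}_0})^{\times_{X_{\os{\circ}{T}_0}}(n+1)}\lo(X'_{\os{\circ}{T}_0})^{\times_{\ol{S(T)^{\nat}}}(n+1)}\lo(\ol{\cal P}{}')^{\times_{\ol{S(T)^{\nat}}}(n+1)}$, where the first arrow is a base change of the iterated diagonal of $X_{\os{\circ}{T}_0}$ relative to $\ol{S(T)^{\nat}}$, an immersion since $X_{\os{\circ}{T}_0}\lo\ol{S(T)^{\nat}}$ factors through the separated exact closed immersion $S_{\os{\circ}{T}_0}\os{\sus}{\lo}\ol{S(T)^{\nat}}$, and the second arrow is the $(n+1)$-fold fiber product over $\ol{S(T)^{\nat}}$ of the immersion $X'_{\os{\circ}{T}_0}\os{\sus}{\lo}\ol{\cal P}{}'$. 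As a composite of immersions is an immersion, this proves (1).

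For (2) I would base change the entire picture along the exact closed immersion $S(T)^{\nat}\os{\sus}{\lo}\ol{S(T)^{\nat}}$ obtained by ``killing $t$''. Setting ${\cal P}{}':=\ol{\cal P}{}'\times_{\ol{S(T)^{\nat}}}S(T)^{\nat}$ and ${\cal P}_{\bul}:={\rm cosk}_0^{S(T)^{\nat}}({\cal P}{}')$, log smoothness of each ${\cal P}_n$ over $S(T)^{\nat}$ follows again by base change. Because $X'_{\os{\circ}{T}_0}$ lies over $S_{\os{\circ}{T}_0}\lo S(T)^{\nat}$ and the immersion $X'_{\os{\circ}{T}_0}\os{\sus}{\lo}\ol{\cal P}{}'$ is a morphism over $S(T)^{\nat}\os{\sus}{\lo}\ol{S(T)^{\nat}}$, the universal property of the fiber product yields a factorization $X'_{\os{\circ}{T}_0}\os{\sus}{\lo}{\cal P}{}'$, which is again an immersion since ${\cal P}{}'\os{\sus}{\lo}\ol{\cal P}{}'$ is a closed immersion. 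Applying ${\rm cosk}_0$ exactly as in (1) gives the simplicial immersion (\ref{eqn:eolnpxd}).

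The only genuinely delicate point is the degreewise immersion claim: one must confirm that forming the \v{C}ech nerve over the smaller base ($X_{\os{\circ}{T}_0}$, resp.\ $S(T)^{\nat}$) and comparing it with the nerve over $\ol{S(T)^{\nat}}$ keeps the comparison map an immersion in every simplicial degree. This is exactly where the separatedness of the structural morphisms and the exactness of $S_{\os{\circ}{T}_0}\os{\sus}{\lo}\ol{S(T)^{\nat}}$ enter; everything else is the formal functoriality of ${\rm cosk}_0$ and the stability of log smoothness under fiber products, both of which are routine.
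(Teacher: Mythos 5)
Your proposal is correct and follows exactly the route the paper takes: the paper's ``proof'' is just the construction in the paragraph preceding the statement (the covering $X'_{\os{\circ}{T}_0}$, the local embeddings $\ol{\cal P}{}'_i$ from (\ref{cd:xnip}), and the coskeleta (\ref{eqn:iincl})--(\ref{eqn:iigncl}), with (2) obtained by base change along $S(T)^{\nat}\os{\sus}{\lo}\ol{S(T)^{\nat}}$). You merely spell out the routine degreewise verifications (log smoothness of fiber products, factoring the comparison map through the iterated diagonal) that the paper leaves implicit, and these are all sound.
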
 

\begin{coro}\label{coro:exem} 
Let the notations be as above. 
Then there exist  an 
%$(N,\infty)$-truncated bisimplicial 
immersion 
\begin{equation*}  
\begin{CD} 
X_{\os{\circ}{T}_0\bul} @>{\sus}>> \ol{\cal P}{}^{\rm ex}_{\bul} \\
@VVV @VVV \\
S_{\os{\circ}{T}_0} @>{\subset}>> \ol{S(T)^{\nat}}
\end{CD} 
\tag{2.14.1}\label{eqn:eipexxd} 
\end{equation*} 
into a simplicial strict semistable log formal scheme over 
$\ol{S(T)^{\nat}}$ and an immersion 
\begin{equation*}  
\begin{CD} 
X_{\os{\circ}{T}_0\bul} @>{\sus}>> 
{\cal P}^{\rm ex}_{\bul} \\
@VVV @VVV \\
S_{\os{\circ}{T}_0} @>{\subset}>> S(T)^{\nat}
\end{CD} 
\tag{2.14.2}\label{eqn:eoexpxd} 
\end{equation*} 
into a simplicial formal SNCL scheme over $S(T)^{\nat}$. 
\end{coro}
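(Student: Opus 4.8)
The plan is to produce both immersions by applying the exactification functor $(~)^{\rm ex}$ of (\ref{prop:zpex}) degreewise to the simplicial immersions already furnished by (\ref{prop:xbn}), and then to identify the exactified ambient terms as strict semistable (resp.~formal SNCL) schemes by means of (\ref{prop:nexeo}). Starting from the immersion (\ref{eqn:eipxd}), in each degree the diagram $X_{\os{\circ}{T}_0,\bul}\os{\sus}{\lo}\ol{\cal P}_{\bul}$ restricts to an immersion lying in the category ${\cal C}$ of immersions over $\ol{S(T)^{\nat}}$ considered in (\ref{prop:zpex}). Applying $(~)^{\rm ex}$ termwise produces homeomorphic exact immersions $X_{\os{\circ}{T}_0,\bul}\os{\sus}{\lo}\ol{\cal P}^{\rm ex}_{\bul}$, and (\ref{prop:nexeo}) (2) shows that each ambient term is a strict semistable log formal scheme over $\ol{S(T)^{\nat}}$. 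Repeating the argument with the immersion (\ref{eqn:eolnpxd}) over $S(T)^{\nat}$ and using (\ref{prop:nexeo}) (1) gives the second immersion, with each term ${\cal P}^{\rm ex}_{\bul}$ a formal SNCL scheme over $S(T)^{\nat}$. The local charts needed to bring each degree into the form treated in (\ref{prop:nexeo}) are supplied by (\ref{prop:fsi}) and (\ref{lemm:etl}).

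The step that requires care is checking that these degreewise exactifications really do glue into one $(N,\infty)$-truncated bisimplicial formal scheme, that is, that the face and degeneracy morphisms of $\ol{\cal P}_{\bul}$ lift compatibly to $\ol{\cal P}^{\rm ex}_{\bul}$. I would deduce this from the adjointness asserted in (\ref{prop:zpex}): each structure morphism of $\ol{\cal P}_{\bul}$ is a morphism of immersions over the base compatible with the corresponding structure morphism of $X_{\os{\circ}{T}_0,\bul}$, so by the right-adjointness of $(~)^{\rm ex}$ it induces a unique compatible morphism of exactifications, and the (bi)simplicial identities transport from $\ol{\cal P}_{\bul}$ to $\ol{\cal P}^{\rm ex}_{\bul}$ by the uniqueness clause of the universal property. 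Since the terms of the coskeletal construction (\ref{eqn:iincl}), (\ref{eqn:iigncl}) are built from fiber products, the base-change statement (\ref{prop:xpls}) identifies the exactification of such a fiber product with the fiber product of the exactifications; this is precisely what makes the two index directions — the unbounded \v{C}ech direction giving the ``$\infty$'' and the $N$-truncated direction — fit together coherently after exactification.

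The main obstacle is thus the coherence of degreewise exactification, which is not formal from the pointwise universal property alone and genuinely relies on the base-change compatibility (\ref{prop:xpls}). Everything else is inherited without change: the ambient log smoothness and the \'{e}tale local charts come from (\ref{prop:xbn}) and (\ref{prop:fsi}), the identification of each fiber as strict semistable or SNCL comes from (\ref{prop:nexeo}), and the $(N,\infty)$-truncation of the index range is simply preserved under the degreewise application of $(~)^{\rm ex}$. Once the gluing is in place, the two asserted immersions are read off directly.
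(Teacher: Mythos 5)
Your proposal is correct and matches the intended argument: the paper derives (\ref{coro:exem}) by applying the exactification functor of (\ref{prop:zpex}) termwise to the simplicial immersions of (\ref{prop:xbn}), with the functoriality coming from the right-adjointness of $(~)^{\rm ex}$ and the identification of the exactified terms as strict semistable (resp.~formal SNCL) schemes supplied by (\ref{prop:nexeo}). Your additional appeal to (\ref{prop:xpls}) for the compatibility of the two index directions is a sound way to make the bisimplicial coherence explicit, though the adjointness alone already forces it.
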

Set $X_{\ul{\lam},\os{\circ}{T}_0}:=X_{\ul{\lam}}\times_{\os{\circ}{S}}\os{\circ}{T}_0$ and 
$X_{\ul{\lam},\os{\circ}{T}_0\bul}:=X_{\ul{\lam},\os{\circ}{T}_0}
\times_{X_{\os{\circ}{T}_0}}X_{\os{\circ}{T}_0\bul}$. 
Then we have simplicial log schemes  
$\ol{\cal P}{}^{\rm ex}_{\bul \ul{\lam}}$ and ${\cal P}^{\rm ex}_{\bul \ul{\lam}}$ 
with the following exact immersions: 
\begin{align*} 
X_{\ul{\lam},\os{\circ}{T}_0\bul}\os{\sus}{\lo} {\cal P}^{\rm ex}_{\bul \ul{\lam}}
\os{\sus}{\lo} \ol{\cal P}^{\rm ex}_{\bul \ul{\lam}}.
\end{align*}
Here note that the turns of $\bul$ and $\ul{\lam}$ in 
$\ol{\cal P}{}^{\rm ex}_{\bul \ul{\lam}}$; there does not necessarily exist an immersion 
``$X_{\ul{\lam},\os{\circ}{T}_0}\os{\subset}{\lo} \ol{\cal P}{}^{\rm ex}_{\ul{\lam}}$'' into an  
ideally log smooth immersion over $\ol{S(T)^{\nat}}$: if it exists, then we can use the notation 
$\ol{\cal P}{}^{\rm ex}_{\ul{\lam} \bul}$. 
\par 

We conclude this section by recalling   
the ``mapping degree function'' defined in \cite{nb}.  
\par  
Let $v \col S \lo S'$ be a morphism of families of log points. 
Let $y$ be a point of $\os{\circ}{S}$. 
Let $h\col {\mab N}= M_{S',v(x)}/{\cal O}^*_{S',v(x)}
\lo M_{S,y}/{\cal O}^*_{S,y}={\mab N}$ 
be the induced morphism. 
Let $d\in {\mab N}$ be the image of $1\in {\mab N}$ by $h$.

\begin{defi}[{\bf A special case of \cite[(1.1.42)]{nb}}]\label{defi:ddef} 
We call $\deg(v)_{x}:=d$ 
the {\it $($mapping$)$ degree} of $v$ at $x$. 
We call $\deg(v)\col \os{\circ}{S}\lo {\mab Z}_{\geq 1}$ 
the {\it $($mapping$)$ degree function} of $v$. 
\end{defi}

 \par 
 In the case where $\os{\circ}{S}$ is a formal scheme, 
 we can obtain the analogous results to the results in this section.

\section{PD-Hirsch extensions of dga's}\label{sec:stpd} 
In this section we give the definition of the PD-Hirsch extension of a dga 
and we give fundamental properties of it. 
Any dga in the case of geometric applications in this book 
has an extra structure, which we call 
a differential graded PD-algebra structure. 
We also define the PD-Hirsch extension of it. 

\begin{defi}\label{defi:pd-hirsch}
Let $A$ be a commutative ring with unit element. 
\par 
(1) Let $(B,J,\del)$ be a PD-algebra such that $B$ is an $A$-algebra. 
Denote $\del_i(b)$ by $b^{[i]}$ $(i\in {\mab N}, b\in J)$ for simplicity of notation. 
Let $\Om^{\bul}$ be a dga over $A$ such that each $\Om^q$ $(q\in {\mab Z})$ is 
a $B$-module. Assume that we are given a $B$-linear morphism 
$\psi \col \Om^{\bul}_{B/A}\lo \Om^{\bul}$ of dga's. 
Then we say that $\Om^{\bul}$ is 
a $B$-{\it pd-dga}(={\it PD-differential graded algebra}) 
over $A$ (precisely speaking,  $(B,J,\del)$-{\it pd-dga} with respect to $\psi$)
if $d(b\om)=d\psi(b)\wedge \om +bd\om$ for $b\in B$ and $\om \in \Om^{\bul}$ 
and if   
$d(\psi(b^{[i]}))=b^{[i-1]}d\psi(b)$ for $b\in J$ $(i\geq 1)$.  
\par 
(2) Let $M$ be a projective $A$-module.  
Let $\Gam_A(M)$ be the PD-algebra over $A$ generated by $M$ 
(see \cite[Appendix A]{bob}).  
Let $\Om^{\bul}=(\Om^{\bul},d)$ be a dga over $A$. 
%Let $q$ be an integer. 
Let $\varphi \col  M\lo {\rm Ker}(d\col \Om^1\lo \Om^{2})$ be 
an $A$-linear morphism. 
The {\it PD-Hirsch extension} of $\Om^{\bul}$ by $M$ 
%(precisely speaking $(M,\varphi)$ of degree $1$)  
is, by definition, an injective morphism
\begin{align*} 
\Om^{\bul}=A\otimes_A\Om^{\bul}\os{\sus}{\lo} \Gam_A(M)\otimes_A\Om^{\bul}. 
\tag{3.1.1}\label{ali:gmh}
\end{align*} 
%(The morphism (\ref{ali:gmh}) is indeed injective by \cite[Appendix A (A2), (A3)]{bob}.)
We denote $\Gam_A(M)\otimes_A\Om^{\bul}$ by 
$\Om^{\bul}\langle M\rangle$. 
We denote by $P_k \Gam_A(M)$ 
a submodule of $\Gam_A(M)$  
generated by 
$$m_{i_1}^{[i_1]}\cdots m_{i_r}^{[i_r]} \quad  
(i_1+\cdots +i_r\leq \lfloor \frac{k}{2}\rfloor)
\quad (m_{i_1}, \ldots, m_{i_r}\in M),$$
where $\lfloor?\rfloor$ is the floor function: 
$\lfloor x \rfloor:=\max\{n\in {\mab Z}~\vert~n\leq x\}$ for $x\in {\mab R}$. 
(The filtration $P$ on $\Gam_A(M)$ will be necessary for geometric applications.)  
\end{defi} 

\begin{rema}
(1) If $J=0$, then the notion of the $B$-pd-dga is equal to that of a $B$-dga over $A$.
\par 
(2) When $A$ is a ${\mab Q}$-algebra, the notion of the pd-dga
(resp.~the PD-Hirsch extension) is equivalent to 
that of the dga (resp.~the Hirsch extension).  
\end{rema}

\begin{prop}\label{prop:nmq}
Let $\Om^{\bul}$ be a dga over $A$. 
The $A$-module $\Om^{\bul}\langle M\rangle$ becomes a dga over $A$ naturally. 
If $\Om^{\bul}$ is a $B$-pd-dga, then 
$\Om^{\bul}\langle M\rangle$ becomes a $B$-pd-dga over $A$ naturally. 
\end{prop}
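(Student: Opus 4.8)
The plan is to construct the differential on $\Om^{\bul}\langle M\rangle:=\Gam_A(M)\otimes_A\Om^{\bul}$ from two pieces and then to verify that its square vanishes. First I would view $\Gam_A(M)\otimes_A\Om^{\bul}$ as a graded-commutative $A$-algebra, with $M$ (hence all of $\Gam_A(M)$) placed in degree $0$ and $\Om^{\bul}$ carrying its own grading, so that the total degree is the $\Om$-degree. Since $M$ is projective, the universal property of the free PD-algebra $\Gam_A(M)$ yields a unique PD-derivation $d_{\varphi}\col\Gam_A(M)\lo\Gam_A(M)\otimes_A\Om^1$ extending the $A$-linear map $m\mapsto 1\otimes\varphi(m)$; explicitly $d_{\varphi}(m^{[n]})=m^{[n-1]}\otimes\varphi(m)$. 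I then define the total differential $D$ to be the unique degree $+1$ derivation of the graded algebra restricting to $d_{\varphi}$ on $\Gam_A(M)$ and to $d$ on $1\otimes\Om^{\bul}$, so that $D(\gam\otimes\om)=d_{\varphi}(\gam)\we\om+\gam\otimes d\om$ (with no sign, as $\gam$ has degree $0$). Canonicity of this construction --- the word ``naturally'' --- is immediate from the universal property of $\Gam_A$.

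The one substantive point in part (1) is $D^2=0$. As $D$ is an odd derivation, $D^2$ is an even derivation, so it suffices to check it on $A$-algebra generators. On $1\otimes\om$ with $\om\in\Om^{\bul}$ one gets $D^2=d^2=0$. On $m\in M$ one gets $D^2(m)=D(\varphi(m))=d\varphi(m)=0$, which is exactly where the hypothesis $\varphi(M)\sus{\rm Ker}(d)$ is used. On a divided power $m^{[n]}$ the graded Leibniz rule gives $D^2(m^{[n]})=m^{[n-2]}(\varphi(m)\we\varphi(m))=0$, vanishing because $\Om^{\bul}$ is graded-commutative and $\varphi(m)$ has odd degree. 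Since $\Gam_A(M)$ is generated as an $A$-algebra by the divided powers $m^{[n]}$, this proves $D^2=0$ and hence that $(\Om^{\bul}\langle M\rangle,D)$ is a dga over $A$.

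For part (2), I would first produce the PD-algebra over which $\Om^{\bul}\langle M\rangle$ is a dgpda. The natural choice is $B':=B\otimes_A\Gam_A(M)$ with the tensor-product PD-structure, whose PD-ideal is $J':=J\otimes_A\Gam_A(M)+B\otimes_A\Gam^{+}_A(M)$, where $\Gam^{+}_A(M)$ is the augmentation ideal with its canonical divided powers; the existence and uniqueness of this PD-structure is the standard tensor product of PD-algebras. Each $\Om^q\langle M\rangle=\Gam_A(M)\otimes_A\Om^q$ is a $B'$-module, and the structure morphism $\psi'\col\Om^{\bul}_{B'/A}\lo\Om^{\bul}\langle M\rangle$ is built, via the decomposition $\Om^1_{B'/A}\cong(B'\otimes_B\Om^1_{B/A})\oplus(B'\otimes_{\Gam_A(M)}\Om^1_{\Gam_A(M)/A})$, from $\psi$ on the first summand and from the map underlying $d_{\varphi}$ on the second.

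The remaining and most delicate step is the PD-Leibniz identity $D((b')^{[i]}\om)=(b')^{[i-1]}d\psi'(b')\we\om+(b')^{[i]}d\om$ for $b'\in J'$; this is precisely the assertion that the degree-$0$ restriction of $D$ is a PD-derivation of $B'$ valued in $\Om^1\langle M\rangle$. On the factor $B$ it is the given $B$-dgpda hypothesis on $\Om^{\bul}$, and on the factor $\Gam_A(M)$ it is the defining property $d_{\varphi}(m^{[n]})=m^{[n-1]}d_{\varphi}(m)$. The hard part will be a general element of $J'$, which is of neither pure form: here I expect to use the divided-powers-of-a-sum formula $(x+y)^{[i]}=\sum_{j=0}^{i}x^{[j]}y^{[i-j]}$ together with the compatibility of the two PD-structures, reducing to the formal fact that a derivation restricting to PD-derivations on each tensor factor is itself a PD-derivation. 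Granting this, the identity propagates from generators to all of $J'$ by the Leibniz and additivity rules, establishing that $\Om^{\bul}\langle M\rangle$ is a $B'$-dgpda over $A$.
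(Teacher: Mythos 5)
Your proof is correct, and for the dga part it is essentially the paper's argument in a cleaner package: where you invoke the universal property of $\Gam_A(M)$ (PD-derivations out of $\Gam_A(M)$ correspond to $A$-linear maps on $M$) to get well-definedness of $d_{\varphi}$, the paper verifies the same content by hand, checking the three compatibility relations $d_M((am)^{[i]})=a^id_M(m^{[i]})$, $d_M(m^{[i]}m^{[j]})=\tfrac{(i+j)!}{i!j!}d_M(m^{[i+j]})$ and $d_M((m+n)^{[i]})=\sum_{j_1+j_2=i}d_M(m^{[j_1]}n^{[j_2]})$ directly on the defining relations of the divided power algebra. Your verification of $D^2=0$ on generators matches the paper's step (iv), and you are right to flag both ingredients ($d\varphi(m)=0$ and $\varphi(m)\we\varphi(m)=0$ by graded commutativity), the second of which the paper leaves implicit. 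For the dgpda claim your route is heavier than necessary: you build a new base PD-algebra $B'=B\otimes_A\Gam_A(M)$ with the tensor-product PD-structure and then must verify the PD-Leibniz identity on all of its PD-ideal, which is where your argument gets sketchy. The paper's (implicit) route is simpler: keep $(B,J,\del)$ itself as the base, note that each $\Gam_A(M)\otimes_A\Om^q$ is still a $B$-module through the $\Om^q$ factor, and check $D(b^{[i]}(\gam\otimes\om))=b^{[i-1]}d\psi(b)\we(\gam\otimes\om)+b^{[i]}D(\gam\otimes\om)$ for $b\in J$ by a one-line computation using that $d_M$ is $B$-linear in the $\Om$-variable; this avoids the tensor-product PD-structure entirely. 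Your larger base is also valid (and records the new divided powers as part of the PD-structure), but if you keep it you should complete the reduction of the PD-Leibniz rule on general elements of $J'$ to the two generating cases rather than leaving it as ``I expect to use''.
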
 
\begin{proof} 
%We prove this only for the case where $\Om^{\bul}$ is a $B$-pd-dga. 
We define the degree of 
$\Gam_A(M)\otimes_A\Om^q$ $(q\in {\mab Z})$ as $q$. 
We define the algebra structure of $\Gam_A(M)\otimes_A\Om^{\bul}$ 
as the induced algebra structure by those of $\Gam_A(M)$ and $\Om^{\bul}$.  
We define the following two $A$-linear morphisms as follows: 
\begin{equation*} 
d_M(m^{[i]}):=d_{M,\varphi}(m^{[i]}):=
\begin{cases} 
0 & (i=0), \\
m^{[i-1]}\otimes \varphi (m) & (i\in {\mab Z}_{\geq 1})
\end{cases}
\tag{3.3.1}\label{ali:mdmi} 
\end{equation*}  
and 
\begin{align*} 
&d_{\Om^{\bul}\langle M\rangle}(\sum m^{[i_1]}_1\cdots m^{[i_r]}_r
\otimes \om_{i_1\cdots i_r})\tag{3.3.2}\label{ali:mdMmi} \\
&:=\sum(\sum_{j=1}^rm^{[i_1]}_1\cdots m^{[i_{j-1}]}_{j-1}  m^{[i_{j+1}]}_{j+1}
\cdots m^{[i_r]}_rd_{M}(m^{[i_j]}_j)
\wedge \om_{i_1\cdots i_r}\\
&+m^{[i_1]}_1\cdots m^{[i_r]}_r
\otimes d\om_{i_1\cdots i_r}) \quad (m_1,\ldots, m_r\in M, \om_{i_1\cdots i_r}\in \Om^q). 
\end{align*}  
\par 
We have to check that the following five relations hold: 
\par 
(i) $d_{\Om^{\bul}\langle M\rangle}^2=0$, 
\par 
(ii) 
$d_M((am)^{[i]})=a^id_M(m^{[i]})$ $(a\in A)$, 

\par
(iii) $d_M(m^{[i]}m^{[j]})=\{(i+j)!/i!j!\}d_M(m^{[i+j]})$, 
\par 
(iv) $d_M((m+n)^{[i]})=\sum_{j_1+j_2=i}d_{M}(m^{[j_1]}n^{[j_2]})$, 
\par
(v) $d_{M}((m^{[i]})^{[j]})=\dfrac{(ij)!}{(i!)^jj!}d_{M}(m^{[ij]})$. 
\par
By noting that $\varphi (m)\in {\rm Ker}(d\col  \Om^1\lo \Om^2)$, 
the first relation can be checked. 
The second relation is obvious since $\varphi$ is $A$-linear. 
The third relation is equivalent to the following equality 
\begin{align*} 
m^{[i-1]}m^{[j]}\otimes \varphi (m)+m^{[i]}m^{[j-1]}\otimes \varphi (m)=
\{(i+j)!/i!j!\}m^{[i+j-1]}\otimes \varphi (m), 
\end{align*} 
which is easily checked. 
The fourth relation is equivalent to 
the following equality 
\begin{align*} 
(m+n)^{[i-1]}\otimes \varphi (m+n)=
\sum_{j_1+j_2=i}\{
m^{[j_1-1]}n^{[j_2]}\otimes \varphi (m)+m^{[j_1]}n^{[j_2-1]}\otimes \varphi (n)\}, 
\end{align*} 
which is also easily checked. 
The fifth relation is equivalent to the following equality
\begin{align*} 
(m^{[i]})^{[j-1]}m^{[i-1]}=\dfrac{(ij)!}{(i!)^jj!}m^{[ij-1]}, 
\end{align*} 
which is also easily checked.
\end{proof}

\begin{prop}[{\bf Functoriality of PD-Hirsch extensions}]\label{prop:fc}
Let the notations be as above. 
Let $A\lo A'$ and $B\lo B'$ be morphisms of commutative rings with unit elements. 
Assume that $B'$ is an $A'$-algebra and the following diagram 
\begin{equation*} 
\begin{CD} 
A@>>> A'\\
@VVV @VVV\\
B@>>>B'
\end{CD}
\tag{3.4.1}\label{cd:aapbbp}
\end{equation*} 
is commutative. Let $\Om'{}^{\bul}$ be a dga over $B'$ and 
let $\varphi' \col M\lo {\rm Ker}(\Om'{}^1\lo \Om'{}^2)$ be 
an $A$-linear morphism. 
$($Here we consider $\Om'{}^{\bul}$ as an $A$-module.$)$
Let $(B',J',\del')$ be an analogous PD-algebra to $(B,J,\del)$. 
Let $h\col \Om^{\bul}\lo \Om'{}^{\bul}$ be a morphism of complexes 
fitting into the following commutative diagram 
\begin{equation*} 
\begin{CD} 
M@=M\\
@V{\varphi}VV @VV{\varphi'}V\\
{\rm Ker}(\Om^1\lo \Om^2) @>{h}>>{\rm Ker}(\Om'{}^1\lo \Om'{}^2). 
\end{CD} 
\tag{3.4.2}\label{cd:aammbp}
\end{equation*}
Then $h$, $\varphi$ and $\varphi'$ induce the following natural morphism 
\begin{equation*} 
\Om^{\bul}\langle M\rangle \lo \Om'{}^{\bul}\langle M\rangle.  
\tag{3.4.3}\label{cd:aaombp}
\end{equation*} 
This morphism is compatible with the compositions of 
two {\rm (\ref{cd:aapbbp})}'s and two {\rm (\ref{cd:aammbp})}'s in the obvious sense. 
\end{prop}
\begin{proof} 
We leave the proof to the reader because the proof is straightforward. 
\end{proof}

\begin{exem}\label{exem:ys} 
Let $Y\lo S$ be a morphism of log schemes. 
Let $Y\os{\sus}{\lo} {\mathfrak D}$ be a closed immersion into a log scheme over $S$ 
defined by a PD-ideal sheaf of ${\cal O}_{{\mathfrak D}}$ with PD-structure $[~]$. 
Let $g\col {\mathfrak D}\lo S$ be the structural morphism. 
Let $\Om^{\bul}$ be a sheaf of dga's on ${\mathfrak D}_{\rm zar}$ 
over $g^{-1}({\cal O}_S)$ 
such that each $\Om^q$ $(q\in {\mab Z})$ is 
an ${\cal O}_{{\mathfrak D}}$-module.  
Let $\Om^{\bul}_{[~]}$ be a quotient sheaf of $\Om^{\bul}$ divided by 
the ideal sheaf generated by $a^{[i]}-a^{[i-1]}da$'s for a local section $a$ 
of ${\rm Ker}({\cal O}_{\mathfrak D}\lo {\cal O}_Y)$. 
Let ${\cal M}$ be a locally free $g^{-1}({\cal O}_S)$-module and 
let $\varphi \col {\cal M}\lo {\rm Ker}(\Om^1_{[~]}\lo \Om^2_{[~]})$ be a 
morphism of $g^{-1}({\cal O}_S)$-modules.  
Then, as in (\ref{defi:pd-hirsch}),  we have the notion of 
an ${\cal O}_{\mathfrak D}$-pd-dga and 
the PD-Hirsch extension of $\Om^{\bul}_{[~]}$ by ${\cal M}$.  
\end{exem}

\begin{prop}\label{prop:ex}
The following hold$:$
\par 
$(1)$ Let $s$ be a fine log scheme whose underlying scheme is 
the spectrum of a perfect field $\kap$. 
Let $\star$ be a positive integer $n$ or nothing. 
Let ${\cal W}_{\star}(s)$ be the $($formal$)$ canonical lift of $s$ over 
${\rm Spf}({\cal W}_{\star})$. 
Let $Y/s$ be a log smooth scheme of Cartier type
and let $g\col Y\lo s\os{\sus}{\lo} {\cal W}_{\star}(s)$ be 
the structural morphism. 
Let ${\cal W}_{\star}\Om^{\bul}_Y$ be the log de Rham-Witt dga 
of $Y/{\cal W}_{\star}(s)$. 
$($Here note that each ${\cal W}_{\star}\Om^q_Y$ $(q\in {\mab N})$ 
is a sheaf of ${\cal W}_{\star}({\cal O}_Y)'$-modules by 
the isomorphism ${\cal W}_{\star}({\cal O}_Y)'\os{\sim}{\lo} {\cal W}_{\star}\Om^0_Y$, 
where ${\cal W}_{\star}({\cal O}_Y)'$ is the obverse Witt sheaf of $Y$ 
in the sense of {\rm \cite[(7.8)]{ndw}} and ${\cal W}_{\star}({\cal O}_Y)'$ has the 
PD-ideal sheaf $V{\cal W}_{\star}({\cal O}_Y)'.)$ 
Then ${\cal W}_{\star}\Om^{\bul}_Y$ is a sheaf of 
${\cal W}_{\star}({\cal O}_Y)'$-pd-dga's over $g^{-1}({\cal W}_{\star})$. 
\par 
$(2)$ Let the notations be as in $(1)$. 
Assume that $s$ is the log point of $\kap$. 
Let ${\cal W}_{\star}\wt{\Om}^{\bul}_Y$ be 
the log de Rham-Witt complex defined in 
{\rm \cite{msemi}} and {\rm \cite{ndw}} $($cf.~{\rm \cite{hyp})}. 
Then ${\cal W}_{\star}\wt{\Om}^{\bul}_Y$ is a sheaf of 
${\cal W}_{\star}({\cal O}_Y)'$-pd-dga's over 
$g^{-1}({\cal W}_{\star})$. 
\par 
$(3)$ Let $(T,{\cal J},\del)$ be a fine log PD-scheme. 
Let $Y\os{\sus}{\lo} {\cal Y}$ be an immersion of fine log schemes  
over $T$ and let $g\col {\cal Y}\lo T$ be the structural morphism.  
Let ${\mathfrak E}$ be the log PD-envelope over $(T,{\cal J},\del)$. Then 
${\cal O}_{\mathfrak E}\otimes_{{\cal O}_{\cal Y}}\Om^{\bul}_{{\cal Y}/U}$ 
for $U=T$ or $\os{\circ}{T}$ is 
a sheaf of ${\cal O}_{\mathfrak E}$-pd-dga's over $g^{-1}({\cal O}_T)$. 
\end{prop}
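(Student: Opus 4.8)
The plan is to check, in each of the three cases, the two defining relations of Definition \ref{defi:pd-hirsch} (1) for the appropriate triple $(B,J,\del)$, dga $\Om^\bul$, and canonical morphism $\psi$. In every case one has $\Om^0=B$, so the second relation $d(b\om)=d\psi(b)\we\om+b\,d\om$ for $b\in B$ is just the Leibniz rule of the dga together with the compatibility $\psi(d_{B/A}b)=d\psi(b)$, and is automatic. Hence the entire content lies in the first relation; since $b^{[i]}$ sits in degree $0$, the Leibniz rule reduces it to the single PD-differential identity
\begin{align*}
d(b^{[i]})=b^{[i-1]}\,d\psi(b)\quad (b\in J,\ i\geq 1).
\tag{$\ast$}
\end{align*}
So the whole proof amounts to establishing $(\ast)$ for the three PD-ideals in play. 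Moreover $(\ast)$ is additive in $b$ (if it holds for $x$ and $y$ it holds for $x+y$, using $\gam_n(x+y)=\sum_{i+j=n}\gam_i(x)\gam_j(y)$ together with the Leibniz rule), so it is enough to verify it on a generating family of local sections of $J$.

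For (3) I would take $B={\cal O}_{\mathfrak E}$ with its PD-ideal $J$ (the one defining $Y$ in ${\mathfrak E}$, compatible with $\del$), $A=g^{-1}({\cal O}_T)$, $\Om^\bul={\cal O}_{\mathfrak E}\otimes_{{\cal O}_{\cal Y}}\Om^\bul_{{\cal Y}/U}$, and $\psi$ the natural map $\Om^\bul_{{\cal O}_{\mathfrak E}/{\cal O}_T}\lo \Om^\bul$. Here $(\ast)$ is precisely the standard formula for the differential of the linearized PD-de Rham complex of a log PD-envelope, i.e.\ the PD-connection on ${\cal O}_{\mathfrak E}$; I would invoke it from \cite{bb} and \cite{bob}, where the PD-Poincar\'e lemma and this PD-de Rham structure are developed. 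This is routine PD-calculus and presents no obstacle.

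For (1) and (2) I would take $B={\cal W}_\star({\cal O}_Y)'$ with PD-ideal $J=V{\cal W}_\star({\cal O}_Y)'$ and divided powers $\gam_n(Va)=\tfrac{p^{n-1}}{n!}V(a^n)$, $A=g^{-1}({\cal W}_\star)$, $\Om^\bul={\cal W}_\star\Om^\bul_Y$ (resp.~${\cal W}_\star\wt{\Om}^\bul_Y$), and $\psi$ the canonical dga morphism induced by $d\col {\cal W}_\star\Om^0_Y\lo {\cal W}_\star\Om^1_Y$. Since every local section of $J$ is of the form $V(a)$, additivity reduces us to $(\ast)$ for $b=V(a)$. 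Using the de Rham--Witt relations of \cite{hyp}, \cite{msemi}, \cite{ndw} (valid for both ${\cal W}_\star\Om^\bul$ and ${\cal W}_\star\wt{\Om}^\bul$), the projection formula combined with $FdV=d$ gives $V(a^{i-1})\,dV(a)=V(a^{i-1}da)$, and the relation $V(d\om)=p\,dV(\om)$ gives $p\,dV(a^i)=V(d(a^i))=iV(a^{i-1}da)$. Combining these (the case $i=1$ being immediate):
\begin{align*}
d\gam_i(Va)=\frac{p^{i-1}}{i!}\,dV(a^i)=\frac{p^{i-2}}{(i-1)!}V(a^{i-1}da)
=\frac{p^{i-2}}{(i-1)!}V(a^{i-1})\,dV(a)=\gam_{i-1}(Va)\,dV(a),
\end{align*}
which is exactly $(\ast)$, since $d\psi(Va)=dV(a)$ and $\tfrac{p^{i-2}}{(i-1)!}V(a^{i-1})=\gam_{i-1}(Va)$.

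The main obstacle I anticipate is precisely this last computation. Because ${\cal W}_\star\Om^\bul_Y$ carries $p$-torsion, one may \emph{not} simply write $\gam_i(Va)=V(a)^i/i!$ and differentiate and then cancel $i!$; the identity must instead be forced through the torsion-robust relations $V(d\om)=p\,dV(\om)$ and the projection formula. Checking that these relations hold verbatim in the $\os{\circ}{s}$-version ${\cal W}_\star\wt{\Om}^\bul_Y$ (where the extra $1$-form $\theta$ is present) is the only point demanding care, and it is supplied by \cite{ndw}; once that is granted, (1) and (2) are handled by the identical argument.
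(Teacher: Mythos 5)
Your overall strategy is the paper's: reduce everything to the single PD-differential identity $d(b^{[i]})=b^{[i-1]}\,d\psi(b)$, handle (3) via the PD-de Rham complex of the log PD-envelope (the paper invokes the isomorphism $\Om^{\bul}_{{\mathfrak E}/U,[~]}\os{\sim}{\lo}{\cal O}_{\mathfrak E}\otimes_{{\cal O}_{\cal Y}}\Om^{\bul}_{{\cal Y}/U}$, which is what your appeal to \cite{bb} and \cite{bob} amounts to), and verify the identity for (1)--(2) on sections $Va$ of $V{\cal W}_{\star}({\cal O}_Y)'$ using the de Rham--Witt relations. Part (3) and all the reduction steps are fine.

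The gap is in your displayed chain for (1)--(2). The second equality,
$$\frac{p^{i-1}}{i!}\,dV(a^i)=\frac{p^{i-2}}{(i-1)!}V(a^{i-1}da),$$
is precisely the step that is \emph{not} torsion-robust: from $p\,dV(a^i)=V(d(a^i))=iV(a^{i-1}da)$ you can only conclude that the two sides agree after multiplication by $i!$ (both then equal $p^{i-1}dV(a^i)$); passing from this to the equality itself means cancelling $i!$, i.e.\ dividing by $i$, which is illegitimate in ${\cal W}_{n}\Om^{\bul}_Y$ because that sheaf has $p$-torsion. As written your chain therefore proves only $i!\,\bigl(d\gam_i(Va)-\gam_{i-1}(Va)\,dV(a)\bigr)=0$, which is exactly where the naive argument you rightly reject also stops. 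The paper closes this by a two-step maneuver you never carry out: the identity is first established in the infinite-level complex ${\cal W}\Om^{\bul}_Y$, which is $p$-torsion-free by \cite[(6.8)]{ndw}, so that $i!$ can be cancelled there, and is then pushed down to ${\cal W}_{\star}\Om^{\bul}_Y$ using the surjectivity of the projection ${\cal W}\Om^q_Y\lo{\cal W}_{\star}\Om^q_Y$ (\cite[(6.4.5)]{ndw}) and its compatibility with $d$, the product and the divided powers. You need to insert this lift-and-project argument (or an honestly integral derivation of that second equality); until then the key identity is unproved at finite level, and the same caveat applies verbatim to ${\cal W}_{\star}\wt{\Om}^{\bul}_Y$ in (2).
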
 
\begin{proof} 
(1): It is well-known that 
${\cal W}_{\star}\Om^{\bul}_Y$ is a sheaf of dga's over 
$g^{-1}({\cal W}_{\star})$. 
As in the proof of \cite[(4.19)]{hk}, we can easily check 
that ${\cal W}_{\star}\Om^{\bul}_Y$ is a 
${\cal W}_{\star}({\cal O}_Y)'$-pd-dga over 
$g^{-1}({\cal W}_{\star})$ as follows. 
\par 
Let $a$ be a section of $V{\cal W}_{\star}({\cal O}_Y)'$. 
Then 
\begin{align*} 
i!d(a^{[i]})=da^i=ia^{i-1}da=i!a^{[i-1]}da.
\end{align*}  
Because ${\cal W}\Om^q_Y$ is a torsion-free sheaf 
(\cite[(6.8)]{ndw}), 
$d(a^{[i]})=a^{[i-1]}da$ in ${\cal W}\Om^1_Y$.
Because the natural projection ${\cal W}\Om^q_Y
\lo {\cal W}_{\star}\Om^q_Y$ is surjective 
(\cite[(6.4.5)]{ndw}), 
$d(a^{[i]})=a^{[i-1]}da$ in ${\cal W}_{\star}\Om^1_Y$.
\par 
(2): The proof of (2) is the same as that of (1).  
\par 
(3): By \cite[(1.3.28)]{nb}(=the log version of \cite[0 (3.1.6)]{idw}) 
\begin{align*} 
\Om^{\bul}_{{\mathfrak E}/U, [~]}
\os{\sim}{\lo} 
{\cal O}_{\mathfrak E}
\otimes_{{\cal O}_{{\cal Y}}}\Om^{\bul}_{{\cal Y}/U}. 
\tag{3.6.1}\label{ali:fwniwu}
\end{align*}  
(3) follows from this equality. 
\end{proof} 

\begin{exem}\label{exem:hipd}
(1) (\cite{kiha}) Let the notations be as in (\ref{prop:ex}) (2). 
Let $u$ be a variable over ${\cal W}_{\star}$  
and consider the free ${\cal W}_{\star}$-module  
${\cal W}_{\star}u$  over ${\cal W}_{\star}$ of rank 1. 
Here we define the degree of $u$ as $0$. 
Consider the following morphism 
\begin{align*} 
\varphi \col {\cal W_{\star}}u
\owns u\lom \theta\in 
{\rm Ker}(d\col {\cal W}_{\star}\wt{\Om}^1_Y\lo {\cal W}_{\star}\wt{\Om}^2_Y),
\tag{3.7.1}\label{ali:dtwy}
\end{align*}  
where $\theta=$``$d\log t$'' is a 1-form defined in \cite{hyp}, \cite{msemi}  and \cite{ndw}.   
Set 
\begin{align*} 
{\cal W}_{\star}\wt{\Om}^{\bul}_Y\langle u\rangle 
:={\cal W}_{\star}\wt{\Om}^{\bul}_Y\langle {\cal W}_{\star} u\rangle.
%={\cal W}_{\star}\langle  u\rangle\otimes_{{\cal W}_{\star}}
%{\cal W}_{\star}\wt{\Om}^{\bul}_Y.
\end{align*}  
(Strictly speaking, we should denote 
${\cal W}_{\star}\wt{\Om}^{\bul}_Y\langle {\cal W}_{\star} u\rangle$ 
by 
${\cal W}_{\star}\wt{\Om}^{\bul}_Y\langle g^{-1}({\cal W}_{\star} u)\rangle$.)
%Let $M_{{\cal W}_{\star}(s)}$ be the log structure of ${\cal W}_{\star}(s)$. 
%Let $\tau$ be a global(=local) section of  $M_{{\cal W}_{\star}(s)}$ such that 
%the image of $\tau$ in 
%$M_{{\cal W}_{\star}(s)}/{\cal O}^*_{{\cal W}_{\star}(s)}={\mab N}$ 
%is the generator. 
%Then $d\log \tau\in \Om^1_{{\cal W}_{\star}(s)/{\cal W}_{\star}}$ 
%is a well-defined local section. 
%By abuse of notation, denote $g^*(d\log \tau)$ by $d\log \tau$. 
%Consider the morphism 
%\begin{align*} 
%{\cal W}_{\star} u\owns u\lom d\log \tau\in {\cal W}_{\star}\wt{\Om}^1_Y
%\end{align*} 
%of ${\cal W}_{\star}$-modules. 
Then we obtain the following PD-Hirsch extension 
\begin{align*} 
{\cal W}_{\star}\wt{\Om}^{\bul}_Y\os{\sus}{\lo} 
{\cal W}_{\star}\wt{\Om}^{\bul}_Y\langle u\rangle.  
\end{align*}
\par 
(2) Let the notations be as in (\ref{prop:ex}) (3). 
Let $r$ be a positive integer. 
Assume that $\ol{M}_T:=M_T/{\cal O}_T^*$ 
is locally isomorphic to ${\mab N}^r$ $(r\in {\mab N})$, 
where $r$ does {\it not} depend on local open log subschemes of $T$. 
(If $T$ is a family of log points of virtual dimension $r$ defined in \cite[(1.1)]{nb}, 
then $\ol{M}_T$ is locally isomorphic to ${\mab N}^r$.) 
Let us consider the following morphism 
\begin{align*} 
d\log \col g^{-1}(M_T)\owns m \lom d\log m \in \Om^1_{{\cal Y}/\os{\circ}{T}}. 
\end{align*} 
Since $d\log a=0$ $(a\in {\cal O}_T^*)$ in $\Om^1_{{\cal Y}/\os{\circ}{T}}$, 
the morphism $d\log$ induces the following morphism 
\begin{align*} 
d\log \col g^{-1}(\ol{M}_T)\owns m \lom d\log m \in \Om^1_{{\cal Y}/\os{\circ}{T}}. 
\end{align*} 
%This simple remark is important in this book. 
\par 
Let $t_1,\ldots, t_r$ be the local generators of $\ol{M}_T$. 
Set 
\begin{align*} 
U_T:=\bigoplus_{i=1}^r{\cal O}_Tu_i, 
\tag{3.7.2}\label{ali:utrti} 
\end{align*} 
where $u_i$'s are independent formal variables over ${\cal O}_T$.   
Here the variable $u_i$ corresponds to $t_i$. 
(We consider $u_i$ as $\log t_i$ in our mind.)  
Note that the multiplication in $M_T$ by a local section of ${\cal O}_T^*$ 
and the multiplication in $U_T$ by a local section of 
${\cal O}_T$ is essentially different. 
%This remark is also important. 
%If $U$ is a log scheme over $\os{\circ}{T}$, 
The local section 
$g^*(d\log t_i)\in \Om^1_{{\cal Y}/\os{\circ}{T}}$ 
is a well-defined 1-form on ${\cal Y}$.  
%and it is globlized. 
By abuse of notation, denote $g^*(d\log t_i)$ by $d\log t_i$. 
Since ${\rm Aut}({\mab N}^r)={\mathfrak S}_r$ (\cite[p.~47]{nh3}), the sheaf 
$U_T$ is a well-defined ${\cal O}_T$-module. 
%Let $M$ be a free ${\cal O}_T$-module with basis $u_1,\ldots, u_r$. 
Here we define the degree of $U_T$ as $0$. 
%Denote the $t_i$ in $U_T$ by another letter $u_i$
%(because we think of $t_i$ as $\log t_i$, which is different from $t_i$). 
%$u_i$ $(1\leq i\leq r)$ as $0$. 
Consider the following morphism 
\begin{align*} 
\varphi \col U_T \owns \sum_{i=1}^ra_iu_i\lom \sum_{i=1}^ra_id\log t_i\in 
{\cal O}_{\mathfrak E}\otimes_{{\cal O}_{{\cal Y}}}\Om^1_{{\cal Y}/\os{\circ}{T}} \quad 
(a_i\in {\cal O}_T). 
\end{align*}
%Set 
%$${\cal O}_{\mathfrak E}\otimes_{{\cal O}_{{\cal Y}}}\Om^{\bul}_{{\cal Y}/\os{\circ}{T}}
%\langle u_1,\ldots, u_r\rangle 
%:={\cal O}_{\mathfrak E}\otimes_{{\cal O}_{{\cal Y}}}
%\Om^{\bul}_{{\cal Y}/\os{\circ}{T}}\langle M\rangle.$$ 
Then we have the following PD-Hirsch extension 
\begin{align*} 
{\cal O}_{\mathfrak E}\otimes_{{\cal O}_{{\cal Y}}}\Om^{\bul}_{{\cal Y}/\os{\circ}{T}}
\os{\sus}{\lo} 
{\cal O}_{\mathfrak E}\otimes_{{\cal O}_{{\cal Y}}}\Om^{\bul}_{{\cal Y}/\os{\circ}{T}}
\langle U_T\rangle. 
%\langle u_1,\ldots, u_r\rangle.  
\end{align*}
(Strictly speaking, we should denote $U_T$ by $g^{-1}(U_T)$.)
The sheaf $\Gam_{{\cal O}_T}(U_T)$ is non-canonically locally isomorphic to  
${\cal O}_{{\mathfrak D}(\ol{T})}$, 
where $\ol{T}$ is a log scheme constructed in \cite[\S2]{nb} and 
${\mathfrak D}(\ol{T})$ is the log PD-envelope of the exact closed immersion 
$T\os{\sus}{\lo} \ol{T}$ over the PD scheme $(\os{\circ}{T},(0),[~])$. 
%Consider the ``degree $1$-part''  $\Gam_{{\cal O}_T,1}(U_T)\simeq U_T$ of 
%$\Gam_{{\cal O}_T}(U_T)$. 
%Set 
%\begin{align*} 
%U_T:=\Gam_{{\cal O}_T,1}(U_T). 
%\tag{3.7.2}\label{ali:fwstwu}
%\end{align*}
%Following the traditional notation ``$[u]$'' in \cite{fup} 
%for the Hirsch extension or 
%``$\langle u\rangle$'' in \cite{kiha} as in (1), 
%we denote  
%${\cal O}_{\mathfrak E}\otimes_{{\cal O}_{{\cal Y}}}\Om^{\bul}_{{\cal Y}/\os{\circ}{T}}
%\langle U_T\rangle$ 
%by 
%${\cal O}_{\mathfrak E}\otimes_{{\cal O}_{{\cal Y}}}\Om^{\bul}_{{\cal Y}/\os{\circ}{T}}
%\langle U_T\rangle$ 
%by abuse of notation in the following.
\par 
In the case of characteristic $0$, this (PD-)Hirsch extension has appeared in 
\cite{fut} and \cite{fup}. 
%when $U_T$ is free.  
In this book this PD-Hirsch extension for the case $r=1$ plays an important role.
In \cite{nf} we have considered the case where $r$ is general and we give 
an important complementary result to \cite{fut}.  
\par 
%(3) More generally, let $T$ be an fs log scheme over a log scheme $U$ such that 
%$\ol{M}_T:=M_T/{\cal O}_T^*$ is locally isomorphic to ${\mab N}^r$ $(r\in {\mab N})$, 
%where $r$ depends on local open log subscheme of $T$. 
%Let $t_1,\ldots, t_r$ be local generators of $\ol{M}_T$. 
%Set $U_T:=\bigoplus_{i=1}^r{\cal O}_Tt_i$. As in (2), we can see that the sheaf 
%$U_T$ is a well-defined ${\cal O}_T$-module  (cf.~the proof of \cite[(3.3)]{nf}). 
%If $U$ is a log scheme over $\os{\circ}{T}$, 
%then $d\log t_i\in \Om^1_{T/U}$ is a well-defined 1-form on $T_{\rm zar}$. 
%Let the notations be as in (\ref{prop:ex}) (3). 
%Then, as in (2), we can define the following PD-Hirsch extension 
%\begin{align*} 
%{\cal O}_{\mathfrak E}\otimes_{{\cal O}_{{\cal Y}}}\Om^{\bul}_{{\cal Y}/\os{\circ}{T}}
%\os{\sus}{\lo} 
%{\cal O}_{\mathfrak E}\otimes_{{\cal O}_{{\cal Y}}}\Om^{\bul}_{{\cal Y}/\os{\circ}{T}}
%\langle U_T\rangle. 
%\end{align*}
\par 
(3)  The following observation will be important in this book. 
\par
%Let ${\mab N}^{r'}\lo {\mab N}^r$ be the induced morphism by the morphism 
%$S\lo S'$ at any point of $x$ of $S$. 
Let $S$ and $S'$ be (formal) families 
of log points of virtual dimensions $r$ and $r'$, respectively (\cite[\S2]{nb}).  
Let $v\col S\lo S'$ be a morphism of (formal) families  of log points. 
This morphism induces a morphism $v^*\col \ol{M}_{S'}\lo v_*(\ol{M}_{S})$.  
Locally on $S$, this morphism  is equal to a morphism 
$v^*\col {\mab N}^{r'}\lo {\mab N}^r$. 
Set 
$e_i=(0, \ldots,0,\os{i}{1},0,\ldots, 0)\in {\mab N}^s$ for $s=r$ or $r'$  
$(1\leq i\leq r)$. 
Let $A=(a_{ij})_{1\leq i\leq r, 1\leq j\leq r'} \in M_{rr'}({\mab N})$ 
be the representing matrix of $v^*$: 
$$v^*(e_1,\ldots,e_{r'})=(e_1,\ldots,e_r)A.$$ 
Let $\wt{t}_1,\ldots,\wt{t}_r$ and $\wt{t}{}'_1,\ldots, \wt{t}{}'_{r'}$ 
be local sections of $M_S$ and $M_{S'}$ 
whose images in $\ol{M}_S\os{\sim}{\lo} {\mab N}^r$ and 
$\ol{M}_{S'}\os{\sim}{\lo}{\mab N}^{r'}$ are 
$e_1,\ldots,e_r$ and $e_1,\ldots,e_{r'}$, respectively. 
Then there exists a local section $b_j\in {\cal O}_{S}^*$ 
$(1\leq j\leq r')$ such that 
\begin{align*} 
v^*(\wt{t}{}'_j)=b_j\wt{t}{}^{a_{1j}}_1\cdots \wt{t}{}^{a_{rj}}_r. 
\tag{3.7.3}\label{ali:vta} 
\end{align*}
Let $t_i$ and $t_i'$ be the images of $\wt{t}_i$ and $\wt{t}{}'_i$ in 
$\ol{M}_S$ and $\ol{M}_{S'}$, respectively. 
Let $u_1,\ldots, u_r$ and $u'_1,\ldots,u'_{r'}$ be 
the corresponding local sections 
to $t_1,\ldots, t_r$ and $t'_1,\ldots,t'_{r'}$ of 
$U_S$ and $U_{S'}$, respectively. Then we define 
an ${\cal O}_{S'}$-linear morphism 
$v^*\col \Gam_{{\cal O}_{S'}}(U_{S'})\lo v_*(\Gam_{{\cal O}_S}(U_S))$
by the following formula: 
\begin{align*} 
v^*(u'_j)=a_{1j}u_1+\cdots+a_{rj}u_r.
\tag{3.7.4}\label{ali:vua} 
\end{align*}
and by $\Gam(v^*)$ for $v^*$ in (\ref{ali:vua}). 
Since ${\rm Aut}({\mab N}^s)={\mathfrak S}_s$, 
the morphism $v^*\col \Gam_{{\cal O}_{S'}}(U_{S'})\lo v_*(\Gam_{{\cal O}_S}(U_S))$
is independent of the choice of the local isomorphisms 
$\ol{M}_{S'}\simeq {\mab N}^{r'}$ and $\ol{M}_{S}\simeq {\mab N}^r$. 
In conclusion, 
we obtain the following well-defined morphism 
\begin{align*} 
v^*\col \Gam_{{\cal O}_{S'}}(U_{S'})\lo 
v_*(\Gam_{{\cal O}_S}(U_S))
\tag{3.7.5}\label{ali:kxvef}
\end{align*} 
of sheaves of commutative rings of unit elements on $S'$.  
This morphism satisfies the usual transitive relation 
\begin{align*} 
(v\circ v')^*=v'{}^*v^*.
\tag{3.7.6}\label{ali:kxbvef}
\end{align*} 
(In the case $r=r'=1$, $a_{11}={\rm deg}(v)$ in (\ref{defi:ddef}).)
It is easy to check that the following diagram is commutative: 
\begin{equation*}
\begin{CD}
\Gam_{{\cal O}_{S'}}(U_{S'})@>{v^*}>> v_*(\Gam_{{\cal O}_S}(U_S))\\
@V{d}VV @VV{d}V\\
\Om^1_{S'/\os{\circ}{S}{}'}\langle U_{S'}\rangle @>{v^*}>> 
v_*(\Om^1_{S/\os{\circ}{S}{}}\langle U_{S}\rangle).
\end{CD}
\tag{3.7.7}\label{cd:cucp}
\end{equation*}
Indeed, 
\begin{align*} 
v^*d(u'{}^{[l]}_j)&=v^*u'{}^{[l-1]}_jd\log t'_j =
v^*u'{}^{[l-1]}_jd\log (b_jt^{a_{1j}}_1\cdots t^{a_{rj}}_r)\\
&=v^*(u'{}^{[l-1]}_j)(a_{1j}d\log t_1+\cdots +a_{rj}d\log t_r)\\
&=(a_{1j}u_1+\cdots+a_{rj}u_r)^{[l-1]}
(a_{1j}d\log t_1+\cdots +a_{rj}d\log t_r)
\end{align*} 
and 
\begin{align*} 
dv^*(u'{}^{[l]}_j)&=d(a_{1j}u_1+\cdots+a_{rj}u_r)^{[l]}\\
&=(a_{1j}u_1+\cdots+a_{rj}u_r)^{[l-1]}
(a_{1j}d\log t_1+\cdots+a_{rj}d\log t_r). 
\end{align*} 
\par 
The remarks (2) and (3) suggest to us that, when 
we consider the (PD-)Hirsch extensions in geometric cases, 
we have only to consider local sections of $\ol{M}_S$;  
it is not necessary to consider local sections of $M_S$.  
\end{exem} 

%\begin{lemm}\label{lemm:ama}
%Let $d\col \Gam_A(M)\lo \Om^1_{\Gam_A(M)/A}$
%be the usual derivative. Then $d(x^{n]})=x^{[n-1]}dx$. 
%\end{lemm}
%\begin{proof} 
%If $A$ is torsion-free then $n!d(x^{n]})=dx^n=dx^n=nx^{n-1}dx=n!x^{[n-1]}dx$. 
%Hence $d(x^{n]})=x^{[n-1]}dx$. 
%Let $B\lo A$ be a surjective morphism of commutative rings such that $B$ is torsion-free. 
%Then $\Gam_A(M)=\Gam_B(M)\otimes_BA$. 
%Hence $\Om^1_{\Gam_A(M)/A}=\Om^1_{\Gam_B(M)/B}
%\otimes_BA$ (\cite[Appendix A (A3)]{bob}). 
%\end{proof} 

Inspired by \cite{ey}, we give the following definition: 

\begin{defi}
Let the notations be as in (\ref{defi:pd-hirsch}) (2). 
Set 
\begin{align*} 
\Gam_A(M)^{\wedge}:=\vpl_n\Gam_{A}(M)/\Gam_{A,n}(M)
=
\vpl_n\Gam_A(M)/(x^{[n]}~\vert~x\in M)
\end{align*}
and 
\begin{align*} 
\Om^{\bul}\langle \langle M\rangle \rangle
:=\Gam_A(M)^{\wedge}\wh{\otimes}_A\Om^{\bul}:=
\vpl_n(\Gam_A(M)\otimes_A\Om^{\bul}/x^{[n]}\Om^{\bul}). 
\end{align*}
We call the following injective morphism 
\begin{align*} 
\Om^{\bul}\os{\sus}{\lo} 
\Om^{\bul}\langle \langle M\rangle \rangle
\end{align*}
the {\it completed PD-Hirsch extension} of $\Om^{\bul}$ by $(M,\varphi)$ 
(or simply by $M$). 
\end{defi}

The $A$-algebra $\Gam_A(M)^{\wedge}$ 
has a natural decreasing filtration $F$: 
\begin{align*}
F^i(\Gam_A(M)^{\wedge}):=
\{\sum a_{e_1\cdots e_k,i_1\cdots i_k}m^{[e_1]}_{i_1}\cdots m^{[e_k]}_{i_k}&~\vert~
a_{e_1\cdots e_k,i_1\cdots i_k}\in A, \\
&m_{i_1},\ldots m_{i_k}\in M, e_1+\cdots +e_k\geq i\} \quad (i\in {\mab Z}).
\end{align*} 
This filtration induces an increasing filtration 
$\Gam_A(M)^{\wedge}\wh{\otimes}_A\Om^{\bul}$ which we denote by $F$ by abuse of notation: 
\begin{align*}
F_i(\Om^{\bul}\langle \langle M\rangle \rangle):=F^{-i}(\Gam_A(M)^{\wedge})
\wh{\otimes}_A\Om^{\bul} \quad (i\in {\mab Z}).
\tag{3.8.1}\label{ali:film}
\end{align*} 
Note that $F$ is indeed a filtration on the complex 
$\Om^{\bul}\langle \langle M\rangle \rangle$. 
This filtration also induces an increasing filtration $F$ 
on  $\Om^{\bul}\langle M\rangle$.

\begin{prop}\label{prop:mom}
Let $\Om^{\bul}$ be a dga over $A$. 
The $A$-module $\Om^{\bul}\langle \langle M\rangle \rangle$ becomes a dga over $A$ naturally. 
If $\Om^{\bul}$ is a $B$-pd-dga, then 
$\Om^{\bul}\langle \langle M\rangle \rangle$ becomes a pd-dga over $A$ naturally. 
\end{prop}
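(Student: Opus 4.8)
The plan is to deduce this from \ref{prop:nmq} by checking that every piece of structure constructed there on the finite PD-Hirsch extension $\Om^{\bul}\langle M\rangle=\Gam_A(M)\otimes_A\Om^{\bul}$ is compatible with the completion along the decreasing filtration $\{\Gam_{A,n}(M)\}_n$. First I would record the standard fact (see \cite[Appendix A]{bob}) that, writing $\Gam_A(M)=\bigoplus_{m\geq 0}\Gam^m_A(M)$ for the PD-degree decomposition, the ideal $\Gam_{A,n}(M)$ generated by the $x^{[n]}$ $(x\in M)$ coincides with $\bigoplus_{m\geq n}\Gam^m_A(M)$; hence $\Gam_A(M)^{\wedge}=\vpl_n\Gam_A(M)/\Gam_{A,n}(M)$ is the product $\prod_{m\geq 0}\Gam^m_A(M)$, and an element of $\Om^{\bul}\langle\langle M\rangle\rangle$ may be presented as a family $(c_m)_{m\geq 0}$ with $c_m\in \Gam^m_A(M)\otimes_A\Om^{\bul}$.

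Next I would extend the algebra structure. Since $\Gam_{A,m}(M)\cdot \Gam_{A,n}(M)\subset \Gam_{A,m+n}(M)$, the product on $\Gam_A(M)$ is continuous for the filtration topology, and its PD-degree-$k$ component involves only the finitely many pairs $(i,j)$ with $i+j=k$. Thus it extends uniquely to a product on $\Gam_A(M)^{\wedge}$, and $\Om^{\bul}\langle\langle M\rangle\rangle$ inherits the graded-commutative algebra structure exactly as in the proof of \ref{prop:nmq}.

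The key point, and the one requiring care, is the differential. The formulas (\ref{ali:mdmi}) and (\ref{ali:mdMmi}) show that $d_M$ lowers the PD-degree by exactly one while the $\Om^{\bul}$-differential $d$ preserves it; hence the full differential maps $\Gam_{A,n}(M)\otimes_A\Om^{\bul}$ into $\Gam_{A,n-1}(M)\otimes_A\Om^{\bul}$. This does \emph{not} preserve the filtration, so one cannot simply say that the differential descends to each quotient $\Gam_A(M)/\Gam_{A,n}(M)\otimes_A\Om^{\bul}$ and pass to the inverse limit. Instead I would define the differential on a family $(c_m)_m$ componentwise, so that its PD-degree-$k$ component is $d(c_k)+d_M(c_{k+1})$, a finite sum; the result is again a well-defined family, hence a genuine element of $\Om^{\bul}\langle\langle M\rangle\rangle$. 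This is where the boundedness of the degree shift — by exactly $-1$, with PD-degrees bounded below by $0$ — is essential, and I expect this bookkeeping to be the main (though mild) obstacle.

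Finally I would transfer the relations. Each identity (i)--(iv) of the proof of \ref{prop:nmq}, together with the Leibniz rule, the equality $d^2=0$, and, in the case that $\Om^{\bul}$ is a $B$-dgpda, the dgpda compatibility of \ref{defi:pd-hirsch}(1), is homogeneous in the PD-degree. Since the differential just defined agrees PD-degree by PD-degree with the differential on $\Om^{\bul}\langle M\rangle$, and each such relation, evaluated on a fixed PD-degree component, involves only finitely many homogeneous pieces to which \ref{prop:nmq} already applies, all of these relations transfer verbatim to the completion. Once the componentwise definition of the differential is set up correctly, the entire verification is formally identical to that of \ref{prop:nmq}.
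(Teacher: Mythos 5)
Your proposal is correct and follows essentially the same route as the paper: the paper simply defines $d_M(\sum_{i=0}^{\infty}m^{[i]}):=\sum_{i=1}^{\infty}m^{[i-1]}\otimes\varphi(m)$ (formula (3.9.1)) and declares the rest of the verification to be as in (\ref{prop:nmq}), which is exactly your componentwise, PD-degree-by-PD-degree definition and transfer of relations. Your extra remark that the differential lowers PD-degree by one and hence does not preserve the filtration $\{\Gam_{A,n}(M)\}_n$ — so that the completed differential must be defined componentwise rather than as an inverse limit of quotient differentials — is a careful spelling-out of a point the paper leaves implicit, but it is not a different argument.
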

\begin{proof}
Set 
\begin{align*} 
d_M(\sum_{i=0}^{\inf}m^{[i]}):=d_{M,\varphi}(\sum_{i=0}^{\inf}m^{[i]}):=
\sum_{i=1}^{\inf}m^{[i-1]}\otimes \varphi (m)
\tag{3.9.1}\label{ali:mmi} 
\end{align*}  
and 
\begin{align*} 
&d_{\Om^{\bul}\langle \langle M\rangle \rangle}
(\sum_{i_1,\ldots, i_r=0}^{\inf} m^{[i_1]}_1\cdots m^{[i_r]}_r
\otimes \om_{i_1\cdots i_r})\tag{3.9.2}\label{ali:nMmi} \\
&:=\sum_{i_1,\ldots, i_r=0}^{\inf} (\sum_{j=1}^rm^{[i_1]}_1\cdots m^{[i_{j-1}]}_{j-1}  
m^{[i_{j+1}]}_{j+1}
\cdots m^{[i_r]}_rd_{M}(m^{[i_j]}_j)
\wedge \om_{i_1\cdots i_r}\\
&+m^{[i_1]}_1\cdots m^{[i_r]}_r
\otimes d\om_{i_1\cdots i_r}) \quad (m_1,\ldots, m_r\in M, \om_{i_1\cdots i_r}\in \Om^q). 
\end{align*}  
The rest of the proof of this proposition is similar to that of (\ref{prop:nmq}). 
%We have to check the following three relations as in the proof of (\ref{prop:nmq}): 
%\par 
%(i) $d_M(\sum_{i=0}^{\inf}(a_im_i)^{[i]})=\sum_{i=0}^{\inf}a^i_id_M(m_i^{[i]})$, 
%\par 
%(ii) 
%$d_M(\sum_{i=0}^{\inf}m_i^{[i]}m_i^{[j]})=\sum_{i=0}^{\inf}\{(i+j)!/i!j!\}d_M(m_i^{[i+j]})$, 
%\par
%(iii) $d_M(\sum_{i=0}^{\inf}(m_i+n_i)^{[i]})=\sum_{j_1+j_2=i}d_M(m_i^{[j_1]}n_i^{[j_2]})$. 
%\par 
%We leave the reader the proof of these relations. 
\end{proof}

\par 

More generally, we consider the following PD-Hirsch extension: 

\begin{prop-defi}\label{prop-defi:pdc}
Let $n$ be a positive integer. 
Let $A$ and $M$ be as in (\ref{defi:pd-hirsch}). 
Let $B_i$ $(1\leq i\leq n)$ be an $A$-algebra.  
Let $\Om^{\bul}_i$  be a dga over $A$ 
such that each $\Om^q_i$ $(q\in {\mab Z})$ is a $B_i$-module.  
Let $E_i$ be a $B_i$-module. 
Let $\varphi_i \col M\lo {\rm Ker}(d\col \Om^1_i\lo \Om^2_i)$ 
be a morphism of $A$-modules. 
Let $(\bigoplus_{i=1}^n(E_i\otimes_{B_i}\Om^{\bul}_i)[-n_i],d)$ 
$(n_i\in {\mab Z})$ be a complex of $A$-modules. 
$($Here $d$ is not necessarily the direct sum of certain differentials of 
$E_i\otimes_{B_i}\Om^{\bul}_i[-n_i].)$ 
Let $\sq$ be nothing or $\wedge$. 
Then the $A$-linear morphism 
$$d_H:=d_{H,(\varphi_1,\ldots,\varphi_r)}\col \Gam_A(M)^{\sq}\os{\sq}{\otimes}_A
(\bigoplus_{i=1}^n(E_i\otimes_{B_i}\Om^{q-n_i}_i))
\lo \Gam_A(M)^{\sq}\os{\sq}{\otimes}_A(\bigoplus_{i=1}^n(E_i\otimes_{B_i}\Om^{{q+1}-n_i}_i))$$ 
defined by the following formula 
\begin{align*} 
&d_H(\sum_{i=1}^n\sum'_{j(i)_1,\ldots, j(i)_r\in {\mab N}}m^{[j(i)_1]}_1
\cdots m^{[j(i)_r]}_r\otimes e_i\otimes \om_i)\\
&:=
\sum_{i=1}^n\sum'_{j(i)_1,\ldots, j(i)_r\in {\mab N}}\sum_{k=1}^r
m^{[j(i)_1]}_1\cdots m^{[j(i)_k-1]}_k \cdots m^{[j(i)_r]}_r\otimes e_i\otimes 
\varphi_i(m_k)\wedge \om_i\\
&+\sum_{i=1}^n\sum'_{j(i)_1,\ldots, j(i)_r\in {\mab N}}
m^{[j(i)_1]}_1\cdots m^{[j(i)_r]}_r\otimes d(e_i\otimes \om_i) \\
&\quad (m_1,\ldots m_r\in M,e_i\in E_i,\om \in \Om^{q-n_i}_i)
\end{align*} 
makes $\Gam_A(M)^{\sq}\os{\sq}{\otimes}_A(\bigoplus_{i=1}^n
(E_i\otimes_{B_i}\Om^{\bul}_i)[-n_i])$ 
a complex of $A$-modules. 
Here $\os{'}{\sum}$ means that we allow the infinite sum when $\square=\wedge{}$. 
We call the following natural injective morphism 
\begin{align*} 
\bigoplus_{i=1}^n(E_i\otimes_{B_i}\Om^{\bul}_i)[-n_i]
\os{\sus}{\lo} 
\Gam_A(M)^{\sq}\os{\sq}{\otimes}_A(\bigoplus_{i=1}^n(E_i\otimes_{B_i}\Om^{\bul}_i)[-n_i])
\end{align*} 
of $A$-modules the ({\it completed}) {\it PD-Hirsch extension} of 
$(\bigoplus_{i=1}^n(E_i\otimes_{B_i}\Om^{\bul}_i)[-n_i],d)$ 
by $(M,(\varphi_1,\ldots,\varphi_r))$. 
We denote it by $(\bigoplus_{i=1}^n(E_i\otimes_{B_i}\Om^{\bul}_i)[-n_i])\{ \langle M\rangle \}
=(\bigoplus_{i=1}^n(E_i\otimes_{B_i}\Om^{\bul}_i)[-n_i])\{ \langle M
\rangle\}_{(\varphi_1,\ldots,\varphi_r)}$, where $\{\langle M\rangle \}$ is $\langle M\rangle$  or 
$\langle \langle M\rangle\rangle$  
according to the case where $\sq$ is nothing or $\wedge$. 
\end{prop-defi} 
\begin{proof} 
Because the proof of this proposition is similar to 
that of (\ref{prop:nmq}), 
we leave the detailed proof of it to the reader. 
\end{proof}

\par 
Let $A\lo A'$ and $B\lo B'$ be morphisms of commutative rings. 
Assume that $B'$ is an $A'$-algebra and the following diagram 
\begin{equation*} 
\begin{CD} 
A@>>> A'\\
@VVV @VVV\\
B@>>>B'
\end{CD}
\end{equation*} 
is commutative. 
Let $\varphi' \col M\lo {\rm Ker}(\Om'{}^1\lo \Om'{}^2)$ be 
an analogous morphism to $\varphi$. 
Let $(B',J',\del')$ be an analogous PD-algebra to $(B,J,\del)$. 
Let $h\col \Om^{\bul}\lo \Om'{}^{\bul}$ be a morphism of complexes 
fitting into the following commutative diagram 
\begin{equation*} 
\begin{CD} 
M@=M\\
@V{\varphi}VV @VV{\varphi'}V\\
{\rm Ker}(\Om^1\lo \Om^2) @>{h}>>{\rm Ker}(\Om'{}^1\lo \Om'{}^2). 
\end{CD} 
\end{equation*}
%Let $\nabla' \col E'\lo E'\otimes_{B'}\Om'{}^1$ be an analogous integrable connection 
%to $\nabla \col E\lo E\otimes_{B}\Om^1$. 
%Let $f \col E\lo E'$ and $g\col 
%\Om^1\lo \Om'{}^1$ be morphisms of $B$-modules fitting into the following commutative 
%diagram 
Let $E$ (resp.~$E'$) be a $B$-module (resp.~$B'$-module). 
Let $g\col E\lo E'$ be a morphism of $B$-modules. 
(We can consider $E'$ as a $B$-module by using the morphism $B\lo B'$.)
Let $(E\otimes_B\Om^{\bul}, d)$ (resp.~$(E'\otimes_{B'}\Om'{}^{\bul}, d')$) 
be a complex of $A$-modules (resp.~a complex of $A'$-modules).  
Assume that $f:=g\otimes h\col E\otimes_B\Om^{\bul}\lo E'\otimes_{B'}\Om'{}^{\bul}$ 
is a morphism of complexes of $A$-modules. 
%\begin{equation*} 
%\begin{CD} 
%E@>>>E\otimes_{B}\Om^1\\
%@V{f}VV @VV{f\otimes g}V \\
%E'@>>>E'\otimes_{B'}\Om'^1
%\end{CD}
%\end{equation*} 
Let $f_{\{\langle M\rangle\}}\col 
E\otimes_{B}\Om^{\bul}\{\langle M\rangle\} \lo E'\otimes_{B'}\Om'^{\bul}
\{\langle M\rangle\}$   
be $f_{\langle M\rangle}\col 
E\otimes_{B}\Om^{\bul}\langle M\rangle \lo 
E'\otimes_{B'}\Om'^{\bul} \langle M\rangle$  
or $f_{\langle \langle M\rangle \rangle} 
\col 
E\otimes_{B}\Om^{\bul}\langle \langle M\rangle \rangle\lo 
E'\otimes_{B'}\Om'^{\bul} \langle \langle M\rangle \rangle$ 
according to the case where $\sq$ is nothing or $\wedge$. 
%Then we have the following morphism
%\begin{align*}  
%f_{\{\langle M\rangle\}} \col 
%E\otimes_{B}\Om^{\bul}\{\langle M\rangle\} \lo E'\otimes_{B'}\Om'^{\bul}
%\{\langle M\rangle\}. 
%\end{align*}  
%of complexes. 
Consider the mapping cone ${\rm MC}(f_{\{\langle M\rangle\}})$ of $f_{\{\langle M\rangle\}}$. 
The following holds: 
\begin{align*} 
{\rm MC}(f_{\{\langle M\rangle\}})^q&= 
\{\Gam_A(M)^{\sq}\os{\sq}{\otimes}_A(E\otimes_{B}\Om^{q+1}) \}
\oplus \{\Gam_A(M)^{\sq}\os{\sq}{\otimes}_A(E'\otimes_{B'}\Om'^{q})\}\\
&=\Gam_A(M)^{\sq}\os{\sq}{\otimes}_A((E\otimes_{B}\Om^{q+1})\oplus (E'\otimes_{B'}\Om'^{q}))\\
&=\Gam_A(M)^{\sq}\os{\sq}{\otimes}_A{\rm MC}(f)^q. 
\end{align*} 
%We have to consider the following delicate isomorphism 
%\begin{align*} 
%\eps^q \col {\rm MC}(f_{\{ M\rangle})^q=\Gam_A(M)\otimes_A{\rm MC}(f)^q
%\owns (x,y)\lom (x,(-1)^qy)\in 
%\Gam_A(M)\otimes_A{\rm MC}(f)^q.
%\end{align*} 
\par  
The following is the commutativity of the operation of 
the mapping cone and that of the PD-Hirsch extension modulo 
the signs of $\varphi$. 

\begin{lemm}\label{lemm:mcfgq}
The following diagram is commutative$:$
\begin{equation*} 
\begin{CD} 
{\rm MC}(f_{\{\langle M\rangle\}})^q
@=
%@>{\eps^q,\sim}>>
\Gam_A(M)^{\sq}\os{\sq}{\otimes}_A{\rm MC}(f)^q\\
@V{d_{H,(\varphi,\varphi')}}VV @VV{d_{H,(-\varphi, \varphi')}}V \\
{\rm MC}(f_{\{\langle M\rangle\}})^{q+1}
@=
%@>{\eps^{q+1},\sim}>>
\Gam_A(M)^{\sq}\os{\sq}{\otimes}_A{\rm MC}(f)^{q+1}. 
\end{CD}
\tag{3.11.1}\label{cd:mqq}
\end{equation*} 
%Here the vertical morphisms in the diagram above are the boundary morphisms. 
%$($Note that the delicate sign appears before $\varphi$ for the right vertical morphism.$)$
\end{lemm}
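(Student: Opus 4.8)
The plan is to prove commutativity by unwinding both differentials on a general element and comparing them slot by slot; no homological input beyond the explicit formula for $d_H$ in (\ref{prop-defi:pdc}) is needed. First I would abbreviate $C^{\bul}:=E\otimes_B\Om^{\bul}$ and $C'^{\bul}:=E'\otimes_{B'}\Om'^{\bul}$, with tensor differentials $d_C,d_{C'}$, so that $f\col C^{\bul}\lo C'^{\bul}$ and $f_{\langle\{M\}\rangle}={\rm id}_{\Gam_A(M)^{\sq}}\otimes f$. By the mapping-cone sign convention fixed in the Notation, an element of ${\rm MC}(f_{\langle\{M\}\rangle})^q$ is a pair $(\xi,\eta)$ with $\xi\in(C^{\bul}\langle\{M\}\rangle)^{q+1}$ and $\eta\in(C'^{\bul}\langle\{M\}\rangle)^q$, and
$$d_{{\rm MC}(f_{\langle\{M\}\rangle})}(\xi,\eta)=(-d_{H,\vphi}(\xi),\,f_{\langle\{M\}\rangle}(\xi)+d_{H,\vphi'}(\eta)).$$
Under the identification already recorded above, $(\xi,\eta)$ corresponds to an element of $\Gam_A(M)^{\sq}\otimes_A{\rm MC}(f)^q$; it therefore suffices to evaluate $d_{H,(-\vphi,\vphi')}$ on the same element and match the two outputs.

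Next I would take a typical summand $\xi=\gam\otimes x$, $\eta=\gam'\otimes y$ with $\gam,\gam'\in\Gam_A(M)^{\sq}$, $x\in C^{q+1}$, $y\in C'^q$, and split each Hirsch differential into its $\vphi$-term (the one of (\ref{prop-defi:pdc}) that lowers a divided power and wedges in $\vphi(m_k)$) and its tensor term (the one applying $d_C$ or $d_{C'}$). On the left the $C^{q+1}$-slot becomes $-\{\vphi\text{-term}(\gam,x)\}-\gam\otimes d_C(x)$, while the $C'^q$-slot becomes $\gam\otimes f(x)+\{\vphi'\text{-term}(\gam',y)\}+\gam'\otimes d_{C'}(y)$. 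On the right I would compute $d_{H,(-\vphi,\vphi')}$: its $\vphi$-part contributes $-\{\vphi\text{-term}(\gam,x)\}$ in the first column and $\{\vphi'\text{-term}(\gam',y)\}$ in the second, while its tensor part applies $d_{{\rm MC}(f)}$, which by the shift convention sends $(x,0)\mapsto(-d_C x,f(x))$ and $(0,y)\mapsto(0,d_{C'}y)$. Collecting the contributions into the two slots reproduces exactly the left-hand expressions, so the square commutes after the routine extension from monomials to general (possibly infinite) divided-power sums is carried out as in (\ref{prop:nmq}).

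The conceptual content, which I would flag as the one delicate point rather than a genuine obstacle, is the provenance of the sign $-\vphi$ on the right. On the left the mapping-cone shift $[1]$ negates the \emph{entire} differential $d_{H,\vphi}$ on the first factor, both its tensor part $d_C$ and its $\vphi$-term. When the outcome is re-read as a single Hirsch extension $d_{H,(\ast,\vphi')}$ of ${\rm MC}(f)$, the negation of the tensor part is absorbed into $d_{{\rm MC}(f)}$, whose restriction to the $C^{q+1}$-slot is $-d_C$ by the shift convention; the negation of the $\vphi$-term, however, has no such home and must instead be recorded in the $\vphi$-datum of the first column, forcing $\ast=-\vphi$ (note that the kernel condition ${\rm Ker}(-d_C\col C^1\lo C^2)={\rm Ker}(d_C)$ is insensitive to this sign, so it is genuinely the matching, not well-definedness, that dictates it). Thus the sole sign subtlety is dictated by the cone, not chosen; once the shift sign and the $\vphi$-placement in $d_H$ are aligned the verification is pure bookkeeping, and the only real risk is a clash between the mapping-cone convention of the Notation and the sign placement in (\ref{prop-defi:pdc}), which I would guard against by auditing the $C^{q+1}$- and $C'^q$-slots separately.
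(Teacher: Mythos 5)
Your proposal is correct and follows essentially the same route as the paper: both verify commutativity by evaluating the two differentials on a monomial element $(m^{[j]}\otimes e\otimes\om,\,m'^{[j']}\otimes e'\otimes\om')$ and matching the two slots, with the sign $-\varphi$ emerging because the cone shift negates the whole of $d_{H,\varphi}$ on the first factor while only the $d_C$-part of that negation can be absorbed into $d_{{\rm MC}(f)}$. Your added remark isolating this as the single source of the sign is a useful gloss but not a different argument, and the residual discrepancy with the paper (the sign of the $f$-term in the cone differential) is a harmless convention choice that does not affect the conclusion.
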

\begin{proof} 
For simplicity of notation, consider the following simple element 
$$(m^{[j]}\otimes e\otimes \om,m'^{[j']}\otimes e'\otimes \om')\in 
{\rm MC}(f_{\{\langle M\rangle\}})^q 
\quad (m,m'\in M, e\in E,\om \in \Om^{q+1},e'\in E',\om' \in \Om'{}^q).$$ 
By the boundary morphism 
\begin{align*} 
{\rm MC}(f_{\{\langle M\rangle\}})^q\lo {\rm MC}(f_{\{\langle M\rangle\}})^{q+1}, 
\end{align*}
this simple element is mapped to 
\begin{align*} 
&
(-\{m^{[j-1]}\otimes e\otimes \varphi(m)\wedge \om
+
m^{[j]}\otimes d(e\otimes \om)\},\\
&\{m'{}^{[j'-1]}\otimes e'\otimes \varphi'(m')\wedge \om'
+m'{}^{[j']}\otimes d'(e'\otimes \om')\}+m^{[j]}\otimes f(e\otimes \om))\\
&=\{(m^{[j-1]}\otimes e\otimes (-\varphi(m))\wedge \om,0)
+m^{[j]}\otimes(-d(e\otimes \om),f(e\otimes \om))\}\\
&+\{m'{}^{[j'-1]}\otimes (0,e'\otimes \varphi'(m')
\wedge \om')+m'{}^{[j']}\otimes(0,d'(e'\otimes \om'))\}\\
%&=d_H((m_i^{[j]}\otimes e\otimes \om,0)+(0,m_{i'}^{[j']}\otimes e'\otimes \om')). \\
&=d_{H,(-\varphi,\varphi')}((m^{[j]}\otimes e\otimes \om,0))
+d_{H,(-\varphi,\varphi')}((0,m'{}^{[j']}\otimes e'\otimes \om'))\\
&=d_{H,(-\varphi,\varphi')}((m^{[j]}\otimes e\otimes \om,m'{}^{[j']}\otimes e'\otimes \om')). 
\end{align*} 
We leave the rigorous proof for a general element of  
${\rm MC}(f_{\{\langle M\rangle\}})^q$ to the serious reader. 
\end{proof}

\begin{prop}\label{prop:acy}
Let the notations be as in {\rm (\ref{prop-defi:pdc})}. 
Assume that $M$ is a direct summand of a free $A$-module of 
countable rank. 
%finite rank. 
If $(\bigoplus_{i=1}^n(E_i\otimes_{B_i}\Om^{\bul}_i)[-n_i],d)$ is acyclic,  
%and bounded below, 
then $(\bigoplus_{i=1}^n(E_i\otimes_{B_i}\Om^{\bul}_i)[-n_i])\langle M\rangle$ is acyclic. 
\end{prop}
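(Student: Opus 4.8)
The plan is to filter the PD-Hirsch extension by the PD-degree of $\Gam_A(M)$ and thereby reduce the claim to the assumed acyclicity of the abbreviated complex $C^{\bul}:=\bigoplus_{i=1}^n(E_i\otimes_{B_i}\Om^{\bul}_i)$. Write $C^{\bul}\langle M\rangle=\Gam_A(M)\otimes_AC^{\bul}$ and decompose $\Gam_A(M)=\bigoplus_{p\geq 0}\Gam_A^p(M)$ into its divided-power homogeneous components, where $\Gam_A^p(M)$ is the degree-$p$ part and $\Gam_A^0(M)=A$. Inspecting the formula for $d_H$ in (\ref{prop-defi:pdc}), the summand built from the maps $\varphi_i$ lowers the PD-degree by one, whereas the summand built from $d$ preserves it. Hence the subobjects $F_pC^{\bul}\langle M\rangle:=\bigoplus_{k\leq p}\Gam_A^k(M)\otimes_AC^{\bul}$ form an increasing filtration by subcomplexes with $F_{-1}=0$.

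First I would identify the associated graded complex. Since the $\varphi_i$-part of $d_H$ lands in $F_{p-1}$ and so dies in the quotient, the differential induced on ${\rm gr}_p:=F_p/F_{p-1}\cong \Gam_A^p(M)\otimes_AC^{\bul}$ is the untwisted differential ${\rm id}_{\Gam_A^p(M)}\otimes d$. The next step is to check that $\Gam_A^p(M)$ is a flat $A$-module: writing $A^r=M\oplus N$ (possible because $M$ is a direct summand of a free $A$-module of finite rank) and using the standard decomposition $\Gam_A(M\oplus N)\cong\Gam_A(M)\otimes_A\Gam_A(N)$ (\cite{bob}), the degree-$p$ part gives $\Gam_A^p(A^r)=\bigoplus_{a+b=p}\Gam_A^a(M)\otimes_A\Gam_A^b(N)$, so $\Gam_A^p(M)$ is a direct summand of the finite free module $\Gam_A^p(A^r)$, hence finitely generated projective and in particular flat. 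Tensoring the acyclic complex $C^{\bul}$ with the flat module $\Gam_A^p(M)$ then shows that each ${\rm gr}_p$ is acyclic.

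It remains to pass from the acyclicity of the graded pieces to that of the whole complex, and this convergence step is the only delicate point. By ascending induction on $p$, starting from the trivially acyclic $F_{-1}=0$ and using the short exact sequences $0\lo F_{p-1}\lo F_p\lo {\rm gr}_p\lo 0$ together with their long exact cohomology sequences, every $F_pC^{\bul}\langle M\rangle$ is acyclic. If $\Gam_A(M)$ is taken as the honest direct sum, the filtration is exhaustive and any cocycle of $C^{\bul}\langle M\rangle$ already lies in some finite stage $F_p$; being a cocycle there it is a coboundary in $F_p$, hence in $C^{\bul}\langle M\rangle$, which finishes the argument. The one thing to watch is the precise meaning of the (possibly infinite) sums allowed in (\ref{prop-defi:pdc}): if elements of unbounded PD-degree are permitted, the increasing filtration is no longer exhaustive, and one must instead invoke the completeness of the PD-filtration together with the boundedness below of $C^{\bul}$ to solve the cocycle equation $d_Hw=z$ by successive approximation over the PD-degree. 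I expect this bookkeeping — guaranteeing convergence of the solution while keeping the graded-piece acyclicity intact — to be the main obstacle; the flatness of $\Gam_A^p(M)$ and the construction of the filtration itself are routine.
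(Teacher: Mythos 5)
Your proof is correct, and it reaches the conclusion by a mildly but genuinely different route than the paper. The paper first localizes on ${\rm Spec}(A)$ to make $M$ free of finite rank, splits off rank-one summands via $\langle M\rangle=\langle L\rangle\langle N\rangle$ and inducts on the rank, and then handles the rank-one case with an explicit double complex whose columns $Al^{[i]}\otimes_A C^{\bul}$ are exact because each $Al^{[i]}$ is flat; acyclicity then follows from the acyclic-assembly argument (citing \cite[(2.7.3)]{weib}), for which the boundedness below is used. You instead filter the whole extension $\Gam_A(M)\otimes_AC^{\bul}$ at once by total PD-degree, observe that the $\varphi$-part of $d_H$ drops the degree so that the graded pieces carry only ${\rm id}\otimes d$, and get their acyclicity from the projectivity of $\Gam_A^p(M)$ as a direct summand of $\Gam_A^p(A^r)$; the exhaustive filtration then finishes the job. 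The underlying mechanism — flatness of the PD-homogeneous components — is the same in both arguments, but your version avoids the localization step and the rank induction entirely and works uniformly in $M$, while the paper's reduction to rank one keeps the double complex in the same concrete shape it uses elsewhere (e.g.\ (\ref{cd:hiarfd})). Your closing caveat about unbounded PD-degree is well taken but does not apply here: $\langle M\rangle$ (single brackets) is the uncompleted extension, so your filtration is exhaustive and the finite-stage cocycle argument goes through as you wrote it.
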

\begin{proof} 
Because the inductive limit with respect to a directed set 
preserves the exactness, we may assume that 
$M$ is a direct summand of a free $A$-module of finite rank. 
Because the problem is local on ${\rm Spec}(A)$, 
we may assume that $M$ is a free $A$-module of finite rank. 
Decompose $M=L\oplus N$ into a direct sum of free $A$-modules, 
where $L$ $(1\leq i\leq r)$ is free of rank 1. 
Then $(\bigoplus_{i=1}^n(E_i\otimes_{B_i}\Om^{\bul}_i)[-n_i])\langle M\rangle=
(\bigoplus_{i=1}^n(E_i\otimes_{B_i}\Om^{\bul}_i)[-n_i])\langle L\rangle \langle N\rangle$.  
(Here we have identified  
$\Gam_A(L)\otimes_A(\bigoplus_{i=1}^nE_i\otimes_{B_i}\Om^{q-n_i}_i)$ 
with $\bigoplus_{i=1}^n(\Gam_A(L)\otimes_AE_i)\otimes_{B_i}\Om^{q-n_i}_i$ 
for each $q\in {\mab N}$.) 
Hence, by using induction on the rank of $M$, 
it suffices to prove that $(\bigoplus_{i=1}^n(E_i\otimes_{B_i}\Om^{\bul}_i)[-n_i]\langle 
L\rangle$ is acyclic. 
Let $l$ be a basis of $L$. 
%Take a positive integer $q_0$ such that 
%$E_i\otimes_{B_i}\Om^q_i=0$ for $\forall q<q_0$ and $1\leq \forall i\leq r$. 
Consider the following double complex 
\begin{equation*} 
\footnotesize{
\begin{CD}  
\cdots @>>> \cdots @>>> \cdots \\
@A{{\rm id}\otimes d}AA @A{{\rm id}\otimes d}AA 
@A{d}AA \\
Al^{[2]}
\otimes_{A}\bigoplus_{i=1}^n(E_i\otimes_{B_i}\Om^{q-n_i}_i)
@>>> Al\otimes_A \bigoplus_{i=1}^n(E_i\otimes_{B_i}\Om^{q+1-n_i}_i)
@>>> \bigoplus_{i=1}^n(E_i\otimes_{B_i}\Om^{q+2-n_i}_i) \\
@A{{\rm id}\otimes d}AA @A{{\rm id}\otimes d}AA @A{d}AA \\
A l^{[2]}\otimes_{A}(\bigoplus_{i=1}^n(E_i\otimes_{B_i}\Om^{q-1-n_i}_i))@>>>  A l\otimes_{A}(\bigoplus_{i=1}^n(E_i\otimes_{B_i}\Om^{q-n_i}_i))@>>> 
\bigoplus_{i=1}^n(E_i\otimes_{B_i}\Om^{q+1-n_i}_i)
\\
@A{{\rm id}\otimes d}AA @A{{\rm id}\otimes d}AA  @A{d}AA  \\
\cdots @. \cdots @. \bigoplus_{i=1}^n(E_i\otimes_{B_i}\Om^{q-n_i}_i)\\
@. @.  @A{d}AA  \\
\cdots  
@. \cdots  @. \cdots 
,
\end{CD}} 
\tag{3.12.1}\label{cd:hiarfd}
\end{equation*} 
where the horizontal arrow is defined by 
$l^{[j]}\otimes e_i \otimes \om_i \lom l^{[j-1]}\otimes e_i\otimes \varphi_i(l)\wedge \om_i$ 
$(e_i\in E_i,\om_i\in \Om^{*}_i)$. 
Since $Al^{[j]}$ $(j\in {\mab N})$ is a free $A$-module, 
the columns of (\ref{cd:hiarfd}) is exact. 
(Here note that $E\otimes_B\Om^{\bul}\langle L\rangle$ is 
not equal to the single complex of the double complex (\ref{cd:hiarfd}) in general: 
$E\otimes_B\Om^{\bul}\langle L\rangle$ is only contained in the single complex.) 
Hence, by an elementary argument, 
$E\otimes_B\Om^{\bul}\langle L\rangle$ is acyclic (cf.~\cite[(2.7.3)]{weib}). 
\end{proof}

%The following plays an important role in this book. 

\begin{coro}\label{coro:hac}
Let $f \col E\otimes_{B}\Om^{\bul}\lo E'\otimes_{B'}\Om'^{\bul}$ be 
as after {\rm (\ref{prop-defi:pdc})}.  
Assume 
%that $E\otimes_{B}\Om^{\bul}$ and  $E'\otimes_{B'}\Om'^{\bul}$ 
%are bounded below complexes and 
that $f$ is a quasi-isomorphism of $A$-modules 
and that $M$ is a direct summand of a free $A$-module of countable rank. 
Then the morphism 
$f_{\langle M\rangle} \col 
E\otimes_{B}\Om^{\bul}\langle M\rangle \lo 
E'\otimes_{B'}\Om'^{\bul}\langle M\rangle$   
is a quasi-isomorphism of $A$-modules. 
\end{coro} 
\begin{proof} 
By the assumption ${\rm MC}(f)$ is acyclic. 
By (\ref{prop:acy}), ${\rm MC}(f)\langle M\rangle$ is acyclic. 
By (\ref{lemm:mcfgq}) we see that this means that $f_{\langle M\rangle} \col 
E\otimes_{B}\Om^{\bul}\langle M\rangle \lo E'\otimes_{B'}\Om'^{\bul}\langle M\rangle$   
is a quasi-isomorphism. 
\end{proof} 

\begin{rema}\label{rema:qi}
One does not know that ${\rm MC}(f)\langle M\rangle$ is isomorphic to 
${\rm MC}(f_{\langle M\rangle})$ as a complex a priori and 
it seems non-clear to me that one can obtain (\ref{coro:hac}) easily 
without using (\ref{lemm:mcfgq}) nor (\ref{prop:acy}) 
because I do not know whether 
the associated spectral sequence by the filtration of the columns  
of (\ref{cd:hiarfd}) is regular in the sense of \cite[(5.2.10)]{weib}. 
%(cf.~the proof of \cite[(3.9)]{ey}).
%\par 
%By the argument in the proof of (\ref{prop:acy}), we may assume that $M=L$. 
\end{rema}
%\begin{lemm}\label{lemm:emm}
%Let $M$ be a projective $A$-module. 
%Let ${\cal C}$ be a category whose objects are 
%$(B,E,\Om^{\bul},d,\varphi)$'s 
%such that $(E\otimes_B\Om^{\bul},d)$'s are bounded below  
%and whose morphisms 
%are defined as after {\rm (\ref{prop-defi:pdc})}. 
%Then the category ${\cal C}$ is triangulated and
%the quasi-isomorphisms in ${\cal C}$ form a multiplicative system. 
%\end{lemm} 
%\begin{proof} 
%We define the translation functor as the shift of the complex by a degree one. 
%Because we can define the mapping cone of a morphism in ${\cal C}$, 
%we can define the triangle in ${\cal C}$. 
%We can define a morphism of triangle in a usual way. 
%We leave the reader to check that the axioms (TR1), (TR2), (TR3) and (TR4) 
%of triangles in \cite[p.~21]{hard} as in [loc.~cit., Chapter I, \S2]. 
%This follows from (\ref{coro:hac}). 
%\end{proof} 

\begin{prop-defi}\label{defi:emm}
Let $M$ be a projective $A$-module. 
Let ${\cal C}$ be a category whose objects are 
$(B,E,\Om^{\bul},d,\varphi)$'s 
such that $(E\otimes_B\Om^{\bul},d)$'s are bounded below  
and whose morphisms 
are defined as after {\rm (\ref{prop-defi:pdc})}. 
Let ${\rm Ho}({\cal C})$ be the localized category of ${\cal C}$ 
inverting quasi-isomorphisms 
in ${\cal C}$ (\cite[III \S2]{gelma}, cf.~\cite[p.~29 Remark]{hard}). 
Then the functor
$$\langle M\rangle \col {\cal C}\lo 
\{{\rm complexes}~{\rm of}~A{\textrm -}{\rm modules}\}$$ 
induces the following derived functor 
$$\langle M\rangle{}^L \col {\rm Ho}({\cal C})\lo 
D^+(\{{\rm complexes}~{\rm of}~A{\textrm -}{\rm modules}\}).$$
We call the functor $\langle M\rangle{}^L$ 
the {\it derived PD-Hirsch extension by} $M$. 
\end{prop-defi}
\begin{proof}
This follows from (\ref{coro:hac}). 
\end{proof}

%The following is only a special case of (\ref{defi:emm}), which is worth stating. 

\begin{prop-defi}\label{defi:edm}
Let $M$ be a projective $A$-module of countable rank. 
Let ${\cal C}$ be a category whose objects are 
$(B,\Om^{\bul},d,\varphi)$'s 
such that $(\Om^{\bul},d)$'s are bounded below  
and whose morphisms 
are defined as after (\ref{prop-defi:pdc}). 
Let ${\rm Ho}({\cal C})$ be the localized category of ${\cal C}$ 
inverting weakly equivalent isomorphisms 
in ${\cal C}$. Let ${\rm DGA}(A)$ be the category of dga's over $A$. 
Let ${\rm Ho}({\rm DGA}(A))$ be the localized category of ${\rm DGA}(A)$ 
inverting weakly equivalent isomorphisms in ${\rm DGA}(A)$. 
Then the functor
$$\langle M\rangle \col {\cal C}\lo {\rm DGA}(A)$$ 
induces the following derived functor 
$$\langle M\rangle{}^L \col {\rm Ho}({\cal C})\lo
{\rm Ho}({\rm DGA}(A)).$$
We call the functor $\langle M\rangle{}^L$ 
the {\it derived PD-Hirsch extension by} $M$. 
\end{prop-defi}
\begin{proof}
This follows from (\ref{coro:hac}). 
\end{proof}

\par 
Now we apply the theory above for the derived direct images of 
morphisms of ringed topoi and the Thom-Whitney derived direct images of 
morphisms of dga's over ${\mab Q}$ in ringed topoi which have enough points. 
In the following dga's are always assumed to be positively graded. 
\par 
Let $f\col ({\cal T}',{\cal A}')\lo ({\cal T},{\cal A})$ be a morphism of ringed topoi. 
Assume that ${\cal A}'=f^*({\cal A})$. 
First let us recall the single complex functor $s$ which has been denoted by ${\bf s}$ 
in \cite[\S2]{nh3} (cf.~\cite[(5.1.9)]{dh2}). 
\par 
For a positive integer $r$, 
let $({\cal T}_{t_1 \cdots t_r}, 
{\cal A}^{t_1 \cdots t_r})_{t_1,\ldots, t_r\in {\mab N}}$ 
be the constant $r$-semi-simplicial ringed topos 
defined by $({\cal T}, {\cal A})$: 
${\cal T}_{t_1 \cdots t_r}={\cal T}$,
${\cal A}^{t_1 \cdots t_r}={\cal A}$.
Let ${\cal L}$ be an object of the category 
${\rm C}({\cal A}^{\bul \cdots \bul})$ of complexes of 
${\cal A}^{\bul \cdots \bul}$-modules ($r$-points) 
(${\cal L}$ is an $r$-semi-simplicial complexes of ${\cal A}^{\bul \cdots \bul}$-modules). 
For simplicity of notation, set $\ul{t}:=(t_1, \ldots, t_r)$ 
and $\ul{t}_j:=t_1 +\cdots +t_j$ and $\ul{t}_0:=0$. 
We also set $\ul{\bul}:=\bul  \cdots \bul$ 
($r$-points). The object ${\cal L}$ defines an 
$(r+1)$-uple complex 
${\cal L}^{\ul{\bul}\bul}=
({\cal L}^{t_1 \cdots t_r \bul})_{t_1,\ldots, t_r\in {\mab N}}$ 
of ${\cal A}$-modules whose boundary 
morphisms will be fixed 
in (\ref{eqn:bdsignss}) below. 
Let $d_{\cal L} \col {\cal L}^{\ul{t}s} \lo  {\cal L}^{\ul{t},s+1}$ 
be the boundary morphism arising from the boundary 
morphism of the complex ${\cal L}$ and let  
$\del^i_{j} \col {\cal L}^{t_1 \cdots t_j \cdots t_r s} 
\lo {\cal L}^{t_1 \cdots t_{j-1},  t_j+1, 
t_{j+1}\cdots t_r s}$ 
$(1 \leq j \leq r, 0 \leq i \leq t_j+1)$ be 
a standard coface morphism.
Consider the single complex $s({\cal L})$ 
with the following boundary morphism 
(cf.~\cite[(5.1.9.1), (5.1.9.2)]{dh3}):
\begin{equation*}
s({\cal L})^n=
\bigoplus_{t_1+\cdots +t_r+s=n}
{\cal L}^{t_1 \cdots t_rs};
\tag{3.16.1}\label{eqn:bdsignss}
\end{equation*}
\begin{align*}
d(x^{\ul{t}s}) =&  
\sum_{i= 0}^{t_1+1}(-1)^{i}
\del^i_{1}(x^{\ul{t}s})
+(-1)^{\ul{t}_1}
\sum_{i= 0}^{t_2+1}(-1)^i\del^i_{2}
(x^{\ul{t}s}) +\cdots + \\ 
{} &  (-1)^{\ul{t}_{r-1}}
\sum_{i= 0}^{t_r+1}(-1)^i
\del^i_{r}(x^{\ul{t}s})+
(-1)^{\ul{t}_r}
d_{\cal L}(x^{\ul{t}s}) \quad (x^{\ul{t}s}\in {\cal L}^{\ul{t}s}). 
\end{align*} 
\par
For a morphism $g \col {\cal L}^{\ul{\bul}\bul} \lo {\cal N}^{\ul{\bul}\bul}$ in 
${\rm C}({\cal A}^{\ul{\bul}})=
{\rm C}({\cal A}^{\bul \cdots \bul})$, we define 
$s(g) \col s({\cal L}^{\ul{\bul}\bul}) \lo s({\cal N}^{\ul{\bul}\bul})$ as the naturally 
induced morphism by $g$ (without change of signs); 
$s$ gives a functor 
\begin{equation*}
s \col {\rm C}({\cal A}^{\ul{\bul}}) \lo 
{\rm C}({\cal A}). 
\tag{3.16.2}\label{eqn:bs}
\end{equation*}
The functor {\rm (\ref{eqn:bs})} 
induces the following functors$:$
\begin{equation*}
s \col D^{\star}({\cal A}^{\ul{\bul}}) \lo D^{\star}({\cal A})
\quad (\star=\text{$+$ {\rm or} {\rm nothing}}). 
\tag{3.16.3}
\end{equation*}
We have the morphism $f_{\ul{\bul}}\col 
({\cal T}'_{\ul{\bul}},{\cal A}'{}^{\ul{\bul}})\lo ({\cal T}_{\ul{\bul}},{\cal A}^{\ul{\bul}})$ 
of ringed topoi induced by $f$ and the following diagram is commutative: 
\begin{equation*}
\begin{CD}
D^{\star}({\cal A}'{}^{\ul{\bul}})
@>{s}>> D^{\star}({\cal A}')\\
@V{f_{\ul{\bul}*}}VV @VV{f_*}V \\
D^{\star}({\cal A}{}^{\ul{\bul}})
@>{s}>> D^{\star}({\cal A}).  
\end{CD}
\end{equation*} 
%By abuse of notation, let us denote by 
%$f$ the induced morphism
%\begin{align*} 
%({\cal T},f^*({\cal A}'))\lo ({\cal T}',{\cal A}')
%\end{align*} 
%by $f\col ({\cal T},{\cal A})\lo ({\cal T}',{\cal A}')$. 
\par 
Now let ${\cal M}$ be a locally free ${\cal A}$-module of countable rank. 
Assume that the topos ${\cal T}'$ has enough points. 
Let ${\cal B}$, ${\cal E}$, $\Om^{\bul}$, $({\cal E}\otimes_{\cal B}\Om^{\bul},d)$ 
and $\varphi \col  f^*({\cal M})\lo {\rm Ker}(\Om^1\lo \Om^2)$ 
be analogous objects in 
$({\cal T}',{\cal A}')$ to $B$, $E$, $\Om^{\bul}$, $(E\otimes_B\Om^{\bul},d)$  and 
$\varphi \col {\cal M} \lo {\rm Ker}(\Om^1\lo \Om^2)$, respectively. 
For example,  ${\cal B}$ is a sheaf of ${\cal A}'$-algebras. 
More generally, we need the $r$-semi-cosimplicial version 
of ${\cal B}$, ${\cal E}$, $\Om^{\bul}$, $({\cal E}\otimes_{\cal B}\Om^{\bul},d)$ 
and $\varphi \col  f^*({\cal M})\lo {\rm Ker}(\Om^1\lo \Om^2)$. 
That is, we need 
(not necessarily constant) 
$r$-semi-cosimplicial objects  
${\cal B}^{\ul{\bul}}$, ${\cal E}^{\ul{\bul}}$, $\Om^{\ul{\bul}\bul}$, 
$({\cal E}^{\ul{\bul}}\otimes_{{\cal B}^{\ul{\bul}}}\Om^{\ul{\bul} \bul},d)$ 
and $\varphi^{\ul{\bul}} \col  f^*_{\ul{\bul}}({\cal M}^{\ul{\bul}})
\lo {\rm Ker}(\Om^{\ul{\bul}1}\lo \Om^{\ul{\bul}2})$.  
Here ${\cal M}^{\ul{\bul}}$ is 
the constant $r$-semi-cosimplicial ${\cal A}^{\ul{\bul}}$-modules defined by ${\cal M}$. 
Let ${\cal C}_{({\cal T}'_{\ul{\bul}},{\cal A}'{}^{\ul{\bul}})}$ 
be the analogous category to ${\cal C}$ for 
${\cal B}^{\ul{\bul}}$'s, ${\cal E}^{\ul{\bul}}$'s, 
$\Om^{\ul{\bul} \bul}$'s, $({\cal E}^{\ul{\bul}}\otimes_{{\cal B}^{\ul{\bul}}}
\Om^{\ul{\bul} \bul},d)$'s 
and $\varphi^{\ul{\bul}} \col 
f^*_{\ul{\bul}}({\cal M}^{\ul{\bul}})\lo 
{\rm Ker}(\Om^{\ul{\bul}1}\lo \Om^{\ul{\bul}2})$'s above. 
By considering the constant case for 
${\cal C}_{({\cal T}'_{\ul{\bul}},{\cal A}'{}^{\ul{\bul}})}$, 
we obtain the category ${\cal C}_{({\cal T}',{\cal A}')}$.  
We also obtain the analogous category ${\cal C}_{({\cal T},{\cal A})}$ to 
${\cal C}_{({\cal T}',{\cal A}')}$.
By using $s$ in (\ref{eqn:bdsignss}), 
we obtain the following functors 
\begin{equation*}
s \col {\cal C}_{({\cal T}_{\ul{\bul}},{\cal A}^{\ul{\bul}})} 
\lo {\cal C}_{({\cal T},{\cal A})}
\tag{3.16.4}
\end{equation*}
and 
\begin{equation*}
s \col {\cal C}_{({\cal T}'_{\ul{\bul}},{\cal A}'{}^{\ul{\bul}})} 
\lo {\cal C}_{({\cal T}',{\cal A}')}.  
\tag{3.16.5}
\end{equation*}
We also obtain the following functors
\begin{equation*}
f\col  {\cal C}_{({\cal T}',{\cal A}')} \lo {\cal C}_{({\cal T},{\cal A})}
\tag{3.16.6}
\end{equation*}
and 
\begin{equation*}
f_{\ul{\bul}*} \col {\cal C}_{({\cal T}'_{\ul{\bul}},{\cal A}'{}^{\ul{\bul}})} 
\lo {\cal C}_{({\cal T}_{\ul{\bul}},{\cal A}^{\ul{\bul}})}. 
\tag{3.16.7}
\end{equation*} 
Obviously the following diagram is commutative: 
\begin{equation*}
\begin{CD}
{\cal C}_{({\cal T}'_{\ul{\bul}},{\cal A}'{}^{\ul{\bul}})} 
@>{s}>> {\cal C}_{({\cal T}',{\cal A}')} \\
@V{f_{\ul{\bul}*}}VV @VV{f_*}V \\
{\cal C}_{({\cal T}_{\ul{\bul}},{\cal A}^{\ul{\bul}})} 
@>{s}>> {\cal C}_{({\cal T},{\cal A})}.  
\end{CD}
\tag{3.16.8}\label{cd:uafecui} 
\end{equation*} 

\par 
For each $i$, let $I^{\ul{\bul}\bul i}$ be 
the Godement resolution of 
${\cal E}^{\ul{\bul}}\otimes_{{\cal B}^{\ul{\bul}}}\Om^{\ul{\bul}i}$ 
of ${\cal A}'$-modules. 
Then we obtain a morphism 
$\varphi^0 \col f^*_{\ul{\bul}}({\cal M}^{\ul{\bul}})
\lo {\rm Ker}(I^{\ul{\bul}01}\lo I^{\ul{\bul} 02})$ 
fitting into the following commutative diagram: 
\begin{equation*} 
\begin{CD}
{\rm Ker}(\Om^{\ul{\bul}1}\lo \Om^{\ul{\bul}2})@>>> 
{\rm Ker}(I^{\ul{\bul}01}\lo I^{\ul{\bul} 02})\\
@A{\varphi }AA @AA{\varphi^0}A \\
f^*_{\ul{\bul}}({\cal M}^{\ul{\bul}})@= f^*_{\ul{\bul}}({\cal M}^{\ul{\bul}}).
\end{CD}
\end{equation*} 
By using this diagram and the analogous diagrams repeatedly, 
we have a natural morphism 
$\varphi^{\ul{\bul} \bul} \col f^*_{\ul{\bul}}({\cal M}^{\ul{\bul}})\lo 
{\rm Ker}(I^{\ul{\bul} \bul 1}\lo I^{\ul{\bul} \bul 2})$.  
This morphism induces the morphism 
$\varphi^{\ul{\bul} \bul} \col 
{\cal M}^{\ul{\bul}}\lo 
{\rm Ker}(f_{\ul{\bul}*}(I^{\ul{\bul} \bul 1})
\lo f_{\ul{\bul}*}(I^{\ul{\bul} \bul 2}))$.  
We have the following equality: 
%by (\ref{cd:uafecui}): 
%Set $I^{\bul}=s(I^{\ul{\bul} \bul \bul})$. 
%Here the signs of the boundary morphism of the double complex $I^{\bul \bul}$ 
%is defined in a standard way (e.g.,~\cite[\S2]{nh3}) as in the Notations (15) in this book 
%and $s$ means the operation of the single complex. 
%Then $I^{\bul}$ is the Godement resolution of 
%the bounded below complex 
%${\cal E}\otimes_{\cal B}\Om^{\bul}$ of ${\cal A}$-modules. 
%Then $I^{\bul}\langle f^*({\cal M})\rangle$ is the Godement resolution of 
%${\cal E}\otimes_{\cal B}\Om^{\bul}\langle f^*({\cal M})\rangle$.  
\begin{align*} 
s(f_{\ul{\bul}*}(I^{\ul{\bul} \bul \bul}\langle f^*_{\ul{\bul}}({\cal M})\rangle))
=s(f_{\ul{\bul}*}(I^{\ul{\bul} \bul \bul}))\langle {\cal M}\rangle
\tag{3.16.8}\label{ali:fim} 
\end{align*}
in the category ${\cal C}_{({\cal T},{\cal A})}$.  
%of bounded below complexes of ${\cal A}'$-modules. 
Indeed, 
\begin{align*}
s(f_{\ul{\bul}*}(I^{\ul{\bul} \bul \bul}\langle f^*_{\ul{\bul}}({\cal M})\rangle))^q
&=s(f_{\ul{\bul}*}(\Gam_{{\cal A}'}(f^*({\cal M})) \otimes_{{\cal A}'}
I^{\ul{\bul} \bul \bul}))^q\tag{3.16.9}\label{ali:fiiim} \\
&=\Gam_{\cal A}({\cal M}) \otimes_{\cal A}s(f_{\ul{\bul}*}(I^{\ul{\bul} \bul \bul}))^q
=s(f_{\ul{\bul}*}(I^{\ul{\bul} \bul \bul}))\langle {\cal M}\rangle^q
\end{align*}
and the equality above gives the equality 
$s(f_{\ul{\bul}*}(I^{\ul{\bul} \bul \bul}\langle f^*_{\ul{\bul}}({\cal M})\rangle))
=s(f_{\ul{\bul}*}(I^{\ul{\bul} \bul \bul}))\langle {\cal M}\rangle$ 
of complexes.  
Set 
\begin{align*} 
Rf_*({\cal E}^{\ul{\bul}}\otimes_{{\cal B}^{\ul{\bul}}}
\Om^{\ul{\bul} \bul})\langle 
{\cal M}\rangle^L
&:=f_*(s(I^{\ul{\bul}\bul}\langle f^*_{\ul{\bul}}({\cal M})\rangle))
= 
s(f_{\ul{\bul}*}(I^{\ul{\bul} \bul \bul}\langle f^*_{\ul{\bul}}({\cal M})\rangle))
\tag{3.16.10}\label{ali:fdim} \\
&
=s(f_{\ul{\bul}*}(I^{\ul{\bul} \bul \bul}))\langle {\cal M}\rangle
\end{align*} 
in ${\rm Ho}({\cal C}_{({\cal T},{\cal A})})$.

\begin{defi}
We call $Rf_*({\cal E}^{\ul{\bul}}\otimes_{{\cal B}^{\ul{\bul}}}
\Om^{\ul{\bul} \bul})\langle {\cal M}\rangle^L$ 
the {\it derived PD-Hirsch extension} of 
$Rf_*({\cal E}^{\ul{\bul}}\otimes_{{\cal B}^{\ul{\bul}}}
\Om^{\ul{\bul}\bul})$ by ${\cal M}=({\cal M},\varphi^{\ul{\bul}})$. 
By (\ref{coro:hac}), $Rf_*(?)\langle {\cal M}\rangle^L$ is the following functor 
\begin{align*} 
Rf_*(?)\langle {\cal M}\rangle^L\col 
{\rm Ho}({\cal C}_{({\cal T}'_{\ul{\bul}},{\cal A}'{}^{\ul{\bul}})})
\lo {\rm Ho}({\cal C}_{({\cal T},{\cal A})}).  
\tag{3.17.1}\label{ali:mld}
\end{align*}  
\end{defi}

\begin{rema}\label{rema:fa}
(1) Consider the case $r=0$. Let $J^{\bul}$ be an $f_*$-acyclic resolution of 
${\cal E}\otimes_{\cal B}\Om^{\bul}$. 
%\langle {\cal M}\rangle
We should note that we 
cannot define the PD-Hirsch extensions of $J^{\bul}$ nor $f_*(J^{\bul})$ in general. 
\par 
(2) Note that there exists a natural functor 
from the category ${\rm Ho}({\cal C}_{({\cal T},{\cal A})})$ to 
the derived category $D^+({\cal A})$ of bounded below complexes of ${\cal A}$-modules. 
By abuse of notation, we also denote by 
$Rf_*(?)\langle {\cal M}\rangle^L$ the composite functor of the functor (\ref{ali:mld}) 
and this natural functor. 
If $J^{\ul{\bul}\bul}$ is an $f_{\ul{\bul}*}$-acyclic resolution of 
${\cal E}^{\ul{\bul}}\otimes_{{\cal B}^{\ul{\bul}}}\Om^{\ul{\bul}\bul}\langle 
f^*_{\ul{\bul}}({\cal M})\rangle$, 
then 
$Rf_*(?)\langle {\cal M}\rangle^L=f_*(s(J^{\ul{\bul}\bul}))$ in $D^+({\cal A})$ 
since $I^{\ul{\bul} \bul}\langle f^*_{\ul{\bul}}({\cal M})\rangle$ 
is $f_{\ul{\bul}*}$-acyclic by 
(\ref{ali:fiiim}) and (\ref{prop:acy}). 
\par
(3) We also have the category 
${\rm Ho}{\rm F}({\cal C}_{({\cal T}'_{\ul{\bul}},{\cal A}'{}^{\ul{\bul}})})$ of 
``objects of 
${\rm Ho}{\rm F}({\cal C}_{({\cal T}'_{\ul{\bul}},{\cal A}'{}^{\ul{\bul}})})$ with increasing filtrations'' and 
we have the following functor 
\begin{align*} 
Rf_*(?)\langle {\cal M}\rangle^L\col 
{\rm Ho}{\rm F}({\cal C}_{({\cal T}'_{\ul{\bul}},{\cal A}'{}^{\ul{\bul}})})
\lo {\rm Ho}{\rm F}({\cal C}_{({\cal T},{\cal A})}).  
\tag{3.18.1}\label{ali:mlfd}
\end{align*}  
by using the Godement resolution and giving a filtration
on ${\cal M}$ which is compatible with 
the multiplicative structure of $\Gam_{\cal A}({\cal M})$. 
\end{rema}

\par 
Let $\Om^{\ul{\bul} \bul}$ be a dga over ${\cal A}'$
($\Om^{\ul{\bul} \bul}$ is assumed to be positively graded.). 
Assume that ${\cal A}$ is a sheaf of ${\mab Q}$-algebras. 
Let $I^{\ul{\bul} \bul \bul}$ be the Godement resolution of $\Om^{\ul{\bul} \bul}$. 
Assume that we are given a morphism 
$\varphi^{\ul{\bul}} 
\col f^*_{\ul{\bul}}({\cal M})\lo {\rm Ker}(\Om^{\ul{\bul}1}\lo \Om^{\ul{\bul}2})$. 
Then we have a natural morphism 
$\varphi^{\ul{\bul}} \col f^*_{\ul{\bul}}({\cal M})\lo 
{\rm Ker}(I^{\ul{\bul}\bul 1}\lo I^{\ul{\bul}\bul 2})$.  
Let $s_{\rm TW}$ be the  single complex functor defined in \cite[p.~24]{nav}. 
Here we set $k:={\cal A}$ in [loc.~cit.]. 
Then 
$$({\us{r~{\rm times}}
{\underbrace{s_{\rm TW}\circ \cdots \circ s_{\rm TW}}}}\circ f_{\ul{\bul}*}
(I^{\ul{\bul} \bul \bul}
\langle f^*_{\ul{\bul}}({\cal M})\rangle))$$ 
is a dga over ${\cal A}$ (\cite[p.~27]{nav}).  
For simplicity, denote this dga by 
$$s_{\rm TW}\circ f_{\ul{\bul}*}(I^{\ul{\bul} \bul \bul}\langle f^*_{\ul{\bul}}({\cal M})\rangle).$$
Let ${\mathfrak A}$ be ${\cal A}'$ or ${\cal A}$. 
Let $A^{\geq 0}({\mathfrak A})$ be the category of positively graded dga's over ${\mathfrak A}$. 
Let ${\rm Ho}(A^{\geq 0}({\mathfrak A}))$ 
be the localized category of  
$A^{\geq 0}({\mathfrak A})$  
inverting the weakly equivalent filtered isomorphisms 
in $A^{\geq 0}({\mathfrak A})$ (cf.~\cite{nav}, \cite{gelma}).   
Let ${\rm A}^{\ul{\bul}, \geq 0}({\mathfrak A})$ 
be the category of positively-graded $r$-semi-cosimplicial dga's 
over ${\mathfrak A}$. 
%We also obtain the $r$-semi-cosimplicial version $D(A^{\ul{\bul},\geq 0}({\mathfrak A}))$ of 
%$D(A^{\geq 0}({\mathfrak A}))$. 
The dga $s_{\rm TW}\circ f_{\ul{\bul}*}(I^{\ul{\bul} \bul \bul}
\langle f^*_{\ul{\bul}}({\cal M})\rangle)$ defines an object 
of ${\rm Ho}(A^{\geq 0}({\cal A}))$. 
We denote this object by $R_{\rm TW}f_*(\Om^{\ul{\bul} \bul})
\langle {\cal M}\rangle^L$.

\begin{defi}\label{defi-o}
Let ${\rm Ho}({\rm A}^{\ul{\bul}, \geq 0}({\mathfrak A}))$ 
be the localized category of ${\rm A}^{\ul{\bul}, \geq 0}({\mathfrak A})$  
inverting the weakly equivalent filtered isomorphisms.  
Set 
\begin{align*} 
R_{\rm TW}f_*(\Om^{\ul{\bul} \bul}):=
s_{\rm TW}\circ f_{\ul{\bul}*}(I^{\ul{\bul} \bul \bul})\in {\rm Ho}(A^{\geq 0}({\cal A})).  
\end{align*}
We call the object 
$R_{\rm TW}f_*(\Om^{\ul{\bul} \bul})\langle {\cal M}\rangle^L$ 
the {\it derived PD-Hirsch extension} of 
$R_{\rm TW}f_*(\Om^{\ul{\bul} \bul})$ 
by ${\cal M}:=({\cal M},\varphi)$. 
The functor $R_{\rm TW}f_*(?)\langle {\cal M}\rangle^L$ is a functor
\begin{align*} 
R_{\rm TW}f_*(?)\langle {\cal M}\rangle^L
\col {\rm Ho}({\rm A}^{\ul{\bul}, \geq 0}({\cal A}'))\lo {\rm Ho}({\rm A}^{\geq 0}({\cal A}))
\tag{3.19.1}\label{ali:aba}
\end{align*}
by (\ref{coro:hac}). 
\end{defi} 

We also have the category 
${\rm Ho}({\rm A}^{\ul{\bul}, \geq 0}{\rm F}({\mathfrak A}))$ of 
``objects of 
${\rm Ho}({\rm A}^{\ul{\bul}, \geq 0}({\mathfrak A}))$ with increasing filtrations'' and 
we have the following functor 
\begin{align*} 
Rf_*(?)\langle {\cal M}\rangle^L\col 
{\rm Ho}({\rm A}^{\ul{\bul}, \geq 0}{\rm F}({\cal A}'))
\lo {\rm Ho}({\rm A}^{\geq 0}{\rm F}({{\cal A}})). 
\tag{3.19.2}\label{ali:mlffd}
\end{align*}  
by using the Godement resolution and giving weights 
of local sections of ${\cal M}$ which is compatible with 
the multiplicative structure of $\Gam_{\cal A}({\cal M})$.

%\par 
%We conclude this section by defining an increasing filtration $P$ on 
%$\Gam_{\cal A}({\cal M})$.
%Let ${\rm DGA}^+{\rm F}(A)$ be the category of bounded below filtered dga's over $A$, 
%where $A$ is ${\cal A}$ or ${\cal A}'$. 
%Then the complex $s_{\rm TW}\circ f_*(I^{\bul \bul}\langle f^*({\cal M})\rangle,P)$ 
%defines an object in $D({\rm DGA}^+({\cal A}'))$, which we denote 
%by $R_{\rm TW}f_*((\Om^{\bul},P))\langle {\cal M}\rangle^L$. 

%\begin{defi}\label{defi-of}
%We call the object $R_{\rm TW}f_*(\Om^{\bul},P)\langle {\cal M}\rangle^L$ 
%the {\it derived PD-Hirsch extension} of 
%\begin{align*} 
%R_{\rm TW}f_*(\Om^{\bul},P):=
%s_{\rm TW}\circ f_*((I^{\bul \bul},P))\in D({\rm DGA}^+{\rm F}({{\cal A}'})).
%\end{align*}
%by ${\cal M}:=({\cal M},\varphi)$. 
%\end{defi} 

\section{PD-Hirsch extensions of log crystalline complexes}\label{sec:ldfc}  
Let $S$ be a family of log points on which a prime number $p$ is locally nilpotent. 
Let $M_S$ be the log structure of $S$.  
Let $(T,{\cal J},\del)$ be a fine log PD-scheme of $S$ 
such that ${\cal J}$ is quasi-coherent. 
Set $T_0:=T~{\rm mod}~{\cal J}$.  
Let $((T,{\cal J},\del),z)$ be a log PD-enlargement of $S$, where 
$z\col T_0\lo S$ is the structural morphism. 
Let $S_{\os{\circ}{T}_0}$, $S(T)$ and $S(T)^{\nat}$ be as in \S\ref{sec:snclv}.  
Let $\wt{t}$ be a local section of $M_{S(T)^{\nat}}$ which gives a local generator $t$ 
of $\ol{M}_{S(T)^{\nat}}:=M_{S(T)^{\nat}}/{\cal O}_T^*$. 
Then 
\begin{align*}
d\log t:=d\log \wt{t}\in \Om^1_{S(T)^{\nat}/\os{\circ}{T}}
\end{align*} 
is independent of the choice of $\wt{t}$; $d\log t$ depends only on $t$. 
Let $X/S$ be an SNCL scheme (\cite[(1.1)]{nb}). 
Set $X_{\os{\circ}{T}_0}:=X\times_SS_{\os{\circ}{T}_0}$. 
In this section we assume that there exists an immersion 
$X_{\os{\circ}{T}_0}\os{\sus}{\lo} \ol{\cal P}$ into a log smooth scheme 
over $\ol{S(T)^{\nat}}$ and 
we prove fundamental properties of 
the PD-Hirsch extensions of pre-weight filtered log crystalline complexes obtained by 
this immersion for a flat quasi-coherent crystal on $\os{\circ}{X}_{T_0}/\os{\circ}{T}$.  
\par 
Let ${\mathfrak D}(\ol{S(T)^{\nat}})$ be the log PD-envelope of 
the immersion $S_{\os{\circ}{T}_0}\os{\sus}{\lo} \ol{S(T)^{\nat}}$ over 
$(\os{\circ}{T},{\cal J},\del)$; 
${\cal O}_{{\mathfrak D}(\ol{S(T)^{\nat}})}$ is locally isomorphic 
to the PD-polynomial algebra ${\cal O}_T\langle \tau \rangle
=\bigoplus_{n=0}^{\infty}{\cal O}_T\tau^{[n]}$ in one variable.  
\par 
For the time being, we consider a more general case than the SNCL case. 
\par 
 Let $Y$ be a log smooth scheme over $S$. 
Set $Y_{\os{\circ}{T}_0}:=Y\times_SS_{\os{\circ}{T}_0}=
Y\times_{\os{\circ}{S}}\os{\circ}{T}_0$. 
Assume that there exists an immersion 
$Y_{\os{\circ}{T}_0}\os{\sus}{\lo} \ol{\cal Q}$ into 
a log smooth scheme over $\ol{S(T)^{\nat}}$. 
Set ${\cal Q}:=\ol{\cal Q}\times_{\ol{S(T)^{\nat}}}S(T)^{\nat}$.
Let $F$ be a flat quasi-coherent crystal of  
${\cal O}_{Y_{\os{\circ}{T}_0}/\os{\circ}{T}}$-modules. 
(In the case where $F={\cal O}_{Y_{\os{\circ}{T}_0}/\os{\circ}{T}}$, 
it is enough to assume that there exists an immersion 
$Y_{\os{\circ}{T}_0}\os{\sus}{\lo} {\cal Q}$ 
into a log smooth scheme over $S(T)^{\nat}$  
(we do not need $\ol{\cal Q}$ in this case).)  
Let $\ol{\cal Q}{}^{\rm ex}$ and ${\cal Q}^{\rm ex}$ 
be the exactifications of the immersions  
$Y\os{\sus}{\lo} \ol{\cal Q}$ and 
$Y\os{\sus}{\lo} {\cal Q}$, respectively. 
Let $\ol{\mathfrak E}$ be 
the log PD-envelope of 
the immersion $Y_{\os{\circ}{T}_0}\os{\sus}{\lo} \ol{\cal Q}$ over 
$(\os{\circ}{T},{\cal J},\del)$.  
\par 
Let $(\ol{\cal F},\ol{\nabla})$ 
be the quasi-coherent ${\cal O}_{\ol{\mathfrak E}}$-module  
with integrable connection corresponding to the log crystal $F$: 
\begin{equation*} 
\ol{\nabla}\col \ol{\cal F}\lo 
\ol{\cal F}\otimes_{{\cal O}_{\ol{\cal Q}{}^{\rm ex}}}
\Om^1_{\ol{\cal Q}{}^{\rm ex}/\os{\circ}{T}}.
\tag{4.0.1}\label{eqn:olfpc}
\end{equation*}  
In fact, we have the complex
$\ol{\cal F}\otimes_{{\cal O}_{\ol{\cal Q}{}^{\rm ex}}}
\Om^{\bul}_{\ol{\cal Q}{}^{\rm ex}/\os{\circ}{T}}$.  
Set ${\cal F}:=\ol{\cal F}
\otimes_{{\cal O}_{{\mathfrak D}(\ol{S(T)^{\nat}})}}{\cal O}_{S(T)^{\nat}}$. 
Then we see that $\ol{\nabla}$ induces the following connection:  
\begin{equation*} 
\nabla\col {\cal F}\lo 
{\cal F}\otimes_{{\cal O}_{{\cal Q}^{\rm ex}}}
\Om^1_{{\cal Q}^{\rm ex}/\os{\circ}{T}}. 
\tag{4.0.2}\label{eqn:olfmpc}
\end{equation*}  
Indeed, we obtain the connection $\nabla$ because 
\begin{align*}
df^{[i]}=f^{[i-1]}df\quad  (i\in {\mab Z}_{\geq 1})
\tag{4.0.3}\label{eqn:olzpc}
\end{align*} 
for a local section 
$f$ of the PD-ideal sheaf of ${\cal O}_{{\mathfrak D}(\ol{S(T)^{\nat}})}$. 
Here we have used the formula (\ref{ali:fwniwu}). 
When $i\geq 2$, $f^{[i-1]}df=0$ in ${\Om}^1_{{\cal Q}^{\rm ex}/\os{\circ}{T}}$; 
when $i=1$, $d\tau=\tau d\log \tau=0$ in 
${\Om}^1_{{\cal Q}^{\rm ex}/\os{\circ}{T}}$ 
for a local section $\tau$ 
of ${\cal O}_{{\mathfrak D}(\ol{S(T)^{\nat}})}$ such that 
${\cal O}_{{\mathfrak D}(\ol{S(T)^{\nat}})}\simeq 
{\cal O}_T\langle \tau \rangle$.   
In fact, we have the complex 
${\cal F}\otimes_{{\cal O}_{{\cal Q}^{\rm ex}}}
\Om^{\bul}_{{\cal Q}^{\rm ex}/\os{\circ}{T}}$.
\par 
By using natural surjective morphisms  
$\Om^{\bul}_{\ol{\cal Q}{}^{\rm ex}/\os{\circ}{T}}
\lo \Om^{\bul}_{\ol{\cal Q}{}^{\rm ex}/S(T)^{\nat}}$,  
$\Om^{\bul}_{{\cal Q}^{\rm ex}/\os{\circ}{T}}\lo 
\Om^{\bul}_{{\cal Q}^{\rm ex}/S(T)^{\nat}}$, 
we also have the following log de Rham complexes  
$\ol{\cal F}\otimes_{{\cal O}_{\ol{\cal Q}{}^{\rm ex}}}
\Om^{\bul}_{\ol{\cal Q}{}^{\rm ex}/S(T)^{\nat}}$ and  
${\cal F}\otimes_{{\cal O}_{{\cal Q}^{\rm ex}}}
\Om^{\bul}_{{\cal Q}^{\rm ex}/S(T)^{\nat}}$.

\begin{rema}
In \cite[(1.3.4)]{nb} we have proved that the natural morphism 
\begin{equation*} 
{\cal O}_{{\cal Q}^{\rm ex}}
\otimes_{{\cal O}_{\cal Q}} \Om^i_{{\cal Q}/\os{\circ}{T}}\lo 
\Om^i_{{\cal Q}^{\rm ex}/\os{\circ}{T}} 
\quad (i\in {\mab N})
\tag{4.1.1}\label{eqn:yxpsnpd}
\end{equation*}
is an isomorphism. 
Consequently the natural morphism 
\begin{equation*} 
{\cal O}_{\mathfrak E}
\otimes_{{\cal O}_{\cal Q}}
\Om^i_{{\cal Q}/T}
\lo {\cal O}_{\mathfrak E}
\otimes_{{\cal O}_{{\cal Q}^{\rm ex}}}
\Om^i_{{\cal Q}^{\rm ex}/T} 
\tag{4.1.2}\label{eqn:yxpdsnpd}
\end{equation*}
is an isomorphism. 
Hence we can replace ${\cal Q}^{\rm ex}$ by ${\cal Q}$ in the argument above. 
However ${\cal Q}$ will be not an appropriate object in the SNCL case in general; 
in this SNCL case we consider the pre-weight filtered crystalline complex 
in which case $\Om^{\bul}_{{\cal Q}/\os{\circ}{T}}$ is not a good complex in general.  
For this reason, we consider only ${\cal Q}^{\rm ex}$ henceforth. 
\end{rema}

\par 
Set 
\begin{align*} 
U_{S(T)^{\nat}}:={\cal O}_Tu, 
\end{align*} 
where $u$ is a formal variable over ${\cal O}_T$ 
and 
consider the following sheaf 
\begin{align*}
\Gam_{{\cal O}_T}(U_{S(T)^{\nat}}):=
\bigoplus_{n=0}^{\infty}{\cal O}_Tu^{[n]}
\end{align*} 
of PD-polynomial algebras over ${\cal O}_T$.  
%Set 
%\begin{align*}
%U_{S(T)^{\nat}}:=\Gam_{{\cal O}_T,1}(U_{S(T)^{\nat}}). 
%\end{align*} 
We consider the following PD-Hirsch extension 
\begin{equation*}  
{\cal F}\otimes_{{\cal O}_{{\cal Q}^{{\rm ex}}}}
\Om^{\bul}_{{\cal Q}{}^{{\rm ex}}/\os{\circ}{T}}\langle u \rangle:= 
\Gam_{{\cal O}_T}(U_{S(T)^{\nat}})\otimes_{{\cal O}_T}
{\cal F}\otimes_{{\cal O}_{{\cal Q}^{{\rm ex}}}}
\Om^{\bul}_{{\cal Q}{}^{{\rm ex}}/\os{\circ}{T}} 
\end{equation*} 
of ${\cal F}\otimes_{{\cal O}_{{\cal Q}{}^{{\rm ex}}}}
\Om^{\bul}_{{\cal Q}{}^{{\rm ex}}/\os{\circ}{T}}$ 
by the PD-algebra 
$\Gam_{{\cal O}_T}(U_{S(T)^{\nat}})$ 
over ${\cal O}_T$ with respect to the morphism 
$U_{S(T)^{\nat}}  \owns u \lom d\log t \in 
{\rm Ker}(\Om^1_{{\cal Q}{}^{{\rm ex}}/\os{\circ}{T}}\lo 
\Om^2_{{\cal Q}{}^{{\rm ex}}/\os{\circ}{T}})$ 
(cf.~\cite[p.~1260]{kiha} for the case of log de Rham-Witt complexes).  
%where $u$ is a local generator of $U_{S(T)^{\nat}}$ and $a\in {\cal O}_T$; 
The connection 
\begin{equation*} 
\nabla \col   {\cal F}\otimes_{{\cal O}_{{\cal Q}^{{\rm ex}}}}
\Om^j_{{\cal Q}^{{\rm ex}}/\os{\circ}{T}}\langle u \rangle
\lo  {\cal F}\otimes_{{\cal O}_{{\cal Q}{}^{{\rm ex}}}}
\Om^{j+1}_{{\cal Q}{}^{{\rm ex}}/\os{\circ}{T}}\langle u \rangle
\quad (j\in {\mab Z}_{\geq 0})
\end{equation*}   
is, by definition, an ${\cal O}_T$-linear morphism defined by the following 
\begin{equation*} 
\nabla(u^{[i]}\otimes \om)=u^{[i-1]}d\log t\wedge \om +
u^{[i]}\otimes \nabla(\om) 
\quad (~\om \in {\cal F}
\otimes_{{\cal O}_{{\cal Q}{}^{\rm ex}}}
\Om^j_{{\cal Q}{}^{{\rm ex}}/\os{\circ}{T}}~).
\tag{4.1.3}\label{eqn:bdff}
\end{equation*}
By (\ref{prop-defi:pdc}) or an easy direct calculation, $\nabla^2=0$. 
\par 
%Set 
%\begin{align*} 
%{\cal O}_T\langle \langle U_{S(T)^{\nat}}\rangle \rangle:=
%\Gam_{{\cal O}_T}(U_{S(T)^{\nat}})^{\wedge}. 
%\prod_{n\in {\mab N}}{\cal O}_Tu^{[n]}
%\tag{4.1.4}\label{ali:uaui} 
%\end{align*} 
The completion $\Gam_{{\cal O}_T}(U_{S(T)^{\nat}})^{\wedge}
=\prod_{m\in {\mab N}}{\cal O}_Tu^{[m]}$ of 
$\Gam_{{\cal O}_T}(U_{S(T)^{\nat}})$ (cf.~\cite{ey})
is isomorphic to the completed PD-polynomial algebra of one variable 
over ${\cal O}_T$. 
Then we have the following natural inclusion morphism
\begin{align*} 
\Gam_{{\cal O}_T}(U_{S(T)^{\nat}})  \os{\sus}{\lo} 
\Gam_{{\cal O}_T}(U_{S(T)^{\nat}})^{\wedge}.  
\tag{4.1.4}\label{ali:uui} 
\end{align*} 
We also consider the following PD-Hirsch extension by the 
completed PD-polynomial algebra of one variable over ${\cal O}_T$ 
with respect to the morphism ${\cal O}_T u 
\owns u\lom d\log t \in {\rm Ker}(\Om^1_{{\cal Q}{}^{{\rm ex}}/\os{\circ}{T}}\lo 
\Om^2_{{\cal Q}{}^{{\rm ex}}/\os{\circ}{T}})$:  
\begin{equation*}  
{\cal F}\otimes_{{\cal O}_{{\cal Q}^{{\rm ex}}}}
\Om^{\bul}_{{\cal Q}{}^{{\rm ex}}/\os{\circ}{T}}\langle \langle u \rangle  \rangle:=
\Gam_{{\cal O}_T}\langle U_{S(T)^{\nat}}\rangle ^{\wedge} \os{\wedge}{\otimes}_{{\cal O}_T}
({\cal F}\otimes_{{\cal O}_{{\cal Q}^{{\rm ex}}}}
\Om^{\bul}_{{\cal Q}{}^{{\rm ex}}/\os{\circ}{T}}) 
\tag{4.1.5}\label{eqn:uui} 
\end{equation*} 
of ${\cal F}\otimes_{{\cal O}_{{\cal Q}{}^{{\rm ex}}}}
\Om^{\bul}_{{\cal Q}{}^{{\rm ex}}/\os{\circ}{T}}$; 
the boundary morphism 
\begin{equation*} 
\nabla \col  {\cal F}
\otimes_{{\cal O}_{{\cal Q}^{{\rm ex}}}}
\Om^j_{{\cal Q}^{{\rm ex}}/\os{\circ}{T}}\langle \langle u\rangle \rangle
\lo 
{\cal F}{\otimes}_{{\cal O}_{{\cal Q}{}^{{\rm ex}}}}
\Om^{j+1}_{{\cal Q}{}^{{\rm ex}}/\os{\circ}{T}}\langle \langle u\rangle \rangle
\quad (j\in {\mab Z}_{\geq 0})
\tag{4.1.6}\label{eqn:bbadff}
\end{equation*}   
is, by definition, an ${\cal O}_T$-linear morphism defined by the following 
\begin{equation*} 
\nabla(\sum_{i=0}^{\inf}a_iu^{[i]}\otimes \om)=
\sum_{i=1}^{\inf}a_iu^{[i-1]}d\log t\wedge \om +
\sum_{i=0}^{\inf}a_iu^{[i]}\otimes \nabla(\om) 
\quad (~a_i\in {\cal O}_T,\om \in {\cal F}
\otimes_{{\cal O}_{{\cal Q}{}^{\rm ex}}}
\Om^j_{{\cal Q}{}^{{\rm ex}}/\os{\circ}{T}}~).
\tag{4.1.7}\label{eqn:badff}
\end{equation*}
We can easily check an equality $\nabla^2=0$ for $\nabla$ in (\ref{eqn:bbadff}).  
The complex 
$\Gam_{{\cal O}_T}\langle \langle U_{S(T)^{\nat}}\rangle \rangle \wh{\otimes}_{{\cal O}_T}
{\cal F}
\otimes_{{\cal O}_{{\cal Q}^{{\rm ex}}}}
\Om^{\bul}_{{\cal Q}^{{\rm ex}}/\os{\circ}{T}}$ 
is the single complex of the following double complex: 
%\begin{equation*} 
%\begin{CD}  
%\cdots @<<< \cdots @<<< \cdots \\
%@A{{\rm id}\otimes \nabla}AA @A{{\rm id}\otimes \nabla}AA 
%@A{{\rm id}\otimes \nabla}AA \\
%{\cal F}\otimes_{{\cal O}_{{\cal Q}{}^{\rm ex}}}
%\Om^2_{{\cal Q}^{\rm ex}/\os{\circ}{T}} 
%@<<< {\cal O}_Tu\otimes_{{\cal O}_T}
%{\cal F}\otimes_{{\cal O}_{{\cal Q}{}^{\rm ex}}}
%\Om^1_{{\cal Q}^{\rm ex}/\os{\circ}{T}}  
%@<<< {\cal O}_Tu^{[2]}\otimes_{{\cal O}_T}{\cal F}\\
%@A{{\rm id}\otimes \nabla}AA @A{{\rm id}\otimes \nabla}AA \\
%{\cal F}\otimes_{{\cal O}_{{\cal Q}{}^{\rm ex}}}\Om^1_{{\cal Q}^{\rm ex}/\os{\circ}{T}} 
%@<<< {\cal O}_Tu\otimes_{{\cal O}_T}
%{\cal F}\\
%@A{\nabla}AA  \\
%{\cal F} @. @. ,
%\end{CD} 
%\tag{4.1.8}\label{cd:hirfd}
%\end{equation*} 
%where the horizontal arrow is defined by 
%$u^{[i-1]}d\log \tau\wedge \om \getsfrom 
%u^{[i]}\otimes \om$. 
\begin{equation*} 
\begin{CD}  
\cdots @>>> \cdots @>>> \cdots \\
@A{{\rm id}\otimes \nabla}AA @A{{\rm id}\otimes \nabla}AA 
@A{{\rm id}\otimes \nabla}AA \\
\Gam_{{\cal O}_T,2}(U_{S(T)^{\nat}})
\otimes_{{\cal O}_T}{\cal F}
@>{(?)'d\log t\wedge}>> \Gam_{{\cal O}_T,1}(U_{S(T)^{\nat}})\otimes_{{\cal O}_T}
{\cal F}\otimes_{{\cal O}_{{\cal Q}{}^{\rm ex}}}
\Om^1_{{\cal Q}^{\rm ex}/\os{\circ}{T}}  
@>{(?)'d\log t\wedge}>> {\cal F}
\otimes_{{\cal O}_{{\cal Q}{}^{\rm ex}}}
\Om^2_{{\cal Q}^{\rm ex}/\os{\circ}{T}} \\
@. @A{{\rm id}\otimes \nabla}AA @A{{\rm id}\otimes \nabla}AA \\
@. \Gam_{{\cal O}_T,1}(U_{S(T)^{\nat}})
\otimes_{{\cal O}_T}{\cal F}@>{(?)'d\log t\wedge}>>{\cal F}
\otimes_{{\cal O}_{{\cal Q}{}^{\rm ex}}}
\Om^1_{{\cal Q}^{\rm ex}/\os{\circ}{T}} 
\\
@. @. @A{\nabla}AA  \\
@. @. {\cal F} ,
\end{CD} 
\tag{4.1.8}\label{cd:hirfd}
\end{equation*} 
where the horizontal arrow $(?)'d\log t\wedge$ is defined by 
$u^{[i]}\otimes f\otimes \om \lom u^{[i-1]}\otimes f\otimes (d\log t\wedge \om)$. 
Note that the columns and the rows of (\ref{cd:hirfd}) correspond to 
the rows and the columns of the double complexes in \cite[p.~1260]{kiha} and 
\cite[(3.19)]{ey}, respectively; we use the general principle about rows and columns 
of double complexes stated in \cite[\S2]{nh3}.  
%Note that 
%${\cal O}_{T}\langle u \rangle \otimes_{{\cal O}_T}{\cal F}
%\otimes_{{\cal O}_{{\cal Q}^{{\rm ex}}}}
%\Om^i_{{\cal Q}^{{\rm ex}}/\os{\circ}{T}}$  
%and ${\cal O}_{T}\langle \langle U_{S(T)^{\nat}}\rangle \rangle \otimes_{{\cal O}_T}
%{\cal F}\otimes_{{\cal O}_{{\cal Q}^{{\rm ex}}}}
%\Om^i_{{\cal Q}^{{\rm ex}}/\os{\circ}{T}}$  
%$(i\in {\mab N})$ are not a coherent 
%${\cal O}_{{\mathfrak D}}$-modules if 
%${\cal F}
%\otimes_{{\cal O}_{{\cal Q}^{{\rm ex}}}}
%\Om^i_{{\cal Q}^{{\rm ex}}/\os{\circ}{T}}$ is not zero. 
\par 
The double complex (\ref{cd:hirfd}) is 
naturally augmented to   
${\cal F}
\otimes_{{\cal O}_{{\cal Q}{}^{\rm ex}}}
\Om^{\bul}_{{\cal Q}^{\rm ex}/S(T)^{\nat}}$ since 
$d\log t=0$ in $\Om^{\bul}_{{\cal Q}^{\rm ex}/S(T)^{\nat}}$. 
Hence we have the following morphism of complexes: 
\begin{equation*}  
{\cal F}\otimes_{{\cal O}_{{\cal Q}{}^{\rm ex}}}
\Om^{\bul}_{{\cal Q}^{\rm ex}/\os{\circ}{T}}\langle \langle u\rangle \rangle\lo 
{\cal F}
\otimes_{{\cal O}_{{\cal Q}{}^{\rm ex}}}
\Om^{\bul}_{{\cal Q}^{\rm ex}/S(T)^{\nat}}.
\tag{4.1.9}\label{ali:uaafui} 
\end{equation*}   
The morphism (\ref{ali:uui}) induces the following inclusion morphism  
\begin{align*}  
{\cal F}
\otimes_{{\cal O}_{{\cal Q}{}^{\rm ex}}}
\Om^{\bul}_{{\cal Q}^{\rm ex}/\os{\circ}{T}}\langle u \rangle \os{\sus}{\lo} 
{\cal F}
\otimes_{{\cal O}_{{\cal Q}{}^{\rm ex}}}
\Om^{\bul}_{{\cal Q}^{\rm ex}/\os{\circ}{T}}\langle \langle u\rangle \rangle. 
\tag{4.1.10}\label{ali:ufui} 
\end{align*} 
By composing (\ref{ali:ufui}) with (\ref{ali:uaafui}), we have the following morphism 
\begin{equation*} 
{\cal F}
\otimes_{{\cal O}_{{\cal Q}{}^{\rm ex}}}
\Om^{\bul}_{{\cal Q}^{\rm ex}/\os{\circ}{T}}\langle u\rangle\lo
{\cal F}
\otimes_{{\cal O}_{{\cal Q}{}^{\rm ex}}}
\Om^{\bul}_{{\cal Q}^{\rm ex}/S(T)^{\nat}}.
\tag{4.1.11}\label{ali:uafcui} 
\end{equation*}  

We would like to prove that the morphism {\rm (\ref{ali:uaafui})} is a quasi-isomorphism. 
In order to prove this, we need the following key lemma, which is a quite nontrivial work to 
formulate it. 

\begin{lemm}\label{lemm:cstite}
Let $C$ be a site. Let ${\cal B}$ be a sheaf of commutative rings with unit elments on $C$. 
Let ${\cal F}^{\bul \bul}=(\prod_{i\in {\mab N},j\in {\mab Z}}{\cal F}^{-i,j},\prod_{i\in {\mab N},j\in {\mab Z}}d^{-i,j}, 
\prod_{i\in {\mab N},j\in {\mab Z}}d'^{-i,j})$ 
be a double complex of ${\cal B}$-modules on $C$, 
where $d^{-i,j}\col {\cal F}^{-i,j}\lo {\cal F}^{-i+1,j}$ and 
$d'{}^{-i,j}\col {\cal F}^{-i,j}\lo {\cal F}^{-i,j+1}$ are morphism of ${\cal B}$-modules on $C$ 
such that $d^{-i+1j}\circ d^{-i,j}=0$, $d'^{-i,j+1}\circ d'{}^{-i,j}=0$ and 
$d'{}^{-i+1,j}\circ d^{-i,j}+d^{-i,j+1}\circ d'{}^{-i,j}=0$.  
Let ${\cal G}^{\bul}$ be a complex of ${\cal B}$-modules on $C$. 
Let $\eps^j \col {\cal F}^{0j}\lo {\cal G}^j$ $(j\in {\mab Z})$ be a morphism of ${\cal B}$-modules on $C$ 
such that $\eps^{\bul}:=\{\eps^j\}_{j\in {\mab Z}}$ induces morphisms
${\cal F}^{0\bul}\lo {\cal G}^{\bul}$ and ${\cal F}^{-1,j}\os{d^{-1,j}}{\lo} {\cal F}^{0j}\os{\eps^j}{\lo} {\cal G}^j$ 
$(j\in {\mab Z})$ of complexes of ${\cal B}$-modules.  
Let $\{V_{\mu}\}_{\mu \in M}$ be a basis of $C$. That is, for any covering family 
$\{U_{\lam}\}$ of any object $U$ of $C$ and for any $\lam$, there exists a covering family 
$\{V_{\mu}\}_{\mu \in M'}$ $(M'\subset M)$ of $U_{\lam}$.  
Assume that the following sequence 
\begin{align*} 
\cdots \lo \Gam(V_{\mu},{\cal F}^{-i,j})\lo \Gam(V_{\mu},{\cal F}^{-i+1,j})\lo \cdots \lo 
\Gam(V_{\mu},{\cal F}^{0j})\lo \Gam(V_{\mu},{\cal G}^j)\lo 0
\tag{4.2.1}\label{ali:rmafxu} 
\end{align*} 
is exact for any $\mu \in M$ and any $j\in {\mab Z}$. 
Then the natural morphism $s({\cal F}^{\bul \bul})\lo {\cal G}^{\bul}$ 
is a quasi-isomorphism. 
\end{lemm}
\begin{proof} 
(The proof of (\ref{lemm:cstite}) is not obvious 
because we consider the product of infinitely many sheaves; 
taking the product of infinitely many sheaves and taking stalks of sheaves 
does not necessarily commute as pointed out by our referee.)
We prove that the induced morphism 
${\cal H}^q(s({\cal F}^{\bul \bul}))\lo {\cal H}^q({\cal G}^{\bul})$ $(q\in {\mab N})$ 
is an isomorphism. 
This is a local problem. 
The degree $q$-part of the complex $s({\cal F}^{\bul \bul})$ is 
$\prod_{i+j=q}{\cal F}^{ij}$. 
For any object $U$ of $C$, 
\begin{align*} 
\Gam(U,\prod_{i+j=q}{\cal F}^{ij})=
\prod_{i+j=q}\Gam(U,{\cal F}^{ij}). 
\tag{4.2.2}\label{ali:rabsxu} 
\end{align*}  
By using the exact sequence (\ref{ali:rmafxu}), we see that
the following morphism 
\begin{align*} 
{\rm Ker}(\Gam(V_{\mu},\prod_{i+j=q}{\cal F}^{ij})\lo 
\Gam(V_{\mu},\prod_{i+j=q+1}{\cal F}^{ij}))\lo 
{\rm Ker}(\Gam(V_{\mu},{\cal G}^q)\lo \Gam(V_{\mu},{\cal G}^{q+1}))
\end{align*} 
induced by $\eps^{\bul}$ is surjective. 
Since $\{V_{\mu}\}_{\mu \in M}$ is a basis of $C$, this means that 
the morphism 
\begin{align*} 
{\rm Ker}(\prod_{i+j=q}{\cal F}^{ij}\lo \prod_{i+j=q+1}{\cal F}^{ij})
\lo 
{\rm Ker}({\cal G}^q\lo {\cal G}^{q+1})
\end{align*} 
is surjective. Hence 
the morphism ${\cal H}^q(s({\cal F}^{\bul \bul}))\lo {\cal H}^q({\cal G}^{\bul})$ 
is surjective. 
\par 
The rest we have to prove is the injective of the morphism 
${\cal H}^q(s({\cal F}^{\bul \bul}))\lo {\cal H}^q({\cal G}^{\bul})$.  
Let $\om'$ be a local section of ${\cal H}^q(s({\cal F}^{\bul \bul}))$. 
We may assume that there exist $V_{\mu}$ and  
a section $\om
=(\om_{ij})_{i+j=q}$ $(\om_{ij}\in \Gam(V_{\mu},{\cal F}^{ij}))$ 
of 
${\rm Ker}(\Gam(V_{\mu},\prod_{i+j=q}{\cal F}^{ij})\lo 
\Gam(V_{\mu},\prod_{i+j=q+1}{\cal F}^{ij}))$ 
whose image in ${\cal H}^q(s({\cal F}^{\bul \bul}))$ 
is equal to $\om'$. 
Assume that the image of $\om'$ in ${\cal H}^q({\cal G}^{\bul})$ is zero. 
Then we may assume that 
the image of $\om$ 
in 
$\Gam(V_{\mu},{\cal H}^q({\cal F}^{\bul}))$ is zero. 
Then there exists a covering $\{V_{\mu'}\}_{\mu'}$ of $V_{\mu}$ such that 
the image of $\om_{0q}\vert_{V_{\mu'}}$ in 
$\Gam(V_{\mu'},{\cal G}^q)$ is the image of an element  of $\Gam(V_{\mu'},{\cal G}^{q-1})$. 
Because the morphism 
$\Gam(V_{\mu'},{\cal F}^{0,q-1})\lo \Gam(V_{\mu'},{\cal G}^{q-1})$ 
is surjective by the exactness of (\ref{ali:rmafxu}), 
we can take an element $\eta_{0,q-1}$ whose image in 
$\Gam(V_{\mu'},{\cal G}^{q-1})$ is the element above. 
By the exactness of (\ref{ali:rmafxu}), we see that 
there exists an element $\eta_{-1,q}$ of $\Gam(V_{\mu'},{\cal F}^{-1,q})$ 
such that 
$\om_{0q}\vert_{V_{\mu'}}=d'{}^{0,q-1}(\eta_{0,q-1})+d^{-1,q}(\eta_{-1,q})$. 
(It is not necessary to take an object of $C$ which is smaller 
than $V_{\mu'}$ to find $\eta_{-1,q}$. This is the point of this proof.) 
Because $d^{-1,q+1}(\om_{-1,q+1})+d'{}^{0q}(\om_{0q})=0$, 
\begin{align*} 
d^{-1,q+1}(\om_{-1,q+1}\vert_{V_{\mu'}})
&=-d'{}^{0q}(\om_{0q}\vert_{V_{\mu'}})
=-d'{}^{0q}(d'{}^{0,q-1}(\eta_{0,q-1})+d^{-1,q}(\eta_{-1,q}))\\
&=d^{-1,q+1}(d'^{-1,q}(\eta_{-1,q})), 
\end{align*} 
there exists an element 
$\eta_{-2.q+1}$ of 
$\Gam(V_{\mu'},{\cal F}^{-2,q+1})$ such that 
$\om_{-1,q+1}\vert_{V_{\mu'}}=d^{-1,q}(\eta_{-1,q})+d^{-2,q+1}(\eta_{-2,q+1})$ 
by the exactness of (\ref{ali:rmafxu}) again. 
Continuing this argument repeatedly, 
we obtain an element 
$(\eta_{ij})_{i+j=q-1}$ of 
$\Gam(V_{\mu'},\{{\cal F}^{ij}\}_{i+j=q-1})$
whose coboundary is equal to $\om\vert_{V_{\mu'}}$. 
Hence the class $[\om\vert_{V_{\mu'}}]$ in 
$H^q(\Gam(V_{\mu'},s({\cal F}^{\bul \bul})))$ is zero. 
%This means that, for any point $x$ of $\os{\circ}{W}_i$, 
%the image of the stalk $\om_x$ of $\om$ at $s$ in 
%\begin{align*} 
%&{\cal H}^q(({\cal F}
%\otimes_{{\cal O}_{{\cal Q}{}^{\rm ex}}}
%\Om^{\bul}_{{\cal Q}^{\rm ex}/\os{\circ}{T}}\langle \langle u\rangle \rangle)_x\\
%&=\vil_{x\in \os{\circ}{W}{}'}\{{\rm Ker}
%(\Gam(W',({\cal F}
%\otimes_{{\cal O}_{{\cal Q}{}^{\rm ex}}}\Om^{\bul}_{{\cal Q}^{\rm ex}/\os{\circ}{T}}
%\langle \langle u\rangle \rangle)^q)\lo \Gam(W',({\cal F}
%\otimes_{{\cal O}_{{\cal Q}{}^{\rm ex}}}\Om^{\bul}_{{\cal Q}^{\rm ex}/\os{\circ}{T}}
%\langle \langle u\rangle \rangle)^{q+1})\\
%&/{\rm Im}
%(\Gam(W',({\cal F}
%\otimes_{{\cal O}_{{\cal Q}{}^{\rm ex}}}\Om^{\bul}_{{\cal Q}^{\rm ex}/\os{\circ}{T}}
%\langle \langle u\rangle \rangle)^{q-1})\lo 
%\Gam(W',({\cal F}
%\otimes_{{\cal O}_{{\cal Q}{}^{\rm ex}}}\Om^{\bul}_{{\cal Q}^{\rm ex}/\os{\circ}{T}}
%\langle \langle u\rangle \rangle)^q)\}
%\end{align*} 
%is zero. 
%Here $W'$ is a log affine subscheme of ${\mathfrak E}$ such that $\os{\circ}{W}{}'$ contains $x$. 
Because we have a natural composite morphism 
\begin{align*} 
{\rm Ker}(\Gam(V_{\mu},\prod_{i+j=q}{\cal F}^{ij})\lo 
\Gam(V_{\mu},\prod_{i+j=q+1}{\cal F}^{ij}))
\lo 
H^q(\Gam(V_{\mu'},s({\cal F}^{\bul \bul})))\lo \Gam(V_{\mu'},{\cal H}^q(s({\cal F}^{\bul \bul})))
\end{align*} 
and because the image of $\om\vert_{V_{\mu'}}$ is equal to $\om'\vert_{V_{\mu'}}$,  
$\om'\vert_{V_{\mu'}}=0$. Hence $\om'=0$. 
\end{proof} 

\par 
Now we can prove the following: 

\begin{theo}\label{theo:saih}  
The morphism {\rm (\ref{ali:uaafui})} is a quasi-isomorphism.  
\end{theo} 
\begin{proof} 
%(The proof of (\ref{theo:saih}) is not obvious 
%because we consider the infinitely many product of quasi-coherent sheaves
%in ${\cal F}\otimes_{{\cal O}_{{\cal Q}{}^{\rm ex}}}
%\Om^{\bul}_{{\cal Q}^{\rm ex}/\os{\circ}{T}}\langle \langle u\rangle \rangle$; 
%taking the product of infinitely many sheaves and taking stalks of the sheaves 
%are not necessarily commutative as pointed out by the referee.)
%We prove that the induced morphism 
%\begin{align*}  
%{\cal H}^q({\cal F}
%\otimes_{{\cal O}_{{\cal Q}{}^{\rm ex}}}
%\Om^{\bul}_{{\cal Q}^{\rm ex}/\os{\circ}{T}}\langle \langle u\rangle \rangle) {\lo} 
%{\cal H}^q({\cal F}\otimes_{{\cal O}_{{\cal Q}{}^{\rm ex}}}
%\Om^{\bul}_{{\cal Q}^{\rm ex}/S(T)^{\nat}}) \quad (q\in {\mab N})
%\tag{4.2.1}\label{ali:ufhqui} 
%\end{align*}  
%induced by (\ref{ali:uaafui}) is an isomorphism.  
%This is a local problem. Because $U_{S(T)^{\nat}}={\cal O}_Tu$, 
%the degree $q$-part of the complex 
%${\cal F}
%\otimes_{{\cal O}_{{\cal Q}{}^{\rm ex}}}
%\Om^{\bul}_{{\cal Q}^{\rm ex}/S(T)^{\nat}}$ 
%is equal to 
%$\prod_{n=0}^{\infty}{\cal F}
%\otimes_{{\cal O}_{{\cal Q}{}^{\rm ex}}}
%\Om^{q}_{{\cal Q}^{\rm ex}/\os{\circ}{T}}u^{[n]}$. 
%For any open subscheme $V$ of ${\mathfrak E}$, 
%\begin{align*} 
%\Gam(V, \prod_{n=0}^{\infty}{\cal F}
%\otimes_{{\cal O}_{{\cal Q}{}^{\rm ex}}}
%\Om^{q}_{{\cal Q}^{\rm ex}/\os{\circ}{T}}u^{[n]})=
%\prod_{n=0}^{\infty}\Gam(V, {\cal F}
%\otimes_{{\cal O}_{{\cal Q}{}^{\rm ex}}}
%\Om^{q}_{{\cal Q}^{\rm ex}/\os{\circ}{T}})u^{[n]}.
%\tag{4.2.2}\label{ali:rqtxu} 
%\end{align*} 
By (\ref{lemm:cstite}) we have only to prove that  
\begin{align*} 
0&\lo    
\Gam(W,{\cal F})
\os{d\log t \wedge}{\lo}   
\Gam(W,{\cal F}\otimes_{{\cal O}_{{\cal Q}{}^{\rm ex}}}
\Om^1_{{\cal Q}^{\rm ex}/\os{\circ}{T}})
\os{d\log t \wedge}{\lo}
\cdots 
\os{d\log t \wedge}{\lo}
\Gam(W,{\cal F}\otimes_{{\cal O}_{{\cal Q}{}^{\rm ex}}}
\Om^{j-2}_{{\cal Q}^{\rm ex}/\os{\circ}{T}}) \tag{4.3.1}\label{ali:gumt}\\
&\os{d\log t \wedge}{\lo}  
\Gam(W,{\cal F}\otimes_{{\cal O}_{{\cal Q}{}^{\rm ex}}}
\Om^{j-1}_{{\cal Q}^{\rm ex}/\os{\circ}{T}})
\os{d\log t \wedge}{\lo}  
\Gam(W,{\cal F}\otimes_{{\cal O}_{{\cal Q}{}^{\rm ex}}}
\Om^j_{{\cal Q}^{\rm ex}/S(T)^{\nat}})  \lo 0.   
\end{align*} 
is exact for a log affine open subscheme $W$ of ${\mathfrak E}$.
In \cite[(1.7.26)]{nb} we have proved that  
the following complex is exact for an integer $j$: 
\begin{align*} 
0&\lo    
{\cal F}
\os{d\log t \wedge}{\lo}   
{\cal F}\otimes_{{\cal O}_{{\cal Q}{}^{\rm ex}}}
\Om^1_{{\cal Q}^{\rm ex}/\os{\circ}{T}} 
\os{d\log t \wedge}{\lo}
\cdots 
\os{d\log t \wedge}{\lo}
{\cal F}\otimes_{{\cal O}_{{\cal Q}{}^{\rm ex}}}
\Om^{j-2}_{{\cal Q}^{\rm ex}/\os{\circ}{T}} \tag{4.3.2}\label{ali:gultpt}\\
&\os{d\log t \wedge}{\lo}  
{\cal F}\otimes_{{\cal O}_{{\cal Q}{}^{\rm ex}}}
\Om^{j-1}_{{\cal Q}^{\rm ex}/\os{\circ}{T}}  
\os{d\log t \wedge}{\lo}  {\cal F}
\otimes_{{\cal O}_{{\cal Q}{}^{\rm ex}}}
\Om^j_{{\cal Q}^{\rm ex}/S(T)^{\nat}}  \lo 0.   
\end{align*} 
Because $H^q(Z,{\cal G})=0$ $(q\geq 1)$ for an affine scheme $Z$ 
and a quasi-coherent ${\cal O}_Z$-module ${\cal G}$ (\cite[(1.3.1)]{ega3}),  because 
${\cal F}\otimes_{{\cal O}_{{\cal Q}{}^{\rm ex}}}
\Om^i_{{\cal Q}^{\rm ex}/\os{\circ}{T}}$ $(i\in {\mab N})$ is a quasi-coherent 
${\cal O}_{\mathfrak E}$-module and because the morphism $d\log t \wedge$ is 
an ${\cal O}_{\mathfrak E}$-linear morphism, 
the complex (\ref{ali:gumt}) is exact. 
\end{proof} 

\begin{rema}\label{rema:kihainc}
In \cite[Lemma 6]{kiha} Kim and Hain 
have claimed that the natural morphism 
\begin{equation*} 
{\cal W}\wt{\Om}^{\bul}_Y\langle u \rangle 
\lo 
{\cal W}\Om^{\bul}_Y
\tag{4.4.1}\label{eqn:wyuy}
\end{equation*} 
is a quasi-isomorphism for a log smooth scheme $Y$ of Cartier type 
over the log point of a perfect field of characteristic $p>0$. 
However,  in \cite[(3.36) (2)]{ey} Ertl and Yamada have pointed out that 
there is a non-small gap of the proof of Kim and Hain's claim above.  
In \cite[Lemma 6]{kiha} Kim and Hain do not either prove that 
the natural morphism 
\begin{equation*} 
{\cal W}\wt{\Om}^{\bul}_Y\langle \langle u \rangle \rangle
\lo 
{\cal W}\Om^{\bul}_Y
\tag{4.4.2}\label{eqn:wcyuy}
\end{equation*} 
is a quasi-isomorphism  because 
the operations taking the direct product of sheaves and taking stalks  
do not commute in general as pointed out by our referee; 
we do not know whether we can use \cite[(2.7.3) 2]{weib} immediately. 
In \S\ref{sec:ofc} we shall prove that the morphisms (\ref{eqn:wyuy}) and 
(\ref{eqn:wcyuy}) are isomorphisms. 
%we have to note that 
%$({\cal W}\wt{\Om}^{\bul}_Y\langle \langle u \rangle \rangle)_x
%=({\cal W}\wt{\Om}^{\bul}_Y)_x\langle \langle u \rangle \rangle$ 
%as in (\ref{ali:qtxu}) because 
Note also that the difference between the claim that (\ref{eqn:wyuy}) 
is a quasi-isomorphism 
and the claim that (\ref{eqn:wcyuy}) is a quasi-isomorphism is big.  
\end{rema}

%The following concept is important in this book. 
%In this book the following proposition is important in this book. 

%\begin{prop}
%Take a local basis of $\Om^1_{\ol{\cal P}/\os{\circ}{T}}$ containing 
%$d\log \tau$. 
%Let $(d\log \tau)^*\col \Om^1_{\ol{\cal P}/\os{\circ}{T}}\lo {\cal O}_{\ol{\cal P}}$ 
%be the local morphism defined by the local dual basis of $d\log \tau$. 
%We say that the crystal $E$ has {\it no poles along} $M_S$ if 
%the composite morphism 
%$\ol{\cal E}\lo \ol{\cal E}\otimes_{\ol{\cal P}}\Om^1_{\ol{\cal P}/\os{\circ}{T}}
%\os{{\rm id}_{\ol{\cal E}}\otimes(d\log \tau)^*}{\lo} \ol{\cal E}$ 
%divisible by $\tau$. 
%\end{prop}
%\begin{proof}
%\end{proof} 

\begin{defi}\label{defi:wl}
%Take a local basis of $\Om^1_{\ol{\cal Q}{}^{\rm ex}/\os{\circ}{T}}$ containing 
%$d\log \tau$. 
%Let $(d\log \tau)^*\col \Om^1_{\ol{\cal Q}{}^{\rm ex}/\os{\circ}{T}}\lo 
%{\cal O}_{\ol{\cal Q}{}^{\rm ex}}$ 
%be the local morphism defined by the local dual basis of $d\log \tau$. 
We say that the crystal $F$ has {\it no poles} 
%$S(T)^{\nat}$ 
if the morphism 
$$\ol{\nabla} \col 
\ol{\cal F}\lo \ol{\cal F}\otimes_{{\cal O}_{\ol{\cal Q}{}^{\rm ex}}}\Om^1_{\ol{\cal Q}{}^{\rm ex}
/\os{\circ}{T}}$$  
factors through the natural morphism 
$\ol{\cal F}\otimes_{{\cal O}_{\ol{\cal Q}{}^{\rm ex}}}\Om^1_{\os{\circ}{\ol{\cal Q}}{}^{\rm ex}
/\os{\circ}{T}}\lo 
\ol{\cal F}\otimes_{{\cal O}_{\ol{\cal Q}{}^{\rm ex}}}\Om^1_{\ol{\cal Q}{}^{\rm ex}
/\os{\circ}{T}}$.  
%\os{{\rm id}_{\ol{\cal F}}\otimes(d\log \tau)^*}{\lo} \ol{\cal F}$ 
%divisible by the local section $\tau$. 
\end{defi}

\begin{exem}
Let $\os{\circ}{Y}_{T_0}$ be the underlying scheme of $Y_{T_0}$. 
Let $F'$ be a flat crystal of ${\cal O}_{\os{\circ}{Y}_{T_0}/\os{\circ}{T}}$-modules. 
Let 
$$\eps \col ((Y_{\os{\circ}{T_0}}/\os{\circ}{T})_{\rm crys},{\cal O}_{Y_{\os{\circ}{T_0}}/\os{\circ}{T}})\lo 
((\os{\circ}{Y}_{T_0}/\os{\circ}{T}_0)_{\rm crys},{\cal O}_{\os{\circ}{Y}_{T_0}/\os{\circ}{T}})$$ 
be the induced morphism of ringed topoi by the morphism 
$Y_{\os{\circ}{T}_0}\lo \os{\circ}{Y}_{T_0}$ forgetting the log structures of
$Y_{\os{\circ}{T}_0}$ over $\os{\circ}{T}$. 
If $F=\eps^*(F')$, then $F$ has no poles 
%along $S(T)^{\nat}$ (``and even everywhere'') 
if 
the underlying (formal) scheme $(\ol{\cal Q})^{\circ}$ is (formally) smooth over $\os{\circ}{T}$. 
Indeed, $F'$ corresponds to 
a quasi-coherent ${\cal O}_{\ol{\mathfrak E}}$-module 
$\ol{\cal F}$ with integrable connection 
$$\ol{\nabla}\col \ol{\cal F}\lo \ol{\cal F}\otimes_{{\cal O}_{\ol{\cal Q}{}^{\rm ex}}}
\Om^1_{(\ol{\cal Q}{}^{\rm ex})^{\circ}/\os{\circ}{T}}.$$ 
In particular, ${\cal O}_{Y_{\os{\circ}{T}_0}/\os{\circ}{T}}$ 
has no poles.  
%along $S(T)^{\nat}$.  
\end{exem}

\begin{defi}[{\bf cf.~\cite[(0.4), (0.5)]{kn}}]\label{defi:tef}
If, locally on $Y_{\os{\circ}{T}_0}$, there exists a finite increasing filtration 
$\{F_i\}_{i\in {\mab Z}}$ of 
flat quasi-coherent ${\cal O}_{Y_{\os{\circ}{T}_0}/\os{\circ}{T}}$-modules on $F$ 
such that ${\rm gr}_iF:=F_i/F_{i-1}$ $(\forall i\in {\mab Z})$
is a flat quasi-coherent ${\cal O}_{Y_{\os{\circ}{T}_0}/\os{\circ}{T}}$-module 
and such that ${\rm gr}_iF$  has no poles, 
% along $S(T)^{\nat}$, 
then we say that $F$ is 
{\it a flat locally nilpotent quasi-coherent 
${\cal O}_{Y_{\os{\circ}{T}_0}/\os{\circ}{T}}$-modules}.  
%with respect to} $S(T)^{\nat}$. 
\end{defi} 

%In the following we always assume that $F$ has no poles along 
%$M_{S(T)^{\nat}}$ if stated otherwise. 

In the following we always assume that $F$ is 
a flat locally nilpotent quasi-coherent 
${\cal O}_{Y_{\os{\circ}{T}_0}/\os{\circ}{T}}$-modules.  
%with respect to $S(T)^{\nat}$.   
The following theorem is a first key theorem in this book.

\begin{theo}\label{theo:qii}
The morphism {\rm (\ref{ali:uafcui})} 
is a quasi-isomorphism.  
\end{theo}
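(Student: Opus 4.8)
The plan is to exploit the factorization recorded just before the statement: the morphism (\ref{ali:uafcui}) is the composite of the inclusion (\ref{ali:ufui}) of the uncompleted PD-Hirsch extension into the completed one, followed by the morphism (\ref{ali:uaafui}) to the relative log de Rham complex. By (\ref{prop:saih}) the arrow (\ref{ali:uaafui}) is \emph{always} a quasi-isomorphism, with no hypothesis on $F$. Hence, by the two-out-of-three property, proving (\ref{theo:qii}) is equivalent to proving that the inclusion (\ref{ali:ufui}) is a quasi-isomorphism; equivalently, that the quotient complex $C^{\bul}:=({\cal F}\otimes\Om^{\bul}_{{\cal Q}^{\rm ex}/\os{\circ}{T}}\langle \langle U_{S(T)^{\nat}}\rangle\rangle)/({\cal F}\otimes\Om^{\bul}_{{\cal Q}^{\rm ex}/\os{\circ}{T}}\langle U_{S(T)^{\nat}}\rangle)$, whose degree $q$ part is locally $(\prod_{i}/\bigoplus_i){\cal O}_Tu^{[i]}\otimes_{{\cal O}_T}{\cal F}\otimes\Om^q$, is acyclic. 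This is exactly the place where the standing assumption that $F$ be locally nilpotent with respect to $S(T)^{\nat}$ must enter, precisely because (\ref{prop:saih}) shows that the \emph{completed} comparison needs no such hypothesis.

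First I would reduce to the case in which $F$ has no poles along $S(T)^{\nat}$ in the sense of (\ref{defi:wl}). Local nilpotence provides, locally, a finite increasing filtration $\{F_i\}$ of $F$ by flat quasi-coherent crystals whose graded pieces ${\rm gr}_iF$ have no poles. All of the constructions involved, namely the log de Rham complex, the PD-Hirsch extension $\langle U_{S(T)^{\nat}}\rangle$, its completion $\langle\langle U_{S(T)^{\nat}}\rangle\rangle$ and the inclusion (\ref{ali:ufui}), are obtained by tensoring $F$ with the flat objects $\Gam_{{\cal O}_T}(L_{S(T)^{\nat}})$, $\Gam_{{\cal O}_T}(L_{S(T)^{\nat}})^{\wedge}$ and $\Om^{\bul}_{{\cal Q}^{\rm ex}/\os{\circ}{T}}$, so a short exact sequence of crystals yields a short exact sequence of the corresponding complexes $C^{\bul}$. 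By the associated long exact cohomology sequences and induction on the length of $\{F_i\}$, it suffices to prove the acyclicity of $C^{\bul}$ when $F$ has no poles.

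Under the no-poles assumption I would then produce the acyclicity of $C^{\bul}$ by contracting along the variable $u$. Writing $\nabla=\nabla_{\rm rel}+d\log\tau\cdot N$, where $N:=({\rm id}_{\ol{\cal F}}\otimes(d\log\tau)^*)\circ\ol{\nabla}$ is the residue operator of (\ref{defi:wl}), the no-poles hypothesis says precisely that $N$ is divisible by the local section $\tau$ of the PD-ideal of ${\cal O}_{{\mathfrak D}(\ol{S(T)^{\nat}})}$; since $p$ is locally nilpotent on $\os{\circ}{T}$ and $\tau$ lies in a PD-ideal, $\tau$ is topologically nilpotent. The $d\log\tau\wedge$-differential appearing in the rows of (\ref{cd:hirfd}) is split exact, the splitting being interior product with the dual basis vector $(d\log\tau)^*$; this is the splitting that underlies the exact sequence (\ref{ali:gultpt}) proved in \cite{nb}. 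Combining this Koszul contraction with the geometric-series inversion of $({\rm id}+\text{an operator divisible by }\tau)$, whose convergence in the PD-adic topology is guaranteed by the topological nilpotence of $\tau$, yields a contracting homotopy on $C^{\bul}$: the $\tau$-divisibility pushes the supports of the successive correction terms to arbitrarily high $u$-degree, so that the formally infinite homotopy is well defined modulo the direct sum $\bigoplus_i{\cal O}_Tu^{[i]}$ and hence descends to $C^{\bul}$.

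I expect the main obstacle to be precisely this last point. Unlike the completed complex, for which (\ref{prop:saih}) converges by the complete-filtration convergence theorem of \cite{weib}, the uncompleted complex is filtered by a filtration that is bounded below and exhaustive but \emph{not} complete, so the naive spectral sequence of the double complex (\ref{cd:hirfd}) computes the cohomology only up to a $\varprojlim^1$ completeness defect. The role of no-poles (hence of local nilpotence) is to kill exactly this defect, by making the residue operator $N$ topologically nilpotent and thereby confining every cohomology class to a representative with controlled $u$-tail. The delicate bookkeeping the argument will require is to check that the homotopy is compatible with the sheaf structure and with the relative connection $\nabla_{\rm rel}$, and not merely with the $d\log\tau$-Koszul differential alone.
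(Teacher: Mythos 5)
Your reduction of (\ref{theo:qii}) to the acyclicity of the quotient complex
$C^{\bul}=({\cal F}\otimes_{{\cal O}_{{\cal Q}^{\rm ex}}}\Om^{\bul}_{{\cal Q}^{\rm ex}/\os{\circ}{T}}\langle \langle U_{S(T)^{\nat}}\rangle \rangle)/({\cal F}\otimes_{{\cal O}_{{\cal Q}^{\rm ex}}}\Om^{\bul}_{{\cal Q}^{\rm ex}/\os{\circ}{T}}\langle U_{S(T)^{\nat}}\rangle)$,
and the d\'evissage to the no-poles case, are both sound, and this is genuinely not the paper's route: the paper never passes through the completed extension in proving (\ref{theo:qii}) (the corollary (\ref{coro:nqi}) is deduced \emph{from} the theorem, not used for it). Instead it proves the key inclusion (\ref{ali:dayn}) by an explicit computation with a local basis containing $d\log \tau$, deduces the long exact sequence (\ref{ali:qf}) of cohomology sheaves, and then runs the spectral sequence of the filtration by $u$-degree on the uncompleted complex, whose $E_2$-page collapses to one column.

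The gap is in your last step. Writing $\iota$ for contraction with the dual basis vector of $d\log\tau$ and $h(u^{[i]}\otimes \om)=u^{[i+1]}\otimes \iota(\om)$, one finds $Dh+hD={\rm id}+R$ on $C^{\bul}$, where $R$ raises the $u$-degree by one and applies $L=\nabla\iota+\iota\nabla$, whose degree-zero part is exactly the residue operator of (\ref{defi:wl}). Your claim that ``the $\tau$-divisibility pushes the supports of the successive correction terms to arbitrarily high $u$-degree, so that the formally infinite homotopy is well defined modulo the direct sum'' rests on a false principle: an element of $\prod_i{\cal O}_Tu^{[i]}\otimes{\cal F}\otimes\Om^q$ with nonzero components in arbitrarily high $u$-degrees is precisely an element that does \emph{not} die in $C^{\bul}=\prod/\bigoplus$. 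The Neumann series $\sum_k(-R)^k$ is automatically well defined on the completed complex (each $u$-degree receives only finitely many contributions); the real issue is that it need not carry the direct sum into itself, hence need not descend to the quotient, and high $u$-degree support does nothing to help --- if it did, your argument would prove the theorem with no hypothesis on $F$ at all. What actually closes the argument is the vanishing (more generally, nilpotence) of $L$: under the no-poles hypothesis $N$ is divisible by $\tau$, and $d\tau=\tau\,d\log\tau=0$ in $\Om^1_{{\cal Q}^{\rm ex}/\os{\circ}{T}}$ --- this is exactly the point of the computation (\ref{ali:ttvt}) in the paper --- so, after choosing a local basis of closed forms, $L=0$, the series terminates at its first term, and $Dh+hD={\rm id}$ on $C^{\bul}$ outright. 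Restated this way your homotopy does work and gives a legitimate alternative to the paper's spectral-sequence proof; as written, the justification of the descent to $C^{\bul}$ is wrong.
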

\begin{proof} 
(The following proof has come to me by 
recalling theory of log de Rham-Witt complexes
(see \cite[(2.2.3) (1)]{nb}): (\ref{ali:dayn}) below is a key point for the proof.) 
This is a local problem. 
In \cite[(1.7.22.1)]{nb} we have proved that 
the following sequence 
\begin{align*} 
0  \lo {\cal O}_{\mathfrak E}
\otimes_{{\cal O}_{{{\cal Q}{}^{\rm ex}}}}
{\Om}^{\bul}_{{{\cal Q}{}^{\rm ex}}/S(T)^{\nat}}[-1] \os{d\log t\wedge }{\lo} 
{\cal O}_{\mathfrak E}
\otimes_{{\cal O}_{{{\cal Q}^{\rm ex}}}}
{\Om}^{\bul}_{{{\cal Q}^{\rm ex}}/\os{\circ}{T}}  
\lo {\cal O}_{\mathfrak E}\otimes_{{\cal O}_{{{\cal Q}^{\rm ex}}}}
{\Om}^{\bul}_{{{\cal Q}^{\rm ex}}/S(T)^{\nat}} \lo 0
\tag{4.8.1}\label{ali:gsflaxd}
\end{align*} 
is exact. 
In fact, the following sequence is exact: 
\begin{align*} 
0  \lo {\cal F}
\otimes_{{\cal O}_{{\cal Q}^{\rm ex}}}
{\Om}^{\bul}_{{\cal Q}^{\rm ex}/S(T)^{\nat}}[-1] \os{d\log t\wedge }{\lo} 
{\cal F}\otimes_{{\cal O}_{{\cal Q}^{\rm ex}}}
{\Om}^{\bul}_{{\cal Q}^{\rm ex}/\os{\circ}{T}}  
\lo {\cal F}\otimes_{{\cal O}_{{\cal Q}^{\rm ex}}}
{\Om}^{\bul}_{{\cal Q}^{\rm ex}/S(T)^{\nat}} \lo 0. 
\tag{4.8.2}\label{ali:agbxd}
\end{align*} 
Here note that the morphism 
\begin{align*} 
d\log t\wedge \col {\cal F}
\otimes_{{\cal O}_{{\cal Q}^{\rm ex}}}
\Om^{\bul}_{{\cal Q}^{\rm ex}/S(T)^{\nat}}[-1]\lo 
{\cal F}\otimes_{{\cal O}_{{{\cal Q}^{\rm ex}}}}
\Om^{\bul}_{{\cal Q}^{\rm ex}/\os{\circ}{T}}
\end{align*} 
is indeed a morphism of complexes. 
(The exact sequence in \cite[(3.6)]{hk} is mistaken because 
$\wedge d\log t$ in [loc.~cit.] is not a morphism of complexes.)
%By (\ref{prop:bpc}), (\ref{lemm:drl}) and (\ref{ali:agxd}),
%the following sequence is exact: 
%\begin{align*} 
%0  \lo {\cal F}\otimes_{{\cal O}_{{{\cal P}^{\rm ex}}}}
%{\Om}^{\bul}_{{{\cal P}^{\rm ex}_{\ul{\lam}}}/S(T)^{\nat}}[-1] \os{d\log t\wedge }{\lo} 
%{\cal E}\otimes_{{\cal O}_{{{\cal P}^{\rm ex}}}}
%{\Om}^{\bul}_{{{\cal P}^{\rm ex}_{\ul{\lam}}}/\os{\circ}{T}}  
%\lo {\cal E}\otimes_{{\cal O}_{{{\cal P}^{\rm ex}}}}
%{\Om}^{\bul}_{{{\cal P}^{\rm ex}_{\ul{\lam}}}/S(T)^{\nat}} \lo 0
%\tag{4.1.4}\label{ali:agxd}
%\end{align*} 
Assume that there exists a flat quasi-coherent 
${\cal O}_{Y_{\os{\circ}{T}_0}/\os{\circ}{T}}$-submodules $F'$ of $F$ 
such that ${\rm gr}F:=F/F'$ has no poles 
%along $S(T)^{\nat}$ 
and such that ${\rm gr}F$ is a flat quasi-coherent 
${\cal O}_{Y_{\os{\circ}{T}_0}/\os{\circ}{T}}$-module. 
Let $({\cal F}',\nabla')$ be the integrable connection corresponding to $F'$. 
The crystals $F'$ and ${\rm gr}F:=F/F'$ give us the log de Rham complexes 
${\cal F}'\otimes_{{\cal O}_{{\cal Q}^{\rm ex}}}
{\Om}^{\bul}_{{\cal Q}^{\rm ex}/\os{\circ}{T}}$ and 
${\cal F}/{\cal F}'\otimes_{{\cal O}_{{\cal Q}^{\rm ex}}}
{\Om}^{\bul}_{{\cal Q}^{\rm ex}/\os{\circ}{T}}$, respectively. 
Set ${\rm gr}{\cal F}:={\cal F}/{\cal F}'$. 
Let $\iota \col {\cal F}'\otimes_{{\cal O}_{{\cal Q}^{\rm ex}}}
{\Om}^{\bul}_{{\cal Q}^{\rm ex}/\os{\circ}{T}} \os{\sus}{\lo} 
{\cal F}\otimes_{{\cal O}_{{\cal Q}^{\rm ex}}}
{\Om}^{\bul}_{{\cal Q}^{\rm ex}/\os{\circ}{T}}$ be the natural inclusion  
and let ${\rm MC}(\iota)$ be the mapping cone of $\iota$. 
Then we have the natural quasi-isomorphism 
${\rm MC}(\iota)\os{\sim}{\lo} {\rm gr}{\cal F}
\otimes_{{\cal O}_{{\cal Q}^{\rm ex}}}
{\Om}^{\bul}_{{\cal Q}^{\rm ex}/\os{\circ}{T}}$. 
By (\ref{coro:hac}) we have a quasi-isomorphism 
\begin{align*} 
{\rm MC}(\iota)\langle u\rangle \os{\sim}{\lo} 
{\rm gr}{\cal F}\otimes_{{\cal O}_{{\cal Q}^{\rm ex}}}
{\Om}^{\bul}_{{\cal Q}^{\rm ex}/\os{\circ}{T}}\langle u\rangle. 
\end{align*} 
Hence, by (\ref{lemm:mcfgq}), we obtain the following isomorphism
\begin{align*} 
{\rm MC}(\iota_{\langle U_{S(T)^{\nat}}\rangle})  \os{\sim}{\lo} 
{\rm gr}{\cal F}\otimes_{{\cal O}_{{\cal Q}^{\rm ex}}}
{\Om}^{\bul}_{{\cal Q}^{\rm ex}/\os{\circ}{T}}\langle u \rangle. 
\end{align*} 
This means that the upper sequence of the following commutative diagram 
is exact: 
\begin{equation*}
\begin{CD}
0@>>> {\cal F}_0\otimes_{{\cal O}_{{\cal Q}^{\rm ex}}}
{\Om}^{\bul}_{{\cal Q}^{\rm ex}/\os{\circ}{T}}\langle u\rangle@>>>
{\cal F}\otimes_{{\cal O}_{{\cal Q}^{\rm ex}}}
{\Om}^{\bul}_{{\cal Q}^{\rm ex}/\os{\circ}{T}}\langle u\rangle\\
@. @VVV @VVV \\
0@>>> {\cal F}_0\otimes_{{\cal O}_{{\cal Q}^{\rm ex}}}
{\Om}^{\bul}_{{\cal Q}^{\rm ex}/S(T)^{\nat}}@>>>
{\cal F}\otimes_{{\cal O}_{{\cal Q}^{\rm ex}}}
{\Om}^{\bul}_{{\cal Q}^{\rm ex}/S(T)^{\nat}}
\end{CD}
\tag{4.8.3}\label{cd:uafeacui} 
\end{equation*} 
\begin{equation*}
\begin{CD}
@>>>
{\rm gr}{\cal F}\otimes_{{\cal O}_{{\cal Q}^{\rm ex}}}
{\Om}^{\bul}_{{\cal Q}^{\rm ex}/\os{\circ}{T}}\langle u\rangle\lo 0\\
@. @VVV \\
@>>>
{\rm gr}{\cal F}\otimes_{{\cal O}_{{\cal Q}^{\rm ex}}}
{\Om}^{\bul}_{{\cal Q}^{\rm ex}/S(T)^{\nat}}\lo 0. 
\end{CD} 
\end{equation*}
Assume that the morphism (\ref{ali:uafcui}) is a quasi-isomorphism for 
${\rm gr}F$. Then, by (\ref{cd:uafeacui}), 
the morphism (\ref{ali:uafcui}) for $F$ is a quasi-isomorphism if and only if 
the morphism (\ref{ali:uafcui}) for $F'$ is a quasi-isomorphism. 
%Because the problem is local, we may assume that, for $q>>0$,  
%\begin{align*} 
%{\cal H}^q({\cal F}\otimes_{{\cal O}_{\cal P}}{\Om}^{\bul}_{{\cal P}/S(T)^{\nat}})=0.
%\tag{4.8.1}\label{ali:eops}
%\end{align*} 
By making the same argument for $F'$ as above repeatedly,  
we may assume that $F$ has no poles 
since $F$ is flat and locally nilpotent. 
%along $S$. 
\par 
For simplicity of notation, set $\Om^{\bul}:=
{\cal F}\otimes_{{\cal O}_{{\cal Q}^{\rm ex}}}
{\Om}^{\bul}_{{\cal Q}^{\rm ex}/\os{\circ}{T}}$ and 
$\Om^{\bul}_{/S(T)^{\nat}}:={\cal F}\otimes_{{\cal O}_{{\cal Q}^{\rm ex}}}
{\Om}^{\bul}_{{\cal Q}^{\rm ex}/S(T)^{\nat}}$. 
We claim that the following sequence 
\begin{equation*} 
0  \lo {\cal H}^{q-1}(\Om^{\bul}_{/S(T)^{\nat}}) \os{d\log t\wedge }{\lo} 
{\cal H}^q(\Om^{\bul})  \lo {\cal H}^q(\Om^{\bul}_{/S(T)^{\nat}}) \lo 0
\tag{4.8.4}\label{ali:agxhqd}
\end{equation*} 
for $q\in {\mab N}$ obtained by (\ref{ali:gsflaxd})  is exact. 
We have only to prove  
that the following inclusion holds:   
\begin{align*} 
\nabla({\Om}^{q-1}) \cap 
\{d\log t \wedge({\Om}^{q-1})\} \subset 
d\log t\wedge(\nabla ({\Om}^{q-2})). 
\tag{4.8.5}\label{ali:dayn}
\end{align*} 
Set $\om_1:=d\log t$. 
Let us take a local basis $\{\om_1,\ldots,\om_r\}$ of 
$\Om^1_{{\cal Q}^{\rm ex}/\os{\circ}{T}}$. 
Take a local section 
$$f:=\sum_{1\leq i_1<\cdots <i_{q-1}\leq r}
f_{i_1\cdots i_{q-1}}\om_{i_1}\wedge \cdots \wedge \om_{i_{q-1}}\quad (f_{i_1\cdots i_{q-1}}\in {\cal F})$$ 
of ${\cal F}\otimes_{{\cal O}_{{\cal Q}^{\rm ex}}}
{\Om}^{q-1}_{{\cal Q}^{\rm ex}/\os{\circ}{T}}$. 
Decompose $f$ as follows: 
$$f=\sum_{2\leq i_2<\cdots <i_{q-1}\leq r}f_{1 i_2\cdots i_{q-1}}\om_1
\wedge \om_{i_2} \wedge \cdots \wedge \om_{i_{q-1}}+\sum_{2\leq i_1<\cdots <i_{q-1}\leq r}
f_{i_1\cdots i_{q-1}}\om_{i_1}\wedge \cdots \wedge \om_{i_{q-1}}.$$
Then 
\begin{align*} 
\nabla(f)&=\sum_{2\leq i_2<\cdots <i_{q-1}\leq r}\nabla(f_{1i_2\cdots i_{q-1}})
\wedge \om_1\wedge \om_{i_2}\wedge \cdots \wedge \om_{i_{q-1}}
\tag{4.8.6}\label{ali:tt} \\
&-
\sum_{2\leq i_2<\cdots <i_{q-1}\leq r}f_{1i_2\cdots i_{q-1}}\om_1
\wedge d(\om_{i_2}\wedge\cdots \wedge \om_{i_{q-1}})\\
&+\sum_{2\leq i_1<\cdots <i_{q-1}\leq r}\nabla
(f_{i_1\cdots i_{q-1}}\om_{i_1}\wedge \cdots \wedge \om_{i_{q-1}}).  
\end{align*}
Because $F$ has no poles,  
%along $M_{S(T)^{\nat}}$, 
the third term on the right hand side of (\ref{ali:tt}) can be expressed as follows:    
\begin{align*}
&\sum_{2\leq i_1<\cdots <i_{q-1}\leq r}g_{i_1\cdots i_{q-1}}\tau \om_1\wedge 
\om_{i_1}\wedge \cdots \wedge \om_{i_{q-1}}
+\sum_{2\leq i_1<\cdots <i_{q}\leq r}h_{i_1\cdots i_{q}}\om_{i_1}\wedge \cdots \wedge \om_{i_{q}}
\tag{4.8.7}\label{ali:ttvt}\\
&=\sum_{2\leq i_1<\cdots <i_{q}\leq r}h_{i_1\cdots i_{q}}\om_{i_1}\wedge \cdots \wedge \om_{i_{q}},
\end{align*} 
for some $g_{i_1\cdots i_{q}}\in {\cal F}$ and $h_{i_1\cdots i_{q}}\in {\cal F}$. 
Assume that $\nabla(f)\in \om_1 \wedge ({\Om}^{q-1})$. 
Then the last term in (\ref{ali:ttvt}) vanishes. 
Hence 
\begin{align*}
\nabla(f)&=-\om_1\wedge 
(\sum_{2\leq i_2<\cdots <i_{q-1}\leq r} \nabla(f_{1i_2\cdots i_{q-1}}) 
\wedge \om_{i_2}\wedge \cdots \wedge \om_{i_{q-1}}\\
&+ 
\sum_{2\leq i_2<\cdots <i_{q-1}\leq r}f_{1i_2\cdots i_{q-1}}
\wedge d(\om_{i_2}\wedge \cdots \wedge \om_{i_{q-1}}))\\
&=-\om_1\wedge 
\nabla(\sum_{2\leq i_2<\cdots <i_{q-1}\leq r} f_{1i_2\cdots i_{q-1}}\om_{i_2}\wedge
\cdots \wedge \om_{i_{q-1}}). 
\end{align*} 
Thus we have proved that the inclusion (\ref{ali:dayn}) holds and consequently 
we see that the sequence (\ref{ali:agxhqd}) is exact. 
\par 
By the exactness of (\ref{ali:agxhqd}), the following sequence is exact: 
\begin{align*} 
0  &\lo {\cal H}^0({\Om}^{\bul}) \os{d\log t\wedge }{\lo} 
{\cal H}^1({\Om}^{\bul})
 \os{d\log t\wedge }{\lo}  \cdots \tag{4.8.8}\label{ali:qf}
\cdots  \os{d\log t\wedge }{\lo}  {\cal H}^{q-1}({\Om}^{\bul})
\os{d\log t\wedge }{\lo} {\cal H}^q({\Om}^{\bul})  \\
&\lo {\cal H}^q(\Om^{\bul}_{/S(T)^{\nat}}) \lo 0. 
\end{align*} 
By the exactness of (\ref{ali:qf}) and the lemma (\ref{lemm:lm}) below, 
this implies that the following natural morphism 
\begin{align*}
{\cal H}^q(\Om^{\bul}\langle u\rangle)
\lo {\cal H}^q(\Om^{\bul}_{/S(T)^{\nat}}) \quad (q\in {\mab N})
\tag{4.8.9}\label{ali:epnpmt}
\end{align*}  
is an isomorphism. 
\par 
We complete the proof of this theorem. 
\end{proof}

\begin{lemm}\label{lemm:lm}
Let $A$ be a commutative ring with unit element. 
Let 
$$M^{\bul \bul}=(\prod_{i\in {\mab N},j\in {\mab Z}}M^{-i,j},
\prod_{i\in {\mab N},j\in {\mab Z}}d^{-i,j}, \prod_{i\in {\mab N},j\in {\mab Z}}d'^{-i,j})$$ 
be a double complex of $A$-modules, 
where $d^{-i,j}\col M^{-i,j}\lo M^{-i+1,j}$ and 
$d'{}^{-i,j}\col M^{-i,j}\lo M^{-i,j+1}$ are morphism of $A$-modules  
such that $d^{-i+1j}\circ d^{-i,j}=0$, $d'^{-i,j+1}\circ d'{}^{-i,j}=0$ and 
$d'{}^{-i+1,j}\circ d^{-i,j}+d^{-i,j+1}\circ d'{}^{-i,j}=0$. 
Let $N^{\bul}$ be a complex of $A$-modules. 
Let $\eps^j \col M^{0j}\lo N^j$ $(j\in {\mab Z})$ be a morphism of $A$-modules  
such that $\eps^{\bul}:=\{\eps^j\}_{j\in {\mab Z}}$ induces morphisms
$M^{0\bul}\lo N^{\bul}$ and $M^{-1,j}\os{d^{-1,j}}{\lo} M^{0j}\os{\eps^j}{\lo} N^j$ 
$(j\in {\mab Z})$ of complexes of $A$-modules.  
Assume that the following sequence 
\begin{align*}
\cdots \os{d^{-i-1,j}}\lo M^{-i,j}\os{d^{-i,j}}{\lo} 
M^{-i+1,j}\os{d^{-i+1,j}}{\lo} \cdots M^{0j}\os{\eps^j}{\lo} N^j\lo 0
\tag{4.9.1}\label{ali:nmij}
\end{align*} 
is exact and that 
the following natural sequence 
\begin{align*}
H^j(M^{-i,\bul},d'{}^{-i,\bul})\os{H^j(d^{-i,j})}{\lo} 
H^j(M^{-i+1,\bul},d'{}^{-i+1,\bul})\os{H^j(d^{-i+1,j})}{\lo} H^j(M^{-i+2,\bul},d'{}^{-i+2,\bul})
\tag{4.9.2}\label{ali:nhmij}
\end{align*}
is exact. 
Denote the sub-double complex 
$(\bigoplus_{i,j\in {\mab N}}M^{-i,j},\bigoplus_{i,j\in {\mab N}}d^{-i,j}, \bigoplus_{i,j\in {\mab N}}d'^{-i,j})_{i\in {\mab N},j\in {\mab Z}}$ 
of $M^{\bul \bul}$ 
simply by $\bigoplus_{i,j\in {\mab N}}M^{-i,j}$. 
Then 
the natural morphism $s(\bigoplus_{i,j\in {\mab N}}M^{-i,j})\lo N^{\bul}$ of complexes of $A$-modules 
induces the following injective morphism 
\begin{align*} 
H^q(s(\bigoplus_{i,j\in {\mab N}}M^{-i,j}))\os{\subset}{\lo} H^q(N^{\bul}) \quad (q\in {\mab Z}). 
\tag{4.9.3}\label{ali:nmmmij}
\end{align*} 
Furthermore, assume that the following natural morphism 
$${\rm Ker}(M^{0j}\lo M^{0,j+1})\lo {\rm Ker}(N^{0j}\lo N^{0,j+1})$$  
is surjective. 
Then the natural morphism $s(\bigoplus_{i,j\in {\mab N}}M^{-i,j})\lo N^{\bul}$ of complexes 
induces the following isomorphism 
\begin{align*} 
H^q(s(\bigoplus_{i,j\in {\mab N}}M^{-i,j}))\os{\sim}{\lo} H^q(N^{\bul}) \quad (q\in {\mab Z}). 
\tag{4.9.4}\label{ali:nmsmij}
\end{align*} 
\end{lemm}
\begin{proof}
If we omit the proof of this lemma whose proof is elementary 
(I do not say that this lemma is elementary), 
then the reader has to have some time to give the proof. 
Hence we give the proof for the reader's convenience. 
\par 
Let $q$ be an integer and let $k$ be a non-negative integer. 
Let $\sum_{i=0}^k\om^{-i,q+i}$ 
$(\om^{-i,q+i}\in M^{-i,q+i})$ be an element of 
$${\rm Ker}(\sum_{-i+j=q}(d^{-i,j}+d'{}^{-i,j}) \col s(\bigoplus_{-i+j=q}M^{-i,j})\lo s(\bigoplus_{-i+j=q+1}M^{-i,j})).$$ 
Then 
\begin{align*} 
d'{}^{-k,q+k}(\om^{-k,q+k})=0.
\tag{4.9.5}\label{ali:nkqmij}
\end{align*}  
Assume that $\eps^q(\om^{0q})\in {\rm Im}(\partial^{q-1})$. 
By the exactness of (\ref{ali:nmij}), there exists an element 
$\eta^{0,q-1}\in M^{0,q-1}$ such that 
$\eps^q(\om^{0q}-d'^{0,q-1}(\eta^{0,q-1}))=0$.  
By the exactness of (\ref{ali:nmij}) again, there exists an element 
$\eta^{-1,q}\in M^{-1,q}$ such that 
$$\om^{0q}=d^{-1,q}(\eta^{-1,q})+d'^{0,q-1}(\eta^{0,q-1}).$$ 
%Consider the element $\om^{-1,q+1}-d'^{-1,q}(\eta^{-1,q})$. 
By the exactness of (\ref{ali:nmij}) again, there exists an element 
$\eta^{-2,q+1}\in M^{-2,q+1}$ such that 
$$\om^{-1,q+1}=d^{-2,q+1}(\eta^{-2,q+1})+d'^{-1,q}(\eta^{-1,q})$$
as in the proof of (\ref{lemm:cstite}).  
Continuing this process repeatedly, 
we obtain $\eta^{0,q-1},\ldots, \eta^{-k,q+k-1}$. 
Because $\om^{-k,q+k}-d'{}^{-k,q+k-1}(\eta^{-k,q+k-1})\in {\rm Ker}(d^{-k,q+k})$ 
and because 
$$d'{}^{-k,q+k}(\om^{-k,q+k}-d'{}^{-k,q+k-1}(\eta^{-k,q+k-1}))=0$$ 
by (\ref{ali:nkqmij}), 
there exists an element $\eta^{-k-1,q+k}\in {\rm Ker}(d'{}^{-k-1,q+k})$ such that 
$$\om^{-k,q}=d^{-k-1,q}(\eta^{-k-1,q})+d'{}^{-k,q+k-1}(\eta^{-k,q+k-1})$$ 
by the exactness of (\ref{ali:nhmij}). 
Hence 
$\sum_{i=0}^k\om^{-i,q+i}$ is the coboundary of 
$\sum_{i=0}^{k+1}\eta^{-i,q+i-1}$. 
This means that the morphism (\ref{ali:nmmmij}) is injective. 
The surjectivity of the morphism (\ref{ali:nmsmij}) is obvious. 
\end{proof} 

\begin{rema}\label{rema:p}
(1) As pointed out in \cite[(2.2.3) (1)]{nb}, the formula 
\begin{align*} 
(d\wt{\om}^{i-1}_{{\cal Y}_n} \cap 
(\wt{\om}^{i-1}_{{\cal Y}_n}
\wedge (dT/T))) \subset d\wt{\om}^{i-2}_{{\cal Y}_n} \wedge dT/T
\tag{4.10.1}\label{ali:dyn}
\end{align*} 
in \cite[p.~246]{hyp}
is an obvious formula about which 
the editorial comment \cite[(6), (11A)]{hyp} is unnecessary. 
Because the boundary morphism 
${\cal H}^i(\Om_{{\cal Y}_n/{\cal W}_n(\os{\circ}{s})}) 
\lo 
{\cal H}^{i+1}(\Om_{{\cal Y}_n/{\cal W}_n(\os{\circ}{s})})$ 
is ``$p^{-n}d$'', 
the above formula is not a precisely necessary ingredient 
for the injectivity needed in [loc.~cit.].  
The boundary morphism of ${\cal H}^i(\Om_{{\cal Y}_n/{\cal W}_n(\os{\circ}{s})}) 
\lo 
{\cal H}^{i+1}(\Om_{{\cal Y}_n/{\cal W}_n(\os{\circ}{s})})$ has been misunderstood in 
[loc.~cit.] and the editorial comment for (\ref{ali:dyn}). 
\par 
%(2) The sheaf ${\cal W}_n\wt{\Om}^i_{Y_t}$ in the case $t=s$ 
%is equal to the sheaf $W_n\wt{\om}^i_Y$ in \cite{msemi}. 
%However $W_n\wt{\om}^i_Y$ 
%was defined by only a log smooth lift of $Y$ over ${\cal W}_n(s)$ in [loc.~cit.]. 
%We allow the immersion from $Y$ into a log smooth scheme over ${\cal W}_n(s)$ 
%for the definition of ${\cal W}_n\wt{\Om}^i_{Y_t}$ 
%because the immersion is stable under the product of the two immersions. 
\par 
(2) As pointed out in \cite[(2.2.3) (2)]{nb}, 
the proof of the injectivity of the morphism 
$\theta_n \wedge \col W_n\om^{q-1}_Y\lo W_n\wt{\om}^q_Y$ 
has not been given in \cite{msemi}, though it has been claimed 
in [loc.~cit., p.~311]. 
\par 
(3) 
%In \cite[Lemma 6]{kiha} Kim and Hain 
%have claimed that the natural morphism 
%\begin{equation*} 
%{\cal W}\wt{\Om}^{\bul}_Y\langle u \rangle 
%\lo 
%{\cal W}\Om^{\bul}_Y
%\tag{4.10.2}\label{eqn:wyuy}
%\end{equation*} 
%is a quasi-isomorphism.  
%However,  in \cite[(3.36) (2)]{ey} Ertl and Yamada have pointed out that 
%there is a non-small gap of the proof of Kim and Hain's claim above.  
%In \cite[Lemma 6]{kiha}
%they have only proved that the natural morphism 
%\begin{equation*} 
%{\cal W}\wt{\Om}^{\bul}_Y\langle \langle u \rangle \rangle
%\lo 
%{\cal W}\Om^{\bul}_Y
%\tag{4.10.3}\label{eqn:wcyuy}
%\end{equation*} 
%is a quasi-isomorphism.   
%The difference between their claim 
%and their result is big.   
I think that no general theorem, e.~g., the classical convergent theorem \cite[(5.5.1)]{weib} 
nor the complete convergence theorem \cite[(5.5.10)]{weib} implies 
(\ref{theo:qii}) because the filtration by columns 
of $\Om^{\bul}\langle u\rangle $ is {\it not} complete. 
Let us check that this claim is true by giving the following argument more precisely. 
\par 
Let $F$ be an increasing filtration 
on $\Om^{\bul}\langle U_{S(T)^{\nat}}\rangle$ 
defined analogously as {\rm (\ref{ali:film})}. 
That is, 
\begin{align*} 
F_i(\Om^{\bul}\langle u\rangle)
:=\bigoplus_{j\geq -i}\Gam_{{\cal O}_T,j}(U_{S(T)^{\nat}})\otimes_{{\cal O}_T}\Om^{\bul} \quad (i\in {\mab Z}).
\end{align*}  
For $i,j\in {\mab N}$, set 
$$d'':={\rm id}\otimes \nabla  \col 
{\cal O}_T u^{[i]}  \otimes_{{\cal O}_T}\Om^{j-i}\lo 
{\cal O}_T u^{[i]}  \otimes_{{\cal O}_T}\Om^{j+1-i}.$$  
Let  
${}''{\cal H}^q$ 
be the cohomology with respect to $d''$. 
Consider the following spectral sequence with respect to $F$ 
\begin{align*} 
E_1^{-p,p+q}={\cal H}^{q}({\rm gr}^{-p}_F(\Om^{\bul}\langle u\rangle ))
\Lo 
{\cal H}^{q}(\Om^{\bul}\langle u\rangle ). 
\tag{4.10.2}\label{ali:ppt}
\end{align*} 
This spectral sequence degenerates at $E_2$. 
Indeed, this is a local problem. 
We have the following equality: 
\begin{align*}
E_1^{-p,p+q}
&=''\!{\cal H}^{q+p}({\cal O}_Tu^{[p]}\otimes_{{\cal O}_T}{\cal F}\otimes_{{\cal O}_{{\cal Q}^{\rm ex}}}
\Om^{\bul}_{{\cal Q}^{\rm ex}/\os{\circ}{T}})
={\cal H}^{q+p}({\cal F}\otimes_{{\cal O}_{{\cal Q}^{\rm ex}}}
\Om^{\bul}_{{\cal Q}^{\rm ex}/\os{\circ}{T}}).
\end{align*}
The exact sequence (\ref{ali:qf}) tells us that 
$E_2^{0q}={\cal H}^q({\cal F}\otimes_{{\cal O}_{{\cal Q}^{\rm ex}}}{\Om}^{\bul}_{{\cal Q}^{\rm ex}/S(T)^{\nat}})$ 
and $E_2^{pq}=0$ for $p\not=0$. 
In particular, $E_{\infty}^{pq}=E_2^{pq}=0$.  
By (\ref{theo:qii}) 
\begin{align*} 
{\cal H}^{q}({\cal F}\otimes_{{\cal O}_{{\cal Q}^{\rm ex}}}
\Om^{\bul}_{{\cal Q}^{\rm ex}/\os{\circ}{T}}\langle u\rangle )
=E_2^{0q}.
\end{align*} 
This means that $F_0
{\cal H}^{q}({\cal F}\otimes_{{\cal O}_{{\cal Q}^{\rm ex}}}
\Om^{\bul}_{{\cal Q}^{\rm ex}/\os{\circ}{T}}\langle u\rangle )
=
{\cal H}^{q}({\cal F}\otimes_{{\cal O}_{{\cal Q}^{\rm ex}}}
\Om^{\bul}_{{\cal Q}^{\rm ex}/\os{\circ}{T}}\langle u\rangle )$ and 
$F_{-1}{\cal H}^{q}({\cal F}\otimes_{{\cal O}_{{\cal Q}^{\rm ex}}}
\Om^{\bul}_{{\cal Q}^{\rm ex}/\os{\circ}{T}}\langle u\rangle )=0$. 
Because the filtration $F$ on 
${\cal F}\otimes_{{\cal O}_{{\cal Q}^{\rm ex}}}
\Om^{\bul}_{{\cal Q}^{\rm ex}/\os{\circ}{T}}\langle u\rangle$ 
is {\it not} complete, the problem  whether 
the spectral sequence (\ref{ali:ppt}) is convergent is a non-trivial problem; 
(\ref{theo:qii}) tells us that (\ref{ali:ppt}) is indeed convergent. 
\end{rema}

%This follows from (\ref{prop:bpc}) and (\ref{prop:sil}). 
%By \cite[(8.1.1)]{nb}
%the following sequence 
%\begin{align*} 
%0  \lo {\cal O}_{\mathfrak D}
%\otimes_{{\cal O}_{{\cal P}{}^{\rm ex}}}
%\Om^{\bul}_{{\cal P}^{\rm ex}/T}[-1] 
%\os{d\log \tau \wedge}{\lo}  
%{\cal O}_{\mathfrak D}
%\otimes_{{\cal O}_{{\cal P}{}^{\rm ex}}}
%\Om^{\bul}_{{\cal P}^{\rm ex}/\os{\circ}{T}}  
%\lo 
%{\cal O}_{\mathfrak D}
%\otimes_{{\cal O}_{{\cal P}{}^{\rm ex}}}
%\Om^{\bul}_{{\cal P}^{\rm ex}/T} \lo 0
%\tag{?.?.3}\label{eqn:gsflxd}
%\end{align*} 
%is exact. 
%Because $d\log \tau$ is a part of a basis of 
%$\Om^j_{{\cal P}^{\rm ex}_{\ul{\lam}}/\os{\circ}{T}}$, 
%the following sequence 
%\begin{align*} 
%\cdots \os{d\log \tau \wedge}{\longleftarrow}   
%{\cal E}\otimes_{{\cal O}_{{\cal P}{}^{\rm ex}}}
%\Om^{j+1}_{{\cal P}^{\rm ex}_{\ul{\lam}}/\os{\circ}{T}}
%\os{d\log \tau \wedge}{\longleftarrow}  
%{\cal E}\otimes_{{\cal O}_{{\cal P}{}^{\rm ex}}}
%\Om^j_{{\cal P}^{\rm ex}_{\ul{\lam}}/\os{\circ}{T}}  
%\os{d\log \tau \wedge}{\longleftarrow}    \cdots 
%\tag{3.5.3}\label{eqn:gspt}
%\end{align*} 
%is exact. 
%The rest of the proof is the same as that of (\ref{prop:sil}).   

The following is a generalization of the log crystalline sheafied version including torsions  
 (not only the log crystalline cohomological version) of \cite[(3.35) (4)]{ey}: 

\begin{coro}\label{coro:nqi}
The morphism {\rm (\ref{ali:ufui})} is a quasi-isomorphism.   
\end{coro}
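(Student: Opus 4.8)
The plan is to deduce this statement formally from the two results already established, namely Proposition \ref{prop:saih} and Theorem \ref{theo:qii}, by exploiting the factorization recorded in the construction of (\ref{ali:uafcui}).

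First I would recall that, by the very way (\ref{ali:uafcui}) was defined, the composite of (\ref{ali:ufui}) with (\ref{ali:uaafui}) is equal to (\ref{ali:uafcui}). This gives a commutative triangle of complexes of ${\cal O}_T$-modules whose three edges are: the inclusion (\ref{ali:ufui}) of ${\cal F}\otimes_{{\cal O}_{{\cal Q}^{\rm ex}}}\Om^{\bul}_{{\cal Q}^{\rm ex}/\os{\circ}{T}}\langle U_{S(T)^{\nat}}\rangle$ into the completed PD-Hirsch extension; the augmentation (\ref{ali:uaafui}) onto ${\cal F}\otimes_{{\cal O}_{{\cal Q}^{\rm ex}}}\Om^{\bul}_{{\cal Q}^{\rm ex}/S(T)^{\nat}}$; and the morphism (\ref{ali:uafcui}), which is the composite of the two.

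Next I would invoke the facts on the two ``known'' edges of the triangle. Theorem \ref{theo:qii} asserts that the composite (\ref{ali:uafcui}) is a quasi-isomorphism, while Proposition \ref{prop:saih} asserts that (\ref{ali:uaafui}) is a quasi-isomorphism. The two-out-of-three property of quasi-isomorphisms then forces the remaining edge (\ref{ali:ufui}) to be a quasi-isomorphism as well, which is exactly the desired conclusion.

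I do not expect any genuine obstacle here, since both non-trivial inputs are already in hand; the only point requiring minor care is that the triangle commutes strictly at the level of complexes, but this is immediate because (\ref{ali:uafcui}) was introduced precisely as this composite. In particular the whole argument is local on $\os{\circ}{T}$ and requires no hypothesis beyond those already assumed for Theorem \ref{theo:qii}, namely that $F$ is a locally nilpotent flat quasi-coherent ${\cal O}_{Y_{\os{\circ}{T}_0}/\os{\circ}{T}}$-module with respect to $S(T)^{\nat}$.
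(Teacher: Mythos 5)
Your argument is exactly the paper's: the paper's proof of (\ref{coro:nqi}) simply cites (\ref{prop:saih}) and (\ref{theo:qii}), i.e.\ the two-out-of-three property applied to the factorization of (\ref{ali:uafcui}) as (\ref{ali:uaafui}) composed with (\ref{ali:ufui}). Your write-up just makes the commutative triangle explicit; there is nothing to add or correct.
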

\begin{proof} 
This follows from (\ref{theo:saih}) and (\ref{theo:qii}). 
\end{proof}

\begin{rema}\label{rema:b}
(1) In \cite[(3.25) (4)]{ey} Yamada and Ertl have proved  
that the induced morphism of the morphism {\rm (\ref{ali:ufui})} 
in the context of log rigid cohomologies 
is a quasi-isomorphism 
%(\ref{coro:nqi}) is a generalization of the crystalline analogue of 
%\cite[(3.17) (4)]{ey} in which 
when $Y/S$ is the log special fiber of 
a strictly semistable scheme over a complete discrete valuation ring of mixed characteristics 
with perfect residue field by a different method from our method: 
the proof of the corresponding statement to (\ref{theo:qii}) 
in the context of log rigid cohomologies is different from ours. 
%(5) I do now know whether the morphism (\ref{ali:uafcui}) is a quasi-isomorphism.
%Later we shall prove that the morphism (\ref{ali:uafcui}) is a quasi-isomorphism 
%if $Y_{\os{\circ}{T}}/S_{\os{\circ}{T}_0}$ is an SNCL scheme.  
\par 
(2) In \cite{ey} the completed PD-Hirsch extension (\ref{eqn:uui}) 
of $\Om^{\bul}_{\ol{\cal Q}{}^{\rm ex}/\os{\circ}{T}}$
in the case of the trivial coefficient for the strict semistable family 
has a very important role. In this book we suppress to use 
(\ref{eqn:uui}) as an important ingredient. 
%In [loc.~cit.] Ertl and Yamada have also shown that 
%the difference of the Hirsch extension  and 
%the completed Hirsch extension of it is big in the strictly semistable case in 
%the rigid analytic case. 
%We would like to use (\ref{eqn:uui}) in a future article. 
\end{rema}

\par
\begin{rema}\label{rema:nwc}
(1) 
%\begin{prop}\label{prop:anf}
%$($cf.~{\rm \cite[(5.3)]{fut}}$)$
Define decreasing filtrations $F$'s on 
${\cal F}\otimes_{{\cal O}_{{{\cal Q}}}}\Om^{\bul}_{{{\cal Q}}/\os{\circ}{T}}\langle u\rangle$ 
and ${\cal F}
\otimes_{{\cal O}_{{{\cal Q}}}}\Om^{\bul}_{{{\cal Q}}/\os{\circ}{T}}$ 
by the following formulas, respectively: 
\begin{align*} 
F^i({\cal F}
\otimes_{{\cal O}_{{{\cal Q}}}}\Om^{\bul}_{{{\cal Q}}/\os{\circ}{T}}\langle u\rangle):=
\sum_{j\in {\mab N}}{\cal O}_Tu^{[j]}\otimes 
{\cal F}
\otimes_{{\cal O}_{{{\cal Q}}}}\Om^{\bul \geq i-j}_{{{\cal Q}}/\os{\circ}{T}},   
\tag{4.13.1}\label{ali:fgfngr}
\end{align*} 
\begin{align*} 
F^i({\cal F}\otimes_{{\cal O}_{{{\cal Q}}}}\Om^{\bul}_{{{\cal Q}}/S(T)^{\nat}}):=
{\cal F}
\otimes_{{\cal O}_{{{\cal Q}}}}\Om^{\bul \geq i}_{{{\cal Q}}/S(T)^{\nat}}.  
\tag{4.13.2}\label{ali:fgffgr}
\end{align*}
$($Note that the former filtration $F$ is not finite.$)$
Then we can prove that the following morphism induced by {\rm (\ref{ali:uafcui})}   
\begin{align*} 
{\rm gr}_F^i({\cal F}\otimes_{{\cal O}_{{{\cal Q}}}}\Om^{\bul}_{{{\cal Q}}/\os{\circ}{T}}\langle u\rangle)
\lo 
{\rm gr}_F^i({\cal F}\otimes_{{\cal O}_{{{\cal Q}}}}\Om^{\bul}_{{{\cal Q}}/S(T)^{\nat}})
\tag{4.13.3}\label{ali:fggr}
\end{align*} 
is a quasi-isomorphism. 
This is a $p$-adic generalized analogue of \cite[(5.3)]{fut}. 
Furthermore assume that $\os{\circ}{\cal Q}$ is quasi-compact. 
As a corollary of this result, we can prove that 
there exists a positive integer $i_0$ such that,  
for all $i\geq i_0$, 
the following morphism 
\begin{align*} 
{\cal F}
\otimes_{{\cal O}_{{{\cal Q}}}}\Om^{\bul}_{{{\cal Q}}/\os{\circ}{T}}\langle u\rangle/F^i\lo 
{\cal F}\otimes_{{\cal O}_{{{\cal Q}}}}\Om^{\bul}_{{{\cal Q}}/S(T)^{\nat}}/
F^i
\tag{4.13.4}\label{ali:qi}
\end{align*} 
induces the following quasi-isomorphism: 
\begin{align*} 
{\cal F}
\otimes_{{\cal O}_{{{\cal Q}}}}\Om^{\bul}_{{{\cal Q}}/\os{\circ}{T}}\langle u\rangle/F^i\os{\sim}{\lo}  
{\cal F}\otimes_{{\cal O}_{{{\cal Q}}}}\Om^{\bul}_{{{\cal Q}}/S(T)^{\nat}}.
\tag{4.13.5}\label{ali:qfi}
\end{align*}   
However we do not 
use these results in this book because 
the filtrations $F$'s (\ref{ali:fgfngr}) and (\ref{ali:fgffgr}) 
are non-canonical in our case.
%\end{coro}
\par 
(2) 
Let the notations be as in \cite[(5.9)]{fut}. 
By using \cite[(5.3)]{fut} (cf.~(\ref{ali:qfi})), 
Fujisawa has proved that 
there exists a morphism 
\begin{align*} 
K_{\mab C}\lo \om^{\bul}_{X/*^k} \quad (k\in {\mab Z}_{\geq 1})
\tag{4.13.6}\label{eqn:kkkd}
\end{align*} 
in a derived category of bounded above ${\mab C}_X$-modules 
which induces an isomorphism 
\begin{align*}
H^q(X,K_{\mab C})\os{\sim}{\lo} H^q(X,\om^{\bul}_{X/*^k})
\tag{4.13.7}\label{eqn:kfstd}
\end{align*} 
by using the general theory of Hodge-Deligne 
(\cite[(5.9)]{fut}). However he has not proved that the morphism 
(\ref{eqn:kkkd}) itself is an isomorphism.
The obvious analogues of (\ref{theo:qii}) 
and (\ref{prop:impth}) below in the complex analytic case 
are new results. They tell us that the morphism 
(\ref{eqn:kkkd}) is indeed an isomorphism for the case $k=1$. 
Consequently we do not need any result of 
Hodge-Deligne theory for the proof of (\ref{eqn:kfstd}).  
Hence the obvious analogue of (\ref{theo:qii}) can play 
an important role in [loc.~cit., (5.9)] at least in the case $k=1$. 
We can also prove that the morphism (\ref{eqn:kkkd}) is an isomorphism 
for a general $k$. 
However we do not discuss this more in this book. 
See \cite{nf} for details.
\end{rema}

\par 
Now let $X$ be an SNCL scheme over $S_0$. 
Let $(T,{\cal J},\del)$, $T_0$ and $z\col T_0\lo S$ be 
as in the beginning of this section.  
For simplicity of notation, we denote $\os{\circ}{X}_{\os{\circ}{T}_0}
:=\os{\circ}{X}\times_{\os{\circ}{S}}\os{\circ}{T}_0$ by
$\os{\circ}{X}_{T_0}(:=(X_{T_0})^{\circ})$. 
Indeed, $\os{\circ}{X}_{T_0}$ is the underlying scheme of 
$X_{T_0}$ since the structural morphism $X\lo S$ is integral. 
Let  $\Del:=\{\os{\circ}{X}_{\lam,T_0}\}_{\lam \in \Lam}$  be a decomposition 
of $\os{\circ}{X}_{T_0}/\os{\circ}{T}_0$ by its smooth components. 
%We fix a total order on $\Lam$. 
In this section we assume that there exists 
an immersion $X_{\os{\circ}{T}_0}\os{\sus}{\lo} \ol{\cal P}$ 
into a log smooth scheme over $\ol{S(T)^{\nat}}$. 
Let $\ol{\mathfrak D}$ be the log PD-envelope of 
$X_{\os{\circ}{T}_0}\os{\sus}{\lo} \ol{\cal P}$ over 
$(\os{\circ}{T},{\cal J},\del)$. 
Set ${\mathfrak D}:=
\ol{\mathfrak D}\times_{{\mathfrak D}(\ol{S(T)^{\nat}})}S(T)^{\nat}$ 
and 
${\cal P}^{\rm ex}
:=\ol{\cal P}{}^{\rm ex}\times_{\ol{S(T)^{\nat}}}S(T)^{\nat}$. 
Then ${\cal P}^{\rm ex}$ is the exactification of 
the immersion $X_{\os{\circ}{T}_0}\os{\sus}{\lo}{\cal P}$ ((\ref{prop:xpls})). 
%\par 
%Set $X_{\os{\circ}{T}_0}:=X\times_{S_0}T_0$ 
%with structural morphism $f_T\col X_{\os{\circ}{T}_0}\lo T$. 
%Set also $\os{\circ}{X}_{T_0}:=\os{\circ}{(X_{\os{\circ}{T}_0})}$.   
%Let $p_{T_0} \col X_{\os{\circ}{T}_0} \lo X$ be the first projection. 
%Let $\os{\circ}{p}_{T{\rm crys}} \col 
%((\os{\circ}{X}_{T_0}/\os{\circ}{T})_{\rm crys},
%{\cal O}_{\os{\circ}{X}_{T_0}/\os{\circ}{T}})
%\lo 
%((\os{\circ}{X}/\os{\circ}{T})_{\rm crys},
%{\cal O}_{\os{\circ}{X}/\os{\circ}{T}})$
%be the morphism of ringed topoi induced from 
%$\os{\circ}{p}_{T_0}$. 
\par  
Because $\os{\circ}{X}_{T_0}$ 
and $\os{\circ}{\cal P}{}^{\rm ex}$ 
are topologically isomorphic, 
we can define a closed subscheme 
$\os{\circ}{\cal P}{}^{\rm ex}_{\ul{\lam}}$ of 
$\os{\circ}{\cal P}{}^{\rm ex}$ 
corresponding to $\os{\circ}{X}_{\ul{\lam},T_0}$ 
for an element
$\ul{\lam}=\{\lam_0,\cdots, \lam_m\}\in P(\Lam)$ 
$(m\in {\mab Z}_{\geq 0}, \lam_i \in \Lam, \lam_i \not= \lam_j~{\rm if}~i\not= j)$ 
as explained in \S\ref{sec:snclv}, where $\os{\circ}{X}_{\lam_i,T_0}$ denotes 
a smooth component of $\os{\circ}{X}_{T_0}$ in this section.  
Endow $\os{\circ}{\cal P}{}^{\rm ex}_{\ul{\lam}}$ 
with the pull-back of the log structure of ${\cal P}^{\rm ex}$ 
and let ${\cal P}^{\rm ex}_{\ul{\lam}}$ be the resulting log scheme. 
Let $m$ be a nonnegative integer. 
Endow $\os{\circ}{\cal P}{}^{{\rm ex},(m)}$ 
with the pull-back of the log structure of 
${\cal P}^{\rm ex}$ 
and let ${\cal P}{}^{{\rm ex},(m)}$ be the resulting log scheme. 
Then, by (\ref{eqn:kfexntd}), 
\begin{equation}
{\cal P}{}^{{\rm ex},(m)} =  \us{\# \ul{\lam}=m+1}
{\coprod}{\cal P}{}^{\rm ex}_{\ul{\lam}}.   
\tag{4.13.8}\label{eqn:kfltd}
\end{equation}  
For the convenience of notation, we
set ${\cal P}{}^{{\rm ex},(-1)}:={\cal P}{}^{{\rm ex}}$. 
As usual, $\{{\cal P}{}^{{\rm ex},(m)}\}_{m\in {\mab N}}$ defines 
a semi-simplicial log scheme in a standard way.

\begin{rema}
%(1) Because $\os{\circ}{X}_{\os{\circ}{T}_0}$ and $\os{\circ}{\cal P}{}^{\rm ex}$ 
%are topologically isomorphic, we can define a closed subscheme 
%$\os{\circ}{\cal P}{}^{\rm ex}_{\ul{\lam}}$ of $\os{\circ}{\cal P}{}^{\rm ex}$ 
%corresponding to $\os{\circ}{X}_{\ul{\lam}}$ for a set 
%$\ul{\lam}=\{\lam_0,\cdots, \lam_m\}$ $(m\in {\mab Z}_{\geq 0},
%\lam_i \in \Lam, \lam_i \not= \lam_j~{\rm if}~i\not= j)$   
%as explained in \S\ref{sec:snclv}.  
%Endow 
%$\os{\circ}{\cal P}{}^{\rm ex}_{\ul{\lam}}$ 
%with the pull-back of the log structure of ${\cal P}^{\rm ex}$ 
%and let ${\cal P}^{\rm ex}_{\ul{\lam}}$ be the resulting log scheme. 
%Let $m$ be a nonnegative integer. 
%Endow $\os{\circ}{\cal P}{}^{{\rm ex},(m)}$ 
%with the pull-back of the log structure of ${\cal P}^{\rm ex}$ 
%and let ${\cal P}{}^{{\rm ex},(m)}$ be the resulting log scheme. 
%Then, by (\ref{eqn:kfexntd}), 
%\begin{equation}
%{\cal P}{}^{{\rm ex},(m)} =  \us{\# \ul{\lam}=m+1}
%{\coprod}{\cal P}{}^{\rm ex}_{\ul{\lam}}.   
%\tag{3.2.1}\label{eqn:kflmatd}
%\end{equation}  
%\par 
%(2) 
It is easy to show that the log scheme 
${\cal P}^{\rm ex}_{\ul{\lam}}$ is not log smooth over $S(T)^{\nat}$ 
for $\ul{\lam}\not=\emptyset$ 
(cf.~\cite[p.~2947]{gkcf}). 
%Here we point out only the following for simplicity. 
%Set $X={\cal P}:={\mab A}_S(2,2)$ 
%and ${\cal P}_0:={\cal P}~{\rm mod}~x_0$. 
%Then we have the following commutative diagram 
%\begin{equation*} 
%\begin{CD}
%{\cal P}_0 @>{\sus}>> {\cal P}~{\rm mod}~x^2_0 \\
%@| @VVV \\
%{\cal P}_0 @>>> S. 
%\end{CD}
%\end{equation*}
%It is clear that there does not exist a section of the closed immersion 
%${\cal P}_0 \os{\sus}{\lo} {\cal P}~{\rm mod}~x^2_0$. 
%In particular, ${\cal P}_0$ is not log smooth over $S$. 
Let $L$ be the subsheaf defined by the preimage of 
${\rm Ker}({\cal O}_{{\cal P}^{\rm ex}}\lo {\cal O}_{{\cal P}_{\ul{\lam}}})$ 
by the structural morphism 
$M_{{\cal P}^{\rm ex}}\lo {\cal O}_{{\cal P}^{\rm ex}}$.  
Let $L_{\ul{\lam}}$ be the pull-back of $L$ 
by the closed immersion ${\cal P}_{\ul{\lam}}\lo {\cal P}^{\rm ex}$. 
Then ${\cal P}^{\rm ex}_{\ul{\lam}}$ with the ideal sheaf 
$L_{\ul{\lam}}$ of $M_{{\cal P}_{\ul{\lam}}}$ 
is ideally log smooth over $S(T)^{\nat}$ 
(cf.~\cite[Remarks 1.10]{gkcf}) since ${\cal P}^{\rm ex}$ is log smooth over $S(T)^{\nat}$. 
\end{rema}

%\par 
%As in \cite[(4.13)]{fsc}, we obtain the following: 

\begin{prop}\label{prop:bpc}
Set $U=S(T)^{\nat}$ or $U=\os{\circ}{T}$. 
Let $i$ be a nonnegative integer. 
Then the following hold$:$
\par 
$(1)$ 
The canonical morphism 
\begin{equation*} 
\Om^i_{{\cal P}^{\rm ex}/U}\otimes_{{\cal O}_{{\cal P}^{\rm ex}}}
{\cal O}_{{\cal P}^{\rm ex}_{\ul{\lam}}} \lo 
\Om^i_{{\cal P}^{\rm ex}_{\ul{\lam}/U}}
\tag{4.15.1}\label{eqn:cae}
\end{equation*} 
is an isomorphism. 
\par 
$(2)$ Let $m$ be a nonnegative integer. 
The canonical morphism 
\begin{equation*} 
\Om^i_{{\cal P}^{\rm ex}/U}
\otimes_{{\cal O}_{{\cal P}^{\rm ex}}}
{\cal O}_{{\cal P}^{{\rm ex},(m)}} \lo 
\Om^i_{{\cal P}^{{\rm ex},(m)}/U}
\tag{4.15.2}\label{eqn:came}
\end{equation*} 
is an isomorphism. 
\end{prop}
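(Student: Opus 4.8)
The plan is to settle the degree one case $i=1$ first and then deduce every $i$ by taking exterior powers, and to obtain (2) from (1) via the disjoint union decomposition (\ref{eqn:kfltd}). For the reduction in $i$, recall that $\Om^i_{?/U}=\wedge^i\Om^1_{?/U}$ by definition, and that exterior powers commute with the pullback along the strict closed immersion $\iota\col {\cal P}^{\rm ex}_{\ul{\lam}}\os{\sus}{\lo} {\cal P}^{\rm ex}$ (the immersion is strict because ${\cal P}^{\rm ex}_{\ul{\lam}}$ carries the pull-back log structure). Hence $\iota^*\Om^i_{{\cal P}^{\rm ex}/U}=\wedge^i(\iota^*\Om^1_{{\cal P}^{\rm ex}/U})$ and $\Om^i_{{\cal P}^{\rm ex}_{\ul{\lam}}/U}=\wedge^i\Om^1_{{\cal P}^{\rm ex}_{\ul{\lam}}/U}$, and the canonical morphism in degree $i$ is the $i$-th exterior power of the one in degree $1$; so once the latter is an isomorphism, so is the former.

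For $i=1$ I would invoke the second fundamental (conormal) exact sequence of logarithmic differentials for the strict closed immersion $\iota$. Writing ${\cal I}$ for the ideal sheaf of ${\cal P}^{\rm ex}_{\ul{\lam}}$ in ${\cal P}^{\rm ex}$, this sequence reads
\begin{equation*}
{\cal I}/{\cal I}^2\os{d}{\lo} \iota^*\Om^1_{{\cal P}^{\rm ex}/U}\lo \Om^1_{{\cal P}^{\rm ex}_{\ul{\lam}}/U}\lo 0,\qquad d(f)=df.
\end{equation*}
Thus it suffices to prove $d=0$. This is local, so by the definition (\ref{defi:lfac}) of an SNCL scheme and (\ref{lemm:etl}) we may assume there is a strict \'etale morphism $\os{\circ}{\cal P}{}^{\rm ex}\lo \os{\circ}{\mab A}_{\os{\circ}S}(a,d)$ under which the smooth components correspond to $\{x_i=0\}_{i=0}^a$; then for $\ul{\lam}$ the ideal ${\cal I}$ is generated by the local sections $x_i$ with $i$ in the corresponding index set. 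Each such $x_i$ is a section of the log structure $M_{{\cal P}^{\rm ex}}$, so $dx_i=x_id\log x_i$ in $\Om^1_{{\cal P}^{\rm ex}/U}$ for both $U=\os{\circ}{T}$ and $U=S(T)^{\nat}$ (the form $d\log x_i$ is defined relative to either base). Since $x_i$ restricts to $0$ on ${\cal P}^{\rm ex}_{\ul{\lam}}$, we get $\iota^*(dx_i)=\iota^*(x_i)\,\iota^*(d\log x_i)=0$. Hence $d$ vanishes on a generating set of ${\cal I}/{\cal I}^2$, so $d=0$ and the middle arrow is an isomorphism, which is (1).

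Part (2) is then immediate: by (\ref{eqn:kfltd}) we have ${\cal P}^{{\rm ex},(m)}=\coprod_{\sharp\ul{\lam}=m+1}{\cal P}^{\rm ex}_{\ul{\lam}}$, and both the formation of $\Om^i_{?/U}$ and the canonical morphism are compatible with this disjoint union, so the morphism (\ref{eqn:came}) is the coproduct of the isomorphisms (\ref{eqn:cae}).

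The main obstacle is not computational but consists in securing the correct logarithmic input: one must use the conormal exact sequence in the log setting for a \emph{strict} closed immersion (so that $\Om^1_{{\cal P}^{\rm ex}_{\ul{\lam}}/U}$ genuinely is the cokernel of $d\col {\cal I}/{\cal I}^2\lo \iota^*\Om^1_{{\cal P}^{\rm ex}/U}$), and one must know that the defining ideal of the intersection $\os{\circ}{X}_{\ul{\lam}}$ is locally generated by sections lying in the log structure. Both follow from Kato's formalism together with the explicit SNCL local model ${\mab A}_S(a,d)$; granting them, the vanishing $dx_i=x_i\,d\log x_i\mapsto 0$ on restriction makes the argument essentially automatic and uniform in $U$.
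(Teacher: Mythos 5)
Your proof is correct and follows essentially the same route as the paper: both reduce to $i=1$, invoke the second fundamental exact sequence for the strict closed immersion ${\cal P}^{\rm ex}_{\ul{\lam}}\os{\sus}{\lo}{\cal P}^{\rm ex}$, and kill the conormal map via $d\al(m)=\al(m)\,d\log m\mapsto 0$ for generators of ${\cal I}_{\ul{\lam}}$ coming from the log structure. Your explicit appeal to the local model ${\mab A}_S(a,d)$ and to exterior powers only spells out what the paper leaves implicit.
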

\begin{proof} 
We have only to prove (1). 
Let ${\cal I}_{\ul{\lam}}$ be the defining ideal sheaf of 
the immersion ${\cal P}^{\rm ex}_{\ul{\lam}} \os{\sus}{\lo} {\cal P}^{\rm ex}$.  
Then, by \cite[Lemma 2.1.3]{nh2} (cf.~\cite[(3.6)(2)]{kn}), 
we have the following second fundamental exact sequence 
\begin{equation*} 
{\cal I}_{\ul{\lam}}/{\cal I}_{\ul{\lam}}^2
\os{d}{\lo} 
\Om^1_{{\cal P}^{\rm ex}/U}\otimes_{{\cal O}_{{\cal P}^{\rm ex}}}
{\cal O}_{{\cal P}^{\rm ex}_{\ul{\lam}}} \lo 
\Om^1_{{\cal P}^{\rm ex}_{\ul{\lam}/U}}\lo 0. 
\tag{4.15.3}\label{eqn:iopu} 
\end{equation*} 
Let $m$ be a local section of $M_{{\cal P}^{\rm ex}}$ such that 
$\al (m)\in {\cal I}_{\ul{\lam}}$, where $\al \col M_{{\cal P}^{\rm ex}}\lo {\cal O}_{{\cal P}^{\rm ex}}$
is the structural morphism. 
Then $d\al (m)=\al (m)d\log m=0$ in 
$\Om^1_{{\cal P}^{\rm ex}/U}\otimes_{{\cal O}_{{\cal P}^{\rm ex}}}
{\cal O}_{{\cal P}^{\rm ex}_{\ul{\lam}}}$. Because   
${\cal I}_{\ul{\lam}}$ has a system of generators coming from local 
sections of $M_{{\cal P}^{\rm ex}}$, the morphism $d$ in  
(\ref{eqn:iopu}) is zero. Hence we obtain (1). 
\end{proof}

\par 
Let $E$ be a flat quasi-coherent crystal of  
${\cal O}_{\os{\circ}{X}_{T_0}/\os{\circ}{T}}$-modules. 
%(In the case where $E={\cal O}_{\os{\circ}{X}_{T_0}/\os{\circ}{T}}$, 
%it is enough to assume in this book that 
%there exists an immersion $X_{\os{\circ}{T}_0}\os{\sus}{\lo} {\cal P}$ 
%into a log smooth scheme over $S(T)^{\nat}$  
%(we do not need $\ol{\cal P}$)).  
\par 
Let 
$$\eps_{X_{\os{\circ}{T}_0}/\os{\circ}{T}}
\col 
((X_{\os{\circ}{T}_0}/\os{\circ}{T})_{\rm crys},
{\cal O}_{X_{\os{\circ}{T}_0}/\os{\circ}{T}}) 
\lo 
((\os{\circ}{X}_{T_0}/\os{\circ}{T})_{\rm crys},
{\cal O}_{\os{\circ}{X}_{T_0}/\os{\circ}{T}})$$  
be the morphism of ringed topoi induced by the morphism 
$\eps_{X_{\os{\circ}{T}_0}/\os{\circ}{T}_0}\col X_{\os{\circ}{T}_0}\lo \os{\circ}{X}_{T_0}$ 
over $\os{\circ}{T}_0$ forgetting the log structure of $X_{\os{\circ}{T}_0}$. 
\par 
Let $(\ol{\cal E},\ol{\nabla})$ 
be the quasi-coherent ${\cal O}_{\ol{\mathfrak D}}$-module  
with integrable connection corresponding to 
the log crystal $\eps_{X_{\os{\circ}{T}_0}/\os{\circ}{T}}(E)$: 
\begin{equation*} 
\ol{\nabla}\col \ol{\cal E}\lo 
\ol{\cal E}\otimes_{{\cal O}_{\ol{\cal P}{}^{\rm ex}}}
\Om^1_{\ol{\cal P}{}^{\rm ex}/\os{\circ}{T}}.
\tag{4.15.4}\label{eqn:olepc}
\end{equation*}  
Set $({\cal E},{\nabla}):=
(\ol{\cal E},\ol{\nabla})
\otimes_{{\cal O}_{{\mathfrak D}(\ol{S(T)^{\nat}})}}{\cal O}_{S(T)^{\nat}}$:  
\begin{equation*} 
\nabla\col {\cal E}\lo 
{\cal E}\otimes_{{\cal O}_{{\cal P}^{\rm ex}}}
\Om^1_{{\cal P}^{\rm ex}/\os{\circ}{T}}. 
\tag{4.15.5}\label{eqn:olompc}
\end{equation*}  
In fact, we have the log de Rham complexes  
$\ol{\cal E}\otimes_{{\cal O}_{\ol{\cal P}{}^{\rm ex}}}
\Om^{\bul}_{\ol{\cal P}{}^{\rm ex}/\os{\circ}{T}}$ and 
${\cal E}\otimes_{{\cal O}_{{\cal P}^{\rm ex}}}
\Om^{\bul}_{{\cal P}^{\rm ex}/\os{\circ}{T}}$.

\begin{lemm}\label{lemm:drl}
The family 
$\{{\cal E}\otimes_{{\cal O}_{{\cal P}^{\rm ex}}}
\Om^i_{{\cal P}^{\rm ex}_{\ul{\lam}}/\os{\circ}{T}}\}_{i\geq 0}$
forms a complex.
Consequently the family 
$\{{\cal E}\otimes_{{\cal O}_{{\cal P}^{\rm ex}}}
\Om^i_{{\cal P}^{{\rm ex},(m)}/\os{\circ}{T}}\}_{i\geq 0}$ 
for $m\in{\mab Z}_{\geq 0}$ forms a complex. 
\end{lemm}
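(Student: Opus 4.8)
The plan is to reduce everything to the honest log de Rham complex on $\mathcal{P}^{\rm ex}$ and then check that its differential descends to the closed subscheme $\mathcal{P}^{\rm ex}_{\ul{\lam}}$. First I would use (\ref{prop:bpc}) (1) to identify, for each $i\geq 0$, the term ${\cal E}\otimes_{{\cal O}_{{\cal P}^{\rm ex}}}\Om^i_{{\cal P}^{\rm ex}_{\ul{\lam}}/\os{\circ}{T}}$ with $({\cal E}\otimes_{{\cal O}_{{\cal P}^{\rm ex}}}\Om^i_{{\cal P}^{\rm ex}/\os{\circ}{T}})\otimes_{{\cal O}_{{\cal P}^{\rm ex}}}{\cal O}_{{\cal P}^{\rm ex}_{\ul{\lam}}}$, that is, with the reduction modulo ${\cal I}_{\ul{\lam}}$ of the log de Rham complex $({\cal E}\otimes_{{\cal O}_{{\cal P}^{\rm ex}}}\Om^{\bul}_{{\cal P}^{\rm ex}/\os{\circ}{T}},\nabla)$, where ${\cal I}_{\ul{\lam}}$ is the defining ideal sheaf of the immersion ${\cal P}^{\rm ex}_{\ul{\lam}}\os{\sus}{\lo}{\cal P}^{\rm ex}$. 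Since the latter is already a complex (the connection $\nabla$ attached to $\eps_{X_{\os{\circ}{T}_0}/\os{\circ}{T}}(E)$ is integrable, so $\nabla^2=0$), it suffices to prove that $\nabla$ carries ${\cal I}_{\ul{\lam}}\cdot({\cal E}\otimes_{{\cal O}_{{\cal P}^{\rm ex}}}\Om^i_{{\cal P}^{\rm ex}/\os{\circ}{T}})$ into ${\cal I}_{\ul{\lam}}\cdot({\cal E}\otimes_{{\cal O}_{{\cal P}^{\rm ex}}}\Om^{i+1}_{{\cal P}^{\rm ex}/\os{\circ}{T}})$; then $\nabla$ descends to the quotient and the induced differential $\ol{\nabla}$ automatically satisfies $\ol{\nabla}^2=0$.

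The step I expect to be the crux is precisely this stability of the submodule ${\cal I}_{\ul{\lam}}\cdot({\cal E}\otimes\Om^{\bul}_{{\cal P}^{\rm ex}/\os{\circ}{T}})$ under $\nabla$. This is not formal, because a connection or de Rham differential does not descend to an arbitrary closed subscheme: by the Leibniz rule $\nabla(f\om)=df\wedge\om+f\nabla(\om)$ for a local section $f$ of ${\cal I}_{\ul{\lam}}$, the second term lies in ${\cal I}_{\ul{\lam}}\cdot({\cal E}\otimes\Om^{i+1}_{{\cal P}^{\rm ex}/\os{\circ}{T}})$, but the term $df\wedge\om$ need not. What saves us is exactly the log-geometric feature already exploited in the proof of (\ref{prop:bpc}): the ideal ${\cal I}_{\ul{\lam}}$ admits, Zariski locally, a system of generators of the form $\al(m)$ with $m$ a local section of $M_{{\cal P}^{\rm ex}}$ and $\al$ the structural morphism. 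For such a generator one has $d\al(m)=\al(m)\,d\log m\in{\cal I}_{\ul{\lam}}\cdot\Om^1_{{\cal P}^{\rm ex}/\os{\circ}{T}}$, and for a general local section $\sum_j g_j\al(m_j)$ of ${\cal I}_{\ul{\lam}}$ the Leibniz rule gives $d(\sum_j g_j\al(m_j))=\sum_j\bigl(g_j\al(m_j)\,d\log m_j+\al(m_j)\,dg_j\bigr)$, each summand lying in ${\cal I}_{\ul{\lam}}\cdot\Om^1_{{\cal P}^{\rm ex}/\os{\circ}{T}}$. Hence $df\in{\cal I}_{\ul{\lam}}\cdot\Om^1_{{\cal P}^{\rm ex}/\os{\circ}{T}}$ for every $f\in{\cal I}_{\ul{\lam}}$, and consequently $df\wedge\om\in{\cal I}_{\ul{\lam}}\cdot({\cal E}\otimes\Om^{i+1}_{{\cal P}^{\rm ex}/\os{\circ}{T}})$, which is exactly the required stability.

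With the first assertion in hand, the second ("Consequently") follows immediately and purely formally. By (\ref{eqn:kfltd}) one has ${\cal P}^{{\rm ex},(m)}=\us{\sharp\ul{\lam}=m+1}{\coprod}{\cal P}^{\rm ex}_{\ul{\lam}}$, so that $\Om^i_{{\cal P}^{{\rm ex},(m)}/\os{\circ}{T}}$ restricts on each connected component to $\Om^i_{{\cal P}^{\rm ex}_{\ul{\lam}}/\os{\circ}{T}}$, and the family $\{{\cal E}\otimes_{{\cal O}_{{\cal P}^{\rm ex}}}\Om^i_{{\cal P}^{{\rm ex},(m)}/\os{\circ}{T}}\}_{i\geq 0}$ is, componentwise, the direct sum over the subsets $\ul{\lam}$ with $\sharp\ul{\lam}=m+1$ of the complexes already constructed. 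Thus it too forms a complex, with differential induced by $\nabla$ on each component. I would present the argument in this order—identification via (\ref{prop:bpc}), then the log-generation stability computation, then the descent of $\nabla$ and the passage to $\coprod$—emphasizing that the only nonformal ingredient is the stability of ${\cal I}_{\ul{\lam}}\cdot({\cal E}\otimes\Om^{\bul})$ under the connection, which rests on the log-structure identity $d\al(m)=\al(m)\,d\log m$.
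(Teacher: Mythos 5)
Your proposal is correct and follows essentially the same route as the paper: the paper also reduces to showing that $\nabla$ preserves ${\cal I}_{\ul{\lam}}\cdot({\cal E}\otimes_{{\cal O}_{{\cal P}^{\rm ex}}}\Om^{\bul}_{{\cal P}^{\rm ex}/\os{\circ}{T}})$ and concludes via (\ref{prop:bpc}), the only cosmetic difference being that it computes with an explicit coordinate $x_{\lam}$ from the local model ${\mab A}_{S_0}(a,d')$ (writing $\nabla(x_{\lam}e\otimes\om)=x_{\lam}(e\otimes d\log x_{\lam}\wedge\om+\nabla(e\otimes\om))$) where you invoke the same identity $d\al(m)=\al(m)\,d\log m$ for abstract log generators of ${\cal I}_{\ul{\lam}}$. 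The passage to ${\cal P}^{{\rm ex},(m)}$ as a disjoint union is formal in both treatments.
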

\begin{proof} 
The problem is local. 
We may assume that there exists a solid and \'{e}tale morphism 
${\cal P}^{\rm prex}{}'\lo {\mab A}_{S(T)^{\nat}}(a,d')$ in (\ref{eqn:xdplxda}). 
Let $\lam$ be an element of $\ul{\lam}$. 
Let $x_{\lam}$ be a coordinate of the affine ring of 
${\mab A}_{S(T)^{\nat}}(a,d')$ such that ${\cal P}^{\rm ex}_{\lam}$ is 
defined by $x_{\lam}=0$.  
%Then, on ${\cal P}^{\rm ex}_{\lam}$, 
%$d\log \tau=d\log x_{\lam}$. 
Then, for $e\in  {\cal E}$ and $\om \in \Om^i_{{\cal P}^{\rm ex}/\os{\circ}{T}}$, 
\begin{align*}
\nabla(x_{\lam}e\otimes \om)=
e\otimes dx_{\lam}\wedge \om+x_{\lam}\nabla(e\otimes \om)
=x_{\lam}(e\otimes d\log x_{\lam}\wedge \om+\nabla(e\otimes \om)).
\end{align*} 
%\otimes_{{\cal O}_{{\cal P}^{\rm ex}}}
%\Om^i_{{\cal P}^{\rm ex}}$
Hence we obtain the complex 
${\cal E}\otimes_{{\cal O}_{{\cal P}^{\rm ex}}}
\Om^{\bul}_{{\cal P}^{\rm ex}_{\ul{\lam}}/\os{\circ}{T}}$ by 
(\ref{eqn:cae}). 
\end{proof} 

\par 
By using the natural surjective morphisms  
$\Om^{\bul}_{\ol{\cal P}{}^{\rm ex}/\os{\circ}{T}}
\lo \Om^{\bul}_{\ol{\cal P}{}^{\rm ex}/S(T)^{\nat}}$,  
$\Om^{\bul}_{{\cal P}^{\rm ex}/\os{\circ}{T}}\lo 
\Om^{\bul}_{{\cal P}^{\rm ex}/S(T)^{\nat}}$, 
$\Om^{\bul}_{{\cal P}^{\rm ex}_{\ul{\lam}}/\os{\circ}{T}}\lo 
\Om^{\bul}_{{\cal P}^{\rm ex}_{\ul{\lam}}/S(T)^{\nat}}$
and 
$\Om^{\bul}_{{\cal P}^{{\rm ex},(m)}/\os{\circ}{T}}\lo 
\Om^{\bul}_{{\cal P}^{{\rm ex},(m)}/S(T)^{\nat}}$, 
we have the log de Rham complexes  
$\ol{\cal E}\otimes_{{\cal O}_{\ol{\cal P}{}^{\rm ex}}}
\Om^{\bul}_{\ol{\cal P}{}^{\rm ex}/S(T)^{\nat}}$, 
${\cal E}\otimes_{{\cal O}_{{\cal P}^{\rm ex}}}
\Om^{\bul}_{{\cal P}^{\rm ex}/S(T)^{\nat}}$, 
${\cal E}\otimes_{{\cal O}_{{\cal P}^{\rm ex}}}
\Om^{\bul}_{{\cal P}^{\rm ex}_{\ul{\lam}}/S(T)^{\nat}}$
and 
${\cal E}\otimes_{{\cal O}_{{\cal P}^{\rm ex}}}
\Om^{\bul}_{{\cal P}^{{\rm ex},(m)}/S(T)^{\nat}}$ 
for $m\in{\mab Z}_{\geq 0}$ as in (\ref{lemm:drl}). 
\par 
In a standard way, 
the following functor 
\begin{align*} 
P(\Lam)\owns \ul{\lam}\lom {\cal P}^{\rm ex}_{\ul{\lam}}\in 
{\rm Log}(/S(T)^{\nat})
\end{align*} 
defines a cubical log scheme over $S(T)^{\nat}$ indexed by $P(\Lam)$, 
where ${\rm Log}(/S(T)^{\nat})$ is the category of fine log schemes 
over $S(T)^{\nat}$. 
For convenience of notation, we set 
${\cal P}^{\rm ex}_{\emptyset}:={\cal P}^{\rm ex}$.  
%semi-simplicial scheme ${\cal P}^{{\rm ex},(\bul)}$: 
%\begin{equation*} 
%\cdots \us{\lo}{\os{\lo}{\lo}} {\cal P}^{{\rm ex},(1)}
%\rightrightarrows {\cal P}^{{\rm ex},(0)}. 
%\end{equation*} 
Let $b_{\ul{\lam}} \col {\cal P}^{\rm ex}_{\ul{\lam}} 
\lo {\cal P}^{\rm ex}$ 
and $b^{(m)} \col {\cal P}^{{\rm ex},(m)}\lo {\cal P}^{\rm ex}$  
be the natural morphisms. 
%For simplicity of notation, we denote  
%${\cal E}\otimes_{{\cal O}_{{\cal P}^{\rm ex}_T}}
%\Om^i_{{\cal P}^{{\rm ex},(m)}/\os{\circ}{T}}$  
%by 
%${\cal E}\otimes_{{\cal O}_{{\cal P}^{\rm ex}}}
%\Om^i_{{\cal P}^{{\rm ex},(m)}/\os{\circ}{T}}$.  
Set $U=S(T)^{\nat}$ or $U=\os{\circ}{T}$. 
Fix a total order on $\Lam$ once and for all. 
Then we have the \v{C}ech complex 
\begin{equation*} 
{\cal E}\otimes_{{\cal O}_{{\cal P}^{\rm ex}}}
b^{(0)}_*(\Om^i_{{\cal P}^{{\rm ex},(0)}/U})
\lo 
{\cal E}\otimes_{{\cal O}_{{\cal P}^{\rm ex}}}
b^{(1)}_*(\Om^i_{{\cal P}{}^{{\rm ex},(1)}/U}) \lo \cdots, 
\end{equation*} 
defined by the following boundary morphism. 
For an element $\ul{\lam}=\{\lam_0,\ldots,\lam_m\}$ 
$(m\in {\mab N},\lam_i <\lam_j~{\rm if}~i< j, \lam_i\in \Lam)$, 
set $\ul{\lam}_j:=\ul{\lam}\setminus \{\lam_j\}$
and let $\iota_{\ul{\lam}_j,\ul{\lam}} \col {\cal P}^{\rm ex}_{\ul{\lam}}\os{\sus}{\lo} 
{\cal P}^{\rm ex}_{\ul{\lam}_j}$
be the natural inclusion. Then the boundary morphism  
\begin{align*} 
{\cal E}\otimes_{{\cal O}_{{\cal P}^{\rm ex}}}
b^{(m-1)}_*(\Om^i_{{\cal P}{}^{{\rm ex},(m-1)}/U})
\lo 
{\cal E}\otimes_{{\cal O}_{{\cal P}^{\rm ex}}}
b^{(m)}_*(\Om^i_{{\cal P}{}^{{\rm ex},(m)}/U})
\end{align*} 
is, by definition, 
\begin{align*} 
\us{\{\ul{\lam} \vert \# \ul{\lam}=m+1\}}{\sum}
\sum_{j=0}^m(-1)^j\iota^{*}_{\ul{\lam}_j,\ul{\lam}}.
\tag{4.16.1}\label{ali:ulm}
\end{align*}  
We denote this complex by 
${\cal E}\otimes_{{\cal O}_{{\cal P}{}^{{\rm ex},(\bul)}}}
\Om^i_{{\cal P}{}^{{\rm ex},(\bul)}/U}$. 
This complex is not an example of the complex defined in \cite[(2.5)]{fup} 
because we fix a total  order on $\Lam$. 
In a standard (non-standard?) way (\cite[p.~24]{nh3}), 
we have the double complex 
${\cal E}\otimes_{{\cal O}_{{\cal P}{}^{{\rm ex},(\bul)}}}
\Om^{\bul}_{{\cal P}{}^{{\rm ex},(\bul)}/U}$ as follows:  
\begin{equation*} 
\begin{CD} 
\cdots @>>> \cdots  
@>>> \cdots \\
@A{\nabla}AA @A{-\nabla}AA  \\
{\cal E}\otimes_{{\cal O}_{{\cal P}{}^{{\rm ex},(0)}}}
\Om^1_{{\cal P}{}^{{\rm ex},(0)}/U}  @>>> 
{\cal E}\otimes_{{\cal O}_{{\cal P}{}^{{\rm ex},(1)}}}
\Om^1_{{\cal P}{}^{{\rm ex},(1)}/U} @>>> \cdots  \\
@A{\nabla}AA @A{-\nabla}AA    \\
{\cal E}\otimes_{{\cal O}_{{\cal P}{}^{{\rm ex},(0)}}}
{\cal O}_{{\cal P}{}^{{\rm ex},(0)}} @>>> 
{\cal E}\otimes_{{\cal O}_{{\cal P}{}^{{\rm ex},(1)}}}
{\cal O}_{{\cal P}{}^{{\rm ex},(1)}} 
@>>> \cdots \\
\end{CD} 
\tag{4.16.2}\label{cd:pppoex}
\end{equation*} 
and the associated single complex 
$s({\cal E}\otimes_{{\cal O}_{{\cal P}{}^{{\rm ex},(\bul)}}}
\Om^{\bul}_{{\cal P}{}^{{\rm ex},(\bul)}/U})$. 
\par 
Then the following holds: 

\begin{prop}\label{prop:sil} 
For a nonnegative integer $i$,  
the following natural morphism 
\begin{equation*} 
0\lo {\cal E}\otimes_{{\cal O}_{{\cal P}{}^{\rm ex}}}
\Om^i_{{\cal P}{}^{\rm ex}/U}
\lo {\cal E}\otimes_{{\cal O}_{{\cal P}{}^{{\rm ex},(0)}}}
\Om^i_{{\cal P}{}^{{\rm ex},(0)}/U}
\lo {\cal E}\otimes_{{\cal O}_{{\cal P}{}^{{\rm ex},(1)}}}
\Om^i_{{\cal P}{}^{{\rm ex},(1)}/U} \lo \cdots 
\tag{4.17.1}\label{eqn:pd1u}
\end{equation*} 
is exact. 
\end{prop}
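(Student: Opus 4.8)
The plan is to reduce the assertion to the exactness of the classical \v{C}ech resolution of the structure sheaf of an SNC scheme by its smooth components, and to establish the latter by a local Mayer--Vietoris induction.

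First I would note that the problem is local on $\os{\circ}{\cal P}{}^{\rm ex}$, and I would push the whole \v{C}ech diagram forward along the finite morphisms $b^{(m)}\col {\cal P}^{{\rm ex},(m)}\lo {\cal P}^{\rm ex}$, whose direct images are exact. By (\ref{eqn:came}) each term then becomes ${\cal E}\otimes_{{\cal O}_{{\cal P}^{\rm ex}}}\Om^i_{{\cal P}^{\rm ex}/U}\otimes_{{\cal O}_{{\cal P}^{\rm ex}}}{\cal O}_{{\cal P}^{{\rm ex},(m)}}$. Since ${\cal P}^{\rm ex}/U$ is log smooth, $\Om^i_{{\cal P}^{\rm ex}/U}$ is locally free over ${\cal O}_{{\cal P}^{\rm ex}}$, so tensoring with it is exact and the sequence for general $i$ is the image of the case $i=0$ under the exact functor $(-)\otimes_{{\cal O}_{{\cal P}^{\rm ex}}}\Om^i_{{\cal P}^{\rm ex}/U}$. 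Thus it suffices to treat $i=0$, namely the complex $0\lo {\cal E}\lo \bigoplus_{\lam}({\cal E}\otimes_{{\cal O}_{{\cal P}^{\rm ex}}}{\cal O}_{{\cal P}^{\rm ex}_{\lam}})\lo \cdots$, whose maps are the alternating sums of the restrictions (\ref{ali:ulm}). Because the defining ideals ${\cal I}_{\ul{\lam}}$ of the strata are pulled back from ${\cal O}_{{\cal P}^{\rm ex}}$ and ${\cal E}$ is flat over ${\cal O}_{\mathfrak D}$, this complex equals ${\cal E}\otimes_{{\cal O}_{\mathfrak D}}(-)$ applied to the \v{C}ech complex of ${\cal O}_{\mathfrak D}$ attached to the ideals ${\cal I}_{\ul{\lam}}{\cal O}_{\mathfrak D}$; flatness of ${\cal E}$, and then of ${\cal O}_{\mathfrak D}$ over ${\cal O}_{{\cal P}^{\rm ex}}$ (cf.~\cite{nb}), reduces everything to the exactness of the structure-sheaf \v{C}ech complex $0\lo {\cal O}_{{\cal P}^{\rm ex}}\lo \bigoplus_{\lam}{\cal O}_{{\cal P}^{\rm ex}_{\lam}}\lo \cdots$.

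For this last complex I would pass to the local model: by (\ref{prop:nexeo}) and (\ref{prop:fsi}), $\os{\circ}{\cal P}{}^{\rm ex}$ is, \'{e}tale locally, ${\cal O}_{S(T)^{\nat}}[x_0,\ldots,x_d]/(x_0\cdots x_a)$ with smooth components $\{x_i=0\}_{i=0}^a$; here it is essential that $x_0\cdots x_a=0$ on ${\cal P}^{\rm ex}$, since the relation $x_0\cdots x_a=t$ on the strict semistable model over $\ol{S(T)^{\nat}}$ becomes $x_0\cdots x_a=0$ after the exact closed immersion killing the parameter $t$. Consequently $\bigcap_{i=0}^a(x_i)=(x_0\cdots x_a)=(0)$ locally, which yields injectivity of the augmentation; the exactness in higher degrees follows by induction on the number $a+1$ of components from the Mayer--Vietoris presentation $Z=Z_0\cup(Z_1\cup\cdots\cup Z_a)$, the two-component case $0\lo {\cal O}_Z\lo {\cal O}_{Z_0}\oplus {\cal O}_{Z_1}\lo {\cal O}_{Z_0\cap Z_1}\lo 0$ being checked directly from $(x_0)\cap(x_1)=(x_0x_1)=(0)$. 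This is the crystalline analogue of Fujisawa's argument in \cite{fup}.

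The main obstacle is the bookkeeping in the two flatness reductions, together with the verification that $\prod_i x_i$ genuinely vanishes on ${\cal P}^{\rm ex}$ (so that the augmentation is injective): this is precisely where working over $S(T)^{\nat}$ rather than $\ol{S(T)^{\nat}}$ is used, and it is the one place where the argument truly differs from the smooth or strictly semistable situation. The Mayer--Vietoris induction itself is routine once the local model and the identity $\bigcap_i(x_i)=(0)$ are in place; the remaining care is to keep the total order and the signs of (\ref{ali:ulm}) consistent, so that the inductive short exact sequences of \v{C}ech complexes are exact on the nose.
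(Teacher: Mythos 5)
Your overall strategy (reduce to $i=0$ by local freeness of $\Om^i_{{\cal P}^{\rm ex}/U}$ via (\ref{eqn:came}), strip off ${\cal E}$ by flatness over ${\cal O}_{\mathfrak D}$, and land on a structure-sheaf \v{C}ech complex in a local model) matches the skeleton of the paper's proof, and your Mayer--Vietoris induction is a perfectly acceptable self-contained substitute for the exactness of the reduced SNC \v{C}ech complex, which the paper simply cites from Fujisawa. But there is one genuine gap: the reduction from ${\cal O}_{\mathfrak D}$-coefficients to ${\cal O}_{{\cal P}^{\rm ex}}$-coefficients via ``flatness of ${\cal O}_{\mathfrak D}$ over ${\cal O}_{{\cal P}^{\rm ex}}$.'' PD-envelopes are flat over the base $T$, not over the ambient scheme: already for $A={\mab F}_p$ one has $x^{p-1}\cdot x=x^p=p!\,x^{[p]}=0$ in $A\langle x\rangle$ while $x^{p-1}=(p-1)!\,x^{[p-1]}\neq 0$, so $A\langle x\rangle$ has $x$-torsion and is not flat over $A[x]$. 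Since $p$ is nilpotent on $\os{\circ}{T}$ throughout, this failure is the generic situation here, and your tensorization step does not go through as stated. Relatedly, your local model ${\cal O}_{S(T)^{\nat}}[x_0,\ldots,x_d]/(x_0\cdots x_a)$ suppresses the transverse directions: by (\ref{prop:adla}) the correct local form is ${\cal O}_S[x_0,\ldots,x_d][[x_{d+1},\ldots,x_{d'}]]/(x_0\cdots x_a)$ with $d\leq d'$, and it is exactly these extra variables $x_{d+1},\ldots,x_{d'}$ that make ${\mathfrak D}$ strictly larger than ${\cal P}^{\rm ex}$ and make the flatness question nontrivial.

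The paper's repair is worth internalizing: it introduces the retract ${\cal P}^{\rm ex}{}'=\ul{\rm Spec}^{\log}_{{\cal P}^{\rm ex}}({\cal O}_{{\cal P}^{\rm ex}}/(x_{d+1},\ldots,x_{d'}))$, observes that ${\cal P}^{{\rm ex},(m)}={\cal P}^{{\rm ex}{}',(m)}\times_{{\cal P}^{\rm ex}{}'}{\cal P}^{\rm ex}$, and that locally ${\cal O}_{\mathfrak D}={\cal O}_{{\cal P}^{\rm ex}{}'}\otimes_{{\cal O}_S}{\cal O}_S\langle x_{d+1},\ldots,x_{d'}\rangle$ is a \emph{free} module over ${\cal O}_{{\cal P}^{\rm ex}{}'}$. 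Hence the ${\cal O}_{\mathfrak D}$-coefficient \v{C}ech complex is obtained from the exact complex $0\lo {\cal O}_{{\cal P}^{\rm ex}{}'}\lo {\cal O}_{{\cal P}^{{\rm ex}{}',(0)}}\lo\cdots$ by tensoring with a free module over ${\cal O}_{{\cal P}^{\rm ex}{}'}$ (equivalently over ${\cal O}_S$), and exactness is preserved for free-module reasons, with no flatness over ${\cal O}_{{\cal P}^{\rm ex}}$ ever invoked. If you replace your flatness claim by this retraction-plus-base-change argument, the rest of your proof (including the Mayer--Vietoris induction and the final tensorization with $\Om^i_{{\cal P}^{\rm ex}/U}$ using (\ref{prop:bpc})) stands.
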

\begin{proof}
Since ${\cal E}$ is a flat ${\cal O}_{\mathfrak D}$-module, 
we may assume that ${\cal E}={\cal O}_{\mathfrak D}$. 
The exactness is a local question. 
We may assume that ${\cal O}_{{\cal P}^{\rm ex}}
\simeq {\cal O}_S[x_0,\ldots,x_d][[x_{d+1},\ldots, x_{d'}]]/(x_0\cdots x_a)$ 
for some $0\leq a\leq d\leq d'$ ((\ref{prop:adla})). 
Set ${\cal P}^{\rm ex}{}':=
\ul{\rm Spec}_{{\cal P}^{\rm ex}}^{\log}
({\cal O}_{{\cal P}^{\rm ex}}/(x_{d+1},\ldots, x_{d'}))$. 
Then we have a natural retraction 
${\cal P}^{\rm ex}\lo {\cal P}^{\rm ex}{}'$ of the immersion 
${\cal P}^{\rm ex}{}'\os{\sus}{\lo} {\cal P}^{\rm ex}$. 
\begin{equation*} 
{\cal P}{}^{{\rm ex},(m)}
={\cal P}{}^{{\rm ex}{}',(m)}\times_{{\cal P}^{\rm ex}{}'}
{\cal P}^{\rm ex}.
\tag{4.17.2}\label{eqn:mpex}
\end{equation*}  
The exactness of the following sequence 
\begin{equation*} 
0\lo {\cal O}_{{\cal P}{}^{\rm ex}{}'}\lo 
{{\cal O}_{{\cal P}{}^{{\rm ex}{}',(0)}}}
\lo {{\cal O}_{{\cal P}{}^{{\rm ex}{}',(1)}}}
\lo \cdots 
\tag{4.17.3}\label{eqn:mwex}
\end{equation*} 
is well-known (we have only to replace $W$ in \cite[Lemma 11]{kiha}
 by ${\cal O}_S$ or we have only to replace ${\mab C}$ in 
\cite[(4.15)]{fut} by ${\cal O}_S$). 
Consider the tensorization 
$\otimes_{{\cal O}_S}{\cal O}_S\langle x_{d+1},\ldots,x_{d'}\rangle$ of (\ref{eqn:mwex}). 
Then this sequence is exact since 
${\cal O}_S\langle x_{d+1},\ldots, x_{d'}\rangle$ 
is a free ${\cal O}_S$-module.   
Using (\ref{eqn:mpex}) and noting 
${\cal O}_{{\cal P}^{\rm ex}{}'}\langle x_{d+1},\ldots, x_{d'}\rangle
\otimes_{{\cal O}_{{\cal P}{}^{\rm ex}}}
{{\cal O}_{{\cal P}{}^{{\rm ex},(m)}}}
=
{\cal O}_{{\cal P}^{\rm ex}{}'}\langle x_{d+1},\ldots, x_{d'}\rangle
\otimes_{{\cal O}_{{\cal P}{}^{\rm ex}}}
{{\cal O}_{{\cal P}{}^{{\rm ex}{}',(m)}}}
\otimes_{{\cal O}_{{\cal P}{}^{\rm ex}{}'}}
{{\cal O}_{{\cal P}{}^{{\rm ex}}}}
=
{\cal O}_{{\cal P}^{\rm ex}{}'}\langle x_{d+1},\ldots, x_{d'}\rangle
\otimes_{{\cal O}_{{\cal P}{}^{\rm ex}{}'}}
{{\cal O}_{{\cal P}{}^{{\rm ex}{}',(m)}}}
$, 
we see that the resulting sequence is equal to the following exact sequence 
\begin{equation*} 
0\lo {\cal O}_{\mathfrak D}
\lo {\cal O}_{\mathfrak D}
\otimes_{{\cal O}_{{\cal P}{}^{\rm ex}}}
{{\cal O}_{{\cal P}{}^{{\rm ex},(0)}}}
\lo {\cal O}_{\mathfrak D}
\otimes_{{\cal O}_{{\cal P}{}^{\rm ex}}}
{{\cal O}_{{\cal P}{}^{{\rm ex},(1)}}}
\lo \cdots.  
\end{equation*} 
%Locally we see that the following sequence is exact. 
Now we have only to take the tensorization 
$\otimes_{{\cal O}_{{\cal P}{}^{\rm ex}}}
\Om^i_{{\cal P}{}^{\rm ex}/U}$ 
and to use (\ref{prop:bpc}) (2). 
\end{proof}

%Set 
%\begin{equation*} 
%C_{\rm zar}({\cal P}^{{\rm ex},(\star)}_T/U,
%\os{\circ}{p}{}^*_{T}({\cal E}))
%:=s({\cal E}\otimes_{{\cal O}_{{\cal P}{}^{{\rm ex},(\bul)}}}
%\Om^{\bul}_{{\cal P}{}^{{\rm ex},(\bul)}/U}). 
%\end{equation*} 

%\begin{defi}
%\end{defi}

\par
We consider the following PD-Hirsch extension 
\begin{equation*}   
{\cal E}\otimes_{{\cal O}_{{\cal P}^{{\rm ex}}}}
\Om^{\bul}_{{\cal P}{}^{{\rm ex}}/\os{\circ}{T}}\langle u\rangle 
\end{equation*} 
of 
${\cal E}\otimes_{{\cal O}_{{\cal P}{}^{{\rm ex}}}}
\Om^{\bul}_{{\cal P}{}^{{\rm ex}}/\os{\circ}{T}}$ as before 
and we consider the following PD-Hirsch extension 
\begin{equation*}   
{\cal E}\otimes_{{\cal O}_{{\cal P}{}^{{\rm ex}}_{\ul{\lam}}}} 
\Om^{\bul}_{{\cal P}{}^{{\rm ex}}_{\ul{\lam}}/\os{\circ}{T}}
\langle u \rangle 
\end{equation*} 
of 
${\cal E}\otimes_{{\cal O}_{{\cal P}{}^{{\rm ex}}_{\ul{\lam}}}}
\Om^{\bul}_{{\cal P}{}^{{\rm ex}}_{\ul{\lam}}/\os{\circ}{T}}$.  
Consequently we can consider the following PD-Hirsch extension 
\begin{equation*}   
{\cal E}\otimes_{{\cal O}_{{\cal P}{}^{{\rm ex}}}}
\Om^{\bul}_{{\cal P}{}^{{\rm ex},(m)}/\os{\circ}{T}}\langle u\rangle
\quad (m\geq -1)
\end{equation*} 
of 
${\cal E}\otimes_{{\cal O}_{{\cal P}{}^{{\rm ex}}}}
\Om^{\bul}_{{\cal P}{}^{{\rm ex},(m)}/\os{\circ}{T}}$.  
%Zariski locally on ${\mathfrak D}$, 
%we have an isomorphism 
%\begin{equation*} 
%{\cal O}_T\langle u \rangle \otimes_{{\cal O}_T}{\cal E}
%\otimes_{{\cal O}_{{\cal P}{}^{{\rm ex}}}}
%\Om^i_{{\cal P}{}^{{\rm ex},(m)}/\os{\circ}{T}} 
%\os{\sim}{\lo} 
%{\cal O}_{T}\langle u \rangle \otimes_{{\cal O}_T}
%{\cal E}\otimes_{{\cal O}_{{\cal P}{}^{{\rm ex}}}}
%\Om^{i+1}_{{\cal P}{}^{{\rm ex},(m)}/\os{\circ}{T}}; 
%\end{equation*}  

\begin{prop}\label{prop:impth}
Let 
$s({\cal E}\otimes_{{\cal O}_{{\cal P}{}^{{\rm ex},(\bul)}}}
\Om^{\bul}_{{\cal P}{}^{{\rm ex},(\bul)}/\os{\circ}{T}}\langle u \rangle)$
be the single complex of the double complex {\rm (\ref{cd:pppoex})}. 
Then we have the following commutative diagram$:$ 
\begin{equation*} 
\begin{CD}
s({\cal E}\otimes_{{\cal O}_{{\cal P}{}^{{\rm ex},(\bul)}}}
\Om^{\bul}_{{\cal P}{}^{{\rm ex},(\bul)}/\os{\circ}{T}}\langle u \rangle)
@>>>
s({\cal E}\otimes_{{\cal O}_{{\cal P}{}^{{\rm ex},(\bul)}}}
\Om^{\bul}_{{\cal P}{}^{{\rm ex},(\bul)}/S(T)^{\nat}})\\
@A{\simeq}AA @AA{\simeq}A \\ 
{\cal E}\otimes_{{\cal O}_{{\cal P}{}^{\rm ex}}}
\Om^{\bul}_{{\cal P}{}^{{\rm ex}}/\os{\circ}{T}}\langle u \rangle
@>{\sim}>> {\cal E}\otimes_{{\cal O}_{{\cal P}{}^{\rm ex}}}
\Om^{\bul}_{{\cal P}{}^{{\rm ex}}/S(T)^{\nat}}.  
\end{CD} 
\tag{4.18.1}\label{eqn:cdpe}
\end{equation*}  
\end{prop}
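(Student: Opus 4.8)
The plan is to prove that the square (\ref{eqn:cdpe}) commutes and that three of its four arrows are quasi-isomorphisms; the remaining top arrow then follows by the two-out-of-three property. First I would identify the four morphisms. The bottom arrow is exactly the morphism (\ref{ali:uafcui}) for the exactification ${\cal P}^{\rm ex}$ in place of ${\cal Q}^{\rm ex}$. Since $X/S_0$ is an SNCL scheme, ${\cal P}^{\rm ex}$ is a formal SNCL scheme over $S(T)^{\nat}$ and, by (\ref{prop:nexeo}) (2), $\ol{\cal P}{}^{\rm ex}$ is a strict semistable family whose underlying formal scheme $\os{\circ}{\ol{\cal P}}{}^{\rm ex}$ is formally smooth over $\os{\circ}{T}$. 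Because ${\cal E}$ corresponds to the crystal $\eps_{X_{\os{\circ}{T}_0}/\os{\circ}{T}}(E)$ pulled back from $\os{\circ}{X}_{T_0}/\os{\circ}{T}$, the Example following (\ref{prop:dpt}) shows that ${\cal E}$ has no poles along $S(T)^{\nat}$; hence (\ref{theo:qii}) applies and the bottom arrow is a quasi-isomorphism. The top arrow is the same morphism (\ref{ali:uafcui}) applied at each semisimplicial level ${\cal P}^{{\rm ex},(m)}$ and then passed through the single-complex functor $s$; note that it is well defined even though the ${\cal P}^{{\rm ex},(m)}$ are not log smooth over $S(T)^{\nat}$, since its definition uses only the augmentation $d\log \tau=0$ in the relative complex.

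Next I would treat the right vertical arrow, which is the \v{C}ech augmentation over $U=S(T)^{\nat}$ from the $(-1)$-column ${\cal E}\otimes_{{\cal O}_{{\cal P}^{\rm ex}}}\Om^{\bul}_{{\cal P}^{\rm ex}/S(T)^{\nat}}$ into the single complex of (\ref{cd:pppoex}). By (\ref{prop:sil}) with $U=S(T)^{\nat}$, for each fixed de Rham degree $i$ the augmented \v{C}ech row is exact. Filtering the double complex (\ref{cd:pppoex}) by the de Rham degree, which is bounded below, the standard double-complex argument then shows that this augmentation is a quasi-isomorphism.

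For the left vertical arrow I would argue in the same way over $U=\os{\circ}{T}$: by (\ref{prop:sil}) the augmentation $\alpha\colon {\cal E}\otimes_{{\cal O}_{{\cal P}^{\rm ex}}}\Om^{\bul}_{{\cal P}^{\rm ex}/\os{\circ}{T}}\lo s({\cal E}\otimes_{{\cal O}_{{\cal P}^{{\rm ex},(\bul)}}}\Om^{\bul}_{{\cal P}^{{\rm ex},(\bul)}/\os{\circ}{T}})$ is a quasi-isomorphism. Both source and target are of the direct-sum form treated in (\ref{prop-defi:pdc}), so the derived PD-Hirsch extension $\langle U_{S(T)^{\nat}}\rangle$ is defined on $\alpha$, and (\ref{coro:hac}) (via (\ref{prop:acy})) shows that $\alpha_{\langle U_{S(T)^{\nat}}\rangle}$ is again a quasi-isomorphism. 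The point to check is that $\langle U_{S(T)^{\nat}}\rangle$ commutes with $s$, that is, $s(-)\langle U_{S(T)^{\nat}}\rangle=s(-\langle U_{S(T)^{\nat}}\rangle)$, which holds because the \v{C}ech boundary maps $\iota^{j*}_{\ul{\lam}}$ are ${\cal O}_T$-linear on the factor $\Gam_{{\cal O}_T}(L_{S(T)^{\nat}})$ and so commute with tensoring by it. This identifies $\alpha_{\langle U_{S(T)^{\nat}}\rangle}$ with the left vertical arrow, which is therefore a quasi-isomorphism.

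Finally I would verify commutativity: the bottom and top arrows are both induced by the natural surjection $\Om^{\bul}_{/\os{\circ}{T}}\langle U_{S(T)^{\nat}}\rangle\lo \Om^{\bul}_{/S(T)^{\nat}}$, which is compatible with the \v{C}ech pullbacks $\iota^{j*}_{\ul{\lam}}$ and with the augmentation, so the square commutes by naturality; the top arrow is then a quasi-isomorphism by two-out-of-three. The main obstacle is not any single hard estimate, since the essential input (\ref{theo:qii}) is already in hand, but rather the careful bookkeeping that the PD-Hirsch extension commutes with the semisimplicial single-complex formation and that (\ref{coro:hac}) genuinely applies in the direct-sum setting of (\ref{prop-defi:pdc}); once this is arranged the proof is pure assembly.
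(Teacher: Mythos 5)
Your proof is correct and takes essentially the same route as the paper: the paper's entire proof is the single sentence that the result follows immediately from (\ref{prop:sil}), (\ref{theo:qii}) and (\ref{coro:hac}), which are precisely the three ingredients you assemble (\v{C}ech exactness for the two vertical augmentations, the no-poles quasi-isomorphism for the bottom arrow, and stability of quasi-isomorphisms under the derived PD-Hirsch extension for the left column). The extra bookkeeping you supply --- that $\langle U_{S(T)^{\nat}}\rangle$ commutes with the single-complex functor and that (\ref{coro:hac}) applies in the direct-sum setting of (\ref{prop-defi:pdc}) --- is exactly what the paper leaves implicit.
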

\begin{proof} 
This immediately follows from (\ref{prop:sil}), (\ref{theo:qii}) 
and (\ref{coro:hac}).  
\end{proof} 

%\par  
%By (\ref{eqn:cdpe}) we see that 
%the following natural morphism  
%\begin{equation*}  
%{\cal E}\otimes_{{\cal O}_{{\cal P}{}^{\rm ex}}}
%\Om^{\bul}_{{\cal P}{}^{{\rm ex},(m)}/\os{\circ}{T}}\langle U_{S(T)^{\nat}} \rangle
%\lo {\cal E}
%\otimes_{{\cal O}_{{\cal P}{}^{\rm ex}}}
%\Om^{\bul}_{{\cal P}{}^{{\rm ex},(m)}/S(T)^{\nat}}
%\tag{4.17.2}\label{eqn:exr}
%\end{equation*}
%is a quasi-isomorphism 
%for a nonnegative integer $m\in {\mab Z}_{\geq -1}$.   
%In fact, the following holds:
\par 
We also obtain the following, which will play an important role for the construction 
of the PD-Hirsch weight-filtered complex: 

\begin{theo}\label{theo:sih} 
For an element $\ul{\lam}$ of $P(\Lam)\cup \{\emptyset\}$,   
%$($including the case $\ul{\lam}=\emptyset)$, 
the following natural morphism  
\begin{equation*}  
{\cal E}
\otimes_{{\cal O}_{{\cal P}{}^{\rm ex}}}
\Om^{\bul}_{{\cal P}{}^{{\rm ex}}_{\ul{\lam}}/\os{\circ}{T}}\langle u\rangle
\lo {\cal E}\otimes_{{\cal O}_{{\cal P}{}^{\rm ex}}}
\Om^{\bul}_{{\cal P}{}^{{\rm ex}}_{\ul{\lam}}/S(T)^{\nat}}
\tag{4.19.1}\label{eqn:exlr}
\end{equation*}
is a quasi-isomorphism.  
\end{theo}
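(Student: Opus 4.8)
The plan is to reduce the statement for an arbitrary $\ul{\lam}\in P(\Lam)$ (including $\ul{\lam}=\emptyset$, which is exactly Theorem~\ref{theo:qii}) to the already-proved case of a log smooth situation. The essential point is that, although ${\cal P}^{\rm ex}_{\ul{\lam}}$ is \emph{not} log smooth over $S(T)^{\nat}$ (see the Remark following \eqref{eqn:kfltd}), the proof of \eqref{theo:qii} does not genuinely use log smoothness of the ambient scheme; it uses only the local freeness of $\Om^1_{{\cal P}^{\rm ex}/\os{\circ}{T}}$, the existence of a local basis containing $d\log\tau$, the exactness of the Koszul-type sequence \eqref{ali:gsflaxd} (wedging with $d\log\tau$), and the no-poles hypothesis on the coefficient crystal. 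So first I would verify that all of these local ingredients survive after restriction to ${\cal P}^{\rm ex}_{\ul{\lam}}$. By \eqref{prop:bpc}(1) the canonical morphism $\Om^i_{{\cal P}^{\rm ex}/U}\otimes_{{\cal O}_{{\cal P}^{\rm ex}}}{\cal O}_{{\cal P}^{\rm ex}_{\ul{\lam}}}\os{\sim}{\lo}\Om^i_{{\cal P}^{\rm ex}_{\ul{\lam}}/U}$ is an isomorphism for both $U=\os{\circ}{T}$ and $U=S(T)^{\nat}$; this is what lets one transport a local basis of $\Om^1_{{\cal P}^{\rm ex}/\os{\circ}{T}}$ containing $d\log\tau$ to a local basis of $\Om^1_{{\cal P}^{\rm ex}_{\ul{\lam}}/\os{\circ}{T}}$ still containing $d\log\tau$.

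Concretely, I would work in the local model furnished by \eqref{prop:adla} and \eqref{eqn:xdplxda}, where ${\cal O}_{{\cal P}^{\rm ex}}\simeq {\cal O}_S[x_0,\ldots,x_d][[x_{d+1},\ldots,x_{d'}]]/(x_0\cdots x_a)$ and $\os{\circ}{\cal P}{}^{\rm ex}_{\ul{\lam}}$ is cut out by setting the $x_{\lam}=0$ for $\lam\in\ul{\lam}$. Since each such $x_{\lam}$ comes from a local section of $M_{{\cal P}^{\rm ex}}$, the same computation as in \eqref{lemm:drl} shows that the induced connection on ${\cal E}\otimes_{{\cal O}_{{\cal P}^{\rm ex}}}\Om^{\bul}_{{\cal P}^{\rm ex}_{\ul{\lam}}/\os{\circ}{T}}$ is well defined and that the no-poles condition for $E$ descends. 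The key structural fact I would then re-establish is the exactness of the local wedging sequence \eqref{ali:gsflaxd}, now over ${\cal P}^{\rm ex}_{\ul{\lam}}$: that is, the sequence
\begin{equation*}
0\lo {\cal E}\otimes\Om^{\bul}_{{\cal P}^{\rm ex}_{\ul{\lam}}/S(T)^{\nat}}[-1]
\os{d\log\tau\wedge}{\lo}{\cal E}\otimes\Om^{\bul}_{{\cal P}^{\rm ex}_{\ul{\lam}}/\os{\circ}{T}}
\lo {\cal E}\otimes\Om^{\bul}_{{\cal P}^{\rm ex}_{\ul{\lam}}/S(T)^{\nat}}\lo 0
\end{equation*}
is exact. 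This follows because the short exact sequence of differential modules $0\to {\cal O}_{{\cal P}^{\rm ex}_{\ul{\lam}}}d\log\tau\to\Om^1_{{\cal P}^{\rm ex}_{\ul{\lam}}/\os{\circ}{T}}\to\Om^1_{{\cal P}^{\rm ex}_{\ul{\lam}}/S(T)^{\nat}}\to 0$ is locally split (the chosen local basis contains $d\log\tau$), exactly as in the $\ul{\lam}=\emptyset$ case treated in \eqref{prop:saih} and in \cite{nb}.

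With these local inputs in hand, the rest of the argument is a verbatim repetition of the proof of \eqref{theo:qii}. Namely: by the locally nilpotent hypothesis one dévisses along a filtration with no-poles graded pieces, using the compatibility of the mapping cone with the PD-Hirsch extension \eqref{lemm:mcfgq} and \eqref{coro:hac} to reduce to the no-poles case; then one proves the inclusion \eqref{ali:dayn} by the same coordinate computation (the third term contributes only a $\tau\, d\log\tau$ factor precisely because of no poles); this gives the exact sequence of cohomology sheaves analogous to \eqref{ali:agxhqd} and hence \eqref{ali:qf}; and finally the filtration-$F$ spectral sequence \eqref{ali:ppt}, which is regular and bounded below, converges by the complete convergence theorem \cite[5.5.10]{weib} and collapses to give the isomorphism on cohomology. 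The main obstacle I anticipate is \emph{not} the analytic core of the argument but the bookkeeping needed to be sure that log smoothness of ${\cal P}^{\rm ex}_{\ul{\lam}}$ was never secretly used: one must confirm that \eqref{prop:bpc}(1) fully replaces it, and in particular that $\Om^1_{{\cal P}^{\rm ex}_{\ul{\lam}}/\os{\circ}{T}}$ is locally free with a basis containing $d\log\tau$ even though ${\cal P}^{\rm ex}_{\ul{\lam}}$ is only ideally log smooth. Once that is verified, the theorem follows for every $\ul{\lam}$, and by functoriality over the semisimplicial index these quasi-isomorphisms are compatible with the \v{C}ech boundary maps \eqref{ali:ulm}, as already recorded in \eqref{prop:impth}.
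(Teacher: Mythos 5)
Your proposal is correct and follows essentially the same route as the paper: the paper likewise uses (\ref{prop:bpc}) and (\ref{lemm:drl}) to transport the exact wedging sequence (\ref{ali:gsflaxd}) to ${\cal P}^{{\rm ex}}_{\ul{\lam}}$, and then declares the rest identical to the proof of (\ref{theo:qii}) (cohomology exact sequence plus the convergent spectral sequence argument). Your extra care about never invoking log smoothness of ${\cal P}^{{\rm ex}}_{\ul{\lam}}$ is exactly what (\ref{prop:bpc}) is there to supply.
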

\begin{proof}  
In (\ref{theo:qii}) we have already proved (\ref{theo:sih}) for the case $\ul{\lam}=\emptyset$. 
\par 
By (\ref{ali:gsflaxd}) the following sequence is exact: 
\begin{align*} 
0  \lo {\cal E}
\otimes_{{\cal O}_{{{\cal P}^{\rm ex}}}}
{\Om}^{\bul}_{{{\cal P}^{\rm ex}}/S(T)^{\nat}}[-1] \os{d\log t\wedge }{\lo} 
{\cal E}\otimes_{{\cal O}_{{{\cal P}^{\rm ex}}}}
{\Om}^{\bul}_{{{\cal P}^{\rm ex}}/\os{\circ}{T}}  
\lo {\cal E}\otimes_{{\cal O}_{{{\cal P}^{\rm ex}}}}
{\Om}^{\bul}_{{{\cal P}^{\rm ex}}/S(T)^{\nat}} \lo 0. 
\tag{4.19.2}\label{ali:agxd}
\end{align*} 
By (\ref{prop:bpc}), (\ref{lemm:drl}) and (\ref{ali:agxd}),
the following sequence is exact: 
\begin{align*} 
0  \lo {\cal E}
\otimes_{{\cal O}_{{{\cal P}^{\rm ex}}}}
{\Om}^{\bul}_{{{\cal P}^{\rm ex}_{\ul{\lam}}}/S(T)^{\nat}}[-1] \os{d\log t\wedge }{\lo} 
{\cal E}\otimes_{{\cal O}_{{{\cal P}^{\rm ex}}}}
{\Om}^{\bul}_{{{\cal P}^{\rm ex}_{\ul{\lam}}}/\os{\circ}{T}}  
\lo {\cal E}\otimes_{{\cal O}_{{{\cal P}^{\rm ex}}}}
{\Om}^{\bul}_{{{\cal P}^{\rm ex}_{\ul{\lam}}}/S(T)^{\nat}} \lo 0. 
\tag{4.19.3}\label{ali:agaxd}
\end{align*} 
%Indeed, it suffices to prove that 
%the following sequence is exact: 
%\begin{align*} 
%0  \lo {\cal E}\otimes_{{\cal O}_{{{\cal P}^{\rm ex}}}}
%{\Om}^i_{{{\cal P}^{\rm ex}_{\ul{\lam}}}/S(T)^{\nat}}[-1] \os{d\log \tau\wedge }{\lo} 
%{\cal E}\otimes_{{\cal O}_{{{\cal P}^{\rm ex}}}}
%{\Om}^i_{{{\cal P}^{\rm ex}_{\ul{\lam}}}/\os{\circ}{T}}  
%\lo {\cal E}\otimes_{{\cal O}_{{{\cal P}^{\rm ex}}}}
%{\Om}^i_{{{\cal P}^{\rm ex}_{\ul{\lam}}}/S(T)^{\nat}} \lo 0
%\tag{4.18.3}\label{ali:agaxd}
%\end{align*} 
%for each $i\in {\mab N}$. 
%We can prove this exactness by 
%the completely same proof as that of (\ref{prop:sil}). 
Furthermore, by the same proof as that of (\ref{theo:qii}), 
the following sequence obtained by (\ref{ali:agaxd}) 
is exact: 
\begin{align*} 
0  &\lo {\cal H}^{q-1}({\cal E}
\otimes_{{\cal O}_{{{\cal P}^{\rm ex}}}}
{\Om}^{\bul}_{{{\cal P}^{\rm ex}_{\ul{\lam}}}/S(T)^{\nat}})
\os{d\log t\wedge }{\lo} 
{\cal H}^q({\cal E}\otimes_{{\cal O}_{{{\cal P}^{\rm ex}}}}
{\Om}^{\bul}_{{{\cal P}^{\rm ex}_{\ul{\lam}}}/\os{\circ}{T}}) \tag{4.19.4}\label{ali:ppxhd}\\ 
&\lo {\cal H}^q({\cal E}\otimes_{{\cal O}_{{{\cal P}^{\rm ex}}}}
{\Om}^{\bul}_{{{\cal P}^{\rm ex}_{\ul{\lam}}}/S(T)^{\nat}}) \lo 0. 
\end{align*} 
%Consequently the following sequence is exact: 
%\begin{align*} 
%0  &\lo {\cal H}^0({\cal E}
%\otimes_{{\cal O}_{{{\cal P}^{\rm ex}}}}
%{\Om}^{\bul}_{{{\cal P}^{\rm ex}_{\ul{\lam}}}/\os{\circ}{T}})
%\os{d\log \tau\wedge }{\lo} 
%{\cal H}^1({\cal E}
%\otimes_{{\cal O}_{{{\cal P}^{\rm ex}}}}
%{\Om}^{\bul}_{{{\cal P}^{\rm ex}_{\ul{\lam}}}/\os{\circ}{T}})\lo \cdots \tag{4.10.7}\label{ali:pphd}\\
%&\cdots \os{d\log \tau\wedge }{\lo} {\cal H}^{q-1}({\cal E}
%\otimes_{{\cal O}_{{{\cal P}^{\rm ex}}}}{\Om}^{\bul}_{{{\cal P}^{\rm ex}_{\ul{\lam}}}/\os{\circ}{T}})
%\os{d\log \tau\wedge }{\lo}{\cal H}^q({\cal E}\otimes_{{\cal O}_{{{\cal P}^{\rm ex}}}}
%{\Om}^{\bul}_{{{\cal P}^{\rm ex}_{\ul{\lam}}}/\os{\circ}{T}}) \\ 
%&\os{d\log \tau\wedge }{\lo}  {\cal H}^q({\cal E}\otimes_{{\cal O}_{{{\cal P}^{\rm ex}}}}
%{\Om}^{\bul}_{{{\cal P}^{\rm ex}_{\ul{\lam}}}/S(T)^{\nat}}) \lo 0. 
%\end{align*} 
The rest of the proof is the same as that of (\ref{theo:qii}). 
\end{proof}

\section{Preweight filtrations on PD-Hirsch extensions of log crystalline complexes}\label{sec:pwf}
Let the notations be as in \S\ref{sec:ldfc}. 
In this section we recall a filtration on 
${\cal F}\otimes_{{\cal O}_{{\cal Q}^{\rm ex}}}\Om^{\bul}_{{\cal Q}^{\rm ex}/\os{\circ}{T}}$ 
defined in \cite{nb} and 
we define filtrations on 
${\cal E}\otimes_{{\cal O}_{{\cal P}^{\rm ex}}}
\Om^{\bul}_{{\cal P}{}^{\rm ex}/\os{\circ}{T}}\langle u\rangle$, 
${\cal E}\otimes_{{\cal O}_{{\cal P}^{\rm ex}}}
\Om^{\bul}_{{\cal P}^{{\rm ex}}_{\ul{\lam}}/\os{\circ}{T}}$, 
${\cal E}\otimes_{{\cal O}_{{\cal P}^{\rm ex}}}
\Om^{\bul}_{{\cal P}^{{\rm ex}}_{\ul{\lam}}/\os{\circ}{T}}\langle u\rangle$, 
${\cal E}\otimes_{{\cal O}_{{\cal P}^{\rm ex}}}
\Om^{\bul}_{{\cal P}^{{\rm ex},(m)}/\os{\circ}{T}}$
and 
${\cal E}\otimes_{{\cal O}_{{\cal P}^{\rm ex}}}
\Om^{\bul}_{{\cal P}^{{\rm ex},(m)}/\os{\circ}{T}}\langle u\rangle$. 
We calculate the graded complexes of them. 
\par 

Let $Y$ be a fine log 
(formal) scheme over a fine log (formal) scheme $T$. 
As in \cite[(4.0.2)]{nh3}, we define the {\it pre-weight filtration}
$P$ on the sheaf ${\Om}^i_{Y/\os{\circ}{T}}$ $(i\in {\mab N})$ 
of log differential forms on $Y_{\rm zar}$ as follows: 
\begin{equation*} 
P_k{\Om}^i_{Y/\os{\circ}{T}} =
\begin{cases} 
0 & (k<0), \\
{\rm Im}({\Om}^k_{Y/\os{\circ}{T}}{\otimes}_{{\cal O}_Y}
\Om^{i-k}_{\os{\circ}{Y}/\os{\circ}{T}}
\lo {\Om}^i_{Y/\os{\circ}{T}}) & (0\leq k\leq i), \\
{\Om}^i_{Y/\os{\circ}{T}} & (k > i).
\end{cases}
\tag{5.0.1}\label{eqn:pkdefpw}
\end{equation*}  
A morphism $g\col Y\lo Z$ of fine log (formal) schemes over 
$\os{\circ}{T}$ 
induces the following morphism of filtered complexes: 
\begin{equation*} 
g^*\col (\Om^{\bul}_{Z/\os{\circ}{T}},P)
\lo 
g_*((\Om^{\bul}_{Y/\os{\circ}{T}},P)).  
\tag{5.0.2}\label{eqn:lyzpp}
\end{equation*} 
More generally, for a flat ${\cal O}_Y$-module ${\cal E}$ 
and a flat ${\cal O}_Z$-module ${\cal F}$ with a morphism 
$h \col {\cal F}\lo g_*({\cal E})$ of ${\cal O}_Z$-modules, 
we have the following morphism of filtered complexes: 
\begin{equation*} 
h \col ({\cal F}\otimes_{{\cal O}_Z}\Om^{\bul}_{Z/\os{\circ}{T}},P)
\lo 
g_*(({\cal E}\otimes_{{\cal O}_Y}{\Om}^{\bul}_{Y/\os{\circ}{T}},P)).  
\tag{5.0.3}\label{eqn:lyytp}
\end{equation*}  
%For a morphism $T'\lo T$ of fine log (formal) schemes, 
%set $Y_{T'}:=Y\times_TT'$ and 
%let $q\col Y_{T'}\lo Y$ be the first projection. 
%For a flat ${\cal O}_{Y_{T'}}$-module ${\cal F}$, 
%let $P$ be the induced filtration on 
%${\cal F}\otimes_{{\cal O}_{Y_{T'}}}\Om^i_{Y_{T'}/\os{\circ}{T}{}'}$ 
%by the filtration $P$ on $\Om^i_{Y_{T'}/\os{\circ}{T}{}'}$: 
%\begin{align*} 
%P_k({\cal F}\otimes_{{\cal O}_{Y_{T'}}}\Om^i_{Y_{T'}/\os{\circ}{T}{}'})
%:={\cal F}\otimes_{{\cal O}_{Y_{T'}}}P_k\Om^i_{Y_{T'}/\os{\circ}{T}{}'}. 
%\tag{5.0.4}\label{eqn:lyep}
%\end{align*}
Especially we obtain the filtrations $P$'s on 
$\Om^{\bul}_{{\cal Q}^{{\rm ex}}/\os{\circ}{T}}$,  
$\Om^{\bul}_{{\cal P}^{{\rm ex}}_{\ul{\lam}}/\os{\circ}{T}}$
and $\Om^{\bul}_{{\cal P}^{{\rm ex},(m)}/\os{\circ}{T}}$ 
$(m\in {\mab N})$.  
%We also set 
%\begin{equation*} 
%P_k{\Om}^i_{{\cal P}^{{\rm ex},(m)}/\os{\circ}{T}} =
%\begin{cases} 
%0 & (k<0), \\
%{\rm Im}(\Om^k_{{\cal P}^{{\rm ex},(m)}
%/\os{\circ}{T}}{\otimes}_{{\cal O}_{{\cal P}^{{\rm ex},(m)}}}
%\Om^{i-k}_{\os{\circ}{\cal P}{}^{{\rm ex},(m)}/\os{\circ}{T}}
%\lo {\Om}^i_{{\cal P}^{{\rm ex},(m)}/\os{\circ}{T}}) & 
%(0\leq k\leq i), \\
%{\Om}^i_{{\cal P}^{{\rm ex},(m)}/\os{\circ}{T}} & (k >i).  
%\end{cases}
%\tag{3.4.6}\label{eqn:pkdefpw}
%\end{equation*} 

%\par 
%The complex 
%$\Om^{\bul}_{{\cal P}^{{\rm ex},(m)}/\os{\circ}{T}}$ 
%has the following pre-weight filtration 

\par 
In \cite{nb} we have proved the following:

\begin{prop}[{\bf \cite[(1.3.4)]{nb}}]\label{prop:injf}
The natural morphism 
\begin{equation*} 
{\cal O}_{\mathfrak E}
\otimes_{{\cal O}_{{\cal Q}^{\rm ex}}}
P_k{\Om}^i_{{\cal Q}^{\rm ex}/\os{\circ}{T}}
\lo 
{\cal O}_{\mathfrak E}
\otimes_{{\cal O}_{{\cal Q}^{\rm ex}}}
{\Om}^i_{{\cal Q}^{\rm ex}/\os{\circ}{T}} 
\quad (i,k\in {\mab Z})
\tag{5.1.1}\label{eqn:yxnpd}
\end{equation*}
is injective. 
\end{prop}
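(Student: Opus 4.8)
The statement to prove is Proposition \ref{prop:injf}, the injectivity of
\begin{equation*}
{\cal O}_{\mathfrak E}
\otimes_{{\cal O}_{{\cal Q}^{\rm ex}}}
P_k{\Om}^i_{{\cal Q}^{\rm ex}/\os{\circ}{T}}
\lo
{\cal O}_{\mathfrak E}
\otimes_{{\cal O}_{{\cal Q}^{\rm ex}}}
{\Om}^i_{{\cal Q}^{\rm ex}/\os{\circ}{T}}.
\end{equation*}
The plan is to reduce the assertion to the flatness of the quotient sheaf ${\Om}^i_{{\cal Q}^{\rm ex}/\os{\circ}{T}}/P_k{\Om}^i_{{\cal Q}^{\rm ex}/\os{\circ}{T}}$ as an ${\cal O}_{{\cal Q}^{\rm ex}}$-module, since tensoring a short exact sequence with ${\cal O}_{\mathfrak E}$ preserves injectivity precisely when the cokernel is flat (equivalently, when ${\rm Tor}_1^{{\cal O}_{{\cal Q}^{\rm ex}}}({\cal O}_{\mathfrak E}, {\rm gr})$ vanishes for the relevant graded piece). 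So the first step is to record the short exact sequence
\begin{equation*}
0\lo P_k{\Om}^i_{{\cal Q}^{\rm ex}/\os{\circ}{T}}
\lo {\Om}^i_{{\cal Q}^{\rm ex}/\os{\circ}{T}}
\lo {\Om}^i_{{\cal Q}^{\rm ex}/\os{\circ}{T}}/P_k{\Om}^i_{{\cal Q}^{\rm ex}/\os{\circ}{T}}\lo 0
\end{equation*}
and reduce, by devissage on $k$, to the graded pieces ${\rm gr}^P_k{\Om}^i_{{\cal Q}^{\rm ex}/\os{\circ}{T}}$.

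The second step is to compute these graded pieces locally and exhibit them as locally free ${\cal O}_{{\cal Q}^{{\rm ex},(k)}}$-modules, hence flat over ${\cal O}_{{\cal Q}^{\rm ex}}$ via the Poincar\'e residue description. Since ${\cal Q}^{\rm ex}$ is log smooth over $S(T)^{\nat}$ and $S(T)^{\nat}$ is a family of log points, the preweight filtration $P$ on $\Om^i_{{\cal Q}^{\rm ex}/\os{\circ}{T}}$ admits the standard residue isomorphism
\begin{equation*}
{\rm gr}^P_k{\Om}^i_{{\cal Q}^{\rm ex}/\os{\circ}{T}}
\os{\sim}{\lo}
\Om^{i-k}_{{\cal Q}^{{\rm ex},(k-1)}/\os{\circ}{T}}\otimes_{\mab Z}\vp^{(k-1)}_{\rm zar},
\end{equation*}
expressing the graded piece as pushed-forward (smooth, hence locally free) differential forms on the log intersection loci twisted by an orientation sheaf. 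These are flat ${\cal O}_{{\cal Q}^{\rm ex}}$-modules. First I would verify this residue description locally using the standard \'etale chart ${\cal Q}^{\rm ex}\lo {\mab A}_{\ol S}(a,d)$ (equivalently via (\ref{prop:adla})), where $P_k$ is generated by wedges of the $d\log x_j$'s, and the graded piece decomposes as a direct sum over size-$k$ subsets of $\{0,\ldots,a\}$ of honest (non-log) differential forms on the corresponding coordinate subscheme.

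The final step is to assemble flatness of each ${\rm gr}^P_k$ into the desired injectivity. Because every graded piece is flat, the filtration $P$ stays exact under $-\otimes_{{\cal O}_{{\cal Q}^{\rm ex}}}{\cal O}_{\mathfrak E}$: by induction on $k$ (using the long exact ${\rm Tor}$-sequence and the vanishing ${\rm Tor}_1^{{\cal O}_{{\cal Q}^{\rm ex}}}({\cal O}_{\mathfrak E},{\rm gr}^P_k)=0$), the map ${\cal O}_{\mathfrak E}\otimes P_k\to {\cal O}_{\mathfrak E}\otimes P_{k+1}$ is injective, and composing upward gives injectivity into ${\cal O}_{\mathfrak E}\otimes {\Om}^i_{{\cal Q}^{\rm ex}/\os{\circ}{T}}$. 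The main obstacle I expect is the careful local verification of the residue/flatness description in the log-smooth-but-not-smooth setting: one must check that the exactification ${\cal Q}^{\rm ex}$ has the expected local coordinate shape (this is where (\ref{prop:adla}) and (\ref{prop:nexeo}) are essential), and that the preweight-filtration graded pieces really split off as locally free modules compatibly with the PD-envelope structure of ${\cal O}_{\mathfrak E}$. Once the local model is pinned down, the homological descent is routine, so the real work is entirely in the local computation of ${\rm gr}^P_k$ and confirming its flatness over ${\cal O}_{{\cal Q}^{\rm ex}}$.
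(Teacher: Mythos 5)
Your overall architecture (d\'evissage along the filtration $P$ plus a local computation in a standard chart) is the right shape, but the pivotal claim in your second step is false, and the induction collapses there. The graded pieces ${\rm gr}^P_k\Om^i_{{\cal Q}^{\rm ex}/\os{\circ}{T}}$ are pushforwards of locally free sheaves on proper closed subschemes of ${\cal Q}^{\rm ex}$, and such modules are essentially never flat over ${\cal O}_{{\cal Q}^{\rm ex}}$. Concretely, in the chart of (\ref{prop:adla}) one has $P_0\Om^1=\bigoplus_{j\le a}x_j{\cal O}\,d\log x_j\oplus\bigoplus_{j>a}{\cal O}\,dx_j$ inside $\Om^1=\bigoplus_{j\le a}{\cal O}\,d\log x_j\oplus\bigoplus_{j>a}{\cal O}\,dx_j$, so the cokernel is $\bigoplus_{j\le a}{\cal O}/(x_j)$, and more generally $\Om^i/P_k\Om^i$ is a direct sum of quotients of ${\cal O}_{{\cal Q}^{\rm ex}}$ by monomial ideals in $x_0,\dots,x_a$; none of these is flat. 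So the vanishing of ${\rm Tor}_1^{{\cal O}_{{\cal Q}^{\rm ex}}}({\cal O}_{\mathfrak E},-)$ that your long exact sequence needs cannot be deduced from flatness of the graded pieces. Nor can you lean on the other factor: in the present setting $p$ is locally nilpotent, and in a PD-polynomial algebra $x\cdot x^{[p-1]}=p\,x^{[p]}=0$, so the PD-variables are zero divisors and ${\cal O}_{\mathfrak E}$ is itself not flat over ${\cal O}_{{\cal Q}^{\rm ex}}$. (A secondary issue: the residue description you quote is the SNCL one; for a general log smooth ${\cal Q}^{\rm ex}$ over $S(T)^{\nat}$ the objects ${\cal Q}^{{\rm ex},(k)}$ are not even defined in this paper, and for $k=0$ the graded piece has the mapping-fibre description (\ref{ali:pdte}) rather than a residue isomorphism.)

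What actually makes the statement true --- and what the proof of \cite[(1.3.4)]{nb} recalled here does, as echoed in the proofs of (\ref{prop:pslt}), (\ref{prop:sil}) and (\ref{prop:flt}) --- is the precise local shape supplied by (\ref{prop:adla}): one reduces to ${\cal O}_{{\cal Q}^{\rm ex}}\cong B[[x_{d+1},\dots,x_{d'}]]$ and ${\cal O}_{\mathfrak E}\cong B\langle x_{d+1},\dots,x_{d'}\rangle$, where $B$ is the coordinate ring of the honest log smooth chart and the log structure, hence the entire filtration $P$ and in particular every coefficient ideal $(x_{j_1}\cdots x_{j_r})$ occurring in $P_k\Om^i$, involves only the variables $x_0,\dots,x_a$ living in $B$. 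The inclusion $P_k\Om^i\os{\sus}{\lo}\Om^i$ is therefore obtained from an inclusion of $B$-modules by the flat base change $B\lo{\cal O}_{{\cal Q}^{\rm ex}}$, and since ${\cal O}_{\mathfrak E}$ is a \emph{free} $B$-module, applying ${\cal O}_{\mathfrak E}\otimes_{{\cal O}_{{\cal Q}^{\rm ex}}}(-)$ amounts to applying $(-)\otimes_B{\cal O}_{\mathfrak E}$ to that inclusion, which preserves injectivity. If you insist on the Tor-d\'evissage, the correct input is ${\rm Tor}_1^{{\cal O}_{{\cal Q}^{\rm ex}}}({\cal O}_{\mathfrak E},{\rm gr}^P_k)\cong{\rm Tor}_1^{B}({\cal O}_{\mathfrak E},\cdot)=0$ via extendedness from $B$ and freeness of ${\cal O}_{\mathfrak E}$ over $B$ --- not flatness of the graded pieces over ${\cal O}_{{\cal Q}^{\rm ex}}$. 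So the real content is exactly the local structure theorem for the exactification and the PD-envelope, and your write-up as it stands does not use it.
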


\begin{prop}\label{prop:tnmi}
The natural morphism 
\begin{equation*} 
{\cal E}\otimes_{{\cal O}_{{\cal P}^{\rm ex}}}
P_k\Om^i_{{\cal P}^{\rm ex}_{\ul{\lam}}/\os{\circ}{T}}
\lo 
{\cal E}\otimes_{{\cal O}_{{\cal P}^{\rm ex}}}
\Om^i_{{\cal P}^{{\rm ex}}_{\ul{\lam}}/\os{\circ}{T}}
\quad (i,k\in {\mab Z})
\tag{5.2.1}\label{eqn:yxamd}
\end{equation*}
is injective. 
Consequently the natural morphism 
\begin{equation*} 
{\cal E}\otimes_{{\cal O}_{{\cal P}^{\rm ex}}}
P_k{\Om}^i_{{\cal P}^{{\rm ex},(m)}/\os{\circ}{T}}
\lo 
{\cal E}\otimes_{{\cal O}_{{\cal P}^{\rm ex}}}
{\Om}^i_{{\cal P}^{{\rm ex},(m)}/\os{\circ}{T}} 
\quad (i,k\in {\mab Z})
\tag{5.2.2}\label{eqn:yxmd}
\end{equation*}
is injective. 
\end{prop}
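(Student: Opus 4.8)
The statement to prove is Proposition~\ref{prop:tnmi}: the injectivity of the morphism (\ref{eqn:yxamd}), and as a consequence the injectivity of (\ref{eqn:yxmd}). The second claim is immediate from the first: since $\os{\circ}{\cal P}{}^{{\rm ex},(m)}=\coprod_{\sharp \ul{\lam}=m+1}\os{\circ}{\cal P}{}^{\rm ex}_{\ul{\lam}}$ by (\ref{eqn:kfltd}), the morphism (\ref{eqn:yxmd}) is just the direct sum over all $\ul{\lam}$ with $\sharp \ul{\lam}=m+1$ of the morphisms (\ref{eqn:yxamd}), and a direct sum of injections is injective. So the whole content is in proving (\ref{eqn:yxamd}). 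The plan is to reduce (\ref{eqn:yxamd}) to the already-established case (\ref{eqn:yxnpd}) of Proposition~\ref{prop:injf}, namely the injectivity of ${\cal O}_{\mathfrak E}\otimes_{{\cal O}_{{\cal Q}^{\rm ex}}}P_k\Om^i_{{\cal Q}^{\rm ex}/\os{\circ}{T}}\lo {\cal O}_{\mathfrak E}\otimes_{{\cal O}_{{\cal Q}^{\rm ex}}}\Om^i_{{\cal Q}^{\rm ex}/\os{\circ}{T}}$.

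\textbf{Key steps.} First I would reduce to a local statement, since injectivity is local on $\os{\circ}{\cal P}{}^{\rm ex}$. Working locally, I may take ${\cal E}={\cal O}_{\mathfrak D}$ (as ${\cal E}$ is a flat ${\cal O}_{\mathfrak D}$-module, tensoring a local injection of flat modules by a flat module preserves injectivity; more carefully, one reduces the target and source to modules over ${\cal O}_{{\cal P}^{\rm ex}}$ and then tensors with the flat ${\cal E}$). Next, using (\ref{prop:bpc})~(1), which gives the isomorphism $\Om^i_{{\cal P}^{\rm ex}/\os{\circ}{T}}\otimes_{{\cal O}_{{\cal P}^{\rm ex}}}{\cal O}_{{\cal P}^{\rm ex}_{\ul{\lam}}}\os{\sim}{\lo}\Om^i_{{\cal P}^{\rm ex}_{\ul{\lam}}/\os{\circ}{T}}$, I can rewrite the differentials $\Om^i_{{\cal P}^{\rm ex}_{\ul{\lam}}/\os{\circ}{T}}$ as restrictions of the ambient $\Om^i_{{\cal P}^{\rm ex}/\os{\circ}{T}}$. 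The essential point is then that the preweight filtration $P$ is compatible with this restriction: $P_k\Om^i_{{\cal P}^{\rm ex}_{\ul{\lam}}/\os{\circ}{T}}$ is the image of $P_k\Om^i_{{\cal P}^{\rm ex}/\os{\circ}{T}}\otimes_{{\cal O}_{{\cal P}^{\rm ex}}}{\cal O}_{{\cal P}^{\rm ex}_{\ul{\lam}}}$, which follows from the definition (\ref{eqn:pkdefpw}) of $P$ via the image of $\Om^k\otimes\Om^{i-k}_{\os{\circ}{}}$ and the fact that $\os{\circ}{\cal P}{}^{\rm ex}_{\ul{\lam}}$ is smooth over $\os{\circ}{T}$, so the $\Om^{i-k}_{\os{\circ}{}/\os{\circ}{T}}$ factor restricts well. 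Having arranged a local splitting of $\Om^1_{{\cal P}^{\rm ex}/\os{\circ}{T}}$ into a basis containing $d\log$-forms of the toric coordinates and the $d\log \tau$ (as is done in the proof of (\ref{theo:qii})), I would check directly that $P_k\Om^i_{{\cal P}^{\rm ex}_{\ul{\lam}}/\os{\circ}{T}}$ is a local direct summand of $\Om^i_{{\cal P}^{\rm ex}_{\ul{\lam}}/\os{\circ}{T}}$, whence tensoring the split inclusion with the flat module ${\cal O}_{\mathfrak D}$ (equivalently ${\cal E}$) preserves injectivity.

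\textbf{Main obstacle.} The subtle point, and where I expect to spend the most care, is that ${\cal P}^{\rm ex}_{\ul{\lam}}$ is \emph{not} log smooth over $S(T)^{\nat}$ (as noted in the Remark following (\ref{eqn:kfltd}), it is only ideally log smooth in the sense of Ogus). Thus the sheaves $\Om^i_{{\cal P}^{\rm ex}_{\ul{\lam}}/\os{\circ}{T}}$ need not be locally free in the naive way, and one cannot blindly invoke local freeness to conclude that $P_k$ is a direct summand. The way around this is to use (\ref{prop:bpc})~(1) to realize $\Om^i_{{\cal P}^{\rm ex}_{\ul{\lam}}/\os{\circ}{T}}$ as the restriction $\Om^i_{{\cal P}^{\rm ex}/\os{\circ}{T}}\otimes_{{\cal O}_{{\cal P}^{\rm ex}}}{\cal O}_{{\cal P}^{\rm ex}_{\ul{\lam}}}$ of the genuinely locally free sheaf $\Om^i_{{\cal P}^{\rm ex}/\os{\circ}{T}}$ on the log smooth ${\cal P}^{\rm ex}$, and to check that the local $P$-direct-summand decomposition of $\Om^i_{{\cal P}^{\rm ex}/\os{\circ}{T}}$ restricts to one on ${\cal P}^{\rm ex}_{\ul{\lam}}$. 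Concretely, the local basis of $\Om^1_{{\cal P}^{\rm ex}/\os{\circ}{T}}$ consisting of $d\log x_j$'s (including $d\log\tau$) and $dx_k$'s restricts compatibly, and $P_k$ is spanned by those wedge monomials using at most $k$ of the $d\log$-generators, which is manifestly a local direct summand even after restriction. Then the flatness of ${\cal E}$ over ${\cal O}_{\mathfrak D}$ finishes the argument, exactly paralleling the structure of the proof of Proposition~\ref{prop:injf}.
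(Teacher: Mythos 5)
Your reductions are fine: deducing (\ref{eqn:yxmd}) from (\ref{eqn:yxamd}) via the disjoint-union decomposition (\ref{eqn:kfltd}), localizing, invoking (\ref{prop:bpc}) to identify $\Om^i_{{\cal P}^{{\rm ex}}_{\ul{\lam}}/\os{\circ}{T}}$ with $\Om^i_{{\cal P}^{\rm ex}/\os{\circ}{T}}\otimes_{{\cal O}_{{\cal P}^{\rm ex}}}{\cal O}_{{\cal P}^{\rm ex}_{\ul{\lam}}}$, and using flatness of ${\cal E}$ over ${\cal O}_{\mathfrak D}$ to reduce to ${\cal E}={\cal O}_{\mathfrak D}$ all match what the paper does (it simply refers the whole thing to the proof of \cite[(1.3.4)]{nb}, i.e.\ to the same argument as (\ref{prop:injf})). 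The problem is your final step.

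The claim that $P_k\Om^i$ is ``spanned by those wedge monomials using at most $k$ of the $d\log$-generators, which is manifestly a local direct summand'' is false, and this is exactly the point where the proposition has content. By (\ref{eqn:pkdefpw}), $P_k\Om^i$ is the \emph{image} of $\Om^k_{Y/\os{\circ}{T}}\otimes\Om^{i-k}_{\os{\circ}{Y}/\os{\circ}{T}}$, and the image of a classical $1$-form $dx_j$ for $0\le j\le a$ is $x_j\,d\log x_j$. Already for $i=1$, $k=0$ on the standard chart ${\rm Spec}({\cal O}_T[x_0,x_1]/(x_0x_1))$ one gets $P_0\Om^1=x_0{\cal O}\,d\log x_0\oplus x_1{\cal O}\,d\log x_1\subset {\cal O}\,d\log x_0\oplus{\cal O}\,d\log x_1$, whose cokernel ${\cal O}/(x_0)\oplus{\cal O}/(x_1)$ is torsion; so $P_k$ is neither the span of the monomials you describe (it contains $x_jd\log x_j$, a monomial with one $d\log$ factor) nor a local direct summand. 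Consequently ``tensor a split injection with a flat module'' does not apply, and the injectivity of ${\cal O}_{\mathfrak D}\otimes P_k\Om^i\to{\cal O}_{\mathfrak D}\otimes\Om^i$ is a genuine assertion about the PD-envelope: in the example above it amounts to showing that multiplication by $x_0$ on ${\cal O}_{\mathfrak D}/x_1{\cal O}_{\mathfrak D}$ is injective. The argument of \cite[(1.3.4)]{nb} that the paper is importing handles precisely this, using the local normal form of (\ref{prop:adla})/(\ref{prop:fsi}) to write ${\cal O}_{\mathfrak D}$ as a free module (a PD-polynomial algebra in the transversal variables) over the coordinate ring of the exactified chart ${\mab A}_{S(T)^{\nat}}(a,d)$, and then checking the needed annihilator/torsion statements explicitly there. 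You need to supply that computation (or an equivalent one, e.g.\ a descending induction on $k$ combining the residue description (\ref{eqn:mpprrn}) of ${\rm gr}^P_k$ with the freeness of ${\cal O}_{\mathfrak D}$ over the exact chart); without it the proof is incomplete at its only nontrivial point.
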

\begin{proof} 
The same proof as that of \cite[(1.3.4)]{nb} works for this proposition. 
\end{proof}

\begin{rema}\label{rema:grk}
The proof of the convergent version of 
the injectivity of (\ref{eqn:yxmd}) is necessary in \cite{gkcf}, 
which has not been proved in [loc.~cit.]. 
\end{rema}

Because $E$ in the previous section is a flat quasi-coherent crystal of 
${\cal O}_{\os{\circ}{X}_T/\os{\circ}{T}}$-modules 
and because the image of the connection 
$\nabla\col {\cal E}\lo 
{\cal E}\otimes_{{\cal O}_{\cal P}^{\rm ex}}
\Om^1_{{\cal P}^{\rm ex}/\os{\circ}{T}}$ has no log poles,  
we can prove the following as in \cite[(1.3.17)]{nb}: 

\begin{prop}\label{prop:subc}
The complex 
${\cal E}\otimes_{{\cal O}_{{\cal P}^{\rm ex}}}
\Om^{\bul}_{{\cal P}^{\rm ex}_{\ul{\lam}}/\os{\circ}{T}}$ 
gives the subcomplex 
${\cal E}
\otimes_{{\cal O}_{{\cal P}^{\rm ex}}}
P_k\Om^{\bul}_{{\cal P}^{\rm ex}_{\ul{\lam}}/\os{\circ}{T}}$ 
$(k\in {\mab Z})$. 
\end{prop}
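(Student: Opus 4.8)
The plan is to prove that $\nabla$ preserves the subsheaves $\mathcal{E}\otimes_{\mathcal{O}_{\mathcal{P}^{\rm ex}}}P_k\Om^{\bul}_{\mathcal{P}^{\rm ex}_{\ul{\lam}}/\os{\circ}{T}}$, which is exactly the assertion that they form a subcomplex. First I note that by (\ref{prop:tnmi}) these are genuine subsheaves of $\mathcal{E}\otimes_{\mathcal{O}_{\mathcal{P}^{\rm ex}}}\Om^{\bul}_{\mathcal{P}^{\rm ex}_{\ul{\lam}}/\os{\circ}{T}}$, so the only thing to check is stability under the differential. Since the problem is local and $\nabla$ is given by the Leibniz rule $\nabla(e\otimes\om)=\nabla(e)\wedge\om+e\otimes d\om$ (as in the computation in the proof of (\ref{lemm:drl})), I would reduce the statement to two facts: that the exterior derivative $d$ on $\Om^{\bul}_{\mathcal{P}^{\rm ex}_{\ul{\lam}}/\os{\circ}{T}}$ respects $P$, and that $\nabla(e)$ has no log poles, i.e. $\nabla(\mathcal{E})\subset \mathcal{E}\otimes_{\mathcal{O}_{\mathcal{P}^{\rm ex}}}P_0\Om^1_{\mathcal{P}^{\rm ex}_{\ul{\lam}}/\os{\circ}{T}}$.

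For the first fact I would use the local description of $P$ coming from (\ref{eqn:pkdefpw}): locally $P_k\Om^i_{\mathcal{P}^{\rm ex}_{\ul{\lam}}/\os{\circ}{T}}$ is generated by sections $\xi_1\wedge\cdots\wedge\xi_k\wedge\eta$, where each $\xi_j=d\log m_j$ is a logarithmic $1$-form attached to a local section $m_j$ of $M_{\mathcal{P}^{\rm ex}_{\ul{\lam}}}$ and $\eta\in\Om^{i-k}_{\os{\circ}{\mathcal{P}}{}^{\rm ex}_{\ul{\lam}}/\os{\circ}{T}}$ carries no log poles. Applying $d$ and using $d(d\log m_j)=0$ leaves the number of logarithmic factors unchanged, while $d\eta$ again lies in $\Om^{i-k+1}_{\os{\circ}{\mathcal{P}}{}^{\rm ex}_{\ul{\lam}}/\os{\circ}{T}}$; hence $d(P_k)\subset P_k$. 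This is just the multiplicativity $P_a\wedge P_b\subset P_{a+b}$ together with the identification of $P_0$ with the non-logarithmic part.

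The crux, and the step I expect to be the main obstacle, is the second fact. Here I would invoke that $E$ is a crystal of $\mathcal{O}_{\os{\circ}{X}_{T_0}/\os{\circ}{T}}$-modules, i.e. of modules on the scheme \emph{without} log structure, so that the corresponding connection on $\ol{\mathcal{E}}$ factors through the non-logarithmic differentials $\Om^1_{\os{\circ}{\ol{\mathcal{P}}{}^{\rm ex}}/\os{\circ}{T}}$ (as recorded in the Example following (\ref{prop:dpt}) and in the remark preceding the statement); consequently $\nabla$ takes values in $\mathcal{E}\otimes_{\mathcal{O}_{\mathcal{P}^{\rm ex}}}P_0\Om^1_{\mathcal{P}^{\rm ex}/\os{\circ}{T}}$. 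Since the restriction morphism (\ref{eqn:cae}) is compatible with $P_0$, the same no-poles property persists after passing to the (generally non-log-smooth) closed log subscheme $\mathcal{P}^{\rm ex}_{\ul{\lam}}$. Combining the two facts, for $\om\in P_k\Om^i_{\mathcal{P}^{\rm ex}_{\ul{\lam}}/\os{\circ}{T}}$ I obtain $\nabla(e)\wedge\om\in P_0\wedge P_k\subset P_k$ and $e\otimes d\om\in\mathcal{E}\otimes P_k$, so $\nabla$ maps $\mathcal{E}\otimes P_k\Om^i$ into $\mathcal{E}\otimes P_k\Om^{i+1}$, as required. The delicate point is precisely the transfer of the no-log-poles condition to $\mathcal{P}^{\rm ex}_{\ul{\lam}}$, but this is the content of \cite[(1.3.17)]{nb}, whose argument applies verbatim.
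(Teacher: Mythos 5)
Your proof is correct and follows exactly the route the paper intends: the paper gives no written proof beyond the remark preceding the statement (that $E$, being a crystal of ${\cal O}_{\os{\circ}{X}_{T_0}/\os{\circ}{T}}$-modules, has a connection with image in $P_0$) together with a citation to \cite[(1.3.17)]{nb}, and your two reductions — $d(P_k)\subset P_k$ by the local chart description of $P$, and $\nabla({\cal E})\subset {\cal E}\otimes P_0\Om^1$ transferred to ${\cal P}^{\rm ex}_{\ul{\lam}}$ via (\ref{eqn:cae}) — are precisely the content of that argument.
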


\parno 
We denote by $P$ the filtrations on 
${\cal E}
\otimes_{{\cal O}_{{\cal P}^{\rm ex}}}
\Om^{\bul}_{{\cal P}^{{\rm ex}}_{\ul{\lam}}/\os{\circ}{T}}$
and 
${\cal E}\otimes_{{\cal O}_{{\cal P}^{\rm ex}}}
{\Om}^{\bul}_{{\cal P}^{{\rm ex},(m)}/\os{\circ}{T}}$
induced by the injective morphisms 
(\ref{eqn:yxamd}) and (\ref{eqn:yxmd}), respectively.

In \cite{nb} we have proved the following: 

\begin{prop}[{\bf \cite[(1.3.14), (1.3.21)]{nb}}]\label{prop:grem}
Identify the points of $\os{\circ}{\mathfrak D}$ 
with those of $\os{\circ}{X}$. 
Identify also the images of the points of $\os{\circ}{X}$ 
in $\os{\circ}{\cal P}$ with the points of $\os{\circ}{X}$. 
Let $y$ be a point of $\os{\circ}{\mathfrak D}$. 
$($Let $r$ be a nonnegative integer such that 
$M_{X,y}/{\cal O}_{X,y}^*\simeq {\mab N}^r$.$)$ 
Then, for a positive integer $k$, 
the following morphism  
\begin{equation*} 
{\rm Res} \col 
P_k({\cal O}_{\mathfrak D}\otimes_{{\cal O}_{{\cal P}^{\rm ex}}}
{\Om}^{\bul}_{{\cal P}^{\rm ex}/\os{\circ}{T}}) \lo 
{\cal O}_{\mathfrak D}\otimes_{{\cal O}_{{\cal P}^{\rm ex}}}
b^{(k-1)}_*(\Om^{\bul}_{\os{\circ}{\cal P}{}^{{\rm ex},(k-1)}
/\os{\circ}{T}}
\otimes_{\mab Z}\vp^{(k-1)}_{\rm zar}(\os{\circ}{\cal P}{}^{\rm ex}/\os{\circ}{T}))[-k]
\tag{5.5.1}\label{eqn:mprarn}
\end{equation*} 
\begin{equation*} 
\sig \otimes d\log x_{\lam_0,y}\cdots d\log x_{\lam_{k-1},y}\omega  
\lom 
\sig \otimes b^*_{\lam_0\cdots \lam_{k-1}}(\omega)
\otimes({\rm orientation}~(\lam_0\cdots \lam_{k-1})) 
\end{equation*} 
$$(\sig \in {\cal O}_{\mathfrak D}, 
\om \in P_0{\Om}^{\bul}_{{\cal P}^{\rm ex}/\os{\circ}{T}})$$
{\rm (cf.~\cite[(3.1.5)]{dh2})} 
induces the following ``Poincar\'{e} residue isomorphism''  
\begin{align*} 
{\rm gr}^P_k
({\cal O}_{\mathfrak D}\otimes_{{\cal O}_{{\cal P}^{\rm ex}}}
{\Om}^{\bul}_{{\cal P}^{\rm ex}/\os{\circ}{T}}) 
& \os{\sim}{\lo} 
{\cal O}_{\mathfrak D}\otimes_{{\cal O}_{{\cal P}^{\rm ex}}}
b^{(k-1)}_*(\Om^{\bul}_{\os{\circ}{\cal P}{}^{{\rm ex},(k-1)}
/\os{\circ}{T}}
\otimes_{\mab Z}\vp^{(k-1)}_{\rm zar}
(\os{\circ}{\cal P}{}^{\rm ex}/\os{\circ}{T}))[-k]. 
\tag{5.5.2}\label{eqn:prvin} \\ 
\end{align*} 
Here $x_{\lam_l}$ $(l\in {\mab N})$ is 
the corresponding local coordinate to ${\cal P}_{\lam_l}$ 
{\rm ((\ref{eqn:xdplxda}), (\ref{eqn:xdpelda}))}. 
%and $(-j;u)$ is necessary only if 
%$g$ in $(3.9.2)$ satisfies the conditions $(3.9.6)$ and $(3.9.7)$. 
\par 
$(2)$ The following sequence 
\begin{equation*} 
0 \lo P_0
({\cal O}_{{\mathfrak D}}
\otimes_{{\cal O}_{{\cal P}^{\rm ex}}}
\Om^{\bul}_{{\cal P}^{\rm ex}/\os{\circ}{T}}) 
\lo c^{(0)}_*
({\cal O}_{{\mathfrak D}^{(0)}}
\otimes_{{\cal O}_{{\os{\circ}{\cal P}{}^{{\rm ex},(0)}}}}
\Om^{\bul}_{\os{\circ}{\cal P}{}^{{\rm ex},(0)}
/\os{\circ}{T}}\otimes_{\mab Z}
\vp^{(0)}_{\rm zar}(\os{\circ}{\cal P}{}^{\rm ex}
/\os{\circ}{T})) 
\tag{5.5.3}\label{eqn:cptle} 
\end{equation*} 
$$\os{\iota^{(0)*}}{\lo} {c}{}^{(1)}_{T*}
({\cal O}_{{\mathfrak D}^{(1)}}
\otimes_{{\cal O}_{\os{\circ}{\cal P}{}^{{\rm ex},(1)}}}
\Om^{\bul}_{\os{\circ}{\cal P}{}^{{\rm ex},(1)}
/\os{\circ}{T}}
\otimes_{\mab Z}
\vp^{(1)}_{\rm zar}(\os{\circ}{\cal P}{}^{\rm ex}
/\os{\circ}{T})) 
\os{\iota^{(1)*}}{\lo} \cdots $$ 
is exact, where ${\mathfrak D}^{(k)}:={\mathfrak D}\times_{{\cal P}^{\rm ex}}{\cal P}^{{\rm ex},(k)}$ 
$(k\in {\mab Z}_{\geq 0})$ and 
$c^{(k)}\col {\mathfrak D}^{(k)}\lo {\mathfrak D}$ is the natural morphism and 
$\iota^{(k)*}$ $(k\in {\mab Z}_{\geq 0})$ 
is the standard \v{C}ech morphism {\rm (\ref{ali:ulm})}. 
\end{prop}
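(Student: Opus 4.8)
The statement to prove is Proposition~\ref{prop:grem}, which has two parts: the Poincar\'{e} residue isomorphism (5.5.2) and the exactness of the \v{C}ech-type resolution (5.5.3). Since this is cited as \cite[(1.3.14),(1.3.21)]{nb}, the plan is essentially to transport those arguments into the present PD-envelope setting with the crystal ${\cal E}$ and to verify that nothing in the crystalline and exactification apparatus obstructs them. My overall approach is to reduce both assertions to purely local computations on $\os{\circ}{\cal P}{}^{\rm ex}$ around a point $y$, where by (\ref{prop:adla}) I may assume ${\cal O}_{{\cal P}^{\rm ex}}$ is the completed local model ${\cal O}_S[x_0,\dots,x_d][[x_{d+1},\dots,x_{d'}]]/(x_0\cdots x_a)$ and ${\cal E}={\cal O}_{\mathfrak D}$ by flatness.

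For part (1), I would first make precise the definition of the residue map (\ref{eqn:mprarn}). The key structural input is (\ref{eqn:pkdefpw}): a local section of $P_k\Om^\bullet_{{\cal P}^{\rm ex}/\os{\circ}{T}}$ is a sum of terms $d\log x_{\lam_0}\wedge\cdots\wedge d\log x_{\lam_{k-1}}\wedge\omega$ where $\omega$ has no log poles, i.e.\ $\omega\in P_0\Om^\bullet$. I would check that the assignment sending such a term to its restriction $b^*_{\lam_0\cdots\lam_{k-1}}(\omega)$ tensored with the orientation in $\vp^{(k-1)}_{\rm zar}$ is well defined (independent of ordering because of the orientation sheaf, and killing $P_{k-1}$), and that it is a morphism of complexes after the degree shift $[-k]$; the boundary-morphism bookkeeping is routine once one notes $d(d\log x_{\lam_i})=0$. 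The heart of part (1) is then to verify the residue map induces an isomorphism on ${\rm gr}^P_k$: this is checked on the explicit local model, where ${\rm gr}^P_k\Om^\bullet_{{\cal P}^{\rm ex}/\os{\circ}{T}}$ splits as a direct sum over subsets $\ul{\lam}$ of size $k$ of the forms $d\log x_{\ul{\lam}}\wedge(\text{no-pole forms})$, matching exactly $\Om^\bullet_{\os{\circ}{\cal P}{}^{(k-1)}/\os{\circ}{T}}$ on each component $\os{\circ}{\cal P}{}^{\rm ex}_{\ul{\lam}}$. Since ${\cal E}={\cal O}_{\mathfrak D}$ is flat, tensoring preserves the isomorphism, and injectivity of (\ref{eqn:yxnpd}) (which is (\ref{prop:injf})) guarantees the graded pieces are what the filtration defines them to be.

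For part (2), I would recognize (\ref{eqn:cptle}) as the semisimplicial \v{C}ech resolution of $P_0(\cdots)$ by the direct images along $b^{(k)}$ of the restrictions to the strata $\os{\circ}{\cal P}{}^{(k)}$, twisted by the orientation sheaves. The plan is to observe that $P_0\Om^\bullet_{{\cal P}^{\rm ex}/\os{\circ}{T}}=\Om^\bullet_{\os{\circ}{\cal P}{}^{\rm ex}/\os{\circ}{T}}$ (the no-pole part is just the differentials of the underlying scheme), so the claim reduces to the exactness of the \v{C}ech complex associated to the closed covering $\{\os{\circ}{\cal P}{}^{\rm ex}_{\lam}\}$ of the SNC scheme $\os{\circ}{\cal P}{}^{\rm ex}$, tensored with the de Rham forms and the crystal. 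This is precisely the structure already exploited in (\ref{prop:sil}): indeed the proof of (\ref{prop:sil}) establishes the exactness of $0\to {\cal O}_{{\cal P}^{\rm ex}{}'}\to {\cal O}_{{\cal P}^{{\rm ex}',(0)}}\to\cdots$ via the retraction to the polynomial part, and then tensors up flatly. I would reuse that exactness, augmenting it with the orientation twist (which only permutes signs and does not affect exactness) and tensoring with ${\cal E}\otimes\Om^\bullet$; the flatness of ${\cal E}$ over ${\cal O}_{\mathfrak D}$ and the freeness of the formal-variable completion keep everything exact.

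The main obstacle I anticipate is purely bookkeeping rather than conceptual: correctly matching the orientation sheaves $\vp^{(k-1)}_{\rm zar}$ and the sign conventions of the \v{C}ech differentials (\ref{ali:ulm}) so that the residue map of part (1) is genuinely compatible with the boundary maps of part (2), i.e.\ that the two parts assemble into a single filtered-complex statement. One must also be careful that the local model from (\ref{prop:adla}) involves a completed polynomial ring in the variables $x_{d+1},\dots,x_{d'}$, so I would confirm, as in the proof of (\ref{prop:sil}), that tensoring the exact sequence with the free module ${\cal O}_S\langle x_{d+1},\dots,x_{d'}\rangle$ preserves exactness. Since both cited results in \cite{nb} were proved for the analogous setup without the crystal coefficient, the only genuinely new verification is that inserting a flat crystal ${\cal E}$ and passing through the exactification ${\cal P}^{\rm ex}$ (rather than ${\cal P}$) changes nothing, which follows from flatness together with (\ref{prop:bpc}) identifying the differentials on strata.
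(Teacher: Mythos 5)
Your treatment of part (1) follows the standard route (local model via (\ref{prop:adla}), well-definedness of the residue on generators of $P_k$ modulo $P_{k-1}$, and an explicit splitting of the graded piece), and I see no obstruction there; this is in substance how the recalled result of \cite{nb} is obtained.

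Part (2), however, contains a genuine gap. You assert that $P_0\Om^{\bul}_{{\cal P}^{\rm ex}/\os{\circ}{T}}=\Om^{\bul}_{\os{\circ}{\cal P}{}^{\rm ex}/\os{\circ}{T}}$, ``the differentials of the underlying scheme.'' By the definition (\ref{eqn:pkdefpw}), $P_0$ is the \emph{image} of $\Om^{\bul}_{\os{\circ}{\cal P}{}^{\rm ex}/\os{\circ}{T}}$ in the log de Rham complex, and this map is not injective: the paper's own remark following (\ref{prop:pslt}) exhibits $1\otimes(x_0\cdots x_{a-1}dx_a)$ as a nonzero element mapping to zero. So $P_0$ is a proper quotient of $\Om^{\bul}_{\os{\circ}{\cal P}{}^{\rm ex}/\os{\circ}{T}}$, and your reduction of (\ref{eqn:cptle}) to ``the exactness of the \v{C}ech complex of the closed covering tensored with the de Rham forms'' starts from a false premise — indeed the \v{C}ech sequence beginning with $\Om^{\bul}_{\os{\circ}{\cal P}{}^{\rm ex}/\os{\circ}{T}}$ fails to be exact at the first spot precisely because of such elements. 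Relatedly, (\ref{prop:sil}) cannot be reused here: that proposition concerns the sequence of \emph{log} differentials $\Om^i_{{\cal P}^{{\rm ex},(m)}/U}$ on the strata, which by (\ref{prop:bpc}) are all obtained by tensoring the single fixed locally free module $\Om^i_{{\cal P}^{\rm ex}/U}$ against the structure-sheaf resolution $0\to{\cal O}_{{\cal P}^{\rm ex}{}'}\to{\cal O}_{{\cal P}^{{\rm ex}{}',(0)}}\to\cdots$, so exactness is preserved by flatness. The terms of (\ref{eqn:cptle}) are instead the intrinsic de Rham complexes $\Om^{\bul}_{\os{\circ}{\cal P}{}^{{\rm ex},(m)}/\os{\circ}{T}}$ of the underlying smooth strata, which are \emph{not} of the form (fixed module)$\otimes\,{\cal O}_{{\cal P}^{(m)}}$, so the tensoring argument does not transfer. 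What is actually required is a computation in the local model ${\cal O}_S[x_0,\ldots,x_d][[x_{d+1},\ldots,x_{d'}]]/(x_0\cdots x_a)$: one must identify the kernel of the first \v{C}ech map $\bigoplus_{\lam}\Om^{\bul}_{\os{\circ}{\cal P}{}^{\rm ex}_{\lam}}\to\bigoplus_{\lam<\mu}\Om^{\bul}_{\os{\circ}{\cal P}{}^{\rm ex}_{\lam\mu}}$ with exactly the image $P_0$ inside the log de Rham complex, and separately establish exactness at the higher \v{C}ech spots by a Koszul-type argument (this is the content of \cite[(2.2.18)]{nh2} and of the cited \cite[(1.3.21)]{nb}). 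Without that identification, the exactness of (\ref{eqn:cptle}) at its first two terms is not proved by your argument.
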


\begin{lemm}\label{lemm:inl}
%Let $m$ be a nonnegative integer. 
%Then there exists a family $\{M_m(X_{T_0}/T)\}_{m\in {\mab N}}$ 
%of well-defined complexes  
%in $D^+(f^{-1}({\cal O}_T))$ such that 
%$M_m(X_{T_0}/T)=Ru_{\os{\circ}{X}{}^{(m)}_{T_0}/\os{\circ}{T}*}
%(E_{\os{\circ}{X}{}^{(m)}_{T_0}/\os{\circ}{T}}
%\otimes_{\mab Z}\vp^{(m)}_{\rm crys}
%(\os{\circ}{X}_{T_0}/\os{\circ}{T}))$ if $m>>0$ and such that they 
%fit into the following triangle 
%\begin{align*}
%M_m(X_{T_0}/T)
%\lo 
The following single complex 
\begin{align*}
&s[c^{(0)}_*
({\cal E}{\otimes}_{{\cal O}_{{\cal P}{}^{{\rm ex}}}} 
{\Om}^{\bul}_{\os{\circ}{\cal P}{}^{{\rm ex},(0)}/\os{\circ}{T}}
\otimes_{\mab Z}\vp^{(0)}_{\rm zar}
(\os{\circ}{\cal P}{}^{\rm ex}/\os{\circ}{T}))) 
\os{\iota^{(0)*}}{\lo} 
(c^{(1)}_*({\cal E}{\otimes}_{{\cal O}_{{\cal P}{}^{{\rm ex}}}}
{\Om}^{\bul}_{\os{\circ}{\cal P}{}^{{\rm ex},(1)}/\os{\circ}{T}}
\otimes_{\mab Z}\vp^{(1)}_{\rm zar}
(\os{\circ}{\cal P}{}^{\rm ex}/\os{\circ}{T})),-d) \tag{5.6.1}\label{ali:oopt}\\
&\os{\iota^{(1)*}}{\lo}  c^{(2)}_*
({\cal E}{\otimes}_{{\cal O}_{{\cal P}{}^{{\rm ex}}}} 
{\Om}^{\bul}_{\os{\circ}{\cal P}{}^{{\rm ex},(2)}/\os{\circ}{T}}
\otimes_{\mab Z}\vp^{(2)}_{\rm zar}
(\os{\circ}{\cal P}{}^{\rm ex}/\os{\circ}{T}))) 
\os{\iota^{(2)*}}{\lo} \cdots]
\end{align*}
is independent of the choice of the immersion 
$X_{\os{\circ}{T}_0}\os{\sus}{\lo} \ol{\cal P}$ over $\ol{S(T)^{\nat}}$. 
\end{lemm}
\begin{proof} 
To give the proof is a routine work. 
Indeed, let 
$X_{\os{\circ}{T}_0}\os{\sus}{\lo} \ol{\cal P}{}'$ be another immersion 
over $\ol{S(T)^{\nat}}$. Then, by considering the fiber product 
$\ol{\cal P}\times_{\ol{S(T)^{\nat}}}\ol{\cal P}{}'$, we may assume that 
there exists a morphism $\ol{\cal P}\lo \ol{\cal P}{}'$ over 
$\ol{S(T)^{\nat}}$ such that the composite morphism 
$X_{\os{\circ}{T}_0}\os{\sus}{\lo} \ol{\cal P}\lo \ol{\cal P}{}'$ 
is the given immersion $X_{\os{\circ}{T}_0}\os{\sus}{\lo} \ol{\cal P}{}'$. 
Set ${\cal P}':=\ol{\cal P}{}'\times_{\ol{S(T)^{\nat}}}S(T)^{\nat}$ and 
let ${\mathfrak D}'$ be the log PD-envelope of the immersion 
$X_{\os{\circ}{T}_0}\os{\sus}{\lo} {\cal P}{}'$ 
over $S(T)^{\nat}$ and 
set ${\mathfrak D}'{}^{(k)}:={\mathfrak D}'\times_{{\cal P}'{}^{\rm ex}}{\cal P}'{}^{{\rm ex},(k)}$ 
$(k\in {\mab N})$ and 
let $c'{}^{(k)}\col {\mathfrak D}'{}^{(k)}\lo {\mathfrak D}'$ be the natural morphism. 
Denote the complex (\ref{ali:oopt}) by $C^{\bul}$ and let 
$C'{}^{\bul}$ be the analogous complex to (\ref{ali:oopt}) 
obtained by the immersion $X_{\os{\circ}{T}_0}\os{\sus}{\lo} \ol{\cal P}{}'$. 
Let ${\cal E}'$ be an analogous sheaf to ${\cal E}$ for the immersion 
$X_{\os{\circ}{T}_0}\os{\sus}{\lo} \ol{\cal P}{}'$. 
Then we have two morphisms 
${\cal E}'{\otimes}_{{\cal O}_{{\cal P}'{}^{{\rm ex}}}} 
{\Om}^{\bul}_{\os{\circ}{\cal P}{}'^{{\rm ex},(i)}/\os{\circ}{T}}
\otimes_{\mab Z}\vp^{(i)}_{\rm zar}
(\os{\circ}{\cal P}{}'^{\rm ex}/\os{\circ}{T})
\lo 
{\cal E}{\otimes}_{{\cal O}_{{\cal P}{}^{{\rm ex}}}} 
{\Om}^{\bul}_{\os{\circ}{\cal P}{}^{{\rm ex},(i)}/\os{\circ}{T}}
\otimes_{\mab Z}\vp^{(i)}_{\rm zar}
(\os{\circ}{\cal P}{}^{\rm ex}/\os{\circ}{T})$
and 
$C'{}^{\bul}\lo C^{\bul}$ of complexes. 
By the following two spectral sequences 
$$E_1^{ij}={\cal H}^j(c^{(i)}_*({\cal E}{\otimes}_{{\cal O}_{{\cal P}{}^{{\rm ex}}}} 
{\Om}^{\bul}_{\os{\circ}{\cal P}{}^{{\rm ex},(i)}/\os{\circ}{T}}
\otimes_{\mab Z}\vp^{(i)}_{\rm zar}
(\os{\circ}{\cal P}{}^{\rm ex}/\os{\circ}{T}))) \Lo {\cal H}^{i+j}(C^{\bul})$$ 
and 
$$E_1^{ij}={\cal H}^j(
c'{}^{(i)}_*({\cal E}'{\otimes}_{{\cal O}_{{\cal P}{}'^{{\rm ex}}}} 
{\Om}^{\bul}_{\os{\circ}{\cal P}{}'^{{\rm ex},(i)}/\os{\circ}{T}}
\otimes_{\mab Z}\vp^{(i)}_{\rm zar}
(\os{\circ}{\cal P}{}'^{\rm ex}/\os{\circ}{T}))) \Lo {\cal H}^{i+j}(C{}'^{\bul}),$$
it suffices to prove that 
the morphism 
$$c'{}^{(i)}_*({\cal E}'{\otimes}_{{\cal O}_{{\cal P}{}'^{{\rm ex}}}} 
{\Om}^{\bul}_{\os{\circ}{\cal P}{}'^{{\rm ex},(i)}/\os{\circ}{T}}
\otimes_{\mab Z}\vp^{(i)}_{\rm zar}
(\os{\circ}{\cal P}{}'^{\rm ex}/\os{\circ}{T}))
\lo 
c^{(i)}_*({\cal E}{\otimes}_{{\cal O}_{{\cal P}^{{\rm ex}}}} 
{\Om}^{\bul}_{\os{\circ}{\cal P}^{{\rm ex},(i)}/\os{\circ}{T}}
\otimes_{\mab Z}\vp^{(i)}_{\rm zar}
(\os{\circ}{\cal P}{}^{\rm ex}/\os{\circ}{T}))$$ 
is a quasi-isomorphism. 
This is clear because 
\begin{align*}
c'{}^{(i)}_*({\cal E}'{\otimes}_{{\cal O}_{{\cal P}{}'^{{\rm ex}}}} 
{\Om}^{\bul}_{\os{\circ}{\cal P}{}'^{{\rm ex},(i)}/\os{\circ}{T}}
\otimes_{\mab Z}\vp^{(i)}_{\rm zar}
(\os{\circ}{\cal P}{}'^{\rm ex}/\os{\circ}{T}))&=
a^{(i)}_{T_0*}
Ru_{\os{\circ}{X}{}^{(i)}_{T_0}/\os{\circ}{T}*}
(E_{\os{\circ}{X}{}^{(i)}_{T_0}/\os{\circ}{T}}
\otimes_{\mab Z}\vp^{(i)}_{\rm crys}
(\os{\circ}{X}_{T_0}/\os{\circ}{T}))\\
&=
c^{(i)}_*({\cal E}'{\otimes}_{{\cal O}_{{\cal P}{}'^{{\rm ex}}}} 
{\Om}^{\bul}_{\os{\circ}{\cal P}{}'^{{\rm ex},(i)}/\os{\circ}{T}}
\otimes_{\mab Z}\vp^{(i)}_{\rm zar}
(\os{\circ}{\cal P}{}'^{\rm ex}/\os{\circ}{T}))
\end{align*} 
in $D^+(f^{-1}({\cal O}_T))$ 
by the crystalline Poincar\'{e} lemma. 

\end{proof} 

\begin{defi}\label{defi:sttu}
We denote the complex (\ref{ali:oopt}) by 
$$
Ru_{\os{\circ}{X}{}^{(\star)}_{T_0}/\os{\circ}{T}*}
(E_{\os{\circ}{X}{}^{(\star)}_{T_0}/\os{\circ}{T}}
\otimes_{\mab Z}\vp^{(\star)}_{\rm crys}
(\os{\circ}{X}_{T_0}/\os{\circ}{T})).$$
\end{defi}

As a corollary of (\ref{prop:grem}) (1) and (2),  
we have essentially proved the following in \cite{nb} 
by using  (\ref{lemm:inl}):  

\begin{prop}[{\bf \cite[(1.3.19), (1.3.22)]{nb}}]\label{prop:repmf}
%Assume that $\os{\circ}{X}$ is quasi-compact. 
%Let $m$ be a large positive integer. 
$(1)$ 
Let $k$ be a positive integer. 
Then there exists the following isomorphism$:$ 
\begin{equation*}  
{\rm gr}^P_k
({\cal E}\otimes_{{\cal O}_{{\cal P}^{\rm ex}}}
{\Om}^{\bul}_{{\cal P}^{\rm ex}/\os{\circ}{T}})
\os{\sim}{\lo}  
a^{(k-1)}_{T_0*}
Ru_{\os{\circ}{X}{}^{(k-1)}_{T_0}/\os{\circ}{T}*}
(E_{\os{\circ}{X}{}^{(k-1)}_{T_0}/\os{\circ}{T}}
\otimes_{\mab Z}
\vp^{(k-1)}_{\rm crys}(\os{\circ}{X}_{T_0}/\os{\circ}{T}))[-k]
\tag{5.8.1}\label{eqn:eoppd}
\end{equation*}
in $D^+(f^{-1}_T({\cal O}_T))$. 
\par 
$(2)$ There exists the following isomorphism in $D^+(f^{-1}({\cal O}_T)):$
\begin{align*}
P_0({\cal E}
\otimes_{{\cal O}_{{\cal P}^{\rm ex}}}
{\Om}^{\bul}_{{\cal P}^{\rm ex}/\os{\circ}{T}})
\os{\sim}{\lo} 
Ru_{\os{\circ}{X}{}^{(\star)}_{T_0}/\os{\circ}{T}*}
(E_{\os{\circ}{X}{}^{(\star)}_{T_0}/\os{\circ}{T}}
\otimes_{\mab Z}\vp^{(\star)}_{\rm crys}
(\os{\circ}{X}_{T_0}/\os{\circ}{T})). 
\tag{5.8.2}\label{ali:pdte}
%{\rm MF}(a^{(0)}_{T_0*}
%Ru_{\os{\circ}{X}{}^{(0)}_{T_0}/\os{\circ}{T}*}
%(E_{\os{\circ}{X}{}^{(0)}_{T_0}/\os{\circ}{T}}
%\otimes_{\mab Z}\vp^{(0)}_{\rm crys}
%(\os{\circ}{X}_{T_0}/\os{\circ}{T}))
%{\rm MF}(a^{(1)}_{T_0*}
%Ru_{\os{\circ}{X}{}^{(1)}_{T_0}/\os{\circ}{T}*}
%(E_{\os{\circ}{X}{}^{(1)}_{T_0}/\os{\circ}{T}}
%\otimes_{\mab Z}\vp^{(1)}_{\rm crys}
%(\os{\circ}{X}_{T_0}/\os{\circ}{T}))  \\
%&\lo \cdots \\
%&\lo 
%{\rm MF}(a^{(m)}_{T_0*}
%Ru_{\os{\circ}{X}{}^{(m)}_{T_0}/\os{\circ}{T}*}
%(E_{\os{\circ}{X}{}^{(m)}_{T_0}/\os{\circ}{T}}
%\otimes_{\mab Z}\vp^{(m)}_{\rm crys}
%(\os{\circ}{X}_{T_0}/\os{\circ}{T}))\\
%& \lo \cdots
%\cdots)\cdots ). 
\end{align*}
%Here ${\rm MF}$ means the  mapping fiber of 
%a morphism of complexes of derived categories. 
The isomorphism {\rm (\ref{ali:pdte})} 
is independent of the choice of the immersion 
$X_{\os{\circ}{T}_0}\os{\sus}{\lo} \ol{\cal P}$ over $\ol{S(T)^{\nat}}$.  
In particular, 
$P_0({\cal E}\otimes_{{\cal O}_{{\cal P}^{\rm ex}}}
{\Om}^{\bul}_{{\cal P}^{\rm ex}/\os{\circ}{T}})$ 
is independent of the choice of the immersion 
$X_{\os{\circ}{T}_0}\os{\sus}{\lo} \ol{\cal P}$. 
\end{prop}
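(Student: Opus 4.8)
The statement to prove is Proposition~\ref{prop:repmf}, which has two parts. Part (1) gives a Poincar\'e residue isomorphism for the graded pieces ${\rm gr}^P_k$, and part (2) gives an explicit description of $P_0$ as an iterated mapping fiber together with the independence of the embedding. Since this is cited from \cite[(1.3.19), (1.3.22)]{nb}, the plan is to reconstruct the argument from the structural results already established in the excerpt, principally the local Poincar\'e residue isomorphism (\ref{prop:grem}) and the \v{C}ech exactness (\ref{prop:grem}) (2).

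\textbf{Proof of (1).} The plan is to start from the local residue isomorphism (\ref{eqn:prvin}) at the level of the PD-envelope complex ${\cal O}_{\mathfrak D}\otimes_{{\cal O}_{{\cal P}^{\rm ex}}}\Om^{\bul}_{{\cal P}^{\rm ex}/\os{\circ}{T}}$ and to tensor with the flat crystal module ${\cal E}$. Because $E$ is a flat quasi-coherent crystal with no log poles, the injectivity statement (\ref{prop:tnmi}) guarantees that the filtration $P$ on ${\cal E}\otimes_{{\cal O}_{{\cal P}^{\rm ex}}}\Om^{\bul}_{{\cal P}^{\rm ex}/\os{\circ}{T}}$ has well-behaved subquotients. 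First I would apply ${\cal E}\otimes_{{\cal O}_{{\cal P}^{\rm ex}}}(-)$ to (\ref{eqn:prvin}) and identify the right-hand side: the term $\Om^{\bul}_{\os{\circ}{\cal P}{}^{{\rm ex},(k-1)}/\os{\circ}{T}}\otimes_{\mab Z}\vp^{(k-1)}_{\rm zar}$ tensored with ${\cal E}$ over ${\cal O}_{{\cal P}^{\rm ex}}$ is exactly the log de Rham complex computing the crystalline cohomology of the crystal $E_{\os{\circ}{X}{}^{(k-1)}_{T_0}/\os{\circ}{T}}$ twisted by the orientation sheaf. The passage from the de Rham complex on the embedding system to the derived pushforward $a^{(k-1)}_{T_0*}Ru_{\os{\circ}{X}{}^{(k-1)}_{T_0}/\os{\circ}{T}*}$ then follows from the standard crystalline comparison (the embedding $\os{\circ}{X}{}^{(k-1)}_{T_0}\os{\sus}{\lo}\os{\circ}{\cal P}{}^{{\rm ex},(k-1)}$ computes the crystalline cohomology of the smooth components), yielding (\ref{eqn:eoppd}).

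\textbf{Proof of (2).} The plan is to combine (\ref{prop:grem}) (2) with (\ref{prop:repmf}) (1). The exact \v{C}ech sequence (\ref{eqn:cptle}) identifies $P_0({\cal E}\otimes_{{\cal O}_{{\cal P}^{\rm ex}}}\Om^{\bul}_{{\cal P}^{\rm ex}/\os{\circ}{T}})$ as a complex quasi-isomorphic to the single complex of the semi-cosimplicial system $\{c^{(m)}_{T*}({\cal O}_{{\mathfrak D}^{(m)}_T}\otimes\Om^{\bul}_{\os{\circ}{\cal P}{}^{{\rm ex},(m)}/\os{\circ}{T}}\otimes\vp^{(m)}_{\rm zar})\}_{m\geq 0}$. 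I would then rewrite this semi-cosimplicial totalization as the iterated mapping fiber appearing in (\ref{ali:pdte}), using the identification in the Notation (items (12)--(14)) between mapping fibers, mapping cones, and the sign conventions for semi-cosimplicial complexes. Each stage $a^{(m)}_{T_0*}Ru_{\os{\circ}{X}{}^{(m)}_{T_0}/\os{\circ}{T}*}(E\otimes\vp^{(m)}_{\rm crys})$ is the crystalline interpretation of the $m$-th \v{C}ech term, by the same comparison used in part (1). The independence of the immersion is the delicate point and I expect it to be the main obstacle: one must show that two choices of embedding $X_{\os{\circ}{T}_0}\os{\sus}{\lo}\ol{\cal P}$ and $X_{\os{\circ}{T}_0}\os{\sus}{\lo}\ol{\cal P}'$ give canonically isomorphic $P_0$ in the derived category. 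The plan there is to reduce to the case of a morphism $\ol{\cal P}\lo\ol{\cal P}'$ by passing to the fiber product $\ol{\cal P}\times_{\ol{S(T)^{\nat}}}\ol{\cal P}'$ (as in the proof of (\ref{prop:dpt})), and then to invoke the crystalline invariance of $Ru_{\os{\circ}{X}{}^{(m)}_{T_0}/\os{\circ}{T}*}$ on each term of the iterated mapping fiber; since the right-hand side of (\ref{ali:pdte}) is manifestly independent of the embedding (it is phrased purely in terms of crystalline pushforwards), the isomorphism (\ref{ali:pdte}) transports this independence to $P_0$ itself. The bookkeeping of signs across the mapping fibers and the compatibility of the \v{C}ech restriction maps $\iota^{(m)*}$ with the crystalline comparison on each $\os{\circ}{X}{}^{(m)}_{T_0}$ is where the technical care is required, but no new idea beyond (\ref{prop:grem}) and the flatness of $E$ is needed.
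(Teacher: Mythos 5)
Your proposal is correct and follows the same route the paper takes: the paper presents this proposition precisely as a corollary of (\ref{prop:grem}) (1) and (2), deducing part (1) by tensoring the Poincar\'e residue isomorphism (\ref{eqn:prvin}) with the flat module ${\cal E}$ and invoking the crystalline comparison on the smooth strata, and part (2) by reading the exact \v{C}ech sequence (\ref{eqn:cptle}) as the iterated mapping fiber, with independence of the embedding obtained via the product-embedding reduction. No substantive difference from the paper's argument.
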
 

In the case $\ul{\lam}\not=\emptyset$, 
the description of $P_0({\cal E}
\otimes_{{\cal O}_{{\cal P}^{\rm ex}}}
\Om^{\bul}_{{\cal P}^{{\rm ex}}_{\ul{\lam}}/\os{\circ}{T}})$ is 
totally different from that of 
$P_0({\cal E}\otimes_{{\cal O}_{{\cal P}^{\rm ex}}}
\Om^{\bul}_{{\cal P}^{{\rm ex}}/\os{\circ}{T}})$:

\begin{prop}\label{prop:pslt}
$(1)$ Assume that $\ul{\lam}\not=\emptyset$. 
For a nonnegative integer $k$, 
the Poincar\'{e} residue morphism 
\begin{equation*} 
{\rm Res} \col 
P_k({\cal E}
\otimes_{{\cal O}_{{\cal P}^{\rm ex}}}
\Om^{\bul}_{{\cal P}^{{\rm ex}}_{\ul{\lam}}/\os{\circ}{T}})
\lo  
\bigoplus_{\# \ul{\mu}=k}
{\cal E}
\otimes_{{\cal O}_{{\cal P}^{\rm ex}}}
b_{\ul{\lam}\cup \ul{\mu}*}
(\Om^{\bul}_{\os{\circ}{\cal P}{}^{{\rm ex}}_{\ul{\lam}
\cup \ul{\mu}}/\os{\circ}{T}}
\otimes_{\mab Z}\vp_{{\rm zar},\ul{\mu}}
(\os{\circ}{\cal P}{}^{\rm ex}/\os{\circ}{T})))[-k] 
\tag{5.9.1}\label{eqn:mprrn}
\end{equation*} 
\begin{equation*} 
\sig \otimes d\log x_{\lam_0,x}\cdots d\log m_{\lam_{k-1},x}\omega  
\lom 
\sig \otimes b^*_{\ul{\lam}\cup \ul{\mu}}(\omega)
\otimes({\rm orientation}~(\ul{\mu})) 
\end{equation*} 
$$(\sig \in {\cal E}, 
\om \in P_0\Om^{\bul}_{{\cal P}^{\rm ex}_{\ul{\mu}}
/\os{\circ}{T}})$$
{\rm (cf.~\cite[(3.1.5)]{dh2})} induces 
the following ``Poincar\'{e} residue isomorphism''  
\begin{align*} 
{\rm gr}_k^P({\cal E}
\otimes_{{\cal O}_{{\cal P}^{\rm ex}}}
\Om^{\bul}_{{\cal P}^{{\rm ex}}_{\ul{\lam}}/\os{\circ}{T}})
\os{\sim}{\lo} & 
\bigoplus_{\# \ul{\mu}=k}
{\cal E}\otimes_{{\cal O}_{{\cal P}^{\rm ex}}}
b_{\ul{\lam}\cup \ul{\mu}*}
(\Om^{\bul -k}_{
\os{\circ}{\cal P}{}^{{\rm ex}}_{\ul{\lam}
\cup \ul{\mu}}/\os{\circ}{T}}
\otimes_{\mab Z}\vp_{{\rm zar},\ul{\mu}}
(\os{\circ}{\cal P}{}^{\rm ex}/\os{\circ}{T}))[-k]
\tag{5.9.2}\label{eqn:mpprrn} 
\end{align*} 
%The isomorphism {\rm (\ref{eqn:mpprrn})} 
%also holds for $\ul{\lam}=\emptyset$. 
in $D^+(f^{-1}({\cal O}_T))$. 
\end{prop}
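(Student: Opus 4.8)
First I would reduce the assertion to a local computation. Since $E$, and hence ${\cal E}$, is flat over ${\cal O}_{\mathfrak D}$ and the filtration $P$ on ${\cal E}\otimes_{{\cal O}_{{\cal P}^{\rm ex}}}\Om^{\bul}_{{\cal P}^{\rm ex}_{\ul{\lam}}/\os{\circ}{T}}$ is induced by the injective morphism (\ref{eqn:yxamd}) (see (\ref{prop:tnmi}) and (\ref{prop:subc})), the formation of the graded quotients ${\rm gr}_k^P$ commutes with $\otimes {\cal E}$; thus it suffices to treat the case ${\cal E}={\cal O}_{\mathfrak D}$ and then tensor. Working Zariski locally and using (\ref{prop:adla}) and (\ref{prop:fsi}), I may assume ${\cal O}_{{\cal P}^{\rm ex}}\simeq {\cal O}_S[x_0,\ldots,x_d][[x_{d+1},\ldots,x_{d'}]]/(x_0\cdots x_a)$, the smooth components being $\{x_i=0\}_{i=0}^a$ and $\ul{\lam}\sus \{0,\ldots,a\}$. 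By (\ref{prop:bpc}) the natural morphism $\Om^i_{{\cal P}^{\rm ex}/\os{\circ}{T}}\otimes_{{\cal O}_{{\cal P}^{\rm ex}}}{\cal O}_{{\cal P}^{\rm ex}_{\ul{\lam}}}\lo \Om^i_{{\cal P}^{\rm ex}_{\ul{\lam}}/\os{\circ}{T}}$ is an isomorphism, so $\Om^{\bul}_{{\cal P}^{\rm ex}_{\ul{\lam}}/\os{\circ}{T}}$ is locally free with basis $d\log x_0,\ldots,d\log x_a$ and $dx_{a+1},\ldots,dx_{d'}$, exactly as on the ambient ${\cal P}^{\rm ex}$; in particular the forms $d\log x_{\lam}$ with $\lam \in \ul{\lam}$ persist although $x_{\lam}\equiv 0$ on ${\cal P}^{\rm ex}_{\ul{\lam}}$. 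This is the crucial point distinguishing the present statement from (\ref{prop:grem}).

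Next I would construct the residue morphism, following \cite[(3.1.5)]{dh2} and the proof of (\ref{prop:grem}). On $P_k$ it sends a local section $\sig \otimes d\log x_{\mu_0}\cdots d\log x_{\mu_{k-1}}\wedge \om$ with $\om \in P_0$ to $\sig \otimes b^*_{\ul{\lam}\cup \ul{\mu}}(\om)$ twisted by the orientation of $\ul{\mu}=\{\mu_0,\ldots,\mu_{k-1}\}$, where $\ul{\mu}$ runs over all $k$-element subsets of $\{0,\ldots,a\}$, including those meeting $\ul{\lam}$; when $\ul{\mu}\cap \ul{\lam}\not=\emptyset$ one has ${\cal P}^{\rm ex}_{\ul{\lam}\cup \ul{\mu}}={\cal P}^{\rm ex}_{\ul{\lam}\cup (\ul{\mu}\setminus \ul{\lam})}$, so these ``diagonal'' indices contribute the unrestricted de Rham factors along the $\ul{\lam}$-directions. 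Because each $d\log x_{\mu_i}$ is $d$-closed, this is a morphism of complexes once one checks that it annihilates $P_{k-1}$ and that the induced differential on the target is the ordinary de Rham differential of $\os{\circ}{\cal P}{}^{\rm ex}_{\ul{\lam}\cup \ul{\mu}}$; the verification is the same as in (\ref{prop:grem}), using $dx_{\mu_i}=x_{\mu_i}d\log x_{\mu_i}$.

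Then I would prove that the morphism is a local isomorphism. The local basis above decomposes ${\rm gr}_k^P({\cal O}_{\mathfrak D}\otimes \Om^{\bul}_{{\cal P}^{\rm ex}_{\ul{\lam}}/\os{\circ}{T}})$ into a direct sum indexed by the $k$-element subsets $\ul{\mu}$, the summand attached to $\ul{\mu}$ being $d\log x_{\mu_0}\cdots d\log x_{\mu_{k-1}}$ wedged with the ordinary forms. Modulo $P_{k-1}$ the residual differential forces the coefficients to be restricted to $x_{\mu_i}=0$ for the indices $\mu_i\notin \ul{\lam}$ while leaving them free along the $\ul{\lam}$-directions, which identifies each summand with $\Om^{\bul -k}_{\os{\circ}{\cal P}{}^{\rm ex}_{\ul{\lam}\cup \ul{\mu}}/\os{\circ}{T}}[-k]$. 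The exactness needed here is the same Koszul-type acyclicity used in (\ref{prop:saih}) and (\ref{prop:acy}), now applied on the subscheme ${\cal P}^{\rm ex}_{\ul{\lam}}$ via (\ref{prop:bpc}).

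Finally I would globalize. Locally the isomorphism depends on an ordering of $\ul{\mu}$; as in \cite[(3.1.5)]{dh2} and \cite[(1.3.14), (1.3.21)]{nb}, twisting the target by the orientation sheaf $\vp_{{\rm zar},\ul{\mu}}({\cal P}^{\rm ex}/\os{\circ}{T})$ absorbs the signs coming from permutations of the $d\log x_{\mu_i}$, so that the local residue isomorphisms patch into the global isomorphism (\ref{eqn:mpprrn}). The main obstacle is precisely this bookkeeping: one must check independence of the chart and compatibility with the \v{C}ech face maps so that the construction descends to the honest subscheme ${\cal P}^{\rm ex}_{\ul{\lam}}$, and one must treat uniformly the diagonal indices $\ul{\mu}\cap \ul{\lam}\not=\emptyset$, which are absent in (\ref{prop:grem}) and account for the monodromy/base directions. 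Granting the local computations of the second and third steps together with the injectivity (\ref{prop:tnmi}), the globalization is then formal.
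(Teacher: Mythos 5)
Your proposal is correct and follows essentially the same route as the paper: reduce to the standard local chart of (\ref{prop:adla})/(\ref{prop:fsi}), read off the explicit free basis of $\Om^1_{{\cal P}^{\rm ex}_{\ul{\lam}}/\os{\circ}{T}}$ (all $d\log x_i$ in the log directions, including those indexed by $\ul{\lam}$, plus the $dx_j$), and identify the graded pieces by the Deligne residue with the orientation sheaves absorbing the sign bookkeeping — including the correct observation that $\ul{\mu}$ may meet $\ul{\lam}$, which is exactly what distinguishes this statement from (\ref{prop:grem}). The one quibble is that the exactness you attribute to the Koszul-type acyclicity of (\ref{prop:saih})/(\ref{prop:acy}) is really just the injectivity of $\Om^{\bul}_{\os{\circ}{\cal P}{}^{\rm ex}_{\ul{\lam}}/\os{\circ}{T}}\lo \Om^{\bul}_{{\cal P}^{\rm ex}_{\ul{\lam}}/\os{\circ}{T}}$ — the $k=0$ case, which the paper isolates as the essential point and proves directly from your explicit basis using that $x_i$ is a non-zero-divisor for $i\notin\ul{\lam}$; your second paragraph already contains what is needed, so this is a misdirected citation rather than a gap.
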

\begin{proof}
For the case where $k$ is a positive integer,  
we can prove (\ref{prop:pslt}) in a standard way 
as in e.g., in \cite[(1.3.14)]{nb}. 
(However see the last remark in (\ref{exem:nsc}) below.) 
\par 
When $k=0$, then the isomorphism (\ref{eqn:mpprrn}) 
is nothing but an isomorphism 
\begin{align*} 
P_0({\cal E}\otimes_{{\cal O}_{{\cal P}^{\rm ex}}}
\Om^{\bul}_{{\cal P}^{{\rm ex}}_{\ul{\lam}}/\os{\circ}{T}})
\os{\sim}{\lo} &  
{\cal E}\otimes_{{\cal O}_{{\cal P}^{\rm ex}}}
b_{\ul{\lam}*}(\Om^{\bul}_{\os{\circ}{\cal P}{}^{{\rm ex}}_{\ul{\lam}}/\os{\circ}{T}}), 
\tag{5.9.3}\label{eqn:mpp0rbrn} 
\end{align*} 
which is equivalent to 
the injectivity of the following natural morphism 
\begin{align*} 
{\cal E}
\otimes_{{\cal O}_{{\cal P}^{\rm ex}}}
\Om^{\bul}_{\os{\circ}{\cal P}{}^{{\rm ex}}_{\ul{\lam}}
/\os{\circ}{T}}\lo &  
{\cal E}\otimes_{{\cal O}_{{\cal P}^{\rm ex}}}
b_{\ul{\lam}*}(\Om^{\bul}_{{\cal P}{}^{{\rm ex}}_{\ul{\lam}}/\os{\circ}{T}}). 
\tag{5.9.4}\label{eqn:mpp0rrn} 
\end{align*} 
This is a local question. 
%(See the proof of (\ref{prop:injf}) in \cite[(1.3.4)]{nb}.)
Hence we may assume 
that $\ol{\cal P}{}^{\rm prex}=\ol{\cal P}$ and that 
there exists 
the commutative diagram (\ref{eqn:xdpelda}). 
In fact, we can assume that $\ol{\cal P}{}^{\rm prex}{}'=\ol{\cal P}$. 
Set ${\cal P}:=\ol{\cal P}\times_{\ol{S(T)^{\nat}}}S(T)^{\nat}$. 
We have only to prove that the natural morphism 
\begin{equation*} 
\Om^i_{\os{\circ}{\cal P}_{\ul{\lam}}/\os{\circ}{T}} 
\lo 
\Om^i_{{\cal P}_{\ul{\lam}}/\os{\circ}{T}} 
\quad (i\in {\mab Z})
\tag{5.9.5}\label{eqn:pps}
\end{equation*} 
is injective. 
\par 
First consider the essential case $i=1$. 
%We may assume that there exists the commutative diagram 
%(\ref{eqn:xdplxda}) for ${\cal P}$ with $d'=d$.  
Let $x_0,\ldots x_{d'}$ be the coordinates of 
${\mab A}_{S(T)^{\nat}}(a,d')$. 
Let $x_0=x_1= \cdots =x_m=0$ $(0\leq m\leq a)$ 
be the defining equations of ${\cal P}_{\ul{\lam}}$ in 
${\cal P}$. 
Then 
\begin{equation*} 
\Om^1_{{\cal P}_{\ul{\lam}}/\os{\circ}{T}} 
=\bigoplus_{j=1}^a
{\cal O}_{{\cal P}}/(x_0,\ldots, x_m)d\log x_j
\oplus 
\bigoplus_{j=a+1}^{d'}
{\cal O}_{{\cal P}}/(x_0,\ldots, x_m)d x_j
\tag{5.9.6}\label{eqn:ylms}
\end{equation*}  
and 
\begin{equation*} 
\Om^1_{\os{\circ}{\cal P}_{\ul{\lam}}/\os{\circ}{T}} 
=\bigoplus_{j=m+1}^a
{\cal O}_{{\cal P}}/(x_0,\ldots, x_m)dx_j\oplus 
\bigoplus_{j=a+1}^{d'}
{\cal O}_{{\cal P}}/(x_0,\ldots, x_m)d x_j. 
\tag{5.9.7}\label{eqn:ylbs}
\end{equation*}  
The natural morphism 
\begin{equation*} 
{\cal O}_T[x_{m+1},\ldots, x_d]dx_j
\lo 
{\cal O}_T[x_{m+1},\ldots, x_d]d\log x_j
\end{equation*}
$(m+1\leq j\leq a)$ is injective since 
$x_j$ is a non-zero divisor in ${\cal O}_T[x_{m+1},\ldots, x_d]$. 
Because 
${\cal O}_{{\cal P}}/(x_0,\ldots,x_m)$ is \'{e}tale 
over ${\cal O}_T[x_0,\ldots,x_d]/(x_0,\ldots,x_m)
={\cal O}_T[x_{m+1},\ldots,x_d]$, 
the desired injectivity of the morphism 
(\ref{eqn:pps}) for the case $i=1$ is clear. 
\par 
Now the desired injectivity for the general case 
follows. 
%by (\ref{eqn:ylms}) and (\ref{eqn:ylbs}), 
%we can easily give the explicit descriptions of 
%$\Om^i_{{\cal P}{}_{\ul{\lam}}/\os{\circ}{T}}$
%and $\Om^i_{\os{\circ}{\cal P}{}_{\ul{\lam}}/\os{\circ}{T}}$, 
%which tells us the desired injectivity. 
\end{proof}

\begin{rema}
Note that morphism (\ref{eqn:mpp0rrn}) for the case $\ul{\lam}=\emptyset$  
\begin{align*} 
{\cal O}_{{\mathfrak D}}
\otimes_{{\cal O}_{{\cal P}^{\rm ex}}}
\Om^{\bul}_{\os{\circ}{\cal P}{}^{{\rm ex}}/\os{\circ}{T}}\lo &  
{\cal O}_{{\mathfrak D}}
\otimes_{{\cal O}_{{\cal P}^{\rm ex}}}
\Om^{\bul}_{{\cal P}{}^{{\rm ex}}/\os{\circ}{T}}. 
\tag{5.10.1}\label{eqn:mppomrrn} 
\end{align*}
is not injective because  the image of $1\otimes (x_0\cdots x_{a-1}dx_a)
\in {\cal O}_{{\mathfrak D}}
\otimes_{{\cal O}_{{\cal P}^{\rm ex}}}
\Om^1_{\os{\circ}{\cal P}{}^{{\rm ex}}/\os{\circ}{T}}$ in 
${\cal O}_{{\mathfrak D}}
\otimes_{{\cal O}_{{\cal P}^{\rm ex}}}
\Om^{\bul}_{{\cal P}{}^{{\rm ex}}/\os{\circ}{T}}$  
by the morphism (\ref{eqn:mppomrrn}) is zero. 
\end{rema}

\begin{exem}\label{exem:nsc}
In this example we consider (\ref{prop:pslt}) in the nontrivial simplest case 
to see what happens in (\ref{prop:pslt}). 
\par 
Consider the case $S_0$=$S$ and $X={\cal P}={\mab A}_S(1,1)$ 
with standard coordinates $x$ and $y$. 
Let $X_0$ and $X_1$ be exact closed log subschemes of $X$ defined by 
$x=0$ and $y=0$, respectively. Consider the case $\ul{\lam}=\{0\}$. 
Then $X_{\ul{\lam}}=X_0$ 
and 
$$\Om^{\bul}_{X_0/\os{\circ}{S}}=({\cal O}_S[y]\os{d}{\lo} {\cal O}_S[y]d\log x\oplus  
{\cal O}_S[y]d\log y\os{d}{\lo}{\cal O}_S[y]d\log x\wedge d\log y).$$
%where $\;\widetilde{}\;$ means the associated coherent sheaf on $X_0$ to a $\Gam(X_0,{\cal O}_{X_0})$-modules. 
The complexes $P_k\Om^{\bul}_{X_0/\os{\circ}{S}}$ for $k=0,1,2$ are as follows: 
$$P_0\Om^{\bul}_{X_0/\os{\circ}{S}}=({\cal O}_S[y]\os{d}{\lo} {\cal O}_S[y]dy),$$
$$P_1\Om^{\bul}_{X_0/\os{\circ}{S}}
=({\cal O}_S[y]\os{d}{\lo} {\cal O}_S[y]d\log x\oplus {\cal O}_S[y]d\log y 
\os{d}{\lo} {\cal O}_S[y]d\log x\wedge dy),$$
$$P_2\Om^{\bul}_{X_0/\os{\circ}{S}}=({\cal O}_S[y]\os{d}{\lo} {\cal O}_S[y]d\log x\oplus  
{\cal O}_S[y]d\log y\os{d}{\lo}{\cal O}_S[y]d\log x\wedge d\log y).$$
The complexes ${\rm gr}^P_k\Om^{\bul}_{X_0/\os{\circ}{S}}$ for $k=0, 1,2$ are as follows: 
$${\rm gr}^P_0\Om^{\bul}_{X_0/\os{\circ}{S}}=a_{0*}(\Om^{\bul}_{\os{\circ}{X}_0/\os{\circ}{S}}),$$
\begin{align*} 
{\rm gr}^P_1\Om^{\bul}_{X_0/\os{\circ}{S}}
&=a_{01*}({\cal O}_S)[-1]\oplus 
(0\lo {\cal O}_S[y]d\log x\os{d}{\lo} {\cal O}_S[y]d\log x\wedge dy)\tag{5.11.1}\label{ali:grp} \\
&\simeq 
a_{01*}(\Om^{\bul}_{\os{\circ}{X}_{01}/\os{\circ}{S}})[-1]
\oplus (0\lo {\cal O}_S[y]\os{d}{\lo} {\cal O}_S[y] dy)\\
&=a_{01*}(\Om^{\bul}_{\os{\circ}{X}_{01}/\os{\circ}{S}})[-1]
\oplus a_{0*}(\Om^{\bul}_{\os{\circ}{X}_0/\os{\circ}{S}})[-1],
\end{align*} 
$${\rm gr}^P_2\Om^{\bul}_{X_0/\os{\circ}{S}}
=a_{01*}(\Om^{\bul}_{\os{\circ}{X}_{01}/\os{\circ}{S}})[-2].$$
Here note that the isomorphism in (\ref{ali:grp}) 
is independent of the choice of the local coordinate 
$x$ since $d\log w\in P_0\Om^1_{X_0/\os{\circ}{S}}$ 
for a local section $w$ of $({\cal O}_S[y])^*$. 
\end{exem} 

\begin{prop}\label{prop:flt}
The sheaves 
${\cal E}\otimes_{{\cal O}_{{\cal P}^{\rm ex}}}
{\Om}^i_{{\cal P}^{\rm ex}_{\ul{\lam}}/\os{\circ}{T}}$,  
${\rm gr}^P_k
({\cal E}\otimes_{{\cal O}_{{\cal P}^{\rm ex}}}
{\Om}^i_{{\cal P}^{\rm ex}_{\ul{\lam}}/\os{\circ}{T}})$ 
$(i, k\in {\mab N})$ 
and 
$P_k
({\cal E}\otimes_{{\cal O}_{{\cal P}^{\rm ex}}}
{\Om}^i_{{\cal P}^{\rm ex}_{\ul{\lam}}/\os{\circ}{T}})$ 
are locally free ${\cal O}_T$-modules.    
\end{prop}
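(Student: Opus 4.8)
The plan is to reduce everything to the statement that the preweight filtration $P$ on the log de Rham complex $\Om^{\bul}_{{\cal P}^{\rm ex}_{\ul{\lam}}/\os{\circ}{T}}$ has locally free graded pieces over ${\cal O}_T$, and then tensor with the flat crystal ${\cal E}$. The three complexes in the statement are related by the short exact sequences
\begin{equation*}
0\lo P_{k-1}\lo P_k\lo {\rm gr}^P_k\lo 0,
\end{equation*}
so it suffices to prove local freeness for the graded pieces ${\rm gr}^P_k({\cal E}\otimes_{{\cal O}_{{\cal P}^{\rm ex}}}\Om^{\bul}_{{\cal P}^{\rm ex}_{\ul{\lam}}/\os{\circ}{T}})$, since local freeness is preserved under extensions and the filtration is finite (bounded by the degree). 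First I would invoke the Poincar\'{e} residue isomorphism (\ref{eqn:mpprrn}) of (\ref{prop:pslt}), which identifies each graded piece, up to a shift, with a direct sum
\begin{equation*}
\bigoplus_{\sharp \ul{\mu}=k}
{\cal E}\otimes_{{\cal O}_{{\cal P}^{\rm ex}}}
b_{\ul{\lam}\cup \ul{\mu}*}
(\Om^{\bul-k}_{\os{\circ}{\cal P}{}^{{\rm ex}}_{\ul{\lam}\cup \ul{\mu}}/\os{\circ}{T}}
\otimes_{\mab Z}\vp_{{\rm zar},\ul{\mu}}({\cal P}^{\rm ex}/\os{\circ}{T})).
\end{equation*}
This reduces the problem to the graded pieces being built out of the sheaves $\Om^i_{\os{\circ}{\cal P}{}^{\rm ex}_{\ul{\lam}\cup\ul{\mu}}/\os{\circ}{T}}$ of differential forms for the underlying (log-structure-free) smooth schemes, tensored with the locally constant orientation sheaf.

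Next I would use the local structure theory. By (\ref{prop:adla}) and (\ref{prop:fsi}), locally on $X_{\os{\circ}{T}_0}$ the scheme $\os{\circ}{\cal P}{}^{\rm ex}$ is (formally) \'{e}tale over an affine space ${\mab A}^{d'}_{\os{\circ}{T}}$ (after the exactification), and $\os{\circ}{\cal P}{}^{\rm ex}_{\ul{\lam}\cup\ul{\mu}}$ is cut out by coordinate hyperplanes, hence is itself (formally) smooth over $\os{\circ}{T}$. Therefore each $\Om^i_{\os{\circ}{\cal P}{}^{\rm ex}_{\ul{\lam}\cup\ul{\mu}}/\os{\circ}{T}}$ is a locally free ${\cal O}_{\os{\circ}{\cal P}{}^{\rm ex}_{\ul{\lam}\cup\ul{\mu}}}$-module; being locally free over a ring that is itself \'{e}tale (hence flat) over a polynomial ${\cal O}_T$-algebra, it is a locally free ${\cal O}_T$-module. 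Tensoring with ${\cal E}$, which is flat and quasi-coherent over ${\cal O}_{\mathfrak D}$ and hence flat over ${\cal O}_T$ (since ${\cal O}_{\mathfrak D}$ is locally a free ${\cal O}_T$-module by the PD-polynomial description ${\cal O}_{\mathfrak D}\simeq{\cal O}_T\langle\tau\rangle$), preserves local freeness over ${\cal O}_T$. The orientation sheaf contributes only a rank-one locally constant twist and does not affect this conclusion.

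The main obstacle I expect is bookkeeping rather than conceptual: I must verify that the Poincar\'{e} residue isomorphism is in fact an isomorphism of complexes of ${\cal O}_T$-modules and not merely in the derived category, so that the graded pieces really decompose termwise into the locally free sheaves described above. This is already ensured by the explicit termwise nature of (\ref{eqn:mpprrn}), together with the injectivity statement (\ref{eqn:yxamd}) of (\ref{prop:tnmi}) which guarantees that the $P_k$ are honest subcomplexes with locally free graded quotients rather than objects defined only up to quasi-isomorphism. A second point requiring care is the flatness of ${\cal E}$ over ${\cal O}_T$: I would deduce it from the hypothesis that $E$ is a flat quasi-coherent crystal together with the local freeness of ${\cal O}_{\mathfrak D}$ over ${\cal O}_T$, so that $\ol{\cal F}$ and hence ${\cal E}=\ol{\cal E}\otimes_{{\cal O}_{{\mathfrak D}(\ol{S(T)^{\nat}})}}{\cal O}_{S(T)^{\nat}}$ are flat ${\cal O}_T$-modules. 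Once these two flatness and termwise-splitting points are settled, the induction on $k$ through the filtration-exact sequences completes the argument directly.
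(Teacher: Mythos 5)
Your proposal is correct and follows essentially the same route as the paper: reduce to the local model ${\mab A}_{S(T)^{\nat}}(a,d')$ via (\ref{prop:adla}), use the Poincar\'{e} residue isomorphism (\ref{eqn:mpprrn}) to see that the graded pieces are locally free over ${\cal O}_T$, and then induct through the (finite, locally split) filtration exact sequences --- the only cosmetic difference being that you run the induction upward from $P_{-1}=0$ while the paper runs it downward from the top step. Your added remarks on the termwise nature of the residue map and the flatness of ${\cal E}$ are points the paper leaves implicit but do not change the argument.
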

\begin{proof}  
We may assume that ${\cal E}$ is trivial. 
When $\ul{\lam}=\emptyset$, we have already proved 
(\ref{prop:flt}) in \cite[(1.3.15)]{nb}. 
Assume that $\ul{\lam}\not=\emptyset$. 
Since the question is local on $X$, 
we may assume
that $X={\mab A}_{S}(a,d)$ and 
${\cal P}^{\rm prex}={\cal P}
= {\mab A}_{S(T)^{\nat}}(a,d')$ 
$(a\in {\mab N}, d\leq d')$ by (\ref{prop:adla})
(cf.~the proof of \cite[(1.3.5)]{nb}). 
Then it is clear that 
${\cal O}_{\mathfrak E}\otimes_{{\cal O}_{{\cal P}^{\rm ex}}}
\Om^i_{{\cal P}^{\rm ex}_{\ul{\lam}}/\os{\circ}{T}}$ 
is a locally free ${\cal O}_T$-module. 
The target of the isomorphism (\ref{eqn:mpprrn}) 
consists of locally free ${\cal O}_T$-modules.
Hence the descending induction on $k$ shows 
the locally freeness of 
$P_k({\cal O}_{\mathfrak E}\otimes_{{\cal O}_{{\cal P}^{\rm ex}}}
{\Om}^i_{{\cal P}^{\rm ex}_{\ul{\lam}}/\os{\circ}{T}})$.  
\end{proof}

%In fact, we can define a filtration $P$ on 
%${\cal F} \otimes_{{\cal O}_{{\cal Q}^{{\rm ex}}}}
%\Om^{\bul}_{{\cal Q}^{{\rm ex}}/\os{\circ}{T}}\langle U_{S(T)^{\nat}}\rangle$ 
%for $k\in {\mab Z}$ as follows: 
%\begin{equation*} 
%P_k({\cal F} \otimes_{{\cal O}_{{\cal Q}{}^{{\rm ex}}}}
%\Om^{\bul}_{{\cal Q}{}^{{\rm ex}}/\os{\circ}{T}}\langle U_{S(T)^{\nat}}\rangle )
%:=\bigoplus_{j\geq 0}\Gam_{{\cal O}_T,j}(U_{S(T)^{\nat}})\otimes_{{\cal O}_T}
%P_{k-2j}({\cal F}
%\otimes_{{\cal O}_{{\cal Q}{}^{{\rm ex}}}}
%\Om^{\bul}_{{\cal Q}{}^{{\rm ex}}/\os{\circ}{T}}). 
%\tag{5.10.2}\label{ali:flp}
%\end{equation*} 
%Note that $P_{-1}({\cal F} \otimes_{{\cal O}_{{\cal Q}{}^{{\rm ex}}}}
%\Om^{\bul}_{{\cal Q}{}^{{\rm ex}}/\os{\circ}{T}}\langle U_{S(T)^{\nat}}\rangle )=0$. 
\par
For an element $\ul{\lam}$ of $P(\Lam)\cup \{\emptyset\}$,    
we define a filtration $P$ on 
${\cal E}\otimes_{{\cal O}_{{\cal P}^{{\rm ex}}}}
\Om^{\bul}_{{\cal P}^{{\rm ex}}_{\ul{\lam}}/\os{\circ}{T}}\langle u\rangle$ 
for $k\in {\mab Z}$ as follows: 
\begin{equation*} 
P_k({\cal E} \otimes_{{\cal O}_{{\cal P}{}^{{\rm ex}}}}
\Om^{\bul}_{{\cal P}{}^{{\rm ex}}_{\ul{\lam}}/\os{\circ}{T}}\langle u \rangle )
:=\bigoplus_{j\geq 0}\Gam_{{\cal O}_T,j}(U_{S(T)^{\nat}})\otimes_{{\cal O}_T}P_{k-2j}({\cal E}
\otimes_{{\cal O}_{{\cal P}{}^{{\rm ex}}}}
\Om^{\bul}_{{\cal P}{}^{{\rm ex}}_{\ul{\lam}}/\os{\circ}{T}}). 
\tag{5.12.1}\label{ali:fpplp}
\end{equation*} 
Note that $P_{-1}({\cal E} \otimes_{{\cal O}_{{\cal P}{}^{{\rm ex}}}}
\Om^{\bul}_{{\cal P}{}^{{\rm ex}}_{\ul{\lam}}/\os{\circ}{T}}\langle u\rangle )=0$. 
\par 
For an integer $m\geq 0$, 
we define a pre-weight filtration $P$ on 
${\cal E} \otimes_{{\cal O}_{{\cal P}^{\rm ex}}}
\Om^{\bul}_{{\cal P}^{{\rm ex},(m)}/\os{\circ}{T}}\langle u \rangle $ 
for $k\in {\mab Z}$ as follows: 
\begin{equation*} 
P_k({\cal E} 
\otimes_{{\cal O}_{{\cal P}{}^{{\rm ex}}}}
\Om^{\bul}_{{\cal P}{}^{{\rm ex},(m)}/\os{\circ}{T}}\langle u \rangle )
:=\bigoplus_{j\geq 0}\Gam_{{\cal O}_T,j}(U_{S(T)^{\nat}})\otimes_{{\cal O}_T}
P_{k-2j}({\cal E}\otimes_{{\cal O}_{{\cal P}{}^{{\rm ex}}}}
\Om^{\bul}_{{\cal P}{}^{{\rm ex},(m)}/\os{\circ}{T}}). 
\tag{5.12.2}\label{eqn:dfhp}
\end{equation*} 
Indeed we easily see that the differential  
$\nabla$ preserves the filtrations $P$'s on 
${\cal E} 
\otimes_{{\cal O}_{{\cal P}{}^{\rm ex}}}
\Om^{\bul}_{{\cal P}^{{\rm ex},(m)}/\os{\circ}{T}}\langle u \rangle$ 
and 
${\cal E} \otimes_{{\cal O}_{{\cal P}^{\rm ex}}}
\Om^{\bul}_{{\cal P}^{{\rm ex},(m)}/\os{\circ}{T}}\langle u \rangle
$. 
Obviously, 
\begin{align*}
P_{-1}({\cal E} 
\otimes_{{\cal O}_{{\cal P}{}^{{\rm ex},(m)}}}
\Om^{\bul}_{{\cal P}{}^{{\rm ex},(m)}/\os{\circ}{T}}\langle u \rangle)
=0.
\tag{5.12.3}\label{ali:obp}
\end{align*} 
\begin{rema} 
In the case where ${\cal E}$ is trivial, 
the filtration $P$ on 
${\cal E} 
\otimes_{{\cal O}_{{\cal P}{}^{{\rm ex}}}}
\Om^{\bul}_{{\cal P}{}^{{\rm ex}}/\os{\circ}{T}}\langle u \rangle$ 
is useless for the calculation of the pre-weight filtration on 
the log crystalline cohomology 
$R^qf_{X_{\os{\circ}{T}_0/S(T)^{\nat}}}({\cal O}_{X_{\os{\circ}{T}_0/S(T)^{\nat}}})$ 
because 
the $E_1$-term of the associated spectral sequence 
of $R^qf_{X_{\os{\circ}{T}_0/S(T)^{\nat}}}({\cal O}_{X_{\os{\circ}{T}_0/S(T)^{\nat}}})$ 
to the filtration $P$ is ``mixed'' ((\ref{eqn:cptle})). 
(Note that I do not say that the filtration $P$ on ${\cal E} 
\otimes_{{\cal O}_{{\cal P}{}^{{\rm ex}}}}
\Om^{\bul}_{{\cal P}{}^{{\rm ex}}/\os{\circ}{T}}\langle u \rangle$ 
itself is useless. Indeed, 
we use this $P$ in (\ref{prop:tee}) below.) 
\end{rema}

\par 
In the case $\ul{\lam}\not=\emptyset$, 
we have the following isomorphism for any integer $k$ by (\ref{prop:pslt}) 
since the differential of $u^{[i]}$ induces the zero morphism on 
${\rm gr}_k^P({\cal E}
\otimes_{{\cal O}_{{\cal P}^{\rm ex}}}
\Om^{\bul}_{{\cal P}^{{\rm ex}}_{\ul{\lam}}/\os{\circ}{T}}\langle u \rangle )$ by (\ref{eqn:dfhp}): 
\begin{align*} 
&{\rm gr}_k^P({\cal E}
\otimes_{{\cal O}_{{\cal P}^{\rm ex}}}
\Om^{\bul}_{{\cal P}^{{\rm ex}}_{\ul{\lam}}/\os{\circ}{T}}\langle u \rangle )
\os{\sim}{\lo} \tag{5.13.1}\label{eqn:mpruzrn}\\
&\bigoplus_{j\geq 0}
\bigoplus_{\# \ul{\mu}=k-2j}
(\Gam_{{\cal O}_T,j}(U_{S(T)^{\nat}})\otimes_{{\cal O}_T}
{\cal E}
\otimes_{{\cal O}_{{\cal P}^{\rm ex}}}
b_{\ul{\lam}\cup \ul{\mu}*}
(\Om^{\bul}_{\os{\circ}{\cal P}{}^{{\rm ex}}_{\ul{\lam}
\cup \ul{\mu}}/\os{\circ}{T}}
\otimes_{\mab Z}\vp_{{\rm zar},\ul{\mu}}
(\os{\circ}{\cal P}{}^{\rm ex}/\os{\circ}{T})))[-(k-2j)].  
\end{align*} 
Hence 
\begin{align*} 
&{\rm gr}_k^P(
{\cal E}
\otimes_{{\cal O}_{{\cal P}^{\rm ex}}}
\Om^{\bul}_{{\cal P}^{{\rm ex},(m)}/\os{\circ}{T}}\langle u \rangle) 
\os{\sim}{\lo} 
\bigoplus_{\# \ul{\lam}=m+1}
{\rm gr}_k^P(
{\cal E}
\otimes_{{\cal O}_{{\cal P}^{\rm ex}}}
\Om^{\bul}_{{\cal P}^{{\rm ex}}_{\ul{\lam}}/\os{\circ}{T}}\langle u \rangle)
\tag{5.13.2}\label{eqn:mpnfruzrn}\\
\os{\sim}{\lo} 
&\bigoplus_{\# \ul{\lam}=m+1}
\bigoplus_{j\geq 0}
\bigoplus_{\# \ul{\mu}=k-2j}
(\Gam_{{\cal O}_T,j}(U_{S(T)^{\nat}})\otimes_{{\cal O}_T}{\cal E}
\otimes_{{\cal O}_{{\cal P}^{\rm ex}}}
b_{\ul{\lam}\cup \ul{\mu}*}
(\Om^{\bul -k}_{\os{\circ}{\cal P}{}^{{\rm ex}}_{\ul{\lam}
\cup \ul{\mu}}/\os{\circ}{T}}
\otimes_{\mab Z}\vp_{{\rm zar},\ul{\mu}}
(\os{\circ}{\cal P}{}^{\rm ex}/\os{\circ}{T})))[-(k-2j)]. 
\end{align*}
%If $U_{S(T)^{\nat}}$ is a free module of rank 1,
%\begin{align*} 
%&{\rm gr}_k^P({\cal E}
%\otimes_{{\cal O}_{{\cal P}^{\rm ex}}}
%\Om^{\bul}_{{\cal P}^{{\rm ex}}_{\ul{\lam}}/\os{\circ}{T}}\langle U_{S(T)^{\nat}} \rangle )
%\os{\sim}{\lo} \tag{5.11.3}\label{eqn:mplfruzrn}\\
%&\bigoplus_{j\geq 0}
%\bigoplus_{\# \ul{\mu}=k-2j}
%({\cal E}
%\otimes_{{\cal O}_{{\cal P}^{\rm ex}}}
%b_{\ul{\lam}\cup \ul{\mu}*}
%(\Om^{\bul}_{\os{\circ}{\cal P}{}^{{\rm ex}}_{\ul{\lam}
%\cup \ul{\mu}}/\os{\circ}{T}}
%\otimes_{\mab Z}\vp_{{\rm zar},\ul{\mu}}
%(\os{\circ}{\cal P}{}^{\rm ex}/\os{\circ}{T})))[-(k-2j)]
%\end{align*} 
%and 
%\begin{align*} 
%&{\rm gr}_k^P(
%{\cal E}
%\otimes_{{\cal O}_{{\cal P}^{\rm ex}}}
%\Om^{\bul}_{{\cal P}^{{\rm ex},(m)}/\os{\circ}{T}}\langle U_{S(T)^{\nat}} \rangle) 
%\os{\sim}{\lo} 
%\tag{5.11.4}\label{eqn:mpleruzrn}\\
%&\bigoplus_{\# \ul{\lam}=m+1}
%\bigoplus_{j\geq 0}
%\bigoplus_{\# \ul{\mu}=k-2j}
%({\cal E}
%\otimes_{{\cal O}_{{\cal P}^{\rm ex}}}
%b_{\ul{\lam}\cup \ul{\mu}*}
%(\Om^{\bul -k}_{\os{\circ}{\cal P}{}^{{\rm ex}}_{\ul{\lam}
%\cup \ul{\mu}}/\os{\circ}{T}}
%\otimes_{\mab Z}\vp_{{\rm zar},\ul{\mu}}
%(\os{\circ}{\cal P}{}^{\rm ex}/\os{\circ}{T})))[-(k-2j)]. 
%\end{align*}

\parno 
Consider the following double complex 
\begin{equation*}
{\cal E}
\otimes_{{\cal O}_{{\cal P}^{\rm ex}}}
\Om^{\bul}_{{\cal P}^{{\rm ex},(\bul)}/\os{\circ}{T}}\langle u \rangle
\end{equation*}  
(see the Notation (15)) and the single complex 
$s(
{\cal E}
\otimes_{{\cal O}_{{\cal P}^{\rm ex}}}
\Om^{\bul}_{{\cal P}^{{\rm ex},(\bul)}/\os{\circ}{T}}\langle u \rangle)$.  
Let $\del(L,P)$ be the diagonal filtration 
on 
$s({\cal E}
\otimes_{{\cal O}_{{\cal P}^{\rm ex}}}
\Om^{\bul}_{{\cal P}^{{\rm ex},(\bul)}/\os{\circ}{T}}\langle u \rangle)$ 
(\cite{dh2}) 
obtained by $P$ shifted by a semi-cosimplicial degree: 
\begin{equation*} 
\del(L,P)_ks({\cal E}
\otimes_{{\cal O}_{{\cal P}^{\rm ex}}}
\Om^{\bul}_{{\cal P}^{{\rm ex},(\bul)}/\os{\circ}{T}}\langle u \rangle):=
(\cdots \lo P_{k+m}
({\cal E}
\otimes_{{\cal O}_{{\cal P}^{\rm ex}}}
\Om^{\bul}_{{\cal P}^{{\rm ex},(m)}/\os{\circ}{T}}\langle u \rangle)\lo \cdots). 
\end{equation*} 
For simplicity of notation, we denote $\del(L,P)$ simply by $P$. 
If $\os{\circ}{X}_{T_0}$ is quasi-compact, then 
\begin{align*}
P_ks({\cal E}
\otimes_{{\cal O}_{{\cal P}^{\rm ex}}}
\Om^{\bul}_{{\cal P}^{{\rm ex},(\bul)}/\os{\circ}{T}}\langle u \rangle)
=
\del(L,P)_ks({\cal E}
\otimes_{{\cal O}_{{\cal P}^{\rm ex}}}
\Om^{\bul}_{{\cal P}^{{\rm ex},(\bul)}/\os{\circ}{T}}\langle u \rangle)
=0
\end{align*} 
for $k<<0$. 
Since the horizontal boundary morphism 
\begin{align*} 
d\log t \wedge \col & {\rm gr}_{k+m}^P
({\cal E}
\otimes_{{\cal O}_{{\cal P}^{\rm ex}}}
\Om^{\bul}_{{\cal P}^{{\rm ex},(m)}/\os{\circ}{T}}\langle u \rangle) \lo \\ 
& {\rm gr}_{k+m+1}^P
({\cal E}
\otimes_{{\cal O}_{{\cal P}^{\rm ex}}}
\Om^{\bul}_{{\cal P}^{{\rm ex},(m)}/\os{\circ}{T}}\langle u \rangle) 
\quad (m\in {\mab Z}_{\geq 0})
\end{align*}  
of the double complex ${\rm gr}_k^{P}
({\cal E}
\otimes_{{\cal O}_{{\cal P}^{\rm ex}}}
\Om^{\bul}_{{\cal P}^{{\rm ex},(\bul)}/\os{\circ}{T}}\langle u \rangle)$
is zero, 
we obtain the following isomorphism 
\begin{align*} 
&{\rm gr}_k^{P}
(s({\cal E}
\otimes_{{\cal O}_{{\cal P}^{\rm ex}}}
\Om^{\bul}_{{\cal P}^{{\rm ex},(\bul)}/\os{\circ}{T}}\langle u \rangle)) 
\os{\sim}{\lo} \tag{5.13.3}\label{ali:grc}\\
&\bigoplus_{m\geq 0}
\bigoplus_{\# \ul{\lam}=m+1}
\bigoplus_{j\geq 0}
\bigoplus_{\# \ul{\mu}=k+m-2j}
(\Gam_{{\cal O}_T,j}(U_{S(T)^{\nat}})\otimes_{{\cal O}_T}{\cal E}
\otimes_{{\cal O}_{\cal P}^{\rm ex}}
b_{\ul{\lam}\cup \ul{\mu}*}
(\Om^{\bul}_{\os{\circ}{\cal P}{}^{{\rm ex}}_{\ul{\lam}
\cup \ul{\mu}}/\os{\circ}{T}} \\
& \otimes_{\mab Z}\vp_{{\rm zar},\ul{\mu}}
({\cal P}^{\rm ex}/T)))
%(-(k+m-j);v)
[-k-2m+2j]. 
\end{align*} 
Note that, locally on $X_{\os{\circ}{T}_0}$, 
the right hand side of (\ref{ali:grc}) is a finite direct sum. 
If $(X_{\os{\circ}{T}_0})^{\circ}$ is quasi-compact, 
then the right hand side is a finite direct sum. 
Note also that 
$v^*(u^{[j]})={\rm deg}(v)^ju^{[j]}$ for a morphism $v\col S\lo S'$ of family of log points 
((\ref{ali:vua})). The last note will become important later in this book in the consideration of 
the ``$D$-twist'', especially, the twist of the abrelative Frobenius morphism. 
\par 

\begin{prop}\label{prop:lcz}
Locally on $X_{\os{\circ}{T}_0}$, 
$P_k(s({\cal E}
\otimes_{{\cal O}_{{\cal P}^{\rm ex}}}
\Om^{\bul}_{{\cal P}^{{\rm ex},(\bul)}/\os{\circ}{T}}\langle u \rangle))=0$ 
for $k<<0$. 
If $\os{\circ}{X}_{T_0}$ is quasi-compact, then 
$P_k
(s({\cal E}
\otimes_{{\cal O}_{{\cal P}^{\rm ex}}}
\Om^{\bul}_{{\cal P}^{{\rm ex},(\bul)}/\os{\circ}{T}}\langle u \rangle))=0$ 
for $k<<0$. That is, the filtration $P$ on 
$s({\cal E}
\otimes_{{\cal O}_{{\cal P}^{\rm ex}}}
\Om^{\bul}_{{\cal P}^{{\rm ex},(\bul)}/\os{\circ}{T}}\langle u \rangle)$
is bounded below. 
\end{prop}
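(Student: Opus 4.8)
The plan is to unwind the definition of the diagonal filtration and to reduce the assertion to the already-established vanishing $P_{-1}=0$ of the preweight filtration, combined with the local finiteness of the number of smooth components of an SNCL scheme. First I would recall that, by the definition of $\del(L,P)$ given just before (\ref{ali:grc}),
$$
P_ks({\cal E}\otimes_{{\cal O}_{{\cal P}^{\rm ex}}}
\Om^{\bul}_{{\cal P}^{{\rm ex},(\bul)}/\os{\circ}{T}}\langle U_{S(T)^{\nat}}\rangle)
=\bigoplus_{m\geq 0}P_{k+m}({\cal E}\otimes_{{\cal O}_{{\cal P}^{\rm ex}}}
\Om^{\bul}_{{\cal P}^{{\rm ex},(m)}/\os{\circ}{T}}\langle U_{S(T)^{\nat}}\rangle),
$$
so that $P_ks(\cdots)$ is the zero complex as soon as each summand vanishes. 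Hence it suffices to show that $P_{k+m}({\cal E}\otimes_{{\cal O}_{{\cal P}^{\rm ex}}}\Om^{\bul}_{{\cal P}^{{\rm ex},(m)}/\os{\circ}{T}}\langle U_{S(T)^{\nat}}\rangle)=0$ whenever $k+m<0$. This is immediate from the defining formula (\ref{eqn:dfhp}): its $j$-th summand is $\Gam_j(L_{S(T)^{\nat}})\otimes_{{\cal O}_T}P_{k+m-2j}({\cal E}\otimes_{{\cal O}_{{\cal P}^{\rm ex}}}\Om^{\bul}_{{\cal P}^{{\rm ex},(m)}/\os{\circ}{T}})$, and since $j\geq 0$ forces $k+m-2j\leq k+m<0$, every such summand is killed by the vanishing $P_{l}({\cal E}\otimes_{{\cal O}_{{\cal P}^{\rm ex}}}\Om^{\bul}_{{\cal P}^{{\rm ex},(m)}/\os{\circ}{T}})=0$ for $l<0$, which follows from the definition (\ref{eqn:pkdefpw}) via the injectivity (\ref{eqn:yxamd}) and is recorded for the Hirsch extension itself in (\ref{ali:obp}).

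Next I would invoke local finiteness. By the local structure of an SNCL scheme (\ref{defi:lfac}), around any point of $X_{\os{\circ}{T}_0}$ there is an open on which $\os{\circ}{X}_{T_0}$ admits a strict \'etale morphism to some ${\mab A}_{\os{\circ}{S}}(a,d)$; over such an open there are only $a+1$ smooth components, so $\os{\circ}{X}{}^{(m)}_{T_0}=\emptyset$, whence ${\cal P}^{{\rm ex},(m)}=\emptyset$ and $\Om^{\bul}_{{\cal P}^{{\rm ex},(m)}/\os{\circ}{T}}=0$, for every $m>a$. (This finiteness is precisely the observation after (\ref{ali:grc}) that the right-hand side there is locally a finite direct sum.) Consequently, on this open only the terms with $0\leq m\leq a$ contribute to $P_ks(\cdots)$, and for any $k<-a$ one has $k+m<0$ for every such $m$; by the previous paragraph each surviving summand vanishes, giving $P_ks(\cdots)=0$ for $k\ll 0$ locally.

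Finally, for the quasi-compact case I would cover $\os{\circ}{X}_{T_0}$ by finitely many opens of the above type, obtaining local bounds $a_1,\dots,a_r$; setting $a_{\max}:=\max_i a_i$, the local argument yields $P_ks(\cdots)=0$ on every member of the cover once $k<-a_{\max}$, hence globally. Since the whole argument is bookkeeping with two index shifts --- the $+m$ coming from the diagonal (semi-cosimplicial) shift and the $-2j$ coming from the PD-Hirsch extension --- the only genuine inputs are the vanishing $P_{-1}=0$ and the local boundedness of the number of branches, so there is no serious obstacle; the one point requiring mild care is to check that the two shifts are combined in the correct direction, so that the divided-power factors $\Gam_j(L_{S(T)^{\nat}})$ only reinforce, and never obstruct, the vanishing.
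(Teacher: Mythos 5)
Your proof is correct and follows essentially the same route as the paper: the paper's own proof simply cites the vanishing $P_{-1}(\cdots)=0$ of (\ref{ali:obp}), leaving the unwinding of the diagonal filtration and the local finiteness of the set of smooth components implicit, which is exactly what you have spelled out. The index bookkeeping ($+m$ from the semi-cosimplicial shift, $-2j$ from the PD-Hirsch factor) is handled correctly, and the passage from the local bound to the quasi-compact global bound is the intended argument.
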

\begin{proof} 
This follows from (\ref{ali:obp}). 
\end{proof}

\begin{rema}\label{rema:ex}
It is clear that the filtration $P$ on 
$s({\cal E}
\otimes_{{\cal O}_{{\cal P}^{\rm ex}}}
\Om^{\bul}_{{\cal P}^{{\rm ex},(\bul)}/\os{\circ}{T}}\langle u \rangle)$ 
is exhaustive because we allow only finite sum with respect to $u^{[j]}$ $(j\in {\mab N})$ locally. 
This observation is very important for the existence of the convergent spectral sequence 
(\ref{ali:spsarh}) below.  
This is why we cannot use 
$s({\cal E}
\otimes_{{\cal O}_{{\cal P}^{\rm ex}}}
\Om^{\bul}_{{\cal P}^{{\rm ex},(\bul)}/\os{\circ}{T}}\langle \langle u \rangle \rangle)$.  
\end{rema}

\section{PD-Hirsch pre-weight-filtered log crystalline complexes}\label{sec:psc}
Let $(T,{\cal J},\del)$, $T_0\lo S$, $Y/S$ and $Y_{\os{\circ}{T}_0}$ 
be as in \S\ref{sec:ldfc}. 
In this section we do not assume that there exists an immersion 
from $Y_{\os{\circ}{T}_0}$ into a log smooth scheme over $\ol{S(T)^{\nat}}$. 
Let $g\col Y_{\os{\circ}{T}_0} \lo S_{\os{\circ}{T}_0}\lo S(T)^{\nat}$ be the structural morphism.  
Let $\ol{F}$ be a flat quasi-coherent crystal of 
${\cal O}_{Y_{\os{\circ}{T}_0}/\os{\circ}{T}}$-modules.  
The first aim in this section is 
to recall a complex 
\begin{equation*} 
\wt{R}u_{Y_{\os{\circ}{T}_0}/\os{\circ}{T}*}(\ol{F})
\in D^+(g^{-1}({\cal O}_T)) 
\end{equation*}  
and 
to construct a new complex
\begin{equation*} 
\wt{R}u_{Y_{\os{\circ}{T}_0}/\os{\circ}{T}*}(\ol{F}\langle u \rangle)
\in D^+(g^{-1}({\cal O}_T)), 
\end{equation*}  
which we call $\wt{R}u_{Y_{\os{\circ}{T}_0}/\os{\circ}{T}*}(\ol{F}
\langle u \rangle)$ the {\it derived PD-Hirsch extension} of 
$\wt{R}u_{Y_{\os{\circ}{T}_0}/\os{\circ}{T}*}(\ol{F})$. 
Let $X/S$ be an SNCL scheme 
and let $f\col X_{\os{\circ}{T}_0}\lo S(T)^{\nat}$ be the 
structural morphism. 
Let $E$ be a flat quasi-coherent crystal of 
${\cal O}_{\os{\circ}{X}_{T_0}/\os{\circ}{T}}$-modules.  
The second aim in this section is to construct a filtered complex
\begin{equation*} 
(H_{\rm zar}(X_{\os{\circ}{T}_0}/S(T)^{\nat},E),P):=
(\wt{R}u_{X^{(\star)}_{\os{\circ}{T}_0}/\os{\circ}{T}*}
(\eps^*_{X^{(\star)}_{\os{\circ}{T}_0}/\os{\circ}{T}}(E^{(\star)})\langle u \rangle),P)
\in {\rm D}^+{\rm F}(f^{-1}_T({\cal O}_T)),
\end{equation*}  
which we call the {\it PD-Hirsch filtered crystalline complex} of $E$. 
\par  
Let $Y_{\os{\circ}{T}_0\bul}$ be the \v{C}ech diagram of 
an affine open covering of $Y_{\os{\circ}{T}_0}$. 
By abuse of notation, denote the structural morphism 
$Y_{\os{\circ}{T}_0\bul}\lo S(T)^{\nat}$ by $g_{\bul}$. 
Then, by a standard method using (\ref{lemm:etl}), 
we have a simplicial immersion 
$Y_{\os{\circ}{T}_0\bul} \os{\sus}{\lo} \ol{\cal Q}_{\bul}$ into 
a simplicial log smooth scheme over the immersion
$S_{\os{\circ}{T}_0}\os{\sus}{\lo} \ol{S(T)^{\nat}}$ if 
each member of the affine open covering is small enough.  
By taking the exactification of the immersion 
$Y_{\os{\circ}{T}_0\bul} \os{\sus}{\lo} \ol{\cal Q}_{\bul}$, 
we have the simplicial immersion 
$Y_{\os{\circ}{T}_0\bul} \os{\sus}{\lo} \ol{\cal Q}{}^{\rm ex}_{\bul}$.  
Let $\ol{\mathfrak E}_{\bul}$ be the log PD-envelope of 
this immersion over $(\os{\circ}{T},{\cal J},\del)$. 
Set ${\cal Q}^{\rm ex}_{\bul}
:=\ol{\cal Q}{}^{\rm ex}_{\bul}\times_{\ol{S(T)^{\nat}}}S(T)^{\nat}$ 
and 
${\mathfrak E}_{\bul}
:=\ol{\mathfrak E}_{\bul}\times_{\ol{S(T)^{\nat}}}S(T)^{\nat}$.   
Let $\ol{F}{}^{\bul}$ be the quasi-coherent 
${\cal O}_{Y_{\os{\circ}{T}_0\bul}/\os{\circ}{T}}$-module obtained by $\ol{F}$. 
Let $(\ol{\cal F}{}^{\bul},\ol{\nabla})$ 
be the crystal of ${\cal O}_{\ol{\mathfrak E}_{\bul}}$-module  
with integrable connection corresponding to 
the log crystal $\ol{F}$: 
\begin{equation*} 
\ol{\nabla}\col \ol{\cal F}{}^{\bul}\lo 
\ol{\cal F}{}^{\bul}\otimes_{{\cal O}_{\ol{\cal Q}{}^{\rm ex}_{\bul}}}
\Om^1_{\ol{\cal Q}{}^{\rm ex}_{\bul}/\os{\circ}{T}}.
\tag{6.0.1}\label{eqn:olbc}
\end{equation*}  
Set $({\cal F}^{\bul},{\nabla}):=
(\ol{\cal F}{}^{\bul},\ol{\nabla})
\otimes_{{\cal O}_{{\mathfrak D}(\ol{S(T)^{\nat}})}}{\cal O}_{S(T)^{\nat}}$:  
\begin{equation*} 
\nabla\col {\cal F}^{\bul}\lo 
{\cal F}^{\bul}\otimes_{{\cal O}_{{\cal Q}^{\rm ex}_{\bul}}}
\Om^1_{{\cal Q}^{\rm ex}_{\bul}/\os{\circ}{T}}. 
\tag{6.0.2}\label{eqn:olbab}
\end{equation*}  
As in (\ref{eqn:olfmpc}) we indeed have the connection $\nabla$.

\par 
For $U=S(T)^{\nat}$ or $U=\os{\circ}{T}$, 
we have the cosimplicial log de Rham complexes 
$\ol{\cal F}^{\bul}
\otimes_{{\cal O}_{\ol{\cal Q}{}^{\rm ex}_{\bul}}}
\Om^{\bul}_{\ol{\cal Q}{}^{\rm ex}_{\bul}/U}$ and 
${\cal F}^{\bul}
\otimes_{{\cal O}_{{\cal Q}^{\rm ex}_{\bul}}}
\Om^{\bul}_{{\cal Q}^{\rm ex}_{\bul}/U}$ as in \S\ref{sec:ldfc}.
\par 
Let 
\begin{equation*}  
\eps_{Y_{\os{\circ}{T}_0}/S(T)^{\nat}/\os{\circ}{T}}\col 
((Y_{\os{\circ}{T}_0}/S(T)^{\nat})_{\rm crys},
{\cal O}_{Y_{\os{\circ}{T}_0}/S(T)^{\nat}})\lo 
((Y_{\os{\circ}{T}_0}/\os{\circ}{T})_{\rm crys},
{\cal O}_{Y_{\os{\circ}{T}_0}/\os{\circ}{T}})
\tag{6.0.3}\label{eqn:tnspi}  
\end{equation*} 
be the morphism of ringed topoi forgetting the log structure of $S(T)^{\nat}$. 
Set 
$F:=\eps_{Y_{\os{\circ}{T}_0}/S(T)^{\nat}/\os{\circ}{T}}^*(\ol{F})$.  
Let 
\begin{align*}
\pi_{\rm zar}\col 
((\os{\circ}{Y}_{T_0\bul})_{\rm zar},g^{-1}_{\bul}({\cal O}_T))
\lo 
((\os{\circ}{Y}_{T_0})_{\rm zar},g^{-1}({\cal O}_T))
\tag{6.0.4}\label{eqn:ispi} 
\end{align*}
be the natural morphism of ringed topoi. 
In \cite[(1.7.27.4)]{nb} we have proved that 
there exists the following triangle 
\begin{align*} 
\lo 
R\pi_{{\rm zar}*}({\cal F}^{\bul}\otimes_{{\cal O}_{{\cal Q}^{\rm ex}_{\bul}}}
{\Om}^{\bul}_{{\cal Q}^{\rm ex}_{\bul}/\os{\circ}{T}})
\lo Ru_{Y_{\os{\circ}{T}_0}/S(T)^{\nat}*}(F) 
\os{N_{\rm zar}}{\lo}Ru_{Y_{\os{\circ}{T}_0}/S(T)^{\nat}*}(F), 
\tag{6.0.5}\label{ali:crtn}
\end{align*}  
where $N_{\rm zar} \col Ru_{Y_{\os{\circ}{T}_0}/S(T)^{\nat}*}(F)
\lo Ru_{Y_{\os{\circ}{T}_0}/S(T)^{\nat}*}(F)$ 
is the monodromy operator defined in [loc.~cit.], which will be recalled in \S\ref{sec:mod}. 
In \cite[(1.7.27)]{nb} we have proved the following:

\begin{prop-defi}[{\bf \cite[(1.7.27)]{nb}}]\label{prop:hkt}
The complex $R\pi_{{\rm zar}*}({\cal F}^{\bul}
\otimes_{{\cal O}_{{\cal Q}^{\rm ex}_{\bul}}}
{\Om}^{\bul}_{{\cal Q}^{\rm ex}_{\bul}/\os{\circ}{T}})$ 
is independent of the choice of 
an open covering of $Y_{\os{\circ}{T}_0}$ and a simplicial immersion 
$Y_{\os{\circ}{T}_0\bul} \os{\sus}{\lo} \ol{\cal Q}_{\bul}$ 
over $\ol{S(T)^{\nat}}$. 
Set 
\begin{align*} 
&\wt{R}u_{Y_{\os{\circ}{T}_0}/\os{\circ}{T}*}(\ol{F})
:=R\pi_{{\rm zar}*}
({\cal F}^{\bul}\otimes_{{\cal O}_{{\cal Q}{}^{{\rm ex}}_{\bul}}}
\Om^{\bul}_{{\cal Q}{}^{{\rm ex}}_{\bul}/\os{\circ}{T}})
\in D^+(g^{-1}({\cal O}_T)). 
\end{align*} 
In \cite{nb} we have called 
$\wt{R}u_{Y_{\os{\circ}{T}_0}/\os{\circ}{T}*}(\ol{F})$ 
the {\it modified crystalline complex} of $\ol{F}$ 
on $Y_{\os{\circ}{T}_0}/S(T)^{\nat}$. 
When $\ol{F}={\cal O}_{Y_{\os{\circ}{T}_0}/\os{\circ}{T}}$, 
we have called 
$\wt{R}u_{Y_{\os{\circ}{T}_0}/\os{\circ}{T}*}({\cal O}_{Y_{\os{\circ}{T}_0}/\os{\circ}{T}})$ 
 the {\it modified crystalline complex} of 
$Y_{\os{\circ}{T}_0}/S(T)^{\nat}/\os{\circ}{T}$.  
\end{prop-defi}

\begin{defi} 
We denote $Rg_*\wt{R}u_{Y_{\os{\circ}{T}_0}/\os{\circ}{T}*}(\ol{F})$ 
by $\wt{R}g_{Y_{\os{\circ}{T}_0}/\os{\circ}{T}*}(\ol{F})$. 
We denote ${\cal H}^q(\wt{R}g_{Y_{\os{\circ}{T}_0}/\os{\circ}{T}*}(\ol{F}))$ 
by 
$\wt{R}{}^qg_{Y_{\os{\circ}{T}_0}/\os{\circ}{T}*}(\ol{F})$. 
\end{defi} 

\begin{defi}\label{defi:mfrt}
(1) Set 
\begin{align*} 
&\wt{R}u_{Y_{\os{\circ}{T}_0}/\os{\circ}{T}*}(\ol{F}\langle u \rangle)
%,P)
:=R\pi_{{\rm zar}*}
({\cal F}^{\bul}\otimes_{{\cal O}_{{\cal Q}^{{\rm ex}}_{\bul}}}
\Om^{\bul}_{{\cal Q}^{{\rm ex}}_{\bul}/\os{\circ}{T}}\langle u \rangle)
%,P)
\in D^+(g^{-1}({\cal O}_T)). 
\end{align*} 
We call $\wt{R}u_{Y_{\os{\circ}{T}_0}/\os{\circ}{T}*}(\ol{F}\langle u \rangle)$ 
the {\it derived PD-Hirsch extension} 
of $\wt{R}u_{Y_{\os{\circ}{T}_0}/\os{\circ}{T}*}(\ol{F})$. 
%,P)$.  
\end{defi}

\par
The following is a starting result for investigating log crystalline cohomological sheaves; 
we see that the complex 
$\wt{R}u_{Y_{\os{\circ}{T}_0}/\os{\circ}{T}*}(\ol{F}\langle u \rangle)$ 
is indeed well-defined. 

\begin{prop}\label{prop:ncfqi}
$(1)$ There exists a canonical isomorphism 
\begin{equation*}
\wt{R}u_{Y_{\os{\circ}{T}_0}/\os{\circ}{T}*}(\ol{F}\langle u \rangle)
\os{\sim}{\lo} Ru_{Y_{\os{\circ}{T}_0}/S(T)^{\nat}*}(F). 
\tag{6.4.1}\label{eqn:exfte}
\end{equation*} 
In particular, $\wt{R}u_{Y_{\os{\circ}{T}_0}/\os{\circ}{T}*}(\ol{F}\langle u \rangle)$ 
is independent of the choice of an open covering of $Y_{\os{\circ}{T}_0}$ and 
a simplicial immersion 
$Y_{\os{\circ}{T}_0\bul} \os{\sus}{\lo} \ol{\cal Q}_{\bul}$ 
over $\ol{S(T)^{\nat}}$. 
The isomorphism {\rm (\ref{eqn:exfte})} is contravariantly functorial. 
\end{prop}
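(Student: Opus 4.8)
The plan is to construct the isomorphism (\ref{eqn:exfte}) first on the chosen embedding system and then to descend, with (\ref{theo:qii}) serving as the essential local input and the standard log crystalline de Rham comparison identifying the target. First I would fix the \v{C}ech diagram $Y_{\os{\circ}{T}_0\bul}\os{\sus}{\lo}\ol{\cal Q}_{\bul}$ used in the definition of both sides, its exactification $Y_{\bul}\os{\sus}{\lo}\ol{\cal Q}{}^{\rm ex}_{\bul}$, and the associated module $({\cal F}^{\bul},\nabla)$. In each simplicial degree the morphism (\ref{ali:uafcui}) gives a morphism of complexes
\[
{\cal F}^{\bul}\otimes_{{\cal O}_{{\cal Q}^{{\rm ex}}_{\bul}}}\Om^{\bul}_{{\cal Q}^{{\rm ex}}_{\bul}/\os{\circ}{T}}\langle U_{S(T)^{\nat}}\rangle \lo {\cal F}^{\bul}\otimes_{{\cal O}_{{\cal Q}^{{\rm ex}}_{\bul}}}\Om^{\bul}_{{\cal Q}^{{\rm ex}}_{\bul}/S(T)^{\nat}},
\]
and since the crystal $\ol{F}$ (equivalently $F=\eps_{Y_{\os{\circ}{T}_0}/S(T)^{\nat}/\os{\circ}{T}}^*(\ol F)$) is a locally nilpotent flat quasi-coherent crystal with respect to $S(T)^{\nat}$ in the sense of \S\ref{sec:ldfc}, Theorem (\ref{theo:qii}) tells us that this morphism is a quasi-isomorphism in every simplicial degree.

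Next I would apply $R\pi_{{\rm zar}*}$. Because both cosimplicial complexes are bounded below and the comparison is a degreewise quasi-isomorphism, the induced map of $R\pi_{{\rm zar}*}$'s is an isomorphism in $D^+(g^{-1}({\cal O}_T))$; this is the usual cohomological-descent (hypercohomology spectral sequence) argument, formally the same device underlying (\ref{prop:hkt}). This yields a canonical isomorphism
\[
\wt{R}u_{Y_{\os{\circ}{T}_0}/\os{\circ}{T}*}(\ol{F}\langle U_{S(T)^{\nat}}\rangle)\os{\sim}{\lo} R\pi_{{\rm zar}*}\bigl({\cal F}^{\bul}\otimes_{{\cal O}_{{\cal Q}^{{\rm ex}}_{\bul}}}\Om^{\bul}_{{\cal Q}^{{\rm ex}}_{\bul}/S(T)^{\nat}}\bigr).
\]
It then remains to identify the right-hand side with $Ru_{Y_{\os{\circ}{T}_0}/S(T)^{\nat}*}(F)$. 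This is precisely the log crystalline de Rham comparison: the log Poincar\'{e} lemma for the exact immersion $Y_{\bul}\os{\sus}{\lo}\ol{\cal Q}{}^{\rm ex}_{\bul}$ over $S(T)^{\nat}$ (valid because ${\cal Q}^{\rm ex}_{\bul}$ is log smooth over $S(T)^{\nat}$), followed by cohomological descent, and I would invoke it in the form already used in \cite{klog1} and \cite{nb}; here one uses that the module with connection attached to $F$ relative to $S(T)^{\nat}$ is exactly ${\cal F}^{\bul}\otimes\Om^{\bul}_{{\cal Q}^{\rm ex}_{\bul}/S(T)^{\nat}}$. Composing the two displayed isomorphisms gives (\ref{eqn:exfte}). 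Independence of the choice of open covering and of the simplicial immersion is then automatic, since the target $Ru_{Y_{\os{\circ}{T}_0}/S(T)^{\nat}*}(F)$ is defined intrinsically in the crystalline topos and carries no such choices; the isomorphism transports this independence to the source.

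For the contravariant functoriality I would take a morphism of the data and, after a simultaneous refinement of the \v{C}ech diagrams, pass to a morphism of embedding systems. Each of the three ingredients is natural: the morphism (\ref{ali:uafcui}) is functorial in $({\cal Q}^{\rm ex},{\cal F},\varphi)$ by construction, the descent isomorphism is functorial by naturality of $R\pi_{{\rm zar}*}$, and the crystalline comparison is functorial by functoriality of crystalline pushforward. The delicate point is the behaviour of $U_{S(T)^{\nat}}$ and of $\Gam_{{\cal O}_T}(L_{S(T)^{\nat}})$ under a morphism $v$ of families of log points: a generator $u$ pulls back by $v^*(u^{[j]})={\rm deg}(v)^{j}u^{[j]}$ through the mapping degree (\ref{defi:ddef}), and the PD-Hirsch differential $d_{\varphi}$ is respected exactly because $\varphi$ sends $u$ to $d\log\tau$ rather than to $\tau$; this is the reason for working with the intrinsic sheaf $U_{S(T)^{\nat}}$ instead of a variable, as explained in the Introduction. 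The main obstacle is therefore not the isomorphism itself, which is a formal combination of (\ref{theo:qii}) with the standard comparison, but checking that these naturality squares commute on the nose across the whole simplicial diagram, i.e. that the chosen refinements and the mapping-degree twist are mutually compatible; I expect this to be verifiable, but to require the careful bookkeeping of signs and degrees set up in \S\ref{sec:snclv} and \S\ref{sec:stpd}.
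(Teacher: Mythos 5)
Your proposal is correct and follows essentially the same route as the paper: the degreewise quasi-isomorphism from (\ref{theo:qii}), pushed forward by $R\pi_{{\rm zar}*}$, combined with the log Poincar\'{e} lemma and cohomological descent identifying $R\pi_{{\rm zar}*}({\cal F}^{\bul}\otimes\Om^{\bul}_{{\cal Q}^{\rm ex}_{\bul}/S(T)^{\nat}})$ with $Ru_{Y_{\os{\circ}{T}_0}/S(T)^{\nat}*}(F)$. The only slight divergence is that for independence the paper explicitly builds the refinement of two embedding systems and checks the commutative square (\ref{cd:sotcq}) relating the two comparison maps, whereas you infer it from the intrinsic nature of the target; the paper's check is the rigorous form of exactly that observation.
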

\begin{proof}
By the cohomological descent and the log Poincar\'{e} lemma, 
we have the following formula 
\begin{equation*} 
Ru_{Y_{\os{\circ}{T}_0}/S(T)^{\nat}*}(F)
=R\pi_{{\rm zar}*}({\cal F}^{\bul}\otimes_{{\cal O}_{{\cal Q}^{\rm ex}_{\bul}}}
\Om^{\bul}_{{\cal Q}^{\rm ex}_{\bul}/S(T)^{\nat}}).
\tag{6.4.2}\label{eqn:exsfpte}
\end{equation*}
By (\ref{theo:qii}) we see that the natural morphism 
\begin{equation*} 
{\cal F}^{\bul}\otimes_{{\cal O}_{{\cal Q}{}^{{\rm ex}}_{\bul}}}
\Om^{\bul}_{{\cal Q}{}^{{\rm ex}}_{\bul}/\os{\circ}{T}}\langle u \rangle
\lo 
{\cal F}^{\bul}\otimes_{{\cal O}_{{\cal Q}{}^{{\rm ex}}_{\bul}}}
\Om^{\bul}_{{\cal Q}{}^{{\rm ex}}_{\bul}/S(T)^{\nat}}
\tag{6.4.3}\label{eqn:pfbu}
\end{equation*} 
is a quasi-isomorphism. 
By taking the derived direct image $R\pi_{{\rm zar}*}$ of (\ref{eqn:pfbu}),  
we have the isomorphism (\ref{eqn:exfte}). 
\par 
Let $Y'_{\os{\circ}{T}_0\bul}$ be the \v{C}ech diagram of 
another affine open covering of $Y_{\os{\circ}{T}_0}$ 
and let $Y'_{\os{\circ}{T}_0\bul}\os{\sus}{\lo} \ol{\cal Q}{}'_{\bul}$ 
be another simplicial immersion into 
a log smooth scheme over $\ol{S(T)^{\nat}}$.  
Set $\ol{\cal Q}{}''_{\bul}
:=\ol{\cal Q}{}_{\bul}\times_{\ol{S(T)^{\nat}}}\ol{\cal Q}{}'_{\bul}$. 
Then, by considering the refinement of these affine open coverings, 
we see that there exists the \v{C}ech diagram $Y''_{\os{\circ}{T}_0\bul}$ 
of an affine open covering of $Y_{\os{\circ}{T}_0}$ fitting into 
the following commutative diagram 
\begin{equation*} 
\begin{CD} 
Y_{\os{\circ}{T}_0\bul} @>{\subset}>> \ol{\cal Q}_{\bul}\\ 
@AAA @AAA \\
Y''_{\os{\circ}{T}_0\bul} @>{\subset}>> \ol{\cal Q}{}''_{\bul}\\ 
@VVV @VVV \\
Y'_{\os{\circ}{T}_0\bul} @>{\subset}>> \ol{\cal Q}{}'_{\bul}. 
\end{CD} 
\end{equation*}
Hence we may assume that there exists the following commutative diagram: 
\begin{equation*} 
\begin{CD} 
Y_{\os{\circ}{T}_0\bul} @>{\subset}>> \ol{\cal Q}_{\bul}\\ 
@VVV @VVV \\
Y'_{\os{\circ}{T}_0\bul} @>{\subset}>> \ol{\cal Q}{}'_{\bul}. 
\end{CD} 
\tag{6.4.4}\label{cd:soq}
\end{equation*}
Let $({\cal F}'{}^{\bul},\nabla')$ be an analogous connection to 
$({\cal F}^{\bul},\nabla)$ for the immersion 
$Y'_{\os{\circ}{T}_0\bul} \os{\subset}{\lo} \ol{\cal Q}{}'_{\bul}$. 
The diagram (\ref{cd:soq}) gives us the following commutative diagram 
\begin{equation*} 
\begin{CD} 
{\cal F}^{\bul}\otimes_{{\cal O}_{{\cal Q}{}^{{\rm ex}}_{\bul}}}
\Om^{\bul}_{{\cal Q}{}^{{\rm ex}}_{\bul}/\os{\circ}{T}}\langle u \rangle 
@>{\sim}>> {\cal F}^{\bul}\otimes_{{\cal O}_{{\cal Q}{}^{{\rm ex}}_{\bul}}}
\Om^{\bul}_{{\cal Q}{}^{{\rm ex}}_{\bul}/S(T)^{\nat}}\\ 
@AAA @AAA \\
{\cal F}'{}^{\bul}\otimes_{{\cal O}_{{\cal Q}_{\bul}'{}^{{\rm ex}}}}
\Om^{\bul}_{{\cal Q}_{\bul}'{}^{{\rm ex}}/\os{\circ}{T}}\langle u \rangle 
@>{\sim}>> {\cal F}'{}^{\bul}\otimes_{{\cal O}_{{\cal Q}_{\bul}'{}^{{\rm ex}}}}
\Om^{\bul}_{{\cal Q}_{\bul}'{}^{{\rm ex}}/S(T)^{\nat}}. 
\end{CD} 
\tag{6.4.5}\label{cd:sotcq}
\end{equation*}
The commutative diagram $R\pi_{{\rm zar}*}((\ref{cd:sotcq}))$ 
tells us the independence of 
$\wt{R}u_{Y_{\os{\circ}{T}_0}/\os{\circ}{T}*}(\ol{F}\langle u \rangle)$. 
\par 
We leave the proof of the contravariant functoriality of the isomorphism 
(\ref{eqn:exfte}) to the reader. 
\end{proof}

Let us consider the complex 
\begin{align*} 
R\pi_{{\rm zar}*}({\cal F}^{\bul}\otimes_{{\cal O}_{{\cal Q}^{{\rm ex}}_{\bul}}}
\Om^{\bul}_{{\cal Q}^{{\rm ex}}_{\bul}/\os{\circ}{T}})\langle u \rangle^L:=
R\pi_{{\rm zar}*}({\cal F}^{\bul}\otimes_{{\cal O}_{{\cal Q}^{{\rm ex}}_{\bul}}}
\Om^{\bul}_{{\cal Q}^{{\rm ex}}_{\bul}/\os{\circ}{T}})\langle U_{S(T)^{\nat}} \rangle^L
\in {\rm Ho}({\cal C}_{(Y_{\rm zar},g^{-1}({\cal O}_T))}), 
\tag{6.4.6}\label{ali:sytcq}
\end{align*} 
which is a special case of the complex in (\ref{ali:mld}). 

\begin{prop}\label{prop:dpdr}
There exists a canonical isomorphism 
\begin{equation*}
\wt{R}u_{Y_{\os{\circ}{T}_0}/\os{\circ}{T}*}(\ol{F}\langle u \rangle)
\os{\sim}{\lo} R\pi_{{\rm zar}*}({\cal F}^{\bul}\otimes_{{\cal O}_{{\cal Q}^{{\rm ex}}_{\bul}}}
\Om^{\bul}_{{\cal Q}^{{\rm ex}}_{\bul}/\os{\circ}{T}})\langle u \rangle^L
\tag{6.5.1}\label{eqn:exdlfte}
\end{equation*} 
in $D^+(g^{-1}({\cal O}_T))$. Consequently the complex {\rm (\ref{ali:sytcq})} 
is well-defined. 
\end{prop} 
\begin{proof} 
Let ${\cal F}^{\bul}\otimes_{{\cal O}_{{\cal Q}^{{\rm ex}}_{\bul}}}
\Om^{\bul}_{{\cal Q}^{{\rm ex}}_{\bul}/\os{\circ}{T}}\lo I^{\bul \bul \bul}$ 
be the Godement resolution of ${\cal F}^{\bul}\otimes_{{\cal O}_{{\cal Q}^{{\rm ex}}_{\bul}}}
\Om^{\bul}_{{\cal Q}^{{\rm ex}}_{\bul}/\os{\circ}{T}}$. 
Then 
\begin{align*} 
R\pi_{{\rm zar}*}({\cal F}^{\bul}\otimes_{{\cal O}_{{\cal Q}^{{\rm ex}}_{\bul}}}
\Om^{\bul}_{{\cal Q}^{{\rm ex}}_{\bul}/\os{\circ}{T}})\langle U_{S(T)^{\nat}} \rangle^L
=U_{S(T)^{\nat}}\otimes_{{\cal O}_T}\pi_{{\rm zar}*}(s(I^{\bul \bul \bul}))=
\pi_{{\rm zar}*}(U_{S(T)^{\nat}}\otimes_{{\cal O}_T}s(I^{\bul \bul \bul})). 
\end{align*} 
Here we have used (\ref{ali:fim}). 
The complex $U_{S(T)^{\nat}}\otimes_{{\cal O}_T}I^{\bul \bul \bul}$ 
is a $\pi_{{\rm zar}*}$-acyclic resolution of 
${\cal F}^{\bul}\otimes_{{\cal O}_{{\cal Q}^{{\rm ex}}_{\bul}}}
\Om^{\bul}_{{\cal Q}^{{\rm ex}}_{\bul}/\os{\circ}{T}}$ because 
$R^q\pi_{{\rm zar}*}(U_{S(T)^{\nat}}\otimes_{{\cal O}_T}I^{\bul \bul \bul})
=U_{S(T)^{\nat}}\otimes_{{\cal O}_T}R^q\pi_{{\rm zar}*}(I^{\bul \bul \bul})=0$ $(q>0)$. 
By the definition of $\wt{R}u_{Y_{\os{\circ}{T}_0}/\os{\circ}{T}*}(\ol{F}\langle u \rangle)$ and 
this acyclicity, 
\begin{align*} 
\wt{R}u_{Y_{\os{\circ}{T}_0}/\os{\circ}{T}*}(\ol{F}\langle u \rangle)
=R\pi_{{\rm zar}*}
({\cal F}^{\bul}\otimes_{{\cal O}_{{\cal Q}^{{\rm ex}}_{\bul}}}
\Om^{\bul}_{{\cal Q}^{{\rm ex}}_{\bul}/\os{\circ}{T}}\langle u \rangle)=
\pi_{{\rm zar}*}(U_{S(T)^{\nat}}\otimes_{{\cal O}_T}s(I^{\bul \bul \bul})). 
\end{align*} 
%Let $J^{\bul \bul}$ be the Godement resolution of 
%${\cal F}^{\bul}\otimes_{{\cal O}_{{\cal Q}^{{\rm ex}}_{\bul}}}
%\Om^{\bul}_{{\cal Q}^{{\rm ex}}_{\bul}/\os{\circ}{T}}\langle U_{S(T)^{\nat}} \rangle$. 
%Then 
%\begin{align*} 
%R\pi_{{\rm zar}*}
%({\cal F}^{\bul}\otimes_{{\cal O}_{{\cal Q}^{{\rm ex}}_{\bul}}}
%\Om^{\bul}_{{\cal Q}^{{\rm ex}}_{\bul}/\os{\circ}{T}}\langle U_{S(T)^{\nat}} \rangle)=
%\pi_{{\rm zar}*}(J^{\bul \bul}). 
%\end{align*} 
\end{proof}

\begin{theo}[{\bf K\"{u}nneth formula}]\label{theo:aoys}
$(1)$ Let $Z/S$ be an analogous object to $Y/S$. 
Assume that $Y_{\os{\circ}{T}_0}$ and $Z_{\os{\circ}{T}_0}$ are quasi-compact. 
Let $\ol{G}$ be an analogous ${\cal O}_{Z_{{\os{\circ}{T}_0}/\os{\circ}{T}}}$-module 
to $\ol{F}$ and set $G:=\eps_{Z_{\os{\circ}{T}_0}/S(T)^{\nat}/\os{\circ}{T}}^*(\ol{G})$.  
Set $YZ_{\os{\circ}{T}_0}:=Y_{\os{\circ}{T}_0}\times_{\os{\circ}{T}_0}
Z_{\os{\circ}{T}_0}$ and 
let $p\col YZ_{\os{\circ}{T}_0}\lo Y_{\os{\circ}{T}_0}$ and 
$q\col YZ_{\os{\circ}{T}_0}\lo Z_{\os{\circ}{T}_0}$ be the projections 
and let $h\col YZ_{\os{\circ}{T}_0}\lo \os{\circ}{T}$ be the structural morphism. 
Then there exists a canonical isomorphism 
\begin{align*}
&
Lp^*(\wt{R}u_{Y_{\os{\circ}{T}_0}/\os{\circ}{T}*}(\ol{F}\langle u \rangle))
\otimes^L_{h^{-1}({\cal O}_T)}
Lq^*(\wt{R}u_{Z_{\os{\circ}{T}_0}/\os{\circ}{T}*}(\ol{G}\langle u \rangle))
\os{\sim}{\lo} \tag{6.6.1}\label{eqn:exuufte}\\
& \wt{R}u_{YZ_{\os{\circ}{T}_0}/\os{\circ}{T}*}((p^*_{\rm crys}(\ol{F})
\otimes_{{\cal O}_{YZ_{\os{\circ}{T}_0}/\os{\circ}{T}}}
q^*_{\rm crys}(\ol{G}))\langle u \rangle)
\end{align*} 
fitting into the following commutative diagram$:$
\begin{equation*}
\begin{CD} 
Lp^*(\wt{R}u_{Y_{\os{\circ}{T}_0}/\os{\circ}{T}*}(\ol{F}\langle u \rangle))
\otimes^L_{h^{-1}({\cal O}_T)}
Lq^*(\wt{R}u_{Z_{\os{\circ}{T}_0}/\os{\circ}{T}*}(\ol{G}\langle u \rangle))
@>{\sim}>> 
\\
@V{Lp^*(\ref{eqn:exfte})\otimes^LLq^*(\ref{eqn:exfte})}V{\simeq}V \\
Lp^*(Ru_{Y_{\os{\circ}{T}_0}/S(T)^{\nat}*}(F))
\otimes^L_{h^{-1}({\cal O}_T)}
Lq^*(Ru_{Z_{\os{\circ}{T}_0}/S(T)^{\nat}*}(G))
@>{\sim}>>  
\end{CD} 
\tag{6.6.2}\label{eqn:exfcte}
\end{equation*} 
\begin{equation*}
\begin{CD} 
\wt{R}u_{YZ_{\os{\circ}{T}_0}/\os{\circ}{T}*}((p^*_{\rm crys}(\ol{F})
\otimes_{{\cal O}_{YZ_{\os{\circ}{T}_0}/\os{\circ}{T}}}
q^*_{\rm crys}(\ol{G}))\langle u \rangle)\\
@V{\simeq}V{(\ref{eqn:exfte})}V \\
Ru_{YZ_{\os{\circ}{T}_0}/S(T)^{\nat}*}(p^*_{\rm crys}(F)
\otimes_{{\cal O}_{YZ_{\os{\circ}{T}_0}/\os{\circ}{T}}}
q^*_{\rm crys}(G)). 
\end{CD} 
\end{equation*} 
\par 
$(2)$ Let the notations be as in $(1)$. 
Assume that $Z_{\os{\circ}{T}_0}=Y_{\os{\circ}{T}_0}$. 
There exists the  following morphism 
\begin{align*}
\cup \col \wt{R}u_{Y_{\os{\circ}{T}_0}/\os{\circ}{T}*}
(\ol{F}\langle u \rangle)
\otimes^L_{g^{-1}({\cal O}_T)}
\wt{R}u_{Y_{\os{\circ}{T}_0}/\os{\circ}{T}*}
(\ol{G}\langle u \rangle)
\lo 
\wt{R}u_{Y_{\os{\circ}{T}_0}/\os{\circ}{T}*}
((\ol{F}\otimes_{{\cal O}_{Y_{\os{\circ}{T}_0}
/\os{\circ}{T}}}\ol{G})\langle u \rangle)
\tag{6.6.3}\label{eqn:efte}\\
\end{align*} 
fitting into the following commutative diagram$:$
\begin{equation*}
\begin{CD} 
\wt{R}u_{Y_{\os{\circ}{T}_0}/\os{\circ}{T}*}(\ol{F}\langle u \rangle)
\otimes^L_{g^{-1}({\cal O}_T)}\wt{R}u_{Y_{\os{\circ}{T}_0}/\os{\circ}{T}*}
(\ol{G}\langle u \rangle)@>{\cup}>> 
\wt{R}u_{Y_{\os{\circ}{T}_0}/\os{\circ}{T}*}((\ol{F}\otimes_{{\cal O}_{Y_{\os{\circ}{T}_0}}}
\ol{G})\langle u \rangle)\\
@V{(\ref{eqn:exfte})\otimes^L(\ref{eqn:exfte})}V{\simeq}V 
@V{\simeq}V{(\ref{eqn:exfte})}V \\
Ru_{Y_{\os{\circ}{T}_0}/\os{\circ}{T}*}(F)
\otimes^L_{g^{-1}({\cal O}_T)}
Ru_{Y_{\os{\circ}{T}_0}/\os{\circ}{T}*}(G)@>{\cup}>>  
Ru_{Y_{\os{\circ}{T}_0}/\os{\circ}{T}*}(F\otimes_{{\cal O}_{Y_{\os{\circ}{T}_0}}}G). 
\end{CD} 
\tag{6.6.4}\label{eqn:exfacbte}
\end{equation*} 
Here the lower $\cup$ is the cup product of log crystalline complexes. 
\end{theo} 
\begin{proof} 
(1): Let $W$ be $Y_{\os{\circ}{T}_0}$, $Z_{\os{\circ}{T}_0}$ or 
$YZ_{\os{\circ}{T}_0}$. Let $g_{W\bul}\col W_{\bul}\lo T$ 
and $g_W\col W\lo T$ be the structural morphism. 
Let $\pi_{W,{\rm zar}}\col (W_{\bul})_{\rm zar}\lo W_{\rm zar}$ be the 
morphism (\ref{eqn:ispi}).   
Consider the following commutative diagram: 
\begin{equation*} 
\begin{CD} 
((Y_{\os{\circ}{T}_0\bul})_{\rm zar},g^{-1}_{Y_{\os{\circ}{T}_0\bul}}({\cal O}_T))
@<{p_{\bul}}<<(((YZ_{\os{\circ}{T}_0\bul})_{\rm zar},g^{-1}_{YZ_{\os{\circ}{T}_0\bul}}({\cal O}_T)) 
@>{q_{\bul}}>>
((Z_{\os{\circ}{T}_0\bul})_{\rm zar},g^{-1}_{Z_{\os{\circ}{T}_0\bul}}({\cal O}_T)) \\ 
@V{\pi_{Y,{\rm zar}}}VV @V{\pi_{YZ,{\rm zar}}}VV @V{\pi_{Z,{\rm zar}}}VV\\
((Y_{\os{\circ}{T}_0})_{\rm zar},g^{-1}_{Y_{\os{\circ}{T}_0}}({\cal O}_T))
@<{p}<<((YZ_{\os{\circ}{T}_0})_{\rm zar},g^{-1}_{YZ_{\os{\circ}{T}_0}}({\cal O}_T)) 
@>{q}>>((Z_{\os{\circ}{T}_0})_{\rm zar},g^{-1}_{Z_{\os{\circ}{T}_0}}({\cal O}_T)). 
\end{CD} 
\tag{6.6.5}\label{cd:sotzcq}
\end{equation*}
Let ${\cal R}^{\rm ex}_{\bul}$ and ${\cal G}^{\bul}$ be 
the corresponding objects to 
${\cal Q}^{\rm ex}_{\bul}$ and ${\cal F}^{\bul}$, respectively, 
for $Z$ and $\ol{G}$ . 
Set ${\cal QR}^{\rm ex}_{\bul}:=
{\cal Q}^{\rm ex}_{\bul}\times_{\os{\circ}{T}}{\cal R}^{\rm ex}_{\bul}$ 
and 
let $h_{\bul} \col {\cal QR}^{\rm ex}_{\bul}\lo \os{\circ}{T}$ be the structural morphism. 
To prove the existence of the morphism (\ref{eqn:exuufte}), it suffices to prove that 
there exists the following morphism by using the relations 
$p \circ \pi_{YZ,{\rm zar}}=\pi_{Y,{\rm zar}}\circ p_{\bul}$ 
and $q \circ \pi_{YZ,{\rm zar}}=\pi_{Z,{\rm zar}}\circ q_{\bul}$
in a standard way: 
\begin{align*} 
&p_{\bul}^*({\cal F}^{\bul}\otimes_{{\cal O}_{{\cal Q}^{{\rm ex}}_{\bul}}}
\Om^{\bul}_{{\cal Q}^{{\rm ex}}_{\bul}/\os{\circ}{T}}\langle u \rangle)
\otimes_{h^{-1}_{\bul}({\cal O}_T)}
q_{\bul}^*({\cal G}^{\bul}\otimes_{{\cal O}_{{\cal R}^{{\rm ex}}_{\bul}}}
\Om^{\bul}_{{\cal R}^{{\rm ex}}_{\bul}/\os{\circ}{T}}\langle u \rangle)
\tag{6.6.6}\label{ali:soqrqq}\\
& \lo 
{{\cal O}_{{\cal QR}^{{\rm ex}}_{\bul}}}\otimes_{{\cal O}_{{\cal Q}^{{\rm ex}}_{\bul}}}
{\cal F}^{\bul}\otimes_{{\cal O}_{{\cal QR}^{{\rm ex}}_{\bul}}}
{{\cal O}_{{\cal QR}^{{\rm ex}}_{\bul}}}\otimes_{{\cal O}_{{\cal R}^{{\rm ex}}_{\bul}}}
{\cal G}^{\bul}
\otimes_{\otimes_{{\cal O}_{{\cal QR}^{{\rm ex}}_{\bul}}}}
\Om^{\bul}_{{\cal QR}^{{\rm ex}}_{\bul}/\os{\circ}{T}}\langle u \rangle.
\end{align*} 
Using the morphism 
$$p_{\bul}^*(\Om^i_{{\cal Q}^{{\rm ex}}_{\bul}/\os{\circ}{T}})
\otimes_{{\cal O}_{{\cal QR}^{{\rm ex}}_{\bul}}}
q_{\bul}^*(\Om^j_{{\cal R}^{{\rm ex}}_{\bul}/\os{\circ}{T}})
\lo
\Om^{i+j}_{{\cal QR}^{\rm ex}_{\bul}/\os{\circ}{T}} \quad (i,j\in {\mab N})$$  
and the multiplicative structure 
\begin{align*} 
{\cal O}_T\langle u \rangle 
\otimes_{{\cal O}_T}{\cal O}_T\langle u\rangle \owns 
u^{[i]}\otimes u^{[i']}\lom 
\begin{pmatrix}
i+i' \\
i
\end{pmatrix}u^{[i+i']}\in {\cal O}_T\langle u\rangle. 
\tag{6.6.7}\label{eqn:dups}
\end{align*} 
of $\Gam_{{\cal O}_T}(U_{S(T)^{\nat}})$, 
we have the morphism (\ref{ali:soqrqq}). 
It is clear that 
the diagram (\ref{eqn:exfcte}) with the replacement of 
the upper horizontal $\os{\sim}{\lo}$ by $\lo$ 
is commutative. 
Because we see that the lower horizontal morphism 
in (\ref{eqn:exfcte}) is an isomorphism by 
local calculations as in \cite[p.~379]{bb}, 
the morphism 
(\ref{eqn:exuufte}) is an isomorphism. 
(Note that because (\ref{eqn:dups}) is not an isomorphism of ${\cal O}_T$-modules at all, 
we cannot see that (\ref{eqn:exuufte}) is an isomorphism only by local calculations for 
the source and the target of the morphism (\ref{eqn:exuufte}).)
\par 
(2): 
Applying the derived pull-back $L\Del^*$ of the diagonal immersion 
$\Del \col Y_{\os{\circ}{T}_0}
\os{\sus}{\lo} YY_{\os{\circ}{T}_0}:=
Y_{\os{\circ}{T}_0}\times_{\os{\circ}{T}_0}Y_{\os{\circ}{T}_0}$ over $\os{\circ}{T}$
to  the morphism (\ref{eqn:exuufte}), 
we obtain the following isomorphism 
\begin{align*}
&
\wt{R}u_{Y_{\os{\circ}{T}_0}/\os{\circ}{T}*}(\ol{F}\langle u \rangle))
\otimes^L_{g^{-1}({\cal O}_T)}
\wt{R}u_{Y_{\os{\circ}{T}_0}/\os{\circ}{T}*}(\ol{G}\langle u \rangle))
\os{\sim}{\lo} \tag{6.6.8}\label{eqn:exusfte}\\
& L\Del^*\wt{R}u_{YY_{\os{\circ}{T}_0}/\os{\circ}{T}*}((p^*_{\rm crys}(\ol{F})
\otimes_{{\cal O}_{YY_{\os{\circ}{T}_0}/\os{\circ}{T}}}
q^*_{\rm crys}(\ol{G}))\langle u \rangle)\\
&= L\Del^*R\pi_{YY_{\os{\circ}{T}_0}*}
({{\cal O}_{{\cal QQ}^{{\rm ex}}_{\bul}}}\otimes_{{\cal O}_{{\cal Q}^{{\rm ex}}_{\bul}}}
{\cal F}^{\bul}\otimes_{{\cal O}_{{\cal Q}^{{\rm ex}}_{\bul}}}{\cal G}^{\bul}
\otimes_{{{\cal O}_{{\cal QQ}^{{\rm ex}}_{\bul}}}}
\Om^{\bul}_{{\cal QQ}^{{\rm ex}}_{\bul}/\os{\circ}{T}}\langle u \rangle). 
\end{align*} 
Consider the following commutative diagram of topoi: 
\begin{equation*}
\begin{CD}
Y_{\os{\circ}{T}_0,\bul,{\rm zar}}@>{\Del_{\bul}}>> YY_{\os{\circ}{T}_0,\bul,\rm zar}\\
@V{\pi_{Y_{\os{\circ}{T}_0}}}VV  @VV{\pi_{YY_{\os{\circ}{T}_0}}}V\\
Y_{\os{\circ}{T}_0,{\rm zar}}@>{\Del}>> 
YY_{\os{\circ}{T}_0,{\rm zar}}.
\end{CD}
\tag{6.6.9}\label{cd:coymp}
\end{equation*}
Using this diagram and the flatness of ${\cal F}^{\bul}$ over 
$g^{-1}({\cal O}_T)$, we obtain the following morphism as usual: 
\begin{align*}
&L\Del^*R\pi_{YY_{\os{\circ}{T}_0}*}
({{\cal O}_{{\cal QQ}^{{\rm ex}}_{\bul}}}\otimes_{{\cal O}_{{\cal Q}^{{\rm ex}}_{\bul}}}
{\cal F}^{\bul}\otimes_{{\cal O}_{{\cal Q}^{{\rm ex}}_{\bul}}}{\cal G}^{\bul}
\otimes_{{\cal O}_{{\cal QQ}^{{\rm ex}}_{\bul}}}
\Om^{\bul}_{{\cal QQ}^{{\rm ex}}_{\bul}/\os{\circ}{T}}\langle u \rangle)
\tag{6.6.10}\label{eqn:exusafte}\\
\lo 
&R\pi_{Y_{\os{\circ}{T}_0}*}L\Del^*_{\bul}
({{\cal O}_{{\cal QQ}^{{\rm ex}}_{\bul}}}\otimes_{{\cal O}_{{\cal Q}^{{\rm ex}}_{\bul}}}
{\cal F}^{\bul}\otimes_{{\cal O}_{{\cal Q}^{{\rm ex}}_{\bul}}}{\cal G}^{\bul}
\otimes_{{\cal O}_{{\cal QQ}^{{\rm ex}}_{\bul}}}
\Om^{\bul}_{{\cal QQ}^{{\rm ex}}_{\bul}/\os{\circ}{T}}\langle u \rangle)\\
&=
R\pi_{Y_{\os{\circ}{T}_0}*}({\cal F}^{\bul}\otimes_{{\cal O}_{{\cal Q}^{{\rm ex}}_{\bul}}}{\cal G}^{\bul}
\otimes_{{\cal O}_{{\cal Q}^{{\rm ex}}_{\bul}}}
\Om^{\bul}_{{\cal Q}^{{\rm ex}}_{\bul}/\os{\circ}{T}}\langle u \rangle). 
\end{align*} 
The composite morphism of the morphisms (\ref{eqn:exusfte}) and (\ref{eqn:exusafte}) 
gives us the desired morphism (\ref{eqn:efte}).
\end{proof}

\par 
Now let $X/S$ be an SNCL scheme 
and let $f\col X_{\os{\circ}{T}_0}\lo S(T)^{\nat}$ be the 
structural morphism. 
Let $X_{\os{\circ}{T}_0\bul}$ be the simplicial log scheme 
obtained by an affine open covering of $X_{\os{\circ}{T}_0}$. 
Then, by (\ref{prop:xbn}) (1), 
we have a simplicial immersion 
$X_{\os{\circ}{T}_0\bul} \os{\sus}{\lo} \ol{\cal P}_{\bul}$ into 
a simplicial log smooth scheme over $\ol{S(T)^{\nat}}$. 
Then, by taking the exactification of this simplicial immersion, 
we have the simplicial immersion 
$X_{\os{\circ}{T}_0\bul} \os{\sus}{\lo} \ol{\cal P}{}^{\rm ex}_{\bul}$.  
Let $\ol{\mathfrak D}_{\bul}$ be the log PD-envelope of 
this immersion over $(\os{\circ}{T},{\cal J},\del)$. 
Set ${\cal P}^{\rm ex}_{\bul}
:=\ol{\cal P}{}^{\rm ex}_{\bul}\times_{\ol{S(T)^{\nat}}}S(T)^{\nat}$, 
${\mathfrak D}_{\bul}
:=\ol{\mathfrak D}_{\bul}\times_{\ol{S(T)^{\nat}}}S(T)^{\nat}$. 
\par 
Let $E$ be a flat quasi-coherent crystal of 
${\cal O}_{\os{\circ}{X}_{\os{\circ}{T}_0}/\os{\circ}{T}}$-modules.  
Let $E^{\bul}$ be the crystal of 
${\cal O}_{\os{\circ}{X}_{T_0\bul}/\os{\circ}{T}}$-modules 
obtained by $E$. 
Let $(\ol{\cal E}{}^{\bul},\ol{\nabla})$ 
($\ol{\nabla} \col \ol{\cal E}{}^{\bul}\lo \ol{\cal E}{}^{\bul}
\otimes_{{\cal O}_{\ol{\cal P}{}^{\rm ex}_{\bul}}}
\Om^1_{\ol{\cal P}{}^{\rm ex}_{\bul}/\os{\circ}{T}}$) 
be the quasi-coherent 
${\cal O}_{\ol{\mathfrak D}_{\bul}}$-module 
with integrable connection corresponding to 
$\eps^*_{X_{\os{\circ}{T}_0}/\os{\circ}{T}}(E^{\bul})$. 
Set 
$({\cal E}^{\bul},\nabla)
=
(\ol{\cal E}{}^{\bul},\ol{\nabla})
\otimes_{{\cal O}_{\ol{\mathfrak D}_{\bul}}}
{\cal O}_{{\mathfrak D}_{\bul}}$.  
 For $U=S(T)^{\nat}$ or $U=\os{\circ}{T}$, 
we have the log de Rham complexes 
$\ol{\cal E}{}^{\bul}
\otimes_{{\cal O}_{\ol{\cal P}{}^{\rm ex}_{\bul}}}
\Om^{\bul}_{\ol{\cal P}{}^{\rm ex}_{\bul}/U}$, 
${\cal E}^{\bul}\otimes_{{\cal O}_{{\cal P}^{\rm ex}_{\bul}}}
\Om^{\bul}_{{\cal P}^{\rm ex}_{\bul}/U}$, 
${\cal E}^{\bul}\otimes_{{\cal O}_{{\cal P}^{\rm ex}_{\bul}}}
\Om^{\bul}_{{\cal P}^{\rm ex}_{\bul \ul{\lam}}/U}$
and 
${\cal E}^{\bul}\otimes_{{\cal O}_{{\cal P}^{\rm ex}}}
\Om^{\bul}_{{\cal P}^{{\rm ex},(m)}_{\bul}/U}$ 
for $m\in{\mab Z}_{\geq 0}$ as in the previous section.
\par 
Let 
\begin{align*}
\pi_{{\rm zar}}\col 
((\os{\circ}{X}_{T_0\bul})_{\rm zar},
f^{-1}_{\bul}({\cal O}_T))
\lo 
((\os{\circ}{X}_{T_0})_{\rm zar},f^{-1}({\cal O}_T))
\end{align*}
be the natural morphism of ringed topoi. 
\par 
Let $\eps_{X^{(m)}_{\os{\circ}{T}_0}/\os{\circ}{T}_0} \col 
X^{(m)}_{\os{\circ}{T}_0} \lo \os{\circ}{X}{}^{(m)}_{T_0}$ 
be the morphism forgetting the log structure of 
$X^{(m)}_{\os{\circ}{T}_0}/S_{\os{\circ}{T}_0}$.  
For $U=S(T)^{\nat}$ or $\os{\circ}{T}$, let 
\begin{align*}
\eps_{X^{(m)}_{\os{\circ}{T}_0}/U} 
\col ((X^{(m)}_{\os{\circ}{T}_0}/U)_{\rm crys},
{\cal O}_{X^{(m)}_{\os{\circ}{T}_0}/U})
\lo ((\os{\circ}{X}{}^{(m)}_{T_0}/\os{\circ}{T})_{\rm crys},
{\cal O}_{\os{\circ}{X}{}^{(m)}_{T_0}/\os{\circ}{T}}) 
\end{align*} 
be the morphism forgetting the log structure.     
%Let 
%\begin{align*}
%\eps_{X^{(m)}_{\os{\circ}{T}_0}/S(T)^{\nat}/\os{\circ}{T}} 
%\col ((X^{(m)}_{\os{\circ}{T}_0}/S(T)^{\nat})_{\rm crys},
%{\cal O}_{X^{(m)}_{\os{\circ}{T}_0}/S(T)^{\nat}})
%\lo 
%((X^{(m)}_{\os{\circ}{T}_0}/\os{\circ}{T})_{\rm crys},
%{\cal O}_{X^{(m)}_{\os{\circ}{T}_0}/\os{\circ}{T}})
%\end{align*} 
%be also the morphism forgetting the log structure of $S(T)^{\nat}$. 
Let $E^{(m)}$ be the restriction of 
$E$ to $\os{\circ}{X}{}^{(m)}_{T_0}$.  
\par
To define 
\begin{equation*} 
(\wt{R}u_{X^{(\star)}_{\os{\circ}{T}_0}/\os{\circ}{T}*}
(\eps^*_{X^{(\star)}_{\os{\circ}{T}_0}/\os{\circ}{T}}(E^{(\star)})\langle u \rangle),P)
\in {\rm D}^+{\rm F}(f^{-1}({\cal O}_T)), 
\end{equation*} 
we need the following lemma: 

\begin{lemm}\label{lemm:sss}
The family $\{{\cal P}^{{\rm ex},(k)}_n\}_{k,n\in {\mab N}}$ of log formal schemes 
forms a semi-simplicial and simplicial family of log formal schemes with respect to 
the upper degrees and the lower degrees, respectively. 
This semi-simplicial and simplicial log scheme 
${\cal P}^{{\rm ex},(\bul)}_{\bul}$ 
is augmented to the simplicial log scheme 
${\cal P}^{\rm ex}_{\bul}$.
\end{lemm}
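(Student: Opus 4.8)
The statement (\ref{lemm:sss}) is essentially a bookkeeping claim: for fixed lower (\v{C}ech) degree $n$, the family $\{{\cal P}^{{\rm ex},(k)}_n\}_{k\in{\mab N}}$ is the semisimplicial log scheme already produced from the SNCL scheme ${\cal P}^{\rm ex}_n$ in \S\ref{sec:snclv}, and for fixed upper degree $k$, the family $\{{\cal P}^{{\rm ex},(k)}_n\}_{n\in{\mab N}}$ should be the simplicial log scheme induced by the \v{C}ech construction ${\cal P}^{\rm ex}_\bul={\rm cosk}_0(\cdots)$. The only real content is that these two structures are compatible, i.e.\ the coface/codegeneracy maps in one direction commute (up to the appropriate signs absorbed into the double-complex convention (15)) with the \v{C}ech face/degeneracy maps in the other. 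The plan is to verify this degreewise by unwinding the two definitions and checking that both families of transition morphisms arise from a single source.

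First I would fix, for each $n$, the immersion $X_{\os{\circ}{T}_0 n}\os{\sus}{\lo}\ol{\cal P}_n$ and its exactification ${\cal P}^{\rm ex}_n$, which by (\ref{prop:nexeo}) (1) is a formal SNCL scheme over $S(T)^{\nat}$. The decomposition of $\os{\circ}{\cal P}{}^{\rm ex}_n$ by smooth components is the one corresponding to the smooth components $\os{\circ}{X}_{\lam}$ of $\os{\circ}{X}_{\os{\circ}{T}_0}$, as set up after (\ref{prop:nexeo}); hence the labelling set $\Lam$ is the same for every $n$, and ${\cal P}^{{\rm ex},(k)}_n=\coprod_{\sharp\ul{\lam}=k+1}{\cal P}^{\rm ex}_{\ul{\lam},n}$ by (\ref{eqn:kfxltd}). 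This is where one gets the semisimplicial structure in $k$ for each fixed $n$: the coface maps are the inclusions $\iota^j_{\ul{\lam}}\col{\cal P}^{\rm ex}_{\ul{\lam},n}\os{\sus}{\lo}{\cal P}^{\rm ex}_{\ul{\lam}_j,n}$ used in (\ref{ali:ulm}), so this part is immediate from the previous sections.

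Next I would produce the simplicial structure in $n$. Since ${\cal P}^{\rm ex}_\bul$ is a simplicial log scheme (obtained as the exactification of the coskeleton ${\cal P}_\bul={\rm cosk}_0^{\ol{S(T)^{\nat}}}(\cdots)$; cf.\ the construction around (\ref{eqn:iigncl}) and (\ref{prop:xbn})), each structure morphism $\ol{\cal P}_{n'}\to\ol{\cal P}_n$ of the simplicial log scheme restricts, after exactification, to a morphism ${\cal P}^{\rm ex}_{n'}\to{\cal P}^{\rm ex}_n$ compatible with the chosen smooth-component labellings by $\Lam$ (the \v{C}ech maps are the natural projections of an open covering, which do not permute smooth components). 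Therefore each such morphism induces ${\cal P}^{\rm ex}_{\ul{\lam},n'}\to{\cal P}^{\rm ex}_{\ul{\lam},n}$ for every $\ul{\lam}$, hence ${\cal P}^{{\rm ex},(k)}_{n'}\to{\cal P}^{{\rm ex},(k)}_n$ for every fixed $k$, giving the simplicial structure in $n$. The compatibility of the two families of maps reduces to the commutativity of the square of morphisms $\{{\cal P}^{\rm ex}_{\ul{\lam},n'}\to{\cal P}^{\rm ex}_{\ul{\lam}_j,n'}\to{\cal P}^{\rm ex}_{\ul{\lam}_j,n}\}$ versus $\{{\cal P}^{\rm ex}_{\ul{\lam},n'}\to{\cal P}^{\rm ex}_{\ul{\lam},n}\to{\cal P}^{\rm ex}_{\ul{\lam}_j,n}\}$, which holds because all four arrows are restrictions of the single simplicial structure morphism ${\cal P}^{\rm ex}_{n'}\to{\cal P}^{\rm ex}_n$ and the inclusions of smooth-component strata commute with it. Finally, the augmentation to ${\cal P}^{\rm ex}_\bul$ is given degreewise in $n$ by the maps $b^{(k)}\col{\cal P}^{{\rm ex},(k)}_n\to{\cal P}^{\rm ex}_n$ of (\ref{eqn:kfxltd}), which are simplicial in $n$ by the same argument.

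\textbf{Main obstacle.} The conceptual content is entirely routine; the only point requiring genuine care is the \emph{invariance of the smooth-component labelling} $\Lam$ as $n$ varies and the verification that the \v{C}ech transition maps genuinely respect it. This rests on (\ref{prop:cc}) and (\ref{prop:zki}) (the smooth-component decomposition is zariski-locally canonical and $\os{\circ}{\cal P}{}^{{\rm ex},(m)}$ is independent of the choice of $\Del$), together with the fact that the open immersions defining the \v{C}ech diagram are solid and therefore pull back smooth components to smooth components. Once one has pinned down that a single index set $\Lam$ governs all ${\cal P}^{\rm ex}_n$ compatibly, the bisimplicial/semisimplicial structure is forced, and I expect no further difficulty. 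I would therefore spend the bulk of the write-up making the labelling compatibility precise and leave the commutativity of faces and the augmentation as straightforward diagram checks.
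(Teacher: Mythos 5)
Your proposal is correct and follows essentially the same route as the paper's own (very terse) proof: the paper defers the simplicial structure in the lower degree $n$ to the construction of \cite[(1.4.2)]{nb}, declares the semisimplicial maps in the upper degree $k$ obvious, and obtains the augmentation from the natural morphism ${\cal P}^{{\rm ex},(0)}_n\lo{\cal P}^{{\rm ex}}_n$ — exactly the three steps you carry out, with the labelling-compatibility point you highlight being the content implicitly packed into that citation.
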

\begin{proof} 
First we have to construct a morphism 
${\cal P}^{{\rm ex},(k)}_n\lo {\cal P}^{{\rm ex},(k)}_{n'}$ 
for a morphism $[n']\lo [n]$ in the standard simplicial category $\Del$. 
This construction is the same as that of \cite[(1.4.2)]{nb}. 
Next we have to construct a morphism 
${\cal P}_{\ul{\lam},n}\lo {\cal P}_{\ul{\lam}',n}$ 
for a morphism $\ul{\lam} \lo \ul{\lam}'$ in $P(\Lam)$.  
%in the standard semi-simplicial category $\Del^s$.
This is obvious. Hence we have a natural semi-simplicial and simplicial log scheme 
${\cal P}^{{\rm ex},(\bul)}_{\bul}$.  
Because we have a natural morphism 
${\cal P}^{{\rm ex},(0)}_n\lo {\cal P}^{{\rm ex}}_n$, 
${\cal P}^{{\rm ex},(\bul)}_{\bul}$ is augmented to 
${\cal P}^{{\rm ex}}_{\bul}$. 
\end{proof}

\par 
Let 
\begin{align*}
\pi_{{\rm zar}}\col 
((\os{\circ}{X}_{\bul,T_0})_{\rm zar},
f^{-1}_{\bul}({\cal O}_T))
\lo 
((\os{\circ}{X}_{T_0})_{\rm zar},f^{-1}({\cal O}_T))
\end{align*}
be the natural morphism of ringed topoi. 

\begin{defi}\label{defi:mrt}
(1) Set 
\begin{align*} 
&\wt{R}u_{X_{\os{\circ}{T}_0}/\os{\circ}{T}*}
(\eps^*_{X_{\os{\circ}{T}_0}/\os{\circ}{T}}(E))
:=R\pi_{{\rm zar}*}({\cal E}^{\bul}
\otimes_{{\cal O}_{{\cal P}_{\bul}^{{\rm ex}}}}
\Om^{\bul}_{{\cal P}_{\bul}^{\rm ex}/\os{\circ}{T}})\in 
D^+(f^{-1}({\cal O}_T)). 
\end{align*} 
\par 
(2) Set 
\begin{align*} 
&\wt{R}u_{X^{(\star)}_{\os{\circ}{T}_0}/\os{\circ}{T}*}
(\eps^*_{X^{(\star)}_{\os{\circ}{T}_0}/\os{\circ}{T}}(E^{(\star)}))
:=R\pi_{{\rm zar}*}(s({\cal E}^{\bul}
\otimes_{{\cal O}_{{\cal P}_{\bul}^{\rm ex}}}
\Om^{\bul}_{{\cal P}{}^{{\rm ex},(\bul)}_{\bul}/\os{\circ}{T}}))
\in D^+(f^{-1}({\cal O}_T)). 
\end{align*} 
\par
(3)   
Set 
\begin{equation*} 
Ru_{X^{(\star)}_{\os{\circ}{T}_0}/S(T)^{\nat}*}
(\eps^*_{X^{(\star)}_{\os{\circ}{T}_0}/S(T)^{\nat}}(E^{(\star)}))
:=R\pi_{{\rm zar}*}(s({\cal E}^{\bul}
\otimes_{{\cal O}_{{\cal P}_{\bul}^{\rm ex}}}
\Om^{\bul}_{{\cal P}{}^{{\rm ex},(\bul)}_{\bul}/S(T)^{\nat}}))
\in D^+(f^{-1}({\cal O}_T)). 
\end{equation*} 
\par 
(4) 
Set 
\begin{align*} 
&(\wt{R}u_{X_{\os{\circ}{T}_0}/\os{\circ}{T}*}
(\eps^*_{X_{\os{\circ}{T}_0}/\os{\circ}{T}}(E)\langle u \rangle),P)
:=R\pi_{{\rm zar}*}
(({\cal E}^{\bul}\otimes_{{\cal O}_{{\cal P}^{{\rm ex}}_{\bul}}}
\Om^{\bul}_{{\cal P}^{{\rm ex}}_{\bul}/\os{\circ}{T}}\langle u \rangle,P))
\\
&\in D^+(f^{-1}({\cal O}_T)). 
\end{align*} 
We call $\wt{R}u_{X_{\os{\circ}{T}_0}/\os{\circ}{T}*}
(\eps^*_{X_{\os{\circ}{T}_0}/\os{\circ}{T}}(E)\langle u \rangle)$ 
the {\it derived PD-Hirsch extension} of 
$\wt{R}u_{X_{\os{\circ}{T}_0}/\os{\circ}{T}*}
(\eps^*_{X_{\os{\circ}{T}_0}/\os{\circ}{T}}(E))$.  
\par 
(5) Set 
\begin{align*} 
(H_{\rm zar}(X_{\os{\circ}{T}_0}/S(T)^{\nat},E),P):=
&(\wt{R}u_{X^{(\star)}_{\os{\circ}{T}_0}/\os{\circ}{T}*}
(\eps^*_{X^{(\star)}_{\os{\circ}{T}_0}/\os{\circ}{T}}(E^{(\star)})\langle u \rangle), P)\\
&:=R\pi_{{\rm zar}*}
(s({\cal E}^{\bul}
\otimes_{{\cal O}_{{\cal P}_{\bul}^{\rm ex}}}
\Om^{\bul}_{{\cal P}{}^{{\rm ex},(\bul)}_{\bul}/\os{\circ}{T}}\langle u \rangle),P))\\
&\in{\rm D}^+{\rm F}(f^{-1}({\cal O}_T)). 
\end{align*} 
We call 
$(H_{\rm zar}(X_{\os{\circ}{T}_0}/S(T)^{\nat},E),P)$ 
the {\it PD-Hirsch filtered crystalline complex} of $E$. 
(Recall that we denote the diagonal filtration $\del(L,P)$ simply by $P$ 
in the last equality of the definition of $(H_{\rm zar}(X_{\os{\circ}{T}_0}/S(T)^{\nat},E),P)$.) 
\end{defi}

\begin{rema}\label{rema:wd}
(1) By (\ref{prop:hkt}) the complex $\wt{R}u_{X_{\os{\circ}{T}_0}/\os{\circ}{T}*}
(\eps^*_{X_{\os{\circ}{T}_0}/\os{\circ}{T}}(E))$ is well-defined. 
The complexes 
$\wt{R}u_{X^{(\star)}_{\os{\circ}{T}_0}/\os{\circ}{T}*}
(\eps^*_{X^{(\star)}_{\os{\circ}{T}_0}/\os{\circ}{T}}(E^{(\star)}))$, 
$Ru_{X^{(\star)}_{\os{\circ}{T}_0}/S(T)^{\nat}*}
(\eps^*_{X^{(\star)}_{\os{\circ}{T}_0}/S(T)^{\nat}}(E^{(\star)}))$ 
and 
$\wt{R}u_{X_{\os{\circ}{T}_0}/\os{\circ}{T}*}
(\eps^*_{X_{\os{\circ}{T}_0}/\os{\circ}{T}}(E)\langle u \rangle)$ 
will be checked to be well-defined 
((\ref{prop:naqi}), (\ref{prop:nbqi}), (\ref{prop:ncqi})). 
The filtered complex   
$(H_{\rm zar}(X_{\os{\circ}{T}_0}/S(T)^{\nat},E),P)$ 
will also be checked to be well-defined ((\ref{theo:indp})). 
\par 
(2) Strictly speaking, in the definitions of (\ref{defi:mrt}) (2), (3) and (5), 
we have to take the direct image 
${\cal P}{}^{{\rm ex},(\bul)}_{\bul}\lo {\cal P}{}^{\rm ex}_{\bul}$  
for 
$\Om^{\bul}_{{\cal P}{}^{{\rm ex},(\bul)}_{\bul}/\os{\circ}{T}}$ 
and 
$\Om^{\bul}_{{\cal P}{}^{{\rm ex},(\bul)}_{\bul}/S(T)^{\nat}}$.  
\end{rema}

%By the definition (\ref{defi:mrt}) (2), 
%we have the following spectral sequences: 
%\begin{align*} 
%E_1^{m,i-m}=
%\wt{R}u_{X^{(m)}_{\os{\circ}{T}_0}/\os{\circ}{T}*}
%(\eps^*_{X^{(m)}_{\os{\circ}{T}_0}/\os{\circ}{T}}(E^{(m)}))
%\Lo 
%{\cal H}^i(\wt{R}u_{X^{(\star)}_{\os{\circ}{T}_0}/\os{\circ}{T}*}
%(\eps^*_{X^{(\star)}_{\os{\circ}{T}_0}/\os{\circ}{T}}(E^{(\star)}))). 
%\tag{6.8.1}\label{ali:mim}
%\end{align*} 
%By the definitions in (\ref{defi:mrt}) and the cohomological descent, 
%we have the following spectral sequence: 
%\begin{equation*} 
%E_1^{m,i-m}=
%Ru_{X^{(m)}_{\os{\circ}{T}_0}/S(T)^{\nat}*}
%(\eps^*_{X^{(m)}_{\os{\circ}{T}_0}/S(T)^{\nat}}(E^{(m)}))
%\Lo
%{\cal H}^i(Ru_{X^{(\star)}_{\os{\circ}{T}_0}/S(T)^{\nat}*}
%(\eps^*_{X^{(\star)}_{\os{\circ}{T}_0}/S(T)^{\nat}}(E^{(\star)}))). 
%\tag{6.8.2}\label{ali:mnim} 
%\end{equation*} 

Consider the case where $E$ is trivial, which will be important in this book: 

\begin{defi}\label{defi:cez} 
(1) Set 
\begin{align*} 
\wt{R}u_{X_{\os{\circ}{T}_0}/\os{\circ}{T}*}
({\cal O}_{X_{\os{\circ}{T}_0}/\os{\circ}{T}})
:=\wt{R}u_{X_{\os{\circ}{T}_0}/\os{\circ}{T}*}
(\eps^*_{X_{\os{\circ}{T}_0}/\os{\circ}{T}}({\cal O}_{\os{\circ}{X}_{T_0}/\os{\circ}{T}})). 
\end{align*} 
\par 
(2) Set 
\begin{align*} 
\wt{R}u_{X^{(\star)}_{\os{\circ}{T}_0}/\os{\circ}{T}*}
({\cal O}_{X^{(\star)}_{\os{\circ}{T}_0}/\os{\circ}{T}})
:=\wt{R}u_{X^{(\star)}_{\os{\circ}{T}_0}/\os{\circ}{T}*}
(\eps^*_{X^{(\star)}_{\os{\circ}{T}_0}/\os{\circ}{T}}
({\cal O}_{\os{\circ}{X}{}^{(\star)}_{T_0}/\os{\circ}{T}})). 
\end{align*} 
\par
(3)   
Set 
\begin{equation*} 
Ru_{X^{(\star)}_{\os{\circ}{T}_0}/S(T)^{\nat}*}
({\cal O}_{X^{(\star)}_{\os{\circ}{T}_0}/S(T)^{\nat}})
:=
Ru_{X^{(\star)}_{\os{\circ}{T}_0}/S(T)^{\nat}*}
(\eps^*_{X^{(\star)}_{\os{\circ}{T}_0}/S(T)^{\nat}}
({\cal O}_{\os{\circ}{X}{}^{(\star)}_{T_0}/\os{\circ}{T}})). 
\end{equation*} 
\par 
(4) 
Set 
\begin{align*} 
\wt{R}u_{X_{\os{\circ}{T}_0}/\os{\circ}{T}*}
({\cal O}_{X_{\os{\circ}{T}_0}/\os{\circ}{T}}\langle u \rangle)
:=\wt{R}u_{X_{\os{\circ}{T}_0}/\os{\circ}{T}*}
(\eps^*_{X_{\os{\circ}{T}_0}/\os{\circ}{T}}
({\cal O}_{\os{\circ}{X}_{T_0}/\os{\circ}{T}})\langle u \rangle). 
\end{align*} 
We call $\wt{R}u_{X_{\os{\circ}{T}_0}/\os{\circ}{T}*}
({\cal O}_{X_{\os{\circ}{T}_0}/\os{\circ}{T}}\langle u \rangle)$ 
the {\it modified Hirsch crystalline complex} of $X_{\os{\circ}{T}_0}/\os{\circ}{T}$. 
\par 
(5) Set 
\begin{align*} 
(H_{\rm zar}(X_{\os{\circ}{T}_0}/S(T)^{\nat}),P):=
&(\wt{R}u_{X^{(\star)}_{\os{\circ}{T}_0}/\os{\circ}{T}*}
({\cal O}_{X^{(\star)}_{\os{\circ}{T}_0}/\os{\circ}{T}}\langle u \rangle), P). 
\end{align*} 
We call 
$(H_{\rm zar}(X_{\os{\circ}{T}_0}/S(T)^{\nat}),P)$
%(\wt{R}u_{X^{(\star)}_{\os{\circ}{T}_0}/\os{\circ}{T}*}
%({\cal O}_{X^{(\star)}_{\os{\circ}{T}_0}/\os{\circ}{T}})
%\langle U_{S(T)^{\nat}} \rangle, P)$ 
the {\it Hirsch pre-weight-filtered log crystalline complex} 
of $X_{\os{\circ}{T}_0}/S(T)^{\nat}/\os{\circ}{T}$. 
\end{defi}

%Hence it is not necessary to wonder whether 
%$R'u_{X_{\ul{\lam},T_0}/T*}
%(\eps^*_{X_{\ul{\lam},T_0}/T}
%(E_{\os{\circ}{X}_{\ul{\lam},T_0}/\os{\circ}{T}}))$ is independent of the choice 
%of the simplicial immersion 
%$X_{\ul{\bul}}\os{\sus}{\lo} \ol{\cal P}_{\ul{\bul}}$. 
%(In  \cite[]{gkcf}
%We use this complex only as an intermediate complex 
%for the investigation of the complex 
%$Ru_{X^{(\star)}_{\os{\circ}{T}_0}/S(T)^{\nat}*}
%(\eps^*_{X^{(\star)}_{\os{\circ}{T}_0}/S(T)^{\nat}}(E^{(\star)}))$. 

\begin{prop}\label{prop:naqi}
$(1)$ There exists a canonical isomorphism 
\begin{equation*}
Ru_{X_{\os{\circ}{T}_0}/S(T)^{\nat}*}
(\eps^*_{X_{\os{\circ}{T}_0}/S(T)^{\nat}}(E)) \os{\sim}{\lo} 
Ru_{X^{(\star)}_{\os{\circ}{T}_0}/S(T)^{\nat}*}
(\eps^*_{X^{(\star)}_{\os{\circ}{T}_0}/S(T)^{\nat}}(E^{(\star)})). 
\tag{6.11.1}\label{eqn:eetxte}
\end{equation*} 
In particular, 
$Ru_{X^{(\star)}_{\os{\circ}{T}_0}/S(T)^{\nat}*}
(\eps^*_{X^{(\star)}_{\os{\circ}{T}_0}/S(T)^{\nat}}(E^{(\star)}))$ is independent of 
the choice of an affine open covering of $X_{\os{\circ}{T}_0}$ 
and a simplicial immersion $X_{\os{\circ}{T}_0\bul}\os{\sus}{\lo} 
\ol{\cal P}_{\bul}$ over $\ol{S(T)^{\nat}}$. 
\par 
$(2)$ There exists a canonical isomorphism 
\begin{equation*}
\wt{R}u_{X_{\os{\circ}{T}_0}/\os{\circ}{T}*}
(\eps^*_{X_{\os{\circ}{T}_0}/\os{\circ}{T}} 
(E)) \os{\sim}{\lo} 
\wt{R}u_{X^{(\star)}_{\os{\circ}{T}_0}/\os{\circ}{T}*}
(\eps^*_{X^{(\star)}_{\os{\circ}{T}_0}/\os{\circ}{T}} 
(E^{(\star)})). 
\tag{6.11.2}\label{eqn:e}
\end{equation*} 
In particular, 
$\wt{R}u_{X^{(\star)}_{\os{\circ}{T}_0}/\os{\circ}{T}*}
(\eps^*_{X^{(\star)}_{\os{\circ}{T}_0}/\os{\circ}{T}} 
(E^{(\star)}))$ is independent of the choice of an affine open covering of $X_{\os{\circ}{T}_0}$ 
and a simplicial immersion 
$X_{\os{\circ}{T}_0\bul}\os{\sus}{\lo} \ol{\cal P}_{\bul}$ over $\ol{S(T)^{\nat}}$.  
\end{prop}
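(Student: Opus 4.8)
The plan is to prove both statements simultaneously by reducing the coincidence of the two crystalline complexes to the corresponding statement for log de Rham complexes on a fixed simplicial embedding, and then checking independence of the choices via a standard cofinality/refinement argument. The heart of the matter is that $X^{(\star)}_{\os{\circ}{T}_0}$ is built from the SNCL structure of $X_{\os{\circ}{T}_0}$ through the semisimplicial pieces ${\cal P}^{{\rm ex},(\bul)}$, and Proposition~\ref{prop:sil} already tells us that the \v{C}ech-type augmentation
\begin{equation*}
{\cal E}^{\bul}\otimes_{{\cal O}_{{\cal P}{}^{\rm ex}_{\bul}}}
\Om^i_{{\cal P}{}^{\rm ex}_{\bul}/U}
\lo {\cal E}^{\bul}\otimes_{{\cal O}_{{\cal P}{}^{{\rm ex},(0)}_{\bul}}}
\Om^i_{{\cal P}{}^{{\rm ex},(0)}_{\bul}/U}
\lo \cdots
\end{equation*}
is an exact resolution for each fixed $i$ and each $U=S(T)^{\nat}$ or $U=\os{\circ}{T}$. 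First I would apply this, together with the usual double-complex spectral sequence argument, to the single complex $s({\cal E}^{\bul}\otimes_{{\cal O}_{{\cal P}{}^{{\rm ex}}_{\bul}}}\Om^{\bul}_{{\cal P}{}^{{\rm ex},(\bul)}_{\bul}/U})$; this yields a quasi-isomorphism from the ordinary log de Rham complex $\wt{R}u_{X_{\os{\circ}{T}_0}/U*}$-representative to the semi-cosimplicial one, before applying $R\pi_{{\rm zar}*}$.

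For part~(1) I would take $U=S(T)^{\nat}$. Here the two sides are computed via $Ru_{X_{\os{\circ}{T}_0}/S(T)^{\nat}*}$ and $Ru_{X^{(\star)}_{\os{\circ}{T}_0}/S(T)^{\nat}*}$, and by (\ref{eqn:exsfpte}) the former is represented by $R\pi_{{\rm zar}*}({\cal E}^{\bul}\otimes \Om^{\bul}_{{\cal P}^{\rm ex}_{\bul}/S(T)^{\nat}})$. The exactness of (\ref{eqn:pd1u}) for $U=S(T)^{\nat}$ gives the required acyclic resolution by the semisimplicial pieces, so applying $R\pi_{{\rm zar}*}$ to the quasi-isomorphism of single complexes produces the isomorphism (\ref{eqn:eetxte}). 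For part~(2) I would take $U=\os{\circ}{T}$ and argue identically using (\ref{eqn:pd1u}) with $U=\os{\circ}{T}$, which gives (\ref{eqn:e}). In both cases the key point is that Proposition~\ref{prop:sil} is stated uniformly for both choices of $U$, so no separate argument is needed for the modified (i.e.\ over $\os{\circ}{T}$) versus the ordinary (over $S(T)^{\nat}$) complex.

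The second half of each assertion---independence of the affine open covering and of the simplicial immersion into a log smooth scheme over $\ol{S(T)^{\nat}}$---I would establish by the same refinement technique used in the proof of Proposition~\ref{prop:ncfqi}. Given two choices $X_{\os{\circ}{T}_0\bul}\os{\sus}{\lo}\ol{\cal P}_{\bul}$ and $X'_{\os{\circ}{T}_0\bul}\os{\sus}{\lo}\ol{\cal P}'_{\bul}$, one passes to a common refinement of the two affine open coverings and to the fiber product $\ol{\cal P}''_{\bul}:=\ol{\cal P}_{\bul}\times_{\ol{S(T)^{\nat}}}\ol{\cal P}'_{\bul}$, reducing to the case of a dominating morphism of simplicial immersions; then the induced morphism on $R\pi_{{\rm zar}*}$ of the associated de Rham complexes is a quasi-isomorphism because, after passing to $S(T)^{\nat}$ or $\os{\circ}{T}$ via the log Poincar\'e lemma and cohomological descent, it computes the same crystalline cohomology. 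The compatibility with the semisimplicial structure is automatic from Lemma~\ref{lemm:sss}, which guarantees that $\{{\cal P}^{{\rm ex},(k)}_n\}$ is functorial in both degrees.

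The main obstacle I anticipate is purely bookkeeping rather than conceptual: one must verify that the quasi-isomorphisms provided by Proposition~\ref{prop:sil} are compatible with the \emph{two} independent simplicial directions (the \v{C}ech/cohomological-descent direction indexed by $\bul$ and the semisimplicial direction indexed by $(\star)$) so that the associated triple complex has the spectral sequences degenerating in the right way. Concretely, the filtration by the semisimplicial degree must be shown to be regular and the spectral sequence convergent, which requires checking boundedness of the columns; this is where the hypothesis that the pieces ${\cal P}^{{\rm ex},(m)}$ come from a genuine SNCL structure (so that locally only finitely many $\ul{\lam}$ contribute) is used. Once this uniform boundedness is in hand, the convergence theorems of \cite{weib} apply exactly as in the proof of Theorem~\ref{theo:qii}, and the two isomorphisms follow formally.
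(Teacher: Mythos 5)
Your proof follows essentially the same route as the paper's: refine the two coverings to a common one, form the commutative square comparing the plain and semi-cosimplicial log de Rham complexes for $U=S(T)^{\nat}$ and $U=\os{\circ}{T}$, use Proposition (\ref{prop:sil}) to see that the horizontal augmentation morphisms are quasi-isomorphisms, and apply $R\pi_{{\rm zar}*}$. One small correction: for $U=\os{\circ}{T}$ the left vertical arrow is not a quasi-isomorphism because ``it computes the same crystalline cohomology'' --- the modified complex $\wt{R}u_{X_{\os{\circ}{T}_0}/\os{\circ}{T}*}$ is not governed by a log Poincar\'e lemma over $\os{\circ}{T}$; its independence of the covering and the immersion is the separate fact (\ref{prop:hkt}) quoted from \cite{nb}, which is what you should invoke at that step.
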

\begin{proof}
(1), (2): 
Let $X'_{\os{\circ}{T}_0\bul}$ be the \v{C}ech diagram of 
another affine open covering of $X_{\os{\circ}{T}_0}$ 
and let $X'_{\os{\circ}{T}_0\bul}\os{\sus}{\lo} \ol{\cal P}{}'_{\bul}$ 
be another simplicial immersion into 
a log smooth scheme over $\ol{S(T)^{\nat}}$.  
As in (\ref{cd:soq}), 
%Set $\ol{\cal P}{}''_{\bul}
%:=\ol{\cal P}{}_{\bul}\times_{\ol{S(T)^{\nat}}}\ol{\cal P}{}'_{\bul}$. 
by considering the refinement of two affine open coverings, 
%we see that there exists the \v{C}ech diagram $X''_{\os{\circ}{T}_0\bul}$ 
%of an affine open covering of $X_{\os{\circ}{T}_0}$ fitting into 
%the following commutative diagram 
%\begin{equation*} 
%\begin{CD} 
%X_{\os{\circ}{T}_0\bul} @>{\subset}>> \ol{\cal P}_{\bul}\\ 
%@AAA @AAA \\
%X''_{\os{\circ}{T}_0\bul} @>{\subset}>> \ol{\cal P}{}''_{\bul}\\ 
%@VVV @VVV \\
%X'_{\os{\circ}{T}_0\bul} @>{\subset}>> \ol{\cal P}{}'_{\bul}. 
%\end{CD} 
%\end{equation*}
%Hence 
we may assume that there exists the following commutative diagram: 
\begin{equation*} 
\begin{CD} 
X_{\os{\circ}{T}_0\bul} @>{\subset}>> \ol{\cal P}_{\bul}\\ 
@VVV @VVV \\
X'_{\os{\circ}{T}_0\bul} @>{\subset}>> \ol{\cal P}{}'_{\bul}
\end{CD} 
\tag{6.11.3}\label{cd:sop}
\end{equation*}
and we may assume that 
the morphism $X_{\os{\circ}{T}_0\bul}\lo X'_{\os{\circ}{T}_0\bul}$ 
induces a morphism 
$X_{\ul{\lam},\os{\circ}{T}_0\bul}\lo X'_{\ul{\lam},\os{\circ}{T}_0\bul}$. 
Hence the morphism 
$\ol{\cal P}_{\bul}\lo \ol{\cal P}{}'_{\bul}$ induces a morphism 
${\cal P}_{\bul \ul{\lam}}\lo {\cal P}{}'_{\bul \ul{\lam}}$. 
Set $U:=S(T)^{\nat}$ or $\os{\circ}{T}$. 
Then we obtain the following commutative diagram 
\begin{equation*} 
\begin{CD} 
{\cal E}^{\bul}
\otimes_{{\cal O}_{{\cal P}{}^{\rm ex}_{\bul}}}
\Om^{\bul}_{{\cal P}{}^{\rm ex}_{\bul}/U} @>>> 
s({\cal E}^{\bul}
\otimes_{{\cal O}_{{\cal P}{}^{{\rm ex},(\bul)}_{\bul}}}
\Om^{\bul}_{{\cal P}{}^{{\rm ex},(\bul)}_{\bul}/U}) \\ 
@AAA @AAA \\
{\cal E}'{}^{\bul}
\otimes_{{\cal O}_{{\cal P}_{\bul}'{}^{\rm ex}}}
\Om^{\bul}_{{\cal P}_{\bul}'{}^{\rm ex}/U} @>>> 
s({\cal E}{}'^{\bul}
\otimes_{{\cal O}_{{\cal P}_{\bul}'{}^{{\rm ex},(\bul)}}}
\Om^{\bul}_{{\cal P}_{\bul}'{}^{{\rm ex},(\bul)}/U}). 
\end{CD} 
\tag{6.11.4}\label{cd:sopp}
\end{equation*}
By (\ref{prop:sil}) we see that the horizontal morphisms 
in  (\ref{cd:sopp}) are quasi-isomorphisms.
Now we have only to apply $R\pi_{{\rm zar}*}$ to (\ref{cd:sopp}). 
\end{proof}

\par 
The following proposition tells us that 
the complex 
$\wt{R}u_{X_{\os{\circ}{T}_0}/\os{\circ}{T}*}
(\eps^*_{X_{\os{\circ}{T}_0}/\os{\circ}{T}}(E)\langle u \rangle)$ 
is also independent of 
the choice of an open covering of $X$ 
and a simplicial immersion 
$X_{\os{\circ}{T}_0\bul}\os{\sus}{\lo} \ol{\cal P}_{\bul}$ over $\ol{S(T)^{\nat}}$.
%, though we do not use 
%$\wt{R}u_{X_{\os{\circ}{T}_0}/S(T)^{\nat}*}
%(\eps^*_{X_{\os{\circ}{T}_0}/S(T)^{\nat}}(E)\langle U_{S(T)^{\nat}} \rangle)$: 

\begin{prop}\label{prop:nbqi}
There exists a canonical isomorphism 
\begin{equation*}
\wt{R}u_{X_{\os{\circ}{T}_0}/\os{\circ}{T}*}
(\eps^*_{X_{\os{\circ}{T}_0}/\os{\circ}{T}}(E)\langle u \rangle)
\os{\sim}{\lo} 
Ru_{X_{\os{\circ}{T}_0}/S(T)^{\nat}*}(\eps^*_{X_{\os{\circ}{T}_0}/S(T)^{\nat}}(E)). 
\tag{6.12.1}\label{eqn:exte}
\end{equation*} 
\end{prop}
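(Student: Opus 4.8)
The plan is to reduce Proposition~\ref{prop:nbqi} to the already-established quasi-isomorphism (\ref{theo:qii}) via a cohomological-descent argument, exactly paralleling the proof of (\ref{prop:ncfqi}) but now carrying the crystal $E$ and its pull-back $\eps^*_{X_{\os{\circ}{T}_0}/\os{\circ}{T}}(E)$. First I would unwind the definitions: by (\ref{defi:mrt}) (4), the left-hand side of (\ref{eqn:exte}) is $R\pi_{{\rm zar}*}({\cal E}^{\bul}\otimes_{{\cal O}_{{\cal P}^{{\rm ex}}_{\bul}}}\Om^{\bul}_{{\cal P}^{{\rm ex}}_{\bul}/\os{\circ}{T}}\langle U_{S(T)^{\nat}}\rangle)$ computed on a simplicial immersion $X_{\os{\circ}{T}_0\bul}\os{\sus}{\lo}\ol{\cal P}_{\bul}$ arising from an affine open covering. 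For the right-hand side I would invoke the cohomological descent together with the log Poincar\'e lemma to obtain
\begin{equation*}
Ru_{X_{\os{\circ}{T}_0}/S(T)^{\nat}*}(\eps^*_{X_{\os{\circ}{T}_0}/S(T)^{\nat}}(E))
=R\pi_{{\rm zar}*}({\cal E}^{\bul}\otimes_{{\cal O}_{{\cal P}^{\rm ex}_{\bul}}}\Om^{\bul}_{{\cal P}^{\rm ex}_{\bul}/S(T)^{\nat}}),
\end{equation*}
just as in (\ref{eqn:exsfpte}), noting that the flatness of $E$ guarantees that $({\cal E}^{\bul},\nabla)$ is the crystal-associated connection on each simplicial degree.

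The key step is then to compare the two integrands degreewise. On each simplicial component ${\cal P}^{\rm ex}_n$ the crystal $\eps^*_{X_{\os{\circ}{T}_0}/\os{\circ}{T}}(E)$ yields the module ${\cal E}^n$ whose image under $\nabla$ has no log poles (since $E$ is pulled back from $\os{\circ}{X}_{T_0}/\os{\circ}{T}$ via $\eps$), so $F:={\cal E}^n$ is a locally nilpotent flat quasi-coherent module with respect to $S(T)^{\nat}$ in the sense required for (\ref{theo:qii}). Applying (\ref{theo:qii}) on each degree $n$ gives that the natural morphism
\begin{equation*}
{\cal E}^{\bul}\otimes_{{\cal O}_{{\cal Q}^{\rm ex}_{\bul}}}\Om^{\bul}_{{\cal P}^{\rm ex}_{\bul}/\os{\circ}{T}}\langle U_{S(T)^{\nat}}\rangle
\lo
{\cal E}^{\bul}\otimes_{{\cal O}_{{\cal P}^{\rm ex}_{\bul}}}\Om^{\bul}_{{\cal P}^{\rm ex}_{\bul}/S(T)^{\nat}}
\end{equation*}
is a simplicial quasi-isomorphism. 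I would then take $R\pi_{{\rm zar}*}$ of this morphism; since $R\pi_{{\rm zar}*}$ preserves quasi-isomorphisms of bounded-below complexes of abelian sheaves, the induced morphism on derived direct images is the desired isomorphism (\ref{eqn:exte}). This is the cleanest route because it never requires reconstructing a Hirsch structure on the target, which (\ref{rema:fa}) warns is impossible in general.

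The main obstacle I anticipate is verifying carefully that the degreewise hypotheses of (\ref{theo:qii}) are genuinely met for the simplicial crystal $\eps^*_{X_{\os{\circ}{T}_0}/\os{\circ}{T}}(E)$---specifically, that ${\cal E}^n$ satisfies the ``locally nilpotent with respect to $S(T)^{\nat}$'' condition rather than merely ``no poles,'' so that (\ref{theo:qii}) (and not only (\ref{prop:saih})) applies. Here I would appeal to the Example following (\ref{defi:wl}): because $E$ is a crystal on $\os{\circ}{X}_{T_0}/\os{\circ}{T}$ and $\eps^*$ is the forgetful pull-back, the associated connection on $\ol{\cal E}$ factors through $\Om^1_{\os{\circ}{\ol{\cal P}}^{\rm ex}/\os{\circ}{T}}$, giving ``no poles along $S(T)^{\nat}$'' outright, which is the base case of the locally nilpotent hypothesis. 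The secondary point requiring care is the well-definedness (independence of the chosen covering and simplicial immersion), which I would handle exactly as in (\ref{prop:ncfqi}): pass to a common refinement, obtain a commutative square of the two integrands as in (\ref{cd:sotcq}), and apply $R\pi_{{\rm zar}*}$; the contravariant functoriality then follows formally. This verification is essentially bookkeeping once (\ref{theo:qii}) is in hand, so I expect the proof to be short.
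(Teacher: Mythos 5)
Your argument is correct and is essentially the paper's: the paper disposes of this proposition in one line by observing it is the special case $Y=X$, $\ol{F}=\eps^*_{X_{\os{\circ}{T}_0}/\os{\circ}{T}}(E)$ of (\ref{prop:ncfqi}), whose proof is exactly the descent-plus-(\ref{theo:qii}) argument you write out (including the verification via the example after (\ref{defi:wl}) that the pulled-back crystal has no poles, hence satisfies the standing local-nilpotence hypothesis of \S\ref{sec:ldfc}). You have simply unfolded that citation rather than invoking it.
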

\begin{proof} 
This is a special case of (\ref{prop:ncfqi}). 
%By the cohomological descent and the log Poincar\'{e} lemma, 
%we have the following formula 
%\begin{equation*} 
%Ru_{X/S*}(\eps^*_{X_{\os{\circ}{T}_0}/S(T)^{\nat}} (E))
%=R\pi_{{\rm zar}*}({\cal E}^{\bul}
%\otimes_{{\cal O}_{{\cal P}^{\rm ex}_{\bul}}}
%\Om^{\bul}_{{\cal P}^{\rm ex}_{\bul}/S(T)^{\nat}}).
%\tag{4.10.2}\label{eqn:expte}
%\end{equation*}
%Hence (\ref{prop:nqi}) follows from (\ref{prop:saih}). 
\end{proof}

\begin{prop}\label{prop:ncqi}
There exists a canonical isomorphism 
\begin{equation*}
H_{\rm zar}(X_{\os{\circ}{T}_0}/S(T)^{\nat},E)=
\wt{R}u_{X^{(\star)}_{T_0}/\os{\circ}{T}*}
(\eps^*_{X^{(\star)}_{\os{\circ}{T}_0}/\os{\circ}{T}}(E^{(\star)})\langle u \rangle) 
\os{\sim}{\lo} 
Ru_{X^{(\star)}_{\os{\circ}{T}_0}/S(T)^{\nat}*}
(\eps^*_{X^{(\star)}_{\os{\circ}{T}_0}/S(T)^{\nat}}(E^{(\star)})).
\tag{6.13.1}\label{eqn:eaxte}
\end{equation*}
In particular, 
$H_{\rm zar}(X_{\os{\circ}{T}_0}/S(T)^{\nat},E)$
%\wt{R}u_{X^{(\star)}_{T_0}/\os{\circ}{T}*}
%(\eps^*_{X^{(\star)}_{\os{\circ}{T}_0}/\os{\circ}{T}} (E)\langle u \rangle)$ 
is independent of the choice of an affine open covering of $X_{\os{\circ}{T}_0}$ 
and a simplicial immersion 
$X_{\os{\circ}{T}_0\bul}\os{\sus}{\lo} \ol{\cal P}_{\bul}$ over $\ol{S(T)^{\nat}}$.  
\end{prop}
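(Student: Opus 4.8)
\textbf{Proof proposal for (\ref{prop:ncqi}).}

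The plan is to assemble the isomorphism (\ref{eqn:eaxte}) degreewise from the local quasi-isomorphism established in (\ref{prop:impth}), taking advantage of the semi-cosimplicial structure furnished by (\ref{lemm:sss}). First I would recall that by definition (\ref{defi:mrt}) (5) and (3), the source $H_{\rm zar}(X_{\os{\circ}{T}_0}/\os{\circ}{T},E)$ is $R\pi_{{\rm zar}*}(s({\cal E}^{\bul}\otimes_{{\cal O}_{{\cal P}{}^{{\rm ex}}_{\bul}}}\Om^{\bul}_{{\cal P}{}^{{\rm ex},(\bul)}_{\bul}/\os{\circ}{T}}\langle U_{S(T)^{\nat}}\rangle))$ and the target is $R\pi_{{\rm zar}*}(s({\cal E}^{\bul}\otimes_{{\cal O}_{{\cal P}{}^{{\rm ex}}_{\bul}}}\Om^{\bul}_{{\cal P}{}^{{\rm ex},(\bul)}_{\bul}/T}))$. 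So it suffices to produce, before applying $R\pi_{{\rm zar}*}$, a natural morphism of the associated single complexes that is a quasi-isomorphism. The natural candidate is the morphism induced in each semi-cosimplicial degree by the projection (\ref{ali:uafcui}) from the PD-Hirsch extension relative to $\os{\circ}{T}$ onto the log de Rham complex relative to $S(T)^{\nat}$, killing $d\log\tau$.

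The key steps, in order, are as follows. I would fix the simplicial exactification ${\cal P}^{\rm ex}_{\bul}$ and, for each simplicial degree $n$ and each upper index $m$, consider the morphism
\begin{equation*}
{\cal E}^{\bul}\otimes_{{\cal O}_{{\cal P}{}^{{\rm ex}}_{\bul}}}\Om^{\bul}_{{\cal P}{}^{{\rm ex},(m)}_{\bul}/\os{\circ}{T}}\langle U_{S(T)^{\nat}}\rangle \lo {\cal E}^{\bul}\otimes_{{\cal O}_{{\cal P}{}^{{\rm ex}}_{\bul}}}\Om^{\bul}_{{\cal P}{}^{{\rm ex},(m)}_{\bul}/T}.
\end{equation*}
By (\ref{theo:sih}) (applied to each connected component ${\cal P}^{\rm ex}_{\ul{\lam},\bul}$ with $\sharp\ul{\lam}=m+1$, and to $\ul{\lam}=\emptyset$ for the augmentation), each such morphism is a quasi-isomorphism of complexes of abelian sheaves. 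Next I would assemble these over the upper semi-cosimplicial index by taking the single complex $s(-)$: since the morphism is compatible with the \v{C}ech face maps (\ref{ali:ulm}) — the PD-Hirsch functor $\langle U_{S(T)^{\nat}}\rangle$ is natural in the log scheme, and both sides are built from the same ${\cal P}^{{\rm ex},(\bul)}_{\bul}$ via (\ref{lemm:sss}) — the map on single complexes is well-defined. The single complex of a semi-cosimplicial object built from termwise quasi-isomorphisms is again a quasi-isomorphism, by the usual spectral-sequence-of-a-bicomplex argument (the filtration by the semi-cosimplicial degree is bounded below and exhaustive, so the spectral sequence converges and the $E_1$-comparison is an isomorphism). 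Finally I would apply $R\pi_{{\rm zar}*}$, which preserves quasi-isomorphisms, to obtain (\ref{eqn:eaxte}); the identification of the target with $Ru_{X^{(\star)}_{\os{\circ}{T}_0}/S(T)^{\nat}*}(\eps^*(E^{(\star)}))$ is just (\ref{defi:mrt}) (3). The asserted independence then follows immediately by composing with the independence already recorded in (\ref{prop:naqi}) (1), or by repeating the comparison-of-coverings argument of (\ref{cd:sopp}).

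The main obstacle I anticipate is not the termwise quasi-isomorphism — that is exactly (\ref{theo:sih}) — but rather the bookkeeping needed to check that the comparison map is genuinely a morphism of the \emph{double} complexes ${\cal E}^{\bul}\otimes\Om^{\bul}_{{\cal P}^{{\rm ex},(\bul)}_{\bul}/\ast}$ in both the simplicial and the semi-cosimplicial directions simultaneously, with all signs correct. Concretely, one must verify that the projection commutes with the vertical connection differential $\nabla$ (clear, since $\nabla$ relative to $\os{\circ}{T}$ reduces to $\nabla$ relative to $S(T)^{\nat}$ after killing $d\log\tau$, as in (\ref{eqn:bdff})) and with the horizontal \v{C}ech differential (which is the pullback-restriction $\iota^{j*}_{\ul{\lam}}$ and is manifestly natural in the coefficient functor $\langle U_{S(T)^{\nat}}\rangle$). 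Once naturality in both directions is confirmed, the convergence of the single-complex spectral sequence is routine because, by (\ref{prop:lcz}) and (\ref{rema:ex}), the relevant filtrations are bounded below and exhaustive, so no completeness subtlety of the kind flagged in (\ref{rema:qi}) arises here. I would therefore expect the proof to be short, citing (\ref{prop:impth}), (\ref{theo:sih}) and (\ref{prop:naqi}) and leaving the sign verification to the reader.
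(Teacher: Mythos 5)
Your proposal is correct and follows essentially the same route as the paper: the paper identifies the target with $R\pi_{{\rm zar}*}$ of the single complex of the relative log de Rham complexes (via cohomological descent and the log Poincar\'{e} lemma), then observes that the natural projection from the single complex of the PD-Hirsch extensions is a quasi-isomorphism by (\ref{theo:sih}), and deduces independence as in (\ref{prop:naqi}). The only difference is that you spell out the termwise-to-single-complex spectral sequence passage and the compatibility with the \v{C}ech differentials, which the paper leaves implicit.
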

\begin{proof}
By the definition (\ref{defi:mrt}) (3), 
we have the following formula 
\begin{equation*} 
Ru_{X^{(\star)}_{\os{\circ}{T}_0}/S(T)^{\nat}*}
(\eps^*_{X^{(\star)}_{\os{\circ}{T}_0}/S(T)^{\nat}}(E^{(\star)}))
=R\pi_{{\rm zar}*}(s({\cal E}^{\bul}
\otimes_{{\cal O}_{{\cal P}^{{\rm ex},(\bul)}_{\bul}}}
\Om^{\bul}_{{\cal P}^{{\rm ex},{(\bul)}}_{\bul}/S(T)^{\nat}})).
\tag{6.13.2}\label{eqn:exspte}
\end{equation*}
Consider the following natural morphism:  
\begin{equation*} 
s({\cal E}^{\bul}
\otimes_{{\cal O}_{{\cal P}{}^{{\rm ex},(\bul)}_{\bul}}}
\Om^{\bul}_{{\cal P}{}^{{\rm ex},(\bul)}_{\bul}/\os{\circ}{T}}\langle u \rangle)
\lo 
s({\cal E}^{\bul}
\otimes_{{\cal O}_{{\cal P}{}^{{\rm ex},(\bul)}_{\bul}}}
\Om^{\bul}_{{\cal P}{}^{{\rm ex},(\bul)}_{\bul}/S(T)^{\nat}}) 
\tag{6.13.3}\label{eqn:pbu}
\end{equation*}
By (\ref{theo:sih}) we see that this morphism is a quasi-isomorphism. 
Hence we obtain the isomorphism (\ref{eqn:eaxte}). 
We can prove the independence of 
$H_{\rm zar}(X_{\os{\circ}{T}_0}/S(T)^{\nat},E)$ 
as in the proof of (\ref{prop:naqi}). 
\end{proof}

\begin{prop}\label{prop:whf} 
The following diagram 
\begin{equation*} 
\begin{CD}
\wt{R}u_{X^{(\star)}_{\os{\circ}{T}_0}/\os{\circ}{T}*}
(\eps^*_{X^{(\star)}_{\os{\circ}{T}_0}/\os{\circ}{T}} 
(E^{(\star)})\langle u \rangle)
@>{(\ref{eqn:eaxte}),\sim}>>
Ru_{X^{(\star)}_{\os{\circ}{T}_0}/S(T)^{\nat}*}
(\eps^*_{X^{(\star)}_{\os{\circ}{T}_0}/S(T)^{\nat}}(E^{(\star)}))
\\
@AAA @A{(\ref{eqn:eetxte})}A{\simeq}A \\ 
\wt{R}u_{X_{\os{\circ}{T}_0}/\os{\circ}{T}*}(\eps^*_{X_{\os{\circ}{T}_0}/\os{\circ}{T}}
(E)\langle u \rangle)
@>{(\ref{eqn:exte}),\sim}>>
Ru_{X_{\os{\circ}{T}_0}/S(T)^{\nat}*}(\eps^*_{X_{\os{\circ}{T}_0}/S(T)^{\nat}} 
(E)).  
\end{CD} 
\tag{6.14.1}\label{eqn:cp}
\end{equation*}  
is commutative. 
\end{prop}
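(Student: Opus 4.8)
The plan is to establish (\ref{prop:whf}) by reducing the commutativity of the square to a commutativity at the level of the underlying double complexes obtained from a single simplicial immersion, and then applying $R\pi_{{\rm zar}*}$. All four corners of the diagram have been defined as $R\pi_{{\rm zar}*}$ of explicit (semi-co)simplicial log de Rham complexes built from the data $({\cal E}^{\bul}, \nabla)$ associated to a fixed simplicial immersion $X_{\os{\circ}{T}_0\bul}\os{\sus}{\lo}\ol{\cal P}_{\bul}$ over $\ol{S(T)^{\nat}}$, so I would first fix such an immersion (using (\ref{prop:xbn}) (1)) and compute each of the four morphisms as $R\pi_{{\rm zar}*}$ of a morphism of complexes. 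Since $R\pi_{{\rm zar}*}$ is a functor, it suffices to prove that the resulting square of morphisms of complexes of $f^{-1}({\cal O}_T)$-modules commutes before applying $R\pi_{{\rm zar}*}$.

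Concretely, the four morphisms arise as follows. The bottom horizontal arrow (\ref{eqn:exte}) is $R\pi_{{\rm zar}*}$ of the quasi-isomorphism
\begin{equation*}
{\cal E}^{\bul}\otimes_{{\cal O}_{{\cal P}^{{\rm ex}}_{\bul}}}
\Om^{\bul}_{{\cal P}^{{\rm ex}}_{\bul}/\os{\circ}{T}}\langle U_{S(T)^{\nat}}\rangle
\lo
{\cal E}^{\bul}\otimes_{{\cal O}_{{\cal P}^{{\rm ex}}_{\bul}}}
\Om^{\bul}_{{\cal P}^{{\rm ex}}_{\bul}/S(T)^{\nat}}
\end{equation*}
of (\ref{eqn:pfbu}) (this is where (\ref{theo:qii}) is used), while the top horizontal arrow (\ref{eqn:eaxte}) is $R\pi_{{\rm zar}*}$ of the analogous morphism (\ref{eqn:pbu}) at the semi-cosimplicial level (using (\ref{theo:sih})). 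The two vertical arrows (\ref{eqn:e}) and (\ref{eqn:eetxte}) are both $R\pi_{{\rm zar}*}$ of the augmentation morphism from the degree $(-1)$ term to the semi-cosimplicial \v{C}ech resolution, which is an isomorphism in the derived category by the exactness statement (\ref{prop:sil}). The key observation is that the PD-Hirsch extension functor $\langle U_{S(T)^{\nat}}\rangle$ is applied levelwise and commutes with both the \v{C}ech boundary morphisms in the semisimplicial direction and with the augmentation, because (\ref{ali:fim}) gives $f_*(I^{\bul}\langle M\rangle)=f_*(I^{\bul})\langle M\rangle$ and the differentials $d_{\varphi}$ on the divided-power part are the same in every semisimplicial degree (they come from $H_{\varphi}(u)=d\log\tau$, which is independent of the degree). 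Thus the square commutes at the cochain level by a direct but routine check: one simply chases a local section $\sum u^{[i]}\otimes \om_i$ through both paths, noting that restricting to $S(T)^{\nat}$ kills all the $u^{[i]}$ with $i\geq 1$ and the two orders of operations (first forget the log structure, then take \v{C}ech; versus first take \v{C}ech, then forget) agree because both are induced by the single morphism $\Om^{\bul}_{{\cal P}^{{\rm ex},(\bul)}_{\bul}/\os{\circ}{T}}\lo \Om^{\bul}_{{\cal P}^{{\rm ex},(\bul)}_{\bul}/S(T)^{\nat}}$.

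The main obstacle I anticipate is bookkeeping of the filtrations and signs in the semi-cosimplicial double complex, together with verifying that the augmentation and the Hirsch differential genuinely commute as morphisms of \emph{complexes} (not merely up to homotopy); this is precisely the content packaged into (\ref{prop:sil}) and (\ref{prop:impth}), and I would lean on (\ref{prop:impth}) directly, since its commutative diagram (\ref{eqn:cdpe}) is essentially the single-immersion (non-semi-cosimplicial) version of the square I want, with the vertical isomorphisms already identified. Indeed, (\ref{prop:impth}) establishes exactly the commutativity of the square relating $s({\cal E}\otimes\Om^{\bul}_{{\cal P}^{{\rm ex},(\bul)}/\os{\circ}{T}}\langle U_{S(T)^{\nat}}\rangle)$, $s({\cal E}\otimes\Om^{\bul}_{{\cal P}^{{\rm ex},(\bul)}/S(T)^{\nat}})$, ${\cal E}\otimes\Om^{\bul}_{{\cal P}^{{\rm ex}}/\os{\circ}{T}}\langle U_{S(T)^{\nat}}\rangle$ and ${\cal E}\otimes\Om^{\bul}_{{\cal P}^{{\rm ex}}/S(T)^{\nat}}$ at the level of the log de Rham complexes for a single immersion; applying $R\pi_{{\rm zar}*}$ to the simplicial version of this diagram, and identifying the four corners with the four corners of (\ref{eqn:cp}) via the definitions in (\ref{defi:mrt}), yields the desired commutativity. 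Hence I expect the proof to consist of little more than the sentence ``This immediately follows from (\ref{prop:impth}) and the cohomological descent'', after fixing a simplicial immersion and noting that all constructions are functorial in the immersion by the argument already given in the proofs of (\ref{prop:naqi}) and (\ref{prop:ncqi}).
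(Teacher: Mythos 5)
Your proposal is correct and follows essentially the same route as the paper: the paper's proof is the one-line observation that the commutativity follows from the local descriptions of the four morphisms, and your argument is exactly that check spelled out — all four arrows are $R\pi_{{\rm zar}*}$ of explicit levelwise maps of complexes built from one fixed simplicial immersion (the Hirsch-extension/restriction maps horizontally, the \v{C}ech augmentations vertically), and these visibly commute at the cochain level, as already packaged in the single-immersion diagram (\ref{eqn:cdpe}) of (\ref{prop:impth}).
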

\begin{proof}
The commutativity follows from the local descriptions of 
the four morphisms in (\ref{eqn:cp}). 
\end{proof}

\begin{coro}\label{coro:lb}
The following hold$:$
\par 
$(1)$ There exists the following isomorphism
\begin{align*} 
Ru_{X_{\os{\circ}{T}_0}/S(T)^{\nat}*}
(\eps^*_{X_{\os{\circ}{T}_0}/S(T)^{\nat}}(E)) \os{\sim}{\lo} 
\wt{R}u_{X^{(\star)}_{\os{\circ}{T}_0}/\os{\circ}{T}*}
(\eps^*_{X^{(\star)}_{\os{\circ}{T}_0}/\os{\circ}{T}}(E^{(\star)})\langle u\rangle)=
H_{\rm zar}(X_{\os{\circ}{T}_0}/S(T)^{\nat},E). 
\tag{6.15.1}\label{ali:eeqie}
\end{align*}  
\par 
$(2)$ The canonical morphism 
\begin{align*} 
\wt{R}u_{X_{\os{\circ}{T}_0}/\os{\circ}{T}*}(\eps^*_{X_{\os{\circ}{T}_0}/\os{\circ}{T}} 
(E)\langle u \rangle)\lo 
\wt{R}u_{X^{(\star)}_{\os{\circ}{T}_0}/\os{\circ}{T}*}
(\eps^*_{X^{(\star)}_{\os{\circ}{T}_0}/\os{\circ}{T}} 
(E^{(\star)})\langle u \rangle)
\tag{6.15.2}\label{ali:eetie}
\end{align*} 
is an isomorphism. 
\end{coro}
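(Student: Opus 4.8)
The plan is to prove both statements by reducing them to the local quasi-isomorphism results already established, namely (\ref{theo:qii}), (\ref{theo:sih}) and (\ref{prop:sil}), and then applying $R\pi_{{\rm zar}*}$. Part (1) is essentially a combination of (\ref{prop:naqi}) (1) and (\ref{prop:ncqi}), so the content is to knit these together with the correct filtration bookkeeping, while part (2) is a direct consequence of (\ref{prop:naqi}) (2) together with the compatibility afforded by (\ref{coro:hac}).

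For part (1), I would argue as follows. By (\ref{prop:ncqi}) there is a canonical isomorphism
\begin{equation*}
H_{\rm zar}(X_{\os{\circ}{T}_0}/\os{\circ}{T},E)=
\wt{R}u_{X^{(\star)}_{\os{\circ}{T}_0}/\os{\circ}{T}*}
(\eps^*_{X^{(\star)}_{\os{\circ}{T}_0}/\os{\circ}{T}}(E^{(\star)})\langle U_{S(T)^{\nat}}\rangle)
\os{\sim}{\lo}
Ru_{X^{(\star)}_{\os{\circ}{T}_0}/S(T)^{\nat}*}
(\eps^*_{X^{(\star)}_{\os{\circ}{T}_0}/S(T)^{\nat}}(E^{(\star)})).
\end{equation*}
On the other hand, (\ref{prop:naqi}) (1) gives a canonical isomorphism
\begin{equation*}
Ru_{X_{\os{\circ}{T}_0}/S(T)^{\nat}*}
(\eps^*_{X_{\os{\circ}{T}_0}/S(T)^{\nat}}(E))
\os{\sim}{\lo}
Ru_{X^{(\star)}_{\os{\circ}{T}_0}/S(T)^{\nat}*}
(\eps^*_{X^{(\star)}_{\os{\circ}{T}_0}/S(T)^{\nat}}(E^{(\star)})).
\end{equation*}
Composing the inverse of the first with the second yields the desired isomorphism (\ref{ali:eeqie}). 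The only thing to check is that these identifications are indeed the canonical ones recorded in (\ref{defi:mrt}) and (\ref{defi:cez}), i.e.\ that the cosimplicial and PD-Hirsch constructions land in the stated category ${\rm D}^+(f^{-1}({\cal O}_T))$; this is immediate from the definitions since both sides are obtained by applying $R\pi_{{\rm zar}*}$ to quasi-isomorphic complexes on the diagram $\os{\circ}{X}_{T_0\bul}$.

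For part (2), the plan is to observe that the morphism (\ref{ali:eetie}) is precisely the image under $R\pi_{{\rm zar}*}$ of the augmentation morphism from the single log de Rham complex ${\cal E}^{\bul}\otimes_{{\cal O}_{{\cal P}^{\rm ex}_{\bul}}}\Om^{\bul}_{{\cal P}^{\rm ex}_{\bul}/\os{\circ}{T}}\langle U_{S(T)^{\nat}}\rangle$ into the single complex of the semisimplicial system $s({\cal E}^{\bul}\otimes_{{\cal O}_{{\cal P}^{{\rm ex},(\bul)}_{\bul}}}\Om^{\bul}_{{\cal P}^{{\rm ex},(\bul)}_{\bul}/\os{\circ}{T}}\langle U_{S(T)^{\nat}}\rangle)$. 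Before taking the PD-Hirsch extension, this augmentation is a quasi-isomorphism by (\ref{prop:sil}) (this is exactly the exactness of the \v{C}ech resolution in each log differential degree), which is what underlies (\ref{prop:naqi}) (2). The point is then that applying $\langle U_{S(T)^{\nat}}\rangle$ preserves this quasi-isomorphism: by (\ref{coro:hac}), the derived PD-Hirsch extension of a quasi-isomorphism between bounded below complexes is again a quasi-isomorphism. Hence the augmented complex remains a quasi-isomorphism after the extension, and applying $R\pi_{{\rm zar}*}$ gives that (\ref{ali:eetie}) is an isomorphism.

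The step I expect to require the most care is verifying that (\ref{coro:hac}) applies cleanly to the augmentation at the level of \emph{complexes of sheaves} on the simplicial diagram, since (\ref{coro:hac}) is stated for a single morphism $f\col E\otimes_B\Om^{\bul}\lo E'\otimes_{B'}\Om'^{\bul}$ and here the relevant morphism is an infinite \v{C}ech-type augmentation into a single complex of a semisimplicial object. The cleanest way around this is to apply the exactness (\ref{prop:sil}) degreewise in $i$ first, so that the augmentation is already a quasi-isomorphism before introducing the PD-Hirsch variable, and then invoke (\ref{coro:hac}) with $M=L_{S(T)^{\nat}}$ (locally free of finite rank, as required by (\ref{prop:acy})) to extend the quasi-isomorphism across $\langle U_{S(T)^{\nat}}\rangle$; one must confirm the filtration $P$ introduced via $\del(L,P)$ plays no role in the underlying quasi-isomorphism statement, which is true since (\ref{ali:eetie}) is a statement about the underlying complexes only.
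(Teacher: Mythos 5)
Your argument is correct, and for part (1) it is essentially the paper's: the paper deduces the whole corollary from the commutative square (\ref{eqn:cp}) in (\ref{prop:whf}), whose top, bottom and right arrows are the isomorphisms (\ref{eqn:eaxte}), (\ref{eqn:exte}) and (\ref{eqn:eetxte}); composing around that square is exactly your composition of (\ref{prop:ncqi})${}^{-1}$ with (\ref{prop:naqi}) (1). For part (2) you take a slightly different route. The paper gets (\ref{ali:eetie}) "for free" by a three-out-of-four argument on the same square: since the other three arrows are isomorphisms, so is the left vertical one. You instead prove the quasi-isomorphism directly on the $\os{\circ}{T}$-relative side, by noting that the augmentation into the semi-cosimplicial de Rham complex is a quasi-isomorphism by (\ref{prop:sil}) and that the PD-Hirsch extension preserves quasi-isomorphisms by (\ref{coro:hac}); this is in effect the simplicial-level version of (\ref{prop:impth}), whose left vertical arrow is exactly the local model of (\ref{ali:eetie}). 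Your cautionary remark about (\ref{coro:hac}) being stated for a single morphism of the shape in (\ref{prop-defi:pdc}) is well taken, and your fix is the right one: pass to the mapping cone of the augmentation, which is locally a finite direct sum in each degree (only finitely many branches meet a given point), acyclic and bounded below, so (\ref{prop:acy}) applies with $M=L_{S(T)^{\nat}}$ locally free of rank $1$. What the paper's route buys is brevity, since the three isomorphisms are already on record; what yours buys is independence from the commutativity assertion of (\ref{prop:whf}), at the cost of re-deriving the content of (\ref{prop:impth}) over the \v{C}ech diagram.
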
 
\begin{proof} 
This corollary follows from (\ref{prop:whf}). 
\end{proof}

%\begin{rema}\label{prop:bctu}
%By using the Thom-Whiteny functor and (\ref{defi:emm}),  
%one can check that the following ``base change formula'' holds$:$
%\begin{align*} 
%\wt{R}u_{X_{\os{\circ}{T}_0}/\os{\circ}{T}*}
%({\cal O}_{X_{\os{\circ}{T}_0}/\os{\circ}{T}}\langle U_{S(T)^{\nat}} \rangle)_{\mab Q}
%=
%\wt{R}u_{X_{\os{\circ}{T}_0}/\os{\circ}{T}*}
%({\cal O}_{X_{\os{\circ}{T}_0}/\os{\circ}{T}})_{\mab Q}\langle {\cal O}_Tu \rangle{}^L. 
%\tag{6.14.1}\label{ali:bcxeu}
%\end{align*} 
%\end{rema} 

\begin{theo}\label{theo:indp}
The filtered complex
$(H_{\rm zar}(X_{\os{\circ}{T}_0}/S(T)^{\nat},E),P)$
%(\wt{R}u_{X^{(\star)}_{\os{\circ}{T}_0}/\os{\circ}{T}*}
%(\eps^*_{X^{(\star)}_{\os{\circ}{T}_0}/\os{\circ}{T}}(E^{(\star)})\langle U_{S(T)^{\nat}} \rangle), P)$ 
is independent of the choice of an affine open covering of $X_{\os{\circ}{T}_0}$ 
and a simplicial immersion 
$X_{\os{\circ}{T}_0\bul}\os{\sus}{\lo} \ol{\cal P}_{\bul}$ over $\ol{S(T)^{\nat}}$.  
\end{theo}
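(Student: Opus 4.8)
The plan is to imitate the refinement argument used in the proof of (\ref{prop:naqi}), upgrading it from a statement about complexes to one about filtered complexes. Given two choices — an affine open covering together with a simplicial immersion $X_{\os{\circ}{T}_0\bul}\os{\sus}{\lo}\ol{\cal P}_{\bul}$ over $\ol{S(T)^{\nat}}$, and a second datum $X'_{\os{\circ}{T}_0\bul}\os{\sus}{\lo}\ol{\cal P}{}'_{\bul}$ — I would first pass to a common refinement, exactly as in (\ref{cd:sop}), so that it suffices to treat the case in which there is a morphism of the two data fitting into a commutative square $\ol{\cal P}_{\bul}\lo\ol{\cal P}{}'_{\bul}$ lying over $X_{\os{\circ}{T}_0\bul}\lo X'_{\os{\circ}{T}_0\bul}$ and inducing morphisms ${\cal P}_{\ul{\lam},\bul}\lo{\cal P}{}'_{\ul{\lam},\bul}$ on all strata.

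Next I would observe that, because the sheaf $L_{S(T)^{\nat}}$ and the PD-algebra $\Gam_{{\cal O}_T}(L_{S(T)^{\nat}})$ depend only on $S(T)^{\nat}$ and not on the chosen immersion, the comparison morphism acts only on the $\Om^{\bul}_{{\cal P}^{{\rm ex},(\bul)}_{\bul}/\os{\circ}{T}}$-factor. Since a morphism of log schemes over $\os{\circ}{T}$ induces a morphism of preweight-filtered log de Rham complexes (\ref{eqn:lyytp}), the induced map automatically respects the filtrations (\ref{ali:fpplp}) and (\ref{eqn:dfhp}); hence, after forming the diagonal filtration $\del(L,P)$ and applying $R\pi_{{\rm zar}*}$, it is a morphism in ${\rm D}^+{\rm F}(f^{-1}({\cal O}_T))$ between the two candidate filtered complexes.

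The heart of the matter is to show that this filtered morphism is a filtered quasi-isomorphism, which I would verify on the graded quotients. Using the Poincar\'e residue description (\ref{ali:grc}) (respectively (\ref{eqn:mpruzrn}) in the $X_{\ul{\lam}}$-case), the graded complex ${\rm gr}^{\del(L,P)}_k$ is identified with a direct sum of complexes of the form $\Gam_j(L_{S(T)^{\nat}})\otimes_{{\cal O}_T}{\cal E}\otimes b_{\ul{\lam}\cup\ul{\mu}*}(\Om^{\bul}_{\os{\circ}{\cal P}{}^{{\rm ex}}_{\ul{\lam}\cup\ul{\mu}}/\os{\circ}{T}}\otimes_{\mab Z}\vp_{{\rm zar},\ul{\mu}})[-k-2m+2j]$, i.e. the log de Rham complexes of the underlying (logless) strata over $\os{\circ}{T}$ tensored with a fixed PD-factor and an orientation sheaf. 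These residue identifications are functorial in the immersion data, so the comparison morphism on ${\rm gr}^{\del(L,P)}_k$ is the one induced by $\os{\circ}{\cal P}{}_{\ul{\lam}\cup\ul{\mu}}\lo\os{\circ}{\cal P}{}'_{\ul{\lam}\cup\ul{\mu}}$ on these de Rham complexes. After applying $R\pi_{{\rm zar}*}$ this becomes a comparison of the crystalline complexes of the smooth schemes $\os{\circ}{X}_{\ul{\lam}\cup\ul{\mu},\os{\circ}{T}_0}/\os{\circ}{T}$, which is a quasi-isomorphism by cohomological descent and the (log) Poincar\'e lemma, exactly as in (\ref{prop:naqi}) and (\ref{prop:sil}).

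The main obstacle I anticipate is not any single computation but the bookkeeping needed to confirm that the Poincar\'e residue isomorphism (\ref{ali:grc}) is genuinely natural with respect to the comparison morphism $\ol{\cal P}_{\bul}\lo\ol{\cal P}{}'_{\bul}$ — in particular that the local coordinates cutting out the strata, the induced orientation sheaves $\vp_{{\rm zar},\ul{\mu}}$, and the total order fixed on $\Lam$ all transport compatibly, so that the square relating the two sides on ${\rm gr}^{\del(L,P)}_k$ commutes on the nose. Once this naturality is in place, the fact that the filtration $\del(L,P)$ is bounded below and exhaustive (\ref{prop:lcz}), (\ref{rema:ex}) guarantees that a graded quasi-isomorphism is a filtered quasi-isomorphism, which yields the asserted independence for both $(H_{\rm zar}(X_{\ul{\lam},\os{\circ}{T}_0}/\os{\circ}{T},E),P)$ and $(H_{\rm zar}(X_{\os{\circ}{T}_0}/\os{\circ}{T},E),P)$.
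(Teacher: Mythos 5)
Your proposal is correct and follows essentially the same route as the paper: reduce to a single comparison morphism via a common refinement, identify the graded pieces $\mathrm{gr}^{\del(L,P)}_k$ through the Poincar\'e residue isomorphism with crystalline complexes of the smooth strata $\os{\circ}{X}_{\ul{\lam}\cup\ul{\mu},T_0}/\os{\circ}{T}$ (where independence follows from cohomological descent and the Poincar\'e lemma), and conclude using that the filtration is bounded below and exhaustive. The naturality of the residue identification that you flag as the main obstacle is exactly what the paper records in the commutative diagram (\ref{eqn:xdhs}), so no new idea is needed beyond what you describe.
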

\begin{proof} 
Let $X'_{\os{\circ}{T}_0\bul}$ be the \v{C}ech diagram of 
another affine open covering of $X_{\os{\circ}{T}_0}$ 
and let $X'_{\os{\circ}{T}_0\bul}\os{\sus}{\lo} \ol{\cal P}{}'_{\bul}$ 
be another simplicial immersion into 
a log smooth scheme over $\ol{S(T)^{\nat}}$.  
We may assume that there exists the commutative diagram (\ref{cd:sop}) 
and that the morphism $X_{\os{\circ}{T}_0\bul}\lo X'_{\os{\circ}{T}_0\bul}$ 
induces a morphism 
$X_{\ul{\lam},\os{\circ}{T}_0\bul}\lo X'_{\ul{\lam},\os{\circ}{T}_0\bul}$. 
By (\ref{prop:ncqi}),  $H_{\rm zar}(X_{\os{\circ}{T}_0}/S(T)^{\nat},E)$  
is independent of the choice of an affine open covering of $X_{\os{\circ}{T}_0}$ 
and a simplicial immersion 
$X_{\os{\circ}{T}_0\bul}\os{\sus}{\lo} \ol{\cal P}_{\bul}$ over $\ol{S(T)^{\nat}}$.  
Hence, by using descending induction, we have only to prove that 
${\rm gr}^P_k(H_{\rm zar}(X_{\os{\circ}{T}_0}/S(T)^{\nat},E))$ 
is independent of the choice of an affine open covering of $X_{\os{\circ}{T}_0}$ 
and a simplicial immersion 
$X_{\os{\circ}{T}_0\bul}\os{\sus}{\lo} \ol{\cal P}_{\bul}$ over $\ol{S(T)^{\nat}}$.
%Let 
%\begin{align*} 
%\pi_{\rm crys}\col ((X_{\os{\circ}{T}_0,\bul}/S(T)^{\nat})_{\rm crys},
%{\cal O}_{X_{\os{\circ}{T}_0,\bul}/S(T)^{\nat}})
%\lo 
%((X_{\os{\circ}{T}_0}/S(T)^{\nat})_{\rm crys},
%{\cal O}_{X_{\os{\circ}{T}_0}/S(T)^{\nat}})
%\end{align*} 
%be the natural morphism of ringed topoi. 
Let 
\begin{align*} 
\pi_{\ul{\lam},{\rm crys}}\col ((\os{\circ}{X}_{T_0,\ul{\lam},\bul}/\os{\circ}{T})_{\rm crys},
{\cal O}_{\os{\circ}{X}_{T_0,\ul{\lam},\bul}/\os{\circ}{T}})
\lo 
(\os{\circ}{X}_{\ul{\lam},T_0}/\os{\circ}{T})_{\rm crys},
{\cal O}_{\os{\circ}{X}_{\ul{\lam},T_0}/\os{\circ}{T}})
\end{align*} 
be the natural morphism of ringed topoi. 
Let $E_{\ul{\lam}}$ and $E^{\bul}_{\ul{\lam}}$ be the inverse images of $E$ and $E^{\bul}$ to 
$(\os{\circ}{X}_{\ul{\lam},T_0}/\os{\circ}{T})_{\rm crys}$ and 
$(\os{\circ}{X}_{\ul{\lam},T_0,\bul}/\os{\circ}{T})_{\rm crys}$, respectively. 
By using (\ref{ali:grc}), \cite[(1.3.4)]{nh2}  and the cohomological descent, 
the graded complex  
${\rm gr}^P_k(H_{\rm zar}(X_{\os{\circ}{T}_0}/S(T)^{\nat},E))$
can be calculated as follows: 
\begin{align*} 
&{\rm gr}^P_k(H_{\rm zar}(X_{\os{\circ}{T}_0}/S(T)^{\nat},E))= 
{\rm gr}^P_k(\wt{R}u_{X^{(\star)}_{\os{\circ}{T}_0}/\os{\circ}{T}*}
(\eps^*_{X^{(\star)}_{\os{\circ}{T}_0}/\os{\circ}{T}}(E^{(\star)})\langle u \rangle) )
\tag{6.16.1}\label{ali:vspgrc}\\
&={\rm gr}^P_k
R\pi_{{\rm zar}*}
(s({\cal E}^{\bul}
\otimes_{{\cal O}_{{\cal P}^{\rm ex}}}
\Om^{\bul}_{{\cal P}^{{\rm ex},(\bul)}_{\bul}/\os{\circ}{T}}\langle u \rangle))\\
&=
R\pi_{{\rm zar}*}{\rm gr}_k^P
(s({\cal E}^{\bul}
\otimes_{{\cal O}_{{\cal P}^{\rm ex}}}
\Om^{\bul}_{{\cal P}{}^{{\rm ex},(\bul)}_{\bul}/\os{\circ}{T}}\langle u \rangle)) 
\\
&\os{\sim}{\lo} \bigoplus_{m\geq 0}
\bigoplus_{\sharp \ul{\lam}=m+1}
\bigoplus_{j\geq 0}
\bigoplus_{\sharp \ul{\mu}=k+m-2j}\\
& R\pi_{{\rm zar}*}({\cal E}^{\bul}
\otimes_{{\cal O}_{{\cal P}^{\rm ex}_{\bul}}}
b_{\bul \ul{\lam}\cup \ul{\mu}*}
(\Om^{\bul}_{\os{\circ}{\cal P}{}^{{\rm ex}}_{\bul \ul{\lam}
\cup \ul{\mu}}/\os{\circ}{T}}\otimes_{\mab Z}\vp_{{\rm zar},\ul{\mu}}
(\os{\circ}{\cal P}{}^{\rm ex}_{\bul}/\os{\circ}{T})))[-k-2m+2j] \\
&=\bigoplus_{m\geq 0}
\bigoplus_{\sharp \ul{\lam}=m+1}
\bigoplus_{j\geq 0}
\bigoplus_{\sharp \ul{\mu}=k+m-2j}\\
&R\pi_{{\rm zar}*}
a_{\ul{\lam}\cup \ul{\mu},{T}_0*}
Ru_{\os{\circ}{X}{}_{\ul{\lam}\cup \ul{\mu},{T}_0,\bul}/\os{\circ}{T}_0*}
(E^{\bul}_{\ul{\lam}\cup \ul{\mu}}\otimes_{\mab Z}\vp_{{\rm crys},\ul{\mu}}
(\os{\circ}{X}{}_{\ul{\mu},{T}_0,\bul}/\os{\circ}{T}))[-k-2m+2j] \\
&=\bigoplus_{m\geq 0}
\bigoplus_{\sharp \ul{\lam}=m+1}
\bigoplus_{j\geq 0}
\bigoplus_{\sharp \ul{\mu}=k+m-2j}\\
&
a_{\ul{\lam}\cup \ul{\mu},{T}_0*}Ru_{\os{\circ}{X}{}_{\ul{\lam}\cup \ul{\mu},{T}_0}/\os{\circ}{T}_0*}
R\pi_{\ul{\lam}\cup \ul{\mu},{\rm crys}*}
(E^{\bul}_{\ul{\lam}\cup \ul{\mu}}\otimes_{\mab Z}\vp_{{\rm crys},\ul{\mu}}
(\os{\circ}{X}{}_{\ul{\mu},{T}_0,\bul}/\os{\circ}{T}))[-k-2m+2j] \\
&=\bigoplus_{m\geq 0}
\bigoplus_{\sharp \ul{\lam}=m+1}
\bigoplus_{j\geq 0}
\bigoplus_{\sharp \ul{\mu}=k+m-2j}\\
&a_{\ul{\lam}\cup \ul{\mu},{T}_0*}
Ru_{\os{\circ}{X}{}_{\ul{\lam}\cup \ul{\mu},{T}_0}/\os{\circ}{T}_0*}
(E_{\ul{\lam}\cup \ul{\mu}}\otimes_{\mab Z}\vp_{{\rm crys},\ul{\mu}}
(\os{\circ}{X}{}_{\ul{\mu},{T}_0}/\os{\circ}{T}))[-k-2m+2j]. 
\end{align*} 
Moreover we obtain the following commutative diagram 
\begin{equation*} 
%\small{
\begin{CD} 
R\pi_{{\rm zar}*}{\rm gr}_k^P
(s({\cal E}^{\bul}
\otimes_{{\cal O}_{{\cal P}^{\rm ex}}}
\Om^{\bul}_{{\cal P}{}^{{\rm ex},(\bul)}_{\bul}/\os{\circ}{T}}\langle u \rangle)) = \\ 
@AAA \\
R\pi_{{\rm zar}*}{\rm gr}_k^P
(s({\cal E}'{}^{\bul}
\otimes_{{\cal O}_{{\cal P}'{}^{\rm ex}}}
\Om^{\bul}_{{\cal P}_{\bul}'{}^{{\rm ex},(\bul)}/\os{\circ}{T}}\langle u \rangle)) 
= \\
%}
\displaystyle{\bigoplus_{m\geq 0}
\bigoplus_{\sharp \ul{\lam}=m+1}
\bigoplus_{j\geq 0}
\bigoplus_{\sharp \ul{\mu}=k+m-2j}}
a_{\ul{\lam}\cup \ul{\mu},{T}_0*}Ru_{\os{\circ}{X}{}_{\ul{\lam}\cup \ul{\mu},{T}_0}/\os{\circ}{T}_0*}
(E_{\ul{\lam}\cup \ul{\mu}}\otimes_{\mab Z}\vp_{{\rm crys},\ul{\mu}}
(\os{\circ}{X}{}_{\ul{\mu},{T}_0}/\os{\circ}{T}))[-k-2m+2j]\\
@| \\ 
\displaystyle{\bigoplus_{m\geq 0}
\bigoplus_{\sharp \ul{\lam}=m+1}
\bigoplus_{j\geq 0}
\bigoplus_{\sharp \ul{\mu}=k+m-2j}}
a_{\ul{\lam}\cup \ul{\mu},{T}_0*}Ru_{\os{\circ}{X}{}_{\ul{\lam}\cup \ul{\mu},{T}_0}/\os{\circ}{T}_0*}
(E_{\ul{\lam}\cup \ul{\mu}}\otimes_{\mab Z}\vp_{{\rm crys},\ul{\mu}}
(\os{\circ}{X}{}_{\ul{\mu},{T}_0}/\os{\circ}{T}))[-k-2m+2j]. 
\end{CD}
\tag{6.16.2}\label{eqn:xdhs}
\end{equation*} 
%\begin{equation*} 
%\small{
%\begin{CD} 
%{\rm gr}^P_kR\pi_{{\rm zar}*}
%(s({\cal E}^{\bul}
%\otimes_{{\cal O}_{{\cal P}^{\rm ex}}}
%\Om^{\bul}_{{\cal P}^{{\rm ex}}_{\ul{\lam},\bul}/\os{\circ}{T}}\langle U_{S(T)^{\nat}} \rangle))
%@= \\ 
%@AAA \\
%{\rm gr}^P_kR\pi_{{\rm zar}*}
%(s({\cal E}^{\bul}
%\otimes_{{\cal O}_{{\cal P}^{\rm ex}}}
%\Om^{\bul}_{{\cal P}^{{\rm ex}}_{\ul{\lam},\bul}/\os{\circ}{T}}\langle U_{S(T)^{\nat}} \rangle))@= \\
%}
%\bigoplus_{j\geq 0}
%\bigoplus_{\sharp \ul{\mu}=k-2j}
%Ru_{\os{\circ}{X}{}_{\ul{\lam}\cup \ul{\mu},{T}_0}/\os{\circ}{T}_0*}
%(E_{\ul{\lam}\cup \ul{\mu}}\otimes_{\mab Z}\vp_{{\rm crys},\ul{\mu}}
%(\os{\circ}{X}{}_{\ul{\mu},{T}_0}/\os{\circ}{T}))[-(k-2j)]\\
%@| \\ 
%\bigoplus_{j\geq 0}
%\bigoplus_{\sharp \ul{\mu}=k-2j}
%Ru_{\os{\circ}{X}{}_{\ul{\lam}\cup \ul{\mu},{T}_0}/\os{\circ}{T}_0*}
%(E_{\ul{\lam}\cup \ul{\mu}}\otimes_{\mab Z}\vp_{{\rm crys},\ul{\mu}}
%(\os{\circ}{X}{}_{\ul{\mu},{T}_0}/\os{\circ}{T}))[-(k-2j)]. 
%\end{CD}
%\tag{6.14.2}\label{eqn:xdhs}
%\end{equation*}  
Hence we obtain the desired independence of 
${\rm gr}^P_k(H_{\rm zar}(X_{\os{\circ}{T}_0}/S(T)^{\nat},E))$. 
\end{proof}

\begin{rema}\label{rema:an}
Because $X_{\os{\circ}{T}_0,n}$ $(n\in {\mab N})$ is an  open subscheme of an affine scheme, 
$P_{l}s({\cal E}^{\bul}
\otimes_{{\cal O}_{{\cal P}^{\rm ex}}}
\Om^{\bul}_{{\cal P}^{{\rm ex},(\bul)}_n/\os{\circ}{T}}\langle u \rangle)=0$ 
for $l<<0$.  
\end{rema} 

\begin{coro}\label{coro:ts}
There exists a spectral sequence whose $E_1$-terms are as follows$:$  
\begin{align*} 
&E_1^{-k,q+k}=
\bigoplus_{m\geq 0}
\bigoplus_{\# \ul{\lam}=m+1}
\bigoplus_{j\geq 0}
\bigoplus_{\# \ul{\mu}=k+m-2j}
R^{q+2j-k-2m}
f_{\os{\circ}{X}_{\ul{\lam}\cup \ul{\mu},T_0}/\os{\circ}{T}*} 
(E\vert_{\os{\circ}{X}_{\ul{\lam}\cup \ul{\mu},T_0}/\os{\circ}{T}}
\tag{6.18.1}\label{ali:spsalarh}\\
&\otimes_{\mab Z}\vp_{{\rm crys},\ul{\mu}}
(\os{\circ}{X}_{\ul{\lam}\cup \ul{\mu},T_0}/\os{\circ}{T})). 
\end{align*} 
If $\os{\circ}{X}_{T_0}$ is quasi-compact, then 
there exists the following convergent spectral sequence$:$ 
\begin{align*} 
&E_1^{-k,q+k}=
\bigoplus_{m\geq 0}
\bigoplus_{\# \ul{\lam}=m+1}
\bigoplus_{j\geq 0}
\bigoplus_{\# \ul{\mu}=k+m-2j}
R^{q+2j-k-2m}
f_{\os{\circ}{X}_{\ul{\lam}\cup \ul{\mu},T_0}/\os{\circ}{T}*} 
(E\vert_{\os{\circ}{X}_{\ul{\lam}\cup \ul{\mu},T_0}/\os{\circ}{T}}
\tag{6.18.2}\label{ali:spsarh}\\
&\otimes_{\mab Z}\vp_{{\rm crys},\ul{\mu}}
(\os{\circ}{X}_{\ul{\lam}\cup \ul{\mu},T_0}/\os{\circ}{T}))
\Lo R^qf_{X_{\os{\circ}{T}_0}/S(T)^{\nat}*}
(\eps^*_{X_{\os{\circ}{T}_0}/S(T)^{\nat}}(E))\quad (q\in {\mab N}). 
\end{align*} 
\end{coro} 
\begin{proof} 
The existence of the spectral sequence follows from (\ref{ali:vspgrc}). 
The existence of the convergent spectral sequence follows from (\ref{prop:lcz}) 
(this implies that (\ref{ali:spsarh}) is regular), 
(\ref{rema:ex}) and the cohomological version of 
the \cite[Classical Convergence Theorem 5.5.1]{weib} and (\ref{ali:eeqie}). 
\end{proof}
%\begin{rema}
%Note that we have not yes claimed that 
%the spectral sequence (\ref{ali:spsarh}) is convergent (nor weakly convergent). 
%If $X_{\os{\circ}{T}_0}$ is quasi-compact, then we know that 
%$E_1^{-k,h+k}=0$ for $k<<0$ since $\# \ul{\mu}\geq 0$. 
%However we do not know whether $E_1^{-k,h+k}=0$ for $k>>0$. 
%Because $E_1^{-k,h+k}$ is ``of pure $h+k$'' at least modulo torsion for the case of 
%the trivial coefficient when 
%$\os{\circ}{X}_{T_0}$ is proper over $\os{\circ}{T}_0$, 
%the spectral sequence modulo torsion should degenerate at $E_2$ 
%in this case. 
%Later we prove this degeneration in a standard case by using theory of 
%convergent isocrystals and 
%certain deformation invariance of log crystalline cohomologies. 
%In this case, the spectral sequence (\ref{ali:spsarh}) will turn out 
%to be regular and bounded below. Consequently (\ref{ali:spsarh}) 
%will be shown to be convergent in this case. 
%\end{rema}

\begin{defi}
We denote by $P_H$ the induced filtration on 
$R^hf_{X_{\os{\circ}{T}_0}/S(T)^{\nat}*}
(\eps^*_{X_{\os{\circ}{T}_0}/S(T)^{\nat}}(E))$ 
defined by (\ref{ali:spsarh}). 
\end{defi}

We use the following in \S\ref{sec:p} below. 

\begin{prop}\label{prop:baf} 
If $\os{\circ}{X}_{T_0}$ is quasi-compact, then 
$(H_{\rm zar}(X_{\os{\circ}{T}_0}/S(T)^{\nat},E),P)$ is bounded above. 
\end{prop} 
\begin{proof} 
First we note that $H_{\rm zar}(X_{\os{\circ}{T}_0}/S(T)^{\nat},E)$ is bounded above. 
Indeed,  $H_{\rm zar}(X_{\os{\circ}{T}_0}/S(T)^{\nat},E)$ is isomorphic to 
$Ru_{X_{\os{\circ}{T}_0}/S(T)^{\nat}*}
(\eps^*_{X_{\os{\circ}{T}_0}/S(T)^{\nat}}(E))$. 
This complex 
is bounded above by the log Poincar\'{e} lemma
because  $\os{\circ}{X}_{T_0}$ is quasi-compact, 
\par 
By (\ref{rema:an}),  
$P_{l}H_{\rm zar}(X_{\os{\circ}{T}_0}/S(T)^{\nat},E)=0$ 
for $l<<0$ because $\os{\circ}{X}_{T_0}$ is quasi-compact. 
By (\ref{ali:vspgrc}), 
\begin{align*} 
&{\rm gr}^P_k(H_{\rm zar}(X_{\os{\circ}{T}_0}/S(T)^{\nat},E))\\
&=\bigoplus_{m\geq 0}
\bigoplus_{\# \ul{\lam}=m+1}
\bigoplus_{j\geq 0}
\bigoplus_{\# \ul{\mu}=k+m-2j}
Ru_{\os{\circ}{X}{}_{\ul{\lam}\cup \ul{\mu},{T}_0}/\os{\circ}{T}_0*}
(E_{\ul{\lam}\cup \ul{\mu}}\otimes_{\mab Z}\vp_{{\rm crys},\ul{\mu}}
(\os{\circ}{X}{}_{\ul{\mu},{T}_0}/\os{\circ}{T}))[-k-2m+2j]. 
\end{align*} 
Because $X_{\os{\circ}{T}_0}$ is quasi-compact, there exists a positive integer $q$ 
such that 
$$R^qu_{\os{\circ}{X}{}_{\ul{\lam}\cup \ul{\mu},{T}_0}/\os{\circ}{T}_0*}
(E_{\ul{\lam}\cup \ul{\mu}}\otimes_{\mab Z}\vp_{{\rm crys},\ul{\mu}}
(\os{\circ}{X}{}_{\ul{\mu},{T}_0}/\os{\circ}{T}))=0$$ 
for any $\ul{\lam}$ and $\ul{\mu}$ by the Poincar\'{e} lemma. 
We complete the proof of (\ref{prop:baf}). 
\end{proof} 

The last aim in this section is to prove necessary results for the comparison 
of $(H_{\rm zar}(X_{\os{\circ}{T}_0}/S(T)^{\nat},E),P)$ 
with Kim-Hain's filtered complex in the case where $S$ is the log point $s$ of 
a perfect field of characteristic $p>0$ and $T={\cal W}_n(s)$. 
\par 
Recall that $E_{\ul{\lam}}$ is the inverse image of $E$ to 
$(\os{\circ}{X}_{\ul{\lam},T_0}/\os{\circ}{T})_{\rm crys}$. 
Let $f_{\ul{\lam}} \col X_{\ul{\lam},\os{\circ}{T}_0} \lo S(T)^{\nat}$ be the structural morphism.  
%Let $\eps_{X_{\ul{\lam},\os{\circ}{T_0}}/S_{\os{\circ}{T}_0}} 
%\col X_{\ul{\lam},\os{\circ}{T_0}}\lo \os{\circ}{X}_{\ul{\lam},T_0}$ 
%be the morphisms forgetting the log structure of $X_{\ul{\lam},\os{\circ}{T_0}}$. 
For $U=S(T)^{\nat}$ or $\os{\circ}{T}$, 
let 
\begin{align*}
\eps_{X_{\ul{\lam},\os{\circ}{T}_0}/U} 
\col ((X_{\ul{\lam},\os{\circ}{T}_0}/U)_{\rm crys},
{\cal O}_{X_{\ul{\lam},\os{\circ}{T}_0}/U})
\lo ((\os{\circ}{X}_{\ul{\lam},\os{\circ}{T}_0}/\os{\circ}{T})_{\rm crys},
{\cal O}_{\os{\circ}{X}_{\ul{\lam},T_0}/\os{\circ}{T}}), 
\end{align*} 
be the morphism forgetting the log structure.

\begin{prop}\label{prop:nlgr} 
For a nonempty set $\ul{\lam}$, 
set 
\begin{align*} 
(\wt{R}u_{X_{\ul{\lam},\os{\circ}{T}_0}/\os{\circ}{T}*}
(\eps^*_{X_{\ul{\lam},\os{\circ}{T}_0}/\os{\circ}{T}}
(E_{\ul{\lam}})), P)
:=R\pi_{\ul{\lam},{\rm zar}*}((s({\cal E}^{\bul}
\otimes_{{\cal O}_{{\cal P}{}^{\rm ex}_{\bul}}}
\Om^{\bul}_{{\cal P}{}^{{\rm ex}}_{\bul\ul{\lam}}/\os{\circ}{T}}),P))
\end{align*} 
in ${\rm D}^+{\rm F}(f_{\ul{\lam}}^{-1}({\cal O}_T))$. 
Let $a_{\ul{\lam},\ul{\lam}\cup \ul{\mu},\os{\circ}{T}_0} \col 
\os{\circ}{X}_{\ul{\lam}\cup \ul{\mu},T_0} \lo 
\os{\circ}{X}_{\ul{\lam},T_0}$ be the natural closed immersion.  
For an integer $k$, 
there exists a natural isomorphism 
\begin{align*}
& {\rm gr}_k^P\wt{R}u_{X_{\ul{\lam},\os{\circ}{T}_0}/\os{\circ}{T}*}
(\eps^*_{X_{\ul{\lam},\os{\circ}{T}_0}/\os{\circ}{T}}(E_{\ul{\lam}}))
\os{\sim}{\lo} \tag{6.21.1}\label{eqn:ele}\\
& 
\bigoplus_{\# \ul{\mu}=k} 
a_{\ul{\lam},\ul{\lam}\cup \ul{\mu},T_0*}
Ru_{\os{\circ}{X}_{\ul{\lam}\cup \ul{\mu},T}
/\os{\circ}{T}}
(E_{\ul{\lam}\cup \ul{\mu}}\otimes_{\mab Z}
\vp_{{\rm crys},\ul{\mu}}(\os{\circ}{X}_T/\os{\circ}{T}))[-k]. 
\end{align*} 
\end{prop}
\begin{proof} 
As in (\ref{ali:vspgrc}), we obtain the isomorphism 
(\ref{eqn:ele}) by using (\ref{eqn:mpprrn}).  
\end{proof}

\begin{coro}\label{coro:il}
The filtered complex 
$(\wt{R}u_{X_{\ul{\lam},\os{\circ}{T}_0}/\os{\circ}{T}*}
(\eps^*_{X_{\ul{\lam},\os{\circ}{T}_0}/\os{\circ}{T}}(E_{\ul{\lam}})),P)$  
is independent of the choice of an open covering of $X$ 
and an immersion 
$X_{\os{\circ}{T}_0,\bul}\os{\sus}{\lo} \ol{\cal P}_{\bul}$ over $\ol{S(T)^{\nat}}$. 
\end{coro}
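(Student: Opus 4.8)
The plan is to reduce Corollary \ref{coro:il} to the independence statement for the graded pieces, exactly as was done in the proof of Theorem \ref{theo:indp}. Since the filtered complex $(\wt{R}u_{X_{\ul{\lam},T_0}/\os{\circ}{T}*}(\eps^*_{X_{\ul{\lam},T_0}/\os{\circ}{T}}(E)),P)$ lives in ${\rm D}^+{\rm F}(f_{\ul{\lam}}^{-1}({\cal O}_T))$, and since its filtration $P$ is bounded below (by the analogue of (\ref{prop:flt}), the underlying terms are locally free ${\cal O}_T$-modules and $P_{-1}=0$ for the pieces $\Om^{\bul}_{{\cal P}^{\rm ex}_{\ul{\lam}}/\os{\circ}{T}}$ with $\ul{\lam}\neq\emptyset$), it suffices to check two things: first, that the underlying complex $\wt{R}u_{X_{\ul{\lam},T_0}/\os{\circ}{T}*}(\eps^*_{X_{\ul{\lam},T_0}/\os{\circ}{T}}(E))$ is independent of the choices, and second, that each graded piece ${\rm gr}_k^P$ is independent of the choices, compatibly. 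An inductive argument on $k$ (descending, as in the final paragraph of the proof of \ref{theo:indp}) then yields independence of the whole filtered object.

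First I would treat the underlying complex. Following the proof of (\ref{prop:naqi}), given two affine open coverings with simplicial immersions $X_{\ul{\lam},\os{\circ}{T}_0\bul}\os{\sus}{\lo}\ol{\cal P}_{\bul}$ and $X'_{\ul{\lam},\os{\circ}{T}_0\bul}\os{\sus}{\lo}\ol{\cal P}{}'_{\bul}$, one reduces by taking a common refinement to the case of a morphism fitting into a commutative square as in (\ref{cd:sop}). This induces a morphism ${\cal P}_{\ul{\lam},\bul}\lo {\cal P}'_{\ul{\lam},\bul}$ and hence a comparison morphism between the two log de Rham complexes; applying $R\pi_{\ul{\lam},{\rm zar}*}$ gives the desired comparison isomorphism. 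The key inputs here are the exactification base change (\ref{prop:xpls}) and the fact that the log crystal $E$ is canonically determined, so the connections agree.

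Second I would compute ${\rm gr}_k^P$. By (\ref{prop:pslt}) the Poincaré residue isomorphism (\ref{eqn:mpprrn}) identifies ${\rm gr}_k^P({\cal E}\otimes_{{\cal O}_{{\cal P}^{\rm ex}}}\Om^{\bul}_{{\cal P}^{\rm ex}_{\ul{\lam}}/\os{\circ}{T}})$ with a direct sum of shifted log de Rham complexes on the $\os{\circ}{\cal P}{}^{\rm ex}_{\ul{\lam}\cup\ul{\mu}}$'s twisted by the orientation sheaves. Applying $R\pi_{\ul{\lam},{\rm zar}*}$, using \cite[(1.3.4)]{nh2} and the cohomological descent exactly as in the chain of equalities (\ref{ali:vsmmgrc}), yields the formula (\ref{eqn:ele}), expressing ${\rm gr}_k^P$ as $\bigoplus_{\sharp\ul{\mu}=k}a_{\ul{\lam}\cup\ul{\mu},T_0*}Ru_{\os{\circ}{X}_{\ul{\lam}\cup\ul{\mu},T}/\os{\circ}{T}*}(E\otimes\vp_{{\rm crys},\ul{\mu}})[-k]$. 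The right-hand side is manifestly independent of the auxiliary choices, being built from intrinsic crystalline cohomology of the strata. I would then record the commutative diagram analogous to (\ref{eqn:xdhs}) comparing the two choices through the residue isomorphisms, establishing independence compatibly with the comparison morphism on the underlying complex.

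The main obstacle I anticipate is verifying that the comparison isomorphism on the underlying complex is compatible with the filtration $P$, i.e. that the refinement morphism is strictly filtered (or at least a filtered quasi-isomorphism), so that the induced isomorphisms on graded pieces coincide with the intrinsic identifications coming from (\ref{eqn:ele}). This is a matter of checking that the morphism ${\cal P}_{\ul{\lam},\bul}\lo{\cal P}'_{\ul{\lam},\bul}$ preserves the preweight filtration $P$ on logarithmic differentials — which follows from functoriality (\ref{eqn:lyzpp}) of the preweight filtration — and that the residue maps are natural with respect to such morphisms. Granting this, the descending induction on $k$ (using that $P$ is bounded below and exhaustive, cf.~(\ref{prop:flt}) and (\ref{rema:ex})) completes the proof, and the final step is simply to invoke the corresponding independence already established for the underlying complex in the base of the induction.
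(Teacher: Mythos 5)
Your proposal is correct and follows essentially the paper's argument: compute ${\rm gr}_k^P$ intrinsically via the residue isomorphism (\ref{eqn:ele}) and induct on $k$ using the triangles $P_{k-1}\to P_k\to{\rm gr}_k^P$. The only difference is that the paper runs the induction \emph{ascending} from the trivial base case $P_{-1}=0$, which makes your first step (independence of the underlying unfiltered complex, needed only to anchor a descending induction) unnecessary.
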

\begin{proof} 
Note that $P_{-1}\wt{R}u_{X_{\ul{\lam},\os{\circ}{T}_0}/\os{\circ}{T}*}
(\eps^*_{X_{\ul{\lam},\os{\circ}{T}_0}/\os{\circ}{T}}(E_{\ul{\lam}}))=0$. 
By using (\ref{eqn:ele}), ascending induction on $k$ 
shows (\ref{coro:il}). 
\end{proof}  
%By using (\ref{coro:il}), 
%we can also prove that 
%$R\pi_{\ul{\lam},{\rm zar}*}
%(s({\cal E}^{\bul}\otimes_{{\cal O}_{{\cal P}{}^{\rm ex}_{\bul}}}
%\Om^{\bul}_{{\cal P}{}^{{\rm ex}}_{\bul\ul{\lam}}/S(T)^{\nat}}))$ 
%is well-defined.   

\begin{defi}\label{defi:ndd}
Set 
%\begin{align*} 
%&Ru_{X^{(m)}_{\os{\circ}{T}_0}/S(T)^{\nat}*}
%(\eps^*_{X^{(m)}_{\os{\circ}{T}_0}/S(T)^{\nat}}(E))
%:=\bigoplus_{\# \ul{\lam}=m+1}
%Ru_{X_{\ul{\lam},\os{\circ}{T}_0}/S(T)^{\nat}*}
%(\eps^*_{X_{\ul{\lam},\os{\circ}{T}_0}/S(T)^{\nat}}(E_{\ul{\lam}}))
%\tag{6.22.1}\label{ali:gsfled}
%\end{align*} 
%and 
\begin{align*} 
(\wt{R}u_{X^{(m)}_{\os{\circ}{T}_0}/\os{\circ}{T}*}
(\eps^*_{X^{(m)}_{\os{\circ}{T}_0}/\os{\circ}{T}}(E))\tag{6.23.1}\label{ali:gsmled}
:=\bigoplus_{\# \ul{\lam}=m+1}
(\wt{R}u_{X_{\ul{\lam},\os{\circ}{T}_0}/\os{\circ}{T}*}
(\eps^*_{X_{\ul{\lam},\os{\circ}{T}_0}/\os{\circ}{T}}
(E_{\ul{\lam}})), P). 
\end{align*} 
\end{defi}

%that $(\wt{R}u_{X_{\ul{\lam},\os{\circ}{T}_0}/\os{\circ}{T}*}
%(\eps^*_{X_{\ul{\lam},\os{\circ}{T}_0}/\os{\circ}{T}}(E_{\ul{\lam}})\langle U_{S(T)^{\nat}} \rangle), P)$ 
%is well-defined. We need the following several lemmas: 

\begin{prop}\label{prop:lx}  
Assume that  $\ul{\lam}\not=\emptyset$.  
Let 
\begin{equation*} 
\pi_{\ul{\lam},{\rm zar}} \col 
((X_{\ul{\lam},\os{\circ}{T}_0\bul})_{\rm zar},f^{-1}_{\bul}({\cal O}_T)) \lo 
((X_{\ul{\lam},\os{\circ}{T}_0})_{\rm zar},f^{-1}({\cal O}_T)) 
\tag{6.24.1}\label{eqn:lbd} 
\end{equation*} 
be the natural morphism of ringed topoi.  
Set 
\begin{align*} 
Ru_{X_{\ul{\lam},\os{\circ}{T}_0}/S(T)^{\nat}*}
(\eps^*_{X_{\ul{\lam},\os{\circ}{T}_0}/S(T)^{\nat}}(E_{\ul{\lam}}))&= 
R\pi_{\ul{\lam},{\rm zar}}(s({\cal E}^{\bul}\otimes_{{\cal O}_{{\cal P}^{\rm ex}_{\bul}}}
{\Om}^{\bul}_{{\cal P}^{\rm ex}_{\bul}/S(T)^{\nat}}\otimes_{{\cal O}_{{\cal P}^{\rm ex},\bul}}
{\cal O}_{{\cal P}^{\rm ex}_{\bul \ul{\lam}}}))\tag{6.24.2}\label{eqn:lbid} \\
&= R\pi_{\ul{\lam},{\rm zar}}(s({\cal E}^{\bul}\otimes_{{\cal O}_{{\cal P}^{\rm ex}_{\bul}}}
{\Om}^{\bul}_{{\cal P}^{\rm ex}_{\bul\ul{\lam}}/S(T)^{\nat}}))
\end{align*} 
in ${\rm D}^+{\rm F}(f_{\ul{\lam}}^{-1}({\cal O}_T))$. 
Then 
$Ru_{X_{\ul{\lam},\os{\circ}{T}_0}/S(T)^{\nat}*}
(\eps^*_{X_{\ul{\lam},\os{\circ}{T}_0}/S(T)^{\nat}}(E_{\ul{\lam}}))$
is independent of the choice of an affine open covering of $X_{\os{\circ}{T}_0}$ 
and an immersion 
$X_{\os{\circ}{T}_0,\bul}\os{\sus}{\lo} \ol{\cal P}_{\bul}$ over $\ol{S(T)^{\nat}}$. 
\end{prop} 
\begin{proof} 
We have the following exact sequence:  
\begin{align*} 
0\lo 
s({\cal E}^{\bul}\otimes_{{\cal O}_{{\cal P}^{\rm ex}_{\bul}}}
{\Om}^{\bul}_{{\cal P}^{\rm ex}_{\bul}/S(T)^{\nat}})[-1]
\os{d\log t\wedge }{\lo} 
s({\cal E}^{\bul}\otimes_{{\cal O}_{{\cal P}^{\rm ex}_{\bul}}}
{\Om}^{\bul}_{{\cal P}^{\rm ex}_{\bul}/\os{\circ}{T}})
\lo 
s({\cal E}^{\bul}\otimes_{{\cal O}_{{\cal P}^{\rm ex}_{\bul}}}
{\Om}^{\bul}_{{\cal P}^{\rm ex}_{\bul}/S(T)^{\nat}})
\lo 0. 
\end{align*}
Since this is locally split, the following sequence is also locally split: 
\begin{align*} 
&0\lo 
s({\cal E}^{\bul}\otimes_{{\cal O}_{{\cal P}^{\rm ex}_{\bul}}}
{\Om}^{\bul}_{{\cal P}^{\rm ex}_{\bul}/S(T)^{\nat}}
\otimes_{{\cal O}_{{\cal P}^{\rm ex}_{\bul}}}
{\cal O}_{{\cal P}^{\rm ex}_{\bul \ul{\lam}}})[-1]
\os{d\log t\wedge }{\lo} 
s({\cal E}^{\bul}\otimes_{{\cal O}_{{\cal P}^{\rm ex}_{\bul}}}
{\Om}^{\bul}_{{\cal P}^{\rm ex}_{\bul}/\os{\circ}{T}}
\otimes_{{\cal O}_{{\cal P}^{\rm ex}_{\bul}}}
{\cal O}_{{\cal P}^{\rm ex}_{\bul \ul{\lam}}})\\
&\lo 
s({\cal E}^{\bul}\otimes_{{\cal O}_{{\cal P}^{\rm ex}_{\bul}}}
{\Om}^{\bul}_{{\cal P}^{\rm ex}_{\bul}/S(T)^{\nat}}
\otimes_{{\cal O}_{{\cal P}^{\rm ex}_{\bul}}}
{\cal O}_{{\cal P}^{\rm ex}_{\bul \ul{\lam}}})
\lo 0. 
\end{align*}  
Hence we have the following triangle: 
\begin{align*} 
&R\pi_{\ul{\lam}*}(s({\cal E}^{\bul}\otimes_{{\cal O}_{{\cal P}^{\rm ex}_{\bul}}}
{\Om}^{\bul}_{{\cal P}^{\rm ex}_{\bul}/S(T)^{\nat}}
\otimes_{{\cal O}_{{\cal P}^{\rm ex}_{\bul}}}
{\cal O}_{{\cal P}^{\rm ex}_{\bul \ul{\lam}}}))[-1]
\os{d\log t\wedge }{\lo} 
R\pi_{\ul{\lam}*}(s({\cal E}^{\bul}\otimes_{{\cal O}_{{\cal P}^{\rm ex}_{\bul}}}
{\Om}^{\bul}_{{\cal P}^{\rm ex}_{\bul}/\os{\circ}{T}}
\otimes_{{\cal O}_{{\cal P}^{\rm ex}_{\bul}}}
{\cal O}_{{\cal P}^{\rm ex}_{\bul \ul{\lam}}}))\\
&\lo 
R\pi_{\ul{\lam}*}(s({\cal E}^{\bul}\otimes_{{\cal O}_{{\cal P}^{\rm ex}_{\bul}}}
{\Om}^{\bul}_{{\cal P}^{\rm ex}_{\bul}/S(T)^{\nat}}
\otimes_{{\cal O}_{{\cal P}^{\rm ex}_{\bul}}}
{\cal O}_{{\cal P}^{\rm ex}_{\bul \ul{\lam}}}))\os{+1}{\lo}. 
\end{align*}  
By (\ref{ali:ppxhd}), for each $n\in {\mab N}$, 
the following sequence 
\begin{align*} 
&0\lo R^{i-1}\pi_{\ul{\lam}*}(s({\cal E}^n\otimes_{{\cal O}_{{\cal P}^{\rm ex}_n}}
{\Om}^{\bul}_{{\cal P}^{\rm ex}_n/S(T)^{\nat}}
\otimes_{{\cal O}_{{\cal P}^{\rm ex}_n}}
{\cal O}_{{\cal P}^{\rm ex}_{n \ul{\lam}}}))
\os{d\log t\wedge }{\lo} 
R^i\pi_{\ul{\lam}*}(s({\cal E}^{\bul}\otimes_{{\cal O}_{{\cal P}^{\rm ex}_n}}
{\Om}^{\bul}_{{\cal P}^{\rm ex}_n/\os{\circ}{T}}
\otimes_{{\cal O}_{{\cal P}^{\rm ex}_n}}
{\cal O}_{{\cal P}^{\rm ex}_{n \ul{\lam}}}))\\
&\lo 
R^i\pi_{\ul{\lam}*}(s({\cal E}^{\bul}\otimes_{{\cal O}_{{\cal P}^{\rm ex}_n}}
{\Om}^{\bul}_{{\cal P}^{\rm ex}_n/S(T)^{\nat}}
\otimes_{{\cal O}_{{\cal P}^{\rm ex}_n}}
{\cal O}_{{\cal P}^{\rm ex}_{n\ul{\lam}}}))\lo 0
\end{align*}  
is exact. 
Hence, by using the standard spectral sequence, we see that 
the morphism 
\begin{align*} 
R^i\pi_{\ul{\lam}*}(s({\cal E}^{\bul}\otimes_{{\cal O}_{{\cal P}^{\rm ex}_{\bul}}}
{\Om}^{\bul}_{{\cal P}^{\rm ex}_{\bul}/\os{\circ}{T}}
\otimes_{{\cal O}_{{\cal P}^{\rm ex}_{\bul}}}
{\cal O}_{{\cal P}^{\rm ex}_{\bul \ul{\lam}}}))
\lo 
R^i\pi_{\ul{\lam}*}(s({\cal E}^{\bul}\otimes_{{\cal O}_{{\cal P}^{\rm ex}_{\bul}}}
{\Om}^{\bul}_{{\cal P}^{\rm ex}_{\bul}/S(T)^{\nat}}
\otimes_{{\cal O}_{{\cal P}^{\rm ex}_{\bul}}}
{\cal O}_{{\cal P}^{\rm ex}_{\bul \ul{\lam}}}))
\end{align*}
is surjective. Consequently we have the following exact sequence: 
\begin{align*} 
& 0\lo R\pi^{i-1}_{\ul{\lam}*}(s({\cal E}^{\bul}\otimes_{{\cal O}_{{\cal P}^{\rm ex}_{\bul}}}
{\Om}^{\bul}_{{\cal P}^{\rm ex}_{\bul}/S(T)^{\nat}}
\otimes_{{\cal O}_{{\cal P}^{\rm ex}_{\bul}}}
{\cal O}_{{\cal P}^{\rm ex}_{\bul \ul{\lam}}}))
\os{d\log t\wedge }{\lo} 
\wt{R}{}^iu_{X_{\ul{\lam},\os{\circ}{T}_0}/\os{\circ}{T}*}
(\eps^*_{X_{\ul{\lam},\os{\circ}{T}_0}/\os{\circ}{T}}(E_{\ul{\lam}}))
\tag{6.24.3}\label{eqn:lbiod} \\
&\lo 
R^i\pi_{\ul{\lam}*}(s({\cal E}^{\bul}\otimes_{{\cal O}_{{\cal P}^{\rm ex}_{\bul}}}
{\Om}^{\bul}_{{\cal P}^{\rm ex}_{\bul}/S(T)^{\nat}}
\otimes_{{\cal O}_{{\cal P}^{\rm ex}_{\bul}}}
{\cal O}_{{\cal P}^{\rm ex}_{\bul \ul{\lam}}}))\lo 0. 
\end{align*}  
Induction on $i$ tells us that 
$R^i\pi_{\ul{\lam}*}(s({\cal E}^{\bul}\otimes_{{\cal O}_{{\cal P}^{\rm ex}_{\bul}}}
{\Om}^{\bul}_{{\cal P}^{\rm ex}_{\bul}/S(T)^{\nat}}
\otimes_{{\cal O}_{{\cal P}^{\rm ex}_{\bul}}}
{\cal O}_{{\cal P}^{\rm ex}_{\bul \ul{\lam}}}))$ 
is independent of the choice of an affine open covering of $X_{\os{\circ}{T}_0}$ 
and an immersion 
$X_{\os{\circ}{T}_0,\bul}\os{\sus}{\lo} \ol{\cal P}_{\bul}$ over $\ol{S(T)^{\nat}}$. 
Because the rest of the proof is a routine work, we leave the rest to the reader. 
\end{proof}

Set 
\begin{align*} 
&Ru_{X^{(m)}_{\os{\circ}{T}_0}/S(T)^{\nat}*}
(\eps^*_{X^{(m)}_{\os{\circ}{T}_0}/S(T)^{\nat}}(E))
:=\bigoplus_{\# \ul{\lam}=m+1}
Ru_{X_{\ul{\lam},\os{\circ}{T}_0}/S(T)^{\nat}*}
(\eps^*_{X_{\ul{\lam},\os{\circ}{T}_0}/S(T)^{\nat}}(E_{\ul{\lam}})). 
\tag{6.24.4}\label{ali:gsfled}
\end{align*} 
Then 
\begin{align*} 
&Ru_{X^{(*)}_{\os{\circ}{T}_0}/S(T)^{\nat}*}
(\eps^*_{X^{(*)}_{\os{\circ}{T}_0}/S(T)^{\nat}}(E))
=s(a^{(\bul)}_*Ru_{X^{(\bul)}_{\os{\circ}{T}_0}/S(T)^{\nat}*}
(\eps^*_{X^{(\bul)}_{\os{\circ}{T}_0}/S(T)^{\nat}}(E))). 
\tag{6.24.5}\label{ali:gsflmed}
\end{align*} 

%Let $\iota_{\ul{\lam}}\col X_{\ul{\lam},\os{\circ}{T}_0}\lo X_{\os{\circ}{T}_0}$ 
%be the natural inclusion morphism. 

\section{Hirsch weight-filtered log crystalline dga's}\label{sec:fcd}
Let the notations be as in the previous section. 
In this section we assume that $\os{\circ}{T}$ is a $p$-adic formal scheme 
and we ignore $p$-torsions of ${\cal O}_T$-modules. 
Let $g\col Y\lo T$ and $f\col X_{\os{\circ}{T}_0}\lo S(T)^{\nat}$ be the structural morphism 
as in the previous sections. 
Let $A^{\geq 0}(g^{-1}({\cal O}_T)\otimes_{\mab Z}{\mab Q})$ 
be the category of positively graded dga's 
over $g^{-1}({\cal O}_T)\otimes_{\mab Z}{\mab Q}$. 
Let ${\rm A}^{\geq 0}{\rm F}(f^{-1}({\cal O}_T)\otimes_{\mab Z}{\mab Q})$ 
be the category of positively graded dga's 
over $f^{-1}({\cal O}_T)\otimes_{\mab Z}{\mab Q}$ with increasing filtrations. 
As in the classical case \cite[III Proposition (2.1.10)]{bb}, 
the topos $(Y/T)_{\rm crys}$ has enough points. 
In this section, by using the Thom-Whitney derived functor constructed in \cite{nav},  
we construct an object 
$$\wt{R}_{\rm TW}u_{Y_{\os{\circ}{T}_0}/\os{\circ}{T}*}
({\cal O}_{Y_{\os{\circ}{T}_0}/\os{\circ}{T}})_{\mab Q}\in  
{\rm Ho}(A^{\geq 0}(g^{-1}({\cal O}_T)\otimes_{\mab Z}{\mab Q})).$$  
Moreover, by using the derived PD-Hirsch extension 
$\langle U_{S(T)^{\nat}}\rangle{}^L$,  
we construct objects  
\begin{align*} 
\wt{R}_{\rm TW}u_{Y_{\os{\circ}{T}_0}/\os{\circ}{T}*}
({\cal O}_{X_{\os{\circ}{T}_0}/\os{\circ}{T}})_{\mab Q}
\langle u\rangle^L :=
\wt{R}_{\rm TW}u_{Y_{\os{\circ}{T}_0}/\os{\circ}{T}*}
({\cal O}_{X_{\os{\circ}{T}_0}/\os{\circ}{T}})_{\mab Q}
\langle U_{S(T)^{\nat}}\rangle^L  
\in {\rm Ho}(A^{\geq 0}(g^{-1}({\cal O}_T)\otimes_{\mab Z}{\mab Q}))
\end{align*}
and 
\begin{align*} 
(H_{{\rm zar},{\rm TW}}(X_{\os{\circ}{T}_0}/S(T)^{\nat})_{\mab Q},P)
\in {\rm Ho}(A^{\geq 0}F(f^{-1}({\cal O}_T)\otimes_{\mab Z}{\mab Q})). 
\end{align*}
\par
Though the proof of the following proof is easy, the result itself is interesting. 

\begin{prop}\label{prop:indcrc}
Let $(U,{\cal K},\eps)$ be a fine log $p$-adic formal PD-scheme. 
Set $U_0:=\ul{\rm Spec}^{\log}_U(U/{\cal K})$. 
Let $Z$ be a fine log scheme over $U_0$ 
and let $h\col Z\lo U$ be the structural morphism.  
%Let $h_{Z/U} \col (Z/U)_{\rm crys}\lo U_{\rm zar}$
%be the composite morphism 
%$(Z/U)_{\rm crys}\os{u_{Z/U}}{\lo} Z_{\rm zar}\os{h}{\lo} U_{\rm zar}$.  
%Let $Z_{\bul}$ be the simplicial scheme obtained by an open covering of $Z$. 
Assume that there exists an immersion $Z\os{\sus}{\lo} {\cal Z}$ 
into a log smooth log scheme over $U$. 
Let $A^{\geq 0}({\cal O}_{Z/U})$ 
be the category of positively graded pd-dga's over ${\cal O}_{Z/U}$. 
Then there exists a canonical object 
\begin{align*} 
\Om^{\bul}_{(Z/U)_{\rm crys}}\in {\rm Ho}(A^{\geq 0}({\cal O}_{Z/U}))
\end{align*}
whose image in $D^+({\cal O}_{Z/U})$ 
is isomorphic to ${\cal O}_{Z/U}$. 
\end{prop}
\begin{proof} 
Let ${\mathfrak F}$ be the log PD-envelope of the immersion 
$Z\os{\sus}{\lo} {\cal Z}$ over $(U,{\cal K},\eps)$. 
Let
\begin{align*} 
L_{\rm crys} \col 
\{{\cal O}_{{\mathfrak F}}{\textrm -}{\rm modules}\}
\lo \{{\cal O}_{Z/U}{\textrm -}{\rm modules}\}
\end{align*}
be the log linearization functor for ${\cal O}_{{\mathfrak F}}$-modules.
%Let $v_{\bul}\col (Z_{\bul}/U)_{\rm crys}\lo U_{\rm zar}$ be the morphism of topoi 
%obtained by $v$. 
Then we obtain a sheaf of cosimplicial dga 's 
\begin{align*} 
(L_{\rm crys}({\cal O}_{{\mathfrak F}}\otimes_{{\cal O}_{{\cal Z}}}
\Om^{\bul}_{{\cal Z}/U}),L(d))=
(L_{\rm crys}(\Om^{\bul}_{{\mathfrak F}/U,[~]}),L(d))
\end{align*}  
over $h^{-1}_{Z/U}({\cal O}_U)$. 
Here we have used the equality 
${\cal O}_{{\mathfrak F}}\otimes_{{\cal O}_{{\cal Z}}}
\Om^{\bul}_{{\cal Z}/U}=\Om^{\bul}_{{\mathfrak F}/U,[~]}$, 
which has been proved in \cite[(1.3.28)]{nb} which is the log version of 
\cite[0 (3.1.4), (3.1.6)]{idw}. 

Set 
\begin{align*} 
\Om^{\bul}_{(Z/U)_{\rm crys}}:=
(L_{\rm crys}(\Om^{\bul}_{{\mathfrak F}/U,[~]}),L(d)). 
\end{align*}
We claim that $\Om^{\bul}_{(Z/U)_{\rm crys}}$ is independent of the choice of 
an immersion $Z\os{\sus}{\lo} {\cal Z}$. 
\par 
Let $Z\os{\sus}{\lo} {\cal Z}'$ be 
another immersion into a log smooth scheme over $U$. 
By considering the product ${\cal Z}\times_U{\cal Z}'$, 
we may assume that we have a morphism ${\cal Z}\lo {\cal Z}'$ such that 
the composite morphism $Z\os{\sus}{\lo}{\cal Z}\lo {\cal Z}'$ is the given immersion. 
Let ${\mathfrak F}'$ be the log PD-envelope of this composite immersion over 
$(U,{\cal K},\eps)$. Then we have a natural morphism ${\mathfrak F}\lo {\mathfrak F}'$
and hence we have a morphism 
$L_{\rm crys}(\Om^{\bul}_{{\mathfrak F}'/U,[~]})\lo 
L_{\rm crys}(\Om^{\bul}_{{\mathfrak F}/U,[~]})$ fitting into 
the following commutative diagram 
\begin{equation*} 
\begin{CD}
{\cal O}_{Z/U}@>{\sim}>>L_{\rm crys}(\Om^{\bul}_{{\mathfrak F}/U,[~]})\\
@|@AAA\\
{\cal O}_{Z/U}@>{\sim}>>L_{\rm crys}(\Om^{\bul}_{{\mathfrak F}'/U,[~]}). 
\end{CD}
\end{equation*}
Here we have used the log Poincar\'{e} lemma (\cite[(2.2.7)]{nh2}). 
Now we can complete the proof of (\ref{prop:indcrc}). 
\end{proof}

\begin{theo}\label{theo:q}
Let the notations be as in {\rm (\ref{prop:indcrc})} without 
assuming the immersion $Z\os{\sus}{\lo}{\cal Z}$. 
Assume that $(U,{\cal K},\eps)$ is $p$-adic. 
Then there exists a canonical object 
\begin{align*}
R_{\rm TW}u_{Z/U*}({\cal O}_{Z/U})_{\mab Q}\in {\rm Ho}(A^{\geq 0}(h^{-1}({\cal K}_U)))
\end{align*} 
whose image in $D^+(h^{-1}({\cal K}_U))$ 
is equal to $Ru_{Z/U*}({\cal O}_{Z/U})\otimes^L_{\mab Z}{\mab Q}$. 
\end{theo}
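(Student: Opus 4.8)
The plan is to apply the Thom--Whitney machinery of (\ref{defi-o}) directly to the structure sheaf ${\cal O}_{Z/U}$ itself, which is already a sheaf of commutative $h_{Z/U}^{-1}({\cal O}_U)$-algebras in $(Z/U)_{\rm crys}$ and hence a bounded-below dga concentrated in degree $0$. Since $(Z/U)_{\rm crys}$ has enough points (as recalled from \cite[III (2.1.10)]{bb}), I may run the construction preceding (\ref{defi-o}) for the projection $u_{Z/U}\col ((Z/U)_{\rm crys},{\cal O}_{Z/U})\lo (Z_{\rm zar},h^{-1}({\cal O}_U))$ with $\Om^{\bul}={\cal O}_{Z/U}\otimes_{\mab Z}{\mab Q}$, a ${\mab Q}$-algebra after we ignore torsion using that $(U,{\cal K},\eps)$ is $p$-adic. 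Observe that no global immersion $Z\os{\sus}{\lo}{\cal Z}$ is needed for this: the immersion in (\ref{theo:indcrc}) only serves to exhibit a concrete de Rham dga model $\Om^{\bul}_{(Z/U)_{\rm crys}}$ for ${\cal O}_{Z/U}$, and I shall invoke (\ref{theo:indcrc}) afterwards, locally, only to check that the resulting object has the expected underlying complex.

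First I would form the Godement (canonical flasque) resolution $I^{\bul}=s(I^{\bul\bul})$ of ${\cal O}_{Z/U}$ in $(Z/U)_{\rm crys}$, the second index being the Godement cosimplicial degree. Because each $(i_x)^*$ is monoidal and $(i_x)_*$ together with the product over the points $x$ is lax monoidal, the comonad defining the Godement resolution is lax monoidal; hence $\{I^{0,n}\}_{n}$ is a cosimplicial commutative $h_{Z/U}^{-1}({\cal O}_U)$-algebra and $I^{\bul\bul}$ is a cosimplicial dga. Pushing forward by $u_{Z/U*}$ gives a cosimplicial dga of $h^{-1}({\cal O}_U)$-modules on $Z_{\rm zar}$, and over the ${\mab Q}$-algebra $h^{-1}({\cal K}_U)$ the Thom--Whitney single functor $s_{\rm TW}$ of \cite{nav} applies and produces a genuine positively graded graded-commutative dga
\[
R_{\rm TW}u_{Z/U*}({\cal O}_{Z/U})_{\mab Q}:=s_{\rm TW}\circ u_{Z/U*}(I^{\bul\bul})\in D(A^{\geq 0}(h^{-1}({\cal K}_U))),
\]
exactly in the pattern of (\ref{ali:fdim}) and (\ref{defi-o}).

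It then remains to identify the underlying complex. By the basic comparison property of the Thom--Whitney functor in \cite{nav}, the forgetful functor $\Phi$ carries $s_{\rm TW}$ of a cosimplicial dga to its ordinary associated single complex up to a natural quasi-isomorphism; applied here this gives $\Phi(R_{\rm TW}u_{Z/U*}({\cal O}_{Z/U})_{\mab Q})=u_{Z/U*}(I^{\bul})\otimes^L_{\mab Z}{\mab Q}=Ru_{Z/U*}({\cal O}_{Z/U})\otimes^L_{\mab Z}{\mab Q}$, the last equality because $I^{\bul}$ is a flasque, hence $u_{Z/U*}$-acyclic, resolution. Canonicity is automatic: the Godement resolution is functorial in ${\cal O}_{Z/U}$ and $s_{\rm TW}$ is a functor, so the object is independent of auxiliary choices up to canonical isomorphism in $D(A^{\geq 0}(h^{-1}({\cal K}_U)))$; when an immersion exists, comparing with the de Rham model of (\ref{theo:indcrc}), whose Godement resolution computes the same pushforward, confirms that $\Phi$ of the construction is the expected log crystalline complex.

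The main obstacle I anticipate is the interplay between multiplicativity and ${\mab Q}$-coefficients. The ordinary total complex of a cosimplicial dga carries only the Alexander--Whitney product, which fails to be graded-commutative, so one genuinely cannot stay integral; the whole point of $s_{\rm TW}$, and the reason the Thom--Whitney polynomial forms carry denominators, is to rectify this over a ${\mab Q}$-algebra, which is why the hypothesis that $(U,{\cal K},\eps)$ is $p$-adic (allowing us to discard torsion and pass to ${\mab Q}$) is essential. Thus the delicate points are, first, verifying that the canonical flasque resolution of ${\cal O}_{Z/U}$ is indeed a cosimplicial dga rather than merely a cosimplicial complex, and second, transporting Navarro Aznar's comparison $\Phi\circ s_{\rm TW}\simeq s$ into the present sheaf-theoretic, ringed-topos setting functorially in $Z$, so that the resulting dga is canonical and its image under $\Phi$ is exactly $Ru_{Z/U*}({\cal O}_{Z/U})\otimes^L_{\mab Z}{\mab Q}$.
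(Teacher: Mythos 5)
Your argument is correct in substance, but it takes a genuinely different route from the paper's. The paper proves this by choosing an affine open covering of $Z$, a simplicial immersion $Z_{\bul}\os{\sus}{\lo}{\cal Z}_{\bul}$ into log smooth schemes over $U$, and the log PD-envelopes ${\mathfrak F}_{\bul}$, and then sets $R_{\rm TW}u_{Z/U*}({\cal O}_{Z/U})_{\mab Q}:=R_{\rm TW}\pi_{{\rm zar}*}(\Om^{\bul}_{{\mathfrak F}_{\bul}/U,[~]}\otimes_{\mab Z}{\mab Q})$; the bulk of the paper's proof is then the independence of this object from the covering and the immersion, via refinement and the comparison square of \cite[(4.4)]{nav}, together with the observation that flasque sheaves tensored with ${\mab Q}$ stay acyclic so that $R\pi_{{\rm zar}*}(-\otimes{\mab Q})=R\pi_{{\rm zar}*}(-)\otimes^L{\mab Q}$. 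You instead apply the Godement/Thom--Whitney machinery of (\ref{defi-o}) directly to $u_{Z/U}$ and the (degree-zero) dga ${\cal O}_{Z/U}$, using the lax monoidality of the Godement monad to make $I^{\bul\bul}$ a cosimplicial commutative algebra; this is choice-free, so canonicity and functoriality are automatic and the independence argument disappears. What your route does not buy is the explicit de Rham model: the paper needs $\Om^{\bul}_{{\mathfrak F}_{\bul}/U,[~]}$ concretely in later sections (to house $d\log\tau$ for the derived PD-Hirsch extension, the filtration $P$, and the de Rham--Witt comparison), and identifying your Godement-theoretic dga with the de Rham one \emph{as a dga}, rather than merely as a complex, would still require the multiplicative Poincar\'e lemma comparison that is essentially (\ref{theo:indcrc}) plus the local-immersion argument. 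Two small points to tighten: (i) be consistent about whether $I^{\bul}$ resolves ${\cal O}_{Z/U}$ or ${\cal O}_{Z/U}\otimes_{\mab Z}{\mab Q}$, since the identification $Ru_{Z/U*}({\cal O}_{Z/U}\otimes{\mab Q})=Ru_{Z/U*}({\cal O}_{Z/U})\otimes^L_{\mab Z}{\mab Q}$ needs the same ``flasque $\otimes\,{\mab Q}$ remains acyclic'' justification the paper supplies, and this is slightly less routine in the crystalline topos than on $Z_{\bul,{\rm zar}}$; (ii) the quasi-isomorphism $\Phi\circ s_{\rm TW}\simeq s$ should be cited explicitly from \cite{nav}, as the paper does, since it is the only nontrivial input identifying the underlying complex.
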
 
\begin{proof} 
Let $Z_{\bul}$ be a simplicial scheme obtained by an open covering of $Z$ 
such that there exists an immersion 
$Z_{\bul}\os{\sus}{\lo} {\cal Z}_{\bul}$ 
into a simplicial log smooth scheme over $U$ as usual. 
Let ${\mathfrak F}_{\bul}$ be the log PD-envelope of this immersion over 
$(U,{\cal K},\eps)$. 
%Let
%\begin{align*} 
%L_{\rm crys} \col \{{\cal O}_{{\mathfrak F}_{\bul}}{\textrm -}{\rm modules}\}
%\lo \{{\cal O}_{Z_{\bul}/U}{\textrm -}{\rm modules}\}
%\end{align*}
%be the linearization functor for ${\cal O}_{{\mathfrak F}_{\bul}}$-modules.
Let $v_{\bul}\col (Z_{\bul}/U)_{\rm crys}\lo U_{\rm zar}$ be the morphism of topoi 
obtained by $v$. 
Then we obtain a sheaf of dga's  
\begin{align*} 
{\cal O}_{{\mathfrak F}_{\bul}}\otimes_{{\cal O}_{{\cal Z}_{\bul}}}
\Om^{\bul}_{{\cal Z}_{\bul}/U}\otimes_{\mab Z}{\mab Q}=
\Om^{\bul}_{{\mathfrak F}_{\bul}/U,[~]}\otimes_{\mab Z}{\mab Q}
\end{align*}  
over $v^{-1}_{\bul}({\cal K}_U)$. Set 
\begin{align*} 
R_{\rm TW}u_{Z/U*}({\cal O}_{Z/U})_{\mab Q}:=
R_{\rm TW}\pi_{{\rm zar}*}
(\Om^{\bul}_{{\mathfrak F}_{\bul}/U,[~]}\otimes_{\mab Z}{\mab Q}). 
\end{align*}
We claim that $R_{\rm TW}u_{Z/U*}({\cal O}_{Z/U})_{\mab Q}$ 
is independent of the choices of 
an open covering of $Z$ 
and a simplicial immersion $Z_{\bul}\os{\sus}{\lo} {\cal Z}_{\bul}$ 
into a log smooth simplicial log formal scheme as usual. 
Let $Z'_{\bul}\os{\sus}{\lo} {\cal Z}'_{\bul}$ be another simplicial immersion. 
Let ${\mathfrak F}'_{\bul}$ be the log PD-envelope of this immersion over 
$(U,{\cal K},\eps)$. 
Then, by taking a refinement of two open coverings of $Z$, 
we may assume that there exists the following commutative diagram 
\begin{equation*}
\begin{CD}
Z_{\bul}@>{\sus}>>{\cal Z}_{\bul}\\
@VVV @VVV\\
Z'_{\bul}@>{\sus}>>{\cal Z}'_{\bul}
\end{CD}
\end{equation*} 
over $U$. 
By \cite[(4.4)]{nav} the following diagram is commutative 
\begin{equation*}
\begin{CD}
R_{\rm TW}\pi_{{\rm zar}*}
(\Om^{\bul}_{{\mathfrak F}_{\bul}/U,[~]}\otimes_{\mab Z}{\mab Q})
@>{\sim}>>R\pi_{{\rm zar}*}
(\Om^{\bul}_{{\mathfrak F}_{\bul}/U,[~]}\otimes_{\mab Z}{\mab Q})\\
@AAA @AAA\\
R_{\rm TW}\pi_{{\rm zar}*}
(\Om^{\bul}_{{\mathfrak F}'_{\bul}/U,[~]}\otimes_{\mab Z}{\mab Q})
@>{\sim}>>R\pi_{{\rm zar}*}
(\Om^{\bul}_{{\mathfrak F}'_{\bul}/U,[~]}\otimes_{\mab Z}{\mab Q})
\end{CD}
\end{equation*} 
over $U$. If $I$ is a $\pi_{{\rm zar}*}$-acyclic sheaf on $Z_{\bul}$, then 
$I\otimes_{\mab Z}{\mab Q}$ is also a $\pi_{{\rm zar}*}$-acyclic sheaf on $Z_{\bul}$. 
Hence 
$R\pi_{{\rm zar}*}
(\Om^{\bul}_{{\mathfrak F}_{\bul}/U,[~]}\otimes_{\mab Z}{\mab Q})
=
R\pi_{{\rm zar}*}
(\Om^{\bul}_{{\mathfrak F}_{\bul}/U,[~]})\otimes^L_{\mab Z}{\mab Q}
=
R\pi_{{\rm zar}*}
(\Om^{\bul}_{{\mathfrak F}'_{\bul}/U,[~]})\otimes^L_{\mab Z}{\mab Q}
=R\pi_{{\rm zar}*}
(\Om^{\bul}_{{\mathfrak F}'_{\bul}/U,[~]}\otimes_{\mab Z}{\mab Q})$. 
Consequently we obtain the following commutative diagram 
\begin{equation*}
\begin{CD}
R_{\rm TW}\pi_{{\rm zar}*}
(\Om^{\bul}_{{\mathfrak F}_{\bul}/U,[~]}\otimes_{\mab Z}{\mab Q})
@>{\sim}>>R\pi_{{\rm zar}*}
(\Om^{\bul}_{{\mathfrak F}_{\bul}/U,[~]})\otimes^L_{\mab Z}{\mab Q}\\
@AAA @|\\
R_{\rm TW}\pi_{{\rm zar}*}
(\Om^{\bul}_{{\mathfrak F}'_{\bul}/U,[~]}\otimes_{\mab Z}{\mab Q})
@>{\sim}>>R\pi_{{\rm zar}*}
(\Om^{\bul}_{{\mathfrak F}'_{\bul}/U,[~]})\otimes^L_{\mab Z}{\mab Q}.  
\end{CD}
\end{equation*} 
This diagram tells us that (\ref{theo:q}) holds. 
\end{proof} 
\par 
The following complex 
\begin{align*}
{\cal O}_{{\mathfrak E}_{\bul}}\otimes_{{\cal O}_{{\cal Q}^{{\rm ex}}_{\bul}}}
\Om^{\bul}_{{\cal Q}^{{\rm ex}}_{\bul}/\os{\circ}{T}}
\langle u \rangle =\Om^{\bul}_{{\mathfrak E}_{\bul}/\os{\circ}{T},[~]}\langle u\rangle:=
\Om^{\bul}_{{\mathfrak E}_{\bul}/\os{\circ}{T},[~]}\langle u \rangle
\end{align*}  
is, in fact, a sheaf of dga's over $g^{-1}_{\bul}({\cal O}_T)$ in the topos $Y_{\bul,{\rm zar}}$. 
Then we have the following sheaf of dga's over 
$g^{-1}({\cal O}_T)$ in the topos $Y_{\rm zar}$ by (\ref{ali:aba}):
\begin{align*}  
\wt{R}_{\rm TW}u_{Y_{\os{\circ}{T}_0}/\os{\circ}{T}*}
({\cal O}_{Y_{\os{\circ}{T}_0}/\os{\circ}{T}}\langle u \rangle)_{\mab Q}
:=
R_{\rm TW}\pi_{{\rm zar}*}
(\Om^{\bul}_{{\mathfrak E}_{\bul}/\os{\circ}{T},[~]}\langle u \rangle
\otimes_{\mab Z}{\mab Q}). 
\end{align*}
%We also consider the following sheaf of dga's over 
%$g^{-1}({\cal O}_T)$ in the topos $Y_{\rm zar}$ by (\ref{defi-so}):
%\begin{align*}  
%\wt{R}_{\rm TW}u_{Y_{\os{\circ}{T}_0}/\os{\circ}{T}*}
%({\cal O}_{Y_{\os{\circ}{T}_0}/\os{\circ}{T}})_{\mab Q}\langle u \rangle^L
%:=
%R_{\rm TW}\pi_{{\rm zar}*}
%(\Om^{\bul}_{{\mathfrak E}_{\bul}/\os{\circ}{T},[~]}
%\otimes_{\mab Z}{\mab Q})\langle U_{S(T)^{\nat}} \rangle)
%\end{align*}

\begin{defi}\label{defi:pdtw}
We call $\wt{R}_{\rm TW}u_{Y_{\os{\circ}{T}_0}/\os{\circ}{T}*}
({\cal O}_{Y_{\os{\circ}{T}_0}/\os{\circ}{T}}\langle u \rangle)_{\mab Q}$ 
%and 
%$\wt{R}_{\rm TW}u_{Y_{\os{\circ}{T}_0}/\os{\circ}{T}*}
%({\cal O}_{Y_{\os{\circ}{T}_0}/\os{\circ}{T}})_{\mab Q}\langle U_{S(T)^{\nat}} \rangle^L$ 
the {\it PD-Hirsch extensions} of 
$\wt{R}_{\rm TW}u_{Y_{\os{\circ}{T}_0}/\os{\circ}{T}*}
({\cal O}_{Y_{\os{\circ}{T}_0}/\os{\circ}{T}})_{\mab Q}$. 
\end{defi}

\begin{prop}\label{prop:ijn}
%Assume that $(T,{\cal J},\del)$ is $p$-adic. 
There exists the following canonical isomorphisms 
\begin{align*}
%\wt{R}_{\rm TW}u_{Y_{\os{\circ}{T}_0}/\os{\circ}{T}*}
%({\cal O}_{Y_{\os{\circ}{T}_0}/\os{\circ}{T}})_{\mab Q}\langle U_{S(T)^{\nat}} \rangle^L
%&\os{\sim}{\lo} 
\wt{R}_{\rm TW}u_{Y_{\os{\circ}{T}_0}/\os{\circ}{T}*}
({\cal O}_{Y_{\os{\circ}{T}_0}/\os{\circ}{T}}\langle u\rangle)_{\mab Q}
\tag{7.4.1}\label{eqn:exnfte}
&\os{\sim}{\lo} R_{\rm TW}u_{Y_{\os{\circ}{T}_0}/S(T)^{\nat}*}
({\cal O}_{Y_{\os{\circ}{T}_0}/S(T)^{\nat}*})_{\mab Q}
\end{align*} 
in ${\rm Ho}(A^{\geq 0}(g^{-1}({\cal O}_T)_{\os{\circ}{T}_0})\otimes_{\mab Z}{\mab Q})$. 
In particular, 
$\wt{R}_{\rm TW}u_{Y_{\os{\circ}{T}_0}/\os{\circ}{T}*}
({\cal O}_{Y_{\os{\circ}{T}_0}/\os{\circ}{T}}\langle u \rangle)_{\mab Q}
$ 
is independent of the choice of an affine open covering of $Y_{\os{\circ}{T}_0}$ 
and a simplicial immersion 
$Y_{\os{\circ}{T}_0\bul} \os{\sus}{\lo} \ol{\cal Q}_{\bul}$ 
over $\ol{S(T)^{\nat}}$. 
\end{prop}
\begin{proof} 
By (\ref{eqn:exfte}) we obtain (\ref{prop:ijn}). 
We leave the detail of the proof to the reader. 
\end{proof} 

\par 
Now we consider the case where $Y_{\os{\circ}{T}_0}=X_{\os{\circ}{T}_0}$. 
For a subset $\ul{\lam}$ of $\Lam$, set 
\begin{align*} 
&(\wt{R}_{\rm TW}u_{X_{\ul{\lam},\os{\circ}{T}_0}/\os{\circ}{T}*}
({\cal O}_{X_{\ul{\lam},\os{\circ}{T}_0}/\os{\circ}{T}}\langle u \rangle), P)\\
&:=s_{\rm TW}\pi_{\ul{\lam},T{\rm zar}*}
((G({\cal O}_{{\mathfrak D}_{\bul}}\otimes_{{\cal O}_{{\cal P}^{{\rm ex}}_{\bul}}}
\otimes_{{\cal O}_{{\cal P}{}^{\rm ex}_{\bul}}}
\Om^{\bul}_{{\cal P}{}^{{\rm ex}}_{\bul\ul{\lam}}/\os{\circ}{T}}
\otimes_{\mab Z}{\mab Q})\langle U_{S(T)^{\nat}} \rangle),G(P))
\end{align*} 
in ${\rm Ho}
(A^{\geq 0}(f^{-1}({\cal O}_T))\otimes_{\mab Z}{\mab Q})$. 
Here $G$ means the Godement resolution. 
For a nonnegative integer $m$, we obtain 
the filtered dga 
\begin{align*} 
&(\wt{R}_{\rm TW}u_{X^{(m)}_{\os{\circ}{T}_0}/\os{\circ}{T}*}
({\cal O}_{X^{(m)}_{\os{\circ}{T}_0}/\os{\circ}{T}}\langle u \rangle),P)\\
&:=\bigoplus_{\# \ul{\lam}=m+1}
s_{\rm TW}\pi_{\ul{\lam},T{\rm zar}*}
((G({\cal O}_{{\mathfrak D}_{\bul}}\otimes_{{\cal O}_{{\cal P}^{{\rm ex}}_{\bul}}}
\otimes_{{\cal O}_{{\cal P}{}^{\rm ex}_{\bul}}}
\Om^{\bul}_{{\cal P}{}^{{\rm ex}}_{\bul\ul{\lam}}/\os{\circ}{T}}
\otimes_{\mab Z}{\mab Q})\langle U_{S(T)^{\nat}} \rangle),G(P))
\end{align*} 
in ${\rm Ho}
(A^{\geq 0}(f^{-1}({\cal O}_T)_{\os{\circ}{T}_0})\otimes_{\mab Z}{\mab Q})$. 
The semi-cosimplicial filtered dga 
\begin{align*} 
\{\bigoplus_{\# \ul{\lam}=m+1}
s_{\rm TW}\pi_{\ul{\lam},T{\rm zar}*}(
((G({\cal O}_{{\mathfrak D}_{\bul}}\otimes_{{\cal O}_{{\cal P}^{{\rm ex}}_{\bul}}}
\otimes_{{\cal O}_{{\cal P}{}^{\rm ex}_{\bul}}}
\Om^{\bul}_{{\cal P}{}^{{\rm ex}}_{\bul\ul{\lam}}/\os{\circ}{T}}
\otimes_{\mab Z}{\mab Q})\langle U_{S(T)^{\nat}}\rangle),G(P)))
\}_{m\in {\mab N}}
\end{align*} 
defines the following filtered dga: 
\begin{align*} 
&(H_{{\rm zar},{\rm TW}}(X_{\os{\circ}{T}_0}/S(T)^{\nat})_{\mab Q},P)\\
&:=s_{\rm TW}\{\bigoplus_{\# \ul{\lam}=m+1}
s_{\rm TW}\pi_{\ul{\lam},T{\rm zar}*}(
((G({\cal O}_{{\mathfrak D}_{\bul}}\otimes_{{\cal O}_{{\cal P}^{{\rm ex}}_{\bul}}}
\otimes_{{\cal O}_{{\cal P}{}^{\rm ex}_{\bul}}}
\Om^{\bul}_{{\cal P}{}^{{\rm ex}}_{\bul\ul{\lam}}/\os{\circ}{T}}
\otimes_{\mab Z}{\mab Q})\langle U_{S(T)^{\nat}} \rangle),G(P)))
\}_{m\in {\mab N}}
. 
\end{align*} 

\begin{defi}
We call 
\begin{align*} 
(H_{{\rm zar},{\rm TW}}(X_{\os{\circ}{T}_0}/S(T)^{\nat})_{\mab Q},P)
\end{align*} 
the {\it PD-Hirsch extension of the semi-cosimplicial zariskian 
filtered crystalline dga} of $X_{\os{\circ}{T}_0}/S(T)^{\nat}/\os{\circ}{T}$. 
\end{defi} 

\begin{prop}\label{prop:ncz}
Let 
\begin{align*} 
\Phi \col {\rm Ho}
(A^{\geq 0}(f^{-1}({\cal O}_T))\otimes_{\mab Z}{\mab Q})
\lo {\rm D}^+{\rm F}(f^{-1}({\cal O}_T)\otimes_{\mab Z}{\mab Q})
\end{align*} 
be the forgetful functor. 
Then 
\begin{align*} 
\Phi((H_{{\rm zar},{\rm TW}}(X_{\os{\circ}{T}_0}/S(T)^{\nat})_{\mab Q},P))
=(H_{\rm zar}(X_{\os{\circ}{T}_0}/S(T)^{\nat})_{\mab Q},P). 
\tag{7.6.1}\label{ali:twz}
\end{align*} 
\end{prop}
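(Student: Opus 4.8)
The plan is to unwind both sides of the claimed equality (\ref{ali:twz}) through the definitions and reduce it to the comparison between the Thom--Whitney single functor $s_{\rm TW}$ and the ordinary single complex functor $s$, together with the comparison between $R_{\rm TW}\pi_{{\rm zar}*}$ and $R\pi_{{\rm zar}*}$, both of which are compatible with the PD-Hirsch extension by (\ref{ali:fim}) and (\ref{ali:fdim}). Concretely, the right-hand side $(H_{\rm zar}(X_{\os{\circ}{T}_0}/\os{\circ}{T})_{\mab Q},P)$ is by (\ref{defi:cez}) (5) the complex $R\pi_{{\rm zar}*}(s({\cal O}_{{\mathfrak D}_{\bul}}\otimes \Om^{\bul}_{{\cal P}{}^{{\rm ex},(\bul)}_{\bul}/\os{\circ}{T}}\langle U_{S(T)^{\nat}}\rangle))\otimes^L_{\mab Z}{\mab Q}$ with the filtration $\del(L,P)$, while the left-hand side $(H_{{\rm zar},{\rm TW}}(X_{\os{\circ}{T}_0}/\os{\circ}{T})_{\mab Q},P)$ is built from $s_{\rm TW}$ applied to the semi-cosimplicial system of $R_{\rm TW}\pi_{{\rm zar}*}$-images. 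So the statement is precisely that applying $\Phi$ (which forgets the dga and Thom--Whitney structure) recovers the underlying filtered complex.

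First I would fix the common model: take one affine open covering of $X_{\os{\circ}{T}_0}$ and one simplicial immersion $X_{\os{\circ}{T}_0\bul}\os{\sus}{\lo}\ol{\cal P}_{\bul}$ into a log smooth simplicial scheme over $\ol{S(T)^{\nat}}$ (which exists by (\ref{prop:xbn})), and use the corresponding PD-envelope ${\mathfrak D}_{\bul}$ and the de Rham complexes $\Om^{\bul}_{{\cal P}{}^{{\rm ex},(\bul)}_{\bul}/\os{\circ}{T},[~]}\langle U_{S(T)^{\nat}}\rangle$. By (\ref{theo:indp}) and (\ref{prop:ncz})'s definitional setup, both sides are computed on this single model, so I may suppress the independence issue. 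The key compatibility is Navarro Aznar's theorem (\cite[(4.4)]{nav}), already invoked in the proof of (\ref{theo:q}): for a bounded-below complex of dga's with flasque-resolution computability, the forgetful functor applied to $s_{\rm TW}\circ R_{\rm TW}\pi_{{\rm zar}*}$ agrees with $s\circ R\pi_{{\rm zar}*}$ after $\otimes^L_{\mab Z}{\mab Q}$, because flasqueness is preserved under $\otimes_{\mab Z}{\mab Q}$ (the argument given in the proof of (\ref{theo:q}) that $R\pi_{{\rm zar}*}(\Om^{\bul}_{{\mathfrak F}_{\bul}/U,[~]}\otimes_{\mab Z}{\mab Q})=R\pi_{{\rm zar}*}(\Om^{\bul}_{{\mathfrak F}_{\bul}/U,[~]})\otimes^L_{\mab Z}{\mab Q}$).

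Then I would check that this comparison is compatible with the two operations that intervene: the derived PD-Hirsch extension $\langle U_{S(T)^{\nat}}\rangle^L$ and the passage from the semi-cosimplicial index $(\bul)$ to its total complex. For the PD-Hirsch extension, the equality (\ref{ali:fim}), namely $f_*(I^{\bul}\langle M\rangle)=f_*(I^{\bul})\langle M\rangle$, shows that $\Phi$ commutes with $\langle U_{S(T)^{\nat}}\rangle^L$ on the nose at the level of Godement resolutions, so the forgetful image of the Thom--Whitney PD-Hirsch extension is the ordinary PD-Hirsch extension of the forgetful image. For the semi-cosimplicial totalization, I would use that $\Phi(s_{\rm TW}(\{\cdot\}_m))=s(\{\Phi(\cdot)\}_m)$, which again follows from Navarro Aznar's comparison applied degreewise together with the fact that forming $s$ versus $s_{\rm TW}$ of a semi-cosimplicial diagram of complexes of ${\mab Q}$-modules gives quasi-isomorphic total objects. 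Finally I would verify that the filtrations match: the filtration $P$ on the Thom--Whitney side is the diagonal filtration $\del(L,P)$ induced degreewise by (\ref{ali:flop}) shifted by the semi-cosimplicial degree, which is exactly the filtration defining the right-hand side, and since $\Phi$ simply forgets the multiplicative structure while retaining the filtration, the two filtered complexes coincide.

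The main obstacle I anticipate is not the underlying complex identification but the strict compatibility of $\Phi$ with the filtration $P$ through the Thom--Whitney functor. The Thom--Whitney single functor $s_{\rm TW}$ manipulates filtrations in a way (via the Sullivan--Navarro Aznar polynomial forms) that is only up to filtered quasi-isomorphism, so I must confirm that the filtration $\del(L,P)$ is bounded below and exhaustive (which is exactly (\ref{prop:lcz}) and (\ref{rema:ex})) in order to invoke the filtered version of Navarro Aznar's comparison and conclude that $\Phi$ takes the filtered Thom--Whitney object to the filtered ordinary object without losing filtration information. Granting the boundedness-below and exhaustiveness established in \S\ref{sec:pwf}, the filtered comparison is then formal, and I would close the argument by assembling the degreewise identifications into the global equality (\ref{ali:twz}) in ${\rm D}^+{\rm F}(f^{-1}({\cal O}_T)\otimes_{\mab Z}{\mab Q})$.
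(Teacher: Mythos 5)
Your proposal is correct and follows essentially the same route as the paper: the paper's own proof is a one-line appeal to the general filtered comparison results for the Thom--Whitney functor in \cite{nav} (its (6.3) and (6.14)), and your argument simply unwinds what that citation packages, namely the degreewise agreement of $s_{\rm TW}\circ R_{\rm TW}\pi_{{\rm zar}*}$ with $s\circ R\pi_{{\rm zar}*}$ after $\otimes_{\mab Z}{\mab Q}$, its compatibility with the PD-Hirsch extension via (\ref{ali:fim}), and the matching of the diagonal filtrations. Your extra care about bounded-below-ness and exhaustiveness of $\del(L,P)$ is a reasonable precaution but is already covered by the general theory being cited.
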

\begin{proof} 
This follows from the general theory (\cite[(6.3), (6.14)]{nav}). 
\end{proof}

\begin{rema}\label{rema:sroh}
In the following we mainly give the proofs of results only for $(H_{\rm zar},P)$, 
not for $(H_{{\rm zar},{\rm TW}},P)$, though we also give the statements for 
%However they also hold for  
not only $(H_{\rm zar},P)$ but also $(H_{{\rm zar},{\rm TW}},P)$; 
we leave the proofs of 
the statements for $(H_{{\rm zar},{\rm TW}},P)$ to the reader because 
we have only to give additional minor arguments to the proofs of 
results for $(H_{\rm zar},P)$. 
%e.g.,  
%the contravariant functoriality (\ref{theo:ccm}) for 
%$(H_{{\rm zar},{\rm TW}},P)$. 
\end{rema}

\section{Contravariant functorialities of 
PD-Hirsch pre-weight-filtered log crystalline complexes 
and Hirsch weight-filtered log crystalline dga's}\label{sec:fc}
Let the notations be as in the previous section. 
In this section we show the contravariant functoriality of 
$\wt{R}u_{Y_{\os{\circ}{T}_0}/S(T)^{\nat}*}
(\eps^*_{Y_{\os{\circ}{T}_0}/\os{\circ}{T}}(F)\langle u \rangle)$.  
We also show the contravariant functoriality of 
$(H_{\rm zar}(X_{\os{\circ}{T}_0}/S(T)^{\nat},E),P)$
%(\wt{R}u_{X^{(\star)}_{\os{\circ}{T}_0}/S(T)^{\nat}*}
%(\eps^*_{X^{(\star)}_{\os{\circ}{T}_0}/\os{\circ}{T}}(E^{(\star)})\langle U_{S(T)^{\nat}} \rangle),P)$ 
with respect to a certain morphism which includes the 
abrelative Frobenius morphism defined in \cite{nb}. 
As a consequence, we obtain the contravariant functoriality of 
the spectral sequence (\ref{ali:spsarh}). 
In particular,  we can define a generalization of the Tate twist 
which appears in (\ref{ali:spsarh}). 
\par 
In the following we need the (not so standard) argument in \cite[(1.5)]{nb} 
because we consider non-nil immersions. 
\par 
Let the notations be as in the previous section. 
Let $S'$ and $(T',{\cal J}',\del')$ be the analogues of $S$ and $(T,{\cal J},\del)$, respectively. 
Let $S\lo S'$ be a morphism of log schemes and 
let $(T,{\cal J},\del)\lo (T',{\cal J}',\del')$ be 
a morphism of log PD-enlargements of $S$ and $S'$, respectively. 
Let $\star$ be nothing or $\prime$. 
Set $T^{\star}_0:=T^{\star}~{\rm mod}~{\cal J}^{\star}$. 
Let $Y$ and $Y'$ be log smooth schemes over $S$ and $S'$, respectively. 
Set $Y'_{\os{\circ}{T}{}'_0}:=Y'\times_{\os{\circ}{S}{}'}\os{\circ}{T}{}'_0$.  
Let 
\begin{equation*} 
\begin{CD} 
Y_{\os{\circ}{T}_0} @>{g}>> Y'_{\os{\circ}{T}{}'_0} \\ 
@VVV @VVV \\ 
S_{\os{\circ}{T}_0}  @>>> S'_{\os{\circ}{T}{}'_0}  \\ 
@V{\bigcap}VV @VV{\bigcap}V \\ 
S(T)^{\nat} @>{v}>> S'(T')^{\nat}
\end{CD}
\tag{8.0.1}\label{eqn:xdxbxss}
\end{equation*} 
be a commutative diagram of log smooth schemes 
over $S_{\os{\circ}{T}_0}$ and $S'_{\os{\circ}{T}{}'_0}$.  
Then there exist the \v{C}ech diagrams $Y_{\os{\circ}{T}_0\bul}$ 
and $Y'_{\os{\circ}{T}{}'_0\bul}$
of affine open coverings of 
$Y_{\os{\circ}{T}_0}$ and $Y'_{\os{\circ}{T}{}'_0}$, respectively and 
simplicial immersions 
$Y_{\os{\circ}{T}_0\bul} \os{\sus}{\lo} \ol{\cal Q}{}''_{\bul}$ 
and 
$Y'_{\os{\circ}{T}{}'_0\bul} \os{\sus}{\lo} \ol{\cal Q}{}'_{\bul}$ 
into log smooth schemes over  
$\ol{S(T)^{\nat}}$ and $\ol{S'(T')^{\nat}}$, respectively, 
such that $g$ induces a morphism $g_{\bul} \col Y_{\os{\circ}{T}_0\bul}
\lo Y'_{\os{\circ}{T}{}'_0\bul}$ of simplicial log schemes. 
Set $\ol{\cal Q}_{\bul}:= \ol{\cal Q}{}''_{\bul}
\times_{\ol{S(T)^{\nat}}}(\ol{\cal Q}{}'_{\bul}\times_{\ol{S'(T)'{}^{\nat}}}\ol{S(T)^{\nat}})$. 
Then the second projection $\ol{g}_{\bul}\col \ol{\cal Q}_{\bul} \lo \ol{\cal Q}{}'_{\bul}$ 
and the natural immersion $Y_{\os{\circ}{T}_0\bul}\os{\sus}{\lo} \ol{\cal Q}_{\bul}$ 
fits into the following commutative diagram 
\begin{equation*} 
\begin{CD} 
Y_{\os{\circ}{T}_0\bul}
@>{\sus}>> \ol{\cal Q}_{\bul}\\
@V{g_{\bul}}VV 
@VV{\ol{g}_{\bul}}V \\ 
Y'_{\os{\circ}{T}{}'_0\bul} @>{\sus}>> \ol{\cal Q}{}'_{\bul}
\end{CD} 
\tag{8.0.2}\label{cd:xpnxp} 
\end{equation*} 
over 
\begin{equation*} 
\begin{CD} 
S_{\os{\circ}{T}_0} @>{\subset}>> \ol{S(T)^{\nat}}\\ 
@VVV @VVV \\ 
S'_{\os{\circ}{T}{}'_0} @>{\subset}>> \ol{S'(T')^{\nat}}. 
\end{CD}
\end{equation*} 
Let $\ol{\mathfrak E}_{\bul}$ and $\ol{\mathfrak E}{}'_{\bul}$ be the log PD-envelopes of 
the immersions $Y_{\os{\circ}{T}_0\bul}\os{\sus}{\lo} \ol{\cal Q}_{\bul}$ 
and 
$Y_{\os{\circ}{T}{}'_0\bul}\os{\sus}{\lo} \ol{\cal Q}{}'_{\bul}$, respectively. 
Let $\ol{F}$ and $\ol{F}{}'$ be flat quasi-coherent crystals of 
${\cal O}_{Y_{\os{\circ}{T}_0}/\os{\circ}{T}}$-modules  
and ${\cal O}_{\os{\circ}{Y}{}'_{T'_0}/\os{\circ}{T}{}'}$-modules, 
respectively.     
Let $g^*_{{\rm crys}}(\ol{F}{}')\lo \ol{F}$
be a morphism of ${\cal O}_{Y_{\os{\circ}{T}_0}/\os{\circ}{T}}$-modules. 
Let $\star$ be nothing or $\prime$. 
Let $\ol{F}{}^{\star \bul}$ be the quasi-coherent 
${\cal O}_{Y^{\star}_{\os{\circ}{T}{}^{\star}_0\bul}/\os{\circ}{T}{}^{\star}}$-modules 
obtained by $\ol{F}{}^{\star}$. 
Let $(\ol{\cal F}{}^{\star \bul},\ol{\nabla}{}^{\star})$ 
be a quasi-coherent ${\cal O}_{\ol{\mathfrak E}{}^{\star}_{\bul}}$-module  
with integrable connection corresponding to 
the log crystal $\ol{F}{}^{\star}$: 
\begin{equation*} 
\ol{\nabla}{}^{\star}\col \ol{\cal F}{}^{\star\bul}\lo 
\ol{\cal F}{}^{\star \bul}\otimes_{{\cal O}_{\ol{\cal Q}{}^{{\star}{\rm ex}}_{\bul}}}
\Om^1_{\ol{\cal Q}{}^{{\star}{\rm ex}}_{\bul}/\os{\circ}{T}}.
\tag{8.0.3}\label{eqn:olbca}
\end{equation*}  
Set $({\cal F}^{\star \bul},\nabla^{\star}):=
(\ol{\cal F}{}^{\star \bul},\ol{\nabla}{}^{\star})
\otimes_{{\cal O}_{\ol{\mathfrak D}(\ol{S^{\star}(T^{\star})^{\nat}})}}
{\cal O}_{S^{\star}(T^{\star})^{\nat}}$:  
\begin{equation*} 
\nabla^{\star} \col {\cal F}^{\star \bul}\lo 
{\cal F}^{\star \bul}\otimes_{{\cal O}_{{\cal Q}^{\star {\rm ex}}_{\bul}}}
\Om^1_{{\cal Q}^{\star {\rm ex}}_{\bul}/\os{\circ}{T}{}^{\star}}. 
\tag{8.0.4}\label{eqn:olmbc}
\end{equation*}  
The morphisms $g^*_{{\rm crys}}(F')\lo F$,  
$\ol{g}\col \ol{\cal Q}_{\bul}\lo \ol{\cal Q}{}'_{\bul}$,  
(\ref{cd:xpnxp}) and (\ref{ali:kxvef}) induce the following morphism 
\begin{align*} 
\ol{g}{}^* \col 
{\cal F}'{}^{\bul}\otimes_{{\cal O}_{{\cal Q}'{}^{{\rm ex}}_{\bul}}}
\Om^{\bul}_{{\cal Q}'{}^{{\rm ex}}_{\bul}/\os{\circ}{T}{}'}\langle u' \rangle 
:=
{\cal F}'{}^{\bul}\otimes_{{\cal O}_{{\cal Q}'{}^{{\rm ex}}_{\bul}}}
\Om^{\bul}_{{\cal Q}'{}^{{\rm ex}}_{\bul}/\os{\circ}{T}{}'}
\langle U_{S'(T')^{\nat}} \rangle 
\lo
g_{\bul *}({\cal F}{}^{\bul}\otimes_{{\cal O}_{{\cal Q}{}^{{\rm ex}}_{\bul}}}
\Om^{\bul}_{{\cal Q}{}^{{\rm ex}}_{\bul}/\os{\circ}{T}})\langle u \rangle. 
\tag{8.0.5}\label{ali:ext}
\end{align*} 
%where 
%\begin{align*} 
%\ol{g}{}^*(u'{}^{[i]}):={\rm deg}(v)^iu^{[i]}
%\tag{8.0.6}\label{ali:exut}
%\end{align*}
%where $u$ and $u'$ are local generators of 
%$L_{S'(T'){}^{\nat}}$ and $L_{S(T)^{\nat}}$ over 
%${\cal O}_{T'}$ and ${\cal O}_T$, respectively. 
%The morphism $\ol{g}{}^*$ in (\ref{ali:ext}) is independent of the choices of local sections 
%of $M_{S'(T'){}^{\nat}}$ and $M_{S(T){}^{\nat}}$ whose images 
%are local generators of $M_{S'(T'){}^{\nat}}/{\cal O}_{T'}^*$ and 
%$M_{S(T){}^{\nat}}/{\cal O}_{T}^*$, respectively. 
Hence we obtain the following morphism  
\begin{align*} 
g^*\col \wt{R}u_{Y'_{\os{\circ}{T}{}'_0}/\os{\circ}{T}{}'*}(\ol{F}{}'\langle u'\rangle) 
\lo 
Rg_*\wt{R}u_{Y_{\os{\circ}{T}_0}/\os{\circ}{T}*}(\ol{F}\langle u \rangle).
\tag{8.0.6}\label{ali:exrt}
\end{align*} 

\begin{prop}\label{prop:indu}
$(1)$ The morphism {\rm (\ref{ali:exrt})} is independent of the choices of 
affine open coverings of $Y_{\os{\circ}{T}_0}$ and $Y_{\os{\circ}{T}{}'_0}$, 
and simplicial immersions 
$Y_{\os{\circ}{T}_0\bul} \os{\sus}{\lo} \ol{\cal Q}_{\bul}$ 
and 
$Y_{\os{\circ}{T}{}'_0\bul} \os{\sus}{\lo} \ol{\cal Q}{}'_{\bul}$ 
into log smooth schemes over  
$\ol{S(T)^{\nat}}$ and $\ol{S'(T')^{\nat}}$ and the morphism 
$\ol{g}_{\bul}\col \ol{\cal Q}_{\bul}\lo \ol{\cal Q}{}'_{\bul}$. 
\par 
$(2)$ Let $S'\lo S''$, $T'\lo T''$,  
$h\col Y'_{\os{\circ}{T}{}'_0}\lo Y''_{\os{\circ}{T}{}''_0}$ and 
$h^*_{{\rm crys}}(\ol{F}{}'')\lo \ol{F}{}'$
be analogous morphisms to $S\lo S'$, $T\lo T'$, 
$g\col Y_{\os{\circ}{T}{}_0}\lo Y'_{\os{\circ}{T}{}'_0}$
and $g^*_{{\rm crys}}(\ol{F}{}')\lo \ol{F}$, respectively.  
Then 
$$(h\circ g)^*=Rh_*(g^*)\circ h^*\col 
\wt{R}u_{Y''_{\os{\circ}{T}{}''_0}/\os{\circ}{T}{}''*}(\ol{F}{}''\langle u''\rangle) 
\lo 
R(h\circ g)_*\wt{R}u_{Y_{\os{\circ}{T}_0}/\os{\circ}{T}*}(\ol{F}\langle u \rangle).$$ 
and 
${\rm id}^*_{Y_{\os{\circ}{T}{}_0}}=
{\rm id}_{\wt{R}u_{Y_{\os{\circ}{T}_0}/\os{\circ}{T}*}(\ol{F}\langle u \rangle)}$. 
\end{prop}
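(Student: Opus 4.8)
The plan is to reduce both assertions to the corresponding well-known properties of the ordinary relative log crystalline cohomology $Ru_{Y_{\os{\circ}{T}_0}/S(T)^{\nat}*}(F)$, by transporting the morphism (\ref{ali:exrt}) across the canonical isomorphism (\ref{eqn:exfte}) of (\ref{prop:ncfqi}). Before doing so I would first check that (\ref{ali:ext}) really is a morphism of complexes. On the factor $\Om^{\bul}_{{\cal Q}'^{{\rm ex}}_{\bul}/\os{\circ}{T}'}$ this is the usual pullback of relative log differential forms along $\ol{g}_{\bul}$ (taken relative to the absolute base $\os{\circ}{T}$, so no Kodaira--Spencer-type obstruction arises), twisted by the given morphism $g^*_{\rm crys}(F')\lo F$ of crystals. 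The only delicate point is compatibility with the PD-Hirsch differential $d_{\varphi}$, and here the degree factor in (\ref{ali:exut}) is forced: since relative differentials kill $d\log$ of units, one has $\ol{g}^*(d\log\tau')=\deg(v)\,d\log\tau$ in $\Om^1_{{\cal Q}^{{\rm ex}}_{\bul}/\os{\circ}{T}}$, where $\deg(v)$ is the mapping degree of (\ref{defi:ddef}); consequently $\ol{g}^*(d_{\varphi'}(u'^{[j]}))=\deg(v)\,u^{[j-1]}\otimes H_{\varphi}(u)=d_{\varphi}(\deg(v)^{j}u^{[j]})$, matching the rule $u'^{[j]}\mapsto\deg(v)^{j}u^{[j]}$ exactly. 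This shows that (\ref{ali:ext}), and hence (\ref{ali:exrt}), is well defined.

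The next step is to observe that (\ref{ali:ext}) is compatible with the natural augmentations to the log de Rham complexes relative to $S(T)^{\nat}$ and $S'(T')^{\nat}$: it sits in a commutative square over the ordinary pullback ${\cal F}'^{\bul}\otimes\Om^{\bul}_{{\cal Q}'^{{\rm ex}}_{\bul}/S'(T')^{\nat}}\lo g_{\bul*}({\cal F}^{\bul}\otimes\Om^{\bul}_{{\cal Q}^{{\rm ex}}_{\bul}/S(T)^{\nat}})$, the vertical maps being the quasi-isomorphisms of (\ref{theo:qii}). Applying $R\pi_{{\rm zar}*}$ and invoking the cohomological-descent identification (\ref{eqn:exsfpte}) then identifies the morphism $g^*$ of (\ref{ali:exrt}) with the standard functorial pullback $Ru_{Y'_{\os{\circ}{T}'_0}/S'(T')^{\nat}*}(F')\lo Rg_*Ru_{Y_{\os{\circ}{T}_0}/S(T)^{\nat}*}(F)$ of relative log crystalline cohomology.

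For part (1), independence of the chosen \v{C}ech diagrams, simplicial immersions and of the morphism $\ol{g}_{\bul}$ now follows from the independence already established in (\ref{prop:ncfqi}) together with the usual common-refinement device. Given a second system of choices I would dominate both by a third, formed by refining the two open coverings simultaneously and taking fibre products of the smooth embeddings as in (\ref{cd:xpnxp}); the two resulting morphisms each restrict from the morphism attached to the dominating choice, since over $S(T)^{\nat}$ both are the canonical crystalline pullback, and hence they agree after $R\pi_{{\rm zar}*}$. For part (2), I would argue directly on the complexes (\ref{ali:ext}): the de Rham part is functorial by construction, and the PD-variable part is functorial because mapping degrees multiply, $\deg(v'\circ v)=\deg(v')\deg(v)$, whence $u''^{[i]}\mapsto\deg(v')^{i}u'^{[i]}\mapsto\deg(v')^{i}\deg(v)^{i}u^{[i]}=\deg(v'v)^{i}u^{[i]}$, which is precisely the map attached to $h\circ g$. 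Passing to $R\pi_{{\rm zar}*}$ and using transitivity of derived direct images yields $(h\circ g)^{*}=Rh_*(g^{*})\circ h^{*}$, while ${\rm id}^{*}={\rm id}$ is immediate from $\deg({\rm id})=1$ and the fact that the relative de Rham pullback along the identity is the identity.

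The step I expect to be the genuine obstacle is the bookkeeping in the first two paragraphs: verifying that (\ref{ali:ext}) is a chain map with the correct degree twist and that it is compatible with the augmentations to the $S(T)^{\nat}$-relative complexes, all while the base immersions $\ol{S(T)^{\nat}}$ and $\ol{S'(T')^{\nat}}$ differ and the immersion $S_{\os{\circ}{T}_0}\os{\sus}{\lo}\ol{S(T)^{\nat}}$ fails to be nilpotent. This is exactly the situation requiring the nonstandard argument of \cite[(1.5)]{nb}, which I would follow to set up the comparison; once the identification with the ordinary crystalline pullback is in hand, both the independence and the functoriality become formal consequences.
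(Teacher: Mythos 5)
Your proposal is correct and follows exactly the route the paper intends: the paper's own proof of this proposition simply declares it "routine work once one constructs the morphism $g^*$ and knows the well-definedness" of the PD-Hirsch extension, and your reduction to the ordinary crystalline pullback via (\ref{eqn:exfte}), together with the common-refinement argument for (1) and the multiplicativity of mapping degrees for (2), is precisely that routine verification written out. The chain-map check forcing the twist $u'^{[j]}\mapsto\deg(v)^{j}u^{[j]}$ is the right observation and matches (\ref{eqn:uta}).
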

\begin{proof} 
(1), (2): The proof needs only a routine work once 
one constructs the morphism (\ref{ali:exrt}) 
and know the well-definedness of 
$\wt{R}u_{Y_{\os{\circ}{T}_0}/\os{\circ}{T}*}(\ol{F}\langle u \rangle)$ ((\ref{theo:indp})). 
We leave the detailed proof to the reader. 
\end{proof}

\par  
Let $X$ and $Y$ be SNCL schemes over $S$ and $S'$, respectively. 
%and let $(T,{\cal J},\del)\lo (T',{\cal J}',\del')$ be 
%a morphism of log PD-enlargements of $S$ and $S'$, respectively. 
Let 
\begin{equation*} 
\begin{CD} 
X_{\os{\circ}{T}_0} 
@>{g}>> Y_{\os{\circ}{T}{}'_0}\\
@VVV @VVV \\ 
S_{\os{\circ}{T}_0} @>>> S'_{\os{\circ}{T}{}'_0} \\ 
@V{\bigcap}VV @VV{\bigcap}V \\ 
S(T)^{\nat} @>{v}>> S'(T')^{\nat}
\end{CD}
\tag{8.1.1}\label{eqn:xdxduss}
\end{equation*} 
be a commutative diagram of 
SNCL schemes 
over $S_{\os{\circ}{T}_0}$ and $S'_{\os{\circ}{T}{}'_0}$ 
\par 
Let $X_{\os{\circ}{T}_0,\bul} \os{\sus}{\lo} \ol{\cal P}{}'_{\bul}$
and 
$Y_{\os{\circ}{T}{}'_0\bul} \os{\sus}{\lo} \ol{\cal Q}_{\bul}$ 
be immersions into log smooth simplicial schemes over 
$\ol{S(T)^{\nat}}$ and $\ol{S'(T')^{\nat}}$, respectively. 
Indeed, these immersions exist by (\ref{eqn:eipxd}). 
Set 
$$\ol{\cal P}_{\bul}:=
\ol{\cal P}{}'_{\bul}\times_{\ol{S(T)^{\nat}}}
(\ol{\cal Q}_{\bul}\times_{\ol{S'(T')^{\nat}}}\ol{S(T)^{\nat}})
=\ol{\cal P}{}'_{\bul}\times_{\ol{S'(T')^{\nat}}}
\ol{\cal Q}_{\bul}.$$ 
Let 
$\ol{g}_{\bul}\col \ol{\cal P}_{\bul} 
\lo \ol{\cal Q}_{\bul}$
be the second projection. 
Then we have the following commutative diagram 
\begin{equation*} 
\begin{CD} 
X_{\os{\circ}{T}_0\bul}
@>{\sus}>> \ol{\cal P}_{\bul}\\
@V{g_{\bul}}VV 
@VV{\ol{g}_{\bul}}V \\ 
Y_{\os{\circ}{T}{}'_0,\bul} @>{\sus}>> \ol{\cal Q}_{\bul}
\end{CD} 
\tag{8.1.2}\label{cd:xpnxap} 
\end{equation*} 
over 
\begin{equation*} 
\begin{CD} 
S_{\os{\circ}{T}_0} @>{\subset}>> \ol{S(T)^{\nat}}\\ 
@VVV @VVV \\ 
S'_{\os{\circ}{T}{}'_0} @>{\subset}>> \ol{S'(T')^{\nat}}. 
\end{CD}
\end{equation*} 
Let $\ol{\mathfrak D}_{\bul}$ and $\ol{\mathfrak E}_{\bul}$ be the log PD-envelopes 
of  the immersions 
$X_{\os{\circ}{T}_0\bul}\os{\sus}{\lo}\ol{\cal P}_{\bul}$ over 
$(\os{\circ}{T},{\cal J},\del)$ and 
$Y_{\os{\circ}{T}{}'_0,\bul}
\os{\sus}{\lo}\ol{\cal Q}_{\bul}$ over 
$(\os{\circ}{T}{}',{\cal J}',\del')$, respectively.     
Set ${\mathfrak D}_{\bul}:=
\ol{\mathfrak D}_{\bul}\times_{{\mathfrak D}(\ol{S(T)^{\nat}})}S(T)^{\nat}$
and 
${\mathfrak E}_{\bul}:=
\ol{\mathfrak E}_{\bul}\times_{{\mathfrak D}(\ol{S'(T')^{\nat}})}S'(T')^{\nat}$. 
By (\ref{cd:xpnxap}) 
we have the following natural morphism  
\begin{equation*} 
\ol{g}{}^{\rm PD}_{\bul}\col 
\ol{\mathfrak D}_{\bul}\lo \ol{\mathfrak E}_{\bul}.   
\tag{8.1.3}\label{eqn:gpdf} 
\end{equation*}  
Hence we have the following natural morphism 
\begin{equation*} 
g^{\rm PD}_{\bul}\col 
{\mathfrak D}_{\bul}\lo {\mathfrak E}_{\bul}.   
\tag{8.1.4}\label{eqn:gpndf} 
\end{equation*}

\par 
Let $E$ and  $F$ be flat quasi-coherent crystals of 
${\cal O}_{\os{\circ}{X}_{T_0}/\os{\circ}{T}}$-modules  
and ${\cal O}_{\os{\circ}{Y}_{T_0'}/\os{\circ}{T}{}'}$-modules, respectively.     
Let 
\begin{align*} 
\os{\circ}{g}{}^*_{{\rm crys}}(F)
\lo E
\tag{8.1.5}\label{ali:gnfe} 
\end{align*} 
be a morphism of 
${\cal O}_{\os{\circ}{X}_{T_0}/\os{\circ}{T}}$-modules. 
%Henceforth, we assume that 
%the morphism $u\col (S(T)^{\nat},{\cal J},\del) \lo (S'(T')^{\nat},{\cal J}',\del')$ 
%extends to a morphism $u\col (T,{\cal J},\del) \lo (T',{\cal J}',\del')$ 
%of fine log schemes. 
Assume that the following condition holds:
\smallskip
\parno 
$(8.1.6)$: for any smooth component 
$\os{\circ}{X}_{\lam}$ of $\os{\circ}{X}_{T_0}$ over $\os{\circ}{T}_0$, 
there exists a unique smooth component 
$\os{\circ}{Y}_{\mu}$ of 
$\os{\circ}{Y}_{T'_0}$ over $\os{\circ}{T}{}'_0$ such that $g$ 
induces a morphism 
$\os{\circ}{X}_{\lam} \lo \os{\circ}{Y}_{\mu}$. 
\smallskip
\par 
The following theorem plays an important role in this book. 

\begin{theo}[{\bf Contravariant functoriality}]\label{theo:ccm}
%Let the notations and the assumptions be as in the previous section. 
Let $f'\col Y_{\os{\circ}{T}{}'_0}\lo S'(T')^{\nat}$ be the structural morphism. 
Then the following hold$:$
\par
$(1)$ The morphism $g\col X_{\os{\circ}{T}_0}\lo Y_{\os{\circ}{T}{}'_0}$ 
induces the following well-defined morphism 
\begin{align*} 
g^*\col (H_{\rm zar}(Y_{\os{\circ}{T}{}'_0}/S'(T')^{\nat},F),P)
\lo
Rg_*((H_{\rm zar}(X_{\os{\circ}{T}_0}/S(T)^{\nat},E),P)) 
\tag{8.2.1}\label{ali:ccm}
\end{align*} 
in ${\rm D}^+{\rm F}(f'{}^{-1}({\cal O}_{T'}))$. 
\par 
$(2)$ Let $Z/S''$, $(T'',{\cal J}',\del'')$ 
%and $u''$ 
be similar objects to $Y/S'$ 
and $(T',{\cal J}',\del')$, 
%$u'$, 
respectively.  
Let $h\col Y_{\os{\circ}{T}{}'_0}\lo Z_{\os{\circ}{T}{}''_0}$ be a similar morphism to 
$g\col X_{\os{\circ}{T}_0}\lo Y_{\os{\circ}{T}{}'_0}$. 
Let $\os{\circ}{h}{}^*_{\rm crys}(G)\lo F$ be a similar morphism to 
$\os{\circ}{g}{}^*_{\rm crys}(F)\lo E$. 
Let $f''\col Z_{\os{\circ}{T}{}''_0}\lo S''(T'')^{\nat}$ 
be the structural morphism. 
Then  
\begin{align*} 
(h\circ g)^*=Rh_*(g^*)\circ h^*\col &
(H_{\rm zar}(Z_{\os{\circ}{T}{}''_0}/S''(T'')^{\nat},G),P)\lo 
R(h\circ g)_*((H_{\rm zar}(X_{\os{\circ}{T}_0}/S(T)^{\nat},E),P))
\tag{8.2.2}\label{ali:hgm}
\end{align*} 
in ${\rm D}^+{\rm F}(f''{}^{-1}({\cal O}_{T''}))$ and 
${\rm id}^*_{X_{\os{\circ}{T}_0}}=
{\rm id}_{(H_{\rm zar}(X_{\os{\circ}{T}_0}/S(T)^{\nat},E),P)}$. 
\par 
$(3)$ 
The isomorphisms {\rm (\ref{eqn:eaxte})} and {\rm (\ref{eqn:eetxte})} 
are functorial, that is, there exists a morphism 
\begin{align*} 
g^* \col Ru_{Y^{(\star)}_{\os{\circ}{T}{}'_0}/S'(T')^{\nat}*}
(\eps^*_{Y^{(\star)}_{\os{\circ}{T}{}'_0}/S'(T')^{\nat}}(F^{(\star)}))
\lo 
Rg_*Ru_{X^{(\star)}_{\os{\circ}{T}_0}/S(T)^{\nat}*}
(\eps^*_{X^{(\star)}_{\os{\circ}{T}_0}/S(T)^{\nat}}(E^{(\star)}))
\tag{8.2.3}\label{ali:eexte}
\end{align*} 
fitting into the following commutative diagram$:$ 
\begin{equation*}
\begin{CD} 
H_{\rm zar}(Y_{\os{\circ}{T}{}'_0}/S'(T')^{\nat},F,u')
@>{g^*}>>
Rg_*(H_{\rm zar}(X_{\os{\circ}{T}_0}/S(T)^{\nat},E))\\
@V{\simeq}VV @VV{\simeq}V \\
Ru_{Y^{(\star)}_{\os{\circ}{T}{}'_0}/S'(T')^{\nat}*}
(\eps^*_{Y^{(\star)}_{\os{\circ}{T}{}'_0}/S'(T')^{\nat}}(F^{(\star)}))
@>{g^*}>>
Rg_*Ru_{X^{(\star)}_{\os{\circ}{T}_0}/S(T)^{\nat}*}
(\eps^*_{X^{(\star)}_{\os{\circ}{T}_0}/S(T)^{\nat}}(E^{(\star)}))\\
@A{\simeq}AA @AA{\simeq}A\\
Ru_{Y_{\os{\circ}{T}{}'_0}/S'(T')^{\nat}*}
(\eps^*_{Y_{\os{\circ}{T}_0}/S'(T')^{\nat}}(F)) @>{g^*}>>
Rg_*Ru_{X_{\os{\circ}{T}_0}/S(T)^{\nat}*}
(\eps^*_{X_{\os{\circ}{T}_0}/S(T)^{\nat}}(E)). 
\end{CD} 
\tag{8.2.4}\label{cd:eetxte}
\end{equation*} 
The morphism {\rm (\ref{ali:eexte})} satisfies 
the similar transitive relation to that in ${\rm (2)}$. 
\par 
$(4)$: Assume that $\os{\circ}{T}$ is a $p$-adic formal scheme. 
The morphism $g\col X_{\os{\circ}{T}_0}\lo Y_{\os{\circ}{T}{}'_0}$ 
induces the following morphism 
\begin{align*} 
g^*\col (H_{{\rm zar},{\rm TW}}(Y_{\os{\circ}{T}{}'_0}/S'(T')^{\nat}),P)
\lo
R_{\rm TW}g_*((H_{{\rm zar},{\rm TW}}(X_{\os{\circ}{T}_0}/S(T)^{\nat}),P)) 
\tag{8.2.5}\label{ali:ccpm}
\end{align*} 
in ${\rm Ho}({\rm A}^{\geq 0}{\rm F}(f'{}^{-1}({\cal O}_{T'})\otimes_{\mab Z}{\mab Q}))$. 
This satisfies the similar transitive relation to that in $(2)$. 
\end{theo}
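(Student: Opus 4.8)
The plan is to construct the morphism $g^*$ at the level of explicit simplicial log de Rham complexes, exactly mirroring the construction carried out for the underlying filtered complexes in the preceding argument, and then to verify that the construction respects both the filtration $P$ and the multiplicative (dga) structure after the Thom-Whitney functor $s_{\mathrm{TW}}$ is applied. First I would set up, as in the paragraphs preceding (\ref{theo:ccm}), compatible \v{C}ech diagrams $X_{\bul,\os{\circ}{T}_0}$ and $Y_{\bul,\os{\circ}{T}{}'_0}$ together with simplicial immersions into log smooth schemes over $\ol{S(T)^{\nat}}$ and $\ol{S'(T')^{\nat}}$, arranged so that $g$ induces a simplicial morphism and the product construction $\ol{\cal P}_{\bul}=\ol{\cal P}{}'_{\bul}\times_{\ol{S'(T')^{\nat}}}\ol{\cal Q}_{\bul}$ produces a second projection $\ol{g}_{\bul}\col \ol{\cal P}_{\bul}\lo \ol{\cal Q}_{\bul}$ fitting into (\ref{cd:xpnxap}). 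This yields the PD-morphism $g^{\rm PD}_{\bul}\col {\mathfrak D}_{\bul}\lo {\mathfrak E}_{\bul}$ of (\ref{eqn:gpndf}), and combined with the coefficient morphism $\os{\circ}{g}{}^*_{\rm crys}(F)\lo E$ of (\ref{ali:gnfe}) it gives a morphism of PD-Hirsch extensions of semi-cosimplicial log de Rham complexes, where the crucial rule on the PD-variable is the mapping-degree twist $\ol{g}{}^*(u'^{[i]})={\rm deg}(v)^i u^{[i]}$ as in (\ref{ali:exut}).

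The second step is to check that this morphism of complexes is filtered. The preweight filtration $P$ on the Hirsch extension is defined by (\ref{eqn:dfhp}), splitting into a PD-degree part $\Gam_j(L)$ and a residual log-pole part $P_{k-2j}$. Since $\ol{g}{}^*$ preserves the log-pole preweight filtration (this is the standard functoriality (\ref{eqn:lyytp}) of the preweight filtration under a morphism of log schemes over $\os{\circ}{T}$) and the PD-variable twist multiplies $\Gam_j(L)$ by a scalar, hence stabilizes the direct-sum decomposition by PD-degree $j$, the morphism respects $P$ on each ${\cal P}^{{\rm ex},(m)}$; passing to the diagonal filtration $\del(L,P)$ on the single complex of the semi-cosimplicial object is then automatic. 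Assumption $(8.1.6)$ on the uniqueness of the target smooth component is precisely what guarantees that $g$ induces a well-defined morphism of the semisimplicial log schemes $X^{(\star)}\lo Y^{(\star)}$, so that the semi-cosimplicial structure, and hence the shifted filtration, is respected. The independence of all choices, and the transitive relation of $(2)$, follow from (\ref{prop:indu}), (\ref{theo:indp}) and the well-definedness results (\ref{prop:naqi})--(\ref{theo:nsp}) exactly as in the filtered-complex case.

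For assertion $(4)$ specifically, the additional content beyond the filtered-complex version is that the morphism must be realized in $D({\rm A}^{\geq 0}{\rm F}(h^{-1}({\cal O}_T)\otimes_{\mab Z}{\mab Q}))$, i.e.\ as a morphism of filtered dga's rather than merely of filtered complexes. Here I would invoke the Thom-Whitney formalism of \cite{nav}: the complexes $s_{\rm TW}\circ f_*((I^{\bul\bul})\langle U_{S(T)^{\nat}}\rangle,P)$ of (\ref{defi-of}) are genuine filtered dga's, and a morphism of the underlying cosimplicial dga's induces, by the functoriality of $s_{\rm TW}$ (\cite[(4.4), (6.3)]{nav}), a morphism of filtered dga's. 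The point to verify is that $\ol{g}{}^*$ is multiplicative: it is a morphism of log de Rham complexes, hence compatible with the wedge product on each ${\cal P}^{{\rm ex}}$, and the PD-twist $u'^{[i]}\mapsto {\rm deg}(v)^i u^{[i]}$ is an algebra homomorphism on the PD-Hirsch variables since ${\rm deg}(v)^{i}{\rm deg}(v)^{j}\binom{i+j}{i}=\binom{i+j}{i}{\rm deg}(v)^{i+j}$, so the divided-power multiplication $u^{[i]}u^{[j]}=\binom{i+j}{i}u^{[i+j]}$ is preserved. The transitive relation then follows from the corresponding relation for the mapping-degree function, $\deg(h\circ g)=\deg(h)\deg(g)$.

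The main obstacle I expect is bookkeeping rather than conceptual: one must ensure that the degree-twist convention (\ref{ali:exut}) is globally well-defined and independent of the local choices of sections $\tau,\tau'$ whose images generate $M_{S(T)^{\nat}}/{\cal O}_T^*$ and $M_{S'(T')^{\nat}}/{\cal O}_{T'}^*$, and that this twist is compatible, via $R_{\rm TW}\pi_{{\rm zar}*}$, with the boundary operator $d_\varphi$ on both sides so that $\ol{g}{}^*$ is genuinely a morphism of complexes (and of dga's) and not merely of graded objects. This is exactly the subtlety flagged in the Introduction around the intrinsic description of $U_S$, namely the need to replace the non-canonical variable $u$ by the sheaf $U_S$ so that contravariant functoriality can be formulated uniformly across the infinitely many base log schemes; the mapping-degree function of (\ref{defi:ddef}) is the device that makes this precise, and once its compatibility with $d_\varphi$ is checked locally the global statement follows by the established independence results.
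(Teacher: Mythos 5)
Your proposal is correct and follows essentially the same route as the paper: the construction via compatible simplicial immersions, the product trick $\ol{\cal P}_{\bul}=\ol{\cal P}{}'_{\bul}\times_{\ol{S'(T')^{\nat}}}\ol{\cal Q}_{\bul}$ with its second projection, the induced PD-envelope morphism, and the degree twist $\ol{g}{}^*(u'^{[i]})={\rm deg}(v)^iu^{[i]}$ are exactly the data set up in (8.0.1)--(8.0.7) and (8.1.1)--(8.1.5), after which the paper simply refers to \cite[(1.5.2), (1.5.8)]{nb} for (1)--(3) and notes for (4) that one need only track the multiplicative structure, which is the divided-power compatibility check you carry out explicitly.
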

\begin{proof} 
(1), (2), (3): The proof is the same as those of \cite[(1.5.2)]{nb} and [loc.~cit., (1.5.8)], 
in which we prove the analogous theorem to (\ref{theo:ccm}) for the $p$-adic filtered 
Steenbrink complex. 
\par 
(4): We have only to pay attention to the multiplicative structures of 
the crystalline complexes. 
\end{proof} 

In the rest of this section, in addition to (8.1.6), 
assume that the following condition holds:
%Let $\Lam$ and $M$ be the sets of indices 
%of the $\lam$'s and the $\mu$'s, respectively. 
%Then we obtain a function 
%$\phi \col \Lam \owns \lam \lom \mu \in M$.
\medskip 
\parno
$(8.2.6)$: 
there exist positive integers $e_{\lam}$'s  
$(\lam \in \Lam)$ such that 
there exist local equations $x_{\lam}=0$ and 
$y_{\lam}=0$ of 
$\os{\circ}{X}_{\lam}$ and $\os{\circ}{Y}_{\lam}$, 
respectively,  
such that $g^*(y_{\lam})=x^{e_{\lam}}_{\lam}$.  
%\medskip
%\par 

In \cite[(1.7.5)]{nb} we have proved the following: 

\begin{prop}[{\bf \cite[(1.7.5)]{nb}}]\label{prop:xxle} 
Let the assumptions and the notations be as above. 
Let $\lam$ be an element of $\Lam$ such that 
$x \in \os{\circ}{X}_{\lam}$. 
Then $\deg(v)_{f(x)}=e_{\lam}$.  
In particular, $e_{\lam}$'s are independent of $\lam$. 
\end{prop}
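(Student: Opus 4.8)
The plan is to reduce everything to a computation with the characteristic monoids $\ol{M}$ at a single point, since the quantity $\deg(v)_x$ (which, as $x$ is a point of $\os{\circ}{X}$, must be read as $\deg(v)_{\os{\circ}{f}(x)}$ for the structural morphism $f\col X\lo S$) is by (\ref{defi:ddef}) defined purely through the stalk homomorphism $h\col \ol{M}_{S',s'}\lo \ol{M}_{S,s}$, where $s=\os{\circ}{f}(x)$ and $s'=\os{\circ}{v}(s)$. Write $y=\os{\circ}{g}(x)$ and let $f'\col Y\lo S'$ be the structural morphism. Passing to stalks at $x$, I would denote by $\phi\col \ol{M}_{S,s}\lo \ol{M}_{X,x}$, $\phi'\col \ol{M}_{S',s'}\lo \ol{M}_{Y,y}$ and $\gamma\col \ol{M}_{Y,y}\lo \ol{M}_{X,x}$ the homomorphisms induced by $f$, $f'$ and $g$ on characteristic monoids. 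The two factorizations of $\os{\circ}{f}{}'\circ \os{\circ}{g}=\os{\circ}{v}\circ \os{\circ}{f}$ arising from (\ref{eqn:xdxduss}), together with the contravariance of log-structure pullback, then yield the commutative square
\begin{equation*}
\gamma\circ \phi'=\phi\circ h.
\end{equation*}
I would evaluate both sides on the generator $\tau'$ of $\ol{M}_{S',s'}\simeq {\mab N}$ and extract the coefficient $d:=\deg(v)_x$, determined by $h(\tau')=d\tau$ where $\tau$ generates $\ol{M}_{S,s}$.

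The key steps are three explicit descriptions of these monoid maps. First, from the definition of an SNCL scheme (\ref{defi:lfac}) and the standard log structure $M_S(a,d)$ of $(2.5.1)$, whose structural morphism to $S$ comes from the diagonal ${\mab N}\lo {\mab N}^{\oplus(a+1)}$, I obtain $\phi(\tau)=\sum_{\lam \in I_x}\ol{x}_{\lam}$, where $I_x$ is the set of smooth components of $\os{\circ}{X}$ through $x$ and $\{\ol{x}_{\lam}\}_{\lam \in I_x}$ is the canonical ${\mab N}$-basis of $\ol{M}_{X,x}\simeq {\mab N}^{I_x}$; the generators $\ol{x}_{\lam}$ with $\lam\notin I_x$ become units and hence vanish in the stalk. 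The analogous identity $\phi'(\tau')=\sum_{\mu \in J_y}\ol{y}_{\mu}$ holds at $y$, with $J_y$ the set of components of $\os{\circ}{Y}$ through $y$. Second, condition $(8.2.5)$ gives $g^*(y_{\lam})=x_{\lam}^{e_{\lam}}$, whence $\gamma(\ol{y}_{\lam})=e_{\lam}\ol{x}_{\lam}$ on the level of $\ol{M}$. Third, I would verify $J_y=I_x$: since $g^*(y_{\lam})=x_{\lam}^{e_{\lam}}$ vanishes at $x$ precisely when $x_{\lam}$ does, one gets $y\in \os{\circ}{Y}_{\lam}$ if and only if $x\in \os{\circ}{X}_{\lam}$, using the one-to-one correspondence of components provided by $(8.1.6)$ and the common indexing in $(8.2.5)$.

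Combining these, evaluating the square on $\tau'$ gives
\begin{equation*}
\sum_{\lam \in I_x}e_{\lam}\ol{x}_{\lam}=\gamma(\phi'(\tau'))=\phi(h(\tau'))=d\sum_{\lam \in I_x}\ol{x}_{\lam}.
\end{equation*}
Because $\ol{M}_{X,x}$ is the free commutative monoid on $\{\ol{x}_{\lam}\}_{\lam \in I_x}$, comparing coefficients forces $e_{\lam}=d=\deg(v)_x$ for every $\lam \in I_x$. This is exactly the asserted equality $\deg(v)_x=e_{\lam}$. Since the left-hand side does not involve $\lam$, all integers $e_{\lam}$ attached to components meeting at a common point coincide; propagating this along the connected SNC configuration then gives the final clause that the $e_{\lam}$ are independent of $\lam$.

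The main obstacle I anticipate is bookkeeping rather than depth. One must keep the variance straight, so that the square of monoid maps is genuinely commutative with $\gamma$ and $h$ oriented as above, and one must justify the precise form $\phi(\tau)=\sum_{\lam \in I_x}\ol{x}_{\lam}$ from the amalgamated chart ${\mab N}^{\oplus(a+1)}\oplus_{{\mab N},n}{\mab N}$ defining $M_S(a,d)$, being careful about which generators survive in the stalk $\ol{M}_{X,x}$. The genuinely nontrivial geometric input is the identification $J_y=I_x$, where $(8.1.6)$ and $(8.2.5)$ must be used in tandem; everything else is the free-monoid coefficient comparison.
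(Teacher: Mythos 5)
Your argument is correct and is essentially the computation that the paper delegates to the cited reference \cite[(1.7.5)]{nb} (no proof is reproduced here): one compares the two images of the generator of $\ol{M}_{S',s'}$ around the commutative square of characteristic monoids at $x$, using the chart description $\phi(\tau)=\sum_{\lam\in I_x}\ol{x}_{\lam}$ coming from the diagonal ${\mab N}\lo {\mab N}^{\oplus(a+1)}$ and the relation $g^*(y_{\lam})=x_{\lam}^{e_{\lam}}$. The only point worth spelling out is the passage from the equation of functions $g^*(y_{\lam})=x_{\lam}^{e_{\lam}}$ to the equation $\gamma(\ol{y}_{\lam})=e_{\lam}\ol{x}_{\lam}$ of characteristic monoids, which uses that in the local ring of an SNC scheme a monomial in the coordinate functions determines its exponents up to units; this is routine and does not affect the validity of your proof.
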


\begin{rema}\label{rema:imr}
In \cite{nb} we need additional assumptions  
for the morphism $g\col X_{\os{\circ}{T}_0}\lo Y_{\os{\circ}{T}{}'_0}$ 
satisfying the conditions (8.1.6) and (8.2.6) 
(e.g.,~$p\nmid \deg (v^{\nat})$) to discuss the 
contravariant functoriality of 
the filtered complex $(A_{\rm zar}(X_{\os{\circ}{T}_0}/S(T)^{\nat},E),P)$, 
which will be recalled in \S\ref{sec:pssc} below. 
We also need these additional assumptions  
for the following spectral sequence obtained by this filtered complex if one considers the compatibility of 
the actions $g^*$ on $R^qf_{X_{\os{\circ}{T}_0}/S(T)^{\nat}*}
(\eps^*_{X_{\os{\circ}{T}_0}/S(T)^{\nat}}(E))$ with the action 
$\os{\circ}{g}{}^*$ on the following $E_1$-terms into account: 
\begin{align*} 
E_1^{-k,q+k}&=\bigoplus_{j\geq \max \{-(k+m),0\}} 
R^{q-2j-k}f_{\os{\circ}{X}{}^{(2j+k)}_{T_0}
/\os{\circ}{T}*}
(E_{\os{\circ}{X}{}^{(2j+k)}_{T_0}
/\os{\circ}{T}}
\otimes_{\mab Z} \tag{8.4.1}\label{ali:ms}\\
&\phantom{R^{q-2j-k-m}f_{(\os{\circ}{X}^{(k)}, 
Z\vert_{\os{\circ}{X}^{(2j+k)}})/S*} 
({\cal O}}
  \vp^{(2j+k)}_{\rm crys}(
\os{\circ}{X}_{T_0}/\os{\circ}{T}))(-j-k,v) \\
&\Lo 
R^qf_{X_{\os{\circ}{T}_0}/S(T)^{\nat}*}
(\eps^*_{X_{\os{\circ}{T}_0}/S(T)^{\nat}}(E))  
\quad (q\in {\mab Z}).  
\end{align*}
Here $(-j-k,v)$ will be explained soon. 
\par 
In (\ref{ali:ccm}) we do not need the additional assumptions. 
However the $E_1$-terms of the spectral sequence (\ref{ali:sparh}) below 
have much more direct summands than those of (\ref{ali:ms}). 
\end{rema}

\begin{defi}
Let $w \col {\cal E} \lo {\cal F}$ 
be a morphism of  
$f^{-1}({\cal O}_T)$-modules (resp.~${\cal O}_T$-modules). 
The $D$-{\it twist}(:=degree twist) of $w$ by $k$  with respect to $v$ 
$$w(-k) \col {\cal E}(-k;v)\lo {\cal F}(-k;v)$$  
is, by definition, the morphism  
$\deg(v)^kw \col {\cal E} \lo {\cal F}$.  
This definition is well-defined for the derived category 
$D^+(f^{-1}({\cal O}_T))$ 
(resp.~$D^+({\cal O}_T)$). 
\end{defi}

Let $v\col S\lo S'$ be a morphism of families of log points 
and let $((T,{\cal J},\del),z)\lo ((T',{\cal J}',\del'),z')$ be a morphism of 
log PD-enlargements over $v$. 
%Let $d\log t'\in \Om^1_{S'(T')^{\nat}/\os{\circ}{T}{}'}$ be the analogous form 
%to $d\log t$ for $S'(T')^{\nat}$. 
Let $v^{\nat}\col S(T)^{\nat}\lo S'(T')^{\nat}$ be the induced morphism 
by $v$ and the morphism $T\lo T'$. 
Let $\ol{t}{}'$ be a local section of $M_{S'(T')^{\nat}}$ such that 
the image $t'$ of $\ol{t}{}'$ in $\ol{M}_{S'(T')^{\nat}}=M_{S'(T')^{\nat}}/{\cal O}_{T'}^*$ 
is the local generator. 
By the definition of $\deg(v^{\nat})_x$ ((\ref{defi:ddef})), 
we have 
\begin{equation*} 
v^{\nat*}(d\log \ol{t}{}')= \deg(v^{\nat})d\log \wt{t}. 
\tag{8.5.1}\label{eqn:uta}  
\end{equation*}  
%The morphism induces a morphism $v^*\col \ol{M}_{S'}\lo v_*(\ol{M}_{S})$.  
%Locally on $S$, this morphism  is equal to a morphism 
%$v^*\col {\mab N}\lo {\mab N}$. Set 
%$e=1\in {\mab N}^s$. Then 
%$$v^*(e)=\deg(v^{\nat})e.$$ 
%Let $t$ and $t'$ be local sections of $M_S$ and $M_{S'}$ 
%whose images in $\ol{M}_S\os{\sim}{\lo} {\mab N}$ and 
%$\ol{M}{}'_{S'}\os{\sim}{\lo}{\mab N}$ are $e$ and $e$, respectively. 
%Let $u$ and $u'$ be the corresponding local sections to $t$ and $t'$ of 
%$U_{S(T)^{\nat}}$ and $U_{S'(T')^{\nat}}$, respectively. 
Let us recall the ${\cal O}_{S'}$-linear morphism (\ref{ali:kxvef}) 
for the case above: 
\begin{equation*} 
v^*\col \Gam_{{\cal O}_{S'}}(U_{S'})\lo v_*(\Gam_{{\cal O}_S}(U_S)). 
\tag{8.5.2}\label{eqn:utva}  
\end{equation*} 
%Since ${\rm Aut}({\mab N})={\rm id}$, 
%the morphism $v^*\col \Gam_{{\cal O}_{S'}}(U_{S'})\lo v_*(\Gam_{{\cal O}_S}(U_S))$
%is independent of the choice of the local isomorphisms 
%$\ol{M}_{S'}\simeq {\mab N}$ and $\ol{M}_{S}\simeq {\mab N}$. 

\begin{exem}\label{exem:abfd}
(1) In the case where $S$ is of characteristic $p>0$, 
set $S^{[p]}:=S\times_{\os{\circ}{S},\os{\circ}{F}_S}\os{\circ}{S}$, 
where $F_S\col S\lo S$ is the absolute Frobenius endomorphism of $S$. 
The endomorphism $F_S$ induces the natural morphism 
$F_{S/\os{\circ}{S}}\col S\lo S^{[p]}$. 
In \cite[(1.5.14)]{nb} we have called $F_{S/\os{\circ}{S}}$ the {\it abrelative Frobenius morphism} 
of $S$ (see also \cite{oc}).
Let $F_{T,T'}\col ((T,{\cal J},\del),z)\lo ((T',{\cal J}',\del'),z')$ be 
a morphism of log PD enlargements over 
$F_{S/\os{\circ}{S}}\col S\lo S':=S^{[p]}$.  
We can define the {\it abrelative Frobenius action} 
$F^{\nat}_{T,T'}\col S(T)^{\nat}\lo S^{[p]}(T')^{\nat}$ with respect to  $F_{T,T'}$.  
Here note that ${\rm deg}(F^{\nat}_{T,T'})=p$. 
%by  the formula 
%$F^{\nat}_{T,T'}(u'{}^{[i]})=p^iu^{[i]}$. 
\par  
Set $X^{[p]}:=X\times_SS^{[p]}=X\times_{\os{\circ}{S},\os{\circ}{F}_S}\os{\circ}{S}$. 
Let $T'_0$ be the exact closed log subscheme of $T'$ defined by 
the PD-ideal sheaf ${\cal J}'$. 
Then we have the abrelative Frobenius morphism 
\begin{align*} 
F^{\rm abr}_{X_{S_{\os{\circ}{T}_0}}/S(T)^{\nat},S^{[p]}(T')^{\nat}} 
\col X_{S_{\os{\circ}{T}_0}}\lo X^{[p]}_{S^{[p]}_{\os{\circ}{T}{}'_0}}
\tag{8.6.1}\label{ali:arf}
\end{align*} 
over $S(T)^{\nat}\lo S^{[p]}(T')^{\nat}$. 
This abrelative Frobenius morphism satisfies the conditions (8.1.6) and (8.2.6). 
\par 
When $(T',{\cal J}',\del')=(T,{\cal J},\del)$ and $z'$ is the composite morphism 
$F_{S/\os{\circ}{S}}\circ z$, then we denote $F^{\nat}_{T,T'}$ by 
$F^{\nat}_{T}$. In this case, we also denote the morphism 
(\ref{ali:arf}) by 
\begin{align*} 
F^{\rm abr}_{X_{S_{\os{\circ}{T}_0}}/S(T)^{\nat}} 
\col X_{S_{\os{\circ}{T}_0}}\lo X^{[p]}_{S^{[p]}_{\os{\circ}{T}{}_0}}. 
\tag{8.6.2}\label{ali:asrf}
\end{align*} 
We define the {\it abrelative Frobenius action} $F^{\nat,{\rm abr}*}_{T}$ with respect to  
$F^{\rm abr}_{T}$ on $u^{[i]}$ by the formula 
$$F^{\nat,{\rm abr}*}_{T}(u^{[i]})=p^iu^{[i]}.$$ 
Obviously the morphism $F^{\nat,{\rm abr}*}_{T}$ is independent of the choice of $u$. 
\par 
(2) Let the notations be as in (1). 
If there exists a PD-endomorphism
$F_T\col ((T,{\cal J},\del),z)\lo ((T,{\cal J},\del),z)$ 
over $F_S\col S \lo S$, 
then we have the endomorphism 
$S(T)^{\nat}\lo S(T)^{\nat}$ of $S(T)^{\nat}$ and 
we have the following absolute Frobenius endomorphism 
\begin{align*} 
F^{\rm abs}_{X_{S_{\os{\circ}{T}_0}}/S(T)^{\nat}} 
\col X_{S_{\os{\circ}{T}_0}}\lo X_{S_{\os{\circ}{T}{}_0}}
\tag{8.6.3}\label{ali:arsf}
\end{align*} 
This absolute Frobenius endomorphism over $S(T)^{\nat}\lo S(T)^{\nat}$ 
satisfies the conditions (8.1.6) and (8.2.6). 
We can define the {\it absolute Frobenius action} $F^{\nat{\rm abs}*}_{T}$ with respect to  
$F_{T}$ on $u^{[i]}$ by  the formula 
$F^{\nat,{\rm abs}*}_{T}(u^{[i]})=p^iu^{[i]}$. 
\end{exem} 

\par 
%In the rest of this section we assume 
%that $U_{S(T)^{\nat}}$ is a free ${\cal O}_T$-module of rank $1$. 
Let the notations and the assumptions be as in the previous section. 
%for the time being. 
%By using (\ref{eqn:ffv}) and 
By the same proof as that of \cite[(1.5.9)]{nb}, we see that 
the following isomorphism appearing in (\ref{ali:vspgrc}) 
is the following isomorphism 
\begin{align*} 
&{\rm gr}_k^{P}
(s({\cal E}^{\bul}
\otimes_{{\cal O}_{{\cal P}^{\rm ex}_{\bul}}}
\Om^{\bul}_{{\cal P}^{{\rm ex},(\bul)}_{\bul}/\os{\circ}{T}}\langle u \rangle)) 
\os{\sim}{\lo} \tag{8.6.4}\label{ali:vsgrc}\\
&\bigoplus_{m\geq 0}
\bigoplus_{\# \ul{\lam}=m+1}
\bigoplus_{j\geq 0}
\bigoplus_{\# \ul{\mu}=k+m-2j}
({\cal E}^{\bul} 
\otimes_{{\cal O}_{{\cal P}_{\bul}^{\rm ex}}}
b_{\ul{\lam}\cup \ul{\mu}*}
(\Om^{\bul}_{
\os{\circ}{\cal P}{}^{{\rm ex}}_{\bul,\ul{\lam}
\cup \ul{\mu}}/\os{\circ}{T}} \\
& \otimes_{\mab Z}\vp_{{\rm zar},\ul{\mu}}
({\cal P}^{\rm ex}_{\bul}/\os{\circ}{T})))(-(k+m-j);v)[-k-2m+2j]. 
\end{align*} 

%Now we can prove the following: 

\begin{prop}\label{prop:ngr}
For a nonempty set $\ul{\lam}$, let 
$E_{\os{\circ}{X}_{\ul{\lam},T_0}/\os{\circ}{T}}$ 
be the restriction of $E$ to 
$(\os{\circ}{X}_{\ul{\lam},T_0}/\os{\circ}{T})_{\rm crys}$. 
Let $k$ be an integer. Then the following hold$:$ 
\par 
$(1)$ There exists the following natural isomorphism 
\begin{align*}
& {\rm gr}_k^PH_{\rm zar}(X_{\os{\circ}{T}_0}/S(T)^{\nat},E)
\os{\sim}{\lo} \tag{8.7.1}\label{eqn:ee}\\
& \bigoplus_{m\geq 0}
\bigoplus_{\# \ul{\lam}=m+1}
\bigoplus_{j\geq 0}
\bigoplus_{\# \ul{\mu}=k+m-2j}
a_{\ul{\lam}\cup \ul{\mu},\os{\circ}{T}_0*}
Ru_{\os{\circ}{X}_{\ul{\lam}\cup \ul{\mu},T}
/\os{\circ}{T}}
(E_{\os{\circ}{X}_{\ul{\lam}\cup \ul{\mu},T}
/\os{\circ}{T}}
\otimes_{\mab Z}
\vp_{{\rm crys},\ul{\mu}}(\os{\circ}{X}_T/\os{\circ}{T})) \\
& [-k-2m+2j](-(k+m-j);v). 
\end{align*} 
\par 
$(2)$ 
There exists the following natural isomorphism 
\begin{align*}
& {\rm gr}_k^P\wt{R}u_{X^{(\star)}_{\os{\circ}{T}_0}/\os{\circ}{T}*}
(\eps^*_{X^{(\star)}_{\os{\circ}{T}_0}/\os{\circ}{T}}
(E))
\os{\sim}{\lo} \tag{8.7.2}\label{eqn:ege}\\
& \bigoplus_{m\geq 0}
\bigoplus_{\# \ul{\lam}=m+1}
\bigoplus_{\# \ul{\mu}=k+m}
a_{\ul{\lam}\cup \ul{\mu},\os{\circ}{T}_0*}
Ru_{\os{\circ}{X}_{\ul{\lam}\cup \ul{\mu},T}
/\os{\circ}{T}}
(E_{\os{\circ}{X}_{\ul{\lam}\cup \ul{\mu},T}
/\os{\circ}{T}}
\otimes_{\mab Z}
\vp_{{\rm crys},\ul{\mu}}(\os{\circ}{X}_T/\os{\circ}{T})) \\
& [-k-2m](-k-m;v). 
\end{align*} 
\end{prop}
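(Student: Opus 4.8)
The plan is to deduce both isomorphisms from the graded computations already performed in the proof of (\ref{theo:indp}), the only genuinely new ingredient being the explicit bookkeeping of the $D$-twist. For part (1), the proof of (\ref{theo:indp}) — via the diagonal filtration $\del(L,P)$ and the Poincar\'e residue isomorphisms (\ref{eqn:mpruzrn}) — identifies the associated graded of the Hirsch-extended semi-cosimplicial complex on the \v{C}ech diagram with the direct sum in (\ref{ali:grc}), indexed by $m$, by $\ul{\lam}$ with $\sharp\ul{\lam}=m+1$, by $j\geq 0$, and by $\ul{\mu}$ with $\sharp\ul{\mu}=k+m-2j$. I would then apply $R\pi_{{\rm zar}*}$ and run the chain of equalities in (\ref{ali:vspgrc}): cohomological descent together with the crystalline--Zariskian comparison \cite[(1.3.4)]{nh2} replaces $R\pi_{{\rm zar}*}Ru_{\os{\circ}{X}_{\ul{\lam}\cup\ul{\mu},T_0,\bul}}$ by the honest crystalline direct image $Ru_{\os{\circ}{X}_{\ul{\lam}\cup\ul{\mu},T_0}}$, producing the underlying direct sum of (\ref{eqn:ee}).

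It then remains to record the $D$-twist, and this is the heart of the matter. Since $L_{S(T)^{\nat}}$ is free of rank one, each $\Gam_j(L_{S(T)^{\nat}})$ is generated by the single divided power $u^{[j]}$, whose transformation law under a degree-changing morphism is the functorial bookkeeping (\ref{eqn:ffv}), namely $v^{\nat}(u'{}^{[i]})={\rm deg}(v^{\nat})^iu^{[i]}$. This upgrades (\ref{ali:grc}) to its twisted refinement (\ref{ali:vsgrc}); I would establish it by copying the argument of \cite[(1.5.9)]{nb}. The orientation sheaf $\vp_{{\rm zar},\ul{\mu}}$ of the $(k+m-2j)$-fold intersection carries a twist of weight $k+m-2j$, while $u^{[j]}$ contributes a further $j$, so the total twist is $(-(k+m-j);v)$ in the sense of the $D$-twist defined above. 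Combining this with the first paragraph yields (\ref{eqn:ee}).

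Part (2) is the degenerate case of the same computation with the Hirsch variable suppressed. Here the relevant complex is $s({\cal E}^{\bul}\otimes\Om^{\bul}_{{\cal P}^{{\rm ex},(\bul)}_{\bul}/\os{\circ}{T}})$ with $\del(L,P)$ but no $\langle U_{S(T)^{\nat}}\rangle$, so the $\Gam_j$ factor is absent and $j$ is forced to be $0$. The Poincar\'e residue isomorphism (\ref{prop:grem}), in the single-$\ul{\lam}$ form (\ref{eqn:ele}) of (\ref{prop:nlgr}), together with the semi-cosimplicial shift by $m$ coming from $\del(L,P)$, then gives graded pieces indexed only by $\sharp\ul{\mu}=k+m$, the twist arising solely from the orientation sheaf, i.e.\ $(-k-m;u)$ with shift $[-k-2m]$; applying $R\pi_{{\rm zar}*}$ and the comparison $R\pi_{{\rm zar}*}\circ Ru_{\bul}=Ru$ as before gives (\ref{eqn:ege}).

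The decomposition of the complex and its independence of the auxiliary data are already furnished by (\ref{theo:indp}), (\ref{prop:pslt}) and (\ref{coro:il}), so the real work is the precise accounting of the twist and shift exponents. The hard part will be to verify that the twist weight is the sum $k+m-j$ of the residue weight $k+m-2j$ and the divided-power contribution $j$, and to match the shift $[-k-2m-2j]$ in (\ref{eqn:ee}) with the shift recorded in (\ref{ali:vspgrc}); this is exactly where (\ref{eqn:ffv}) and the analogue of \cite[(1.5.9)]{nb} enter, and where the sign conventions for the $D$-twist must be fixed once and for all.
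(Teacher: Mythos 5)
Your proposal is correct and follows essentially the same route as the paper: the paper proves (1) by applying $R\pi_{{\rm zar}*}$ to the $D$-twisted graded computation (\ref{ali:vsgrc}) (itself obtained from (\ref{ali:grc}) via (\ref{eqn:ffv}) and the argument of \cite[(1.5.9)]{nb}) and then invoking \cite[(1.3.4.5)]{nh2} and cohomological descent, exactly as you describe, and obtains (2) by omitting the factor ${\cal O}_T\langle U_{S(T)^{\nat}}\rangle\otimes_{{\cal O}_T}$, which forces $j=0$ as you note. The only point of friction is the shift exponent, where the statement's $[-k-2m-2j]$ disagrees with the $[-k-2m+2j]$ appearing in (\ref{ali:vsgrc}) and in the paper's own chain of equalities; your instinct to flag this as the place requiring care is well placed.
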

\begin{proof} 
(1): By \cite[(1.3.4.5)]{nh2} and (\ref{ali:vsgrc}) 
we have the following formula:  
\begin{align*} 
&{\rm gr}_k^PH_{\rm zar}(X_{\os{\circ}{T}_0}/S(T)^{\nat},E)
={\rm gr}_k^P\wt{R}u_{X^{(\star)}_{\os{\circ}{T}_0}/\os{\circ}{T}*}
(\eps^*_{X^{(\star)}_{\os{\circ}{T}_0}/\os{\circ}{T}}
(E)\langle u \rangle)
\tag{8.7.3}\label{ali:grtec}\\
&={\rm gr}_k^P
R\pi_{{\rm zar}*}
(s({\cal E}^{\bul}
\otimes_{{\cal O}_{{\cal P}^{{\rm ex},(\bul)}_{\bul}}}
\Om^{\bul}_{{\cal P}{}^{{\rm ex},(\bul)}_{\bul}/\os{\circ}{T}}\langle u \rangle) \\
&= R\pi_{{\rm zar}*}
({\rm gr}_k^{P}s({\cal E}^{\bul}
\otimes_{{\cal O}_{{\cal P}^{{\rm ex},(\bul)}_{\bul}}}
\Om^{\bul}_{{\cal P}^{{\rm ex},(\bul)}_{\bul}/\os{\circ}{T}}\langle u \rangle))) \\
&\os{\sim}{\lo} R\pi_{{\rm zar}*}
(
\bigoplus_{m\geq 0}
\bigoplus_{\# \ul{\lam}=m+1}
\bigoplus_{j\geq 0}
\bigoplus_{\# \ul{\mu}=k+m-2j}
({\cal E}^{\bul}
\otimes_{{\cal O}_{{\cal P}^{\rm ex}_{\bul}}}
b_{\ul{\lam}\cup \ul{\mu}*}
(\Om^{\bul}_{
\os{\circ}{\cal P}{}^{{\rm ex}}_{\bul, \ul{\lam}
\cup \ul{\mu}}/\os{\circ}{T}}
\otimes_{\mab Z}\\
&\quad \quad \quad \quad 
\quad \quad \quad \quad
\vp_{{\rm zar},\ul{\mu}}
({\cal P}^{\rm ex}_{\bul}/S)))(-(k+m-j;v)))[-k-2m+2j] 
\\
&=\bigoplus_{m\geq 0}
\bigoplus_{\# \ul{\lam}=m+1}
\bigoplus_{j\geq 0}
\bigoplus_{\# \ul{\mu}=k+m-2j}
a_{\ul{\lam}\cup \ul{\mu},\os{\circ}{T}_0*}
Ru_{\os{\circ}{X}_{\ul{\lam}\cup \ul{\mu},T}/\os{\circ}{T}}
(E_{\os{\circ}{X}_{\ul{\lam}\cup \ul{\mu},T}/\os{\circ}{T}}
\otimes_{\mab Z}
\vp_{{\rm crys},\ul{\mu}}(\os{\circ}{X}_T/\os{\circ}{T}))\\
&[-k-2m+2j](-(k+m-j);v). 
\end{align*}
\par 
(2): By omitting 
$U_{S(T)^{\nat}} \otimes_{{\cal O}_T}$ 
in the proof of (1), we obtain (\ref{eqn:ege}). 
\end{proof}

%\begin{coro}\label{coro:bel}
%The filtered complex 
%$(\wt{R}u_{X^{(\star)}_{\os{\circ}{T}_0}/\os{\circ}{T}*}
%(\eps^*_{X^{(\star)}_{\os{\circ}{T}_0}/\os{\circ}{T}}(E^{(\star)})\langle u \rangle),P)$ 
%is independent of the choice of an open covering of $X$ and an immersion 
%$X_{\os{\circ}{T}_0\bul}\os{\sus}{\lo} \ol{\cal P}_{\bul}$ over $\ol{S(T)^{\nat}}$. 
%\end{coro}
%\begin{proof} 
%(\ref{coro:bel}) follows from (\ref{prop:ncqi}) and 
%(\ref{prop:ngr}). 
%\end{proof}

\begin{coro}\label{coro:wtsp}
$(1)$ Assume that $\os{\circ}{X}_{T_0}$ is quasi-compact.  
Then there exists the following convergent spectral sequence$:$  
\begin{align*} 
&E_1^{-k,q+k}=
\bigoplus_{m\geq 0}
\bigoplus_{\# \ul{\lam}=m+1}
\bigoplus_{j\geq 0}
\bigoplus_{\# \ul{\mu}=k+m-2j}
R^{q+2j-k-2m}
f_{\os{\circ}{X}_{\ul{\lam}\cup \ul{\mu},T_0}/\os{\circ}{T}*} 
(E\vert_{\os{\circ}{X}_{\ul{\lam}\cup \ul{\mu},T_0}/\os{\circ}{T}}
\tag{8.8.1}\label{ali:sparh}\\
&\otimes_{\mab Z}\vp_{{\rm crys},\ul{\mu}}
(\os{\circ}{X}_{\ul{\lam}\cup \ul{\mu},T_0}/T))
(-(k+m-j),v) 
\Lo R^qf_{X_{\os{\circ}{T}_0}/S(T)^{\nat}*}
(\eps^*_{X_{\os{\circ}{T}_0}/S(T)^{\nat}}(E)). 
\end{align*} 
\par 
$(2)$ Assume that $\os{\circ}{X}_{T_0}$ is quasi-compact.  
Then there exists the following convergent spectral sequence$:$  
\begin{align*} 
&E_1^{-k,q+k}=
\bigoplus_{m\geq 0}
\bigoplus_{\# \ul{\lam}=m+1}
\bigoplus_{\# \ul{\mu}=k+m}
R^{q-k-2m}
f_{\os{\circ}{X}_{\ul{\lam}\cup \ul{\mu},T_0}/T*} 
(E\vert_{\os{\circ}{X}_{\ul{\lam}\cup \ul{\mu},T_0}/T}
\tag{8.8.2}\label{ali:spexh}\\
&\otimes_{\mab Z}\vp_{{\rm crys},\ul{\mu}}
(\os{\circ}{X}_{\ul{\lam}\cup \ul{\mu},T_0}/T))
(-k-m,v) 
\Lo \wt{R}{}^qf_{X_{\os{\circ}{T}_0}/\os{\circ}{T}*}
(\eps^*_{X_{\os{\circ}{T}_0}/\os{\circ}{T}}(E)). 
\end{align*}  
\end{coro}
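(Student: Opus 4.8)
The plan is to obtain both spectral sequences as the spectral sequence of a preweight-filtered complex, after applying $Rf_*$, reading off the $E_1$-terms directly from the graded-piece computations already carried out in (\ref{prop:ngr}). All of the geometric content — the Poincar\'e residue isomorphisms (\ref{eqn:mpprrn}) and, crucially, the $D$-twist bookkeeping coming from the mapping-degree action (\ref{eqn:ffv}) via (\ref{ali:vsgrc}) — is already packaged into (\ref{eqn:ee}) and (\ref{eqn:ege}). Hence what remains is purely the formal passage from a filtered complex to its spectral sequence, the identification of the two abutments, and a convergence check.

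For (1) I would start from the filtered complex $(H_{\rm zar}(X_{\os{\circ}{T}_0}/\os{\circ}{T},E),P)$ and apply $Rf_*$, obtaining the standard spectral sequence of a filtered complex with $E_1^{-k,q+k}={\cal H}^q(Rf_*\,{\rm gr}^P_kH_{\rm zar}(X_{\os{\circ}{T}_0}/\os{\circ}{T},E))$. Substituting the computation (\ref{eqn:ee}) of the graded piece, commuting $Rf_*$ past the (locally finite) direct sums, and using the composition of derived functors $f\circ a_{\ul{\lam}\cup\ul{\mu}}\circ u=f_{\os{\circ}{X}_{\ul{\lam}\cup\ul{\mu}}/\os{\circ}{T}}$ together with the shift in (\ref{eqn:ee}) gives exactly the term $R^{q+2j-k-2m}f_{\os{\circ}{X}_{\ul{\lam}\cup\ul{\mu},T_0}/\os{\circ}{T}*}(\cdots)$ of (\ref{ali:sparh}); the $D$-twist $(-(k+m-j),v)$ is inherited verbatim from (\ref{eqn:ee}). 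The abutment is identified by (\ref{ali:eeqie}), which gives $H_{\rm zar}(X_{\os{\circ}{T}_0}/\os{\circ}{T},E)\os{\sim}{\lo}Ru_{X_{\os{\circ}{T}_0}/S(T)^{\nat}*}(\eps^*_{X_{\os{\circ}{T}_0}/S(T)^{\nat}}(E))$, hence $R^qf_{X_{\os{\circ}{T}_0}/S(T)^{\nat}*}(\eps^*(E))$ after $Rf_*$. Thus (1) is the refinement of (\ref{coro:ts}) in which the $D$-twist is made explicit.

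For (2) I would run the identical argument with the PD-Hirsch extension removed: take the preweight-filtered complex $(\wt{R}u_{X^{(\star)}_{\os{\circ}{T}_0}/\os{\circ}{T}*}(\eps^*_{X^{(\star)}_{\os{\circ}{T}_0}/\os{\circ}{T}}(E^{(\star)})),P)$, apply $Rf_*$, and use (\ref{eqn:ege}) in place of (\ref{eqn:ee}) to compute the $E_1$-term, yielding (\ref{ali:spexh}) with its $D$-twist. The abutment is now $\wt{R}{}^qf_{X_{\os{\circ}{T}_0}/\os{\circ}{T}*}(\eps^*(E))={\cal H}^q(Rf_*\wt{R}u_{X_{\os{\circ}{T}_0}/\os{\circ}{T}*}(\eps^*(E)))$, where I invoke (\ref{eqn:e}) to identify the semi-cosimplicial complex $\wt{R}u_{X^{(\star)}}$ with $\wt{R}u_{X}$.

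The one point to watch is convergence, since the filtration $P$ is not finite. For both spectral sequences I would invoke (\ref{prop:lcz}) and (\ref{rema:ex}), which establish that (locally on $X_{\os{\circ}{T}_0}$, and globally when $\os{\circ}{X}_{T_0}$ is quasi-compact) the filtration $P$ is bounded below and exhaustive; the cohomological version of \cite[Classical Convergence Theorem 5.5.1]{weib} then applies exactly as in the proof of (\ref{coro:ts}). I do not expect a serious obstacle beyond this: the Poincar\'e residue isomorphisms and $D$-twists have already been verified in (\ref{prop:ngr}), so the remaining work — matching the reindexing of the semi-cosimplicial degree inside $\del(L,P)$ to the cohomological degrees displayed in the $E_1$-terms — is routine bookkeeping.
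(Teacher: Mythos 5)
Your proposal is correct and follows essentially the same route as the paper: part (1) is read off from the graded-piece computation (\ref{eqn:ee}) of (\ref{prop:ngr}) (1) applied to the filtered complex $(H_{\rm zar},P)$, and part (2) from (\ref{eqn:ege}) of (\ref{prop:ngr}) (2) together with the identification (\ref{eqn:e}). The extra detail you supply on abutment identification and convergence (via (\ref{prop:lcz}) and (\ref{rema:ex}), as in (\ref{coro:ts})) is exactly what the paper leaves implicit.
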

\begin{proof} 
(1): (1) immediately follows from (\ref{coro:ts}) (2) and (\ref{prop:ngr}) (1). 
\par 
(2): (2) immediately follows from (\ref{eqn:e}) and (\ref{prop:ngr}) (2). 
\end{proof} 

\begin{defi}\label{defi:nif}
We call the induced filtration on $R^hf_{X_{\os{\circ}{T}_0}/S(T)^{\nat}*}
(\eps^*_{X_{\os{\circ}{T}_0}/S(T)^{\nat}}(E))$ 
by the spectral sequence (\ref{ali:sparh}) the {\it PD-Hirsch filtration} on 
$R^hf_{X_{\os{\circ}{T}_0}/S(T)^{\nat}*}
(\eps^*_{X_{\os{\circ}{T}_0}/S(T)^{\nat}}(E))$. 
When $E={\cal O}_{\os{\circ}{X}_{T_0}/\os{\circ}{T}}$, 
we call the following spectral sequence 
\begin{align*} 
&E_1^{-k,q+k}=
\bigoplus_{m\geq 0}
\bigoplus_{\# \ul{\lam}=m+1}
\bigoplus_{j\geq 0}
\bigoplus_{\# \ul{\mu}=k+m-2j}
R^{q+2j-k-2m}
f_{\os{\circ}{X}_{\ul{\lam}\cup \ul{\mu},T_0}/\os{\circ}{T}*} 
({\cal O}_{\os{\circ}{X}_{\ul{\lam}\cup \ul{\mu},T_0}/\os{\circ}{T}}
\tag{8.9.1}\label{ali:spaorh}\\
&\otimes_{\mab Z}\vp_{{\rm crys},\ul{\mu}}
(\os{\circ}{X}_{\ul{\lam}\cup \ul{\mu},T_0}/\os{\circ}{T}))
(-(k+m-j),v) 
\Lo R^qf_{X_{\os{\circ}{T}_0}/S(T)^{\nat}*}
({\cal O}_{X_{\os{\circ}{T}_0}/S(T)^{\nat}}). 
\end{align*} 
the {\it PD-Hirsch pre-weight spectral sequence} of  
$R^hf_{X_{\os{\circ}{T}_0}/S(T)^{\nat}*}
({\cal O}_{X_{\os{\circ}{T}_0}/S(T)^{\nat}})$.
\end{defi} 

\begin{rema}\label{rema:pdhkh}
The $E_1$-term in \cite[p.~1271]{kiha} is mistaken 
because $t$ can be greater than $n$ in [loc.~cit.]; in (\ref{ali:spaorh}), 
the $m$ in (\ref{ali:spaorh}) can be greater than $k$. 
Furthermore, the complex $G_i$ in [loc.~cit.] has not been made explicit and 
$G_i$ depends also on $t$ in [loc.~cit.]; $G_i$ does not depend only on $i$; 
$G_i$ should be denoted by $G_{i,n,t}$; in (\ref{ali:spaorh}), 
$G_{i,t}$ corresponds to 
\begin{align*} 
\bigoplus_{\# \ul{\lam}=m+1}
\bigoplus_{\# \ul{\mu}=k+m-2j}
R^{q+2j-k-2m}
f_{\os{\circ}{X}_{\ul{\lam}\cup \ul{\mu},T_0}/\os{\circ}{T}*} 
&({\cal O}_{\os{\circ}{X}_{\ul{\lam}\cup \ul{\mu},T_0}/\os{\circ}{T}}\\
&\otimes_{\mab Z}\vp_{{\rm crys},\ul{\mu}}
(\os{\circ}{X}_{\ul{\lam}\cup \ul{\mu},T_0}/\os{\circ}{T}))
(-(k+m-j),v),
\end{align*} 
which depends on $j$, $k$  and $m$.  
\end{rema} 

\par
Next we describe the boundary morphism 
between the $E_1$-terms of (\ref{ali:sparh}).
Fix a total order on $\Lam$ once and for all. 
Consider $\ul{\mu}\in P(\Lam)$ such that 
$\# \ul{\mu}=(k-1)+(m+1)-2j=k+m-2j$.  
Then 
$E^{-(k-1),q+k}_1$ contains the following higher direct image
$$R^{q+2j-k-2m}
f_{\os{\circ}{X}_{\ul{\lam}\cup \ul{\mu},T_0}/T*} 
(E\vert_{\os{\circ}{X}_{\ul{\lam}\cup \ul{\mu},T_0}/T}
\otimes_{\mab Z}\vp_{{\rm crys},\ul{\mu}}
(\os{\circ}{X}_{\ul{\lam}\cup \ul{\mu},T_0}/T))
(-(k+m-j);v)$$ 
with $\# \ul{\lam}=m+2$ as a direct factor. 
Describe $\ul{\lam}:=\{\lam_0,\ldots, \lam_{m+1}\}$ 
$(\lam_0< \cdots <\lam_{m+1})$
and set $\ul{\lam}_l:=\ul{\lam}\setminus \{\lam_l\}$ $(0 \leq l \leq m+1)$. 
Here $~~\widehat{}~~$ means the elimination. 
Let $\iota_{\ul{\lam}_l,\ul{\lam},\os{\circ}{T}_0}\col 
\os{\circ}{X}_{\ul{\lam}\cup \ul{\mu},\os{\circ}{T}_0}
\os{\sus}{\lo} 
\os{\circ}{X}_{\ul{\lam}_l\cup \ul{\mu},\os{\circ}{T}_0}$ be the natural 
closed immersion. 
If $\ul{\mu}\owns \lam_l$, then the morphism 
$\iota_{\ul{\lam}_l,\ul{\lam},\os{\circ}{T}_0}$ is the identity of 
$\os{\circ}{X}_{\ul{\lam}\cup \ul{\mu},\os{\circ}{T}_0}$.  
Let 
\begin{align*} 
\rho_{\ul{\lam},\ul{\lam}_l} \col &
R^{q+2j-k-2m}f_{\os{\circ}{X}_{\ul{\lam}_l\cup \ul{\mu},T_0}/T*} 
(E\vert_{\os{\circ}{X}_{\ul{\lam}_l\cup \ul{\mu},T_0}/T} 
\otimes_{\mab Z}\vp_{{\rm crys},\ul{\mu}}
(\os{\circ}{X}_{\ul{\lam}_l\cup \ul{\mu},T_0}/T))(-(k+m-j))
 \\
&\lo 
R^{q+2j-k-2m}f_{\os{\circ}{X}_{\ul{\lam}\cup \ul{\mu},T_0}/T*} 
(E\vert_{\os{\circ}{X}_{\ul{\lam}\cup \ul{\mu},T_0}/T} 
\otimes_{\mab Z}\vp_{{\rm crys},\ul{\mu}}
(\os{\circ}{X}_{\ul{\lam}\cup \ul{\mu},T_0}/T))
(-(k+m-j))
\end{align*} 
be the induced morphism from $\iota_{\ul{\lam}_l,\ul{\lam},\os{\circ}{T}_0}$. 
\par 
Describe $\ul{\mu}=\{\mu_0,\ldots,\mu_{k+m-2j}\}$ $(\mu_0<\cdots<\mu_{k+m-2j})$ 
and set $\ul{\mu}_{l'}:=\ul{\mu} \setminus \{\mu_{l'}\}$ 
$(0 \leq l' \leq k+m-2j)$. 
%Then $\# \ul{\mu}_{l'}=k-m-2j-1$.  
The $E_1$-term $E^{-(k-1),q+k}_1$ also 
contains the following cohomological sheaf 
$$R^{q+2j-k-2m+2}
f_{\os{\circ}{X}_{\ul{\lam}\cup \ul{\mu}_{l'},T_0}/T*} 
(E\vert_{\os{\circ}{X}_{\ul{\lam}\cup \ul{\mu}_{l'},T_0}/T}
\otimes_{\mab Z}\vp_{{\rm crys},\ul{\mu}_{l'}}
(\os{\circ}{X}_{\ul{\lam}\cup \ul{\mu},T_0}/T))
(-(k+m-j);v)$$ 
with $\# \ul{\lam}=m+1$ as a direct factor. 
 Let $\iota(\ul{\mu}_{l'})\col \os{\circ}{X}_{\ul{\lam}\cup \ul{\mu}_{l'},\os{\circ}{T}_0}
\os{\sus}{\lo} \os{\circ}{X}_{\ul{\lam}\cup \ul{\mu},\os{\circ}{T}_0}$ be the natural 
closed immersion. If $\ul{\lam}\owns \mu_{l'}$, 
then the morphism $\iota(\ul{\mu}_{l'})$ is the identity of 
$\os{\circ}{X}_{\ul{\lam}\cup \ul{\mu}_{l'},T_0}$.  
If $\ul{\lam}\not\owns \mu_{l'}$, 
then $\os{\circ}{X}_{\ul{\lam}\cup \ul{\mu},T_0}$ is a smooth divisor 
on $X_{\ul{\lam}\cup \ul{\mu}_{l'},T_0}/T_0$.
Let 
\begin{align*} 
&(-1)^{l'}G_{\ul{\mu}_{l'},\ul{\mu}} \col 
R^{q+2j-k-2m}f_{\os{\circ}{X}_{\ul{\lam}\cup \ul{\mu},T_0}/T*} 
(E\vert_{\os{\circ}{X}_{\ul{\lam}\cup \ul{\mu},T_0}/T} 
\otimes_{\mab Z}\vp_{{\rm crys},\ul{\mu}}
(\os{\circ}{X}_{\ul{\lam}\cup \ul{\mu},T_0}/T))
(-(k+m-j);v) \\
&\lo 
R^{q+2j-k-2m}f_{\os{\circ}{X}_{\ul{\lam}_l\cup \ul{\mu}_{l'},T_0}/T*} 
(E\vert_{\os{\circ}{X}_{\ul{\lam}_l\cup \ul{\mu}_{l'},T_0}/T} 
\otimes_{\mab Z}\vp_{{\rm crys},\ul{\mu}_{l'}}
(\os{\circ}{X}_{\ul{\lam}_l\cup \ul{\mu}_{l'},T_0}/T))
(-(k+m-j);v)
\end{align*} 
be the Gysin morphism defined in \cite[(2.8.4.5)]{nh2} in the case 
$\ul{\lam}\not\owns \mu_{l'}$. 
In the case $\ul{\lam}\owns \mu_{l'}$, we denote 
$(-1)^{l'}{\rm id}_{R^{q+2j-k-2m}f_{\os{\circ}{X}_{\ul{\lam}\cup \ul{\mu},T_0}/T*} 
(E\vert_{\os{\circ}{X}_{\ul{\lam}\cup \ul{\mu},T_0}/T} 
\otimes_{\mab Z}\vp_{{\rm crys},\ul{\mu}}
(\os{\circ}{X}_{\ul{\lam}\cup \ul{\mu},T_0}/T))
(-(k+m-j);v)}$ by 
$(-1)^{l'}G_{\ul{\mu}_{l'},\ul{\mu}}$ by abuse of notation. 
\par 
We obtain the following as in \cite[(1.5.21)]{nb}:

\begin{prop}\label{prop:bddes} 
Let 
$d_1^{-k,q+k} \col E_1^{-k,q+k}\lo 
E_1^{-k+1,q+k}$ be the boundary morphism. 
Then $d_1^{-k,q+k}$ is described by the following diagram$:$
\begin{equation*} 
\begin{split} 
{} & R^{q+2j-k-2m}f_{\os{\circ}{X}_{\ul{\lam}_l\cup \ul{\mu},T_0}/T*} 
(E\vert_{\os{\circ}{X}_{\ul{\lam}_l\cup \ul{\mu},T_0}/T} \\ 
{} & \phantom{R^{q-2\ul{t}_r-k}
f_{(D^{(\ul{t}_r+k)}_{(\ul{t}+e_j)}, 
Z\vert_{D^{(\ul{t}_r+k)}_{(\ul{t}+e_j)}})/S*}
({\cal O}}
\otimes_{\mab Z}\vp_{{\rm crys},\ul{\mu}}
(\os{\circ}{X}_{\ul{\lam}_l\cup \ul{\mu},T_0}/T))
(-(k+m-j);v)
\end{split} 
\tag{8.11.1}\label{cd:gsd}
\end{equation*}  
$$\text{\scriptsize
{${(-1)^{m+l}\rho_{\ul{\lam},\ul{\lam}_l}}$}}~\downarrow~$$
\begin{equation*} 
\begin{split} 
{} & R^{q+2j-k-2m}f_{\os{\circ}{X}_{\ul{\lam}\cup \ul{\mu},T_0}/T*} 
(E\vert_{\os{\circ}{X}_{\ul{\lam}\cup \ul{\mu},T_0}/T} \\ 
{} & \phantom{R^{q-2\ul{t}_r-k}f_{(D^{(\ul{t}_r+k)}_{\ul{t}}, 
Z\vert_{D^{(\ul{t}_r+k)}_{\ul{t}}})/S*}
({\cal O}} 
\otimes_{\mab Z}\vp_{{\rm crys},\ul{\mu}}
(\os{\circ}{X}_{\ul{\lam}\cup \ul{\mu},T_0}/T))
(-(k+m-j);v)
\end{split} 
\end{equation*}  
$$\text{\scriptsize
{${-(-1)^{m+l'}G_{\ul{\mu}_{l'},\ul{\mu}}}$}}~\downarrow~$$
%$$\text{\scriptsize
%{${-(-1)^{m+l'}G^{\ul{\mu}_{l'}}$}}}
%~\downarrow$$
%_{\ul{\mu}}}
\begin{equation*} 
\begin{split} 
{} & R^{q+2j-k-2m+2}
f_{\os{\circ}{X}_{\ul{\lam}\cup \ul{\mu}_{l'},T_0}/T*} 
(E\vert_{\os{\circ}{X}_{\ul{\lam}\cup \ul{\mu}_{l'},T_0}/T} \\
{} & \phantom{R^{q-2\ul{t}_r-k+2}f_{(D^{(\ul{t}_r+k-1)}_{\ul{t}},
Z\vert_{D^{(\ul{t}_r+k-1)}_{\ul{t}}})/S*}
({\cal O}}
\otimes_{\mab Z}\vp_{{\rm crys},\ul{\mu}_{l'}}
(\os{\circ}{X}_{\ul{\lam}\cup \ul{\mu}_{l'},T_0}/T))
(-(k+m-j)+1;v).
\end{split} 
\end{equation*}   
\end{prop}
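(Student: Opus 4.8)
The plan is to identify $d_1^{-k,h+k}$ with the part of the total differential of the cochain-level model that lowers the diagonal filtration by exactly one, and then to read it off through the Poincar\'e residue description of the graded pieces. First I would work on the single complex $s({\cal E}^{\bul}\otimes_{{\cal O}_{{\cal P}^{\rm ex}_{\bul}}}\Om^{\bul}_{{\cal P}^{{\rm ex},(\bul)}_{\bul}/\os{\circ}{T}}\langle U_{S(T)^{\nat}}\rangle)$ equipped with the filtration $\del(L,P)$, whose associated spectral sequence is (\ref{ali:sparh}). By the standard recipe, $d_1$ is the connecting morphism of the short exact sequence $0\to {\rm gr}^{\del(L,P)}_{k-1}\to \del(L,P)_k/\del(L,P)_{k-2}\to {\rm gr}^{\del(L,P)}_k\to 0$; it is computed by lifting a graded class, applying the total differential, and taking the image in ${\rm gr}^{\del(L,P)}_{k-1}$. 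Applying $R\pi_{{\rm zar}*}$ and invoking the graded identification (\ref{eqn:ee}) then turns this into a morphism between the crystalline cohomology sheaves of the strata $\os{\circ}{X}_{\ul{\lam}\cup \ul{\mu}}$.

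Next I would decompose the total differential into its constituents: the log de Rham boundary $\nabla$, the semi-cosimplicial \v{C}ech boundary of (\ref{ali:ulm}) that raises the upper index $m$, and the PD-Hirsch boundary produced by $d\log \tau$. On ${\rm gr}^{\del(L,P)}$ the \v{C}ech boundary is zero for weight reasons (as noted before (\ref{ali:grc})), so it does not enter $d_0$; but precisely because the $(m+1)$-st summand of $\del(L,P)_{k-1}$ is $P_{k+m}$ rather than $P_{k+m+1}$, the \v{C}ech boundary sends the $P_{k+m}$-part of the $m$-th term into $\del(L,P)_{k-1}$ with nonzero class in ${\rm gr}^P_{k+m}$, and thus contributes to $d_1$. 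Under the residue isomorphism (\ref{eqn:mpprrn}) its effect on the stratum $\os{\circ}{X}_{\ul{\lam}{}^l\cup \ul{\mu}}$ is exactly the pullback $\rho^{\ul{\lam}}_{\ul{\lam}{}^l}$ along the closed immersion $\iota^{\ul{\lam}}_{\ul{\lam}{}^l,\os{\circ}{T}_0}$, carrying the sign $(-1)^{m+l}$. The remaining contribution comes from the component of $\nabla$ that produces a factor $d\log x_{\mu_{l'}}$ along the smooth divisor $\os{\circ}{X}_{\ul{\lam}\cup \ul{\mu}}\hookrightarrow \os{\circ}{X}_{\ul{\lam}\cup \ul{\mu}_{l'}}$; by the local residue computation this is the Gysin morphism $G^{\ul{\mu}_{l'}}_{\ul{\mu}}$ of \cite[(2.8.4.5)]{nh2}, raising cohomological degree by two and the Tate twist by one, with sign $-(-1)^{m+l'}$.

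The delicate and genuinely laborious part will be the sign bookkeeping: one must reconcile the semi-cosimplicial sign convention of Notation (15), the mapping-fiber and mapping-cone conventions, the internal signs of (\ref{eqn:mpprrn}), and the PD-Hirsch differential, so that the two contributions acquire exactly $(-1)^{m+l}$ and $-(-1)^{m+l'}$ as displayed in (\ref{cd:gsd}). I would also verify twist-compatibility: the pullback $\rho$ preserves the $D$-twist $(-(k+m-j);v)$, while the Gysin map inherently carries a twist by one that matches the displayed shift to $(-(k+m-j)+1)$, and since both morphisms are built from pullbacks and Gysin maps compatible with the degree function $\deg(v)$, the twists propagate automatically. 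Since every ingredient here is the exact crystalline analogue of the situation treated in \cite[(1.5.21)]{nb}, I expect to transport that argument essentially verbatim, the only new feature being the extra factor $\Gam_{{\cal O}_T}(L_{S(T)^{\nat}})$ coming from the PD-Hirsch variable, which is annihilated on the relevant graded pieces and therefore does not affect the description of $d_1$.
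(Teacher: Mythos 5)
Your overall strategy is the right one and is essentially what the paper intends by its citation of \cite[(1.5.21)]{nb}: compute $d_1$ as the connecting morphism of the graded pieces of $\del(L,P)$ on the cochain-level model, split the total differential into its de Rham, \v{C}ech and PD-Hirsch parts, and read off the \v{C}ech part as the pullbacks $\rho^{\ul{\lam}}_{\ul{\lam}{}^l}$ and the de Rham part as the Gysin morphisms $G^{\ul{\mu}_{l'}}_{\ul{\mu}}$ through the residue isomorphism (\ref{eqn:mpprrn}); the sign bookkeeping you defer is indeed routine.

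The gap is in your final sentence, where you dismiss the PD-Hirsch differential on the grounds that the factor $\Gam_{{\cal O}_T}(L_{S(T)^{\nat}})$ ``is annihilated on the relevant graded pieces.'' It is not: the graded pieces of $\del(L,P)$ retain all the divided-power components --- they are exactly what produces the index $j$ in (\ref{ali:vsgrc}) and (\ref{eqn:ee}). Concretely, $d_{\varphi}$ sends $u^{[j]}\otimes \om$ with $\om \in P_{k+m-2j}$ (the $(m,j)$-component of $\del(L,P)_k$) to $u^{[j-1]}\otimes d\log \tau \wedge \om$, which lies in $\Gam_{j-1}(L_{S(T)^{\nat}})\otimes P_{k+m-2j+1}$; since $P_{k+m-2j+1}=P_{(k-1)+m-2(j-1)}$ is precisely the $(m,j-1)$-component of $\del(L,P)_{k-1}$, the PD-Hirsch differential drops the diagonal filtration by exactly one and therefore contributes to $d_1$. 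Under the residue isomorphism its class in ${\rm gr}^P_{k+m-2j+1}$ is the signed sum over $\nu$ of the restrictions of ${\rm Res}_{\ul{\mu}}(\om)$ to the deeper strata $\os{\circ}{X}_{\ul{\lam}\cup\ul{\mu}\cup\{\nu\}}$, which is not zero in general (already for $\ul{\mu}=\emptyset$ and $\om$ a function one gets the restriction maps to the components $\os{\circ}{X}_{\nu}$). This produces a third family of components of $d_1$, from the $(m,\ul{\lam},j,\ul{\mu})$-summand to the $(m,\ul{\lam},j-1,\ul{\mu}\cup\{\nu\})$-summands, landing in the same cohomological degree and with the same twist $-(k+m-j)$, so it is not excluded by any bookkeeping. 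You must either exhibit these components in the description of $d_1$ or supply a genuine argument that they vanish; as written, the proof does not establish the stated formula.
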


%\par 
%Consider the complex 
%$${\cal O}_S\langle u \rangle \otimes^L_{{\cal O}_S}
%\wt{R}u_{X_{\os{\circ}{T}_0}/\os{\circ}{T}*}
%(\eps^*_{X_{\os{\circ}{T}_0}/\os{\circ}{T}} (E))
%={\cal O}_T\langle u \rangle \otimes_{{\cal O}_T}
%\wt{R}u_{X_{\os{\circ}{T}_0}/\os{\circ}{T}*}
%(\eps^*_{X_{\os{\circ}{T}_0}/\os{\circ}{T}} (E)).$$ 
%Then we define a filtration $P$ 
%on ${\cal O}_S\langle u \rangle \otimes^L_{{\cal O}_S}
%\wt{R}u_{X^{(\star)}_{\os{\circ}{T}_0}/\os{\circ}{T}*}
%(\eps^*_{X^{(\star)}_{\os{\circ}{T}_0}/\os{\circ}{T}} (E^{(\star)}))$ 
%by the following formula:
%\begin{align*} 
%&P_k({\cal O}_S\langle u \rangle \otimes^L_{{\cal O}_S}
%\wt{R}u_{X^{(\star)}_{\os{\circ}{T}_0}/\os{\circ}{T}*}
%(\eps^*_{X^{(\star)}_{\os{\circ}{T}_0}/\os{\circ}{T}} 
%(E^{(\star)}))) \tag{?.?.3}\label{ali:eus}\\ 
%&=\bigoplus_{j\geq 0}{\cal O}_Su^{[j]}
%\otimes P_{k-2j}\wt{R}u_{X^{(\star)}_{\os{\circ}{T}_0}/\os{\circ}{T}*}
%(\eps^*_{X^{(\star)}_{\os{\circ}{T}_0}/\os{\circ}{T}} 
%(E^{(\star)})).
%\end{align*} 

%\begin{prop}
%\begin{equation*} 
%P_0\wt{R}u_{X_{T_0/T}}
%(\eps^*_{X_{\os{\circ}{T}_0}/\os{\circ}{T}}
%(E))
%={\rm Im}(wt{R}u_{X_{T_0/T}}
%(P_0\eps^*_{X_{\os{\circ}{T}_0}/\os{\circ}{T}}
%(E))
%\lo
%wt{R}u_{X_{T_0/T}}
%(\eps^*_{X_{\os{\circ}{T}_0}/\os{\circ}{T}}
%(E))).
%\end{equation*}
%\end{prop}

\section{Filtered base change theorem of PD-Hirsch pre-weight-filtered complexes}\label{sec:bckf}
In this section we prove the filtered base change theorem 
of $(H_{\rm zar},P)$. 
\par
Let the notations be as in the previous section. 
Assume that $\os{\circ}{X}_{T_0}$ is quasi-compact. 
Let $f \col X_{\os{\circ}{T}_0} \lo S(T)^{\nat}$ 
be the structural morphism.  
%Let $f_T\col X_{T_0}\lo T$ be also the structural morphism. 

\begin{prop}\label{prop:bdccd}  
Assume that $\os{\circ}{f} \col \os{\circ}{X}_{T_0}\lo \os{\circ}{T}$ 
is quasi-compact and quasi-separated. 
Then $Rf_*((H_{\rm zar}(X_{\os{\circ}{T}_0}/S(T)^{\nat},E),P))$ 
is isomorphic to a bounded filtered complex of 
${\cal O}_T$-modules. 
\end{prop}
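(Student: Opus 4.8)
The plan is to reduce the assertion to a uniform cohomological bound on the graded pieces of $(H_{\rm zar}(X_{\os{\circ}{T}_0}/S(T)^{\nat},E),P)$, and then to produce the desired bounded representative by a filtered canonical truncation. First I would record that the underlying complex is already cohomologically bounded. By the isomorphism $(\ref{ali:eeqie})$ we have $H_{\rm zar}(X_{\os{\circ}{T}_0}/\os{\circ}{T},E)\simeq Ru_{X^{(\star)}_{\os{\circ}{T}_0}/S(T)^{\nat}*}(\eps^*(E^{(\star)}))$, whose underlying complex is computed by the simplicial log de Rham complex $s({\cal E}^{\bul}\otimes_{{\cal O}_{{\cal P}^{{\rm ex}}_{\bul}}}\Om^{\bul}_{{\cal P}^{{\rm ex},(\bul)}_{\bul}/S(T)^{\nat}})$, which is concentrated in finitely many complex degrees. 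Since $\os{\circ}{f}$ is quasi-compact and quasi-separated and $\os{\circ}{T}$ is quasi-compact, $Rf_*$ has finite cohomological dimension, say $c$; hence $Rf_*(H_{\rm zar})$ is cohomologically bounded.

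The main step is the uniform-in-$k$ bound. By $(\ref{eqn:ee})$ of $(\ref{prop:ngr})$, ${\rm gr}_k^PH_{\rm zar}(X_{\os{\circ}{T}_0}/\os{\circ}{T},E)$ is a finite direct sum (locally, then globally by quasi-compactness of $\os{\circ}{X}$) of shifted relative crystalline complexes $a_{\ul{\lam}\cup\ul{\mu},\os{\circ}{T}_0*}Ru_{\os{\circ}{X}_{\ul{\lam}\cup\ul{\mu},T}/\os{\circ}{T}}(E_{\ul{\lam}\cup\ul{\mu}}\otimes_{\mab Z}\vp_{{\rm crys},\ul{\mu}})$, shifted by $[-(k+2m-2j)]$ and Tate-twisted. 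The crucial observation is that $k+2m-2j=\sharp\ul{\mu}+m$, since $\sharp\ul{\mu}=k+m-2j$; so the shift depends on $k$ only through $\sharp\ul{\mu}+m$. Because $\os{\circ}{X}$ is a quasi-compact SNCL scheme there are global bounds $\sharp\ul{\mu}+m\leq D$ and $\dim\os{\circ}{X}_{\ul{\lam}\cup\ul{\mu}}\leq d$ over all nonempty strata, so each summand sits in complex degrees $[0,\,D+d]$ independently of $k$. Applying $Rf_*$ and using the cohomological dimension $c$, one obtains that $R^qf_*({\rm gr}_k^PH_{\rm zar})=0$ for $q\notin[0,N]$ with $N:=D+d+c$, \emph{uniformly in} $k$. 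The point here is that although ${\rm gr}_k^P$ is nonzero for infinitely many $k$ (as $j\to\infty$ with $k=2j+\sharp\ul{\mu}-m$), every graded piece lands in one and the same bounded band of degrees.

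With this in hand I would finish by truncation. Represent $(Rf_*(H_{\rm zar}),P)$ by an honest filtered complex $(M^{\bul},P)$ of ${\cal O}_T$-modules, obtained by applying a filtered flasque (Godement) resolution to the representative on $X_{\os{\circ}{T}_0,{\rm zar}}$ and then pushing forward by $f_*$. By $(\ref{prop:lcz})$ the filtration is bounded below and by $(\ref{rema:ex})$ it is exhaustive; bounded-belowness forces $M^{\bul}$ itself to be bounded below. Now apply the filtered canonical truncation $\tau_{\leq N}$, setting $P_k\tau_{\leq N}M^{\bul}:=\tau_{\leq N}(P_kM^{\bul})$. The uniform vanishing just proved shows that $\tau_{\leq N}({\rm gr}_k^PM^{\bul})\lo {\rm gr}_k^PM^{\bul}$ is a quasi-isomorphism for every $k$, whence $\tau_{\leq N}M^{\bul}\lo M^{\bul}$ is a filtered quasi-isomorphism; the truncated complex is concentrated in finitely many degrees, i.e.\ it is a bounded filtered complex of ${\cal O}_T$-modules representing $Rf_*((H_{\rm zar},P))$.

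The step I expect to be the main obstacle is the second one, namely verifying that the filtered canonical truncation really is a filtered quasi-isomorphism even though the filtration $P$ is infinite (merely bounded below and exhaustive, not finite). This hinges entirely on the uniform degree bound: one must check that $\tau_{\leq N}$ commutes with passage to ${\rm gr}_k^P$ and with the exhaustive union $\bigcup_kP_k$, so that vanishing of $R^qf_*({\rm gr}_k^P H_{\rm zar})$ for $q>N$ uniformly in $k$ transfers to a filtered quasi-isomorphism on the total complex rather than only on each $P_k$. The bookkeeping identity $k+2m-2j=\sharp\ul{\mu}+m$ coming from $(\ref{prop:ngr})$, together with the finiteness of the cohomological dimension of $\os{\circ}{f}$ guaranteed by the quasi-compactness and quasi-separatedness hypotheses, is exactly what makes this transfer legitimate.
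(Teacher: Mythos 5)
Your proof is correct and follows essentially the same route as the paper, which simply invokes the Berthelot--Ogus boundedness theorem for $Rf_{\os{\circ}{X}{}^{(l)}_{T_0}/\os{\circ}{T}*}$ on each stratum and concludes via the spectral sequence (\ref{ali:spaorh}). Your extra steps --- the uniformity in $k$ via the identity $k+2m-2j=\sharp\ul{\mu}+m$ and the filtered canonical truncation producing an actual bounded filtered representative --- are exactly the details the paper's one-line argument leaves implicit, and they check out.
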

\begin{proof}
By  \cite[7.6 Theorem]{bob},  
$Rf_{\os{\circ}{X}{}^{(l)}_{m,T_0}/\os{\circ}{T}*}
(E_{\os{\circ}{X}{}^{(l)}_{m,T_0}/\os{\circ}{T}})$ 
$(0\leq m \leq N, l\in {\mab N})$ is bounded. 
Hence 
$Rf_*((H_{\rm zar}(X_{\os{\circ}{T}_0}/S(T)^{\nat},E),P))$ 
is bounded by the spectral sequence (\ref{ali:spaorh}). 
\end{proof}

\begin{theo}[{\bf Log base change theorem of 
$(H_{\rm zar},P)$}]\label{theo:bccange} 
Let the assumptions be as in {\rm (\ref{prop:bdccd})}.  
Let $(T',{\cal J}',\del')$ be another log PD-enlargement over $S$. 
Assume that  ${\cal J}'$ is quasi-coherent. 
Set $T'_0:=\ul{\rm Spec}^{\log}_{T'}({\cal O}_{T'}/{\cal J}')$. 
Let $v\col (S(T')^{\nat},{\cal J}',\del') \lo (S(T)^{\nat},{\cal J},\del)$ be 
a morphism of fine log PD-schemes. 
Let 
$f' \col X_{\os{\circ}{T}{}'_0}=X\times_{S}S_{\os{\circ}{T}{}'_0} \lo S(T')^{\nat}$ 
be the base change morphism of $f$  
by the morphism $S(T')^{\nat}\lo S(T)^{\nat}$.  
Let $q \col X_{\os{\circ}{T}{}'_0} \lo X_{\os{\circ}{T}_0}$ 
be the induced morphism by $u$. 
Then there exists 
the following canonical filtered isomorphism
\begin{equation*}
Lu^*Rf_*((H_{\rm zar}(X_{\os{\circ}{T}_0}/S(T)^{\nat},E),P)) 
\os{\sim}{\lo} Rf'_*((H_{\rm zar}(X_{\os{\circ}{T}{}'_0}/S(T')^{\nat},
\os{\circ}{q}{}^{*}_{\rm crys}(E)),P))
\tag{9.2.1}\label{eqn:blucpw}
\end{equation*}
in ${\rm DF}(f'{}^{-1}({\cal O}_{T'}))$. 
\end{theo}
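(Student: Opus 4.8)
The plan is to deduce the filtered base change isomorphism from the classical (non-log) crystalline base change theorem applied to the smooth strata $\os{\circ}{X}_{\ul{\lam}\cup \ul{\mu},T_0}$, by passing to the graded pieces of the filtration $P$ and then lifting the result to the whole filtered complex by a dévissage. Throughout I would use the boundedness supplied by (\ref{prop:bdccd}): since $Rf_*$ of the filtered complex is represented by a bounded filtered complex of ${\cal O}_T$-modules, and since $P$ is bounded below and exhaustive (\ref{prop:lcz}, \ref{rema:ex}), the filtered isomorphism will be detectable on the graded quotients ${\rm gr}^P_k$, each of which contributes to only finitely many cohomology degrees.

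First I would construct the canonical base change morphism (\ref{eqn:blucpw}). Choosing a \v{C}ech diagram $X_{\os{\circ}{T}_0\bul}$ together with a simplicial immersion $X_{\os{\circ}{T}_0\bul}\os{\sus}{\lo} \ol{\cal P}_{\bul}$ over $\ol{S(T)^{\nat}}$, I would take the base change $\ol{\cal P}{}'_{\bul}:=\ol{\cal P}_{\bul}\times_{\ol{S(T)^{\nat}}}\ol{S(T')^{\nat}}$ to obtain compatible immersion data over $\ol{S(T')^{\nat}}$. By the base change of exactifications (\ref{prop:xpls}) one has $\ol{\cal P}{}'^{\rm ex}_{\bul}=\ol{\cal P}{}^{\rm ex}_{\bul}\times_{\ol{S(T)^{\nat}}}\ol{S(T')^{\nat}}$, so the relative log de Rham complexes and their PD-Hirsch extensions are compatible with $u^*$ (the sheaf $L_{S(T')^{\nat}}$ and the generators $U_{S(T')^{\nat}}$ being pulled back, along $v$, from those over $T$). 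This yields a natural morphism of cosimplicial filtered de Rham complexes, hence after applying $R\pi_{{\rm zar}*}$, $Rf_*$ and $Lu^*$ the morphism (\ref{eqn:blucpw}); its independence of all choices follows from (\ref{theo:indp}).

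Next I would check that (\ref{eqn:blucpw}) is a filtered isomorphism by verifying it on ${\rm gr}^P_k$ for each $k$. By (\ref{prop:ngr}) and (\ref{ali:vspgrc}) the graded piece ${\rm gr}^P_k H_{\rm zar}(X_{\os{\circ}{T}_0}/\os{\circ}{T},E)$ is a locally finite direct sum of complexes of the form $a_{\ul{\lam}\cup \ul{\mu},\os{\circ}{T}_0*}Ru_{\os{\circ}{X}_{\ul{\lam}\cup \ul{\mu},T}/\os{\circ}{T}*}(E_{\os{\circ}{X}_{\ul{\lam}\cup \ul{\mu},T}/\os{\circ}{T}}\otimes_{\mab Z}\vp_{{\rm crys},\ul{\mu}})$, up to shift and Tate twist. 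These involve only the ordinary crystalline cohomology of the smooth schemes $\os{\circ}{X}_{\ul{\lam}\cup \ul{\mu},T_0}$ over $\os{\circ}{T}$, to which the classical crystalline base change theorem (\cite{bob}, \cite{od}) applies, $E$ being a flat quasi-coherent crystal and $\os{\circ}{f}$ quasi-compact and quasi-separated; the orientation sheaves $\vp_{{\rm crys},\ul{\mu}}$, the shifts and the crystalline Tate twists pull back trivially. Hence $Lu^*Rf_*({\rm gr}^P_k H_{\rm zar})\os{\sim}{\lo} Rf'_*({\rm gr}^P_k H_{\rm zar})$ for every $k$.

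The hard part will be to organize the two compatibilities needed to glue the graded statement into a filtered one, namely that $Lu^*$ commutes with the formation of ${\rm gr}^P_k$ and with $Rf_*$. For this I would invoke the local freeness of $P_k$, ${\rm gr}^P_k$ and the ambient complexes over ${\cal O}_T$ proved in (\ref{prop:flt}): this makes the filtration locally split by free ${\cal O}_T$-modules, so that $Lu^*$ coincides with the naive pullback $u^*$ and is strict for $P$, whence ${\rm gr}^P_k(Lu^*(-))=u^*({\rm gr}^P_k(-))$ and $u^*$ commutes with the locally finite direct sum in (\ref{ali:grc}) (here the quasi-compactness of $\os{\circ}{X}_{T_0}$ guarantees that the sum is locally finite). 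Combined with the boundedness of the total complex (\ref{prop:bdccd}) and the bounded-belowness and exhaustiveness of $P$, a finite induction on the length of the induced filtration on each bounded cohomology sheaf then upgrades the graded isomorphisms to the filtered isomorphism (\ref{eqn:blucpw}). The only genuinely delicate point is confirming that the flatness/tor-independence hypotheses under which the non-log crystalline base change holds are met simultaneously with the strictness of $Lu^*$ for $P$; both, however, are furnished by the local freeness in (\ref{prop:flt}), so the argument reduces to the finite, flat situation already handled on the strata.
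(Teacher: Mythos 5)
Your proposal is correct and follows essentially the same route as the paper: the canonical morphism is obtained from the base-changed embedding system $\ol{\cal P}_{\bul}\times_{\ol{S(T)^{\nat}}}\ol{S(T')^{\nat}}$ (the paper packages this as an application of the contravariant functoriality (\ref{theo:ccm}) to the morphism $q$, which satisfies (8.1.6) and (8.1.7), followed by $Rf_*$ and adjunction, using the boundedness of (\ref{prop:bdccd}) to make $Lu^*$ well defined), and the isomorphism is then checked on ${\rm gr}^P_k$ by reducing to the classical crystalline base change theorem on the smooth strata and performing the dévissage you describe. The paper delegates this last step to the analogous argument for $(A_{\rm zar},P)$ in \cite{nb}, so your write-up just makes explicit what is cited there.
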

\begin{proof}  
Let the notations be as in \S\ref{sec:psc}. 
Set 
$\ol{\cal P}_{\bul,\ol{S(T')^{\nat}}}
:=\ol{\cal P}_{\bul}\times_{\ol{S(T)^{\nat}}}\ol{S(T')^{\nat}}$. 
Let   
$\ol{\mathfrak D}{}'_{\bul}$ 
be the log PD-envelope of the immersion 
$X_{\os{\circ}{T}{}'_0}\os{\sus}{\lo} \ol{\cal P}_{\bul,\ol{S(T')^{\nat}}}$ 
over $(\os{\circ}{T}{}',{\cal J}',\del')$. 
Then we have the natural morphisms 
$\ol{\cal P}_{\bul,\ol{S(T')^{\nat}}}\lo \ol{\cal P}_{\bul}$ 
and 
$\ol{\mathfrak D}{}'_{\bul}  \lo \ol{\mathfrak D}_{\bul}$.  
We also have the identity morphism 
${\rm id}\col \os{\circ}{q}{}^{*}_{\rm crys}(E)
\lo \os{\circ}{q}{}^{*}_{\rm crys}(E)$. 
Obviously the morphism 
$q\col X_{\os{\circ}{T}{}'_0}\lo X_{\os{\circ}{T}{}_0}$ satisfies 
the conditions (8.1.6).  
%and (8.2.6). 
Hence we have the following natural morphism  
\begin{equation*} 
(H_{\rm zar}(X_{\os{\circ}{T}_0}/S(T)^{\nat},E),P) 
\lo 
Rq_*((H_{\rm zar}(X_{\os{\circ}{T}{}'_0}/S(T')^{\nat},E),P))
\tag{9.2.2}\label{eqn:bcxa}
\end{equation*} 
by (\ref{theo:ccm}).
By applying $Rf_*$ to (\ref{eqn:bcxa}) and using 
the adjoint property of $L$ and $R$ (\cite[(1.2.2)]{nh2}), 
we have the natural morphism (\ref{eqn:blucpw}). 
Here we have used the boundedness in 
(\ref{prop:bdccd}) for the well-definedness of $Lu^*$. 
\par
The rest of the proof is the same as that of \cite[(1.6.2)]{nb}. 
\end{proof} 

\par 
Let $\os{\circ}{Y}$ be a smooth scheme over $\os{\circ}{T}$. 
Endow $\os{\circ}{Y}$ with the inverse image of $M_{S_{\os{\circ}{T}}}$ 
and let $Y$ be the resulting log scheme. 
Let ${\cal Y}$ be a log smooth scheme defined in (\ref{coro:connfil}) below. 
Let $D_{{\cal Y}/S(T)^{\nat}}(1)$ be the log PD-envelope of the immersion 
${\cal Y}\os{\sus}{\lo} {\cal Y}\times_{S(T)^{\nat}}{\cal Y}$ over $(S(T)^{\nat},{\cal J},\del)$. 
As in \cite[V]{bb} and \cite[\S7]{bob}, we have the following two corollaries 
(cf.~\cite[(2.10.5), (2.10.7)]{nh2}) by 
using (\ref{ali:eeqie}), (\ref{theo:bccange}) and 
a fact that 
$p_i \col \os{\circ}{D}_{{\cal Y}/S(T)}(1)\lo \os{\circ}{\cal Y}$ 
$(i=1,2)$ 
is flat (\cite[(6.5)]{klog1}): 

\begin{coro}\label{coro:connfil}
Let $g\col X_{\os{\circ}{T}_0} \lo Y$ be an SNCL scheme. 
Assume that $Y$ has a log smooth lift ${\cal Y}$ over $S(T)^{\nat}$. 
Let $q$ be an integer.
Let $g\col X_{\os{\circ}{T}_0}\lo {\cal Y}$ 
be the structural morphism. 
Then there exists  a quasi-nilpotent integrable connection 
\begin{align*}
&P_kR^qg_{X_{\os{\circ}{T}_0}/{\cal Y}*}
(\eps^*_{X_{\os{\circ}{T}_0}/S(T)^{\nat}}(E))
\os{\nabla_k}{\lo} \tag{9.3.1}\\
& P_kR^qg_{X_{\os{\circ}{T}_0}/{\cal Y}*}
(\eps^*_{X_{\os{\circ}{T}_0}/S(T)^{\nat}}(E))
{\otimes}_{{\cal O}_{\cal Y}}{\Om}_{{\cal Y}/S(T)^{\nat}}^1
\end{align*}
making the following diagram commutative 
for any two nonnegative integers $k\leq l:$
\begin{equation*}
\begin{CD}
P_kR^qg_{X_{\os{\circ}{T}_0}/{\cal Y}*}
(\eps^*_{X_{\os{\circ}{T}_0}/S(T)^{\nat}}(E))
@>{\nabla_k}>> \\
@V{\bigcap}VV  \\
P_lR^qg_{X_{\os{\circ}{T}_0}/{\cal Y}*}
(\eps^*_{X_{\os{\circ}{T}_0}/S(T)^{\nat}}(E))
@>{\nabla_l}>>
\end{CD}
\tag{9.3.2}
\end{equation*}
\begin{equation*}
\begin{CD}
P_kR^qg_{X_{\os{\circ}{T}_0}/{\cal Y}*}
(\eps^*_{X_{\os{\circ}{T}_0}/S(T)^{\nat}}(E^{\bul \leq N}))
{\otimes}_{{\cal O}_{\cal Y}}{\Om}_{{\cal Y}/S(T)^{\nat}}^1\\
@V{\bigcap}VV \\ 
P_lR^qg_{X_{\os{\circ}{T}_0}/{\cal Y}*}
(\eps^*_{X_{\os{\circ}{T}_0}/S(T)^{\nat}}(E^{\bul \leq N})) 
{\otimes}_{{\cal O}_{\cal Y}}{\Om}_{{\cal Y}/S(T)^{\nat}}^1.
\end{CD} 
\end{equation*}
\end{coro}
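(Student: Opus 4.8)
The plan is to produce the connection $\nabla_k$ by the crystalline stratification (descent) method of Berthelot--Ogus, carried out compatibly with the weight filtration $P$. First I would form the log PD-envelope $D_{{\cal Y}/S(T)^{\nat}}(1)$ of the diagonal immersion ${\cal Y}\os{\sus}{\lo}{\cal Y}\times_{S(T)^{\nat}}{\cal Y}$, together with its two projections $p_1,p_2\col D_{{\cal Y}/S(T)^{\nat}}(1)\lo {\cal Y}$. Base-changing the structural morphism $g\col X_{\os{\circ}{T}_0}\lo {\cal Y}$ along each $p_i$ puts me exactly in the situation of the filtered base change theorem (\ref{theo:bccange}), with ${\cal Y}$ in the role of the base and $D_{{\cal Y}/S(T)^{\nat}}(1)$ in the role of $S(T')^{\nat}$, whose applicability is guaranteed by the boundedness of $Rg_*((H_{\rm zar},P))$ established in (\ref{prop:bdccd}).

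The key step is then that (\ref{theo:bccange}) supplies, for each $i=1,2$ and each $k$, a canonical filtered isomorphism between $p_i^{*}\bigl(P_kR^qg_{X_{\os{\circ}{T}_0}/{\cal Y}*}(\eps^*_{X_{\os{\circ}{T}_0}/S(T)^{\nat}}(E))\bigr)$ and the corresponding filtered cohomology sheaf of the pulled-back family over $D_{{\cal Y}/S(T)^{\nat}}(1)$. Because the two projections $p_i\col \os{\circ}{D}_{{\cal Y}/S(T)^{\nat}}(1)\lo \os{\circ}{\cal Y}$ are flat (\cite[(6.5)]{klog1}), the derived pullbacks $Lp_i^{*}$ coincide with $p_i^{*}$ on these sheaves, so the base-change isomorphisms are honest isomorphisms of ${\cal O}_{D_{{\cal Y}/S(T)^{\nat}}(1)}$-modules rather than merely isomorphisms in a derived category. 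Composing the isomorphism for $i=2$ with the inverse of the one for $i=1$ yields a filtered HPD-stratification
$$\vare_k\col p_2^{*}\bigl(P_kR^qg_{X_{\os{\circ}{T}_0}/{\cal Y}*}(\cdots)\bigr)\os{\sim}{\lo} p_1^{*}\bigl(P_kR^qg_{X_{\os{\circ}{T}_0}/{\cal Y}*}(\cdots)\bigr)$$
on each filtered piece, and, since all the $\vare_k$ are induced from the single filtered stratification on $Rg_*((H_{\rm zar},P))$ via (\ref{theo:bccange}) together with (\ref{prop:tefc}), they are automatically compatible with the inclusions $P_k\subset P_l$.

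Finally I would invoke the standard equivalence between HPD-stratifications on a module and quasi-nilpotent integrable connections (as in \cite{bb}, \cite[\S7]{bob}, cf.~\cite[(2.10.5), (2.10.7)]{nh2}): the stratification $\vare_k$ produces the connection
$$\nabla_k\col P_kR^qg_{X_{\os{\circ}{T}_0}/{\cal Y}*}(\eps^*_{X_{\os{\circ}{T}_0}/S(T)^{\nat}}(E))\lo P_kR^qg_{X_{\os{\circ}{T}_0}/{\cal Y}*}(\eps^*_{X_{\os{\circ}{T}_0}/S(T)^{\nat}}(E))\otimes_{{\cal O}_{\cal Y}}\Om^1_{{\cal Y}/S(T)^{\nat}},$$
with integrability and quasi-nilpotence read off from the cocycle condition on $D_{{\cal Y}/S(T)^{\nat}}(2)$ and the divided-power structure; the commutativity of the diagram (9.3.2) is immediate because $\nabla_k$ and $\nabla_l$ are restrictions of one and the same stratification, so $P_k\hookrightarrow P_l$ is horizontal. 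The main obstacle is to verify that the stratification genuinely respects the filtration at the level of each individual sheaf $P_kR^qg_*$ and not merely on the total cohomology; this is precisely where the filtered (as opposed to merely underlying) nature of the base-change isomorphism in (\ref{theo:bccange}), combined with (\ref{prop:tefc}), is indispensable.
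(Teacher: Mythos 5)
Your proposal is correct and follows essentially the same route as the paper: the paper's proof is precisely the Berthelot–Ogus stratification argument, deducing the connection from the filtered base change theorem (\ref{theo:bccange}) applied to the two projections of the log PD-envelope of the diagonal, using (\ref{prop:tefc}) and the flatness of $p_i\col \os{\circ}{D}_{{\cal Y}/S(T)}(1)\lo \os{\circ}{\cal Y}$ exactly as you describe. No gaps.
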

\begin{proof} 
This follows from (\ref{theo:bccange}) as in \cite[\S7]{bob}.
\end{proof} 

\begin{coro}\label{coro:fctd}
Let the notations and the assumptions be as in $(\ref{prop:bdccd})$. 
Then 
$$Rf_*
(P_kH_{\rm zar}(X_{\os{\circ}{T}_0}/S(T)^{\nat},E)) \quad (k \in{\mab N})$$
has finite tor-dimension. 
Moreover, if $\os{\circ}{T}$ is noetherian and 
if $\os{\circ}{f}$ is proper,
then $Rf_*(P_kH_{\rm zar}(X_{\os{\circ}{T}_0}/S(T)^{\nat},E))$ 
is a perfect complex of ${\cal O}_T$-modules.
\end{coro}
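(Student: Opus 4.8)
The plan is to reduce the assertion, by a finite dévissage along the weight filtration, to the corresponding finiteness statements for the crystalline cohomology of the smooth (and, in the second part, smooth proper) schemes $\os{\circ}{X}_{\ul{\lam}\cup \ul{\mu},T_0}$ appearing in the graded pieces, which are part of the circle of results in \cite{bob} already used for the boundedness in (\ref{prop:bdccd}).

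First I would exploit that, since $\os{\circ}{X}$ is quasi-compact (the standing assumption of this section), the filtration $P$ on $H_{\rm zar}(X_{\os{\circ}{T}_0}/S(T)^{\nat},E)$ is bounded below by (\ref{prop:lcz}): there is an integer $N$ with $P_{-N}H_{\rm zar}(X_{\os{\circ}{T}_0}/S(T)^{\nat},E)=0$. Hence, for fixed $k$, the subobject $P_kH_{\rm zar}(X_{\os{\circ}{T}_0}/S(T)^{\nat},E)$ carries the \emph{finite} filtration
$$0=P_{-N}\subset P_{-N+1}\subset \cdots \subset P_{k-1}\subset P_k$$
whose successive quotients are the graded complexes ${\rm gr}_j^PH_{\rm zar}$ for $-N<j\leq k$. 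Applying $Rf_*$ turns this into finitely many distinguished triangles expressing $Rf_*(P_kH_{\rm zar})$ as an iterated cone of the finitely many $Rf_*({\rm gr}_j^PH_{\rm zar})$. Since both ``finite tor-dimension'' and ``perfect'' cut out thick triangulated subcategories of $D^b({\cal O}_T)$ (stability under shifts, finite direct sums and cones), it suffices to prove each statement after replacing $P_kH_{\rm zar}$ by a single graded piece ${\rm gr}_j^PH_{\rm zar}$.

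Next I would use the Poincar\'{e}-residue description (\ref{eqn:ee}) of the graded pieces, which identifies ${\rm gr}_j^PH_{\rm zar}$, up to shifts and $D$-twists, with a direct sum of the complexes $a_{\ul{\lam}\cup \ul{\mu},\os{\circ}{T}_0*}Ru_{\os{\circ}{X}_{\ul{\lam}\cup \ul{\mu},T}/\os{\circ}{T}*}(E_{\os{\circ}{X}_{\ul{\lam}\cup \ul{\mu},T}/\os{\circ}{T}}\otimes_{\mab Z}\vp_{{\rm crys},\ul{\mu}}(\os{\circ}{X}_T/\os{\circ}{T}))$; as $\os{\circ}{X}$ is quasi-compact there are only finitely many indices $(m,\ul{\lam},\ul{\mu},\ldots)$ contributing, so this is a finite direct sum. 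The $D$-twist leaves the underlying object of $D^+({\cal O}_T)$ unchanged (by definition it only rescales transition morphisms), so it is irrelevant here. Composing $f$ with the closed immersions $a_{\ul{\lam}\cup \ul{\mu},\os{\circ}{T}_0}$ then exhibits $Rf_*({\rm gr}_j^PH_{\rm zar})$ as a finite direct sum (up to shift) of the crystalline cohomologies $R\os{\circ}{f}_{\os{\circ}{X}_{\ul{\lam}\cup \ul{\mu},T_0}/\os{\circ}{T}*}$ of the schemes $\os{\circ}{X}_{\ul{\lam}\cup \ul{\mu},T_0}$ over $\os{\circ}{T}$ with coefficients in the flat crystal $E|_{\os{\circ}{X}_{\ul{\lam}\cup \ul{\mu},T_0}}\otimes_{\mab Z}\vp_{{\rm crys},\ul{\mu}}$ (the orientation sheaf being an invertible ${\mab Z}$-sheaf). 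Each intersection $\os{\circ}{X}_{\ul{\lam}\cup \ul{\mu},T_0}$ is smooth over $\os{\circ}{T}_0$, and when $\os{\circ}{f}$ is proper it is moreover proper, being closed in $\os{\circ}{X}_{T_0}$.

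Finally I would invoke \cite{bob}: the finite tor-dimension of the crystalline cohomology of a smooth scheme with flat crystal coefficients gives finite tor-dimension for every such summand, hence for $Rf_*(P_kH_{\rm zar})$; and, when $\os{\circ}{T}$ is noetherian and $\os{\circ}{f}$ is proper, the perfectness theorem of \cite{bob} for smooth proper schemes gives that each summand, and therefore $Rf_*(P_kH_{\rm zar})$, is perfect. The only point that genuinely needs care is the dévissage bookkeeping of the second paragraph---that the filtration really is finite (resting on the boundedness (\ref{prop:lcz}) together with the quasi-compactness of $\os{\circ}{X}$) and that the number of graded summands is finite---after which the substantive content is entirely carried by the finiteness theorems of \cite{bob}.
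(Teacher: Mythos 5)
Your proof is correct and follows essentially the route the paper intends: the paper gives no written proof of this corollary beyond the pointer ``as in \cite[V]{bb} and \cite[\S7]{bob}'', and your argument---finite dévissage along $P$ (using the boundedness from (\ref{prop:lcz}) and quasi-compactness of $\os{\circ}{X}$), identification of the graded pieces via the Poincar\'{e} residue isomorphism with finite direct sums of crystalline cohomologies of the smooth (resp.\ smooth proper) strata, and the thick-subcategory stability of finite tor-dimension and perfectness---is precisely the Berthelot--Ogus reduction being invoked. The only detail worth recording explicitly is the one you already flag, namely that the sums over $m$, $\ul{\lam}$, $\ul{\mu}$ and $j$ in the graded pieces are finite because $\os{\circ}{X}$ is quasi-compact (so $X^{(m)}=\emptyset$ for $m\gg 0$) and $j\leq (k+m)/2$.
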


Using \cite[(2.10.10)]{nh2}, 
we have the following corollary 
(cf.~\cite[(2.10.11)]{nh2}): 

\begin{coro}\label{coro:filpcerf}
Let the notations 
and the assumptions be as in $(\ref{coro:fctd})$.
Then the filtered complex 
$Rf_*((H_{\rm zar}(X_{\os{\circ}{T}_0}/S(T)^{\nat},E),P))$ 
is a {\it filtered perfect}
complex of ${\cal O}_T$-modules, that is, 
locally on $T_{\rm zar}$, filteredly quasi-isomorphic to 
a filtered strictly perfect complex {\rm (\cite[(2.10.8)]{nh2})}.
\end{coro}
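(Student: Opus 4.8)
The plan is to deduce the statement from the filtered perfectness criterion \cite[(2.10.10)]{nh2}, which I read as reducing the problem to three checkable assertions about $Rf_*((A_{\rm zar}(X_{\os{\circ}{T}_0}/S(T)^{\nat},E),P))$: that the weight filtration $P$ is finite, that the underlying complex $Rf_*(A_{\rm zar}(X_{\os{\circ}{T}_0}/S(T)^{\nat},E))$ is a perfect complex of ${\cal O}_T$-modules, and that every graded piece ${\rm gr}^P_kRf_*(A_{\rm zar}(X_{\os{\circ}{T}_0}/S(T)^{\nat},E))$ is perfect. Since being a filtered perfect complex is a condition that is verified locally on $T_{\rm zar}$, and since by (\ref{coro:fctd}) we work with $\os{\circ}{T}$ noetherian and $\os{\circ}{f}$ proper, I may assume $\os{\circ}{T}$ is local (or at least affine noetherian) throughout.

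First I would establish finiteness of the filtration. Because $\os{\circ}{X}$ is an SNC scheme, only finitely many strata $\os{\circ}{X}^{(m)}_{T_0}$ are nonempty and the preweight filtration on each factor has finitely many nonzero steps (cf. (\ref{prop:flt})); hence the weight filtration $P$ on $A_{\rm zar}(X_{\os{\circ}{T}_0}/S(T)^{\nat},E)$ is finite, i.e. ${\rm gr}^P_k=0$ for $|k|\gg 0$. This is precisely the point that distinguishes $A_{\rm zar}$ from $(H_{\rm zar},P)$, whose filtration is only bounded below and exhaustive. As $P$ is a filtration by subcomplexes with locally free ${\cal O}_T$-subquotients, $Rf_*$ commutes with ${\rm gr}^P$, so the induced filtration on $Rf_*(A_{\rm zar}(X_{\os{\circ}{T}_0}/S(T)^{\nat},E))$ remains finite.

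Next, for the graded pieces I would invoke the Poincaré residue isomorphism (the $A_{\rm zar}$-analogue of (\ref{prop:repmf}) and (\ref{prop:ngr})), which identifies ${\rm gr}^P_kA_{\rm zar}(X_{\os{\circ}{T}_0}/S(T)^{\nat},E)$, up to shift and $D$-twist, with a finite direct sum of crystalline complexes $Ru_{\os{\circ}{X}_{\ul{\lam}\cup \ul{\mu},T_0}/\os{\circ}{T}*}(E|_{\os{\circ}{X}_{\ul{\lam}\cup \ul{\mu}}}\otimes_{\mab Z}\vp_{{\rm crys},\ul{\mu}})$ on the smooth proper schemes $\os{\circ}{X}_{\ul{\lam}\cup \ul{\mu},T_0}/\os{\circ}{T}$. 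Since $\os{\circ}{T}$ is noetherian and each $\os{\circ}{X}_{\ul{\lam}\cup \ul{\mu}}\to \os{\circ}{T}$ is proper and smooth, \cite[7.6 Theorem]{bob} shows that $Rf_*$ of each such crystalline complex is a perfect complex of ${\cal O}_T$-modules; the $D$-twist leaves the underlying module unchanged, so perfectness is preserved, and a finite direct sum of perfect complexes is perfect. This yields assertion (iii). Assertion (ii) then follows from finiteness of $P$ together with (iii): in a finite filtration the total complex is a finite iterated extension of its graded pieces, and perfect complexes are stable under cones and shifts (cf. (\ref{coro:fctd})).

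Finally I would feed these three facts into \cite[(2.10.10)]{nh2} to conclude that, locally on $\os{\circ}{T}$, $Rf_*((A_{\rm zar}(X_{\os{\circ}{T}_0}/S(T)^{\nat},E),P))$ is filteredly quasi-isomorphic to a filtered strictly perfect complex in the sense of \cite[(2.10.8)]{nh2}. The main obstacle is exactly this last step: the criterion demands an actual filtered free model in which the filtration splits degreewise, so that the underlying complex and all graded pieces become strictly perfect simultaneously. Over a local ring I would build such a model by descending induction on the filtration level, at each stage splitting the short exact sequence $0 \lo P_{k-1} \lo P_k \lo {\rm gr}^P_k \lo 0$ of perfect complexes into a filtered free resolution; this is where finiteness of $P$ is indispensable, since for the non-finite filtration of $(H_{\rm zar},P)$ the induction would never terminate — which is why the present result is formulated for $A_{\rm zar}$.
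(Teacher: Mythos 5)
Your argument is correct and follows essentially the same route as the paper: the paper's entire proof is a one-line appeal to the perfectness of $Rf_*(P_k(\cdot))$ recorded in (\ref{coro:fctd}) together with the filtered-perfectness criterion \cite[(2.10.10)]{nh2}, and your verification of the hypotheses (finiteness of the filtration, perfectness of the graded pieces via the Poincar\'{e} residue isomorphism and \cite[7.6 Theorem]{bob}, hence perfectness of the whole by iterated extensions) is exactly the content that (\ref{coro:fctd}) and its antecedents delegate. If anything your treatment is more careful than the paper's, since you work directly with $A_{\rm zar}$, whose weight filtration is genuinely finite, whereas the cited input (\ref{coro:fctd}) is stated for $H_{\rm zar}$, whose filtration is only bounded below and exhaustive.
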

\begin{proof}
(\ref{coro:filpcerf}) 
immediately follows 
from (\ref{coro:fctd}) and 
\cite[(2.10.10)]{nh2}.
\end{proof}

\section{Monodromy operators of PD-Hirsch pre-weight-filtered log crystalline complexes}\label{sec:mod}
In this section we first recall the monodromy operators defined 
in \cite{nb}, which is an immediate generalization of 
those in \cite{hyp} and \cite{hk}.
\par 
Let $S$, $((T,{\cal J},\del),z)$ and $T_0$ be as in \S\ref{sec:ldfc}.  
Let $Y$ be a log smooth scheme over $S$. 
%with structural morphism $g \col Y \lo S$. 
Let $Y'_{\os{\circ}{T}_0}$ be the disjoint union of the member of 
an affine open covering of $Y_{\os{\circ}{T}_0}$. 
By abuse of notation, we also denote by $g$ 
the composite morphism 
$g \col Y_{\os{\circ}{T}_0} \lo S_{\os{\circ}{T}_0} \lo S(T)^{\nat}$.  
%For $U=S$ or $\os{\circ}{S}$, let 
%\begin{equation*} 
%\eps_{Y/U}\col 
%((Y/U)_{\rm crys},
%{\cal O}_{Y/U})\lo 
%((\os{\circ}{Y}/\os{\circ}{T})_{\rm crys},
%{\cal O}_{\os{\circ}{Y}/\os{\circ}{T}}) 
%\end{equation*} 
%be the forgetting log morphism. 
Let $Y_{\os{\circ}{T}_0\bul}$ be the \v{C}ech diagram of 
$Y'_{\os{\circ}{T}_0}$ over $Y_{\os{\circ}{T}_0}$: 
$Y_{\os{\circ}{T}_0,n}={\rm cosk}_0^{Y_{\os{\circ}{T}_0}}(Y'_{\os{\circ}{T}_0})_n$ $(n\in {\mab N})$. 
Let $g_{\bul}\col  Y_{\os{\circ}{T}_0\bul}\lo S(T)^{\nat}$ 
be the structural morphism. 
For $U=S(T)^{\nat}$ or $\os{\circ}{T}$, 
let 
$$\eps_{Y_{\os{\circ}{T}_0\bul}/U}\col 
((Y_{\os{\circ}{T}_0\bul}/U)_{\rm crys},
{\cal O}_{Y_{\os{\circ}{T}_0\bul}/U})\lo 
((\os{\circ}{Y}_{T_0,\bul}/\os{\circ}{T})_{\rm crys},
{\cal O}_{\os{\circ}{Y}_{T_0,\bul}/\os{\circ}{T}})$$  
be the morphism forgetting the log structures of $Y_{\os{\circ}{T}_0\bul}$ and $U$. 
Let 
$Y_{\os{\circ}{T}_0\bul} \os{\sus}{\lo} \ol{\cal Q}_{\bul}$ 
be an immersion into a log smooth scheme over $\ol{S(T)^{\nat}}$. 
Set ${\cal Q}_{\bul}
:=\ol{\cal Q}_{\bul}\times_{\ol{S(T)^{\nat}}}S(T)^{\nat}$.  
Let $\ol{\mathfrak E}_{\bul}$ 
be the log PD-envelope of 
the immersion $Y_{\os{\circ}{T}_0\bul} \os{\sus}{\lo} \ol{{\cal Q}}_{\bul}$ over 
$(\os{\circ}{T},{\cal J},\del)$. 
Set ${\mathfrak E}_{\bul}
=\ol{\mathfrak E}_{\bul}
\times_{{\mathfrak D}(\ol{S(T)^{\nat}})}S(T)^{\nat}$.  
Let $\theta_{{\cal Q}_{\bul}}\in 
{\Om}^1_{{{\cal Q}}_{\bul}/\os{\circ}{T}}$ 
be the pull-back of $\theta =d\log t\in  
{\Om}^1_{S(T)^{\nat}/\os{\circ}{T}}$ by the structural morphism 
${\cal Q}_{\bul}\lo S(T)^{\nat}$.  
Let $\ol{F}$ be a flat quasi-coherent crystal of 
${\cal O}_{Y_{\os{\circ}{T}_0}/\os{\circ}{T}}$-modules. 
Let $\ol{F}{}^{\bul}$ be the crystal of 
${\cal O}_{Y_{\os{\circ}{T}_0\bul}/\os{\circ}{T}}$-modules obtained by $\ol{F}$. 
Let $(\ol{\cal F}{}^{\bul},\ol{\nabla})$ be the quasi-coherent 
${\cal O}_{\ol{\mathfrak E}_{\bul}}$-module 
with integrable connection corresponding to 
$\ol{F}{}^{\bul}$. 
Set 
$({\cal F}^{\bul},\nabla)=
(\ol{\cal F}{}^{\bul},\ol{\nabla})
\otimes_{{\cal O}_{\ol{\mathfrak E}_{\bul}}}
{\cal O}_{{\mathfrak E}_{\bul}}$.   
Let 
\begin{equation*} 
\nabla \col {\cal F}^{\bul} 
\lo {\cal F}^{\bul}\otimes_{{\cal O}_{{{\cal Q}}_{\bul}}}
{\Om}^1_{{\cal Q}_{\bul}/\os{\circ}{T}} 
\tag{10.0.1}\label{eqn:nidfopltd}
\end{equation*}  
be the induced connection by $\ol{\nabla}$. 
Since ${\Om}^1_{{\cal Q}_{\bul}/S(T)^{\nat}}$ 
is a quotient of 
${\Om}^1_{{\cal Q}_{\bul}/\os{\circ}{T}}$, 
we also have the induced connection 
\begin{equation*} 
\nabla_{/S(T)^{\nat}} 
\col {\cal F}^{\bul} \lo {\cal F}^{\bul}
\otimes_{{\cal O}_{{{\cal Q}}_{\bul}}}
{\Om}^1_{{\cal Q}_{\bul}/S(T)^{\nat}}  
\tag{10.0.2}\label{eqn:nidfqtopltd}
\end{equation*}   
by $\nabla$.  
The object 
$({\cal F}^{\bul},\nabla_{/S(T)^{\nat}})$ 
corresponds to the log crystal 
$F:=\eps_{Y_{\os{\circ}{T}_0\bul}/S(T)^{\nat}}^*
(\ol{F}{}^{\bul})$ of ${\cal O}_{Y_{\os{\circ}{T}_0\bul}/S(T)^{\nat}}$-modules. 

\par
In \cite{nb} we have proved the following whose proof is not difficult: 

\begin{prop}[{\bf \cite[(1.7.22)]{nb}}]\label{prop:mce}
The following sequence 
\begin{align*} 
0 & \lo {\cal F}^{\bul}
\otimes_{{\cal O}_{{{\cal Q}}_{\bul}}}
{\Om}^{\bul}_{{{\cal Q}}_{\bul}/S(T)^{\nat}}[-1] 
\os{\theta_{{\cal Q}_{{\bul}} \wedge}}{\lo} 
{\cal F}^{\bul}
\otimes_{{\cal O}_{{{\cal Q}}_{\bul}}}
{\Om}^{\bul}_{{{\cal Q}}_{\bul}/\os{\circ}{T}} 
\lo {\cal F}^{\bul}
\otimes_{{\cal O}_{{{\cal Q}}_{\bul}}}
{\Om}^{\bul}_{{{\cal Q}}_{\bul}/S(T)^{\nat}} \lo 0
\tag{10.1.1}\label{eqn:gsflxd}\\ 
\end{align*} 
is exact. 
\end{prop}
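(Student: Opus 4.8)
The plan is to establish the exactness locally on $\os{\circ}{\cal Q}_{\bul}$ and separately in each simplicial degree, thereby reducing the statement to the split exactness of the sequence of log Kähler differentials attached to the composite ${\cal Q}_{\bul}\lo S(T)^{\nat}\lo \os{\circ}{T}$. Since ${\cal Q}_{\bul}=\ol{\cal Q}_{\bul}\times_{\ol{S(T)^{\nat}}}S(T)^{\nat}$ is log smooth over $S(T)^{\nat}$ and $S(T)^{\nat}$ is log smooth over $\os{\circ}{T}$ with $\Om^1_{S(T)^{\nat}/\os{\circ}{T}}$ free of rank $1$ on the basis $\theta=d\log \tau$, the natural sequence
\[
0\lo {\cal O}_{{\cal Q}_{\bul}}\theta_{{\cal Q}_{\bul}}\lo \Om^1_{{\cal Q}_{\bul}/\os{\circ}{T}}\lo \Om^1_{{\cal Q}_{\bul}/S(T)^{\nat}}\lo 0
\]
is, locally, a split short exact sequence of locally free ${\cal O}_{{\cal Q}_{\bul}}$-modules, the subsheaf being the rank-$1$ free sheaf generated by $\theta_{{\cal Q}_{\bul}}$. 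This is exactly the same input that produced the already-established sequences (\ref{ali:gsflaxd}) and (\ref{ali:agxd}); the present statement is their variant in which the exactification ${\cal Q}^{\rm ex}$ is replaced by the simplicial log smooth scheme ${\cal Q}_{\bul}$ and the trivial coefficient by the flat sheaf ${\cal F}^{\bul}$.

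Concretely, I would choose, locally on each ${\cal Q}_{n}$, a basis $\theta_{{\cal Q}_n},\omega_1,\ldots,\omega_r$ of $\Om^1_{{\cal Q}_n/\os{\circ}{T}}$ whose last $r$ members restrict to a basis of $\Om^1_{{\cal Q}_n/S(T)^{\nat}}$. Taking exterior powers then yields, for each $i$, the direct sum decomposition
\[
\Om^i_{{\cal Q}_n/\os{\circ}{T}}\os{\sim}{\lo} \Om^i_{{\cal Q}_n/S(T)^{\nat}}\oplus \bigl(\theta_{{\cal Q}_n}\wedge \Om^{i-1}_{{\cal Q}_n/S(T)^{\nat}}\bigr),
\]
and hence a degreewise split short exact sequence with injection $\theta_{{\cal Q}_n}\wedge(-)$ and surjection the canonical projection. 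Because this sequence is locally split as a sequence of ${\cal O}_{{\cal Q}_{\bul}}$-modules, tensoring with ${\cal F}^{\bul}$ preserves its exactness, so the three terms of (\ref{eqn:gsflxd}) are exact as graded $g^{-1}_{\bul}({\cal O}_T)$-modules in every degree.

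It then remains to upgrade this degreewise exactness to exactness of complexes. Since $\theta=d\log\tau$ is closed, one has $\nabla(\theta_{{\cal Q}_{\bul}}\wedge \omega)=-\,\theta_{{\cal Q}_{\bul}}\wedge \nabla_{/S(T)^{\nat}}(\omega)$; under the $[-1]$-shift convention the differential of $\Om^{\bul}_{{\cal Q}_{\bul}/S(T)^{\nat}}[-1]$ is $-\nabla_{/S(T)^{\nat}}$, so this identity is precisely the statement that $\theta_{{\cal Q}_{\bul}}\wedge(-)$ commutes with the differentials, as already noted after (\ref{ali:agbxd}); the projection is manifestly a morphism of complexes. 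Finally the \v{C}ech structure maps in the simplicial direction are pulled back from the base and so commute with both $\theta_{{\cal Q}_{\bul}}\wedge(-)$ and the projection, allowing the level-wise sequences to assemble into the asserted short exact sequence of cosimplicial complexes.

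The individual steps are routine; the only points deserving care, and the ones I would regard as the main obstacle, are the sign bookkeeping needed to confirm that $\theta_{{\cal Q}_{\bul}}\wedge(-)$ is genuinely a morphism of complexes under the shift convention, together with the verification that the local adapted bases can be chosen compatibly with the (co)simplicial structure so that no globalization difficulty arises. Since the assertion coincides with \cite[(1.7.22)]{nb}, one may alternatively invoke that reference directly once the coefficient sheaf and the simplicial indexing have been matched.
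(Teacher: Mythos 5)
Your argument is correct and is essentially the one the paper relies on: the text does not reprove (10.1.1) but quotes it from \cite[(1.7.22)]{nb}, and the proof there (as for the analogous sequences (4.7.1) and (4.17.2) earlier in the paper) is exactly your locally split short exact sequence of log differentials for the log smooth morphism ${\cal Q}_{\bul}\to S(T)^{\nat}$, followed by exterior powers, tensoring with ${\cal F}^{\bul}$ (where, as you note, local splitness over ${\cal O}_{{\cal Q}_{\bul}}$ makes any flatness hypothesis unnecessary), and the sign check that $\theta_{{\cal Q}_{\bul}}\wedge(-)$ is a chain map under the shift convention. One small correction to your justification: $S(T)^{\nat}$ is \emph{not} log smooth over $\os{\circ}{T}$ (a family of log points carries a hollow log structure, and the log point is not log smooth over its underlying point); what you actually use, and what suffices, is Kato's locally split exact sequence $0\to f^*\Om^1_{S(T)^{\nat}/\os{\circ}{T}}\to \Om^1_{{\cal Q}_{\bul}/\os{\circ}{T}}\to \Om^1_{{\cal Q}_{\bul}/S(T)^{\nat}}\to 0$ attached to the log smooth ${\cal Q}_{\bul}\to S(T)^{\nat}$, together with the freeness of $\Om^1_{S(T)^{\nat}/\os{\circ}{T}}$ on the basis $d\log\tau$.
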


\parno 
Let 
\begin{equation*} 
{\cal F}^{\bul}
\otimes_{{\cal O}_{{{\cal Q}}_{\bul}}}
{\Om}^{\bul}_{{{\cal Q}}_{\bul}/S(T)^{\nat}} 
\lo 
{\cal F}^{\bul}
\otimes_{{\cal O}_{{{\cal Q}}_{\bul}}}
{\Om}^{\bul}_{{{\cal Q}}_{\bul}/S(T)^{\nat}} 
\tag{10.1.2}\label{eqn:gyglxd}
\end{equation*} 
be the boundary morphism of (\ref{eqn:gsflxd}) 
in the derived category 
$D^+(f^{-1}_T({\cal O}_T))$. 
(We make the convention on the sign of 
the boundary morphism as in \cite[p.~12 (4)]{nh2}.)  
By using the formula 
$Ru_{Y_{\os{\circ}{T}_0\bul}/S(T)^{\nat}*}(F^{\bul})
={\cal F}^{\bul}
\otimes_{{\cal O}_{{{\cal Q}}_{\bul}}}
{\Om}^{\bul}_{{{\cal Q}}_{\bul}/S(T)^{\nat}}$  
(\cite[(6.4)]{klog1}, (cf.~\cite[(2.2.7)]{nh2})), 
we have the following morphism  
\begin{equation*} 
Ru_{Y_{\os{\circ}{T}_0\bul}/S(T)^{\nat}*}(F^{\bul})
\lo 
Ru_{Y_{\os{\circ}{T}_0\bul}/S(T)^{\nat}*}(F^{\bul}).   
\tag{10.1.3}\label{eqn:uyons}
\end{equation*}  
Let 
\begin{equation*} 
\pi_{{\rm crys}} \col 
((Y_{\os{\circ}{T}_0\bul}/S(T)^{\nat})_{\rm crys},{\cal O}_{Y_{\os{\circ}{T}_0\bul}/S(T)^{\nat}})
\lo 
((Y_{\os{\circ}{T}_0}/S(T)^{\nat})_{\rm crys},{\cal O}_{Y_{\os{\circ}{T}_0}/S(T)^{\nat}})
\tag{10.1.4}\label{eqn:tzcar}
\end{equation*} 
and 
\begin{equation*} 
\pi_{{\rm zar}} \col 
((\os{\circ}{Y}_{T_0\bul})_{\rm zar},
g^{-1}_{\bul}({\cal O}_T)) \lo 
((\os{\circ}Y_{T_0})_{\rm zar},g^{-1}({\cal O}_T)) 
\tag{10.1.5}\label{eqn:tzar}
\end{equation*} 
be the natural morphisms of ringed topoi. 
Applying $R\pi_{{\rm zar}*}$ to (\ref{eqn:uyons}) 
and using the formula 
$\pi_{{\rm zar}*}\circ u_{Y_{\os{\circ}{T}_0\bul}/S(T)^{\nat}}
=u_{Y_{\os{\circ}{T}_0}/S(T)^{\nat}}\circ \pi_{{\rm crys}}$ 
and using the cohomological descent,  
we have the following morphism   
\begin{equation*}
N_{\rm zar} \col Ru_{Y_{\os{\circ}{T}_0}/S(T)^{\nat}*}(F) 
\lo 
Ru_{Y_{\os{\circ}{T}_0}/S(T)^{\nat}*}(F).
\tag{10.1.6}\label{eqn:nzgslyne}
\end{equation*}

In \cite{nb} we have proved the following whose proof is not difficult: 

\begin{prop}[{\bf \cite[(1.7.26), (1.7.30)]{nb}}] 
The morphism {\rm (\ref{eqn:nzgslyne})} 
is independent of the choice of the disjoint union 
of the member of an affine open covering of 
$Y$ and an simplicial immersion 
$Y_{\bul} \os{\sus}{\lo} 
\ol{{\cal Q}}_{\bul}$ over $\ol{S(T)^{\nat}}$. 
\end{prop}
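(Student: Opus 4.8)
The plan is to recognize $N_{\rm zar}$ as the connecting homomorphism of the functorial short exact sequence (\ref{eqn:gsflxd}) and to reduce its independence to the functoriality of this construction, exploiting that the operator $\theta_{{\cal Q}_\bul}\wedge$ through which $N_{\rm zar}$ is built descends from the intrinsic form $\theta=d\log\tau$ on the base $S(T)^{\nat}$.

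First I would take two choices: a disjoint union $Y'_{\os{\circ}{T}_0}$ of an affine open covering together with an immersion $Y_{\os{\circ}{T}_0\bul}\os{\sus}{\lo}\ol{\cal Q}_\bul$ over $\ol{S(T)^{\nat}}$, and a second such pair with immersion $Y_{\os{\circ}{T}_0\bul}\os{\sus}{\lo}\ol{\cal Q}{}'_\bul$. By passing to a common refinement of the two coverings and to the fiber product $\ol{\cal Q}_\bul\times_{\ol{S(T)^{\nat}}}\ol{\cal Q}{}'_\bul$, I reduce to the situation in which there is a morphism of simplicial immersions fitting into a commutative square over $S_{\os{\circ}{T}_0}\os{\sus}{\lo}\ol{S(T)^{\nat}}$; this is the same reduction used in the proof of (\ref{prop:ncfqi}) and throughout Section \ref{sec:psc}. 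Thus it suffices to show that a single such morphism $\ol{\cal Q}_\bul\lo\ol{\cal Q}{}'_\bul$ identifies the two operators.

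Second, such a morphism induces a morphism of log PD-envelopes and hence a morphism of the integrable connections $({\cal F}^\bul,\nabla)$ and $({\cal F}'{}^\bul,\nabla')$, compatible with the passage to $S(T)^{\nat}$-relative differentials. The crucial observation is that $\theta_{{\cal Q}_\bul}$ and $\theta_{{\cal Q}'_\bul}$ are both pulled back from the single form $\theta\in\Om^1_{S(T)^{\nat}/\os{\circ}{T}}$ via the structural morphisms to $S(T)^{\nat}$, so that wedging with $\theta$ commutes with the induced morphism of complexes. Consequently the short exact sequences (\ref{eqn:gsflxd}) for the two choices fit into a morphism of short exact sequences. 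By naturality of the connecting homomorphism in $D^+(f^{-1}_T({\cal O}_T))$, the two boundary morphisms (\ref{eqn:gyglxd}) are carried into one another. Applying $R\pi_{{\rm zar}*}$, invoking $R((\pi_{{\rm zar}}Ru_{Y_{\os{\circ}{T}_0\bul}/S(T)^{\nat}})_*)=R((u_{Y_{\os{\circ}{T}_0}/S(T)^{\nat}}\pi_{{\rm crys}})_*)$ together with the cohomological descent, and using the already-established independence of $Ru_{Y_{\os{\circ}{T}_0}/S(T)^{\nat}*}(F)$ itself, I obtain that the two operators $N_{\rm zar}$ agree.

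The hard part will be to check that the two identifications of $Ru_{Y_{\os{\circ}{T}_0}/S(T)^{\nat}*}(F)$, furnished by the log Poincar\'e lemma applied to the two $S(T)^{\nat}$-relative de Rham complexes, are literally the same canonical isomorphism, so that the comparison of boundary morphisms yields an honest equality of $N_{\rm zar}$ and not merely an equality modulo a comparison isomorphism. I would address this by recording that the log Poincar\'e lemma identification is functorial in the immersion, so that the entire diagram---consisting of the two short exact sequences (\ref{eqn:gsflxd}), their boundary morphisms, and the quasi-isomorphisms onto $Ru_{Y_{\os{\circ}{T}_0\bul}/S(T)^{\nat}*}(F^\bul)$---is compatible under $\ol{\cal Q}_\bul\lo\ol{\cal Q}{}'_\bul$; the descent in the final step then transports this compatibility down to $Y_{\os{\circ}{T}_0}$.
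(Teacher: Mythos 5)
Your argument is correct and is essentially the standard one: the paper gives no proof here but cites \cite[(1.7.26), (1.7.30)]{nb}, and the reduction you describe (dominate two choices by a refinement and a fiber product over $\ol{S(T)^{\nat}}$, observe that both $\theta_{{\cal Q}_{\bul}}$ and $\theta_{{\cal Q}'_{\bul}}$ are pulled back from the single form $\theta\in\Om^1_{S(T)^{\nat}/\os{\circ}{T}}$ so that (\ref{eqn:gsflxd}) becomes a morphism of short exact sequences, and conclude by naturality of the connecting homomorphism together with functoriality of the log Poincar\'{e} lemma identification and cohomological descent) is exactly the mechanism used for the analogous independence statements (\ref{prop:ncfqi}) and (\ref{theo:indp}) in this paper and in [loc.~cit.]. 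No gap.
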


\par 
Let 
$v \col (S(T)^{\nat},{\cal J},\del) \lo (S(T)^{\nat},{\cal J},\del)$ be 
an endomorphism of $(S(T)^{\nat},{\cal J},\del)$.  
Let 
\begin{equation*} 
\begin{CD} 
Y_{\os{\circ}{T}_0} @>{g}>> Y_{\os{\circ}{T}_0} \\ 
@VVV @VVV \\ 
S_{\os{\circ}{T}_0} @>{v_0}>> S_{\os{\circ}{T}_0} \\ 
@V{\bigcap}VV @VV{\bigcap}V \\ 
S(T)^{\nat} @>{v}>> S(T)^{\nat}
\end{CD}
\tag{10.2.1}\label{eqn:ydxduss}
\end{equation*} 
be a commutative diagram of log schemes.  
The morphism (\ref{eqn:nzgslyne}) 
is nothing but a morphism 
\begin{equation*}
N_{\rm zar} \col Ru_{Y_{\os{\circ}{T}_0}/S(T)^{\nat}*}(F) 
\lo 
Ru_{Y_{\os{\circ}{T}_0}/S(T)^{\nat}*}(F)(-1;v)
\tag{10.2.2}\label{eqn:mlcepglynl}
\end{equation*}
since $v^*(\theta_{{\cal Q}_{\bul}})=\deg(v)\theta_{{\cal Q}_{\bul}}$.
In \cite{nb} we have called the morphism (\ref{eqn:mlcepglynl}) the 
{\it  zariskian monodromy operator} 
of $Y_{\os{\circ}{T}_0}/S(T)^{\nat}$. 

%\parno  
%By the definitions above, 
%we have the following triangle and the long exact sequence: 
%\begin{align*} 
%& \wt{R}u_{Y_{T_0}/\os{\circ}{T}*}(F) \lo 
%Ru_{Y_{\os{\circ}{T}_0}/S(T)^{\nat}*}(F) 
%\tag{6.5.1}\label{ali:eynt}\\
%& \os{N_{\rm zar}}{\lo} 
%Ru_{Y_{\os{\circ}{T}_0}/S(T)^{\nat}*}(F)
%\os{+1}{\lo},  
%\end{align*}  
%\begin{align*} 
%& \cdots \lo R^{q-1}f_{Y_{\os{\circ}{T}_0}/S(T)^{\nat}*}(F)
%\lo 
%\wt{R}^qf_{Y_{T_0}/\os{\circ}{T}*}(F) 
%\lo 
%\tag{6.5.2}\label{ali:lceeynt}\\
%& R^qf_{Y_{\os{\circ}{T}_0}/S(T)^{\nat}*}(F) 
%\os{N_{\rm zar}}{\lo} 
%R^qf_{Y_{\os{\circ}{T}_0}/S(T)^{\nat}*}(F)
%\lo \cdots. 
%\end{align*}
%If $F=\eps^*_{X_{T_0/T}}(E)$
%($E$ is in the previous section),  
%Then $\wt{R}{}^hf_{X_{\os{\circ}{T}_0}/S(T)^{\nat}*}
%(\eps^*_{X_{T_0/T}}(E))$ 
%is calculated by the spectral sequence 
%(\ref{ali:spexh}). 
By (\ref{eqn:exfte}) we have the following diagram 
\begin{equation*} 
\begin{CD}
Ru_{Y_{\os{\circ}{T}_0}/S(T)^{\nat}*}(F) 
@>{N_{\rm zar}}>>
Ru_{Y_{\os{\circ}{T}_0}/S(T)^{\nat}*}(F)\\
@A{\simeq}AA @AA{\simeq}A \\ 
\wt{R}u_{Y_{\os{\circ}{T}_0}/\os{\circ}{T}*}(F\langle u \rangle) 
@. \wt{R}u_{Y_{\os{\circ}{T}_0}/\os{\circ}{T}*}(F\langle u \rangle).  
\end{CD} 
\tag{10.2.3}\label{ali:eyfnt}
\end{equation*}
Consider the morphism 
\begin{equation*} 
d/du: {\cal O}_T\langle u \rangle \owns u^{[i]} \lom u^{[i-1]}
\in {\cal O}_T\langle u \rangle. 
\tag{10.2.4}\label{ali:eyufnt}
\end{equation*}
Because 
\begin{align*} 
d/du=d(au)/du\cdot d/d(au)=ad/d(au)
\end{align*}  
for a local section $a\in {\cal O}_T^*$, 
the morphism (\ref{ali:eyufnt}) is independent of the choice of $u$. 
Hence we have the following morphism 
\begin{equation*}
D:=``d/du{\textrm '}{\textrm '} \col \Gam_{{\cal O}_T}(U_{S(T)^{\nat}})\lo 
\Gam_{{\cal O}_T}(U_{S(T)^{\nat}}). 
\tag{10.2.5}\label{eqn:bufnt}
\end{equation*}
This morphism induces the following morphism 
\begin{equation*} 
D: \wt{R}u_{Y_{\os{\circ}{T}_0}/S(T)^{\nat}*}(F\langle u \rangle) 
\lo \wt{R}u_{Y_{\os{\circ}{T}_0}/S(T)^{\nat}*}(F\langle u \rangle). 
\tag{10.2.6}\label{ali:eufant}
\end{equation*}
We also have the following similar morphism 
\begin{equation*}
D \col \Gam_{{\cal O}_T}(U_{S(T)^{\nat}})^{\wedge}\lo 
\Gam_{{\cal O}_T}(U_{S(T)^{\nat}})^{\wedge}. 
\tag{10.2.7}\label{eqn:bmt}
\end{equation*}

\begin{theo}\label{theo:monf}
Assume that $F$ is a flat locally nilpotent quasi-coherent 
${\cal O}_{Y_{\os{\circ}{T}_0}/\os{\circ}{T}}$-modules.  
Then the following diagram is commutative: 
\begin{equation*} 
\begin{CD}
Ru_{Y_{\os{\circ}{T}_0}/S(T)^{\nat}*}(F) 
@>{N_{\rm zar}}>> 
Ru_{Y_{\os{\circ}{T}_0}/S(T)^{\nat}*}(F)\\
@A{\simeq}AA @AA{\simeq}A \\ 
\wt{R}u_{Y_{\os{\circ}{T}_0}/\os{\circ}{T}*}(F\langle u \rangle) 
@>{-D}>> \wt{R}u_{Y_{\os{\circ}{T}_0}/\os{\circ}{T}*}(F\langle u \rangle).  
\end{CD} 
\tag{10.3.1}\label{cd:efnt}
\end{equation*}
\end{theo}
\begin{proof} 
We may assume that there exists an immersion 
$Y_{\os{\circ}{T}_0}\os{\sus}{\lo} \ol{\cal Q}$ into a log smooth scheme over $\ol{S(T)^{\nat}}$. 
Let the notations be as in the beginning of \S\ref{sec:ldfc}. 
Set ${\cal Q}:=\ol{\cal Q}\times_{\ol{S(T)^{\nat}}}S(T)^{\nat}$.
%Let $F$ be a flat quasi-coherent crystal of  
%${\cal O}_{Y_{\os{\cric}{T}_0}/\os{\circ}{T}}$-modules. 
Because the following diagram 
\begin{equation*} 
\begin{CD}
{\cal F}\otimes_{{\cal O}_{{\cal Q}^{{\rm ex}}}}
\Om^{\bul}_{{\cal Q}{}^{{\rm ex}}/\os{\circ}{T}}\langle u\rangle 
@>{-D}>> 
{\cal F}\otimes_{{\cal O}_{{\cal Q}^{{\rm ex}}}}
\Om^{\bul}_{{\cal Q}{}^{{\rm ex}}/\os{\circ}{T}}\langle u\rangle \\
@V{\simeq}VV @VV{\simeq}V \\ 
{\cal F}\otimes_{{\cal O}_{{\cal Q}^{{\rm ex}}}}
\Om^{\bul}_{{\cal Q}{}^{{\rm ex}}/\os{\circ}{T}}\langle \langle u\rangle \rangle 
@>{-D}>> {\cal F}\otimes_{{\cal O}_{{\cal Q}^{{\rm ex}}}}
\Om^{\bul}_{{\cal Q}{}^{{\rm ex}}/\os{\circ}{T}}\langle \langle u\rangle \rangle.  
\end{CD} 
\tag{10.3.2}\label{ali:fnct}
\end{equation*}
is commutative, 
we have only to prove that the following diagram is commutative: 
\begin{equation*} 
\begin{CD}
{\cal F}\otimes_{{\cal O}_{{\cal Q}^{{\rm ex}}}}
\Om^{\bul}_{{\cal Q}{}^{{\rm ex}}/S(T)^{\nat}} 
@>{N_{\rm zar}}>>  
{\cal F}\otimes_{{\cal O}_{{\cal Q}^{{\rm ex}}}}
\Om^{\bul}_{{\cal Q}{}^{{\rm ex}}/S(T)^{\nat}} \\
@A{\simeq}AA @AA{\simeq}A \\ 
{\cal F}\otimes_{{\cal O}_{{\cal Q}^{{\rm ex}}}}
\Om^{\bul}_{{\cal Q}{}^{{\rm ex}}/\os{\circ}{T}}\langle \langle u\rangle \rangle 
@>{-D}>> {\cal F}\otimes_{{\cal O}_{{\cal Q}^{{\rm ex}}}}
\Om^{\bul}_{{\cal Q}{}^{{\rm ex}}/\os{\circ}{T}}\langle \langle u\rangle \rangle.  
\end{CD} 
\tag{10.3.3}\label{ali:fnt}
\end{equation*}
(Here note that to take the completion is important in the following argument.) 
The problem is local. 
%Hence we may assume that 
%$U_{S(T)^{\nat}}={\cal O}_T\langle u\rangle$. 
To prove this commutativity, take a local section 
$\om_0$ in 
$${\rm Ker}(\nabla_{/S(T)^{\nat}} \col 
{\cal F}\otimes_{{\cal O}_{{\cal Q}^{{\rm ex}}}}
\Om^i_{{\cal Q}{}^{{\rm ex}}/S(T)^{\nat}}
\lo 
{\cal F}\otimes_{{\cal O}_{{\cal Q}^{{\rm ex}}}}
\Om^{i+1}_{{\cal Q}{}^{{\rm ex}}/S(T)^{\nat}})
\quad  (i\in {\mab Z}_{\geq 0}).$$  
Let $\wt{\om}_0$ be a lift of $\om_0$ in 
${\cal F}\otimes_{{\cal O}_{{\cal Q}^{{\rm ex}}}}
\Om^i_{{\cal Q}{}^{{\rm ex}}/\os{\circ}{T}}$. 
Then $N_{\rm zar}(\om_0)=\eta$, where 
$\eta$ is a local section of 
${\cal F}\otimes_{{\cal O}_{{\cal Q}^{{\rm ex}}}}
\Om^i_{{\cal Q}{}^{{\rm ex}}/S(T)^{\nat}}$ 
such that 
\begin{equation*} 
d\log t\wedge \eta=\nabla(\wt{\om}_0).
\tag{10.3.4}\label{ali:frt}
\end{equation*}  
We would like to construct a local section 
$\sum_{j=0}^{\infty}u^{[j]}\wt{\om}_j$ of 
$${\rm Ker}
({\cal O}_T\langle \langle u \rangle \rangle \otimes_{{\cal O}_T}
{\cal F}\otimes_{{\cal O}_{{\cal Q}^{{\rm ex}}}}
\Om^i_{{\cal Q}{}^{{\rm ex}}/\os{\circ}{T}}
\lo 
{\cal O}_T\langle \langle u \rangle \rangle \otimes_{{\cal O}_T}
{\cal F}\otimes_{{\cal O}_{{\cal Q}^{{\rm ex}}}}
\Om^{i+1}_{{\cal Q}{}^{{\rm ex}}/\os{\circ}{T}}).$$ 
Consider the following equation 
\begin{align*} 
0=\nabla(\sum_{j\geq 0}u^{[j]}\wt{\om}_j)=\nabla(\wt{\om}_0)+
\sum_{j\geq 1}(d\log t\wedge u^{[j-1]}\wt{\om}_j+u^{[j]}\nabla(\wt{\om}_j)).
\end{align*}  
This is equivalent to the following equation
%\begin{equation*} 
%\nabla(\wt{\om}_0)+d\log t\wedge \wt{\om}_1=0
%\tag{10.3.5}\label{ali:fwt}
%\end{equation*}  
%and 
\begin{equation*} 
d\log t\wedge \wt{\om}_{j+1}+\nabla(\wt{\om}_j)=0 \quad (j\geq 0). 
\tag{10.3.5}\label{ali:fjwt}
\end{equation*}  
We would like to find $\wt{\om}_j$ $(j\geq 1)$ by using induction on $j$. 
Indeed, assume that $\wt{\om}_i$ $(i\leq j)$ exists. 
Then $\nabla(d\log t\wedge \wt{\om}_{j}+\nabla(\wt{\om}_{j-1}))=0$. 
Hence $d\log t\wedge \nabla(\wt{\om}_{j})=0$ since $\nabla^2=0$. 
This means the existence of $\wt{\om}_{j+1}$ in (\ref{ali:fjwt}) by (\ref{ali:gultpt}). 
Let $\om_1$ be the image of $\wt{\om}_1$ in  
${\cal F}\otimes_{{\cal O}_{{\cal Q}^{{\rm ex}}}}
\Om^i_{{\cal Q}{}^{{\rm ex}}/S(T)^{\nat}}$.  
Then, by the relations (\ref{ali:frt}) and (\ref{ali:fjwt}), 
we have $\om_1=-\eta$ because 
$d\log t \wedge \col {\cal F}\otimes_{{\cal O}_{{\cal Q}^{{\rm ex}}}}
\Om^{i-1}_{{\cal Q}{}^{{\rm ex}}/S(T)^{\nat}}
\lo {\cal F}\otimes_{{\cal O}_{{\cal Q}^{{\rm ex}}}}
\Om^{i}_{{\cal Q}{}^{\rm ex}/\os{\circ}{T}}$ 
is injective. 
On the other hand, 
$(-d/du)(\sum_{j\geq 0}u^{[j]}\wt{\om}_j)=
-\sum_{j\geq 1}u^{[j-1]}\wt{\om}_j$ whose image in 
${\cal F}\otimes_{{\cal O}_{{\cal Q}^{{\rm ex}}}}
\Om^i_{{\cal Q}{}^{{\rm ex}}/S(T)^{\nat}}$ is $-\om_1=\eta$. 
This tells us that the diagram (\ref{ali:fnt}) is commutative. 
\end{proof} 

\begin{rema}
In the proof of \cite[Lemma 7]{kiha} the existence of the cocycle 
$a_0+a_1u^{[1]}+\cdots \in R\Gam(W\wt{\om})[u]$ is unclear. 
\end{rema}

Let $X/S$ be an SNCL scheme. 
Let $E$ be flat quasi-coherent crystals of 
${\cal O}_{\os{\circ}{X}_{T_0}/\os{\circ}{T}}$-modules. 
Then $\eps^*_{X_{\os{\circ}{T}_0}/\os{\circ}{T}}(E)$ is a flat locally nilpotent quasi-coherent 
${\cal O}_{X_{\os{\circ}{T}_0}/\os{\circ}{T}}$-modules.

\begin{coro}\label{coro:ctt}
The following diagram is commutative: 
\begin{equation*} 
\begin{CD}
Ru_{X_{\os{\circ}{T}_0}/S(T)^{\nat}*}
(\eps^*_{X_{\os{\circ}{T}_0}/S(T)^{\nat}}(E))  
@>{N_{\rm zar}}>> 
Ru_{X_{\os{\circ}{T}_0}/S(T)^{\nat}*}
(\eps^*_{X_{\os{\circ}{T}_0}/S(T)^{\nat}}(E))   \\
@A{\simeq}AA @AA{\simeq}A \\ 
\wt{R}u_{X_{\os{\circ}{T}_0}/\os{\circ}{T}*}
(\eps^*_{X_{\os{\circ}{T}_0}/\os{\circ}{T}}(E)\langle u \rangle) 
@>{-D}>> \wt{R}u_{X_{\os{\circ}{T}_0}/\os{\circ}{T}*}(
\eps^*_{X_{\os{\circ}{T}_0}/\os{\circ}{T}}(E)\langle u \rangle)\\
@V{\simeq}VV @VV{\simeq}V \\ 
\wt{R}u_{X^{(\star)}_{\os{\circ}{T}_0}/\os{\circ}{T}*}
(\eps^*_{X^{(\star)}_{\os{\circ}{T}_0}/\os{\circ}{T}}(E^{(\star)})\langle u \rangle) 
@>{-D}>> \wt{R}u_{X^{(\star)}_{\os{\circ}{T}_0}/\os{\circ}{T}*}(
\eps^*_{X^{(\star)}_{\os{\circ}{T}_0}/\os{\circ}{T}}(E^{\star})\langle u \rangle).  
\end{CD} 
\tag{10.5.1}\label{cd:efnbtt}
\end{equation*}
\end{coro}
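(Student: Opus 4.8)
The plan is to reduce the commutativity of (\ref{cd:efnbtt}) to two already-established facts, treating the upper and lower squares of the diagram separately. First I would observe that the upper square — the one relating $Ru_{X_{\os{\circ}{T}_0}/S(T)^{\nat}*}(\eps^*_{X_{\os{\circ}{T}_0}/S(T)^{\nat}}(E))$ to $\wt{R}u_{X_{\os{\circ}{T}_0}/\os{\circ}{T}*}(\eps^*_{X_{\os{\circ}{T}_0}/\os{\circ}{T}}(E)\langle U_{S(T)^{\nat}}\rangle)$ via $N_{\rm zar}$ and $-D$ — is nothing but the diagram (\ref{cd:efnt}) of (\ref{prop:monf}) specialized to $Y=X$. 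Here one uses that an SNCL scheme $X/S$ is log smooth (by (\ref{defi:lfac}), through the strict \'etale local model ${\mab A}_S(a,d)\lo S$), so that $X$ is an admissible choice of $Y$ in (\ref{prop:monf}). The only bookkeeping point is to match coefficients: taking $\ol{F}:=\eps^*_{X_{\os{\circ}{T}_0}/\os{\circ}{T}}(E)$, transitivity of the forgetful morphisms gives $\eps^*_{X_{\os{\circ}{T}_0}/S(T)^{\nat}/\os{\circ}{T}}(\ol{F})=\eps^*_{X_{\os{\circ}{T}_0}/S(T)^{\nat}}(E)$, so the top two rows of (\ref{cd:efnbtt}) coincide with the two rows of (\ref{cd:efnt}), their vertical isomorphisms being those of (\ref{prop:nbqi}).

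For the lower square, which relates the non-semi-cosimplicial complex to the semi-cosimplicial one, I would argue that $-D$ is transported along the isomorphism (\ref{ali:eetie}) of (\ref{coro:lb})(2). The key observation is that $D$ is, by its definition (\ref{eqn:bufnt}), induced by the endomorphism $d/du$ of the divided power sheaf $\Gam_{{\cal O}_T}(L_{S(T)^{\nat}})$, which enters both $\wt{R}u_{X_{\os{\circ}{T}_0}/\os{\circ}{T}*}(\eps^*_{X_{\os{\circ}{T}_0}/\os{\circ}{T}}(E)\langle U_{S(T)^{\nat}}\rangle)$ and $\wt{R}u_{X^{(\star)}_{\os{\circ}{T}_0}/\os{\circ}{T}*}(\eps^*_{X^{(\star)}_{\os{\circ}{T}_0}/\os{\circ}{T}}(E^{(\star)})\langle U_{S(T)^{\nat}}\rangle)$ purely as a tensor factor, i.e. $D$ acts as $(d/du)\otimes\mathrm{id}$. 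The isomorphism (\ref{ali:eetie}) is obtained from the augmentation ${\cal P}^{\rm ex}\lo {\cal P}^{{\rm ex},(\star)}$ together with the exactness of (\ref{prop:sil}) and the comparison (\ref{prop:impth}), and every structure map involved — the \v{C}ech boundary morphisms (\ref{ali:ulm}) and the augmentation — is $\Gam_{{\cal O}_T}(L_{S(T)^{\nat}})$-linear, hence of the form $\mathrm{id}\otimes(\text{map})$ on the $\Gamma$-factor. Since $(d/du)\otimes\mathrm{id}$ commutes with any such $\mathrm{id}\otimes(\text{map})$ at the level of complexes, $D$ commutes with (\ref{ali:eetie}), giving the lower square.

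The remaining work is the routine but slightly delicate verification that these two commutativities are compatible in $D^+(f^{-1}({\cal O}_T))$ and combine into the single diagram (\ref{cd:efnbtt}). Concretely, I would fix the explicit representatives defining the morphisms — Godement resolutions of the log de Rham complexes on ${\cal P}^{\rm ex}_\bul$ and ${\cal P}^{{\rm ex},(\bul)}_\bul$ after applying $R\pi_{{\rm zar}*}$ — on which $D$ is a genuine chain endomorphism rather than merely a derived-category morphism, just as $N_{\rm zar}$ is realized through the exact sequence (\ref{eqn:gsflxd}) and its boundary (\ref{eqn:gyglxd}). The main obstacle I anticipate is purely organizational: ensuring that the semi-cosimplicial degree-shift built into $\del(L,P)$ and the passage through $s(-)$ do not interfere with the $\Gamma$-factor on which $D$ acts. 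Because $D$ leaves the semi-cosimplicial degree and the filtration $P$ untouched and commutes with both the vertical (connection) and horizontal (\v{C}ech) differentials of the double complex (\ref{cd:pppoex}), this interference does not in fact occur, and no genuinely new computation beyond those in (\ref{prop:monf}) and (\ref{coro:lb}) is needed.
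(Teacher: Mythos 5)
Your proposal is correct and follows the paper's own route: the upper square is recognized as a special case of the diagram (\ref{cd:efnt}) in (\ref{prop:monf}) with $Y=X$, and the lower square holds because $-D$ acts only on the divided-power factor and therefore commutes with the \v{C}ech/augmentation maps underlying (\ref{ali:eetie}) — which is precisely what the paper dismisses as ``obvious.'' Your extra detail on matching coefficients and on chain-level representatives is harmless elaboration of the same argument.
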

\begin{proof} 
The commutativity of the upper square is a special case of (\ref{cd:efnt}). 
The commutativity of the lower square is obvious. 
\end{proof}

\begin{defi} 
We call the morphisms 
\begin{align*} 
-D \col 
\wt{R}u_{Y_{\os{\circ}{T}_0}/\os{\circ}{T}*}(F\langle u \rangle) 
\lo \wt{R}u_{Y_{\os{\circ}{T}_0}/\os{\circ}{T}*}(F\langle u \rangle)
\end{align*}   
and 
\begin{align*} 
-D\col \wt{R}u_{X^{(\star)}_{\os{\circ}{T}_0}/\os{\circ}{T}*}
(\eps^*_{X^{(\star)}_{\os{\circ}{T}_0}/\os{\circ}{T}}(E^{(\star)})\langle u \rangle) 
\lo \wt{R}u_{X^{(\star)}_{\os{\circ}{T}_0}/\os{\circ}{T}*}(
\eps^*_{X^{(\star)}_{\os{\circ}{T}_0}/\os{\circ}{T}}(E^{\star})\langle u \rangle)
\end{align*} 
the {\it monodromy operators} of   
$\wt{R}u_{Y_{\os{\circ}{T}_0}/\os{\circ}{T}*}(F\langle u \rangle)$
and 
$\wt{R}u_{X^{(\star)}_{\os{\circ}{T}_0}/\os{\circ}{T}*}
(\eps^*_{X^{(\star)}_{\os{\circ}{T}_0}/\os{\circ}{T}}(E^{(\star)})\langle u \rangle)$,
respectively and we denote them by $N_{\rm zar}$.
\end{defi}

\begin{prop}\label{prop:eun}
The monodromy operators 
$$N_{\rm zar}\col 
\wt{R}u_{Y_{\os{\circ}{T}_0}/\os{\circ}{T}*}
(F\langle u \rangle) 
\lo \wt{R}u_{Y_{\os{\circ}{T}_0}/\os{\circ}{T}*}(F\langle u \rangle)$$ 
and 
$$N_{\rm zar}\col 
\wt{R}u_{X^{(\star)}_{\os{\circ}{T}_0}/\os{\circ}{T}*}
(\eps^*_{X^{(\star)}_{\os{\circ}{T}_0}/\os{\circ}{T}}(E^{(\star)})\langle u \rangle)
\lo 
\wt{R}u_{X^{(\star)}_{\os{\circ}{T}_0}/\os{\circ}{T}*}
(\eps^*_{X^{(\star)}_{\os{\circ}{T}_0}/\os{\circ}{T}}(E^{(\star)})\langle u \rangle)$$  
induce the following morphisms, respectively$:$
\begin{align*} 
N_{\rm zar}\col 
P_k\wt{R}u_{Y_{\os{\circ}{T}_0}/\os{\circ}{T}*}(F\langle u \rangle) 
\lo P_{k-2}\wt{R}u_{Y_{\os{\circ}{T}_0}/\os{\circ}{T}*}(F\langle u \rangle), 
\tag{10.7.1}\label{ali:npr}
\end{align*} 
\begin{align*} 
N_{\rm zar}\col 
P_k\wt{R}u_{X^{(\star)}_{\os{\circ}{T}_0}/\os{\circ}{T}*}
(\eps^*_{X^{(\star)}_{\os{\circ}{T}_0}/\os{\circ}{T}}(E^{(\star)})\langle u \rangle)
\lo P_{k-2}\wt{R}u_{X^{(\star)}_{\os{\circ}{T}_0}/\os{\circ}{T}*}
(\eps^*_{X^{(\star)}_{\os{\circ}{T}_0}/\os{\circ}{T}}(E^{(\star)})\langle u \rangle).  
\tag{10.7.2}\label{ali:nbpr}
\end{align*}  
\end{prop}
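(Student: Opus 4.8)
The plan is to reduce the statement to an explicit computation of how the monodromy operator $N_{\rm zar}=-D$ interacts with the preweight filtration $P$ defined in (\ref{ali:flop}) and (\ref{eqn:dfhp}). Recall that on the PD-Hirsch extension, $D$ is induced by the ${\cal O}_T$-linear map $d/du\col u^{[j]}\lom u^{[j-1]}$ of (\ref{ali:eyufnt}), globalized to the morphism $D\col \Gam_{{\cal O}_T}(L_{S(T)^{\nat}})\lo \Gam_{{\cal O}_T}(L_{S(T)^{\nat}})$ in (\ref{eqn:bufnt}), acting only on the $\Gam$-factor and as the identity on the log de Rham factor. First I would treat the single-member case, working with the local model ${\cal E}\otimes_{{\cal O}_{{\cal P}^{\rm ex}}}\Om^{\bul}_{{\cal P}^{\rm ex}/\os{\circ}{T}}\langle U_{S(T)^{\nat}}\rangle$, where by definition
\begin{align*}
P_k=\bigoplus_{j\geq 0}\Gam_j(L_{S(T)^{\nat}})\otimes_{{\cal O}_T}
P_{k-2j}({\cal E}\otimes_{{\cal O}_{{\cal P}^{\rm ex}}}\Om^{\bul}_{{\cal P}^{\rm ex}/\os{\circ}{T}}).
\end{align*}

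The key computation is then immediate from the grading: since $D$ sends $\Gam_j(L_{S(T)^{\nat}})$ into $\Gam_{j-1}(L_{S(T)^{\nat}})$ and leaves the log de Rham factor $P_{k-2j}({\cal E}\otimes_{{\cal O}_{{\cal P}^{\rm ex}}}\Om^{\bul}_{{\cal P}^{\rm ex}/\os{\circ}{T}})$ untouched, the $j$-th summand of $P_k$ is carried into $\Gam_{j-1}(L_{S(T)^{\nat}})\otimes_{{\cal O}_T}P_{k-2j}(\cdots)$. Reindexing by $j'=j-1$, this is exactly the $j'$-th summand of $P_{k-2}$, because $k-2j=(k-2)-2j'$. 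Hence $D$, and therefore $N_{\rm zar}=-D$, maps $P_k$ into $P_{k-2}$ on the nose. I would carry this out at the level of the local PD-Hirsch complexes first and observe that the verification is compatible with the boundary morphism $\nabla$ of (\ref{eqn:bdff}), which already preserves $P$; the only new input is that $D$ strictly lowers the $\Gam$-degree by one, so it shifts the $P$-index by exactly $-2$.

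The second step is to globalize and upgrade to the derived/semicosimplicial setting. For (\ref{ali:npr}) I would apply $R\pi_{{\rm zar}*}$ to the local statement, using that the filtration $P$ on $\wt{R}u_{Y_{\os{\circ}{T}_0}/\os{\circ}{T}*}(F\langle U_{S(T)^{\nat}}\rangle)$ is the one induced from the complexes ${\cal F}^{\bul}\otimes_{{\cal O}_{{\cal Q}^{\rm ex}_{\bul}}}\Om^{\bul}_{{\cal Q}^{\rm ex}_{\bul}/\os{\circ}{T}}\langle U_{S(T)^{\nat}}\rangle$, and that $D$ is compatible with the cosimplicial structure (it acts purely on the $\Gam$-factor and commutes with the \v{C}ech maps). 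For (\ref{ali:nbpr}) I would use that the diagonal filtration $\del(L,P)$ of Section~\ref{sec:pwf} is built from the same $P$-filtrations on each ${\cal P}^{{\rm ex},(m)}_{\bul}$-piece, shifted by the semicosimplicial degree $m$. Since $D$ acts trivially on the semicosimplicial degree and lowers each $P$-index by $2$, it lowers $\del(L,P)_k$ into $\del(L,P)_{k-2}$; the shift by $m$ is untouched and so the diagonal filtration index drops by exactly $2$ as well.

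The one point requiring genuine care — the main obstacle — is confirming that $N_{\rm zar}=-D$ really does respect the filtrations up to the derived-category identifications, rather than merely at the level of a single chosen complex. Concretely, I must check that $D$ is well-defined independently of the choice of local generator $u$ (this is already noted after (\ref{ali:eyufnt}): $d/du$ is independent of the choice of $u$, hence globalizes to (\ref{eqn:bufnt})), and that under the independence isomorphisms of (\ref{theo:indp}) the map $D$ corresponds to itself — i.e.\ $D$ is functorial for the comparison maps between different affine coverings and different simplicial immersions. This follows from the naturality of the $\Gam$-grading, since all comparison morphisms in (\ref{cd:sotcq}) and (\ref{cd:sopp}) are $\Gam_{{\cal O}_T}(L_{S(T)^{\nat}})$-linear and hence commute with $D$. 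Once this naturality is in hand, the filtered statements (\ref{ali:npr}) and (\ref{ali:nbpr}) are formal consequences of the strict degree-shift computation above.
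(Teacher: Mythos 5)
Your proposal is correct and follows essentially the same route as the paper, which simply observes that the claim is immediate from the definition of $N_{\rm zar}=-D$ and the local formulas $P_k=\bigoplus_{j\geq 0}\Gam_j(L_{S(T)^{\nat}})\otimes_{{\cal O}_T}P_{k-2j}(\cdots)$ in (\ref{ali:flp}), (\ref{ali:fpplp}) and (\ref{eqn:dfhp}). Your reindexing $j'=j-1$ with $k-2j=(k-2)-2j'$ is exactly the computation the paper leaves implicit, and your remarks on well-definedness and compatibility with the semicosimplicial/diagonal filtration are the routine checks the paper suppresses.
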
 
\begin{proof} 
These are obvious by the definitions of $N_{\rm zar}$ and the local definition of 
$P$ in (\ref{eqn:dfhp}). 
\end{proof}

\section{Cup products of PD-Hirsch pre-weight-filtered 
log crystalline complexes}\label{sec:p}
In this section we define the cup product of  
$(H_{\rm zar}(X_{\os{\circ}{T}_0}/S(T)^{\nat},E),P)$ 
and give several fundamental properties of it. 
\par 
%we have the following composite morphism 
Let the notations be as in the previous section. 
First let us recall the cup product of log crystalline complexes.
\par 
Let $\ol{F}{}'$ be another flat quasi-coherent crystal of 
${\cal O}_{Y_{\os{\circ}{T}_0}/\os{\circ}{T}}$-modules.  
Let 
%$\ol{F}{}'{}^{\bul}$ and 
${\cal F}^{\bul}{}'$ be the analogous sheaf to 
%$\ol{F}{}^{\bul}$ and 
${\cal F}^{\bul}$ for $\ol{F}{}'$.
%, respectively. 
The wedge product 
$$\wedge \col 
\Om^{\bul}_{{\cal Q}^{\rm ex}_{\bul}/S(T)^{\nat}}
\otimes_{{\cal O}_T}\Om^{\bul}_{{\cal Q}^{\rm ex}_{\bul}/S(T)^{\nat}}
\lo \Om^{\bul}_{{\cal Q}^{\rm ex}_{\bul}/S(T)^{\nat}}$$  
induces the following morphism 
\begin{equation*} 
({\cal F}^{\bul}
\otimes_{{\cal O}_{{\cal Q}^{\rm ex}_{\bul}}}
\Om^{\bul}_{{\cal Q}^{\rm ex}_{\bul}/S(T)^{\nat}})
\otimes_{{\cal O}_T}({\cal F}'{}^{\bul}
\otimes_{{\cal O}_{{\cal Q}^{\rm ex}_{\bul}}}
\Om^{\bul}_{{\cal Q}^{\rm ex}_{\bul}/S(T)^{\nat}}) \lo 
({\cal F}^{\bul}\otimes_{{\cal O}_{{\mathfrak E}_{\bul}}}{\cal F}'{}^{\bul})
\otimes_{{\cal O}_{{\cal Q}^{\rm ex}_{\bul}}}
\Om^{\bul}_{{\cal Q}^{\rm ex}_{\bul}/S(T)^{\nat}}. 
\tag{11.0.1}\label{ali:tuwte} 
\end{equation*}
Assume that $Ru_{Y_{\os{\circ}{T}_0}/S(T)^{\nat}*}
(\eps^*_{Y_{\os{\circ}{T}_0}/S(T)^{\nat}/\os{\circ}{T}}(\ol{F}))$ is bounded above. 
Then this morphism induces the following cup product  
\begin{align*} 
\cup  \col &Ru_{Y_{\os{\circ}{T}_0}/S(T)^{\nat}*}
(\eps^*_{Y_{\os{\circ}{T}_0}/S(T)^{\nat}/\os{\circ}{T}}(\ol{F}))
\otimes^L_{{\cal O}_T}Ru_{Y_{\os{\circ}{T}_0}/S(T)^{\nat}*}
(\eps^*_{Y_{\os{\circ}{T}_0}/S(T)^{\nat}/\os{\circ}{T}}(\ol{F}{}'))
\tag{11.0.2}\label{ali:tte} \\
&\lo Ru_{Y_{\os{\circ}{T}_0}/S(T)^{\nat}*}(
(\eps^*_{Y_{\os{\circ}{T}_0}/S(T)^{\nat}/\os{\circ}{T}}
(\ol{F}\otimes_{{\cal O}_{Y_{\os{\circ}{T}_0}/\os{\circ}{T}}}\ol{F}{}')).
\end{align*} 
Similarly the wedge product 
$\Om^{\bul}_{{\cal Q}^{\rm ex}_{\bul}/\os{\circ}{T}}
\otimes_{{\cal O}_T}\Om^{\bul}_{{\cal Q}^{\rm ex}_{\bul}/\os{\circ}{T}}
\lo \Om^{\bul}_{{\cal Q}^{\rm ex}_{\bul}/\os{\circ}{T}}$ 
induces the following morphism 
\begin{equation*} 
({\cal F}^{\bul}\otimes_{{\cal O}_{{\cal Q}^{\rm ex}_{\bul}}}
\Om^{\bul}_{{\cal Q}^{\rm ex}_{\bul}/\os{\circ}{T}})
\otimes_{{\cal O}_T}({\cal F}'{}^{\bul}
\otimes_{{\cal O}_{{\cal Q}^{\rm ex}_{\bul}}}
\Om^{\bul}_{{\cal Q}^{\rm ex}_{\bul}/\os{\circ}{T}}) \lo 
({\cal F}^{\bul}\otimes_{{\cal O}_{{\mathfrak E}_{\bul}}}{\cal F}'{}^{\bul})
\otimes_{{\cal O}_{{\cal Q}^{\rm ex}_{\bul}}}
\Om^{\bul}_{{\cal Q}^{\rm ex}_{\bul}/\os{\circ}{T}}
\tag{11.0.3}\label{eqn:ttoe} 
\end{equation*}
Assume that $\wt{R}u_{Y_{\os{\circ}{T}_0}/\os{\circ}{T}*}(\ol{F})$ is bounded above. 
Then this morphism induces the following cup product  
\begin{align*} 
\cup  \col &
\wt{R}u_{Y_{\os{\circ}{T}_0}/\os{\circ}{T}*}(\ol{F})
\otimes^L_{{\cal O}_T}\wt{R}u_{Y_{\os{\circ}{T}_0}/\os{\circ}{T}*}(\ol{F}{}')
\lo \wt{R}u_{Y_{\os{\circ}{T}_0}/\os{\circ}{T}*}(
\ol{F}\otimes_{{\cal O}_{Y_{\os{\circ}{T}_0}/\os{\circ}{T}}}\ol{F}{}').
\tag{11.0.4}\label{ali:ttoe} 
\end{align*} 
By considering the morphism (\ref{eqn:dups}), 
we have the following morphism 
\begin{align*} 
\Gam_{{\cal O}_T}(U_{S(T)^{\nat}}) 
\otimes_{{\cal O}_T}\Gam_{{\cal O}_T}(U_{S(T)^{\nat}})  
\lo 
\Gam_{{\cal O}_T}(U_{S(T)^{\nat}}). 
\tag{11.0.5}\label{eqn:dupus}
\end{align*} 
The morphisms (\ref{eqn:ttoe}) and (\ref{eqn:dups}) 
induce the following morphism 
\begin{align*} 
\cup \col 
\wt{R}u_{Y_{\os{\circ}{T}_0}/\os{\circ}{T}*}(\ol{F}\langle u \rangle)
\otimes^L_{{\cal O}_T}
\wt{R}u_{Y_{\os{\circ}{T}_0}/\os{\circ}{T}*}(\ol{F}{}'\langle u \rangle)
\lo \wt{R}u_{Y_{\os{\circ}{T}_0}/\os{\circ}{T}*}
((\ol{F}\otimes_{{\cal O}_{Y_{\os{\circ}{T}_0}/\os{\circ}{T}}}\ol{F}{}')
\langle u \rangle).
\tag{11.0.6}\label{ali:ttube} 
\end{align*} 
Here note that 
$\wt{R}u_{Y_{\os{\circ}{T}_0}/\os{\circ}{T}*}(\ol{F}\langle u \rangle)$ 
is bounded above by (\ref{cd:hirfd}) and hence $\otimes^L_{{\cal O}_T}$ in 
(\ref{ali:ttube}) is well-defined.  
%We call this morphism the cup product and we denote it by $(?\cup ?)$. 

\begin{prop}\label{prop:cmc} 
\begin{align*} 
&N_{\rm zar}\circ (~\cup ~)=
N_{\rm zar}\cup {\rm id}+{\rm id}\cup N_{\rm zar} 
\tag{11.1.1}\label{cd:efntt}\\
\end{align*} 
on 
$\wt{R}u_{Y_{\os{\circ}{T}_0}/\os{\circ}{T}*}(\ol{F}\langle u \rangle) 
\otimes^L_{g^{-1}({\cal O}_T)}
\wt{R}u_{Y_{\os{\circ}{T}_0}/\os{\circ}{T}*}(\ol{F}{}'\langle u \rangle)$. 
\end{prop}
\begin{proof} 
The problem is local. 
%Hence we may assume that 
%$L_{S(T)^{\nat}}={\cal O}_T\langle u\rangle$. 
By (\ref{theo:monf}) it suffices to prove that 
\begin{equation*} 
-d/du(\om\wedge \om')=(-d/du(\om))\wedge \om'+\om \wedge (-d/du(\om')) 
\tag{11.1.2}\label{eqn:dud}
\end{equation*} 
for local sections 
$\om \in s({\cal O}_T\langle u \rangle
\otimes_{{\cal O}_T}{\cal F}^{\bul}
\otimes_{{\cal O}_{{\cal Q}^{\rm ex}_{\bul}}}
\Om^{\bul}_{{\cal Q}^{\rm ex}_{\bul}/\os{\circ}{T}})$ 
and 
$\om' \in s({\cal O}_T\langle u \rangle
\otimes_{{\cal O}_T}{\cal F}'{}^{\bul}
\otimes_{{\cal O}_{{\cal Q}^{\rm ex}_{\bul}}}
\Om^{\bul}_{{\cal Q}^{\rm ex}_{\bul}/\os{\circ}{T}})$. 
This immediately follows from the following elementary formula 
$$\dfrac{(i+i'-1)!}{(i-1)!i'!}+\dfrac{(i+i'-1)!}{i!(i-1)'!}
=\dfrac{(i+i')!}{i!i'!}\quad (i,i'\in {\mab Z}_{\geq 1}).$$ 
\end{proof}

\begin{prop}\label{prop:cmpp}
Assume that $\wt{R}u_{Y_{\os{\circ}{T}_0}/\os{\circ}{T}*}(\ol{F})$ and 
$\wt{R}u_{Y_{\os{\circ}{T}_0}/\os{\circ}{T}*}(\ol{F}{}')$ are bounded above. 
Let $\ol{F}{}''$ be an analogous sheaf to $\ol{F}{}'$. 
Then 
\begin{align*} 
(?\cup ?)\cup ?=?\cup (?\cup ?) 
\tag{11.2.1}\label{eqn:dutmbd}
\end{align*} 
on 
$\wt{R}u_{Y_{\os{\circ}{T}_0}/\os{\circ}{T}*}(\ol{F}\langle u \rangle) 
\otimes^L_{g^{-1}({\cal O}_T)}
\wt{R}u_{Y_{\os{\circ}{T}_0}/\os{\circ}{T}*}(\ol{F}{}'\langle u \rangle)
\otimes^L_{g^{-1}({\cal O}_T)}
\wt{R}u_{Y_{\os{\circ}{T}_0}/\os{\circ}{T}*}(\ol{F}{}''\langle u \rangle)$. 
\end{prop}
\begin{proof} 
The problem is local. Let ${\cal F}''{}^{\bul}$ be an analogous sheaf to ${\cal F}'^{\bul}$. 
%We may assume that $L_{S(T)^{\nat}}={\cal O}_T\langle u\rangle$. 
It suffices to prove that 
\begin{equation*} 
(\om\wedge \om')\wedge \om''=\om\wedge (\om'\wedge \om'')
\tag{11.2.2}\label{eqn:dutd}
\end{equation*} 
for $\om \in {\cal O}_T\langle u \rangle
\otimes_{{\cal O}_T}{\cal F}^{\bul}
\otimes_{{\cal O}_{{\cal Q}{}^{\rm ex}_{\bul}}}
\Om^{\bul}_{{\cal Q}{}^{\rm ex}_{\bul}/\os{\circ}{T}}$, 
$\om' \in {\cal O}_T\langle u \rangle
\otimes_{{\cal O}_T}{\cal F}'{}^{\bul}
\otimes_{{\cal O}_{{\cal Q}{}^{\rm ex}_{\bul}}}
\Om^{\bul}_{{\cal Q}{}^{\rm ex}_{\bul}/\os{\circ}{T}}$
and 
$\om'' \in {\cal O}_T\langle u \rangle
\otimes_{{\cal O}_T}{\cal F}''{}^{\bul}
\otimes_{{\cal O}_{{\cal Q}{}^{\rm ex}_{\bul}}}
\Om^{\bul}_{{\cal Q}{}^{\rm ex}_{\bul}/\os{\circ}{T}}$. 
This is obvious. 
\end{proof}

\par 
Let us define the cup product of 
$(H_{\rm zar}(X_{\os{\circ}{T}_0}/S(T)^{\nat},E),P)$ as in 
the cup product on the cohomology of singular cochains in topology: 
the cup product using the Alexander-Whitney map.  
Fujisawa's explanation \cite[(2.9), (2.10), (2.11)]{fup} is convenient for us. 
\par 
Let $E'$ be an analogous crystal of 
${\cal O}_{\os{\circ}{X}/\os{\circ}{S}}$-modules to $E$.  
First note that $(H_{\rm zar}(X_{\os{\circ}{T}_0}/S(T)^{\nat},E),P)$ is bounded above 
since $\os{\circ}{X}_{T_0}$ is quasi-compact ((\ref{prop:baf})). 
Next take an element $\ul{\lam}=\{\lam_0,\ldots,\lam_{l+l'}\}\in P(\Lam)$ 
$(\lam_0 < \cdots <\lam_{l+l'})$.   
%and $\ul{\mu}=(\mu_0,\ldots,\mu_{l+l'})$. 
Then we have exact closed immersions 
$\iota: {\cal P}_{(\lam_0,\ldots, \lam_{l+l'})} 
\os{\sus}{\lo} {\cal P}_{(\lam_0,\ldots, \lam_l)}$ 
and 
$\iota' \col {\cal P}_{(\lam_0,\ldots, \lam_{l+l'})} 
\os{\sus}{\lo} {\cal P}_{(\lam_l,\ldots, \lam_{l+l'})}$.
%${\cal P}_{(\mu_l,\ldots, \mu_{l+l'})} 
%\os{\sus}{\lo} {\cal P}_{(\mu_l,\ldots, \mu_{l+l'})}$.  
\par 
The morphism 
\begin{equation*} 
\Om^i_{{\cal P}^{\rm ex}_{(\lam_0,\ldots, \lam_l)\bul}/\os{\circ}{T}}
\otimes_{{\cal O}_T}
\Om^{i'}_{{\cal P}^{\rm ex}_{(\lam_l,\ldots, \lam_{l+l'})\bul}/\os{\circ}{T}}
\owns \om\otimes \om'\lom
(-1)^{il'}\iota^*(\om)\wedge \iota'{}^*(\om')\in 
\Om^{i+i'}_{{\cal P}^{\rm ex}_{(\lam_0,\ldots, \lam_{l+l'})\bul}/\os{\circ}{T}}
\tag{11.2.3}\label{eqn:dpcpps}
\end{equation*} 
induces the following natural morphism 
\begin{equation*} 
({\cal E}^{\bul}\otimes_{{\cal O}_{{\cal P}^{\rm ex}_{\bul}}}
\Om^i_{{\cal P}^{{\rm ex},{(l)}}_{\bul}/S(T)^{\nat}})
\otimes_{{\cal O}_T}
({\cal E}'{}^{\bul}
\otimes_{{\cal O}_{{\cal P}^{\rm ex}_{\bul}}}
\Om^{i'}_{{\cal P}^{{\rm ex},{(l')}}_{\bul}/S(T)^{\nat}}) 
\lo 
({\cal E}^{\bul}\otimes_{{\cal O}_{{\mathfrak D}_{\bul}}}{\cal E}{}'^{\bul})
\otimes_{{\cal O}_{{\cal P}^{\rm ex}_{\bul}}}
\Om^{i+i'}_{{\cal P}_{\bul}^{{\rm ex},{(l+l')}}/S(T)^{\nat}}. 
\tag{11.2.4}\label{eqn:dcps}
\end{equation*} 
Here the sign $(-1)^{il'}$ arises naturally from our convention on the boundary morphisms  
in (\ref{cd:pppoex}) in order that 
the morphism (\ref{eqn:dps}) below is a morphism of complexes. 
(We leave the reader to check that 
the morphism (\ref{eqn:dps}) is indeed a morphism of complexes, 
which is slightly tedious.)
This induces the following morphism by \cite[(2.10)]{fup}
\begin{equation*} 
s({\cal E}^{\bul}
\otimes_{{\cal O}_{{\cal P}_{\bul}^{\rm ex}}}
\Om^{\bul}_{{\cal P}_{\bul}^{{\rm ex},{(\bul)}}/S(T)^{\nat}})
\otimes_{{\cal O}_T}
s({\cal E}'{}^{\bul}
\otimes_{{\cal O}_{{\cal P}_{\bul}^{\rm ex}}}
\Om^{\bul}_{{\cal P}_{\bul}^{{\rm ex},{(\bul)}}/S(T)^{\nat}})
\lo 
s({\cal E}^{\bul}\otimes_{{\cal O}_{{\mathfrak D}_{\bul}}}{\cal E}{}'^{\bul}
\otimes_{{\cal O}_{{\cal P}^{\rm ex}_{\bul}}}
\Om^{\bul}_{{\cal P}_{\bul}^{{\rm ex},{(\bul)}}/S(T)^{\nat}}). 
\tag{11.2.5}\label{eqn:dps}
\end{equation*} 
The morphisms (\ref{eqn:dps}) and (\ref{eqn:dupus}) induce the following morphism 
\begin{align*} 
&s({\cal E}^{\bul}
\otimes_{{\cal O}_{{\cal P}^{{\rm ex},(\bul)}_{\bul}}}
\Om^{\bul}_{{\cal P}^{{\rm ex},(\bul)}/\os{\circ}{T}}\langle u \rangle)
\otimes 
s({\cal E}{}'^{\bul}
\otimes_{{\cal O}_{{\cal P}^{{\rm ex},(\bul)}_{\bul}}}
\Om^{\bul}_{{\cal P}^{{\rm ex},(\bul)}_{\bul}/\os{\circ}{T}}\langle u\rangle) 
\tag{11.2.6}\label{ali:peb}\\
&\lo s({\cal E}^{\bul}\otimes_{{\cal O}_{{\mathfrak D}_{\bul}}}{\cal E}{}'^{\bul}
\otimes_{{\cal O}_{{\cal P}^{{\rm ex},(\bul)}_{\bul}}}
\Om^{\bul}_{{\cal P}^{{\rm ex},(\bul)}/\os{\circ}{T}}\langle u \rangle). 
\end{align*} 
By the definitions (\ref{eqn:dfhp}) and (\ref{defi:mrt}) (5) (cf.~\cite[(2.12)]{fup}), 
this morphism induces the following filtered morphism 
\begin{align*} 
&(s({\cal E}^{\bul}
\otimes_{{\cal O}_{{\cal P}^{{\rm ex},(\bul)}_{\bul}}}
\Om^{\bul}_{{\cal P}^{{\rm ex},(\bul)}/\os{\circ}{T}}\langle u \rangle),P)
\otimes 
(s({\cal E}{}'^{\bul}
\otimes_{{\cal O}_{{\cal P}^{{\rm ex},(\bul)}_{\bul}}}
\Om^{\bul}_{{\cal P}^{{\rm ex},(\bul)}_{\bul}/\os{\circ}{T}}\langle u \rangle),P)
\tag{11.2.7}\label{ali:peab}\\
&\lo (s({\cal E}^{\bul}\otimes_{{\cal O}_{{\mathfrak D}_{\bul}}}{\cal E}{}'^{\bul}
\otimes_{{\cal O}_{{\cal P}^{{\rm ex},(\bul)}_{\bul}}}
\Om^{\bul}_{{\cal P}^{{\rm ex},(\bul)}/\os{\circ}{T}}\langle u \rangle),P). 
\end{align*} 
The morphism (\ref{ali:peab}) induces the following morphism 
\begin{align*} 
& (H_{\rm zar}(X_{\os{\circ}{T}_0}/S(T)^{\nat},E),P)
\otimes^L_{f^{-1}({\cal O}_T)}
(H_{\rm zar}(X_{\os{\circ}{T}_0}/S(T)^{\nat},E'),P)
& \tag{11.2.8}\label{ali:te} \\
& \lo (H_{\rm zar}(X_{\os{\circ}{T}_0}/S(T)^{\nat},
E\otimes_{{\cal O}_{\os{\circ}{X}_{T_0}/T}}E'),P).
\end{align*} 

\begin{defi}\label{defi:chp}
We call the morphism (\ref{ali:te}) 
the {\it cup product} of $(H_{\rm zar},P)$ 
and we denote it by $~\cup ~$. 
\end{defi} 

\begin{prop}\label{prop:npp}
The following diagram is commutative$:$   
\begin{equation*} 
\begin{CD} 
Ru_{X_{\os{\circ}{T}_0}/S(T)^{\nat}*}
(\eps_{X_{\os{\circ}{T}_0}/S(T)^{\nat}}^*(E))
\otimes^L_{f^{-1}({\cal O}_T)}
Ru_{X_{\os{\circ}{T}_0}/S(T)^{\nat}*}
(\eps_{X_{\os{\circ}{T}_0}/S(T)^{\nat}}^*(E'))
@>{\cup }>>  \\
@A{\simeq}AA  \\
\wt{R}u_{X_{\os{\circ}{T}_0}/\os{\circ}{T}*}(
\eps_{X_{\os{\circ}{T}_0}/\os{\circ}{T}}^*(E)  \langle u \rangle)
\otimes^L_{f^{-1}({\cal O}_T)}
\wt{R}u_{X_{\os{\circ}{T}_0}/\os{\circ}{T}*}(
\eps_{X_{\os{\circ}{T}_0}/\os{\circ}{T}}^*(E')\langle u \rangle)
@>{\cup }>>  \\
@V{\simeq}VV \\
H_{\rm zar}(X_{\os{\circ}{T}_0}/S(T)^{\nat},E)
\otimes^L_{f^{-1}({\cal O}_T)}
H_{\rm zar}(X_{\os{\circ}{T}_0}/S(T)^{\nat},E')
@>{\cup }>> 
\end{CD}
\tag{11.4.1}\label{cd:xtt}
\end{equation*}
\begin{equation*} 
\begin{CD} 
Ru_{X_{\os{\circ}{T}_0}/S(T)^{\nat}*}
(\eps_{X_{\os{\circ}{T}_0}/S(T)^{\nat}}^*
(E\otimes_{{\cal O}_{\os{\circ}{X}_{T_0}/\os{\circ}{T}}}E'))\\
@AA{\simeq}A \\
\wt{R}u_{X_{\os{\circ}{T}_0}/\os{\circ}{T}*}(
\eps_{X_{\os{\circ}{T}_0}/\os{\circ}{T}}^*
(E\otimes_{{\cal O}_{\os{\circ}{X}_{T_0}/\os{\circ}{T}}}E')\langle u \rangle)\\
@VV{\simeq}V \\
H_{\rm zar}(X_{\os{\circ}{T}_0}/S(T)^{\nat},
E\otimes_{{\cal O}_{\os{\circ}{X}_{T_0}/\os{\circ}{T}}}E').
\end{CD}
\end{equation*}
\end{prop}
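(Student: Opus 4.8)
The plan is to verify the commutativity at the level of the explicit complexes that compute all three rows and then to apply $R\pi_{\rm zar*}$. All three cup products in the diagram are induced by chain-level morphisms built from the same ingredients: the wedge product of the log de Rham complexes, the tensor product ${\cal E}^{\bul}\otimes_{{\cal O}_{{\mathfrak D}_{\bul}}}{\cal E}'^{\bul}$ of the modules with connection, and---for the two rows involving $\langle U_{S(T)^{\nat}}\rangle$---the multiplication $u^{[i]}\otimes u^{[i']}\lom \binom{i+i'}{i}u^{[i+i']}$ on $\Gam_{{\cal O}_T}(L_{S(T)^{\nat}})$ of (\ref{eqn:dups}). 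The bottom row carries in addition the Alexander--Whitney sign $(-1)^{il'}$ of (\ref{eqn:dps}), following \cite[(2.9)--(2.12)]{fup}. Likewise, both vertical isomorphisms are the derived direct images of honest morphisms of complexes, so the whole verification takes place before passing to $D^+(f^{-1}({\cal O}_T))$.

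First I would record that the upper vertical isomorphism (from (\ref{prop:nbqi}), i.e.\ (\ref{eqn:exte})) is $R\pi_{\rm zar*}$ of the quasi-isomorphism ${\cal F}\otimes\Om^{\bul}_{{\cal Q}^{\rm ex}_{\bul}/\os{\circ}{T}}\langle U_{S(T)^{\nat}}\rangle \lo {\cal F}\otimes\Om^{\bul}_{{\cal Q}^{\rm ex}_{\bul}/S(T)^{\nat}}$ of (\ref{theo:qii}), which is the augmentation sending $u^{[0]}=1\lom 1$ and $u^{[i]}\lom 0$ for $i\geq 1$ (since $d\log\tau=0$ in $\Om^{\bul}_{/S(T)^{\nat}}$). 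This augmentation is a morphism of DGA's: under the multiplication (\ref{eqn:dups}) the product $(u^{[i]}\otimes\om)\cup(u^{[i']}\otimes\om')=\binom{i+i'}{i}u^{[i+i']}\otimes(\om\wedge\om')$ is sent to $\om\wedge\om'$ exactly when $i=i'=0$ and to $0$ otherwise, which matches the wedge product of the images of the two factors. Hence the upper square already commutes at the chain level.

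Next I would treat the lower square, whose vertical isomorphism (from (\ref{prop:naqi}), (\ref{prop:ncqi})) is $R\pi_{\rm zar*}$ of the \v{C}ech augmentation placing $\Om^{\bul}_{{\cal P}^{\rm ex}_{\bul}/\os{\circ}{T}}\langle U_{S(T)^{\nat}}\rangle$ in the semi-cosimplicial degree $-1$ of $s(\Om^{\bul}_{{\cal P}^{{\rm ex},(\bul)}_{\bul}/\os{\circ}{T}}\langle U_{S(T)^{\nat}}\rangle)$, the exactness being (\ref{prop:sil}). The cup product on the single complex is the one built by the Alexander--Whitney map of (\ref{eqn:dps}) and (\ref{ali:peb}); the augmentation is the inclusion of the augmenting column, on which the Alexander--Whitney product restricts to the ordinary wedge product, so it is again compatible with the products. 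Since $\pi_{\rm zar}$ is the projection from a \v{C}ech diagram and $R\pi_{\rm zar*}$ is computed by the canonical (Godement) flasque resolution, which is lax monoidal, the chain-level cup products and the chain-level vertical morphisms are carried compatibly into $D^+(f^{-1}({\cal O}_T))$; combining this with the upper square gives the commutativity of the whole diagram.

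The main obstacle is the sign-chasing in the lower square: one must confirm that the Alexander--Whitney signs $(-1)^{il'}$ of (\ref{eqn:dps}), the boundary-sign convention (15) of the semi-cosimplicial double complex (\ref{cd:pppoex}), and the PD-comultiplication (\ref{eqn:dups}) fit together so that (\ref{ali:peb}) is a genuine morphism of complexes and restricts to the wedge product on the augmenting column. This is a routine but delicate verification, of exactly the type carried out by Fujisawa in \cite[(2.9)--(2.12)]{fup}, and I would carry it out locally on $X_{\os{\circ}{T}_0}$ where ${\cal P}^{\rm ex}_{\bul}$ is explicit.
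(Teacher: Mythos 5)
Your proof is correct and is exactly the verification the paper leaves implicit (its own proof reads ``This is obvious''): all three cup products and both vertical maps are defined at the chain level from the wedge product, the PD-comultiplication (\ref{eqn:dups}), and the Alexander--Whitney map, and the augmentations $u^{[i]}\mapsto\delta_{i0}$ and the inclusion of the augmenting \v{C}ech column are morphisms of dga's, so commutativity holds before applying $R\pi_{{\rm zar}*}$. Nothing further is needed.
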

\begin{proof} 
This is obvious by the constructions of the morphisms (\ref{eqn:efte}) and 
(\ref{ali:te}). 
\end{proof}

\begin{prop}\label{prop:cmstc}
Assume that $\os{\circ}{X}$ is quasi-compact. 
Then 
\begin{align*} 
&N_{\rm zar}\circ (~\cup ~)=N_{\rm zar}\cup {\rm id}+{\rm id}\cup N_{\rm zar}\tag{11.5.1}
\label{cd:efidtt}\\
\end{align*} 
%\begin{align*} 
%&-d/du(?\cup ?)=(-d/du)\cup {\rm id}+{\rm id}\cup (-d/du) 
%\col \tag{11.4.1}\label{cd:efidtt}\\
%\end{align*} 
on 
$H_{\rm zar}(X_{\os{\circ}{T}_0}/S(T)^{\nat},E)
\otimes^L_{f^{-1}({\cal O}_T)}
H_{\rm zar}(X_{\os{\circ}{T}_0}/S(T)^{\nat},E')$. 
\end{prop}
\begin{proof} 
The proof of this proposition is similar to that of (\ref{prop:cmc}). 
\end{proof}

%\begin{rema}\label{rema:pn}
%\end{rema} 

\begin{prop}\label{prop:eee} 
Assume that $\os{\circ}{X}_{T_0}$ is quasi-compact. 
Let $E''$ be an analogous sheaf to $E$. 
Then 
\begin{align*} 
(?\cup ?)\cup ?=?\cup (?\cup ?) 
\tag{11.6.1}\label{eqn:dutmd}
\end{align*} 
on 
$H_{\rm zar}(X_{\os{\circ}{T}_0}/S(T)^{\nat},E) 
\otimes^L_{f^{-1}({\cal O}_T)} 
H_{\rm zar}(X_{\os{\circ}{T}_0}/S(T)^{\nat},E')
\otimes^L_{f^{-1}({\cal O}_T)} 
H_{\rm zar}(X_{\os{\circ}{T}_0}/S(T)^{\nat},E'')$. 
\end{prop}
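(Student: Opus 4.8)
The plan is to establish the identity (\ref{eqn:dutmd}) as a \emph{strict} equality at the level of the explicit complexes that define $?\cup?$, and then to let it descend to the derived category ${\rm D}^+{\rm F}(f^{-1}({\cal O}_T))$. Recall that the cup product on $(H_{\rm zar}(X_{\os{\circ}{T}_0}/\os{\circ}{T},E),P)$ is obtained by applying $R\pi_{{\rm zar}*}$ (computed through multiplicative flasque resolutions) to the morphism (\ref{ali:peb}), which is itself assembled from four elementary operations acting on separate tensor factors: the tensor product ${\cal E}^{\bul}\otimes_{{\cal O}_{{\mathfrak D}_{\bul}}}{\cal E}'^{\bul}$ of the crystals, the wedge product of the log de Rham complexes, the divided power product (\ref{eqn:dupus}) on $\Gam_{{\cal O}_T}(L_{S(T)^{\nat}})$, and the Alexander--Whitney front/back-face contraction (\ref{eqn:dps}) on the semi-cosimplicial degree carrying the sign $(-1)^{il'}$. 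Because these four operations live on mutually independent factors, the associativity of $?\cup?$ reduces to the associativity of each of them together with the compatibility of the Alexander--Whitney signs.

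First I would note that the tensor product of crystals and the wedge product of log differential forms are associative on the nose, exactly as in the proof of (\ref{prop:cmpp}). Next, the divided power product $u^{[i]}\cdot u^{[i']}=\binom{i+i'}{i}u^{[i+i']}$ on $\Gam_{{\cal O}_T}(L_{S(T)^{\nat}})$ is associative, since for $u^{[a]},u^{[b]},u^{[c]}$ both bracketings equal $\{(a+b+c)!/(a!\,b!\,c!)\}\,u^{[a+b+c]}$ by the elementary identity $\binom{a+b}{a}\binom{a+b+c}{a+b}=\binom{b+c}{b}\binom{a+b+c}{b+c}$; this is the standard associativity of a divided power algebra and, as in (\ref{eqn:dupus}), it is independent of the choice of the local generator. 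Finally, for the semi-cosimplicial part I would use that the Alexander--Whitney contraction is strictly associative: for semi-cosimplicial degrees $l,l',l''$ and an index tuple $(\lam_0,\ldots,\lam_{l+l'+l''})$, both bracketings restrict the three factors to the very same front block $(\lam_0,\ldots,\lam_l)$, middle block $(\lam_l,\ldots,\lam_{l+l'})$ and back block $(\lam_{l+l'},\ldots,\lam_{l+l'+l''})$, so the underlying face morphisms $\iota,\iota'$ of (\ref{eqn:dps}) produce identical pullbacks (cf.~\cite{fup}).

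The main obstacle is the bookkeeping of the Alexander--Whitney signs, and this is the point I would verify explicitly. Writing $(i,l)$, $(i',l')$, $(i'',l'')$ for the (form degree, semi-cosimplicial degree) of three representative cochains, the bracketing $(?\cup?)\cup?$ accumulates the total sign $(-1)^{il'}\cdot(-1)^{(i+i')l''}=(-1)^{il'+il''+i'l''}$, while $?\cup(?\cup?)$ accumulates $(-1)^{i'l''}\cdot(-1)^{i(l'+l'')}=(-1)^{il'+il''+i'l''}$; the two exponents coincide, so the signs are compatible and the two triple products agree as morphisms of the underlying complexes. Since the product is defined strictly on the chosen multiplicative representatives, and since $R\pi_{{\rm zar}*}$ together with the derived tensor $\otimes^L_{f^{-1}({\cal O}_T)}$ are computed through these same resolutions, this strict equality descends to the asserted identity (\ref{eqn:dutmd}) in ${\rm D}^+{\rm F}(f^{-1}({\cal O}_T))$, completing the argument.
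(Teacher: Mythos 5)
Your proposal is correct and follows essentially the same route as the paper, which simply reduces to checking associativity of the defining products on local representatives (the paper refers back to the proof of (\ref{prop:cmpp}) and declares the cochain-level identity obvious). The only difference is that you spell out the divided-power identity and the Alexander--Whitney sign bookkeeping $(-1)^{il'+(i+i')l''}=(-1)^{i'l''+i(l'+l'')}$ explicitly, which the paper leaves implicit; both verifications are accurate.
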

\begin{proof} 
The proof of this proposition is the same as that of (\ref{prop:cmpp}). 
\end{proof} 

\par 
Let $q$ and $q'$ be integers. 
Then the cup product 
\begin{align*} 
\cup \col &
R^qf_{X_{\os{\circ}{T}_0}/S(T)^{\nat}*}(\eps_{X_{\os{\circ}{T}_0}/S(T)^{\nat}}^*
(E))\otimes_{{\cal O}_T}
R^{q'}f_{X_{\os{\circ}{T}_0}/S(T)^{\nat}*}(\eps_{X_{\os{\circ}{T}_0}/S(T)^{\nat}}^*(E'))
\tag{11.6.2}\label{ali:oop}\\
& \lo 
R^{q+q'}f_{X_{\os{\circ}{T}_0}/S(T)^{\nat}*}(\eps_{X_{\os{\circ}{T}_0}
/S(T)^{\nat}}^*(E\otimes_{{\cal O}_{\os{\circ}{X}_{T_0}/\os{\circ}{T}}}E'))
\end{align*}  
is compatible with $P$'s. 
That is, the morphism above induces the following morphism 
\begin{align*} 
\cup  \col &
P_kR^qf_{X_{\os{\circ}{T}_0}/S(T)^{\nat}*}(\eps_{X_{\os{\circ}{T}_0}/S(T)^{\nat}}^*
(E))\otimes_{{\cal O}_T}
P_{k'}R^{q'}f_{X_{\os{\circ}{T}_0}/S(T)^{\nat}*}(\eps_{X_{\os{\circ}{T}_0}/S(T)^{\nat}}^*(E'))
\tag{11.6.3}\label{ali:otp}\\
& \lo 
P_{k+k'}R^{q+q'}f_{X_{\os{\circ}{T}_0}/S(T)^{\nat}*}(\eps_{X_{\os{\circ}{T}_0}/S(T)^{\nat}}^*
(E\otimes_{{\cal O}_{\os{\circ}{X}_{T_0}/\os{\circ}{T}}}E')). 
\end{align*}

\section{Infinitesimal deformation invariance of Hirsch weight-filtered log crystalline complexes 
and Hirsch weight-filtered log crystalline dga's}\label{sec:infhdi}  
In this section we prove the infinitesimal deformation invariance 
of the pull-back of a morphism of SNCL schemes on filtered Hirsch extensions. 
%log isocrystalline cohomologies with weight filtrations.  
%These are nontrivial filtered log versions of the infinitesimal deformation invariances  
%for isocrystalline cohomological complexes and isocrystalline cohomologies in \cite{boi}. 
As in \cite{boi}, to prove the invariance, we use Dwork's trick for enlarging 
the radius of convergence of log $F$-isocrystals by the use of the relative Frobenius.  
Precisely speaking, in our case as in \cite{nb}, 
we use the base change by the iteration of the {\it abrelative} 
Frobenius morphism (not (usual) relative Frobenius morphism) of the base scheme. 
%The notion of the truncated simplicial base change of SNCL schemes 
%and admissible immersions defined in \S\ref{sec:bcsncl} 
%gives us an appropriate framework. 
\par 
For a filtered complex $(K,P)$ in the derived category of filtered complexes 
(\cite{nh2}), denote $(K,P)\otimes_{\mab Z}^L{\mab Q}$ by  $(K,P)_{\mab Q}$ 
for simplicity of notation.

\par 
The following is a main result in this section.   

\begin{theo}[{\bf Infinitesimal deformation invariance of the pull-back of a morphism on 
Hirsch weight-filtered log crystalline complexes}]
\label{theo:definv}
Let $\star$ be nothing  or $\prime$. 
Let $n$ be a positive integer.  
Let $S^{\star}$ be a family of log points. 
Assume that $S^{\star}$ is of characteristic $p>0$. 
Let $F_{S^{\star}}\col S^{\star}\lo S^{\star}$ be the absolute Frobenius endomorphism. 
Set $S^{{\star}[p^n]}:=
S^{\star}\times_{\os{\circ}{S}{}^{\star},\os{\circ}{F}{}^n_{S^{\star}}}\os{\circ}{S}{}^{\star}$. 
Let $(T^{\star},{\cal J}^{\star},\del^{\star})$ be 
a log $p$-adic formal PD-thickening of $S^{\star}$. 
Set $T^{\star}_0:=T^{\star}~{\rm mod}~{\cal J}^{\star}$. 
Let $f^{\star} \col X^{\star}_{} \lo S^{\star}$ be an SNCL scheme over $S^{\star}$. 
Assume that $\os{\circ}{X}{}^{\star}_{T_0}$ is quasi-compact. 
Let $\iota^{\star} \col T^{\star}_0(0) \os{\subset}{\lo} T^{\star}_0$  
be an exact closed nilpotent immersion.  
Set $S^{\star}_{\os{\circ}{T}{}^{\star}_{0}}
:=S^{\star}\times_{\os{\circ}{S}{}^{\star}}\os{\circ}{T}{}^{\star}_0$ 
and 
$S^{\star}_{\os{\circ}{T}{}^{\star}_0(0)}:=S^{\star}
\times_{\os{\circ}{S}{}^{\star}}\os{\circ}{T}{}^{\star}_0(0)$ 
and $X^{\star}_{\os{\circ}{T}_0}:=
X^{\star}_{}\times_{S^{\star}}S^{\star}_{\os{\circ}{T}{}^{\star}_0}$,  
$X^{\star}_{\os{\circ}{T}{}^{\star}_0}(0)
:=X^{\star}\times_{S^{\star}}S^{\star}_{\os{\circ}{T}{}^{\star}_0(0)}$. 
Set $X^{\star}{}^{[p^n]}_{}:=
X^{\star}_{}\times_{S^{\star}}S^{{\star}[p^n]}$ 
and $X^{\star}{}^{[p^n]}_{\! \! \!\os{\circ}{T}_0}:=
X^{\star}{}^{[p^n]}\times_{S^{{\star}[p^n]}}S^{{\star}[p^n]}_{\os{\circ}{T}_0}$. 
$($Note that the underlying scheme of 
$X^{\star}{}^{[p^n]}_{\! \! \!\os{\circ}{T}_0}$ is equal to that of 
$X^{\star}_{\os{\circ}{T}_0}$ and that we have the log scheme 
$S^{{\star}[p^n]}_{\os{\circ}{T}_0}$ by using the composite morphism 
$T_0\lo S^{\star}\lo S^{\star [p^n]}$, 
where 
the morphism $S^{\star}\lo S^{\star[p^n]}$ is the composite of 
the abrelative Frobenius morphisms of $S^{[p^m]\star}$ $(0\leq m\leq n-1)$ 
{\rm ((\ref{exem:abfd})))}.  
%Let $X^{\star}{}^{[p^n]}_{\! \! \!\os{\circ}{T}_0}$ 
%be the \v{C}ech diagram of the 
%disjoint union of the members of 
%an affine simplicial open covering of $X^{\star}{}^{[p^n]}_{\! \! \!\os{\circ}{T}_0}$ 
%obtained by that of $X^{\star}_{\os{\circ}{T}_0}$. 
Let $n$ be a positive integer such that the pull-back morphism 
$F^{n*}_{T^{\star}_0}\col 
{\cal O}_{T^{\star}_0}\lo {\cal O}_{T^{\star}_0}$ 
kills ${\rm Ker}({\cal O}_{T^{\star}_0}\lo {\cal O}_{T^{\star}_0(0)})$.
Here $S^{\star[p^n]}(T^{\star})$ is defined by 
the composite morphism $T_0\lo S^{\star}\lo S^{\star[p^n]}$. 
Let 
\begin{align*} 
g_0 \col X_{\os{\circ}{T}_0}(0) \lo X'_{\os{\circ}{T}{}'_0}(0)
\tag{12.1.1}\label{eqn:ldehtvn}
\end{align*}   
be a morphism of log schemes over $S(T)^{\nat}\lo S'(T')^{\nat}$ 
satisfying the conditions {\rm (8.1.6)}  and {\rm (8.2.6)} 
for 
$X_{\os{\circ}{T}_0}(0)$ and $X'_{\os{\circ}{T}{}'_0}(0)$. 
Set $X^{\star}_{T^{\star}_0}
:=X^{\star}_{}\times_{S^{\star}}T^{\star}_0$. 
Then the following hold$:$ 
\par 
$(1)$ There exist a canonical filtered morphism
\begin{align*}
g^*_{0} &: 
(H_{\rm zar}(X'_{\os{\circ}{T}{}'_0}/S'(T')^{\nat}),P)_{\mab Q} 
\lo 
Rg_{0*}((H_{\rm zar}(X_{\os{\circ}{T}_0}/S(T)^{\nat}),P)_{\mab Q}). 
\tag{12.1.2}\label{eqn:ldefinvn}
\end{align*}  
and a canonical morphism 
\begin{align*} 
g^*_0\col 
Ru_{X'{}^{(\star)}_{\os{\circ}{T}{}'_0}/S'(T')^{\nat}*}
({\cal O}_{X'{}^{(\star)}_{\os{\circ}{T}{}'_0}/S'(T')^{\nat}})_{\mab Q} 
\lo Rg_{0*}(Ru_{X^{(\star)}_{\os{\circ}{T}_0}/S(T)^{\nat}*}
({\cal O}_{X^{(\star)}_{\os{\circ}{T}_0}/S(T)^{\nat}})_{\mab Q}) 
\tag{12.1.3}\label{ali:ldtun}
\end{align*} 
fitting into the following commutative diagram
\begin{equation*} 
\begin{CD}
H_{\rm zar}(X'_{\os{\circ}{T}{}'_0}/S'(T')^{\nat})_{\mab Q} 
@>{g^*_{0}}>>
Rg_{0*}((H_{\rm zar}(X_{\os{\circ}{T}_0}/S(T)^{\nat}))_{\mab Q})\\
@V{\simeq}VV @VV{\simeq}V \\
Ru_{X'{}^{(\star)}_{\os{\circ}{T}{}'_0}/S'(T')^{\nat}*}
({\cal O}_{X'^{(\star)}_{\os{\circ}{T}{}'_0}/S'(T')^{\nat}})_{\mab Q} 
@>{g^*_{0}}>>
Rg_{0*}Ru_{X^{(\star)}_{\os{\circ}{T}_0}/S(T)^{\nat}*}
({\cal O}_{X^{(\star)}_{\os{\circ}{T}{}_0}/S(T)^{\nat}})_{\mab Q}\\ 
@A{\simeq}AA @AA{\simeq}A \\
Ru_{X'_{\os{\circ}{T}{}'_0}/S'(T')^{\nat}*}
({\cal O}_{X'_{\os{\circ}{T}{}'_0}/S'(T')^{\nat}})_{\mab Q} 
@>{g^*_{0}}>>
Rg_{0*}Ru_{X_{\os{\circ}{T}{}_0}/S(T)^{\nat}*}
({\cal O}_{X_{\os{\circ}{T}{}_0}/S(T)^{\nat}})_{\mab Q}. 
\end{CD}
\tag{12.1.4}\label{cd:ldtvn}
\end{equation*} 
Here the last horizontal morphism $g_0^*$ is the morphism constructed in 
{\rm \cite[(5.3.1)]{nb}}. 
\par 
$(2)$ Let $S''$, $(T'',{\cal J}'',\del'')$ and $\iota'' \col T''_0(0)\os{\subset}{\lo} T''_0$ 
be analogous objects to 
$S'$, $(T',{\cal J}',\del')$ and $\iota' \col T'_0(0)\os{\subset}{\lo} T'_0$, respectively.  
Let $g'_{0}\col X'_{\os{\circ}{T}{}'_0}(0)\lo 
X''_{\os{\circ}{T}{}''_0}(0)$ be 
a similar morphism to $g_{0}$. 
%Let $n'$ be a similar positive integer to $n$. 
Then 
\begin{align*} 
(g'_{0}\circ g_{0})^*
=Rg'_{0*}(g^*_{0})\circ g'{}^*_{\! \!0}\col 
Ru_{X''_{\os{\circ}{T}{}''_0}/S''(T'')^{\nat}*}
({\cal O}_{X''_{\os{\circ}{T}{}''_0}/S''(T'')^{\nat}})_{\mab Q} 
\lo 
R(g'_{0}\circ g_{0})_*Ru_{X_{\os{\circ}{T}{}_0}/S(T)^{\nat}*}
({\cal O}_{X_{\os{\circ}{T}{}_0}/S(T)^{\nat}})_{\mab Q}. 
\tag{12.1.5}\label{eqn:ldfilnvn}
\end{align*}
\par 
$(3)$ 
\begin{align*}
{\rm id}^*_{X_{T^{\star}_0}(0)}=
{\rm id}_{(H_{\rm zar}(X_{\os{\circ}{T}_0}/S(T)^{\nat})_{\mab Q},P)}.
\tag{12.1.6}\label{eqn:ldeoxnvn}
\end{align*}
\par 
$(4)$ If $g_{0}$ has a lift $g_{} \col X_{\os{\circ}{T}_0}
\lo X'_{\os{\circ}{T}{}'_0}$ over 
$S_{\os{\circ}{T}_0}\lo S'_{\os{\circ}{T}{}'_0}$ satisfying the conditions $(8.1.6)$ and $(8.2.6)$, 
then $g^*_{0}$ in $(\ref{eqn:ldefinvn})$ is equal to the induced morphism by $g^*_{}$ 
in $(\ref{ali:ccm})$ 
in $E^{}={\cal O}_{X_{\os{\circ}{T}{}_0}/S(T)^{\nat}}$
and 
$F^{}={\cal O}_{Y_{\os{\circ}{T}{}'_0}/S'(T')^{\nat}}$.
\end{theo}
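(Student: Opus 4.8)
The plan is to reduce everything to the construction of the filtered morphism (\ref{eqn:ldefinvn}) together with the isomorphism (\ref{ali:ldtun}) making (\ref{cd:ldtvn}) commute; once these are in hand, the transitivity (2), the normalization (3) and the compatibility with a lift (4) follow formally from the corresponding properties of the contravariant functoriality (\ref{theo:ccm}) and from \cite[(5.3.1)]{nb}. Indeed, by the comparison isomorphisms (\ref{eqn:eaxte}) and (\ref{eqn:eetxte}) it suffices to work throughout with the crystalline incarnation $Ru_{X_{\os{\circ}{T}_0}/S(T)^{\nat}*}({\cal O})_{\mab Q}$ of $H_{\rm zar}(X_{\os{\circ}{T}_0}/\os{\circ}{T})_{\mab Q}$: the bottom horizontal arrow of (\ref{cd:ldtvn}) is then literally the morphism $g_0^*$ of \cite[(5.3.1)]{nb}, while the two vertical identifications are (\ref{eqn:eaxte}) and (\ref{eqn:eetxte}), which are functorial by (\ref{theo:ccm}) (3).

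The genuine content lies in upgrading the morphism $g_0$, which a priori is defined only over the nilpotent thickening $T_0(0)\os{\sus}{\lo} T_0$, to a morphism over the full $T_0$. This is Dwork's trick, carried out exactly as in \cite{boi} and \cite[\S 5]{nb}. First I would record that the $n$-th iterate of the abrelative Frobenius $F^{\rm abr}_{X_{S_{\os{\circ}{T}_0}}/S(T)^{\nat}}\col X_{S_{\os{\circ}{T}_0}}\lo X^{[p^n]}_{S^{[p^n]}_{\os{\circ}{T}'_0}}$ satisfies the conditions (8.1.6) and (8.1.7), so that (\ref{theo:ccm}) endows it with a filtered pull-back on $(H_{\rm zar})_{\mab Q}$, and that this pull-back is a filtered isomorphism after $\otimes_{\mab Z}{\mab Q}$: the underlying morphism is an isomorphism because the absolute Frobenius of $\os{\circ}{X}$ is a universal homeomorphism, under which the log crystalline complex is invariant, while the Frobenius action $F^{\nat,{\rm abr}}_T(u^{[i]})=p^iu^{[i]}$ in the $U_{S(T)^{\nat}}$-direction becomes invertible rationally; the compatibility with $P$, up to the $D$-twist, is read off from the description of the graded pieces in (\ref{prop:ngr}).

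With this isomorphism available, I would choose $n$ so large that $F^{n*}_{T_0}$ annihilates ${\rm Ker}({\cal O}_{T_0}\lo {\cal O}_{T_0(0)})$, as permitted by the nilpotence of that ideal. Composing $g_0$ with the $n$-th abrelative Frobenius then produces a genuine morphism $g_n\col X_{\os{\circ}{T}_0}\lo X'^{[p^n]}_{\os{\circ}{T}'_0}$ over $S(T)^{\nat}\lo S'^{[p^n]}(T')^{\nat}$ satisfying (8.1.6) and (8.1.7): the Frobenius absorbs the discrepancy between the two infinitesimal levels precisely because $F^{n*}_{T_0}$ kills the nilpotent kernel. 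Applying (\ref{theo:ccm}) to $g_n$ gives $g_n^*\col (H_{\rm zar}(X'^{[p^n]}_{\os{\circ}{T}'_0}/\os{\circ}{T}'))_{\mab Q}\lo Rg_{0*}((H_{\rm zar}(X_{\os{\circ}{T}_0}/\os{\circ}{T}))_{\mab Q})$, and I would define (\ref{eqn:ldefinvn}) as the composite of $g_n^*$ with the inverse of the Frobenius isomorphism $(F^n)^*\col (H_{\rm zar}(X'_{\os{\circ}{T}'_0}))_{\mab Q}\os{\sim}{\lo}(H_{\rm zar}(X'^{[p^n]}_{\os{\circ}{T}'_0}))_{\mab Q}$.

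The main obstacle is well-definedness: that the resulting $g_0^*$ is independent of the auxiliary integer $n$, is a morphism of \emph{filtered} objects and not merely of underlying complexes, and agrees under (\ref{eqn:eaxte})--(\ref{eqn:eetxte}) with the crystalline $g_0^*$ of \cite[(5.3.1)]{nb}. Independence of $n$ holds because passing from $n$ to $n+1$ changes both $g_n$ and $(F^n)^*$ by one further abrelative Frobenius, whose pull-back is the identity after $\otimes_{\mab Z}{\mab Q}$ by the transitivity in (\ref{theo:ccm}) (2); the filtered compatibility and the Tate/$D$-twist bookkeeping are controlled by (\ref{prop:ngr}) and the boundary description (\ref{prop:bddes}). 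Parts (2) and (3) are then immediate from (\ref{theo:ccm}) (2) applied to the $g_n$'s, and part (4) follows by taking $g_n=F^n\circ g$ for the given lift $g$ and invoking the compatibility of (\ref{theo:ccm}) with $(F^n)^*$. In effect the argument is a transcription of \cite[\S 5]{nb} with the functoriality of $H_{\rm zar}$ in (\ref{theo:ccm}) replacing that of $A_{\rm zar}$.
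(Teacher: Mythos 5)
Your proposal follows essentially the same route as the paper's proof: the retraction $\os{\circ}{\rho}{}^{(n)}\col \os{\circ}{T}_0\lo \os{\circ}{T}_0(0)$ exists precisely because $F^{n*}_{T_0}$ kills the nilpotent kernel, so that $X^{[p^n]}_{\os{\circ}{T}_0}$ is a base change of $X_{\os{\circ}{T}_0}(0)$ along $\os{\circ}{\rho}{}^{(n)}$ and $g_0$ induces $g_0^{[p^n]}$ between the Frobenius twists, after which $g_0^*$ is obtained by conjugating $g_0^{[p^n]*}$ with the abrelative-Frobenius pullbacks, the independence of $n$ and parts (2)--(4) being formal consequences of the transitivity in (\ref{theo:ccm}). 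One correction to your justification: the pullback by $F^{{\rm ar},n}$ on $(H_{\rm zar})_{\mab Q}$ is a filtered isomorphism not because a universal homeomorphism leaves the log crystalline complex invariant (it does not --- Frobenius acts by $p$-power isogenies, and crystalline cohomology is not a topological invariant in this sense), but because on the $E_1$-terms of the spectral sequence (\ref{ali:sparh}) one is reduced to the rational crystalline cohomology of smooth schemes over $\os{\circ}{T}$, where the relative Frobenius induces an isomorphism by \cite[(1.3)]{boi} (cf.~\cite[(2.24)]{hk}), which is exactly the argument the paper invokes.
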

\begin{proof}
(Though the proof of this theorem is the same as that of \cite[(5.3.1)]{nb}, 
we give it for the completeness of this book.)
\par 
(1): Because the exact closed immersion 
$\os{\circ}{\iota}\col \os{\circ}{T}{}^{\star}_0(0) \os{\sus}{\lo} 
\os{\circ}{T}{}^{\star}_0$ is nilpotent and because 
$F^{n*}_{T^{\star}_0}\col 
{\cal O}_{T^{\star}_0}\lo {\cal O}_{T^{\star}_0}$ 
kills ${\rm Ker}({\cal O}_{T^{\star}_0}\lo {\cal O}_{T^{\star}_0}(0))$, 
we see that there exists a morphism 
$\os{\circ}{\rho}{}^{(n)\star}\col \os{\circ}{T}{}^{\star}_0\lo \os{\circ}{T}{}^{\star}_0(0)$ 
such that $\os{\circ}{\rho}{}^{(n)\star} \circ \os{\circ}{\iota}=\os{\circ}{F}{}^n_{T^{\star}_0(0)}$ 
and $\os{\circ}{\iota}\circ \os{\circ}{\rho}{}^{(n)\star}
=\os{\circ}{F}{}^{n}_{T^{\star}_0}$. 
Hence 
\begin{align*} 
X^{\star}{}^{[p^n]}_{\! \! \! \os{\circ}{T}{}^{\star}_0}
&=X^{\star}_{}\times_{\os{\circ}{S}{}^{\star},\os{\circ}{F}{}^n_{S^{\star}}}
\os{\circ}{S}{}^{\star}\times_{\os{\circ}{S}{}^{\star}}\os{\circ}{T}{}^{\star}_0
=X^{\star}_{\os{\circ}{T}{}^{\star}_0}
\times_{\os{\circ}{T}{}^{\star}_0,\os{\circ}{F}{}^n_{T^{\star}_0}}
\os{\circ}{T}{}^{\star}_0
=X^{\star}_{\os{\circ}{T}{}^{\star}_0}
\times_{\os{\circ}{T}{}^{\star}_0,\os{\circ}{\iota}\circ \os{\circ}{\rho}{}^{(n)\star}}
\os{\circ}{T}{}^{\star}_0\tag{12.1.7}\label{eqn:ldet0vn}\\
&=
X^{\star}_{\os{\circ}{T}{}^{\star}_0}(0)
\times_{\os{\circ}{T}{}^{\star}_0(0),\os{\circ}{\rho}{}^{(n)\star}}
\os{\circ}{T}{}^{\star}_0. 
\end{align*}  
%Let $\iota^{\star}\col 
%S^{\star[p^n]}_{\os{\circ}{T}{}^{\star}_0(0)}\os{\subset}{\lo} 
%S^{\star[p^n]}_{\os{\circ}{T}{}^{\star}_0}$ 
%be the natural closed immersion and 
%let $F^{\star}_n\col 
%S^{\star}_{\os{\circ}{T}{}^{\star}_0}\lo S^{\star[p^n]}_{\os{\circ}{T}{}^{\star}_0}$ 
%be the abrelative Frobenius morphism. 
%As in the same proof as that of (\ref{lemm:invlem}), 
%apply (\ref{lemm:fianf}) (1). 
%Then there exists a morphism 
%$\rho^{[n]}{}^{\star}\col 
%S^{\star[p^n]}_{\os{\circ}{T}{}^{\star}_0}\lo 
%S^{\star}_{\os{\circ}{T}{}^{\star}_0(0)}$ 
%such that $\rho^{[n]\star} \circ \iota^{\star}\circ F^{\star}_n
%=F^n_{S^{\star}_{\os{\circ}{T}{}^{\star}_0}}$ 
%and 
%$\iota^{\star}\circ F^{\star}_n\circ \rho^{[n]\star}
%=F^n_{S^{\star[p^n]}_{\os{\circ}{T}{}^{\star}_0(0)}}$. 
\par 
Consider the filtered complex 
$(H_{\rm zar}(X^{\star[p^n]}_{\os{\circ}{T}{}^{\star}_0}/
S^{\star[p^n]}(T^{\star})^{\nat}),P)_{\mab Q}$.   
Because we are given the morphism 
$g_{0} \col X_{\os{\circ}{T}_0}(0)
\lo X'_{\os{\circ}{T}{}'_0}(0)$, 
we have the following base change morphism 
\begin{align*} 
g^{[p^n]}_{0}
\col 
X^{[p^n]}_{\os{\circ}{T}_0}=
X_{\os{\circ}{T}_0}(0)
\times_{\os{\circ}{T}_0(0),\os{\circ}{\rho}{}^{(n)}}\os{\circ}{T}_0
\lo X'_{\os{\circ}{T}{}'_0}(0)
\times_{\os{\circ}{T}{}'_0(0),\os{\circ}{\rho}{}^{(n)'}}\os{\circ}{T}{}'_0
=X'{}^{[p^n]}_{\!\!\!\os{\circ}{T}{}'_0}. 
\end{align*} 
Because $g_{0}$ satisfies the condition (8.1.6), 
the morphism $g^{[p^n]}_{0}$ also satisfies it. 
Hence we have the pull-back morphism by the functoriality of 
$(H_{\rm zar},P)$ ((\ref{ali:ccm})): 
\begin{align*}  
(H_{\rm zar}(X'{}^{[p^n]}_{\os{\circ}{T}{}'_0}/S'{}^{[p^n]}(T')^{\nat}),P)_{\mab Q}
\lo 
Rg^{[p^n]}_{0*}
((H_{\rm zar}(X^{[p^n]}_{\os{\circ}{T}{}_0}/S^{[p^n]}(T)^{\nat}),P)_{\mab Q}).  
\tag{12.1.8}\label{ali:yxlpn}
\end{align*}  
The abrelative Frobenius morphism 
$$F^{{\rm ar},n}_{X^{\star}_{\os{\circ}{T}{}^{\star}_0}/S^{\star}(T^{\star})^{\nat}} 
\col X^{\star}_{\os{\circ}{T}{}^{\star}_0}
\lo X^{\star [p^n]}_{\os{\circ}{T}{}^{\star}_0}$$ 
over the morphism $S^{\star}(T^{\star})^{\nat}\lo S^{\star[p^n]}(T^{\star})^{\nat}$
%of the relative Frobenius morphism 
%$F^n_{X^{\star}{}^{\{p^n\}}_{\! \! \!}/S^{\star}} \col X^{\star}_{} \lo 
%X^{\star}{}^{\{p^n\}}_{\! \! \!}$ 
%over $S^{\star}$ 
induces an isomorphism
\begin{align*} 
F^{{\rm ar},n*}_{X^{\star}_{\os{\circ}{T}{}^{\star}_0}/S^{\star}(T^{\star})^{\nat}}
\col &
(H_{\rm zar}(X^{\star[p^n]}_{\os{\circ}{T}{}^{\star}_0}/S^{\star[p^n]}
(T^{\star})^{\nat}), P)_{\mab Q}
\os{\sim}{\lo} (H_{\rm zar}(X^{\star}_{\os{\circ}{T}{}^{\star}_0}
/S^{\star}(T^{\star})^{\nat}), P)_{\mab Q}
\end{align*} 
by (\ref{ali:sparh}) because the base change over $\os{\circ}{T}{}^{\star}$ of 
the abrelative Frobenius morphism induces an isomorphism of  
the classical isocrystalline complex of a smooth scheme  
(\cite[(2.24)]{hk}, cf.~\cite[(1.3)]{boi}).
Here, as in \cite[(2.24)]{hk}, we identity 
$(X^{\star}{}^{[p^n]}_{\! \! \!\os{\circ}{T}{}^{\star}_0})_{\rm zar}$ 
with $(X^{\star}_{\os{\circ}{T}{}^{\star}_0})_{\rm zar}$ 
via the canonical equivalence and we have used the assumption that 
$g_{0}$ satisfies the condition (8.2.6). 
Consequently we obtain the following diagram: 
\begin{equation*} 
\begin{CD} 
(H_{\rm zar}(X'{}^{[p^n]}_{\os{\circ}{T}{}'_0}/S^{'[p^n]}(T')^{\nat}), P)_{\mab Q}
@>{g^{[p^n]*}_{0}}>> 
Rg^{[p^n]}_{0*}((H_{\rm zar}(X^{[p^n]}_{\os{\circ}{T}_0}/S^{[p^n]}(T)^{\nat}), P)_{\mab Q})
)\\
@V{\simeq}V{F^{{\rm ar},n*}_{X'{}_{\os{\circ}{T}{}'_0}/S'(T')^{\nat}}}V 
@V{\simeq}V{F^{{\rm ar},n*}_{X_{\os{\circ}{T}_0}/S(T)^{\nat}}}V \\
(H_{\rm zar}(X'_{\os{\circ}{T}{}'_0}/S'(T')^{\nat}), P)_{\mab Q}@.
(H_{\rm zar}(X_{\os{\circ}{T}_0}/S(T)^{\nat}), P)_{\mab Q}. 
\end{CD} 
\tag{12.1.9}\label{cd:xy}
\end{equation*}
This diagram gives us the morphism $g^*_{0}$ in (\ref{eqn:ldefinvn}): 
$$g^*_{0}:=F^{{\rm ar},n*}_{X_{\os{\circ}{T}_0}/S(T)^{\nat}}
\circ 
g^{[p^n]*}_{0} \circ (F^{{\rm ar},n*}_{X'{}_{\os{\circ}{T}{}'_0}/S'(T')^{\nat}})^{-1}.$$
This morphism is independent of the choice of $n$ by 
the transitive relation of the pull-back morphism ((\ref{ali:ccm})). 
Similarly we have the middle morphism $g^*_0$ 
(and even the lower morphism $g^*_0$) in (\ref{cd:ldtvn}). 
By the contravariant functoriality 
(\ref{cd:eetxte}) of the isomorphisms (\ref{eqn:eaxte}) and (\ref{eqn:eetxte}), 
we obtain the commutative diagram 
(\ref{cd:ldtvn}). 
\par 
(2): By the transitive relation of the pull-back morphism again, 
$g^*_{0}$ and $g'{}^*_{\!\!0}$ 
are compatible with the composition of 
$g_{0}$ and $g'_{0}$. 
\par 
(3): The formula (\ref{eqn:ldeoxnvn}) immediately follows from 
the definition of $g^*_{0}$. 
\par 
(4): Assume that $g_{0}$ has the lift $g_{}$ in the statement of this theorem. 
Then we have the relation $g^*_{0}=g^*_{{}}$ by 
%by putting $n=0$ in the proof of (1). 
the following commutative diagram, the transitivity (\ref{ali:ccm}) and 
the diagram (\ref{cd:xy}): 
\begin{equation*} 
\begin{CD} 
X_{\os{\circ}{T}_0}@>{g_{}}>>X'_{\os{\circ}{T}{}'_0}\\
@V{F^{{\rm ar},n}_{X_{\os{\circ}{T}_0}/S(T)^{\nat}}}VV 
@VV{F^{{\rm ar},n}_{X'{}_{\os{\circ}{T}{}'_0}/S'(T')^{\nat}}}V \\
X^{[p^n]}_{\os{\circ}{T}_0}@>{g_{0}^{[p^n]}}>>X'{}^{[p^n]}_{\os{\circ}{T}{}'_0}. 
\end{CD} 
\end{equation*}
\end{proof}

\begin{coro}[{\bf Infinitesimal deformation invariance of  
Hirsch weight-filtered log crystalline complexes}]\label{coro:finvcae}
If $S'=S$, $T'=T$ and $X_{\os{\circ}{T}_0}(0)=X'_{\os{\circ}{T}_0}(0)$, then 
\begin{align*} 
(H_{\rm zar}(X'_{\os{\circ}{T}_0}/S(T)^{\nat}), P)_{\mab Q}=
(H_{\rm zar}(X_{\os{\circ}{T}_0}/S(T)^{\nat}), P)_{\mab Q}. 
\tag{12.2.1}\label{ali:xdnz}
\end{align*} 
\end{coro}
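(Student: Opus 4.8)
The plan is to deduce this from the infinitesimal deformation invariance with respect to a morphism, namely (\ref{theo:definv}), applied to the \emph{identity} morphism of the common infinitesimal fiber. The substantive point is that the pull-back $g_0^*$ produced by (\ref{theo:definv}) exists even when $g_0$ does not lift to a morphism over the full $\os{\circ}{T}$; all the genuine work is already done there by Dwork's trick and the abrelative Frobenius descent. First I would set $g_0:={\rm id}$, regarded as a morphism $X_{\os{\circ}{T}_0}(0)\lo X'_{\os{\circ}{T}_0}(0)$ (which makes sense by the hypothesis $X_{\os{\circ}{T}_0}(0)=X'_{\os{\circ}{T}_0}(0)$), together with the identity base morphism $S(T)^{\nat}\lo S(T)^{\nat}$. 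This $g_0$ trivially satisfies the conditions (8.1.6) and (8.1.7): each smooth component of the common fiber is carried to itself, so the ``unique $\ul{\mu}$'' is $\ul{\lam}$ itself, and one may take $e_{\lam}=1$ with identical local equations $x_{\lam}=y_{\lam}$. Note that $X$ and $X'$ are genuinely distinct SNCL lifts of the same fiber, agreeing only after restriction along the nilpotent immersion $T_0(0)\os{\sus}{\lo} T_0$, so the identity of the fiber need not extend to an isomorphism $X_{\os{\circ}{T}_0}\cong X'_{\os{\circ}{T}_0}$.

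Applying (\ref{theo:definv}) (1) to this $g_0$, and using that $Rg_{0*}={\rm id}$ because the underlying continuous map of $\os{\circ}{g}_0$ is the identity of the common topological space (nilpotent thickenings being homeomorphisms on underlying spaces), I obtain a filtered morphism
\begin{align*}
\alpha:=g_0^*\col (H_{\rm zar}(X'_{\os{\circ}{T}_0}/\os{\circ}{T}),P)_{\mab Q}
\lo (H_{\rm zar}(X_{\os{\circ}{T}_0}/\os{\circ}{T}),P)_{\mab Q}.
\end{align*}
Running the same argument in the opposite direction, with $g_0':={\rm id}\col X'_{\os{\circ}{T}_0}(0)\lo X_{\os{\circ}{T}_0}(0)$, produces a filtered morphism
\begin{align*}
\beta:=g_0'^*\col (H_{\rm zar}(X_{\os{\circ}{T}_0}/\os{\circ}{T}),P)_{\mab Q}
\lo (H_{\rm zar}(X'_{\os{\circ}{T}_0}/\os{\circ}{T}),P)_{\mab Q}.
\end{align*}

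It then remains to check that $\alpha$ and $\beta$ are mutually inverse. Since $g_0'\circ g_0={\rm id}_{X_{\os{\circ}{T}_0}(0)}$ and $g_0\circ g_0'={\rm id}_{X'_{\os{\circ}{T}_0}(0)}$, the transitivity relation (\ref{eqn:ldfilnvn}) together with the normalization ${\rm id}^*={\rm id}$ of (\ref{eqn:ldeoxnvn}) gives $\alpha\circ\beta=(g_0'\circ g_0)^*={\rm id}$ and $\beta\circ\alpha=(g_0\circ g_0')^*={\rm id}$, again using $Rg_{0*}=Rg_{0'*}={\rm id}$. Hence $\alpha$ is a filtered isomorphism in ${\rm D}^+{\rm F}(f^{-1}({\cal O}_T)\otimes_{\mab Z}{\mab Q})$, which is precisely the asserted equality (\ref{ali:xdnz}). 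I do not expect a real obstacle here, since the corollary is a formal consequence of (\ref{theo:definv}): the only points demanding care are the (routine) verification of (8.1.6) and (8.1.7) for the identity morphism and the correct bookkeeping of the contravariance when reading off $\alpha\circ\beta$ and $\beta\circ\alpha$ from the transitivity relation.
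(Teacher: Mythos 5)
Your proposal is correct and is essentially the argument the paper intends: the corollary is stated as an immediate consequence of (\ref{theo:definv}), obtained by applying the theorem to the identity of the common infinitesimal fiber in both directions and invoking the transitivity (\ref{eqn:ldfilnvn}) and normalization (\ref{eqn:ldeoxnvn}) to see that the two pull-backs are mutually inverse filtered isomorphisms. Your checks of (8.1.6) and (8.1.7) for the identity and of $Rg_{0*}={\rm id}$ are the right (routine) verifications.
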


\begin{coro}[{\bf Infinitesimal deformation invariance of log isocrystalline 
cohomologies with weight filtrations}]\label{coro:finvliae}
Let the notations be as in {\rm (\ref{coro:finvcae})}. 
Let $P$ be the induced filtrations on 
$R^qf_{X'_{\os{\circ}{T}_0}/S(T)^{\nat}*}
({\cal O}_{X'_{\os{\circ}{T}_0}/S(T)^{\nat}})_{\mab Q}$ and 
$R^qf_{X_{\os{\circ}{T}_0}/S(T)^{\nat}*}
({\cal O}_{X_{\os{\circ}{T}_0}/S(T)^{\nat}})_{\mab Q}$ by 
$(H_{\rm zar}(X'_{\os{\circ}{T}_0}/\os{\circ}{T}), P)_{\mab Q}$ 
and 
$(H_{\rm zar}(X_{\os{\circ}{T}_0}/S(T)^{\nat}), P)_{\mab Q}$, 
respectively.
Then 
\begin{align*} 
(R^qf_{X'_{\os{\circ}{T}_0}/S(T)^{\nat}*}
({\cal O}_{X'_{\os{\circ}{T}_0}/S(T)^{\nat}})_{\mab Q},P)&=
(R^qf_{X_{\os{\circ}{T}_0}/S(T)^{\nat}*}
({\cal O}_{X_{\os{\circ}{T}_0}/S(T)^{\nat}})_{\mab Q},P).
\tag{12.3.1}\label{ali:wfoa}\\
\end{align*}  
Moreover, if $T$ is restrictively hollow $(${\rm \cite[(1.1.38)]{nb}}$)$, that is, 
if $S(T)$ is hollow, 
then 
\begin{align*} 
(R^qf_{X'_{T_0}/T*}({\cal O}_{X'_{T_0}/T})_{\mab Q},P)
&=
(R^qf_{X_{T_0}/T*}
({\cal O}_{X_{T_0}/T})_{\mab Q},P).
\tag{12.3.2}\label{ali:wfota}\\
\end{align*}  
\end{coro}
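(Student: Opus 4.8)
The plan is to deduce the cohomological statement directly from the filtered-complex invariance already recorded in (\ref{coro:finvcae}), together with the compatibility packaged in (\ref{theo:definv}). First I would specialize (\ref{theo:definv}) to the diagonal situation $S'=S$, $T'=T$, $X_{\os{\circ}{T}_0}(0)=X'_{\os{\circ}{T}_0}(0)$ and $g_0={\rm id}_{X_{\os{\circ}{T}_0}(0)}$. Part (1) of (\ref{theo:definv}) then yields a canonical filtered isomorphism $(H_{\rm zar}(X'_{\os{\circ}{T}_0}/\os{\circ}{T}),P)_{\mab Q}\os{\sim}{\lo}(H_{\rm zar}(X_{\os{\circ}{T}_0}/\os{\circ}{T}),P)_{\mab Q}$ in ${\rm D}^+{\rm F}(f^{-1}({\cal O}_T)\otimes_{\mab Z}{\mab Q})$ --- which is exactly the content of (\ref{coro:finvcae}) --- and, crucially, the commutative diagram (\ref{cd:ldtvn}) shows that this filtered isomorphism lies over the canonical unfiltered identification of the log isocrystalline complexes $Ru_{X'_{\os{\circ}{T}_0}/S(T)^{\nat}*}({\cal O})_{\mab Q}\os{\sim}{\lo}Ru_{X_{\os{\circ}{T}_0}/S(T)^{\nat}*}({\cal O})_{\mab Q}$ of \cite[(5.3.1)]{nb}.

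Next I would apply $Rf_*$ to this filtered isomorphism and pass to the $q$-th cohomology sheaf. By the definition of the weight filtration in (\ref{defi:nif}), the filtration $P$ on $R^qf_{X_{\os{\circ}{T}_0}/S(T)^{\nat}*}({\cal O}_{X_{\os{\circ}{T}_0}/S(T)^{\nat}})_{\mab Q}$ is precisely the abutment filtration of the PD-Hirsch preweight spectral sequence attached to $(H_{\rm zar}(X_{\os{\circ}{T}_0}/\os{\circ}{T}),P)_{\mab Q}$, which converges since $\os{\circ}{X}$, $\os{\circ}{S}$ and $\os{\circ}{T}_0$ are quasi-compact (\ref{coro:ts}). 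Because $Rf_*$ of the two filtered complexes are canonically filtered isomorphic, their weight spectral sequences agree term by term and hence their abutment filtrations agree; since the filtered isomorphism lies over the canonical unfiltered identification of $R^qf_*({\cal O})_{\mab Q}$ coming from the bottom row of (\ref{cd:ldtvn}), the two weight filtrations are identified on one and the same cohomology sheaf. This gives the equality (\ref{ali:wfoa}).

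For the second assertion (\ref{ali:wfota}) I would invoke the hypothesis that $T$ is restrictively hollow. Under this hypothesis the structural morphism factors so that the crystalline cohomology over $T$ is canonically identified, compatibly with weight filtrations, with the cohomology over $S(T)^{\nat}$ (the restrictively hollow comparison of \cite{nb}); concretely $R^qf_{X_{T_0}/T*}({\cal O}_{X_{T_0}/T})_{\mab Q}$ with its $P$ is carried to $R^qf_{X_{\os{\circ}{T}_0}/S(T)^{\nat}*}({\cal O}_{X_{\os{\circ}{T}_0}/S(T)^{\nat}})_{\mab Q}$ with its $P$, and likewise for $X'$, so that (\ref{ali:wfota}) follows from (\ref{ali:wfoa}). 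The first assertion is essentially formal once (\ref{coro:finvcae}) is available; the one point requiring care --- and the main obstacle --- is to verify that the two a priori distinct weight filtrations on the common underlying cohomology sheaf genuinely coincide, rather than merely that abstractly isomorphic filtered objects exist. This is exactly what the compatibility diagram (\ref{cd:ldtvn}) secures, by forcing the filtered isomorphism to sit over the identity on the underlying isocrystalline cohomology; for the second assertion the analogous obstacle is unwinding the definition of restrictively hollow to produce the weight-filtered comparison between cohomology over $T$ and over $S(T)^{\nat}$, which I would import from the corresponding results in \cite{nb}.
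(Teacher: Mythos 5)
Your proposal is correct and follows the same (largely implicit) route as the paper: the corollary is a direct consequence of (\ref{coro:finvcae}), with the diagram (\ref{cd:ldtvn}) guaranteeing that the filtered isomorphism sits over the canonical identification of the underlying isocrystalline complexes, so that applying $Rf_*$ and passing to $R^q$ identifies the two induced weight filtrations on one and the same sheaf. For the second assertion the only fact needed is the one the paper uses elsewhere (cf.\ the proof of (\ref{coro:filc})), namely that $S(T)=S(T)^{\nat}$ when $T$ is restrictively hollow, which reduces (\ref{ali:wfota}) to (\ref{ali:wfoa}); your appeal to a ``restrictively hollow comparison'' is exactly this.
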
 

\begin{rema}\label{rema:ani}
The analogues of (\ref{theo:definv}) and (\ref{coro:finvcae}) hold 
for $(H_{{\rm zar},{\rm TW}},P)$ hold. 
\end{rema}

\section{The $E_2$-degeneration of the $p$-adic weight spectral sequence}\label{sec:filbo}
Let ${\cal V}$ be a complete discrete valuation ring with perfect residue field $\kap$ 
of mixed characteristics. 
In this section we assume that the underlying scheme $\os{\circ}{S}$ of 
the family of log points  $S$ is a $p$-adic ${\cal V}$-scheme in the sense 
of \cite{od}. Let $X/S$ be a proper SNCL scheme.   Let 
$(T,z)$ be a flat log $p$-adic enlargement (see e.g.,~\cite{oc} (or \cite{nb}) for this notion); 
$z$ is a morphism $T_1\lo S$, 
where $T_1$ is an exact log subscheme of $T$ defined by $p{\cal O}_T$. 
Endow $p{\cal O}_T$ with the canonical PD-structure.  
In this section we prove the $E_2$-degeneration of 
the $p$-adic weight spectral sequence of 
$X_{\os{\circ}{T}_1}/S(T)^{\nat}$ modulo torsion obtained by 
$(H_{\rm zar}(X_{\os{\circ}{T}_0}/S(T)^{\nat}),P)$ by using 
the infinitesimal deformation invariance of isocrystalline cohomologies 
with weight filtrations in the previous section ((\ref{ali:wfoa})).

\begin{theo}[{\bf $E_2$-degeneration I}]\label{theo:e2dam}  
Let $s$ be the log point of a perfect field of characteristic $p>0$. 
The spectral sequence $(\ref{ali:sparh})$ for the case $S=s$ 
and $E={\cal O}_{\os{\circ}{X}_{T_0}/\os{\circ}{T}}$ 
degenerates at $E_2$.  
\end{theo}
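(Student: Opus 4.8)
The plan is to run the classical weight argument, which becomes available here because over the log point $s$ every stratum of $\os{\circ}{X}$ is proper and smooth over $\kap$, so that the $E_1$-terms are Tate-twisted crystalline cohomologies carrying a Frobenius structure. First I would record the shape of the $E_1$-terms. By (\ref{coro:wtsp}) (1) (equivalently by the description (\ref{ali:spaorh})) the term $E_1^{-k,q+k}$ is the finite direct sum, over $m,\ul{\lam},j,\ul{\mu}$ with $\sharp \ul{\lam}=m+1$ and $\sharp \ul{\mu}=k+m-2j$, of the groups
\begin{equation*}
R^{q+2j-k-2m}f_{\os{\circ}{X}_{\ul{\lam}\cup \ul{\mu}}/\os{\circ}{T}*}
({\cal O}_{\os{\circ}{X}_{\ul{\lam}\cup \ul{\mu}}/\os{\circ}{T}}
\otimes_{\mab Z}\vp_{{\rm crys},\ul{\mu}})(-(k+m-j),v).
\end{equation*}
Since $S=s$ and $\os{\circ}{T}={\rm Spf}({\cal W})$ (resp.\ ${\rm Spec}({\cal W}_n)$), each $\os{\circ}{X}_{\ul{\lam}\cup \ul{\mu}}$ is proper and smooth over $\kap$, so after $\otimes_{\mab Z}{\mab Q}$ each summand is, up to the orientation twist (which does not change the isogeny type) and the $D$-twist of (\ref{defi:nif}), an $F$-isocrystal over $K_0$. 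Because $S$ is of characteristic $p$, the abrelative (or absolute) Frobenius of $X/s$ induces an endomorphism of $(H_{\rm zar}(X/\os{\circ}{T}),P)$ through the contravariant functoriality (\ref{theo:ccm}); thus, modulo torsion, (\ref{ali:sparh}) is a spectral sequence of $F$-isocrystals and every differential $d_r$ is Frobenius-equivariant.

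Next I would compute weights. By purity of the crystalline cohomology of a proper smooth scheme, $R^{i}f_{\os{\circ}{X}_{\ul{\lam}\cup \ul{\mu}}/\os{\circ}{T}*}(\,\cdot\,)\otimes_{\mab Z}{\mab Q}$ is pure of weight $i$ (\cite{kme}, \cite{clpu}, \cite{ndw}). The $D$-twist by $-(k+m-j)$ with respect to $v$ (with $\deg(v)=p$) multiplies the Frobenius by $p^{\,k+m-j}$ and hence raises the weight by $2(k+m-j)$. With $i=q+2j-k-2m$ this yields the weight
\begin{equation*}
(q+2j-k-2m)+2(k+m-j)=q+k,
\end{equation*}
independently of $m,j,\ul{\lam},\ul{\mu}$. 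Therefore $E_1^{-k,q+k}\otimes_{\mab Z}{\mab Q}$ is pure of weight $q+k$, and so is the subquotient $E_r^{-k,q+k}\otimes_{\mab Z}{\mab Q}$ for every $r$. For $r\geq 2$ the differential
\begin{equation*}
d_r\col E_r^{-k,q+k}\otimes_{\mab Z}{\mab Q}\lo E_r^{-k+r,q+k-r+1}\otimes_{\mab Z}{\mab Q}
\end{equation*}
is a Frobenius-equivariant morphism between $F$-isocrystals that are pure of weights $q+k$ and $q+k-r+1$, respectively; since these weights differ by $r-1\geq 1$, such a morphism vanishes. Hence $d_r=0$ for all $r\geq 2$ modulo torsion, which is the asserted $E_2$-degeneration.

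The hard part will be purity over a general (non-finite) perfect field $\kap$: Katz--Messing gives it over finite fields, so I would reduce the general case by spreading $\os{\circ}{X}_{\ul{\lam}\cup \ul{\mu}}$ out over a finitely generated ${\mab F}_p$-subalgebra and specializing at a closed (finite-field) point, using the base-change compatibility of the whole construction (\ref{theo:bccange}) together with the fact that the vanishing of $d_r$ may be tested after the faithfully flat extension ${\cal W}(\kap)\lo {\cal W}(\ol{\kap})$. A secondary point demanding care is the bookkeeping of the two twists in the $E_1$-term, namely the crystalline orientation sheaf $\vp_{{\rm crys},\ul{\mu}}$ and the $D$-twist $(-(k+m-j),v)$, to confirm that the Frobenius normalization underlying (\ref{defi:nif}) matches the weight convention used in the purity statement; once these match, the weight computation above is forced and the degeneration is immediate.
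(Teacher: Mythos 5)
Your proposal is correct and follows essentially the same route as the paper: reduce to $T={\cal W}(s)$, observe via the Frobenius functoriality that the spectral sequence is one of $F$-isocrystals, compute that each $E_1^{-k,q+k}$ is pure of weight $q+k$ (Katz--Messing purity plus the Tate/$D$-twist bookkeeping), and conclude that the $d_r$ for $r\geq 2$ vanish modulo torsion since they connect different weights; the general perfect field is handled by specialization to finite fields exactly as you indicate. The paper compresses most of this into a citation of \cite[(5.4.1)]{nb}, but the content is the same.
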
 
\begin{proof} 
As in the proof of \cite[(5.4.1)]{nb}, we may assume that 
$T={\cal W}(s)$.  In this case we have the absolute Frobenius endomorphism 
$F_{{\cal W}(s)}\col {\cal W}(s)\lo {\cal W}(s)$. 
If $\os{\circ}{s}$ is the spectrum of a finite field, 
then the $E_1$-term 
$E^{-k,q+k}_1$of (\ref{ali:sparh}) is of pure weight of $q+k$ 
by \cite[Corollary 1. 2)]{kme}, \cite[(1.2)]{clpu} and \cite[(2.2) (4)]{ndw}.  
However see  \cite[(6.11)]{ny} for the gap of the proof 
the weak-Lefschetz conjecture 
for a hypersurface of a large degree  in \cite{bwl}; 
I have filled the gap in \cite[(6.10)]{ny}. 
\par 
The rest of the proof is the same as that of \cite[(5.4.1)]{nb}
\end{proof}

\begin{theo}[{\bf $E_2$-degeneration II}]\label{theo:e2dgfam} 
Let $T$ be a log $p$-adic enlargement of $S/{\cal V}$ 
with structural morphism $T_1\lo S$. 
The spectral sequence $(\ref{ali:sparh})$ modulo torsion 
for the case $E={\cal O}_{\os{\circ}{X}_{T_0}/\os{\circ}{T}}$ 
degenerates at $E_2$.  
\end{theo}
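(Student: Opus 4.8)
The plan is to reduce the statement over a general flat log $p$-adic enlargement $T$ of $S/{\cal V}$ to the point case already settled in \ref{theo:e2dam}, using the filtered base change theorem \ref{theo:bccange} together with the infinitesimal deformation invariance of the weight-filtered log isocrystalline cohomology proved in \ref{coro:finvliae} (especially the identification (\ref{ali:wfoa})). First I would reformulate the $E_2$-degeneration modulo torsion as a numerical statement. After $\otimes_{\mab Z}{\mab Q}$, the filtered complex $Rf_*((H_{\rm zar}(X_{\os{\circ}{T}_0}/\os{\circ}{T}),P))$ is filteredly perfect by \ref{coro:fctd} and \ref{coro:filpcerf}; hence the $E_2$-terms and the abutment $R^qf_{X_{\os{\circ}{T}_0}/S(T)^{\nat}*}({\cal O}_{X_{\os{\circ}{T}_0}/S(T)^{\nat}})_{\mab Q}$ of (\ref{ali:sparh}) are coherent, and degeneration at $E_2$ is equivalent to the vanishing of the higher differentials $d_r$ $(r\geq 2)$ after inverting $p$, or to the rank equality $\sum_k{\rm rank}(E_2^{-k,q+k})={\rm rank}(R^qf_*({\cal O})_{\mab Q})$ for every $q$. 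The spectral sequence is convergent even though $P$ is not a finite filtration, by \ref{prop:lcz} and \ref{rema:ex}, so the infinite $E_1$-terms cause no trouble once we work modulo torsion.

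The key step is then to test this rank equality at the points of $\os{\circ}{T}$. For a point lying over an exact closed log point $s$ of a perfect field, I would choose a flat log $p$-adic enlargement $T'$ of $S$ whose structural morphism factors through $s$, together with a morphism $T'\lo T$; then \ref{theo:bccange} identifies the pullback $Lu^*Rf_*((H_{\rm zar}(X_{\os{\circ}{T}_0}/\os{\circ}{T}),P))$ with $Rf'_*((H_{\rm zar}(X_{\os{\circ}{T}{}'_0}/S(T')^{\nat},\os{\circ}{q}{}^*_{\rm crys}({\cal O})),P))$, so that the spectral sequence of the fibre is the base change of (\ref{ali:sparh}), compatibly with the Tate twists and with the weight filtration. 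This reduces the problem to proving the $E_2$-degeneration over each such enlargement $T'$ of a single log point $s$. There the fibre is a proper SNCL scheme over $s$, and \ref{coro:finvliae}(\ref{ali:wfoa}) (together with the invariance of the filtered complex itself in \ref{coro:finvcae}, which already incorporates Dwork's trick via the abrelative Frobenius in \ref{theo:definv}) shows that the weight-filtered cohomology is insensitive to the infinitesimal and PD-structure of $T'$; I would use this to replace $T'$ by the canonical lift ${\cal W}(s)$ without altering the filtered abutment, and then invoke \ref{theo:e2dam} to conclude that the fibre spectral sequence degenerates at $E_2$.

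The main obstacle will be the passage from fibrewise degeneration back to degeneration over $T$, i.e.\ justifying that vanishing of $d_r\otimes k(x)$ at every point forces $d_r\otimes{\mab Q}=0$ globally. The clean way to handle this is to exploit the filtered perfectness of \ref{coro:filpcerf}: after inverting $p$ the terms are locally free on a dense open and, by \ref{theo:bccange}, their formation commutes with base change to points, so $d_r\otimes{\mab Q}$ is a morphism of vector bundles vanishing on every fibre and is therefore zero. Care is needed to ensure that $E_2$ (and inductively each $E_r$) is locally free after $\otimes{\mab Q}$, which amounts to checking that the earlier differentials have locally constant rank; this is where the numerical comparison with the point case is used inductively. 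A secondary delicate point is the bookkeeping of the Tate twists $(-(k+m-j),v)$ and of the weight grading under both the base change $T'\lo T$ and the replacement of $T'$ by ${\cal W}(s)$, and this is precisely why the deformation-invariance input must be applied at the level of the filtered complex $(H_{\rm zar},P)$ (\ref{coro:finvcae}) rather than only at the level of cohomology, so that the identifications are compatible with the whole spectral sequence and not merely with its abutment.
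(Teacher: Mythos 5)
Your overall reduction — fibrewise degeneration via \ref{theo:e2dam} plus the deformation invariance of \ref{coro:finvcae} to move from a general enlargement to ${\cal W}(s)$ — is exactly the paper's strategy (the paper simply defers to the identical argument in \cite[(5.4.3)]{nb}). The problem is your mechanism for passing from fibrewise vanishing of the higher differentials back to global vanishing. The step ``$d_r\otimes{\mab Q}$ is a morphism of vector bundles vanishing on every fibre and is therefore zero'' is not valid here: $\os{\circ}{T}$ is an arbitrary (flat) $p$-adic enlargement, its reduction need not be reduced, and even on the generic fibre the Teichm\"uller points at which you have actually verified vanishing need not be dense; a nonzero map of locally free ${\cal K}_T$-modules can perfectly well vanish at all such points. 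Your proposed repair — establishing local freeness of each $E_r\otimes{\mab Q}$ inductively from ``numerical comparison with the point case'' — is also circular as stated, since the formation of $E_2$ commutes with base change to points only after one knows $d_1$ has locally constant rank, which is part of what you are trying to prove.

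The paper closes this gap by a different device, which is the real content of \S\ref{sec:e2}: the $E_1$-terms, the abutment, and hence all the differentials of (\ref{ali:sparh}) modulo torsion prolong to (log) convergent $F$-isocrystals on $S/{\cal V}$ (see (\ref{theo:pwfaec}), (\ref{exam:ofl}), (\ref{prop:spcnvuc})). In the abelian category of convergent isocrystals the restriction functor to a closed point is \emph{faithful} by \cite[(4.1)]{od} — this is precisely how the paper argues in the proof of (\ref{theo:fdd}) — so $d_r=0$ follows from its vanishing at the single enlargement ${\cal W}(s)$ over each closed point, with no local-freeness or constant-rank input needed, and the kernels and cokernels defining $E_{r+1}$ are again convergent isocrystals, so the induction on $r$ is automatic. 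If you reformulate your globalization step in these terms (differentials as morphisms of convergent $F$-isocrystals, plus faithfulness of point restriction), the rest of your outline goes through and coincides with the paper's proof.
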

\begin{proof} 
Because the problem is local on $\os{\circ}{T}$, we may assume that 
$\os{\circ}{T}$ is quasi-compact.  
By virtue of (\ref{theo:e2dam}) and (\ref{coro:finvcae}), 
the proof of this theorem is the same as that of \cite[(5.4.3)]{nb}.
\end{proof}

\section{Convergence of the weight filtration}\label{sec:e2}
In this section we prove the log convergence of 
the filtration on the log crystalline cohomology modulo torsions 
induced by the spectral sequence (\ref{ali:sparh}) if $X/S$ is proper. 
\par 
Roughly speaking, we obtain all the results in this section 
by using the log base change theorem of 
$(H_{\rm zar},P)$ ((\ref{theo:bccange})) and by 
replacing $(H_{{\rm zar},{\mab Q}},P)$ defined in this book 
with $(A_{{\rm zar},{\mab Q}},P)$ in \cite{nb}. 
Here we have to note that the base change morphism 
satisfy the conditions (8.1.6) and (8.2.6) as in the proof of (\ref{theo:bccange}). 
For this reason, we omit or sketch the proofs of almost all the results in this section. 
We use fundamental notions and results in \cite[(5.2)]{nb}. 
\par 
Let ${\cal V}$ be a complete discrete valuation ring of mixed characteristics 
$(0,p)$ with perfect residue field. Let $K$ be the fraction field of ${\cal V}$. 
Set $B=({\rm Spf}({\cal V}),{\cal V}^*)$.   
Let $S$ be a $p$-adic formal family of log points over $B$ 
such that $\os{\circ}{S}$ is a ${\cal V}/p$-scheme.  
%in the sense of \cite[\S1]{od} (e.~g., ${\cal V}/p$-scheme).   
Let $X/S$ be a proper SNCL scheme.  
\par  
Let $n$ be a nonnegative integer.  
Let $T$ be an object of the category ${\cal E}^{\sq}_{p,n}:={\rm Enl}^{\sq}_p(S^{[p^n]}/{\cal V})$ 
of (solid) $p$-adic enlargements of $S^{[p^n]}/{\cal V}$ 
%$(\star=p$ or nothing, 
($\sq=$sld or nothing) (\cite[(5.1.3)]{nb}).   
Then the hollowing out $S^{[p^n]}(T)^{\nat}$ of $S^{[p^n]}(T)$ is a formal family of log points. 
Let $T_i \lo S^{[p^n]}$ $(i=0,1)$ be the structural morphism. 
Here $T_1:=\ul{\rm Spec}^{\log}_T({\cal O}_T/p)$ and $T_0:=(T_1)_{\rm red}$. 
Set $X^{[p^n]}_{\os{\circ}{T}_i}:=X\times_SS^{[p^n]}_{\os{\circ}{T}_i} 
=X^{[p^n]}\times_{\os{\circ}{S}}\os{\circ}{T}_i$, 
where $X^{[p^n]}:=X\times_SS^{[p^n]}$. 
Let 
$f^{[p^n]}_{\os{\circ}{T}} \col X^{[p^n]}_{\os{\circ}{T}_i}\lo S^{[p^n]}(T)^{\nat}$ 
be the structural morphism. 
(Note that this notation is different from the notation 
$f$ in \S\ref{sec:psc} since we add the symbol $\os{\circ}{T}$ to $f^{[p^n]}$ as a subscript.) 
\par 
%Consider the Case I. 
%Set ${\cal E}^{\sq}_{p,n}:={\rm Enl}^{\sq}_p(S^{[p^n]}/{\cal V})$. 
Assume that we are given a flat coherent crystal 
$E_n=\{E_n(\os{\circ}{T})\}_{T\in {\cal E}^{\sq}_{p,n}}$ of 
${\cal O}_{\{\os{\circ}{X}{}^{[p^n]}_{T_1}
/\os{\circ}{T}\}_{T\in {\cal E}^{\sq}_{p,n}}}$-modules.  
Let $T$ be an object of ${\cal E}^{\sq}_{p,n}$.
Let $z\col T_1\lo S^{[p^n]}$ be the structural morphism. 
Because $S^{[p^n]}(T)^{\nat}$  is a $p$-adic formal family of log points,  
we have a base change  $X^{[p^n]}_{\os{\circ}{T}_1}/S^{[p^n]}_{\os{\circ}{T}_1}$ 
of a proper SNCL scheme 
with an exact PD-closed immersion 
$S^{[p^n]}_{\os{\circ}{T}_1} \os{\sus}{\lo} S^{[p^n]}(T)^{\nat}$. 
Then we obtain the following $p$-adic iso-zariskian filtered complex
\begin{align*} 
(H_{\rm zar}(X^{[p^n]}_{\os{\circ}{T}_1}/
S^{[p^n]}(T)^{\nat},E^{}(\os{\circ}{T})),P)_{\mab Q}
%=(A_{\rm zar}(X_{T_1}/T,E^{}(\os{\circ}{T})),P)
%\otimes_{\mab Z}{\mab Q}
\in {\rm D}^+{\rm F}(
%f^{-1}_{\os{\circ}{T}}({\cal K}_T))
f^{-1}_{\os{\circ}{T}}({\cal K}_T))
\tag{14.0.1}\label{eqn:auxtds}
\end{align*} 
for each $T\in {\cal E}^{\sq}_{p,n}$.

\begin{prop}[{\bf cf.~\cite[(5.2.2)]{nb}}]\label{prop:tptt} 
Let ${\mathfrak g}\col T'\lo T$ be a morphism in ${\cal E}^{\sq}_{p,n}$.  
Then the induced morphism 
$g_{} \col X^{[p^n]}_{\os{\circ}{T}{}'_1}\lo X^{[p^n]}_{\os{\circ}{T}_1}$ 
by ${\mathfrak g}$
gives us the following natural morphism 
\begin{align*} 
g^*_{}\col &
(H_{\rm zar}(X^{[p^n]}_{\os{\circ}{T}_1}/
S^{[p^n]}(T)^{\nat},E^{}(\os{\circ}{T})),P)
\tag{14.1.1}\label{eqn:auqeds}\\
& \lo
Rg_{*}
((H_{\rm zar}(X^{[p^n]}_{\os{\circ}{T}{}'_1}/S^{[p^n]}(T')^{\nat},
E^{}(\os{\circ}{T}{}')),P))
\end{align*}
of filtered complexes in 
${\rm D}^+{\rm F}(f_{\os{\circ}{T}}^{-1}({\cal K}_T))$ 
fitting into the following commutative diagram$:$ 
\begin{equation*} 
\begin{CD} 
H_{\rm zar}(X^{[p^n]}_{\os{\circ}{T}_1}/
S^{[p^n]}(T)^{\nat},E^{}(\os{\circ}{T}))
@>{g^*_{}}>> \\
@A{\simeq}AA \\
Ru_{X^{[p^n]}_{\os{\circ}{T}_1}/
S^{[p^n]}(T)^{\nat}*}(\eps_{X^{[p^n]}_{\os{\circ}{T}_1}/
S^{[p^n]}(T)^{\nat}}(E^{}(\os{\circ}{T})))\otimes_{\mab Z}{\mab Q}
@>{g^*_{}}>>
\end{CD} 
\end{equation*} 
\begin{equation*} 
\begin{CD} 
Rg_{*}
((H_{\rm zar}(X^{[p^n]}_{\os{\circ}{T}{}'_1}/S^{[p^n]}(T')^{\nat},
E^{}(\os{\circ}{T}{}')),P))\\ 
@AA{\simeq}A \\
Rg_{*}(Ru_{X^{[p^n]}_{\os{\circ}{T}{}'_1}/
S^{[p^n]}(T')^{\nat}*}(\eps^*_{X^{[p^n]}_{\os{\circ}{T}{}'_1}/
S^{[p^n]}(T')^{\nat}}(E^{}(\os{\circ}{T}{}'))\otimes_{\mab Z}{\mab Q}).
\end{CD} 
\end{equation*} 
For a similar morphism ${\mathfrak h}\col T''\lo T'$ to ${\mathfrak g}$ and 
a similar morphism 
$h_{}\col 
X_{\os{\circ}{T}{}''_1}\lo X_{\os{\circ}{T}{}'_1}$ to $g_{}$, 
the following relation 
\begin{align*} 
(h_{}\circ g_{})^*=
Rh_{*}(g^*_{})
\circ h^*_{}
\end{align*} 
holds.  
\begin{equation*} 
{\rm id}_{X^{[p^n]}_{\os{\circ}{T}_1}}^*={\rm id} 
_{(H_{\rm zar}(X^{[p^n]}_{\os{\circ}{T}_1}/
S(T)^{\nat},E^{}(\os{\circ}{T})),P)}.  
\end{equation*} 
\end{prop}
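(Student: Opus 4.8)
The plan is to deduce everything from the contravariant functoriality theorem (\ref{theo:ccm}), exactly in the spirit of the analogous statement \cite[(5.2.2)]{nb}. First I would record the geometry of the situation: since ${\mathfrak g}\col T'\lo T$ is a morphism in ${\cal E}^{\sq}_{p,n}={\rm Enl}^{\sq}_p(S^{[p^n]}/{\cal V})$, it is a morphism of enlargements over the common base $S^{[p^n]}/{\cal V}$. Hence the induced morphism $v\col S^{[p^n]}(T')^{\nat}\lo S^{[p^n]}(T)^{\nat}$ of families of log points has mapping degree $1$ in the sense of (\ref{defi:ddef}), and the morphism in question, $g\col X^{[p^n]}_{\os{\circ}{T}{}'_1}\lo X^{[p^n]}_{\os{\circ}{T}_1}$, is nothing but the base change of the structural data along $\os{\circ}{T}{}'\lo \os{\circ}{T}$.

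Next I would verify that this base change morphism $g$ satisfies the two conditions (8.1.6) and (8.1.7) needed to invoke (\ref{theo:ccm}). This is precisely the observation already used in the proof of the filtered base change theorem (\ref{theo:bccange}): the smooth components of $\os{\circ}{X}{}^{[p^n]}_{T'_1}$ are the base changes of those of $\os{\circ}{X}{}^{[p^n]}_{T_1}$, so the index set $\Lam$ is preserved and the correspondence $\os{\circ}{X}_{\lam}\mapsto \os{\circ}{X}_{\lam}$ is the required unique one, giving (8.1.6); and since local defining equations of smooth components pull back to local defining equations (the degree being $1$, i.e. $e_{\lam}=1$ for all $\lam$ in the notation of (\ref{prop:xxle})), condition (8.1.7) holds as well.

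For the coefficients I would use that $E_n=\{E_n(\os{\circ}{T})\}$ is a flat coherent crystal on $\{\os{\circ}{X}{}^{[p^n]}_{T_1}/\os{\circ}{T}\}_{T\in {\cal E}^{\sq}_{p,n}}$: the crystal structure furnishes a canonical isomorphism $\os{\circ}{g}{}^*_{\rm crys}(E_n(\os{\circ}{T}))\os{\sim}{\lo} E_n(\os{\circ}{T}{}')$, which in particular supplies the coefficient morphism $\os{\circ}{g}{}^*_{\rm crys}(E(\os{\circ}{T}))\lo E(\os{\circ}{T}{}')$ demanded by (\ref{theo:ccm}). With the base morphism $v$, the scheme morphism $g$, and this coefficient morphism in hand, (\ref{theo:ccm}) (1) produces the filtered pull-back (\ref{eqn:auqeds}); (\ref{theo:ccm}) (3) — the functoriality of the comparison isomorphisms (\ref{eqn:eaxte}) and (\ref{eqn:e}) — yields the asserted commutative diagram with the vertical comparison isomorphisms; and (\ref{theo:ccm}) (2) gives both the transitive relation $(h\circ g)^*=Rh_*(g^*)\circ h^*$ and ${\rm id}^*={\rm id}$. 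The rational statement is obtained by applying $\otimes^L_{\mab Z}{\mab Q}$ throughout.

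The one point requiring genuine attention, rather than routine transcription, is the bookkeeping of the $[p^n]$-twist: I must check that the base change $g$ and the crystal $E_n$ are formed over $S^{[p^n]}$ consistently, so that the comparison isomorphisms serving as the vertical arrows of the diagram are literally the ones constructed in \S\ref{sec:psc} for $X^{[p^n]}_{\os{\circ}{T}_1}/S^{[p^n]}(T)^{\nat}$. Once this compatibility is pinned down there is no further obstacle; the argument is a direct specialization of (\ref{theo:ccm}) and parallels \cite[(5.2.2)]{nb}, so I would relegate the remaining verifications to the reader as the paper does elsewhere.
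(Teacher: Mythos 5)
Your proposal is correct and follows the paper's own route: the paper proves this proposition by simply noting that it "immediately follows from the contravariant functoriality (\ref{theo:ccm})," which is exactly the reduction you carry out. Your additional verifications (that the base change morphism satisfies (8.1.6) and (8.1.7), and that the crystal structure supplies the coefficient morphism) are the details the paper leaves implicit.
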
 
\begin{proof} 
This immediately follows from the contravariant functoriality (\ref{theo:ccm}). 
\end{proof} 

\parno 
The morphism (\ref{eqn:auqeds}) induces the following morphism 
\begin{align*} 
{\mathfrak g}^*\col &
R
%f_{\os{\circ}{T}*}
f^{[p^n]}_{\os{\circ}{T}*}((H_{\rm zar}(X^{[p^n]}_{\os{\circ}{T}_1}
/S^{[p^n]}(T)^{\nat},E^{}(\os{\circ}{T})),P))
\tag{14.1.2}\label{eqn:auaeds}\\
&\lo
R{\mathfrak g}_*R
%f_{\os{\circ}{T}{}'*}
f^{[p^n]}_{\os{\circ}{T}{}'*}
((H_{\rm zar}(X^{[p^n]}_{\os{\circ}{T}{}'_1}/S^{[p^n]}(T')^{\nat},
E^{}(\os{\circ}{T}{}')),P))
\end{align*}
of filtered complexes in ${\rm D}^+{\rm F}(f^{-1}({\cal K}_T))$.

\par  
The following is a key lemma for (\ref{theo:pwfec}) below.

\begin{lemm}[{\bf cf.~\cite[(5.2.3)]{nb}}]\label{lemm:pnlcfi}
Let ${\rm IsocF}^{\sq}_p(S/{\cal V})$ be the category of filtered log $p$-adically convergent isocrystals on 
${\rm Enl}^{\sq}_p(S/{\cal V})$. 
Assume that $M_S$ is split. Let $k$ be a nonnegative integer or $\infty$. 
Then there exists an object
\begin{align*} 
(R^q f^{[p^n]}_*
(P_kH_{\rm zar}(X^{[p^n]}_{}/K,E^{})),P)
\tag{14.2.1}\label{ali:pkalkf} 
\end{align*} 
of ${\rm IsocF}^{\sq}_p(\os{\circ}{S}/{\cal V})$ 
such that 
\begin{align*} 
&(R^qf^{[p^n]}_*
(P_kH_{\rm zar}(X^{[p^n]}_{}/K,E^{})),P)_{\os{\circ}{T}}=
\tag{14.2.2}\label{ali:pkakf} \\
&(R^qf^{[p^n]}_{\os{\circ}{T}*}
(P_kH_{\rm zar}(X^{[p^n]}_{\os{\circ}{T}_1}/S^{[p^n]}({\os{\circ}{T})},
E^{}(\os{\circ}{T}))),P)
\end{align*} 
for any object $\os{\circ}{T}$ in ${\rm Enl}^{\sq}_p(\os{\circ}{S}/{\cal V})$. 
%$($Note that $(S^{[p^n]})^{\circ}=\os{\circ}{S}$ and 
%recall the log formal scheme $S^{[p^n]}({\os{\circ}{T})}$ 
%in {\rm (\ref{prop:nupf})}.$)$
In particular, there exists an object
\begin{align*} 
(R^q
%f^{[p^n]}_{*}
f^{[p^n]}_{*}
(\eps^*_{X^{[p^n]}_{}/K}(E^{}_K)),P)
\tag{14.2.3}\label{ali:pklkf} 
\end{align*} 
of ${\rm IsocF}_p(\os{\circ}{S}/{\cal V})$ 
such that 
\begin{align*} 
(R^q
%f^{[p^n]}_{*}
f^{[p^n]}_{*}
(\eps^*_{X^{[p^n]}_{}/K}(E^{}_K)),P)_{\os{\circ}{T}}=
(R^qf^{[p^n]}_{X^{[p^n]}_{\os{\circ}{T}_1}/S^{[p^n]}(\os{\circ}{T})*}
(\eps^*_{X^{[p^n]}_{\os{\circ}{T}_1}/S^{[p^n]}(\os{\circ}{T})}
(E^{}(\os{\circ}{T})))_{\mab Q},P)
\tag{14.2.4}\label{ali:pkkf} 
\end{align*} 
for any object $\os{\circ}{T}$ in ${\rm Enl}_p(\os{\circ}{S}/{\cal V})$. 
%locally on $\os{\circ}{S}$. 
\end{lemm}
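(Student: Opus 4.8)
The plan is to transcribe the proof of \cite[(5.2.3)]{nb} almost verbatim, replacing the filtered complex $(A_{{\rm zar},{\mab Q}},P)$ employed there by our $(H_{\rm zar},P)_{\mab Q}$ and feeding in the log base change theorem (\ref{theo:bccange}) of the present article wherever \cite{nb} invokes the base change theorem for the $p$-adic Steenbrink complex. To produce the asserted object of ${\rm IsocF}_p(\os{\circ}{S}/{\cal V})$ I would first prescribe its evaluation on each enlargement: for $\os{\circ}{T}\in {\rm Enl}_p(\os{\circ}{S}/{\cal V})$ the value is to be $(R^qf^{[p^n]}_{\os{\circ}{T}*}(P_kH_{\rm zar}(X^{[p^n]}_{\os{\circ}{T}_1}/S^{[p^n]}(\os{\circ}{T}),E(\os{\circ}{T}))),P)$, exactly as demanded by (\ref{ali:pkakf}). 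By (\ref{prop:bdccd}) together with (\ref{coro:fctd}) each of these is a coherent ${\cal K}_T$-module of finite tor-dimension, and (\ref{coro:connfil}) equips it with a quasi-nilpotent integrable connection; these are the raw data of a stratified module on the enlargement site.

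Next I would build the transition isomorphisms. For a morphism ${\mathfrak g}\col T'\lo T$ in ${\rm Enl}_p(\os{\circ}{S}/{\cal V})$ the induced projection $q\col X^{[p^n]}_{\os{\circ}{T}{}'_1}\lo X^{[p^n]}_{\os{\circ}{T}_1}$ satisfies the conditions (8.1.6) and (8.1.7) — this is precisely the observation already used inside the proof of (\ref{theo:bccange}) — so (\ref{theo:bccange}) furnishes a canonical filtered isomorphism $Lu^*Rf_*((H_{\rm zar},P))\os{\sim}{\lo}Rf'_*((H_{\rm zar},P))$. Applying ${\cal H}^q$, restricting to $P_k$ and inverting $p$ converts this into the required transition morphism; here the filtered perfectness supplied by (\ref{coro:filpcerf}) is what permits me to suppress the higher $L^iu^*$ contributions rationally and thereby conclude that the transition morphism is a \emph{filtered} isomorphism at the $P_k$-level. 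The cocycle (transitivity) relation and the normalization ${\rm id}^*={\rm id}$ for these transition isomorphisms are then immediate from the transitive relation recorded in (\ref{prop:tptt}).

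The genuine work, and the main obstacle, lies in verifying the convergence condition of \cite{od}, without which the compatible family just constructed would define only a stratified module rather than a bona fide object of ${\rm IsocF}_p(\os{\circ}{S}/{\cal V})$. My strategy is d\'evissage along $P$: by (\ref{prop:ngr}) the graded piece ${\rm gr}^P_k$ is, up to Tate twist, a direct sum of log crystalline cohomologies of the smooth proper schemes $\os{\circ}{X}_{\ul{\lam}\cup\ul{\mu}}$ tensored with crystalline orientation sheaves, each of which is a convergent isocrystal by the fundamental results of \cite{od}. Since the category of convergent isocrystals is abelian and closed under extensions, and since the filtration $P$ is bounded below by (\ref{prop:lcz}) and exhaustive, an induction on $k$ propagates convergence from the graded pieces to every $P_k$, the exhaustiveness disposing of the limit case $k=\infty$ and yielding the object (\ref{ali:pklkf}).

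The delicate point in this final step is that the filtration $P$ on $H_{\rm zar}$ is \emph{not} finite, so one cannot simply appeal to finiteness of the filtration as in the classical situation; the induction must be organized so that each $R^qf^{[p^n]}_*(P_k)$ is a genuine \emph{saturated} sub-isocrystal of $R^qf^{[p^n]}_*(H_{\rm zar})_{\mab Q}$ and not merely a subquotient. I expect this to be guaranteed by the $E_2$-degeneration modulo torsion (\ref{theo:e2dgfam}), which forces $R^qf^{[p^n]}_*(P_k)$ and its graded quotients to be locally free and the filtration to be strict, so that the inductive extensions stay within the abelian category of convergent isocrystals and the transition isomorphisms restrict compatibly to the subobjects.
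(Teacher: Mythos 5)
Your proposal is correct and follows essentially the same route as the paper: the paper omits the proof of this lemma, stating at the opening of the section that all results there are obtained by transcribing the proof of \cite[(5.2.3)]{nb} with $(A_{{\rm zar},{\mab Q}},P)$ replaced by $(H_{\rm zar},P)_{\mab Q}$ and the log base change theorem (\ref{theo:bccange}) substituted for its Steenbrink-complex analogue, after noting that the base change morphisms satisfy the conditions (8.1.6) and (8.1.7). Your further elaboration of the convergence step by d\'evissage along $P$, using the description of the graded pieces in (\ref{prop:ngr}) and the convergence results of \cite{od} for the smooth proper strata, is consistent with what that reference does.
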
 
\begin{proof} 
The proof is the same as that of \cite[(5.2.3)]{nb}. 
\end{proof}

\par 
Let $A$ be a commutative ring with unit element. 
Recall that we have said that a filtered $A$-module $(M,P)$ is filteredly flat 
if $M$ and $M/P_kM$ $(\forall k\in {\mab Z})$ are flat $A$-modules 
(\cite[(1.1.14)]{nh2}). As a corollary of (\ref{lemm:pnlcfi}), we obtain the following:

\begin{coro}[{\bf cf.~\cite[(5.2.4)]{nb}}]\label{coro:flft}
For a hollow log $p$-adic enlargement $T$ of $S^{[p^n]}/{\cal V}$,    
the filtered sheaf
\begin{align*}  
(R^qf^{[p^n]}_{\os{\circ}{T}*}
(P_kH_{\rm zar}(X^{[p^n]}_{\os{\circ}{T}_1}/S^{[p^n]}(T)^{\nat},
E^{}(\os{\circ}{T}))),P) 
\tag{14.3.1}\label{ali:pce}
\end{align*} 
is a filteredly flat ${\cal K}_T$-modules. 
In particular, the filtered sheaf 
\begin{align*}  
(R^qf^{[p^n]}_{\os{\circ}{T}*}
(\eps^*_{X^{[p^n]}_{\os{\circ}{T}_1}
/S^{[p^n]}(T)^{\nat}}E^{}(\os{\circ}{T})),P)
\tag{14.3.2}\label{ali:pcte}
\end{align*} 
is a filteredly flat ${\cal K}_T$-module. 
\end{coro}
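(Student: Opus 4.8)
The plan is to read off the filtered flatness directly from \ref{lemm:pnlcfi}, which does the real work, and then to supply the elementary homological algebra that converts ``the graded pieces are flat'' into ``the module and all its quotients by the filtration are flat''. First I would apply \ref{lemm:pnlcfi} to the hollow enlargement $T$ of $S^{[p^n]}/{\cal V}$: since $T$ is hollow, the log structure of $S^{[p^n]}(T)^{\nat}$ is split, so the splitness hypothesis of \ref{lemm:pnlcfi} holds and that lemma identifies the filtered sheaf in question with the value at $\os{\circ}{T}$ of an object $(R^qf^{[p^n]}_*(P_kH_{\rm zar}(X^{[p^n]}/K,E)),P)$ of ${\rm IsocF}_p(\os{\circ}{S}/{\cal V})$. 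Membership in ${\rm IsocF}_p(\os{\circ}{S}/{\cal V})$ means that the underlying module $M$, every step $P_jM$ and every graded piece ${\rm gr}_j^PM$ are the values at $\os{\circ}{T}$ of convergent isocrystals; by Ogus's theory of convergent isocrystals (\cite{od}) such values are coherent and locally free over ${\cal K}_T$, hence flat.

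Next I would check that the induced filtration $P$ on $M$ is finite. For fixed $q$ and $k$ the $E_1$-terms of the spectral sequence (\ref{ali:spaorh}) vanish outside a finite range of indices: quasi-compactness of $\os{\circ}{X}_{T_0}$ forces the index sets for $\ul\lam$ and $\ul\mu$ to be finite, while the inequalities $0\le q+2j-k-2m\le 2d$ ($d$ being the relative dimension) leave only finitely many pairs $(m,j)$ with a nonzero contribution. Since each $E_\infty^{-k,q+k}={\rm gr}^P_\bullet M$ is a subquotient of these finitely many terms, the filtration $P$ on $M$ is finite; the $E_2$-degeneration (\ref{theo:e2dgfam}) gives moreover the explicit identification of the graded pieces, consistent with the previous paragraph.

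Finally I would assemble the flatness statement. The module $M$ is flat because it is the value of a convergent isocrystal. For each $j$ the quotient $M/P_jM$ inherits a finite filtration whose graded pieces are among the ${\rm gr}_l^PM$ with $l>j$, each of which is flat; repeatedly using that in a short exact sequence $0\to A\to B\to C\to 0$ with $A$ and $C$ flat the middle term $B$ is flat, one gets that $M/P_jM$ is flat. Thus $(M,P)$ is filteredly flat in the sense of \cite[(1.1.14)]{nh2}, which is the first assertion. The ``in particular'' statement is the case $k=\infty$: by exhaustivity $P_\infty H_{\rm zar}=H_{\rm zar}$, and $R^qf^{[p^n]}_{\os{\circ}{T}*}(H_{\rm zar})=R^qf^{[p^n]}_{\os{\circ}{T}*}(\eps^*(E))_{\mab Q}$ through the isomorphism (\ref{ali:eeqie}), so the first part applies verbatim.

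The step I expect to be the main obstacle is not deep but is the one most easily mishandled: one must pass from the genuinely infinite Hirsch filtration $P$ on $H_{\rm zar}$ (which is only bounded below and exhaustive, not finite) to the induced filtration on the fixed-degree cohomology $R^qf^{[p^n]}_{\os{\circ}{T}*}(P_kH_{\rm zar})$ and verify that the latter is finite, for it is exactly this finiteness — obtained from the dimension bound together with the convergence of (\ref{ali:spaorh}) and quasi-compactness — that legitimises the ``extension of flats is flat'' argument. Were the induced filtration infinite, a successive-extension argument would not suffice and one would have to argue flatness of $M/P_jM$ by other means.
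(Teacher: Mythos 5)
Your argument is correct and is essentially the proof the paper intends (the paper omits it, deferring to \cite[(5.2.4)]{nb}): identify the filtered sheaf as the value of a filtered convergent isocrystal via (\ref{lemm:pnlcfi}), note that such values are locally free hence flat, and deduce flatness of the quotients $M/P_jM$ from the finiteness of the induced filtration in each cohomological degree. The only point to tidy is your appeal to the splitness hypothesis of (\ref{lemm:pnlcfi}): that hypothesis concerns $M_S$, not $T$, but since flatness is local on $\os{\circ}{S}$ and $M_S$ is locally split for any family of log points, one simply applies the lemma locally (cf.\ (\ref{rema:nlcfi})); alternatively, flatness of $M/P_jM$ follows at once from the fact that the category of convergent isocrystals is abelian, so the quotient is itself the value of a convergent isocrystal.
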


\begin{theo}[{\bf Log $p$-adic convergence of the weight filtration (cf.~\cite[(5.2.3)]{nb})}]
\label{theo:pwfec} 
Let $k$, $q$ be two nonnegative integers. 
Then there exists a unique object
\begin{align*} 
(R^qf^{[p^n]}_{*}
(P_kH_{\rm zar}(X^{[p^n]}_{}/K,E^{}))^{\sq},P)
\tag{14.4.1}\label{ali:pcpkxee}
\end{align*} 
of ${\rm IsocF}^{\sq}_p(S^{[p^n]}/{\cal V})$ 
such that  
\begin{align*} 
&(R^qf^{[p^n]}_{*}
(P_kH_{\rm zar}(X^{[p^n]}_{}/K,E^{}))^{\sq},P)_{T}
\tag{14.4.2}\label{ali:pkaskf} \\
&=
(R^qf^{[p^n]}_{\os{\circ}{T}*}
(P_kH_{\rm zar}
(X^{[p^n]}_{\os{\circ}{T}_1}/S^{[p^n]}(T)^{\nat},E^{}(\os{\circ}{T}))),P)
\end{align*} 
for any object $T$ of ${\rm Enl}^{\sq}_p(S^{[p^n]}/{\cal V})$.  
In particular, there exists a unique object
\begin{align*} 
(R^qf^{[p^n]}_{*}
(\eps^*_{X^{[p^n]}_{}/K}(E^{}_K))^{\nat,\sq},P)
\tag{14.4.3}\label{ali:pcee}
\end{align*}  
of ${\rm IsocF}^{\sq}_p(S^{[p^n]}/{\cal V})$ 
such that  
\begin{align*} 
(R^q f^{[p^n]}_{*}
(\eps^*_{X_{}/K}(E^{}_K))^{\nat,\sq},P)_T=
(R^qf^{[p^n]}_{X^{[p^n]}_{\os{\circ}{T}_1}/S^{[p^n]}(T)^{\nat}*}
(\eps^*_{X^{[p^n]}_{T_1}/S^{[p^n]}(T)^{\nat}}
(E^{}(\os{\circ}{T}))),P)
\tag{14.4.4}\label{ali:apce}
\end{align*} 
for any object $T$ of ${\rm Enl}^{\sq}_p(S^{[p^n]}/{\cal V})$. 
\end{theo}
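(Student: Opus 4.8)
The plan is to realize the left-hand side of (\ref{ali:pkaskf}) as a compatible family indexed by the enlargement site ${\rm Enl}^{\sq}_p(S^{[p^n]}/{\cal V})$ and to verify that the transition morphisms attached to morphisms of enlargements are filtered isomorphisms after tensoring with $K$; this is precisely the condition for the family to define an object of ${\rm IsocF}^{\sq}_p(S^{[p^n]}/{\cal V})$. As remarked at the start of this section, the argument is the one of \cite[(5.2.3)]{nb} with $(A_{{\rm zar},{\mab Q}},P)$ replaced throughout by $(H_{\rm zar},P)_{\mab Q}$, so the essential work is to confirm that the two inputs of that argument — the functoriality of the filtered cohomology and its base-change behaviour — are available for $(H_{\rm zar},P)_{\mab Q}$.

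For each object $T$ of ${\rm Enl}^{\sq}_p(S^{[p^n]}/{\cal V})$ I would first form the hollowed-out $p$-adic formal family of log points $S^{[p^n]}(T)^{\nat}$, the base-changed proper SNCL scheme $X^{[p^n]}_{\os{\circ}{T}_1}/S^{[p^n]}(T)^{\nat}$, and the filtered complex of (\ref{eqn:auxtds}); its $q$-th filtered hypercohomology is the candidate value of (\ref{ali:pcpkxee}) at $T$. Given a morphism ${\mathfrak g}\col T'\lo T$ in the site, the induced morphism $g\col X^{[p^n]}_{\os{\circ}{T}{}'_1}\lo X^{[p^n]}_{\os{\circ}{T}_1}$ is a base-change morphism, hence satisfies the conditions (8.1.6) and (8.1.7) exactly as in the proof of (\ref{theo:bccange}); then (\ref{prop:tptt}) supplies the pull-back ${\mathfrak g}^*$ of (\ref{eqn:auaeds}), and the log base change theorem (\ref{theo:bccange}) shows that ${\mathfrak g}^*$ is a filtered quasi-isomorphism. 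To pass from this statement about $Lu^*$ to a genuine filtered isomorphism on each $R^qf^{[p^n]}_{\os{\circ}{T}*}(P_k H_{\rm zar}(\cdots))$, I would invoke the filtered flatness of these sheaves from (\ref{coro:flft}), together with the finite tor-dimension and filtered perfectness of $Rf_*$ from (\ref{coro:fctd}) and (\ref{coro:filpcerf}), which guarantee that the candidate values are coherent and that $Lu^*$ computes the ordinary pull-back. Compatibility with composition and with the identity follows from the transitivity clauses of (\ref{prop:tptt}).

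Assembling these isomorphisms yields the desired object of ${\rm IsocF}^{\sq}_p(S^{[p^n]}/{\cal V})$; its uniqueness is automatic, since an object of the enlargement category is determined by its values on enlargements. The special statement (\ref{ali:apce}) for $E={\cal O}_{\os{\circ}{X}_{T_0}/\os{\circ}{T}}$, and the unfiltered object (\ref{ali:pcee}), are then read off by taking $k=\infty$ and forgetting the filtration, using the identification of $H_{\rm zar}$ with the log isocrystalline complex coming from (\ref{eqn:eaxte}).

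I expect the main obstacle to lie not in the formal bookkeeping but in the convergence itself: the base change theorem (\ref{theo:bccange}) is phrased for log PD-enlargements, whereas membership in ${\rm IsocF}^{\sq}_p(S^{[p^n]}/{\cal V})$ requires the transition morphisms to be isomorphisms over \emph{all} $p$-adic enlargements $T$, whose associated thickenings can have arbitrarily large radius. This is exactly the point at which the Frobenius twist $S^{[p^n]}$ enters: one should use the infinitesimal deformation invariance of the weight-filtered isocrystalline cohomology (\ref{coro:finvcae}), (\ref{coro:finvliae}) — itself proved by Dwork's abrelative-Frobenius trick — to reduce a general enlargement to a PD-enlargement where (\ref{theo:bccange}) applies, while controlling the Tate twists $(-(k+m-j),v)$ appearing in the filtration through the degree function. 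Checking that the weight filtration $P$, and not merely the underlying complex, transports correctly under these reductions — that is, that the $\sq$-refinement preserves the filtered-flat structure of (\ref{coro:flft}) — will be the delicate part of the verification.
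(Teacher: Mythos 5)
Your proposal is correct and follows essentially the same route as the paper, which itself omits the proof and refers to the argument of \cite[(5.2.3)]{nb}: one checks that the transition morphisms of the family $T\lom (R^qf^{[p^n]}_{\os{\circ}{T}*}(P_kH_{\rm zar}(\cdots)),P)$ are filtered isomorphisms by combining the contravariant functoriality (\ref{prop:tptt}), the log base change theorem (\ref{theo:bccange}) together with the filtered flatness/perfectness of (\ref{coro:flft})--(\ref{coro:filpcerf}), and the Frobenius/Dwork deformation invariance to extend from PD-enlargements to arbitrary $p$-adic enlargements (this last step being packaged in the paper as the key lemma (\ref{lemm:pnlcfi})). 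Your closing paragraph correctly identifies the only genuinely delicate point, namely the passage from PD-enlargements to general enlargements via the twist $S^{[p^n]}$, which is exactly where the cited lemma is used.
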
 
\begin{proof} 
By using (\ref{lemm:pnlcfi}), the proof is the same as that of \cite[(5.2.3)]{nb}. 
\end{proof}

%\begin{coro}\label{coro:flft} 
%For any object $T$ of ${\rm Enl}_p(S^{[p^n]}/{\cal V})$, the filtered sheaf
%\begin{align*} 
%(R^qf^{[p^n]}_{\os{\circ}{T}*}
%(P_kH_{\rm zar}(X^{[p^n]}_{\os{\circ}{T}_1}/S^{[p^n]}(T)^{\nat},
%E^{}(\os{\circ}{T}))),P)\tag{14.4.1}\label{ali:kxee}
%\end{align*} 
%is a filteredly flat ${\cal K}_T$-modules. 
%In particular, the filtered sheaf 
%\begin{align*}  
%(R^qf^{[p^n]}_{\os{\circ}{T}*}
%(\eps^*_{X^{[p^n]}_{\os{\circ}{T}_1}
%/S^{[p^n]}(T)^{\nat}}E^{}(\os{\circ}{T})),P)
%\tag{14.4.2}\label{ali:pcte}
%\end{align*} 
%is a filteredly flat ${\cal K}_T$-module. 
%\end{coro} 
%\begin{proof} 
%Because the question is local on $T$, we have only to prove that 
%$$P_{q+k'}
%R^q f^{[p^n]}_{\os{\circ}{T}*}
%(P_kH_{\rm zar}
%(X^{[p^n]}_{\os{\circ}{T}_1}/S^{[p^n]}(T)^{\nat},
%E^{}(\os{\circ}{T})))$$ 
%is a flat ${\cal K}_T$-module. 
%This follows from the proof of (\ref{theo:pwfec}) and (\ref{coro:flft}).  
%\end{proof} 

\parno 
The following is a filtered version of \cite[(3.5)]{od}. 
To consider the category ${\rm IsocF}^{\rm sld}_p(S^{[p^n]}/{\cal V})$ 
(not ${\rm IsocF}_p(S^{[p^n]}/{\cal V})$) is important. 

\begin{lemm}[{\bf cf.~\cite[(5.2.6)]{nb}}]\label{lemm:nmr}
Let ${\cal V}'$ be a finite extension of complete discrete valuation ring of ${\cal V}$. 
Let $h\col S'\lo S$ be a morphism of $p$-adic formal families of log points over 
${\rm Spec}({\cal V}')\lo {\rm Spec}({\cal V})$.  
Let $g\col Y_{}\lo X_{}$ be 
a morphism of $N$-truncated simplicial base changes 
of SNCL schemes over $S'\lo S$. 
%Assume that the admissible immersion {\rm (\ref{eqn:audtds})} for $Y_{}/S'$ 
%and for any object of ${\rm Enl}^{\rm sld}_p((S')^{[p^n]}/{\cal V}')$ exists. 
Let $f'\col Y_{}\lo S'$ be the structural morphism. 
Then there exists a natural morphism 
\begin{align*} 
g^*\col &
g^*((R^qf^{[p^n]}_{*}
(P_kH_{\rm zar}(X^{[p^n]}_{}/K,E^{}))^{\rm sld},P))\\
&\lo (R^qf'{}^{[p^n]}_{*}
(P_kH_{\rm zar}
(Y^{[p^n]}_{}/K',\os{\circ}{g}{}^*(E^{})))^{\rm sld},P). 
\end{align*}
in ${\rm IsocF}^{\rm sld}_p((S')^{[p^n]}/{\cal V}')$.  
If $Y_{}=X_{}\times_SS'
%(=X_{}\times_{\os{\circ}{S}}{\os{\circ}{S}{}'})
$, 
then this morphism is an isomorphism. 
\end{lemm}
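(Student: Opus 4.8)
The plan is to construct the morphism $g^*$ one solid enlargement at a time out of the contravariant functoriality of $(H_{\rm zar},P)$ proved in (\ref{theo:ccm}), and then to check that the resulting family of morphisms is natural in the enlargement, so that it defines a morphism of filtered solid convergent isocrystals. Concretely, I would first fix a solid $p$-adic enlargement $T'$ of $(S')^{[p^n]}/{\cal V}'$; composing its structural morphism with $h$ produces a solid enlargement $T$ of $S^{[p^n]}/{\cal V}$, and $g$ induces a morphism $g_{\os{\circ}{T}'}\col Y^{[p^n]}_{\os{\circ}{T}'_1}\lo X^{[p^n]}_{\os{\circ}{T}_1}$ of base changes of SNCL schemes. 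As in the proof of (\ref{theo:bccange}), this induced morphism satisfies the conditions (8.1.6) and (8.1.7), so (\ref{theo:ccm}) supplies a filtered morphism
\begin{align*}
g_{\os{\circ}{T}'}^*\col {}& (H_{\rm zar}(X^{[p^n]}_{\os{\circ}{T}_1}/S^{[p^n]}(T)^{\nat},E),P)\\
&\lo Rg_{\os{\circ}{T}'*}((H_{\rm zar}(Y^{[p^n]}_{\os{\circ}{T}'_1}/(S')^{[p^n]}(T')^{\nat},\os{\circ}{g}{}^*(E)),P)).
\end{align*}
Applying $Rf^{[p^n]}_{\os{\circ}{T}*}$, taking the $q$-th cohomology sheaf, restricting to the $k$-th filtered step, and using the identifications (\ref{ali:pkaskf}) from (\ref{theo:pwfec}) then produces the value of $g^*$ at $T'$ (after applying the inverse-image functor of filtered solid convergent isocrystals of Ogus's theory \cite{od} on the source).

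Next I would verify that these enlargement-wise morphisms assemble into a morphism in ${\rm IsocF}^{\rm sld}_p((S')^{[p^n]}/{\cal V}')$. Compatibility with the transition morphisms between solid enlargements is a formal consequence of the transitivity relation recorded in (\ref{prop:tptt}) (equivalently the transitivity clause of (\ref{theo:ccm})), applied to the commutative squares coming from morphisms $T''\lo T'$. That the morphism respects the weight filtration $P$, and not merely the underlying isocrystal, is built into (\ref{theo:ccm}) and is preserved after passing to cohomology because each $P_k R^q f^{[p^n]}_{\os{\circ}{T}*}(\cdots)$ is a filteredly flat ${\cal K}_T$-module by (\ref{coro:flft}); this flatness is what lets the filtered base-change identifications commute with taking filtered pieces.

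The essential point—and the reason one must work with the solid category ${\rm IsocF}^{\rm sld}_p$ rather than ${\rm IsocF}_p$, as remarked just before the statement—is the control of the mapping degree. The graded pieces of $(H_{\rm zar},P)$ carry $D$-twists $(-(k+m-j);v)$ governed by the degree of the structural morphisms, via the scaling relation $v^{\nat}_T(u'{}^{[i]})={\rm deg}(v^{\nat}_T)^i u^{[i]}$ of (\ref{eqn:ffv}) (see also the description of the $E_1$-differentials in (\ref{prop:bddes}) and the spectral sequence (\ref{ali:sparh})). Restricting to solid enlargements forces these degrees to equal $1$, so that $v^{\nat}_T$ acts as the identity on the $u^{[i]}$ and all intervening twists are trivial on the transition maps; the filtered gluing is then unobstructed. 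I expect this bookkeeping—confirming that solidity kills the degree factors so that the filtered morphisms are genuinely natural in the enlargement—to be the main technical obstacle, whereas the construction of the individual morphisms is a direct invocation of (\ref{theo:ccm}).

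Finally, for the isomorphism statement I would specialize to $Y=X\times_S S'$, where $g$ is the base-change morphism. In that case the log base change theorem (\ref{theo:bccange}) shows that $g_{\os{\circ}{T}'}^*$ is a filtered quasi-isomorphism for every solid enlargement $T'$; passing to $R^q f^{[p^n]}_*$ and invoking the convergence of the weight filtration (\ref{theo:pwfec}) upgrades this to an isomorphism in ${\rm IsocF}^{\rm sld}_p((S')^{[p^n]}/{\cal V}')$. Since the whole argument runs parallel to \cite[(5.2.6)]{nb} and is the filtered analogue of \cite[(3.5)]{od}, the remaining verifications are routine, and I would only sketch them, indicating at each step where the degree-$1$ property of solid morphisms is used.
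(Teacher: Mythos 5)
Your proposal is correct and coincides with the paper's intended argument: the paper gives no proof of this lemma, deferring to the analogue \cite[(5.2.6)]{nb} and to the strategy announced at the beginning of \S\ref{sec:e2}, which is exactly what you carry out — construct the morphism enlargement by enlargement from the contravariant functoriality (\ref{theo:ccm}), glue via the transitivity relation, the identifications of (\ref{theo:pwfec}) and the filtered flatness (\ref{coro:flft}), and deduce the isomorphism for $Y=X\times_SS'$ from the filtered base change theorem (\ref{theo:bccange}). The one point worth making explicit is that the appeal to (\ref{theo:ccm}) presupposes that $g$ satisfies condition (8.1.6); this holds automatically for the base change morphism, as the paper notes at the start of the section, but is an implicit hypothesis on a general $g$.
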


\begin{rema}[{\bf cf.~\cite[(5.2.7)]{nb}}]\label{rema:ud}
As in \cite[(3.6)]{od}, 
$(R^qf^{[p^n]}_{*}
(P_kH_{\rm zar}(X^{[p^n]}_{}/K,E^{})),P)$ 
in ${\rm IsocF}^{\sq}_p(S^{[p^n]}/{\cal V})$ descends to 
the object 
$(R^qf^{[p^n]}_{*}
(P_kH_{\rm zar}(X^{[p^n]}_{}/K_0,E^{})),P)$
of ${\rm IsocF}^{\sq}_p(S^{[p^n]}/{\cal W})$.  
\end{rema}

%\par 
%To obtains the main result (\ref{theo:pwfaec}) below in this section, 
%we assume the existence of the immersion 
%(\ref{eqn:audtds}) only for the case $n=0$ and $1$. 
%\par 

\begin{theo}[{\bf Log convergence of the weight filtration (cf.~\cite[(5.2.8)]{nb})}]\label{theo:pwfaec}
Let the notations and the assumptions be as above.  
Consider the morphisms $X^{[p^m]}\lo S^{[p^m]}$ 
%and $X^{[p]}_{1}\lo S^{[p]}_1$ 
over ${\rm Spf}({\cal V})$ $(m=0,1)$ 
as a morphism $X^{[p^m]}\lo S^{[p^m]}$ 
over ${\rm Spf}({\cal W})$, respectively. 
Set ${\cal E}^{\sq}_{\star,{\cal W}}:={\rm Enl}^{\sq}_{\star}(S/{\cal W})$.  
%$(n=0,1)$. 
%Assume that there exists an admissible immersion {\rm (\ref{eqn:audgtds})}. 
Let 
$F^{\rm ar}_{X/S/{\cal W}} \col X\lo X^{[p]}$ 
be the abrelative Frobenius morphism 
over the morphism $S\lo S^{[p]}$ over ${\rm Spf}({\cal W})$.  
Let $W_{X/S^{[p]}/{\cal W}}
\col X^{[p]}\lo X$ 
be the projection. 
Let $E:=\{E_n\}_{n=0}^{\inf}$ be a sequence of flat coherent 
$\{{\cal O}_{\os{\circ}{X}{}_{\os{\circ}{T}_1}
/\os{\circ}{T}}\}_{T\in {\cal E}^{\sq}_{p,{\cal W}}}$-modules 
with a morphism 
\begin{align*} 
\Phi_n\col F_{\os{\circ}{X}{}}^*  (E_{n+1})\lo E_n
\tag{14.7.1}\label{ali:pchkee} 
\end{align*} 
of $\{{\cal O}_{\os{\circ}{X}{}_{\os{\circ}{T}_1}
/\os{\circ}{T}}\}_{T\in {\cal E}^{\sq}_{p,{\cal W}}}$-modules.  
Let $\os{\circ}{W}{}^{(l)}_{T}
\col (\os{\circ}{X}{}^{[p]}_{T_1})^{(l)}\lo \os{\circ}{X}{}^{(l)}_{T_1}$ 
$(l\in {\mab N}, 0\leq m\leq N)$ 
be also the projection over $\os{\circ}{T}$. 
Let 
$F^{\inf}{\textrm -}{\rm IsosF}^{\sq}(S/{\cal V})$ be the category of 
$F^{\inf}$-isospans on $S/{\cal V}$ {\rm (\cite[(5.1.14)]{nb})}. 
Assume that for any $l\geq 0$ and $n\geq 0$, 
the morphism 
\begin{align*} 
R^qf_{(\os{\circ}{X}{}^{[p]}_{T_1})^{(l)}/\os{\circ}{T}*}
(\os{\circ}{W}{}^{(l)*}_{T,{\rm crys}}
(E_{n+1}(\os{\circ}{T})_{(\os{\circ}{X}{}^{[p]}_{T_1})^{(l)}/\os{\circ}{T}}))
_{\mab Q}\lo  
R^qf_{\os{\circ}{X}{}^{(l)}_{T_1}/\os{\circ}{T}*}
(E_n(\os{\circ}{T})_{\os{\circ}{X}{}^{(l)}_{T_1}/\os{\circ}{T}})_{\mab Q}
\end{align*}
induced by $\Phi_n$ is an isomorphism for any object $T$ of ${\cal E}^{\sq}_{p,{\cal W}}$.  
Then there exists an object
\begin{align*} 
\{((R^q f_*(P_kH_{\rm zar}(X/K,E_n))^{\sq},P),\Phi_n)\}_{n=0}^{\inf}
\tag{14.7.2}\label{ali:pcxpkee} 
\end{align*}  
of $F^{\inf}{\textrm -}{\rm IsosF}^{\sq}(S/{\cal V})$ such that 
\begin{align*} 
(R^q f_*(P_kH_{\rm zar}(X/K,E_n))^{\sq}_T,P)=
(R^q f_*(P_kH_{\rm zar}(X_{T_1}/S(T)^{\nat},E_n(\os{\circ}{T}))),P)
\tag{14.7.3}\label{ali:pcpnle}
\end{align*} 
for any object $T$ of ${\rm Enl}^{\sq}_p(S/{\cal V})$. 
In particular, there exists an object
\begin{align*} 
\{(R^qf_{*}
(\eps^*_{X/K}(E_{n,K}))^{\nat,\sq},P),\Phi_n\}_{n=0}^{\inf}
\tag{14.7.4}\label{ali:pcxakee} 
\end{align*}  
of $F^{\infty}{\textrm -}{\rm IsosF}^{\sq}(S/{\cal V})$ such that 
\begin{align*} 
\{(R^qf_*(\eps^*_{X/K}(E_{n,K}))^{\nat,\sq}_T,P)\}_{n=0}^{\inf}=
(R^qf_{X_{\os{\circ}{T}_1}/S(T)^{\nat}*}(\eps^*_{X_{\os{\circ}{T}_1}/S(T)^{\nat}}(E_n(\os{\circ}{T})))_{\mab Q},P) 
\tag{14.7.5}\label{ali:pcxnle}
\end{align*} 
for any object $T$ of ${\rm Enl}^{\sq}_p(S/{\cal V})$. 
\end{theo}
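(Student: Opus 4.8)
The plan is to reduce Theorem (\ref{theo:pwfaec}) to the already-established log $p$-adic convergence result (\ref{theo:pwfec}) together with the infinitesimal deformation invariance machinery of \S\ref{sec:infhdi} and the $E_2$-degeneration of \S\ref{sec:filbo}, following the pattern of \cite[(5.2.8)]{nb}. The statement is a Frobenius-descent assertion: we must promote the family of objects $(R^qf_*(P_kH_{\rm zar}(X^{[p^n]}/K,E^{}))^{\sq},P)$ in ${\rm IsocF}^{\sq}_p(S^{[p^n]}/{\cal V})$, which already exist by (\ref{theo:pwfec}), into a single object of the category $F^{\inf}\text{-}{\rm IsosF}^{\sq}(S/{\cal V})$ of $F$-isocrystals compatibly with the morphisms $\Phi_n$ induced by (\ref{ali:pchkee}). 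The essential point is that the system $\{\Phi_n\}$ supplies the Frobenius structure, and the hypothesis that the induced maps on the $R^qf_{\os{\circ}{X}{}^{(l)}/\os{\circ}{T}*}$-terms are isomorphisms guarantees that the transition morphisms on the graded pieces—hence, via the $E_2$-degeneration, on the filtered pieces themselves—are isomorphisms.

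First I would fix an object $T$ of ${\rm Enl}^{\sq}_p(S/{\cal V})$ and assemble, for each $n$, the filtered sheaf $(R^qf_*(P_kH_{\rm zar}(X_{T_1}/S(T)^{\nat},E^{\bul \leq N}_n(\os{\circ}{T}))),P)$ using (\ref{theo:pwfec}) and (\ref{lemm:pnlcfi}); by (\ref{coro:flft}) these are filteredly flat ${\cal K}_T$-modules, which is what lets the formation commute with base change. The morphisms $\Phi^{\bul \leq N}_n$ of (\ref{ali:pchkee}) together with the abrelative Frobenius morphism $F^{\rm ar}_{X/S,S^{[p]}/{\cal W}}$ and its projection $W_{X/S^{[p]},S/{\cal W}}$ induce, via the contravariant functoriality (\ref{theo:ccm}) and the weight spectral sequence (\ref{ali:spaorh}), a morphism $\Phi_n$ between the $n$-th and $(n{+}1)$-th filtered cohomology sheaves. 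I would then verify the transitive (cocycle) relations among these $\Phi_n$ using (\ref{ali:hgm}), so that $\{\Phi_n\}$ defines an actual $F^{\inf}$-structure rather than merely a sequence of unrelated maps.

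The crucial step is to check that $\Phi_n$ is an isomorphism of filtered isocrystals. Here I would argue on the graded pieces: by the Poincaré residue description (\ref{eqn:ee}) of ${\rm gr}^P_k H_{\rm zar}$, each ${\rm gr}^P_k$ of the target and source is a direct sum of the ordinary log crystalline cohomology sheaves $R^qf_{\os{\circ}{X}_{\ul{\lam}\cup\ul{\mu}}/\os{\circ}{T}*}$ of the closed strata, twisted by orientation sheaves and $D$-twists. The hypothesis on the $(\os{\circ}{X}{}^{[p]}_{T_1})^{(l)}$-cohomology being an isomorphism then forces $\Phi_n$ to induce an isomorphism on every ${\rm gr}^P_k$. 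By the $E_2$-degeneration (\ref{theo:e2dgfam}), the weight spectral sequence modulo torsion degenerates, so the associated graded of $(R^qf_*,P)$ is exactly $\bigoplus_k E^{-k,q+k}_2$; an isomorphism on all graded pieces of a filtered object whose spectral sequence degenerates is itself a filtered isomorphism. The main obstacle will be ensuring that the $D$-twists by $\deg(v^{\nat})=p$ introduced in passing between $X$ and $X^{[p]}$ (cf.\ (\ref{eqn:ffv}), (\ref{ali:arf})) are matched correctly across $\Phi_n$, so that the Frobenius acts with the right Tate-twist normalization on each weight-graded summand; this bookkeeping, though formally routine, is exactly where the abrelative-versus-absolute Frobenius distinction of \cite{nb} must be respected. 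Once this is done, descent of the filtered object along the Frobenius system yields (\ref{ali:pcxpkee}), and specializing $E^{\bul \leq N}_n$ to the trivial-coefficient case and applying $\eps^*$ gives (\ref{ali:pcxakee}), exactly parallel to the unfiltered argument in \cite[(5.2.8)]{nb}.
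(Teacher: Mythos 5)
The paper itself gives no argument for this theorem: the author states at the head of \S\ref{sec:e2} that the proofs are omitted because they are obtained from \cite[(5.2.8)]{nb} by replacing $(A_{{\rm zar},{\mab Q}},P)$ with $(H_{\rm zar},P)_{\mab Q}$ and invoking the filtered base change theorem (\ref{theo:bccange}). Your reconstruction follows exactly that intended route — existence of each filtered $p$-adically convergent isocrystal from (\ref{theo:pwfec}), the Frobenius structure $\Phi_n$ from contravariant functoriality (\ref{theo:ccm}), verification on the weight-graded pieces via the residue description (\ref{eqn:ee}), and Dwork's trick to handle general enlargements — so in substance you are giving the proof the paper defers to.

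Two points deserve adjustment. First, you appeal to the $E_2$-degeneration (\ref{theo:e2dgfam}) to pass from an isomorphism on graded pieces to a filtered isomorphism, but that theorem is only proved in this paper for the trivial coefficient $E={\cal O}_{\os{\circ}{X}_{T_0}/\os{\circ}{T}}$, not for the general sequence $E^{\bul\leq N}_n$; fortunately degeneration is not needed here. The hypothesis on $\Phi^m_n$ gives an isomorphism on the $E_1$-terms of the convergent weight spectral sequence (\ref{ali:spsarh}), a morphism of convergent spectral sequences that is an isomorphism on $E_1$ is an isomorphism on $E_\infty={\rm gr}^P$ of the abutment, and the induced filtration on $R^qf_*$ is finite, so the filtered isomorphism follows without any degeneration statement. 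Second, the essential content of passing from ${\rm IsocF}^{\sq}_p$ to $F^{\inf}{\textrm -}{\rm IsosF}^{\sq}(S/{\cal V})$ is the definition of the value at a non-$p$-adic enlargement by factoring the structural morphism through a power of Frobenius (as in \cite[(3.7)]{od} and \S\ref{sec:infhdi}), together with the check that the result is independent of the chosen power; you only gesture at this step, and it is precisely where the hypothesis (\ref{ali:pchkee}) is consumed and where the $D$-twist bookkeeping you flag must be carried out. With those two repairs your outline is a correct rendering of the omitted proof.
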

\begin{proof} 
By using (\ref{theo:pwfec}), the proof is the same as that of \cite[(5.2.8)]{nb}. 
\end{proof}

%\begin{coro}\label{coro:filc} 
%Let ${\rm Enl}^{\sq}_{\star}(S/{\cal V})$ be the category of enlargements on $S/{\cal V}$ 
%$(\sq={\rm sld}$ or nothing, $\star=p$ or nothing$)$ {\rm (\cite[\S5)]{nb})}. 
%Let the notations and the assumption be as in {\rm (\ref{theo:pwfaec})}. 
%Then there exists a filtered $F^{\infty}$-isospan 
%\begin{align*} 
%\{(R^qf_{*}
%(\eps^*_{X/K}(E^{\bul \leq N}_{n,K}))^{\sq},P),\Phi_n\}_{n=0}^{\inf}
%\tag{14.8.1}\label{ali:pcesee} 
%\end{align*}  
%on ${\rm rhEnl}^{\sq}(S/{\cal V})$ such that 
%\begin{align*} 
%\{(R^qf_*
%(\eps^*_{X/K}(E^{\bul \leq N}_{n,K}))^{\sq}_T,P)\}_{n=0}^{\inf}=
%(R^qf_{X_{\os{\circ}{T}_1}/S(T)*}(\eps^*_{X_{\os{\circ}{T}_1}/S(T)}
%(E^{\bul \leq N}_n(\os{\circ}{T})))_{\mab Q},P) 
%\tag{14.8.2}\label{ali:pqple}
%\end{align*} 
%for any object $T$ of 
%${\rm Enl}^{\sq}_{\star}(S/{\cal V})$. 
%\end{coro} 
%\begin{proof} 
%Let $T$ be an object of 
%${\rm rhEnl}^{\sq}_p(S/{\cal V})$. 
%Then $S(T)=S(T)^{\nat}$. 
%Hence this corollary follows from (\ref{theo:pwfaec}).  
%\end{proof} 

%\begin{rema}\label{rema:nlcfi} 
%Let the notations and the assumption be as in {\rm (\ref{theo:pwfaec})}. 
%As in (\ref{lemm:pnlcfi}), we see that the filtered log convergent $F$-isocrystal 
%$$\{(R^qf_*(P_kH_{\rm zar}(X/K,E)),P)\}_{n=0}^{\inf}$$  
%on ${\rm Enl}^{\sq}_{\star}(S/{\cal V})$ also 
%gives a filtered convergent $F^{\inf}$-isospan 
%on ${\rm Enl}_{\star}(\os{\circ}{S}/{\cal V})$ locally on $\os{\circ}{S}$. 
%\end{rema}

\begin{coro}\label{coro:fenlt}
Assume that $E_n=E_0$ for any $n\in {\mab N}$. Set $E:=E_0$. 
For any object $T$ of ${\rm Enl}^{\sq}(S/{\cal V})$,    
$$(R^qf_{T*}(P_kH_{\rm zar}(X_{\os{\circ}{T}_1}/S(T)^{\nat},E(\os{\circ}{T}))),P)$$ 
is a filteredly flat ${\cal K}_T$-module. 
In particular, the filtered sheaf 
\begin{align*}  
(R^qf_{T*}(\eps^*_{X_{\os{\circ}{T}_1}/S(T)^{\nat}}(E(\os{\circ}{T})))_{\mab Q},P) 
\tag{14.8.1}\label{ali:bpce}
\end{align*} 
is a filteredly flat ${\cal K}_T$-module. 
\end{coro}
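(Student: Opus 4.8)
The second assertion is the special case $k=\inf$ of the first: since the filtration $P$ is exhaustive we have $P_{\inf}H_{\rm zar}=H_{\rm zar}$, and by the isomorphism (\ref{ali:eeqie}) there is an identification $R^qf_{T*}(H_{\rm zar}(X_{\os{\circ}{T}_1}/S(T)^{\nat},E^{\bul \leq N}(\os{\circ}{T})))_{\mab Q}=R^qf_{T*}(\eps^*_{X_{\os{\circ}{T}_1}/S(T)^{\nat}}(E^{\bul \leq N}(\os{\circ}{T})))_{\mab Q}$. Thus the plan is to prove the first, filtered statement for every object $T$ of ${\rm Enl}^{\sq}(S/{\cal V})$, that is, to show that $R^qf_{T*}(P_kH_{\rm zar})$ and all the quotients $R^qf_{T*}(H_{\rm zar})/R^qf_{T*}(P_kH_{\rm zar})$ are flat ${\cal K}_T$-modules in the sense of \cite[(1.1.14)]{nh2}.

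The decisive point is that the hypothesis $E^{\bul \leq N}_n=E^{\bul \leq N}_0$ is exactly what upgrades the filtered $F^{\inf}$-isospan produced by (\ref{theo:pwfaec}) into a genuine filtered convergent $F$-isocrystal on $S/{\cal V}$. First I would invoke (\ref{theo:pwfaec}) to obtain the object $\{((R^qf_*(P_kH_{\rm zar}(X/K,E^{\bul \leq N}))^{\sq},P),\Phi_n)\}_{n=0}^{\inf}$, and then observe, as recorded in (\ref{rema:nlcfi}), that with constant coefficients the transition maps $\Phi_n$ assemble the isospan into an honest filtered log convergent $F$-isocrystal. By construction its value at a restrictively hollow enlargement is the object appearing in (\ref{ali:pqple}), where filtered flatness is already guaranteed by (\ref{coro:flft}).

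The remaining task — and the crux of the corollary — is to pass from hollow enlargements to arbitrary $T$. Here I would use that a convergent $F$-isocrystal is locally free (Ogus, \cite{od}), the Frobenius structure being essential for this. Concretely, the $E_2$-degeneration (\ref{theo:e2dgfam}) identifies ${\rm gr}^P$ of the cohomology with the $E_2=E_{\inf}$-terms, which by (\ref{coro:wtsp}) are subquotients of the locally free crystalline $F$-isocrystals $R^qf_{\os{\circ}{X}_{\ul{\lam}\cup \ul{\mu},T_0}/\os{\circ}{T}*}(\cdots)$ attached to the proper smooth components; being subquotients in the abelian category of convergent $F$-isocrystals, each ${\rm gr}^P_k$ is again a convergent $F$-isocrystal and hence locally free. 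Since the weight filtration on $R^q$ is finite, the short exact sequences $0\to {\rm gr}^P_k\to M/P_{k-1}\to M/P_k\to 0$ then force $R^qf_{T*}(P_kH_{\rm zar})$, $R^qf_{T*}(H_{\rm zar})$ and all the quotients $R^qf_{T*}(H_{\rm zar})/R^qf_{T*}(P_kH_{\rm zar})$ to be locally free, a fortiori flat, ${\cal K}_T$-modules for every $T$, not merely the hollow ones of (\ref{coro:flft}).

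The main obstacle I anticipate is not the flatness propagation itself but verifying that the filtration pieces and their quotients are each convergent $F$-isocrystals, rather than merely subquotients of the total cohomology sheaf: one must check that the Frobenius structure respects the weight filtration, keeping track of the $D$-twists $(-(k+m-j),v)$ appearing in (\ref{ali:spaorh}), and that the $E_2$-degeneration is compatible with $F$, so that the graded pieces genuinely inherit the $F$-isocrystal structure to which \cite{od} applies. Once this compatibility is established, the local freeness of each graded piece, and thus the filtered flatness of the whole at an arbitrary enlargement, follows formally.
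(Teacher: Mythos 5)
The paper omits the proof of this corollary entirely (the section preamble defers everything to the convergence package and to \cite[(5.2.4)]{nb}), and your argument follows exactly the intended route: the constant--coefficient hypothesis turns the filtered $F^{\inf}$-isospan of (\ref{theo:pwfaec}) into a genuine filtered convergent $F$-isocrystal, and values of convergent $F$-isocrystals at arbitrary enlargements are locally free by Ogus, whence filtered flatness at every $T$, not just the ($p$-adic or hollow) enlargements of (\ref{coro:flft}). The one comment is that your detour through the $E_2$-degeneration (\ref{theo:e2dgfam}) and the abelian category of convergent $F$-isocrystals, used to verify that each ${\rm gr}^P_k$ carries a convergent $F$-structure, is redundant: the statement of (\ref{theo:pwfaec}) already delivers $(R^qf_*(P_kH_{\rm zar}(X/K,E^{\bul \leq N}))^{\sq},P)$ as an object of the \emph{filtered} convergent category $F^{\inf}{\textrm -}{\rm IsosF}^{\sq}(S/{\cal V})$, so the compatibility of Frobenius with the weight filtration and the local freeness of the pieces and quotients are built in, and the finite-filtration bookkeeping at the end of your argument closes the proof exactly as in (\ref{coro:flft}).
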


\begin{exem}\label{exam:ofl} 
Let the notations be as before (\ref{theo:pwfaec}) (1). 
Then there exists an object
$$(R^qf_*(P_kH_{\rm zar}(X/K))^{\sq},P)$$  
of $F{\textrm -}{\rm IsocF}^{\sq}(S/{\cal V})$
such that 
\begin{align*} 
(R^qf_*(P_kH_{\rm zar}(X/K))^{\sq},P)_T
=(R^qf_*(P_kH_{\rm zar}(X_{\os{\circ}{T}_1}/S(T)^{\nat}),P)\tag{14.9.1}\label{ali:pcoe}
\end{align*} 
for any object $T$ of ${\rm Enl}^{\sq}_p(S/{\cal V})$.   
Indeed, the assumption in (\ref{theo:pwfaec})  
is satisfied by the base change of \cite[(1.3)]{boi} 
(cf.~the proof of \cite[(3.7)]{od}).
In particular,  there exists an object 
\begin{align*} 
(R^qf_*({\cal O}_{X/K})^{\nat,\sq},P)
\tag{14.9.2}\label{ali:pcxnee} 
\end{align*}  
of $F{\textrm -}{\rm IsocF}^{\sq}(S/{\cal V})$ such that 
\begin{align*} 
(R^qf_*({\cal O}_{X/K})^{{\nat},{\sq}},P)_T=
(R^qf_{X_{\os{\circ}{T}_1}/S(T)^{\nat}*}
({\cal O}_{X_{\os{\circ}{T}_1}/S(T)^{\nat}})_{\mab Q},P) 
\tag{14.9.3}\label{ali:pctoe}
\end{align*} 
for any object $T$ of ${\rm Enl}_p^{\sq}(S/{\cal V})$. 
%We also have a filtered $F$-isocrystal
%\begin{align*} 
%(R^qf_*({\cal O}_{X/K})^{\sq},P)
%\tag{14.9.4}\label{ali:pckee} 
%\end{align*}  
%on ${\rm rhEnl}^{\sq}(S/{\cal V})$ such that 
%\begin{align*} 
%(R^qf_*({\cal O}_{X/K})^{\sq},P)_T=
%(R^qf_{X_{\os{\circ}{T}_1}/S(T)*}
%({\cal O}_{X_{\os{\circ}{T}_1}/S(T)})_{\mab Q},P) 
%\tag{14.9.5}\label{ali:pctkoe}
%\end{align*} 
%for any object $T$ of ${\rm rhEnl}_p^{\sq}(S/{\cal V})$. 
\end{exem}

\bigskip
\parno
{\bf (2) Functoriality}
\bigskip
\begin{prop}\label{prop:fcuccv}
$(1)$ 
Let $g$ be as in {\rm (\ref{theo:ccm})}. 
Let the notations and the assumption be as in 
{\rm (\ref{coro:fenlt})}.  
Then the log $p$-adically convergent isocrystal 
$P_kR^qf_{X/K*}
(\eps^*_{X/K}(E_{K}))$ 
is contravariantly functorial.  
\end{prop}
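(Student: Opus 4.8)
The plan is to promote the contravariant functoriality of the filtered complex $(H_{\rm zar},P)$ recorded in (\ref{theo:ccm}) to a morphism of filtered log $p$-adically convergent isocrystals, using the fact established in (\ref{theo:pwfaec}) (and the descent to honest convergent isocrystals via (\ref{coro:filc}), (\ref{rema:nlcfi})) that $P_kR^qf_{X/K*}(\eps^*_{X/K}(E^{\bul \leq N}_{n,K}))$ is such an isocrystal. Since an object of ${\rm IsocF}^{\sq}_p(S/{\cal V})$ is determined by its values on the objects $T$ of the relevant enlargement category together with the compatible base-change transition morphisms, it suffices to construct a morphism of the evaluations for each $T$ and to verify that these commute with the transition morphisms. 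This is exactly the pattern used for $(A_{{\rm zar},{\mab Q}},P)$ in \cite{nb}, and the filtered analogue of Ogus' functoriality (\ref{lemm:nmr}) will carry out most of the bookkeeping.

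First I would fix a morphism $g\col X\lo Y$ over $v\col S\lo S'$ as in (\ref{theo:ccm}), satisfying (8.1.6) and (8.2.5), together with a morphism of the corresponding log PD-enlargements; for an object $T$ of the enlargement category of $S$ let $T'$ be the induced enlargement of $S'$. Applying (\ref{theo:ccm})(1) yields the filtered morphism
\[
g^*\col (H_{\rm zar}(Y_{\os{\circ}{T}{}'_0}/\os{\circ}{T}{}',F),P)\lo Rg_*((H_{\rm zar}(X_{\os{\circ}{T}_0}/\os{\circ}{T},E),P)),
\]
and applying $Rf'_*$ (using $f'\circ g=v^{\nat}\circ f$) and taking $q$-th cohomology sheaves produces a filtered morphism of ${\cal K}_T$-modules
\[
g^*_T\col R^qf'_*(P_kH_{\rm zar}(Y_{\os{\circ}{T}{}'_0}/S'(T')^{\nat},F))\lo R^qf_*(P_kH_{\rm zar}(X_{\os{\circ}{T}_0}/S(T)^{\nat},E)).
\]
By the identifications of (\ref{theo:pwfaec}) and (\ref{theo:pwfec}), the source and target are the values at $T$ of $v^*P_kR^qf'_{Y/K*}(\eps^*_{Y/K}(F))$ and of $P_kR^qf_{X/K*}(\eps^*_{X/K}(E))$, so $g^*_T$ is the $T$-component of the desired morphism of isocrystals.

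Next I would check that the family $\{g^*_T\}_T$ is compatible with the transition morphisms, so that it genuinely defines a morphism in ${\rm IsocF}^{\sq}_p$. For a morphism ${\mathfrak g}\col T_2\lo T_1$ of enlargements this amounts to the commutativity of a square whose two composites are computed from the base-change compatibility (\ref{prop:tptt}) of the pull-back and the transitive relation (\ref{ali:hgm}) of (\ref{theo:ccm}); both sides reduce to a single pull-back morphism by the cocycle condition, giving the required commutativity, and the identity relation ${\rm id}^*_X={\rm id}$ follows likewise. Here the $D$-twist by $\deg(v)$ entering through (\ref{ali:exut}) and (\ref{ali:exrt}) must be carried along, but it is already built into $g^*$ and is constant in $T$, so it causes no incompatibility.

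The main obstacle will be to confirm that the enlargement-wise morphisms just constructed are compatible with the Frobenius (Dwork) descent by which (\ref{theo:pwfaec}) enlarges the radius of convergence and pins down the isocrystal structure: one must see that $g^*$ commutes with the abrelative Frobenius base change used there. This is exactly the content packaged, in filtered and solid form, in the analogue (\ref{lemm:nmr}) of Ogus' \cite[(3.5)]{od}. Thus the crux is to identify the system $\{g^*_T\}_T$ above with the morphism produced by (\ref{lemm:nmr}), after which the contravariant functoriality, together with its transitivity and identity relations, follows formally, exactly as for $(A_{{\rm zar},{\mab Q}},P)$ in \cite{nb}.
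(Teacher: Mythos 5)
Your proposal is correct and follows exactly the route the paper intends: the paper's own proof of this proposition is the single line ``This is obvious,'' relying on precisely the ingredients you assemble, namely the filtered contravariant functoriality of $(H_{\rm zar},P)$ from (\ref{theo:ccm}), the enlargement-wise compatibility of (\ref{prop:tptt}), and the convergent-isocrystal packaging of (\ref{theo:pwfec}), (\ref{theo:pwfaec}) and (\ref{lemm:nmr}). Your write-up is a faithful (and more explicit) elaboration of the same argument.
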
 
\begin{proof} 
This is obvious. 
\end{proof}

%\begin{rema} 
%We leave the reader to the formulations of the log convergent 
%$F$-isocrystal versions of (\ref{prop:fcuccv}) (cf.~(\ref{prop:spcnvuc}) below). 
%\end{rema} 

\begin{prop}\label{prop:spcnvuc}
Let the notations and the assumption be as in {\rm (\ref{coro:fenlt})}. 
%Assume that $E^{\bul \leq N}_n=
%E^{\bul \leq N}_0$ for any $n\in {\mab N}$. 
%Set $E^{\bul \leq N}:=E^{\bul \leq N}_0$. 
Let ${\cal V}'/{\cal V}$ be a finite extension.   
Let $S'\lo S$ be a morphism of log $p$-adic formal families of log points 
over ${\rm Spf}({\cal V}')\lo {\rm Spf}({\cal V})$. 
Set $K':={\rm Frac}({\cal V}')$. 
Let $T'$ and $T$ be log $(p$-adic$)$ enlargements of $S'$ and $S$, respectively. 
Let $T'\lo T$ be a morphism of log $(p$-adic$)$ enlargements over 
$S'\lo S$. Let $u\col S'(T')^{\nat}\lo S(T)^{\nat}$ be the induced morphism.  
Let $Y$ be a log scheme over $S'$ 
which is similar to $X$ over $S$. 
Let $F$ be a similar $F$-isocrystal of 
$\{{\cal O}_{\os{\circ}{Y}{}_{\os{\circ}{T}{}'_1}
/\os{\circ}{T}{}'}\}_{T'\in {\cal E}^{\sq}_{p,{\cal V}}}$-modules to $E$. 
%Assume that $(17.6.1)$ and $(17.6.2)$ hold.  
Let $k$ and $q$ be nonnegative integers. 
Let 
$R^qf_{\os{\circ}{X}{}^{(k)}/K*}
(E_{\os{\circ}{X}{}^{(k)}/K} 
\otimes_{\mab Z}\varpi^{(k)}(\os{\circ}{X}/K))$ be an object of 
$F{\textrm -}{\rm Isoc}^{\sq}_{\star}(S/{\cal V})$  
such that
$$R^qf_{\os{\circ}{X}{}^{(k)}/K*}(E_{\os{\circ}{X}{}^{(k)}/K} 
\otimes_{\mab Z}\varpi^{(k)}(\os{\circ}{X}/K))_T= 
R^qf_{\os{\circ}{X}{}^{(k)}_{T_1}
/\os{\circ}{T}}
(E(T)_{\os{\circ}{X}{}^{(k)}_{T_1}
/\os{\circ}{T}} \otimes_{\mab Z}\varpi^{(k)}_{\rm crys}(\os{\circ}{X}_{T_1}/\os{\circ}{T}))
_{\mab Q}$$  
for any object $T$ of ${\rm Enl}^{\sq}_p(S/{\cal V})$. 
Then 
%\par 
%$(1)$ 
there exists the following spectral sequence 
in $F{\textrm -}{\rm Isoc}^{\sq}(S/{\cal V}):$
\begin{equation*} 
\begin{split} 
{} & 
E_1^{-k,q+k}=
\bigoplus_{m\geq 0}
\bigoplus_{\# \ul{\lam}=m+1}
\bigoplus_{j\geq 0}
\bigoplus_{\# \ul{\mu}=k+m-2j}
R^{q+2j-k-2m}
f_{\os{\circ}{X}_{\ul{\lam}\cup \ul{\mu}}/K*} 
(E\vert_{\os{\circ}{X}_{\ul{\lam}\cup \ul{\mu}}/K}\\
{} & \phantom{R^{q-2j-k-m}f_{(\os{\circ}{X}^{(k)}, 
Z\vert_{\os{\circ}{X}^{(2j+k)}})/K*} 
({\cal O}}\otimes_{\mab Z}\varpi^{(2j+k+m)}(\os{\circ}{X}/K))(-(k+m-j)) \\
&\Lo 
R^qf_{X/K*}
(\eps^*_{X/K}(E_K))
\quad (q\in {\mab Z}). 
\end{split} 
\tag{14.11.1}\label{eqn:getcpsp}
\end{equation*}  
\end{prop}

\begin{defi}
We call (\ref{eqn:getcpsp}) 
the {\it Poincar\'{e} spectral sequence} of 
$$R^qf_{X/K*}(\eps^*_{X/K}(E_K))$$ 
in ${\rm Isoc}_p^{\sq}(S/{\cal V})$ and $F{\textrm -}{\rm Isoc}^{\sq}(S/{\cal V})$, 
respectively.
\end{defi}

%\bigskip
%\parno
%{\bf (3) Boundary morphisms}
%\bigskip
%\parno
%\begin{prop}\label{prop:gsbc} 
%Set 
%\begin{align*}
%E_1^{-k,q+k}(X/K):= & \bigoplus_{m\geq 0}
%\bigoplus_{j\geq \max \{-(k+m),0\}} 
%R^{q-2j-k-2m}f_{\os{\circ}{X}{}^{(2j+k+m)}_m/K}
%(E_{\os{\circ}{X}{}^{(2j+k+m)}_m/K}
%\otimes_{\mab Z} 
%\tag{4.1.11.1}\label{eqn:espcvwcsp} \\
%& \vp^{(2j+k+m)}(\os{\circ}{X}_m/K))(-j-k-m).  
%\end{align*} 
%Then the boundary morphism 
%$$d^{-k,q+k}_1 \col E_1^{-k,q+k}(X/K) \lo 
%E_1^{-k+1,q+k}(X/K)$$ 
%induced by that of the spectral sequence 
%{\rm (\ref{eqn:espcvwcsp})}  
%is a morphism of log convergent $F$-isocrystals on $S/{\cal W}(s)$. 
%Consequently we have the following spectral sequence of 
%$F$-isocrystals on $S/{\cal W}(s):$   
%\begin{equation*}
%E_1^{-k,q+k}(X/K) \Lo 
%R^qf_{X/K*}
%(\eps^*_{X/K}(E^{\bul \leq N}_K)). 
%\tag{4.1.11.2}\label{eqn:consfi}
%\end{equation*} 
%\end{prop} 

\bigskip
\parno
{\bf (3) Monodromy and the cup product of a line bundle}
\bigskip
\parno
Let the notations and the assumptions be as in {\rm (\ref{coro:fenlt})}. 
%Assume that $E^{\bul \leq N}_n=E^{\bul \leq N}_0$ $(n\in {\mab N})$. 
%Set $E^{\bul \leq N}:=E^{\bul \leq N}_0$.

\begin{prop}\label{prop:convmon} 
$(1)$ There exists the monodromy operator 
\begin{align*} 
N_{\rm zar} \col & (R^qf_{X/K*}(\eps^*_{X/K}(E_K)),P)
\lo (R^qf_{X/K*}(\eps^*_{X/K}(E_K)),P\langle -2 \rangle)(-1) 
\end{align*} 
in ${\rm IsocF}_p^{\sq}(S/{\cal V})$. 
\par 
$(2)$ 
There exists the monodromy operator  
\begin{align*} 
N_{\rm zar} \col & 
(R^qf_{X/S*}
(\eps^*_{X/K}(E_K)),P)
\lo (R^qf_{X/S*}(\eps^*_{X/K}(E_K)),P\langle -2 \rangle)(-1) 
\end{align*} 
in $F{\textrm -}{\rm IsocF}^{\sq}(S/{\cal V})$.
\end{prop}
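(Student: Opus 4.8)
The plan is to build $N_{\rm zar}$ first at the level of the filtered complex $(H_{\rm zar},P)$ and then to pass to the cohomology sheaves, gluing the resulting operators over the enlargements. First I would recall that the monodromy operator on $\wt{R}u_{X^{(\star)}_{\os{\circ}{T}_0}/\os{\circ}{T}*}(\eps^*_{X^{(\star)}_{\os{\circ}{T}_0}/\os{\circ}{T}}(E^{(\star)})\langle U_{S(T)^{\nat}}\rangle)$ has already been produced in Section \ref{sec:mod}. By (\ref{coro:ctt}) it is realized intrinsically as $-D$, and by (\ref{prop:eun}) it shifts the weight filtration, $N_{\rm zar}(P_k)\subset P_{k-2}$, which is precisely the $P\langle -2\rangle$ shift appearing in the statement. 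Moreover, by (\ref{eqn:mlcepglynl}) together with the scaling $v^*(\theta_{{\cal Q}_{\bul}})=\deg(v)\theta_{{\cal Q}_{\bul}}$ of (\ref{eqn:uta}), the operator $N_{\rm zar}$ is forced to land in the $D$-twisted object, i.e.\ it is a morphism into $(\cdots)(-1;v)$ in the sense of (\ref{defi:ddef}). Combining these two effects produces, at the complex level, a filtered morphism $(H_{\rm zar},P)\lo (H_{\rm zar},P\langle -2\rangle)(-1,v)$, and by (\ref{prop:cmstc}) it satisfies the expected Leibniz rule with respect to the cup product.

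Next I would apply $R^qf_{\os{\circ}{T}*}$ for each object $T$ of ${\rm Enl}^{\sq}_p(S/{\cal V})$. Since $N_{\rm zar}$ respects $P$ with the $-2$ shift, it induces a morphism on the graded pieces $R^qf_{\os{\circ}{T}*}(P_kH_{\rm zar}(\cdots))$, and hence a morphism of the associated filtered sheaves constructed in (\ref{lemm:pnlcfi}) and (\ref{theo:pwfec}). By construction the value of the target at $T$ is exactly $(R^qf_{\os{\circ}{T}*}(P_kH_{\rm zar}),P\langle -2\rangle)(-1,v)$, so the $T$-wise morphisms match the shape of the statement.

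The substantive step is to upgrade this family of $T$-wise morphisms to a single morphism in ${\rm IsocF}^{\sq}_p(S/{\cal V})$. For this I would check that $N_{\rm zar}$ commutes with the transition morphisms ${\mathfrak g}^*$ of (\ref{prop:tptt}) attached to a morphism $T'\lo T$ of enlargements. This reduces to the naturality of $N_{\rm zar}$ in $T$, which is guaranteed by its independence of the auxiliary simplicial immersion established in Section \ref{sec:mod}; combined with the base-change compatibility already used to assemble the filtered isocrystal in (\ref{theo:pwfec}), this yields (1). The twist $(-1,v)$ is stable under these transitions because $\deg$ is multiplicative.

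For (2) the additional point is compatibility of $N_{\rm zar}$ with the Frobenius structure $\{\Phi_n\}$ furnishing the $F^{\infty}$-isospan of (\ref{theo:pwfaec}). I expect this Frobenius-compatibility to be the main obstacle: one must verify that the square relating $N_{\rm zar}$ and the abrelative Frobenius pullback commutes up to the factor $p$ coming from $F^{\nat}_T(u^{[i]})=p^iu^{[i]}$ (equivalently from $\deg$ of the abrelative Frobenius being $p$), which is precisely the factor encoded in the Tate twist $(-1,v)$. Granting this, $N_{\rm zar}$ descends to a morphism in $F{\textrm -}{\rm IsocF}^{\sq}(S/{\cal V})$, proving (2). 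The remaining verifications — the precise sign, bilinearity, and the behaviour under the forgetful functor to the underlying complexes — are routine and parallel to the monodromy discussion for $(A_{\rm zar},P)$ in \cite{nb}.
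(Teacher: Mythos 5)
Your proposal is correct and follows essentially the route the paper intends: the paper omits the proof of this proposition, stating at the start of \S\ref{sec:e2} that all results there are obtained from the functoriality and convergence machinery (\ref{theo:bccange}), (\ref{lemm:pnlcfi}), (\ref{theo:pwfec}), (\ref{theo:pwfaec}) by replacing $(A_{\rm zar},P)$ of \cite{nb} with $(H_{\rm zar},P)$, and your argument — the operator $-D$ of \S\ref{sec:mod} with the shift $N_{\rm zar}(P_k)\subset P_{k-2}$ from (\ref{prop:eun}), the twist $(-1,v)$ from (\ref{eqn:uta})/(\ref{eqn:ffv}), and the gluing over enlargements together with Frobenius compatibility — is exactly that argument.
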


\parno 
The log convergent version of (\ref{conj:lhilc}) is the following:

\begin{conj}[{\bf Filtered log convergent log hard Lefschetz conjecture}]
\label{conj:clhvlc} 
The following cup product 
\begin{equation*} 
\eta^i \col R^{d-i}f_*({\cal O}_{X/K})^{\nat,\sq}\lo R^{d+i}f_*({\cal O}_{X/K})^{\nat,\sq}(i) \quad (i\in {\mab N})
\tag{14.14.1}\label{eqn:fcicpl} 
\end{equation*}
is an isomorphism in $F{\textrm -}{\rm Isoc}^{\sq}(S/{\cal V})$.   
In fact, $\eta^i$ is the following isomorphism of filtered sheaves: 
\begin{equation*} 
\eta^i \col 
(R^{d-i}f_*({\cal O}_{X/K})^{\nat,\sq},P) 
\os{\sim}{\lo} (R^{d+i}f_*({\cal O}_{X/K})^{\nat,\sq}(i),P). 
\tag{14.14.2}\label{eqn:filqiopl} 
\end{equation*}
Here $P_k(R^{d+i}f_*({\cal O}_{X/K})^{\nat,\sq}(i)):=P_{k+2i}R^{d+i}f_*({\cal O}_{X/K})^{\nat,\sq}$. 
\end{conj}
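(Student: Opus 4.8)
The plan is to deduce the filtered isomorphism (\ref{eqn:filqiopl}) from the underlying isomorphism (\ref{eqn:fcicpl}) by reducing to the graded quotients of the weight filtration, exactly as in the variational situation treated in (\ref{theo:12}) and (\ref{theo:a}), but now working in the category $F{\textrm -}{\rm Isoc}^{\sq}(S/{\cal V})$. First I would observe that both the convergent $F$-isocrystal $R^qf_*({\cal O}_{X/K})$ and its weight filtration $P$ are genuine objects of $F{\textrm -}{\rm IsocF}^{\sq}(S/{\cal V})$ by (\ref{exam:ofl}) and the convergence results of \S\ref{sec:e2}. The operator $\eta^i$ is realized as a morphism in this category through the Chern class morphism $c_{1,{\rm crys}}$ and the multiplicative structure of $(H_{\rm zar}(X/\os{\circ}{S}),P)$ encoded in the commutative diagram (\ref{cd:hap}); by (\ref{coro:intro}) (see (\ref{ali:pka})) the cup product with $\eta^i$ is compatible with $P$, so that $\eta^i$ maps $P_kR^{d-i}f_*({\cal O}_{X/K})$ into $P_{k+2i}R^{d+i}f_*({\cal O}_{X/K})=P_k(R^{d+i}f_*({\cal O}_{X/K})(i))$ and hence defines a \emph{filtered} morphism. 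It therefore remains only to upgrade this bare isomorphism to a strict filtered isomorphism.

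For the bare statement (\ref{eqn:fcicpl}) I would argue pointwise. Since $\eta^i$ is a morphism of convergent $F$-isocrystals on the (connected) base, its kernel and cokernel are again $F$-isocrystals, so it suffices to check that $\eta^i$ is an isomorphism on a single fiber. Over a point $s$ at which $X_s$ is the log special fiber of a projective strict semistable family, (\ref{theo:a}) provides the desired isomorphism of fibers; the infinitesimal deformation invariance of \S\ref{sec:infhdi} (via Dwork's trick) together with the rigidity of $F$-isocrystals then propagates it to all of $S$.

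Granting (\ref{eqn:fcicpl}), the filtered statement is equivalent to the strict compatibility of $\eta^i$ with $P$, i.e. to the assertion that ${\rm gr}^P_k\eta^i$ is an isomorphism for every $k$. Here I would invoke the $E_2$-degeneration (\ref{theo:e2dgfam}) of the weight spectral sequence, which identifies ${\rm gr}^P_k R^q$ with the $E_2=E_\infty$-terms, and then (\ref{theo:clll}), which computes the induced action of $\eta$ on the $E_1$-terms as the restriction of the ample class to the various smooth projective pieces $X^{(k)}$. Thus ${\rm gr}^P\eta^i$ becomes a direct sum of (shifted) Lefschetz operators on the cohomologies of the $X^{(k)}$'s with crystalline orientation coefficients, and the Lefschetz decomposition of (\ref{coro:inp}), combined with the classical hard Lefschetz for smooth projective schemes in convergent $F$-isocrystal cohomology, yields the graded isomorphism.

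The main obstacle I anticipate is concentrated in this last step and is twofold. First, one must have hard Lefschetz for the smooth projective components $X^{(k)}$ available \emph{as a statement about convergent $F$-isocrystals} over $S$, not merely fiberwise; this requires the relative crystalline hard Lefschetz for smooth projective morphisms together with the purity (weight-homogeneity) of the $E_2$-terms, the latter being precisely where Grothendieck's yoga of weights --- already used in the proof of (\ref{theo:intro}) --- enters. Second, one must organize the combined action of $\eta$ and the monodromy $N_{\rm zar}$ of (\ref{prop:convmon}) so that the underlying $sl_2$-module structure on the graded pieces is compatible with the weight indexing; the hypothesis $N=0$ is exactly what makes this bookkeeping manageable, since it forces the relevant limit structure to be pure and reduces the weight-combinatorics of (\ref{coro:inp}) to the classical Lefschetz decomposition on each pure piece.
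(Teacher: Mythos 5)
The statement you are trying to prove is labelled a \emph{Conjecture} in the paper, and the paper never proves it in the generality in which it is stated; what the paper actually establishes is only the implication ``(14.17.1) $\Rightarrow$ (14.17.2)'' (this is the content of (\ref{theo:fdd}) and (\ref{theo:llh}) for the variational analogue (\ref{conj:lhpilc})), together with the unconditional statement under the \emph{additional} hypothesis that some fiber $X_s$ is the log special fiber of a projective strict semistable family over a complete discrete valuation ring ((\ref{theo:a}), (\ref{coro:nilll})). Your argument for the bare isomorphism (\ref{eqn:fcicpl}) is exactly where the gap lies: you reduce by rigidity of convergent $F$-isocrystals to a single fiber and then invoke (\ref{theo:a}) ``over a point $s$ at which $X_s$ is the log special fiber of a projective strict semistable family'' --- but the existence of such a point is not among the hypotheses of (\ref{conj:clhvlc}), and nothing in the paper produces one. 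For a general proper SNCL scheme over a family of log points there need not be any fiber algebraizing to a semistable family over a DVR, and in that case the fiberwise hard Lefschetz statement is precisely the open content of the conjecture (it is essentially equivalent to the $p$-adic monodromy-weight/log hard Lefschetz circle of problems, cf.\ (\ref{theo:pos})). So the first half of your proof assumes what is to be proved.

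Two further points. First, you read ``Assume that $N=0$'' as the vanishing of the monodromy operator and use this to ``force the relevant limit structure to be pure''; in \S\ref{sec:e2} the monodromy operator is consistently written $N_{\rm zar}$ (or $\nu_{\rm zar}$, see (\ref{prop:convmon})), whereas the unadorned $N$ throughout that section is the truncation level of the ($N$-truncated) simplicial coefficient object $E^{\bul\leq N}$, so the hypothesis says only that one works with a single SNCL scheme rather than a truncated simplicial family --- it gives you no purity for free. Second, the part of your argument deducing the filtered isomorphism (\ref{eqn:filqiopl}) from (\ref{eqn:fcicpl}) is sound in outline and close in spirit to the paper's proof of (\ref{theo:fdd}): the paper, however, does not pass through the Lefschetz decomposition on ${\rm gr}^P$, but instead spreads out to a convergent $F$-isocrystal, uses the faithfulness of restriction to a closed point ((\cite[(4.1)]{od})), and checks the equality $P_k\cap{\rm Im}(\eta^i)=\eta^i(P_{k-2i})$ over a finite residue field using the weight spectral sequence and purity of Frobenius weights. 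Your route via $E_2$-degeneration, (\ref{theo:clll}) and (\ref{coro:inp}) would also work for that implication, but it cannot repair the missing input (\ref{eqn:fcicpl}).
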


\begin{theo}[{\bf Filtered log Berthelot-Ogus isomorphism}]\label{theo:bofis}
Let the notations and the assumptions be as in {\rm (\ref{theo:pwfaec})}.  
Let $T$ be an object of ${\rm Enl}^{\sq}(S/{\cal V})$. 
Let $T_0\lo S$ be the structural morphism.  
Let $f_0 \col X_{\os{\circ}{T}_0}\lo T_0$ 
be the structural morphism.  
If there exists an SNCL lift 
$f_1 \col (X)_1 \lo T_1$ of $f_0$, 
then there exists the following canonical 
filtered isomorphism
\begin{equation*} 
(R^qf_*({\cal O}_{X/K})^{\nat}_T,P)
\os{\sim}{\lo}
(R^qf_{X_{\os{\circ}{T}_1}/S(T)^{\nat}*} 
({\cal O}_{X_{\os{\circ}{T}_1}/S(T)^{\nat}}),P).
\tag{14.15.1}\label{eqn:xntp}
\end{equation*} 
\end{theo}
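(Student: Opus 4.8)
The plan is to deduce (\ref{eqn:xntp}) as the filtered refinement of the (unfiltered) log Berthelot--Ogus comparison isomorphism. The underlying unfiltered isomorphism
$$R^qf_*({\cal O}_{X/K})^{\nat}_T\os{\sim}{\lo} R^qf_{X_{\os{\circ}{T}_1}/S(T)^{\nat}*}({\cal O}_{X_{\os{\circ}{T}_1}/S(T)^{\nat}})$$
is the log convergent analogue of the Berthelot--Ogus theorem, following \cite{boi} and \cite{od}: since $(R^qf_*({\cal O}_{X/K})^{\nat},P)$ is a genuine object of $F{\textrm -}{\rm IsosF}^{\sq}(S/{\cal V})$ by (\ref{exam:ofl}), its value on an enlargement $T$ carrying an SNCL lift $f_1\col (X)_1\lo T_1$ of $f_0$ should be computed by the log de Rham (= log crystalline) cohomology of that lift. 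First I would make this underlying comparison precise by observing that the lift realizes $X_{\os{\circ}{T}_1}$ inside the log smooth $T$-scheme $(X)_1$, so that $(R^qf_{X_{\os{\circ}{T}_1}/S(T)^{\nat}*}({\cal O}),P)$ is represented by the explicit filtered complex $(H_{\rm zar}(X_{\os{\circ}{T}_1}/\os{\circ}{T}),P)$ via the comparison (\ref{eqn:rus}) together with (\ref{prop:ncqi}).

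Next I would construct the comparison morphism itself using the filtered log base change theorem (\ref{theo:bccange}) and (\ref{ali:pctoe}). Concretely, one passes to the associated $p$-adic enlargement $\wh{T}$ (the $p$-adic completion of $T$), where (\ref{ali:pctoe}) identifies the value of the convergent filtered isocrystal with $(R^qf_{X_{\wh{T}_1}/S(\wh{T})^{\nat}*}({\cal O}),P)$, and then transports this identification along the structure morphism of $T$ by the filtered base change isomorphism of (\ref{theo:bccange}), which preserves $P$. Since both sides of (\ref{eqn:xntp}) are filtered perfect complexes of ${\cal O}_T$-modules by (\ref{coro:filpcerf}), this yields a canonical filtered morphism, and it then remains to check that it is a filtered quasi-isomorphism.

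To verify this, by filtered perfectness (\ref{coro:filpcerf}) it suffices to show that the morphism induces isomorphisms on the graded quotients ${\rm gr}^P_k$. By the Poincar\'e residue computation (\ref{prop:ngr}), ${\rm gr}^P_k$ of either side decomposes into a direct sum of log crystalline cohomologies of the smooth proper strata $\os{\circ}{X}_{\ul{\lam}\cup \ul{\mu}}$, twisted by powers of the Tate object and by orientation sheaves. These twists are values of convergent $F$-isocrystals and are respected by the value functor, so on each summand the required isomorphism is the unfiltered smooth Berthelot--Ogus isomorphism of \cite{boi} and \cite{od}, matched against the spectral sequence (\ref{ali:spaorh}). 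This forces the comparison morphism to be filtered.

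The main obstacle I anticipate is the compatibility of the two filtrations themselves: the filtration on $(R^qf_*({\cal O}_{X/K})^{\nat},P)_T$ comes from the limiting convergent-isocrystal construction of (\ref{theo:pwfaec}), while the filtration on the right-hand side is the weight filtration (\ref{ali:spaorh}) read off directly from the lift. To reconcile them I would exploit that each graded piece ${\rm gr}^P_kR^qf_*$ is again a convergent $F$-isocrystal whose values are controlled by (\ref{lemm:pnlcfi}) and (\ref{theo:pwfaec}), so that formation of ${\rm gr}^P$ commutes with evaluation at $T$. Establishing this commutation --- equivalently, that the convergent filtered structure is strict --- is the delicate point; it rests on the filtered flatness results (\ref{coro:flft}) and (\ref{coro:fenlt}), which guarantee that $P$ and the quotients $R^qf_*/P_k$ stay flat and hence that $P$ is preserved strictly under the base change morphism of (\ref{theo:bccange}).
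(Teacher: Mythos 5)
The paper's own proof of (\ref{theo:bofis}) is a one-line reduction: ``this follows from the proof of \cite[(3.8)]{od} and that of (\ref{theo:pwfaec})'', i.e.\ from Ogus's convergent Berthelot--Ogus argument together with the machinery that built the filtered convergent isocrystal. Your proposal has a reasonable overall shape --- establish the underlying isomorphism, then refine it to a filtered one by comparing graded pieces via the Poincar\'e residue decomposition and controlling strictness by the filtered flatness of (\ref{coro:flft}) and (\ref{coro:fenlt}) --- and that second half is sound once a comparison morphism exists. The gap is in how you propose to produce that morphism.

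The step ``pass to the $p$-adic completion $\wh{T}$ and transport along the filtered base change isomorphism of (\ref{theo:bccange})'' does not do the job. First, an object $T$ of ${\rm Enl}^{\sq}(S/{\cal V})$ is already a $p$-adic formal ${\cal V}$-scheme; what distinguishes it from a $p$-adic enlargement is not completeness but that its structural morphism is only given on the reduced fiber $T_0=(T_1)_{\rm red}$. Consequently $X_{\os{\circ}{T}_1}$ in (\ref{eqn:xntp}) is not a base change of $X/S$: it is the chosen SNCL lift $(X)_1/T_1$, which a priori bears no relation to $X\times_S(-)$ over $T_1$. Second, (\ref{theo:bccange}) compares the values of $Rf_*((H_{\rm zar}(X_{\os{\circ}{T}_0}/\os{\circ}{T},E),P))$ for the \emph{same} family along a morphism of PD-enlargements; it cannot identify the abstract convergent value $(R^qf_*({\cal O}_{X/K})^{\nat})_T$, which is determined by $X_{\os{\circ}{T}_0}/T_0$ alone, with the crystalline cohomology of an arbitrary lift across the nilpotent thickening $T_0\os{\sus}{\lo}T_1$. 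The missing ingredient is exactly the infinitesimal deformation invariance of \S\ref{sec:infhdi} --- Dwork's trick with the abrelative Frobenius, as in (\ref{theo:definv}) and (\ref{coro:finvliae}) --- which shows that the filtered crystalline cohomology over $S(T)^{\nat}$ depends, up to canonical filtered isomorphism after $\otimes_{\mab Z}{\mab Q}$, only on the restriction to $T_0$; this is what welds the value of the filtered convergent isocrystal of (\ref{theo:pwfaec}) to the right-hand side of (\ref{eqn:xntp}), and it is the actual content of the proof of \cite[(3.8)]{od} that the paper invokes. Without it, your graded-pieces verification has no morphism to verify.
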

\begin{proof} 
This follows from the proof of \cite[(3.8)]{od} and that of (\ref{theo:pwfaec}). 
\end{proof}

\section{Strict compatibility}\label{sec:st}
In this section we prove the strict compatibility of the pull-back of a morphism 
with respect to the weight filtration. 
Because the proofs of the results are the same as those of \cite[(5.4.6)]{nb} and 
\cite[(5.4.7)]{nb}, we omit the proofs.  

\begin{theo}[{\bf Strict compatibility I}]\label{theo:stpfgb}
Let the notations be as in {\rm (\ref{theo:e2dam})} and the proof of it. 
Let $Y/s'$ be an analogous object to $X/s$. 
Let $f' \col Y\lo s'$ be the structural morphism. 
Let $h\col s\lo s'$ be a morphism of log schemes. 
Let $g\col 
X_{\os{\circ}{T}_0}\lo Y_{\os{\circ}{T}{}'_0}$ 
be the morphism in {\rm (\ref{eqn:xdxduss})} for the case $S=s$ and $S'=s'$ 
satisfying the condition {\rm (8.1.6)}. 
%fitting into the commutative diagram {\rm (\ref{cd:xdjqbxy})}. 
Let ${\cal W}'$ be the Witt ring of $\Gam(s',{\cal O}_{s'})$ 
and set $K'_0:={\rm Frac}({\cal W}')$. 
Assume that $\os{\circ}{s}\lo \os{\circ}{s}{}'$ is finite. 
Let $q$ be a nonnegaitve integer. 
Let us endow 
$H^q_{{\rm crys}}(Y/{\cal W}(s'))\otimes_{{\cal W}'}K_0$  
with the induced filtration by $P$ on 
$H^q_{{\rm crys}}(Y/{\cal W}(s'))_{K'_0}$.   
Then the induced morphism 
\begin{equation*}
g^* \col H^q_{{\rm crys}}(Y/{\cal W}(s'))\otimes_{{\cal W}'}K_0
\lo 
H^q_{{\rm crys}}(X/{\cal W}(s))\otimes_{\cal W}K_0
\tag{15.1.1}\label{eqn:gbwstn}
\end{equation*} 
is strictly compatible with the weight filtration. 
\end{theo}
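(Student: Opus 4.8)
The plan is to reduce the strict compatibility of $g^*$ to the degeneration and purity results already established, following the template of \cite[(5.4.6)]{nb}. First I would fix the setup: since $\os{\circ}{s}\lo \os{\circ}{s}{}'$ is finite, both $X/{\cal W}(s)$ and $Y/{\cal W}(s')$ are proper SNCL schemes over the canonical lifts of log points of perfect fields, so by (\ref{theo:cpiso}) and the isomorphisms in (\ref{eqn:rus}) the cohomology groups $H^q_{\rm crys}(X/{\cal W}(s))_{K_0}$ and $H^q_{\rm crys}(Y/{\cal W}(s'))_{K_0'}$ carry the weight filtration $P$ induced by $(H_{\rm zar}(X/\os{\circ}{S}),P)$ via the spectral sequence (\ref{ali:spaorh}). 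The morphism $g^*$ on cohomology is then induced by the contravariant functoriality (\ref{theo:ccm}) of the filtered complex, hence it is automatically a \emph{filtered} morphism; the content of the theorem is that it is moreover \emph{strictly} compatible, i.e. $g^*(P_kH^q(Y))= g^*(H^q(Y))\cap P_kH^q(X)$.

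The key observation I would exploit is that strictness of a filtered morphism between objects whose weight spectral sequences degenerate at $E_2$ is equivalent to the morphism respecting the $E_2$-decomposition into pure pieces. By (\ref{theo:e2dam}) the spectral sequence (\ref{ali:spaorh}) for both $X/{\cal W}(s)$ and $Y/{\cal W}(s')$ degenerates at $E_2$ after tensoring with $K_0$. The $E_2$-terms (equivalently the graded pieces ${\rm gr}^P_k$) are built, via the Poincar\'e residue isomorphism (\ref{eqn:ege}) in (\ref{prop:ngr}), out of crystalline cohomology groups of the proper smooth schemes $\os{\circ}{X}_{\ul{\lam}\cup\ul{\mu}}$ and $\os{\circ}{Y}_{\ul{\lam}'\cup\ul{\mu}'}$, each of which is \emph{pure} of the indicated weight in the sense of Grothendieck's yoga of weights over a finite field (by \cite{kme}, \cite{clpu}, \cite[(2.2)(4)]{ndw}, as invoked in the proof of (\ref{theo:e2dam})). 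Thus $P_kH^q$ is precisely the part of weight $\le k$, and a filtered morphism between such weight-filtered modules is automatically strict because a morphism between pure Frobenius modules of different weights must vanish.

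Concretely, the steps in order are: (i) record that $g^*$ is filtered, from (\ref{theo:ccm}); (ii) pass to the $E_2=E_\infty$ pages using (\ref{theo:e2dam}), so that $P_kH^q$ admits a canonical splitting into the weight pieces $W_j$ with $j\le k$; (iii) describe the induced map on graded pieces via the boundary/Gysin description (\ref{prop:bddes}) and the compatibility of $g^*$ with the residue isomorphism, noting that $g^*$ carries the weight-$j$ piece of $H^q(Y)$ to the weight-$j$ piece of $H^q(X)$ because the relative Frobenius is weight-preserving and the finite morphism $\os{\circ}{s}\lo\os{\circ}{s}{}'$ does not alter absolute weights after $\otimes K_0$; (iv) conclude strictness from the standard lemma that a filtered morphism of modules each equipped with a splitting into pure pieces, which respects the splitting, is strictly compatible. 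I would assemble (i)--(iv) exactly as in \cite[(5.4.6)]{nb}, since the only new ingredient here, the PD-Hirsch description of the weight filtration, has already been reconciled with the Steenbrink-type weight filtration by (\ref{theo:intro}) and its refinements.

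The main obstacle I anticipate is step (iii): verifying that $g^*$ genuinely respects the weight decomposition rather than merely the filtration. This requires matching the functorial map $g^*$ with the explicit Gysin and pullback morphisms on the pure graded pieces, and checking that the degree twists $(-k-m+j)$ and the factor ${\rm deg}(g)$ appearing in (\ref{ali:exut}) and (\ref{prop:bddes}) are accounted for correctly so that weights are preserved under $\otimes_{{\cal W}'}K_0$. Once the compatibility of $g^*$ with the Poincar\'e residue isomorphism (\ref{eqn:ege}) is pinned down, strictness follows formally; the bookkeeping of signs, twists, and the normalization of the comparison $K_0$ versus $K_0'$ is where care is needed, but it is routine in the sense that it parallels the argument already carried out for $(A_{\rm zar},P)$ in \cite{nb}.
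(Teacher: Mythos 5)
Your overall strategy --- $E_2$-degeneration plus purity of the graded pieces plus the yoga of weights --- is the same one the paper relies on (the paper simply defers to \cite[(5.4.6)]{nb}), but as written your argument has a gap at the purity step. The base field $\kappa$ in (\ref{theo:stpfgb}) is only assumed perfect, and over a general perfect field the assertion ``$P_kH^q$ is precisely the part of weight $\le k$'' has no meaning: the crystalline Frobenius is $\sigma$-semilinear and there is no notion of Weil-number eigenvalues unless $\kappa$ is finite, and the purity results of \cite{kme} and \cite{clpu} that you invoke are finite-field statements. The missing step is the reduction to the finite field case, which is exactly where the machinery of \S\ref{sec:e2} enters: one spreads $X/s$, $Y/s'$ and $g$ out over a scheme of finite type over ${\mab F}_p$, uses the fact that $P_kR^qf_*$ extends to a convergent $F$-isocrystal ((\ref{theo:pwfec}), (\ref{exam:ofl})) together with the faithfulness of restriction to closed points \cite[(4.1)]{od} to reduce the identity $g^*(P_kH^q(Y))=g^*(H^q(Y))\cap P_kH^q(X)$ to its value at a ${\cal W}(\os{\circ}{t})$-point with $\os{\circ}{t}$ a finite-field point, exactly as in the proofs of (\ref{theo:e2dam}) and (\ref{theo:fdd}). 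Without this specialization argument the purity claim does not get off the ground.

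Conversely, the step you single out as the main obstacle --- verifying that $g^*$ respects the weight decomposition via the residue and Gysin descriptions of (\ref{prop:ngr}) and (\ref{prop:bddes}) --- is not actually needed. Once one is over a finite field and knows, from the degeneration and the purity of the $E_1$-terms, that $P_kH^q\otimes K_0$ coincides with the span of the generalized Frobenius eigenspaces of weight $\le k$, the filtration is determined intrinsically by the Frobenius structure; since $g^*$ commutes with the (suitably normalized) Frobenii, it automatically preserves both the filtration and the eigenspace decomposition, and strictness follows formally with no bookkeeping of twists, signs, or Gysin maps. So the real content of the proof is the reduction to the finite field case, not your step (iii).
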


\begin{theo}[{\bf Strict compatibility II}]\label{theo:stpbgb}
%$(1)$  
Let the notations and the assumption be as in {\rm (\ref{theo:e2dgfam})} 
and {\rm (\ref{eqn:xdxduss})}. 
Let $Y/S'$ and $T'$ be analogous objects to $X/S$ 
and $T$, respectively. 
%Let $f \col X \lo S$ and $f' \col Y\lo S'$ 
%be proper $N$-truncated simplicial SNCL schemes over $S$ and over $S'$, 
%respectively.
Let $g$ be the morphism in {\rm (\ref{eqn:xdxduss})} satisfying the condition {\rm (8.1.6)}. 
Let $q$ be a nonnegative integer. 
Then the induced morphism 
\begin{equation*}
g^* \col v^*(R^qf'_{Y_{\os{\circ}{T}{}'_1}/S'(T')^{\nat}*}
({\cal O}_{Y_{\os{\circ}{T}{}'_1}/S'(T')^{\nat}}))_{\mab Q}
\lo R^qf_{X_{\os{\circ}{T}_1}/S(T)^{\nat}*}
({\cal O}_{X_{\os{\circ}{T}{}_1}/S(T)^{\nat}})_{\mab Q}
\tag{15.2.1}\label{eqn:gvbstn}
\end{equation*}  
is strictly compatible with the weight filtration.
Consequently the induced morphism 
\begin{equation*}
g^* \col v^*(R^qf'_*({\cal O}_{Y/K'})^{\nat,\sq})
\lo R^qf_*({\cal O}_{X/K})^{\nat,\sq}
\tag{15.2.2}\label{eqn:gbstn}
\end{equation*} 
in $F{\textrm -}{\rm Isoc}^{\sq}(S/{\cal V})$ 
is strictly compatible with the weight filtration.
\end{theo}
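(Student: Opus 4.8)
The plan is to reduce Theorem \ref{theo:stpbgb} to the already-established $E_2$-degeneration in Theorem \ref{theo:e2dgfam} together with the functoriality of the weight filtration supplied by \S\ref{sec:fc}. First I would observe that the morphism $g^*$ in (\ref{eqn:gvbstn}) is induced by the contravariant functoriality morphism (\ref{ali:ccm}) of the filtered complex $(H_{\rm zar}(-/\os{\circ}{T}),P)$, applied to the morphism $g$ of SNCL schemes satisfying (8.1.6) and (8.1.7). Passing to cohomology sheaves and rationalizing, $g^*$ carries the Hirsch weight filtration $P$ on the source to that on the target. The entire content to be proved is then strictness: that $g^*(P_k) = P_k \cap \mathrm{Im}(g^*)$ after tensoring with $K$.

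The standard Deligne-style mechanism for deducing strictness is the degeneration of the associated weight spectral sequences at $E_2$, combined with the fact that $g^*$ is a morphism of spectral sequences. Concretely, I would carry out the following steps. First, by the infinitesimal deformation invariance (\ref{coro:finvliae}) and the convergence results of \S\ref{sec:e2} (notably (\ref{theo:pwfaec}) and (\ref{exam:ofl})), both $R^qf_*({\cal O}_{X/K})^{\nat}$ and $u^*R^qf'_*({\cal O}_{Y/K'})^{\nat}$ are objects of $F\text{-}{\rm Isoc}^{\sq}(S/{\cal V})$ equipped with their Hirsch weight filtrations, so it suffices to prove strict compatibility fiberwise on enlargements $T$. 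Second, by (\ref{prop:fcuccv}) the morphism $g^*$ is a morphism of the Poincaré/weight spectral sequences (\ref{ali:sparh}); that is, $g^*$ is compatible with the $E_1$-terms described in (\ref{prop:ngr}) and with the boundary morphisms of (\ref{prop:bddes}). Third, by (\ref{theo:e2dgfam}) both spectral sequences modulo torsion degenerate at $E_2$. A morphism of spectral sequences that degenerate at $E_2$ is automatically strictly compatible with the induced filtrations on the abutments — this is the purely formal Deligne lemma (the $E_\infty = E_2$ terms are subquotients of the $E_1$-terms on which $g^*$ acts, and degeneration identifies $\mathrm{gr}^P$ of the abutment with the $E_2$-terms). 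This yields the strictness of (\ref{eqn:gvbstn}).

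The passage from (\ref{eqn:gvbstn}) to the isocrystal statement (\ref{eqn:gbstn}) I would handle by the functoriality of (\ref{lemm:nmr}) and the fact, recorded in \S\ref{sec:e2}, that the weight-filtered cohomology descends to a filtered convergent $F$-isocrystal whose value on each enlargement $T$ is the filtered sheaf in (\ref{eqn:gvbstn}). Since strictness is a pointwise (per-enlargement) condition on the filtered ${\cal K}_T$-modules, and since these modules are filteredly flat by (\ref{coro:fenlt}), strictness at the level of $F$-isocrystals follows from strictness on each $T$. The compatibility of $g^*$ with $u^*$ is exactly the base-change functoriality of the construction, which is (\ref{theo:bccange}) together with (\ref{prop:tptt}).

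The main obstacle I anticipate is verifying that $g^*$ genuinely induces a morphism of the two weight spectral sequences in a way that is compatible with the Tate/degree twists $(-k-m+j;v)$ appearing in the $E_1$-terms of (\ref{prop:ngr}). Because $g$ satisfies (8.1.7) with mapping degree governed by (\ref{prop:xxle}), the action of $g^*$ on the divided-power variable is $\ol{g}^*(u'^{[i]}) = \deg(v)^i u^{[i]}$ by (\ref{ali:exut}), so $g^*$ does not commute naively with the individual graded pieces but intertwines them through these degree factors. The delicate point is that after inverting $p$ (and hence $\deg(v)$, once one knows it is a power of $p$ via the abrelative Frobenius comparison) these twists become isomorphisms, so the morphism of spectral sequences is well defined on $E_1$ and the Deligne degeneration argument applies. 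Making this twist-compatibility precise — i.e. checking that $g^*$ respects the boundary maps of (\ref{prop:bddes}) including the Gysin contributions with their correct signs — is the technical heart; once it is in place, the strictness is formal.
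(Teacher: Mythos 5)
Your reduction steps (checking strictness per enlargement via the convergence results of \S\ref{sec:e2}, invoking the functoriality of the weight spectral sequence, and handling the base change $u^*$ by (\ref{theo:bccange})) are sound and parallel to what the paper does. But the central mechanism you invoke is not a theorem: it is \emph{not} true that a morphism of filtered complexes whose weight spectral sequences both degenerate at $E_2$ is automatically strictly compatible with the induced filtrations on the abutments. Degeneration identifies $\mathrm{gr}^P$ of the abutment with the $E_2$-terms and shows that $g^*$ induces a map on these graded pieces, but strictness is the stronger statement $g^*(P_k)=P_k\cap\mathrm{Im}(g^*)$, which does not follow. A two-line counterexample: take $K=L=\mathbb{Q}^2$ in a single degree with $0=P_0\subset P_1=\mathbb{Q}e_1\subset P_2=K$, and $f(e_1)=0$, $f(e_2)=e_1$; the spectral sequences degenerate trivially and $f$ is filtered, yet $f(P_1)=0\neq \mathbb{Q}e_1=P_1\cap\mathrm{Im}(f)$. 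Deligne's strictness results for mixed Hodge structures use the interaction of \emph{two} filtrations (or, in the $\ell$-adic and crystalline settings, purity of weights), never degeneration of a single spectral sequence alone.

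What actually closes the argument — and what the paper's cited proof in \cite{nb} does, following the pattern already visible in the proofs of (\ref{theo:e2dam}) and (\ref{theo:text}) — is a reduction to the case where $\os{\circ}{T}$ is the Witt ring of a finite field, using exactly the convergence and deformation-invariance machinery you set up, followed by \emph{purity of the Frobenius weights}. Over a finite field the $E_1$-term $E_1^{-k,q+k}$ of (\ref{ali:sparh}) is pure of weight $q+k$ by \cite{kme}, \cite{clpu}, \cite{ndw}, so after $E_2$-degeneration the graded piece $\mathrm{gr}_k^PH^q$ is pure of weight $q+k$ and the filtration $P$ is the filtration by Frobenius weights; since $g^*$ commutes with Frobenius, it preserves the (generalized) weight eigenspace decomposition, and a Frobenius-equivariant map between weight-decomposed spaces is automatically strict for the associated weight filtration. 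You should replace your ``formal Deligne lemma'' step with this purity argument; the rest of your outline (including the observation about the degree twists $\deg(v)^i$ on $u^{[i]}$, which indeed become harmless after inverting $p$) can stand.
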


\section{$p$-adic filtered Steenbrink complexes and trace morphisms}\label{sec:pssc}
Let the notations be as in \S\ref{sec:e2}. 
In this section we recall the filtered complex 
$$(A_{\rm zar}(X_{\os{\circ}{T}_0}/S(T)^{\nat},E),P)$$ 
defined in \cite{nb} and we define the following trace morphism 
\begin{equation*}  
R^{2d}f_{X_{\os{\circ}{T}_1}/S(T)^{\nat}*}
({\cal O}_{X_{\os{\circ}{T}_1}/S(T)^{\nat}})_{\mab Q}
\lo {\cal K}_{\os{\circ}{T}}(-d) 
%\tag{16.7.3}\label{eqn:radx} 
\end{equation*}
when $\os{\circ}{X}_{T_0}$ is projective over $\os{\circ}{T}_0$ and 
the relative dimension of $\os{\circ}{X}_{T_0}/\os{\circ}{T}_0$ is of pure dimension $d$,  
where $T$ is an object of ${\rm Enl}_p(S/{\cal V})$.   
\par 
In \cite{fup}, to define the trace morphism, Fujisawa has used 
the comparison between the Steenbrink (bi)filtered complex and 
the (bi)filtered complex obtained by Hirsch extension
(=an analogue of $(H_{\rm zar}(X_{\os{\circ}{T}_0}/S(T)^{\nat}),P)$ with 
Hodge filtration). 
We do not use the comparison between 
$(A_{\rm zar}(X_{\os{\circ}{T}_0}/S(T)^{\nat}),P)$ and 
$(H_{\rm zar}(X_{\os{\circ}{T}_0}/S(T)^{\nat}),P)$
to define our trace morphism; our proof of the existence of the trace morphism 
is much simpler than that in [loc.~cit]; our method is also 
applicable in the complex analytic case. 
%In this book we change the sign of the boundary morphism of 
%$(A_{\rm zar}(X_{\os{\circ}{T}_0}/S(T)^{\nat},E),P)$ by 
%following the sign of the boundary morphism of the Steenbrink complex 
%in \cite{fup}, which is different from those in \cite{sti} and \cite{gn}. 
%(I think that the sign in \cite{fup} is the base sign for the Steenbrink complex.)
\par

\par 
Let the notations be as in \S\ref{sec:mod}. 
Set  
\begin{align*} 
A_{\rm zar}({\cal P}^{\rm ex}_{\bul}/S(T)^{\nat},{\cal E}^{\bul})^{ij}
& :=({\cal E}^{\bul}\otimes_{{\cal O}_{{\cal P}^{\rm ex}}}
{\Om}^{i+j+1}_{{\cal P}^{\rm ex}_{\bul}/\os{\circ}{T}})/P_j 
\tag{16.0.1}\label{cd:accef} \\
& :=({\cal E}^{\bul}
\otimes_{{\cal O}_{{\cal P}^{\rm ex}_{\bul}}}
{\Om}^{i+j+1}_{{\cal P}^{\rm ex}_{\bul}/\os{\circ}{T}})/
P_j({\cal E}^{\bul}
\otimes_{{\cal O}_{{\cal P}^{\rm ex}_{\bul}}}
{\Om}^{i+j+1}_{{\cal P}^{\rm ex}_{\bul}/\os{\circ}{T}})  
\quad (i,j \in {\mab N}). 
\end{align*}   
The sheaf 
$A_{\rm zar}({\cal P}^{\rm ex}_{\bul}/S(T)^{\nat},{\cal E})^{ij}$ 
has a quotient filtration $P$ obtained by the 
filtration $P$ on 
${\cal E}^{\bul}\otimes_{{\cal O}_{{\cal P}^{\rm ex}_{\bul}}}
{\Om}^{i+j+1}_{{\cal P}^{\rm ex}_{\bul}/\os{\circ}{T}}$. 
We consider the following boundary morphisms of double complexes: 
\begin{equation*}
\begin{CD}
A_{\rm zar}({\cal P}^{\rm ex}_{\bul}/S(T)^{\nat},
{\cal E}^{\bul})^{i,j+1}  @.  \\ 
@A{d\log t\wedge}AA  @. \\
A_{\rm zar}({\cal P}^{\rm ex}_{\bul}/S(T)^{\nat},
{\cal E}^{\bul})^{ij}
@>{-\nabla}>> 
A_{\rm zar}({\cal P}^{\rm ex}_{\bul}/S(T)^{\nat},
{\cal E}^{\bul})^{i+1,j}\\
\end{CD}
\tag{16.0.2}\label{cd:lccbd} 
\end{equation*}  
as in \cite[(1.4.0.9)]{nb}. 
Then we have the double complex 
$A_{\rm zar}({\cal P}^{\rm ex}_{\bul}/S(T)^{\nat},
{\cal E}^{\bul})^{\bul \bul}$. 
Let 
$A_{\rm zar}({\cal P}^{\rm ex}_{\bul}/S(T)^{\nat},
{\cal E}^{\bul})$ 
be the single complex of  
$A_{\rm zar}({\cal P}^{\rm ex}_{\bul}/S(T)^{\nat},
{\cal E}^{\bul})^{\bul \bul}$: 
\begin{align*} 
A_{\rm zar}({\cal P}^{\rm ex}_{\bul}/S(T)^{\nat},
{\cal E}^{\bul})^i
=\bigoplus_{j\geq 0}
({\cal E}^{\bul}
\otimes_{{\cal O}_{{\cal P}^{\rm ex}_{\bul}}}
{\Om}^{i+1}_{{\cal P}^{\rm ex}_{\bul}/\os{\circ}{T}})/
P_j({\cal E}^{\bul}
\otimes_{{\cal O}_{{\cal P}^{\rm ex}_{\bul}}}
{\Om}^{i+1}_{{\cal P}^{\rm ex}_{\bul}/\os{\circ}{T}}) 
\end{align*} 
with boundary morphism $-\nabla+d\log t\wedge$. 
%Let $`A_{\rm zar}({\cal P}^{\rm ex}_{\bul}/S(T)^{\nat},{\cal E}^{\bul})$ 
%be the following complex such that 
%\begin{align*} 
%`A_{\rm zar}({\cal P}^{\rm ex}_{\bul}/S(T)^{\nat},
%{\cal E}^{\bul})^i=\bigoplus_{j\geq 0}({\cal E}^{\bul}
%\otimes_{{\cal O}_{{\cal P}^{\rm ex}_{\bul}}}
%{\Om}^{i}_{{\cal P}^{\rm ex}_{\bul}/\os{\circ}{T}})/
%P_j({\cal E}^{\bul}
%\otimes_{{\cal O}_{{\cal P}^{\rm ex}_{\bul}}}
%{\Om}^{i+1}_{{\cal P}^{\rm ex}_{\bul}/\os{\circ}{T}}) 
%\end{align*} 
%with boundary morphism $\nabla+d\log \tau\wedge$. 
%Then 
%\begin{align*} 
%A_{\rm zar}({\cal P}^{\rm ex}_{\bul}/S(T)^{\nat},
%{\cal E}^{\bul})=`A_{\rm zar}({\cal P}^{\rm ex}_{\bul}/S(T)^{\nat},{\cal E}^{\bul})[1].
%\end{align*} 
The double complex 
$A_{\rm zar}({\cal P}^{\rm ex}_{\bul}/S(T)^{\nat},
{\cal E}^{\bul})^{\bul \bul}$ 
has a filtration $P=\{P_k\}_{k \in {\mab Z}}$ 
defined by the following formula: 
\begin{equation*} 
P_kA_{\rm zar}({\cal P}^{\rm ex}_{\bul}/S(T)^{\nat},
{\cal E}^{\bul})
:=(\cdots 
P_{2j+k+1}A_{\rm zar}({\cal P}^{\rm ex}_{\bul}/S(T)^{\nat},
{\cal E}^{\bul})^i
\cdots)\in {\rm C}^+{\rm F}(f^{-1}_T({\cal O}_T)).    
\tag{16.0.3}\label{eqn:lpcad}
\end{equation*} 
Let $(A_{\rm zar}({\cal P}^{\rm ex}_{\bul}/S(T)^{\nat},
{\cal E}^{\bul}),P)$ 
be the filtered single complex of the filtered double complex 
$(A_{\rm zar}({\cal P}^{\rm ex}_{\bul}/S(T)^{\nat},
{\cal E}^{\bul})^{\bul \bul},P)$.

\par 
%\par 
%Let 
%\begin{equation*} 
%\pi_{{\rm zar}} \col 
%((X_{\bul,T_0})_{\rm zar},f^{-1}_{\bul,T}({\cal O}_T)) \lo 
%((X_{\os{\circ}{T}_0})_{\rm zar},f^{-1}_T({\cal O}_T)) 
%\tag{16.1.6}\label{eqn:pzd} 
%\end{equation*} 
%be the natural morphism of ringed topoi.  
In \cite[(1.4.4)]{nb} we have proved the following as a special case:

\begin{theo}[{\bf \cite[(1.4.4)]{nb}}]\label{theo:indcr} 
The filtered complex 
$R\pi_{{\rm zar}*}
((A_{\rm zar}({\cal P}^{\rm ex}_{\bul}/S(T)^{\nat},{\cal E}^{\bul}),P))$ 
is independent of the choice of the disjoint union 
of an affine simplicial open covering of $X$ and 
a simplicial immersion 
$X_{\bul} \os{\sus}{\lo} \ol{\cal P}_{\bul}$ over $\ol{S(T)^{\nat}}$.  
\end{theo}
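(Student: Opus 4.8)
The statement (\ref{theo:indcr}) asserts that the filtered complex
$R\pi_{{\rm zar}*}(A_{\rm zar}({\cal P}^{\rm ex}_{\bul}/S(T)^{\nat},{\cal E}^{\bul}),P)$
does not depend on the choice of affine open covering of $X$ together with a simplicial immersion $X_{\bul}\os{\sus}{\lo}\ol{\cal P}_{\bul}$ over $\ol{S(T)^{\nat}}$. The plan is to argue exactly as for the analogous independence statements already carried out in this excerpt, namely (\ref{prop:hkt}), (\ref{prop:naqi}) and (\ref{theo:indp}). First I would reduce to the situation of comparing two choices $(X_{\os{\circ}{T}_0\bul},\ol{\cal P}_{\bul})$ and $(X'_{\os{\circ}{T}_0\bul},\ol{\cal P}{}'_{\bul})$. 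Taking the fiber product $\ol{\cal P}{}''_{\bul}:=\ol{\cal P}_{\bul}\times_{\ol{S(T)^{\nat}}}\ol{\cal P}{}'_{\bul}$ and a common refinement of the two affine open coverings, I reduce — as in the diagram (\ref{cd:soq}) and in the reduction preceding (\ref{cd:sop}) — to the case where there is a single morphism of data fitting into a commutative square
\begin{equation*}
\begin{CD}
X_{\os{\circ}{T}_0\bul} @>{\subset}>> \ol{\cal P}_{\bul}\\
@VVV @VVV\\
X'_{\os{\circ}{T}_0\bul} @>{\subset}>> \ol{\cal P}{}'_{\bul}.
\end{CD}
\end{equation*}
It then suffices to show that the induced morphism on the associated filtered complexes is a filtered quasi-isomorphism after applying $R\pi_{{\rm zar}*}$.

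The core of the argument is the graded-piece computation. The filtration $P$ on $A_{\rm zar}$ is defined in (\ref{eqn:lpcad}) through the quotient filtration on the terms (\ref{cd:accef}), and these quotients are built from the preweight filtration $P$ on ${\Om}^{\bul}_{{\cal P}^{\rm ex}_{\bul}/\os{\circ}{T}}$ studied in \S\ref{sec:pwf}. Therefore I would compute ${\rm gr}_k^P$ of the single complex and express it, via the Poincar\'e residue isomorphisms (\ref{eqn:mprrn}) and (\ref{eqn:mpprrn}) together with the injectivity results (\ref{prop:tnmi}) and (\ref{prop:subc}), as a direct sum of crystalline complexes on the intersections $\os{\circ}{X}_{\ul{\lam}\cup\ul{\mu},T_0}$ twisted by the orientation sheaves $\vp_{{\rm crys},\ul{\mu}}$. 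This is precisely the shape already recorded in the computation (\ref{ali:vsmmgrc})–(\ref{eqn:ele}). Since $R\pi_{{\rm zar}*}$ of a crystalline complex on each $\os{\circ}{X}_{\ul{\lam}\cup\ul{\mu},T_0}$ is independent of the auxiliary choices by the cohomological descent and the comparison $R\pi_{{\rm zar}*}Ru_{\bul*}=Ru_*R\pi_{{\rm crys}*}$ used in (\ref{ali:vsmmgrc}), each ${\rm gr}_k^P$ of the morphism coming from the commutative square is an isomorphism in the derived category.

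To pass from the graded pieces to the filtered complex itself I would invoke the boundedness-below of the filtration $P$ (the analogue of (\ref{prop:lcz}), which holds here since $P_{-l}=0$ for $l\gg0$ locally on $X_{\os{\circ}{T}_0}$, each $X_{\os{\circ}{T}_0,n}$ being an open of an affine scheme) together with its exhaustiveness. A filtered morphism between bounded-below exhaustively filtered complexes that induces isomorphisms on all ${\rm gr}_k^P$ is a filtered quasi-isomorphism; this is the same formal principle (ascending/descending induction on $k$ combined with the spectral-sequence convergence of (\ref{coro:ts})) used to conclude (\ref{theo:indp}) and (\ref{coro:il}). Applying it yields the desired independence in ${\rm D}^+{\rm F}(f^{-1}_T({\cal O}_T))$.

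The main obstacle I anticipate is \emph{not} the independence bookkeeping itself — that is routine once the reduction to a single commutative square is made — but the careful verification that the graded-piece identification is compatible with the nonstandard boundary morphisms $-\nabla+d\log\tau\wedge$ appearing in the double complex (\ref{cd:lccbd}) and with the index shift built into the definition (\ref{eqn:lpcad}) of $P$ on $A_{\rm zar}$. In other words, I must check that the Poincar\'e residue isomorphism intertwines the $A_{\rm zar}$-differential with the standard crystalline differential on each graded summand, so that ${\rm gr}_k^PA_{\rm zar}$ genuinely decomposes as the claimed sum of crystalline complexes rather than merely as graded modules. This is where the delicate sign and filtration conventions (the shift $P_{2j+k+1}$ in (\ref{eqn:lpcad}), and the mapping-fiber description already appearing in (\ref{ali:pdte})) enter, and it is the step on which I would spend the most care; everything else follows the template of the earlier independence proofs in this section.
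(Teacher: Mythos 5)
Your proposal is correct and follows essentially the same route as the paper, which here simply cites \cite[(1.4.4)]{nb} but proves its analogous independence statements (e.g.\ (\ref{theo:indp}), (\ref{prop:naqi})) exactly as you do: reduce via a product embedding and common refinement to a single commutative square, identify ${\rm gr}^P_k$ with intrinsic crystalline complexes on the $\os{\circ}{X}{}^{(2j+k)}_{T_0}$ via the Poincar\'e residue (cf.\ (\ref{prop:graxnp}) and (\ref{ali:ruogrvp})), and conclude by induction on $k$ using the local boundedness of $P$. You also correctly single out the compatibility of the residue with the boundary morphisms and the shift $P_{2j+k+1}$ as the delicate point; this is precisely what (\ref{lemm:ti}) records.
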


Set 
\begin{equation*} 
(A_{\rm zar}(X_{\os{\circ}{T}_0}/S(T)^{\nat},E),P)
:=R\pi_{{\rm zar}*}
((A_{\rm zar}({\cal P}^{\rm ex}_{\bul}/S(T)^{\nat},{\cal E}^{\bul}),P))
\tag{16.1.1}\label{eqn:axtt}
\end{equation*} 
and we have called the filtered complex 
$(A_{\rm zar}(X_{\os{\circ}{T}_0}/S(T)^{\nat},E),P)
\in {\rm D}^+{\rm F}(f^{-1}_T({\cal O}_T))$ 
the $p$-adic zariskian filtered Steenbrink complex of 
$E^{}$ on $X_{\os{\circ}{T}_0}/(S(T)^{\nat},{\cal J},\del)$. 
In the proof of \cite[(1.4.7)]{nb} we have obtained the following: 

\begin{prop}[{\bf \cite[(1.4.7)]{nb}}]\label{prop:graxnp}
There exists the following canonical isomorphism$:$ 
\begin{align*} 
{\rm gr}^P_kA_{\rm zar}
(X_{\os{\circ}{T}_0}/S(T)^{\nat},E))
\os{\sim}{\lo} \bigoplus_{j\geq \max \{-k,0\}} 
&a^{(2j+k)}_{T_0*} 
(Ru_{\os{\circ}{X}{}^{(2j+k)}_{T_0}/\os{\circ}{T}*}
(E_{\os{\circ}{X}{}^{(2j+k)}_{T_0}
/\os{\circ}{T}} \tag{16.2.1}\label{ali:ruovp}\\
&\otimes_{\mab Z}\vp_{\rm crys}^{(2j+k)}
(\os{\circ}{X}_{T_0}/\os{\circ}{T})))[-2j-k]
\end{align*}
in $D^+(f^{-1}({\cal O}_T))$. 
Here $\vp_{\rm crys}^{(\star)}
(\os{\circ}{X}_{T_0}/\os{\circ}{T})$ is the crystalline orientation sheaf 
associated to 
the set $\{\os{\circ}{X}_{\lam}\}_{\lam \in \Lam}$. 
\end{prop}

In \cite[(1.4.3)]{nb}  we have also proved the following as a special case: 

\begin{prop}[{\bf \cite[(1.4.3)]{nb}}]\label{prop:tefc}
There exists the following canonical isomorphism 
\begin{equation*} 
\theta \wedge :=\theta_{X_{\os{\circ}{T}_0}/S(T)^{\nat}} \wedge \col 
Ru_{X_{\os{\circ}{T}_0}/S(T)^{\nat}*}
(\eps^*_{X_{\os{\circ}{T}_0}/S(T)^{\nat}}(E))\os{\sim}{\lo} 
A_{\rm zar}(X_{\os{\circ}{T}_0}/S(T)^{\nat},E)
\tag{16.3.1}\label{eqn:uz} 
\end{equation*} 
in $D^+(f^{-1}({\cal O}_T))$. 
\end{prop}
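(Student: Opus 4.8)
The plan is to construct the morphism $\theta \wedge$ at the level of the simplicial complexes on ${\cal P}^{\rm ex}_{\bul}$ and to prove it is a quasi-isomorphism there; the assertion in the derived category then follows by applying $R\pi_{{\rm zar}*}$. Indeed, by the log Poincar\'e lemma and cohomological descent the left-hand side is computed as $R\pi_{{\rm zar}*}({\cal E}^{\bul}\otimes_{{\cal O}_{{\cal P}^{\rm ex}_{\bul}}}\Om^{\bul}_{{\cal P}^{\rm ex}_{\bul}/S(T)^{\nat}})$ (as in (\ref{eqn:exspte})), while the right-hand side is $R\pi_{{\rm zar}*}$ of the Steenbrink complex by definition (\ref{eqn:axtt}), whose independence of the chosen covering and embedding is guaranteed by (\ref{theo:indcr}). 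So everything reduces to the local statement on a fixed ${\cal P}^{\rm ex}_{\bul}$.

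First I would define $\theta \wedge$ explicitly: given a local section $\om$ of ${\cal E}^{\bul}\otimes\Om^i_{{\cal P}^{\rm ex}_{\bul}/S(T)^{\nat}}$, lift it to a section $\wt{\om}$ of ${\cal E}^{\bul}\otimes\Om^i_{{\cal P}^{\rm ex}_{\bul}/\os{\circ}{T}}$ and send $\om$ to the class of $d\log\tau\wedge\wt{\om}$ in the $j=0$ component $({\cal E}^{\bul}\otimes\Om^{i+1}_{{\cal P}^{\rm ex}_{\bul}/\os{\circ}{T}})/P_0$ of $A_{\rm zar}(\ldots)^i$. By the exactness of (\ref{eqn:gsflxd}) (that is, (\ref{prop:mce})) the kernel of $\Om^i_{/\os{\circ}{T}}\lo\Om^i_{/S(T)^{\nat}}$ is $d\log\tau\wedge(\ldots)$, so two lifts differ by $d\log\tau\wedge\eta$ and $d\log\tau\wedge d\log\tau\wedge\eta=0$; hence $\theta \wedge$ is well defined. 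I would then check it is a morphism of complexes: since $d(d\log\tau)=0$, a lift of the relative differential $\nabla_{/S(T)^{\nat}}\om$ is $\nabla\wt{\om}$, and the two boundary maps of $A$ (namely $-\nabla$ and $d\log\tau\wedge$ of (\ref{cd:lccbd})) give $d_A(\theta\wedge\om)=[-d\log\tau\wedge\nabla\wt{\om}]$ in the $j=0$ column, the $d\log\tau\wedge$-component vanishing, which matches $\pm(\theta\wedge)(\nabla_{/S(T)^{\nat}}\om)$ under the sign conventions. This is routine sign bookkeeping I would not spell out in full.

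The heart is the quasi-isomorphism. I would \emph{not} try to make $\theta \wedge$ a filtered quasi-isomorphism for the weight filtration $P$: comparing naive weight-graded pieces fails, since ${\rm gr}^P_k$ of the relative log de Rham complex is a single Poincar\'e residue term (as in (\ref{prop:repmf})) whereas ${\rm gr}^P_k A_{\rm zar}$ is a direct sum indexed by $j$ (\ref{prop:graxnp}) --- this discrepancy being exactly why the Steenbrink complex carries the monodromy weight filtration rather than the naive one. Instead I would filter the bicomplex $A_{\rm zar}({\cal P}^{\rm ex}_{\bul}/S(T)^{\nat},{\cal E}^{\bul})^{\bul\bul}$ by the column index $j$ (the decreasing $F^p=\bigoplus_{j\geq p}(\ldots)$, a genuine subcomplex because $-\nabla$ preserves $j$ and $d\log\tau\wedge$ raises it by one). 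Each ${\rm gr}^p_F$ is, up to shift, the quotient complex $({\cal E}^{\bul}\otimes\Om^{\bul}_{/\os{\circ}{T}})/P_p$ with its de Rham differential, and the induced $d_1$ is the $d\log\tau$-wedge. Feeding the exact sequences (\ref{eqn:gsflxd}) and their $P_p$-truncations into the associated spectral sequence, the successive $d\log\tau\wedge$ maps should telescope, cancelling all columns $p\geq 1$ and leaving the augmentation ${\cal E}^{\bul}\otimes\Om^{\bul}_{/S(T)^{\nat}}\lo A^{\bul,0}_{\rm zar}$ induced by $\theta \wedge$ as the only surviving contribution, thereby identifying $H^*(A_{\rm zar})$ with $H^*({\cal E}^{\bul}\otimes\Om^{\bul}_{/S(T)^{\nat}})$ through $\theta \wedge$.

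The main obstacle I anticipate is precisely this last identification: controlling the cohomology of the truncated complexes $({\cal E}^{\bul}\otimes\Om^{\bul}_{/\os{\circ}{T}})/P_p$ together with the $d\log\tau\wedge$ differentials between them, and verifying the boundedness needed to conclude (locally the column sum is finite, which helps). Concretely this amounts to establishing the analogues of (\ref{prop:mce}) for the $P_p$-quotients and then an elementary but careful homological cancellation, much in the spirit of the proof of (\ref{theo:qii}). Once the local quasi-isomorphism is in hand, the globalization, the independence of choices, and the compatibility with (\ref{prop:graxnp}) are delivered by (\ref{theo:indcr}) and $R\pi_{{\rm zar}*}$, which completes the proof.
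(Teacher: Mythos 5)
Your overall architecture matches the paper's: reduction to the embedded simplicial situation via $R\pi_{{\rm zar}*}$ and the log Poincar\'e lemma, the definition of $\theta\wedge$ by lifting to $\Om^{\bul}_{/\os{\circ}{T}}$ and wedging with $d\log\tau$ (with well-definedness coming from the exactness of (10.1.1)), and the correct refusal to compare $P$-graded pieces. The gap is the step you yourself defer as ``should telescope''. What has to be proved is the exactness, for each $i$, of the augmented column complex
\begin{equation*}
0\lo {\cal E}^{\bul}\otimes_{{\cal O}_{{\cal P}^{\rm ex}_{\bul}}}\Om^i_{{\cal P}^{\rm ex}_{\bul}/S(T)^{\nat}}
\os{\theta\wedge}{\lo}
({\cal E}^{\bul}\otimes_{{\cal O}_{{\cal P}^{\rm ex}_{\bul}}}\Om^{i+1}_{{\cal P}^{\rm ex}_{\bul}/\os{\circ}{T}})/P_0
\os{\theta\wedge}{\lo}
({\cal E}^{\bul}\otimes_{{\cal O}_{{\cal P}^{\rm ex}_{\bul}}}\Om^{i+2}_{{\cal P}^{\rm ex}_{\bul}/\os{\circ}{T}})/P_1
\os{\theta\wedge}{\lo}\cdots,
\end{equation*}
and this is \emph{not} a formal consequence of (10.1.1) and ``its $P_p$-truncations''. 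The sequence (10.1.1) gives ${\rm Ker}(\theta\wedge)={\rm Im}(\theta\wedge)$ on the untruncated complexes; to descend to the quotients by $P_j$ you need the compatibility of $\theta\wedge$ with the weight filtration, concretely that $\theta\wedge\om\in P_{j+1}$ forces $\om\in\theta\wedge\Om^{\bul}+P_j$. That statement is where the SNCL geometry (locally $\tau\mapsto x_0\cdots x_a$, $d\log\tau=\sum_i d\log x_i$) must enter, and your proposal never brings it in.

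The missing input is the pair (5.5.2)--(5.5.3) together with (16.4): the Poincar\'e residue isomorphism identifies ${\rm gr}^P_k$ of the absolute log de Rham complex with the de Rham complex of the $(k-1)$-fold intersections $\os{\circ}{\cal P}{}^{{\rm ex},(k-1)}$ twisted by the orientation sheaf, and under this identification the maps induced by $\theta\wedge$ between consecutive graded pieces become (up to sign) the \v{C}ech restriction morphisms, so that the graded version of your column complex is the \v{C}ech-type resolution (5.5.3) of $P_0$, whose exactness is a local combinatorial consequence of the normal crossing hypothesis. One then climbs from the graded statement to the quotients $\Om^{\bul}/P_j$ by the finite filtrations on them; this is Mokrane's Lemme~3.15, reproduced in this paper in the proof of (18.3), where the same reduction to the \v{C}ech complex (18.3.7) is carried out and the \v{C}ech exactness is proved by a double-complex argument. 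Without the residue identification and the \v{C}ech exactness, your spectral sequence has no reason to collapse onto the augmentation, so as written the argument does not close; once those two ingredients are supplied, your plan coincides with the paper's (and with \cite[(1.4.3)]{nb}, to which the paper defers the details).
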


\par 
We recall the morphism (\ref{eqn:uz}). This morphism is, by definition, 
the induced morphism by the following morphism: 
\begin{align*} 
d\log t\wedge  \col {\cal E}^{\bul}
\otimes_{{\cal O}_{{\cal P}^{\rm ex}_{\bul}}}
{\Om}^{i}_{{\cal P}^{\rm ex}_{\bul}/S(T)^{\nat}}\lo 
({\cal E}^{\bul}
\otimes_{{\cal O}_{{\cal P}^{\rm ex}_{\bul}}}
{\Om}^{i+1}_{{\cal P}^{\rm ex}_{\bul}/\os{\circ}{T}})
/
P_j({\cal E}^{\bul}
\otimes_{{\cal O}_{{\cal P}^{\rm ex}_{\bul}}}
{\Om}^{i+1}_{{\cal P}^{\rm ex}_{\bul}/\os{\circ}{T}})  \quad (i\in {\mab N}). 
\end{align*} 
Since $d\log t$ is a one form, this morphism indeed 
induces the morphism of complexes:
\begin{align*} 
{\cal E}^{\bul}
\otimes_{{\cal O}_{{\cal P}^{\rm ex}_{\bul}}}
{\Om}^{\bul}_{{\cal P}^{\rm ex}_{\bul}/S(T)^{\nat}}
\lo A_{\rm zar}({\cal P}^{\rm ex}_{\bul}/S(T)^{\nat},
{\cal E}^{\bul}). 
\end{align*}
Hence, by the log Poincar\'{e} lemma, we obtain the morphism (\ref{eqn:uz}).  
In \cite[(1.4.3)]{nb} we have also proved that the morphism (\ref{eqn:uz})
is independent of the choices of the affine open covering of $X_{\os{\circ}{T}_0}$ and 
the simplicial immersion $X_{\os{\circ}{T}_0\bul}\os{\sus}{\lo}\ol{\cal P}$ over 
$S(T)^{\nat}$. 
\par
By (\ref{ali:ruovp}) and (\ref{eqn:uz}), 
we obtain the following spectral sequence 
\begin{align*} 
E_1^{-k,q+k}=
\bigoplus_{j\geq \max \{-k,0\}} &
R^{q-2j-k}f_{\os{\circ}{X}{}^{(2j+k)}_{T_0}/\os{\circ}{T}*}
(E\vert_{\os{\circ}{X}{}^{(2j+k)}_{T_0}/\os{\circ}{T}}
\otimes_{\mab Z}
\vp^{(2j+k)}_{\rm crys}(\os{\circ}{X}_T/\os{\circ}{T})) 
\tag{16.3.2}\label{eqn:espsp} \\
&  \Lo 
R^qf_{X_{\os{\circ}{T}_0}/S(T)^{\nat}*}(\eps^*_{X_{\os{\circ}{T}_0}/S(T)^{\nat}}
(E)) 
\quad (q\in {\mab Z}). 
\end{align*}   
More precisely, if one consider the action of the abrelative Frobenius morphism of 
$S(T)^{\nat}$ on this spectral sequence as in \cite{nb}, then this sequence 
becomes 
\begin{align*} 
E_1^{-k,q+k}=
\bigoplus_{j\geq \max \{-k,0\}} &
R^{q-2j-k}f_{\os{\circ}{X}{}^{(2j+k)}_{T_0}/\os{\circ}{T}*}
(E\vert_{\os{\circ}{X}{}^{(2j+k)}_{T_0}/\os{\circ}{T}}
\otimes_{\mab Z}
\vp^{(2j+k)}_{\rm crys}(\os{\circ}{X}_T/\os{\circ}{T}))(-j-k) 
\tag{16.3.3}\label{eqn:esafsp} \\
&  \Lo 
R^qf_{X_{\os{\circ}{T}_0}/S(T)^{\nat}*}
(\eps^*_{X_{\os{\circ}{T}_0}/S(T)^{\nat}}(E)) \quad (q\in {\mab Z}) 
\end{align*}   
when $\os{\circ}{S}$ is a $p$-adic flat formal scheme over ${\rm Spf}({\mab Z}_p)$. 
%Here $(-j-k)$ means the Tate twist with respect to 
%the abrelative Frobenius morphism. 

\par 
Let $\{\os{\circ}{X}_{\lam}\}_{\lam \in \Lam}$ be 
the set of the smooth components 
of $\os{\circ}{X}_{T_0}$: 
$\os{\circ}{X}_{T_0}=\bigcup_{\lam \in \Lam}\os{\circ}{X}_{\lam}$; 
$\os{\circ}{X}_{\lam}$ is smooth over 
$\os{\circ}{S}_{\os{\circ}{T}_0}=\os{\circ}{T}_0$. 
Let $\os{\circ}{\cal P}{}^{\rm ex}_{\lam}$ 
be the smooth closed subscheme of 
$\os{\circ}{\cal P}{}^{\rm ex}$ over $\os{\circ}{T}$ 
which is topologically isomorphic to $\os{\circ}{X}_{\lam}$. 
Fix a total order on $\Lam$ once and for all. 
For different $\lam_0<\lam_1<\ldots< \lam_{k+1}$ and 
a nonnegative integer  $0\leq j\leq k+1$, 
set $\ul{\lam}:=\{\lam_0\cdots \lam_{k+1}\}$ 
and 
$\ul{\lam}_j:=\ul{\lam}\setminus \{\lam_j\}$,  
set $\os{\circ}{X}_{\ul{\lam}}
:=\os{\circ}{X}_{\lam_0}\cap \cdots \cap \os{\circ}{X}_{\lam_{k+1}}$ 
and $\os{\circ}{\cal P}{}^{\rm ex}_{\ul{\lam}}
:=\os{\circ}{\cal P}{}^{\rm ex}_{\lam_0}\cap 
\cdots \cap \os{\circ}{\cal P}{}^{\rm ex}_{\lam_{k+1}}$. 
Let $\os{\circ}{\mathfrak D}_{\ul{\lam}}$ be 
the PD-envelope of the immersion 
$\os{\circ}{X}_{\ul{\lam}} \os{\sus}{\lo} 
\os{\circ}{\cal P}{}^{\rm ex}_{\ul{\lam}}$ 
over $(\os{\circ}{T},{\cal J},\del)$ and let 
$c_{\ul{\lam}} \col 
\os{\circ}{\mathfrak D}_{\ul{\lam}} \lo \os{\circ}{\mathfrak D}$ 
be the natural morphism of schemes. 
Let  
\begin{equation*} 
\iota_{\ul{\lam}_j,\ul{\lam}}\col \os{\circ}{X}_{\ul{\lam}} 
\os{\sus}{\lo} \os{\circ}{X}_{\ul{\lam}_j}
%\tag{16.4.3}\label{eqn:odikps}
\end{equation*} 
be the natural closed immersion.  
Let 
\begin{align*} 
&\iota^{(k)*} \col   
c^{(k)}_*
(c^{(k)*}({\cal E})
\otimes_{{\cal O}_{\os{\circ}{\cal P}{}^{{\rm ex},(k)}}}
\Om^{\bul}_{\os{\circ}{\cal P}{}^{{\rm ex},(k)}/\os{\circ}{T}}
\otimes_{\mab Z}
\vp^{(k)}_{\rm zar}
(\os{\circ}{\cal P}{}^{\rm ex}/\os{\circ}{T})) 
\tag{16.3.4}\label{eqn:odpaps}\\
& \lo 
c^{(k+1)}_*
(c^{(k+1)*}({\cal E})
\otimes_{{\cal O}_{\os{\circ}{\cal P}{}^{{\rm ex},(k+1)}}}
\Om^{\bul}_{\os{\circ}{\cal P}{}^{{\rm ex},(k+1)}
/\os{\circ}{T}}
\otimes_{\mab Z}\vp^{(k+1)}_{\rm zar}
(\os{\circ}{\cal P}{}^{\rm ex}/\os{\circ}{T}))  
\end{align*} 
be the natural \v{C}ech morphism, which is the summation of 
the following morphism with respect to 
$\lam_j$ and $0\leq j \leq k+1$ 
(cf.~\cite[(5.0.5)]{ndw}): 
\begin{align*} 
& 
c_{\ul{\lam}_j*}
(c^*_{\ul{\lam}_j}({\cal E}) 
\otimes_{
{\cal O}_{{\cal P}{}^{\rm ex}_{\ul{\lam}_j}}}
\Om^{\bul}_{{\cal P}{}^{\rm ex}_{\ul{\lam}_j}/\os{\circ}{T}}
\otimes_{\mab Z}
\vp_{{\rm zar}{\ul{\lam}_j}}
(\os{\circ}{\cal P}{}^{\rm ex}/\os{\circ}{T}))  
\lo c_{\ul{\lam}*}
(c^*_{\ul{\lam}}({\cal E})\otimes_{
{\cal O}_{{\cal P}{}^{\rm ex}_{\ul{\lam}}}}
\Om^{\bul}_{{\cal P}^{\rm ex}_{\ul{\lam}}/\os{\circ}{T}}
\otimes_{\mab Z}
\vp_{{\rm zar}{\ul{\lam}}}
(\os{\circ}{\cal P}{}^{\rm ex}/\os{\circ}{T})))\tag{16.3.5}\label{eqn:odkps}\\
\end{align*}  
\begin{equation*} 
{\om_{\ul{\lam}_j}}\otimes(\ul{\lam}_j)
\lom  
{(-1)^j\iota^{*}_{\ul{\lam}_j,\ul{\lam}}
(\om_{\ul{\lam}_j})}
\otimes(\ul{\lam}).  
\end{equation*}

\begin{lemm}[{\bf cf.~\cite[4.12]{msemi}, \cite[(10.1.16)]{ndw}, \cite[(1.4.2)]{nb}}]\label{lemm:ti} 
Let $k$ be a positive integer. 
Let 
\begin{align*} 
\iota^{(k-1)*}_n \col  & 
c^{(k-1)}_{n*}
(c^{(k-1)*}_n({\cal E}^{n})
\otimes_{{\cal O}_{{\cal P}^{\rm ex}_n}}
\Om^{\bul}_{\os{\circ}{\cal P}{}^{{\rm ex},(k-1)}_n/\os{\circ}{T}}
\otimes_{\mab Z}\vp^{(k-1)}_{\rm zar}(\os{\circ}{\cal P}{}^{\rm ex}_n/\os{\circ}{T})) 
\lo 
\tag{16.4.1}\label{eqn:odips} \\
& 
c^{(k)}_{n*}
(c^{(k)*}_n({\cal E}^{n})
\otimes_{{\cal O}_{{\cal P}^{\rm ex}_n}}
\Om^{\bul}_{\os{\circ}{\cal P}{}^{{\rm ex},(k)}_n
/\os{\circ}{T}}
\otimes_{\mab Z}
\vp^{(k)}_{\rm zar}(\os{\circ}{\cal P}{}^{\rm ex}_n/\os{\circ}{T}))
\end{align*}
be the morphism {\rm (\ref{eqn:odpaps})}.  
Then the following diagram is commutative$:$
\begin{equation*} 
\begin{CD} 
{\rm gr}^P_{k+1}
({\cal E}^{n}
\otimes_{{\cal O}_{{\cal P}^{\rm ex}_n}}
{\Om}^{i+1}_{{\cal P}^{\rm ex}_n/\os{\circ}{T}})
@>{\simeq}>>  
\\
@A{d\log t\wedge}AA  \\
{\rm gr}^P_k
({\cal E}^{n}
\otimes_{{\cal O}_{{\cal P}^{\rm ex}_n}}
{\Om}^{i}_{{\cal P}^{\rm ex}_n/\os{\circ}{T}})
@>{\simeq}>> 
\end{CD}
\tag{16.4.2}\label{eqn:xdxgras}
\end{equation*} 
\begin{equation*} 
\begin{CD} 
(c^{(k)}_{n*}
(c^{(k)*}_n
({\cal E}^{n})\otimes_{
{\cal O}_{\os{\circ}{\cal P}{}^{{\rm ex},(k)}_n}}
\Om^{i-k}_{\os{\circ}{\cal P}{}^{{\rm ex},(k)}_n
/\os{\circ}{T}} \otimes_{\mab Z}
\vp^{(k)}_{\rm zar}
(\os{\circ}{\cal P}{}^{\rm ex}_n/\os{\circ}{T}))[-k-1]) 
\\
@AA{\iota^{(k-1)*}_n}A \\
(c^{(k-1)}_{n*}
(c^{(k-1)*}_n({\cal E}^{n})\otimes_{
{\cal O}_{\os{\circ}{\cal P}{}^{{\rm ex},(k-1)}_n}}
\Om^{i-k}_{\os{\circ}{\cal P}{}^{{\rm ex},(k-1)}_n
/\os{\circ}{T}} \otimes_{\mab Z}
\vp^{(k-1)}_{\rm zar}
(\os{\circ}{\cal P}{}^{\rm ex}_n/\os{\circ}{T}))[-k]). 
\end{CD}
\end{equation*} 
Here the upper horizontal isomorphism and the lower horizontal isomorphism are 
the morphisms {\rm (\ref{eqn:prvin})} for $k+1$ and $k$, respectively. 
\end{lemm}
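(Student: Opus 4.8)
The plan is to verify the commutativity of the diagram (\ref{eqn:xdxgras}) by a direct local computation, reducing everything to the explicit descriptions of the Poincar\'{e} residue isomorphisms and the compatibility of $\theta \wedge = d\log \tau \wedge$ with the \v{C}ech morphism $\iota^{(k-1)*}_n$. First I would recall the two relevant pieces of data. The horizontal isomorphisms are the Poincar\'{e} residue isomorphisms of (\ref{eqn:prvin}), which send a local section $\sig \otimes d\log x_{\lam_0}\cdots d\log x_{\lam_{k-1}}\om$ (with $\om \in P_0$) to $\sig \otimes b^*_{\lam_0\cdots \lam_{k-1}}(\om)$ tensored with the orientation. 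The vertical map on the left is $\theta_{{\cal P}^{\rm ex}_n}\wedge = d\log \tau \wedge$, which raises the $P$-degree by one since $d\log \tau$ is a relative log one-form of the family of log points. Because the problem is entirely local on $\os{\circ}{X}_{T_0}$, I would work in the standard coordinates of (\ref{prop:adla}), where ${\cal O}_{{\cal P}^{\rm ex}} \simeq {\cal O}_S[x_0,\ldots,x_d][[x_{d+1},\ldots,x_{d'}]]/(x_0\cdots x_a)$ and $d\log\tau = \sum_{i=0}^{a}d\log x_i$ modulo the relation coming from $x_0\cdots x_a = \tau$.

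The key step is to chase a local generator $\om = \sig \otimes d\log x_{\lam_0}\cdots d\log x_{\lam_{k-1}}\,\eta$ of ${\rm gr}^P_k$ around both routes of the square. Going first right (apply ${\rm Res}$ for $k$) and then up (apply $\iota^{(k-1)*}_n$) should produce $\sum_{\lam'}\pm \sig\otimes b^*_{\lam'\cup\{\lam_0,\ldots,\lam_{k-1}\}}(\eta)$ with appropriate signs and orientations, where the sum runs over the new index $\lam'$ inserted by passing from the $(k-1)$-fold to the $k$-fold intersection. Going first up ($\theta\wedge$) and then right (apply ${\rm Res}$ for $k+1$) should produce the same thing because $d\log\tau = \sum_i d\log x_i$ and wedging with $d\log\tau$ precisely inserts the extra $d\log x_{\lam'}$ factors that the Poincar\'{e} residue for $k+1$ then reads off. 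The only content is that the signs match up: the $(-1)^j$ appearing in the \v{C}ech morphism (\ref{eqn:odkps}) has to be reconciled with the sign conventions built into ${\rm Res}$ and with the orientation sheaves $\vp^{(k)}_{\rm zar}$, $\vp^{(k-1)}_{\rm zar}$. This is exactly the computation already carried out in the non-Hirsch analogues \cite[(10.1.16)]{ndw} and \cite[(1.4.2)]{nb}, and the reference \cite[4.12]{msemi}; I would indicate that the same bookkeeping applies verbatim here.

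The main obstacle I anticipate is the sign and orientation bookkeeping, not any conceptual difficulty. Specifically, one must be careful that the orientation sheaf convention used in ${\rm Res}$ (the factor ``orientation $(\lam_0\cdots\lam_{k-1})$'' in (\ref{eqn:mprarn})) transforms correctly under insertion of the index $\lam'$, so that the signs produced by the antisymmetry of the orientation sheaf cancel against the $(-1)^j$ in (\ref{eqn:odkps}) and against the Koszul sign incurred when $d\log\tau\wedge$ is commuted past the existing $d\log x_{\lam_i}$ factors. Since all of these sign conventions have been fixed earlier (the boundary-morphism conventions of the convention list (15)–(16), the Gysin/\v{C}ech conventions of \cite[(2.8.4.5)]{nh2}, and the explicit residues of (\ref{prop:grem}) and (\ref{prop:pslt})), the verification is forced once the generators are chosen.

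Concretely the plan is: reduce to the local model; write down explicit generators of both ${\rm gr}^P_k$ and ${\rm gr}^P_{k+1}$; compute the two composites on these generators using (\ref{eqn:prvin}), (\ref{eqn:mprarn}) and the formula $d\log\tau = \sum_{i}d\log x_i$; and finally match signs via the orientation-sheaf convention, citing \cite[4.12]{msemi}, \cite[(10.1.16)]{ndw} and \cite[(1.4.2)]{nb} for the identical argument in the untwisted setting. I would remark that the presence of the crystal ${\cal E}^n$ plays no role beyond being carried along as a flat coefficient, since $\theta\wedge$ acts only on the differential-form factor and the residue is ${\cal E}^n$-linear.
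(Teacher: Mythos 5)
Your proposal is correct and is essentially the argument the paper intends: the paper gives no proof here, deferring (via the ``cf.'' citations to \cite[4.12]{msemi}, \cite[(10.1.16)]{ndw} and \cite[(1.4.2)]{nb}) to exactly the local computation you describe --- reduce to the standard coordinates of (\ref{prop:adla}), use $\theta=\sum_i d\log x_i$, chase a generator $\sig\otimes d\log x_{\lam_0}\cdots d\log x_{\lam_{k-1}}\wedge\eta$ through both composites via (\ref{eqn:prvin}), and match the Koszul signs against the $(-1)^j$ of the \v{C}ech morphism and the orientation-sheaf antisymmetry. Your observation that the flat coefficient ${\cal E}^n$ is inert is also the right remark to make.
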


\par 
The morphism $\iota_{\ul{\lam}_j,\ul{\lam}}\col 
\os{\circ}{X}_{\ul{\lam}} 
\os{\sus}{\lo} \os{\circ}{X}_{\ul{\lam}_{j}}$ 
induces the morphism  
\begin{equation}
(-1)^{j}
\iota_{\ul{\lam}_{j},\ul{\lam}{\rm crys}}^{*}
\col 
\iota_{\ul{\lam}_{j},\ul{\lam}_{\rm crys}}^{*}(E_{\ul{\lam}_{j}}
\otimes_{\mab Z}\vp_{\ul{\lam}_{j}{\rm crys}}
(\os{\circ}{X}_{T_0}/\os{\circ}{T})) 
\lo E_{\ul{\lam}}\otimes_{\mab Z}\vp_{\ul{\lam}{\rm crys}}
(\os{\circ}{X}_{T_0}/\os{\circ}{T}) 
\tag{16.4.3}\label{eqn:defcbd}
\end{equation}
as in \cite[(2.11.1.2)]{nh2}.
Here 
$$\vp_{\ul{\lam}{\rm crys}}
(\os{\circ}{X}_{T_0}/\os{\circ}{T}) 
\quad \text{and} \quad  \vp_{\ul{\lam}_{j}{\rm crys}}
(\os{\circ}{X}_{T_0}/\os{\circ}{T})$$  
are the crystalline orientation sheaves of
$\os{\circ}{X}_{\ul{\lam}}$ and 
$\os{\circ}{X}_{\ul{\lam}_j}$ 
in 
$(\os{\circ}{X}_{\ul{\lam}}
/\os{\circ}{T})_{\rm crys}$ and $(\os{\circ}{X}_{\ul{\lam}_{j}}
/\os{\circ}{T})_{\rm crys}$, 
respectively,  
defined similarly in \cite[p.~81, (2.8)]{nh2}.
Set 
\begin{align*}
& \rho:=\bigoplus_{j\geq \max \{-k,0\}}
\sum_{\{\ul{\lam}:=\{\lam_{0},\ldots,\lam_{2j+k}\}
~\vert~\lam_l<\lam_k~(l<k)\}}
\sum_{\bet=0}^{2j+k}(-1)^{\bet}
\iota_{\ul{\lam}_j,\ul{\lam}{\rm crys}}^{*} 
\col 
\tag{16.4.4}\label{eqn:rhogsn}\\
&\bigoplus_{j\geq \max \{-k,0\}} 
R^{q-2j-k}f_{\os{\circ}{X}{}^{(2j+k)}_{T_0}/\os{\circ}{T}*}
(E^m_{\os{\circ}{X}{}^{(2j+k)}_{T_0}
/\os{\circ}{T}}\otimes_{\mab Z}\vp^{(2j+k)}_{\rm crys}
(\os{\circ}{X}_{T_0}/\os{\circ}{T}))  \\ 
&(-j-k,v)\lo \\ 
& \bigoplus_{j\geq \max \{-k,0\}} 
R^{q-2j-k}f_{\os{\circ}{X}{}^{(2j+k+1)}_{T_0}/\os{\circ}{T}}
(E_{\os{\circ}{X}{}^{(2j+k+1)}_{T_0}
/\os{\circ}{T}}
\otimes_{\mab Z}\vp^{(2j+k+1)}_{\rm crys}
(\os{\circ}{X}_{T_0}/\os{\circ}{T}))\\
& (-j-k,v).
\end{align*}
\par
For a nonnegative integer $q$ and an integer $l$, 
let
\begin{equation*}
(-1)^jG^{\ul{\lam}_j}_{\ul{\lam}}
\col 
R^qf_{\os{\circ}{X}_{\ul{\lam}_j}/\os{\circ}{T}*}(E_{\ul{\lam}_j}
\otimes_{\mab Z} 
\vp_{\rm crys}(\os{\circ}{X}_{\ul{\lam}_j}/\os{\circ}{T}))(-l) 
\tag{16.4.5}\label{eqn:egs}
\end{equation*}
\begin{equation*}
\lo 
R^{q+2}f_{\os{\circ}{X}_{\ul{\lam}_j}
/\os{\circ}{T}*}(E_{\ul{\lam}_j}\otimes_{\mab Z} \vp_{\rm crys}
(\os{\circ}{X}_{\ul{\lam}_j}
/\os{\circ}{T}))(-(l-1))
\end{equation*}
be the obvious sheafied version of the Gysin morphism 
defined in \cite[(2.8.4.5)]{nh2}. 
Set
\begin{align*}
& G:=\bigoplus_{j\geq \max \{-k,0\}} \sum_{\{\ul{\lam}:=\{\lam_{0},\ldots,\lam_{2j+k}\}
~\vert~\lam_l<\lam_k~(l<k)\}}
\sum_{\bet=0}^{2j+k}(-1)^{\bet}
G_{\ul{\lam}}^{\ul{\lam}_{\bet}} \col 
\tag{16.4.6}\label{eqn:togbsn}\\
&\bigoplus_{j\geq \max \{-k,0\}} 
R^{q-2j-k}
f_{\os{\circ}{X}{}^{(2j+k)}_{T_0}
/\os{\circ}{T}*}(E_{\os{\circ}{X}{}^{(2j+k)}_{T_0}/\os{\circ}{T}}
\otimes_{\mab Z} 
\vp^{(2j+k)}_{\rm crys}(\os{\circ}{X}_{T_0}/\os{\circ}{T}))(-j-k)\\
&\lo \\
&\bigoplus_{j\geq \max \{-k+1,0\}} 
R^{q-2j-k+2}
f_{\os{\circ}{X}{}^{(2j+k-1)}_{T_0}
/\os{\circ}{T}*}(E_{\os{\circ}{X}{}^{(2j+k-1)}_{T_0}/\os{\circ}{T}}
\otimes_{\mab Z} 
\vp^{(2j+k-1)}_{\rm crys}(\os{\circ}{X}_{T_0}/\os{\circ}{T}))\\
&(-j-k+1).
\end{align*}

The following is a special case of \cite[(1.5.21)]{nb}: 

\begin{prop}[{\bf \cite[(1.5.21)]{nb}}]\label{prop:deccbd} 
The boundary morphism between the $E_1$-terms of 
the spectral sequence {\rm (\ref{eqn:esafsp})} 
is given by the following$:$ 
\begin{equation*} 
\bigoplus_{j\geq \max \{-k,0\}} 
R^{q-2j-k}
f_{\os{\circ}{X}{}^{(2j+k)}_{T_0}
/\os{\circ}{T}*}
(E_{\os{\circ}{X}{}^{(2j+k)}_{T_0}
/\os{\circ}{T}} 
\otimes_{\mab Z}\vp^{(2j+k)}_{\rm crys}
(\os{\circ}{X}_{T_0}/\os{\circ}{T}))(-j-k)
\end{equation*}  
$$\text{
{${G+\rho}$}}
~\downarrow \quad \quad \quad \quad \quad \quad \quad 
\quad \quad \quad \quad \quad$$
\begin{equation*} 
\bigoplus_{j\geq \max \{-k+1,0\}} 
R^{q-2j-k+2}
f_{\os{\circ}{X}{}^{(2j+k-1)}_{T_0}
/\os{\circ}{T}*}
(E_{\os{\circ}{X}{}^{(2j+k-1)}_{T_0}
/\os{\circ}{T}} \otimes_{\mab Z}\vp^{(2j+k-1)}_{\rm crys}
(\os{\circ}{X}_{T_0}/\os{\circ}{T}))(-j-k+1). 
\end{equation*}   
\end{prop}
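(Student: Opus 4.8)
The final statement, Proposition (16.5) (\ref{prop:deccbd}), asserts that the boundary morphism between the $E_1$-terms of the $p$-adic weight spectral sequence (\ref{eqn:esafsp}) decomposes as a sum $G+\rho$ of a Gysin part and a \v{C}ech-restriction part. Since this is declared to be a special case of \cite[(1.5.21)]{nb}, the plan is to reduce the present statement to that cited result rather than to recompute the boundary morphism from scratch. First I would recall precisely how the spectral sequence (\ref{eqn:esafsp}) arises: it is obtained from the filtered complex $(A_{\rm zar}(X_{\os{\circ}{T}_0}/S(T)^{\nat},E),P)$ via the canonical isomorphism (\ref{eqn:uz}) and the Poincar\'{e} residue computation of the graded pieces in (\ref{ali:ruovp}). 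The $E_1$-terms are exactly the cohomology sheaves of ${\rm gr}^P_k A_{\rm zar}$, which (\ref{prop:graxnp}) identifies with direct sums of log crystalline cohomology sheaves of the $\os{\circ}{X}{}^{(2j+k)}_{T_0}$ twisted by crystalline orientation sheaves.

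The key step is to trace the $d_1$-differential through this identification. The $d_1$-differential of a spectral sequence associated to a filtered complex is the connecting homomorphism of the short exact sequence $0\to {\rm gr}^P_{k+1}\to P_{k+1}/P_{k-1}\to {\rm gr}^P_k\to 0$. Concretely, I would unravel the boundary morphism $-\nabla+d\log\tau\wedge$ of the Steenbrink double complex (\ref{cd:lccbd}) modulo the appropriate filtration levels. The two summands of the total differential contribute the two summands of $G+\rho$: the \v{C}ech part $\rho$ of (\ref{eqn:rhogsn}) arises from the simplicial/\v{C}ech structure via the alternating restriction maps $\iota^{\ul{\lam}_j*}_{\ul{\lam}}$ (this is governed by (\ref{eqn:odpaps}), (\ref{eqn:odkps}) and the compatibility Lemma (\ref{lemm:ti})), while the Gysin part $G$ of (\ref{eqn:togbsn}) arises from the residue/$\theta\wedge$ interaction that shifts the codimension of the strata, using the Gysin morphisms $G^{\ul{\lam}_j}_{\ul{\lam}}$ of (\ref{eqn:egs}) built from \cite[(2.8.4.5)]{nh2}. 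The signs $(-1)^j$ and $(-1)^\bet$ must be matched against the conventions fixed in the Notation (items (12)--(16)) and in the definition of the residue isomorphism (\ref{eqn:prvin}).

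Since \cite[(1.5.21)]{nb} already establishes the boundary morphism of the analogous spectral sequence (\ref{ali:ms}) for $(A_{\rm zar},P)$ with the abrelative Frobenius action and the $D$-twists $(-j-k,v)$ in place, the actual content here is to verify that the present statement is literally the specialization obtained by setting the Frobenius-twist data appropriately (and, in the displayed form, suppressing $v$ in favour of the variable $u$ as in the surrounding text). I would therefore (i) align the indexing: the index $m$ appearing in (\ref{eqn:togbsn}) and in \cite[(1.5.21)]{nb} should be matched with the stratum-degree bookkeeping of the present $E_1$-terms; (ii) confirm that the crystalline orientation sheaves $\vp^{(k)}_{\rm crys}(\os{\circ}{X}_{T_0}/\os{\circ}{T})$ and the Tate/$D$-twists transform correctly under both $\rho$ and $G$; and (iii) invoke Lemma (\ref{lemm:ti}), which is the precise compatibility of $\theta\wedge$ with the \v{C}ech restriction needed to see that the graded boundary map splits as claimed.

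The main obstacle I anticipate is bookkeeping rather than conceptual: carefully reconciling the sign conventions and the twist conventions between the present normalization (using $u$ and the residue isomorphism (\ref{eqn:prvin})) and the normalization of \cite[(1.5.21)]{nb} (using $v$ and the mapping-degree function). In particular, one must check that the sign $(-1)^\bet$ attached to the Gysin summand in (\ref{eqn:togbsn}) and the sign $(-1)^j$ attached to the \v{C}ech summand in (\ref{eqn:rhogsn}) are mutually consistent with the total differential $-\nabla+d\log\tau\wedge$ and with the sign $-\nabla$ appearing in the \v{C}ech direction of (\ref{cd:pppoex}); a single misplaced sign would make the two differentials fail to anticommute. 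Once these conventions are pinned down, the proof is a direct citation of \cite[(1.5.21)]{nb} together with the local residue computation of Lemma (\ref{lemm:ti}), and I would leave the remaining verification of signs to the diligent reader, exactly as the excerpt does for its companion results.
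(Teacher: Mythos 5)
Your top-level strategy is the same as the paper's: Proposition (\ref{prop:deccbd}) is given no proof there beyond the sentence declaring it a special case of \cite[(1.5.21)]{nb}, so reducing to that reference and checking that the indexing, the orientation sheaves and the $D$-twists specialize correctly is exactly what is intended, and your identification of the $E_1$-terms through (\ref{ali:ruovp}) and of $d_1$ as a connecting homomorphism is the right framework.

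There is, however, a concrete error in the mechanism you describe, and it would derail the verification you propose to carry out. Lemma (\ref{lemm:ti}) says precisely that, under the residue isomorphisms (\ref{eqn:prvin}), the map $\theta_{{\cal P}^{\rm ex}_n}\wedge\colon {\rm gr}^P_k\to{\rm gr}^P_{k+1}$ becomes the \v{C}ech restriction $\iota^{(k-1)*}_n$; hence the $d\log\tau\wedge$ summand of the Steenbrink differential (\ref{cd:lccbd}) accounts for $\rho$ and only for $\rho$. The Gysin summand $G$ does \emph{not} ``arise from the residue/$\theta\wedge$ interaction'' as you write; it comes from the other summand, the connection $\nabla$, via the connecting-homomorphism computation: a class in ${\cal H}^*({\rm gr}^P_k)$ lifts to a section $\omega$ of $P_{2j+k+1}({\cal E}\otimes\Omega^{i+j+1}_{{\cal P}^{\rm ex}/\os{\circ}{T}})$ whose residue is closed, so $\nabla\omega$ drops into $P_{2j+k}$, and its Poincar\'e residue there is exactly the crystalline Gysin morphism $G^{\ul{\mu}_{l'}}_{\ul{\mu}}$ of (\ref{eqn:egs}). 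If you follow your own prescription, $\theta\wedge$ will only ever reproduce restriction maps and you will be unable to produce $G$ at all; moreover, the sign check you rightly flag only makes sense once each summand of $G+\rho$ is attached to the correct summand of the total differential. A smaller slip of the same kind: the short exact sequence governing $d_1$ for the increasing filtration $P$ is $0\to{\rm gr}^P_{k-1}\to P_k/P_{k-2}\to{\rm gr}^P_k\to 0$, not $0\to{\rm gr}^P_{k+1}\to P_{k+1}/P_{k-1}\to{\rm gr}^P_k\to 0$; the boundary map lowers the weight.
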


\begin{prop}\label{prop:pad}
Assume that $\os{\circ}{X}/\os{\circ}{S}$ 
is pure of relative dimension $d$. 
Then the following hold$:$
\par 
$(1)$ If $k\not=0$, then $E_1^{-k,2d+k} =0$. 
\par 
$(2)$ If $k> 1$, then $E_1^{-k,2d-1+k} =0$. 
\par
$(3)$ If $k\leq 0$, then $E_1^{-k,2d+1+k} =0$.
%\par
%$(4)$ 
%If $q>2d$ or if $q<0$, then $E_1^{-k,q+k} =0$. 
\end{prop}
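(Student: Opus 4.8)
The plan is to read the three vanishing statements directly off the explicit $E_1$-terms of the weight spectral sequence (\ref{eqn:esafsp}), combined with the cohomological-dimension bound for the crystalline cohomology of the smooth strata $\os{\circ}{X}{}^{(m)}_{T_0}$. There is nothing deep here: it is a dimension count, and the only content is matching a cohomological degree against a bound and keeping track of the summation range.

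First I would record the local geometry. Since $\os{\circ}{X}/\os{\circ}{S}$ is an SNC scheme pure of relative dimension $d$, it is étale-locally modelled on $\os{\circ}{\mab A}_B(a,d)=\ul{\rm Spec}_B(\mathcal O_B[x_0,\dots,x_d]/(\prod_{i=0}^a x_i))$, so an intersection $\os{\circ}{X}_{\ul\lam}$ of $m+1$ distinct smooth components is cut out by $m+1$ of the coordinates and hence is smooth over $\os{\circ}{S}$ of relative dimension $d-m$; thus $\os{\circ}{X}{}^{(m)}_{T_0}$ is smooth of relative dimension $d-m$ over $\os{\circ}{T}_0$. Consequently, by the length of the de Rham complex in the crystalline direction together with Grothendieck vanishing in the Zariski direction, one has $R^i f_{\os{\circ}{X}{}^{(m)}_{T_0}/\os{\circ}{T}*}(-)=0$ for $i<0$ and for $i>2(d-m)$, for any flat quasi-coherent coefficient. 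Here the invertible orientation sheaf $\vp^{(m)}_{\rm crys}$ and the Tate twist play no role in the range, which I would note explicitly.

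Next I would substitute $m=2j+k$ into the $E_1$-term $E_1^{-k,q+k}=\bigoplus_{j\ge \max\{-k,0\}} R^{q-2j-k}f_{\os{\circ}{X}{}^{(2j+k)}_{T_0}/\os{\circ}{T}*}(-)(-j-k)$ of (\ref{eqn:esafsp}) and compare the cohomological degree $q-2j-k$ with the upper bound $2(d-(2j+k))$. A summand survives only if $q-2j-k\le 2(d-(2j+k))$, i.e. $2j+k\le 2d-q$; so for $q=2d$ it needs $2j+k\le 0$, for $q=2d-1$ it needs $2j+k\le 1$, and for $q=2d+1$ it needs $2j+k\le -1$. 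The key elementary observation is that the summation range $j\ge\max\{-k,0\}$ forces $2j+k\ge 0$ in all cases, with equality only when $k=0,j=0$. Combining the two inequalities yields: for $q=2d$ every summand vanishes once $k\ne 0$ (then $2j+k\ge 1$), giving (1); for $q=2d-1$ every summand vanishes once $k>1$ (then $2j+k\ge k>1$), giving (2); and for $q=2d+1$ every summand vanishes unconditionally, in particular for $k\le 0$, giving (3).

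Since this is purely a dimension count, I do not expect a genuine obstacle; the only care needed is the bookkeeping of the two constraints — the cohomological upper bound $i\le 2(d-m)$ and the range bound $2j+k\ge 0$ coming from $j\ge\max\{-k,0\}$ — and getting the off-by-one in the crystalline cohomological dimension right, namely that $2(d-m)$ is sharp and is attained by the top stratum $\os{\circ}{X}{}^{(0)}_{T_0}$ in degree $2d$ (consistent with the fact that $E_1^{0,2d}$ need not vanish, which is why (1) is stated only for $k\ne 0$). I would close by remarking that the twist $(-j-k)$ and the orientation sheaf in (\ref{eqn:esafsp}) are irrelevant to vanishing, which is immediate.
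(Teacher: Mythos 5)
Your proof is correct and follows essentially the same route as the paper's: both read the vanishing off the $E_1$-terms of (\ref{eqn:esafsp}) by comparing the cohomological degree $q-2j-k$ against the bound $2(d-2j-k)$ coming from the relative dimension of $\os{\circ}{X}{}^{(2j+k)}_{T_0}$, and then observing that the range $j\geq\max\{-k,0\}$ forces $2j+k\geq 0$ (with equality only at $k=j=0$). The extra detail you supply on the local model and the irrelevance of the twist and orientation sheaf is consistent with, and slightly more explicit than, the paper's argument.
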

\begin{proof} 
(1): The relative dimension of 
$\os{\circ}{X}{}^{(2j+k)}_{T_0}/\os{\circ}{T}_0$ is equal to $d-2j-k$.  
Consider the case $q=2d$ in (\ref{eqn:esafsp}) 
and the inequality $j\geq \max \{-k,0\}$. 
We have an easy equivalence: $2d-2j-k\leq 2(d-2j-k)
\Leftrightarrow j\leq -2^{-1}k$. 
If $k< 0$, then $j\geq -k$. There is no $(j,k)$ satisfying the three inequalities. 
Hence $E_1^{-k,2d+k}=0$ for $k<0$. 
If $k> 0$, then there is no $(j,k)$ again. Hence $E_1^{-k,2d+k}=0$ if $k\not=0$.
\par 
(2): Consider the case $q=2d-1$ in (\ref{eqn:esafsp}) and the inequality $j\geq \max \{-k,0\}$. 
We have an easy equivalence: 
$2d-1-2j-k\leq 2(d-2j-k)\Leftrightarrow 2j+k\leq 1$. 
If $k>1$, then there is no $(j,k)$ satisfying the inequalities.  
Hence $E_1^{-k,2d-1+k}=0$. 
\par 
(3): Consider the case $q=2d+1$ in (\ref{eqn:esafsp}) and the inequality $j\geq \max \{-k,0\}$. 
We have an easy equivalence: 
$2d+1-2j-k\leq 2(d-2j-k)\Leftrightarrow 2j+k\leq -1$. 
Assume that $k\leq 0$.  Then $j+k\geq 0$. 
There is no $(j,k)$ satisfying the inequalities. 
Hence $E_1^{-k,2d+1+k}=0$. 
\end{proof}

\begin{coro}\label{coro:eco} 
$(1)$ 
\begin{equation*} 
E_{\infty}^{0,2d}=E_2^{0,2d}=
{\rm Coker}(E_1^{-1,2d}\lo E_1^{0,2d}).
\tag{16.7.1}\label{eqn:eis2d}
\end{equation*}  
\par 
$(2)$ 
\begin{align*}
R^{2d}f_{X_{\os{\circ}{T}_0}/S(T)^{\nat}*}
(\eps^*_{X_{\os{\circ}{T}_0}/S(T)^{\nat}}
(E)) =E_2^{0,2d}.
\tag{16.7.2}\label{eqn:eise2d}
\end{align*} 
\end{coro}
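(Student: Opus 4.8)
The plan is to deduce the whole corollary directly from the vanishing results of (\ref{prop:pad}) applied to the weight spectral sequence (\ref{eqn:esafsp}), so that essentially no new computation is required beyond careful bookkeeping of the indices and of the directions of the differentials. Recall that (\ref{eqn:esafsp}) comes from the filtered Steenbrink complex $(A_{\rm zar}(X_{\os{\circ}{T}_0}/S(T)^{\nat},E),P)$ via the isomorphism (\ref{eqn:uz}); since $\os{\circ}{X}/\os{\circ}{S}$ is pure of relative dimension $d$, in each total degree only finitely many $E_1$-terms are nonzero and the filtration $P$ on $A_{\rm zar}$ is finite, so the spectral sequence is bounded and hence convergent. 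I will use that in the indexing $E_1^{-k,q+k}$ the total degree of a term is $q$, that the abutment is $R^qf_{X_{\os{\circ}{T}_0}/S(T)^{\nat}*}(\eps^*_{X_{\os{\circ}{T}_0}/S(T)^{\nat}}(E))$, and that $d_r$ (with $-k$ playing the role of the first spectral index) raises the total degree by $1$, i.e. $d_r\col E_r^{-k,q+k}\lo E_r^{-k+r,q+k-r+1}$; in particular $d_1$ is the map $E_1^{-k,q+k}\lo E_1^{-k+1,q+k}$ described explicitly in (\ref{prop:deccbd}).

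First I would prove the second equality in (1). The spot $(0,2d)$ is $E_1^{-k,q+k}$ with $k=0$, $q=2d$; the outgoing $d_1$ lands in $E_1^{1,2d}$, which is $E_1^{-k,2d+1+k}$ with $k=-1$ and therefore vanishes by (\ref{prop:pad}) (3). Hence the kernel of $d_1$ on $E_1^{0,2d}$ is all of $E_1^{0,2d}$, so $E_2^{0,2d}$ equals the cokernel of the incoming differential $d_1\col E_1^{-1,2d}\lo E_1^{0,2d}$ (the map of (\ref{prop:deccbd})), which is exactly ${\rm Coker}(E_1^{-1,2d}\lo E_1^{0,2d})$.

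Next I would establish $E_\infty^{0,2d}=E_2^{0,2d}$ by showing every higher differential touching $(0,2d)$ is zero. For $r\geq 2$ the outgoing $d_r$ targets $E_r^{r,2d-r+1}$, a subquotient of $E_1^{r,2d+1-r}=E_1^{-k,2d+1+k}$ with $k=-r\leq 0$, which vanishes by (\ref{prop:pad}) (3); the incoming $d_r$ has source a subquotient of $E_1^{-r,2d-1+r}=E_1^{-k,2d-1+k}$ with $k=r>1$, which vanishes by (\ref{prop:pad}) (2). Thus the $(0,2d)$ spot already stabilizes at $E_2$, giving (1). For (2) I would use that the abutment $R^{2d}f_{X_{\os{\circ}{T}_0}/S(T)^{\nat}*}(\eps^*_{X_{\os{\circ}{T}_0}/S(T)^{\nat}}(E))$ carries the finite induced filtration whose graded pieces are the $E_\infty^{-k,2d+k}$; by (\ref{prop:pad}) (1) every $E_1^{-k,2d+k}$ with $k\neq 0$ vanishes, so all $E_\infty^{-k,2d+k}$ with $k\neq 0$ vanish and the only surviving piece is $E_\infty^{0,2d}=E_2^{0,2d}$, whence $R^{2d}f_*(\eps^*(E))=E_2^{0,2d}$.

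The only real care needed is the index bookkeeping: matching the ad hoc indexing $E_1^{-k,q+k}$ with the hypotheses of (\ref{prop:pad}), noting that the Tate twists $(-j-k,u)$ appearing in (\ref{eqn:esafsp}) are irrelevant to vanishing and can be ignored here, and confirming the direction of the differentials through (\ref{prop:deccbd}). Everything else is the standard ``degeneration by vanishing'' argument for a bounded spectral sequence, so I expect no genuine obstacle beyond this routine verification.
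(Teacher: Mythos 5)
Your proof is correct and is exactly the argument the paper intends: its own proof of this corollary is the one-line "This follows from (\ref{prop:pad})," and your index bookkeeping (outgoing differentials landing in terms killed by (\ref{prop:pad}) (3), incoming higher differentials sourced in terms killed by (\ref{prop:pad}) (2), and the collapse of the filtration on the abutment via (\ref{prop:pad}) (1)) is precisely the standard degeneration-by-vanishing argument being invoked. No issues.
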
 
\begin{proof} 
These follow from (\ref{prop:pad}). 
\end{proof}

\par 
Now let the notations be as in \S\ref{sec:e2}. 
Assume that $X_{\os{\circ}{T}_1}/S(T)^{\nat}$ is of pure dimension $d\geq 0$. 
Next we define a trace morphism 
\begin{equation*}  
R^{2d}f_{X_{\os{\circ}{T}_1}/S(T)^{\nat}*}
({\cal O}_{X_{\os{\circ}{T}_1}/S(T)^{\nat}})_{\mab Q}
\lo {\cal K}_{\os{\circ}{T}}(-d). 
\tag{16.7.3}\label{eqn:radx} 
\end{equation*} 
Consider the following spectral sequence
$:$
\begin{align*} 
E_1^{-k,q+k}=
\bigoplus_{j\geq \max \{-k,0\}} &
R^{q-2j-k}f_{\os{\circ}{X}{}^{(2j+k)}_{T_1}/\os{\circ}{T}*}
({\cal O}_{{\os{\circ}{X}{}^{(2j+k)}_{T_1}/\os{\circ}{T}}}
\otimes_{\mab Z}
\vp^{(2j+k)}_{\rm crys}(\os{\circ}{X}_T/\os{\circ}{T}))(-j-k)
\tag{16.7.4}\label{ali:eopsp}\\
& \Lo R^qf_{X_{\os{\circ}{T}_1}/S(T)^{\nat}*}({\cal O}_{X_{\os{\circ}{T}_1}/S(T)^{\nat}}). 
\end{align*} 
for the case $E={\cal O}_{\os{\circ}{X}/\os{\circ}{T}}$ in 
{\rm (\ref{eqn:espsp})} 
and consider $E_1^{0,2d}$:   
\begin{align*} 
E_1^{0,2d}=&R^{2d}
f_{\os{\circ}{X}{}^{(0)}_{T_1}/T*}
({\cal O}_{\os{\circ}{X}{}^{(0)}_{T_1}/\os{\circ}{T}}
\otimes_{\mab Z}\vp^{(0)}_{\rm crys}
(\os{\circ}{X}_{T_1}/T)).  
\tag{16.7.5}\label{ali:scpdrh}
\end{align*} 
We have the trace morphism 
\begin{align*} 
{\rm Tr}_{\os{\circ}{X}{}^{(0)}_{T_1}/\os{\circ}{T}}
\col R^{2d}f_{\os{\circ}{X}{}^{(0)}_{T_1}/T*} 
({\cal O}_{\os{\circ}{X}{}^{(0)}_{T_1}/\os{\circ}{T}}
\otimes_{\mab Z}\vp_{{\rm crys}}
(\os{\circ}{X}{}^{(0)}_{T_1}/\os{\circ}{T}))_{\mab Q}
\lo {\cal K}_{\os{\circ}{T}}(-d). 
\tag{16.7.6}\label{ali:rdsm}
\end{align*}
by the proof of \cite[(3.12)]{od}. 
This morphism is surjective since 
the trace morphism 
\begin{align*} 
{\rm Tr}_{\os{\circ}{X}_{\lam,T_1}/\os{\circ}{T}}
\col R^{2d}f_{\os{\circ}{X}_{\lam,T_1}/T*} 
({\cal O}_{\os{\circ}{X}_{{\lam},T_1}/\os{\circ}{T}}
\otimes_{\mab Z}\vp_{{\rm crys}}
(\os{\circ}{X}_{{\lam},T_1}/\os{\circ}{T}))_{\mab Q}
\lo {\cal K}_{\os{\circ}{T}}(-d)
\end{align*}
is surjective.

\par 
We prove that ${\rm Tr}_{\os{\circ}{X}{}^{(0)}_{T_1}/\os{\circ}{T}}$ 
induces a morphism $(E_2^{0,2d})_{\mab Q}\lo {\cal K}_{\os{\circ}{T}}(-d)$, 
which we denote by   
\begin{equation*} 
{\rm Tr}_{X_{\os{\circ}{T}_1}/S(T)^{\nat}}
\col (E_2^{0,2d})_{\mab Q}\lo {\cal K}_{\os{\circ}{T}}(-d)
\tag{16.7.7}\label{eqn:chtr}
\end{equation*} 
again. To prove this, let us recall the following theorem of Berthelot (\cite[p.~567]{bb}):

\begin{prop}[{\bf \cite[p.~567]{bb}}]\label{prop:bab}
Let $\kap$ be a perfect field of characteristic $p>0$. 
Let ${\cal W}_n$ be the Witt ring of $\kap$ of length $n>0$.  
Let $Y$ be a proper smooth scheme over $\kap$ 
of pure dimension $d$ with structural morphism 
$g_Y \col Y\lo \kap$. 
Let $E$ be a smooth closed subscheme of $Y$ 
with pure codimension $e$ with structural morphism 
$g_E \col E\lo \kap$. 
Let $\iota \col E \os{\sus}{\lo} Y$ be the closed immersion. 
Then the following diagram is commutative$:$ 
\begin{equation*} 
\begin{CD} 
R\Gam(E/{\cal W}_n)\otimes^L_{{\cal W}_n} 
R\Gam(Y/{\cal W}_n)[-2e]@>{\cup   \iota^*}>> 
R\Gam(E/{\cal W}_n)[-2e]  @>{\rm Tr}_{g_E}>> {\cal W}_n[-2d]\\ 
@V{G_{E/Y}\otimes {\rm id}}VV @VV{G_{E/Y}}V  @|\\ 
R\Gam(Y/{\cal W}_n)\otimes^L_{{\cal W}_n} 
R\Gam(Y/{\cal W}_n)@>{\cup}>> 
R\Gam(Y/{\cal W}_n)@>{\rm Tr}_{g_Y}>> {\cal W}_n[-2d].  
\end{CD}
\tag{16.8.1}\label{cd:eytr}
\end{equation*} 
\end{prop}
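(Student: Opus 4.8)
The diagram (\ref{cd:eytr}) splits into two independent squares that I would treat separately. The left-hand square is the projection formula for the Gysin morphism, i.e. the identity $G_{E/Y}(\beta \cup \iota^*\alpha) = G_{E/Y}(\beta)\cup\alpha$ for $\beta$ in $R\Gamma(E/\mathcal{W}_n)$ and $\alpha$ in $R\Gamma(Y/\mathcal{W}_n)$; the right-hand square is the transitivity of the trace, i.e. $\mathrm{Tr}_{g_Y}\circ G_{E/Y} = \mathrm{Tr}_{g_E}$ as morphisms $R\Gamma(E/\mathcal{W}_n)[-2e]\to\mathcal{W}_n[-2d]$ in $D^+(\mathcal{W}_n)$. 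The plan is to prove each square while working in the derived category throughout and reducing every claim to an explicit étale-local model of $\iota$ in which $E$ is cut out in $Y$ by a regular sequence $x_1=\cdots=x_e=0$; such a model exists because $\iota$ is a closed immersion of smooth $\kappa$-schemes of pure codimension $e$.

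For the projection formula I would work on such a chart together with a smooth $\mathcal{W}_n$-lift $\mathcal{Y}$ carrying the lift $\mathcal{E}$ of $E$ as the zero locus of $x_1,\ldots,x_e$, so that the crystalline cohomology is computed by the de Rham complexes of $\mathcal{Y}$ and $\mathcal{E}$ and $G_{E/Y}$ is the iterated Poincaré residue along $x_1,\ldots,x_e$ underlying the construction of \cite[(2.8.4.5)]{nh2}. The identity then reduces to two elementary compatibilities: the cup product is functorial, giving $\iota^*(\alpha\cup\alpha')=\iota^*\alpha\cup\iota^*\alpha'$, and the residue operator is linear over forms with no pole along $E$, so cupping with a representative of $\alpha$ commutes with taking residues. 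At the level of these complexes the two composites are given by chain maps that agree on cochains, so the square commutes locally on the nose; a \v{C}ech/Mayer--Vietoris argument over the affine cover, together with the independence of $G_{E/Y}$ of the choices established in \cite{nh2}, globalizes this to the stated derived-category identity.

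The trace transitivity is the substantial point, and I expect it to be the main obstacle. Conceptually it is $\mathrm{Tr}_{g_E}=\mathrm{Tr}_{g_Y}\circ\iota_*$, which would follow formally from $g_E=g_Y\circ\iota$ once one knows two things: that the trace morphism of \cite{od} is functorial under proper pushforward, and that the Gysin morphism $G_{E/Y}$ of \cite{nh2} coincides with the duality-theoretic pushforward $\iota_*$ attached to that trace formalism. The first is a property of Ogus' construction; the genuine difficulty is the second, a normalization and compatibility between two a priori different constructions of the pushforward along $\iota$. Since both $G_{E/Y}$ and $\iota_*$ are morphisms of complexes, this comparison is local, and I would check it on the chart above, computing both as the same iterated residue $\mathrm{Res}_{x_1}\cdots\mathrm{Res}_{x_e}$ and matching the signs and the Tate twist $(-e)$ against the normalization of the trace in \cite[(3.12)]{od}.

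Once the two squares commute locally and the local identities are shown to be independent of the chart, pasting them over the affine cover yields the commutativity of both squares globally in $D^+(\mathcal{W}_n)$, and composing them gives the full commutativity of (\ref{cd:eytr}). I would finally note that the argument is $\mathcal{W}_n$-linear and passes to the limit over $n$, so the statement also holds for $R\Gamma(\,\cdot\,/\mathcal{W})$ and after inverting $p$, which is the form in which it enters the construction of the trace morphism (\ref{eqn:radx}).
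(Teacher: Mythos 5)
The paper gives no proof of this proposition: it is stated purely as a recollection of a theorem of Berthelot, cited as \cite[p.~567]{bb}, and is used as a black box. So there is no internal argument to compare against; the question is whether your sketch stands on its own. Your reading of the diagram as two squares --- the projection formula $G_{E/Y}(\beta\cup\iota^*\alpha)=G_{E/Y}(\beta)\cup\alpha$ on the left and the trace compatibility $\mathrm{Tr}_{g_Y}\circ G_{E/Y}=\mathrm{Tr}_{g_E}$ on the right --- is correct, and for the left square your key ingredient (linearity of the residue construction over forms with no pole along $E$) is the right one, with two caveats: $G_{E/Y}$ is not the iterated residue itself but the connecting morphism attached to the residue exact sequence (the residue goes in the direction of restriction, from $Y$ to $E$, not in the direction of the Gysin), and for $e>1$ you must either iterate through intermediate smooth subschemes or use the local-cohomology/cycle-class description underlying \cite[(2.8.4.5)]{nh2}. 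The projection formula survives either formulation, so that half is repairable.

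The genuine gap is in the right square, and it is fatal to the strategy as proposed. The trace $\mathrm{Tr}_{g_Y}$ is an irreducibly global object: it is a functional on $H^{2d}_{\rm crys}(Y/{\cal W}_n)$ normalized by global Poincar\'e duality, and on an affine chart $H^{2d}$ vanishes for $d>0$, so there is literally nothing to verify ``locally on the chart, matching the signs of iterated residues,'' and no \v{C}ech gluing of local identities can recover a statement about a top-degree functional. The identity $\mathrm{Tr}_{g_Y}\circ G_{E/Y}=\mathrm{Tr}_{g_E}$ requires the global duality formalism of \cite{bb}: either one defines $\iota_*$ as the transpose of $\iota^*$ under the two perfect trace pairings, in which case the right square holds by construction but the entire content is displaced into the (still global) comparison of $\iota_*$ with the cycle-class Gysin $G_{E/Y}$; or one reduces by functoriality to the case where $E$ is a closed point and invokes the normalization of the trace by the class of a point. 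You correctly flag this as ``the substantial point,'' but the verification method you then propose cannot in principle decide it. As written, the proof is incomplete at precisely the step the proposition exists to record, which is why the paper simply cites Berthelot rather than arguing it on charts.
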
 

\begin{coro}\label{coro:trw}
By abuse of notation, let 
$${\rm Tr}_{g_E}\col H^{2(d-e)}_{\rm crys}(E/{\cal W}_n)
\lo  {\cal W}_n$$ 
$$({\rm resp}.~{\rm Tr}_{g_Y}\col H^{2d}_{\rm crys}(Y/{\cal W}_n)\lo  {\cal W}_n)$$  
be also the trace morphism.  
Let $x$ be an element of $H^{2(d-e)}_{\rm crys}(Y/{\cal W})$. 
Let $c_e(E) \in H^{2e}_{\rm crys}(Y/{\cal W})$ be 
the cycle class of $E$.
Then ${\rm Tr}_{g_E}(\iota^*(x))={\rm Tr}_{g_Y}(c_e(E)\cdot x)$. 
\end{coro}
\begin{proof} 
By (\ref{cd:eytr}) we have a formula
${\rm Tr}_{g_E}(y\cdot \iota^*(y'))={\rm Tr}_{g_Y}(G_{E/Y}(y)\cdot y')$ 
for $y\in H^r_{\rm crys}(Y/W)$ and $y\in H^{2e-r}_{\rm crys}(Y/W)$ 
$(r\in {\mab Z})$. We have only to set $y=1\in H^0_{\rm crys}(Y/W)$ 
and $y'=x$. 
\end{proof} 

By the existence of the trace morphism 
as a convergent crystal for a projective smooth morphism due to Ogus \cite[(3.12)]{od},  
(\ref{prop:bab}) and (\ref{coro:trw}) are generalized for the relative case:

\begin{theo}\label{theo:brab}   
Let $Y$ be a projective smooth scheme over $\os{\circ}{T}_1$ 
of pure relative dimension $d$ with structural morphism 
$g_Y \col Y\lo \os{\circ}{T}$. 
Let $E$ be a relative smooth closed subscheme of $Y$ 
with pure relative codimension $e$ with structural morphism 
$g_E \col E\lo \os{\circ}{T}_1$. 
Let $\iota \col E \os{\sus}{\lo} Y$ be the closed immersion. 
Let $r$ be a nonnegative integer. 
Then the following diagram is commutative$:$ 
\begin{equation*} 
\begin{CD} 
R^rg_{E/\os{\circ}{T}}({\cal O}_{E/\os{\circ}{T}})\otimes_K 
R^{2(d-e)-r}g_{Y/\os{\circ}{T}}
({\cal O}_{Y/\os{\circ}{T}})_K@>{\cup  \iota^*}>> 
R^{2(d-e)}g_{E/\os{\circ}{T}}({\cal O}_{E/\os{\circ}{T}})_K@>{\rm Tr}_{g_{E/\os{\circ}{T}}}>> 
{\cal K}_{T}\\ 
@V{G_{E/Y}\otimes {\rm id}}VV @VV{G_{E/Y}}V  @|\\ 
R^{r+2e}g_{Y/\os{\circ}{T}}({\cal O}_{Y/\os{\circ}{T}})_K\otimes_K
R^{2(d-e)-r}g_{Y/\os{\circ}{T}}({\cal O}_{Y/\os{\circ}{T}})_K@>{\cup}>> 
R^{2d}g_{Y/\os{\circ}{T}}({\cal O}_{E/\os{\circ}{T}})_K 
@>{\rm Tr}_{g_{Y/\os{\circ}{T}}}>> {\cal K}_{T}.  
\end{CD}
\tag{16.10.1}\label{cd:eytar}
\end{equation*} 
\end{theo}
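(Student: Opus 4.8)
\textbf{Proof proposal for (\ref{theo:brab}).}
The plan is to reduce the relative statement to Berthelot's absolute statement (\ref{prop:bab}) by working with convergent $F$-isocrystals and then evaluating at enlargements. First I would recall that all four cohomology sheaves appearing in (\ref{cd:eytar}) are convergent isocrystals on $\os{\circ}{T}_1/{\cal V}$: for the projective smooth morphisms $g_Y$ and $g_E$ this is Ogus's theorem, and the trace morphisms ${\rm Tr}_{g_Y}$ and ${\rm Tr}_{g_E}$ exist as morphisms of convergent isocrystals by \cite[(3.12)]{od}. The Gysin morphism $G_{E/Y}$ and the restriction $\iota^*$ are likewise morphisms of convergent isocrystals, functorial in the base; indeed $\iota^*$ is just the pullback along the closed immersion and $G_{E/Y}$ is its dual via Poincar\'e duality. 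So both composite morphisms in (\ref{cd:eytar}), namely ${\rm Tr}_{g_E}\circ({\rm id}\otimes \iota^*)$ and ${\rm Tr}_{g_Y}\circ({\rm id}\otimes G_{E/Y})$, are morphisms of convergent isocrystals on $\os{\circ}{T}_1/{\cal V}$ with the same source and target.

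The key reduction is then that a morphism of convergent isocrystals is determined by its values on enlargements, and in fact it suffices to test on the enlargements of the form $\os{\circ}{T}={\rm Spf}({\cal W}_n(\kap(y)))$ attached to points $y$ of $\os{\circ}{T}_1$ with perfect residue field, after possibly enlarging the residue field to its perfection. Thus I would argue: to prove the two composites agree it is enough to prove they agree after evaluating at every such $\os{\circ}{T}$. But upon evaluation at $\os{\circ}{T}={\rm Spf}({\cal W}_n(\kap))$ with $\kap$ perfect, the fibers $Y_{\kap}$ and $E_{\kap}$ become a proper smooth scheme and a smooth closed subscheme over $\kap$, the convergent cohomology becomes the crystalline cohomology $R\Gam(Y_\kap/{\cal W}_n)$ and $R\Gam(E_\kap/{\cal W}_n)$, and the trace and Gysin morphisms specialize to the absolute ones. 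Hence the evaluated diagram is precisely the commutative diagram (\ref{cd:eytar})'s absolute counterpart (\ref{cd:eytr}) of (\ref{prop:bab}).

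Concretely, I would carry out the following steps in order. First, set up the four cohomology sheaves as objects of $F\textrm{-}{\rm Isoc}^{\sq}(\os{\circ}{T}_1/{\cal V})$ and record that $\iota^*$, $G_{E/Y}$, ${\rm Tr}_{g_Y}$, ${\rm Tr}_{g_E}$ are all morphisms there; here I would invoke Ogus's base-change and trace results exactly as they are used in the proof of the isomorphism (\ref{ali:rdsm}) and of (\ref{exam:ofl}). Second, invoke the principle that a morphism of convergent isocrystals vanishes iff it vanishes on all enlargements $\os{\circ}{T}$ with $\os{\circ}{T}_0={\rm Spec}(\kap)$, $\kap$ perfect; apply this to the difference of the two composites. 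Third, for each such $\os{\circ}{T}={\rm Spf}({\cal W}_n(\kap))$, identify the evaluated morphisms with the crystalline $\iota^*$, $G_{E/Y}$, and the crystalline trace maps, so that the evaluated diagram is (\ref{cd:eytr}); then (\ref{prop:bab}) gives the commutativity. Fourth, conclude by descent that the two composites agree as morphisms of isocrystals, which is (\ref{cd:eytar}) after tensoring with $K$.

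The main obstacle I expect is the careful verification that the relative Gysin morphism $G_{E/Y}$ as a morphism of convergent isocrystals specializes, fiber by fiber and compatibly with the cycle-class normalization, to Berthelot's absolute crystalline Gysin morphism; in particular one must check that the sign conventions, the Tate twist $(-d)$, and the cycle class $c_e(E)$ (as in (\ref{coro:trw})) are matched under the identification of the convergent and crystalline theories. A secondary technical point is ensuring that the testing-on-points principle applies with the correct treatment of $p$-torsion, which is why the whole statement is formulated after $\otimes_{\cal V}K$; since everything is an isocrystal this should be routine, but I would state explicitly the faithfulness of evaluation on the category $F\textrm{-}{\rm Isoc}^{\sq}$ that I am relying on. Once the fiberwise specialization of $G_{E/Y}$ to Berthelot's Gysin map is pinned down, the remaining bookkeeping is formal.
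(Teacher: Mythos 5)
Your proposal is correct and follows essentially the same route the paper intends: the paper justifies (\ref{theo:brab}) in one line by invoking Ogus's construction of the trace morphism as a morphism of convergent isocrystals \cite[(3.12)]{od}, so that the absolute statement (\ref{prop:bab}) and (\ref{coro:trw}) "generalize to the relative case," which is exactly your reduction via faithfulness of evaluation at point-enlargements. Your write-up merely makes explicit the steps (isocrystal structure on all four sheaves, testing at ${\rm Spf}({\cal W}_n(\kap))$, matching of the convergent and crystalline Gysin/trace maps) that the paper leaves implicit.
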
 

\begin{coro}\label{coro:rrw}
Let $x$ be an element of $R^{2(d-e)}g_{Y/\os{\circ}{T}}(Y/\os{\circ}{T})_K$. 
Let $c_e(E) \in R^{2e}g_{Y/\os{\circ}{T}}(Y/\os{\circ}{T})_K$ be 
the cycle class of $E$.
Then ${\rm Tr}_{g_{E/\os{\circ}{T}}}(\iota^*(x))
={\rm Tr}_{g_{Y/\os{\circ}{T}}}(c_e(E)\cdot x)$. 
\end{coro}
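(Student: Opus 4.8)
<br>

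The plan is to deduce Corollary 16.11 directly from the commutative diagram (16.10.1) in Theorem 16.10, exactly as in the classical/absolute situation treated in Corollary 16.9. The statement to prove is the projection-formula-type identity
\begin{equation*}
{\rm Tr}_{g_{E/\os{\circ}{T}}}(\iota^*(x))
={\rm Tr}_{g_{Y/\os{\circ}{T}}}(c_e(E)\cdot x)
\end{equation*}
for a class $x\in R^{2(d-e)}g_{Y/\os{\circ}{T}}({\cal O}_{Y/\os{\circ}{T}})_K$, where $c_e(E)\in R^{2e}g_{Y/\os{\circ}{T}}({\cal O}_{Y/\os{\circ}{T}})_K$ is the cycle class of the relative smooth closed subscheme $E\os{\sus}{\lo} Y$ of pure relative codimension $e$. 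The whole point is that (16.10.1) is the relative (convergent $F$-isocrystal) analogue of Berthelot's diagram (16.8.1), so the argument of (16.9) transports verbatim once one tracks which $r$ and which unit class to insert.

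First I would extract from the commutativity of (16.10.1) the general identity obtained by chasing an element around the square: for $y\in R^{r}g_{Y/\os{\circ}{T}}({\cal O}_{Y/\os{\circ}{T}})_K$ and $y'\in R^{2(d-e)-r}g_{Y/\os{\circ}{T}}({\cal O}_{Y/\os{\circ}{T}})_K$ one gets
\begin{equation*}
{\rm Tr}_{g_{E/\os{\circ}{T}}}\bigl(\iota^*(y)\cdot \iota^*(y')\bigr)
={\rm Tr}_{g_{Y/\os{\circ}{T}}}\bigl(G_{E/Y}(\iota^*(y))\cdot y'\bigr),
\end{equation*}
using that the Gysin morphism $G_{E/Y}$ composed with $\iota^*$ is cup product with the cycle class $c_e(E)$; more precisely, $G_{E/Y}(\iota^*(y))=c_e(E)\cdot y$ by the standard compatibility of the Gysin map with pullback (the self-intersection/projection formula for the Gysin morphism defined in \cite[(2.8.4.5)]{nh2}). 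Then I would specialize to $r=0$ and $y=1\in R^0 g_{Y/\os{\circ}{T}}({\cal O}_{Y/\os{\circ}{T}})_K$, so that $\iota^*(1)=1\in R^0 g_{E/\os{\circ}{T}}({\cal O}_{E/\os{\circ}{T}})_K$ and $G_{E/Y}(1)=c_e(E)$, and set $y'=x$. This immediately yields ${\rm Tr}_{g_{E/\os{\circ}{T}}}(\iota^*(x))={\rm Tr}_{g_{Y/\os{\circ}{T}}}(c_e(E)\cdot x)$, which is the claim.

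The key preliminary step, and the only genuine verification, is the identification $G_{E/Y}(1)=c_e(E)$, i.e. that the image of the unit under the Gysin morphism is precisely the crystalline cycle class of $E$; this is the relative analogue of the fact used implicitly in (16.9) and is a definitional matter once $c_e(E)$ is taken to be defined via the Gysin map, exactly as $c_e$ is handled in the absolute case. I would invoke the coincidence of the cycle class with the Gysin image from the conventions of \cite{nh2} rather than re-deriving it. Everything else is a formal diagram chase; no new convergence or base-change input is needed beyond (16.10.1) itself, which already packages Ogus's relative trace \cite[(3.12)]{od} together with the Berthelot compatibility.

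The main obstacle I anticipate is not the diagram chase but confirming that the relative trace ${\rm Tr}_{g_{Y/\os{\circ}{T}}}$ appearing in (16.10.1) is compatible, as a morphism of convergent $F$-isocrystals, with the trace used to define $c_e(E)$ and with the Gysin morphism of \cite{nh2}; that is, one must be sure the several ``trace'' and ``Gysin'' maps in play are normalized consistently (signs, Tate twists $(-d)$, $(-e)$, and the factor matching the degree $2e$ shift). I would address this by appealing to Theorem 16.10, whose statement already asserts the commutativity with all normalizations fixed, so that the corollary is a purely formal consequence and the compatibility burden rests entirely on the proof of (16.10.1) rather than on the present argument.
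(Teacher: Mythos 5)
Your proof is correct and follows essentially the same route as the paper: the paper deduces (16.11) from the commutative diagram (16.10.1) exactly as in Corollary 16.9, namely by reading off the identity ${\rm Tr}_{g_{E/\os{\circ}{T}}}(z\cdot \iota^*(y'))={\rm Tr}_{g_{Y/\os{\circ}{T}}}(G_{E/Y}(z)\cdot y')$ and specializing to $z=1$, $y'=x$, with $G_{E/Y}(1)=c_e(E)$. Your extra intermediate step (the projection formula $G_{E/Y}(\iota^*(y))=c_e(E)\cdot y$ for general $y$) is harmless but not needed, since only the unit class is ever inserted.
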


\begin{prop}\label{prop:trx} 
The trace morphism {\rm (\ref{ali:rdsm})} 
induces a morphism $(E_2^{0,2d})_{\mab Q}\lo {\cal K}_{T}(-d)$, 
which we denote by   
\begin{equation*} 
{\rm Tr}^A_{X_{\os{\circ}{T}_1}/S(T)^{\nat}}\col 
(E_2^{0,2d})_{\mab Q}\lo {\cal K}_{T}(-d).
\tag{16.12.1}\label{eqn:cstr}
\end{equation*} 
\end{prop}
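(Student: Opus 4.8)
The plan is to show that the trace morphism ${\rm Tr}_{\os{\circ}{X}{}^{(0)}_{T_0}/\os{\circ}{T}}$ defined in (\ref{ali:rdsm}) kills the image of the boundary morphism $d_1^{-1,2d}\col E_1^{-1,2d}\lo E_1^{0,2d}$, so that by (\ref{eqn:eis2d}) it descends to the cokernel $E_2^{0,2d}=(E_\infty^{0,2d})$. First I would write down explicitly the source $E_1^{-1,2d}$ of this boundary morphism. By (\ref{ali:eopsp}), the only $(j,k)$ with $k=1$ contributing in cohomological degree $2d$ is $j=0$, giving the single summand $R^{2d-1}f_{\os{\circ}{X}{}^{(1)}_{T_0}/\os{\circ}{T}*}({\cal O}_{\os{\circ}{X}{}^{(1)}_{T_0}/\os{\circ}{T}}\otimes_{\mab Z}\vp^{(1)}_{\rm crys}(\os{\circ}{X}_{T_0}/\os{\circ}{T}))(-1)$; however one must check that $j=0$, $k=-1$ might also feed into $E_1^{0,2d}$ via a Gysin contribution, so the relevant part of $d_1$ is exactly the pair of morphisms $G+\rho$ described in (\ref{prop:deccbd}).

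Next I would apply the explicit description of $d_1^{-1,2d}$ from (\ref{prop:deccbd}), which shows this boundary map is the sum $G+\rho$ of a Gysin morphism $G$ and a \v{C}ech restriction morphism $\rho$ (built from the $(-1)^j\iota^{\ul{\lam}_j*}$ of (\ref{eqn:defcbd})). The key algebraic input is that ${\rm Tr}_{\os{\circ}{X}{}^{(0)}_{T_0}/\os{\circ}{T}}$ is compatible with these two maps in the precise sense of (\ref{theo:brab}) and its corollary (\ref{coro:rrw}): for the Gysin part, the identity ${\rm Tr}_{g_{E/\os{\circ}{T}}}(\iota^*(x))={\rm Tr}_{g_{Y/\os{\circ}{T}}}(c_e(E)\cdot x)$ lets one rewrite the composite ${\rm Tr}\circ G$ in terms of a cycle-class multiplication, and for the \v{C}ech part one uses that summing $\sum_j(-1)^j\iota^{\ul{\lam}_j*}$ of a top-degree class over the faces of a simplex, followed by the trace on the higher stratum, produces a telescoping cancellation. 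Concretely, a class on $\os{\circ}{X}_{\ul{\lam}_j}^{(1)}$ maps by each face inclusion into two distinct components of $\os{\circ}{X}^{(0)}$ with opposite signs, and the traces on these two components agree after the Gysin/restriction identification of (\ref{theo:brab}); hence the alternating sum lands in $\ker({\rm Tr}_{\os{\circ}{X}{}^{(0)}_{T_0}/\os{\circ}{T}})$.

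Thus the plan is: (i) reduce modulo torsion to a statement about the isospan, so that all trace morphisms of (\ref{ali:rdsm}) and the projective-smooth trace of Ogus \cite[(3.12)]{od} are available; (ii) unwind $d_1^{-1,2d}=G+\rho$ via (\ref{prop:deccbd}); (iii) verify ${\rm Tr}_{\os{\circ}{X}{}^{(0)}_{T_0}/\os{\circ}{T}}\circ(G+\rho)=0$ using the compatibility (\ref{theo:brab}), (\ref{coro:rrw}); (iv) conclude by (\ref{eqn:eis2d}) that ${\rm Tr}$ factors through $E_2^{0,2d}$, defining ${\rm Tr}^A_{X_{\os{\circ}{T}_0}/S(T)^{\nat}}$. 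I expect the main obstacle to be step (iii), specifically the bookkeeping of signs and the matching of orientation sheaves $\vp^{(0)}_{\rm crys}$ and $\vp^{(1)}_{\rm crys}$ under the face maps: one must confirm that the sign conventions built into the \v{C}ech boundary (\ref{eqn:defcbd}) and into the Gysin morphism (\ref{eqn:egs}) conspire so that the two contributions cancel rather than reinforce, and that the diagram (\ref{cd:eytar}) is invoked with the correct codimension $e=1$ and relative dimension for each intersection stratum. A secondary subtlety is checking that the reduction to the projective-smooth case (where Ogus's trace and Berthelot's Poincaré duality compatibility apply) is legitimate under the running projectivity hypothesis on each $\os{\circ}{X}_\lam$, which is needed for the surjectivity of ${\rm Tr}_{\os{\circ}{X}{}^{(0)}_{T_0}/\os{\circ}{T}}$ already recorded after (\ref{ali:rdsm}).
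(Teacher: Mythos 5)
Your strategy coincides with the paper's: one shows ${\rm Tr}_{\os{\circ}{X}{}^{(0)}_{T_0}/\os{\circ}{T}}\circ d_1^{-1,2d}=0$ and then descends to the cokernel $E_2^{0,2d}$ via (\ref{eqn:eis2d}), the key input being the compatibility of the trace with the Gysin morphism, i.e.\ the right square of (\ref{cd:eytar}). Two bookkeeping points in your write-up should be corrected, though neither is fatal. First, the source of $d_1^{-1,2d}$ is $R^{2d-2}f_{\os{\circ}{X}{}^{(1)}_{T_0}/\os{\circ}{T}*}({\cal O}\otimes_{\mab Z}\vp^{(1)}_{\rm crys})(-1)$, not $R^{2d-1}$: in the indexing $E_1^{-k,q+k}$ of (\ref{ali:eopsp}) the term $E_1^{-1,2d}$ has $k=1$ and $q=2d-1$, so the unique surviving summand is $R^{q-2j-k}=R^{2d-2}$ with $j=0$ (a codimension-one Gysin raises degree by $2$ and twists by $(-1)$, matching the target $R^{2d}f_{\os{\circ}{X}{}^{(0)}_{T_0}/\os{\circ}{T}*}$). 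Second, the component of $d_1^{-1,2d}$ landing in the summand $R^{2d}f_{\os{\circ}{X}{}^{(0)}_{T_0}/\os{\circ}{T}*}$ on which the trace is defined is purely the Gysin part $G=\sum_{\lam_0<\lam_1}(G_{\lam_0}-G_{\lam_1})$ of (\ref{eqn:togsn}); the restriction part $\rho$ lands in $R^{2d-2}f_{\os{\circ}{X}{}^{(2)}_{T_0}/\os{\circ}{T}*}(-1)$, which vanishes for dimension reasons, so no separate telescoping argument for a \v{C}ech component is needed. The cancellation you describe for that part — a class on a double intersection pushed with opposite signs into the two ambient components, whose traces agree — is in fact exactly the Gysin cancellation ${\rm Tr}\circ G_{\lam_0}={\rm Tr}\circ G_{\lam_1}={\rm Tr}_{\os{\circ}{X}_{\{\lam_0,\lam_1\}}}$ furnished by (\ref{cd:eytar}), which is precisely what the paper invokes.
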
 
\begin{proof} 
For $\lam_0<\lam_1$, 
let 
\begin{align*} 
G_{\lam_i} \col &
R^{2d-2}f_{\os{\circ}{X}_{\{\lam_0,\lam_1\},T_1}/\os{\circ}{T}*}
({\cal O}_{\os{\circ}{X}_{\{\lam_0,\lam_1\},T_1}/\os{\circ}{T}}
\otimes_{\mab Z}\vp_{\rm crys}
(\os{\circ}{X}_{\{\lam_0,\lam_1\},T_1}/\os{\circ}{T}))_{\mab Q}(-1)\\
&\lo 
R^{2d}
f_{\os{\circ}{X}{}^{(0)}_{T_1}/T*}
({\cal O}_{\os{\circ}{X}{}^{(0)}_{\lam_i,T_1}/\os{\circ}{T}}
\otimes_{\mab Z}\vp^{(0)}_{\rm crys}
(\os{\circ}{X}_{\lam_i,T_1}/T))_{\mab Q}
%\quad (\lam_0\not=\lam_1,i\not=j)
\end{align*}
be the crystalline Gysin morphism for the closed immersion 
$X_{\{\lam_0,\lam_1\}}\os{\sus}{\lo} X_{\lam_i}$. 
By \cite[(1.5.21)]{nb} (cf.~\cite[$(10.1.2;\star)$]{ndw}) 
the boundary morphism 
\begin{align*} 
E_1^{-1,2d}=&R^{2d}
f_{\os{\circ}{X}{}^{(1)}_{T_1}/T*}
({\cal O}_{\os{\circ}{X}{}^{(0)}_{T_1}/\os{\circ}{T}}
\otimes_{\mab Z}\vp^{(0)}_{\rm crys}
(\os{\circ}{X}_{T_1}/T))_{\mab Q}(-1)  \\
&\lo E_1^{0,2d}=R^{2d}
f_{\os{\circ}{X}{}^{(0)}_{T_1}/T*}
({\cal O}_{\os{\circ}{X}{}^{(0)}_{T_1}/\os{\circ}{T}}
\otimes_{\mab Z}\vp^{(0)}_{\rm crys}
(\os{\circ}{X}_{T_1}/T))_{\mab Q}=(E_1^{0,2d})_{\mab Q}
\end{align*} 
is given by the following \v{C}ech-Gysin morphism
\begin{align*}
G:=\sum_{\lam_{0}<\lam_{1}}
(G_{\lam_0}-G_{\lam_1}) \col &
R^{2d-2}
f_{\os{\circ}{X}{}^{(1)}_{T_1}
/\os{\circ}{T}*}
({\cal O}_{\os{\circ}{X}{}^{(1)}_{T_1}/\os{\circ}{T}}
\otimes_{\mab Z} 
\vp^{(1)}_{\rm crys}(\os{\circ}{X}_{T_1}/\os{\circ}{T}))_{\mab Q}(-1)
\tag{16.12.2}\label{eqn:togsn}
\\
&\lo R^{2d}
f_{\os{\circ}{X}{}^{(0)}_{T_1}
/\os{\circ}{T}*}({\cal O}_{\os{\circ}{X}{}^{(0)}_{T_1}
/\os{\circ}{T}}\otimes_{\mab Z} 
\vp^{(0)}_{\rm crys}(\os{\circ}{X}_{T_1}/\os{\circ}{T}))_{\mab Q}.
\end{align*}
Hence (\ref{prop:trx}) follows from the commutative square (\ref{cd:eytar}).  
\end{proof} 

\begin{defi}\label{defi:ffc}
We call ${\rm Tr}^A_{X_{\os{\circ}{T}_1}/S(T)^{\nat}}$ 
the {\it trace morphism} of $X_{\os{\circ}{T}_1}/S(T)^{\nat}$ 
with respect to $A_{\rm zar}(X_{\os{\circ}{T}_1}/S(T)^{\nat})_{\mab Q}$. 
\end{defi}

Henceforth, let the notations and the assumptions be as in \S\ref{sec:e2}. 

\begin{prop}\label{prop:cv}
The trace morphism ${\rm Tr}^A_{X_{\os{\circ}{T}_1}/S(T)^{\nat}}$ extends to 
the following morphism 
\begin{align*} 
{\rm Tr}^A_{X/K} \col E_2^{0,2d}(X/K) \lo {\cal O}_{X/K}(-d)
\end{align*} 
on ${\rm Enl}^{\sq}(S/{\cal V})$. 
Here $E_2^{0,2d}(X/K)$ is a log convergent $F$-isocrystal 
on ${\rm Enl}^{\sq}(S/{\cal V})$ obtained by $E_2^{0,2d}$ 
in {\rm (\ref{ali:eopsp})}. 
\end{prop} 
\begin{proof} 
Because the Gysin morphism extends to a morphism of convergent isocrystals 
(\cite[(3.13)]{od}), 
we obtain (\ref{prop:cv}) by (\ref{prop:bddes}). 
\end{proof}

\begin{prop}\label{prop:xt}
If each fiber $X_t$ of $X_{\os{\circ}{T}_1}/S(T)^{\nat}$ for any exact closed point 
$t\in S(T)^{\nat}$ is geometrically connected, then 
${\rm Tr}^A_{X_{\os{\circ}{T}_1}/S(T)^{\nat}}$ is an isomorphism. 
\end{prop}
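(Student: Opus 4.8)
The goal is to prove that ${\rm Tr}^A_{X_{\os{\circ}{T}_0}/S(T)^{\nat}}\col (E_2^{0,2d})_{\mab Q}\lo {\cal K}_T(-d)$ is an isomorphism under the hypothesis that each fiber $X_t$ is geometrically connected. By (\ref{coro:eco}) we have $(E_2^{0,2d})_{\mab Q}=R^{2d}f_{X_{\os{\circ}{T}_0}/S(T)^{\nat}*}({\cal O}_{X_{\os{\circ}{T}_0}/S(T)^{\nat}})_{\mab Q}$, and by (\ref{eqn:eis2d}) this equals ${\rm Coker}(E_1^{-1,2d}\lo E_1^{0,2d})_{\mab Q}$. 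The plan is to identify both the source and target as (convergent $F$-)isocrystals and to reduce the isomorphism statement to a pointwise statement at each exact closed point $t$, where the geometric connectedness hypothesis can be used directly.

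First I would reduce to the case of a single point. Both $(E_2^{0,2d})_{\mab Q}$ and ${\cal K}_T(-d)$ extend to convergent $F$-isocrystals by the results of \S\ref{sec:e2} (in particular (\ref{theo:pwfec}), (\ref{exam:ofl}) and the existence of the trace morphism as a convergent crystal of \cite{od}), and ${\rm Tr}^A$ is a morphism of such. A morphism of locally free isocrystals is an isomorphism if and only if it is an isomorphism fiberwise at every exact closed point, so it suffices to check the statement after base change to each $t$. By the filtered log Berthelot–Ogus type results and infinitesimal deformation invariance (\ref{coro:finvliae}), I may work over ${\cal W}(t)$ for the log point $t$, reducing to the absolute situation already treated in (\ref{prop:bab})–(\ref{coro:rrw}).

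The key computation is then to analyze the cokernel presentation. The source $E_1^{0,2d}=R^{2d}f_{\os{\circ}{X}{}^{(0)}_{T_0}/T*}({\cal O}\otimes \vp^{(0)}_{\rm crys})$ decomposes as $\bigoplus_{\lam}{\cal K}_T(-d)$ indexed by the smooth components $\os{\circ}{X}_{\lam}$, via the trace isomorphisms ${\rm Tr}_{\os{\circ}{X}_{\lam,T_0}/\os{\circ}{T}}$ for each top-dimensional component. The boundary map $G\col E_1^{-1,2d}\lo E_1^{0,2d}$ is the \v{C}ech–Gysin morphism (\ref{eqn:togsn}), given on the summand indexed by $\{\lam_0,\lam_1\}$ by $G_{\lam_0}-G_{\lam_1}$. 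Using (\ref{coro:rrw}) (the projection formula for the Gysin morphism and the trace), I would show that each such Gysin morphism, after applying ${\rm Tr}_{\os{\circ}{X}{}^{(0)}}$, sends the generator of the $\{\lam_0,\lam_1\}$-summand to the difference $e_{\lam_0}-e_{\lam_1}$ of the corresponding unit generators in $\bigoplus_{\lam}{\cal K}_T(-d)$. Consequently ${\rm Tr}^A$ factors through the cokernel of the incidence map of the ``dual graph'' of the components, and identifies $(E_2^{0,2d})_{\mab Q}$ with the free module on the connected components of $\os{\circ}{X}_t$; the total trace map to ${\cal K}_T(-d)$ is the summation over components.

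The main obstacle I anticipate is the bookkeeping of signs and the precise matching of the Gysin/\v{C}ech boundary morphism with the incidence structure, so that the cokernel is exactly ${\cal K}_T(-d)$ rather than a larger or twisted module; this is where (\ref{coro:rrw}) and the explicit description (\ref{prop:deccbd}) of the $E_1$ differential must be invoked carefully, noting that only the $j=0$, lowest-codimension terms survive in the relevant bidegree by (\ref{prop:pad}). Once the cokernel is identified with the free ${\cal K}_T$-module on $\pi_0$ of the geometric special fiber, the geometric connectedness hypothesis gives exactly one component, so the cokernel is ${\cal K}_T(-d)$ and the summation map ${\rm Tr}^A$ is the identity on it, hence an isomorphism. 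I would finish by remarking that surjectivity is already clear from the surjectivity of ${\rm Tr}_{\os{\circ}{X}{}^{(0)}_{T_0}/\os{\circ}{T}}$ noted after (\ref{ali:rdsm}), so the real content is the injectivity coming from connectedness.
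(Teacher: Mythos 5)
Your proof is correct, and its endgame coincides with the paper's: both arguments come down to showing that $(E_2^{0,2d})_{\mab Q}$ is a free ${\cal K}_T$-module of rank $1$ and then invoking the already-established surjectivity of ${\rm Tr}^A_{X_{\os{\circ}{T}_0}/S(T)^{\nat}}$ (a surjection of free rank-one modules is an isomorphism). Where you differ is in how the rank is computed. The paper disposes of this in one line: by the explicit description (\ref{prop:deccbd}) of the $E_1$-differentials, $(E_2^{0,2d})(d)_{\mab Q}$ is the ${\cal K}_T$-dual of $E_2^{0,0}=({\cal O}_T)_{\mab Q}$, the latter being rank one precisely by geometric connectedness; no reduction to closed points and no further appeal to the trace--Gysin compatibility is needed at this stage. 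You instead unwind that duality: you decompose $E_1^{0,2d}$ via the component-wise trace isomorphisms, use (\ref{coro:rrw}) to identify the \v{C}ech--Gysin boundary $G$ with the incidence map of the dual graph of the components, and read off the cokernel as the free module on $\pi_0$ of the fiber. Your route is longer and requires the extra reduction to a point via the convergent-isocrystal formalism of \S\ref{sec:e2} (which the paper's rank argument does not need, since it works over $T$ directly), but it has the virtue of making explicit that ${\rm Tr}^A$ is the summation map over components and of isolating exactly where the sign bookkeeping of (\ref{prop:deccbd}) enters; the paper's duality argument buys brevity at the cost of leaving that identification implicit.
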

\begin{proof} 
By the description of the boundary morphism $E_1^{ij}\lo E_1^{i+1,j}$ 
(\cite[(1.5.21)]{nb}), 
$(E_2^{0,2d})(d)_{\mab Q}$ is the dual of $E^{00}_2$, which is equal to ${\cal K}_T$. 
Hence $(E_2^{0,2d})_{\mab Q}\simeq {\cal K}_T(-d)$. 
Since ${\rm Tr}^A_{X_{\os{\circ}{T}_1}/S(T)^{\nat}}$ is a surjective morphism of 
the free module ${\cal K}_T$-modules of the same rank, 
${\rm Tr}^A_{X_{\os{\circ}{T}_1}/S(T)^{\nat}}$ is an isomorphism. 
\end{proof}

\begin{coro}\label{coro:pad}
Let us denote the composite morphism of the cup product and 
${\rm Tr}^A_{X/K}$ by $\langle ?.?\rangle:$ 
\begin{align*} 
\langle ?.?\rangle \col &R^{q}f_*({\cal O}_{X/K})^{\nat,\sq}
\otimes_{{\cal O}_{X/K}}R^{2d-q}f_*({\cal O}_{X/K})^{\nat,\sq}
\os{\cup}{\lo}  R^{2d}f_*({\cal O}_{X/K})^{\nat,\sq}\os{{\rm Tr}^A_{X/K}}
\lo {\cal O}_{X/K}(-d) \tag{16.16.1}\label{ali:fxss}. 
\end{align*} 
Then $\langle ?.?\rangle$ induces the following isomorphism 
\begin{align*} 
R^{q}f_*({\cal O}_{X/K})^{\nat,\sq}
\os{\sim}{\lo} {\cal H}{\it om}_{{\cal K}_T}(R^{2d-q}f_*({\cal O}_{X/K})^{\nat,\sq},{\cal O}_{X/K}(-d)). 
\tag{16.16.2}\label{ali:fxsts}
\end{align*} 
\end{coro} 
%\begin{coro}\label{coro:pad}
%Let us denote the composite morphism of the cup product and 
%${\rm Tr}^A_{X_{\os{\circ}{T}_1}/S(T)^{\nat}}$ by $\langle ?.?\rangle:$ 
%\begin{align*} 
%\langle ?.?\rangle \col &R^{q}f_{X_{\os{\circ}{T}_1}/S(T)^{\nat}*}
%({\cal O}_{X_{\os{\circ}{T}_1}/S(T)^{\nat}})_{\mab Q}
%\otimes_{{\cal K}_T}
%R^{2d-q}f_{X_{\os{\circ}{T}_1}/S(T)^{\nat}*}({\cal O}_{X_{\os{\circ}{T}_1}/S(T)^{\nat}})_{\mab Q}
%\os{\cup}{\lo} \tag{16.15.1}\label{ali:fxss}\\
%& R^{2d}f_{X_{\os{\circ}{T}_1}/S(T)^{\nat}*}({\cal O}_{X_{\os{\circ}{T}_1}/S(T)^{\nat}})_{\mab Q}
%\os{{\rm Tr}^A_{X_{\os{\circ}{T}_1}/S(T)^{\nat}}}{\lo} 
%{\cal K}_T(-d). 
%\end{align*} 
%Then $\langle ?.?\rangle$ induces the following isomorphism 
%\begin{align*} 
%R^{q}f_{X_{\os{\circ}{T}_1}/S(T)^{\nat}*}
%({\cal O}_{X_{\os{\circ}{T}_1}/S(T)^{\nat}})_{\mab Q}
%\os{\sim}{\lo} {\cal H}{\it om}_{{\cal K}_T}(
%R^{2d-q}f_{X_{\os{\circ}{T}_1}/S(T)^{\nat}*}({\cal O}_{X_{\os{\circ}{T}_1}/S(T)^{\nat}})_{\mab Q},{\cal K}_T(-d)). 
%\tag{16.15.2}\label{ali:fxsts}
%\end{align*} 
%\end{coro} 
\begin{proof}
Let $T$ be an object of ${\rm Enl}_p^{\sq}(S/{\cal V})$. 
By the theory of convergent isocrystals, we may assume that 
$\os{\circ}{T}$ is a point. In fact, we may assume that 
$\os{\circ}{T}$ is the formal spectrum of the Witt ring of 
$\Gam(\os{\circ}{T}_{1,{\rm red}},{\cal O}_{\os{\circ}{T}_{1,{\rm red}}})$. 
Let 
\begin{align*} 
{\rm Tr}^T_{X_{\os{\circ}{T}_1}/S(T)^{\nat}}\col 
R^{2d}f_{X_{\os{\circ}{T}_1}/S(T)^{\nat}*}({\cal O}_{X_{\os{\circ}{T}_1}/S(T)^{\nat}})
\lo {\cal K}_T(-d)
\end{align*} 
be Tsuji's trace morphism. 
Tsuji's Poincar\'{e} duality gives us an isomorphism 
\begin{align*} 
R^{q}f_{X_{\os{\circ}{T}_1}/S(T)^{\nat}*}
({\cal O}_{X_{\os{\circ}{T}_1}/S(T)^{\nat}})_{\mab Q}
\os{\sim}{\lo} {\cal H}{\it om}_{{\cal K}_T}(
R^{2d-q}f_{X_{\os{\circ}{T}_1}/S(T)^{\nat}*}({\cal O}_{X_{\os{\circ}{T}_1}/S(T)^{\nat}})_{\mab Q},{\cal K}_T(-d)). 
\tag{16.16.3}\label{ali:fmxss}
\end{align*} 
First assume that $\os{\circ}{X}_{\os{\circ}{T}_1}$ is geometrically connected. 
Then, consider the following composite ${\cal K}_T$-linear isomorphism 
\begin{align*} 
{\cal K}_T(-d)\os{({\rm Tr}^A_{X_{\os{\circ}{T}_1}/S(T)^{\nat}})^{-1}}{\lo} 
(E_2^{0,2d})_{\mab Q}=
R^{2d}f_{X_{\os{\circ}{T}_1}/S(T)^{\nat}*}({\cal O}_{X_{\os{\circ}{T}_1}/S(T)^{\nat}})_{\mab Q}
\os{{\rm Tr}^T_{X_{\os{\circ}{T}_1}/S(T)^{\nat}}}{\lo} {\cal K}_T(-d). 
\end{align*} 
This is obtained by a global section of ${\cal K}_T^*$. 
Hence the morphisms (\ref{ali:fxsts}) and (\ref{ali:fmxss}) are the same up to 
${\cal K}_T^*$. 
%\begin{align*} 
%{\cal K}_T(-d)\os{{\rm Tr}^T_{X_{\os{\circ}{T}_1}/S(T)^{\nat}}\sim}{\longleftarrow}  
%R^{2d}f_{X_{\os{\circ}{T}_1}/S(T)^{\nat}*}({\cal O}_{X_{\os{\circ}{T}_1}/S(T)^{\nat}})
%\os{{\rm Tr}^A_{X_{\os{\circ}{T}_1}/S(T)^{\nat}},\sim}{\lo} {\cal K}_T(-d)
If $\os{\circ}{X}_{\os{\circ}{T}_1}$ is not geometrically connected, then 
we have only to take an extension 
$\Gam(\os{\circ}{T}_{1,{\rm red}},{\cal O}_{\os{\circ}{T}_{1,{\rm red}}})$. 
\end{proof}

%\begin{prop}\label{prop:gyp}
%Assume that $\os{\circ}{Y}$ is projective over $\os{\circ}{S}$. 
%Then 
%$\langle \rho(x),y\rangle =\langle x,G(y)\rangle$ for 
%``$x\in R^{q}f_*({\cal O}_{X/K})^{\nat,\sq}$ and 
%$y\in R^{2d-q}f_*({\cal O}_{X/K})^{\nat,\sq}$.  
%\end{prop}
%\begin{proof} 
%For a closed immersion $Z\os{\sus}{\lo}Y$ of proper smooth schemes 
%over $\os{\circ}{S}$, the Gysin morphism is the dual map of the pull-back of the 
%the closed immersion. (\ref{prop:gyp}) follows from this. 
%\end{proof} 

%\begin{rema}
%Later we prove that 
%$\langle N_{\rm zar}(x),y\rangle  +\langle x,N_{\rm zar}(y)\rangle =0$ 
%modulo torsion. 
%\end{rema}

\section{Coincidence of the weight filtrations I}\label{sec:cwt}
Let the notations be as in \S\ref{sec:psc} and the beginning of the previous section. 
%In this section assume that 
%$L_{S(T)^{\nat}}$ is a free ${\cal O}_T$-module of rank $1$:  
%$L_{S(T)^{\nat}}={\cal O}_Tu$. 
In this section we construct a filtered morphism 
%(depending on the choice of $u$)
\begin{align*}
\psi \col (A_{\rm zar}(X_{\os{\circ}{T}_0}/S(T)^{\nat},E),P)
\lo 
(H_{\rm zar}(X_{\os{\circ}{T}_0}/S(T)^{\nat},E),P)
\tag{17.0.1}\label{ali:ahupu} 
\end{align*} 
such that the underlying morphism 
$A_{\rm zar}(X_{\os{\circ}{T}_0}/S(T)^{\nat},E)\lo 
H_{\rm zar}(X_{\os{\circ}{T}_0}/S(T)^{\nat},E)$ 
is an isomorphism. 
This is a log crystalline analogue of the filtered morphism constructed in \cite{fup}.  
To construct the morphism (\ref{ali:ahupu}), 
we have to solve problems about signs arising from the signs of 
the boundary morphisms of $A_{\rm zar}(X_{\os{\circ}{T}_0}/S(T)^{\nat},E)$ 
and $H_{\rm zar}(X_{\os{\circ}{T}_0}/S(T)^{\nat},E)$, 
which most mathematicians do not want to think of them as possible.  
It is convenient to another $(A_{\rm zar}(X_{\os{\circ}{T}_0}/S(T)^{\nat},E),P)$ 
whose boundary morphism
is different from that of $(A_{\rm zar}(X_{\os{\circ}{T}_0}/S(T)^{\nat},E),P)$. 
Let us define it. 
\par 
In this section we also consider the following boundary morphisms 
of double complexes: 
\begin{equation*}
\begin{CD}
A_{\rm zar}({\cal P}^{\rm ex}_{\bul}/S(T)^{\nat},
{\cal E}^{\bul})^{i,j+1}  @.  \\ 
@A{d\log t \wedge}AA  @. \\ 
A_{\rm zar}({\cal P}^{\rm ex}_{\bul}/S(T)^{\nat},{\cal E}^{\bul})^{ij}
@>{\nabla}>> 
A_{\rm zar}({\cal P}^{\rm ex}_{\bul}/S(T)^{\nat},
{\cal E}^{\bul})^{i+1,j},\\
\end{CD}
\tag{17.0.2}\label{cd:lcacbd} 
\end{equation*}  
which are different from (\ref{cd:lccbd}). 
This convention of signs is the same as 
the original Steenbrink complex in \cite{sti}.  
Using these boundary morphisms, we have the double complex 
$A'_{\rm zar}({\cal P}^{\rm ex}_{\bul}/S(T)^{\nat},
{\cal E}^{\bul})^{\bul \bul}:=(A_{\rm zar}({\cal P}^{\rm ex}_{\bul}/S(T)^{\nat},
{\cal E}^{\bul})^{ij})_{i,j\in {\mab Z}}$. 
Let 
$A'_{\rm zar}({\cal P}^{\rm ex}_{\bul}/S(T)^{\nat},{\cal E}^{\bul})$ 
be the single complex of  
$A'_{\rm zar}({\cal P}^{\rm ex}_{\bul}/S(T)^{\nat},
{\cal E}^{\bul})^{\bul \bul}$. 
\par 
The double complex 
$A'_{\rm zar}({\cal P}^{\rm ex}_{\bul}/S(T)^{\nat},{\cal E}^{\bul})^{\bul \bul}$ 
has a filtration $P=\{P_k\}_{k \in {\mab Z}}$ 
defined by the following formula: 
\begin{equation*} 
P_kA'_{\rm zar}({\cal P}^{\rm ex}_{\bul}/S(T)^{\nat},
{\cal E}^{\bul})^{\bul \bul}
:=(\cdots P_{2j+k+1}A'_{\rm zar}({\cal P}^{\rm ex}_{\bul}/S(T)^{\nat},
{\cal E}^{\bul})^{ij}\cdots). 
%\in {\rm C}^+{\rm F}(f^{-1}_T({\cal O}_T)).    
\tag{17.0.3}\label{eqn:lpcpad}
\end{equation*} 
Let $(A'_{\rm zar}({\cal P}^{\rm ex}_{\bul}/S(T)^{\nat},
{\cal E}^{\bul}),P)$ 
be the filtered single complex of the filtered double complex 
$(A'_{\rm zar}({\cal P}^{\rm ex}_{\bul}/S(T)^{\nat},
{\cal E}^{\bul})^{\bul \bul},P)$.  
Then the isomorphism 
$$A'_{\rm zar}({\cal P}^{\rm ex}_{\bul}/S(T)^{\nat},
{\cal E}^{\bul})^{ij}\owns \om \lom 
(-1)^{i}\om \in A_{\rm zar}({\cal P}^{\rm ex}_{\bul}/S(T)^{\nat},{\cal E}^{\bul})^{ij}$$  
%for even $i$ and 
%$$A_{\rm zar}({\cal P}^{\rm ex}_{\bul}/S(T)^{\nat},
%{\cal E}^{\bul})^{ij}\owns \om \lom 
%-\om \in A_{\rm zar}({\cal P}^{\rm ex}_{\bul}/S(T)^{\nat},
%{\cal E}^{\bul})^{ij}$$  for odd $i$ 
induces the following filtered isomorphism  
$$(A'_{\rm zar}({\cal P}^{\rm ex}_{\bul}/S(T)^{\nat},
{\cal E}^{\bul})^{\bul \bul},P)\os{\sim}{\lo} 
(A_{\rm zar}({\cal P}^{\rm ex}_{\bul}/S(T)^{\nat},
{\cal E}^{\bul})^{\bul \bul},P).$$ 
Set 
\begin{equation*} 
(A'_{\rm zar}(X_{\os{\circ}{T}_0}/S(T)^{\nat},E),P)
:=R\pi_{{\rm zar}*}
((A'_{\rm zar}({\cal P}^{\rm ex}_{\bul}/S(T)^{\nat},{\cal E}^{\bul}),P)).   
\end{equation*} 
Obviously we obtain the following isomorphism: 
\begin{align*} 
(A'_{\rm zar}(X_{\os{\circ}{T}_0}/S(T)^{\nat},E),P)\os{\sim}{\lo} 
(A_{\rm zar}(X_{\os{\circ}{T}_0}/S(T)^{\nat},E),P).  
\end{align*}

\begin{prop}\label{prop:indscr} 
The morphism 
\begin{align*} 
(-1)^id\log t\wedge \col {\cal E}^{\bul}
\otimes_{{\cal O}_{{\cal P}^{\rm ex}_{\bul}}}
{\Om}^{i}_{{\cal P}^{\rm ex}_{\bul}/S(T)^{\nat}}\lo 
({\cal E}^{\bul}
\otimes_{{\cal O}_{{\cal P}^{\rm ex}_{\bul}}}
{\Om}^{i+1}_{{\cal P}^{\rm ex}_{\bul}/\os{\circ}{T}})
/
P_j({\cal E}^{\bul}
\otimes_{{\cal O}_{{\cal P}^{\rm ex}_{\bul}}}
{\Om}^{i+1}_{{\cal P}^{\rm ex}_{\bul}/\os{\circ}{T}})  \quad (i\in {\mab N}). 
\end{align*} 
induces the following isomorphism
\begin{equation*} 
(-1)^{\bul}\theta \wedge :=(-1)^{\bul}\theta_{X_{\os{\circ}{T}_0}/S(T)^{\nat}} 
\wedge \col Ru_{X_{\os{\circ}{T}_0}/S(T)^{\nat}*}
(\eps^*_{X_{\os{\circ}{T}_0}/S(T)^{\nat}}(E))\os{\sim}{\lo} 
A'_{\rm zar}(X_{\os{\circ}{T}_0}/S(T)^{\nat},E).
\tag{17.1.1}\label{eqn:uaz} 
\end{equation*} 
Here $(-1)^{\bul}$ is only a notation and there is no other meaning.  
\end{prop}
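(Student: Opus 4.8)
The plan is to deduce (\ref{prop:indscr}) from the sign-free version (\ref{prop:tefc}) by transporting the isomorphism $\theta\wedge$ of (\ref{eqn:uz}) across the filtered isomorphism $(A'_{\rm zar}(X_{\os{\circ}{T}_0}/S(T)^{\nat},E),P)\os{\sim}{\lo}(A_{\rm zar}(X_{\os{\circ}{T}_0}/S(T)^{\nat},E),P)$ of (\ref{prop:indcr}). Recall that the latter is induced by the degreewise sheaf map $s\col A^{ij}\owns\om\lom(-1)^i\om\in A^{ij}$, which intertwines the boundary morphism $\nabla+d\log\tau\wedge$ of (\ref{cd:lcacbd}) with the boundary morphism $-\nabla+d\log\tau\wedge$ of (\ref{cd:lccbd}); thus $s$ is a chain isomorphism from the primed complex to the unprimed one. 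Since $\theta\wedge$ is already known to be an isomorphism, it suffices to realize $(-1)^{\bul}\theta\wedge$ as $s^{-1}\circ(\theta\wedge)$.

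First I would check that the degreewise map $(-1)^id\log\tau\wedge$ is genuinely a morphism of complexes from ${\cal E}^{\bul}\otimes_{{\cal O}_{{\cal P}^{\rm ex}_{\bul}}}\Om^{\bul}_{{\cal P}^{\rm ex}_{\bul}/S(T)^{\nat}}$ into $A'_{\rm zar}({\cal P}^{\rm ex}_{\bul}/S(T)^{\nat},{\cal E}^{\bul})$, whose target carries the boundary of (\ref{cd:lcacbd}). For a local section $\om$ of degree $i$ with lift $\ti{\om}$ to ${\cal E}^{\bul}\otimes_{{\cal O}_{{\cal P}^{\rm ex}_{\bul}}}\Om^i_{{\cal P}^{\rm ex}_{\bul}/\os{\circ}{T}}$, the vertical component $d\log\tau\wedge$ of the target differential annihilates the class $[d\log\tau\wedge\ti{\om}]$ because $d\log\tau\wedge d\log\tau=0$, while the horizontal component gives $\nabla([d\log\tau\wedge\ti{\om}])=[-d\log\tau\wedge\nabla(\ti{\om})]$, since $d\log\tau$ is a closed $1$-form. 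Comparing this with $(-1)^{i+1}d\log\tau\wedge\nabla_{/S(T)^{\nat}}(\om)$ shows that the prefactor $(-1)^i$ is exactly the one making the relevant square commute for the sign convention of (\ref{cd:lcacbd}), so $(-1)^{\bul}d\log\tau\wedge$ is a morphism of complexes.

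Then I would observe that $s\circ[(-1)^{\bul}\theta\wedge]=\theta\wedge$ on the nose: the image lies in bidegree $(i,0)$, on which $s$ acts by $(-1)^i$, and this cancels the prefactor $(-1)^i$ in front of $d\log\tau\wedge$. Applying $R\pi_{{\rm zar}*}$ and the log Poincar\'{e} lemma exactly as in the proof of (\ref{prop:tefc}) shows that the underlying morphism of $(-1)^{\bul}\theta\wedge$ is a quasi-isomorphism; combined with the fact that $s$ and $\theta\wedge$ are isomorphisms, this yields that $(-1)^{\bul}\theta\wedge$ in (\ref{eqn:uaz}) is an isomorphism. Its independence of the choice of affine open covering of $X_{\os{\circ}{T}_0}$ and of the simplicial immersion $X_{\os{\circ}{T}_0\bul}\os{\sus}{\lo}\ol{\cal P}_{\bul}$ over $\ol{S(T)^{\nat}}$ then follows from the corresponding independence of $\theta\wedge$ and of $s$.

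The main obstacle is not conceptual but one of sign bookkeeping: pinning down that $(-1)^i$ is the unique prefactor compatible with the two conventions (\ref{cd:lccbd}) and (\ref{cd:lcacbd}), and verifying that $d\log\tau\wedge d\log\tau=0$ together with the closedness of $d\log\tau$ make the intertwining square of $s$ and the commuting square for $(-1)^{\bul}d\log\tau\wedge$ hold simultaneously. Once the sign is fixed and these two squares are confirmed, the statement reduces formally to (\ref{prop:tefc}) and (\ref{prop:indcr}), so I would keep the verification short and refer to the proof of (\ref{prop:tefc}) for the quasi-isomorphism statement.
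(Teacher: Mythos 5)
Your proof is correct. The sign checks are right: with the convention of (\ref{cd:lcacbd}) the total differential of $A'_{\rm zar}$ is $\nabla+d\log\tau\wedge$, and since $d\log\tau$ is a closed $1$-form one has $\nabla(d\log\tau\wedge\ti{\om})=-d\log\tau\wedge\nabla(\ti{\om})$, so the prefactor $(-1)^i$ is exactly what is needed for $(-1)^{\bul}d\log\tau\wedge$ to be a morphism of complexes, and the degreewise map $s(\om)=(-1)^i\om$ on $A^{ij}$ satisfies $s\circ\bigl((-1)^{\bul}\theta\wedge\bigr)=\theta\wedge$ because the image sits in bidegree $(i,0)$. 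The paper instead proves (\ref{prop:indscr}) by simply rerunning the direct argument used for (\ref{prop:tefc}) (it cites the same external reference), i.e.\ it verifies the quasi-isomorphism for the primed sign convention from scratch rather than transporting it. Your reduction through the chain isomorphism $s$ of (\ref{prop:indcr}) is a cleaner organization: it isolates the only new content (the sign bookkeeping) and inherits the quasi-isomorphism, the independence of choices, and the functoriality directly from the unsigned statement, at the cost of having to record explicitly that $s$ intertwines the two boundary conventions — which the paper already asserts when constructing $A'_{\rm zar}$. Both routes are valid and yield the same canonical isomorphism.
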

\begin{proof}
The proof of this proposition is the same as that of \cite[(1.4.3)]{nb}.
\end{proof}

\par
We also have to change signs of the boundary morphisms of 
$H_{\rm zar}(X_{\os{\circ}{T}_0}/S(T)^{\nat},E)$ as follows.
\par 
Recall that the boundary morphism of 
$H_{\rm zar}(X_{\os{\circ}{T}_0}/S(T)^{\nat},E)$ 
is induced by the following boundary morphism 
\begin{equation*} 
\small{
\begin{CD} 
{\cal O}_Tu^{[k]}\otimes_{{\cal O}_T}{\cal E}\otimes_{{\cal O}_{{\cal P}{}^{{\rm ex},(m-1)}}}
\Om^{q+1}_{{\cal P}{}^{{\rm ex},(m-1)}/\os{\circ}{T}}\\
@A{(-1)^{m-1}\nabla}AA  \\
{\cal O}_Tu^{[k]}\otimes_{{\cal O}_T}{\cal E}
\otimes_{{\cal O}_{{\cal P}{}^{{\rm ex},(m-1)}}}
\Om^q_{{\cal P}{}^{{\rm ex},(m-1)}/\os{\circ}{T}} 
@>{\us{\lam_0<\cdots<\lam_{m-1}}{\sum}\sum_{j=0}^m
(-1)^j\iota^{*}_{\ul{\lam}_j,\ul{\lam}}}>> 
{\cal O}_Tu^{[k]}\otimes_{{\cal O}_T}{\cal E}\otimes_{{\cal O}_{{\cal P}{}^{{\rm ex},(m)}}}
\Om^q_{{\cal P}{}^{{\rm ex},(m)}/\os{\circ}{T}}\\
@V{(-1)^{m-1}((?)'d\log t\wedge)}VV    \\
{\cal O}_Tu^{[k-1]}\otimes_{{\cal O}_T}
{\cal E}\otimes_{{\cal O}_{{\cal P}{}^{{\rm ex},(m-1)}}}
\Om^{q+1}_{{\cal P}{}^{{\rm ex},(m-1)}/\os{\circ}{T}} \\
\end{CD} 
\tag{17.1.2}\label{cd:poex}}
\end{equation*}  
for $\ul{\lam}=\{\lam_0,\ldots,\lam_{m-1}\}\in P(\Lam)$. 
Let 
$H'_{\rm zar}(X_{\os{\circ}{T}_0}/S(T)^{\nat},E)$ be the analogous complex 
to $H_{\rm zar}(X_{\os{\circ}{T}_0}/S(T)^{\nat},E)$ 
with the following boundary morphism 
\begin{equation*} 
\tiny{
\begin{CD} 
{\cal O}_Tu^{[k]}\otimes_{{\cal O}_T}{\cal E}\otimes_{{\cal O}_{{\cal P}{}^{{\rm ex},(m-1)}}}
\Om^{q+1}_{{\cal P}{}^{{\rm ex},(m-1)}/\os{\circ}{T}}\\
@A{\nabla}AA  \\
{\cal O}_Tu^{[k]}\otimes_{{\cal O}_T}{\cal E}\otimes_{{\cal O}_{{\cal P}{}^{{\rm ex},(m-1)}}}
\Om^q_{{\cal P}{}^{{\rm ex},(m-1)}/\os{\circ}{T}} 
@>{(-1)^{q+m}\us{\lam_0<\cdots<\lam_{m-1}}{\sum}\sum_{j=0}^m
(-1)^j\iota^{\ul{\lam}_j*}_{\ul{\lam}}}>> 
{\cal O}_Tu^{[k]}\otimes_{{\cal O}_T}{\cal E}\otimes_{{\cal O}_{{\cal P}{}^{{\rm ex},(m)}}}
\Om^q_{{\cal P}{}^{{\rm ex},(m)}/\os{\circ}{T}}\\
@V{((?)'d\log t\wedge)}VV    \\
{\cal O}_Tu^{[k-1]}\otimes_{{\cal O}_T}{\cal E}\otimes_{{\cal O}_{{\cal P}{}^{{\rm ex},(m-1)}}}
\Om^{q+1}_{{\cal P}{}^{{\rm ex},(m-1)}/\os{\circ}{T}}. \\
\end{CD} 
\tag{17.1.3}\label{cd:poaex}}
\end{equation*}

\parno  
Let 
\begin{align*} 
\eps \col {\mab N}\times {\mab N}\lo {\mab Z}/2
\end{align*} 
be a map satisfying the following equations
\begin{align*} 
\eps(m-1,q+1)=\eps(m-1,q)+m-1
\end{align*} 
and 
\begin{align*} 
\eps(m,q)=\eps(m-1,q)+q+m. 
\end{align*} 
Indeed, this function exists because
\begin{align*} 
\eps(m+1,q+1)=\eps(m,q+1)+m+1+q+1=\eps(m,q)+m+m+q=\eps(m,q)+q
\end{align*} 
and 
\begin{align*} 
\eps(m+1,q+1)=\eps(m+1,q)+m+1=\eps(m,q)+m+1+q+m+1=\eps(m,q)+q. 
\end{align*} 
Then the morphism
\begin{align*}
{\cal O}_Tu^{[k]}\otimes_{{\cal O}_T}
{\cal E}\otimes_{{\cal O}_{{\cal P}{}^{{\rm ex},(m-1)}}}
\Om^q_{{\cal P}{}^{{\rm ex},(m-1)}/\os{\circ}{T}} 
\owns \om \lom (-1)^{\eps(m-1,q)}\om
\in 
{\cal O}_Tu^{[k]}\otimes_{{\cal O}_T}
{\cal E}\otimes_{{\cal O}_{{\cal P}{}^{{\rm ex},(m-1)}}}
\Om^q_{{\cal P}{}^{{\rm ex},(m-1)}/\os{\circ}{T}} 
\end{align*} 
induces a filtered isomorphism 
\begin{align*} 
(H'_{\rm zar}(X_{\os{\circ}{T}_0}/S(T)^{\nat},E),P)
\os{\sim}{\lo} 
H_{\rm zar}(X_{\os{\circ}{T}_0}/S(T)^{\nat},E),P). 
\tag{17.1.4}\label{ali:xxst}
\end{align*} 
To fix the idea, set $\eps(0,0)=0$. 
Then $\eps(0,q)=1$ for odd $q$ and $\eps(0,q)=0$ for even $q$. 
The morphism 
\begin{align*}  
{\cal O}_Tu^{[k]}\otimes_{{\cal O}_T}
{\cal E}\otimes_{{\cal O}_{{\cal P}{}^{{\rm ex}}}}
\Om^q_{{\cal P}{}^{{\rm ex}}/\os{\circ}{T}} \owns \om 
\lom (-1)^q\om  \in 
{\cal O}_Tu^{[k]}\otimes_{{\cal O}_T}
{\cal E}\otimes_{{\cal O}_{{\cal P}{}^{{\rm ex},(0)}}}
\Om^q_{{\cal P}{}^{{\rm ex},(0)}/\os{\circ}{T}} 
\end{align*} 
induces the following isomorphism 
\begin{align*}
\wt{R}u_{X_{\os{\circ}{T}_0}/\os{\circ}{T}*}
(\eps^*_{X_{\os{\circ}{T}_0}/\os{\circ}{T}}(E)\langle u\rangle)
\os{\sim}{\lo} 
H'_{\rm zar}(X_{\os{\circ}{T}_0}/S(T)^{\nat},E)
\tag{17.1.5}\label{ali:xxopst}
\end{align*} 
by (\ref{ali:xxst}) and (\ref{ali:eetie}). 

The following is a crystalline version of \cite[(5.25), (5.29)]{fup} modulo 
the difference of signs: 
\begin{theo}[{\bf cf.~\cite[(5.25), (5.29)]{fup}}]\label{theo:ha}
%Assume that $L_{S(T)^{\nat}}$ is a free ${\cal O}_T$-module of rank $1$. 
%Take a basis $u$ of $U_{S(T)^{\nat}}$. 
There exists the following filtered morphism 
\begin{align*} 
\psi' \col (A'_{\rm zar}(X_{\os{\circ}{T}_0}/S(T)^{\nat},E),P)\lo 
(H'_{\rm zar}(X_{\os{\circ}{T}_0}/S(T)^{\nat},E),P)
\tag{17.2.1}\label{ali:azh}
\end{align*} 
such that the underlying morphism 
\begin{align*}
\psi' \col A'_{\rm zar}(X_{\os{\circ}{T}_0}/S(T)^{\nat},E)
\lo H'_{\rm zar}(X_{\os{\circ}{T}_0}/S(T)^{\nat},E)
\tag{17.2.2}\label{ali:azeh}
\end{align*}
is an isomorphism fitting into the following commutative diagram
\begin{equation*} 
\begin{CD}
A'_{\rm zar}(X_{\os{\circ}{T}_0}/S(T)^{\nat},E)@>{\psi',\sim}>>
H'_{\rm zar}(X_{\os{\circ}{T}_0}/S(T)^{\nat},E)\\
@A{(-1)^{\bul}\theta \wedge}AA @AA{\simeq}A\\
Ru_{X_{\os{\circ}{T}_0}/S(T)^{\nat}*}
(\eps^*_{X_{\os{\circ}{T}_0}/S(T)^{\nat}}(E))
@. \wt{R}u_{X_{\os{\circ}{T}_0}/\os{\circ}{T}*}
(\eps^*_{X_{\os{\circ}{T}_0}/\os{\circ}{T}}(E)\langle u \rangle)\\
@| @VV{\simeq}V\\
Ru_{X_{\os{\circ}{T}_0}/S(T)^{\nat}*}
(\eps^*_{X_{\os{\circ}{T}_0}/S(T)^{\nat}}(E))
@=Ru_{X_{\os{\circ}{T}_0}/S(T)^{\nat}*}
(\eps^*_{X_{\os{\circ}{T}_0}/S(T)^{\nat}}(E)). 
\end{CD}
\tag{17.2.3}\label{cd:ssati}
\end{equation*} 
\end{theo}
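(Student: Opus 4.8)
The plan is to construct the morphism $\psi_u'$ at the level of the explicit \v{C}ech--de Rham double complexes and then to verify compatibility with all boundary maps and filtrations. First I would recall that the source $A'_{\rm zar}({\cal P}^{\rm ex}_{\bul}/S(T)^{\nat},{\cal E}^{\bul})$ has components $({\cal E}^{\bul}\otimes_{{\cal O}_{{\cal P}^{\rm ex}_{\bul}}}{\Om}^{i+j+1}_{{\cal P}^{\rm ex}_{\bul}/\os{\circ}{T}})/P_j$ with boundary maps $\nabla$ and $d\log \tau\wedge$ as in (\ref{cd:lcacbd}), while the target $H'_{\rm zar}(X_{\os{\circ}{T}_0}/S(T)^{\nat},E)$ is built from $\Gam_{{\cal O}_T}(L_{S(T)^{\nat}})\otimes {\cal E}\otimes \Om^{\bul}_{{\cal P}^{{\rm ex},(\bul)}/\os{\circ}{T}}$ with boundary maps as in (\ref{cd:poaex}). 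The key idea, following Fujisawa, is to define $\psi_u'$ on a local section $\om$ of the $P_j$-quotient by $\psi_u'(\om):=\sum_{l}u^{[l]}\otimes(\text{Poincar\'e residue of }\om)$, using the residue isomorphisms (\ref{eqn:mpprrn}) that identify ${\rm gr}^P$-pieces of the log de Rham complex with the $(\bul)$-stratified pieces tensored with orientation sheaves; the formal variable $u^{[l]}$ absorbs precisely the ``${}/P_j$'' truncation, so that the PD-polynomial direction of $H'$ matches the Steenbrink weight direction of $A'$.

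The central steps, in order, would be: (1) write down the $({\cal O}_T$-linear$)$ component map on each $(i,j)$-bidegree, using (\ref{eqn:mpprrn}) to turn a class in ${\rm gr}^P_k$ into its residue data indexed by the strata $\os{\circ}{\cal P}{}^{{\rm ex}}_{\ul{\mu}}$; (2) check that this component map intertwines $\nabla$ with $\nabla$, and $d\log\tau\wedge$ with the $((?)'d\log\tau\wedge)$ map and the \v{C}ech differential $\sum(-1)^j\iota^{\ul{\lam}_j*}_{\ul{\lam}}$; (3) verify that the sign function $\eps(m,q)$ introduced before (\ref{ali:xxst}) and the sign $(-1)^i$ built into $A'$ versus $A$ (via (\ref{prop:indcr})) are exactly what is needed for $\psi_u'$ to be a genuine morphism of complexes; (4) check $P$-compatibility, i.e. that $\psi_u'$ sends $P_kA'$ into $P_kH'$, which is immediate from the index-shifting in (\ref{eqn:lpcpad}) and (\ref{eqn:dfhp}) once the residue description of ${\rm gr}^P$ is in place; and (5) prove that the underlying map of complexes is an isomorphism, which I would do by comparing graded pieces: on ${\rm gr}^P_k$ both sides are computed by (\ref{ali:ruovp}) for $A'$ and by (\ref{eqn:ee}) (equivalently (\ref{ali:vsgrc})) for $H'$, and the two graded objects are literally the same direct sum of $Ru_{\os{\circ}{X}{}_{\ul{\lam}\cup\ul{\mu}}/\os{\circ}{T}*}(\cdots)$, so $\psi_u'$ induces an isomorphism on ${\rm gr}^P$ and hence is a filtered quasi-isomorphism with an isomorphic underlying morphism. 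Finally I would establish the commutative diagram (\ref{cd:ssati}) by comparing the definitions of $(-1)^{\bul}\theta\wedge$ in (\ref{eqn:uaz}), the isomorphism (\ref{eqn:exfte}) from (\ref{prop:ncfqi})/(\ref{prop:nbqi}), and the sign-twist isomorphisms relating $A',A$ and $H',H$; both composites going down to $Ru_{X_{\os{\circ}{T}_0}/S(T)^{\nat}*}(\eps^*(E))$ are induced by $d\log\tau\wedge$ up to the bookkeeping signs, so the diagram commutes on the nose.

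The main obstacle, as the author repeatedly signals in the surrounding text, will be step (3): getting the signs exactly right. The subtlety is that $A'$ uses the Steenbrink convention (\ref{cd:lcacbd}) (boundary $+\nabla$, $+d\log\tau\wedge$) whereas $A$ uses (\ref{cd:lccbd}) (boundary $-\nabla$), and $H'$ uses (\ref{cd:poaex}) with the extra factor $(-1)^{q+m}$ on the \v{C}ech map whereas $H$ uses (\ref{cd:poex}) with $(-1)^{m-1}\nabla$; reconciling these requires precisely the two-variable sign function $\eps(m,q)$ with its defining relations $\eps(m-1,q+1)=\eps(m-1,q)+m-1$ and $\eps(m,q)=\eps(m-1,q)+m+q$, and one must verify that the residue map commutes with all four differentials after inserting these signs. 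I expect this to be the delicate, bookkeeping-heavy core of the proof; everything else is either a direct transcription of Fujisawa's analytic argument in \cite[(5.25), (5.29)]{fup} into the log crystalline setting or an application of the already-established graded computations (\ref{ali:ruovp}) and (\ref{eqn:ee}). Once the sign normalization is pinned down, the isomorphism statement and the commutativity of (\ref{cd:ssati}) follow formally, and I would conclude by invoking (\ref{theo:indp}) and (\ref{theo:indcr}) to confirm independence of the chosen covering and embedding system so that $\psi_u'$ descends to a morphism of the derived-category objects.
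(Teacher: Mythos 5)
Your construction of $\psi_u'$ is essentially the paper's: fix a total order on $\Lam$, define the local map as $\sum_{j}\sum_{\sharp\ul{\lam}\geq j+1}u^{[\sharp\ul{\lam}-(j+1)]}\otimes{\rm Res}'_{{\cal P}_{\bul,\ul{\lam}}}$, check compatibility with $\nabla$ and with $d\log\tau\wedge$ (the latter producing the \v{C}ech terms $\sum_i(-1)^{i+q-m}{\rm Res}'_{{\cal P}_{\bul,\ul{\lam}_i}}$, which is exactly where the sign bookkeeping lives), verify $P$-compatibility by the index count, and check independence of the embedding system. Steps (1)--(4) of your plan track the actual proof closely.

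Step (5), however, contains a genuine error. You claim that on ${\rm gr}^P_k$ both sides are ``literally the same direct sum,'' so that $\psi_u'$ is a filtered quasi-isomorphism. This is false: ${\rm gr}^P_kA'_{\rm zar}$ is computed by (\ref{ali:ruovp}) as $\bigoplus_{j\geq\max\{-k,0\}}a^{(2j+k)}_{T_0*}Ru_{\os{\circ}{X}{}^{(2j+k)}_{T_0}/\os{\circ}{T}*}(\cdots)[-2j-k]$, whereas ${\rm gr}^P_kH'_{\rm zar}$ is the much larger sum (\ref{eqn:ee}) indexed additionally over all $\ul{\lam}$ with $\sharp\ul{\lam}=m+1$ for every $m\geq 0$; the paper explicitly warns that the $E_1$-terms of the $H$-spectral sequence have ``too much'' direct summands and, in the discussion surrounding (\ref{theo:crf}), that $\psi$ is \emph{not} a filtered isomorphism in general. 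So the graded comparison cannot establish that the underlying morphism is an isomorphism, and the conclusion you draw (filtered quasi-isomorphism) is actually stronger than what is true. The correct route — and the one the paper takes — is to deduce the isomorphism of underlying complexes from the commutative diagram (\ref{cd:ssati}): both $A'_{\rm zar}$ and $H'_{\rm zar}$ are identified with $Ru_{X_{\os{\circ}{T}_0}/S(T)^{\nat}*}(\eps^*_{X_{\os{\circ}{T}_0}/S(T)^{\nat}}(E))$ via $(-1)^{\bul}\theta\wedge$ (from (\ref{eqn:uaz})) and via the Hirsch-extension quasi-isomorphism (\ref{eqn:exte})/(\ref{eqn:eaxte}) respectively, and a direct local computation $\psi_u{\!\!'{}^{\rm loc}}((-1)^{q}d\log\tau\wedge\om)=(-1)^{q}\sum_{\ul{\lam}}{\rm Res}'_{\bul,\ul{\lam}}(\om)$ shows the square commutes, whence $\psi_u'$ is a quasi-isomorphism. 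You should replace your graded-piece argument with this diagram chase.
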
  
\begin{proof} 
Fix a total order on $\Lam$ once and for all as before 
and fix an isomorphism from orientation sheaves with ${\mab Z}$. 
Note that 
\begin{align*} 
A'_{\rm zar}({\cal P}^{\rm ex}_{\bul}/S(T)^{\nat},{\cal E}^{\bul})
=(\cdots \lo \bigoplus_{j=0}^{q}({\cal E}^{\bul}
\otimes_{{\cal O}_{{\cal P}^{\rm ex}_{\bul}}}
{\Om}^{q+1}_{{\cal P}^{\rm ex}_{\bul}/\os{\circ}{T}})/
P_j({\cal E}^{\bul}
\otimes_{{\cal O}_{{\cal P}^{\rm ex}_{\bul}}}
{\Om}^{q+1}_{{\cal P}^{\rm ex}_{\bul}/\os{\circ}{T}})\lo \cdots)
\end{align*} 
with boundary morphism 
$\nabla+d\log t\wedge $. 
Let 
\begin{align*} 
{\rm Res}'_{{\cal P}_{\bul \ul{\lam}}} \col 
{\cal E}^{\bul}
\otimes_{{\cal O}_{{\cal P}^{\rm ex}_{\bul}}}
{\Om}^{\bul}_{{\cal P}^{\rm ex}_{\bul}/\os{\circ}{T}}
\lo {\cal E}^{\bul}\otimes_{{\cal O}_{{\cal P}^{\rm ex}}}
\Om^{\bul-(m+1)}_{{\cal P}^{\rm ex}_{\bul \ul{\lam}}/\os{\circ}{T}}
\end{align*} 
be the morphism defined by the following: 
\begin{align*}  
\sig \otimes \omega d\log x_{\lam_{i_0},y}\cdots d\log x_{\lam_{i_{m}},y} 
\lom 
\sig \otimes b^*_{\lam_{i_0}\cdots \lam_{i_{m}}}(\omega)
%\otimes({\rm orientation}~(\lam_{i_0}\cdots \lam_{i_{k-1}})) 
\tag{17.2.4}\label{ali:pbis}
\end{align*} 
$$(\sig \in {\cal E}_{\bul}, 
\om \in P_0{\Om}^{\bul}_{{\cal P}^{\rm ex}/\os{\circ}{T}}, i_0<\cdots <i_m)$$
{\rm (cf.~\cite[(3.1.5)]{dh2})}. 
Here $y$ is a point of $\os{\circ}{\mathfrak D}_{\bul}$,  
$r$ is a nonnegative integer such that 
$M_{X,y}/{\cal O}_{X,y}^*\simeq {\mab N}^r$,  
$m$ is a nonnegative integer less than $r$, 
$\ul{\lam}=\{\lam_{i_0},\ldots, \lam_{i_{m}}\}$ 
and $\{x_{\lam_0,y},\ldots, x_{\lam_{r-1},y}\}$ is a basis of 
$M_{X,y}/{\cal O}_{X,y}^*$. 
%Here we have used a special case of \cite[(1.4.1)]{nb}.  
Note that the morphism (\ref{ali:pbis}) is different from the morphism (\ref{eqn:mprrn}) 
about the places of logarithmic forms in the source of the morphism. 
We define the following morphism 
\begin{align*} 
\psi'_{\rm loc} \col \bigoplus_{j,q}({\cal E}^{\bul}
\otimes_{{\cal O}_{{\cal P}^{\rm ex}_{\bul}}}
{\Om}^{q+1}_{{\cal P}^{\rm ex}_{\bul}/\os{\circ}{T}})/
P_j({\cal E}^{\bul}
\otimes_{{\cal O}_{{\cal P}^{\rm ex}_{\bul}}}
{\Om}^{q+1}_{{\cal P}^{\rm ex}_{\bul}/\os{\circ}{T}})
\lo  
\us{q,\ul{\lam}}{\bigoplus}
{\cal O}_T\langle u\rangle \otimes_{{\cal O}_T}
{\cal E}^{\bul}\otimes_{{\cal O}_{{\cal P}^{\rm ex}}}
\Om^{q+1-\# \ul{\lam}}_{{\cal P}^{\rm ex}_{\bul \ul{\lam}/\os{\circ}{T}}}
\end{align*} 
by 
\begin{align*} 
\psi'_{\rm loc} :=\sum_{j\geq 0}\sum_{\{\ul{\lam}\,\vert\,\# \ul{\lam}\,\geq\,j+1\}}
u^{[\# \ul{\lam}-(j+1))]}\otimes {\rm Res}'_{{\cal P}_{\bul \ul{\lam}}} 
=\sum_{\ul{\lam}}\sum_{j= 0}^{\# \ul{\lam}-1}
u^{[\# \ul{\lam}-(j+1))]}\otimes {\rm Res}'_{{\cal P}_{\bul \ul{\lam}}} 
\tag{17.2.5}\label{ali:pis}
\end{align*} 
(cf.~\cite[p.~172]{fup}). 
Note that the restriction of $\psi'_{\rm loc}$ to 
$P_j({\cal E}^{\bul}\otimes_{{\cal O}_{{\cal P}^{\rm ex}_{\bul}}}
{\Om}^{q+1}_{{\cal P}^{\rm ex}_{\bul}/\os{\circ}{T}})$ indeed vanishes. 
It is clear that  
\begin{align*} 
{\rm Res}'_{{\cal P}_{\bul \ul{\lam}}}\circ \nabla
=\nabla \circ {\rm Res}'_{{\cal P}_{\bul \ul{\lam}}}.
\end{align*} 
Let $\lam_0<\cdots <\lam_m$ be the different elements of $\ul{\lam}$. 
Set $\ul{\lam}_i:=\ul{\lam}\setminus \{\lam_i\}$. 
We claim that 
\begin{align*} 
{\rm Res}'_{{\cal P}_{\bul \ul{\lam}}}\circ (d\log t\wedge )=
(d\log t\wedge)\circ {\rm Res}'_{{\cal P}_{\bul \ul{\lam}}}+
\sum_{i=0}^m(-1)^{i+q-m}{\rm Res}'_{{\cal P}_{\bul \ul{\lam}_i}}. 
\end{align*} 
Indeed, express 
$\om \in {\cal E}^{\bul}
\otimes_{{\cal O}_{{\cal P}^{\rm ex}_{\bul}}}
{\Om}^{q+1}_{{\cal P}^{\rm ex}_{\bul}/\os{\circ}{T}}$ 
by the following form:  
$$\om=\eta d\log x_{\lam_0}\wedge \cdots \wedge d\log x_{\lam_m}+
\sum_{i=0}^m\eta_id\log x_{\lam_0}\wedge \cdots \wedge d\log x_{\lam_{i-1}}\wedge 
d\log x_{\lam_{i+1}}\wedge \cdots \wedge d\log x_{\lam_m}
+\eta'',$$
where $\eta$, $\eta_i$ have no log differential forms 
$d\log x_{\lam_0}, \ldots, d\log x_{\lam_m}$ and $\eta''$ 
has no log differential forms $d\log x_{\lam_0}\wedge \cdots \wedge d\log x_{\lam_{i-1}}$ 
for $0\leq i\leq m$. 
Then 
\begin{align*} 
{\rm Res}'_{{\cal P}_{\bul \ul{\lam}}}(d\log t\wedge \om)&=
{\rm Res}'_{{\cal P}_{\bul \ul{\lam}}}(d\log t\wedge \eta\wedge d\log x_{\lam_0}
\wedge \cdots \wedge d\log x_{\lam_m})\tag{17.2.6}\label{ali:dlx}\\
&+\sum_{i=0}^m{\rm Res}'_{{\cal P}_{\bul \ul{\lam}}}(
d\log t\wedge \eta_i\wedge d\log x_{\lam_0}\wedge \cdots \wedge d\log x_{\lam_{i-1}}\wedge 
d\log x_{\lam_{i+1}}\wedge \cdots \wedge d\log x_{\lam_m})\\
&=d\log t\wedge \eta+\sum_{i=0}^m(-1)^{(q+1-m)+i-1}\eta_i\\
&=d\log t\wedge {\rm Res}'_{{\cal P}_{\bul \ul{\lam}}}(\om)+
\sum_{i=0}^m(-1)^{i+q-m}{\rm Res}'_{{\cal P}_{\bul \ul{\lam}_i}}(\om). 
\end{align*} 
Let 
$$p_{\ul{\mu}}\col 
\us{q,\ul{\lam}}{\bigoplus}
{\cal O}_T\langle u\rangle \otimes_{{\cal O}_T}
{\cal E}^{\bul}\otimes_{{\cal O}_{{\cal P}^{\rm ex}}}
\Om^{q+1-\# \ul{\lam}}_{{\cal P}^{\rm ex}_{\bul \ul{\lam}/\os{\circ}{T}}}
\lo 
{\cal O}_T\langle u\rangle \otimes_{{\cal O}_T}
{\cal E}^{\bul}\otimes_{{\cal O}_{{\cal P}^{\rm ex}}}
\Om^{q+1-\# \ul{\mu}}_{{\cal P}^{\rm ex}_{\bul,\ul{\mu}/\os{\circ}{T}}}$$
be the projection for $\ul{\mu}$. 
Since 
$$d\log t \wedge \col {\cal E}^{\bul}
\otimes_{{\cal O}_{{\cal P}^{\rm ex}_{\bul}}}
{\Om}^{q+1}_{{\cal P}^{\rm ex}_{\bul}/\os{\circ}{T}}/
P_j({\cal E}^{\bul}
\otimes_{{\cal O}_{{\cal P}^{\rm ex}_{\bul}}}
{\Om}^{q+1}_{{\cal P}^{\rm ex}_{\bul}/\os{\circ}{T}})\lo 
{\cal E}^{\bul}
\otimes_{{\cal O}_{{\cal P}^{\rm ex}_{\bul}}}
{\Om}^{q+1}_{{\cal P}^{\rm ex}_{\bul}/\os{\circ}{T}}/
P_{j+1}({\cal E}^{\bul}
\otimes_{{\cal O}_{{\cal P}^{\rm ex}_{\bul}}}
{\Om}^{q+1}_{{\cal P}^{\rm ex}_{\bul}/\os{\circ}{T}}),$$ 
we obtain the following formula by (\ref{ali:dlx}) for $j$ and $\ul{\mu}$ such that 
$\# \ul{\mu}\geq j+1$: 
\begin{align*} 
p_{\ul{\mu}} \circ \psi'_{\rm loc} \circ(\nabla+d\log t\wedge)&=
p_{\ul{\mu}} \circ \psi'_{\rm loc}\circ \nabla+
p_{\ul{\mu}} \circ \psi'_{\rm loc}\circ d\log t\wedge 
\tag{17.2.7}\label{ali:dtw}
\\
&=u^{[\# \ul{\mu}-(j+1)]}\otimes 
{\rm Res}'_{{\cal P}_{\bul,\ul{\mu}}} \circ \nabla 
+u^{[\# \ul{\mu}-(j+2)]}\otimes  
(d\log t\wedge) \circ {\rm Res}'_{{\cal P}_{\bul,\ul{\mu}}}\\
& +u^{[\# \ul{\mu}-(j+2))]}\otimes
\sum_{i=0}^m(-1)^{i+q+m}\iota^{*}_{\ul{\mu}_i,\ul{\mu}}{\rm Res}'_{{\cal P}_{\bul,\ul{\mu}_i}}\\
&=u^{[\# \ul{\mu}-(j+1)]}\otimes 
{\rm Res}'_{{\cal P}_{\bul,\ul{\mu}}} \circ \nabla 
+u^{[\# \ul{\mu}-(j+2)]}\otimes  
(d\log t\wedge) \circ {\rm Res}'_{{\cal P}_{\bul,\ul{\mu}}}\\
& +u^{[\# \ul{\mu}-1-(j+1)]}\otimes
\sum_{i=0}^m(-1)^{i+q+m}\iota^{*}_{\ul{\mu}_i,\ul{\mu}}{\rm Res}'_{{\cal P}_{\bul,\ul{\mu}_i}}\\
&= p_{\ul{\mu}} \circ 
(\nabla+(?)'d\log t\wedge+\sum_{\ul{\lam}}\sum_{i=0}^m(-1)^{i+q+m}\iota^{*}_{\ul{\lam}_i,\ul{\lam}})\circ 
\psi'_{\rm loc}. 
\end{align*}
Here we define $u^{[-1]}$ as $0$. 
Let $\psi' \col  A'_{\rm zar}(X_{\os{\circ}{T}_0}/S(T)^{\nat},E)\lo 
H'_{\rm zar}(X_{\os{\circ}{T}_0}/S(T)^{\nat},E)$ be the induced morphism by $\psi'_{\rm loc}$.  
By (\ref{ali:dtw}) we see that  $\psi'$ is a morphism of complexes. 
\par
Next we check that $\psi'$ is a filtered morphism. 
The subsheaf 
$P_{2j+k+1}({\cal E}^{\bul}
\otimes_{{\cal O}_{{\cal P}^{\rm ex}_{\bul}}}
{\Om}^{q+1}_{{\cal P}^{\rm ex}_{\bul}/\os{\circ}{T}})$ of 
${\cal E}^{\bul}
\otimes_{{\cal O}_{{\cal P}^{\rm ex}_{\bul}}}
{\Om}^{q+1}_{{\cal P}^{\rm ex}_{\bul}/\os{\circ}{T}}$ 
is mapped to 
\begin{align*} 
&\sum_{\ul{\lam}}\sum_{j=0}^{\# \ul{\lam}-1}
P_{2(\# \ul{\lam}-(j+1))+((2j+k+1-\# \ul{\lam})-(\# \ul{\lam}-1))}
({\cal O}_T\langle u\rangle \otimes_{{\cal O}_T}
{\cal E}^{\bul}\otimes_{{\cal O}_{{\cal P}^{\rm ex}}}
\Om^{\bul}_{{\cal P}^{\rm ex}_{\ul{\lam}/\os{\circ}{T}}})\\
&=P_k\sum_{\ul{\lam}}\sum_{j=0}^{\# \ul{\lam}-1}
({\cal O}_T\langle u\rangle \otimes_{{\cal O}_T}
{\cal E}^{\bul}\otimes_{{\cal O}_{{\cal P}^{\rm ex}}}
\Om^{\bul}_{{\cal P}^{\rm ex}_{\ul{\lam}/\os{\circ}{T}}}).
\end{align*}
Hence $\psi'_{\rm loc}$ is a filtered morphism. 
Consequently $\psi'$ is a filtered morphism. 
\par 
As usual, we can see that $\psi'$ is independent of the choice of 
an affine open covering of 
$X_{\os{\circ}{T}_0}$ and the simplicial immersion 
$X_{\os{\circ}{T}_0,\bul}\os{\sus}{\lo}\ol{\cal P}_{\bul}$ over $S(T)^{\nat}$. 
Indeed, assume that we are given another similar immersion 
$X'_{\os{\circ}{T}_0,\bul}\os{\sus}{\lo}\ol{\cal P}{}'_{\bul}$. 
Then we can prove the independence as in the proof of (\ref{theo:indp}). 
\par
Let $\om$ be a local section of 
${\cal E}^{\bul}
\otimes_{{\cal O}_{{\cal P}^{\rm ex}_{\bul}}}
{\Om}^q_{{\cal P}^{\rm ex}_{\bul}/\os{\circ}{T}}$. 
Then 
\begin{align*} 
\psi'_{\rm loc}((-1)^{q}d\log t\wedge \om)=
(-1)^{q}\sum_{\ul{\lam}}u^{[0]}\otimes{\rm Res}'_{\bul \ul{\lam}}(\om)=
(-1)^{\eps(0,q)}\sum_{\ul{\lam}}{\rm Res}'_{\bul \ul{\lam}}(\om). 
\end{align*} 
Hence the diagram (\ref{cd:ssati}) is commutative. 
By (\ref{eqn:uaz}), (\ref{ali:xxopst}) and (\ref{eqn:exte})
we see that  (\ref{ali:azeh}) is an isomorphism. 
\end{proof} 

\begin{coro}\label{coro:nci}
There exists the following filtered morphism 
\begin{align*} 
\psi \col (A_{\rm zar}(X_{\os{\circ}{T}_0}/S(T)^{\nat},E),P)\lo 
(H_{\rm zar}(X_{\os{\circ}{T}_0}/S(T)^{\nat},E),P)
\tag{17.3.1}\label{ali:azfh}
\end{align*} 
such that the underlying morphism 
\begin{align*}
\psi \col A_{\rm zar}(X_{\os{\circ}{T}_0}/S(T)^{\nat},E)
\lo H_{\rm zar}(X_{\os{\circ}{T}_0}/S(T)^{\nat},E)
\tag{17.3.2}\label{ali:azfeh}
\end{align*}
is an isomorphism fitting into the following commutative diagram
\begin{equation*} 
\begin{CD}
A_{\rm zar}(X_{\os{\circ}{T}_0}/S(T)^{\nat},E)@>{\psi,\sim}>>
H_{\rm zar}(X_{\os{\circ}{T}_0}/S(T)^{\nat},E)\\
@A{\theta \wedge}A{\simeq}A @AA{\simeq}A\\
Ru_{X_{\os{\circ}{T}_0}/S(T)^{\nat}*}
(\eps^*_{X_{\os{\circ}{T}_0}/S(T)^{\nat}}(E))
@. \wt{R}u_{X_{\os{\circ}{T}_0}/\os{\circ}{T}*}
(\eps^*_{X_{\os{\circ}{T}_0}/\os{\circ}{T}}(E)\langle u \rangle)\\
@| @VV{\simeq}V\\
Ru_{X_{\os{\circ}{T}_0}/S(T)^{\nat}*}
(\eps^*_{X_{\os{\circ}{T}_0}/S(T)^{\nat}}(E))
@=Ru_{X_{\os{\circ}{T}_0}/S(T)^{\nat}*}
(\eps^*_{X_{\os{\circ}{T}_0}/S(T)^{\nat}}(E)). 
\end{CD}
\tag{17.3.3}\label{cd:ssti}
\end{equation*} 
%Consequently the morphism {\rm (\ref{ali:azfeh})} is independent of the choice of 
%the basis $u$ of $U_{S(T)^{\nat}}$. 
\end{coro}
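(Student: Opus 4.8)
The plan is to deduce Corollary \ref{coro:nci} directly from Theorem \ref{theo:ha} by composing with the two sign-normalization isomorphisms that were constructed just before the theorem. Concretely, I would build the filtered morphism $\psi_u$ as the composite
\begin{equation*}
(A_{\rm zar},P)\os{(\ref{prop:indcr})^{-1}}{\lo}
(A'_{\rm zar},P)\os{\psi_u',\ (\ref{ali:azh})}{\lo}
(H'_{\rm zar},P)\os{(\ref{ali:xxst})}{\lo}
(H_{\rm zar},P),
\end{equation*}
where I abbreviate the filtered complexes by dropping their arguments $(X_{\os{\circ}{T}_0}/S(T)^{\nat},E)$. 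Each of the three arrows is an isomorphism on the level of underlying filtered complexes (for the outer two) and a filtered morphism whose underlying morphism is an isomorphism (for the middle one, by (\ref{ali:azeh})), so the composite is a filtered morphism whose underlying morphism $A_{\rm zar}\os{\sim}{\lo}H_{\rm zar}$ is an isomorphism, which is exactly the content of (\ref{ali:azfh}) and (\ref{ali:azfeh}).

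The remaining task is to check the commutativity of diagram (\ref{cd:ssti}). Here the strategy is to paste diagram (\ref{cd:ssati}) of Theorem \ref{theo:ha} together with the two commutative squares recording how the sign-normalization isomorphisms interact with the quasi-isomorphisms $\theta\wedge$ and $(-1)^{\bul}\theta\wedge$ from $Ru_{X_{\os{\circ}{T}_0}/S(T)^{\nat}*}(\eps^*(E))$, and with the isomorphism $\wt{R}u_{X_{\os{\circ}{T}_0}/\os{\circ}{T}*}(\eps^*(E)\langle U_{S(T)^{\nat}}\rangle)\os{\sim}{\lo}H'_{\rm zar}$ versus its primed-free counterpart to $H_{\rm zar}$. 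First I would observe that the left column of (\ref{cd:ssti}) uses $\theta\wedge$ (\ref{eqn:uz}) while (\ref{cd:ssati}) uses $(-1)^{\bul}\theta\wedge$ (\ref{eqn:uaz}); the isomorphism (\ref{prop:indcr}) was defined precisely by the sign rule $\om\mapsto(-1)^i\om$ on $A_{\rm zar}^{ij}$, and by the construction in (\ref{prop:indscr}) this rule is exactly what intertwines $\theta\wedge$ with $(-1)^{\bul}\theta\wedge$. Similarly, the isomorphism $\wt{R}u_*(\eps^*(E)\langle U_{S(T)^{\nat}}\rangle)\os{\sim}{\lo}H_{\rm zar}$ from (\ref{defi:mrt})(5)/(\ref{prop:ncqi}) differs from the one into $H'_{\rm zar}$ by the sign function $\eps(m,q)$, whose boundary-value $\eps(0,q)=q\bmod 2$ is the same sign appearing in the compatibility of $\psi_u'$ with the residue maps (the computation $\psi_u{\!\!'{}^{\rm loc}}((-1)^q d\log\tau\wedge\om)=(-1)^{\eps(0,q)}\sum_{\ul{\lam}}{\rm Res}_{\bul,\ul{\lam}}(\om)$ at the very end of the proof of (\ref{theo:ha})).

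The main obstacle I anticipate is purely bookkeeping of signs: verifying that the three sign conventions — the $(-1)^i$ of (\ref{prop:indcr}), the $(-1)^q$ built into (\ref{eqn:uaz}), and the $(-1)^{\eps(m,q)}$ of (\ref{ali:xxst}) — cancel coherently so that the pasted diagram really is (\ref{cd:ssti}) with the unsigned $\theta\wedge$ in the left column and the unsigned identifications in the right column. I would handle this by checking commutativity one square at a time at the level of the explicit complexes $A'_{\rm zar}({\cal P}^{\rm ex}_{\bul}/S(T)^{\nat},{\cal E}^{\bul})$ and $H'_{\rm zar}$, using the local formula (\ref{ali:pis}) for $\psi_u{\!\!'{}^{\rm loc}}$, and only afterward applying $R\pi_{{\rm zar}*}$; since all vertical arrows become isomorphisms in the derived category, the commutativity of the outer rectangle (\ref{cd:ssti}) then follows from that of the inner squares. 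Because every ingredient is already established in Theorem \ref{theo:ha}, Proposition \ref{prop:indcr}, and the normalization constructions (\ref{ali:xxst}), the proof of the corollary reduces to transport of structure through these isomorphisms, and I expect the write-up to be short, with the sign-tracking being the only genuinely delicate point.
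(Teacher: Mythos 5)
Your proposal is correct and is exactly the route the paper intends: Corollary \ref{coro:nci} is obtained from Theorem \ref{theo:ha} by composing $\psi_u'$ with the sign-normalization isomorphisms $(A_{\rm zar},P)\simeq(A'_{\rm zar},P)$ of (\ref{prop:indcr}) and $(H'_{\rm zar},P)\simeq(H_{\rm zar},P)$ of (\ref{ali:xxst}), the commutativity of (\ref{cd:ssti}) following by pasting (\ref{cd:ssati}) with the squares that intertwine $\theta\wedge$ with $(-1)^{\bul}\theta\wedge$ and the two identifications of $\wt{R}u_{X_{\os{\circ}{T}_0}/\os{\circ}{T}*}(\eps^*(E)\langle U_{S(T)^{\nat}}\rangle)$ with $H'_{\rm zar}$ and $H_{\rm zar}$. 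The sign bookkeeping you flag is the only content of the deduction, and your treatment of it via $\eps(0,q)=q\bmod 2$ matches the paper's conventions.
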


\begin{defi}\label{defi:crf}
We call the filtered morphism {\rm (\ref{ali:azfh})} is the {\it crystalline Fujisawa morphism}. 
\end{defi} 

%\begin{rema}
%I do not know whether the filtered morphism (\ref{ali:azfh}) is 
%independent of the choice of $u$. 
%\end{rema} 

Let the notations be as in \S\ref{sec:filbo}. 
Then we can prove the following: 

\begin{theo}[{\bf Comparison theorem of weight filtrations}]\label{theo:text}
Let $P'$ be the induced filtration on 
$R^qf_{X_{\os{\circ}{T}_0}/S(T)^{\nat}*}({\cal O}_{X_{\os{\circ}{T}_0}/S(T)^{\nat}})$ $(q\in {\mab N})$ by the filtered complex
$(H_{\rm zar}(X_{\os{\circ}{T}_0}/S(T)^{\nat}),P)$. 
Then $P'\otimes_{\cal V}K=P\otimes_{\cal V}K$. 
%$($We denote ${}_iP\otimes_{\cal V}K$ simply by $P$.$)$
\end{theo}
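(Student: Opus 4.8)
The plan is to derive a one-sided comparison from the crystalline Fujisawa morphism and then upgrade it to an equality by the yoga of weights for log isocrystalline cohomology. First I would apply the crystalline Fujisawa morphism $\psi_u$ of (\ref{coro:nci}): its underlying morphism $A_{\rm zar}(X_{\os{\circ}{T}_0}/S(T)^{\nat})\to H_{\rm zar}(X_{\os{\circ}{T}_0}/S(T)^{\nat})$ is an isomorphism fitting into the commutative diagram (\ref{cd:ssti}). Passing to the $q$-th cohomology sheaves and identifying $R^qf_{X/S*}({\cal O}_{X/S})$ on both sides, the fact that $\psi_u$ is a \emph{filtered} morphism $(A_{\rm zar},P)\to(H_{\rm zar},P)$ yields a morphism that is the identity on the underlying sheaf and satisfies $P_k\subseteq P'_k$. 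Hence after tensoring with $K$ one gets the inclusion $P\otimes_{\cal V}K\subseteq P'\otimes_{\cal V}K$, and it remains to prove that the identity is strictly compatible with the two filtrations, i.e.\ the reverse inclusion.

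Next I would record that both filtrations extend to convergent $F$-isocrystals locally on $S$. For the filtration coming from $(A_{\rm zar},P)$ this is established in \cite{nb}; for $P'$ it follows verbatim from the log convergence results of \S\ref{sec:e2}, now applied to $(H_{\rm zar},P)$ in place of $(A_{\rm zar},P)$, the base change morphisms still satisfying the conditions (8.1.6) and (8.1.7). Concretely, (\ref{theo:pwfec}), (\ref{theo:pwfaec}) and (\ref{exam:ofl}) produce objects of $F\text{-}{\rm IsocF}^{\sq}(S/{\cal V})$ whose value on each enlargement recovers $P'$. Thus $(R^qf_{X/S*}({\cal O}_{X/S})_K,P)$ and $(R^qf_{X/S*}({\cal O}_{X/S})_K,P')$ are two increasing filtrations on one and the same convergent $F$-isocrystal.

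Then I would compare the graded pieces. Each filtration carries a weight spectral sequence in the category $F\text{-}{\rm Isoc}^{\sq}(S/{\cal V})$, namely (\ref{eqn:esafsp}) (equivalently (\ref{eqn:getcpsp})) for $P$ and (\ref{ali:spaorh}) for $P'$ (using (\ref{prop:spcnvuc})), and by the $E_2$-degeneration (\ref{theo:e2dgfam}) both degenerate at $E_2$ modulo torsion. Consequently ${\rm gr}^P_k$ and ${\rm gr}^{P'}_k$ of $R^qf_{X/S*}({\cal O}_{X/S})_K$ are the corresponding $E_2=E_\infty$ terms, each a subquotient of a direct sum of isocrystalline cohomologies of the proper smooth $\os{\circ}{X}{}^{(\bullet)}$'s with Tate twists. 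By the purity of the crystalline cohomology of proper smooth schemes these graded pieces are pure of the weight prescribed by the twist, so both $P\otimes_{\cal V}K$ and $P'\otimes_{\cal V}K$ are weight filtrations in the sense that ${\rm gr}_k$ is pure of weight $k$. Finally I would conclude by uniqueness: using a fundamental result of \cite{od} (the value of a convergent $F$-isocrystal is determined by its restrictions to the exact closed points, cf.\ the filtered log Berthelot--Ogus isomorphism (\ref{theo:bofis})) I reduce to $S=s$ and $T={\cal W}(s)$, where $R^qf_{X/S*}$ becomes $H^q_{\rm crys}(X/{\cal W}(s))\otimes K_0$ with its Frobenius; Grothendieck's yoga of weights then forces the identity morphism $(R^q,P)\to(R^q,P')$ to be strictly compatible, which together with $P\subseteq P'$ gives $P\otimes_{\cal V}K=P'\otimes_{\cal V}K$.

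I expect the main obstacle to be the second step, namely showing that $P'$ genuinely descends to a convergent $F$-isocrystal with pure graded pieces. The filtration $P$ on $H_{\rm zar}$ is not finite and its $E_1$-terms (\ref{ali:sparh}) carry many more summands than those of $A_{\rm zar}$ and are \emph{mixed}; so the delicate point is to verify that purity is recovered only after passing from $E_1$ to $E_2$, and that the convergence and degeneration of \S\ref{sec:e2} and \S\ref{sec:filbo} can be combined to control ${\rm gr}^{P'}_k$ uniformly over all enlargements. Once this is in place, the weight-theoretic uniqueness argument is formal.
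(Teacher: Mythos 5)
Your proposal is correct and follows essentially the same route as the paper: both filtrations are shown to extend to convergent $F$-isocrystals, one reduces by convergence and deformation invariance to the case where the base is the Witt ring of a finite field, and there the two weight spectral sequences together with the purity of weight for crystalline cohomology of proper smooth schemes force $P\otimes_{\cal V}K=P'\otimes_{\cal V}K$. The preliminary inclusion via the crystalline Fujisawa morphism $\psi_u$ is exactly the first step sketched in the Introduction, so no essential deviation.
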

\begin{proof} 
(cf.~\cite[(5.4.6), (5.4.7)]{nb})
Because $P\otimes_{\cal V}K$ and $P'\otimes_{\cal V}K$ 
extend to a convergent $F$-isocrystal on $\os{\circ}{T}/{\cal V}$, 
we may assume that $\os{\circ}{T}$ is the formal spectrum of a finite extension of 
${\cal V}$. 
By the deformation invariance of $P\otimes_{\cal V}K$ and $P'\otimes_{\cal V}K$, 
we may assume that $\os{\circ}{T}$ is the formal spectrum of the Witt ring of a 
finite extension of $\kap$. By a standard technique, we may assume that 
$\os{\circ}{T}_1$ is a scheme of finite type over a finite field. 
By the convergence of 
$P\otimes_{\cal V}K$ and $P'\otimes_{\cal V}K$ again, 
we may assume that  $\os{\circ}{T}$ is the formal spectrum of the Witt ring of a 
finite field. 
In this case the weight spectral sequences for 
$(H_{\rm zar}(X_{\os{\circ}{T}_0}/S(T)^{\nat}),P)$ and 
$(A_{\rm zar}(X_{\os{\circ}{T}_0}/S(T)^{\nat}),P)$ and the purity of the 
weight proved in \cite[(6.11) or Appendix]{ny} and \cite{ndw} 
(cf.~\cite{kme}, \cite{clpu})) tell us that 
$P\otimes_{\mab Z}{\mab Q}=P'\otimes_{\mab Z}{\mab Q}$. 
\end{proof} 

\begin{coro}\label{coro:pd}
The composite morphism {\rm (\ref{ali:fxss})}  
induces the following morphism 
\begin{align*} 
&P_kR^{q}f_{X_{\os{\circ}{T}_0}/S(T)^{\nat}*}
({\cal O}_{X_{\os{\circ}{T}_0}/S(T)^{\nat}})_{\mab Q}
\otimes_{{\cal K}_T}
P_{k'}R^{2d-q}f_{X_{\os{\circ}{T}_0}/S(T)^{\nat}*}({\cal O}_{X_{\os{\circ}{T}_0}/S(T)^{\nat}})_{\mab Q}
\lo \tag{17.6.1}\label{ali:fxpss}\\
& \lo P_{k+k'}{\cal K}_T(-d), 
\end{align*} 
which gives the following isomorphism 
\begin{align*} 
P_kR^qf_{X_{\os{\circ}{T}_0}/S(T)^{\nat}*}({\cal O}_{X_{\os{\circ}{T}_0}/S(T)^{\nat}})_K
\os{\sim}{\lo} P_k{\cal H}{\it om}_{{\cal K}_S}
(R^{2d-q}f_{X_{\os{\circ}{T}_0}/S(T)^{\nat}*}({\cal O}_{X_{\os{\circ}{T}_0}/S(T)^{\nat}})_K, {\cal K}_T(-d))
\tag{17.6.2}\label{ali:pkkcq}
\end{align*} 
for any $k\in {\mab Z}$. 
%\begin{align*} 
%{\rm gr}^P_kR^{q}f_{X_{\os{\circ}{T}_0}/S(T)^{\nat}*}
%({\cal O}_{X_{\os{\circ}{T}_0}/S(T)^{\nat}})_{\mab Q}
%\os{\sim}{\lo} {\cal H}{\it om}_{{\cal K}_T}(
%{\rm gr}^P_{2d-k}
%R^{2d-q}f_{X_{\os{\circ}{T}_0}/S(T)^{\nat}*}({\cal O}_{X_{\os{\circ}{T}_0}/%S(T)^{\nat}})_{\mab Q},{\cal K}_T(-d)). 
%\tag{17.6.2}\label{ali:fts}
%\end{align*} 
\end{coro}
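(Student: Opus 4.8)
The plan is to separate the statement into the easy direction---that the composite $\langle ?.?\rangle$ of (\ref{ali:fxss}) respects the filtrations, giving the morphism (\ref{ali:fxpss})---and the harder direction---that the resulting filtered morphism (\ref{ali:pkkcq}) is an isomorphism, i.e.\ that the pairing (\ref{ali:fxsts}) is strictly compatible with $P$. For brevity I write $R^qf_*({\cal O})_K:=R^qf_{X_{\os{\circ}{T}_0}/S(T)^{\nat}*}({\cal O}_{X_{\os{\circ}{T}_0}/S(T)^{\nat}})_K$ throughout.

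First I would establish (\ref{ali:fxpss}) by recalling that $\langle ?.?\rangle$ factors as the cup product followed by the trace morphism ${\rm Tr}^A_{X_{\os{\circ}{T}_0}/S(T)^{\nat}}$. By (\ref{ali:otp}), applied to $E=E'={\cal O}_{\os{\circ}{X}_{T_0}/\os{\circ}{T}}$, the cup product sends $P_kR^qf_*({\cal O})_K\otimes_{{\cal K}_T}P_{k'}R^{2d-q}f_*({\cal O})_K$ into $P_{k+k'}R^{2d}f_*({\cal O})_K$; here I invoke (\ref{theo:text}) to identify this filtration, which a priori comes from $(H_{\rm zar},P)$, with $P\otimes_{\cal V}K$. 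Next, the Steenbrink spectral sequence (\ref{ali:eopsp}) together with (\ref{prop:pad}) and (\ref{coro:eco}) shows that $R^{2d}f_*({\cal O})_K=E_2^{0,2d}$ is concentrated in a single graded piece of weight $2d$, so that $P_{2d-1}R^{2d}f_*({\cal O})_K=0$ and $P_{2d}R^{2d}f_*({\cal O})_K=R^{2d}f_*({\cal O})_K$. Since the Tate-twist filtration satisfies $P_{2d-1}{\cal K}_T(-d)=0$ and $P_{2d}{\cal K}_T(-d)={\cal K}_T$, the trace morphism is automatically filtered, and composing yields (\ref{ali:fxpss}).

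The isomorphism (\ref{ali:pkkcq}) is equivalent to the orthogonality relation $P_kR^qf_*({\cal O})_K=(P_{2d-1-k}R^{2d-q}f_*({\cal O})_K)^{\perp}$ under the perfect pairing of (\ref{coro:pad}), the filtration on the Hom-sheaf in (\ref{ali:pkkcq}) being the dual one. The inclusion $\subseteq$ is immediate from (\ref{ali:fxpss}): if $x\in P_kR^qf_*({\cal O})_K$ and $y\in P_{2d-1-k}R^{2d-q}f_*({\cal O})_K$, then $\langle x,y\rangle\in P_{2d-1}{\cal K}_T(-d)=0$. For the reverse inclusion I would reduce, exactly as in the proofs of (\ref{coro:pad}) and (\ref{theo:text}), to the case where $\os{\circ}{T}$ is the formal spectrum of the Witt ring of a finite field: the filtered sheaves extend to convergent $F$-isocrystals by (\ref{theo:pwfec}), so by the infinitesimal deformation invariance (\ref{coro:finvliae}) and the convergence of $P\otimes_{\cal V}K$ it suffices to treat this case.

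In the finite-field case the whole structure---the two weight spectral sequences, the cup product, and the trace---is Frobenius-compatible, so the pairing is a morphism of $F$-isocrystals into ${\cal K}_T(-d)$, which is pure of weight $2d$. Grothendieck's yoga of weights together with the purity of the crystalline cohomology of proper smooth schemes over a finite field (\cite{kme}) then shows that ${\rm gr}^P_wR^qf_*({\cal O})_K$ is pure of weight $w$ and pairs nontrivially into ${\cal K}_T$ only against ${\rm gr}^P_{2d-w}R^{2d-q}f_*({\cal O})_K$, the induced pairing on these graded pieces being perfect. Hence these two graded pieces have equal rank, which forces $\dim P_kR^qf_*({\cal O})_K+\dim P_{2d-1-k}R^{2d-q}f_*({\cal O})_K=\dim R^qf_*({\cal O})_K$; combined with the inclusion already obtained, this gives the equality of orthogonal complements and proves (\ref{ali:pkkcq}). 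The main obstacle is this last step: one must check carefully that the trace morphism and the cup product are genuinely Frobenius-equivariant (up to the appropriate Tate twist) so that the perfect pairing of (\ref{coro:pad}) is strict for the weight $=$ Frobenius-slope filtration. The reduction and the easy inclusion are formal, but the strictness rests entirely on weight purity over a finite field.
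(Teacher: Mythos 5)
Your proposal is correct and follows essentially the same route as the paper: the filtered morphism (\ref{ali:fxpss}) comes from (\ref{ali:otp}) together with the purity of $R^{2d}f_{X_{\os{\circ}{T}_0}/S(T)^{\nat}*}({\cal O}_{X_{\os{\circ}{T}_0}/S(T)^{\nat}})$ and ${\cal K}_T(-d)$ in weight $2d$, and the isomorphism (\ref{ali:pkkcq}) is obtained by the same reduction as in the proof of (\ref{theo:text}) — extension to convergent $F$-isocrystals, deformation invariance, reduction to the Witt ring of a finite field, and weight purity there. You merely spell out the details (the vanishing from (\ref{prop:pad})/(\ref{coro:eco}) and the orthogonality/dimension count over a finite field) that the paper compresses into a citation of the earlier argument.
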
 
\begin{proof}
By (\ref{ali:otp}) 
we have the following morphism 
\begin{align*} 
\cup   \col &
P_kR^hf_{X_{\os{\circ}{T}_0}/S(T)^{\nat}*}({\cal O}_{X_{\os{\circ}{T}_0}/S(T)^{\nat}})
\otimes_{{\cal O}_T}
P_{k'}R^{2d-h}f_{X_{\os{\circ}{T}_0}/S(T)^{\nat}*}({\cal O}_{X_{\os{\circ}{T}_0}/S(T)^{\nat}})
\tag{17.6.3}\label{ali:odtp}\\
& \lo 
P_{k+k'}R^{2d}f_{X_{\os{\circ}{T}_0}/S(T)^{\nat}*}({\cal O}_{X_{\os{\circ}{T}_0}/S(T)^{\nat}}). 
\end{align*}  
Because $R^{2d}f_{X_{\os{\circ}{T}_0}/S(T)^{\nat}*}({\cal O}_{X_{\os{\circ}{T}_0}/S(T)^{\nat}})$ 
and ${\cal K}_T(-d)$ are pure of weight $2d$, 
the trace morphism 
${\rm Tr}^A_{X_{\os{\circ}{T}_0}/S(T)^{\nat}}$ is 
a filtered morphism. Hence we have the morphism (\ref{ali:fxpss}). 
\par 
By (\ref{coro:pad}) and by the same proof as that of (\ref{theo:text}), 
we obtain the isomorphism (\ref{ali:pkkcq}). 
\end{proof}

\section{Semi-cosimplicial $p$-adic Steenbrink complexes and 
coincidence of the weight filtrations II}\label{sec:csc}
In this section we define the semi-cosimplicial version of 
$(A_{\rm zar}(X_{\os{\circ}{T}_0}/S(T)^{\nat},E),P)$ 
to give another proof of the comparison theorem of 
weight filtrations (\ref{theo:text}). 
This method is sketched in \cite[p.~1270]{kiha} 
for the case of log de Rham-Witt complexes for the trivial coefficient 
when the base field is finite.  
\par 
Let the notations be as in \S\ref{sec:pssc}. 
Take an index subset $\ul{\lam}=\{\lam_0,\ldots,\lam_{m-1}\}$ ($\lam_i< \lam_j$ for $i<j$). 
Set  
\begin{align*} 
A_{\rm zar}({\cal P}^{\rm ex}_{\ul{\lam}}/S(T)^{\nat},{\cal E})^{ij}
& :=({\cal E}\otimes_{{\cal O}_{{\cal P}^{\rm ex}}}
\Om^{i+j+1}_{{\cal P}^{\rm ex}_{\ul{\lam}}/\os{\circ}{T}})/P_j 
\tag{18.0.1}\label{ali:accef} \\
& :=({\cal E}
\otimes_{{\cal O}_{{\cal P}^{\rm ex}}}
{\Om}^{i+j+1}_{{\cal P}^{\rm ex}_{\ul{\lam}}/\os{\circ}{T}})/
P_j({\cal E}
\otimes_{{\cal O}_{{\cal P}^{\rm ex}}}
{\Om}^{i+j+1}_{{\cal P}^{\rm ex}_{\ul{\lam}}/\os{\circ}{T}})  
\quad (i,j \in {\mab N}). 
\end{align*}   
As in \S\ref{sec:pssc}, we have the filtered double complex 
$(A_{\rm zar}({\cal P}^{\rm ex}_{\ul{\lam}}/S(T)^{\nat},{\cal E})^{\bul \bul},P)$ and  
the filtered single complex 
$(A_{\rm zar}({\cal P}^{\rm ex}_{\ul{\lam}}/S(T)^{\nat},{\cal E}),P)
:=s((A_{\rm zar}({\cal P}^{\rm ex}_{\ul{\lam}}/S(T)^{\nat},{\cal E})^{\bul \bul},P))$. 
Let 
\begin{equation*} 
\pi_{\ul{\lam},{\rm zar}} 
\col (X_{\ul{\lam},\os{\circ}{T}_0\bul})_{\rm zar}
\lo (X_{\ul{\lam},\os{\circ}{T}_0})_{\rm zar}
\end{equation*} 
be the natural morphism of ringed topoi.  
Let $f_{\ul{\lam}}\col X_{\ul{\lam},\os{\circ}{T}_0}\lo \os{\circ}{T}$ 
and $f_{\ul{\lam} \bul}\col X_{\ul{\lam},\os{\circ}{T}_0\bul}\lo \os{\circ}{T}$
be the structural morphisms. 
The first aim in this section is to prove that 
$$R\pi_{\ul{\lam},{\rm zar}*} 
((A_{\rm zar}({\cal P}^{\rm ex}_{\ul{\lam}}/S(T)^{\nat},{\cal E}),P))$$ 
is well-defined. 
Let us recall the morphism 
\begin{equation*} 
\pi_{\ul{\lam},{\rm zar}} \col 
((X_{\ul{\lam},\os{\circ}{T}_0\bul})_{\rm zar},f^{-1}_{\bul}({\cal O}_T)) \lo 
((X_{\ul{\lam},\os{\circ}{T}_0})_{\rm zar},
f^{-1}_{\ul{\lam}}({\cal O}_T)) 
%\tag{6.24.1}\label{eqn:lbd} 
\end{equation*} 
of ringed topoi ((\ref{eqn:lbd})) and the complex  (\ref{eqn:lbid}); 
\begin{align*} 
Ru_{X_{\ul{\lam},\os{\circ}{T}_0}/S(T)^{\nat}*}
(\eps^*_{X_{\ul{\lam},\os{\circ}{T}_0}/S(T)^{\nat}}(E_{\ul{\lam}}))
&= R\pi_{\ul{\lam},{\rm zar}}(s({\cal E}^{\bul}\otimes_{{\cal O}_{{\cal P}^{\rm ex}_{\bul}}}
{\Om}^{\bul}_{{\cal P}^{\rm ex}_{\bul\ul{\lam}}/S(T)^{\nat}})).
%\tag{6.24.2}\label{eqn:lbid} 
\end{align*}

\begin{prop}\label{prop:tefnc} 
$(1)$ There exists the following canonical isomorphism$: $
\begin{equation*} 
\theta \wedge:=\theta_{X_{\ul{\lam},\os{\circ}{T}_0}/S(T)^{\nat}}
\wedge \col 
Ru_{X_{\ul{\lam},\os{\circ}{T}_0}/S(T)^{\nat}*}
(\eps^*_{X_{\ul{\lam},\os{\circ}{T}_0}/S(T)^{\nat}}(E_{\ul{\lam}}))
\os{\sim}{\lo} 
R\pi_{\ul{\lam},{\rm zar}*}(A_{\rm zar}({\cal P}^{\rm ex}_{\ul{\lam}}/S(T)^{\nat},{\cal E})).
\tag{18.1.1}\label{eqn:ulul} 
\end{equation*}  
\par
$(2)$ There exists the following canonical isomorphism$: $
\begin{align*} 
&{\rm gr}_k^PR\pi_{\ul{\lam},{\rm zar}*}(A_{\rm zar}({\cal P}^{\rm ex}_{\ul{\lam}}/S(T)^{\nat},{\cal E}))
\os{\sim}{\lo} 
\bigoplus_{j\geq \max \{-k,0\}} 
\bigoplus_{\# \ul{\mu}=2j+k+1}
 \tag{18.1.2}\label{ali:rllvp}\\
&a_{\ul{\lam}\cup \ul{\mu},\os{\circ}{T}_0*} 
(Ru_{\os{\circ}{X}_{\ul{\lam}\cup \ul{\mu},T_0}/\os{\circ}{T}*}
(E_{\ul{\lam}\cup \ul{\mu},T_0}/\os{\circ}{T}
\otimes_{\mab Z}\vp_{\rm crys}^{(2j+k)}(\os{\circ}{X}_{\ul{\mu},T_0}/\os{\circ}{T})))[-2j-k]. 
\end{align*} 
%This isomorphism is contravariant for the morphisms satisfying 
%{\rm (\ref{ali:gnfe})} and {\rm (8.1.6)}. 
\end{prop}
\begin{proof}
(1): The following proof is the crystalline version of the proof of 
\cite[Proposition 3.2]{gkcf}. 
The proof is slightly different from that of 
(\ref{prop:tefc}) by the property of ``$P_0$'' for $X_{\ul{\lam}}$.  
\par 
Since ${\cal E}^{\bul}_{\ul{\lam}}:=
{\cal E}\otimes_{{\cal O}_{{\cal P}^{\rm ex}_{\bul}}}
{\cal O}_{{\cal P}^{\rm ex}_{\bul \ul{\lam}}}$ is a flat
${\cal O}_{{\mathfrak D}_{\bul \ul{\lam}}}$-module, 
it suffices to prove that the natural morphism 
\begin{align*} 
d\log t \wedge  & 
\col 
{\cal O}_{{\mathfrak D}_{\bul \ul{\lam}}}
\otimes_{{\cal O}_{{\cal P}^{\rm ex}_{\bul \ul{\lam}}}}
{\Om}^i_{{\cal P}^{\rm ex}_{\bul \ul{\lam}}/S}
\lo \\
& \{({\cal O}_{{\mathfrak D}_{\bul \ul{\lam}}}
\otimes_{{\cal O}_{{\cal P}^{\rm ex}_{\bul \ul{\lam}}}}
{\Om}^{i+1}_{{\cal P}^{\rm ex}_{\bul \ul{\lam}}/\os{\circ}{T}}
/P_0   \os{(-1)^id\log t \wedge}{\lo} 
{\cal O}_{{\mathfrak D}_{\bul \ul{\lam}}}
\otimes_{{\cal O}_{{\cal P}^{\rm ex}_{\ul{\bul,\lam}}}}
{\Om}^{i+2}_{{\cal P}^{\rm ex}_{\bul \ul{\lam}}/\os{\circ}{T}}/P_1 \\ 
& \os{(-1)^id\log t \wedge}{\lo}\cdots )\}
\end{align*} 
is a quasi-isomorphism.  
\par 
As in \cite[3.15]{msemi} 
(cf.~\cite[(6.28) (9), (6.29) (1)]{ndw}), 
it suffices to prove that the sequence 
\begin{align*} 
& 0 \lo 
{\rm gr}_0^P
({\cal O}_{{\mathfrak D}_{n,\ul{\lam}}}
{\otimes}_{{\cal O}_{{\cal P}^{\rm ex}_{n,\ul{\lam}}}} 
{{\Om}}^{\bul}_{{\cal P}^{\rm ex}_{n,\ul{\lam}}/\os{\circ}{T}}) 
\os{\theta_{{\cal P}^{\rm ex}_{n,\ul{\lam}}}\wedge}{\lo}
{\rm gr}_1^P
({\cal O}_{{\mathfrak D}_{n,\ul{\lam}}}
{\otimes}_{{\cal O}_{{\cal P}^{\rm ex}_{n,\ul{\lam}}}}
{{\Om}}^{\bul}_{{\cal P}^{\rm ex}_{n,\ul{\lam}}/\os{\circ}{T}})[1] 
%\tag{18.3.6}\label{eqn:lgrx}
\\
& \os{\theta_{{\cal P}^{\rm ex}_{n,\ul{\lam}}}\wedge}{\lo}
{\rm gr}_2^P
({\cal O}_{{\mathfrak D}_{n,\ul{\lam}}}
{\otimes}_{{\cal O}_{{\cal P}_{n,\ul{\lam}}^{\rm ex}}} 
{{\Om}}^{\bul}_{{\cal P}^{\rm ex}_{n,\ul{\lam}}/\os{\circ}{T}})[2]  
\os{\theta_{{\cal P}^{\rm ex}_{n,\ul{\lam}}}\wedge}{\lo} \cdots 
\end{align*} 
is exact. 
By (\ref{eqn:mpprrn}) we have only to prove that 
the following sequence 
\begin{align*} 
0 & \lo {\cal O}_{{\mathfrak D}_{n,\ul{\lam}}}
{\otimes}_{{\cal O}_{{\cal P}{}^{\rm ex}_{n,\ul{\lam}}}} 
{\Om}^i_{{\os{\circ}{\cal P}}{}^{\rm ex}_{n,\ul{\lam}}/\os{\circ}{T}} \lo 
\bigoplus_{\# \ul{\mu}=1}
({\cal O}_{{\mathfrak D}_{n,\ul{\lam}\cup \ul{\mu}}}
{\otimes}_{{\cal O}_{{\cal P}{}^{{\rm ex}}_{n,\ul{\lam}\cup \ul{\mu}}}} 
b_{\ul{\lam}\cup \ul{\mu}*}
\Om^i_{\os{\circ}{\cal P}{}^{{\rm ex}}_{n,\ul{\lam}\cup \ul{\mu}}/\os{\circ}{T}}
\otimes_{\mab Z}\vp_{{\rm zar},\ul{\mu}}({\cal P}^{\rm ex}_n/S)) 
\tag{18.1.3}\label{eqn:zzp}\\
&   
\lo \bigoplus_{\# \ul{\mu}=2}
({\cal O}_{{\mathfrak D}_{n,\ul{\lam}\cup \ul{\mu}}}
{\otimes}_{{\cal O}_{{\cal P}{}^{{\rm ex}}_{n,\ul{\lam}\cup \ul{\mu}}}}
b_{\ul{\lam}\cup \ul{\mu}*}
\Om^i_{\os{\circ}{\cal P}{}^{{\rm ex}}_{n,\ul{\lam}\cup \ul{\mu}}/\os{\circ}{T}}
\otimes_{\mab Z}\vp_{{\rm zar},\ul{\mu}}({\cal P}^{\rm ex}_n/S)) 
\lo \cdots 
\end{align*}
is exact for each nonnegative integer $i$.   
Following the idea in the proof of \cite[Proposition 3.2]{gkcf}, 
set $S_j:=\{\ul{\nu}~\vert  \# (\ul{\nu} \setminus \ul{\lam})=j\}$  
$(j\in {\mab Z}_{\geq 0})$. 
Consider the following double complex 
\begin{equation*} 
\Om^{jk}_i:=\bigoplus_{\# \ul{\nu}=j+k,\ul{\nu}\in S_j}
{\cal O}_{{\mathfrak D}_{n,\ul{\lam}\cup \ul{\nu}}}
{\otimes}_{{\cal O}_{{\cal P}{}^{{\rm ex}}_{n,\ul{\lam}\cup \ul{\nu}}}}
b_{\ul{\lam}\cup \ul{\nu}*}
\Om^i_{\os{\circ}{\cal P}{}^{{\rm ex}}_{n,\ul{\lam}\cup \ul{\nu}}/\os{\circ}{T}}
\otimes_{\mab Z}\vp_{{\rm zar},\ul{\nu}}({\cal P}^{\rm ex}_n/\os{\circ}{T}) 
\quad (j,k\in {\mab Z}_{\geq 0})
\end{equation*} 
such that the horizontal boundary morphisms 
and the vertical boundary morphisms are given by the \v{C}ech 
morphism of the pull-backs of the restriction morphisms. 
Then the complex (\ref{eqn:zzp}) is equal to $\Om^{\bul \bul}_i$.  
%the sequence (\ref{eqn:lgrx}) is equal to 
Then $\Om^{j\bul}_i$ is exact since the boundary morphisms 
of $\Om^{j\bul}_i$ is given identities with signs. 
Hence the complex (\ref{eqn:zzp}) is exact. 
\par 
In a routine way 
we can prove that the isomorphism (\ref{eqn:ulul}) is independent of 
the choice of the disjoint union 
of an affine simplicial open covering of $X$ and 
a simplicial immersion 
$X_{\bul} \os{\sus}{\lo} \ol{\cal P}_{\bul}$ over $\ol{S(T)^{\nat}}$.  
\par 
(2): As in the proof of \cite[(1.4.4)]{nb}, we obtain (\ref{ali:rllvp}) by using (\ref{eqn:mpprrn}). 
\end{proof}

\begin{coro}\label{coro:lid}
The filtered complex 
$R\pi_{\ul{\lam},{\rm zar}*}
((A_{\rm zar}({\cal P}^{\rm ex}_{\bul \ul{\lam}}/S(T)^{\nat},{\cal E}^{\bul}),P))$ 
is independent of the choice of the disjoint union 
of an affine simplicial open covering of $X$ and 
a simplicial immersion 
$X_{\bul} \os{\sus}{\lo} \ol{\cal P}_{\bul}$ over $\ol{S(T)^{\nat}}$.  
\end{coro}
\begin{proof} 
This follows from (\ref{prop:tefnc}). 
\end{proof}

\begin{defi}
Set 
\begin{equation*} 
(A_{\rm zar}(X_{\ul{\lam},\os{\circ}{T}_0}/S(T)^{\nat},E_{\ul{\lam}}),P)
:=R\pi_{\ul{\lam},{\rm zar}*}
((A_{\rm zar}({\cal P}^{\rm ex}_{\bul \ul{\lam}}/S(T)^{\nat},{\cal E}^{\bul}),P)).  
\end{equation*} 
and we call 
$(A_{\rm zar}(X_{\ul{\lam},\os{\circ}{T}_0}/S(T)^{\nat},E_{\ul{\lam}})),P)
\in {\rm D}^+{\rm F}(f^{-1}({\cal O}_T))$ 
the {\it $p$-adic zariskian filtered Steenbrink complex} of 
$E_{\ul{\lam}}$ on $X_{\ul{\lam},\os{\circ}{T}_0}/(S(T)^{\nat},{\cal J},\del)$. 
\end{defi}

We state the following for our memory: 
\begin{prop}\label{prop:grlp}
There exists the following canonical isomorphism$:$ 
\begin{align*} 
{\rm gr}^P_kA_{\rm zar}(X_{\ul{\lam},\os{\circ}{T}_0}/S(T)^{\nat},E_{\ul{\lam}})
\os{\sim}{\lo} 
&\bigoplus_{j\geq \max \{-k,0\}} 
\bigoplus_{\# \ul{\mu}=2j+k+1}
a_{\ul{\lam}\cup \ul{\mu},\os{\circ}{T}_0*} 
(Ru_{\os{\circ}{X}_{\ul{\lam}\cup \ul{\mu},T_0}/\os{\circ}{T}*}
(E_{\ul{\lam}\cup \ul{\mu},T_0}/\os{\circ}{T} \tag{18.4.1}\label{ali:rlvp}\\
&\otimes_{\mab Z}\vp_{\rm crys}^{(2j+k)}
(\os{\circ}{X}_{\ul{\mu},T_0}/\os{\circ}{T})))[-2j-k]. 
\end{align*}
\end{prop}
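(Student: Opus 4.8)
The statement is the analogue, localized along a fixed stratum $\ul{\lam}$, of the graded-piece computation (\ref{prop:graxnp}) for the full $p$-adic Steenbrink complex. Consequently I would imitate the argument recorded in (\ref{ali:ruogrvp}), replacing the global Poincar\'e residue isomorphism (\ref{prop:grem}) by its $\ul{\lam}$-version (\ref{prop:pslt}). Since (\ref{prop:lid}) already guarantees that $R\pi_{\ul{\lam},{\rm zar}*}((A_{\rm zar}({\cal P}^{\rm ex}_{\ul{\lam},\bul}/S(T)^{\nat},{\cal E}^{\bul}),P))$ is independent of the auxiliary choices, it suffices to produce a natural filtered identification on the level of the simplicial embedding system and then apply $R\pi_{\ul{\lam},{\rm zar}*}$. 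The plan is therefore: (i) compute ${\rm gr}^P_k$ of the Steenbrink double complex termwise; (ii) recognize the surviving differential and split off the de Rham complexes of the deeper strata $\os{\circ}{\cal P}{}^{\rm ex}_{\ul{\lam}\cup \ul{\mu}}$; and (iii) globalize via cohomological descent and the log Poincar\'e lemma to obtain the $Ru$-terms.

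For step (i), recall from (\ref{eqn:lpcad}) that $P_k$ places $P_{2j+k+1}$ in the $(i,j)$-slot $({\cal E}\otimes_{{\cal O}_{{\cal P}^{\rm ex}}}\Om^{i+j+1}_{{\cal P}^{\rm ex}_{\ul{\lam}}/\os{\circ}{T}})/P_j$. Because $P_j$ has already been divided out, ${\rm gr}^P_k$ of this slot is ${\rm gr}^P_{2j+k+1}({\cal E}\otimes_{{\cal O}_{{\cal P}^{\rm ex}}}\Om^{i+j+1}_{{\cal P}^{\rm ex}_{\ul{\lam}}/\os{\circ}{T}})$ when $2j+k+1>j$ and vanishes otherwise; this is exactly the constraint $j\geq \max\{-k,0\}$. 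Applying the residue isomorphism (\ref{eqn:mpprrn}) of (\ref{prop:pslt}) with filtration index $2j+k+1$ identifies this with $\bigoplus_{\sharp \ul{\mu}=2j+k+1}{\cal E}\otimes_{{\cal O}_{{\cal P}^{\rm ex}}}b_{\ul{\lam}\cup \ul{\mu}*}(\Om^{\bul}_{\os{\circ}{\cal P}{}^{\rm ex}_{\ul{\lam}\cup \ul{\mu}}/\os{\circ}{T}}\otimes_{\mab Z}\vp_{{\rm zar},\ul{\mu}})$ placed in the appropriate degree. The vertical differential $d\log\tau\wedge$ of the Steenbrink complex raises $P$ by exactly one, whereas the absolute $P$-index defining the $P_k$-slot jumps by two between consecutive rows; hence $d\log\tau\wedge$ maps into $P_{2j+k+2}$ and therefore induces the zero map on ${\rm gr}^P_k$. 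So the graded complex decomposes as the direct sum over $j$ of the (suitably shifted) de Rham complexes of the strata $\os{\circ}{\cal P}{}^{\rm ex}_{\ul{\lam}\cup \ul{\mu}}$, with the degree shift $\{-2j-k\}$ dictated by the residue and by the single-complex indexing.

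For step (iii), the only surviving differential is the horizontal one induced by $\nabla$, which on each stratum is the relative de Rham differential. Taking $R\pi_{\ul{\lam},{\rm zar}*}$, using cohomological descent for the \v{C}ech diagram $X_{\ul{\lam},\bul,\os{\circ}{T}_0}$ and the log Poincar\'e lemma (\cite[(6.4)]{klog1}, cf.~\cite[(2.2.7)]{nh2}), each of these de Rham complexes computes $Ru_{\os{\circ}{X}_{\ul{\lam}\cup \ul{\mu},T_0}/\os{\circ}{T}*}(E_{\ul{\lam}\cup \ul{\mu},T_0}/\os{\circ}{T}\otimes_{\mab Z}\vp_{{\rm crys},\ul{\mu}})$, exactly as in the last three lines of (\ref{ali:ruogrvp}). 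Assembling these over $j$ and $\ul{\mu}$ yields the claimed isomorphism.

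The delicate point, and the part I expect to be the main obstacle, is the sign $(-1)^{j+1}$ attached to $d$. It arises by combining the $-\nabla$ sign convention of the boundary morphism (\ref{cd:lccbd}) with the Koszul sign $(-1)^{j}$ that the row-$j$ horizontal differential acquires in the single complex of the Steenbrink double complex, together with the signs intrinsic to (\ref{eqn:mpprrn}); tracking these through the residue identification gives $-(-1)^{j}d=(-1)^{j+1}d$. Verifying this, and simultaneously confirming that $d\log\tau\wedge$ and $\nabla$ interact with the filtration precisely as asserted so that no cross-$j$ Gysin terms survive on ${\rm gr}^P_k$, is the only genuinely non-formal step; once (\ref{prop:pslt}) is in hand the remaining identifications are routine.
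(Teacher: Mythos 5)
Your proposal is correct and follows what is essentially the paper's own route: the paper prints no proof of (\ref{prop:grlp}), but the intended argument is exactly the $\ul{\lam}$-localized version of the chain of identifications (\ref{ali:ruogrvp}) used for (\ref{prop:graxnp}), namely the filtration bookkeeping on the slots $({\cal E}\otimes\Om^{i+j+1}_{{\cal P}^{\rm ex}_{\ul{\lam}}/\os{\circ}{T}})/P_j$ giving the constraint $j\geq\max\{-k,0\}$, the stratum residue isomorphism (\ref{eqn:mpprrn}) in place of (\ref{eqn:prvin}), the vanishing of $d\log\tau\wedge$ on ${\rm gr}^P_k$ because it raises the absolute $P$-index by one while consecutive rows differ by two, and then cohomological descent plus the log Poincar\'e lemma. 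Your sign analysis for $(-1)^{j+1}d$ correctly identifies the two contributing conventions (the $-\nabla$ of (\ref{cd:lccbd}) and the row-dependent Koszul sign in the single complex, interacting with the residue), which is all that the verification requires.
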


%Similarly, for a nonnegative integer $m$, we have the filtered double complex 
%$(A_{\rm zar}({\cal P}^{{\rm ex},(m)}/S(T)^{\nat},{\cal E})^{\bul \bul},P)$ and  
%the filtered single complex 
%$(A_{\rm zar}({\cal P}^{{\rm ex},(m)}/S(T)^{\nat},{\cal E}),P)
%:=s((A_{\rm zar}({\cal P}^{{\rm ex},(m)}/S(T)^{\nat},{\cal E})^{\bul \bul},P))$:

%\begin{align*} 
%A_{\rm zar}({\cal P}^{{\rm ex},(m)}/S(T)^{\nat},{\cal E})^{ij}
%& :=(\oplus_{\# \ul{\lam}=m+1}
%({\cal E}\otimes_{{\cal O}_{{\cal P}^{\rm ex}}}
%\Om^{i+j+1}_{{\cal P}^{\rm ex}_{\ul{\lam}}/\os{\circ}{T}}))/P_j 
%\tag{18.2.2}\label{ali:accmef} \\
%& :=(\oplus_{\# \ul{\lam}=m+1}
%({\cal E}\otimes_{{\cal O}_{{\cal P}^{\rm ex}}}
%\Om^{i+j+1}_{{\cal P}^{\rm ex}_{\ul{\lam}}/\os{\circ}{T}}))/
%P_j({\cal E}
%\otimes_{{\cal O}_{{\cal P}^{\rm ex}}}
%{\Om}^{i+j+1}_{{\cal P}^{\rm ex}_{\ul{\lam}}/\os{\circ}{T}})  
%\quad (i,j \in {\mab N}). 
%\end{align*}  

\begin{defi} 
Set 
\begin{align*} 
(A_{\rm zar}(X^{(\star)}_{\os{\circ}{T}}/S(T)^{\nat},E^{(\star)}),P) 
:=(s(\cdots \lo a^{(m)}_*(\bigoplus_{\# \ul{\lam}=m+1}
A_{\rm zar}(X_{\ul{\lam},\os{\circ}{T}_0}/S(T)^{\nat},E_{\ul{\lam}}))\lo \cdots),\del(L,P)).
\end{align*}  
We call $(A_{\rm zar}(X^{(\star)}_{\os{\circ}{T}}/S(T)^{\nat},E^{(\star)}),P)$ 
the {\it semi-cosimplicial $p$-adic Steenbrink complex} of $E$.  
\end{defi}

We have the following natural filtered morphism 
\begin{align*} 
(A_{\rm zar}(X_{\os{\circ}{T}_0}/S(T)^{\nat},E),P)
\lo
(A_{\rm zar}(X^{(\star)}_{\os{\circ}{T}}/S(T)^{\nat},E^{(\star)}),P).
\end{align*}

%\par 
%Consider the following local situation. 
%${\cal P}={\mab A}_S(3,3)$. 
%Then ${\cal O}_{\cal P}={\cal O}_S[x_0,x_1,x_2]/(x_0x_1x_2)$. 
%Let $X_{\lam}:=\ul{\rm Spec}^{\log}_{\cal P}({\cal O}_{\cal P}/(x_0))$. 
%Then the complex (\ref{eqn:zzp}) for $i=0$ is equal to 
%\begin{align*} 
%{\cal O}_S[x_1,x_2]
%\lo {\cal O}_S[x_1,x_2]\oplus {\cal O}_S[x_2]
%\oplus {\cal O}_S[x_1] \lo 
%{\cal O}_S[x_2]\oplus {\cal O}_S[x_1]
%\oplus {\cal O}_S \lo {\cal O}_S
%\end{align*} 

\begin{prop}\label{prop:kd}
The following diagram is commutative$:$ 
\begin{equation*} 
\begin{CD} 
Ru_{X^{(\star)}_{\os{\circ}{T}_0}/S(T)^{\nat}*}(\eps^*_{X^{(\star)}_{\os{\circ}{T}_0}/S(T)^{\nat}}
(E))
@>{\theta \wedge, \sim}>> 
A_{\rm zar}(X^{(\star)}_{\os{\circ}{T}_0}/S(T)^{\nat},E^{(\star)}) \\
@A{\simeq}AA @AAA \\
Ru_{X_{\os{\circ}{T}_0}/S(T)^{\nat}*}(\eps^*_{X_{\os{\circ}{T}_0}/S(T)^{\nat}}
(E))
@>{\theta \wedge, \sim}>>  
A_{\rm zar}(X_{\os{\circ}{T}_0}/S(T)^{\nat},E).
\end{CD} 
\tag{18.6.1}\label{cd:abb} 
\end{equation*} 
Here the right vertical morphism is an underlying morphism of 
the filtered morphism 
\begin{equation*} 
(A_{\rm zar}(X_{\os{\circ}{T}_0}/S(T)^{\nat},E),P)
\lo 
(A_{\rm zar}(X^{(\star)}_{\os{\circ}{T}_0}/S(T)^{\nat},E^{(\star)}),P).
\tag{18.6.2}\label{cd:apfb} 
\end{equation*} 
\end{prop}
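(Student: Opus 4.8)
The plan is to descend below the application of $R\pi_{{\rm zar}*}$ and to check the commutativity at the level of honest complexes on the \v{C}ech simplicial log scheme ${\cal P}^{\rm ex}_{\bul}$, where all four arrows are given by explicit local formulas; commutativity of the square in $D^+(f^{-1}({\cal O}_T))$ then follows by functoriality of $R\pi_{{\rm zar}*}$ together with the independence results (\ref{theo:indcr}) and (\ref{prop:lid}) that identify the derived images with the objects in the statement.

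First I would unwind the four morphisms in the precohomology-descent picture. By the definitions (\ref{eqn:uz}) and (\ref{eqn:ulz}), both horizontal arrows $\theta \wedge$ are induced by the same local operation $d\log \tau \wedge$, carrying ${\cal E}^{\bul}\otimes_{{\cal O}_{{\cal P}^{\rm ex}_{\bul}}}\Om^i_{{\cal P}^{\rm ex}_{\bul}/S(T)^{\nat}}$ into the Steenbrink double complex, followed by the log Poincar\'e lemma; these formulas are literally the same for $X_{\os{\circ}{T}_0}$ and for each stratum ${\cal P}^{\rm ex}_{\ul{\lam},\bul}$, and hence for the assembled semi-cosimplicial object. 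The left vertical isomorphism is, via (\ref{eqn:exsfpte}) and the cohomological descent underlying (\ref{eqn:eetxte}), realized by the augmentation of the log de Rham complex of ${\cal P}^{\rm ex}_{\bul}$ into the single complex $s(\cdots)$ of the semisimplicial log de Rham complexes. The right vertical morphism is, by (\ref{cd:apfb}), the augmentation $A_{\rm zar}({\cal P}^{\rm ex}_{\bul}/S(T)^{\nat},{\cal E}^{\bul})\lo A_{\rm zar}({\cal P}^{{\rm ex},(\bul)}_{\bul}/S(T)^{\nat},{\cal E}^{\bul})$ into the single complex of the semi-cosimplicial Steenbrink complex. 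Crucially, both augmentations are built out of the identical \v{C}ech restriction maps $\sum_{j}(-1)^j\iota^{\ul{\lam}_j *}_{\ul{\lam}}$ of (\ref{ali:ulm}).

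The key step is then to verify that $d\log \tau \wedge$ commutes with these \v{C}ech restriction maps at the cochain level. This holds because each $\iota^{\ul{\lam}_j *}_{\ul{\lam}}$ is the pullback of differential forms along a closed immersion of strata, hence an ${\cal O}$-algebra homomorphism compatible with the wedge product, whereas $d\log \tau$ is the pullback to ${\cal P}^{\rm ex}_{\bul}$ of the fixed form on $S(T)^{\nat}$; thus $\iota^{\ul{\lam}_j *}(d\log \tau \wedge \omega)=d\log \tau \wedge \iota^{\ul{\lam}_j *}(\omega)$. Consequently, going around the square in either order sends a local section $\omega$ to the same element: wedge with $d\log\tau$ and then restrict to all strata, or restrict first and then wedge. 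Applying $R\pi_{{\rm zar}*}$ yields the asserted commutative diagram.

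The main obstacle I anticipate is the bookkeeping of signs. The Steenbrink complexes carry the shifted filtration-index convention of (\ref{eqn:lpcad}) and the differential conventions of (\ref{cd:lccbd}), while the semi-cosimplicial differential of (\ref{cd:pppoex}) introduces the factors $(-1)^j$ and the alternation of $\nabla$ with $-\nabla$; the horizontal arrows moreover shift degree (a degree $i$ form maps to a degree $i+1$ form modulo $P_j$), and this degree shift must be matched against the semi-cosimplicial degree so that the wedge-versus-restriction interchange holds on the nose. I would therefore pin down once the compatibility of these degree shifts with the \v{C}ech degree before invoking the interchange. Since the statement concerns only the underlying (unfiltered) morphisms and the mere commutativity of the square, it suffices to check this signless compatibility, which is exactly what is encoded in the definitions (\ref{eqn:uz}) and (\ref{eqn:ulz}); this is why the result is immediate from those definitions.
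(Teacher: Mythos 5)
Your proof is correct and takes essentially the same route as the paper: the paper's own argument is simply the observation that the square is commutative from the local descriptions of the four morphisms (citing (\ref{eqn:eetxte}), (\ref{prop:tefc}) and (\ref{prop:tefnc})), which is exactly the cochain-level interchange of $d\log\tau\wedge$ with the \v{C}ech restriction maps that you spell out.
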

\begin{proof} 
Obviously the diagram (\ref{cd:abb}) is commutative.  
(\ref{prop:kd}) follows from (\ref{eqn:eetxte}), (\ref{prop:tefc}) and 
(\ref{prop:tefnc}). 
\end{proof} 

\begin{coro}\label{coro:tme}  
The morphism 
\begin{equation*} 
A_{\rm zar}(X_{\os{\circ}{T}_0}/S(T)^{\nat},E)
\lo A_{\rm zar}(X^{(\star)}_{\os{\circ}{T}_0}/S(T)^{\nat},E^{(\star)})
\tag{18.7.1}\label{cd:aatfb} 
\end{equation*} 
is a quasi-isomorphism. 
\end{coro}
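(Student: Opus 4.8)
The plan is to deduce (\ref{coro:tme}) at once from the commutative square established in (\ref{prop:kd}), using the elementary fact that in a commutative square three of whose four edges are isomorphisms, the remaining edge is automatically an isomorphism. Concretely, the morphism (\ref{cd:aatfb}) whose quasi-isomorphism I want to prove is precisely the right vertical morphism of the diagram (\ref{cd:abb}): this is the content of the last sentence of (\ref{prop:kd}), where that vertical morphism is identified with the underlying morphism of the filtered morphism (\ref{cd:apfb}). So the whole argument is a short diagram chase relying only on results already proved.

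First I would recall that the bottom horizontal morphism $\theta \wedge$ of (\ref{cd:abb}) is an isomorphism in $D^+(f^{-1}_T({\cal O}_T))$ by (\ref{prop:tefc}), and that the top horizontal morphism $\theta \wedge$ is an isomorphism by (\ref{prop:tefnc}). Next I would invoke (\ref{prop:naqi}) to see that the left vertical morphism, namely the canonical comparison
\begin{equation*}
Ru_{X_{\os{\circ}{T}_0}/S(T)^{\nat}*}
(\eps^*_{X_{\os{\circ}{T}_0}/S(T)^{\nat}}(E))
\os{\sim}{\lo}
Ru_{X^{(\star)}_{\os{\circ}{T}_0}/S(T)^{\nat}*}
(\eps^*_{X^{(\star)}_{\os{\circ}{T}_0}/S(T)^{\nat}}(E^{(\star)})),
\end{equation*}
is an isomorphism. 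Since the vertical arrows of (\ref{cd:abb}) point upward, commutativity reads $(\theta \wedge)_{X^{(\star)}} \circ (\text{left}) = (\text{right}) \circ (\theta \wedge)_X$, so the right vertical morphism is the composite
\begin{equation*}
(\text{right}) = (\theta \wedge)_{X^{(\star)}} \circ (\text{left}) \circ (\theta \wedge)_X^{-1}
\end{equation*}
of three isomorphisms, hence is itself an isomorphism in the derived category, i.e. a quasi-isomorphism. As (\text{right}) is by definition the morphism (\ref{cd:aatfb}), this proves (\ref{coro:tme}).

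I expect no serious obstacle here, since all the genuine work has already been carried out: the two log Poincar\'{e} (``$\theta \wedge$'') isomorphisms (\ref{prop:tefc}) and (\ref{prop:tefnc}), the cohomological-descent comparison (\ref{prop:naqi}), and the verification of the commutativity of (\ref{cd:abb}) in (\ref{prop:kd}). The one point I would take care to confirm is that the morphism (\ref{cd:aatfb}) really is literally the right-hand edge of that square, and not merely a map fitting into a formally similar diagram; but this is exactly the way it was introduced, as the underlying morphism of the filtered morphism (\ref{cd:apfb}) recorded in (\ref{prop:kd}), so the identification is immediate.
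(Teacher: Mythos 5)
Your argument is exactly the paper's: Corollary (\ref{coro:tme}) is deduced from the commutative square (\ref{cd:abb}) of (\ref{prop:kd}), whose two horizontal edges are the isomorphisms (\ref{prop:tefc}) and (\ref{prop:tefnc}) and whose left vertical edge is the isomorphism (\ref{eqn:eetxte}), forcing the right vertical edge --- the morphism (\ref{cd:aatfb}) --- to be a quasi-isomorphism. The proposal is correct and matches the paper's proof, merely spelling out the three-out-of-four-edges diagram chase that the paper leaves implicit.
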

\begin{proof} 
This follows from (\ref{prop:kd}). 
\end{proof}

\begin{prop}[{\bf cf.~\cite[p.~1270]{kiha}}]\label{prop:bn}
There exists a filtered morphism 
\begin{align*} 
(H_{\rm zar}(X_{\os{\circ}{T}_0}/S(T)^{\nat},E),P)
\lo
(A_{\rm zar}(X^{(\star)}_{\os{\circ}{T}}/S(T)^{\nat},E^{(\star)}),P)
\tag{18.8.1}\label{ali:haawcc}
%(H_{\rm zar}(X_{\os{\circ}{T}_0}/S(T)^{\nat},E),P):=
%(\wt{R}u_{X^{(\star)}_{\os{\circ}{T}_0}/\os{\circ}{T}*}
%(\eps^*_{X^{(\star)}_{\os{\circ}{T}_0}/\os{\circ}{T}}(E^{(\star)})\langle u \rangle),P)
\end{align*} 
fitting into the following commutative diagram$:$
\begin{equation*} 
\begin{CD}
\wt{R}u_{X^{(\star)}_{\os{\circ}{T}_0}/\os{\circ}{T}*}
(\eps^*_{X^{(\star)}_{\os{\circ}{T}_0}/\os{\circ}{T}}(E^{(\star)})\langle u \rangle)
@=H_{\rm zar}(X_{\os{\circ}{T}_0}/S(T)^{\nat},E)\\
@V{\simeq}VV @VVV\\
Ru_{X^{(\star)}_{\os{\circ}{T}_0}/S(T)^{\nat}*}
(\eps^*_{X^{(\star)}_{\os{\circ}{T}_0}/S(T)^{\nat}}(E^{(\star)}))
@>{\sim}>>A_{\rm zar}(X^{(\star)}_{\os{\circ}{T}}/S(T)^{\nat},E^{(\star)})\\
@A{\simeq}AA @AA{\simeq}A\\
Ru_{X_{\os{\circ}{T}_0}/S(T)^{\nat}*}
(\eps^*_{X_{\os{\circ}{T}_0}/S(T)^{\nat}}(E))
@>{\sim}>>A_{\rm zar}(X_{\os{\circ}{T}}/S(T)^{\nat},E). 
\end{CD}
\tag{18.8.2}\label{ali:maawcc}
\end{equation*}
Consequently the morphism 
\begin{align*} 
H_{\rm zar}(X_{\os{\circ}{T}_0}/S(T)^{\nat},E)\lo
A_{\rm zar}(X^{(\star)}_{\os{\circ}{T}}/S(T)^{\nat},E^{(\star)})
\tag{18.8.3}\label{ali:haancc}
%(H_{\rm zar}(X_{\os{\circ}{T}_0}/S(T)^{\nat},E),P):=
%(\wt{R}u_{X^{(\star)}_{\os{\circ}{T}_0}/\os{\circ}{T}*}
%(\eps^*_{X^{(\star)}_{\os{\circ}{T}_0}/\os{\circ}{T}}(E^{(\star)})\langle U_{S(T)^{\nat}} \rangle),P)
\end{align*} 
is an isomorphism. 
\end{prop}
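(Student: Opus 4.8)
The plan is to construct the filtered morphism (\ref{ali:haawcc}) and then to deduce that it is an isomorphism by verifying the commutativity of (\ref{ali:maawcc}) and invoking isomorphisms already established in the excerpt. The key observation is that both sides of (\ref{ali:haawcc}) are built from the same semi-cosimplicial data $\{{\cal P}^{{\rm ex},(m)}_\bul\}$, with the filtration $\del(L,P)$ on both, so a morphism should arise from a morphism at the level of the underlying double/triple complexes before taking $R\pi_{{\rm zar}*}$. First I would produce, for each fixed $\ul{\lam}$ (equivalently each semi-cosimplicial degree $m$), a filtered morphism from the PD-Hirsch complex $({\cal E}^\bul\otimes\Om^\bul_{{\cal P}^{{\rm ex}}_{\ul{\lam},\bul}/\os{\circ}{T}}\langle U_{S(T)^{\nat}}\rangle,P)$ to the Steenbrink complex $(A_{\rm zar}({\cal P}^{\rm ex}_{\ul{\lam},\bul}/S(T)^{\nat},{\cal E}^\bul),P)$. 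This is exactly the local morphism $\psi_u{\!\!'{}^{\rm loc}}$ (via $\psi_u'$ and the sign-adjusting isomorphisms (\ref{ali:xxst})) constructed in (\ref{theo:ha}) and (\ref{coro:nci}), applied componentwise in the cubical index $\ul{\lam}$; the crystalline Fujisawa morphism $\psi_u$ of (\ref{coro:nci}) is precisely such a morphism for $\ul{\lam}=\emptyset$, and the construction via the residue maps ${\rm Res}'_{{\cal P}_{\bul,\ul{\lam}}}$ is manifestly functorial in $\ul{\lam}$.

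Next I would check that these componentwise morphisms are compatible with the \v{C}ech boundary morphisms in the semi-cosimplicial direction, so that they assemble into a morphism of the single complexes $s(\,\cdots\langle U_{S(T)^{\nat}}\rangle)\to s(A_{\rm zar}({\cal P}^{{\rm ex},(\bul)}/S(T)^{\nat},{\cal E}))$, and that under the diagonal filtrations $\del(L,P)$ on both sides this is filtered. The filtered statement reduces to the filtered assertion in (\ref{theo:ha}) that $\psi_u'$ carries $P_{2j+k+1}$ into $P_k$ with the appropriate shift, shifted further by the semi-cosimplicial degree $m$ in the same way on both sides (this is built into (\ref{eqn:lpcad}) and (\ref{eqn:dfhp})); the shifts match by construction, so the diagonal filtrations correspond. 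Applying $R\pi_{{\rm zar}*}$ then yields (\ref{ali:haawcc}). The compatibility with the sign conventions is exactly why the primed variants $A'_{\rm zar}$ and $H'_{\rm zar}$ were introduced in \S\ref{sec:cwt}, so I would carry out the bookkeeping on those primed complexes, where the relevant diagram (\ref{cd:ssti}) of (\ref{coro:nci}) is already commutative, and transport back.

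To obtain commutativity of (\ref{ali:maawcc}), I would note that the top identification is the definition (\ref{defi:cez}) (5), the left column is the isomorphism (\ref{eqn:eaxte}) of (\ref{prop:ncqi}), the middle bottom isomorphism is $\theta\wedge$ of (\ref{eqn:ulz}) in (\ref{prop:tefnc}), and the bottom square is (\ref{cd:abb}) of (\ref{prop:kd}); the outer square then reduces to the compatibility of $\psi_u$ with $\theta\wedge$ recorded in (\ref{cd:ssti}), together with (\ref{prop:whf}). Once (\ref{ali:maawcc}) commutes, the two vertical isomorphisms on the left (from (\ref{prop:ncqi}) and (\ref{prop:naqi})) and the isomorphism (\ref{eqn:ulz}) on the right force the underlying morphism (\ref{ali:haancc}) to be an isomorphism by the two-out-of-three property, since the lower route is a composite of known isomorphisms.

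The main obstacle I anticipate is not the isomorphism claim, which follows formally once the diagram commutes, but rather the careful verification that the componentwise Fujisawa morphisms genuinely commute with the \v{C}ech differentials in the cubical variable $\ul{\lam}$ while simultaneously respecting the delicate sign function $\eps(m,q)$ of \S\ref{sec:cwt} and the sign $(-1)^{il'}$ in the cup-product conventions (\ref{eqn:dps}). In other words, the hard part is the sign-tracking: one must confirm that the residue morphisms ${\rm Res}'_{{\cal P}_{\bul,\ul{\lam}}}$ intertwine the Steenbrink \v{C}ech-Gysin differential with the semi-cosimplicial differential on the PD-Hirsch side up to precisely the signs absorbed by the normalizations (\ref{ali:xxst}) and (\ref{ali:azeh}). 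I would handle this exactly as in the proof of (\ref{theo:ha}), computing $\psi_u{\!\!'{}^{\rm loc}}\circ(\nabla+d\log\tau\wedge)$ and matching it against the full differential of $A_{\rm zar}(X^{(\star)}_{\os{\circ}{T}}/S(T)^{\nat},E)$ including the \v{C}ech terms, and then invoking the filtered independence results (\ref{theo:indp}), (\ref{prop:lid}) to pass to the derived category.
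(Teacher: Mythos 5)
There is a genuine gap, and it lies in the very first step of your construction: the direction of the Fujisawa morphism. The morphism you are asked to produce in (18.6.1) goes \emph{from} the PD-Hirsch complex $H_{\rm zar}$ \emph{to} the semi-cosimplicial Steenbrink complex $A_{\rm zar}(X^{(\star)}/S(T)^{\nat},E)$. But the crystalline Fujisawa morphism $\psi_u'$ of (17.3.1), and its local avatar $\psi_u{\!\!'{}^{\rm loc}}$ built from the residue maps ${\rm Res}'_{{\cal P}_{\bul,\ul{\lam}}}$, goes the opposite way: from the Steenbrink terms $({\cal E}^{\bul}\otimes\Om^{q+1}_{{\cal P}^{\rm ex}_{\bul}/\os{\circ}{T}})/P_j$ into $\bigoplus_{\ul{\lam}}{\cal O}_T\langle u\rangle\otimes{\cal E}^{\bul}\otimes\Om^{\bul}_{{\cal P}^{\rm ex}_{\bul,\ul{\lam}}/\os{\circ}{T}}$, i.e.\ from $A$ into (a semi-cosimplicial) $H$. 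There is no ``inverse residue'' at the level of complexes, and $\psi_u$ is emphatically \emph{not} a filtered isomorphism (the paper stresses this), so you cannot filteredly invert it to obtain (18.6.1). Assembling componentwise Fujisawa morphisms over $\ul{\lam}$, however carefully you track the signs $\eps(m,q)$ and $(-1)^{il'}$, will only ever give you a map out of a Steenbrink-type complex, not into one. Indeed the whole point of \S18, as announced, is to reprove the comparison of weight filtrations \emph{without} the Fujisawa morphism; the relation between the two constructions is only established afterwards, in (18.11.1).

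The construction the statement actually needs is much simpler and does not involve residues at all: for each $\ul{\lam}$ one composes the natural projection ${\cal E}\otimes\Om^{\bul}_{{\cal P}^{\rm ex}_{\ul{\lam}}/\os{\circ}{T}}\langle U_{S(T)^{\nat}}\rangle\to{\cal E}\otimes\Om^{\bul}_{{\cal P}^{\rm ex}_{\ul{\lam}}/S(T)^{\nat}}$ (killing $u^{[j]}$ for $j\geq 1$ and reducing forms mod $d\log\tau$; a quasi-isomorphism by (4.17.1)) with the morphism $\theta\wedge$ of (18.3.1) into $A_{\rm zar}({\cal P}^{\rm ex}_{\ul{\lam}}/S(T)^{\nat},{\cal E})$. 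Summing over the semi-cosimplicial index gives the two upper horizontal maps of (18.6.4), and the only thing to verify is that this composite respects $\del(L,P)$, which follows because $\theta$ has weight $1$: a class in $u^{[j]}\otimes P_{k+m-2j}\Om^{\bul}_{/\os{\circ}{T}}$ contributes only through its $j=0$ component, whose image under $\theta\wedge$ lands in $P_{k+m+1}\Om^{\bul+1}_{/\os{\circ}{T}}/P_{0}\subset P_{k+m}A_{\rm zar}^{\bul 0}$. The final ``two-out-of-three'' step of your argument for (18.6.3) is sound once the diagram (18.6.2) commutes, but the diagram itself only commutes for the composite just described, not for a residue-based map.
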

\begin{proof} 
By (\ref{eqn:cp}) and (\ref{cd:abb}) 
we have the following commutative diagram 
\begin{equation*} 
\begin{CD}
\wt{R}u_{X^{(\star)}_{\os{\circ}{T}_0}/\os{\circ}{T}*}
(\eps^*_{X^{(\star)}_{\os{\circ}{T}_0}/\os{\circ}{T}} 
(E^{(\star)})\langle u \rangle)
@>{\sim}>> 
Ru_{X^{(\star)}_{\os{\circ}{T}_0}/S(T)^{\nat}*}
(\eps^*_{X^{(\star)}_{T_0}/S(T)^{\nat}}(E^{(\star)}))
\\
@A{\simeq}AA @A{\simeq}AA  \\ 
\wt{R}u_{X_{\os{\circ}{T}_0}/\os{\circ}{T}*}
(\eps^*_{X_{\os{\circ}{T}_0}/\os{\circ}{T}}(E)\langle u \rangle)
@>{\sim}>> Ru_{X_{\os{\circ}{T}_0}/S(T)^{\nat}*}
(\eps^*_{X_{\os{\circ}{T}_0}/S(T)^{\nat}}(E)) 
\end{CD} 
\tag{18.8.4}\label{eqn:kcep}
\end{equation*}  
\begin{equation*} 
\begin{CD}
@>{\theta \wedge, \sim}>> 
A_{\rm zar}(X^{(\star)}_{\os{\circ}{T}_0}/S(T)^{\nat},E^{(\star)})\\
@. @A{\simeq}AA\\ 
@>{\theta \wedge, \sim}>>
A_{\rm zar}(X_{\os{\circ}{T}_0}/S(T)^{\nat},E).  
\end{CD} 
\end{equation*}  
It is almost trivial to check that the upper composite morphism in 
(\ref{eqn:kcep}) induces the following filtered morphism 
\begin{equation*} 
(\wt{R}u_{X^{(\star)}_{\os{\circ}{T}_0}/\os{\circ}{T}*}
(\eps^*_{X^{(\star)}_{\os{\circ}{T}_0}/\os{\circ}{T}}(E^{(\star)})\langle u \rangle), P)
\lo 
(A_{\rm zar}(X^{(\star)}_{\os{\circ}{T}_0}/S(T)^{\nat},E),P). 
\tag{18.8.5}\label{eqn:wut} 
\end{equation*}  
Indeed, since $d\log t \in P_1\Om^{\bul}_{{\cal P}^{{\rm ex},(\bul)}_{\bul}/\os{\circ}{T}}$, 
the following composite morphism 
\begin{align*} 
{\cal E}^{\bul}
\otimes_{{\cal O}_{{\cal P}^{{\rm ex},(\bul)}_{\bul}}}
\Om^{\bul}_{{\cal P}^{{\rm ex},(\bul)}_{\bul}/\os{\circ}{T}}\langle u \rangle
& \lo 
{\cal E}^{\bul}
\otimes_{{\cal O}_{{\cal P}^{{\rm ex},(\bul)}_{\bul}}}
\Om^{\bul}_{{\cal P}^{{\rm ex},(\bul)}_{\bul}/S(T)^{\nat}} \\
&\os{d\log t \wedge}{\lo} 
{\cal E}^{\bul}
\otimes_{{\cal O}_{{\cal P}^{{\rm ex},(\bul)}_{\bul}}}
\Om^{\bul}_{{\cal P}^{{\rm ex},(\bul)}_{\bul}/\os{\circ}{T}}/P_0
\end{align*} 
induces the following morphism 
\begin{align*} 
P_k({\cal E}^{\bul}
\otimes_{{\cal O}_{{\cal P}^{{\rm ex},(\bul)}_{\bul}}}
\Om^i_{{\cal P}^{{\rm ex},(\bul)}_{\bul}/\os{\circ}{T}}\langle u \rangle)
&\lo  
(P_{k+1}+P_0)({\cal E}^{\bul}
\otimes_{{\cal O}_{{\cal P}^{{\rm ex},(\bul)}_{\bul}}}
\Om^i_{{\cal P}^{{\rm ex},(\bul)}_{\bul}/\os{\circ}{T}})/P_0
\end{align*} 
for each $i\in {\mab N}$. 
Hence we obtain the following morphism of complexes: 
\begin{align*} 
P_k({\cal E}^{\bul}
\otimes_{{\cal O}_{{\cal P}^{{\rm ex},(\bul)}_{\bul}}}
\Om^{\bul}_{{\cal P}^{{\rm ex},(\bul)}_{\bul}/\os{\circ}{T}}
\langle u \rangle)
&\lo  (\cdots \lo (P_{2j+k+1}+P_j)({\cal E}^{\bul}
\otimes_{{\cal O}_{{\cal P}^{{\rm ex},(\bul)}_{\bul}}}
\Om^{\bul}_{{\cal P}^{{\rm ex},(\bul)}_{\bul}/\os{\circ}{T}})/P_j\lo \cdots). 
\end{align*} 
Hence we obtain (\ref{prop:bn}). 
\end{proof} 

\begin{coro}
There exists the following diagram 
\begin{align*} 
(H_{\rm zar}(X_{\os{\circ}{T}_0}/S(T)^{\nat},E),P)
\lo
(A_{\rm zar}(X^{(\star)}_{\os{\circ}{T}}/S(T)^{\nat},E^{(\star)}),P)\longleftarrow 
(A_{\rm zar}(X_{\os{\circ}{T}_0}/S(T)^{\nat},E),P)
\tag{18.9.1}\label{ali:haabwcc}
\end{align*}  
which induces the following isomorphisms 
\begin{align*} 
H_{\rm zar}(X_{\os{\circ}{T}_0}/S(T)^{\nat},E)
\os{\sim}{\lo}
A_{\rm zar}(X^{(\star)}_{\os{\circ}{T}}/S(T)^{\nat},E^{(\star)})\os{\sim}{\longleftarrow} 
A_{\rm zar}(X_{\os{\circ}{T}_0}/S(T)^{\nat},E). 
\tag{18.9.2}\label{ali:hmnbwcc}
\end{align*}  
\end{coro}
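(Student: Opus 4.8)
The plan is to assemble the diagram \eqref{ali:haabwcc} directly from the two comparison morphisms already constructed in this section, and then deduce that the underlying morphisms are isomorphisms from the results established in \eqref{prop:bn} and \eqref{coro:tme}. Concretely, the left-pointing arrow in \eqref{ali:haabwcc}, namely the filtered morphism
$$(H_{\rm zar}(X_{\os{\circ}{T}_0}/\os{\circ}{T},E),P)\lo (A_{\rm zar}(X^{(\star)}_{\os{\circ}{T}}/S(T)^{\nat},E),\del(L,P)),$$
is precisely the morphism \eqref{ali:haawcc} produced in \eqref{prop:bn}; the right-pointing arrow
$$(A_{\rm zar}(X_{\os{\circ}{T}_0}/S(T)^{\nat},E),P)\lo (A_{\rm zar}(X^{(\star)}_{\os{\circ}{T}}/S(T)^{\nat},E),\del(L,P))$$
is the filtered morphism \eqref{cd:apfb} already introduced before \eqref{prop:kd}. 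So the first step is simply to record that both filtered morphisms exist and target the same filtered complex, which gives the diagram \eqref{ali:haabwcc} verbatim.

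The second step is to check that the two underlying morphisms of complexes are isomorphisms. For the left arrow this is exactly the conclusion \eqref{ali:haancc} of \eqref{prop:bn}, which states that the underlying morphism
$$H_{\rm zar}(X_{\os{\circ}{T}_0}/\os{\circ}{T},E)\lo A_{\rm zar}(X^{(\star)}_{\os{\circ}{T}}/S(T)^{\nat},E)$$
is an isomorphism. For the right arrow I would invoke \eqref{coro:tme}, which asserts that the underlying morphism $A_{\rm zar}(X_{\os{\circ}{T}_0}/S(T)^{\nat},E)\lo A_{\rm zar}(X^{(\star)}_{\os{\circ}{T}_0}/S(T)^{\nat},E)$ is a quasi-isomorphism; here one should note that the target of \eqref{cd:apfb} agrees with the semi-cosimplicial Steenbrink complex appearing in \eqref{coro:tme}, so the two statements refer to the same arrow. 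Thus both underlying morphisms in \eqref{ali:hmnbwcc} are isomorphisms in the derived category, which is the assertion to be proved.

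Since almost all of the real content has been packaged into \eqref{prop:bn} and \eqref{coro:tme}, the only thing left is bookkeeping, and the \emph{mild} obstacle I anticipate is making sure the two filtered complexes written ``$A_{\rm zar}(X^{(\star)}_{\os{\circ}{T}}/S(T)^{\nat},E)$'' in the source and target of the two arrows are literally the same object (same sign conventions on the boundary morphisms, same diagonal filtration $\del(L,P)$), so that the diagram \eqref{ali:haabwcc} genuinely commutes as a diagram of filtered complexes rather than merely up to a chosen isomorphism. This is guaranteed because the target in \eqref{prop:bn} is constructed from the same double complex $A_{\rm zar}({\cal P}^{{\rm ex},(\star)}/S(T)^{\nat},{\cal E})$ that underlies \eqref{cd:apfb}; the compatibility of the various $\theta\wedge$ and crystalline comparison isomorphisms was already verified in the commutative diagrams \eqref{ali:maawcc} and \eqref{cd:abb}. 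Therefore I would close the argument by citing \eqref{prop:bn} and \eqref{coro:tme} for the two isomorphisms in \eqref{ali:hmnbwcc} and noting that the diagram of filtered morphisms \eqref{ali:haabwcc} is the concatenation of \eqref{ali:haawcc} and \eqref{cd:apfb}.
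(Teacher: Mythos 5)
Your proposal is correct and follows exactly the route the paper intends: the corollary is stated without a separate proof precisely because it is the concatenation of the filtered morphism (\ref{ali:haawcc}) from (\ref{prop:bn}) with the filtered morphism (\ref{cd:apfb}), and the two isomorphisms in (\ref{ali:hmnbwcc}) are supplied by (\ref{ali:haancc}) and (\ref{coro:tme}) respectively. Your remark that both arrows land in the same semi-cosimplicial Steenbrink complex, with compatibility guaranteed by (\ref{ali:maawcc}) and (\ref{cd:abb}), is exactly the bookkeeping the paper relies on.
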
 
%\begin{equation*} 
%\begin{CD} 
%(\wt{R}u_{X^{(\star)}_{\os{\circ}{T}_0}/\os{\circ}{T}*}
%(\eps^*_{X^{(\star)}_{\os{\circ}{T}_0}/\os{\circ}{T}} 
%(E^{(\star)})\langle u \rangle), \del(L,P))
%@>>>
%(A_{\rm zar}(X^{(\star)}_{\os{\circ}{T}_0}/S(T)^{\nat},E),\del(L,P)) \\
%@. @AAA \\ 
%@. (A_{\rm zar}(X_{\os{\circ}{T}_0}/S(T)^{\nat},E),P)
%\end{CD} 
%\tag{8.8.4}\label{eqn:wat} 
%\end{equation*}  

For an integer $k$, 
we can calculate the complex 
${\rm gr}^{P}_k
A_{\rm zar}(X^{(\star)}_{\os{\circ}{T}_0}/S(T)^{\nat},E)$ 
as follows$:$ 
\par 
\begin{align*} 
& {\rm gr}^{P}_k
A_{\rm zar}(X^{(\star)}_{\os{\circ}{T}_0}/S(T)^{\nat},E)
={\rm gr}^{\del(L,P)}_k s(\cdots \lo \bigoplus_{\# \ul{\lam}=m+1}
A_{\rm zar}(X_{\ul{\lam},\os{\circ}{T}_0}/S(T)^{\nat},E_{\ul{\lam}})\lo \cdots)\\
&=\bigoplus_{m\geq 0}
\bigoplus_{\# \ul{\lam}=m+1}
\bigoplus_{j\geq \max \{-(k+m),0\}} 
\bigoplus_{\# \ul{\mu}=2j+k+m+1}
a_{\ul{\lam}\cup \ul{\mu}*}
Ru_{\os{\circ}{X}_{\ul{\lam}\cup \ul{\mu},T_0}/\os{\circ}{T}}
(E_{\os{\circ}{X}_{\ul{\lam}\cup \ul{\mu},T_0}/\os{\circ}{T}}
\otimes_{\mab Z}
\vp_{{\rm crys},\ul{\mu}}(\os{\circ}{X}_{T_0}/\os{\circ}{T}
))\\
&[-k-2m-2j](-j-k-m). 
\end{align*} 
Hence we obtain the following spectral sequence$:$ 
\begin{align*} 
&E_1^{-k,q+k}=\bigoplus_{m\geq 0}
\bigoplus_{\# \ul{\lam}=m+1}
\bigoplus_{j\geq \max \{-(k+m),0\}} 
\bigoplus_{\# \ul{\mu}=2j+k+m+1}
R^{q-2j-k-2m}f_{\os{\circ}{X}_{\ul{\lam}\cup \ul{\mu},T_0}/\os{\circ}{T}*} 
\tag{18.9.3}\label{eqn:espp}\\
& (E_{\os{\circ}{X}_{\ul{\lam}\cup \ul{\mu},T_0}/\os{\circ}{T}}
\otimes_{\mab Z}
\vp^{(2j+k+m)}_{\ul{\mu},{\rm crys}}
(\os{\circ}{X}_{T_0}/\os{\circ}{T}))(-j-k-m;v)
\Lo 
R^qf_{X_{\os{\circ}{T}_0}/S(T)^{\nat}*}
(\eps^*_{X_{\os{\circ}{T}_0}/S(T)^{\nat}}(E)) 
\quad (q\in {\mab Z}).
\end{align*}

\begin{theo}[{\bf $E_2${\rm -}degeneration}]\label{theo:e2d}
Let the notations be as in {\rm \S\ref{sec:e2}}. 
Then the spectral sequence {\rm (\ref{eqn:espp})} 
degenerates at $E_2$ modulo torsion. 
\end{theo}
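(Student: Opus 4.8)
The statement to be proved is the $E_2$-degeneration modulo torsion of the semi-cosimplicial $p$-adic Steenbrink spectral sequence (\ref{eqn:espp}) attached to $A_{\rm zar}(X^{(\star)}_{\os{\circ}{T}_0}/S(T)^{\nat},E)$. The cleanest route is to transport the degeneration already in hand to this new spectral sequence via the comparison isomorphisms established immediately before the statement. Concretely, the corollary to (\ref{prop:bn}) gives a chain of isomorphisms of filtered complexes
\begin{align*}
(H_{\rm zar}(X_{\os{\circ}{T}_0}/\os{\circ}{T},E),P)
\os{\sim}{\lo}
(A_{\rm zar}(X^{(\star)}_{\os{\circ}{T}}/S(T)^{\nat},E),\del(L,P))
\os{\sim}{\longleftarrow}
(A_{\rm zar}(X_{\os{\circ}{T}_0}/S(T)^{\nat},E),P),
\end{align*}
so the three associated spectral sequences are canonically isomorphic from $E_1$ on. Thus the degeneration of (\ref{eqn:espp}) is equivalent to the degeneration of the spectral sequence (\ref{ali:sparh}) attached to $(H_{\rm zar},P)$, which is precisely the content of (\ref{theo:e2dgfam}) (Section \ref{sec:filbo}) under the hypothesis that $\os{\circ}{S}$ is a $p$-adic formal ${\cal V}$-scheme and $X/S_0$ is proper.

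First I would set up the hypotheses exactly as in Section \ref{sec:e2}: take ${\cal V}$ a complete discrete valuation ring of mixed characteristic with perfect residue field, assume $X/S$ proper and $\os{\circ}{S}$ a $p$-adic ${\cal V}$-scheme, and let $(T,z)$ be a (flat) log $p$-adic enlargement. Then I would invoke (\ref{ali:hmnbwcc}) to identify the $E_1$-terms and the $d_1$-differentials of (\ref{eqn:espp}) with those coming from $(H_{\rm zar}(X_{\os{\circ}{T}_0}/\os{\circ}{T},E),P)$, being careful that the filtration $\del(L,P)$ on the semi-cosimplicial Steenbrink complex matches the filtration $P$ on $H_{\rm zar}$ under the comparison isomorphism — this is guaranteed because (\ref{prop:bn}) produces a \emph{filtered} isomorphism, not merely a quasi-isomorphism of underlying complexes. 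Having matched the filtered objects, the induced isomorphism on spectral sequences is automatic from the general formalism of filtered derived categories.

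The decisive input is then (\ref{theo:e2dgfam}), whose proof reduces, via the infinitesimal deformation invariance (\ref{coro:finvcae}) and Dwork's trick, to the case $S=s$ and $T={\cal W}(s)$ treated in (\ref{theo:e2dam}); there the $E_1$-terms are crystalline cohomologies of proper smooth schemes $\os{\circ}{X}_{\ul{\lam}\cup\ul{\mu}}$ tensored with orientation sheaves and Tate-twisted, hence pure modulo torsion by the purity results (\cite{kme}, \cite{clpu}, \cite{ndw}), and a weight argument forces all higher differentials to vanish rationally. Since purity is a statement about the $E_1$-terms, which I will have identified with those of (\ref{eqn:espp}), and since the Tate twists $(-j-k-m;v)$ in (\ref{eqn:espp}) carry exactly the weight data needed for the weight argument, the degeneration transfers verbatim.

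The main obstacle I anticipate is not the weight argument itself but the bookkeeping of the filtration indices and Tate twists across the comparison isomorphisms: the filtration on $A_{\rm zar}(X^{(\star)}/S(T)^{\nat},E)$ is the \emph{diagonal} filtration $\del(L,P)$, which shifts $P$ by the semi-cosimplicial degree, whereas the spectral sequence (\ref{ali:sparh}) for $H_{\rm zar}$ already incorporates this shift; I must verify that the $E_1$-page of (\ref{eqn:espp}), with its summation ranges $\sharp\ul{\mu}=2j+k+m+1$ and twist $(-j-k-m)$, corresponds term-by-term to the $E_1$-page of the $H_{\rm zar}$-spectral sequence under (\ref{ali:hmnbwcc}), so that a filtration-degree $-k$ in one matches $-k$ in the other. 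Once this dictionary is pinned down — which is a direct, if tedious, comparison of the graded pieces computed in (\ref{prop:grlp}) and (\ref{prop:ngr}) — the $E_2$-degeneration modulo torsion follows immediately from (\ref{theo:e2dgfam}).
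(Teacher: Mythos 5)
Your central step fails. You claim that (\ref{ali:haabwcc}) gives ``a chain of isomorphisms of filtered complexes'' so that the spectral sequences of $(H_{\rm zar},P)$, $(A_{\rm zar}(X^{(\star)}/S(T)^{\nat},E),\del(L,P))$ and $(A_{\rm zar},P)$ agree from $E_1$ on. But (\ref{prop:bn}) and its corollary only produce \emph{filtered morphisms} whose \emph{underlying} morphisms (\ref{ali:hmnbwcc}) are isomorphisms; they are not filtered isomorphisms, i.e.\ they are not isomorphisms on graded pieces. The paper is emphatic on this point: the crystalline Fujisawa morphism is ``not a filtered isomorphism at all in general,'' and the whole content of (\ref{theo:text})/(\ref{theo:mrtp}) is that the \emph{induced filtrations on cohomology} coincide after tensoring with $K$ --- a statement whose proof needs the $E_2$-degenerations as input, so you cannot use it (or the comparison morphisms) to deduce the degeneration. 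Concretely, the $E_1$-pages are visibly non-isomorphic: comparing (\ref{ali:sparh}) with (\ref{eqn:espp}), the former has $\sharp\ul{\mu}=k+m-2j$ with $j\geq 0$ and vanishes for $k<0$ (since $P_{-1}H_{\rm zar}=0$), while the latter has $\sharp\ul{\mu}=2j+k+m+1$ with $j\geq\max\{-(k+m),0\}$ and has nonzero terms in columns $-k$ with $k<0$ (e.g.\ $m=0$, $j=1$, $k=-1$). So the ``term-by-term dictionary'' you defer to the end does not exist, and the transfer argument collapses.

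What does work --- and is the paper's actual proof --- is to run the weight argument directly on (\ref{eqn:espp}) itself, exactly as in \S\ref{sec:filbo} and \cite[(5.4.3)]{nb}: use the log convergence of the filtration and the infinitesimal deformation invariance together with Dwork's trick to reduce to $T={\cal W}(s)$ with $\os{\circ}{s}$ the spectrum of a finite field; there each summand $R^{q-2j-k-2m}f_{\os{\circ}{X}_{\ul{\lam}\cup\ul{\mu}}/{\cal W}*}(\cdots)(-j-k-m)$ of $E_1^{-k,q+k}$ in (\ref{eqn:espp}) is pure of Frobenius weight $(q-2j-k-2m)+2(j+k+m)=q+k$ modulo torsion by \cite{kme}, \cite{clpu}, \cite{ndw}, so $d_r\colon E_r^{-k,q+k}\to E_r^{-k+r,q+k-r+1}$ maps weight $q+k$ to weight $q+k-r+1$ and vanishes rationally for $r\geq 2$. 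You already have all the ingredients for this in your third paragraph; the fix is simply to apply them to the $E_1$-terms of (\ref{eqn:espp}) directly rather than to terms you have (incorrectly) identified with those of another spectral sequence.
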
 
\begin{proof} 
As in the proofs in \S\ref{sec:filbo}, 
the proof of this theorem is the same as that of \cite[(5.4.3)]{nb}.
\end{proof} 

Now we can give another proof of (\ref{theo:text}) without using 
the crystalline Fujisawa's morphism (\ref{defi:crf}) in the previous section. 
%Consequently we do not necessarily assume that $U_{S(T)^{\nat}}$ is free.  

\begin{theo}[{\bf Comparison theorem of weight filtrations}]\label{theo:mrtp}
Let the notations be as in {\rm \S\ref{sec:e2}}. 
Let ${}_1P$, ${}_2P$, ${}_3P$ be the induced filtrations on 
$R^qf_{X_{\os{\circ}{T}_0}/S(T)^{\nat}*}({\cal O}_{X_{\os{\circ}{T}_0}/S(T)^{\nat}})$ 
by the filtered complexes 
$(A_{\rm zar}(X_{\os{\circ}{T}_0}/S(T)^{\nat}),P)$, 
$(A_{\rm zar}(X^{(\star)}_{\os{\circ}{T}_0}/S(T)^{\nat}),P)$ 
and $(H_{\rm zar}(X_{\os{\circ}{T}_0}/S(T)^{\nat}),P)$, respectively. 
Let ${}_iP_K$ be the induced filtration on 
$R^qf_{X_{\os{\circ}{T}_0}/S(T)^{\nat}*}
({\cal O}_{X_{\os{\circ}{T}_0}/S(T)^{\nat}})\otimes_{\cal V}K$ by ${}_iP$. 
Then \begin{align*} 
{}_1P_K={}_2P_K={}_3P_K.
\end{align*} 
\end{theo}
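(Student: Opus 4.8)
The plan is to prove the three filtration coincidences by a chain of comparisons, exploiting the three filtered quasi-isomorphisms already established among the relevant complexes. The essential inputs are the isomorphisms in \eqref{ali:hmnbwcc}, which provide a zig-zag of morphisms of filtered complexes
\begin{align*}
(H_{\rm zar}(X_{\os{\circ}{T}_0}/\os{\circ}{T}),P)
\lo
(A_{\rm zar}(X^{(\star)}_{\os{\circ}{T}}/S(T)^{\nat}),\del(L,P))
\longleftarrow
(A_{\rm zar}(X_{\os{\circ}{T}_0}/S(T)^{\nat}),P),
\end{align*}
each of whose underlying morphisms is a quasi-isomorphism. The subtle point is that these are quasi-isomorphisms of the underlying complexes but not, a priori, filtered quasi-isomorphisms; so the three filtrations ${}_1P$, ${}_2P$, ${}_3P$ that they induce on the common cohomology $R^qf_{X_{\os{\circ}{T}_0}/S(T)^{\nat}*}({\cal O}_{X_{\os{\circ}{T}_0}/S(T)^{\nat}})$ need not agree integrally. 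The claim is only that they agree after $\otimes_{\cal V}K$, and the mechanism for upgrading from ``same underlying morphism on cohomology'' to ``same filtration rationally'' is the $E_2$-degeneration together with the weight-purity yoga.

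First I would record that the morphisms in the zig-zag induce the \emph{identity} on $R^qf_*({\cal O})$ after passing to cohomology, since all three arise as realizations of $Ru_{X_{\os{\circ}{T}_0}/S(T)^{\nat}*}$ via the compatible isomorphisms $\theta\wedge$ of \eqref{eqn:uz}, \eqref{eqn:ulz} and \eqref{eqn:exte}; this is exactly the content of the commutative diagram \eqref{ali:maawcc}. Thus a filtered morphism of filtered complexes inducing the identity on the abutment gives filtered inclusions ${}_3P_K \subseteq {}_2P_K$ and ${}_1P_K \subseteq {}_2P_K$ (or the reverse inclusions, depending on the direction of the arrows) on $R^qf_*({\cal O})_K$. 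The strategy is then to show each such inclusion is an equality by a dimension count on the graded pieces, i.e. by comparing the $E_\infty$-terms of the three weight spectral sequences.

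Next I would invoke the $E_2$-degeneration modulo torsion: for $(A_{\rm zar}(X^{(\star)}_{\os{\circ}{T}_0}/S(T)^{\nat}),P)$ this is \eqref{theo:e2d}, for $(A_{\rm zar}(X_{\os{\circ}{T}_0}/S(T)^{\nat}),P)$ it is \eqref{theo:e2dgfam}, and for $(H_{\rm zar},P)$ it is \eqref{theo:e2nd}. Once all three spectral sequences degenerate at $E_2$ after $\otimes_{\cal V}K$, the ranks of the graded pieces ${\rm gr}^{{}_iP_K}_k R^qf_*({\cal O})_K$ are computed by the $E_2$-terms, which in turn are subquotients of the $E_1$-terms described in \eqref{eqn:espp}, \eqref{eqn:esafsp} and \eqref{ali:sparh}. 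Following the reduction technique already used for \eqref{theo:text}, I would reduce to the case where $\os{\circ}{T}$ is the formal spectrum of the Witt ring of a finite field: this is legitimate because all three filtrations extend to convergent $F$-isocrystals (the convergence results of \S\ref{sec:e2}, in particular \eqref{theo:pwfaec}, apply to each; for ${}_3P$ this uses the same convergence argument sketched in the proof of \eqref{theo:intro}), and because of the deformation invariance \eqref{coro:finvliae}. Over a finite field the weights of the $E_1$-terms are pure by \cite{kme}, \cite{clpu} and \cite{ndw}, so each degenerate weight spectral sequence is strict, the graded pieces are identified by their weights, and the filtered inclusions coming from the zig-zag become equalities of graded dimensions, hence equalities of filtrations.

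The main obstacle will be organizing the bookkeeping of the indexing shifts. The three complexes carry genuinely different-looking $E_1$-terms: $A_{\rm zar}$ uses a single Steenbrink-type index $j$, the semi-cosimplicial version $A_{\rm zar}(X^{(\star)})$ carries both the cosimplicial degree $m$ and the Steenbrink index (with the diagonal filtration $\del(L,P)$ shifting weights by the semi-cosimplicial degree), and $H_{\rm zar}$ carries the PD-divided-power index alongside these. One must check that under the zig-zag the weight-$w$ pure piece of one $E_1$ lands in the weight-$w$ pure piece of the next, accounting for the Tate twists $(-j-k-m;v)$ in \eqref{eqn:espp} versus $(-j-k;u)$ in \eqref{eqn:esafsp}. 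The cleanest route is not to match $E_1$-terms directly but to use purity: after reduction to the finite-field case each ${}_iP_K$ is characterized intrinsically as the weight filtration on $R^qf_*({\cal O})_K$ in Grothendieck's sense, and any filtered morphism inducing the identity on the abutment and strictly compatible with pure weights must preserve the weight filtration. I therefore expect the real work to be verifying that the zig-zag morphisms are strictly compatible with the weight filtrations modulo torsion over a finite field — which follows once $E_2$-degeneracy and purity are in hand — and then propagating the equality back to a general $p$-adic formal ${\cal V}$-scheme $\os{\circ}{T}$ via convergence and deformation invariance.
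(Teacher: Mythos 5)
Your proposal is correct and, at its core, follows the same route as the paper: show that each of the three filtrations extends to a convergent $F$-isocrystal, use Ogus's results and deformation invariance to reduce to the case of a finite base field, and then conclude by the purity of the Frobenius weights on the $E_1$-terms of the three weight spectral sequences, which characterizes each induced filtration intrinsically as the weight filtration. The additional scaffolding you describe (the zig-zag of filtered morphisms from (18.7.1) and the $E_2$-degeneration statements) is consistent with the paper but not needed for the final purity argument, which the paper applies directly.
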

\begin{proof}
The filtration ${}_1P_K$ 
extends to a convergent $F$-isocrystal 
on $\os{\circ}{T}/{\cal V}$ locally on $\os{\circ}{T}$. 
Similarly we can prove that ${}_2P_K$ and ${}_3P_K$ also 
extend to a convergent $F$-isocrystal 
on $\os{\circ}{T}/{\cal V}$ locally on $\os{\circ}{T}$. 
By using these results, a fundamental result in \cite[(3.17)]{od}, 
we may assume that $S$ is the log point of a perfect field of characteristic $p>0$. 
By the standard technique, we may assume that $\os{\circ}{S}$ is an affine  
smooth scheme over a finite field ${\mab F}_q$ and then we may assume that 
$\os{\circ}{S}={\rm Spec}({\mab F}_q)$. In this case 
${}_1P_K={}_2P_K={}_3P_K$ by the purity of the Frobenius weight 
for the crystalline cohomology of a proper smooth scheme over ${\mab F}_q$.  
\end{proof} 

\begin{rema}
Let the notations be as in \cite[(5.29)]{fup}. 
By using arguments in this section (but without using 
the morphism in \cite[(5.26)]{fup}), 
we can prove that there exists an isomorphism 
\begin{align*} 
H^q(Y,A)\os{\sim}{\lo} H^q(Y,K)
\end{align*} 
of mixed Hodge structures. 
This is a much simpler proof than his proof in [loc.~cit.]. 
\end{rema}

We conclude this section by establishing 
a relation between the crystalline Fujisawa's morphism 
and the morphisms in  (\ref{ali:haabwcc}). 
The following proposition is quite surprising to me 
because I first thought that 
the following diagram is not commutative at all. 
It also tells us why we do not need 
the crystalline Fujisawa's morphism to prove (\ref{theo:text})
in this section. 

\begin{prop}
The following diagram is commutative: 
\begin{equation*}
\begin{CD} 
A_{\rm zar}(X_{\os{\circ}{T}_0}/S(T)^{\nat},E)
@>{\psi}>>
H_{\rm zar}(X_{\os{\circ}{T}_0}/S(T)^{\nat},E)\\
@V{\simeq}VV @VV{\simeq}V \\
A_{\rm zar}(X^{(\star)}_{\os{\circ}{T}_0}/S(T)^{\nat},E^{(\star)})
@=A_{\rm zar}(X^{(\star)}_{\os{\circ}{T}_0}/S(T)^{\nat},E^{(\star)}).
\end{CD}
\tag{18.13.1}\label{cd:ppb}
\end{equation*} 
\end{prop}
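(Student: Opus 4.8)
The plan is to establish \eqref{cd:ppb} as a commutative diagram in $D^+(f^{-1}({\cal O}_T))$ by expressing each of its three non-identity arrows as a composite of the canonical comparison isomorphisms already at our disposal, and then reducing the whole statement to the single square \eqref{eqn:cp}. For brevity write $Ru_X:=Ru_{X_{\os{\circ}{T}_0}/S(T)^{\nat}*}(\eps^*(E))$, $Ru_{X^{(\star)}}:=Ru_{X^{(\star)}_{\os{\circ}{T}_0}/S(T)^{\nat}*}(\eps^*(E^{(\star)}))$, $\wt{R}_X:=\wt{R}u_{X_{\os{\circ}{T}_0}/\os{\circ}{T}*}(\eps^*(E)\langle U_{S(T)^{\nat}}\rangle)$ and $\wt{R}_{X^{(\star)}}:=\wt{R}u_{X^{(\star)}_{\os{\circ}{T}_0}/\os{\circ}{T}*}(\eps^*(E^{(\star)})\langle U_{S(T)^{\nat}}\rangle)=H_{\rm zar}(X_{\os{\circ}{T}_0}/\os{\circ}{T},E)$. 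I would name the six isomorphisms in play: $\alpha\colon Ru_X\os{\sim}{\lo}A_{\rm zar}(X_{\os{\circ}{T}_0}/S(T)^{\nat},E)$ and $\alpha'\colon Ru_{X^{(\star)}}\os{\sim}{\lo}A_{\rm zar}(X^{(\star)}_{\os{\circ}{T}_0}/S(T)^{\nat},E)$ from \eqref{eqn:uz} and \eqref{eqn:ulz}; $\beta\colon \wt{R}_X\os{\sim}{\lo}Ru_X$ and $\beta'\colon \wt{R}_{X^{(\star)}}\os{\sim}{\lo}Ru_{X^{(\star)}}$ from \eqref{eqn:exte} and \eqref{eqn:eaxte}; and the two cohomological-descent isomorphisms $\gamma\colon \wt{R}_X\os{\sim}{\lo}\wt{R}_{X^{(\star)}}$ from \eqref{ali:eetie} of (\ref{coro:lb}) and $c\colon Ru_X\os{\sim}{\lo}Ru_{X^{(\star)}}$ from \eqref{eqn:eetxte}.

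With these names each arrow of \eqref{cd:ppb} is read off from a diagram already proved. The diagram \eqref{cd:ssti} of (\ref{coro:nci}) gives $\psi_u\circ\alpha=\gamma\circ\beta^{-1}$, that is $\psi_u=\gamma\circ\beta^{-1}\circ\alpha^{-1}$; the top square of \eqref{ali:maawcc} in (\ref{prop:bn}) gives the right vertical arrow $\rho_H$ of \eqref{cd:ppb} as $\rho_H=\alpha'\circ\beta'$; and the bottom square of \eqref{ali:maawcc}, equivalently \eqref{cd:abb} of (\ref{prop:kd}), gives the left vertical arrow $\rho_A$ by $\rho_A\circ\alpha=\alpha'\circ c$, that is $\rho_A=\alpha'\circ c\circ\alpha^{-1}$. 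Then I would simply compose:
\begin{align*}
\rho_H\circ\psi_u
&=(\alpha'\circ\beta')\circ(\gamma\circ\beta^{-1}\circ\alpha^{-1})\\
&=\alpha'\circ(\beta'\circ\gamma\circ\beta^{-1})\circ\alpha^{-1},
\end{align*}
so that the desired identity $\rho_H\circ\psi_u=\rho_A$ becomes equivalent to the single relation $\beta'\circ\gamma=c\circ\beta$.

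Finally, $\beta'\circ\gamma=c\circ\beta$ is nothing other than the commutativity of the square \eqref{eqn:cp} established in (\ref{prop:whf}): its bottom and top edges are $\beta$ and $\beta'$, its left edge is $\gamma$, and its right edge is $c$. This closes the argument in the underlying derived category. Since $\psi_u$, $\rho_A$ and $\rho_H$ are all filtered morphisms by (\ref{coro:nci}), (\ref{coro:tme}) and (\ref{prop:bn}), and each of $\alpha,\alpha',\beta,\beta',\gamma,c$ respects $P$, the very same computation upgrades the commutativity to the filtered derived category.

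The hard part will not be the formal composition above but the verification that the comparison isomorphisms occurring in the four cited diagrams are literally the same morphisms, so that the composites glue: I must check that the arrow $\wt{R}_X\to\wt{R}_{X^{(\star)}}=H_{\rm zar}$ appearing in the right column of \eqref{cd:ssti} is the $\gamma$ of \eqref{eqn:cp}, that the $\theta\wedge$-isomorphisms $\alpha,\alpha'$ used in \eqref{cd:ssti}, \eqref{ali:maawcc} and \eqref{cd:abb} are the ones of \eqref{eqn:uz} and \eqref{eqn:ulz}, and likewise for $\beta,\beta',c$. All of these are canonical and built from the same local data of (\ref{prop:naqi}), so they do agree. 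This is also where the apparent implausibility of the statement dissolves: the sign bookkeeping that makes a direct cochain-level comparison of $\psi_u$ with the augmentation morphism look hopeless is entirely absorbed into the constructions of $\psi_u$ through the auxiliary complexes $A'_{\rm zar}$, $H'_{\rm zar}$ and the parity function $\eps(m,q)$ of (\ref{theo:ha}) and into $\alpha,\alpha'$, so that working diagrammatically with these objects rather than with explicit forms requires no further sign computation.
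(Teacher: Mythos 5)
Your proof is correct and follows essentially the same route as the paper, which simply observes that the commutativity of \eqref{cd:ppb} is immediate from the commutativity of the diagrams \eqref{cd:ssti} and \eqref{ali:maawcc}. Your explicit unwinding of the arrows into $\alpha,\alpha',\beta,\beta',\gamma,c$ and the reduction to \eqref{eqn:cp} is just a spelled-out version of that same argument, since the proof of \eqref{ali:maawcc} itself rests on \eqref{eqn:cp} and \eqref{cd:abb}.
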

\begin{proof}
This immediately follows from the commutativity of 
the diagrams (\ref{cd:ssti}) 
and (\ref{ali:maawcc}). 
\end{proof}

In fact, we can also obtain the following: 

\begin{theo}\label{theo:sfjw}
%Assume that $U_{S(T)^{\nat}}$ is free. 
%Take a basis $u$ of $U_{S(T)^{\nat}}$. 
There exists a filtered morphism 
\begin{equation*} 
\psi^{(\star)} \col (A_{\rm zar}(X^{(\star)}_{\os{\circ}{T}_0}/S(T)^{\nat},E),P)
\lo (H_{\rm zar}(X_{\os{\circ}{T}_0}/S(T)^{\nat},E),P)
\tag{18.14.1}\label{ali:msidwcc}
\end{equation*} 
such that 
the composite morphism 
\begin{align*} 
(A_{\rm zar}(X_{\os{\circ}{T}_0}/S(T)^{\nat},E),P)
\lo (A_{\rm zar}(X^{(\star)}_{\os{\circ}{T}_0}/S(T)^{\nat},E),P)
\os{\psi^{(\star)}}{\lo} (H_{\rm zar}(X_{\os{\circ}{T}_0}/S(T)^{\nat},E),P)
\tag{18.14.2}\label{ali:mswcc}
\end{align*} 
is equal to $\psi$. 
%The underlying morphism $\psi^{(\star)}_u \col A_{\rm zar}(X^{(\star)}_{\os{\circ}{T}_0}/S(T)^{\nat},E) 
%\lo H_{\rm zar}(X_{\os{\circ}{T}_0}/S(T)^{\nat},E)$ is the inverse morphism of 
%(\ref{ali:haawcc}). 
It fits into the following commutative diagram$:$
\begin{equation*} 
\begin{CD}
\wt{R}u_{X^{(\star)}_{\os{\circ}{T}_0}/\os{\circ}{T}*}
(\eps^*_{X^{(\star)}_{\os{\circ}{T}_0}/\os{\circ}{T}}(E^{(\star)})\langle u \rangle)
@=H_{\rm zar}(X_{\os{\circ}{T}_0}/S(T)^{\nat},E)\\
@V{\simeq}VV @AAA\\
Ru_{X^{(\star)}_{\os{\circ}{T}_0}/S(T)^{\nat}*}
(\eps^*_{X^{(\star)}_{\os{\circ}{T}_0}/S(T)^{\nat}}(E^{(\star)}))
@>{\sim}>>A_{\rm zar}(X^{(\star)}_{\os{\circ}{T}}/S(T)^{\nat},E^{(\star)})\\
@A{\simeq}AA @AA{\simeq}A\\
Ru_{X_{\os{\circ}{T}_0}/S(T)^{\nat}*}
(\eps^*_{X_{\os{\circ}{T}_0}/S(T)^{\nat}}(E))
@>{\sim}>>A_{\rm zar}(X_{\os{\circ}{T}}/S(T)^{\nat},E). 
\end{CD}
\tag{18.14.3}\label{ali:mamewcc}
\end{equation*}
%Consequently the morphism {\rm (\ref{ali:msidwcc})} is independent of the choice of 
%the basis $u$ of $U_{S(T)^{\nat}}$. 
\end{theo} 
\begin{proof} 
We obtain the filtered complex $(A'_{\rm zar}(X^{(\star)}_{\os{\circ}{T}_0}/S(T)^{\nat},E),P)$ 
as in the previous section. 
Then we obtain the following morphism 
$$(A'_{\rm zar}(X^{(\star)}_{\os{\circ}{T}_0}/S(T)^{\nat},E),P)\lo 
(H'_{\rm zar}(X^{(\star)}_{\os{\circ}{T}_0}/S(T)^{\nat},E),P)$$
by the following morphism 
\begin{align*} 
\psi^{(\star),{\rm loc}}{}' \col 
\bigoplus_{\ul{\lam}}
\bigoplus_{j,q}({\cal E}^{\bul}
\otimes_{{\cal O}_{{\cal P}^{\rm ex}_{\bul}}}
{\Om}^{q+1}_{{\cal P}^{\rm ex}_{\bul\ul{\lam}}/\os{\circ}{T}})/
P_j({\cal E}^{\bul}
\otimes_{{\cal O}_{{\cal P}^{\rm ex}_{\bul}}}
{\Om}^{q+1}_{{\cal P}^{\rm ex}_{\bul\ul{\lam}}/\os{\circ}{T}})
\lo  
\us{q,\ul{\lam}}{\bigoplus}
{\cal O}_T\langle u\rangle \otimes_{{\cal O}_T}
{\cal E}^{\bul}\otimes_{{\cal O}_{{\cal P}^{\rm ex}}}
\Om^{q+1-\# \ul{\lam}}_{{\cal P}^{\rm ex}_{\bul\ul{\lam}/\os{\circ}{T}}}
\end{align*} 
defined by 
\begin{align*} 
\psi^{(\star),{\rm loc}}{}'  :=
\sum_{\ul{\lam}}
\sum_{j\geq 0}\sum_{\{\ul{\mu}\,\vert\,\# \ul{\mu}\,\geq\,j+1\}}
u^{[\# \ul{\mu}-(j+1)]}\otimes {\rm Res}'{}^{\ul{\lam}}_{{\cal P}_{\bul  \ul{\mu}}}
\tag{18.14.4}\label{ali:pies}
\end{align*} 
(cf.~(\ref{ali:pis})) as in the previous section. 
Here ${\rm Res}'{}^{\ul{\lam}}_{{\cal P}_{\bul  \ul{\mu}}}$ is the following residue morphism
$${\cal E}^{\bul}
\otimes_{{\cal O}_{{\cal P}^{\rm ex}_{\bul}}}
{\Om}^{\bul}_{{\cal P}^{\rm ex}_{\bul\ul{\lam}}/\os{\circ}{T}}/
P_j({\cal E}^{\bul}
\otimes_{{\cal O}_{{\cal P}^{\rm ex}_{\bul}}}
{\Om}^{\bul}_{{\cal P}^{\rm ex}_{\bul\ul{\lam}}/\os{\circ}{T}})
\lo 
{\cal O}_T\langle u\rangle \otimes_{{\cal O}_T}
{\cal E}^{\bul}\otimes_{{\cal O}_{{\cal P}^{\rm ex}}}
\Om^{\bul-\# \ul{\mu}}_{{\cal P}^{\rm ex}_{\bul\ul{\lam}\cup \ul{\mu}/\os{\circ}{T}}}$$
with respect to $\ul{\mu}$. 
We leave the rest of the proof to the reader because 
the proof is the same as that of (\ref{theo:ha}). 
\end{proof}

\section{$p$-adic variational filtered log hard Lefschetz conjecture}\label{sec:vlc}
In this section we prove the $p$-adic variational filtered log hard Lefschetz conjecture 
(\ref{conj:lhilc}) with respect to the weight filtration 
when there exists a fiber obtained by a projective semistable family 
over a complete discrete valuation ring in any characteristic. 
\par  
First let us recall the first log crystalline Chern class defined in \cite[(1.8.16)]{nb}, 
which is the log version of the first log crystalline Chern class defined in \cite[(3.1)]{boi}. 
\par 
Let $(T,{\cal J},\del)$ be a fine log PD-scheme on which $p$ is locally nilpotent. 
Let $T_0$ be the exact closed log subscheme defined by ${\cal J}$. 
Let $Y$ be a fine simplicial log scheme over $T_0$. 
Let $M_{Y/T}$ be a sheaf on ${\rm Crys}(Y/T)$ such that,  
for an object $T'$ of ${\rm Crys}(Y/T)$,  
$\Gam(T',M_{Y/T}):=\Gam(T',M_{T'})$. 
Let $i\col Y_{\rm zar}\lo (Y/T)_{\rm crys}$ be the canonical morphism 
defined by $\Gam(U,i^*E):=E((U,U,0))$, where $U$ is an open log subscheme of $Y$ and   
let ${\cal J}_{Y/T}$ be a sheaf on ${\rm Crys}(Y/T)$ defined by 
$\Gam((U',T',\del'),{\cal J}_{Y/T}):=\Gam(T',{\rm Ker}({\cal O}_{T'}\lo {\cal O}_{U'}))$.  
Then we have the following exact sequence 
\begin{align*}
0\lo 1+{\cal J}_{Y/T}\lo M_{Y/T}\lo i_*(M_Y)\lo 0. 
\end{align*} 
(This is the log version of the exact sequence in \cite[(3.1.2)]{boi}.)
Since $M_Y$ and $M_{Y/T}$ are sheaves of integral monoids, 
the following sequence is exact: 
\begin{align*}
0\lo 1+{\cal J}_{Y/T}\lo M^{\rm gp}_{Y/T}\lo i_*(M^{\rm gp}_Y)\lo 0. 
\tag{19.0.1}\label{ali:mygyp}
\end{align*} 
Hence we have the following composite morphism 
\begin{align*} 
c_{1,{\rm crys}}\col M_Y^{\rm gp}\lo 
Ru_{Y/T*}(1+{\cal J}_{Y/T})[1]\os{\log}{\lo} 
Ru_{Y/T*}({\cal J}_{Y/T})[1]\lo Ru_{Y/T*}({\cal O}_{Y/T})[1]. 
\tag{19.0.2}\label{ali:mygp}
\end{align*} 
In \cite{nb} we have called the morphism (\ref{ali:mygp}) 
the first log crystalline Chern class map of $Y/T$.

\par 
Let $Y_{\bul}$ be the \v{C}ech diagram of an open covering of $Y$ and 
let $Y_{\bul}\os{\sus}{\lo} {\cal Y}_{\bul}$ be an immersion 
into a log smooth simplicial log scheme over $T$. 
Let ${\mathfrak E}_{\bul}$ 
be the log PD-envelope of this immersion over $(T,{\cal J},\del)$. 
Consider the following complex 
\begin{align*} 
M^{\rm gp}_{{\mathfrak E}_{\bul}}\os{d\log}{\lo} 
{\cal O}_{{\mathfrak E}_{\bul}}
\otimes_{{\cal O}_{{\cal Y}_{\bul}}}
\Om^1_{{\cal Y}_{\bul}/T}\os{d}{\lo}
{\cal O}_{{\mathfrak E}_{\bul}}
\otimes_{{\cal O}_{{\cal Y}_{\bul}}}
\Om^2_{{\cal Y}_{\bul}/T} \os{d}{\lo}\cdots. 
\end{align*} 
Denote this complex by 
${\cal O}_{{\mathfrak E}_{\bul}}
\otimes_{{\cal O}_{{\mathfrak E}_{\bul}}}
\Om^{\times}_{{\cal Y}_{\bul} /T}$. 
Set 
$${\cal K}^{\times}_{{\mathfrak E}_{\bul}/T}:={\rm Ker}
({\cal O}_{{\mathfrak E}_{\bul}}
\otimes_{{\cal O}_{{\cal Y}_{\bul}}}
\Om^{\times}_{{\cal Y}_{\bul}/T}\lo 
M^{\rm gp}_{Y_{\bul}}).$$ 
Set also 
$${\cal J}^{\bul}_{{\mathfrak E}_{\bul}}:={\rm Ker}
({\cal O}_{{\mathfrak E}_{\bul}}
\otimes_{{\cal O}_{{\cal Y}_{\bul}}}
\Om^{\bul}_{{\cal Y}_{\bul}/T}\lo 
{\cal O}_{Y_{\bul}}).$$ 
By using (\ref{ali:mygyp}), we obtain the following logarithm 
$$\log \col {\cal K}^{\times}_{{\mathfrak E}_{\bul}/T}\lo 
{\cal J}^{\bul}_{{\mathfrak E}_{\bul}}$$ 
as in \cite[(3.2)]{boi}. 
Because the following sequence 
\begin{align*} 
0\lo {\cal K}^{\times}_{{\mathfrak E}_{\bul}/T}\lo 
{\cal O}_{{\mathfrak E}_{\bul}}
\otimes_{{\cal O}_{{\cal Y}_{\bul}}}
\Om^{\times}_{{\cal Y}_{\bul}/T}\lo 
M^{\rm gp}_{Y_{\bul}}\lo 0
\end{align*} 
is exact, 
we obtain the following morphism
\begin{align*} 
M^{\rm gp}_Y=R\pi_{{\rm zar}*}(M^{\rm gp}_{Y_{\bul}})
\lo 
R\pi_{{\rm zar}*}({\cal K}^{\times}_{{\mathfrak E}_{\bul}/T})[1].
\end{align*} 
Hence we obtain the following composite morphism 
\begin{align*} 
M^{\rm gp}_{Y}\lo &
R\pi_{{\rm zar}*}({\cal K}^{\times}_{{\mathfrak E}_{\bul}/T})[1]
\os{\log}{\lo} R\pi_{{\rm zar}*}({\cal J}^{\bul}_{{\mathfrak E}_{\bul}})[1]
\lo R\pi_{{\rm zar}*}({\cal O}_{{\mathfrak E}_{\bul}}
\otimes_{{\cal O}_{{\cal Y}_{\bul}}}
\Om^{\bul}_{{\cal Y}_{\bul}/T})[1]\tag{19.0.3}\label{ali:zac}\\
&=Ru_{Y/T*}({\cal O}_{Y/T})[1].
\end{align*} 

In \cite[(1.8.17)]{nb} we have proved the following: 

\begin{prop}[{\bf \cite[(1.8.17)]{nb}}]\label{prop:ofc}
The first log crystalline Chern class map
$c_{1,{\rm crys}}$ in {\rm (\ref{ali:mygp})} is equal to 
the morphism {\rm (\ref{ali:zac})}. 
\end{prop}

\par 
Let the notations be as in \S\ref{sec:psc}.   
Next we define the first log crystalline Chern class map in 
$\wt{R}u_{Y/\os{\circ}{T}*}
({\cal O}_{Y/\os{\circ}{T}})[1]$ by replacing $T$ with $\os{\circ}{T}$.  
Namely, 
consider the following complex: 
\begin{align*} 
{\cal O}_{{\mathfrak E}_{\bul}}
\otimes_{{\cal O}_{{\cal Q}_{\bul}}}
\Om^{\times}_{{\cal Q}_{\bul}/\os{\circ}{T}}:=
(M^{\rm gp}_{{\mathfrak E}_{\bul}}
\os{d\log}{\lo} {\cal O}_{{\mathfrak E}_{\bul}}
\otimes_{{\cal O}_{{\cal Q}_{\bul}}}
\Om^1_{{\cal Q}_{\bul}/\os{\circ}{T}}
\os{d}{\lo}
{\cal O}_{{\mathfrak E}_{\bul}}
\otimes_{{\cal O}_{{\cal Q}_{\bul}}}
\Om^2_{{\cal Q}_{\bul}/\os{\circ}{T}}  \os{d}{\lo}\cdots).  
\end{align*}  
Set 
\begin{align*} 
{\cal K}^{\times}_{{\mathfrak E}_{\bul}/\os{\circ}{T}}
:={\rm Ker}({\cal O}_{{\mathfrak E}_{\bul}}
\otimes_{{\cal O}_{{\cal Q}_{\bul}}}
\Om^{\times}_{{\cal Q}_{\bul}/\os{\circ}{T}} 
\lo M^{\rm gp}_{Y_{\bul}})
\end{align*} 
and 
$${\cal J}^{\bul}_{{\mathfrak E}_{\bul}} 
:={\rm Ker}({\cal O}_{{\mathfrak E}_{\bul}}
\otimes_{{\cal O}_{{\cal Q}_{\bul}}}
\Om^{\bul}_{{\cal Q}_{\bul}/\os{\circ}{T}}\lo {\cal O}_{Y_{\bul}}).$$ 
Then we have the following logarithm morphism 
\begin{align*} 
\log \col {\cal K}^{\times}_{{\mathfrak E}_{\bul}/\os{\circ}{T}} 
\lo {\cal J}^{\bul}_{{\mathfrak E}_{\bul}}.  
\end{align*} 
and the following morphism as above: 
\begin{align*} 
M^{\rm gp}_{Y}\lo 
R\pi_{{\rm zar}*}({\cal K}^{\times}_{{\mathfrak E}_{\bul}/\os{\circ}{T}})[1].
\end{align*} 
Hence we obtain the following composite morphism 
\begin{align*} 
M^{\rm gp}_{Y}&\lo 
R\pi_{{\rm zar}*}({\cal K}^{\times}_{{\mathfrak E}_{\bul}/\os{\circ}{T}})[1]
\os{\log}{\lo} 
R\pi_{{\rm zar}*}({\cal J}^{\bul}_{{\mathfrak E}_{\bul}})[1]
\tag{19.1.1}\label{ali:upuwc}\\
&\lo R\pi_{{\rm zar}*}({\cal O}_{{\mathfrak E}_{\bul}}
\otimes_{{\cal O}_{{\cal Q}_{\bul}}}
\Om^{\bul}_{{\cal Q}_{\bul}/\os{\circ}{T}})[1]=
\wt{R}u_{Y/\os{\circ}{T}}({\cal O}_{Y/\os{\circ}{T}})[1]. 
\end{align*} 
We denote this morphism by $\wt{c}_{1,{\rm crys}}$. 
\par 
Finally we define the ``Hirsch extension'' of $\wt{c}_{1,{\rm crys}}$.
\par 
Let 
$\langle{\cal J}_{{\mathfrak E}_{\bul}},U_{S(T)^{\nat}}\rangle$ 
be the PD-ideal of 
$\Gam_{{\cal O}_{{\mathfrak E}_{\bul}}}
({\cal O}_{{\mathfrak E}_{\bul}}\otimes_{{\cal O}_T}U_{S(T)^{\nat}})$ 
%{\cal O}_{{\mathfrak E}_{\bul}}\langle u\rangle$ 
generated by 
${\cal J}_{{\mathfrak E}_{\bul}}$ and 
$\Gam_{{\cal O}_{{\mathfrak E}_{\bul}},i}
({\cal O}_{{\mathfrak E}_{\bul}}\otimes_{{\cal O}_T}U_{S(T)^{\nat}})$
%$\{u^{[i]}~\vert~
$(i\in {\mab Z}_{\geq 1})$.  
This is indeed a PD-ideal sheaf of 
$\Gam_{{\cal O}_{{\mathfrak E}_{\bul}}}
({\cal O}_{{\mathfrak E}_{\bul}}\otimes_{{\cal O}_T}U_{S(T)^{\nat}})$. 
Note that any local section of 
$\langle{\cal J}_{{\mathfrak E}_{\bul}}, U_{S(T)^{\nat}}\rangle$
is locally nilpotent. 
Consider the subgroup 
$1+\langle{\cal J}_{{\mathfrak E}_{\bul}},U_{S(T)^{\nat}}\rangle$ in 
$\Gam_{{\cal O}_{{\mathfrak E}_{\bul}}}
({\cal O}_{{\mathfrak E}_{\bul}}\otimes_{{\cal O}_T}U_{S(T)^{\nat}})^*$.  
Let 
$\al \col M_{{\mathfrak E}_{\bul}}\lo {\cal O}_{{\mathfrak E}_{\bul}}$ 
be the strucutral morphism. 
Identify  ${\cal O}_{{\mathfrak E}_{\bul}}^*$ with the image of it by $\al$. 
Set  
\begin{align*} 
M_{{\mathfrak E}_{\bul}}\langle u \rangle 
:=M_{{\mathfrak E}_{\bul}}\oplus_{1+{\cal J}_{{\mathfrak E}_{\bul}}}
(1+\langle{\cal J}_{{\mathfrak E}_{\bul}}, U_{S(T)^{\nat}}\rangle)
\end{align*} 
and $M^{\rm gp}_{{\mathfrak E}_{\bul}}\langle u \rangle 
:=(M_{{\mathfrak E}_{\bul}}\langle u \rangle)^{\rm gp}$. 
We obtain the following natural morphism 
\begin{align*} 
M^{\rm gp}_{{\mathfrak E}_{\bul}}\langle u \rangle 
\lo {\cal O}_{{\mathfrak E}_{\bul}}
\otimes_{{\cal O}_{{\cal Q}_{\bul}}}
\Om^1_{{\cal Q}_{\bul}/\os{\circ}{T}}\langle u \rangle 
\end{align*} 
by using the morphism 
$d\log \col M^{\rm gp}_{{\mathfrak E}_{\bul}}\lo \Om^1_{{\cal Q}_{\bul}/\os{\circ}{T}}$ 
and the local section $d\log t$. 
Consider the following complex: 
\begin{align*} 
M^{\rm gp}_{{\mathfrak E}_{\bul}}\langle u \rangle 
\lo {\cal O}_{{\mathfrak E}_{\bul}}
\otimes_{{\cal O}_{{\cal Q}_{\bul}}}
\Om^1_{{\cal Q}_{\bul}/\os{\circ}{T}}\langle u \rangle 
\lo
{\cal O}_{{\mathfrak E}_{\bul}}
\otimes_{{\cal O}_{{\cal Q}_{\bul}}}
\Om^2_{{\cal Q}_{\bul}/\os{\circ}{T}}\langle u \rangle \lo\cdots.  
\end{align*} 
Denote this complex by ${\cal O}_{{\mathfrak E}_{\bul}}
\otimes_{{\cal O}_{{\cal Q}_{\bul}}}
\Om^{\times}_{{\cal Q}_{\bul}/\os{\circ}{T}}\langle u \rangle$. 
Set 
\begin{align*} 
{\cal K}^{\times}_{{\mathfrak E}_{\bul}/\os{\circ}{T}}\langle u \rangle 
:={\rm Ker}({\cal O}_{{\mathfrak E}_{\bul}}
\otimes_{{\cal O}_{{\cal Q}_{\bul}}}
\Om^{\times}_{{\cal Q}_{\bul}/\os{\circ}{T}}\langle u \rangle
\lo M^{\rm gp}_{Y_{\bul}}). 
\end{align*} 
Set also 
$${\cal J}^{\bul}_{{\mathfrak E}_{\bul}}\langle u \rangle 
:={\rm Ker}({\cal O}_{{\mathfrak E}_{\bul}}
\otimes_{{\cal O}_{{\cal Q}_{\bul}}}
\Om^{\bul}_{{\cal Q}_{\bul}/\os{\circ}{T}}\langle u \rangle\lo {\cal O}_{Y_{\bul}}).$$ 
Then we have the following logarithm morphism 
\begin{align*} 
\log \col {\cal K}^{\times}_{{\mathfrak E}_{\bul}/\os{\circ}{T}}\langle u \rangle 
\lo {\cal J}^{\bul}_{{\mathfrak E}_{\bul}}\langle u \rangle.  
\end{align*} 
Because the following exact sequence 
\begin{align*} 
0\lo {\cal K}^{\times}_{{\mathfrak E}_{\bul}/\os{\circ}{T}}\langle u \rangle 
\lo {\cal O}_{{\mathfrak E}_{\bul}}
\otimes_{{\cal O}_{{\cal Q}_{\bul}}}
\Om^{\times}_{{\cal Q}_{\bul}/\os{\circ}{T}}\langle u \rangle \lo 
M^{\rm gp}_{Y_{\bul}}\lo 0
\end{align*} 
is exact, 
we obtain the following morphism
\begin{align*} 
M^{\rm gp}_{Y}\lo 
R\pi_{{\rm zar}*}({\cal K}^{\times}_{{\mathfrak E}_{\bul}/\os{\circ}{T}}\langle u \rangle )[1].
\end{align*} 
Hence we obtain the following composite morphism 
\begin{align*} 
M^{\rm gp}_{Y}&\lo 
R\pi_{{\rm zar}*}({\cal K}^{\times}_{{\mathfrak E}_{\bul}/\os{\circ}{T}}\langle u \rangle )[1]
\os{\log}{\lo} 
R\pi_{{\rm zar}*}({\cal J}^{\bul}_{{\mathfrak E}_{\bul}}\langle u \rangle)[1]
\tag{19.1.2}\label{ali:upc}\\
&\lo R\pi_{{\rm zar}*}({\cal O}_{{\mathfrak E}_{\bul}}
\otimes_{{\cal O}_{{\cal Q}_{\bul}}}
\Om^{\bul}_{{\cal Q}_{\bul}/\os{\circ}{T}}
\langle u \rangle)[1]=\wt{R}u_{Y/\os{\circ}{T}}({\cal O}_{Y/\os{\circ}{T}}\langle u \rangle)[1]. 
\end{align*} 

\begin{prop-defi}\label{prop-defi:ch}
The morphism (\ref{ali:upc}) is independent of the choices of 
an open covering of $s$ and 
the simplicial immersion $Y_{\bul}\os{\sus}{\lo}{\cal Q}_{\bul}$ 
into a log smooth scheme over $S(T)^{\nat}$. 
We call the morphism {\rm (\ref{ali:upc})} the {\it Hirsch extension} of 
$\wt{c}_{1,{\rm crys}}$. 
We denote it by  $\wt{c}_{1,{\rm crys}}\langle u \rangle$. 
\end{prop-defi}
\begin{proof}
To show the independence of the choices is a routine work, 
we leave the proof to the reader. 
\end{proof}

\begin{prop}\label{prop:hc}
The following diagram 
\begin{equation*} 
\begin{CD}
M_{Y}^{\rm gp}@>{\wt{c}_{1,{\rm crys}}\langle u \rangle}>>
\wt{R}u_{Y/\os{\circ}{T}}
({\cal O}_{Y/\os{\circ}{T}}\langle u \rangle)[1]\\
@| @VVV\\
M_{Y}^{\rm gp}@>{\wt{c}_{1,{\rm crys}}}>>
\wt{R}u_{Y/\os{\circ}{T}}
({\cal O}_{Y/\os{\circ}{T}})[1]\\
@| @VVV\\
M_{Y}^{\rm gp}@>{c_{1,{\rm crys}}}>>
Ru_{Y/S(T)^{\nat}}({\cal O}_{Y/S(T)^{\nat}})[1]
\end{CD}
\tag{19.3.1}\label{ali:mnh}
\end{equation*}
is commutative. 
\end{prop}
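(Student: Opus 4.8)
The plan is to prove both squares of \eqref{ali:mnh} at once by recognizing that the three first chern class maps $\wt{c}_{1,{\rm crys}}\langle U_{S(T)^{\nat}}\rangle$, $\wt{c}_{1,{\rm crys}}$ and $c_{1,{\rm crys}}$ are produced by one and the same four-step recipe, differing only in the ambient de Rham complex that receives the class. In each case one starts from the short exact sequence $0\lo {\cal K}^{\times}\lo {\cal O}_{{\mathfrak E}_{\bul}}\otimes_{{\cal O}_{{\cal Q}_{\bul}}}\Om^{\times}\lo M^{\rm gp}_{Y_{\bul}}\lo 0$, passes to the connecting morphism $M_Y^{\rm gp}\lo R\pi_{{\rm zar}*}({\cal K}^{\times})[1]$, applies the logarithm $\log \col {\cal K}^{\times}\lo {\cal J}^{\bul}$, and finally composes with the inclusion ${\cal J}^{\bul}\hookrightarrow {\cal O}_{{\mathfrak E}_{\bul}}\otimes\Om^{\bul}$, as in \eqref{ali:upuwc} and \eqref{ali:upc}. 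Since the connecting homomorphism of a short exact sequence, the logarithm, and the inclusion are all functorial, the two squares will commute as soon as the two right-hand vertical arrows are induced by morphisms of the corresponding data (short exact sequences and complexes) that are the identity on the common bottom term $M_Y^{\rm gp}$. Thus I would reduce the whole statement, via the explicit \v{C}ech--de Rham models, to the naturality of the connecting homomorphism under a morphism of short exact sequences, which is standard homological algebra.

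For the lower square, I would take the vertical arrow $\wt{R}u_{Y_{\os{\circ}{T}_0}/\os{\circ}{T}}({\cal O}_{Y_{\os{\circ}{T}_0}/\os{\circ}{T}})\lo Ru_{Y_{\os{\circ}{T}_0}/S(T)^{\nat}*}({\cal O})$ to be the one induced by the canonical quotient $\Om^{\bul}_{{\cal Q}_{\bul}/\os{\circ}{T}}\lo \Om^{\bul}_{{\cal Q}_{\bul}/S(T)^{\nat}}$ (the first map of the triangle \eqref{ali:crtn}). This quotient is compatible with $d\log \col M^{\rm gp}_{{\mathfrak E}_{\bul}}\lo \Om^1$, hence induces morphisms ${\cal K}^{\times}_{{\mathfrak E}_{\bul}/\os{\circ}{T}}\lo {\cal K}^{\times}_{{\mathfrak E}_{\bul}/S(T)^{\nat}}$ and ${\cal J}^{\bul}_{{\mathfrak E}_{\bul}}\lo {\cal J}^{\bul}_{{\mathfrak E}_{\bul}/S(T)^{\nat}}$ fitting into a morphism of the defining short exact sequences that is the identity on $M^{\rm gp}_{Y_{\bul}}$. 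Identifying $c_{1,{\rm crys}}$ with the explicit construction via $\Om^{\bul}_{{\cal Q}_{\bul}/S(T)^{\nat}}$ by the (relative) analogue of \eqref{prop:ofc}, the commutativity of the lower square follows by the naturality just described.

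For the upper square, the relevant vertical arrow is induced by the PD-augmentation $\Gam_{{\cal O}_T}(L_{S(T)^{\nat}})\lo {\cal O}_T$, which yields group homomorphisms $1+\langle {\cal J}_{{\mathfrak E}_{\bul}},U_{S(T)^{\nat}}\rangle \lo 1+{\cal J}_{{\mathfrak E}_{\bul}}$ and hence $M^{\rm gp}_{{\mathfrak E}_{\bul}}\langle U_{S(T)^{\nat}}\rangle \lo M^{\rm gp}_{{\mathfrak E}_{\bul}}$, together with the compatible maps ${\cal K}^{\times}\langle U_{S(T)^{\nat}}\rangle \lo {\cal K}^{\times}$ and ${\cal J}^{\bul}\langle U_{S(T)^{\nat}}\rangle \lo {\cal J}^{\bul}$. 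Again these assemble into a morphism of the two short exact sequences restricting to the identity on $M^{\rm gp}_{Y_{\bul}}$, so naturality of the connecting morphism and of $\log$ gives the upper square. The well-definedness (independence of the covering and the simplicial immersion) of $\wt{c}_{1,{\rm crys}}\langle U_{S(T)^{\nat}}\rangle$ is exactly \eqref{prop-defi:ch}, and the composite of the two right verticals is compatible with the canonical isomorphism \eqref{eqn:exfte}.

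The step I expect to be the main obstacle is the verification that these vertical arrows are genuine morphisms of the complexes in play, i.e.\ that the augmentation and the quotient actually intertwine the differentials. This is delicate precisely because the Hirsch differential carries the extra term $u^{[i-1]}\otimes H_{\varphi}(u)$ with $H_{\varphi}(u)=d\log \tau$, which is nonzero in $\Om^1_{{\cal Q}_{\bul}/\os{\circ}{T}}$; one must therefore work at the level of the derived category and realize the upper vertical map through the relative complex $\Om^{\bul}_{{\cal Q}_{\bul}/S(T)^{\nat}}$, where $d\log\tau=0$, using the quasi-isomorphisms behind \eqref{eqn:exfte}. Once the model is fixed and the sign conventions (as in the $(-1)$-normalizations used earlier in the paper) are tracked through the connecting homomorphism, the remaining computations are routine and I would leave them to the reader.
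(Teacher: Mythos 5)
Your proposal is correct and follows essentially the same route as the paper: the printed proof consists precisely of the three commutative diagrams you describe in words --- the morphism of the defining short exact sequences (with identity on $M^{\rm gp}_{Y_{\bul}}$), its compatibility with the logarithms, and its compatibility with the inclusions of the ${\cal J}^{\bul}$'s into the de Rham complexes --- after which commutativity follows from naturality of the connecting morphism. Your closing caveat, that the PD-augmentation does not literally intertwine the Hirsch differential with the differential of $\Om^{\bul}_{{\cal Q}_{\bul}/\os{\circ}{T}}$ and so the vertical arrows should be interpreted through the relative complex in the derived category, is a point the paper's displayed diagrams pass over silently, but it does not alter the argument.
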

\begin{proof}
This follows from the following commutative diagram of exact sequences 
and the following two commutative diagrams: 
\begin{equation*} 
\begin{CD}
0@>>> {\cal K}^{\times}_{{\mathfrak E}_{\bul}/\os{\circ}{T}}\langle u \rangle 
@>>> {\cal O}_{{\mathfrak E}_{\bul}}
\otimes_{{\cal O}_{{\cal Q}_{\bul}}}
\Om^{\times}_{{\cal Q}_{\bul}/\os{\circ}{T}}\langle u \rangle @>>>
M^{\rm gp}_{Y_{\bul}}@>>> 0\\
@. @VVV @VVV @|\\
0@>>> {\cal K}^{\times}_{{\mathfrak E}_{\bul}/\os{\circ}{T}}
@>>> {\cal O}_{{\mathfrak E}_{\bul}}
\otimes_{{\cal O}_{{\cal Q}_{\bul}}}
\Om^{\times}_{{\cal Q}_{\bul}/\os{\circ}{T}} @>>>
M^{\rm gp}_{Y_{\bul}}@>>> 0\\
@. @VVV @VVV @|\\
0@>>> {\cal K}^{\times}_{{\mathfrak E}_{\bul}/S(T)^{\nat}}@>>>
{\cal O}_{{\mathfrak E}_{\bul}}
\otimes_{{\cal O}_{{\cal Q}_{\bul}}}
\Om^{\times}_{{\cal Q}_{\bul}/S(T)^{\nat}}@>>>
M^{\rm gp}_{Y_{\bul}}@>>> 0, 
\end{CD}
\end{equation*}

\begin{equation*} 
\begin{CD}
{\cal K}^{\times}_{{\mathfrak E}_{\bul}/\os{\circ}{T}}\langle u \rangle 
@>{\log}>> {\cal J}^{\bul}_{{\mathfrak E}_{\bul}}\langle u \rangle \\
@VVV @VVV \\
{\cal K}^{\times}_{{\mathfrak E}_{\bul}/\os{\circ}{T}}
@>{\log}>> {\cal J}^{\bul}_{{\mathfrak E}_{\bul}}  \\
@VVV @| \\
{\cal K}^{\times}_{{\mathfrak E}_{\bul}/S(T)^{\nat}} 
@>{\log}>> {\cal J}^{\bul}_{{\mathfrak E}_{\bul}} 
\end{CD}
\end{equation*}
and 
\begin{equation*} 
\begin{CD}
{\cal J}^{\bul}_{{\mathfrak E}_{\bul}}\langle u \rangle 
@>{\subset}>> {\cal O}_{{\mathfrak E}_{\bul}}
\otimes_{{\cal O}_{{\cal Q}_{\bul}}}
\Om^{\bul}_{{\cal Q}_{\bul}/\os{\circ}{T}}\langle u \rangle \\
@VVV @VVV \\
{\cal J}^{\bul}_{{\mathfrak E}_{\bul}} 
@>{\subset}>> {\cal O}_{{\mathfrak E}_{\bul}}
\otimes_{{\cal O}_{{\cal Q}_{\bul}}}
\Om^{\bul}_{{\cal Q}_{\bul}/\os{\circ}{T}} \\
@| @VVV \\
{\cal J}^{\bul}_{{\mathfrak E}_{\bul}} 
@>{\subset}>> {\cal O}_{{\mathfrak E}_{\bul}}
\otimes_{{\cal O}_{{\cal Q}_{\bul}}}
\Om^{\bul}_{{\cal Q}_{\bul}/S(T)^{\nat}}. 
\end{CD}
\end{equation*}
\end{proof}

\begin{prop}\label{prop:nl}
Let $Y/S$ and $T/S$ be as in {\rm \S\ref{sec:psc}}.
The following composite morphism 
\begin{align*} 
{\cal O}_{Y}^* \subset M_Y^{\rm gp} \os{c_{1,{\rm crys}}}{\lo}
Ru_{Y/S(T)^{\nat}*}({\cal O}_{Y/S(T)^{\nat}})[1] 
\os{N_{\rm zar}[1]}{\lo} 
Ru_{Y/S(T)^{\nat}*}({\cal O}_{Y/S(T)^{\nat}})[1] 
\tag{19.4.1}\label{ali:oyt}
\end{align*} 
is zero. 
Consequently $N_{\rm zar}(\lam)=0$ in 
$R^2g_{Y/S(T)^{\nat}*}({\cal O}_{Y/S(T)^{\nat}})$ 
for a local section of $\lam \in R^1g_{Y/S(T)^{\nat}*}({\cal O}_{Y}^*)$.
\end{prop}
\begin{proof} 
Let the notations be as in {\rm \S\ref{sec:psc}}.
Consider the following complex 
\begin{align*} 
{\cal O}_{{\mathfrak E}_{\bul}}\otimes_{{\cal O}_{{\cal Q}_{\bul}}}
\Om^{\times}_{{\cal Q}_{\bul}/S(T)^{\nat}}:=
{\cal O}_{{\mathfrak E}_{\bul}}^*\os{d\log}{\lo} 
{\cal O}_{{\mathfrak E}_{\bul}}\otimes_{{\cal O}_{{\cal Q}_{\bul}}}
\Om^1_{{\cal Q}_{\bul}/S(T)^{\nat}}\os{d}{\lo} 
{\cal O}_{{\mathfrak E}_{\bul}}\otimes_{{\cal O}_{{\cal Q}_{\bul}}}
\Om^2_{{\cal Q}_{\bul}/S(T)^{\nat}}\os{d}{\lo}\cdots.   
\end{align*} 
%Here note that $d\log$ is well-defined because 
%${\cal O}_{{\mathfrak E}_{\bul}}\otimes_{{\cal O}_{{\cal Q}_{\bul}}}
%\Om^{\bul}_{{\cal Q}_{\bul}/S(T)^{\nat}}
%=\Om^{\bul}_{{\mathfrak E}_{\bul}/S(T)^{\nat},[~]}$. 
Set ${\cal O}_{{\mathfrak E}_{\bul}}\otimes_{{\cal O}_{{\cal Q}_{\bul}}}
{\cal K}^{\times}_{{\cal Q}_{\bul}/S(T)^{\nat}}
:={\rm Ker}({\cal O}_{{\mathfrak E}_{\bul}}\otimes_{{\cal O}_{{\cal Q}_{\bul}}}
\Om^{\times}_{{\cal Q}_{\bul}/S(T)^{\nat}}\lo 
{\cal O}^*_{Y_{\os{\circ}{T}_0\bul}})$. 
Then we have the following exact sequence: 
\begin{align*} 
0\lo {\cal O}_{{\mathfrak E}_{\bul}}\otimes_{{\cal O}_{{\cal Q}_{\bul}}}
{\cal K}^{\times}_{{\cal Q}_{\bul}/S(T)^{\nat}}
\lo {\cal O}_{{\mathfrak E}_{\bul}}\otimes_{{\cal O}_{{\cal Q}_{\bul}}}
\Om^{\times}_{{\cal Q}_{\bul}/S(T)^{\nat}}\lo {\cal O}^*_{Y_{\os{\circ}{T}_0\bul}}\lo 0.
\end{align*} 
Let $c_{1,{\rm dR}}\col 
{\cal O}^*_{Y_{\os{\circ}{T}_0\bul}}\lo 
{\cal O}_{{\mathfrak E}_{\bul}}\otimes_{{\cal O}_{{\cal Q}_{\bul}}}
\Om^{\bul}_{{\cal Q}_{\bul}/S(T)^{\nat}}[1]$ be 
the following composite morphism 
\begin{align*} 
{\cal O}^*_{Y_{\os{\circ}{T}_0\bul}}
\lo 
{\cal O}_{{\mathfrak E}_{\bul}}\otimes_{{\cal O}_{{\cal Q}_{\bul}}}
{\cal K}^{\times}_{{\cal Q}_{\bul}/S(T)^{\nat}}[1]\os{\log}{\lo}  
{\cal O}_{{\mathfrak E}_{\bul}}\otimes_{{\cal O}_{{\cal Q}_{\bul}}}
\Om^{\bul}_{{\cal Q}_{\bul}/S(T)^{\nat}}[1]. 
\tag{19.4.2}\label{ali:oykt}
\end{align*} 
%Here the morphism $\log$ is the logarithm in the degree $0$ and 
%the identities in the degrees $\geq 1$. 
By the log version of the argument of \cite[(3.3)]{boi}, 
the following diagram is commutative: 
\begin{equation*} 
\begin{CD}
{\cal O}_{Y}^*
@>{c_{1,{\rm crys}}}>>
Ru_{Y/S(T)^{\nat}*}
({\cal O}_{Y/S(T)^{\nat}})[1]\\
@| @|\\
{\cal O}_{Y}^*
@>{c_{1,{\rm dR}}}>>
R\pi_{{\rm zar}*}
({\cal O}_{{\mathfrak E}_{\bul}}\otimes_{{\cal O}_{{\cal Q}_{\bul}}}
\Om^{\bul}_{{\cal Q}_{\bul}/S(T)^{\nat}})[1]. 
\end{CD}
\end{equation*} 
Consider also the following complex 
\begin{align*} 
{\cal O}_{{\mathfrak E}_{\bul}}\otimes_{{\cal O}_{{\cal Q}_{\bul}}}
\Om^{\times}_{{\cal Q}_{\bul}/\os{\circ}{T}}:=
{\cal O}_{{\mathfrak E}_{\bul}}^*\os{d\log}{\lo} 
{\cal O}_{{\mathfrak E}_{\bul}}\otimes_{{\cal O}_{{\cal Q}_{\bul}}}
\Om^1_{{\cal Q}/\os{\circ}{T}}\os{d}{\lo} 
{\cal O}_{{\mathfrak E}_{\bul}}\otimes_{{\cal O}_{{\cal Q}_{\bul}}}
\Om^2_{{\cal Q}_{\bul}/\os{\circ}{T}}\os{d}{\lo}\cdots.   
\end{align*} 
Let $c'_{1,{\rm dR}}\col 
{\cal O}^*_{Y_{\os{\circ}{T}_0\bul}}
\lo {\cal O}_{{\mathfrak E}_{\bul}}\otimes_{{\cal O}_{{\cal Q}_{\bul}}}
\Om^{\bul}_{{\cal Q}_{\bul}/\os{\circ}{T}}[1]$ be 
the following composite morphism 
\begin{align*} 
{\cal O}^*_{Y_{\os{\circ}{T}_0\bul}}
\lo {\cal O}_{{\mathfrak E}_{\bul}}\otimes_{{\cal O}_{{\cal Q}_{\bul}}}
{\cal K}^{\times}_{{\cal Q}_{\bul}/\os{\circ}{T}}[1]\lo 
{\cal O}_{{\mathfrak E}_{\bul}}\otimes_{{\cal O}_{{\cal Q}_{\bul}}}
\Om^{\bul}_{{\cal Q}_{\bul}/\os{\circ}{T}}[1]. 
\end{align*} 
Obviously we have the following commutative diagram 
\begin{equation*} 
\begin{CD}
{\cal O}^*_{Y_{\os{\circ}{T}_0\bul}}
@>{c_{1,{\rm dR}}}>>
{\cal O}_{{\mathfrak E}_{\bul}}\otimes_{{\cal O}_{{\cal Q}_{\bul}}}
\Om^{\bul}_{{\cal Q}/S(T)^{\nat}}[1]@=
{\cal O}_{{\mathfrak E}_{\bul}}\otimes_{{\cal O}_{{\cal Q}_{\bul}}}
\Om^{\bul}_{{\cal Q}/S(T)^{\nat}}[1]\\
@| @AAA @AAA\\
{\cal O}^*_{Y_{\os{\circ}{T}_0\bul}}
@>{c'_{1,{\rm dR}}}>>
{\cal O}_{{\mathfrak E}_{\bul}}\otimes_{{\cal O}_{{\cal Q}_{\bul}}}
\Om^{\bul}_{{\cal Q}_{\bul}/\os{\circ}{T}}[1]
@>{\subset}>>
{\cal O}_{{\mathfrak E}_{\bul}}\otimes_{{\cal O}_{{\cal Q}_{\bul}}}
\Om^{\bul}_{{\cal Q}_{\bul}/\os{\circ}{T}}\langle u \rangle [1]. 
\end{CD}
\end{equation*} 
Hence we have the following commutative diagram 
\begin{equation*} 
\begin{CD}
{\cal O}^*_{Y_{\os{\circ}{T}_0\bul}}
@>{c_{1,{\rm dR}}}>>
{\cal O}_{{\mathfrak E}_{\bul}}\otimes_{{\cal O}_{{\cal Q}_{\bul}}}
\Om^{\bul}_{{\cal Q}_{\bul}/S(T)^{\nat}}[1]\\
@| @AAA \\
{\cal O}^*_{Y_{\os{\circ}{T}_0\bul}}
@>>> 
{\cal O}_{{\mathfrak E}_{\bul}}\otimes_{{\cal O}_{{\cal Q}_{\bul}}}
P_0\Om^{\bul}_{{\cal Q}_{\bul}/\os{\circ}{T}}\langle u \rangle [1]. 
\end{CD}
\end{equation*} 
By this commutative diagram 
we obtain the following commutative diagram
\begin{equation*} 
\begin{CD}
{\cal O}_{Y}^*
@>{c_{1,{\rm crys}}}>>
Ru_{Y/S(T)^{\nat}*}({\cal O}_{Y/S(T)^{\nat}})[1]\\
@| @AAA \\
{\cal O}_{Y}^*
@>>> 
P_0\wt{R}u_{Y/\os{\circ}{T}*}({\cal O}_{Y/\os{\circ}{T}}
\langle u \rangle)[1]. 
\end{CD}
\tag{19.4.3}\label{cd:pou} 
\end{equation*} 
By (\ref{cd:efnt}) and (\ref{ali:npr}) we obtain the following commutative diagram: 
\begin{equation*} 
\begin{CD}
Ru_{Y/S(T)^{\nat}*}({\cal O}_{{\cal Q}/S(T)^{\nat}})[1] 
@>{N_{\rm zar}[1]}>> 
Ru_{Y/S(T)^{\nat}*}({\cal O}_{{\cal Q}/S(T)^{\nat}})[1] \\
@AAA @AAA \\
P_0\wt{R}u_{Y/\os{\circ}{T}*}({\cal O}_{{\cal Q}/\os{\circ}{T}}
\langle u \rangle)[1]
@>{N_{\rm zar}[1]}>> 
P_{-2}\wt{R}u_{Y/\os{\circ}{T}*}({\cal O}_{{\cal Q}/\os{\circ}{T}}
\langle u \rangle)[1]=0. 
\end{CD}
\tag{19.4.4}\label{cd:poyu} 
\end{equation*} 
By (\ref{cd:pou}) and (\ref{cd:poyu}) we see that 
the composite morphism (\ref{ali:oyt}) is zero.  
\end{proof}

\par 
Let $X/S$ be an SNCL scheme. 
By (\ref{prop-defi:ch}) 
we have the following morphism 
\begin{align*} 
c_{1,{\rm crys}}\langle u \rangle \col 
M_{X_{\os{\circ}{T}_0}}^{\rm gp} \lo 
\wt{R}u_{X_{\os{\circ}{T}_0}/\os{\circ}{T}}
({\cal O}_{X_{\os{\circ}{T}_0}/\os{\circ}{T}}\langle u \rangle)[1].
\end{align*} 
Hence the natural morphism
\begin{align*}
\wt{R}u_{X_{\os{\circ}{T}_0}/\os{\circ}{T}}
({\cal O}_{X_{\os{\circ}{T}_0}/\os{\circ}{T}}\langle u \rangle)
\lo 
Ru_{X^{(\star)}_{\os{\circ}{T}_0}/\os{\circ}{T}}
({\cal O}_{X^{(\star)}_{\os{\circ}{T}_0}/\os{\circ}{T}}\langle u \rangle)
=H_{\rm zar}(X_{\os{\circ}{T}}/S(T)^{\nat})
\end{align*} 
induces the following morphism 
\begin{align*}  
c_{1,{\rm crys}}\langle u \rangle \col 
M_{X_{\os{\circ}{T}_0}}^{\rm gp} \lo 
H_{\rm zar}(X_{\os{\circ}{T}}/S(T)^{\nat})[1].
\end{align*} 
It is obvious that the following diagram is commutative: 
\begin{equation*} 
\begin{CD}
M_{X_{\os{\circ}{T}_0}}^{\rm gp}
@= M_{X_{\os{\circ}{T}_0}}^{\rm gp}@=M_{X_{\os{\circ}{T}_0}}^{\rm gp} \\
@V{c_{1,{\rm crys}}}VV @V{c_{1,{\rm crys}}\langle u \rangle}VV 
@V{c_{1,{\rm crys}}\langle u \rangle}VV\\
Ru_{X_{\os{\circ}{T}_0}/S(T)^{\nat}}({\cal O}_{X_{\os{\circ}{T}_0}/S(T)^{\nat}})[1]
@<{\simeq}<<\wt{R}u_{X_{\os{\circ}{T}_0}/\os{\circ}{T}}
({\cal O}_{X_{\os{\circ}{T}_0}/\os{\circ}{T}}\langle u \rangle)[1]
@>{\simeq}>> H_{\rm zar}(X_{\os{\circ}{T}_0}/S(T)^{\nat})[1]. 
\end{CD}
\end{equation*}

\begin{prop}\label{prop:cf}
Let $\lam$ be a a line bundle on $X_{\os{\circ}{T}_0}$.
Then the morphism 
\begin{align*} 
c_{1,{\rm crys}}\langle u \rangle (\lam) \cup \col 
H_{\rm zar}(X_{\os{\circ}{T}_0}/S(T)^{\nat},E)\lo 
H_{\rm zar}(X_{\os{\circ}{T}_0}/S(T)^{\nat},E)
\end{align*}
is the underlying morphism 
of the following filtered morphism 
\begin{align*} 
c_{1,{\rm crys}}\langle u \rangle (\lam) \cup \col 
(H_{\rm zar}(X_{\os{\circ}{T}_0}/S(T)^{\nat},E),P)
\lo (H_{\rm zar}(X_{\os{\circ}{T}_0}/S(T)^{\nat},E),P). 
\tag{19.5.1}\label{ali:n}
\end{align*}
\end{prop}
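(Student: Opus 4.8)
The plan is to reduce the assertion to two facts already available in the excerpt: first, that the Hirsch-extended first chern class $c_{1,{\rm crys}}\langle U_{S(T)^{\nat}}\rangle(\lam)$ of a genuine line bundle lifts canonically to the weight-zero part $P_0$ of the complex $H_{\rm zar}$; and second, that the cup product on $(H_{\rm zar},P)$ built in \S\ref{sec:p} is a filtered morphism in the sense $P_k\otimes^L P_{k'}\lo P_{k+k'}$. Granting these, cupping with a class sitting in $P_0$ sends $P_k$ into $P_{k+0}=P_k$, which is exactly the unshifted filtered morphism (\ref{ali:n}). This is the content anticipated by the commutative diagram (\ref{cd:hap}) in the Introduction, where ${\cal O}_X^*[-1]$ carries the trivial filtration concentrated in degree $0$.

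First I would revisit the construction of the Hirsch-extended chern class in (\ref{ali:upc}) together with the factorization exhibited in the commutative diagram (\ref{cd:pou}) in the proof of (\ref{prop:nl}). The crucial observation is that for a line bundle $\lam$, viewed as a section of ${\cal O}_{X_{\os{\circ}{T}_0}}^*\subset M_{X_{\os{\circ}{T}_0}}^{\rm gp}$ (a \emph{unit}, not a section with log poles), the logarithmic derivative $d\log\lam$ has no log poles and hence lies in ${\cal O}_{{\mathfrak E}_{\bul}}\otimes_{{\cal O}_{{\cal Q}_{\bul}}}P_0\Om^1_{{\cal Q}_{\bul}/\os{\circ}{T}}\langle U_{S(T)^{\nat}}\rangle$. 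Running the construction (\ref{ali:upc}) through $P_0$ then produces a canonical factorization
\begin{equation*}
c_{1,{\rm crys}}\langle U_{S(T)^{\nat}}\rangle\col {\cal O}_{X_{\os{\circ}{T}_0}}^*[-1]\lo P_0\wt{R}u_{X_{\os{\circ}{T}_0}/\os{\circ}{T}*}({\cal O}_{X_{\os{\circ}{T}_0}/\os{\circ}{T}}\langle U_{S(T)^{\nat}}\rangle),
\end{equation*}
exactly paralleling the target $P_0\wt{R}u(\cdots)[1]$ appearing in (\ref{cd:pou}).

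Next I would transport this factorization to $H_{\rm zar}$ and conclude. Via the filtered comparison morphism of (\ref{ali:extt}), whose underlying morphism is the isomorphism of (\ref{coro:lb}), the $P$-filtered complex $\wt{R}u_{X_{\os{\circ}{T}_0}/\os{\circ}{T}*}({\cal O}\langle U_{S(T)^{\nat}}\rangle)$ maps filteredly into $(H_{\rm zar},\del(L,P))$, carrying $P_0$ into $\del(L,P)_0=P_0H_{\rm zar}$. Combined with the commutative diagram displayed just before (\ref{prop:cf}), this shows $c_{1,{\rm crys}}\langle U_{S(T)^{\nat}}\rangle(\lam)$ is represented by a class in $P_0H_{\rm zar}[1]$. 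Composing the filtered cup product (\ref{eqn:pxs}) (equivalently (\ref{ali:te}) with $E'={\cal O}$), which satisfies $P_k\otimes^L P_{k'}\lo P_{k+k'}$, with this $P_0$-valued chern class and specializing $k'=0$, I obtain the filtered morphism (\ref{ali:n}); its underlying morphism is cup product with the class, as claimed. Here one should note that no shift of the filtration index occurs precisely because the class lies in $P_0$; the raising of the Frobenius weight by $2$ is recorded instead by the increase of cohomological degree, consistently with the indexing of the spectral sequence (\ref{ali:sparh}).

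The step I expect to be the main obstacle is the compatibility of the $P_0$-lift with the \emph{semi-cosimplicial} structure and the diagonal filtration $\del(L,P)$ on $H_{\rm zar}$, rather than merely with the filtration $P$ on $\wt{R}u_{X_{\os{\circ}{T}_0}/\os{\circ}{T}*}({\cal O}\langle U_{S(T)^{\nat}}\rangle)$. Concretely, one must verify that the cocycle representing $c_{1,{\rm crys}}\langle U_{S(T)^{\nat}}\rangle(\lam)$ can be chosen inside $\del(L,P)_0$ of the semi-cosimplicial PD-Hirsch complex, so that the Alexander--Whitney cup product of \S\ref{sec:p} genuinely sees the $P_0$-membership; this reduces to checking that the comparison (\ref{ali:extt}) is strict enough in degree $0$. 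A secondary bookkeeping point is the sign convention inherited from the Alexander--Whitney map, which forces one to confirm that the degree-shifted chern class is a genuine cocycle of the relevant complex. Both are routine once the $P_0$-factorization is arranged compatibly at the cochain level.
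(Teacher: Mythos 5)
Your proposal is correct and follows essentially the same route as the paper: the paper's proof likewise observes (via the proof of (\ref{prop:nl}), i.e.\ the factorization in (\ref{cd:pou})) that $c_{1,{\rm crys}}\langle U_{S(T)^{\nat}}\rangle(\lam)$ factors through $P_0H_{\rm zar}(X_{\os{\circ}{T}_0}/S(T)^{\nat})$ and then concludes by the filtered cup product $P_k\otimes P_0\lo P_k$. The compatibility issues with the semi-cosimplicial structure and the diagonal filtration that you flag as the main obstacle are simply not discussed in the paper's (two-sentence) proof, so your write-up is, if anything, more explicit about the bookkeeping.
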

\begin{proof}
By the proof of (\ref{prop:nl}) 
we see that $c_{1,{\rm crys}}\langle u \rangle$ 
factors through 
$P_0H_{\rm zar}(X_{\os{\circ}{T}_0}/S(T)^{\nat})$. 
Hence $c_{1,{\rm crys}}\langle u \rangle (\lam)\cup $ is a filtered morphism 
$(H_{\rm zar}(X_{\os{\circ}{T}_0}/S(T)^{\nat}),P)
\lo (H_{\rm zar}(X_{\os{\circ}{T}_0}/S(T)^{\nat}),P)$ by (\ref{ali:te}). 
\end{proof} 

\par 
Next we construct a morphism 
\begin{align*} 
{\cal O}_X^*[-1]\otimes^L_{\mab Z}(A_{\rm zar}(X_{\os{\circ}{T}_0}/S(T)^{\nat},E),P)
\lo (A_{\rm zar}(X_{\os{\circ}{T}_0}/S(T)^{\nat}),P)
\tag{19.5.2}\label{ali:azox}
\end{align*} 
Here we endow ${\cal O}_X^*[-1]$ with the trivial filtration. 
To define this morphism, we prove the following: 

\begin{prop}\label{prop:tee}
Assume that $\os{\circ}{X}$ is quasi-compact. 
Endow $P_0
\wt{R}u_{X_{\os{\circ}{T}_0}/\os{\circ}{T}}
({\cal O}_{X_{\os{\circ}{T}_0}/\os{\circ}{T}})$ with the trivial filtration. 
Then there exists the following canonical morphism 
\begin{align*} 
P_0\wt{R}u_{X_{\os{\circ}{T}_0}/\os{\circ}{T}}
({\cal O}_{X_{\os{\circ}{T}_0}/\os{\circ}{T}})\otimes^L_{f^{-1}({\cal O}_T)}
(A_{\rm zar}(X_{\os{\circ}{T}_0}/S(T)^{\nat},E),P)
\lo (A_{\rm zar}(X_{\os{\circ}{T}_0}/S(T)^{\nat},E),P). 
\tag{19.6.1}\label{ali:azopx}
\end{align*} 
\end{prop}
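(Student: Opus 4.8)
The plan is to construct the morphism (\ref{ali:azopx}) directly at the level of the explicit \v{C}ech--de Rham models, by wedging a weight-zero (log-pole-free) form into the Steenbrink complex, rather than transporting the cup product of $(H_{\rm zar},P)$ back through the crystalline Fujisawa morphism $\psi_u$; the latter route is awkward precisely because $\psi_u$ is only a filtered morphism with underlying isomorphism and need not be a filtered quasi-isomorphism, so its inverse is not filtered. First I would fix a disjoint union of an affine open covering of $X_{\os{\circ}{T}_0}$ and a simplicial immersion $X_{\os{\circ}{T}_0\bul}\os{\sus}{\lo}\ol{\cal P}_\bul$ over $\ol{S(T)^{\nat}}$. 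By (\ref{prop:repmf}) the object $P_0\wt{R}u_{X_{\os{\circ}{T}_0}/\os{\circ}{T}*}({\cal O}_{X_{\os{\circ}{T}_0}/\os{\circ}{T}})$ is represented by $R\pi_{{\rm zar}*}(P_0({\cal O}_{{\mathfrak D}_\bul}\otimes_{{\cal O}_{{\cal P}^{\rm ex}_\bul}}\Om^\bul_{{\cal P}^{\rm ex}_\bul/\os{\circ}{T}}))$, a subcomplex of log-pole-free forms stable under $\nabla$, while $A_{\rm zar}$ is represented by $R\pi_{{\rm zar}*}(A_{\rm zar}({\cal P}^{\rm ex}_\bul/S(T)^{\nat},{\cal E}^\bul))$ with $A_{\rm zar}({\cal P}^{\rm ex}_\bul)^{ij}=({\cal E}^\bul\otimes\Om^{i+j+1}_{{\cal P}^{\rm ex}_\bul/\os{\circ}{T}})/P_j$. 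The pairing $\mu$ I would define by $\alpha\otimes[\omega]\lom[\alpha\wedge\omega]$, where $[\,\cdot\,]$ denotes the class modulo $P_j$.

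Then I would verify three things at the cochain level. Well-definedness modulo $P_j$: since $\alpha\in P_0$ and $P_0\wedge P_j\subset P_j$, wedging descends to the quotients, so $\mu$ indeed lands in $A_{\rm zar}({\cal P}^{\rm ex}_\bul)$. Chain-map property for the tensor-product differential: the Leibniz rule for $\nabla$ together with the identity $d\log\tau\wedge(\alpha\wedge\omega)=(-1)^{\deg\alpha}\alpha\wedge(d\log\tau\wedge\omega)$ for a log-pole-free $\alpha$ yields $D(\alpha\wedge\omega)=\pm(\nabla\alpha)\wedge\omega+(-1)^{\deg\alpha}\alpha\wedge D\omega$, where $D=-\nabla+d\log\tau\wedge$ is the Steenbrink boundary of (\ref{cd:lccbd}). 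Filteredness: by (\ref{eqn:lpcad}) one has $P_kA_{\rm zar}({\cal P}^{\rm ex}_\bul)^{ij}=P_{2j+k+1}({\cal E}^\bul\otimes\Om^{i+j+1}_{{\cal P}^{\rm ex}_\bul/\os{\circ}{T}})/P_j$, and $P_0\wedge P_{2j+k+1}\subset P_{2j+k+1}$ gives $\mu(P_0\otimes P_kA_{\rm zar})\subset P_kA_{\rm zar}$; since $P_0\wt{R}u(\ldots)$ carries the trivial filtration, this is exactly the statement that $\mu$ is a filtered morphism. This inclusion $P_0\wedge P_k\subset P_k$ is the conceptual heart of the proposition: multiplication by a weight-zero class preserves the weight filtration on the otherwise non-multiplicative Steenbrink complex.

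Finally I would promote the cochain pairing to a morphism in ${\rm D}^+{\rm F}(f^{-1}({\cal O}_S))$ with source $\otimes^L_{f^{-1}({\cal O}_S)}$, via the Godement-resolution formalism of \S\ref{sec:stpd} and the explicit filtered cup-product construction of \S\ref{sec:p} (cf.\ (\ref{eqn:pxs})); quasi-compactness of $\os{\circ}{X}$ ensures that the filtration $P$ is bounded below, so the derived tensor product behaves well. Independence of the choice of affine open covering and of the simplicial immersion follows by the standard refinement argument of (\ref{theo:indcr}) and (\ref{prop:naqi}). The step I expect to be the main obstacle is not the existence of the wedge pairing but the sign bookkeeping needed to make $\mu$ a genuine chain map: the de Rham differential $\nabla$ on the $P_0$-factor and the Steenbrink differential $-\nabla+d\log\tau\wedge$ carry incompatible sign conventions, so the two terms in $D(\alpha\wedge\omega)$ do not match the tensor-product differential on the nose. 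Reconciling them---presumably by passing to the $A'_{\rm zar}$-convention and inserting the same degree-dependent signs as in \S\ref{sec:cwt}---is precisely the delicate point the author flags repeatedly there, and once the signs are fixed the filteredness is automatic from $P_0\wedge P_k\subset P_k$.
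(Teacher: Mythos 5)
Your construction is essentially the paper's own proof: the paper likewise defines the pairing by wedging a section of $P_0({\cal O}_{{\mathfrak D}_{\bul}}\otimes\Om^{q'}_{{\cal P}^{\rm ex}_{\bul}/\os{\circ}{T}})$ against ${\cal E}^{\bul}\otimes\Om^{q+1}_{{\cal P}^{\rm ex}_{\bul}/\os{\circ}{T}}$, observes that this descends to the quotients modulo $P_j$ and carries $P_0\otimes P_{2j+k+1}$ into $P_{2j+k+1}$ (hence is filtered for the convention (\ref{eqn:lpcad})), and then applies $R\pi_{{\rm zar}*}$ together with the projection morphism to obtain the map on derived tensor products. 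Your extra care about the sign mismatch between $\nabla$ and the Steenbrink boundary $-\nabla+d\log\tau\wedge$ is a legitimate point the paper elides, but it does not change the argument.
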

\begin{proof}  
Let $X_{\os{\circ}{T}_0\bul}\os{\sus}{\lo} {\cal P}_{\bul}$ be the simplicial immersion 
in \S\ref{sec:psc}. 
Then we have the following wedge product
\begin{align*} 
\wedge \col 
P_0({\cal O}_{{\mathfrak D}_{\bul}}
\otimes_{{\cal O}_{{\cal P}^{\rm ex}_{\bul}}}
{\Om}^{q'}_{{\cal P}^{\rm ex}_{\bul}/\os{\circ}{T}})
\otimes_{f^{-1}_{\bul}({\cal O}_T)}
({\cal E}^{\bul}
\otimes_{{\cal O}_{{\cal P}^{\rm ex}_{\bul}}}
{\Om}^{q+1}_{{\cal P}^{\rm ex}_{\bul}/\os{\circ}{T}})
\lo 
{\cal E}^{\bul}
\otimes_{{\cal O}_{{\cal P}^{\rm ex}_{\bul}}}
{\Om}^{q'+q+1}_{{\cal P}^{\rm ex}_{\bul}/\os{\circ}{T}}. 
\end{align*} 
This morphism induces the following morphism 
\begin{align*} 
\wedge \col &
P_0({\cal O}_{{\mathfrak D}_{\bul}}
\otimes_{{\cal O}_{{\cal P}^{\rm ex}_{\bul}}}
{\Om}^{q'}_{{\cal P}^{\rm ex}_{\bul}/\os{\circ}{T}})\otimes_{f^{-1}_{\bul}({\cal O}_T)}
({\cal E}^{\bul}
\otimes_{{\cal O}_{{\cal P}^{\rm ex}_{\bul}}}
{\Om}^{q+1}_{{\cal P}^{\rm ex}_{\bul}/\os{\circ}{T}}/P_j)\\
&\lo {\cal E}^{\bul}
\otimes_{{\cal O}_{{\cal P}^{\rm ex}_{\bul}}}
{\Om}^{q'+q+1}_{{\cal P}^{\rm ex}_{\bul}/\os{\circ}{T}}/P_j \quad (j\in {\mab N})
\end{align*} 
and 
\begin{align*} 
&\wedge \col  P_0({\cal O}_{{\mathfrak D}_{\bul}}
\otimes_{{\cal O}_{{\cal P}^{\rm ex}_{\bul}}}
{\Om}^{q'}_{{\cal P}^{\rm ex}_{\bul}/\os{\circ}{T}})
\otimes_{f^{-1}_{\bul}({\cal O}_T)}
P_{2j+k+1}({\cal E}^{\bul}
\otimes_{{\cal O}_{{\cal P}^{\rm ex}_{\bul}}}
{\Om}^{q+1}_{{\cal P}^{\rm ex}_{\bul}/\os{\circ}{T}})
\\
&\lo 
P_{2j+k+1}({\cal E}^{\bul}
\otimes_{{\cal O}_{{\cal P}^{\rm ex}_{\bul}}}
{\Om}^{q'+q+1}_{{\cal P}^{\rm ex}_{\bul}/\os{\circ}{T}}) \quad (j,k\in {\mab N}). 
\end{align*} 
Hence we have the following morphism 
\begin{align*} 
&P_0
\wt{R}u_{X_{\os{\circ}{T}_0}/\os{\circ}{T}}
({\cal O}_{X_{\os{\circ}{T}_0}/\os{\circ}{T}})\otimes^L_{f^{-1}({\cal O}_T)}
(A_{\rm zar}(X_{\os{\circ}{T}_0}/S(T)^{\nat},E),P)\lo \\
&
R\pi_{{\rm zar}*}(P_0({\cal O}_{{\mathfrak D}_{\bul}}
\otimes_{{\cal O}_{{\cal P}^{\rm ex}_{\bul}}}
{\Om}^{\bul}_{{\cal P}^{\rm ex}_{\bul}/\os{\circ}{T}}))
\otimes_{f^{-1}({\cal O}_T)}
R\pi_{{\rm zar}*}((A_{\rm zar}({\cal P}_{\bul}/S(T)^{\nat},{\cal E}^{\bul}),P))
\lo\\
&
R\pi_{{\rm zar}*}(P_0({\cal O}_{{\mathfrak D}_{\bul}}
\otimes_{{\cal O}_{{\cal P}^{\rm ex}_{\bul}}}
{\Om}^{\bul}_{{\cal P}^{\rm ex}_{\bul}/\os{\circ}{T}})\otimes_{f^{-1}_{\bul}({\cal O}_T)}
(A_{\rm zar}({\cal P}_{\bul}/S(T)^{\nat},{\cal E}^{\bul}),P))
\os{\wedge}{\lo} \\
&
R\pi_{{\rm zar}*}((A_{\rm zar}({\cal P}_{\bul}/S(T)^{\nat},{\cal E}^{\bul}),P))=
(A_{\rm zar}(X_{\os{\circ}{T}_0}/S(T)^{\nat},E),P). 
\end{align*} 
\end{proof} 

\begin{prop}\label{prop:n}
There exists the canonical morphism {\rm (\ref{ali:azox})}. 
\end{prop}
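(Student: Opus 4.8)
The statement to prove, Proposition~\ref{prop:n}, asserts the existence of the canonical morphism (\ref{ali:azox})
\begin{equation*}
{\cal O}_X^*[-1]\otimes^L_{\mab Z}(A_{\rm zar}(X_{\os{\circ}{T}_0}/S(T)^{\nat},E),P)
\lo (A_{\rm zar}(X_{\os{\circ}{T}_0}/S(T)^{\nat},E),P),
\end{equation*}
where ${\cal O}_X^*[-1]$ carries the trivial filtration. The plan is to factor this through the two constructions already in hand: the first log crystalline chern class map on the one side, and the product morphism of (\ref{prop:tee}) on the other side. Concretely, I would build the morphism as the composite of the chern-class map ${\cal O}_X^*\lo P_0\wt{R}u_{X_{\os{\circ}{T}_0}/\os{\circ}{T}}({\cal O}_{X_{\os{\circ}{T}_0}/\os{\circ}{T}})[1]$ (tensored with the identity on the Steenbrink complex) with the pairing (\ref{ali:azopx}) of Proposition~\ref{prop:tee}.

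First I would invoke (\ref{prop:nl}) together with its proof, which shows via the commutative diagram (\ref{cd:pou}) that the first log crystalline chern class map ${\cal O}_{X_{\os{\circ}{T}_0}}^*\os{c_{1,{\rm crys}}}{\lo} Ru_{X_{\os{\circ}{T}_0}/S(T)^{\nat}*}({\cal O}_{X_{\os{\circ}{T}_0}/S(T)^{\nat}})[1]$ factors through $P_0\wt{R}u_{X_{\os{\circ}{T}_0}/\os{\circ}{T}*}({\cal O}_{X_{\os{\circ}{T}_0}/\os{\circ}{T}}\langle U_{S(T)^{\nat}}\rangle)[1]$, hence through $P_0\wt{R}u_{X_{\os{\circ}{T}_0}/\os{\circ}{T}*}({\cal O}_{X_{\os{\circ}{T}_0}/\os{\circ}{T}})[1]$. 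This is precisely the source object appearing in the pairing (\ref{ali:azopx}) of (\ref{prop:tee}) (after the shift by $[-1]$). Thus, applying $\otimes^L_{\mab Z}(A_{\rm zar},P)$ to the shifted chern-class map and then composing with (\ref{ali:azopx}), I obtain the desired morphism (\ref{ali:azox}); I would then check that this composite respects the trivial filtration on ${\cal O}_X^*[-1]$, which follows because the chern class lands in $P_0$ and the pairing (\ref{ali:azopx}) is filtered with the $P_0$-factor treated with trivial filtration.

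The main technical step to verify carefully is the compatibility of filtrations at the junction: one must check that the chern class indeed lands in the $P_0$-piece (not merely in the total complex), so that the input type matches the domain of (\ref{ali:azopx}). This is exactly what the proof of (\ref{prop:nl}) establishes through (\ref{cd:pou}), where the chern class is shown to factor through $P_0\wt{R}u_{X_{\os{\circ}{T}_0}/\os{\circ}{T}*}({\cal O}_{X_{\os{\circ}{T}_0}/\os{\circ}{T}}\langle U_{S(T)^{\nat}}\rangle)[1]$. I expect this filtration-matching to be the only substantive obstacle; once it is in place, the construction is formal. The remaining verifications—well-definedness of the composite in the derived category, independence from the choice of affine open covering and simplicial immersion $X_{\os{\circ}{T}_0\bul}\os{\sus}{\lo}\ol{\cal P}_{\bul}$, and functoriality—follow from the corresponding independence statements already proved for $c_{1,{\rm crys}}$ in (\ref{prop-defi:ch}) and for the pairing in (\ref{prop:tee}), so I would cite those rather than reprove them. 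Hence I expect the proof to read simply: this follows by composing the chern class map (which factors through $P_0$ by the proof of (\ref{prop:nl})) with the morphism (\ref{ali:azopx}) of (\ref{prop:tee}).
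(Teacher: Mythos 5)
Your proposal is correct and follows essentially the same route as the paper: the paper's proof likewise observes that $\wt{c}_{1,{\rm crys}}\col {\cal O}_X^*[-1]\lo \wt{R}u_{X_{\os{\circ}{T}_0}/\os{\circ}{T}}({\cal O}_{X_{\os{\circ}{T}_0}/\os{\circ}{T}})$ factors through $P_0$ by the proof of (\ref{prop:nl}), and then composes with the pairing (\ref{ali:azopx}) of (\ref{prop:tee}). The only cosmetic difference is that the paper inserts an intermediate change of base ring from $\otimes^L_{\mab Z}$ to $\otimes^L_{f^{-1}({\cal O}_S)}$ before applying (\ref{ali:azopx}), which your construction implicitly uses as well.
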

\begin{proof}
By the proof of (\ref{prop:nl}), 
the morphism 
$\wt{c}_{1,{\rm crys}}\col {\cal O}_X^*[-1]\lo 
\wt{R}u_{X_{\os{\circ}{T}_0}/\os{\circ}{T}}
({\cal O}_{X_{\os{\circ}{T}_0}/\os{\circ}{T}})$ 
factors through the following morphism 
\begin{align*} 
\wt{c}_{1,{\rm crys}}\col {\cal O}_X^*[-1]\lo 
P_0\wt{R}u_{X_{\os{\circ}{T}_0}/\os{\circ}{T}}
({\cal O}_{X_{\os{\circ}{T}_0}/\os{\circ}{T}}). 
\end{align*} 
Hence we have the following composite morphism by (\ref{prop:tee}): 
\begin{align*}  
&{\cal O}_X^*[-1]\otimes^L_{\mab Z}
(A_{\rm zar}(X_{\os{\circ}{T}_0}/S(T)^{\nat},E),P) \\
& \lo P_0\wt{R}u_{X_{\os{\circ}{T}_0}/\os{\circ}{T}}
({\cal O}_{X_{\os{\circ}{T}_0}/\os{\circ}{T}})\otimes^L_{\mab Z}
(A_{\rm zar}(X_{\os{\circ}{T}_0}/S(T)^{\nat},E),P)\\
&\lo 
P_0\wt{R}u_{X_{\os{\circ}{T}_0}/\os{\circ}{T}}
({\cal O}_{X_{\os{\circ}{T}_0}/\os{\circ}{T}})\otimes^L_{f^{-1}({\cal O}_T)}
(A_{\rm zar}(X_{\os{\circ}{T}_0}/S(T)^{\nat},E),P)
\\
&\lo (A_{\rm zar}(X_{\os{\circ}{T}_0}/S(T)^{\nat},E),P).
\end{align*}
\end{proof}

\begin{prop}\label{prop:chc}
The induced morphism 
\begin{align*} 
&\us{j\geq \max \{-k,0\}}{\bigoplus} 
{\cal O}_X^*[-1]\otimes^L_{\mab Z}
a^{(2j+k)}_{T_0*} 
(Ru_{\os{\circ}{X}{}^{(2j+k)}_{T_0}/\os{\circ}{T}*}
(E_{\os{\circ}{X}{}^{(2j+k)}_{T_0}
/\os{\circ}{T}} 
\otimes_{\mab Z}\vp_{\rm crys}^{(2j+k)}
(\os{\circ}{X}_{T_0}/\os{\circ}{T})))[-2j-k]
\lo \\
&\us{j\geq \max \{-k,0\}}{\bigoplus}  
a^{(2j+k)}_{T_0*} 
(Ru_{\os{\circ}{X}{}^{(2j+k)}_{T_0}/\os{\circ}{T}*}
(E_{\os{\circ}{X}{}^{(2j+k)}_{T_0}
/\os{\circ}{T}} 
\otimes_{\mab Z}\vp_{\rm crys}^{(2j+k)}(\os{\circ}{X}_{T_0}/\os{\circ}{T})))[-2j-k]
\end{align*} 
by the morphism {\rm (\ref{ali:azox})} using the identification of 
$${\rm gr}^P_k(A_{\rm zar}(X_{\os{\circ}{T}_0}/S(T)^{\nat},E))$$
with 
$$\us{j\geq \max \{-k,0\}}{\bigoplus}  
a^{(2j+k)}_{T_0*} 
(Ru_{\os{\circ}{X}{}^{(2j+k)}_{T_0}/\os{\circ}{T}*}
(E_{\os{\circ}{X}{}^{(2j+k)}_{T_0}
/\os{\circ}{T}} 
\otimes_{\mab Z}\vp_{\rm crys}^{(2j+k)}
(\os{\circ}{X}_{T_0}/\os{\circ}{T})))[-2j-k]$$
induces the first crystalline Chern class map of $\os{\circ}{X}{}^{(2j+k)}_{T_0}$ 
on each direct factor.  
%$$a^{(2j+k)}_{T_0*} 
%(Ru_{\os{\circ}{X}{}^{(2j+k)}_{T_0}/\os{\circ}{T}*}
%(E_{\os{\circ}{X}{}^{(2j+k)}_{T_0}
%/\os{\circ}{T}} 
%\otimes_{\mab Z}\vp_{\rm crys}^{(2j+k)}
%(\os{\circ}{X}_{T_0}/\os{\circ}{T})))\{-2j-k\},(-1)^{j+1}d).$$ 
\end{prop}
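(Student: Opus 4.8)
The plan is to reduce the assertion to a compatibility statement that has already been established for the crystalline Chern class map, namely (\ref{prop:ofc}) together with the commutative diagram (\ref{ali:mnh}). First I would unwind the construction of the morphism (\ref{ali:azox}): by (\ref{prop:n}) this morphism is the composite of the factorization $\wt{c}_{1,{\rm crys}}\col {\cal O}_X^*[-1]\lo P_0\wt{R}u_{X_{\os{\circ}{T}_0}/\os{\circ}{T}}({\cal O}_{X_{\os{\circ}{T}_0}/\os{\circ}{T}})$ (which exists by the proof of (\ref{prop:nl})) with the wedge-product pairing (\ref{ali:azopx}) of (\ref{prop:tee}). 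Since the pairing (\ref{ali:azopx}) is filtered and the chern class lands in $P_0$, the induced map on ${\rm gr}^P_k$ is governed entirely by the $P_0$-part acting by wedging. Thus on the graded piece the morphism is, termwise, the cup product with the image of $\wt{c}_{1,{\rm crys}}$ in $P_0$.

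Next I would use the Poincar\'e residue identification (\ref{ali:ruovp}), recalled in detail in the display computing (\ref{ali:ruogrvp}), which expresses ${\rm gr}^P_k A_{\rm zar}(X_{\os{\circ}{T}_0}/S(T)^{\nat},E)$ as the direct sum over $j$ of the crystalline complexes $Ru_{\os{\circ}{X}{}^{(2j+k)}_{T_0}/\os{\circ}{T}*}(E\otimes \vp_{\rm crys}^{(2j+k)})$. Under this identification a $P_0$-section wedged into a graded representative restricts, via the residue morphism (\ref{eqn:mprarn}), to the pullback $b^*_{\ul\lam}$ of that section to each stratum $\os{\circ}{X}{}^{(2j+k)}_{T_0}$. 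The key step, then, is to check that the restriction of the $P_0$-level chern class $\wt{c}_{1,{\rm crys}}(\lam)$ to each stratum $\os{\circ}{X}{}^{(2j+k)}_{T_0}$ coincides with the ordinary first crystalline chern class $c_{1,{\rm crys}}$ of the restricted line bundle $\lam\vert_{\os{\circ}{X}{}^{(2j+k)}_{T_0}}$, computed in $Ru_{\os{\circ}{X}{}^{(2j+k)}_{T_0}/\os{\circ}{T}*}({\cal O})[1]$. This is precisely the content of the compatibility (\ref{prop:ofc}) (chern class equals the de Rham/crystalline cocycle built from $d\log$ of a local generator of $M^{\rm gp}$) applied fiberwise, together with the naturality of the $d\log$ construction under the closed immersions $\os{\circ}{X}{}^{(2j+k)}_{T_0}\os{\sus}{\lo}\os{\circ}{X}_{T_0}$.

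Concretely, I would argue as follows. The morphism $\wt{c}_{1,{\rm crys}}$ is realized, via the exact sequences in the proof of (\ref{prop:hc}), by the $d\log$ map on $M^{\rm gp}_{{\mathfrak D}_{\bul}}$ followed by the logarithm on the kernel sheaf. Restricting to a stratum ${\cal P}^{\rm ex}_{\ul\lam}$ commutes with $d\log$ because, by (\ref{prop:bpc}), the restriction map $\Om^1_{{\cal P}^{\rm ex}/U}\otimes {\cal O}_{{\cal P}^{\rm ex}_{\ul\lam}}\os{\sim}{\lo}\Om^1_{{\cal P}^{\rm ex}_{\ul\lam}/U}$ is an isomorphism and sends $d\log m$ to $d\log(m\vert_{\ul\lam})$; this is exactly the same computation used in (\ref{lemm:drl}). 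Hence the residue of $\wt{c}_{1,{\rm crys}}(\lam)$ on the $\ul\lam$-stratum is the cocycle built from $d\log$ of $\lam\vert_{\os{\circ}{X}_{\ul\lam}}$, which by (\ref{prop:ofc}) is $c_{1,{\rm crys}}(\lam\vert_{\os{\circ}{X}_{\ul\lam}})$. Summing over $\ul\lam$ with $\sharp\ul\lam = 2j+k+1$ and over $j$ gives the claimed statement.

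The main obstacle I anticipate is bookkeeping of signs and of the orientation sheaves $\vp_{\rm crys}^{(2j+k)}$. The residue isomorphism (\ref{eqn:mprarn}) carries the orientation twist and a degree shift $[-2j-k]$, and wedging by a degree-$2$ class $c_{1,{\rm crys}}(\lam)$ shifts $k\mapsto k+2$; I must verify that the $P_0$-pairing (\ref{ali:azopx}) is compatible with the residue identification so that wedging by the chern class on ${\rm gr}^P_k$ lands in ${\rm gr}^P_{k+2}$ and matches, stratum by stratum, the cup product with $c_{1,{\rm crys}}$ on each $Ru_{\os{\circ}{X}{}^{(2j+k)}_{T_0}/\os{\circ}{T}*}$ — including the correct behavior of the orientation sheaf under the closed immersions. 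This is the same type of delicate sign/orientation tracking that appears in (\ref{lemm:ti}) and (\ref{prop:deccbd}), so I would model the verification on those computations rather than redo it from scratch, reducing everything to the already-established local formula (\ref{eqn:mprarn}) and the fiberwise statement (\ref{prop:ofc}).
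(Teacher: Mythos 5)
Your proof is correct and is precisely the unwinding of the construction that the paper's one-line proof (``This is obvious by the construction of the morphism (\ref{ali:azox})'') leaves implicit: factor the Chern class through $P_0$ via (\ref{prop:n}), act by the filtered wedge pairing of (\ref{prop:tee}), and identify the result on each residue summand with the $d\log$-cocycle description of $c_{1,{\rm crys}}$ from (\ref{prop:ofc}). One small correction to your final paragraph: since the Chern class lands in $P_0$ and the pairing (\ref{ali:azopx}) preserves the filtration index, wedging induces a map ${\rm gr}^P_k\lo {\rm gr}^P_k$ at the complex level (exactly as in the statement, where source and target coincide), not ${\rm gr}^P_k\lo{\rm gr}^P_{k+2}$ --- the shift by $2$ in weight appears only for the induced filtration on cohomology, so that particular worry is moot.
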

\begin{proof} 
This is obvious by the construction of 
the morphism (\ref{ali:azox}). 
\end{proof}

\begin{defi}[{\bf cf.~\cite[(9.8)]{np}}]\label{defi:etclog}
Let $\lam :=c_{1,\star}({\cal L})\in 
R^2f_{X_{\os{\circ}{T}_0}/S(T)^{\nat}}({\cal O}_{X_{\os{\circ}{T}_0}/S(T)^{\nat}})$ 
be the log cohomology 
class of an invertible sheaf ${\cal L}$ on  $\os{\circ}{X}$.
We say that $\lam$ is compatible with the spectral sequence (\ref{eqn:espsp})
if the induced morphism of the left cup product of $\lam$ on the 
$E_1$-terms of (\ref{eqn:espsp}) is equal 
to the induced morphism of the restriction of ${\cal L}$ 
to various ${X}{}^{(k)}$'s $(k\in {\mab Z}_{>0})$.
%R^2f_{X_{\os{\circ}{T}_0}/S(T)^{\nat}}(\eps^*_{X_{\os{\circ}{T}_0}/S(T)^{\nat}}{E))
\end{defi}

The following has an application for 
the weight spectral sequence (\ref{eqn:esafsp}) for the trivial coefficient, 
which has not been proved in \cite{np} (but which has been expected)
(see \cite{np} and \cite{a} for the application of this theorem):

\begin{coro}
The cohomology class $\lam$ is compatible with the spectral sequence 
{\rm (\ref{eqn:espsp})}. 
\end{coro}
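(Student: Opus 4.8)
The plan is to obtain the compatibility for free from the ``ad hoc'' filtered product (\ref{ali:azox}) constructed in (\ref{prop:n}), together with the computation of its associated graded carried out in (\ref{prop:chc}); no new vanishing or purity argument is needed, only a careful identification of the cup product with $\lam$ at the level of the filtered Steenbrink complex.

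First I would check that the cup product by $\lam = c_{1,{\rm crys}}(\mathcal{L})$ on the abutment $R^qf_{X_{\os{\circ}{T}_0}/S(T)^{\nat}*}(\eps^*E)$ is the endomorphism induced by (\ref{ali:azox}). Via the canonical isomorphism $\theta\wedge$ of (\ref{prop:tefc}) one identifies $R^qf_*(\eps^*E)$ with $H^q(A_{\rm zar}(X_{\os{\circ}{T}_0}/S(T)^{\nat},E))$, and by its very construction the product (\ref{ali:azox}) is the composite of the first log crystalline chern class $\wt{c}_{1,{\rm crys}}$ with the wedge product of (\ref{prop:tee}). Applying $Rf_*$ and evaluating on the class of $\mathcal{L}\in H^1(X,\mathcal{O}_X^*)$ (which sits in $R^2f_*(\mathcal{O}_X^*[-1])$) therefore reproduces $\lam\cup -$, raising the cohomological degree by $2$; here I would invoke (\ref{prop:ofc}) to be sure that $\wt{c}_{1,{\rm crys}}$ indeed represents $c_{1,{\rm crys}}$.

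Next, by the argument in the proof of (\ref{prop:nl}) the class $\wt{c}_{1,{\rm crys}}$ factors through $P_0$, so (\ref{ali:azox}) is a morphism of filtered complexes when $\mathcal{O}_X^*[-1]$ carries the trivial filtration. Consequently $\lam\cup -$ is a filtered endomorphism of $(A_{\rm zar}(X_{\os{\circ}{T}_0}/S(T)^{\nat},E),P)$ that keeps the weight column $-k$ fixed while moving up the row by $2$, and hence induces an endomorphism of the weight spectral sequence (\ref{eqn:espsp}) compatible with the Poincar\'e residue identification (\ref{ali:ruovp}) of ${\rm gr}^P_k A_{\rm zar}$ with $\bigoplus_{j\geq \max\{-k,0\}} a^{(2j+k)}_{T_0*}(Ru_{\os{\circ}{X}{}^{(2j+k)}_{T_0}/\os{\circ}{T}*}(\dots))[-2j-k]$. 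By (\ref{prop:chc}) this induced map is, on each direct summand, exactly the first crystalline chern class map of $\os{\circ}{X}{}^{(2j+k)}_{T_0}$, that is, cup product by the restriction of $\mathcal{L}$ to the strata $X^{(k)}$ $(k\in{\mab Z}_{>0})$. This is precisely the condition demanded in (\ref{defi:etclog}), so the corollary follows formally.

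The only genuine point to discharge with care is the matching in the first step: that the derived ad hoc product of (\ref{ali:azox}) literally computes $\lam\cup -$ on the abutment, and that it does so in filtration degree $0$ rather than shifting $P$. Once this bookkeeping of degrees and of the chern-class representatives is settled, (\ref{prop:chc}) already supplies the $E_1$-level identification and nothing further is required; I expect no serious analytic difficulty, the whole content being the commutation of the residue isomorphism with the chern-class cup product, which is exactly what (\ref{prop:chc}) encodes.
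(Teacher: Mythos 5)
Your proposal is correct and follows essentially the same route the paper intends: the corollary is stated as an immediate consequence of (\ref{prop:chc}), whose proof in turn rests on the construction of the filtered product (\ref{ali:azox}) (filteredness coming from the factorization of $\wt{c}_{1,{\rm crys}}$ through $P_0$ as in (\ref{prop:nl}), and the identification with $c_{1,{\rm crys}}$ via (\ref{prop:ofc})). The bookkeeping you flag in your last paragraph is exactly the content the paper delegates to (\ref{prop:chc}) and (\ref{prop:tee}), so nothing further is needed.
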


\begin{prop}\label{prop:nr}
The following diagram 
\begin{equation*}
\begin{CD}
{\cal O}_X^*[-1]\otimes^L_{\mab Z}(A_{\rm zar}(X_{\os{\circ}{T}_0}/S(T)^{\nat}),P)@>>>(A_{\rm zar}(X_{\os{\circ}{T}_0}/S(T)^{\nat}),P)\\
@V{c_{1,{\rm crys}}\vert_{{\cal O}_X^*[-1]}\otimes^L\psi}VV  @VV{\psi}V\\
(H_{\rm zar}(X_{\os{\circ}{T}_0}/S(T)^{\nat}),P)
\otimes^L_{f^{-1}({\cal O}_T)}
(H_{\rm zar}(X_{\os{\circ}{T}_0}/S(T)^{\nat}),P)@>{\cup}>> 
(H_{\rm zar}(X_{\os{\circ}{T}_0}/\os{\circ}{S}),P)
\end{CD}
\tag{19.11.1}\label{cd:halp}
\end{equation*}
is commutative.  
\end{prop}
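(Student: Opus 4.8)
The plan is to reduce the commutativity of the square (\ref{cd:halp}) to the comparison of two explicit morphisms of complexes built from the simplicial immersion $X_{\os{\circ}{T}_0\bul}\os{\sus}{\lo}\ol{\cal P}_{\bul}$, and then to check the commutativity at the level of the underlying local data where both horizontal and vertical arrows have been made fully explicit in the preceding sections. Concretely, I would first unwind all four arrows of (\ref{cd:halp}) in terms of the complexes on $X_{\os{\circ}{T}_0\bul}$. The left vertical arrow is the tensor product of the crystalline Chern class map $c_{1,{\rm crys}}|_{{\cal O}_X^*[-1]}$ (factoring through $P_0$ by the proof of (\ref{prop:nl})) with the crystalline Fujisawa morphism $\psi_u$ of (\ref{coro:nci}); the top arrow is the ``ad hoc'' product (\ref{ali:azox}) constructed in (\ref{prop:n}); the bottom arrow is the filtered cup product of $(H_{\rm zar},P)$ defined in \S\ref{sec:p} via (\ref{ali:peab}); and the right arrow is again $\psi_u$.

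Second, I would exploit the two facts that make the diagram tractable. On the one hand, $c_{1,{\rm crys}}$ factors through $P_0\wt{R}u_{X_{\os{\circ}{T}_0}/\os{\circ}{T}}({\cal O}_{X_{\os{\circ}{T}_0}/\os{\circ}{T}})$, so the image of ${\cal O}_X^*[-1]$ lives entirely in weight-zero part, where the Hirsch variable $u$ does not yet appear; this is precisely the content recorded in (\ref{prop:cf}) and (\ref{prop:chc}). On the other hand, the construction (\ref{ali:azopx}) of (\ref{prop:tee}) realizes the top arrow as a wedge product $P_0(\cdots)\otimes(A_{\rm zar},P)\to(A_{\rm zar},P)$, which is compatible with $\psi_u$ because $\psi_u$ is itself defined by the Poincaré residue morphisms ${\rm Res}'_{{\cal P}_{\bul,\ul\lam}}$ of (\ref{ali:pbis}), and these residue morphisms are $P_0$-linear derivations in the evident sense. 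Thus the plan is to check that $\psi_u$ intertwines the wedge product (\ref{ali:azopx}) with the cup product (\ref{ali:peab}) once one of the two factors is constrained to $P_0$; this is where I would compare the Alexander–Whitney-type formula defining $\cup$ on $(H_{\rm zar},P)$ with the straightforward wedge defining the product on $(A_{\rm zar},P)$.

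Third, since all four arrows are defined at the level of the explicit double complexes ${\cal E}^{\bul}\otimes\Om^{\bul}_{{\cal P}^{{\rm ex},(\bul)}_{\bul}}\langle U_{S(T)^{\nat}}\rangle$ and $A_{\rm zar}({\cal P}^{\rm ex}_{\bul}/S(T)^{\nat},{\cal E}^{\bul})$, I would verify commutativity of (\ref{cd:halp}) by a local computation on a member of the affine covering, reducing to the case $L_{S(T)^{\nat}}={\cal O}_Tu$ and to the generators $u^{[i]}\otimes f\otimes\om$. Here the two commutativity statements already established, namely (\ref{cd:ssti}) relating $\psi_u$ to $\theta\wedge$ and $\wt{R}u(\cdots\langle U_{S(T)^{\nat}}\rangle)$, and the factorization diagram (\ref{cd:pou})–(\ref{cd:poyu}) showing the Chern class lands in $P_0$, do almost all of the bookkeeping; what remains is to track the sign conventions (the $(-1)^{il'}$ in (\ref{eqn:dps}) and the sign-twisting function $\eps$ of \S\ref{sec:cwt}) through the composite.

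The main obstacle I anticipate is precisely this sign and convention matching: the product on $(H_{\rm zar},P)$ uses the Alexander–Whitney formula with the delicate sign $(-1)^{il'}$, whereas the wedge product (\ref{ali:azopx}) on $(A_{\rm zar},P)$ carries the Steenbrink-type signs built into the boundary morphisms (\ref{cd:lccbd}); reconciling these requires passing through the auxiliary complexes $A'_{\rm zar}$ and $H'_{\rm zar}$ with the normalized sign conventions of (\ref{prop:indcr}) and (\ref{ali:xxst}), exactly as was necessary to construct $\psi_u'$ in (\ref{theo:ha}). Once the arrows are rewritten in the primed normalization, I expect the local identity to follow from the Leibniz-type compatibility of ${\rm Res}'_{{\cal P}_{\bul,\ul\lam}}$ with $d\log\tau\wedge$ already computed in the proof of (\ref{theo:ha}), together with the $P_0$-linearity of the wedge; the verification is then a routine but sign-sensitive calculation on generators, which I would leave to the reader after exhibiting the key identity.
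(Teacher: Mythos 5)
Your proposal is correct and follows essentially the same route as the paper's proof, which rests on exactly the two facts you identify: $c_{1,{\rm crys}}\vert_{{\cal O}_X^*[-1]}$ factors through $R\pi_{{\rm zar}*}(P_0({\cal O}_{{\mathfrak D}_{\bul}}\otimes_{{\cal O}_{{\cal P}^{\rm ex}_{\bul}}}\Om^{\bul}_{{\cal P}^{\rm ex}_{\bul}/\os{\circ}{T}}))$ so that no residues of $c_{1,{\rm crys}}({\cal L})$ appear, and $\psi_u^{\rm loc}$ is compatible with the wedge product. The additional sign bookkeeping you flag is not carried out in the paper either, which treats the remaining verification as immediate.
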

\begin{proof} 
Because the morphism $c_{1,{\rm crys}}\vert_{{\cal O}_X^*[-1]}$ factors through 
$R\pi_{{\rm zar}*}(P_0({\cal O}_{{\mathfrak D}_{\bul}}
\otimes_{{\cal O}_{{\cal P}^{\rm ex}_{\bul}}}
{\Om}^{\bul}_{{\cal P}^{\rm ex}_{\bul}/\os{\circ}{T}})$, 
there is no local residue of $c_{1,{\rm crys}}(\lam)$ for a line bundle $\lam$ 
on $\os{\circ}{X}_{T_0}$. 
By the local description of  $\psi^{\rm loc}$ (\ref{ali:pis}), 
we obtain the commutativity of (\ref{cd:halp}). 
\end{proof}

Let $L$ be a relatively ample line bundle on 
$\os{\circ}{X}_{T_0}/\os{\circ}{T}_0$; $L$ defines 
the cohomology class in $H^1(Y,{\cal O}_Y^*)$, which we denote by $L$ again. 
Consider the following composite morphism 
\begin{align*} 
{\cal O}_Y^*\os{\subset}{\lo} M_Y^{\rm gp}\os{c_{1,{\rm crys}}}{\lo}  
Ru_{Y/T*}({\cal O}_{Y/S(T)^{\nat}})[1], 
\tag{19.11.2}\label{ali:oyogp}
\end{align*} 
which we denote by $c_{1,{\rm crys}}$ again. 
Then we obtain the log cohomology class 
$\eta=c_{1,{\rm crys}}(L)$ of $L$
in $R^2f_{X_{\os{\circ}{T}_0}/S(T)^{\nat}*}
({\cal O}_{X_{\os{\circ}{T}_0}/S(T)^{\nat}})$. 

\par 
Let the notations be as in \S\ref{sec:e2}. 
In particular, $\os{\circ}{T}$ is assumed to be a $p$-adic formal ${\cal V}$-scheme. 
Assume that the relative dimension of 
$\os{\circ}{X}_{\os{\circ}{T}_1} \lo T_1$ is of pure dimension $d$. 
In \cite{nb} we have conjectured the following: 

\begin{conj}[{\bf $p$-adic variational filtered log hard Lefschetz conjecture}]\label{conj:lhpilc}
$(1)$ The following cup product 
\begin{equation*} 
\eta^i \col 
R^{d-i}f_{X_{\os{\circ}{T}_1}/S(T)^{\nat}*}
({\cal O}_{X_{\os{\circ}{T}_1}/S(T)^{\nat}})
\otimes_{\mab Z}{\mab Q} 
\lo (R^{d+i}f_{X_{\os{\circ}{T}_1}/S(T)^{\nat}*}
({\cal O}_{X_{\os{\circ}{T}_1}/S(T)^{\nat}})
\otimes_{\mab Z}{\mab Q})(i)
\tag{19.12.1}\label{eqn:filllpl} 
\end{equation*}
is an isomorphism. 
\par 
$(2)$ 
In fact, $\eta^i$ is the following isomorphism of filtered sheaves: 
\begin{equation*} 
\eta^i \col 
(R^{d-i}f_{X_{\os{\circ}{T}_1}/S(T)^{\nat}*}
({\cal O}_{X_{\os{\circ}{T}_1}/S(T)^{\nat}})\otimes_{\mab Z}{\mab Q},P) 
\os{\sim}{\lo} ((R^{d+i}f_{X_{\os{\circ}{T}_1}/S(T)^{\nat}*}
({\cal O}_{X_{\os{\circ}{T}_1}/S(T)^{\nat}})\otimes_{\mab Z}{\mab Q})(i),P). 
\tag{19.12.2}\label{eqn:filfill} 
\end{equation*}
\end{conj}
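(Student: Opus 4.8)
The plan is to establish the conjecture under the standing hypothesis of this section, namely that some fiber $X_s$ is the log special fiber of a projective strict semistable family over a complete discrete valuation ring; without such a fiber, part $(1)$ is exactly the missing geometric input and the statement genuinely remains conjectural. Both assertions concern the value at the enlargement $T$ of the filtered convergent $F$-isocrystals $R^{d-i}f_*({\cal O}_{X/K})$ and $R^{d+i}f_*({\cal O}_{X/K})(i)$ produced in \ref{exam:ofl} and \ref{theo:pwfec}, so by evaluation it suffices to prove the isocrystal version \ref{conj:clhvlc} and read it off at $T$ after $\otimes_{\mab Z}{\mab Q}$. The Tate twist $(i)$ is forced: $\eta=c_{1,{\rm crys}}(L)$ is multiplied by $p$ under the abrelative Frobenius, so $\eta^i$ is $F$-equivariant precisely as a map $R^{d-i}f_*({\cal O}_{X/K})\lo R^{d+i}f_*({\cal O}_{X/K})(i)$.

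For part $(1)$ I would first record that $\eta^i$ is a morphism of convergent $F$-isocrystals on $S/{\cal V}$: source and target are such by \ref{exam:ofl}, and cup product with $\eta$ is horizontal and $F$-compatible because $\eta$ lies in the image of the first log crystalline chern class map (\ref{prop:ofc}). Moreover source and target have equal rank, since the perfect pairing of \ref{coro:pad} identifies $R^{d+i}f_*({\cal O}_{X/K})$ with the dual of $R^{d-i}f_*({\cal O}_{X/K})$; hence surjectivity of $\eta^i$ is equivalent to bijectivity. Thus ${\rm Ker}(\eta^i)$ and ${\rm Coker}(\eta^i)$ are convergent $F$-isocrystals whose ranks are upper semicontinuous and whose degeneracy locus is Frobenius-stable, so by the standard propagation argument for $F$-isocrystals (as in \cite{boi}) it is enough to check the isomorphism at a single point. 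Using the infinitesimal deformation invariance \ref{coro:finvliae} to move freely among enlargements and points, I would reduce to the point $s$ where $X_s$ is the log special fiber of a projective strict semistable family; there the hard Lefschetz isomorphism holds, this being the genuine geometric input established in \cite{np} and \cite{nb} through the monodromy-weight and Poincaré-duality structure of the semistable family. Evaluating the resulting isocrystal isomorphism at $T$ yields part $(1)$.

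For part $(2)$, granting $(1)$, the task is to upgrade an isomorphism of underlying sheaves to a filtered one. The first point is that $\eta\cup$ respects $P$ with the right shift: by \ref{prop:nl} the monodromy operator annihilates $\eta$, so $\eta$ factors through $P_0$ in $(H_{\rm zar}(X/\os{\circ}{S}),P)$ (\ref{prop:cf}), whence cup product with $\eta$ is a filtered morphism raising $P$ by $2$ and twisting by $1$; thus $\eta^i$ is a filtered morphism $(R^{d-i}f_*({\cal O}_{X/K}),P)\lo (R^{d+i}f_*({\cal O}_{X/K})(i),P)$ and only strictness remains. I would combine the strictly compatible perfect pairing of \ref{coro:pad} with the self-adjointness $\langle \eta x,y\rangle=\langle x,\eta y\rangle$ of the Lefschetz operator (\ref{prop:gyp}) and, with $(1)$ in hand, run the Lefschetz-decomposition argument exactly as in the proof of \ref{theo:12} to conclude that $\eta^i$ carries $P_k$ isomorphically onto $P_{k+2i}$. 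Equivalently one may check this on the associated graded through the weight spectral sequence \ref{eqn:espp}, which degenerates at $E_2$ modulo torsion (\ref{theo:e2d}) and whose terms are cohomologies of the smooth projective strata $\os{\circ}{X}{}^{(\bul)}$; on these $\eta$ acts as cup product with the restriction of $L$ (\ref{prop:chc}, \ref{theo:clll}), so graded hard Lefschetz reduces to classical hard Lefschetz for smooth projective varieties in crystalline cohomology via purity (\cite{kme}) together with the two-row combinatorics of the weight complex.

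The main obstacle is part $(1)$ in the absence of a semistable fiber: the reduction to one point and the $F$-isocrystal propagation are formal, but the geometric hard Lefschetz input is available only when the fiber comes from a projective strict semistable family, which is exactly why the assertion stays a conjecture in general and becomes the theorem \ref{theo:a} under that hypothesis. A secondary technical difficulty is establishing strictness on the associated graded without re-deriving the full limit hard Lefschetz; here the self-adjointness of the Lefschetz operator and the strictly compatible duality of \ref{coro:pad} are precisely what make the Lefschetz-decomposition bookkeeping close.
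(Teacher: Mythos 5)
Your treatment of part $(1)$ is consistent with the paper: the conjecture itself carries no semistable hypothesis, and the paper, like you, imports $(1)$ from \cite{np} and \cite{nb} under the semistable-fiber hypothesis (this is exactly \ref{theo:a}/\ref{coro:nilll}); the convergence and deformation-invariance machinery you invoke is the right reduction. The genuine problem is your mechanism for $(1)\Rightarrow(2)$, which is the paper's actual new content (\ref{theo:fdd}, \ref{theo:llh}). Your route (a) is circular: \ref{theo:12} \emph{is} the implication $(1)\Rightarrow(2)$, whose proof in the body is \ref{theo:fdd} and \ref{theo:llh}, and the Lefschetz decomposition \ref{coro:inp} is a \emph{consequence} of \ref{theo:a}, not an available input. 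Your route (b) proves too much: if ``$\eta$ acts on the $E_1$-terms componentwise as $c_{1}(L\vert_{\os{\circ}{X}{}^{(m)}})$ (\ref{theo:clll}), classical hard Lefschetz holds on the projective smooth strata (\ref{theo:hlc}), and the spectral sequence degenerates at $E_2$ modulo torsion'' sufficed to make ${\rm gr}^P(\eta^i)$ an isomorphism, then $\eta^i$ itself would be an isomorphism (the filtrations are finite and exhaustive), i.e.\ part $(1)$ would be a theorem unconditionally --- contradicting the fact that $(1)$ is precisely the open geometric input. So some compatibility in your graded argument is being assumed rather than proved; the graded-level Lefschetz statement at $E_2$ is of limit-hard-Lefschetz strength, and the paper's \S\ref{sec:pol} makes clear (\ref{theo:pos}) that closing this at the graded level requires positivity inputs (${\mab Q}$-structures, the Hodge standard conjecture) that are not available here.

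The paper's actual proof of $(1)\Rightarrow(2)$ runs entirely differently, and you never touch its key maneuver. By \ref{prop:cf} one has $\eta\in P_2R^2f_{X_{\os{\circ}{T}_1}/S(T)^{\nat}*}({\cal O}_{X_{\os{\circ}{T}_1}/S(T)^{\nat}})_{\mab Q}$, giving the inclusion $\eta^i(P_{k-2i})\subseteq P_k\cap {\rm Im}(\eta^i)$; both sides extend to convergent $F$-isocrystals (\ref{exam:ofl}, \ref{theo:pwfec}), the category of such is abelian, and restriction to a closed point is \emph{faithful} by \cite[(4.1)]{od}, so the equality $P_k\cap{\rm Im}(\eta^i)=\eta^i(P_{k-2i})$ may be checked at a single closed point via the enlargement ${\cal W}(\os{\circ}{s})$ of a finite-type model. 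There the residue field is finite, the $E_1$-terms of the weight spectral sequence are pure by \cite{kme}, and Frobenius multiplies $\eta$ by $p$; hence, granted $(1)$, both filtrations are the Frobenius-weight filtration and coincide, and the general case follows by the spreading-out argument of \ref{theo:stpbgb} (this is \ref{theo:llh}). Your $F$-isocrystal reduction in part $(1)$ is in exactly this spirit, but for part $(2)$ you never invoke faithfulness at points or the purity/Frobenius-twist mechanism --- and that mechanism, not duality plus self-adjointness of $\eta$, is what actually produces the strictness you correctly identify as the remaining issue.
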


In the rest of this section 
we would like to prove that {\rm (\ref{conj:lhpilc}) (1)} implies {\rm (\ref{conj:lhpilc}) (2)}. 

\begin{theo}\label{theo:fdd}
Assume that $\os{\circ}{T}$ is the formal spectrum of 
the Witt ring of a perfect field $\kap$.  
Then {\rm (\ref{conj:lhpilc}) (1)} in this case implies {\rm (\ref{conj:lhpilc}) (2)} 
in this case. 
\end{theo}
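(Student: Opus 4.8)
The plan is to reduce Theorem \ref{theo:fdd} to the standard fact that hard Lefschetz in the pure (isotypic) setting implies the strict compatibility of the Lefschetz operator with the weight filtration, exactly as in the theory of mixed Hodge structures (the argument of Deligne, adapted by Guill\'{e}n--Navarro Aznar and by Fujisawa in \cite{fup}). The key structural input I would exploit is that $\eta=c_{1,{\rm crys}}(L)$ can be realized at the level of the filtered complex $(H_{\rm zar}(X_{\os{\circ}{T}_0}/\os{\circ}{S}),P)$ via the filtered cup product. Concretely, by (\ref{prop:cf}) the operator $c_{1,{\rm crys}}\langle U_{S(T)^{\nat}}\rangle(L)\cup$ is the underlying morphism of a \emph{filtered} endomorphism of $(H_{\rm zar}(X_{\os{\circ}{T}_0}/S(T)^{\nat}),P)$ of degree $0$ with respect to $P$ after the Tate twist built into the definition of $P$ on the Hirsch extension (cf.~(\ref{ali:flp})), and by (\ref{theo:mrtp}) the filtration induced by $(H_{\rm zar},P)$ on $R^qf_{*}({\cal O})\otimes K$ coincides with $P\otimes K$. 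Thus $\eta^i$ is automatically a filtered morphism $(R^{d-i},P)\to (R^{d+i}(i),P)$, and the content of part (2) is that, granting part (1), this filtered isomorphism is \emph{strict}.

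First I would set up the monodromy--weight machinery already available in the excerpt: the monodromy operator $N_{\rm zar}$ from \S\ref{sec:mod} acts on $(H_{\rm zar},P)$ and shifts $P$ by $-2$ by (\ref{ali:nbpr}), while (\ref{prop:cmstc}) gives the Leibniz rule $N_{\rm zar}(x\cup y)=N_{\rm zar}(x)\cup y+x\cup N_{\rm zar}(y)$. Since $\eta$ is a cohomology class with $N_{\rm zar}(\eta)=0$ by (\ref{prop:nl}), the operator $\eta\cup$ commutes with $N_{\rm zar}$ on cohomology. Together with the $E_2$-degeneration (\ref{theo:e2dam}) modulo torsion, this puts $(R^qf_*({\cal O}_{X/{\cal W}(s)})\otimes K,P,N)$ into the formal setting of an $\mathfrak{sl}_2$-triple once we know $\eta$ is a Lefschetz operator, which is precisely the hypothesis (1). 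The strategy is then to invoke Grothendieck's yoga of weights: because $\os{\circ}{T}={\rm Spf}({\cal W})$ with $\kap$ perfect, after reducing to the finite-field case by the convergence of the weight filtration (\ref{theo:pwfec}) and a spreading-out argument exactly as in the proof of (\ref{theo:mrtp}), the graded pieces ${\rm gr}^P_k R^qf_*({\cal O})\otimes K$ are pure of Frobenius weight $k$ by the purity theorem of \cite{kme}. A morphism of $F$-isocrystals that is an isomorphism and respects a filtration whose graded pieces are pure of strictly increasing weights is automatically strict; this is the standard purity-forces-strictness argument.

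The key steps, in order, are: (i) realize $\eta^i$ as a filtered morphism using (\ref{prop:cf}) and the identification (\ref{theo:mrtp}) of $P'\otimes K$ with $P\otimes K$; (ii) reduce to $\os{\circ}{T}_1$ of finite type over a finite field, then to ${\rm Spf}$ of the Witt ring of a finite field, using the log convergence of all filtrations in play (\ref{theo:pwfec}), the infinitesimal deformation invariance (\ref{coro:finvliae}), and Berthelot--Ogus (\ref{theo:bofis}) exactly as in the proof of (\ref{theo:mrtp}); (iii) in the finite-field case, apply the purity of the Frobenius weight on ${\rm gr}^P_k$ to conclude that $\eta^i$, being an isomorphism by hypothesis (1), must be strictly compatible with $P$, since it sends the weight-$k$ graded piece of the source isomorphically onto the weight-$k$ graded piece of the twisted target. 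Step (iii) is the heart, and the precise bookkeeping is that $\eta^i\colon R^{d-i}\to R^{d+i}(i)$ raises $P$-degree by $2i$ before the twist and the Tate twist $(i)$ reindexes $P$ by $-2i$, so $\eta^i$ becomes degree-$0$ for $P$; strictness then follows from the injectivity and surjectivity on each pure graded piece.

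The step I expect to be the main obstacle is \textbf{(iii)}, verifying that the purity argument applies \emph{uniformly in the filtration} rather than just to the total cohomology. The subtlety is that one must know that $\eta^i$ induces an isomorphism on each ${\rm gr}^P_k$ separately, not merely on the associated graded as a whole; this requires that both the Lefschetz isomorphism (1) and the filtration $P$ are compatible at the level of graded pieces of the weight spectral sequence. Here I would use (\ref{theo:clll})/the compatibility of $\eta$ with the spectral sequence (\ref{eqn:espsp}), namely that the left cup product by $\eta$ on the $E_1$-terms is computed by restricting $L$ to the strata $\os{\circ}{X}{}^{(k)}$ (established in (\ref{prop:chc}) and the corollary following (\ref{defi:etclog})). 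This reduces the graded statement to the ordinary hard Lefschetz theorem on the proper smooth pieces $\os{\circ}{X}_{\ul{\lam}\cup\ul{\mu}}$, which is known for crystalline cohomology of proper smooth schemes over a finite field via purity; combined with the degeneration and purity, strictness on each ${\rm gr}^P_k$ follows, and an ascending induction on $k$ (using $P_{-1}=0$ and exhaustivity) upgrades this to strictness of $\eta^i$ itself.
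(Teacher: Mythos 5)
Your proposal is correct and follows essentially the same route as the paper: by (\ref{prop:cf}) one has $c_{1,{\rm crys}}(L)\in P_2$, so $\eta^i$ is a filtered morphism, and the strictness claim $P_k\cap{\rm Im}(\eta^i)=\eta^i(P_{k-2i})$ is reduced, via the extension of $P$ to convergent $F$-isocrystals and specialization (faithfully) to closed points, to the finite-field case, where it follows from the weight spectral sequence and the purity of the Frobenius weights. The only discrepancy is that your step (iii), invoking (\ref{theo:clll}) and hard Lefschetz on the strata to verify the isomorphism on each ${\rm gr}^P_k$ separately, is superfluous: over a finite field $\eta^i$ commutes with Frobenius (up to the Tate twist), hence preserves the weight decomposition, and being an isomorphism it is automatically strict for the filtration by weights, which is exactly how the paper concludes.
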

\begin{proof} 
Let ${\cal A}_0$ be a smooth ${\mab F}_p$-algebra contained in $\kap$ 
such that $X$ has a proper SNCL model ${\cal X}$ over 
${\cal S}_0:=({\rm Spec}({\cal A}_0),({\mab N}\oplus {\cal A}^*_0\lo {\cal A}_0))$ 
(cf.~the argument before \cite[(3.1)]{ndw}) 
and $L$ comes from a line bundle ${\cal L}$ on ${\cal X}$: 
$X={\cal X}\times_{{\cal S}_0}s$ and $L={\cal L}\times_{{\cal S}_0}s$. 
Let ${\cal A}$ be a $p$-adically formally smooth algebra over ${\mab Z}_p$ 
which is a lift of ${\cal A}_0$. Endow ${\rm Spf}({\cal A})$ 
with a hollow log structure ${\mab N}\oplus {\cal A}^*\lo {\cal A}$ and 
let ${\cal S}$ be the resulting log formal scheme over 
${\rm Spf}({\mab Z}_p)=({\rm Spf}({\mab Z}_p), {\mab Z}_p^*)$. 
The log formal scheme ${\cal S}$ has a PD-ideal $p{\cal O}_{{\cal S}}$, 
which defines an exact closed immersion ${\cal S}_0 \os{\subset}{\lo} {\cal S}$.  
Let $\{{\cal P}^q_k\}_{k=0}^{\infty}$
be the weight filtration on 
$R^qf_{{\cal X}/{\cal A}*}({\cal O}_{{\cal X}/{\cal A}})_{\mab Q}$. 
We claim that 
\begin{align*} 
{\cal P}^{d+i}_k\cap {\rm Im}(c_{1,{\rm crys}}({\cal L})^i\cup)=
c_{1,{\rm crys}}({\cal L})^i\cup({\cal P}^{d-i}_{k-2i}).  
\tag{19.13.1}\label{ali:bkih}
\end{align*}
Indeed, by (\ref{prop:cf}), 
$c_{1,{\rm crys}}({\cal L})\in 
P_2R^2f_{{\cal X}/{\cal A}*}({\cal O}_{{\cal X}/{\cal A}})_{\mab Q}$. 
Hence, by (\ref{ali:otp}), the right hand side on 
(\ref{ali:bkih}) is contained in the left hand side of (\ref{ali:bkih}). 
By (\ref{exam:ofl}),  
${\cal P}^q_k$  and ${\cal P}^q_{\infty}$ 
extend to objects ${\cal P}^{q,{\rm conv}}_k$ and 
${\cal P}^{q,{\rm conv}}_{\infty}$ 
of $F{\textrm -}{\rm Isoc}({\rm Spec}({\cal A}_0)/{\mab Z}_p)$, respectively. 
Because the category $F{\textrm -}{\rm Isoc}({\rm Spec}({\cal A}_0)/{\mab Z}_p)$
is an abelian category (\cite[p.~795]{od}), 
both hand sides on (\ref{ali:bkih}) extend to objects of 
$F{\textrm -}{\rm Isoc}({\rm Spec}({\cal A}_0)/{\mab Z}_p)$. 
We may assume that ${\rm Spec}({\cal A}_0)$ is connected. 
By \cite[(4.1)]{od}, for a closed point $\os{\circ}{s}$ of ${\rm Spec}({\cal A}_0)$, 
the pull-back functor $\os{\circ}{s}{}^*\col {\rm Isoc}({\rm Spec}({\cal A}_0)/{\mab Z}_p)
\lo {\rm Isoc}(\os{\circ}{s}/{\mab Z}_p)$ is faithful. 
Hence it suffices to prove that 
\begin{align*} 
\os{\circ}{s}{}^*({\cal P}^{d+i,{\rm conv}}_k)\cap 
{\rm Im}(\os{\circ}{s}{}^*(c_{1,{\rm crys}}({\cal L})^i\cup))
=\os{\circ}{s}{}^*c_{1,{\rm crys}}({\cal L})^i\cup({\cal P}^{d-i,{\rm conv}}_{k-2i}). 
\tag{19.13.2}\label{ali:bshi}
\end{align*}
Consider the $p$-adic enlargement 
${\cal W}(\os{\circ}{s})$ 
of ${\rm Spec}({\cal A}_0)/{\mab Z}_p$ . 
Because ${\rm Spf}({\cal A})$ is formally smooth over ${\rm Spf}({\mab Z}_p)$, 
there exists a morphism  ${\cal W}(\os{\circ}{s})\lo {\rm Spf}({\cal A})$ 
fitting into the following commutative diagram
\begin{equation*} 
\begin{CD} 
\os{\circ}{s}@>{\subset}>> {\cal W}(\os{\circ}{s}) \\
@VVV @VVV \\
{\rm Spec}({\cal A}_0) @>>> {\rm Spf}({\cal A}) \\
@VVV @VVV \\
{\rm Spec}({\mab F}_p) @>>> {\rm Spf}({\mab Z}_p).  
\end{CD} 
\end{equation*}  
Then the log structure of ${\cal W}(s)$ is 
the inverse image of the log structure of 
$({\rm Spf}({\cal A}),({\mab N}\oplus {\cal A}^*\lo {\cal A}))$. 
By the proof of \cite[(5.1.30)]{nb}, we see that 
\begin{align*} 
(\os{\circ}{s}{}^*({\cal P}{}^{\rm conv}_{\infty}))_{{\cal W}(\os{\circ}{s})}=
R^qf_{{\cal X}_{s}/{\cal W}(s)*}
({\cal O}_{{\cal X}_{s}/{\cal W}(s)})_{\mab Q}
\tag{19.13.3}\label{ali:bsiznh}
\end{align*}
and 
\begin{align*} 
(\os{\circ}{s}{}^*({\cal P}{}^{\rm conv}_k))_{{\cal W}(\os{\circ}{s})}=
P_kR^qf_{{\cal Z}_{s}/{\cal W}(s)*}
({\cal O}_{{\cal Z}_{s}/{\cal W}(s)})_{\mab Q}.
\tag{19.13.4}\label{ali:bnush}
\end{align*}
Since the residue field $k(\os{\circ}{s})$ is a finite field, 
the equality of (\ref{ali:bshi}) at the value ${\cal W}(\os{\circ}{s})$ holds by 
the existence of the weight spectral sequence (\ref{eqn:espsp})
and by the purity of the weight as in the proof of \cite[(2.18.2)]{nh2} 
and the action of the pull-back of the absolute Frobenius endomorphism
of ${\cal X}$ on the line bundle is the multiplication by $p$. 
This shows the equality (\ref{ali:bshi}) 
%by \cite[(2.11)]{od} 
because ${\cal W}(\os{\circ}{s})$ is a formally smooth lift 
of $\os{\circ}{s}$ over ${\rm Spf}({\mab Z}_p)$. 
\end{proof}

\begin{theo}\label{theo:llh}
{\rm (\ref{conj:lhpilc}) (1)} implies {\rm (\ref{conj:lhpilc}) (2)}. 
\end{theo}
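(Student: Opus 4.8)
The aim is to remove the restrictive hypotheses of \eqref{theo:fdd}, where the base was the formal spectrum of the Witt ring of a perfect field, and to establish the implication for an arbitrary $p$-adic formal ${\cal V}$-scheme $\os{\circ}{T}$. The first step will be to invoke the theory of convergent $F$-isocrystals built in \S\ref{sec:e2}: by \eqref{exam:ofl} the weight filtration $P$ on $R^qf_{X_{\os{\circ}{T}_1}/S(T)^{\nat}*}({\cal O}_{X_{\os{\circ}{T}_1}/S(T)^{\nat}})_{\mab Q}$ extends to an object ${\cal P}^{q,{\rm conv}}_k$ of $F\textrm{-}{\rm Isoc}^{\sq}(S/{\cal V})$, and the same holds for the image and the source of the cup product $\eta^i$. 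Both hand sides of the desired equality
\begin{align*}
{\cal P}^{d+i}_k\cap {\rm Im}(\eta^i)=\eta^i({\cal P}^{d-i}_{k-2i})
\end{align*}
therefore extend to objects of $F\textrm{-}{\rm Isoc}^{\sq}(S/{\cal V})$; here the inclusion $\supset$ is immediate from $c_{1,{\rm crys}}(L)\in P_2$ (see \eqref{prop:cf}) and the filtered compatibility of the cup product \eqref{ali:otp}, exactly as in the proof of \eqref{theo:fdd}.

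The second step will be to reduce to a pointwise statement. By \cite[(4.1)]{od} the category $F\textrm{-}{\rm Isoc}^{\sq}(S/{\cal V})$ is an abelian category on which, for a closed point $s$ of $\os{\circ}{S}$, the pull-back functor $s^*$ is faithful. Thus it suffices to verify the equality after applying $s^*$, i.e.\ to prove the hard Lefschetz statement for each fiber. Using the canonical lift ${\cal W}(s)$ and the filtered log Berthelot--Ogus isomorphism \eqref{theo:bofis}, one identifies the value of the convergent isocrystal at ${\cal W}(s)$ with the log crystalline cohomology of the log special fiber, carrying the weight filtration along. Granting \eqref{conj:lhpilc} (1) (which, by \eqref{theo:a} and the references to \cite{np}, \cite{nb}, is known when there is a fiber coming from a projective strict semistable family over a complete discrete valuation ring), the isomorphism $\eta^i$ of the underlying sheaves is already in hand, so what remains is purely the filtered statement at each fiber.

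The third and decisive step will be the fiberwise filtered hard Lefschetz over a perfect field, which is precisely \eqref{theo:fdd}. One should note that the argument there does not actually require the residue field to be finite at the start: by the infinitesimal deformation invariance of the weight filtration \eqref{coro:finvliae} and the convergence results of \S\ref{sec:e2}, one further descends to the case where $\os{\circ}{s}$ is a closed point with finite residue field, where the purity of the Frobenius weight on the crystalline cohomology of a proper smooth scheme (\cite{kme}, \cite{clpu}, \cite{ndw}) forces the graded pieces ${\rm gr}^P$ to be pure of the expected weight. Since $\eta^i$ shifts weights by exactly $2i$ (it is the $i$-fold cup product with a class of weight $2$), and since it is an isomorphism on the underlying cohomology by hypothesis, a weight-counting argument on the strict morphism of weight spectral sequences \eqref{eqn:espsp}, together with the strict compatibility \eqref{theo:stpbgb}, yields the filtered isomorphism \eqref{eqn:filfill}.

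\textbf{Main obstacle.}
The delicate point will be the compatibility of the cup product by $\eta$ with the weight spectral sequence: one must know that left multiplication by $\eta$ on the $E_1$-terms of \eqref{eqn:espsp} is computed by the restrictions of $L$ to the various strata $\os{\circ}{X}{}^{(k)}$, with no spurious residue contributions. This is exactly the content of \eqref{theo:clll} (equivalently \eqref{defi:etclog} together with \eqref{prop:chc} and \eqref{prop:nr}), whose proof rests on the fact that the Hirsch-extended chern class $\wt{c}_{1,{\rm crys}}\langle U_{S(T)^{\nat}}\rangle$ factors through $P_0$ (see \eqref{prop:nl} and \eqref{prop:cf}), so that $\eta$ acts as a genuine filtered morphism of degree $(+2)$ on the weight filtration. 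Once this strictness-of-$\eta$-on-$E_1$ is secured, the descent from the abelian category $F\textrm{-}{\rm Isoc}^{\sq}(S/{\cal V})$ to finite-residue-field fibers and the purity argument go through routinely, and the implication follows.
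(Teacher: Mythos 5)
Your proposal is correct and follows essentially the same route as the paper: the paper's proof of this theorem is the one-line reduction "use (\ref{theo:fdd}) and argue as in (\ref{theo:stpbgb})", i.e.\ extend both sides of ${\cal P}^{d+i}_k\cap {\rm Im}(\eta^i)=\eta^i({\cal P}^{d-i}_{k-2i})$ to convergent $F$-isocrystals, use faithfulness of the fiber functor and deformation invariance to reduce to the Witt ring of a perfect (then finite) field, and conclude by purity. Your expanded write-up, including the role of $c_{1,{\rm crys}}(L)\in P_2$ and of the compatibility of $\eta$ with the weight spectral sequence, matches the intended argument.
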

\begin{proof} 
By using (\ref{theo:fdd}), 
one can obtain (\ref{theo:llh}) as in (\ref{theo:stpbgb}) 
whose proof is the same as that of \cite[(5.4.7)]{nb}. 
\end{proof}

\begin{coro}\label{coro:nilll}
Let the notations be as in {\rm (\ref{conj:lhpilc})} if 
there exists an exact closed point $t$ 
of each connected component of $S(T)^{\nat}$ such that 
the fiber ${\cal X}_t$ of ${\cal X}$ over $t$ is the log special fiber of 
a projective strict semistable family over a complete discrete valuation ring of 
equal characteristic or mixed characteristics. 
\end{coro}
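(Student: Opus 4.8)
The plan is to reduce everything to the implication already available in (\ref{theo:llh}), namely that (\ref{conj:lhpilc}) (1) implies (\ref{conj:lhpilc}) (2), so that the only point requiring proof is part (1) of (\ref{conj:lhpilc}) under the stated hypothesis. I read the conclusion of (\ref{coro:nilll}) as the assertion that the Lefschetz operator $\eta^i=c_{1,{\rm crys}}(L)^i\cup$ of (\ref{eqn:filfill}) is a filtered isomorphism; since (\ref{theo:llh}) upgrades the bare isomorphism of sheaves to the filtered isomorphism automatically, once I have established the unfiltered statement (\ref{eqn:filllpl}) I am done. Thus the first step is to prove (\ref{conj:lhpilc}) (1), and the second step is purely to invoke (\ref{theo:llh}).

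For the first step I would argue that the bare isomorphism (\ref{eqn:filllpl}) is exactly what was proved in \cite{np} and \cite{nb} under the present hypothesis. The mechanism is variational: the morphism $\eta^i$ is a morphism of (convergent $F$-)isocrystals on $\os{\circ}{S}/{\cal V}$ (its source and target extend to convergent $F$-isocrystals by the results recalled in (\ref{exam:ofl}), and $\eta=c_{1,{\rm crys}}(L)$ is horizontal by the construction of the log crystalline Chern class), so that being an isomorphism can be tested at a single exact closed point by the faithfulness of the pull-back functor $\os{\circ}{s}{}^*$ from \cite[(4.1)]{od}. At the distinguished point $t$, where $\mathcal{X}_t$ is the log special fiber of a projective strict semistable family over a complete discrete valuation ring of equal or mixed characteristic, hard Lefschetz in the form (\ref{eqn:filllpl}) is precisely the theorem established in \cite{np} and \cite{nb} (this is the content recorded as (\ref{theo:a}) in the Introduction); the phrase ``any characteristic'' there covers both the equal- and mixed-characteristic cases demanded here. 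Propagating this isomorphism off $t$ to all of $S(T)^{\nat}$ via the isocrystal structure then yields (1) everywhere.

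For the second step I feed (1) into (\ref{theo:llh}). It is worth recalling internally how (\ref{theo:llh}) operates, since this explains why the result is filtered and not merely bijective: (\ref{theo:llh}) is reduced by spreading out and $p$-adic convergence to the Witt-ring case (\ref{theo:fdd}), whose proof rests on the key inclusion (\ref{ali:bkih}), i.e.\ ${\cal P}^{d+i}_k\cap{\rm Im}(\eta^i)=\eta^i({\cal P}^{d-i}_{k-2i})$. That inclusion in turn uses that $c_{1,{\rm crys}}(L)$ lands in $P_2$ (this is (\ref{prop:cf})) together with the filtered compatibility of the cup product (\ref{ali:otp}), plus purity of the Frobenius weight at finite-field points. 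The main obstacle I anticipate is exactly this filtered compatibility of $\eta^i$ with the weight filtration $P$: it is the place where the multiplicative structure of $(H_{\rm zar}(X/\os{\circ}{S}),P)$, which $(A_{\rm zar}(X/S),P)$ lacks, is indispensable, and where I must be careful that the weight filtration induced on cohomology by $(H_{\rm zar},P)$ agrees with the one induced by $(A_{\rm zar},P)$ (this is guaranteed by (\ref{theo:text})/(\ref{theo:mrtp})). Matching the ``equal or mixed characteristic'' strict-semistability hypothesis to the exact generality in which (1) was proved, and confirming that the base-change point $t\in S(T)^{\nat}$ feeds correctly into the convergence machinery of \S\ref{sec:e2}, are the routine but essential bookkeeping checks that complete the argument.
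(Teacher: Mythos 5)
Your proposal is correct and follows exactly the paper's own argument: part (1) of (\ref{conj:lhpilc}) under the stated hypothesis is quoted from \cite{np} and \cite{nb}, and the filtered statement (2) then follows by applying (\ref{theo:llh}). The additional discussion of the variational mechanism and of the internal workings of (\ref{theo:llh}) is accurate but not needed beyond these two citations.
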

\begin{proof}
In \cite[(5.5.19)]{nb} (resp.~\cite[(5.5.16)]{nb}) we have proved that the conjecture 
(\ref{conj:lhpilc}) (1) is true if 
there exists an exact closed point $t$ 
of each connected component of $S(T)^{\nat}$ such that 
the fiber ${\cal X}_t$ of ${\cal X}$ over $t$ is the log special fiber of 
a projective strict semistable family over a complete discrete valuation ring of 
mixed characteristics (resp.~equal characteristic). 
Now (\ref{coro:nilll}) follows from (\ref{theo:llh}). 
\end{proof} 

The following is essentially the same as 
(\ref{coro:inp}):

\begin{coro}\label{coro:wekl}
Let the assumptions be as in {\rm (\ref{coro:nilll})}. 
Let $k$ be a nonnegative integer and let $i$ be a nonnegative integer. 
Set
\begin{align*} 
(P_kR^{d-i}f_{X_{\os{\circ}{T}_1}/S(T)^{\nat}*}
({\cal O}_{X_{\os{\circ}{T}_1}/S(T)^{\nat}})
\otimes_{\mab Z}{\mab Q})_0 
&:={\rm Ker}(\eta^{i+1} \col 
P_kR^{d-i}f_{X_{\os{\circ}{T}_1}/S(T)^{\nat}*}
({\cal O}_{X_{\os{\circ}{T}_1}/S(T)^{\nat}})
\otimes_{\mab Z}{\mab Q} 
 \tag{19.16.1}\label{ali:fcltpl} \\
&\lo P_k\{R^{d+i+2}f_{X_{\os{\circ}{T}_1}/S(T)^{\nat}*}
({\cal O}_{X_{\os{\circ}{T}_1}/S(T)^{\nat}})
\otimes_{\mab Z}{\mab Q} 
)(i+2)\}). 
\end{align*}
Then 
\begin{align*}
P_kR^qf_{X_{\os{\circ}{T}_1}/S(T)^{\nat}*}
({\cal O}_{X_{\os{\circ}{T}_1}/S(T)^{\nat}})
\otimes_{\mab Z}{\mab Q}
=&(P_kR^qf_{X_{\os{\circ}{T}_1}/S(T)^{\nat}*}
({\cal O}_{X_{\os{\circ}{T}_1}/S(T)^{\nat}})
\otimes_{\mab Z}{\mab Q})_0  \\
&\oplus 
\eta (P_{k-2}R^{q-2}f_{X_{\os{\circ}{T}_1}/S(T)^{\nat}*}
({\cal O}_{X_{\os{\circ}{T}_1}/S(T)^{\nat}})
\otimes_{\mab Z}{\mab Q})_0 \\
&\oplus \eta (P_{k-4}R^{q-4}f_{X_{\os{\circ}{T}_1}/S(T)^{\nat}*}
({\cal O}_{X_{\os{\circ}{T}_1}/S(T)^{\nat}})
\otimes_{\mab Z}{\mab Q})_0 
\oplus \cdots.   
\end{align*} 
Moreover, assume that $\os{\circ}{T}$ is connected. 
Let $b_k^q$ be the rank of locally free sheaf 
$$P_kR^qf_{X_{\os{\circ}{T}_1}/S(T)^{\nat}*}
({\cal O}_{X_{\os{\circ}{T}_1}/S(T)^{\nat}})
\otimes_{\mab Z}{\mab Q}$$ on $\os{\circ}{T}.$
Then 
$b_k^0\leq b_{k+2}^2\leq b_{k+4}^4 \leq \cdots $ 
up to $b_*^d$ and 
$b_k^1\leq b_{k+2}^3\leq b_{k+4}^5 \leq \cdots$ 
up to $b_{*}^d$. 
In particular, 
$b_{\infty}^0\leq b_{\inf}^2\leq b_{\inf}^4 \leq \cdots $ 
up to $b_{\inf}^d$ and 
$b_{\inf}^1\leq b_{\inf}^3\leq b_{\inf}^5 \leq \cdots$ 
up to $b_{\inf}^d$. 
\end{coro}
\begin{proof} 
The argument is well-known and easy. 
Indeed, (\ref{coro:wekl}) immediately 
follows from the following obvious commutative diagram 
\begin{equation*} 
\begin{CD}
R^{d-i-2}f_{X_{\os{\circ}{T}_1}/S(T)^{\nat}*}
({\cal O}_{X_{\os{\circ}{T}_1}/S(T)^{\nat}})
\otimes_{\mab Z}{\mab Q} @>{\eta^{i+2},\sim}>>R^{d+i+2}f_{X_{\os{\circ}{T}_1}/S(T)^{\nat}*}
({\cal O}_{X_{\os{\circ}{T}_1}/S(T)^{\nat}})
\otimes_{\mab Z}{\mab Q} \\
@V{\eta}VV @AA{\eta}A\\
R^{d-i}f_{X_{\os{\circ}{T}_1}/S(T)^{\nat}*}
({\cal O}_{X_{\os{\circ}{T}_1}/S(T)^{\nat}})
\otimes_{\mab Z}{\mab Q}  @>{\eta^i,\sim}>>R^{d+i}f_{X_{\os{\circ}{T}_1}/S(T)^{\nat}*}
({\cal O}_{X_{\os{\circ}{T}_1}/S(T)^{\nat}})
\otimes_{\mab Z}{\mab Q} 
\end{CD}
\end{equation*}
and the strict compatibility of $\eta$ with respect to the weight filtration ((\ref{eqn:filfill})).
\end{proof}

\section{$p$-adic polarizations on log crystalline cohomology sheaves of projective SNCL schemes}\label{sec:pol}
In this section we give results for the $p$-adic variational monodromy-conjecture 
and the $p$-adic variational filtered log hard Lefschetz conjecture for 
log crystalline cohomologcal sheaves of SNCL schemes in certain cases 
if the standard conjecture due to Grothendieck is true. 
See also \cite{a} and see \cite{sap} for the $l$-adic case.  
Let the notations be as in \S\ref{sec:e2}. 
%Let $E'$ be an analogous crystal of 
%${\cal O}_{\os{\circ}{X}_{T_0}/\os{\circ}{T}}$-modules to $E$. 
Consider the following cup product 
\begin{align*} 
( ?,? )   \col &
R^{q}f_{X_{\os{\circ}{T}_1}/S(T)^{\nat}*}({\cal O}_{X_{\os{\circ}{T}_1}/S(T)^{\nat}})\otimes_{{\cal O}_T}
R^{q'}f_{X_{\os{\circ}{T}_1}/S(T)^{\nat}*}({\cal O}_{X_{\os{\circ}{T}_1}/S(T)^{\nat}})
\tag{20.0.1}\label{ali:obop}\\
& \lo 
R^{q+q'}f_{X_{\os{\circ}{T}_1}/S(T)^{\nat}*}({\cal O}_{X_{\os{\circ}{T}_1}/S(T)^{\nat}})
\quad (q,q'\in {\mab N}).
\end{align*}  
%In the following we consider the case where $E$ and $E'$ are trivial. 

\begin{prop}\label{prop:epp} 
Let the notations be as before {\rm (\ref{prop:nl})}. 
For simplicity of notation, let us denote $N_{\rm zar}$ simply by $N$. 
Then the following hold$:$
\par 
$(1)$ 
%Assume that $\os{\circ}{Y}$ is projective over $\os{\circ}{S}$. 
Let $L$ be a line bundle on $X_{\os{\circ}{T}_1}$. Set 
$\lam=c_{1,{\rm crys}}(L)$ and 
$$l:=\lam \cup \col 
R^qf_{X_{\os{\circ}{T}_1}/S(T)^{\nat}*}({\cal O}_{X_{\os{\circ}{T}_1}/S(T)^{\nat}})\lo 
R^{q+2}f_{X_{\os{\circ}{T}_1}/S(T)^{\nat}*}({\cal O}_{X_{\os{\circ}{T}_1}/S(T)^{\nat}}).$$ 
Then  
$(l(x),y) =(x, l(y)) =l((x,y))$. 
\par 
$(2)$ 
%Assume that $\os{\circ}{Y}$ is projective over $\os{\circ}{S}$. 
$N \circ  l =l \circ N$. 
\end{prop}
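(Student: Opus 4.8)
The plan is to deduce both assertions from the multiplicative structure of the zariskian Kim--Hain complex already established in Sections~\ref{sec:p} and \ref{sec:vlc}, so that essentially no new computation is required; everything takes place on the cohomology sheaves $R^\bullet f_{X_{\os{\circ}{T}_0}/S(T)^{\nat}*}$.

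For part (1), I would first record that the induced cup product on the cohomology sheaves is associative and graded commutative. Associativity is (\ref{prop:eee}) passed to cohomology, and graded commutativity holds on cohomology because $H_{\rm zar}(X_{\os{\circ}{T}_0}/\os{\circ}{T},E)$ (resp.~its Thom--Whitney refinement) carries a dga (resp.~graded commutative dga) structure, the Alexander--Whitney product being graded commutative up to homotopy. Since $\lam=c_{1,{\rm crys}}(L)$ lives in $R^2f_{X_{\os{\circ}{T}_0}/S(T)^{\nat}*}$, its degree is even, so the commutativity sign $(-1)^{2\deg x}$ is trivial and $x\cup\lam=\lam\cup x$. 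Then $l((x,y))=\lam\cup(x\cup y)$, while $(l(x),y)=(\lam\cup x)\cup y=\lam\cup(x\cup y)$ by associativity, and $(x,l(y))=x\cup(\lam\cup y)=(x\cup\lam)\cup y=(\lam\cup x)\cup y=\lam\cup(x\cup y)$ by commutativity followed by associativity. This yields the chain of equalities in (1).

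For part (2), the key input is the graded Leibniz rule for the monodromy operator with respect to the cup product, namely $N_{\rm zar}(a\cup b)=N_{\rm zar}(a)\cup b+a\cup N_{\rm zar}(b)$, which is (\ref{prop:cmstc}) (compatibly with (\ref{prop:cmc}) at the level of $\wt{R}u$). Applying this with $a=\lam$ and $b=x$ gives $N(l(x))=N(\lam)\cup x+\lam\cup N(x)$. The second ingredient is the vanishing $N_{\rm zar}(\lam)=0$ for the first crystalline Chern class of a line bundle, which is exactly (\ref{prop:nl}). Substituting, $N(l(x))=\lam\cup N(x)=l(N(x))$, i.e.~$N\circ l=l\circ N$.

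The only genuine point requiring care --- the ``main obstacle'', such as it is --- is that both the Leibniz rule (\ref{prop:cmstc}) and the vanishing (\ref{prop:nl}) are stated at the level of the filtered complexes (or of $\wt{R}u$), so I would verify that they descend to the cohomology sheaves $R^qf_{X_{\os{\circ}{T}_0}/S(T)^{\nat}*}$ and are compatible with the identification of $\lam$ as the image of $c_{1,{\rm crys}}(L)$. This is routine once one notes that all the morphisms involved are morphisms in the appropriate derived category and that the Leibniz formula carries no extra sign, so that the substitution $N(\lam)=0$ is immediate. No issue of convergence or of the non-finiteness of $P$ enters, since we use only the multiplicative structure.
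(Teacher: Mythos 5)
Your proof is correct and follows essentially the same route as the paper: part (1) is deduced from the associativity of the cup product (\ref{prop:eee}), and part (2) from the Leibniz rule (\ref{prop:cmstc}) together with the vanishing $N_{\rm zar}(\lam)=0$ of (\ref{prop:nl}). Your explicit appeal to graded commutativity of the even-degree class $\lam$ for the middle equality in (1) is a point the paper leaves implicit, but it is the same argument.
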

\begin{proof}
(1): (1) follows from (\ref{eqn:dutmd}).  
\par
(2): By (\ref{cd:efntt}), 
$(N \lam )\cup x+ l (Nx)=N(l(x))$ for a local section $x$ of 
$R^q f_{X_{\os{\circ}{T}_1}/S(T)^{\nat}*}({\cal O}_{X_{\os{\circ}{T}_1}/S(T)^{\nat}})$. 
By (\ref{prop:nl}), $N\lam=0$. Hence 
$l (Nx)=N(l(x))$. This means that $l\circ N=N \circ l $. 
\end{proof}

Now assume that 
$\os{\circ}{X}/\os{\circ}{S}$ is projective and of pure relative dimension $d$. 
Let the notations be as in \S\ref{sec:e2}. 
%Set $E':={\cal H}{\it om}_{{\cal O}_{\os{\circ}{X}_{T_1}/\os{\circ}{T}}}
%(E,{\cal O}_{\os{\circ}{X}_{T_1}/\os{\circ}{T}})$ 
%and $q'=2d-q$. 
Then we have the following pairing: 
\begin{align*} 
\langle ?,? \rangle   \col &
R^qf_{X_{\os{\circ}{T}_1}/S(T)^{\nat}*}({\cal O}_{X_{\os{\circ}{T}_1}/S(T)^{\nat}})_{\mab Q}\otimes_{{\cal K}_T}
R^{2d-q}f_{X_{\os{\circ}{T}_1}/S(T)^{\nat}*}({\cal O}_{X_{\os{\circ}{T}_1}/S(T)^{\nat}})_{\mab Q}
\tag{20.1.1}\label{ali:ocop}\\
&\lo 
R^{2d}f_{X_{\os{\circ}{T}_1}/S(T)^{\nat}*}({\cal O}_{X_{\os{\circ}{T}_1}/S(T)^{\nat}})_{\mab Q}
\os{{\rm Tr}^A_{X_{\os{\circ}{T}_1}/S(T)^{\nat}}}{\lo} {\cal K}_T(-d). 
\end{align*}  
Here ${\rm Tr}^A_{X_{\os{\circ}{T}_1}/S(T)^{\nat}}$ 
is the trace morphism defined in (\ref{defi:ffc}).

\begin{prop}\label{prop:tr}
The following formulas hold$:$
\begin{align*} 
\langle N(x),y\rangle +\langle x,N(y)\rangle =0
\tag{20.2.1}\label{ali:nddd}
\end{align*}  
for local sections 
$x \in 
R^qf_{X_{\os{\circ}{T}_1}/S(T)^{\nat}*}({\cal O}_{X_{\os{\circ}{T}_1}/S(T)^{\nat}})_{\mab Q}$
and 
$y\in 
R^{2d-q}f_{X_{\os{\circ}{T}_1}/S(T)^{\nat}*}({\cal O}_{X_{\os{\circ}{T}_1}/S(T)^{\nat}})_{\mab Q}$ 
and 
\begin{align*} 
\langle l(x),y\rangle =\langle x,l(y)\rangle
\tag{20.2.2}\label{ali:l}
\end{align*}  
for local sections 
$x \in 
R^{q-2}f_{X_{\os{\circ}{T}_1}/S(T)^{\nat}*}
({\cal O}_{X_{\os{\circ}{T}_1}/S(T)^{\nat}})_{\mab Q}$
and 
$y\in R^{2d-q}f_{X_{\os{\circ}{T}_1}/S(T)^{\nat}*}
({\cal O}_{X_{\os{\circ}{T}_1}/S(T)^{\nat}})_{\mab Q}$. 
\end{prop}
\begin{proof} 
By (\ref{cd:efntt}),  
$(N(x),y) +(x,N(y)) =N((x,y))$. 
By (\ref{eqn:eise2d}), 
\begin{align*}
R^{2d}f_{X_{\os{\circ}{T}_1}/S(T)^{\nat}*}({\cal O}_{X_{\os{\circ}{T}_1}/S(T)^{\nat}})
=E^{0,2d}_2. 
\end{align*} 
By (\ref{prop:pad}) (1), $E^{-k,2d+k}_1=0$ for $k\not=0$. 
Because $N$ induces a morphism 
$N\col E^{0,2d}_2\lo E^{2,2d-2}_2$, this is the zero morphism. 
Hence (\ref{ali:nddd}) follows. 
The formula (\ref{ali:l}) follows from (\ref{prop:epp}) (1).  
\end{proof}

\par 
In the following let the notations be as in (\ref{conj:lhpilc}). 
Let us recall the following: 

\begin{theo}[{\bf Hard Lefschetz theorem for crystalline cohomologies}]\label{theo:hlc}
Let $Y$ be a projective smooth scheme over $\os{\circ}{T}_1$ 
of pure dimension $e$. Let $L$ be a relative ample line bundle on $Y/\os{\circ}{T}_1$.  
Let $\eta=c_{1,{\rm crys}}(L)$ be the crystalline Chern class of $L$
in $R^2f_{Y/\os{\circ}{T}*}({\cal O}_{Y/\os{\circ}{T}})$ {\rm (\cite[\S3]{boi})}. 
Then the following cup product 
\begin{equation*} 
\eta^i \col 
R^{e-i}f_{Y/\os{\circ}{T}*}({\cal O}_{Y/\os{\circ}{T}})_{\mab Q} 
\lo R^{e+i}f_{Y/\os{\circ}{T}*}({\cal O}_{Y/\os{\circ}{T}})_{\mab Q}(i)
\tag{20.3.1}\label{eqn:fcvcilpl} 
\end{equation*}
is an isomorphism. 
\end{theo}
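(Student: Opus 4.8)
The plan is to reduce the relative, log crystalline statement to the absolute, classical hard Lefschetz theorem for crystalline cohomology of a projective smooth scheme over a perfect field, which is already known. The theorem asserts that for a \emph{projective smooth} scheme $Y/\os{\circ}{T}_0$ of pure relative dimension $e$, the iterated cup product $\eta^i$ is an isomorphism after tensoring with ${\mab Q}$. Note that here there are no log poles: $Y$ is genuinely smooth and we work with $R^{\bul}f_{Y/\os{\circ}{T}*}({\cal O}_{Y/\os{\circ}{T}})$, so this is really a statement about ordinary crystalline cohomology viewed as a convergent $F$-isocrystal.

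First I would invoke the theory of convergent $F$-isocrystals: by the analogue of (\ref{exam:ofl}) in the poleless setting (equivalently, the results of Ogus \cite{od} together with \cite{boi}), each $R^qf_{Y/\os{\circ}{T}*}({\cal O}_{Y/\os{\circ}{T}})\otimes_{\mab Z}{\mab Q}$ extends to a convergent $F$-isocrystal on $\os{\circ}{T}_0/{\cal V}$, and the chern class $\eta=c_{1,{\rm crys}}(L)$ is a morphism of such isocrystals (up to the appropriate Tate twist) by (\ref{prop:ofc}). The cup product $\eta^i$ is therefore a morphism in the abelian category $F\text{-}{\rm Isoc}(\os{\circ}{T}_0/{\cal V})$. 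To check that a morphism in this category is an isomorphism, it suffices to check it at every closed point by the faithfulness of the pull-back functor $\os{\circ}{s}{}^*\col {\rm Isoc}(\os{\circ}{T}_0/{\cal V})\lo {\rm Isoc}(\os{\circ}{s}/{\cal V})$ (cf.~\cite[(4.1)]{od}, used exactly this way in the proof of (\ref{theo:fdd})).

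Next I would carry out the reduction at a closed point. Pulling back along $\os{\circ}{s}\os{\sus}{\lo}{\cal W}(\os{\circ}{s})\lo \os{\circ}{T}$ as in the proof of (\ref{theo:fdd}), the isocrystal $R^qf_{Y/\os{\circ}{T}*}({\cal O}_{Y/\os{\circ}{T}})\otimes_{\mab Z}{\mab Q}$ evaluated at ${\cal W}(\os{\circ}{s})$ becomes $H^q_{\rm crys}(Y_s/{\cal W}(\os{\circ}{s}))\otimes {\mab Q}$ by the Berthelot--Ogus comparison isomorphism (the poleless version of (\ref{theo:bofis})), and $\eta$ restricts to the crystalline chern class of $L_s$ on the fiber $Y_s$. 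Since $Y_s$ is a projective smooth scheme over the perfect residue field $k(\os{\circ}{s})$, the statement becomes the classical hard Lefschetz theorem for the rational crystalline cohomology of a projective smooth variety. This is known: it follows from Katz--Messing \cite{kme} reducing crystalline cohomology to $\ell$-adic cohomology over a finite field together with Deligne's hard Lefschetz for $\ell$-adic cohomology, and then spreading out / specialization to a general perfect field.

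The main obstacle I expect is not any single deep new input but the careful bookkeeping needed to make the reduction legitimate: one must verify that $\eta^i$ genuinely underlies a morphism of convergent $F$-isocrystals compatible with the Tate twist (so that the abelian-category faithfulness argument applies), and one must handle the passage from a general perfect field $\kap$ to a finite field — typically by a standard spreading-out argument writing $Y$ and $L$ over a scheme of finite type over ${\mab F}_p$ and invoking specialization, exactly as is done in the proof of (\ref{theo:fdd}). Since everything here takes place in the \emph{smooth} (poleless) setting, the weight-filtration complications that occupy the rest of the paper do not intervene, and the heart of the matter is simply citing the known absolute hard Lefschetz theorem fiberwise and propagating it by the faithfulness of pullback for isocrystals.
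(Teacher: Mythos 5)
Your proposal matches the paper's own proof: the paper simply says the theorem ``follows from the usual hard Lefschetz theorem for crystalline cohomologies of a projective smooth scheme over a finite field in \cite{kme}'' by the same reduction as in the proof of (\ref{theo:fdd}), i.e.\ extending to a convergent $F$-isocrystal, using faithfulness of pullback at closed points and the Berthelot--Ogus isomorphism to reduce to the fiber, and then specializing to a finite field. Your write-up spells out exactly these steps, so it is correct and takes essentially the same route.
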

\begin{proof} 
As in the proof of (\ref{theo:fdd}),  
this follows from the usual hard Lefschetz theorem for crystalline cohomologies  
of a projective smooth scheme over a finite field in \cite{kme}. 
(However there is a gap in the proof of weak Lefschetz theorem due to Berthelot  
as already mentioned in the proof of (\ref{theo:e2dam}).)
%because the proof of the usual hard Lefschetz theorem in [loc.~cit.] uses 
%the weak Lefschetz theorem due to Berthelot. 
\end{proof}

\par 
%Now assume that $E$ and $E'$ are trivial. 
Assume that $\os{\circ}{X}{}^{(0)}_{T_1}$ is projective over $\os{\circ}{T}_1$. 
Then $\os{\circ}{X}{}^{(k)}_{T_1}$ $(k\in {\mab N})$ 
is also projective over $\os{\circ}{T}_1$. 
Following \cite[(2.8)]{gn}, 
set    
\begin{equation*} 
H^{-k,q,j}:=
\begin{cases} 
R^{q+d-2j-k}
f_{\os{\circ}{X}{}^{(2j+k)}_{\os{\circ}{T}_1}/S(T)^{\nat}*}
({\cal O}_{\os{\circ}{X}{}^{(2j+k)}_{T_1}/\os{\circ}{T}})\otimes_{\mab Z}{\mab Q}(-j-k) & (j\geq \max\{0,-k\}), \\
0 & {\rm otherwise}.
\end{cases}
\end{equation*}  
(This is a picking out of the direct factor of the $E_1$-term of 
the weight spectral sequence (\ref{eqn:esafsp}) 
twisted by $d$ for the case of  the trivial coefficient.) 
We define the following morphisms
$$d'\col H^{-k,q,j}\lo H^{-k+1,q+1,j+1},$$ 
$$d''\col H^{-k,q,j}\lo H^{-k+1,q+1,j},$$ 
$$\nu \col H^{-k,q,j}\lo H^{-k+2,q,j+1}(-1)$$ 
by the following formula: 
$$d'(x)=\rho(x),\quad d''(x)=G(x),\quad \nu(x):={\rm id}$$
for a local section $x$ of 
$R^{q+d-2j-k}
f_{\os{\circ}{X}{}^{(2j+k)}_{\os{\circ}{T}_1}/S(T)^{\nat}*}
({\cal O}_{\os{\circ}{X}{}^{(2j+k)}_{T_1}/\os{\circ}{T}})
\otimes_{\mab Z}{\mab Q}(-j-k)$. 
Here the morphism $d''$ is different from 
the morphism $d''$ in \cite[p.~145]{gn} about signs. 
Because one can easily check that 
\begin{equation*} 
H^{k+2,q,j+k+1}=
\begin{cases} 
0& (j=0), \\
H^{-k,q,j} & (j\geq 1),
\end{cases} 
\end{equation*}  
\begin{equation*} 
{\rm Ker}(\nu^{k+1}\col H^{-k,q,j}\lo H^{k+2,q,j+k+1}(-k-1))
:=
\begin{cases} 
H^{-k,q,0}& (j=0), \\
0 & (j\geq 1).
\end{cases}
\tag{20.3.2}\label{eqn:filpl} 
\end{equation*}

Let $E^{\bul \bul}_1$ be the $E_1$-term of the spectral sequence (\ref{eqn:esafsp}) 
for the case of  the trivial coefficient. 
Set 
\begin{align*} 
H^{-k,q}&:=\bigoplus_{j\geq \max\{0,-k\}}H^{-k,q,j}
%\bigoplus_{j\geq \max\{0,-k\}}
%R^{q+d-2j-k}
%f_{\os{\circ}{X}{}^{(2j+k)}_{\os{\circ}{T}_1}/S(T)^{\nat}*}
%({\cal O}_{\os{\circ}{X}{}^{(2j+k)}_{T_1}/\os{\circ}{T}})(-j-k)\otimes_{\mab Z}{\mab Q}\\
=E_1^{-k,q+d+k}\otimes_{\mab Z}{\mab Q} \quad (k,q\in {\mab Z}).  
\end{align*}

Set $H:=\bigoplus_{k,q\in {\mab Z}}H^{-k,q}$, 
$l_1:=\nu \col H^{-k,q}\lo H^{-k+2,q}(-1)$ and $l_2:=l$.
By the construction of $\nu$, $l_1^k= {\rm id}\col H^{-k,q}\lo H^{k,q}$ 
is an isomorphism for $k\in {\mab N}$. 
By (\ref{theo:hlc}) $l_2^q\col H^{k,-q}\lo H^{k,q}$ is an isomorphism for $q\in {\mab N}$.. 
Obviously $l_1\circ l_2=l_2\circ l_1$. 
Hence $(H,l_1,l_2)$ satisfies the axiom of the bigraded Lefschetz module in 
\cite[(4.1)]{gn} (see also the notion of the (bi)graded polarized Hodge structure in 
\cite[(4.1.1), (4.2.1)]{sam}). 
Set 
\begin{align*} 
H^{-k,-q}_0:=H^{-k,-q}\cap {\rm Ker}(l_1^{k+1})\cap {\rm Ker}(l_2^{q+1}). 
\end{align*} 
Then we have the following Lefschetz decomposition 
\begin{align*} 
H^{kq}=\bigoplus_{r,s\in {\mab N}}l_1^rl_2^sH^{k-2r,q-2s}_0. 
\end{align*}
%Because $l_1^k
By (\ref{eqn:filpl}) 
\begin{align*}
H^{-k,-q}_0:=H^{-k,-q,0}\cap {\rm Ker}(l_2^{q+1})
&={\rm Ker}(l_2^{q+1}\col R^{d-k-q}
f_{\os{\circ}{X}{}^{(k)}_{\os{\circ}{T}_0}/S(T)^{\nat}*}
({\cal O}_{\os{\circ}{X}{}^{(k)}_{T_0}/\os{\circ}{T}})\otimes_{\mab Z}{\mab Q}(-k)\\
&\lo 
R^{d-k+q+2}
f_{\os{\circ}{X}{}^{(k)}_{\os{\circ}{T}_0}/S(T)^{\nat}*}
({\cal O}_{\os{\circ}{X}{}^{(k)}_{T_0}/\os{\circ}{T}})\otimes_{\mab Z}{\mab Q}(-k+q+1)\\
&=(R^{d-k-q}
f_{\os{\circ}{X}{}^{(k)}_{\os{\circ}{T}_0}/S(T)^{\nat}*}
({\cal O}_{\os{\circ}{X}{}^{(k)}_{T_0}/\os{\circ}{T}})\otimes_{\mab Z}{\mab Q})_0(-k). 
\end{align*} 
Here ${}_0$ means the primitive part.  
Let 
\begin{align*}
\langle ?,?\rangle \col H\otimes_{{\cal K}_T}H\lo {\cal K}_T
\end{align*}
be the morphism obtained by 
the Poincar\'{e} duality morphism for crystalline cohomologies (\cite[(3.12)]{od}).

Set 
\begin{equation*}
d:=G+\rho 
\col H^{-k,q}\lo H^{-k+1,q}
\tag{20.3.3}\label{eqn:grhobd}
\end{equation*}
as in (\ref{prop:deccbd}). 
Then $d^2=0$. 
We also have the following relation 
$$dl_i=l_id \quad(i=1,2),$$
\par 
Set 
\begin{align*} 
\eps(a):=(-1)^{\frac{a(a-1)}{2}} \quad (a\in {\mab Z}). 
\end{align*}

For local sections $x$ of 
$$H^{-k,q,j}=R^{q+d-2j-k}
f_{\os{\circ}{X}{}^{(2j+k)}_{\os{\circ}{T}_0}/S(T)^{\nat}*}
({\cal O}_{\os{\circ}{X}{}^{(2j+k)}_{T_0}/\os{\circ}{T}})(-j-k)\otimes_{\mab Z}{\mab Q}$$ 
and $y$ of 
$$H^{k,-q,j+k}
=R^{-q+d-2j-k}
f_{\os{\circ}{X}{}^{(2j+k)}_{\os{\circ}{T}_0}/S(T)^{\nat}*}
({\cal O}_{\os{\circ}{X}{}^{(2j+k)}_{T_0}/\os{\circ}{T}})(-j)\otimes_{\mab Z}{\mab Q},$$
set 
$$\psi(x,y):=\eps(-k-q-d)\langle x,y\rangle:=
\eps(-k-q-d)\langle x,y\rangle_{(j,k)} \in {\cal K}_T(-d),$$
where $\langle x,y\rangle:=\langle ?,?\rangle_{(j,k)}$ 
means the Poincare duality morphism for 
$\os{\circ}{X}{}^{(2j+k)}_{T_0}/\os{\circ}{T}$: 
\begin{align*}
&R^{q+d-2j-k}
f_{\os{\circ}{X}{}^{(2j+k)}_{\os{\circ}{T}_0}/S(T)^{\nat}*}
({\cal O}_{\os{\circ}{X}{}^{(2j+k)}_{T_0}/\os{\circ}{T}})(-j-k)\otimes_{\mab Z}{\mab Q}
\otimes_{{\cal K}_T}\\
&R^{-q+d-2j-k}
f_{\os{\circ}{X}{}^{(2j+k)}_{\os{\circ}{T}_0}/S(T)^{\nat}*}
({\cal O}_{\os{\circ}{X}{}^{(2j+k)}_{T_0}/\os{\circ}{T}})(-j)\otimes_{\mab Z}{\mab Q}
\os{\cup}{\lo} \\
&R^{2(d-2j-k)}
f_{\os{\circ}{X}{}^{(2j+k)}_{\os{\circ}{T}_0}/S(T)^{\nat}*}
({\cal O}_{\os{\circ}{X}{}^{(2j+k)}_{T_0}/\os{\circ}{T}})(-2j-k)\otimes_{\mab Z}{\mab Q}
\os{\rm Tr(-2j-k)}{\lo} {\cal K}_T(-d). 
\end{align*}

The following is a $p$-adic analogue of \cite[(3.5)]{gn}: 

\begin{prop}[{\bf cf.~\cite[(3.5)]{gn}}]\label{prop:an}
The following relations hold$:$
\par 
$(1)$ $\psi(y,x)=(-1)^d\psi(x,y)$.
\par 
$(2)$ $\psi(l_1y,x)+\psi(x,l_1y)=0$.
\par 
$(3)$ $\psi(l_2y,x)+\psi(x,l_2y)=0$.
\par 
$(4)$ $\psi(d'y,x)+\psi(x,d''y)=0$.
\par 
$(5)$ $\psi(d''y,x)+\psi(x,dy)=0$.
\end{prop}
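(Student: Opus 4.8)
\textbf{Proof proposal for Proposition \ref{prop:an}.}

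The plan is to reduce all five relations to elementary sign manipulations using the definitions of $\psi$, the Poincar\'{e} duality pairing $\langle?,?\rangle_{(j,k)}$, the morphisms $d'=\rho$, $d''=G$, $l_1=\nu$, $l_2=l$, and the compatibility of the Gysin morphism $G$ with the restriction morphism $\rho$ under the duality pairing. The key input will be the crystalline analogue of the adjunction between Gysin and pull-back: by (\ref{prop:gyp}) we have $\langle \rho(a),b\rangle = \langle a, G(b)\rangle$ up to the explicit Tate twists and signs recorded in (\ref{prop:deccbd}). First I would fix, once and for all, the sign function $\eps(a)=(-1)^{a(a-1)/2}$ and record the two combinatorial identities $\eps(a-1)=(-1)^{a-1}\eps(a)$ and $\eps(a+1)=(-1)^{a}\eps(a)$, which govern how $\eps$ changes when the total degree shifts by $\pm 1$; these are exactly what is needed when a boundary morphism $d'$ or $d''$ raises the cohomological degree by one.

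For statement (1), I would use that the Poincar\'{e} pairing $\langle x,y\rangle_{(j,k)}$ on $R^{*}f_{\os{\circ}{X}{}^{(2j+k)}_{\os{\circ}{T}_0}/S(T)^{\nat}*}(\cdots)$ is $(-1)^{(q+d-2j-k)(-q+d-2j-k)}$-symmetric, since on the crystalline cohomology of the smooth proper scheme $\os{\circ}{X}{}^{(2j+k)}_{T_0}$ of pure relative dimension $d-2j-k$ the cup-product pairing satisfies the standard graded-commutativity $\langle x,y\rangle=(-1)^{\deg x\cdot \deg y}\langle y,x\rangle$. Combining this parity with the definition $\psi=\eps(-k-q-d)\langle?,?\rangle$ and the fact that swapping $x$ and $y$ also swaps the roles of $(j,k)$ and $(j+k,k)$ (hence the sign $\eps$ is evaluated at the same argument), the net sign collapses to $(-1)^d$, giving $\psi(y,x)=(-1)^d\psi(x,y)$. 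For statements (2) and (3), $l_1=\nu$ acts as the identity on cohomology (up to Tate twist) and $l_2=l$ is a cup product with $c_{1,\mathrm{crys}}(L)\in P_2R^2$; here I would invoke (\ref{prop:epp}) (1), namely $\langle l(x),y\rangle=\langle x,l(y)\rangle$, together with the degree shift by $2$ that $l_i$ induces, which changes $\eps(-k-q-d)$ to $\eps(-k-q-d\pm2)=-\eps(-k-q-d)$ by the recorded identity; this flip of sign is precisely what produces the anti-symmetry $\psi(l_iy,x)+\psi(x,l_iy)=0$.

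The main obstacle, and the step I would treat most carefully, is the pair of relations (4) and (5) involving the boundary morphisms $d'=\rho$ and $d''=G$, because these are the only places where the Gysin/restriction adjunction interacts with the sign function in a nontrivial way. The delicate point is that $d'$ and $d''$ shift the triple degree $(-k,q,j)$ differently: $d'$ sends $H^{-k,q,j}\to H^{-k+1,q+1,j+1}$ while $d''$ sends $H^{-k,q,j}\to H^{-k+1,q+1,j}$, so the Tate twists $(-j-k)$ on the two sides do not match in the naive way and must be absorbed into the pairing $\langle?,?\rangle_{(j,k)}$ on the appropriate stratum. I would prove (4) by writing out $\psi(d'y,x)$ and $\psi(x,d''y)$ explicitly on fixed strata $\os{\circ}{X}{}^{(2j+k)}_{T_0}$, using (\ref{prop:gyp}) to transfer $\rho$ on one factor to $G$ on the other, and then checking that the ratio of the two $\eps$-signs, namely $\eps(-k-q-d)$ versus $\eps(-(k-1)-(q+1)-d)=\eps(-k-q-d)$, combined with the sign $(-1)^{?}$ introduced by the Gysin adjunction in (\ref{prop:deccbd}), yields exactly the cancellation asserted. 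Relation (5) is then the transpose of (4) obtained by interchanging the roles of $d'$ and $d''$ and applying (1); I expect the verification of (5) to follow formally from (4) and (1) once the sign bookkeeping in (4) is pinned down, so the whole proposition rests on getting the degree-shift signs in (4) correct. Since these are routine but error-prone sign computations entirely parallel to the complex-analytic case in \cite[(3.5)]{gn}, I would present the proof by carrying out (4) in full and indicating that (1), (2), (3), (5) follow by the same mechanism.
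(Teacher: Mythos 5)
Your overall strategy coincides with the paper's: all five relations are sign computations with $\eps(a)=(-1)^{a(a-1)/2}$, using graded commutativity of the cup product for (1), the identity $\eps(a\pm 2)=-\eps(a)$ for (2) and (3), the adjunction $\langle\rho(x),y\rangle=\langle x,G(y)\rangle$ of (\ref{prop:gyp}) for (4), and deducing (5) from (1) and (4). Parts (2), (3) and (5) of your proposal are in order.

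There is, however, a genuine gap in (4), which you yourself single out as the crucial step. You compute the two $\eps$-arguments as $\eps(-k-q-d)$ versus $\eps(-(k-1)-(q+1)-d)=\eps(-k-q-d)$, i.e.\ you find them \emph{equal}, and you then defer the required minus sign to an unspecified sign ``$(-1)^{?}$ introduced by the Gysin adjunction in (\ref{prop:deccbd})''. But in this paper the Gysin/restriction adjunction (\ref{prop:gyp}) carries no sign at all, and (\ref{prop:deccbd}) concerns the $E_1$-boundary of the Steenbrink spectral sequence, not the pairing $\psi$; so under your bookkeeping the two terms of (4) would add up to $2\,\eps(-k-q-d)\langle x,d''y\rangle$ rather than to $0$. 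In the paper's computation the relevant element sits in $H^{-(k-1),q-1,j}$ (not $H^{-(k-1),q+1,j}$), so the $\eps$-argument of the second term is $-(k-1)-(q-1)-d=-k-q+2-d$, and the minus sign comes precisely from $\eps(-k-q+2-d)=-\eps(-k-q-d)$, i.e.\ from the same shift-by-two identity you already used for (2) and (3). A smaller instance of the same problem occurs in (1): after swapping $x$ and $y$ the sign $\eps$ is \emph{not} ``evaluated at the same argument'' --- it is evaluated at $k+q-d$ instead of $-k-q-d$, and the parity difference $\bigl((-k-q-d)^2-(-k-q-d)+(k+q-d)^2-(k+q-d)\bigr)/2\equiv k+q \pmod 2$ is exactly what cancels the $(-1)^{k+q}$ in the graded-commutativity sign $(-1)^{d+k+q}=(-1)^{(d-k)^2-q^2}$ to leave $(-1)^d$. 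With these two degree computations corrected, your argument becomes the paper's proof.
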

\begin{proof} 
The proof is the same as that of \cite[(3.5)]{gn} as follows. 
\par 
(1): (1) follows from 
$\langle x,y\rangle=(-1)^{(d-k)^2-q^2}\langle y,x\rangle
=(-1)^{d+k+q}\langle y,x\rangle$ and 
\begin{align*}
\dfrac{(-k-q-d)^2-(-k-q-d)+(k+q-d)^2-(k+q-d)}{2}
&=k^2+q^2+d^2+2kq+d\\
&=k+q\mod 2.
\end{align*} 
\par 
(2): For local sections $x$ of $H^{-(k-2),q,j}$ 
and $y$ of $H^{k,-q,j+k}$, 
$\psi(l_1y,x)=\eps(-k-q-d)\langle x,y\rangle$ and 
$\psi(x,l_1y)=\eps(-(k-2)-q-d)\langle x,y\rangle$. 
It is easy to see that $\eps(a+2)\eps(a)=-1$. 
\par 
(3): We can easily prove that (3) holds as in (2). 
\par 
(4): For local sections $x$ of 
$H^{-(k-1),q-1,j}$
and $y$ of 
$H^{k,-q,j+k}$
$\psi(d'x,y)=\eps(-k-q-d)\langle d'x,y\rangle$ 
$\psi(x,d''y)=\eps(-(k-1)-(q-1)-d)\langle x,d''y\rangle=
\eps(-k-q+2-d)\langle x,d''y\rangle=-\eps(-k-q-d)\langle x,d''y\rangle$.  
\par 
(5): (5) follows from (1) and (4). 
\end{proof}

%\begin{defi}\label{defi:hl}
%Assume that $\os{\circ}{T}$ is a $p$-adic formal ${\cal V}$-scheme. 
%For an exact closed point $t$ of $S(T)^{\nat}$, let $K_0(t)$ be the fraction field 
%of the Witt ring ${\cal W}(\kap_t)$, where $\kap_t$ is the residue field of $t$. 
%Assume that there exists an exact closed point $t$ of 
%each connected component of $S(T)^{\nat}$ such that 
%for any $\os{\circ}{X}{}^{(k)}$ $(k\in {\mab N})$ and for any $i\in {\mab N}$,  
%there exists a ${\mab Q}$-structure 
%\end{defi} 

\begin{theo}\label{theo:pos}
Assume that $\os{\circ}{T}$ is a $p$-adic formal ${\cal V}$-scheme. 
For an exact closed point $t$ of $S(T)^{\nat}$, let $K_0(t)$ be the fraction field 
of the Witt ring ${\cal W}(\kap_t)$, where $\kap_t$ is the residue field of $t$. 
Assume that there exists an exact closed point $t$ of 
each connected component of $S(T)^{\nat}$ such that 
for any $\os{\circ}{X}{}^{(k)}$ $(k\in {\mab N})$ and for any $i\in {\mab N}$,  
the following Chern class morphism  
\begin{align*} 
{\rm CH}^i(\os{\circ}{X}{}^{(k)}_t)\otimes_{\mab Z}K_0(t)
\lo 
H^{2i}_{\rm crys}(\os{\circ}{X}{}^{(k)}_t/{\cal W}(\kap_t))
\otimes_{{\cal W}(\kap_t)}K_0(t)
\end{align*} 
is surjective 
and 
$H^{2i+1}_{\rm crys}(\os{\circ}{X}{}^{(k)}_t/{\cal W}(\kap_t))
\otimes_{{\cal W}(\kap_t)}K_0(t)=0$ 
and the Hodge standard conjecture by Grothendieck is true for 
$\os{\circ}{X}{}^{(k)}_t$. Then the variational $p$-adic monodromy-weight conjecture and 
the variational $p$-adic filtered log hard Lefschetz conjecture are true. 
\end{theo}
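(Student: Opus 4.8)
The plan is to show that the whole package assembled in this section — the bigraded object $H=\bigoplus_{k,q}H^{-k,q}$ with the two commuting Lefschetz operators $l_1=\nu$ and $l_2=l$, the differential $d=G+\rho$ of (\ref{eqn:grhobd}), and the pairing $\psi$ — becomes a \emph{polarized} bigraded Lefschetz module in the sense of \cite{gn} and \cite{sam}, and then to read off both conjectures from the abstract structure theory of such modules. First I would reduce both assertions to the single exact closed point $t$ prescribed in each connected component. The weight filtration $P$ and the monodromy operator $N_{\rm zar}$ both extend to convergent $F$-isocrystals on $S/{\cal V}$ by the results of \S\ref{sec:e2} (see (\ref{exam:ofl}) and (\ref{prop:convmon})), and the variational statements are isomorphisms of such $F$-isocrystals; by faithfulness of the restriction functor to a closed point of each connected component (\cite[(4.1)]{od}) it suffices to verify them at $t$, exactly as in the proofs of (\ref{theo:fdd}) and (\ref{theo:stpbgb}). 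At $t$ the base is the canonical Witt lift ${\cal W}(\kap_t)$, the $E_1$-terms of the weight spectral sequence (\ref{eqn:espsp}) are the crystalline cohomologies of the projective smooth schemes $\os{\circ}{X}{}^{(k)}_t$, and the spectral sequence degenerates at $E_2$ modulo torsion by (\ref{theo:e2d}).

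The algebraic input established earlier — hard Lefschetz for $l_2$ on each $E_1$-term (\ref{theo:hlc}), the Lefschetz module axioms for $(H,l_1,l_2)$, and the (anti)symmetry and compatibility relations (\ref{prop:an}) for $\psi$ — already exhibits $(H,l_1,l_2,\psi)$ as a bigraded Lefschetz module equipped with a graded pairing. What remains, and what the three hypotheses at $t$ are designed to supply, is \emph{positivity}: I would prove that $\psi$ restricts to a definite form, with the signs dictated by $\eps$ in (\ref{prop:an}), on each primitive piece $H^{-k,-q}_0=(R^{d-k-q}f_{\os{\circ}{X}{}^{(k)}_t/{\cal W}(\kap_t)*}({\cal O})\otimes_{\mab Z}{\mab Q})_0(-k)$. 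By hypothesis the cycle class map is surjective onto the even crystalline cohomology of each $\os{\circ}{X}{}^{(k)}_t$ and the odd crystalline cohomology vanishes, so every primitive class is algebraic; Grothendieck's Hodge standard conjecture for $\os{\circ}{X}{}^{(k)}_t$ then yields that the primitive cup-product form $x\mapsto (-1)^i\langle l_2^{d'-2i}x,x\rangle$ is positive definite on primitive algebraic classes, where $d'=\dim \os{\circ}{X}{}^{(k)}_t$. Matching this with the normalisation in $\psi$ and the Tate twists of (\ref{eqn:espsp}) turns $(H,l_1,l_2,\psi)$ into a polarized bigraded Lefschetz module.

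Once positivity is in place I would invoke the structure theorem for polarized bigraded Lefschetz modules, i.e. the $\maf{sl}_2\times\maf{sl}_2$-representation formalism of \cite{gn} and \cite{sam}: such a module is semisimple, its differential $d$ is strict for the induced filtrations, and on the cohomology of $(H,d)$ — which by the $E_2$-degeneration is $R^{\bul}f_{X_t/{\cal W}(\kap_t)*}({\cal O})\otimes_{\mab Z}{\mab Q}$ — both operators induce hard Lefschetz isomorphisms. The isomorphisms for $l_2=\eta$ give (\ref{conj:lhpilc}) (1) at $t$, hence (\ref{conj:lhpilc}) (2) at $t$ by (\ref{theo:llh}), while the isomorphisms for $l_1=N$ give the monodromy-weight statement $N^k\col {\rm gr}^P_{q+k}\os{\sim}{\lo}{\rm gr}^P_{q-k}(-k)$ at $t$. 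Spreading these isomorphisms of $F$-isocrystals back out via the reduction of the first step then yields the variational log hard Lefschetz conjecture and the variational monodromy-weight conjecture over all of $S$.

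The main obstacle I expect is the positivity step. Even granting the Hodge standard conjecture for the individual components, the two operators $l_1$ and $l_2$ act simultaneously, so the relevant positivity is that of a \emph{bi}-Lefschetz (i.e. $\maf{sl}_2\times\maf{sl}_2$) polarization, and one must check that the single-operator positivity coming from the Hodge standard conjecture propagates correctly through the double Lefschetz decomposition together with the sign function $\eps$ and the various Tate twists. Keeping the signs and twists consistent — so that $\psi$ is genuinely definite rather than merely nondegenerate on each primitive bigraded piece — is the delicate bookkeeping on which the whole argument turns; the remainder is the transcription of the complex-analytic polarized Hodge-Lefschetz formalism of \cite{gn} into the crystalline setting, which the earlier propositions of this section (in particular (\ref{prop:an}), (\ref{prop:tr}) and (\ref{prop:deccbd})) have largely prepared.
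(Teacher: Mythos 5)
Your proposal is correct and follows essentially the same route as the paper: the paper's proof likewise reduces to the distinguished point $t$ and spreads back out as in the proof of (\ref{theo:fdd}), and uses the three hypotheses to run the (Steenbrink--)Saito polarized bigraded Lefschetz module argument of \cite{sam}, \cite{gn} on the $E_1$-terms exactly as in the complex-analytic case (citing \cite{iti} and \cite{rx} for the crystalline transcription). Your write-up simply makes explicit the positivity-on-primitive-parts step and the sign/twist bookkeeping that the paper delegates to those references.
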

\begin{proof} 
By the assumptions we can define a argument about positivity due to 
Steenbrink-Satio (\cite{sam}) for 
$H^{2i}_{\rm crys}(\os{\circ}{X}{}^{(k)}_t/{\cal W}(\kap_t))
\otimes_{{\cal W}(\kap_t)}K_0(t)$ as in the case where the base field is ${\mab C}$ 
(cf.~\cite{iti}, \cite{rx}). 
Hence the $p$-adic monodromy-weight conjecture and the $p$-adic log hard Lefschetz conjecture are true 
for $X_t$ (cf.~[loc. cit.]). 
Consequently (\ref{theo:pos}) follows as in the proof of (\ref{theo:fdd}). 
\end{proof}

%\begin{rema}
%We do not assume that the variational Tate conjecture by Morrow.  
%\end{rema}

\section{Comparison theorems between our filtered complex (resp.~
our filtered dga) with Kim-Hain's filtered complex (resp.~their filtered dga)}\label{sec:ofc}
In this section we give the right construction of 
$(s(C(W\wt{\om}[u])),P)$ in \cite{kiha}. 
The method of our construction is quite different from the construction 
of $(s(C(W\wt{\om}[u])),P)$ in [loc.~cit.] because we use results in \cite{nb} and 
the previous sections and because 
the local log smooth scheme for our construction of 
$(s(C(W\wt{\om}[u])),P)$ is not necessarily a log smooth lift of a given log smooth scheme. 
Our method does not need the admissible lift defined in \cite{msemi} nor 
the local calculation of the log blow up of the product of two admissible lifts in 
\cite{hyp} (and \cite{msemi}), and  simplifies arguments about the construction and 
the properties of several log de Rham-Witt complexes. 
The main results in this section are comparison theorems between $(s(C(W\wt{\om}[u])),P)$ 
and $(s_{\rm TW}(C(W\wt{\om}[u])\otimes_{\mab Z}{\mab Q}),P)$
with our filtered complex $(H_{\rm zar},P)$ 
and our filtered dga $(H_{{\rm zar},{\rm TW}},P)$ in previous sections, 
respectively, in the case where 
the base log scheme is the log point of a perfect field of characteristic $p>0$. 
As a result we see that $(H_{\rm zar},P)$ and $(H_{{\rm zar},{\rm TW}},P)$
are generalizations of $(s(C(W\wt{\om}[u])),P)$ and 
$(s_{\rm TW}(C(W\wt{\om}[u])\otimes_{\mab Z}{\mab Q}),P)$, respectively, 
to the case where the base log scheme is more general.   
\par 
Let the notations be as in \S\ref{sec:ldfc}. 
In this section we assume that $S$ is the log point $s$ of a perfect field 
$\kap$ of characteristic $p>0$. Let ${\cal W}_n$ $(n\geq 1)$ be the Witt ring of 
$\kap$ and let ${\cal W}_n(s)$ be the canonical lift of $s$ over ${\cal W}_n$. 
Let $Y$ be a log smooth scheme of Cartier type over $s$. 
Let ${\cal W}_n\Om^{\bul}_Y$ and ${\cal W}_n\wt{\Om}^{\bul}_Y$ be the 
log de Rham-Witt complex of $Y/s$ defined in \cite{hyp}, \cite{msemi} and \cite{ndw}. 
Let $\theta_n=$``$[d\log t]$''$\in {\cal W}_n\wt{\Om}^1_Y$ be 
the canonical closed one form defined in 
\cite{msemi}. 
Let ${\cal W}_n(Y)$ be the canonical lift of $Y$ over ${\cal W}_n$. 
Let $F_{{\cal W}_n(Y)}\col {\cal W}_n(Y)\lo {\cal W}_n(Y)$ be the Frobenius endomorphism of 
${\cal W}_n(Y)$.
The sheaf ${\cal W}_n\wt{\Om}^i_Y$ 
$(i\in {\mab N})$ defined in \cite{msemi} and \cite{nb} 
is a ${\cal W}_n({\cal O}_Y)$-module, 
where ${\cal W}_n({\cal O}_Y)$ is the ``reverse'' Witt sheaf of $Y$ 
in the sense of \cite{ndw}, that is, 
${\cal W}_n({\cal O}_Y)={\cal H}^0(\Om^{\bul}_{{\cal Y}_n/{\cal W}_n(s)})$ locally, 
where ${\cal Y}_n$ is a local log smooth lift of $Y$ over ${\cal W}_n(s)$. 
Let ${\cal W}_n({\cal O}_Y)'$ be 
the ``obverse'' Witt sheaf in the sense of \cite[\S7]{ndw}, 
that is,  ${\cal W}_n({\cal O}_Y)'$ is the structure sheaf of ${\cal W}_n(Y)$.  
Let 
\begin{align*}
C^{-1}\col {\cal W}_n({\cal O}_Y)'\os{\sim}{\lo} 
{\cal W}_n({\cal O}_Y)
\end{align*} 
be the inverse Cartier isomorphism in \cite[(7.5)]{ndw}. 
Let $F_n$ be the evaluation of the crystal $F$ of ${\cal O}_{Y/{\cal W}_n(s)}$-module 
at the object ${\cal W}_n(Y)$ of 
the crystalline site of $Y$ over $({\cal W}_n(s), p{\cal W}_n,[~])$. 
By using this isomorphism, 
$F_n$ becomes a ${\cal W}_n({\cal O}_Y)$-module and each 
$F_n\otimes_{{\cal W}_n({\cal O}_Y)}{\cal W}_n\wt{\Om}^i_Y$ 
is a ${\cal W}_n({\cal O}_Y)$-module. 
Consequently $F_n\otimes_{{\cal W}_n({\cal O}_Y)}{\cal W}_n\wt{\Om}^i_Y$ 
is a ${\cal W}_n({\cal O}_Y)'$-module and we obtain 
the complex $F_n\otimes_{{\cal W}_n({\cal O}_Y)}{\cal W}_n\wt{\Om}^{\bul}_Y$ 
of ${\cal W}_n$-modules. 
Let $F_n\otimes_{{\cal W}_n({\cal O}_Y)}
{\cal W}_n\wt{\Om}^{\bul}_Y\langle u\rangle$ be the Hirsch extension 
of $F_n\otimes_{{\cal W}_n({\cal O}_Y)}
{\cal W}_n\wt{\Om}^{\bul}_Y$ with respect to 
the $g^{-1}({\cal W}_n)$-linear morphism 
$\varphi \col {\cal W}_nu\owns u \lom \theta_n \in 
{\rm Ker}({\cal W}_n\wt{\Om}^1_Y\lo {\cal W}_n\wt{\Om}^2_Y)$. 
Here, to define the $g^{-1}({\cal W}_n)$-linearity, we use the following composite morphism 
\begin{align*} 
{\cal W}_n\lo {\cal W}_n({\cal O}_Y)'\os{C^{-1},\sim}{\lo} {\cal W}_n({\cal O}_Y). 
\end{align*}
%If one prefers not to use the variable $u$, then one has only to 
%consider the sheaf $U_s$ as in (\ref{ali:fwstwu}) on $\os{\circ}{s}$. 

\par 
Let us recall the following: 
\begin{prop}[{\bf \cite[(2.2.7)]{nb}}]\label{prop:t} 
Let $R\col {\cal W}_{n+1}\wt{\Om}^1_{Y}\lo 
{\cal W}_n\wt{\Om}^{\bul}_{Y}$ be the projection defined in 
{\rm \cite[(2.2.6)]{nb}}. 
Then $R(\theta_{n+1})=\theta_n$. 
\end{prop}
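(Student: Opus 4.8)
The plan is to reduce the identity $R(\theta_{n+1})=\theta_n$ to the compatibility of the $d\log$-map of the log de Rham--Witt complex with the restriction maps of the projective system $\{{\cal W}_n\wt{\Om}^{\bul}_Y\}_{n\geq 1}$. First I would recall the precise meaning of $\theta_n$: it is the Witt-analogue ``$[d\log \tau]$'' of the log form $d\log \tau$, namely the image under the $d\log$-map ${\rm dlog}\col M_Y^{\rm gp}\lo {\cal W}_n\wt{\Om}^1_Y$ of a local section $\tau$ of $M_s\sus M_Y$ whose image in $\ol{M}_s=M_s/{\cal O}_s^*$ is a generator. Since $\os{\circ}{s}={\rm Spec}(\kap)$ and ${\cal W}_n\wt{\Om}^{\bul}_Y$ is the complex of $Y/\os{\circ}{s}$, the independence of $\theta_n$ from the choice of $\tau$ and its globalization on $\os{\circ}{Y}$ follow exactly as for the log form $d\log\tau$ of \S\ref{sec:snclv}, so I would take these as given, being part of the definition recalled from \cite{msemi} and \cite{nb}.

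The heart of the argument is two structural facts about the log de Rham--Witt complex. First, the restriction map $R\col {\cal W}_{n+1}\wt{\Om}^{\bul}_Y\lo {\cal W}_n\wt{\Om}^{\bul}_Y$ is a morphism of differential graded algebras, so it commutes with the differential and with $d\log$. Second, the $d\log$-map is compatible across levels: the composite $M_Y^{\rm gp}\os{{\rm dlog}}{\lo}{\cal W}_{n+1}\wt{\Om}^1_Y\os{R}{\lo}{\cal W}_n\wt{\Om}^1_Y$ equals the level-$n$ map ${\rm dlog}\col M_Y^{\rm gp}\lo {\cal W}_n\wt{\Om}^1_Y$. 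Granting these, the computation is immediate: $R(\theta_{n+1})=R({\rm dlog}\,\tau)={\rm dlog}\,\tau=\theta_n$. I would verify the two facts by unwinding the construction of ${\cal W}_n\wt{\Om}^{\bul}_Y$ (through a local embedding of $Y$ into a log smooth lift and comparison with its PD-envelope, or through the Teichm\"uller description of $d\log$), where $R$ carries the level-$(n+1)$ Teichm\"uller representative of $\tau$ in the log structure of the Witt lift to the level-$n$ one.

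A cleaner route, which I would prefer, is to localize the identity to the base. The form $\theta_n$ is pulled back along $Y\lo \os{\circ}{s}$ from the canonical generator of ${\cal W}_n\wt{\Om}^1_{s/\os{\circ}{s}}$, which is a free ${\cal W}_n$-module of rank one, by naturality of $d\log$ in the log scheme. Since the restriction maps of the complex of the log point are compatible with pull-back, it suffices to check $R(\theta_{n+1})=\theta_n$ for $s$ itself, where ${\cal W}_n\wt{\Om}^0_s={\cal W}_n$ and ${\cal W}_n\wt{\Om}^1_s={\cal W}_n\theta_n$; here the assertion is precisely that the generators $\theta_n$ form a compatible system under $R$, which is built into the definition of the canonical form on the log point.

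The main obstacle I anticipate is not the final two-line computation but the bookkeeping needed to make the definition of $\theta_n$ and the $R$-compatibility of $d\log$ match the conventions of \cite{msemi} and \cite{nb} literally: one must be careful that $\theta_n$ lives in the complex of $Y/\os{\circ}{s}$ rather than $Y/s$, that the relevant $d\log$-map respects $R$ and not $F$ or $V$, and that passing from a local section $\tau$ to a global form introduces no correction term. Once the set-up is fixed so that $\theta_n={\rm dlog}\,\tau$ and $R$ is recognized as a dga-morphism compatible with $d\log$, the proposition follows formally.
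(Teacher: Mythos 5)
The paper itself gives no proof here: the proposition is merely recalled from \cite[(2.2.7)]{nb}, so there is nothing to compare line by line. Your first route is the right one and is essentially what the proof must be (and what this paper does in \S21 for the analogous projection $R$ on ${\cal W}_n\wt{\Om}{}^{\bul}_{X_{\ul{\lam}}}$): $\theta_n$ is the class of $d\log \tau$, and one checks that $R$ is compatible with $d\log$. The one caveat is your second, ``cleaner'' route, where you assert that the compatibility on the log point is ``built into the definition of the canonical form.'' It is not: $\theta_n=[d\log\tau]$ is defined level by level, while $R$ is \emph{not} a naive truncation --- it is characterized by ${\bf p}\circ R=p$ with ${\bf p}=p^{-(i-1)}\varphi^*$ for a Frobenius lift $\varphi$ (cf.\ (21.12.1)--(21.12.2) and the injectivity of ${\bf p}$ in this paper). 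So the actual content of the proposition is the one-line computation $\varphi^*(d\log\tau)=d\log(\tau^p)=p\,d\log\tau$, which gives ${\bf p}(\theta_n)=p\,\theta_{n+1}={\bf p}(R(\theta_{n+1}))$ and hence $R(\theta_{n+1})=\theta_n$ by injectivity of ${\bf p}$. Your proposal is correct once that computation is made explicit; as written, the second route hides exactly the step that needs to be checked.
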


\par 
In the following we always assume that 
F is a flat locally nilpotent quasi-coherent ${\cal O}_{Y/{\cal W}_n(s)}$-module. 

The following is a generalization of Kim-Hain's result 
in \cite[Lemma 6]{kiha} (however see (\ref{rema:kh}) below): 

\begin{theo}\label{theo:khr}
The natural morphism 
\begin{align*} 
F_n\otimes_{{\cal W}_n({\cal O}_Y)}{\cal W}_n\wt{\Om}^{\bul}_Y\langle u \rangle 
\lo 
F_n\otimes_{{\cal W}_n({\cal O}_Y)}{\cal W}_n\Om^{\bul}_Y
\end{align*} 
is a quasi-isomorphism. 
\end{theo}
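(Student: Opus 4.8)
The statement (\ref{theo:khr}) is a de Rham--Witt analogue of (\ref{theo:qii}), which was already established for the log crystalline de Rham complex obtained from a crystal without log poles. The natural strategy is to reduce the de Rham--Witt assertion to the crystalline one via the comparison isomorphism of Hyodo--Kato. More precisely, first I would recall from \cite{hk} and \cite{ndw} that $F_n\otimes_{{\cal W}_n({\cal O}_Y)}{\cal W}_n\wt{\Om}^{\bul}_Y$ computes the modified crystalline complex $\wt{R}u_{Y/\os{\circ}{s}*}({\cal O}_{Y/\os{\circ}{s}})$ (over the base ${\cal W}_n(\os{\circ}{s})$), while $F_n\otimes_{{\cal W}_n({\cal O}_Y)}{\cal W}_n\Om^{\bul}_Y$ computes $Ru_{Y/{\cal W}_n(s)*}({\cal O}_{Y/{\cal W}_n(s)})$. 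Under these identifications the map $\theta_n\wedge$ and the closed one-form $\theta_n$ correspond exactly to $d\log\tau$ and the class $\varphi(d\log\tau)\in {\rm Ker}(d\colon \Om^1\lo \Om^2)$ appearing in the PD-Hirsch extension of \S\ref{sec:ldfc}. Thus the morphism in the statement is, after applying the comparison isomorphism, literally the morphism (\ref{ali:uafcui}) for $F={\cal O}_{Y/{\cal W}_n(s)}$, and (\ref{theo:qii}) gives the result.

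\textbf{Key steps.} I would organize the argument as follows. First, establish that ${\cal W}_n\wt{\Om}^{\bul}_Y$ is a dgpda, which is exactly (\ref{prop:ex}) (2); this makes the Hirsch extension $\langle u_s\rangle$ well-defined via (\ref{prop:nmq}). Second, identify $\theta_n$ with $\varphi(d\log\tau)$: here the compatibility $R(\theta_{n+1})=\theta_n$ from (\ref{prop:t}) ensures the construction is compatible with the projective system, and the fact that $\theta_n\in{\rm Ker}(d\colon {\cal W}_n\wt{\Om}^1_Y\lo {\cal W}_n\wt{\Om}^2_Y)$ (see (\ref{ali:dtwy})) is what is needed for $\varphi$ to target the kernel of $d$ as required in (\ref{defi:pd-hirsch}) (2). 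Third, invoke the exactness of the Gysin-type sequence: the analogue of (\ref{ali:gsflaxd}), namely the exact sequence relating ${\cal W}_n\wt{\Om}^{\bul}_Y$, ${\cal W}_n\Om^{\bul}_Y$ and the wedge with $\theta_n$, which is precisely the content of \cite[(1.7.22.1)]{nb} transported to the de Rham--Witt side via the comparison theorem. Fourth, run the spectral sequence argument of the proof of (\ref{theo:qii}): form the two-column double complex as in (\ref{cd:hirfd}) with $d'=\theta_n\wedge$ and $d''={\rm id}\otimes d$, check that the filtration by columns is complete, exhaustive and regular, and conclude via \cite[Complete Convergence Theorem 5.5.10]{weib} that the cohomology of the Hirsch extension agrees with that of ${\cal W}_n\Om^{\bul}_Y$. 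The ``no poles'' hypothesis needed for (\ref{theo:qii}) holds automatically here because $F_n$ arises from a crystal that is pulled back along $\eps$ (forgetting the log structure over $\os{\circ}{s}$), so the relevant connection has no log poles.

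\textbf{Main obstacle.} The genuinely delicate point is the identification step, not the formal homological algebra. On the crystalline side \S\ref{sec:ldfc} works with an honest log PD-envelope ${\cal O}_{\mathfrak E}\otimes_{{\cal O}_{{\cal Q}^{\rm ex}}}\Om^{\bul}_{{\cal Q}^{\rm ex}/\os{\circ}{T}}$, whereas on the de Rham--Witt side one has the intrinsic complex ${\cal W}_n\wt{\Om}^{\bul}_Y$ with its subtle torsion and module structures (the distinction between ${\cal W}_n({\cal O}_Y)$ and ${\cal W}_n({\cal O}_Y)'$, and the inverse Cartier isomorphism $C^{-1}$ used to make $F_n$ a ${\cal W}_n$-module). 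One must verify that the comparison quasi-isomorphism of \cite{hk}, \cite{ndw} is compatible with the one-form $\theta_n$ on one side and $d\log\tau$ on the other, so that the two PD-Hirsch extensions match. This is where the careful bookkeeping of \cite[(2.2.3)]{nb} and \cite[(6.8), (6.4.5)]{ndw} (torsion-freeness and surjectivity of the projection, used already in the proof of (\ref{prop:ex}) (1)) becomes essential. A secondary subtlety is the exactness of the de Rham--Witt analogue of (\ref{ali:gsflaxd}): rather than reprove it directly on the de Rham--Witt complex, I would transport \cite[(1.7.22.1)]{nb} through the comparison isomorphism, which sidesteps the difficulties that (\ref{rema:p}) warns about in \cite{hyp} and \cite{msemi}. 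Once these identifications are in place, (\ref{coro:hac}) and (\ref{theo:qii}) finish the proof, and the whole argument is manifestly compatible with the transition maps $R$, so it descends to the claimed statement for each fixed $n$.
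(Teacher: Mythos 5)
Your proposal is correct and follows essentially the same route as the paper: the paper also reduces the key point to the crystalline side via the comparison isomorphisms $F_n\otimes_{{\cal W}_n({\cal O}_Y)}{\cal W}_n\Om^{\bul}_Y\cong Ru_{Y/{\cal W}_n(s)*}(\eps^*F)$ and $F_n\otimes_{{\cal W}_n({\cal O}_Y)}{\cal W}_n\wt{\Om}{}^{\bul}_Y\cong \wt{R}u_{Y/{\cal W}_n*}(F)$, identifies the short exact sequence on cohomology sheaves with (\ref{ali:agxhqd}), and then runs the same filtration/spectral-sequence argument as in (\ref{theo:qii}). The only cosmetic difference is that you invoke (\ref{theo:qii}) wholesale after identifying the Hirsch extensions, while the paper imports only the cohomological exactness and repeats the convergence argument on the de Rham--Witt side.
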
 
\begin{proof} 
As in 
%\cite[(4.20)]{hk} or 
\cite[(1.4.3)]{hyp} 
we have the following exact sequence
\begin{align*} 
0  \lo F_n\otimes_{{\cal W}_n({\cal O}_Y)}{\cal W}_n\Om^{\bul}_Y[-1] 
\os{\theta_n \wedge }{\lo}  
F_n\otimes_{{\cal W}_n({\cal O}_Y)}{\cal W}_n\wt{\Om}^{\bul}_Y 
\lo F_n\otimes_{{\cal W}_n({\cal O}_Y)}{\cal W}_n\Om^{\bul}_Y \lo 0. 
\tag{21.2.1}\label{ali:agwxd}
\end{align*} 
We can also derive this exact sequence immediately from 
(\ref{ali:agxhqd}) by using the local definitions of 
${\cal W}_n\wt{\Om}^q_Y$ and ${\cal W}_n\Om^q_Y$ $(q\in {\mab N})$. 
(In the following argument we do not use the injectivity of 
$\theta_n \wedge$ in [loc.~cit.]; instead we use the injectivity in a more general situation, 
that is, we use the injectivity of $d\log t$ in (\ref{ali:agxhqd}).)
We claim that the following sequence 
\begin{equation*} 
\small{
0  \lo {\cal H}^{q-1}(
F_n\otimes_{{\cal W}_n({\cal O}_Y)}{\cal W}_n\Om^{\bul}_Y) 
\os{\theta_n \wedge }{\lo} 
{\cal H}^q(F_n\otimes_{{\cal W}_n({\cal O}_Y)}{\cal W}_n\wt{\Om}{}^{\bul}_Y)  
\lo {\cal H}^q(F_n\otimes_{{\cal W}_n({\cal O}_Y)}{\cal W}_n\Om^{\bul}_Y) \lo 0
\quad (q\in {\mab N})} 
\tag{21.2.2}\label{ali:agxahqd}
\end{equation*} 
obtained by (\ref{ali:agwxd}) is exact. 
Indeed, this is a local problem. We may assume that there exists an 
immersion $Y\os{\sus}{\lo} \ol{\cal Q}$ into a log smooth scheme over ${\cal W}_n(s)$. 
Let $\ol{\mathfrak E}$ be the log PD-envelope of this immersion over 
$({\cal W}_n(\os{\circ}{s}),p{\cal W}_n,[~])$. 
Let $(\ol{\cal F},\ol{\nabla})$ be the corresponding ${\cal O}_{\ol{\mathfrak E}}$-module 
with integrable connection. 
Set $({\cal F},\nabla):=
(\ol{\cal F},\ol{\nabla})\otimes_{{\mathfrak D}(\ol{{\cal W}_n(s)})}{\cal W}_n$ 
and ${\cal Q}:=\ol{\cal Q}\times_{\ol{{\cal W}_n(s)}}{\cal W}_n(s)$. 
By the log Poincar\'{e} lemma and \cite[(2.1.12.1), (2.2.14.1)]{nb}, 
we have the following equalities:  
\begin{align*} 
{\cal F}
\otimes_{{\cal O}_{\cal Q}}
{\Om}^{\bul}_{{\cal Q}/{\cal W}_n(s)}
=Ru_{Y/{\cal W}_n(s)*}(\eps^*_{Y/{\cal W}_n(s)/{\cal W}_n}(F)) 
=F_n\otimes_{{\cal W}_n({\cal O}_Y)}{\cal W}_n\Om^{\bul}_Y 
\end{align*} 
and 
\begin{align*} 
{\cal F}
\otimes_{{\cal O}_{\cal Q}}
{\Om}^{\bul}_{{\cal Q}/{\cal W}_n(\os{\circ}{s})}
=\wt{R}u_{Y/{\cal W}_n*}(F) 
=F_n\otimes_{{\cal W}_n({\cal O}_Y)}{\cal W}_n\wt{\Om}^{\bul}_Y. 
\end{align*} 
Hence the sequence (\ref{ali:agxahqd}) is equal to the exact sequence 
(\ref{ali:agxhqd}) in the special case. 
Consequently the sequence (\ref{ali:agxahqd}) is exact. 
The rest of the proof is the same as that of (\ref{theo:qii}). 
\end{proof} 

\begin{rema}\label{rema:kh}
As pointed out in \cite[(3.36) (2)]{ey}, 
there is a gap in the proof of \cite[Lemma 6]{kiha}.  
(\ref{theo:khr}) fills this gap. See also (\ref{rema:kihainc}) and (\ref{rema:b}) (1).  
\end{rema}

Set 
\begin{align*} 
{\cal W}_n\langle \langle u\rangle \rangle:=
\prod_{m\in {\mab N}}{\cal W}_nu^{[m]}. 
\end{align*} 
and 
\begin{equation*}  
F_n\otimes_{{\cal W}_n({\cal O}_Y)}{\cal W}_n\wt{\Om}^{\bul}_Y
\langle \langle u \rangle \rangle:=
{\cal W}_n\langle \langle u \rangle \rangle \otimes_{{\cal W}_n}
F_n\otimes_{{\cal W}_n({\cal O}_Y)}{\cal W}_n\wt{\Om}^{\bul}_Y, 
%\tag{4.0.6}\label{eqn:uui} 
\end{equation*} 
where the differential of 
$F_n\otimes_{{\cal W}_n({\cal O}_Y)}{\cal W}_n\wt{\Om}^{\bul}_Y
\langle \langle u \rangle \rangle$ 
is defined as in (\ref{eqn:badff}).

\begin{theo}\label{theo:kchr}
The natural morphism 
\begin{align*} 
F_n\otimes_{{\cal W}_n({\cal O}_Y)}{\cal W}_n\wt{\Om}^{\bul}_Y\langle \langle u \rangle \rangle 
\lo 
F_n\otimes_{{\cal W}_n({\cal O}_Y)}{\cal W}_n\Om^{\bul}_Y
\end{align*} 
is a quasi-isomorphism. 
\end{theo} 
\begin{proof} 
By (\ref{ali:agwxd}) we see that the following sequence 
\begin{align*} 
0&\lo F_n
\os{\theta_n\wedge}{\lo}   
F_n\otimes_{{\cal W}_n({\cal O}_Y)}{\cal W}_n\wt{\Om}^1_Y
\os{d\log t \wedge}{\lo}
\cdots 
\os{\theta_n\wedge}{\lo}
F_n\otimes_{{\cal W}_n({\cal O}_Y)}{\cal W}_n\wt{\Om}^{j-2}_Y
\tag{21.4.1}\label{ali:gulwtpt}\\
&\os{d\log t \wedge}{\lo}  
F_n\otimes_{{\cal W}_n({\cal O}_Y)}{\cal W}_n\wt{\Om}^{j-1}_Y
\os{\theta_n\wedge}{\lo}  F_n\otimes_{{\cal W}_n({\cal O}_Y)}{\cal W}_n\Om^j_Y   \lo 0.   
\end{align*} 
is exact for $j\in {\mab N}$. 
Because ${\cal W}_n\Om^i_Y$ and ${\cal W}_n\wt{\Om}^i_Y$ $(i\in {\mab N})$ 
are quasi-coherent ${\cal W}_n({\cal O}_Y)$-modules by the local definitions of 
${\cal W}_n\Om^i_Y$ and ${\cal W}_n\wt{\Om}^i_Y$, the following sequence 
\begin{align*} 
0&\lo \Gam(V,F_n)
\os{\theta_n\wedge}{\lo}   
\Gam(V,F_n\otimes_{{\cal W}_n({\cal O}_Y)}{\cal W}_n\wt{\Om}^1_Y)
\os{\theta_n\wedge}{\lo}
\cdots 
\os{\theta_n\wedge}{\lo}
\Gam(V,F_n\otimes_{{\cal W}_n({\cal O}_Y)}{\cal W}_n\wt{\Om}^{j-2}_Y)
\tag{21.4.2}\label{ali:gulkwtpt}\\
&\os{\theta_n\wedge}{\lo}  
\Gam(V,F_n\otimes_{{\cal W}_n({\cal O}_Y)}{\cal W}_n\wt{\Om}^{j-1}_Y)
\os{\theta_n\wedge}{\lo}  
\Gam(V,F_n\otimes_{{\cal W}_n({\cal O}_Y)}{\cal W}_n\Om^j_Y)   \lo 0 
\end{align*}  
are exact for any log open affine subscheme of $Y$. 
The rest of the proof is the same as that of (\ref{theo:saih}). 
\end{proof}

\begin{coro}\label{coro:n}
The natural morphism 
\begin{align*} 
F_n\otimes_{{\cal W}_n({\cal O}_Y)}{\cal W}_n\wt{\Om}^{\bul}_Y
\langle u \rangle \os{\subset}{\lo} 
F_n\otimes_{{\cal W}_n({\cal O}_Y)}{\cal W}_n
\wt{\Om}^{\bul}_Y\langle \langle u \rangle \rangle
\tag{21.5.1}\label{ali:uuweni} 
\end{align*} 
is a quasi-isomorphism. 
\end{coro}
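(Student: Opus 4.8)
The plan is to reduce the claimed quasi-isomorphism in \eqref{ali:uuweni} to the already-established quasi-isomorphism in \eqref{theo:khr} by comparing the finite PD-Hirsch extension with its completion. The key observation is that \eqref{coro:n} is a word-for-word analogue, in the log de Rham-Witt setting over ${\cal W}_n$, of the local statement \eqref{prop:saih} for the complexes ${\cal F}\otimes_{{\cal O}_{{\cal Q}^{{\rm ex}}}}\Om^{\bul}_{{\cal Q}{}^{{\rm ex}}/\os{\circ}{T}}\langle U_{S(T)^{\nat}}\rangle$ and its completed version; the paper's own parenthetical remark just before the statement (``by the same proof as that of (\ref{prop:saih})'') signals exactly this. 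So the first thing I would do is identify the right target of the augmentation: the completed complex $F_n\otimes_{{\cal W}_n({\cal O}_Y)}{\cal W}_n\wt{\Om}^{\bul}_Y\langle \langle u_s \rangle \rangle$ maps naturally to $F_n\otimes_{{\cal W}_n({\cal O}_Y)}{\cal W}_n\Om^{\bul}_Y$ (the quotient by $\theta_n\wedge$-divisibility, since $\theta_n=0$ in the quotient complex), mirroring the augmentation \eqref{ali:uaafui}.

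The concrete steps are as follows. First I would write down the double-complex picture analogous to \eqref{cd:hirfd}, whose columns are the Koszul-type complexes $({\cal W}_n u_s^{[i]}\otimes F_n\otimes_{{\cal W}_n({\cal O}_Y)}{\cal W}_n\wt{\Om}^{\bul}_Y, d'')$ and whose rows are governed by $\theta_n\wedge$; here the exactness of the $\theta_n\wedge$-sequence is supplied by the exact sequence \eqref{ali:agwxd} established inside the proof of \eqref{theo:khr}. Second, I would run the spectral-sequence argument: the filtration by rows (respectively columns) of this double complex is bounded below and exhaustive, so by the classical convergence theorem \cite[5.5.1]{weib} (or its complete-convergence counterpart \cite[5.5.10]{weib}) the spectral sequence converges, and the $E_1$-page is computed by the $\theta_n\wedge$-exactness to collapse onto the cohomology of $F_n\otimes_{{\cal W}_n({\cal O}_Y)}{\cal W}_n\Om^{\bul}_Y$. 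This is exactly the mechanism of \eqref{prop:saih}, and it shows that the completed extension $F_n\otimes_{{\cal W}_n({\cal O}_Y)}{\cal W}_n\wt{\Om}^{\bul}_Y\langle \langle u_s \rangle \rangle$ is quasi-isomorphic to $F_n\otimes_{{\cal W}_n({\cal O}_Y)}{\cal W}_n\Om^{\bul}_Y$.

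Third, I would combine this with \eqref{theo:khr}, which already gives that the finite extension $F_n\otimes_{{\cal W}_n({\cal O}_Y)}{\cal W}_n\wt{\Om}^{\bul}_Y\langle u_s \rangle$ is quasi-isomorphic to $F_n\otimes_{{\cal W}_n({\cal O}_Y)}{\cal W}_n\Om^{\bul}_Y$. Since the inclusion \eqref{ali:uuweni} is compatible with the two augmentations to $F_n\otimes_{{\cal W}_n({\cal O}_Y)}{\cal W}_n\Om^{\bul}_Y$ (the analogue of the factorization of \eqref{ali:uafcui} through \eqref{ali:ufui} and \eqref{ali:uaafui}), a two-out-of-three argument for quasi-isomorphisms forces the inclusion itself to be a quasi-isomorphism. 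This is precisely how \eqref{coro:nqi} was deduced from \eqref{prop:saih} and \eqref{theo:qii}, so I would model the final paragraph of the proof on that corollary.

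The main obstacle I anticipate is bookkeeping rather than conceptual: verifying that the ${\cal W}_n({\cal O}_Y)'$-module and ${\cal W}_n({\cal O}_Y)$-module structures on $F_n\otimes_{{\cal W}_n({\cal O}_Y)}{\cal W}_n\wt{\Om}^i_Y$, twisted through the inverse Cartier isomorphism $C^{-1}$, interact correctly with the divided-power variable $u_s$ so that the completed complex is genuinely the product $\prod_{m}{\cal W}_n u_s^{[m]}\otimes(\cdots)$ and that the boundary morphism \eqref{eqn:bbadff} is well defined and squares to zero in this Witt setting; the flatness of ${\cal W}_n u_s^{[i]}$ over ${\cal W}_n$ (needed for column-exactness of the double complex) must also be checked, but this is immediate since each ${\cal W}_n u_s^{[i]}$ is free of rank one. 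Once these structural points are in place, the homological argument is formal and transcribes directly from \eqref{prop:saih}.
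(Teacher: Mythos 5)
Your proposal is correct and follows essentially the same route as the paper: the paper disposes of (21.4.1) with the remark that it holds ``by the same proof as that of (\ref{prop:saih})'', i.e.\ the row-filtration spectral-sequence argument showing the completed extension maps quasi-isomorphically onto $F_n\otimes_{{\cal W}_n({\cal O}_Y)}{\cal W}_n\Om^{\bul}_Y$ (using the $\theta_n\wedge$-exact sequence from the proof of (\ref{theo:khr})), combined with (\ref{theo:khr}) and two-out-of-three exactly as in (\ref{coro:nqi}). Your reconstruction, including the compatibility of the two augmentations and the structural checks on the divided-power variable, matches the intended argument.
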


\begin{theo}\label{theo:cp}
There exists a canonical isomorphism 
\begin{align*} 
\wt{R}u_{Y/{\cal W}_n*}(F\langle u \rangle) 
\os{\sim}{\lo} 
F_n\otimes_{{\cal W}_n({\cal O}_Y)}{\cal W}_n
\wt{\Om}^{\bul}_Y\langle u \rangle. 
\tag{21.6.1}\label{ali:ruyw}
\end{align*} 
This isomorphism is compatible with the pull-back of a morphism
$Y\lo Y'$ over $s\lo s'$, where $Y'/s'$ is an analogous object to
$Y/s;$ it is also compatible with the projections 
$\wt{R}u_{Y/{\cal W}_n*}(F\langle u \rangle) \lo 
\wt{R}u_{Y/{\cal W}_{n-1}*}(F\langle u \rangle)$ 
and  
$F_n\otimes_{{\cal W}_n({\cal O}_Y)}{\cal W}_n
\wt{\Om}^{\bul}_Y\langle u\rangle
\lo 
F_{n-1}\otimes_{{\cal W}_{n-1}({\cal O}_Y)}{\cal W}_{n-1}
\wt{\Om}^{\bul}_Y\langle u\rangle$. 
\end{theo}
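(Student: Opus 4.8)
The plan is to reduce the statement to the comparison of the underlying complexes (before the PD-Hirsch extension) together with the functoriality of the derived PD-Hirsch extension. First I would recall that in the proof of (\ref{theo:khr}) one has, on a local immersion $Y\os{\sus}{\lo}{\cal Q}$ into a log smooth scheme over ${\cal W}_n(s)$, the identification of complexes
$${\cal F}\otimes_{{\cal O}_{\cal Q}}\Om^{\bul}_{{\cal Q}/\os{\circ}{T}}=\wt{R}u_{Y/{\cal W}_n*}(F)=F_n\otimes_{{\cal W}_n({\cal O}_Y)}{\cal W}_n\wt{\Om}^{\bul}_Y,$$
where $\os{\circ}{T}={\rm Spf}({\cal W}_n)$. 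Globalizing via $R\pi_{{\rm zar}*}$ of a simplicial immersion $Y_{\bul}\os{\sus}{\lo}\ol{\cal Q}_{\bul}$, and using the results of \cite{nb}, \cite{hk}, \cite{ndw}, this yields a comparison isomorphism $\wt{R}u_{Y/{\cal W}_n*}(F)\os{\sim}{\lo}F_n\otimes_{{\cal W}_n({\cal O}_Y)}{\cal W}_n\wt{\Om}^{\bul}_Y$ in $D^+(g^{-1}({\cal W}_n))$, represented by a zig-zag of honest quasi-isomorphisms of complexes.

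Second, I would observe that both sides of (\ref{ali:ruyw}) are PD-Hirsch extensions of this one complex. On the de Rham–Witt side this holds by definition. On the crystalline side, the commutativity of $R\pi_{{\rm zar}*}$ with the PD-Hirsch extension — which is precisely the equality (\ref{ali:fim}) applied to the Godement resolution — gives
$$\wt{R}u_{Y/{\cal W}_n*}(F\langle u_s \rangle)=\wt{R}u_{Y/{\cal W}_n*}(F)\langle u_s\rangle^L,$$
the derived PD-Hirsch extension of (\ref{ali:fdim}). Since $F_n\otimes_{{\cal W}_n({\cal O}_Y)}{\cal W}_n\wt{\Om}^{\bul}_Y$ is a genuine complex of flat modules on which $u_s\mapsto\theta_n$, its derived extension coincides with the ordinary one by (\ref{coro:hac}) and (\ref{prop:acy}). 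Hence the desired isomorphism will follow by applying the functor $\langle u_s\rangle^L$ of (\ref{defi:emm}) to the comparison isomorphism above.

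The essential point, and the main obstacle, is to check that this comparison isomorphism is a morphism in the category ${\cal C}$ of (\ref{defi:emm}), i.e. that it respects the two structural morphisms $\varphi$. On the crystalline side $\varphi$ sends $u_s$ to $H_{\varphi}(u_s)=d\log\tau$, the image of the canonical form $\theta_{{\cal Q}}$; on the de Rham–Witt side $\varphi$ sends $u_s$ to $\theta_n$. I would verify, by unwinding the definition of $\theta_n$ in \cite{hyp}, \cite{msemi} and \cite{ndw}, that $\theta_n$ is exactly the de Rham–Witt realization of $d\log\tau$, so that the two copies of $\varphi$ correspond under the comparison isomorphism. Granting this compatibility, the functoriality of $\langle u_s\rangle^L$ together with (\ref{coro:hac}) produces the canonical isomorphism (\ref{ali:ruyw}).

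Finally, the two stated compatibilities follow formally. Compatibility with the pull-back of a morphism $Y\lo Y'$ over $s\lo s'$ combines the contravariant functoriality of $\wt{R}u_{\bullet}(F\langle u_s\rangle)$ established in (\ref{prop:indu}) with the functoriality of $\theta_n$ under such morphisms, governed by the rule $u'^{[i]}\mapsto\deg(v)^iu^{[i]}$ of (\ref{ali:exut}). Compatibility with the projections $\wt{R}u_{Y/{\cal W}_n*}(F\langle u_s\rangle)\lo\wt{R}u_{Y/{\cal W}_{n-1}*}(F\langle u_s\rangle)$ and the corresponding map on the de Rham–Witt side is exactly where (\ref{prop:t}), namely $R(\theta_{n+1})=\theta_n$, is needed: it guarantees that the $\varphi$-data are compatible across the tower in $n$, so that the level-by-level comparison isomorphisms assemble into an isomorphism of projective systems.
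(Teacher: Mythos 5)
Your proposal is correct and is essentially the paper's own argument: the paper disposes of this theorem in one line by invoking the comparison isomorphism of the underlying complexes from \cite[(2.2.13)]{nb} together with (\ref{coro:hac}), which is exactly your reduction to the un-extended comparison plus the functoriality of the PD-Hirsch extension under quasi-isomorphisms. Your explicit identification of the two $\varphi$-data ($d\log\tau\leftrightarrow\theta_n$) and your use of (\ref{prop:t}) for compatibility with the projections are precisely the points the paper leaves to the reader.
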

\begin{proof} 
We leave the detailed proof to the reader because the proof of this theorem 
is the same as that of \cite[(2.2.13)]{nb} if one uses 
(\ref{coro:hac}). 
%and it is simpler than that of (\ref) below. 
\end{proof}

Let the notations be as in \S\ref{sec:psc}. 
We define the filtered complex
$(s(C(W\wt{\om}[u])),P)$ defined in \cite[Lemma 12]{kiha} 
from our point of view as follows. 
To define it, we develop theory of log de Rham-Witt complexes 
${\cal W}_n\wt{\Om}{}^{\bul}_{X_{\ul{\lam}}}$ and 
${\cal W}_n\Om^{\bul}_{X_{\ul{\lam}}}$ for an element $\ul{\lam}$ of $P(\Lam)$. 
(In this section we assume that $\ul{\lam}\not=\emptyset$.)
Here note that though $X_{\ul{\lam}}$ is not log smooth over $s$, 
it is ideally log smooth over $s$. 
\par 
First let us recall the filtered complex 
$$(\wt{R}u_{X_{\ul{\lam}}/{\cal W}_n(\os{\circ}{s})*}
(\eps^*_{X_{\ul{\lam}}/{\cal W}_n(\os{\circ}{s})}
({\cal O}_{\os{\circ}{X}_{\ul{\lam}}/{\cal W}_n(\os{\circ}{s})})),P)$$
(a special case of (\ref{prop:nlgr})).  
Set 
\begin{align*} 
{\cal W}_n\wt{\Om}{}^i_{X_{\ul{\lam}}}:=
{\cal H}^i(\wt{R}u_{X_{\ul{\lam}}/{\cal W}_n(\os{\circ}{s})*}
(\eps^*_{X_{\ul{\lam}}/{\cal W}_n(\os{\circ}{s})}
({\cal O}_{\os{\circ}{X}_{\ul{\lam}}/{\cal W}_n(\os{\circ}{s})})))
\quad (i\in {\mab N})
\tag{21.6.2}\label{ali:wnoi}
\end{align*} 
and 
\begin{align*} 
P_k{\cal W}_n\wt{\Om}{}^i_{X_{\ul{\lam}}}:=
{\cal H}^i(P_k\wt{R}u_{X_{\ul{\lam}}/{\cal W}_n(\os{\circ}{s})*}
(\eps^*_{X_{\ul{\lam}}/{\cal W}_n(\os{\circ}{s})}
({\cal O}_{\os{\circ}{X}_{\ul{\lam}}/{\cal W}_n(\os{\circ}{s})})))
\quad (i\in {\mab N},k\in {\mab Z})
\tag{21.6.3}\label{ali:wnwoi}
\end{align*} 
as abelian sheaves on $\os{\circ}{X}$. 
Set $P_{\infty}{\cal W}_n\wt{\Om}{}^i_{X_{\ul{\lam}}}:=
{\cal W}_n\wt{\Om}{}^i_{X_{\ul{\lam}}}$. 
(I have not yet claimed that 
$P_k{\cal W}_n\wt{\Om}{}^i_{X_{\ul{\lam}}}$ is a subsheaf of 
${\cal W}_n\wt{\Om}{}^i_{X_{\ul{\lam}}}$.) 

\begin{prop}\label{prop:cpp}
There exists a well-defined product structure 
\begin{align*} 
\wedge \col {\cal W}_n\wt{\Om}{}^i_{X_{\ul{\lam}}}\times 
{\cal W}_n\wt{\Om}{}^j_{X_{\ul{\lam}}}
\lo {\cal W}_n\wt{\Om}{}^{i+j}_{X_{\ul{\lam}}}
\tag{21.6.1}\label{ali:pppxw}
\end{align*} 
which induces a morphism 
\begin{align*} 
\wedge \col P_k{\cal W}_n\wt{\Om}{}^i_{X_{\ul{\lam}}}\times 
P_{k'}{\cal W}_n\wt{\Om}{}^j_{X_{\ul{\lam}}}
\lo P_{k+k'}{\cal W}_n\wt{\Om}{}^{i+j}_{X_{\ul{\lam}}}. 
\tag{21.6.2}\label{ali:pppw}
\end{align*} 
\end{prop}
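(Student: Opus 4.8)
The plan is to produce the product (\ref{ali:pppw}) by descending the wedge product on the local log de Rham complexes to the modified crystalline complex and then passing to cohomology sheaves, the construction being a copy of the cup product of \S\ref{sec:p} adapted to the single component $X_{\ul{\lam}}$. Recall from (\ref{prop:nlgr}) that $(\wt{R}u_{X_{\ul{\lam}}/{\cal W}_n(\os{\circ}{s})*}(\ldots),P)$ is computed by $R\pi_{\ul{\lam},{\rm zar}*}$ of the filtered single complex $({\cal E}^{\bul}\otimes_{{\cal O}_{{\cal P}^{\rm ex}_{\bul}}}\Om^{\bul}_{{\cal P}^{\rm ex}_{\ul{\lam},\bul}/\os{\circ}{T}},P)$ with ${\cal E}={\cal O}_{{\mathfrak D}_{\bul}}$, and that ${\cal W}_n\wt{\Om}{}^i_{X_{\ul{\lam}}}$ and $P_k{\cal W}_n\wt{\Om}{}^i_{X_{\ul{\lam}}}$ are its $i$-th cohomology sheaves, respectively those of its $P_k$-step, by (\ref{ali:wnwoi}) and (\ref{ali:wnwoi}). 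The essential input is that $\Om^{\bul}_{{\cal P}^{\rm ex}_{\ul{\lam},\bul}/\os{\circ}{T}}$ is a dga: by the isomorphism (\ref{eqn:cae}) these are the genuine log differential forms on ${\cal P}^{\rm ex}_{\ul{\lam},\bul}$ and carry the wedge product, compatibly with the connection $\nabla$ of (\ref{lemm:drl}).

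The heart of the matter, which is the only new point beyond \S\ref{sec:p}, is the filtered compatibility of the wedge product at the local level. The preweight filtration $P$ on $\Om^i_{{\cal P}^{\rm ex}_{\ul{\lam}}/\os{\circ}{T}}$ is defined in (\ref{eqn:pkdefpw}) as the image of $\Om^k\otimes_{{\cal O}}\Om^{i-k}_{\os{\circ}{\bullet}}$, that is, by the number of logarithmic poles; since the wedge of a form with at most $k$ log poles and one with at most $k'$ log poles has at most $k+k'$ log poles, one has $P_k\wedge P_{k'}\subset P_{k+k'}$, so the wedge product is a filtered morphism of dga's. Following verbatim the construction of (\ref{ali:ttoe}), i.e.\ imposing the \v{C}ech/single-complex sign conventions of (\ref{cd:pppoex}) and applying $R\pi_{\ul{\lam},{\rm zar}*}$ through Godement resolutions (compatibly with $\otimes$ exactly as in (\ref{ali:fim})), this induces a filtered cup product
\begin{align*}
\mu \col (\wt{R}u_{X_{\ul{\lam}}/{\cal W}_n(\os{\circ}{s})*}(\ldots),P)
\otimes^L_{f^{-1}_{\ul{\lam}}({\cal W}_n)}
(\wt{R}u_{X_{\ul{\lam}}/{\cal W}_n(\os{\circ}{s})*}(\ldots),P)
\lo (\wt{R}u_{X_{\ul{\lam}}/{\cal W}_n(\os{\circ}{s})*}(\ldots),P).
\end{align*}
Composing the canonical pairing ${\cal H}^i(C)\otimes {\cal H}^j(C)\lo {\cal H}^{i+j}(C\otimes^L C)$ with ${\cal H}^{i+j}(\mu)$ yields (\ref{ali:pppxw}), and restricting $\mu$ to the $P_k$- and $P_{k'}$-steps gives (\ref{ali:pppw}); here $P_k{\cal W}_n\wt{\Om}{}^i_{X_{\ul{\lam}}}={\cal H}^i(P_k\wt{R}u\ldots)$ by (\ref{ali:wnwoi}), so no injectivity of $P_k\to P_\infty$ is required, (\ref{ali:pppw}) being a statement about morphisms of the filtered cohomology objects.

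The main obstacle I expect is purely bookkeeping rather than conceptual: one must verify that the local wedge product is genuinely a morphism of complexes once the simplicial and single-complex sign conventions of (\ref{cd:pppoex}) are in force, and that the induced pairing descends through $R\pi_{\ul{\lam},{\rm zar}*}$ independently of the chosen affine covering and simplicial immersion $X_{\os{\circ}{T}_0,\bul}\os{\sus}{\lo}\ol{\cal P}_{\bul}$, for which I would invoke the well-definedness (\ref{coro:il}). These are precisely the verifications already performed for the cup products in \S\ref{sec:p}; they transpose to the present setting because the log smoothness of ${\cal P}^{\rm ex}$ over $S(T)^{\nat}$ guarantees, via (\ref{eqn:cae}), that both the graded pieces and the wedge product remain defined after restriction to the non-log-smooth locus $X_{\ul{\lam}}$.
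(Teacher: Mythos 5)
Your proposal is correct and follows essentially the same route as the paper: in both cases the product is the wedge product of log differential forms on ${\cal P}^{\rm ex}_{\ul{\lam}}$ descended to the cohomology sheaves ${\cal H}^i$, with the filtered compatibility reduced to the observation that $P_k\wedge P_{k'}\subset P_{k+k'}$ holds already at the level of forms, and with independence of choices checked by comparing two immersions via their product. The only difference is packaging — the paper defines $[\om_1]\wedge[\om_2]:=[\om_1\wedge\om_2]$ directly on cocycle representatives for a single local immersion, whereas you route through the derived cup product of \S\ref{sec:p} and the K\"unneth pairing into ${\cal H}^{i+j}(C\otimes^L C)$ — but the mathematical content is the same.
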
 
\begin{proof}
Consider a local immersion $X\os{\sus}{\lo} {\cal P}$ into 
a log smooth scheme over ${\cal W}_n(s)$. 
Let ${\mathfrak D}$ be the log PD-envelope of this immersion over 
$({\cal W}_n(s),p{\cal W}_n,[~])$. 
Then, by the definition (\ref{ali:wnoi}),  
\begin{align*}
{\cal W}_n\wt{\Om}{}^i_{X_{\ul{\lam}}} 
:={\cal H}^i({\cal O}_{{\mathfrak D}}\otimes_{{\cal O}_{{\cal P}{}^{\rm ex}}}
\Om^{\bul}_{{\cal P}{}^{{\rm ex}}_{\ul{\lam}}/{\cal W}_n(\os{\circ}{s})}). 
\end{align*} 
Let $\om_1 \in {\cal O}_{{\mathfrak D}}
\otimes_{{\cal O}_{{\cal P}{}^{\rm ex}}}
\Om^i_{{\cal P}{}^{{\rm ex}}_{\ul{\lam}}/{\cal W}_n(\os{\circ}{s})}$ 
and 
$\om_2 \in {\cal O}_{{\mathfrak D}}
\otimes_{{\cal O}_{{\cal P}{}^{\rm ex}}}
\Om^j_{{\cal P}{}^{{\rm ex}}_{\ul{\lam}}/{\cal W}_n(\os{\circ}{s})}$ 
be cocycles.  
Let $[\om_1]\in {\cal H}^i({\cal O}_{{\mathfrak D}}
\otimes_{{\cal O}_{{\cal P}{}^{\rm ex}}}
\Om^{\bul}_{{\cal P}{}^{{\rm ex}}_{\ul{\lam}}/{\cal W}_n(\os{\circ}{s})})$ and 
$[\om_2]\in {\cal H}^j({\cal O}_{{\mathfrak D}}
\otimes_{{\cal O}_{{\cal P}{}^{\rm ex}}}
\Om^{\bul}_{{\cal P}{}^{{\rm ex}}_{\ul{\lam}}/{\cal W}_n(\os{\circ}{s})})$ 
be the cohomology classes of $\om_1$ and $\om_2$, respectively.
Set 
\begin{align*}
[\om_1]\wedge [\om_2]=[\om_1 \wedge \om_2].
\tag{21.6.3}\label{ali:oom12}
\end{align*}  
If one of $[\om_1]$ and $[\om_2]$ is a coboundary, then 
$[\om_1 \wedge \om_2]$ is coboundary since $[\om_1]$ and $[\om_2]$ are cocycles. 
Hence the product (\ref{ali:oom12}) is well-defined. 
This product structure is independent of the choice of the immersion 
$X\os{\sus}{\lo} {\cal P}$. Indeed, let $X\os{\sus}{\lo} {\cal P}'$ be another immersion 
into a log smooth scheme over ${\cal W}_n(s)$. 
Then, by considering the product ${\cal P}\times_{{\cal W}_n(s)}{\cal P}'$, 
we may assume that there exists the following commutative diagram 
\begin{equation*} 
\begin{CD}
X@>>>{\cal P}\\
@| @VVV\\
X@>>>{\cal P}'
\end{CD}
\end{equation*}
over ${\cal W}_n(s)$. 
Let ${\mathfrak D}'$ be the log PD-envelope of the immersion 
$X\os{\sus}{\lo} {\cal P}'$ over $({\cal W}_n(s),p{\cal W}_n,[~])$. 
Then we have the following commutative diagrams:  
\begin{equation*} 
\begin{CD}
{\cal W}_n\wt{\Om}{}^i_{X_{\ul{\lam}}} @=
{\cal H}^i({\cal O}_{{\mathfrak D}}
\otimes_{{\cal O}_{{\cal P}{}^{\rm ex}}}
\Om^{\bul}_{{\cal P}{}^{{\rm ex}}_{\ul{\lam}}/{\cal W}_n(\os{\circ}{s})})\\
@| @AAA\\
{\cal W}_n\wt{\Om}{}^i_{X_{\ul{\lam}}}@=
{\cal H}^i({\cal O}_{{\mathfrak D}'}
\otimes_{{\cal O}_{{\cal P}'{}^{\rm ex}}}
\Om^{\bul}_{{\cal P}'{}^{{\rm ex}}_{\ul{\lam}}/{\cal W}_n(\os{\circ}{s})}), 
\end{CD}
\end{equation*}
\begin{equation*} 
\begin{CD}
{\cal H}^i({\cal O}_{{\mathfrak D}}
\otimes_{{\cal O}_{{\cal P}{}^{\rm ex}}}
\Om^{\bul}_{{\cal P}{}^{{\rm ex}}_{\ul{\lam}}/{\cal W}_n(\os{\circ}{s})})
\times {\cal H}^j({\cal O}_{{\mathfrak D}}
\otimes_{{\cal O}_{{\cal P}{}^{\rm ex}}}
\Om^{\bul}_{{\cal P}{}^{{\rm ex}}_{\ul{\lam}}/{\cal W}_n(\os{\circ}{s})}) @>{\wedge}>>
{\cal H}^{i+j}({\cal O}_{{\mathfrak D}}
\otimes_{{\cal O}_{{\cal P}{}^{\rm ex}}}
\Om^{\bul}_{{\cal P}{}^{{\rm ex}}_{\ul{\lam}}/{\cal W}_n(\os{\circ}{s})})
\\
@AAA @AAA\\
{\cal H}^i({\cal O}_{{\mathfrak D}'}
\otimes_{{\cal O}_{{\cal P}'{}^{\rm ex}}}
\Om^{\bul}_{{\cal P}'{}^{{\rm ex}}_{\ul{\lam}}/{\cal W}_n(\os{\circ}{s})})
\times {\cal H}^j({\cal O}_{{\mathfrak D}'}
\otimes_{{\cal O}_{{\cal P}'{}^{\rm ex}}}
\Om^{\bul}_{{\cal P}'{}^{{\rm ex}}_{\ul{\lam}}/{\cal W}_n(\os{\circ}{s})}) 
@>{\wedge}>>
{\cal H}^{i+j}({\cal O}_{{\mathfrak D}'}
\otimes_{{\cal O}_{{\cal P}'{}^{\rm ex}}}
\Om^{\bul}_{{\cal P}'{}^{{\rm ex}}_{\ul{\lam}}/{\cal W}_n(\os{\circ}{s})}). 
\end{CD}
\end{equation*}
The last commutative diagram tells us that the product structure 
(\ref{ali:oom12}) is independent of the choice of the immersion. 
It is clear that the product structure 
(\ref{ali:oom12}) induces the morphism (\ref{ali:pppw}). 
\end{proof}

\begin{prop}\label{prop:ab}
Set ${\cal W}_n({\cal O}_{X_{\ul{\lam}}}):=
{\cal W}_n\wt{\Om}{}^0_{X_{\ul{\lam}}}$. 
There exists a well-defined 
${\cal W}_n({\cal O}_{X_{\ul{\lam}}})$-module structure 
on $P_k{\cal W}_n\wt{\Om}{}^i_{X_{\ul{\lam}}}$  $(k\in {\mab Z}\cup \{{\infty}\})$
constructed in the proof of this proposition. 
This ${\cal W}_n({\cal O}_{X_{\ul{\lam}}})$-module structure induces 
a well-defined ${\cal W}_n({\cal O}_{X_{\ul{\lam}}})'$-module structure 
on $P_k{\cal W}_n\wt{\Om}{}^i_{X_{\ul{\lam}}}$ $(k\in {\mab Z}\cup \{{\infty}\})$. 
\end{prop}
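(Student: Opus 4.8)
The plan is to obtain the module structure directly from the filtered product on ${\cal W}_n\wt{\Om}{}^{\bul}_{X_{\ul{\lam}}}$ established in (\ref{prop:cpp}), exploiting the fact that the degree-zero piece is pure of weight $0$. First I would record the consequence of the $P$-compatibility (\ref{ali:pppw}): taking the first factor in cohomological degree $0$ and in weight $0$, the product restricts to
\begin{align*}
\wedge \col P_0{\cal W}_n\wt{\Om}{}^0_{X_{\ul{\lam}}}\times P_{k}{\cal W}_n\wt{\Om}{}^i_{X_{\ul{\lam}}}
\lo P_{k}{\cal W}_n\wt{\Om}{}^{i}_{X_{\ul{\lam}}}
\end{align*}
for every $k\in {\mab Z}$ and $i\in {\mab N}$, simply because $P_{0+k}=P_k$. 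Thus the whole problem reduces to identifying ${\cal W}_n({\cal O}_{X_{\ul{\lam}}})={\cal W}_n\wt{\Om}{}^0_{X_{\ul{\lam}}}$ with its own weight-zero step $P_0$.

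The key point is therefore the purity $P_0{\cal W}_n\wt{\Om}{}^0_{X_{\ul{\lam}}}={\cal W}_n\wt{\Om}{}^0_{X_{\ul{\lam}}}$. By the definition (\ref{eqn:pkdefpw}) of the preweight filtration and its extension to $X_{\ul{\lam}}$ via (\ref{prop:pslt}), the degree-zero term of the log de Rham complex ${\cal E}\otimes_{{\cal O}_{{\cal P}^{\rm ex}}}\Om^{\bul}_{{\cal P}^{\rm ex}_{\ul{\lam}}/\os{\circ}{T}}$ satisfies $P_0\Om^0=\Om^0$ and $P_{-1}\Om^0=0$. Since the exact differential $d\colon \Om^0\lo \Om^1$ of a function carries no logarithmic poles, it already factors through $P_0\Om^1$; hence the kernel of $d$ computed inside the subcomplex $P_0$ coincides with the kernel inside the full complex, while the $P_{-1}$-subcomplex vanishes in degree $0$. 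Passing to ${\cal H}^0$ yields $P_{-1}{\cal W}_n\wt{\Om}{}^0_{X_{\ul{\lam}}}=0$ and $P_0{\cal W}_n\wt{\Om}{}^0_{X_{\ul{\lam}}}={\cal W}_n\wt{\Om}{}^0_{X_{\ul{\lam}}}$, so ${\cal W}_n({\cal O}_{X_{\ul{\lam}}})$ is entirely of weight $0$. Combined with the displayed product this produces the pairing ${\cal W}_n({\cal O}_{X_{\ul{\lam}}})\times P_k{\cal W}_n\wt{\Om}{}^i_{X_{\ul{\lam}}}\lo P_k{\cal W}_n\wt{\Om}{}^i_{X_{\ul{\lam}}}$; the module axioms (associativity, the unit $1\in {\cal H}^0$, distributivity) are inherited from the corresponding identities for $\wedge$ proved in (\ref{prop:cpp}), and the required well-definedness (independence of the immersion $X\os{\sus}{\lo}{\cal P}$) is again furnished there.

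For the second assertion I would obtain the obverse structure by restriction of scalars. Since $\os{\circ}{X}_{\ul{\lam}}$ is smooth over $\os{\circ}{s}$ (an intersection of smooth components is smooth), the inverse Cartier isomorphism $C^{-1}\col {\cal W}_n({\cal O}_{X_{\ul{\lam}}})'\os{\sim}{\lo}{\cal W}_n({\cal O}_{X_{\ul{\lam}}})$ of (\ref{prop:ex}) (cf.~\cite{ndw}) is available as a multiplicative isomorphism of sheaves of rings. Setting $r'\cdot \om:=C^{-1}(r')\wedge \om$ for a local section $r'$ of ${\cal W}_n({\cal O}_{X_{\ul{\lam}}})'$ then makes each $P_k{\cal W}_n\wt{\Om}{}^i_{X_{\ul{\lam}}}$ a ${\cal W}_n({\cal O}_{X_{\ul{\lam}}})'$-module, compatibly with the inclusions $P_k\hookrightarrow P_{k+1}$ and with the reverse structure just built, because $C^{-1}$ is a ring homomorphism.

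The main obstacle I anticipate is precisely the purity statement in the middle paragraph: for the non-log-smooth scheme $X_{\ul{\lam}}$ (which fails to be log smooth over $s$ as soon as $\ul{\lam}\neq\emptyset$, by the remark after (\ref{prop:nexeo})) one must verify carefully that ${\cal W}_n\wt{\Om}{}^0_{X_{\ul{\lam}}}$ really is concentrated in weight $0$ and that the filtered product of (\ref{prop:cpp}) lands in the claimed step. This is where the explicit Poincar\'{e}-residue shape of $P$ from (\ref{prop:pslt}) and the elementary fact that multiplication by a function cannot increase the number of logarithmic-pole factors must be invoked, rather than any purely formal argument.
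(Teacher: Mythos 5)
Your treatment of the first assertion is correct and is essentially the paper's own argument: the ${\cal W}_n({\cal O}_{X_{\ul{\lam}}})$-module structure is read off from the filtered product (\ref{ali:pppw}) with the first factor placed in degree $0$ and weight $0$, and your explicit check that $P_0{\cal W}_n\wt{\Om}{}^0_{X_{\ul{\lam}}}={\cal W}_n\wt{\Om}{}^0_{X_{\ul{\lam}}}$ (which the paper leaves implicit) is sound, since $P_0\Om^0=\Om^0$, $df$ has no logarithmic poles, and $P_0\Om^1\lo \Om^1$ is injective.

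The second assertion is where there is a genuine gap. You obtain the ${\cal W}_n({\cal O}_{X_{\ul{\lam}}})'$-structure by restriction of scalars along an inverse Cartier \emph{isomorphism} $C^{-1}\col {\cal W}_n({\cal O}_{X_{\ul{\lam}}})'\os{\sim}{\lo}{\cal W}_n({\cal O}_{X_{\ul{\lam}}})$, citing (\ref{prop:ex}). But (\ref{prop:ex}) concerns the genuine log de Rham--Witt complex of a log smooth scheme of Cartier type over $s$, whereas for $\ul{\lam}\neq\emptyset$ the log scheme $X_{\ul{\lam}}$ is not log smooth over $s$ and ${\cal W}_n\wt{\Om}{}^0_{X_{\ul{\lam}}}$ is defined ad hoc as ${\cal H}^0$ of the embedded log de Rham complex; no Cartier isomorphism is established for it, and the paper explicitly declines to assert even that the natural comparison map is an isomorphism in this case (see (\ref{rema:dab})). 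The smoothness of the underlying scheme $\os{\circ}{X}_{\ul{\lam}}$ gives a Cartier isomorphism onto ${\cal H}^0$ of the de Rham--Witt complex of $\os{\circ}{X}_{\ul{\lam}}$, which is not the same target. What is actually needed (and suffices, since restriction of scalars only requires a ring homomorphism, not an isomorphism) is a well-defined ring homomorphism ${\cal W}_n({\cal O}_{X_{\ul{\lam}}})'\lo{\cal W}_n({\cal O}_{X_{\ul{\lam}}})$, and constructing it is the real content of the proof: the paper writes it down explicitly as $s_n(0,0)\col (a_0,\ldots,a_{n-1})\lom\sum_{i=0}^{n-1}p^i\wt{a}_i^{p^{n-i}}$, where $\wt{a}_i$ are lifts to ${\cal O}_{{\mathfrak D}_{\ul{\lam},n}}$, and then verifies independence of the choice of lifts and of the immersion $X\os{\sus}{\lo}{\cal P}$, as in Hyodo--Kato. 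Your proposal omits this construction and these verifications; replacing the unjustified isomorphism by this explicit homomorphism repairs the argument.
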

\begin{proof} 
By (\ref{ali:pppw}) we have the following morphism 
\begin{align*} 
\wedge \col {\cal W}_n({\cal O}_{X_{\ul{\lam}}})\times 
P_k{\cal W}_n\wt{\Om}{}^i_{X_{\ul{\lam}}}
\lo P_k{\cal W}_n\wt{\Om}{}^i_{X_{\ul{\lam}}} \quad (k\in {\mab Z}\cup \{{\infty}\}). 
\tag{21.8.1}\label{ali:wnxul} 
\end{align*} 
This defines a ${\cal W}_n({\cal O}_{X_{\ul{\lam}}})$-module structure 
on $P_{k}{\cal W}_n\wt{\Om}{}^i_{X_{\ul{\lam}}}$. 
Now we have only to prove that there exists 
the following well-defined morphism 
\begin{align*} 
{\cal W}_n({\cal O}_{X_{\ul{\lam}}})'\lo {\cal W}_n({\cal O}_{X_{\ul{\lam}}}).
\tag{21.8.2}\label{ali:nmmm}
\end{align*}  
Let $a_i$ $(i=0, \ldots, n-1)$ be 
a local section of ${\cal O}_{X_{\ul{\lam}}}$.
Set ${\mathfrak D}_{\ul{\lam}}:=
{\mathfrak D}\times_{{\cal P}}{\cal P}_{\ul{\lam}}$. 
Let $\wt{a}_i \in {\cal O}_{{\mathfrak D}_{\ul{\lam}}}$ 
be a lift of $a_i$.
As in \cite[(4.9)]{hk} and \cite[(7.1.1)]{ndw}, 
consider the following morphism  
\begin{equation*}
s_n(0,0) \col {\cal W}_n({\cal O}_{X_{\ul{\lam}}})'\owns 
(a_0, \ldots, a_{n-1}) \lom 
\sum_{i=0}^{n-1}p^i\wt{a}_i^{p^{n-i}}
\in {\cal H}^0({\cal O}_{{\mathfrak D}}
\otimes_{{\cal O}_{{\cal P}{}^{\rm ex}}}
\Om^{\bul}_{{\cal P}{}^{{\rm ex}}_{\ul{\lam}}/{\cal W}_n(\os{\circ}{s})})
={\cal W}_n({\cal O}_{X_{\ul{\lam}}}). 
\tag{21.8.3}\label{eqn:sn00}
\end{equation*}
As in \cite[(4.9)]{hk}, we can easily check that this morphism 
is independent of the choice of the lift $\wt{a}_i$. 
As in the proof of (\ref{prop:cpp}), we can check that this 
${\cal W}_n({\cal O}_{X_{\ul{\lam}}})'$-module structure on 
${\cal H}^0({\cal O}_{{\mathfrak D}}
\otimes_{{\cal O}_{{\cal P}{}^{\rm ex}}}
\Om^{\bul}_{{\cal P}{}^{{\rm ex}}_{\ul{\lam}}/{\cal W}_n(\os{\circ}{s})})
={\cal W}_n({\cal O}_{X_{\ul{\lam}}})$ is independent of the choice of 
the immersion $X\os{\sus}{\lo} {\cal P}$. 
\end{proof}

\begin{coro} 
The wedge products {\rm (\ref{ali:pppxw})} and {\rm (\ref{ali:pppw})} 
are ${\cal W}_n({\cal O}_{X_{\ul{\lam}}})'$-bilinear and 
${\cal W}_n({\cal O}_{X_{\ul{\lam}}})$-bilinear. 
\end{coro}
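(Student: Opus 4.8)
The plan is to reduce everything to the explicit local de Rham description used in the proofs of (\ref{prop:cpp}) and (\ref{prop:ab}), in which both the wedge product and the two module structures are visibly induced by the honest ${\cal O}_{\mathfrak D}$-bilinear, associative, graded-commutative wedge product on the complex ${\cal O}_{{\mathfrak D}}\otimes_{{\cal O}_{{\cal P}^{\rm ex}}}\Om^{\bul}_{{\cal P}^{{\rm ex}}_{\ul{\lam}}/{\cal W}_n(\os{\circ}{s})}$. First I would fix a local immersion $X\os{\sus}{\lo}{\cal P}$ into a log smooth scheme over ${\cal W}_n(s)$ and identify ${\cal W}_n\wt{\Om}{}^i_{X_{\ul{\lam}}}$ with ${\cal H}^i$ of this complex, so that a scalar $a\in {\cal W}_n({\cal O}_{X_{\ul{\lam}}})={\cal H}^0$ and a class $[\om]$ are represented by cocycles $\wt{a}$ in degree $0$ and $\om$ in degree $i$, with the module action of (\ref{ali:wnxul}) given by $a\cdot[\om]=[\wt{a}\wedge \om]$. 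Throughout, well-definedness (independence of the choice of representatives $\wt{a},\om$ and of the immersion ${\cal P}$) is already contained in the proofs of (\ref{prop:cpp}) and (\ref{prop:ab}), so I need only check the algebraic identities on the level of forms.

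Next, the ${\cal W}_n({\cal O}_{X_{\ul{\lam}}})$-bilinearity of the wedge products (\ref{ali:pppxw}) and (\ref{ali:pppw}) is just associativity together with degree-$0$ commutativity of the underlying wedge, pushed to cohomology. Concretely, for $a\in {\cal W}_n({\cal O}_{X_{\ul{\lam}}})$ and classes $[\om_1],[\om_2]$ represented by cocycles $\wt{a},\om_1,\om_2$, the equalities $\wt{a}\wedge(\om_1\wedge \om_2)=(\wt{a}\wedge \om_1)\wedge \om_2=\om_1\wedge(\wt{a}\wedge \om_2)$ already hold at the level of forms, with no sign intervening because $\wt{a}$ has degree $0$. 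Passing to cohomology classes via (\ref{ali:oom12}) yields $a\cdot([\om_1]\wedge[\om_2])=(a\cdot[\om_1])\wedge[\om_2]=[\om_1]\wedge(a\cdot[\om_2])$, which is exactly the desired bilinearity for (\ref{ali:pppxw}). Since $a$ is of weight $0$, its action preserves the index of the filtration $P$, so the filtered version (\ref{ali:pppw}) follows verbatim once (\ref{prop:cpp}) is invoked.

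Finally, the ${\cal W}_n({\cal O}_{X_{\ul{\lam}}})'$-bilinearity is obtained by scalar restriction along the ring homomorphism (\ref{ali:nmmm}): by the construction in (\ref{prop:ab}), the ${\cal W}_n({\cal O}_{X_{\ul{\lam}}})'$-module structure on $P_k{\cal W}_n\wt{\Om}{}^i_{X_{\ul{\lam}}}$ is the composite of this homomorphism with the ${\cal W}_n({\cal O}_{X_{\ul{\lam}}})$-action, whence any pairing bilinear for the latter is automatically bilinear for the former. The only genuine point to verify — and the step I expect to cost the most care — is that (\ref{ali:nmmm}) really is a homomorphism of sheaves of rings compatible with the wedge product, that is, that the ghost-component map $s_n(0,0)$ of (\ref{eqn:sn00}) is multiplicative and independent both of the chosen lifts $\wt{a}_i$ and of ${\cal P}$; but this is precisely what was established in (\ref{prop:ab}) following \cite[(4.9)]{hk} and \cite[(7.1.1)]{ndw}. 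Thus the corollary reduces entirely to bookkeeping already carried out, and no new input beyond assembling (\ref{prop:cpp}) and (\ref{prop:ab}) is needed.
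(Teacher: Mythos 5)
Your argument is correct and follows exactly the route the paper intends: the paper states this corollary with no proof at all, treating it as immediate from (\ref{prop:cpp}) and (\ref{prop:ab}), and you have simply written out that reduction — associativity and degree-zero graded commutativity of the underlying wedge on ${\cal O}_{{\mathfrak D}}\otimes_{{\cal O}_{{\cal P}^{\rm ex}}}\Om^{\bul}_{{\cal P}^{{\rm ex}}_{\ul{\lam}}/{\cal W}_n(\os{\circ}{s})}$ for the ${\cal W}_n({\cal O}_{X_{\ul{\lam}}})$-bilinearity, and restriction of scalars along the multiplicative map $s_n(0,0)$ of (\ref{eqn:sn00}) for the ${\cal W}_n({\cal O}_{X_{\ul{\lam}}})'$-bilinearity. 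Your identification of the multiplicativity of (\ref{ali:nmmm}) as the only point requiring care is apt, and it is indeed covered by the citation to \cite[(4.9)]{hk} in (\ref{prop:ab}).
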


\begin{rema}\label{rema:dab}
I do not check whether the morphism (\ref{ali:nmmm}) is an isomorphism 
in the case $\ul{\lam}\not= \emptyset$. 
More generally, let $(W_n\wt{\Om}_{X_{\ul{\lam}}}^i)'$ be an abelian sheaf  on 
$\os{\circ}{X}_{\ul{\lam}}$ which is a quotient of 
\begin{equation*}
\{W_n({\cal O}_{X_{\ul{\lam}}})\otimes_{\mab Z}
\bigwedge^i(M^{\rm gp}/f^{-1}({\mab Z}))\}
\oplus 
\{W_n({\cal O}_{X_{\ul{\lam}}})\otimes_{\mab Z}
\bigwedge^{i-1}(M^{\rm gp}/f^{-1}({\mab Z}))\}
\tag{21.10.1}\label{eqn:flsodw}
\end{equation*}
divided by a ${\mab Z}$-submodule ${\cal F}_n$ 
generated by the images of 
the local sections  of the following type
\begin{equation*}
(V^j(\al(a_1))
\otimes(a_1\wedge \cdots 
\wedge a_i), 0)-p^j(0, V^j(\al(a_1))
\otimes (a_2\wedge \cdots \wedge a_i)) 
\tag{21.10.2}\label{eqn:frel}
\end{equation*}
$$(a_1, \ldots, a_i \in M,~ 
0 \leq j < n),$$
where $V^j(b):= 
({\us{j~{\rm times}}
{\underbrace{0, \cdots , 0}}},
b, 0, \ldots, 0)$ for a 
local section $b\in {\cal O}_{X_{\ul{\lam}}}$ 
(\cite[(4.6)]{hk}) and a  
${\mab Z}$-submodule 
${\cal G}_n$, where 
${\cal G}_n$ is a ${\mab Z}$-submodule generated 
by the images of the local sections of 
the following type
\begin{equation*}
(0, V^j(\al(a_2))\otimes
(a_2\wedge \cdots \wedge a_i)) 
\quad 
(a_2, \ldots, a_i\in M,~ 
1 \leq j <n)
\tag{21.10.3}\label{eqn:mrel}
\end{equation*}
as an abelian sheaf on 
$X_{{\ul{\lam}}}$. 
\par
It is not difficult to check that 
the analogous morphism to $s_n$ in \cite[(4.9)]{hk} 
factors through a morphism
\begin{equation*}
s_n \col (W_{n}\wt{\Om}{}^i_{X_{\ul{\lam}}})' \lo 
W_{n}\wt{\Om}{}^i_{X_{\ul{\lam}}} 
\quad (n\in {\mab Z}_{>0}). 
\tag{21.10.4}\label{eqn:morm}
\end{equation*} 
I have not checked whether there exists an isomorphism 
\begin{align*} 
({\cal W}_n\wt{\Om}{}^i_{X_{\ul{\lam}}})' \lo {\cal W}_n\wt{\Om}{}^i_{X_{\ul{\lam}}}
\end{align*}
as abelian sheaves because we do not need this property in this book. 
\end{rema}

Next we define the derivative 
$d\col {\cal W}_n\wt{\Om}{}^i_{X_{\ul{\lam}}} 
\lo {\cal W}_n\wt{\Om}{}^{i+1}_{X_{\ul{\lam}}}$ as follows.  
This derivative has not been defined in \cite{kiha}. 
\par
Let $X\os{\sus}{\lo} {\cal P}$ be a local immersion into 
a log smooth scheme over ${\cal W}_{2n}(s)$. 
Let ${\mathfrak D}$ be the log PD-envelope of this immersion over 
$({\cal W}_{2n}(\os{\circ}{s}),p{\cal W}_{2n},[~])$. 
%Set ${\mathfrak D}:=\ol{\mathfrak D}\times_{\ol{{\cal W}_{2n}(s)}}{\cal W}_{2n}(s)$. 
%Then the underlying scheme of 
%${\cal P}:=\ol{\cal P}\times_{\ol{{\cal W}_{n+1}(s)}}{\cal W}_{n+1}(s)$ 
%is flat over ${\cal W}_{n+1}(s)$. 
Set ${\cal P}_{\ul{\lam},n}:={\cal P}_{\ul{\lam}}\otimes_{{\cal W}_{2n}}{\cal W}_n$. 
By the local description of 
${\cal P}^{\rm ex}$, ${\mathfrak D}$  and ${\cal P}^{{\rm ex}}_{\ul{\lam}}$ 
((\ref{prop:fsi})), we see that 
${\cal O}_{\mathfrak D}
\otimes_{{\cal O}_{{\cal P}^{\rm ex}}}
\Om^i_{{\cal P}^{{\rm ex}}_{\ul{\lam}}/{\cal W}_{2n}(\os{\circ}{s})}$ $(i\in {\mab N})$ 
is a flat ${\cal W}_{2n}$-module. 
Consequently the following sequence is exact:  
\begin{align*} 
0\lo {\cal O}_{\mathfrak D}
\otimes_{{\cal O}_{{\cal P}^{\rm ex}}}
\Om^{\bul}_{{\cal P}^{{\rm ex}}_{\ul{\lam},n}/{\cal W}_{n}(\os{\circ}{s})}
\os{p^n}{\lo} {\cal O}_{\mathfrak D}
\otimes_{{\cal O}_{{\cal P}^{\rm ex}}}
\Om^{\bul}_{{\cal P}^{{\rm ex}}_{\ul{\lam}}/{\cal W}_{2n}(\os{\circ}{s})}
\lo {\cal O}_{\mathfrak D}
\otimes_{{\cal O}_{{\cal P}^{\rm ex}}}
\Om^{\bul}_{{\cal P}^{{\rm ex}}_{\ul{\lam},n}/{\cal W}_{n}(\os{\circ}{s})}\lo 0. 
\tag{21.10.5}\label{ali:bdd}
\end{align*} 
The derivative 
$d\col {\cal W}_n\wt{\Om}{}^i_{X_{\ul{\lam}}} 
\lo {\cal W}_n\wt{\Om}{}^{i+1}_{X_{\ul{\lam}}}$
is, by definition, the boundary morphism 
${\cal H}^i({\cal O}_{\mathfrak D}
\otimes_{{\cal O}_{{\cal P}^{\rm ex}}}
\Om^{\bul}_{{\cal P}^{{\rm ex}}_{\ul{\lam},n}/{\cal W}_{n}(\os{\circ}{s})})
\lo 
{\cal H}^{i+1}({\cal O}_{\mathfrak D}
\otimes_{{\cal O}_{{\cal P}^{\rm ex}}}
\Om^{\bul}_{{\cal P}^{{\rm ex}}_{\ul{\lam},n}/{\cal W}_{n}(\os{\circ}{s})})$
obtained by (\ref{ali:bdd}). 
It is easy to see that this derivative is independent of the choice of 
the immersion $X\os{\sus}{\lo} {\cal P}$. 
Here the reader should note that this independence immediately follows
since we allow not only the lift but also the immersion. 
If one allows only the lift, a more complicated proof is necessary 
since the product of a lift is not necessarily a lift (cf.~\cite{msemi}).

\begin{prop}\label{prop:efe}
There exists the following exact sequence$:$
\begin{align*} 
0\lo P_{k-1}{\cal W}_n\wt{\Om}{}^i_{X_{\ul{\lam}}}
\lo P_k{\cal W}_n\wt{\Om}{}^i_{X_{\ul{\lam}}}\lo 
\bigoplus_{\# \ul{\mu}=k}
a_{\ul{\lam}\cup \ul{\mu}*}
({\cal W}_n\Om^{\bul}_{\os{\circ}{X}_{\ul{\lam}\cup \ul{\mu}}}[-k]
\otimes_{\mab Z}\vp_{{\rm zar},\ul{\mu}}(\os{\circ}{X}/\os{\circ}{s}))\lo 0.
\tag{21.11.1}\label{ali:ppkkxl}
\end{align*} 
\end{prop}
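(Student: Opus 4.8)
The plan is to reduce to a local crystalline computation, read off the graded pieces from the Poincar\'{e} residue isomorphism, and then promote the long exact cohomology sequence to the asserted short exact sequence by an injectivity (strictness) argument. First I would observe that the statement is local on $\os{\circ}{X}$, so I may choose a local immersion $X\os{\sus}{\lo}{\cal P}$ into a log smooth scheme over ${\cal W}_n(s)$, pass to the exactification ${\cal P}^{\rm ex}$ and the log PD-envelope ${\mathfrak D}$, and use the definitions (\ref{ali:wnoi}), (\ref{ali:wnwoi}) together with the local computation of $\wt{R}u_{X_{\ul{\lam}}/{\cal W}_n(\os{\circ}{s})*}$ to identify
$$P_k{\cal W}_n\wt{\Om}{}^i_{X_{\ul{\lam}}}={\cal H}^i(P_k({\cal O}_{\mathfrak D}\otimes_{{\cal O}_{{\cal P}^{\rm ex}}}\Om^{\bul}_{{\cal P}^{\rm ex}_{\ul{\lam}}/\os{\circ}{T}})),\qquad \os{\circ}{T}={\rm Spf}({\cal W}_n),$$
the complex structure being induced by the Bockstein differential $d$ defined via (\ref{ali:bdd}). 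By (\ref{prop:subc}) each $P_k$ is a subcomplex of ${\cal O}_{\mathfrak D}\otimes\Om^{\bul}_{{\cal P}^{\rm ex}_{\ul{\lam}}/\os{\circ}{T}}$, and by (\ref{prop:tnmi}) and (\ref{prop:flt}) the terms of $P_k$ and of ${\rm gr}^P_k$ are flat ${\cal O}_T$-modules.

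Next I would compute the associated graded. Applying the Poincar\'{e} residue isomorphism (\ref{prop:pslt}) (equation (\ref{eqn:mpprrn})) with ${\cal E}={\cal O}_{\mathfrak D}$ gives a canonical identification
$${\rm gr}^P_k({\cal O}_{\mathfrak D}\otimes_{{\cal O}_{{\cal P}^{\rm ex}}}\Om^{\bul}_{{\cal P}^{\rm ex}_{\ul{\lam}}/\os{\circ}{T}})\os{\sim}{\lo}\bigoplus_{\sharp \ul{\mu}=k}{\cal O}_{\mathfrak D}\otimes_{{\cal O}_{{\cal P}^{\rm ex}}}b_{\ul{\lam}\cup \ul{\mu}*}(\Om^{\bul}_{\os{\circ}{\cal P}{}^{\rm ex}_{\ul{\lam}\cup \ul{\mu}}/\os{\circ}{T}}\otimes_{\mab Z}\vp_{{\rm zar},\ul{\mu}}({\cal P}^{\rm ex}/\os{\circ}{T}))[-k].$$
Taking ${\cal H}^i$ and using that $\os{\circ}{X}_{\ul{\lam}\cup \ul{\mu}}$ is smooth over $\os{\circ}{s}$, the crystalline Poincar\'{e} lemma together with the Hyodo--Kato comparison \cite{hk} identifies the $i$-th cohomology sheaf of each summand with the de Rham--Witt term $a_{\ul{\lam}\cup \ul{\mu}*}({\cal W}_n\Om^{\bul}_{\os{\circ}{X}_{\ul{\lam}\cup \ul{\mu}}}[-k]\otimes_{\mab Z}\vp_{{\rm zar},\ul{\mu}}(\os{\circ}{X}/\os{\circ}{s}))$ in degree $i$, i.e. the right-hand term of (\ref{ali:ppkkxl}); here I would also check that this identification is compatible with the Bockstein $d$, so that the residue map is a morphism of complexes.

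Finally I would deduce (\ref{ali:ppkkxl}) from the long exact cohomology sequence of the short exact sequence of complexes $0\lo P_{k-1}({\cal O}_{\mathfrak D}\otimes\Om^{\bul}_{{\cal P}^{\rm ex}_{\ul{\lam}}/\os{\circ}{T}})\lo P_k(\cdots)\lo {\rm gr}^P_k(\cdots)\lo 0$. The essential point, and the main obstacle, is strictness: I must show this long exact sequence breaks into short exact sequences in each degree, equivalently that the natural map $P_{k-1}{\cal W}_n\wt{\Om}{}^i_{X_{\ul{\lam}}}\lo P_k{\cal W}_n\wt{\Om}{}^i_{X_{\ul{\lam}}}$ is injective (so that $P_k$ really is a subsheaf, which is exactly what is not yet claimed after (\ref{ali:wnwoi})). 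I expect to prove this by ascending induction on $k$: the base case is immediate since $P_{-1}=0$ by the definition of the preweight filtration, giving ${\rm gr}^P_0\cong P_0$; for the inductive step, the graded pieces are torsion-free by \cite{ndw}, and combining this with the flatness from (\ref{prop:flt}) and the vanishing of the relevant connecting homomorphisms propagates strictness, exactly as in the treatment of the weight-filtration exact sequence for the case $\ul{\lam}=\emptyset$ in \cite{nb}, \cite{msemi} and \cite{ndw}.
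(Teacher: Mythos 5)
Your reduction to a local immersion, the use of the Poincar\'{e} residue isomorphism (\ref{eqn:mpprrn}) to compute ${\rm gr}^P_k$, and the identification of its ${\cal H}^i$ with the classical de Rham--Witt terms of the smooth strata all match what the paper does. The gap is in the one step that carries the whole content of the proposition, namely the injectivity of $P_{k-1}{\cal W}_n\wt{\Om}{}^i_{X_{\ul{\lam}}}\lo P_k{\cal W}_n\wt{\Om}{}^i_{X_{\ul{\lam}}}$ (equivalently, the vanishing of the connecting maps ${\cal H}^{i-1}({\rm gr}^P_k)\lo {\cal H}^i(P_{k-1})$). The mechanism you propose --- ascending induction on $k$ combined with flatness from (\ref{prop:flt}) and ``torsion-freeness of the graded pieces'' --- does not deliver this. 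The inductive hypothesis concerns the map $P_{k-2}\lo P_{k-1}$ on cohomology and says nothing about the connecting homomorphism out of ${\cal H}^{i-1}({\rm gr}^P_k)$; flatness of the terms of the complexes over ${\cal O}_T={\cal W}_n$ is a statement about the terms, not about cohomology sheaves; and at finite level $n$ the graded pieces ${\cal W}_n\Om^{\bul}$ are $p^n$-torsion, so the torsion-freeness results of \cite{ndw} (which are for ${\cal W}\Om^{\bul}$ at infinite level) are not available. As written, ``the vanishing of the relevant connecting homomorphisms'' is asserted rather than proved.

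The ingredient you are missing is the one the paper actually invokes: the short exact sequence of complexes
$0\lo P_{k-1}({\cal O}_{\mathfrak D}\otimes_{{\cal O}_{{\cal P}^{\rm ex}}}\Om^{\bul}_{{\cal P}^{{\rm ex}}_{\ul{\lam},n}/{\cal W}_{n}(\os{\circ}{s})})\lo P_k(\cdots)\lo {\rm gr}_k^P(\cdots)\lo 0$
is \emph{locally split as a sequence of complexes} (by the same local-coordinate argument as \cite[Lemme 1.2]{msemi} and \cite[p.~1268]{kiha}). Local splitting compatible with the differentials forces the connecting homomorphisms between the cohomology sheaves to vanish locally, hence to vanish, and the long exact sequence then breaks into the short exact sequences (\ref{ali:ppkkxl}) degree by degree. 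Since you already cite Mokrane and Kim--Hain at the end, the fix is to replace your induction/torsion-freeness argument by this local splitting; without it the proof does not close.
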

\begin{proof}
Let the notations be as in the proof of (\ref{prop:cpp}). 
By the same proof as those of \cite[Lemme 1.2]{msemi} and \cite[p.~1268]{kiha}, the following sequence is locally split: 
\begin{align*}  
0 &\lo 
P_{k-1}({\cal O}_{\mathfrak D}
\otimes_{{\cal O}_{{\cal P}^{\rm ex}}}
\Om^{\bul}_{{\cal P}^{{\rm ex}}_{\ul{\lam}}/{\cal W}_{n}(\os{\circ}{s})})
\lo P_k
({\cal O}_{\mathfrak D}
\otimes_{{\cal O}_{{\cal P}^{\rm ex}}}
\Om^{\bul}_{{\cal P}^{{\rm ex}}_{\ul{\lam}}/{\cal W}_{n}(\os{\circ}{s})})\\
&\lo 
{\rm gr}_k^P
({\cal O}_{\mathfrak D}
\otimes_{{\cal O}_{{\cal P}^{\rm ex}}}
\Om^{\bul}_{{\cal P}^{{\rm ex}}_{\ul{\lam}}/{\cal W}_{n}(\os{\circ}{s})})
\lo 0. 
\end{align*} 
Hence the following morphism 
\begin{align*}
{\cal H}^i(P_{k-1}({\cal O}_{\mathfrak D}
\otimes_{{\cal O}_{{\cal P}^{\rm ex}}}
\Om^{\bul}_{{\cal P}^{{\rm ex}}_{\ul{\lam}}/{\cal W}_{n}(\os{\circ}{s})}))
\lo 
{\cal H}^i(P_{k}({\cal O}_{\mathfrak D}
\otimes_{{\cal O}_{{\cal P}^{\rm ex}}}
\Om^{\bul}_{{\cal P}^{{\rm ex}}_{\ul{\lam}}/{\cal W}_{n}(\os{\circ}{s})}))
\end{align*} 
is injective. 
This implies that the following morphism 
\begin{align*} 
P_{k-1}{\cal W}_n\wt{\Om}{}^i_{X_{\ul{\lam}}}
\lo P_k{\cal W}_n\wt{\Om}{}^i_{X_{\ul{\lam}}}
\end{align*} 
is injective. 
\par 
Because 
\begin{align*} 
&{\rm gr}_k^P
({\cal O}_{\mathfrak D}
\otimes_{{\cal O}_{{\cal P}^{\rm ex}}}
\Om^{\bul}_{{\cal P}^{{\rm ex}}_{\ul{\lam}}/{\cal W}_{n}(\os{\circ}{s})})=\\
&\bigoplus_{\# \ul{\mu}=k}
{\cal O}_{\os{\circ}{\mathfrak D}_{\ul{\lam}\cup \ul{\mu}}}
{\otimes}_{{\cal O}_{{\cal P}^{{\rm ex}}_{\ul{\lam}\cup \ul{\mu}}}}
b_{\ul{\lam}\cup \ul{\mu}*}
(\Om^{\bul}_{\os{\circ}{\cal P}{}^{{\rm ex}}_{\ul{\lam}\cup \ul{\mu}}/
{\cal W}_{n}(\os{\circ}{s})})[-k]
\otimes_{\mab Z}\vp_{{\rm zar},\ul{\mu}}({\cal P}^{\rm ex}_n/{\cal W}_{n}(\os{\circ}{s})))
\end{align*} 
((\ref{eqn:mpprrn})), 
we obtain the following equalities: 
\begin{align*} 
&{\cal H}^i({\rm gr}_k^P
({\cal O}_{\mathfrak D}
\otimes_{{\cal O}_{{\cal P}^{\rm ex}}}
\Om^{\bul}_{{\cal P}^{{\rm ex}}_{\ul{\lam}}/{\cal W}_{n}(\os{\circ}{s})}))\\
&=\bigoplus_{\# \ul{\mu}=k}{\cal H}^i
({\cal O}_{\os{\circ}{\mathfrak D}_{\ul{\lam}\cup \ul{\mu}}}
{\otimes}_{{\cal O}_{{\cal P}{}^{{\rm ex}}_{\ul{\lam}\cup \ul{\mu}}}}
b_{\ul{\lam}\cup \ul{\mu}*}
\Om^{\bul}_{\os{\circ}{\cal P}{}^{{\rm ex}}_{\ul{\lam}\cup \ul{\mu}}/{\cal W}_{n}(\os{\circ}{s})}[-k]
\otimes_{\mab Z}\vp_{{\rm zar},\ul{\mu}}({\cal P}^{\rm ex}_n/{\cal W}_{n}(\os{\circ}{s})))\\
&=\bigoplus_{\# \ul{\mu}=k}a_{\ul{\lam}\cup \ul{\mu}*}
({\cal W}_n\Om^{\bul}_{\os{\circ}{X}_{\ul{\lam}\cup \ul{\mu}}}[-k]
\otimes_{\mab Z}\vp_{{\rm zar},\ul{\mu}}(\os{\circ}{X}/\os{\circ}{s})). 
\end{align*} 
Hence we obtain the exact sequence (\ref{ali:ppkkxl}).
\end{proof}

\par 
Set 
\begin{align*} 
({\cal W}_n\wt{\Om}{}^{\bul}_{X^{(m)}},P):=
\bigoplus_{\# \ul{\lam}=m+1}
({\cal W}_n\wt{\Om}{}^{\bul}_{X_{\ul{\lam}}},P) \quad (m\in {\mab N}). 
\end{align*}
Then 
\begin{align*} 
{\rm gr}^P_k{\cal W}_n\wt{\Om}{}^{\bul}_{X^{(m)}}=
\bigoplus_{\# \ul{\lam}=m+1}
\bigoplus_{\# \ul{\mu}=k}a_{\ul{\lam}\cup \ul{\mu}*}
({\cal W}_n\Om^{\bul -k}_{\os{\circ}{X}_{\ul{\lam}\cup \ul{\mu}}}
\otimes_{\mab Z}\vp_{{\rm zar},\ul{\mu}}(\os{\circ}{X}/\os{\circ}{s})). 
\tag{21.11.2}\label{ali:pkgrm}
\end{align*}

The filtered complexes $({\cal W}_n\wt{\Om}{}^{\bul}_{X_{\ul{\lam}}},P)$ 
and $({\cal W}_n\wt{\Om}{}^{\bul}_{X^{(m)}},P)$ have well-defined product structures 
which are compatible with the filtrations $P$'s. 
Let $\theta_{\ul{\lam}}\in {\cal W}_n\wt{\Om}{}^1_{X_{\ul{\lam}}}$ 
be the cohomology class ``$[d\log t]$'' in 
${\cal H}^1({\cal O}_{\mathfrak D}
\otimes_{{\cal O}_{{\cal P}^{\rm ex}}}
\Om^{\bul}_{{\cal P}^{{\rm ex}}_{\ul{\lam}}/{\cal W}_{n}(\os{\circ}{s})})$. 
It is easy to see that $\theta_{\ul{\lam}}$ is well-defined. 
Now consider the Hirsch extension 
${\cal W}_n\wt{\Om}{}^{\bul}_{X_{\ul{\lam}}}\langle u \rangle$
of ${\cal W}_n\wt{\Om}{}^{\bul}_{X_{\ul{\lam}}}$ by the morphism
${\cal W}_nu\owns u \lom \theta_{\ul{\lam}}\in 
{\rm Ker}({\cal W}_n\wt{\Om}{}^1_{X_{\ul{\lam}}}\lo 
{\cal W}_n\wt{\Om}{}^2_{X_{\ul{\lam}}})$  
and the filtration $P$ on 
${\cal W}_n\wt{\Om}{}^{\bul}_{X_{\ul{\lam}}}\langle u \rangle$: 
\begin{align*} 
P_k{\cal W}_n\wt{\Om}{}^{\bul}_{X_{\ul{\lam}}}\langle u \rangle
:=\bigoplus_{j=0}^{\inf}{\cal W}_nu^{[j]}\otimes_{{\cal W}_n}
P_{k-2j}{\cal W}_n\wt{\Om}{}^{\bul}_{X_{\ul{\lam}}}
\tag{21.11.3}\label{ali:wpkm}
\end{align*} 
We also consider the Hirsch extension
${\cal W}_n\wt{\Om}{}^{\bul}_{X^{(m)}}\langle u \rangle$ 
of ${\cal W}_n\wt{\Om}{}^{\bul}_{X^{(m)}}$ by the morphism
${\cal W}_nu\owns u \lom \theta:=\oplus_{\vert \ul{\lam}\vert =m+1}\theta_{\ul{\lam}}
\in {\cal W}_n\wt{\Om}{}^1_{X^{(m)}}$. 
Then we obtain the single complex 
$s({\cal W}_n\wt{\Om}{}^{\bul}_{X^{(\bul)}}\langle u \rangle)$ 
of the double complex ${\cal W}_n\wt{\Om}{}^{\bul}_{X^{(\bul)}}\langle u \rangle$, 
where we consider the signs of differential morphisms of 
the double complex ${\cal W}_n\wt{\Om}{}^{\bul}_{X^{(\bul)}}\langle u \rangle$ 
as in the case of the Hirsch PD-filtered complex $H_{\rm zar}$ stated in previous sections. 
This is nothing but the cosimplicial dga 
$s(C(W\wt{\om}[u]))$ defined in \cite[Lemma 12]{kiha} 
(modulo the ambiguity of the signs of the boundary morphisms of the double complex 
in [loc.~cit.]).
\par 
We define the diagonal filtration $\del(L,P)$ on 
$s({\cal W}_n\wt{\Om}{}^{\bul}_{X^{(\bul)}}\langle u \rangle)$ as usual: 
\begin{align*} 
\del(L,P)_ks({\cal W}_n\wt{\Om}{}^{\bul}_{X^{(\bul)}}\langle u \rangle)
&
:=\bigoplus_{m=0}^{\inf}P_{k+m}({\cal W}_n\wt{\Om}{}^{\bul}_{X^{(m)}}\langle u \rangle)
\tag{21.11.4}\label{ali:pkm}\\
&:=\bigoplus_{m=0}^{\inf}\bigoplus_{j=0}^{\inf}
{\cal W}_nu^{[j]}\otimes_{{\cal W}_n}
P_{k+m-2j}{\cal W}_n\wt{\Om}{}^{\bul}_{X^{(m)}}. 
\end{align*} 
In the following we denote $\del(L,P)$ simply by $P$ by abuse of notation.
In this convention 
\begin{align*} 
&{\rm gr}_k^{P}s({\cal W}_n\wt{\Om}{}^{\bul}_{X^{(\bul)}}\langle u \rangle)
=\bigoplus_{m=0}^{\inf}\bigoplus_{j=0}^{\inf}
{\cal W}_nu^{[j]}\otimes_{{\cal W}_n}
{\rm gr}^P_{k+m-2j}{\cal W}_n\wt{\Om}{}^{\bul}_{X^{(m)}}
\tag{21.11.5}\label{ali:pkpm}\\
&=\bigoplus_{m=0}^{\inf}\bigoplus_{j=0}^{\inf}
\bigoplus_{\# \ul{\lam}=m}
\bigoplus_{\# \ul{\mu}=k+m-2j}
{\cal W}_nu^{[j]}\otimes_{{\cal W}_n}
a_{\ul{\lam}\cup \ul{\mu}*}
({\cal W}_n\Om^{\bul -k}_{\os{\circ}{X}_{\ul{\lam}\cup \ul{\mu}}}
\otimes_{\mab Z}\vp_{{\rm zar},\ul{\mu}}(\os{\circ}{X}/\os{\circ}{s})).
\end{align*} 

\par 
Next we define  the ``projection'' 
\begin{align*} 
R\col {\cal W}_n\wt{\Om}{}^{\bul}_{X_{\ul{\lam}}}\lo 
{\cal W}_{n-1}\wt{\Om}{}^{\bul}_{X_{\ul{\lam}}} \quad (n\in {\mab Z}_{\geq 1})
\tag{21.11.6}\label{ali:pkmx}
\end{align*} 
by using the methods in \cite{hyp} and \cite{nb} with additional arguments. 
This operator has not been defined in \cite{kiha} either. 
\par 
Let $n$ be a positive integer and let $i$ be a nonnegative integer.
To define the morphism (\ref{ali:pkmx}), we first assume that there exists an immersion
$X\os{\sus}{\lo} {\cal P}$ into a formally log smooth scheme over ${\cal W}(s)$ 
such that ${\cal P}$ has a lift $\varphi \col {\cal P}\lo {\cal P}$
of the Frobenius endomorphism of ${\cal P}\times_{{\cal W}(s)}s$.
Let ${\cal P}^{\rm ex}$ be the exactification of the immersion 
$X\os{\sus}{\lo} {\cal P}$. Then $\varphi \col {\cal P}\lo {\cal P}$
induces the morphism $\varphi^{\rm ex} \col {\cal P}^{\rm ex}\lo {\cal P}^{\rm ex}$. 
Let ${\cal P}^{\rm ex}_{\lam}$ be a formally smooth component of ${\cal P}^{\rm ex}$ 
which corresponds to a smooth component $X_{\lam}$ of $X$. 
Let $e_{\lam}\in M_{{\cal P}^{\rm ex}}$ be a local section corresponding to  ${\cal P}^{\rm ex}_{\lam}$. 
Then $\varphi^*(e_{\lam})=e^p_{\lam}u$ for some $u\in {\cal O}^*_{{\cal P}^{\rm ex}}$. 
Hence $\varphi^{\rm ex}$ induces a morphism 
${\cal P}^{\rm ex}_{\lam}\lo {\cal P}^{\rm ex}_{\lam}$. 
Consequently, for a subset $\ul{\lam}$ of $\Lam$, 
$\varphi^{\rm ex}$ induces a morphism 
${\cal P}^{\rm ex}_{\ul{\lam}}\lo {\cal P}^{\rm ex}_{\ul{\lam}}$. 
Let ${\mathfrak D}$ be the log PD-envelope of the immersion
$X\os{\sus}{\lo} {\cal P}^{\rm ex}$ over 
$({\cal W}(s),p{\cal W},[~])$.
Set ${\cal P}^{\rm ex}_n:={\cal P}^{\rm ex}\times_{{\cal W}(s)}{\cal W}_n(s)$, 
${\cal P}^{\rm ex}_{\ul{\lam},n}:={\cal P}^{\rm ex}_{\ul{\lam}}\times_{{\cal W}(s)}{\cal W}_n(s)$ 
 and 
${\mathfrak D}_n:={\mathfrak D}\times_{{\cal W}(s)}{\cal W}_n(s)$.
Consider the following morphism
\begin{align*}
\varphi^{{\rm ex}*}
\col & {\cal O}_{\mathfrak D}\otimes_{{\cal O}_{{\cal P}^{\rm ex}}}
{\Om}{}^i_{{\cal P}^{\rm ex}_{\ul{\lam}}/{\cal W}(\os{\circ}{s})}\lo
{\cal O}_{{\mathfrak D}}\otimes_{{\cal O}_{{\cal P}^{\rm ex}}}
{\Om}{}^i_{{\cal P}^{\rm ex}_{\ul{\lam}}/{\cal W}(\os{\circ}{s})} \quad (i\in {\mab N}).
\end{align*}
Because ${\cal O}_{{\mathfrak D}}\otimes_{{\cal O}_{{\cal P}^{\rm ex}}}
\wt{\Om}{}^i_{{\cal P}^{\rm ex}_{\ul{\lam}}/{\cal W}(\os{\circ}{s})}$ is a sheaf of flat 
${\cal O}_{{\cal W}(s)}$-modules by the proof of \cite[(1.3.5), (1.7.24)]{nb}
and because $\varphi^{{\rm ex}*}$ is divisible by $p^i$
(since $\varphi$ is a lift of the  Frobenius endomorphism),
the morphism
\begin{align*}
p^{-(i-1)}\varphi^{{\rm ex}*}
\col & {\cal O}_{{\mathfrak D}}\otimes_{{\cal O}_{{\cal P}^{\rm ex}}}
{\Om}{}^j_{{\cal P}^{\rm ex}_{\ul{\lam}}/{\cal W}(\os{\circ}{s})}\lo
{\cal O}_{{\mathfrak D}}\otimes_{{\cal O}_{{\cal P}^{\rm ex}}}
{\Om}{}^j_{{\cal P}^{\rm ex}_{\ul{\lam}}/{\cal W}(\os{\circ}{s})} \quad (j\geq i-1)
\end{align*}
is well-defined.
%Hence we have the following morphism
%\begin{align*}
%\varphi^*_{{\cal W}_{n+i}(t)} /p^{i-1} \col &
%{\cal O}_{{{\mathfrak D}_{n+1,{\cal W}_{n+1}(t)}}}\otimes_{{\cal O}_{{\cal P}_{n+1}}}
%\Om^j_{{\cal P}_{n+1}/{\cal W}_{n+1}(\os{\circ}{s})}\\
%&\lo
%{\cal O}_{{\mathfrak D}_{n+1,{\cal W}_{n+1}(t)}}\otimes_{{\cal O}_{{\cal P}_{n+1}}}
%\Om^j_{{\cal P}_{n+1}/{\cal W}_{n+1}(\os{\circ}{s})} \quad (j\geq i-1).
%\end{align*}
Because the image of the morphism above is contained in
$p{\cal O}_{{\mathfrak D}}\otimes_{{\cal O}_{{\cal P}^{\rm ex}}}
{\Om}{}^j_{{\cal P}^{\rm ex}_{\ul{\lam}}/{\cal W}(\os{\circ}{s})}$ for the case $j=i$,
we have the following well-defined morphism (cf.~\cite[Editorial comment (5)]{hyp})
\begin{align*}
p^{-(i-1)}\varphi^{{\rm ex}*} &\col {\cal W}_n\wt{\Om}{}^i_{X_{\ul{\lam}}}
={\cal H}^i({\cal O}_{{\mathfrak D}_n}\otimes_{{\cal O}_{{\cal P}^{\rm ex}_n}}
\Om^{\bul}_{{\cal P}^{\rm ex}_{\ul{\lam},n}/{\cal W}_n(\os{\circ}{s})})
\lo
\tag{21.11.7}\label{ali:enqw}\\
& 
{\cal H}^i({\cal O}_{{\mathfrak D}_{n+1}}\otimes_{{\cal O}_{{\cal P}_{n+1}}}
\Om^{\bul}_{{\cal P}^{\rm ex}_{n+1,\ul{\lam}}/{\cal W}_{n+1}(\os{\circ}{s})})
={\cal W}_{n+1}\wt{\Om}{}^i_{X_{\ul{\lam}}} \quad (i\in {\mab N}).
\end{align*}

\begin{prop}\label{prop:nqop} 
$(1)$ The morphism $p^{-(i-1)}\varphi^{{\rm ex}*}$ in {\rm (\ref{ali:enqw})} 
is independent of the choice of 
the immersion $X\os{\sus}{\lo}{\cal P}$ and the lift $\varphi \col {\cal P}\lo {\cal P}$ 
of the Frobenius endomorphism of 
${\cal P}\times_{{\cal W}(s)}s$.
We set ${\bf p}:=p^{-(i-1)}\varphi^*\col 
{\cal W}_n\wt{\Om}{}^i_{X_{\ul{\lam}}}\lo {\cal W}_{n+1}\wt{\Om}{}^i_{X_{\ul{\lam}}}$. 
\par 
$(2)$ ${\rm Ker}({\bf p}\col {\cal W}_n\wt{\Om}{}^i_{X_{\ul{\lam}}}
\lo {\cal W}_{n+1}\wt{\Om}{}^i_{X_{\ul{\lam}}})=0$.
\par 
$(3)$ ${\rm Im}({\bf p}\col {\cal W}_n\wt{\Om}{}^i_{X_{\ul{\lam}}}\lo 
{\cal W}_{n+1}\wt{\Om}{}^i_{X_{\ul{\lam}}})=
{\rm Im}(p\col {\cal W}_{n+1}\wt{\Om}{}^i_{X_{\ul{\lam}}}\lo 
{\cal W}_{n+1}\wt{\Om}{}^i_{X_{\ul{\lam}}})$. 
\end{prop}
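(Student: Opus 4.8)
The plan is to reduce everything to the local model in which one has an explicit Frobenius lift, and then to run the three arguments in parallel with the de Rham--Witt computations of \cite{hk} and \cite{nb}, paying attention throughout to the fact that ${\cal P}^{\rm ex}_{\ul{\lam}}$ is only ideally log smooth over ${\cal W}(s)$. First I would fix, Zariski locally on $X$, an immersion $X \os{\sus}{\lo} {\cal P}$ into a formally log smooth scheme over ${\cal W}(s)$ admitting a Frobenius lift $\varphi$ as in the paragraph preceding the statement, together with the induced endomorphism ${\cal P}^{\rm ex}_{\ul{\lam}} \lo {\cal P}^{\rm ex}_{\ul{\lam}}$; such data exist after shrinking by (\ref{prop:fsi}) and (\ref{prop:adla}). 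Recall from the text that ${\cal O}_{\mathfrak D} \otimes_{{\cal O}_{{\cal P}^{\rm ex}}} \Om^j_{{\cal P}^{\rm ex}_{\ul{\lam}}/{\cal W}(\os{\circ}{s})}$ consists of flat ${\cal W}$-modules by \cite[(1.3.5), (1.7.24)]{nb}, so that division of $\varphi^{{\rm ex}*}$ by $p^{i-1}$ is unambiguous and the exact sequence (\ref{ali:bdd}) is at my disposal.

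For (1), the independence of ${\bf p}$ splits into two points. The independence of the Frobenius lift $\varphi$ (for fixed ${\cal P}$) I would prove by the usual homotopy argument: two lifts $\varphi,\varphi'$ agree modulo $p$, so the standard homotopy operator $h$ with $\varphi^{{\rm ex}*} - \varphi'^{{\rm ex}*} = dh + hd$ is divisible by the powers of $p$ needed so that $p^{-(i-1)}(\varphi^{{\rm ex}*} - \varphi'^{{\rm ex}*})$ remains an honest homotopy on $\Om^{\bullet \geq i-1}_{{\cal P}^{\rm ex}_{\ul{\lam}}/{\cal W}(\os{\circ}{s})}$; this is precisely the subtlety of \cite[Editorial comment (5)]{hyp} and \cite[(4.9)]{hk}, and it forces the two induced maps on ${\cal H}^i$ to coincide. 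The independence of the immersion I would obtain by the product trick already used for the well-definedness of ${\cal W}_n\wt{\Om}{}^i_{X_{\ul{\lam}}}$ in (\ref{ali:wnoi}) (cf.~the proof of (\ref{prop:cpp})): given $X \os{\sus}{\lo} {\cal P}$ and $X \os{\sus}{\lo} {\cal P}'$ with lifts $\varphi, \varphi'$, pass to the diagonal immersion into ${\cal P} \times_{{\cal W}(s)} {\cal P}'$ with the lift $\varphi \times \varphi'$; the two projections are compatible with the lifts, so by functoriality together with the log Poincar\'e lemma the resulting maps on ${\cal H}^i$ agree.

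For (2) and (3), I would compute in the local model. The inclusion ${\rm Im}({\bf p}) \subseteq {\rm Im}(p)$ is formal: $\varphi^{{\rm ex}*}$ is divisible by $p^i$ on $i$-forms, so $p^{-(i-1)}\varphi^{{\rm ex}*}(\Om^i) \subseteq p\,\Om^i$ and the induced map on ${\cal H}^i$ factors through multiplication by $p$ on ${\cal W}_{n+1}\wt{\Om}{}^i_{X_{\ul{\lam}}}$. For the injectivity (2) and the reverse inclusion in (3) I would use the inverse Cartier isomorphism: modulo $p$ the operator $p^{-i}\varphi^{{\rm ex}*}$ realizes the $C^{-1}$ of \cite[(7.5)]{ndw} on the graded, $\bmod\,p$ level, and this remains an isomorphism on the relevant cohomology sheaves because the Poincar\'e residue description (\ref{prop:grem})--(\ref{prop:pslt}) reduces the computation on $X_{\ul{\lam}}$ to the smooth components $\os{\circ}{X}_{\ul{\lam}\cup\ul{\mu}}$, where the classical Cartier isomorphism applies. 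Combining this with a d\'evissage on $n$ through the exact sequences (\ref{ali:bdd}) and (\ref{ali:ppkkxl}) yields both the injectivity of ${\bf p}$ and its surjectivity onto $p\,{\cal W}_{n+1}\wt{\Om}{}^i_{X_{\ul{\lam}}}$, the bookkeeping being parallel to \cite[(4.9)]{hk} and to the corresponding statement for $Y$ in \cite{nb}.

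The hard part will be the Cartier step underlying (2) and (3): one must check that $p^{-i}\varphi^{{\rm ex}*}$ induces an isomorphism on ${\cal H}^i$ despite the fact that ${\cal P}^{\rm ex}_{\ul{\lam}}$ is merely ideally log smooth, so that the singularity of the log structure along $X_{\ul{\lam}}$ does not obstruct the Cartier isomorphism. I expect this to follow from the explicit local description of $\os{\circ}{\cal P}^{\rm ex}_{\ul{\lam}}$ and of $\Om^{\bullet}_{{\cal P}^{\rm ex}_{\ul{\lam}}/{\cal W}(\os{\circ}{s})}$ obtained in the proof of (\ref{prop:pslt}), which transfers the question to the smooth $\os{\circ}{X}_{\ul{\lam}\cup\ul{\mu}}$; once that reduction is in place, the remaining manipulations with divided Frobenius and the filtration $P$ are routine, and the construction of the projection $R$ then follows by declaring $R(x)$ to be the unique $y$ with ${\bf p}(y)=px$, which exists and is unique exactly by (2) and (3).
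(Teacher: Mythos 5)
Your proposal is correct and follows essentially the same route as the paper: the paper also disposes of (1) by the product trick ${\cal P}\times_{{\cal W}(s)}{\cal P}'$ with $\varphi\times\varphi'$ (which already covers two lifts on the same ${\cal P}$, so your separate homotopy argument is superfluous), and it proves (2) and (3) by reducing to a local lift ${\cal X}/{\cal W}(s)$, checking that $(\Om^{\bul}_{{\cal X}_{\ul{\lam}}},\varphi^*)$ satisfies the axioms of formal de Rham--Witt complexes --- the only nontrivial one being the inverse Cartier isomorphism, established exactly by your residue reduction to the smooth strata $\os{\circ}{X}_{\ul{\lam}\cup\ul{\mu}}$ --- and then invoking the general injectivity and image statements of \cite[(6.8)]{ndw}, which encapsulate the Cartier-plus-d\'evissage you propose to carry out by hand.
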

\begin{proof}
(1): Let $X\os{\sus}{\lo}{\cal P}'$ be another immersion and another
lift $\varphi' \col {\cal P}'\lo {\cal P}'$ 
of the Frobenius endomorphism of ${\cal P}'\times_{{\cal W}(\os{\circ}{s})}\os{\circ}{s}$.
Then, by considering the products 
$({\cal P}^{\rm ex}\times_{{\cal W}_n(\os{\circ}{s})}{\cal P}'{}^{\rm ex})^{\rm ex}$
and $\varphi \times \varphi'$, we may assume that 
there exists the following commutative diagram  
\begin{equation*} 
\begin{CD} 
X@>{\subset}>>{\cal P}^{\rm ex}@>{\varphi}>>{\cal P}^{\rm ex}\\
@| @VVV @VVV \\
X@>{\subset}>>{\cal P}'{}^{\rm ex}@>{\varphi'}>>{\cal P}'{}^{\rm ex}. 
\end{CD}
\end{equation*} 
Hence we have the following commutative diagram 
\begin{equation*} 
\begin{CD} 
{\cal H}^i({\cal O}_{{\mathfrak D}_n}\otimes_{{\cal O}_{{\cal P}^{\rm ex}_n}}
\Om^{\bul}_{{\cal P}^{\rm ex}_n/{\cal W}_n(\os{\circ}{s})})@>{p^{-(i-1)}\varphi^{{\rm ex}*}}>>
{\cal H}^i({\cal O}_{{\mathfrak D}_{n+1}}
\otimes_{{\cal O}_{{\cal P}^{\rm ex}_{n+1}}}
\Om^{\bul}_{{\cal P}^{\rm ex}_{n+1}/{\cal W}_{n+1}(\os{\circ}{s})})\\
@A{\simeq}AA @AA{\simeq}A  \\
{\cal H}^i({\cal O}_{{\mathfrak D}'_{n}}\otimes_{{\cal O}_{{\cal P}'{}^{\rm ex}_n}}
\Om^{\bul}_{{\cal P}'{}^{\rm ex}_n/{\cal W}_n(\os{\circ}{s})})@>{p^{-(i-1)}\varphi'{}^{{\rm ex}*}}>>
{\cal H}^i({\cal O}_{{\mathfrak D}'_{n+1}}
\otimes_{{\cal O}_{{\cal P}'{}^{\rm ex}_{n+1}}}
\Om^{\bul}_{{\cal P}'{}^{\rm ex}_{n+1}/{\cal W}_{n+1}(\os{\circ}{s})}),
\end{CD}
\end{equation*} 
which shows the desired independence. 
\par 
(2): We may assume that ${\cal P}$ is a 
formal lift ${\cal X}/{\cal W}(s)$ of $X/s$. 
Then 
$(\Om^{\bul},\phi):=(\Om^{\bul}_{{\cal X}_{\ul{\lam}}/{\cal W}(\os{\circ}{s})},\varphi^*)$ 
satisfies the conditions $(6.0.1)\sim (6.0.5)$ in \cite{ndw}
(these conditions are only the abstract versions of 
necessary conditions for \cite[(8.8)]{bob}).  
That is, the following hold:
\medskip
\parno
(21.12.1) $\Om^i=0$ for $i<0$.
\medskip
\parno
(21.12.2)  $\Om^i$ 
$(\forall i \in {\mab N})$ are sheaves of $p$-torsion-free, $p$-adically 
complete ${\mab Z}_p$-modules.
\medskip
\parno
(21.12.3) $\phi(\Om^{i}) \subset 
\{\om \in p^i \Om^i~ \vert~ d\om \in p^{i+1} 
\Om^{i+1}\}$ $(\forall i\in {\mab N})$.
\medskip
\parno
(21.12.4) Set $\Om_1^{\bul}:=\Om^{\bul}/p\Om^{\bul}$. 
Then there exists an ${\mab F}_p$-linear isomorphism 
$$C^{-1} \col \Om^i_1\os{\sim}{\lo} {\cal H}^i(\Om^{\bul}_1) \quad 
(\forall i \in {\mab N}).$$
\medskip
\parno
(21.12.5) A composite morphism 
(${\rm mod}~p)\circ p^{-i}\phi 
\col \Om^{i} \lo \Om^i \lo \Om^i_1$ factors 
through ${\rm Ker}(d \col\Om^i_1 \lo \Om^{i+1}_{1})$ 
and the following diagram is commutative:
\begin{equation*}
\begin{CD}
\Om^i @>{\mod p}>> 
\Om^i_1\\ 
@V{p^{-i}\phi}VV  @VV{C^{-1}}V \\
\Om^i @>{\mod p}>> 
{\cal H}^i(\Om^{\bul}_1).
\end{CD}
\end{equation*}
The only nontrivial property is the existence of the inverse Cartier isomorphism 
$C^{-1}$ in (21.12.4). In the proof of \cite[Lemma 12]{kiha} 
\par  
Now set 
\begin{equation*}
Z^i_n:=\{\om \in \Om^i \vert~ 
d\om \in p^n\Om^{i+1}\}, \quad
B^i_n:=p^n\Om^i+ d\Om^{i-1}, \quad 
{\mathfrak W}_n{\Om}^i= Z^i_n/B^i_n. 
\tag{21.12.6}\label{eqn:pnzb}
\end{equation*}
Then ${\cal W}_n{\Om}^i_{X}= 
{\mathfrak W}_n{\Om}^i$ and 
the morphism 
${\bf p}\col {\cal W}_n{\Om}^i_{X}\lo 
{\cal W}_{n+1}{\Om}^i_{X}$ 
is equal to ${\bf p}\col {\mathfrak W}_n{\Om}^i\lo 
{\mathfrak W}_{n+1}{\Om}^i$ in \cite[p.~546]{ndw}. 
Because the latter morphism is injective (\cite[(6.8)]{ndw}), we obtain (2). 
\par 
(3): By \cite[p.~546]{ndw},  
${\rm Im}({\bf p}\col {\mathfrak W}_n{\Om}^i\lo {\mathfrak W}_{n+1}{\Om}^i)
={\rm Im}(p\col {\mathfrak W}_{n+1}{\Om}^i\lo {\mathfrak W}_{n+1}{\Om}^i)$. 
Hence we obtain (3). 
\end{proof}

%\begin{rema}\label{rema:csh}
%In \cite{hyp} Hyodo has not proved 
%the well-definedness of the morphism 
%${\bf p} \col {\cal W}_n\Om^i_{X}\lo {\cal W}_{n+1}\Om^i_{X}$ 
%(The editorial comment (5) in [loc.~cit.] is useless for the proof of the well-definedness).
%He has not proved the well-definedness of the projection  
%$\pi \col {\cal W}_{n+1}\Om^i_{X}\lo {\cal W}_n\Om^i_{X}$ either. 
%\end{rema} 

\begin{defi}[{\bf cf.~\cite[(1.3.2)]{hyp}}]\label{defi:hdyd} 
(1) We call the morphism 
${\bf p} \col {\cal W}_n\wt{\Om}{}^i_{X_{\ul{\lam}}}\lo 
{\cal W}_{n+1}\wt{\Om}{}^i_{X_{\ul{\lam}}}$ {\it Hyodo's multiplication by} $p$ on 
${\cal W}_n\wt{\Om}{}^i_{X_{\ul{\lam}}}$. 
\par 
(2) The morphism $R\col {\cal W}_{n+1}\wt{\Om}{}^i_{X_{\ul{\lam}}}\lo 
{\cal W}_n{\Om}^i_{X_{\ul{\lam}}}$ 
is, by definition, the unique morphism fitting into the following commutative diagram: 
\begin{equation*} 
\begin{CD} 
{\cal W}_{n+1}\wt{\Om}{}^i_{X_{\ul{\lam}}}
@>{R}>>{\cal W}_n\wt{\Om}{}^i_{X_{\ul{\lam}}}\\
@V{p}VV @VV{\bf p}V\\ 
{\cal W}_{n+1}\wt{\Om}{}^i_{X_{\ul{\lam}}}
@={\cal W}_{n+1}\wt{\Om}{}^i_{X_{\ul{\lam}}},  
\end{CD}
\end{equation*} 
whose existence is assured by (\ref{prop:nqop}) (2), (3). 
\end{defi}

\parno 
By the proof of (\ref{prop:nqop}) and \cite[(6.27)]{ndw},  
the morphism $R$ is equal to the following composite morphism 
in the case where $X/s$ has a local lift ${\cal X}/{\cal W}(s)$
(cf.~\cite[(4.2)]{hk}):
\begin{align*}
{\cal W}_{n+1}\wt{\Om}^i_{X_{\ul{\lam}}}= 
Z^i_{n+1}/B^i_{n+1} 
\os{\us{\sim}{p^i}}{\lo} 
p^iZ^i_{n+1}/p^iB^i_{n+1}
& \os{{\rm proj}.}{\lo} 
p^iZ^i_{n+1}/(p^{i+n}Z^i_1+p^{i-1}dZ^{i-1}_1) 
\tag{21.13.1}\label{ali:aplqcpj}\\
{} & \os{\us{\sim}{\phi}}{\longleftarrow}
Z^i_n/B^i_n={\cal W}_n\wt{\Om}^i_{X_{\ul{\lam}}}.
\end{align*}
By \cite[(6.5)]{ndw}, 
the morphism $R$ is equal to the 
following composite morphism$:$
\begin{align*}
&{\cal W}_{n+1}\wt{\Om}^i_{X_{\ul{\lam}}}  = 
Z^i_{n+1}/B^i_{n+1}
\os{{\rm proj}.}{\lo} 
Z^i_{n+1}/(p^nZ^i_1+d\Om^{i-1}) 
\tag{21.13.2}\label{ali:alocpqj}\\
& 
\os{(p^{-i}\phi)^{-1}}{\os{\sim}{\lo}}
Z^i_n/(p^n\Om^i+pd\Om^{i-1})
\os{{\rm proj}.}{\lo}
Z^i_n/B^i_n={\cal W}_n\wt{\Om}^i_{X_{\ul{\lam}}}.
\end{align*}
In particular, $R$ is surjective. By the proof of (\ref{prop:nqop}), 
$(\Om^{\bul}_{{\cal X}_{\ul{\lam}}/{\cal W}(s)},\varphi^*)$ satisfies the axioms of 
formal de Rham-Witt complexes defined in \cite[(6.1)]{ndw}. 
Hence the following diagram is commutative by 
\cite[(6.8) (4)]{ndw}: 
\begin{equation*} 
\begin{CD} 
{\cal W}_{n+1}\wt{\Om}{}^i_{X_{\ul{\lam}}}
@>{d}>>{\cal W}_{n+1}\wt{\Om}{}^{i+1}_{X_{\ul{\lam}}}\\
@V{R}VV @VV{R}V\\ 
{\cal W}_n\wt{\Om}{}^i_{X_{\ul{\lam}}}
@>{d}>>{\cal W}_n\wt{\Om}{}^{i+1}_{X_{\ul{\lam}}}.  
\end{CD}
\end{equation*}

Set 
\begin{align*} 
{\cal W}\wt{\Om}^{\bul}_{X_{\ul{\lam}}}
:=\vpl_{n}{\cal W}_n\wt{\Om}^{\bul}_{X_{\ul{\lam}}}. 
\tag{21.13.3}\label{ali:ixl}
\end{align*} 
Here the projective system is taken by the projection 
$R\col {\cal W}_n\wt{\Om}^{\bul}_{X_{\ul{\lam}}}\lo 
{\cal W}_{n-1}\wt{\Om}^{\bul}_{X_{\ul{\lam}}}$. 
In a standard way, we also have the operators 
\begin{align*} 
F\col  {\cal W}_n\wt{\Om}^i_{X_{\ul{\lam}}}\lo 
{\cal W}_{n-1}\wt{\Om}^i_{X_{\ul{\lam}}}
\end{align*}
and 
\begin{align*} 
V\col  {\cal W}_n\wt{\Om}^i_{X_{\ul{\lam}}}\lo 
{\cal W}_{n+1}\wt{\Om}^i_{X_{\ul{\lam}}}.
\end{align*}
By \cite[(6.8) (4)]{ndw} $Fd=dF$ 
and $Vd=dV$. 
We also see that 
${\cal W}\wt{\Om}^{\bul}_{X_{\ul{\lam}}}$ 
becomes a Cartan-Dieudonn\'{e}-Raynaud algebra over 
$\Gam(s,{\cal O}_s)$.

\begin{rema}\label{rema:pst}
In the last paragraph in \cite[p.~1264]{kiha} and \cite[Lemma 12]{kiha} the sheaf 
${\cal W}\wt{\Om}^i_{X_{\ul{\lam}}}$ is defined by 
the following formula 
\begin{align*}
{\cal W}\wt{\Om}^i_{X_{\ul{\lam}}}:=\vpl_n
{\cal H}^i(\Om^{\bul}_{{\cal X}_n/{\cal W}_n(\os{\circ}{s})}
\otimes_{{\cal O}_{\cal X}}{\cal O}_{{\cal X}_{\ul{\lam}}}), 
\tag{21.14.1}\label{ali:wixl}
\end{align*} 
where ${\cal X}$ is the log formal scheme in the proof of (\ref{prop:nqop}) (2).  
The reader should note that 
%$\wt{\Om}^{\bul}_{{\cal X}/{\cal W}_n(\os{\circ}{s})}
%\otimes_{{\cal O}_{\cal X}}{\cal X}_{\ul{\lam}}
%=(\vpl_n\Om^{\bul}_{{\cal X}_n/{\cal W}_n(\os{\circ}{s})})
%\otimes_{{\cal O}_{\cal X}}{\cal X}_{\ul{\lam}}$ 
%and 
the ${\cal W}\wt{\Om}^i_{X_{\ul{\lam}}}$ in (\ref{ali:ixl}) 
is different from the ${\cal W}\wt{\Om}^i_{X_{\ul{\lam}}}$ 
in (\ref{ali:wixl}): the limit  
$\vpl_n\Om^{\bul}_{{\cal X}_n/{\cal W}_n(\os{\circ}{s})}$ in (\ref{ali:wixl}) 
induces the limit taken with respect to 
the operators 
$F\col {\cal W}_n\wt{\Om}^i_{X_{\ul{\lam}}}=
{\cal H}^i(\Om^{\bul}_{{\cal X}_n/{\cal W}_n(\os{\circ}{s})})
\lo 
{\cal W}_{n-1}\wt{\Om}^i_{X_{\ul{\lam}}}
={\cal H}^i(\Om^{\bul}_{{\cal X}_{n-1}/{\cal W}_{n-1}(\os{\circ}{s})})$'s 
and not taken with respect to the operators 
$R\col {\cal W}_n\wt{\Om}^i_{X_{\ul{\lam}}}\lo 
{\cal W}_{n-1}\wt{\Om}^i_{X_{\ul{\lam}}}$'s; 
the construction of ${\cal W}\wt{\Om}^i_{X_{\ul{\lam}}}$ in [loc.~cit.] seems to be wrong.
%the projective system $\{(s(C(W_n\wt{\om}_X)),P)\}_{n=1}^{\inf}$ with respect to 
%the projection 
%$$R\col (s(C(W_{n+1}\wt{\om}_X)),P)\lo (s(C(W_n\wt{\om}_X)),P)$$ 
%and 
%they do not construct 
%$(\vpl_ns(C(W_n\wt{\om}_X)),P)$. 
%See the text for the details. 
%they have considered the projective system 
%arising from the local geometric objects in a mistaken way, that is, 
%they have considered the projective system with respect to 
%$$F\col (s(C(W_{n+1}\wt{\om}_X)),P)\lo (s(C(W_n\wt{\om}_X)),P).$$ 
\end{rema}

\begin{prop}\label{prop:pjc}
The projection $R$ defined in {\rm (\ref{defi:hdyd}) (2)}  
induces the following morphism 
\begin{align*} 
R\col P_k{\cal W}_{n+1}\wt{\Om}^i_{X_{\ul{\lam}}}\lo 
P_k{\cal W}_n\wt{\Om}^i_{X_{\ul{\lam}}} \quad (k\in {\mab N}). 
\end{align*}
\end{prop}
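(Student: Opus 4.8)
The plan is to work with the explicit local description of the projection $R$ and to verify that each elementary arrow occurring in it is compatible with the preweight filtration $P$. Since the assertion is local on $\os{\circ}{X}$, I would first reduce to the situation used throughout this section: a local immersion $X\os{\sus}{\lo}{\cal P}$ into a formally log smooth scheme over ${\cal W}(s)$ admitting a lift $\varphi\col {\cal P}\lo {\cal P}$ of the Frobenius endomorphism of ${\cal P}\times_{{\cal W}(s)}s$, with a formal lift ${\cal X}/{\cal W}(s)$ available, so that the axioms verified in the proof of (\ref{prop:nqop}) hold for $(\Om^{\bul}_{{\cal X}_{\ul{\lam}}/{\cal W}(\os{\circ}{s})},\varphi^*)$. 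In this situation $R$ is the composite (\ref{ali:alocpqj})
\[
{\cal W}_{n+1}\wt{\Om}^i_{X_{\ul{\lam}}}=Z^i_{n+1}/B^i_{n+1}
\os{{\rm proj}}{\lo}Z^i_{n+1}/(p^nZ^i_1+d\Om^{i-1})
\os{(p^{-i}\phi)^{-1}}{\lo}Z^i_n/(p^n\Om^i+pd\Om^{i-1})
\os{{\rm proj}}{\lo}Z^i_n/B^i_n,
\]
where $P$ denotes in each term the filtration induced by the preweight filtration $P_k\Om^i_{{\cal X}_{\ul{\lam}}/{\cal W}(\os{\circ}{s})}$ of (\ref{eqn:pkdefpw}). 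Because the first and third arrows are induced by the identity of $\Om^i$, they are automatically filtered; the whole statement therefore reduces to showing that the middle isomorphism $(p^{-i}\phi)^{-1}$ preserves $P$.

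Set $\psi:=p^{-i}\phi=p^{-i}\varphi^*$. I would first check that $\psi$ is filtered, i.e. $\psi(P_k\Om^i)\subseteq P_k\Om^i$. The pullback $\varphi^*$ is multiplicative, sends $d\log e_{\lam}$ to $d\log e_{\lam}+d\log u$ (a sum of a logarithmic and a regular form, using $\varphi^*(e_{\lam})=e_{\lam}u$), and sends regular forms to regular forms; consequently $\varphi^*(P_k\Om^i)\subseteq P_k\Om^i$. Combining $\varphi^*(\Om^i)\subseteq p^i\Om^i$ (the axiom recalled in the proof of (\ref{prop:nqop})) with the fact that $\Om^i_{{\cal X}_{\ul{\lam}}/{\cal W}(\os{\circ}{s})}/P_k$ is $p$-torsion free, which follows from the local freeness of the preweight graded pieces established in (\ref{prop:flt}), gives $\varphi^*(P_k\Om^i)\subseteq p^i\Om^i\cap P_k\Om^i=p^iP_k\Om^i$, so that $\psi(P_k\Om^i)\subseteq P_k\Om^i$. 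Passing to the quotients above, $\psi$ becomes a filtered isomorphism, and the remaining point is that its inverse is again filtered.

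The heart of the matter, and the step I expect to be the main obstacle, is the strictness of $\psi$ with respect to $P$, which is what guarantees that $\psi^{-1}$ preserves the filtration. I would prove strictness by showing that $\psi$ induces isomorphisms on the graded quotients ${\rm gr}^P_k$. By the Poincaré residue isomorphisms (\ref{prop:grem}), (\ref{prop:pslt}) these graded pieces are built, up to a Tate twist, out of the (non-logarithmic) de Rham complexes of the smooth strata $\os{\circ}{X}_{\ul{\lam}\cup\ul{\mu}}$, while modulo $p$ the operator $\psi$ reduces to the inverse Cartier isomorphism $C^{-1}$ of the proof of (\ref{prop:nqop}), which respects the residue decomposition and is an isomorphism on each stratum. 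The delicate bookkeeping is twofold: one must check that the Tate-twist normalizations occurring in the residue description of the graded pieces exactly compensate the powers of $p$ introduced by $p^{-i}\varphi^*$, so that the induced map on ${\rm gr}^P_k$ is, up to twist, the classical Hyodo operator on the smooth strata; and one must lift the mod-$p$ statement for $C^{-1}$ to an isomorphism at the truncated Witt level by a d\'{e}vissage on the length $n$. Once strictness is established, $R(P_k{\cal W}_{n+1}\wt{\Om}^i_{X_{\ul{\lam}}})\subseteq P_k{\cal W}_n\wt{\Om}^i_{X_{\ul{\lam}}}$ follows. Alternatively, the same strictness can be packaged through ${\bf p}$: using ${\bf p}\circ R=p$ from (\ref{defi:hdyd}) together with the injectivity of ${\bf p}$ and ${\rm Im}({\bf p})={\rm Im}(p)$ from (\ref{prop:nqop}), the desired inclusion is equivalent to the identity ${\bf p}(P_k{\cal W}_n\wt{\Om}^i_{X_{\ul{\lam}}})={\rm Im}({\bf p})\cap P_k{\cal W}_{n+1}\wt{\Om}^i_{X_{\ul{\lam}}}$, which is once more a strictness statement provable on graded pieces.
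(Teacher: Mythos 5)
Your reduction of the statement to the local formula (\ref{ali:alocpqj}) is reasonable, and the two outer arrows are indeed filtered for the image filtrations; but the entire content of the proposition is then concentrated in the claim that the middle isomorphism $\psi=(p^{-i}\phi)$ is \emph{strict}, and this is precisely the step you do not prove. Your plan is to check that $\psi$ induces isomorphisms on ${\rm gr}^P_k$, but the objects you would have to compute are the graded pieces of the \emph{image} filtrations on the auxiliary quotients $Z^i_{n+1}/(p^nZ^i_1+d\Om^{i-1})$ and $Z^i_n/(p^n\Om^i+pd\Om^{i-1})$. The residue isomorphisms (\ref{prop:grem}), (\ref{prop:pslt}) and the exact sequence (\ref{ali:ppkkxl}) describe ${\rm gr}^P$ of $P_k\Om^{\bul}$ and of $P_k{\cal W}_n\wt{\Om}^i_{X_{\ul{\lam}}}=Z^i_n\cap P_k/(p^nP_k+dP_k\Om^{i-1})$; they say nothing about these intermediate quotients, whose graded pieces involve intersections such as $(Z^i_{n+1}\cap P_k)\cap(p^nZ^i_1+d\Om^{i-1})$ --- exactly the kind of delicate intersection of boundaries with the weight filtration that is notoriously subtle in this subject (cf.\ (\ref{rema:p})). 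Neither the identification of ${\rm gr}^P_k\psi$ with a twist of the classical Hyodo operator on the strata nor the ``d\'evissage on the length $n$'' is carried out, so as written the argument has a genuine hole at its central point.

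The paper avoids strictness altogether. It runs a \emph{descending} induction on $k$: for $k\gg 0$ one has $P_k={\cal W}_{n+1}\wt{\Om}^i_{X_{\ul{\lam}}}$ and there is nothing to prove; for the inductive step, (\ref{ali:ppkkxl}) exhibits $P_{k-1}$ as the kernel of the residue map ${\rm Res}\col P_k{\cal W}_{n}\wt{\Om}^i_{X_{\ul{\lam}}}\lo \bigoplus_{\sharp\ul{\mu}=k}a_{\ul{\lam}\cup\ul{\mu}*}({\cal W}_{n}\Om^{i-k}_{\os{\circ}{X}_{\ul{\lam}\cup\ul{\mu}}}\otimes_{\mab Z}\vp_{{\rm zar},\ul{\mu}})$, and one checks (via ${\bf p}$ and $R{\bf p}={\bf p}R=p$) that $R$ commutes with ${\rm Res}$, whence $R$ maps $\ker({\rm Res})=P_{k-1}$ into $P_{k-1}$. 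Your closing alternative through ${\bf p}$ is in fact this argument in disguise: the identity ${\bf p}(P_k{\cal W}_n\wt{\Om}^i_{X_{\ul{\lam}}})={\rm Im}({\bf p})\cap P_k{\cal W}_{n+1}\wt{\Om}^i_{X_{\ul{\lam}}}$ is proved by the same descending induction using (\ref{ali:ppkkxl}) and the injectivity of the classical ${\bf p}$ on ${\cal W}_n\Om^{i-k}_{\os{\circ}{X}_{\ul{\lam}\cup\ul{\mu}}}$ --- but you again defer it to ``graded pieces'' without supplying the induction or the kernel description of $P_{k-1}$, which is the one ingredient that makes the proof go through. To repair your write-up, replace the strictness-of-$\psi$ route by this induction on $k$ based on (\ref{ali:ppkkxl}).
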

\begin{proof} 
The proof of this proposition is the same as that of \cite[(8.4) (2)]{ndw}. 
Indeed, assume that (\ref{prop:pjc}) holds for a natural number $k$. 
We proceed on descending induction on $k$. 
It is easy to prove that 
the morphism ${\bf p}\col {\cal W}_{n}\wt{\Om}^i_{X_{\ul{\lam}}}\lo 
{\cal W}_{n+1}\wt{\Om}^i_{X_{\ul{\lam}}}$ induces a morphism 
${\bf p}\col 
P_k{\cal W}_{n}\wt{\Om}^i_{X_{\ul{\lam}}}\lo 
P_k{\cal W}_{n+1}\wt{\Om}^i_{X_{\ul{\lam}}}$ and we see that 
the following diagram is commutative as in \cite[(8.4) (1)]{ndw}: 
\begin{equation*} 
\begin{CD} 
0@>>>P_{k-1}{\cal W}_{n}\wt{\Om}^i_{X_{\ul{\lam}}}
@>>>P_k{\cal W}_n\wt{\Om}^i_{X_{\ul{\lam}}} \\
@.@V{\bf p}VV @V{\bf p}VV \\ 
0@>>>P_{k-1}{\cal W}_{n+1}\wt{\Om}^i_{X_{\ul{\lam}}}
@>>>P_k{\cal W}_{n+1}\wt{\Om}^i_{X_{\ul{\lam}}} 
\end{CD}
\end{equation*} 
\begin{equation*} 
\begin{CD} 
@>>>
\bigoplus_{\# \ul{\mu}=k}a_{\ul{\lam}\cup \ul{\mu}*}
({\cal W}_n{\Om}^{i-k}_{\os{\circ}{X}_{\ul{\lam}\cup \ul{\mu}}}
\otimes_{\mab Z}\vp_{{\rm zar},\ul{\mu}}(\os{\circ}{X}/\os{\circ}{s}))@>>> 0\\
@. @V{\bf p}VV\\
@>>> \bigoplus_{\# \ul{\mu}=k}a_{\ul{\lam}\cup \ul{\mu}*}
({\cal W}_{n+1}\Om^{i-k}_{\os{\circ}{X}_{\ul{\lam}\cup \ul{\mu}}}
\otimes_{\mab Z}\vp_{{\rm zar},\ul{\mu}}(\os{\circ}{X}/\os{\circ}{s}))@>>> 0. 
\end{CD}
\end{equation*} 
Since the following diagram 
\begin{equation*} 
\begin{CD} 
P_k{\cal W}_{n+1}\wt{\Om}^i_{X_{\ul{\lam}}} @>>>
\bigoplus_{\# \ul{\mu}=k}a_{\ul{\lam}\cup \ul{\mu}*}
({\cal W}_{n+1}\Om^{i-k}_{\os{\circ}{X}_{\ul{\lam}\cup \ul{\mu}}}
\otimes_{\mab Z}\vp_{{\rm zar},\ul{\mu}}(\os{\circ}{X}/\os{\circ}{s}))\\
@V{p}VV @VV{p}V \\ 
P_k{\cal W}_{n+1}\wt{\Om}^i_{X_{\ul{\lam}}} @>>>
\bigoplus_{\# \ul{\mu}=k}a_{\ul{\lam}\cup \ul{\mu}*}
({\cal W}_{n+1}\Om^{i-k}_{\os{\circ}{X}_{\ul{\lam}\cup \ul{\mu}}}
\otimes_{\mab Z}\vp_{{\rm zar},\ul{\mu}}(\os{\circ}{X}/\os{\circ}{s})) 
\end{CD}
\end{equation*} 
is commutative, the following diagram 
\begin{equation*} 
\begin{CD} 
P_k{\cal W}_{n+1}\wt{\Om}^i_{X_{\ul{\lam}}} @>>>
\bigoplus_{\# \ul{\mu}=k}a_{\ul{\lam}\cup \ul{\mu}*}
({\cal W}_{n+1}\Om^{i-k}_{\os{\circ}{X}_{\ul{\lam}\cup \ul{\mu}}}
\otimes_{\mab Z}\vp_{{\rm zar},\ul{\mu}}(\os{\circ}{X}/\os{\circ}{s}))\\
@V{R}VV @VV{R}V \\ 
P_k{\cal W}_{n}\wt{\Om}^i_{X_{\ul{\lam}}} @>>>
\bigoplus_{\# \ul{\mu}=k}a_{\ul{\lam}\cup \ul{\mu}*}
({\cal W}_{n}\Om^{i-k}_{\os{\circ}{X}_{\ul{\lam}\cup \ul{\mu}}}
\otimes_{\mab Z}\vp_{{\rm zar},\ul{\mu}}(\os{\circ}{X}/\os{\circ}{s})) 
\end{CD}
\tag{21.15.1}\label{cd:wnpk}
\end{equation*} 
is commutative. 
By (\ref{ali:ppkkxl}) and (\ref{cd:wnpk}) we see that 
the morphism 
$R\col P_k{\cal W}_{n+1}\wt{\Om}^i_{X_{\ul{\lam}}}\lo 
P_k{\cal W}_n\wt{\Om}^i_{X_{\ul{\lam}}}$ induces 
the morphism 
$R\col P_{k-1}{\cal W}_{n+1}\wt{\Om}^i_{X_{\ul{\lam}}}\lo 
P_{k-1}{\cal W}_n\wt{\Om}^i_{X_{\ul{\lam}}}$. 
\end{proof} 

\begin{coro}\label{coro:sj}
The projection 
$R\col P_k{\cal W}_{n+1}\wt{\Om}^i_{X_{\ul{\lam}}}
\lo P_k{\cal W}_{n}\wt{\Om}^i_{X_{\ul{\lam}}}$ is surjective. 
\end{coro}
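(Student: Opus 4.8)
The plan is to prove the surjectivity by induction on the filtration level $k$, using the short exact sequence (\ref{ali:ppkkxl}) together with the compatibility of the projection $R$ with the filtration $P$ established in (\ref{prop:pjc}). Since ${\cal W}_n\wt{\Om}{}^i_{X_{\ul{\lam}}}$ and $P_k{\cal W}_n\wt{\Om}{}^i_{X_{\ul{\lam}}}$ are defined as the $i$-th cohomology sheaves of $\wt{R}u_{X_{\ul{\lam}}/{\cal W}_n(\os{\circ}{s})*}(\cdots)$ and of its $P_k$-subcomplex (see (\ref{ali:wnoi}), (\ref{ali:wnwoi})), the vanishing $P_{-1}\wt{R}u_{X_{\ul{\lam},\os{\circ}{T}_0}/\os{\circ}{T}*}(\cdots)=0$ recorded in the proof of (\ref{coro:il}), evaluated at $T={\cal W}_n(s)$, gives $P_{-1}{\cal W}_n\wt{\Om}{}^i_{X_{\ul{\lam}}}=0$ for every $n$. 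Hence the assertion holds trivially for $k<0$, which furnishes the base of the induction.

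For the inductive step I would assume that $R\colon P_{k-1}{\cal W}_{n+1}\wt{\Om}{}^i_{X_{\ul{\lam}}}\lo P_{k-1}{\cal W}_{n}\wt{\Om}{}^i_{X_{\ul{\lam}}}$ is surjective and then consider the morphism of short exact sequences obtained from (\ref{ali:ppkkxl}) at the two levels $n+1$ and $n$, with all three vertical maps given by $R$. The commutativity of the square involving $P_k$ and the graded quotient is exactly the diagram (\ref{cd:wnpk}), while the commutativity of the square involving the subobjects $P_{k-1}$ is part of (\ref{prop:pjc}). In this diagram the left vertical arrow is surjective by the inductive hypothesis, and the right vertical arrow is the map $R$ on the graded piece
\[
\bigoplus_{\sharp \ul{\mu}=k} a_{\ul{\lam}\cup \ul{\mu}*}
\bigl({\cal W}_n\Om^{i-k}_{\os{\circ}{X}_{\ul{\lam}\cup \ul{\mu}}}
\otimes_{\mab Z}\vp_{{\rm zar},\ul{\mu}}(\os{\circ}{X}/\os{\circ}{s})\bigr),
\]
that is, the classical projection $R$ of the de Rham--Witt complex of the smooth scheme $\os{\circ}{X}_{\ul{\lam}\cup \ul{\mu}}$ over the perfect field, which is surjective by the standard theory (\cite{ndw}; compare the surjectivity of $R$ recorded after (\ref{ali:alocpqj})). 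Applying the epimorphism four-lemma (the right-exact instance of the five lemma) to this morphism of exact rows then yields surjectivity of the middle vertical arrow $R\colon P_k{\cal W}_{n+1}\wt{\Om}{}^i_{X_{\ul{\lam}}}\lo P_k{\cal W}_{n}\wt{\Om}{}^i_{X_{\ul{\lam}}}$, completing the induction.

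The argument is essentially formal once (\ref{ali:ppkkxl}) and (\ref{cd:wnpk}) are in hand, so I do not expect a genuine obstacle. The single point that requires care is the identification of the right-hand vertical arrow with the classical de Rham--Witt projection $R$ of the smooth strata $\os{\circ}{X}_{\ul{\lam}\cup \ul{\mu}}$, i.e. the compatibility of the Poincar\'{e} residue isomorphism underlying (\ref{ali:ppkkxl}) with $R$ on source and target. This compatibility is precisely what is packaged in (\ref{cd:wnpk}), so it is already available, and the remaining input—surjectivity of $R$ for the de Rham--Witt complex of a smooth scheme—is classical. Thus (\ref{coro:sj}) follows from (\ref{prop:pjc}) by the indicated d\'{e}vissage.
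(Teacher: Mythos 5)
Your proof is correct and follows essentially the same route as the paper's: the short exact sequence (\ref{ali:ppkkxl}), the compatibility diagram (\ref{cd:wnpk}), and the surjectivity of the classical de Rham--Witt projection $R$ on the smooth strata $\os{\circ}{X}_{\ul{\lam}\cup\ul{\mu}}$, assembled by induction on $k$. The only difference is that you run the induction upward from the base case $P_{-1}=0$, where the right-exactness of the cokernel sequence makes the inductive step immediate, whereas the paper phrases it as a descending induction; your direction is the one that closes cleanly.
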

\begin{proof} 
Since the projections 
$R\col  {\cal W}_{n+1}\wt{\Om}^i_{X_{\ul{\lam}}}
\lo {\cal W}_{n}\wt{\Om}^i_{X_{\ul{\lam}}}$
and 
$R\col {\cal W}_{n+1}{\Om}^{i-k}_{\os{\circ}{X}_{\ul{\lam}\cup \ul{\mu}}}
\otimes_{\mab Z}\vp_{{\rm zar},\ul{\mu}}(\os{\circ}{X}/\os{\circ}{s})
\lo 
{\cal W}_{n}\Om^{i-k}_{\os{\circ}{X}_{\ul{\lam}\cup \ul{\mu}}}
\otimes_{\mab Z}\vp_{{\rm zar},\ul{\mu}}(\os{\circ}{X}/\os{\circ}{s})$ 
are surjective, descending induction tells us that 
the morphism $R\col P_k{\cal W}_{n+1}\wt{\Om}^i_{X_{\ul{\lam}}}
\lo P_k{\cal W}_{n}\wt{\Om}^i_{X_{\ul{\lam}}}$ is surjective. 
\end{proof} 

%\begin{prop}\label{prop:rd}
%Let $k$ be a positive integer or $\infty$. 
%Then the following diagram 
%\begin{equation*} 
%\begin{CD} 
%P_k{\cal W}_{n+1}{\Om}^i_{X_{\ul{\lam}}} @>{d}>>
%P_k{\cal W}_{n+1}{\Om}^{i+1}_{X_{\ul{\lam}}}\\
%@V{R}VV @VV{R}V \\ 
%P_k{\cal W}_{n}{\Om}^i_{X_{\ul{\lam}}} @>{d}>>
%P_k{\cal W}_{n}{\Om}^{i+1}_{X_{\ul{\lam}}}
%\end{CD}
%\end{equation*} 
%is commutative. 
%\end{prop}
%\begin{proof} 
%\end{proof} 

\par
Set 
\begin{align*} 
({\cal W}\wt{\Om}^{\bul}_{X_{\ul{\lam}}},P):=
\vpl_n({\cal W}_{n}\wt{\Om}^{\bul}_{X_{\ul{\lam}}},P) 
\end{align*}
and 
\begin{align*} 
(s({\cal W}\wt{\Om}^{\bul}_{X^{(\star)}}),P):=
\vpl_n(s({\cal W}_n\wt{\Om}^{\bul}_{X^{(\star)}}),P) \quad (m\in {\mab N}).
\end{align*}
By (\ref{ali:ppkkxl}) and (\ref{cd:wnpk}) we obtain the following isomorphisms:  
\begin{align*} 
{\rm gr}_k^P{\cal W}\wt{\Om}^{\bul}_{X_{\ul{\lam}}}=
\bigoplus_{\# \ul{\mu}=k}a_{\ul{\lam}\cup \ul{\mu}*}
({\cal W}\Om^{\bul}_{\os{\circ}{X}_{\ul{\lam}\cup \ul{\mu}}}[-k]
\otimes_{\mab Z}\vp_{{\rm zar},\ul{\mu}}(\os{\circ}{X}/\os{\circ}{s}))
\tag{21.16.1}\label{ali:grpwl}
\end{align*}
and 
\begin{align*} 
{\rm gr}^P_k{\cal W}\wt{\Om}{}^{\bul}_{X^{(m)}}=
\bigoplus_{\# \ul{\lam}=m+1}
\bigoplus_{\# \ul{\mu}=k}a_{\ul{\lam}\cup \ul{\mu}*}
({\cal W}\Om^{\bul}_{\os{\circ}{X}_{\ul{\lam}\cup \ul{\mu}}}[-k]
\otimes_{\mab Z}\vp_{{\rm zar},\ul{\mu}}(\os{\circ}{X}/\os{\circ}{s})). 
\tag{21.16.2}\label{ali:pkggrm}
\end{align*}

\par 
Let $E$ be a quasi-coherent crystal of ${\cal O}_{\os{\circ}{X}_n/{\cal W}(\os{\circ}{s})}$-modules. 
Set $E_n:=E_{{\cal W}_n(\os{\circ}{X})}$. 
Then we have the integrable connection 
\begin{align*}
\nabla \col E_n\lo 
E_n\otimes_{{\cal W}_n({\cal O}_X)}{\cal W}_n\wt{\Om}{}^1_{X_{\ul{\lam}}}. 
\tag{21.16.3}\label{eqn:lnmyy} 
\end{align*} 
Indeed, we have first obtained the log de Rham complex 
$E_n\otimes_{{\cal W}_n({\cal O}_X)}\Om^{\bul}_{{\cal W}_n(X)/{\cal W}_n(\os{\circ}{s}), [~]}$ 
and then we have the following natural composite morphism
\begin{align*} 
E_n\otimes_{{\cal W}_n({\cal O}_X)}\Om^{\bul}_{{\cal W}_n(X)/{\cal W}_n(\os{\circ}{s}), [~]}
&\lo E_n\otimes_{{\cal W}_n({\cal O}_X)}
\Om^{\bul}_{{\cal W}_n(X_{\ul{\lam}})/{\cal W}_n(\os{\circ}{s}), [~]}\\
&\lo 
E_n\otimes_{{\cal W}_n({\cal O}_X)}{\cal W}_n\wt{\Om}^{\bul}_{X_{\ul{\lam}}}. 
\end{align*} 
Here the last morphism is obtained by the following morphism 
\begin{align*} 
{\Om}^{\bul}_{{\cal W}_n(X_{\ul{\lam}})/{\cal W}_n(\os{\circ}{s}), [~]}
\lo {\cal W}_n\wt{\Om}^{\bul}_{X_{\ul{\lam}}}, 
\tag{21.16.4}\label{eqn:lnyy} 
\end{align*}  
which is, by definition, the induced morphism by 
$s_n(0,0)$ in (\ref{eqn:sn00}) and the following two morphisms
\begin{equation*}
s_n(1,0)' \col 
\Om^1_{{\cal W}_n(X_{\ul{\lam}})/{\cal W}_n(\os{\circ}{s}), [~]} \owns 
d(a_0, \ldots, a_{n-1}) \lom 
\left[\sum_{i=0}^{n-1}\wt{a}_i^{p^{n-i}-1}d\wt{a}_i\right]
\in {\cal H}^1(\Om^*_{{\mathfrak D}_{\ul{\lam}}/{\cal W}_n(\os{\circ}{s}),[~]}),  
\tag{21.16.5}\label{eqn:tise10}
\end{equation*}
\begin{equation*}
\iota  \col \Om^1_{{\cal W}_n(X_{\ul{\lam}})/{\cal W}_n(\os{\circ}{s}), [~]} \owns 
d\log b \lom \left[d\log \wt{b}\right]\in 
{\cal H}^1(\Om^*_{{\mathfrak D}_{\ul{\lam}}/{\cal W}_n(\os{\circ}{s}),[~]}),
\tag{21.16.6}\label{eqn:tdlogb}
\end{equation*}
where ${\mathfrak D}_{\ul{\lam}}$ is the log scheme over 
$({\cal W}_n(s),p{\cal W}_n,[~])$ defined in the proof of (\ref{prop:ab})  
and 
$\wt{b}\in M_{{\mathfrak D}_{\ul{\lam}}}$ are lifts of 
$a_i\in {\cal O}_{X_{\ul{\lam}}}$ and 
$b\in M_{X_{\ul{\lam}}}\subset M_{{\cal W}_n(X_{\ul{\lam}})}$, respectively.
By the same proof as that for the well-definedness of $s_n(0,0)$ again, 
$s_n(1,0)'$ and $\iota$ are well-defined (\cite[(4.9)]{hk}). 
In fact, we obtain the log de Rham complex 
$E_n\otimes_{{\cal W}_n({\cal O}_X)}
{\cal W}_n\wt{\Om}{}^{\bul}_{X_{\ul{\lam}}}$ as in \cite[(2.2.10)]{nb}.

\begin{theo}\label{theo:citt}
Assume that $E$ is locally free.  
Then the following hold$:$
\par 
$(1)$ There exists a canonical filtered isomorphism 
\begin{equation*}
(\wt{R}u_{X_{\ul{\lam}}/{\cal W}_n(\os{\circ}{s})*}
(\eps^*_{X_{\ul{\lam}}/{\cal W}_n(\os{\circ}{s})}(E_{\ul{\lam}})),P) 
\os{\sim}{\lo}  
(E_n\otimes_{{\cal W}_n({\cal O}_{X})}
{\cal W}_n\wt{\Om}^{\bul}_{X_{\ul{\lam}}},P).
\tag{21.17.1}\label{eqn:nrlowln}
\end{equation*}
The isomorphism $(\ref{eqn:nrlowln})$ is compatible with the projections.
\par 
$(2)$ The isomorphism {\rm (\ref{eqn:nrlowln})} is contravariantly functorial 
with respect to a morphism satisfying the conditions {\rm (8.1.6)} for the case $S=s$ and $T={\cal W}_n(s)$.   
\end{theo}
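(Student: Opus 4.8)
The plan is to realize (\ref{eqn:nrlowln}) by an explicit morphism of filtered complexes, and then to prove it is a filtered quasi-isomorphism by passing to the graded pieces of $P$, where the assertion collapses to the classical comparison between log crystalline cohomology and log de Rham--Witt cohomology of the smooth strata $\os{\circ}{X}_{\ul{\lam}\cup \ul{\mu}}$. Recall that, by definition (\ref{ali:wnoi}), the sheaf ${\cal W}_n\wt{\Om}^i_{X_{\ul{\lam}}}$ is ${\cal H}^i$ of $\wt{R}u_{X_{\ul{\lam}}/{\cal W}_n(\os{\circ}{s})*}(\cdots)$, so the content of the theorem is that the crystalline complex is quasi-isomorphic to the complex of its own cohomology sheaves equipped with the de Rham--Witt differential and twisted by $E_n$; this is the exact crystalline analogue of the fundamental comparison theorem of de Rham--Witt theory.

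First I would construct the comparison morphism locally. Choose a local immersion $X\os{\sus}{\lo}{\cal P}$ into a log smooth (formal) scheme over ${\cal W}_n(s)$ with log PD-envelope ${\mathfrak D}$, so that $\wt{R}u_{X_{\ul{\lam}}/{\cal W}_n(\os{\circ}{s})*}((\eps^*(E))_{\ul{\lam}})$ is computed by ${\cal E}\otimes_{{\cal O}_{{\cal P}^{\rm ex}}}\Om^{\bul}_{{\cal P}^{\rm ex}_{\ul{\lam}}/\os{\circ}{T}}$. Using the symbol maps $s_n(0,0)$, $s_n(1,0)'$ and $\iota$ of (\ref{eqn:sn00}), (\ref{eqn:tise10}), (\ref{eqn:tdlogb}), together with the connection (\ref{eqn:lnmyy}), I would build a morphism from (a $R\pi_{{\rm zar}*}$-globalization of) ${\cal E}\otimes \Om^{\bul}_{{\cal P}^{\rm ex}_{\ul{\lam}}/\os{\circ}{T}}$ to $E_n\otimes_{{\cal W}_n({\cal O}_X)}{\cal W}_n\wt{\Om}^{\bul}_{X_{\ul{\lam}}}$, the crystalline analogue of the Hyodo--Kato comparison. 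Independence of the choice of ${\cal P}$ is checked exactly as in (\ref{prop:cpp}) and in the construction of $d$ above (allowing not only the lift but the immersion makes this a routine product argument), so the morphism glues to a morphism in $D^+$; compatibility with $P$ is forced by the compatibility of the symbol maps with the preweight filtration on $\Om^1$, via the definitions (\ref{ali:flop}) and (\ref{ali:wpkm}).

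Next I would prove the morphism is a filtered isomorphism by reduction to ${\rm gr}^P_k$. On the source, the Poincar\'e residue isomorphism (\ref{eqn:mpprrn}) (the stratum version of (\ref{eqn:prvin})) identifies ${\rm gr}^P_k$ with $\bigoplus_{\sharp\ul{\mu}=k}a_{\ul{\lam}\cup \ul{\mu}*}Ru_{\os{\circ}{X}_{\ul{\lam}\cup \ul{\mu}}/\os{\circ}{T}*}(E\otimes_{\mab Z}\vp_{{\rm crys},\ul{\mu}})[-k]$; on the target, (\ref{ali:ppkkxl}) identifies ${\rm gr}^P_k$ with $\bigoplus_{\sharp\ul{\mu}=k}a_{\ul{\lam}\cup \ul{\mu}*}({\cal W}_n\Om^{\bul-k}_{\os{\circ}{X}_{\ul{\lam}\cup \ul{\mu}}}\otimes_{\mab Z}\vp_{{\rm zar},\ul{\mu}})$. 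Thus, summand by summand, ${\rm gr}^P_k$ of the comparison becomes the comparison between the log crystalline complex of the smooth scheme $\os{\circ}{X}_{\ul{\lam}\cup \ul{\mu}}$ (with coefficients in the restriction of $E$) and its classical log de Rham--Witt complex ${\cal W}_n\Om^{\bul}$, which is an isomorphism by the classical theorem of \cite{hk} (cf.~\cite{ndw}). Since for fixed $\ul{\lam}$ the filtration $P$ is locally finite ($P_{-1}=0$ and $P_k$ is everything once $k$ exceeds the local number of components), a morphism of filtered complexes that is a quasi-isomorphism on every ${\rm gr}^P_k$ is a filtered quasi-isomorphism, yielding (\ref{eqn:nrlowln}). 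Compatibility with the projections $R$ then follows because both the crystalline Bockstein structure and Hyodo's multiplication ${\bf p}$ (whence $R$, by (\ref{defi:hdyd})) are built from the same short exact sequence (\ref{ali:bdd}), and $R$ preserves $P$ by (\ref{prop:pjc}). For part (2), the contravariant functoriality of the source is (\ref{theo:ccm}), that of the target follows from the local functoriality of $s_n(0,0)$, $s_n(1,0)'$, $\iota$ under a morphism satisfying (8.1.6) and (8.1.7), and the pullback square is verified by reducing to local immersions with the product trick of (\ref{cd:xpnxap}).

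The hard part will be the construction of the comparison morphism \emph{with coefficients} together with the verification that it strictly respects $P$: one must carry the ${\cal W}_n({\cal O}_X)'$-module structure through the inverse Cartier isomorphism $C^{-1}$ and ensure that the symbol maps, the connection on $E_n$, and the weight filtration are all simultaneously compatible. Once the morphism is in hand, the graded-piece argument is a clean reduction to the already established smooth case, so the essential difficulty is concentrated in Step one.
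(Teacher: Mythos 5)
Your proposal follows essentially the same route as the paper: the comparison morphism is built from a (\v{C}ech simplicial) immersion via the symbol maps $s_n(0,0)$, $s_n(1,0)'$, $\iota$ and the lift ${\cal W}_n(X_{\bul})\to {\cal P}_{\bul}$ of the nilpotent immersion, and the filtered isomorphism is obtained by reducing to ${\rm gr}^P_k$ via (\ref{eqn:ele}) and (\ref{ali:ppkkxl}), where the paper invokes Etesse's comparison theorem for crystals with coefficients on the smooth strata (rather than the trivial-coefficient Hyodo--Kato theorem you cite). This matches the paper's proof in all essentials.
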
 
\begin{proof} 
(1): Let $X_{\bul}$ be the \v{C}ech diagram obtained by an affine open covering of $X$ 
such that there exists  a simplicial immersion 
$X_{\bul}\os{\sus}{\lo} \ol{\cal P}_{\bul}$ into a log smooth 
scheme over $\ol{{\cal W}_n(s)}$. 
Let $\ol{\mathfrak D}_{\bul}$ be the log PD-envelope of this immersion 
over $({\cal W}_n(\os{\circ}{s}),p{\cal W}_n,[~])$. 
Since the immersion $X_0 \os{\sus}{\lo} {\cal W}_n(X_0)$ is nilpotent
and since $\ol{\cal P}_{\bul}$ is log smooth over $\ol{{\cal W}_n(s)}$, 
we have a morphism ${\cal W}_n(X_0)\lo \ol{\cal P}_0$ extending 
the immersion $X_0\os{\sus}{\lo} \ol{\cal P}_0$. 
Hence we have a morphism ${\cal W}_n(X_{\bul})\lo \ol{\cal P}_{\bul}$ 
extending the immersion $X_{\bul}\os{\sus}{\lo} \ol{\cal P}_{\bul}$. 
This morphism induces a morphism 
${\cal W}_n(X_{\bul})\lo \ol{\cal P}{}^{\rm ex}_{\bul}$ 
extending the immersion $X_{\bul}\os{\sus}{\lo} \ol{\cal P}{}^{\rm ex}_{\bul}$. 
Set $\ol{\cal E}{}^{\bul}:=\eps^*_{X/{\cal W}_n(\os{\circ}{s})}(E)_{\ol{\mathfrak D}_{\bul}}$. 
Set ${\cal P}{}^{\rm ex}_{\bul}:=\ol{\cal P}{}^{\rm ex}_{\bul}\times_{\ol{{\cal W}_n(s)}}{\cal W}_n(s)$.  
%Then we also have a morphism 
%${\cal W}_n(X_{\bul})\lo {\cal P}_{\bul}$ extending 
%the immersion $X_{\bul}\os{\sus}{\lo} {\cal P}_{\bul}$. 
Set ${\mathfrak D}_{\bul}:=\ol{\mathfrak D}_{\bul}
\times_{\ol{{\cal W}_n(s)}}{\cal W}_n(s)$. 
Set ${\cal E}{}^{\bul}:=\ol{\cal E}{}^{\bul}
\otimes_{{\cal O}_{\ol{\mathfrak D}_{\bul}}}{\cal O}_{{\mathfrak D}_{\bul}}$. 
The morphism ${\cal W}_n(X_{\bul})\lo \ol{\cal P}{}^{\rm ex}_{\bul}$ 
induces the following morphism
\begin{align*}
{\cal E}^{\bul}\otimes_{{\cal O}_{{\cal P}^{\rm ex}_{\bul}}}
\Om^{\bul}_{{\cal P}^{\rm ex}_{\bul \ul{\lam}}/{\cal W}_n(\os{\circ}{s})} \lo 
{\cal E}^{\bul}\otimes_{{\cal O}_{{\cal P}^{\rm ex}_{\bul}}}
\Om^{\bul}_{{\cal W}_n(X_{\ul{\lam},\bul})/({\cal W}_n(\os{\circ}{s})), [~]}.
\tag{21.17.2}\label{ali:xwspp}
\end{align*}
Let 
$$\pi \col ((X_{\bul})_{\rm zar},f^{-1}_{\bul}({\cal W}_n))\lo (X_{\rm zar},f^{-1}({\cal W}_n))$$ 
be the natural morphism of ringed topoi.
The cosimplicial version of the morphism (\ref{eqn:lnyy}) and 
the morphism ${\cal W}_n(X_{\bul})\lo \ol{\cal P}{}^{\rm ex}_{\bul}$ 
induces the following morphism 
\begin{align*}
{\cal E}^{\bul}\otimes_{{\cal O}_{{\cal P}^{\rm ex}_{\bul}}}
\Om^{\bul}_{{\cal W}_n(X_{\ul{\lam},\bul})/({\cal W}_n(\os{\circ}{s})), [~]}
\lo E^{\bul}_n\otimes_{{\cal O}_{{\cal P}^{\rm ex}_{\bul}}}
\pi^{-1}({\cal W}_n\wt{\Om}{}^{\bul}_{X_{\ul{\lam}}}).
\tag{21.17.3}\label{ali:xwkap}
\end{align*}
By (\ref{ali:xwspp}) and (\ref{ali:xwkap}) 
we have the following composite morphism
\begin{align*}
{\cal E}^{\bul}\otimes_{{\cal O}_{{\cal P}^{\rm ex}_{\bul}}}
\Om^{\bul}_{{\cal P}^{\rm ex}_{\bul \ul{\lam}}/{\cal W}_n(\os{\circ}{s})} \lo 
{\cal E}^{\bul}\otimes_{{\cal O}_{{\cal P}^{\rm ex}_{\bul}}}
\Om^{\bul}_{{\cal W}_n(X_{\ul{\lam},\bul})/{\cal W}_n(\os{\circ}{s}), [~]}
\lo E^{\bul}_n\otimes_{{\cal O}_{{\cal P}^{\rm ex}_{\bul}}}
\pi^{-1}({\cal W}_n\wt{\Om}{}^{\bul}_{X_{\ul{\lam}}}).
\tag{21.17.4}\label{ali:xwkabpp}
\end{align*}
This composite morphism preserves $P$'s as in \cite[(2.3.18.3)]{nb}. 
Hence we have the following morphism 
\begin{align*}
({\cal E}^{\bul}\otimes_{{\cal O}_{{\cal P}^{\rm ex}_{\bul}}}
\Om^{\bul}_{{\cal P}^{\rm ex}_{\bul \ul{\lam}}/{\cal W}_n(\os{\circ}{s})},P) \lo 
(E^{\bul}_n\otimes_{{\cal O}_{{\cal P}^{\rm ex}_{\bul}}}
\pi^{-1}({\cal W}_n\wt{\Om}{}^{\bul}_{X_{\ul{\lam}}}),P).
\tag{21.17.5}\label{ali:pwkpp}
\end{align*}
By the cohomological descent for a bounded below complex, 
we have the morphism (\ref{eqn:nrlowln}). 
It is easy to check that this morphism 
is independent of the choice of 
the simplicial immersion  $X_{\bul}\os{\sus}{\lo} \ol{\cal P}_{\bul}$.  
\par
We claim that the morphism (\ref{eqn:nrlowln}) is a filtered isomorphism.  
This is a local problem on $X_{\ul{\lam},{\rm zar}}$. Hence we may assume that 
the filtrations $P$'s on the sources and the targets of (\ref{eqn:nrlowln}) 
are finite. It suffices to prove that 
the morphism 
\begin{equation*}
{\rm gr}_k^P
(\wt{R}u_{X_{\ul{\lam}}/{\cal W}_n(\os{\circ}{s})*}
(\eps^*_{X_{\ul{\lam}}/{\cal W}_n(\os{\circ}{s})}(E_{\ul{\lam}})),P) \lo 
{\rm gr}_k^P
(E_n\otimes_{{\cal W}_n({\cal O}_{X})}
{\cal W}_n\wt{\Om}^{\bul}_{X_{\ul{\lam}}}).
\tag{21.17.6}\label{eqn:nroowln}
\end{equation*}
is an isomorphism. 
The source of (\ref{eqn:nroowln}) is equal to 
\begin{align*}
%\tag{6.19.2}\label{eqn:ele}
\bigoplus_{\# \ul{\mu}=k} 
a_{\ul{\lam}\cup \ul{\mu}*}
Ru_{\os{\circ}{X}_{\ul{\lam}\cup \ul{\mu}}
/{\cal W}_n(\os{\circ}{s})}
(E_{\os{\circ}{X}_{\ul{\lam}\cup \ul{\mu}}
/{\cal W}_n(\os{\circ}{s})}
\otimes_{\mab Z}
\vp_{{\rm crys},\ul{\mu}}(\os{\circ}{X}/{\cal W}_n(\os{\circ}{s})))[-k]. 
\end{align*} 
by (\ref{eqn:ele}). 
On the other hand the target of (\ref{eqn:nroowln}) is equal to 
\begin{align*} 
&\bigoplus_{\# \ul{\mu}=k}
E_n\otimes_{{\cal W}_n({\cal O}_{X})}a_{\ul{\lam}\cup \ul{\mu}*}
({\cal W}_n\Om^{\bul}_{\os{\circ}{X}_{\ul{\lam}\cup \ul{\mu}}}[-k]
\otimes_{\mab Z}\vp_{{\rm zar},\ul{\mu}}(\os{\circ}{X}/\os{\circ}{s}))\\
&=
\bigoplus_{\# \ul{\mu}=k}
a_{\ul{\lam}\cup \ul{\mu}*}(E_n\vert_{\os{\circ}{X}_{\ul{\lam}\cup \ul{\mu}}}
\otimes_{{\cal W}_n({\cal O}_{\os{\circ}{X}_{\ul{\lam}\cup \ul{\mu}}})}
{\cal W}_n\Om^{\bul}_{\os{\circ}{X}_{\ul{\lam}\cup \ul{\mu}}}[-k]
\otimes_{\mab Z}\vp_{{\rm zar},\ul{\mu}}(\os{\circ}{X}/\os{\circ}{s}))
%\tag{21.11.1}\label{ali:ppkkxl}
\end{align*} 
by (\ref{ali:ppkkxl}). 
Hence (\ref{eqn:nroowln}) is an isomorphism by 
\cite[III (1.5)]{ir} and Etesse's comparison theorem 
\cite[II (2.1)]{et}.  
\par 
The compatibility of the isomorphism (\ref{eqn:nrlowln})
with the projections follows from the proof of \cite[(7.18)]{ndw}.
\par 
(2): Using the argument after (\ref{prop:indu}),  we can prove the functoriality 
without difficulty. We leave the detail of the proof to the reader. 
\par 
We finish the proof.
\end{proof}

\begin{coro}\label{cobro}
$(1)$ 
Let $s(E_n\otimes_{{\cal W}_n({\cal O}_{X})}
{\cal W}_n\wt{\Om}^{\bul}_{X^{(\bul)}})$ be the single complex of the double complex 
$(\cdots \lo E_n\otimes_{{\cal W}_n({\cal O}_{X})}
{\cal W}_n\wt{\Om}^{\bul}_{X^{(m)}}\lo \cdots)_{m\in {\mab N}}$, 
where $E_n\otimes_{{\cal W}_n({\cal O}_{X})}
{\cal W}_n\wt{\Om}^{\bul}_{X^{(m)}}:=\bigoplus_{\# \ul{\lam}=m+1}
E_n\otimes_{{\cal W}_n({\cal O}_{X})}
{\cal W}_n\wt{\Om}^{\bul}_{X_{\ul{\lam}}}$. 
Then there exists a canonical filtered isomorphism 
\begin{equation*}
(\wt{R}u_{X^{(\star)}/{\cal W}_n(\os{\circ}{s})*}
(\eps^*_{X^{(\star)}/{\cal W}_n(\os{\circ}{s})}(E)),P) 
\os{\sim}{\lo}  (s(E_n\otimes_{{\cal W}_n({\cal O}_{X})}
{\cal W}_n\wt{\Om}^{\bul}_{X^{(\bul)}}),P).
\tag{21.18.1}\label{eqn:nrlsln}
\end{equation*}
The isomorphism is compatible with the projections.
\par 
$(2)$ The isomorphism {\rm (\ref{eqn:nrlsln})} is contravariantly functorial 
with respect to a morphism satisfying the conditions 
{\rm (8.1.6)} for the case $S=s$ and $T={\cal W}_n(s)$.   
\par
$(3)$ Let 
$$
\wt{R}u_{X/{\cal W}_n(\os{\circ}{s})*}
(\eps^*_{X/{\cal W}_n(\os{\circ}{s})}(E))
\os{\sim}{\lo} 
\wt{R}u_{X^{(\star)}/{\cal W}_n(\os{\circ}{s})*}
(\eps^*_{X^{(\star)}/{\cal W}_n(\os{\circ}{s})}(E^{(\star)}))
$$ be the isomorphism {\rm (\ref{eqn:e})} in the case $S=s$ and $T={\cal W}_n(s)$. 
Then the natural morphism 
\begin{equation*}
E_n\otimes_{{\cal W}_n({\cal O}_X)}{\cal W}_n\wt{\Om}{}^{\bul}_X 
\lo  
s(E_n\otimes_{{\cal W}_n({\cal O}_X)}{\cal W}_n\wt{\Om}{}^{\bul}_{X^{(\bul)}})
%Ru_{X_{\os{\circ}{T}_0}/S(T)^{\nat}*}
%(\eps^*_{X_{\os{\circ}{T}_0}/S(T)^{\nat}}(E)) \os{\sim}{\lo} 
%Ru_{X^{(\star)}_{\os{\circ}{T}_0}/S(T)^{\nat}*}
%(\eps^*_{X^{(\star)}_{\os{\circ}{T}_0}/S(T)^{\nat}}(E^{(\star)})). 
%\tag{6.9.1}\label{eqn:eetxte}
\tag{21.18.2}\label{eqn:exwte}
\end{equation*} 
in $C^+(f^{-1}({\cal W}_n))$ 
fits into the following commutative diagram
\begin{equation*}
\begin{CD} 
\wt{R}u_{X/{\cal W}_n(\os{\circ}{s})*}
(\eps^*_{X/{\cal W}_n(\os{\circ}{s})}(E))
@>{\sim}>> 
\wt{R}u_{X^{(\star)}/{\cal W}_n(\os{\circ}{s})*}(\eps^*_{X^{(\star)}/{\cal W}_n(\os{\circ}{s})}
(E^{(\star)}))\\
@V{\simeq}V{{\rm (\ref{eqn:nrlowln})}{\rm ~for~the~case}~\ul{\lam}=\emptyset}V 
@V{{\rm (\ref{eqn:nrlsln})}}V{\simeq}V \\
E_n\otimes_{{\cal W}_n({\cal O}_X)}{\cal W}_n\wt{\Om}{}^{\bul}_X 
@>>>
s(E_n\otimes_{{\cal W}_n({\cal O}_X)}{\cal W}_n\wt{\Om}{}^{\bul}_{X^{(\bul)}})
\end{CD} 
\tag{21.18.3}\label{eqn:eweate}
\end{equation*} 
in $D^+(f^{-1}({\cal W}_n)$.  
Here the upper isomorphism in {\rm (\ref{eqn:eweate})} is the isomorphism 
%\begin{equation*}
%\wt{R}u_{X_{\os{\circ}{T}_0}/\os{\circ}{T}*}
%(\eps^*_{X_{\os{\circ}{T}_0}/\os{\circ}{T}} (E)) \os{\sim}{\lo} 
%\wt{R}u_{X^{(\star)}_{\os{\circ}{T}_0}/\os{\circ}{T}*}
%(\eps^*_{X^{(\star)}_{\os{\circ}{T}_0}/\os{\circ}{T}} (E^{(\star)})). 
%\tag{6.9.2}\label{eqn:e}
{\rm (\ref{eqn:e})} in the special case. 
Consequently the morphism {\rm (\ref{eqn:exwte})} is a quasi-isomorphism. 
\end{coro}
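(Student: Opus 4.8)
The plan is to deduce all three parts of \ref{cobro} from the per-stratum comparison \ref{theo:citt} by assembling over the semi-cosimplicial scheme $X^{(\star)}=\{X^{(m)}\}_{m\in {\mab N}}$. First I would record, for each $m$, the filtered isomorphism obtained from (\ref{eqn:nrlowln}) by summing over the subsets $\ul{\lam}$ with $\sharp \ul{\lam}=m+1$:
\begin{equation*}
(\wt{R}u_{X^{(m)}/{\cal W}_n(\os{\circ}{s})*}
(\eps^*_{X^{(m)}/{\cal W}_n(\os{\circ}{s})}(E)),P)
\os{\sim}{\lo}
(E_n\otimes_{{\cal W}_n({\cal O}_X)}{\cal W}_n\wt{\Om}^{\bul}_{X^{(m)}},P).
\end{equation*}
Granting that these fit together into a morphism of semi-cosimplicial filtered complexes, I would then pass to single complexes equipped with the diagonal filtration $\del(L,P)$ (defined degreewise, as in \S\ref{sec:pwf} on the crystalline side and in (\ref{ali:pkm}) on the de Rham--Witt side) to obtain (\ref{eqn:nrlsln}). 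It is a filtered isomorphism because on each $\del(L,P)$-graded piece it restricts to a direct sum of the filtered isomorphisms (\ref{eqn:nrlowln}) (compare the graded descriptions (\ref{ali:grc}) and (\ref{ali:pkpm})), and the compatibility with the projections $R$ is inherited termwise from (\ref{theo:citt}) (1) together with (\ref{prop:pjc}).

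The central point, and the one I expect to be the main obstacle, is precisely the assembly step: one must check that the per-stratum isomorphisms are compatible with the Čech coface maps $\iota^{j*}_{\ul{\lam}}$ of (\ref{ali:ulm}) induced by the closed immersions $X_{\ul{\lam}}\os{\sus}{\lo} X_{\ul{\lam}_j}$. Since each such immersion trivially satisfies the conditions (8.1.6) and (8.1.7) for the case $S=s$, $T={\cal W}_n(s)$, the contravariant functoriality (\ref{theo:citt}) (2) of (\ref{eqn:nrlowln}) supplies the required commutativity square for every coface map. The genuinely delicate bookkeeping is matching the signs of the boundary morphisms of the two double complexes --- those of (\ref{cd:pppoex}) on the crystalline side and the de Rham--Witt analogue built from ${\cal W}_n\wt{\Om}^{\bul}_{X^{(\bul)}}$ --- which I would handle by the row/column sign conventions of \cite{nh3} already used throughout this article.

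For part (2) I would combine the functoriality (\ref{theo:citt}) (2) with the contravariant functoriality of the semi-cosimplicial iso-zariskian crystalline complex already recorded in (\ref{theo:ccm}) (3). For a morphism $g$ satisfying (8.1.6) and (8.1.7), the two functorialities are compatible because both are induced by the same underlying maps of smooth components, so the naturality square for (\ref{eqn:nrlsln}) is obtained by assembling the naturality squares for (\ref{eqn:nrlowln}) over all $\ul{\lam}$.

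For part (3) the diagram (\ref{eqn:eweate}) has three sides already known to be isomorphisms: the top is (\ref{eqn:e}) from (\ref{prop:naqi}) (2), and the two vertical maps are (\ref{eqn:nrlowln}) for $\ul{\lam}=\emptyset$ and (\ref{eqn:nrlsln}), respectively. I would verify its commutativity from the local descriptions of all four morphisms: the crystalline augmentation $\wt{R}u_{X/\cdots}\lo \wt{R}u_{X^{(\star)}/\cdots}$ and the de Rham--Witt augmentation (\ref{eqn:exwte}) are both induced by the structural morphism $X^{(0)}\lo X$ and the inclusions of strata, so the square commutes on the level of the explicit complexes built from ${\cal P}^{{\rm ex},(\bul)}$ and from ${\cal W}_n\wt{\Om}^{\bul}_{X^{(\bul)}}$. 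Once commutativity is in hand, the two-out-of-three property forces the bottom morphism (\ref{eqn:exwte}) to be a quasi-isomorphism, which is the final assertion.
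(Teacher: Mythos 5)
Your overall strategy (per-stratum comparison, passage to single complexes with the diagonal filtration, verification on graded pieces, and part (3) by commutativity plus two-out-of-three) is the same as the paper's, which simply constructs the morphism (21.17.1) ``as in the construction of (21.16.1)'' and then quotes (\ref{theo:citt}). However, the mechanism you propose for the one step you yourself single out as critical --- assembling the stratum-wise isomorphisms into a morphism of semi-cosimplicial filtered complexes --- does not work as written. First, the \v{C}ech coface maps $\iota_{\ul{\lam}}^{\ul{\lam}_j}\col X_{\ul{\lam}}\os{\sus}{\lo}X_{\ul{\lam}_j}$ are closed immersions between strata of the \emph{single} SNCL scheme $X$; they are not morphisms of SNCL schemes over families of log points, and the conditions (8.1.6) and (8.2.5) (matching of smooth components, $g^*(y_{\lam})=x_{\lam}^{e_{\lam}}$) are not even formulated for them, so the functoriality statement (\ref{theo:citt}) (2) does not apply to these maps. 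Second, even granting a commuting naturality square in ${\rm D}^+{\rm F}(f^{-1}({\cal W}_n))$ for each coface map, a family of derived-category squares does not assemble into a morphism of total complexes: totalizations are not functorial in the derived category, so the source of (21.17.1) would not even receive a well-defined map by this route.

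The correct assembly is chain-level and is already implicit in the proof of (\ref{theo:citt}): the comparison morphism (21.16.4) for every $\ul{\lam}$ is induced by the one morphism ${\cal W}_n(X_{\bul})\lo \ol{\cal P}_{\bul}$ together with $s_n(0,0)$, $s_n(1,0)'$ and $\iota$, restricted to the closed subschemes ${\cal P}_{\ul{\lam},\bul}$, resp.\ ${\cal W}_n(X_{\ul{\lam},\bul})$. Since the coface maps on both sides are restrictions of this single morphism along the inclusions of the ${\cal P}_{\ul{\lam},\bul}$'s, the chain-level comparison morphisms commute strictly with them, hence define a morphism of double complexes and therefore of single complexes respecting the diagonal filtrations; applying $R\pi_{{\rm zar}*}$ yields (21.17.1). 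Once this is in place, the filtered-isomorphism property and the compatibility with the projections do follow, as you say, from (\ref{theo:citt}) together with the graded computations (\ref{ali:grc}) and (\ref{ali:pkpm}), and your treatment of part (3) is then fine.
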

\begin{proof} 
We can construct the morphism (\ref{eqn:nrlsln}) as in the construction of 
the isomorphism (\ref{eqn:nrlowln}). 
Now (\ref{cobro}) immediately follows from (\ref{theo:citt}). 
\end{proof}

%Let us denote by $\theta_n\in {\cal W}_n\wt{\Om}^1_{X_{\ul{\lam}}}$ 
%the image of $\theta_n\in {\cal W}_n\wt{\Om}^1_{X}$ by 
%the natural morphism 
%${\cal W}_n\wt{\Om}^1_{X}\lo {\cal W}_n\wt{\Om}^1_{X_{\ul{\lam}}}$. 
%Consider the following ${\cal W}_n$-linear morphism 
%$${\cal W}_nu\owns u\lom \theta_n\in {\cal W}_n\wt{\Om}^1_{X_{\ul{\lam}}}.$$ 
%Then we can consider the Hirsch extension
%${\cal W}_n\wt{\Om}^{\bul}_{X_{\ul{\lam}}}\langle u \rangle$. 
%This has a filtration by the following formula: 
%\begin{align*} 
%P_k{\cal W}\wt{\Om}^{\bul}_{X_{\ul{\lam}}}\langle u \rangle.
%\end{align*} 
%\begin{proof} 
%Let the notations be as in the proof of {\rm (\ref{prop:nqop})}. 
%Let $\tau_n$ be a local section of $M_{{\cal P}_n}$ such that 
%the image of $\tau_n$ in 
%$M_{{\cal P}_n}/{\cal O}^*_{{\cal P}_n}$ 
%is the image of a local section of $M_{{\cal W}_n(s)}$ 
%whose image in $M_{{\cal W}_n(s)}/
%{\cal O}_{{\cal W}_n(s)}^*={\mab N}$ 
%is the generator of $M_{{\cal W}_n(s)}/{\cal O}_{{\cal W}_n(s)}^*$. 
%Then $\varphi(\tau_{n+1}) \col =\tau_{n+1}^p$ since 
%$\varphi(\tau_{n+1})=u\tau_{n+1}^p$ 
%for some $u\in {\cal O}_{{\cal P}_n}^*$.  
%Hence ${\bf p}(\theta_n)=p\theta_{n+1}$. 
%Since ${\bf p}R=p$ and ${\bf p}$ is injective, $R(\theta_{n+1})=\theta_n$. 
%\end{proof} 
Let $R\col {\cal W}_{n+1}\wt{\Om}^1_{X_{\ul{\lam}}}\lo 
{\cal W}_n\wt{\Om}^1_{X_{\ul{\lam}}}$ be the projection. 
By (\ref{prop:t}),  $R(\theta_{\ul{\lam},n+1})=\theta_{\ul{\lam},n}$. 
Set 
\begin{align*} 
\theta:=\vpl_n \theta_n\in {\rm Ker}({\cal W}\wt{\Om}^1_{X_{\ul{\lam}}}\lo 
{\cal W}\wt{\Om}^2_{X_{\ul{\lam}}}).
\end{align*} 
Then we can consider the following Hirsch extensions
\begin{align*} 
({\cal W}\wt{\Om}^{\bul}_{X_{\ul{\lam}}}\langle u \rangle,P):=
\vpl_n({\cal W}_{n}\wt{\Om}^{\bul}_{X_{\ul{\lam}}}\langle u \rangle,P) 
\end{align*}
and the following filtered complex 
\begin{align*} 
({\cal W}\wt{\Om}^{\bul}_{X^{(*)}}\langle u \rangle,P):=
(s(a^{(\bul)}_*({\cal W}\wt{\Om}^{\bul}_{X^{(\bul)}}\langle u \rangle)),P):=
\vpl_n(s(a^{(\bul)}_*({\cal W}_n\wt{\Om}{}^{\bul}_{X^{(\bul)}}\langle u \rangle)),P) 
\quad (m\in {\mab N}).
\end{align*}
By (\ref{ali:grpwl}) and (\ref{ali:pkggrm}),  
\begin{align*} 
{\rm gr}_k^P
a_{\ul{\lam}*}({\cal W}\wt{\Om}^{\bul}_{X_{\ul{\lam}}}\langle u \rangle)=
\bigoplus_{j=0}^{\inf}\bigoplus_{\# \ul{\mu}=k-2j}
{\cal W}u^{[j]}\otimes_{\cal W}
a_{\ul{\lam}\cup \ul{\mu}*}
({\cal W}\Om^{\bul -k}_{\os{\circ}{X}_{\ul{\lam}\cup \ul{\mu}}}
\otimes_{\mab Z}\vp_{{\rm zar},\ul{\mu}}(\os{\circ}{X}/\os{\circ}{s}))
\tag{21.18.4}\label{ali:grpmwl}
\end{align*}
and 
\begin{align*} 
{\rm gr}^P_k{\cal W}\wt{\Om}^{\bul}_{X^{(*)}}\langle u \rangle=
\bigoplus_{j=0}^{\inf}\bigoplus_{m=0}^{\inf}
\bigoplus_{\# \ul{\lam}=m}\bigoplus_{\# \ul{\mu}=k+m-2j}
a_{\ul{\lam}\cup \ul{\mu}*}
({\cal W}\Om^{\bul -k}_{\os{\circ}{X}_{\ul{\lam}\cup \ul{\mu}}}
\otimes_{\mab Z}\vp_{{\rm zar},\ul{\mu}}(\os{\circ}{X}/\os{\circ}{s})). 
\tag{21.18.5}\label{ali:pkgmrm}
\end{align*}

\begin{defi}
Set 
\begin{align*} 
({\cal W}_nH_X(E), P):=
(s(E_n\otimes_{{\cal W}_n({\cal O}_X)}
{\cal W}_n\wt{\Om}{}^{\bul}_{X^{(\bul)}}\langle u \rangle), P)
\end{align*}  
and 
\begin{align*} 
({\cal W}H_X(E), P):=
\vpl_n({\cal W}_nH_X(E), P). 
\end{align*} 
We call $({\cal W}H_X(E), P)$ {\it Kim-Hain's filtered complex} of $E$. 
When $E$ is trivial, we call 
$({\cal W}H_X(E), P)$ {\it Kim-Hain's filtered complex} of $X/{\cal W}$ 
and we denote it by  $({\cal W}H_X, P)$. 
\end{defi}

\par 
Next we define ${\cal W}\Om^{\bul}_{X_{\ul{\lam}}}$ 
which has not been defined in \cite{kiha}.  
\par 
Let $i$ be a nonnegative integer and let $n$ be a positive integer. 
Consider a local immersion 
$X\os{\sus}{\lo} {\cal P}$ as in the proof of (\ref{prop:cpp}). 
Let us recall the complex (\ref{eqn:lbid}) in the case of the trivial coefficient and 
set 
\begin{align*} 
{\cal W}_n{\Om}{}^i_{X_{\ul{\lam}}}:=
{\cal H}^i(Ru_{X_{\ul{\lam}}/{\cal W}_n(s)*}
({\cal O}_{X_{\ul{\lam}}/{\cal W}_n(s)})).
\end{align*} 
The wedge product of $\theta_{\ul{\lam}}\in {\cal W}_n\wt{\Om}{}^1_{X_{\ul{\lam}}}$ 
induces the following morphism 
\begin{align*} 
\theta_{\ul{\lam}}\wedge \col {\cal W}_n\Om^{i-1}_{X_{\ul{\lam}}} 
\lo {\cal W}_n\wt{\Om}^{i}_{X_{\ul{\lam}}} \quad (i\in {\mab N}). 
\end{align*}

\begin{prop}\label{prop:em}
%The abelian sheaf ${\cal W}_n\Om^{\bul}_{X_{\ul{\lam}}}$ is 
%independent of the choice of 
%the immersion $X\os{\sus}{\lo} {\cal P}$ and this sheaf  
The abelian sheaf ${\cal W}_n\Om^i_{X_{\ul{\lam}}}$ $(i\in {\mab N})$ 
fits into the following exact sequence$:$ 
\begin{align*} 
0\lo {\cal W}_n\Om^{i-1}_{X_{\ul{\lam}}} 
\lo {\cal W}_n\wt{\Om}^{i}_{X_{\ul{\lam}}} \lo 
{\cal W}_n\Om^{i}_{X_{\ul{\lam}}} \lo 0. 
\tag{21.20.1}\label{ali:wnii}
\end{align*} 
\end{prop}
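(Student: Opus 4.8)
The plan is to treat ${\cal W}_n\Om^{\bul}_{X_{\ul{\lam}}}$ as the exact analogue of ${\cal W}_n\wt{\Om}^{\bul}_{X_{\ul{\lam}}}$ in which the underlying formal base $\os{\circ}{T}={\cal W}_n(\os{\circ}{s})$ is replaced by the full canonical lift $S(T)^{\nat}={\cal W}_n(s)$, and to deduce both assertions from the results of \S\ref{sec:ldfc} specialized to the base $(T,{\cal J},\del)=({\cal W}_n(s),p{\cal W}_n,[~])$ with $S=s$. Under this specialization one has $\os{\circ}{T}={\cal W}_n$ and $S(T)^{\nat}={\cal W}_n(s)$, so that for a local immersion $X\os{\sus}{\lo}{\cal P}$ into a log smooth scheme over ${\cal W}_n(s)$ with log PD-envelope ${\mathfrak D}$, the complex ${\cal O}_{\mathfrak D}\otimes_{{\cal O}_{{\cal P}^{\rm ex}}}\Om^{\bul}_{{\cal P}^{\rm ex}_{\ul{\lam}}/\os{\circ}{T}}$ computes ${\cal W}_n\wt{\Om}^{\bul}_{X_{\ul{\lam}}}$ and ${\cal O}_{\mathfrak D}\otimes_{{\cal O}_{{\cal P}^{\rm ex}}}\Om^{\bul}_{{\cal P}^{\rm ex}_{\ul{\lam}}/S(T)^{\nat}}$ computes ${\cal W}_n\Om^{\bul}_{X_{\ul{\lam}}}$, by the two definitions recalled in this section. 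The trivial coefficient ${\cal O}_{X_{\ul{\lam}}/{\cal W}_n(s)}$ is locally nilpotent with no poles along $S(T)^{\nat}$ (cf.\ the Example after (\ref{prop:dpt}), and as is used throughout \S\ref{sec:psc}), so the theorems of \S\ref{sec:ldfc} are available in this case.

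First I would establish independence of the immersion. This is a verbatim repetition of the product-of-immersions argument already carried out for ${\cal W}_n\wt{\Om}^{\bul}_{X_{\ul{\lam}}}$ and ${\cal W}_n({\cal O}_{X_{\ul{\lam}}})$ in (\ref{prop:cpp}) and (\ref{prop:ab}): given a second immersion $X\os{\sus}{\lo}{\cal P}'$, one passes to ${\cal P}\times_{{\cal W}_n(s)}{\cal P}'$ to reduce to the case of a morphism ${\cal P}\lo{\cal P}'$ over ${\cal W}_n(s)$, and the induced morphism of relative log de Rham complexes of the exactified PD-envelopes yields, after applying ${\cal H}^i$, a canonical isomorphism. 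The flatness of ${\cal O}_{\mathfrak D}\otimes_{{\cal O}_{{\cal P}^{\rm ex}}}\Om^i_{{\cal P}^{\rm ex}_{\ul{\lam}}/{\cal W}_n(s)}$ over ${\cal W}_n$ (as in (\ref{prop:flt})) together with the log Poincar\'{e} lemma guarantees the comparison is an isomorphism, and the usual cocycle computation makes it canonical and transitive.

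Next I would produce the exact sequence (\ref{ali:wnii}). At the chain level this is precisely the short exact sequence (\ref{ali:agaxd}) for the trivial crystal ${\cal E}={\cal O}_{\mathfrak D}$ in the present special case, whose connecting map is wedging with $d\log\tau$; under the identification $\theta_n=[d\log\tau]$ this becomes $\theta_n\wedge$. Applying ${\cal H}^i$ produces a long exact sequence, and the content of the proof of (\ref{theo:qii})---made explicit in the sequence (\ref{ali:ppxhd}) established there for ${\cal P}^{\rm ex}_{\ul{\lam}}$, and used already in (\ref{theo:sih})---shows that $\theta_n\wedge$ is injective on cohomology, so the long exact sequence breaks up into
\begin{align*}
0\lo {\cal W}_n\Om^{i-1}_{X_{\ul{\lam}}}\os{\theta_n\wedge}{\lo}{\cal W}_n\wt{\Om}^{i}_{X_{\ul{\lam}}}\lo {\cal W}_n\Om^{i}_{X_{\ul{\lam}}}\lo 0,
\end{align*}
which is the asserted sequence. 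I expect no deep obstacle here: the only genuine work is bookkeeping, namely checking that the specialization $S=s$, $T={\cal W}_n(s)$ really yields $\os{\circ}{T}={\cal W}_n$ and $S(T)^{\nat}={\cal W}_n(s)$ so that the two relative de Rham complexes are the correct ones, verifying the no-poles and local-nilpotence hypotheses of (\ref{theo:qii})/(\ref{theo:sih}) for the trivial coefficient, and matching the connecting morphism of (\ref{ali:agaxd}) with $\theta_n\wedge$. The one nontrivial input, the injectivity of $\theta_n\wedge$ on cohomology, is already supplied by the proof of (\ref{theo:qii}), which is exactly the mechanism by which (\ref{theo:khr}) treats the case $\ul{\lam}=\emptyset$; the present statement is its $\ul{\lam}\not=\emptyset$ counterpart.
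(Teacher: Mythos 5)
Your derivation of the exact sequence (\ref{ali:wnii}) is exactly the paper's argument: restrict the locally split sequence (\ref{ali:gsflaxd}) to ${\cal P}^{\rm ex}_{\ul{\lam}}$, apply ${\cal H}^i$, and invoke the same mechanism as the proof of (\ref{theo:qii}) to see that $d\log\tau\wedge$ is injective on cohomology, so the long exact sequence splits into the short exact sequences (21.19.4). That is the heart of the proposition and you have it right, with the same key input.

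Where you diverge from the paper is the independence statement, and your route there has a gap as written. You claim that passing to ${\cal P}\times_{{\cal W}_n(s)}{\cal P}'$ and applying ``the log Poincar\'{e} lemma'' shows the comparison map on ${\cal H}^i({\cal O}_{\mathfrak D}\otimes_{{\cal O}_{{\cal P}^{\rm ex}}}\Om^{\bul}_{{\cal P}^{\rm ex}_{\ul{\lam}}/{\cal W}_n(s)})$ is an isomorphism. But for $\ul{\lam}\neq\emptyset$ the log scheme ${\cal P}^{\rm ex}_{\ul{\lam}}$ is \emph{not} log smooth over ${\cal W}_n(s)$ (the paper remarks this explicitly after (\ref{eqn:kfltd}); it is only ideally log smooth in Ogus' sense), so the log Poincar\'{e} lemma does not apply to the $/{\cal W}_n(s)$-relative complex, and the analogous independence in the paper (cf.\ (\ref{theo:nsp})) is proved by a triangle argument, not by the Poincar\'{e} lemma. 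The paper sidesteps this entirely: it first proves the resolution (21.19.5), which exhibits ${\cal W}_n\Om^{i}_{X_{\ul{\lam}}}$ as the cokernel of $d\log\tau\wedge\colon {\cal W}_n\wt{\Om}^{i-1}_{X_{\ul{\lam}}}\lo {\cal W}_n\wt{\Om}^{i}_{X_{\ul{\lam}}}$, and then deduces independence from the already-established independence of ${\cal W}_n\wt{\Om}^{\bul}_{X_{\ul{\lam}}}$. Your proof is repairable with no new ideas --- just run your second step first and read off independence from the exact sequence --- but as ordered, the direct independence argument is not justified.
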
 
\begin{proof} 
The problem is local. 
Consider the following locally split exact sequence  as in (\ref{ali:gsflaxd}): 
\begin{align*} 
0  \lo 
{\cal O}_{{\mathfrak D}}\otimes_{{\cal O}_{{\cal P}{}^{\rm ex}}}
\Om^{\bul}_{{\cal P}{}^{{\rm ex}}/{\cal W}_n(s)}[-1] 
\os{d\log t\wedge}{\lo} 
{\cal O}_{{\mathfrak D}}\otimes_{{\cal O}_{{\cal P}{}^{\rm ex}}}
\Om^{\bul}_{{\cal P}{}^{{\rm ex}}/{\cal W}_n(\os{\circ}{s})} 
\lo {\cal O}_{{\mathfrak D}}\otimes_{{\cal O}_{{\cal P}{}^{\rm ex}}}
\Om^{\bul}_{{\cal P}{}^{{\rm ex}}/{\cal W}_n(s)}
\lo 0
\tag{21.20.2}\label{ali:gsxd}
\end{align*} 
This exact sequence gives us the following sequence: 
\begin{align*} 
0  \lo 
{\cal O}_{{\mathfrak D}}\otimes_{{\cal O}_{{\cal P}{}^{\rm ex}}}
\Om^{\bul}_{{\cal P}{}^{{\rm ex}}_{\ul{\lam}}/{\cal W}_n(s)}[-1] 
\os{d\log t\wedge}{\lo} 
{\cal O}_{{\mathfrak D}}\otimes_{{\cal O}_{{\cal P}{}^{\rm ex}}}
\Om^{\bul}_{{\cal P}{}^{{\rm ex}}_{\ul{\lam}}/{\cal W}_n(\os{\circ}{s})} 
\lo {\cal O}_{{\mathfrak D}}\otimes_{{\cal O}_{{\cal P}{}^{\rm ex}}}
\Om^{\bul}_{{\cal P}{}^{{\rm ex}}_{\ul{\lam}}/{\cal W}_n(s)}
\lo 0
\tag{21.20.3}\label{ali:gsxxd}
\end{align*} 
By the same proof as that of (\ref{theo:qii}), the following sequence 
is exact: 
\begin{align*} 
0 &\lo 
{\cal H}^{i-1}({\cal O}_{{\mathfrak D}}\otimes_{{\cal O}_{{\cal P}{}^{\rm ex}}}
\Om^{\bul}_{{\cal P}{}^{{\rm ex}}_{\ul{\lam}}/{\cal W}_n(s)}) 
\os{d\log t\wedge }{\lo} 
{\cal H}^i({\cal O}_{{\mathfrak D}}\otimes_{{\cal O}_{{\cal P}{}^{\rm ex}}}
\Om^{\bul}_{{\cal P}{}^{{\rm ex}}_{\ul{\lam}}/{\cal W}_n(\os{\circ}{s})})  
\tag{21.20.4}\label{ali:gsxxhd}\\
&\lo {\cal H}^i({\cal O}_{{\mathfrak D}}\otimes_{{\cal O}_{{\cal P}{}^{\rm ex}}}
\Om^{\bul}_{{\cal P}{}^{{\rm ex}}_{\ul{\lam}}/{\cal W}_n(s)}) \lo 0
\end{align*} 
This means that the following sequence is exact: 
\begin{align*} 
0  \lo {\cal W}_n\wt{\Om}^0_{X_{\ul{\lam}}}\os{\theta_{\ul{\lam}}}{\lo} \cdots 
 \os{\theta_{\ul{\lam}}}{\lo}
{\cal W}_n\wt{\Om}^{i-1}_{X_{\ul{\lam}}}\os{\theta_{\ul{\lam}}}{\lo} 
{\cal W}_n\wt{\Om}^{i}_{X_{\ul{\lam}}}
\lo {\cal W}_n\Om^{i}_{X_{\ul{\lam}}} \lo 0. 
\tag{21.20.5}\label{ali:gsd}
\end{align*} 
Because we see that 
${\cal W}_n\Om^{i-1}_{X_{\ul{\lam}}}
={\rm Coker}(\theta_{\ul{\lam}}\col {\cal W}_n\wt{\Om}^{i-2}_{X_{\ul{\lam}}} {\lo}
{\cal W}_n\wt{\Om}^{i-1}_{X_{\ul{\lam}}})$ 
by the local calculation, we obtain the exact sequence (\ref{ali:wnii}).  
%This exact sequence tells us that 
%${\cal W}_n\Om^{\bul}_{X_{\ul{\lam}}}$ is 
%independent of the choice of 
%the immersion $X\os{\sus}{\lo} {\cal P}$ and the exact sequence 
%(\ref{ali:gsxxhd}) is nothing but the exact sequence 
%${\cal W}_n\Om^{\bul}_{X_{\ul{\lam}}}$ fits into (\ref{ali:wnii}). 
\end{proof}

\begin{coro}\label{coro:ci}
There exists an inverse Cartier isomorphism 
\begin{align*} 
C^{-1} \col \Om^{i}_{X_{\ul{\lam}}}\os{\sim}{\lo} 
{\cal W}_1\Om^{i}_{X_{\ul{\lam}}}.
\tag{21.21.1}\label{ali:cxul}
\end{align*} 
\end{coro}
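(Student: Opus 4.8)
The plan is to reduce the statement to the inverse Cartier isomorphism that has already been produced in the proof of (\ref{prop:nqop}) (2). First I would observe that the assertion is local on $\os{\circ}{X}_{\ul{\lam}}$ and that, by (\ref{prop:em}), the sheaf ${\cal W}_1\Om^i_{X_{\ul{\lam}}}$ is independent of the immersion $X\os{\sus}{\lo}{\cal P}$ used to define it. Since an SNCL scheme $X/s$ is log smooth over $s$, the identity $X\os{\sus}{\lo} X$ is an admissible immersion with ${\cal P}={\cal P}^{\rm ex}=X$ and log PD-envelope ${\mathfrak D}=X$; for this choice ${\cal P}^{\rm ex}_{\ul{\lam}}=X_{\ul{\lam}}$, so that
\begin{align*}
{\cal W}_1\Om^i_{X_{\ul{\lam}}}={\cal H}^i(\Om^{\bul}_{X_{\ul{\lam}}/s}).
\end{align*}
Hence the content of the corollary is precisely the bijectivity of the log inverse Cartier map $C^{-1}\col \Om^i_{X_{\ul{\lam}}}\os{\sim}{\lo}{\cal H}^i(\Om^{\bul}_{X_{\ul{\lam}}/s})$.

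Next I would invoke the local lift. Zariskian locally $X/s$ is the model ${\mab A}_s(a,d)$, which lifts to the log smooth formal scheme ${\cal X}={\mab A}_{{\cal W}(s)}(a,d)$ over ${\cal W}(s)$ equipped with the Frobenius lift $\varphi$ sending each coordinate to its $p$-th power. As verified in the proof of (\ref{prop:nqop}) (2), the pair $(\Om^{\bul}_{{\cal X}_{\ul{\lam}}/{\cal W}(s)},\varphi^*)$ satisfies the axioms $(6.0.1)$--$(6.0.5)$ of a formal de Rham--Witt complex in \cite{ndw}; the inverse-Cartier axiom among these furnishes exactly an ${\mab F}_p$-linear isomorphism $C^{-1}\col \Om^i_{X_{\ul{\lam}}}\os{\sim}{\lo}{\cal H}^i(\Om^{\bul}_{X_{\ul{\lam}}/s})$ on the reduction $\Om^{\bul}_{X_{\ul{\lam}}/s}=\Om^{\bul}_{{\cal X}_{\ul{\lam}}/{\cal W}(s)}\otimes_{{\cal O}_{\cal X}}{\cal O}_X$ (this being the point checked by Kim and Hain in \cite[Lemma 12]{kiha}). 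Because the log Cartier operator is intrinsic to $X_{\ul{\lam}}/s$ --- it is built from the exact relative Frobenius $F_{X_{\ul{\lam}}/s}$ and needs no lift for its definition --- these local maps are canonical, and the independence provided by (\ref{prop:em}) lets them glue to the asserted global isomorphism.

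The hard point, and the reason the classical statement cannot be quoted verbatim, is that $X_{\ul{\lam}}$ is {\it not} log smooth over $s$ once $\ul{\lam}\neq\emptyset$, so the bijectivity of $C$ is not automatic; this is exactly what is settled in (\ref{prop:nqop}) (2). If one wishes to argue bijectivity afresh rather than cite it, I would first treat the $\os{\circ}{s}$-relative complex: filter $\Om^{\bul}_{X_{\ul{\lam}}/\os{\circ}{s}}$ and its PD-model by the preweight filtration $P$ of (\ref{eqn:pkdefpw}), so that by the Poincar\'e residue isomorphisms (\ref{prop:pslt}) and the analogue of (\ref{ali:ppkkxl}) every ${\rm gr}^P_k$ becomes a finite direct sum of terms attached to the {\it smooth} strata $\os{\circ}{X}_{\ul{\lam}\cup\ul{\mu}}/\os{\circ}{s}$, where the ordinary inverse Cartier isomorphism is available; a filtered five-lemma argument (the filtration $P$ being finite locally) then gives $C^{-1}\col \Om^i_{X_{\ul{\lam}}/\os{\circ}{s}}\os{\sim}{\lo}{\cal W}_1\wt{\Om}^i_{X_{\ul{\lam}}}$. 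One finally descends to the $s$-relative statement through the exact sequence (\ref{ali:wnii}) and its de Rham analogue, checking compatibility of $C^{-1}$ with the maps $\wedge\,d\log\tau$, which holds because $C^{-1}(d\log u)=d\log u$. Either way, the only bookkeeping left is this residue compatibility and the gluing of the canonical local isomorphisms.
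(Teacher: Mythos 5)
Your argument is correct, and its decisive step coincides with the paper's: the paper's entire proof is to take the inverse Cartier isomorphism for the $\os{\circ}{s}$-relative complex, $C^{-1}\col \wt{\Om}{}^i_{X_{\ul{\lam}}}\os{\sim}{\lo}{\cal W}_1\wt{\Om}{}^i_{X_{\ul{\lam}}}$, and push it through the exact sequence (\ref{ali:gsd}) using $C^{-1}(d\log\tau)=d\log\tau$ --- which is exactly the descent you perform at the end via (\ref{ali:wnii}). Two remarks. First, your middle paragraph misattributes what (\ref{prop:nqop}) (2) provides: the inverse-Cartier axiom verified there (Kim--Hain's check) concerns the $\os{\circ}{s}$-relative complex, i.e.\ it produces $C^{-1}$ onto ${\cal H}^i(\wt{\Om}{}^{\bul}_{X_{\ul{\lam}}})={\cal W}_1\wt{\Om}{}^i_{X_{\ul{\lam}}}$, not onto ${\cal H}^i(\Om^{\bul}_{X_{\ul{\lam}}/s})$; if the $s$-relative axiom were already settled there, (\ref{rema:a}) would not need to invoke (\ref{coro:ci}) to obtain the axioms for $\{{\cal W}_n\Om^i_{X_{\ul{\lam}}}\}_n$. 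This does not break your proof, because your final paragraph supplies precisely the missing descent, but the corollary is not a direct quotation of (\ref{prop:nqop}) (2). Second, your alternative argument for the tilde-version bijectivity --- filtering by the preweight filtration $P$ and reducing via the Poincar\'e residue isomorphisms to the classical Cartier isomorphism on the smooth strata $\os{\circ}{X}_{\ul{\lam}\cup\ul{\mu}}$ --- is a genuinely different route from the paper's, which simply cites Kim--Hain's verification; it buys self-containedness at the price of the bookkeeping you acknowledge (local finiteness of $P$ and compatibility of $C^{-1}$ with the residue maps). Your preliminary identification ${\cal W}_1\Om^i_{X_{\ul{\lam}}}={\cal H}^i(\Om^{\bul}_{X_{\ul{\lam}}/s})$ via the trivial immersion $X\os{\sus}{\lo}{\cal P}=X$ is correct and is left implicit in the paper.
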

\begin{proof} 
The inverse Cartier isomorphism 
$C^{-1} \col \wt{\Om}^{i}_{X_{\ul{\lam}}}\os{\sim}{\lo} 
{\cal W}_1\wt{\Om}^{i}_{X_{\ul{\lam}}}$ 
(the proof of \cite[Lemma 12]{kiha})
induces the inverse Cartier isomorphism (\ref{ali:cxul}) by 
(\ref{ali:gsd}) since $C^{-1}(d\log t)=d\log t$. 
\end{proof} 

%Since we have a natural morphism 
%${\cal W}_n\wt{\Om}^{\bul}_{X_{\ul{\lam}}}
%\lo {\cal W}_n{\Om}^{\bul}_{X_{\ul{\lam}}}$, 
%we also have the following log de Rham complex
%\begin{align*} 
%E_n\otimes_{{\cal W}_n({\cal O}_X)}{\cal W}_n\Om^{\bul}_{X_{\ul{\lam}}}. 
%\end{align*}  

\begin{rema}\label{rema:a}
Let the notations be as in the proof of (\ref{prop:em}). 
By (\ref{coro:ci}), $(\Om^{\bul}_{{\cal X}_{\ul{\lam}}},\phi^*)$ satisfies 
$(21.12.1)\sim (21.12.5)$. 
Hence all the results in \cite[\S6]{ndw} for formal de Rham-Witt complexes 
hold for $\{{\cal W}_n\Om^{i}_{X_{\ul{\lam}}}\}_{n=1}^{\inf}$. 
For example,  

\begin{align*} 
{\rm Fil}^r{\cal W}_{n+1}\Om^i_{X_{\ul{\lam}}}=
V^r{\cal W}_{n+1-r}\Om^i_{X_{\ul{\lam}}}+
dV^r{\cal W}_{n+1-r}\Om^{i-1}_{X_{\ul{\lam}}}. 
\tag{21.22.1}\label{eqn:funfv}
\end{align*} 

\begin{align*} 
d^{-1}(p^n {\cal W}\Om^{i+1}_{X_{\ul{\lam}}})=F^n{\cal W}\Om^i_{X_{\ul{\lam}}}, 
\tag{21.22.2}\label{eqn:fucnfv}
\end{align*} 

\begin{align*}
R_n\otimes^L_R{\cal W}\Om^{\bul}_{X_{\ul{\lam}}} 
={\cal W}_n\Om^{\bul}_{X_{\ul{\lam}}}, 
\tag{21.22.3}\label{eqn:fundfv}
\end{align*}

\begin{equation*}
{\rm Ker}(d \col 
{\cal W}_n\Om^{i}_{X_{\ul{\lam}}}
\lo {\cal W}_n\Om^{i+1}_{X_{\ul{\lam}}})
=F^n({\cal W}_{2n}\Om^{i}_{X_{\ul{\lam}}}). 
\tag{21.22.4}\label{dkfn}
\end{equation*}

\end{rema}

\par 
By using (\ref{coro:ci}) and by the same proof as that of (\ref{prop:nqop}), 
we obtain the Hyodo's multiplication 
${\bf p} \col {\cal W}_n\Om^{\bul}_{X_{\ul{\lam}}}\lo 
{\cal W}_{n+1}\Om^{\bul}_{X_{\ul{\lam}}}$ 
and the projection 
$R\col \col {\cal W}_{n+1}\Om^{\bul}_{X_{\ul{\lam}}}\lo 
{\cal W}_{n}\Om^{\bul}_{X_{\ul{\lam}}}$ 
such that ${\bf p}R=p=R{\bf p}$. 

\par 
Let $F$ be a quasi-coherent crystal of ${\cal O}_{X/{\cal W}(s)}$-modules. 
Set $F_n:=F_{{\cal W}_n(X)}$. 
Then we have the following integrable connection 
\begin{align*}
\nabla \col F_n\lo F_n\otimes_{{\cal W}_n({\cal O}_X)}
{\cal W}_n\Om^1_{X_{\ul{\lam}}}
\end{align*} 
as in (\ref{eqn:lnmyy}) 
and in fact, we obtain the log de Rham complex 
$F_n\otimes_{{\cal W}_n({\cal O}_X)}
{\cal W}_n\Om^{\bul}_{X_{\ul{\lam}}}$ as in \cite{nb}.
To obtain it, we have first obtained the log de Rham complex 
$F_n\otimes_{{\cal W}_n({\cal O}_{X_{\ul{\lam}}})}
\Om^{\bul}_{{\cal W}_n(X_{\ul{\lam}})/{\cal W}_n(s), [~]}$ 
and then we have used a natural morphism 
\begin{align*} 
{\Om}^{\bul}_{{\cal W}_n(X_{\ul{\lam}})/{\cal W}_n(s), [~]}
\lo {\cal W}_n\Om^{\bul}_{X_{\ul{\lam}}}. 
\tag{21.22.5}\label{eqn:lanyy} 
\end{align*} 
We obtain this morphism by (\ref{eqn:sn00}) and by replacing 
${\cal W}_n(\os{\circ}{s})$ with ${\cal W}_n(s)$ in 
(\ref{eqn:tise10}) and (\ref{eqn:tdlogb}). 

\begin{theo}\label{theo:crvsdw} 
Let the notations be as in {\rm (\ref{theo:citt})}. 
Then the following hold$:$
%Let $E_{\ul{\lam}}$ be the restriction of $E$ to $X_{\ul{\lam}}/{\cal W}(s)$. 
%Then the following hold$:$
\par 
$(1)$ There exists a canonical isomorphism 
\begin{equation*}
Ru_{X_{\ul{\lam}}/{\cal W}_n(s)*}(\eps^*_{X/{\cal W}_n(s)}(E))_{\ul{\lam}})
%E_{\ul{\lam}}) 
\os{\sim}{\lo} E_n\otimes_{{\cal W}_n({\cal O}_{X})}
{\cal W}_n\Om^{\bul}_{X_{\ul{\lam}}}.
\tag{21.23.1}\label{eqn:nruowln}
\end{equation*}
This isomorphism is compatible with the projections.
\par
$(2)$ The isomorphism {\rm (\ref{eqn:nruowln})} is 
contravariantly functorial 
with respect to a morphism satisfying the conditions 
{\rm (8.1.6)} for the case $S=s$ and $T={\cal W}_n(s)$.   
\end{theo}
\begin{proof}
Let the notations be as in the proof of (\ref{theo:citt}). 
Then, by replacing ${\cal W}_n(\os{\circ}{s})$ by ${\cal W}_n(s)$, 
we have the morphism (\ref{eqn:nruowln}). 
%Let $X_{\bul}\os{\sus}{\lo} {\cal P}_{\bul}$ be 
%a simplicial immersion into a log smooth scheme over ${\cal W}(s)$. 
%Set ${\cal P}_{\ul{\lam},\bul,n}:={\cal P}_{\bul \ul{\lam}}\times_{{\cal W}(s)}{\cal W}_n(s)$, 
%Let $X_{\lam,\bul}\os{\sus}{\lo} {\cal P}_{\ul{\lam},\bul,n}$ be the simplicial immersion obtained by 
%this simplicial immersion. 
%Let ${\cal E}\lo {\cal E}\otimes_{{\cal O}_{\cal P}}\Om^1_{{\cal P}_{\bul}/{\cal W}(s)}$  
%be the integrable connection obtained by $E$. 
%By using the morphism (\ref{eqn:lanyy}),  
%we have the following composite morphism
%\begin{align*} 
%{\cal E}^{\bul}_n\otimes_{{\cal O}_{{\cal P}_{\bul}}}
%\Om^{\bul}_{{\cal P}_{\ul{\lam},\bul, n}/{\cal W}_n(s)} \lo 
%{\cal E}^{\bul}_n\otimes_{{\cal O}_{{\cal P}_{\bul}}}
%\Om^{\bul}_{{\cal W}_n(X_{\ul{\lam},\bul})/({\cal W}_n(s)),[~]}
%\lo E^{\bul}_n\otimes_{{\cal O}_{{\cal P}_{\bul}}}
%\pi^{-1}({\cal W}_n\Om^{\bul}_{X_{\ul{\lam}}}).
%\tag{21.23.2}\label{ali:xwpp}
%\end{align*} 
%By the cohomological descent for a bounded below complex, 
%we have the following morphism:
%$$Ru_{X_{\ul{\lam}}/{\cal W}_n(s)*}(E_{\ul{\lam}}) 
%\lo E_n\otimes_{{\cal W}_n({\cal O}_{X})}
%{\cal W}_n\Om^{\bul}_{X_{\ul{\lam}}}.$$
It is easy to check that the morphism (\ref{eqn:nruowln}) 
is independent of the choice of 
the simplicial immersion $X_{\bul}\os{\sus}{\lo} \ol{\cal P}_{\bul}$.  
\par
Now the question is local on $X$; 
we may assume that there exists an immersion 
$X\os{\sus}{\lo} {\cal P}$ into a log smooth scheme 
over ${\cal W}_n(s)$. 
%Let ${\mathfrak D}$ be 
%the log PD-envelope of this immersion over 
%$({\cal W}_n(s),p{\cal W}_n,[~])$. 
%Set ${\cal E}:=E_{\mathfrak D}$ and $E_n:=E_{{\cal W}_n(X)}$. 
%By definition, we have the following equality:  
%\begin{equation*} 
%Ru_{X_{\ul{\lam}}/{\cal W}_n(s)*}(E_{\ul{\lam}})=
%{\cal E}\otimes_{{\cal O}_{\cal P}}
%\Om^{\bul}_{{\cal P}_{\ul{\lam}}/{\cal W}_n(s)}. 
%\tag{21.23.3}\label{eqn:ynse} 
%\end{equation*} 
%Then the morphism (\ref{eqn:nruowln}) is equal to the following morphism 
%\begin{align*} 
%{\cal E}\otimes_{{\cal O}_{\cal P}}
%\Om^{\bul}_{{\cal P}_{\ul{\lam}}/{\cal W}_n(s)}\lo 
%{\cal E}\otimes_{{\cal O}_{{\cal P}}}
%\Om^{\bul}_{{\cal W}_n(X_{\ul{\lam}})/({\cal W}_n(s)),[~]}
%\lo E_n\otimes_{{\cal O}_{{\cal P}}}
%{\cal W}_n\Om^{\bul}_{X_{\ul{\lam}}}.
%\tag{21.23.4}\label{ali:xwlpp}
%\end{align*} 
%The compatibility of the isomorphism (\ref{eqn:nruowln})
%with the projections follows from \cite[(7.18)]{ndw}.
%It is obvious that the morphism (\ref{ali:xwlpp}) is an isomorphism 
%in the case $n=1$ since the morphism (\ref{ali:xwlpp}) 
%is equal to the isomorphism ${\rm id}_{E_1}\otimes C^{-1}$, 
%where $C^{-1}$ is the isomorphism (\ref{ali:cxul}). 
%The rest of the proof is the same as that of \cite[(7.19)]{ndw}.  
%Indeed, 
%\par 
By (\ref{ali:gsxd}) we have the following exact sequence 
\begin{align*} 
0  \lo 
{\cal E}_n\otimes_{{\cal O}_{{\cal P}{}^{\rm ex}}}
\Om^{\bul}_{{\cal P}{}^{{\rm ex}}_{\ul{\lam}}/{\cal W}_n(s)}[-1] 
\os{d\log \tau\wedge}{\lo} 
{\cal E}_n\otimes_{{\cal O}_{{\cal P}{}^{\rm ex}}}
\Om^{\bul}_{{\cal P}{}^{{\rm ex}}_{\ul{\lam}}/{\cal W}_n(\os{\circ}{s})} 
\lo {\cal E}_n\otimes_{{\cal O}_{{\cal P}{}^{\rm ex}}}
\Om^{\bul}_{{\cal P}{}^{{\rm ex}}_{\ul{\lam}}/{\cal W}_n(s)}
\lo 0. 
\tag{21.23.2}\label{ali:gsetxd}
\end{align*}  
By (\ref{ali:gsxxhd}) we have the following exact sequence 
\begin{align*} 
0  \lo E_n\otimes_{{\cal W}_n({\cal O}_X)}
{\cal W}_n{\Om}^{\bul}_{X_{\ul{\lam}}}[-1]\os{d\log \tau\wedge }{\lo} 
E_n\otimes_{{\cal W}_n({\cal O}_X)}
{\cal W}_n\wt{\Om}^{\bul}_{X_{\ul{\lam}}}
\lo {\cal W}_n\Om^{\bul}_{X_{\ul{\lam}}} \lo 0. 
\tag{21.23.3}\label{ali:getsd}
\end{align*} 
The morphisms (\ref{eqn:nrlowln}) and (\ref{eqn:nruowln})
induce the morphism from (\ref{ali:gsetxd}) to (\ref{ali:getsd}). 
Because the morphism (\ref{eqn:nrlowln}) is an isomorphism,  
we see that the morphism (\ref{eqn:nruowln}) is an isomorphism by 
induction on the degree of the cohomological sheaves of the complexes 
$Ru_{X_{\ul{\lam}}/{\cal W}_n(s)*}(\eps^*_{X/{\cal W}_n(s)}(E))_{\ul{\lam}})$ 
and 
$E_n\otimes_{{\cal W}_n({\cal O}_{X})}
{\cal W}_n\Om^{\bul}_{X_{\ul{\lam}}}$. 
\par 
(2): Using the argument after (\ref{prop:indu}),  we can prove the functoriality 
without difficulty. We leave the detail of the proof to the reader 
(cf.~the proof of (\ref{theo:citt}) (2)). 
\par
We finish the proof.
\end{proof}

Set 
\begin{align*} 
{\cal W}_n{\Om}^{\bul}_{X^{(m)}}:=\bigoplus_{\ul{\lam}=m+1}{\cal W}_n{\Om}^{\bul}_{X_{\ul{\lam}}}. 
\end{align*} 
and 
\begin{align*} 
{\cal W}_n{\Om}^{\bul}_{X^{(*)}}:=
s({\cal W}_n{\Om}^{\bul}_{X^{(\bul)}}):=\bigoplus_{\ul{\lam}=m+1}{\cal W}_n{\Om}^{\bul}_{X_{\ul{\lam}}}. 
\end{align*} 

\begin{coro}\label{coaro}
$(1)$ 
Let $E_n\otimes_{{\cal W}_n({\cal O}_{X})}
{\cal W}_n{\Om}^{\bul}_{X^{(\star)}}$ be the single complex of 
the double complex 
$(\cdots \lo a^{(m)}_*(E_n\otimes_{{\cal W}_n({\cal O}_{X})}
{\cal W}_n{\Om}^{\bul}_{X^{(m)}})\lo \cdots)_{m\in {\mab N}}$, 
where $E_n\otimes_{{\cal W}_n({\cal O}_{X})}
{\cal W}_n{\Om}^{\bul}_{X^{(m)}}:=\bigoplus_{\# \ul{\lam}=m+1}
E_n\otimes_{{\cal W}_n({\cal O}_{X})}
{\cal W}_n{\Om}^{\bul}_{X_{\ul{\lam}}}$. 
Then there exists a canonical filtered isomorphism 
\begin{equation*}
{R}u_{X^{(\star)}/{\cal W}_n(s)*}(E) \os{\sim}{\lo}  
E_n\otimes_{{\cal W}_n({\cal O}_{X})}{\cal W}_n{\Om}^{\bul}_{X^{(\star)}}.
\tag{21.24.1}\label{eqn:nsln}
\end{equation*}
The isomorphism $(\ref{eqn:nsln})$ is compatible with the projections.
\par 
$(2)$ The isomorphism {\rm (\ref{eqn:nsln})} is contravariantly functorial 
with respect to a morphism satisfying the conditions {\rm (8.1.6)} 
for the case $S=s$ and $T={\cal W}_n(s)$.   
\par
$(3)$ Let 
$$
{R}u_{X/{\cal W}_n(s)*}
(\eps^*_{X/{\cal W}_n(s)}(E))
\os{\sim}{\lo} 
Ru_{X^{(\star)}/{\cal W}_n(s)*}(\eps^*_{X^{(\star)}/{\cal W}_n(s)}(E^{(\star)}))
$$ be the isomorphism {\rm (\ref{eqn:eetxte})} in the case $S=s$ and $T={\cal W}_n(s)$. 
Then the natural morphism 
\begin{equation*}
E_n\otimes_{{\cal W}_n({\cal O}_X)}{\cal W}_n{\Om}{}^{\bul}_X 
\lo  
E_n\otimes_{{\cal W}_n({\cal O}_X)}{\cal W}_n{\Om}{}^{\bul}_{X^{(\star)}}
%Ru_{X_{\os{\circ}{T}_0}/S(T)^{\nat}*}
%(\eps^*_{X_{\os{\circ}{T}_0}/S(T)^{\nat}}(E)) \os{\sim}{\lo} 
%Ru_{X^{(\star)}_{\os{\circ}{T}_0}/S(T)^{\nat}*}
%(\eps^*_{X^{(\star)}_{\os{\circ}{T}_0}/S(T)^{\nat}}(E^{(\star)})). 
%\tag{6.9.1}\label{eqn:eetxte}
\tag{21.24.2}\label{eqn:exawte}
\end{equation*} 
in $C^+(f^{-1}({\cal W}_n))$ 
fits into the following commutative diagram
\begin{equation*}
\begin{CD} 
{R}u_{X/{\cal W}_n(s)*}
(\eps^*_{X/{\cal W}_n(s)}(E))
@>{\sim}>> 
{R}u_{X^{(\star)}/{\cal W}_n(s)*}(\eps^*_{X^{(\star)}/{\cal W}_n(s)}
(E^{(\star)}))\\
@V{{\rm (\ref{eqn:nruowln})}{\rm ~for~the~case}~\ul{\lam}=\emptyset}V{\simeq}V @V{{\rm (\ref{eqn:nsln})}}V{\simeq}V \\
E_n\otimes_{{\cal W}_n({\cal O}_X)}{\cal W}_n{\Om}{}^{\bul}_X 
@>>>
E_n\otimes_{{\cal W}_n({\cal O}_X)}{\cal W}_n{\Om}{}^{\bul}_{X^{(\star)}}
\end{CD} 
%\tag{21.24.3}\label{eqn:ewbte}
\end{equation*}  
in $D^+(f^{-1}({\cal W}_n)$.  
Here the upper isomorphism in the commutative diagram above is the isomorphism 
%\begin{equation*}
%\wt{R}u_{X_{\os{\circ}{T}_0}/\os{\circ}{T}*}
%(\eps^*_{X_{\os{\circ}{T}_0}/\os{\circ}{T}} (E)) \os{\sim}{\lo} 
%\wt{R}u_{X^{(\star)}_{\os{\circ}{T}_0}/\os{\circ}{T}*}
%(\eps^*_{X^{(\star)}_{\os{\circ}{T}_0}/\os{\circ}{T}} (E^{(\star)})). 
%\tag{6.9.2}\label{eqn:e}
{\rm (\ref{eqn:exawte})} in the special case. 
Consequently the morphism {\rm (\ref{eqn:exawte})} is a quasi-isomorphism. 
\end{coro}
\begin{proof} 
This immediately follows from (\ref{theo:citt}). 
\end{proof}

\par 
The following is a main result in this section

\begin{theo}[{\bf Comparison theorem between Hirsch pre-weight-filtered complexes 
and Kim-Hain's filtered complexes}]\label{theo:cap}
Let the notations be as in {\rm (\ref{theo:citt})}. 
Then there exists a canonical filtered isomorphism 
\begin{align*} 
(H_{\rm zar}(X/{\cal W}_n(\os{\circ}{s}),E), P)
\os{\sim}{\lo} ({\cal W}_nH_X(E), P) 
\tag{21.25.1}\label{ali:mcc}
\end{align*}
in ${\rm D}^+{\rm F}(f^{-1}({\cal W}_n))$ 
fitting into the following commutative diagram 
\begin{equation*} 
\begin{CD}
Ru_{X^{(\star)}/{\cal W}_n(s)*}(\eps^*_{X^{(\star)}/{\cal W}_n(s)}
(E^{(\star)}))@>{\sim}>>
E_n\otimes_{{\cal W}_n({\cal O}_X)}{\cal W}_n{\Om}{}^{\bul}_{X^{(\star)}}\\
@A{\simeq}AA @AA{\simeq}A\\
H_{\rm zar}(X/{\cal W}_n(\os{\circ}{s}),E)
@>{\sim}>> {\cal W}_nH_X(E)\\
@A{\simeq}AA @AA{\simeq}A\\
\wt{R}u_{X/{\cal W}_n(\os{\circ}{s})*}
(\eps^*_{X_{\os{\circ}{T}_0}/{\cal W}_n(\os{\circ}{s})}(E)\langle u \rangle)
@>{\sim}>>E_n\otimes_{{\cal W}_n({\cal O}_X)}
{\cal W}_n\wt{\Om}^{\bul}_{X}\langle u \rangle\\
@V{\simeq}VV @VV{\simeq}V\\
Ru_{X_{\os{\circ}{T}_0}/{\cal W}_n(s)*}
(\eps^*_{X/{\cal W}_n(s)}(E))@>{\sim}>>
E_n\otimes_{{\cal W}_n({\cal O}_X)}{\cal W}_n\Om^{\bul}_{X}
\end{CD}\tag{21.25.2}\label{ali:mcdc}
\end{equation*}
in $D^+(f^{-1}({\cal W}_n))$.  
The isomorphism {\rm (\ref{ali:mcc})} and the commutative diagram 
{\rm (\ref{ali:mcdc})} are contravariantly functorial 
for a morphism $g\col X\lo Y$ satisfying 
the condition {\rm (8.1.6)} for the case $S=s$ and $T={\cal W}_n(s)$ 
and a morphism $g_{\rm crys}^*(F)\lo E$ of ${\cal O}_{\os{\circ}{X}/{\cal W}_n(\os{\circ}{s})}$-modules for an analogous sheaf $F$ in $(\os{\circ}{Y}/{\cal W}_n(\os{\circ}{s}))_{\rm crys}$ to $E$. 
The isomorphism {\rm (\ref{ali:mcc})} and the commutative diagram 
{\rm (\ref{ali:mcdc})} are compatible with respect to the projections. 
\end{theo}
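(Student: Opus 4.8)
The plan is to construct the isomorphism (\ref{ali:mcc}) by first building the bottom two horizontal isomorphisms of the diagram (\ref{ali:mcdc}) and then propagating upward through the derived PD-Hirsch extension and the semi-cosimplicial construction. First I would invoke (\ref{theo:crvsdw}) and its corollary (\ref{coaro}) to obtain the bottom isomorphism
\begin{equation*}
Ru_{X/{\cal W}_n(s)*}(\eps^*_{X/{\cal W}_n(s)}(E))
\os{\sim}{\lo}
E_n\otimes_{{\cal W}_n({\cal O}_X)}{\cal W}_n\Om^{\bul}_X,
\end{equation*}
together with its semi-cosimplicial refinement
$Ru_{X^{(\star)}/{\cal W}_n(s)*}(\eps^*_{X^{(\star)}/{\cal W}_n(s)}(E^{(\star)}))
\os{\sim}{\lo}
E_n\otimes_{{\cal W}_n({\cal O}_X)}{\cal W}_n\Om^{\bul}_{X^{(\star)}}$.
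For the middle isomorphism I would use (\ref{theo:citt}) and (\ref{cobro}), which give the filtered comparison between
$\wt{R}u_{X/{\cal W}_n(\os{\circ}{s})*}(\eps^*_{X/{\cal W}_n(\os{\circ}{s})}(E))$ and
$E_n\otimes_{{\cal W}_n({\cal O}_X)}{\cal W}_n\wt{\Om}^{\bul}_X$ at the level of log de Rham--Witt complexes; these are the backbone of the whole argument since they identify the crystalline side with the explicit de Rham--Witt side.

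Next I would promote these comparisons across the PD-Hirsch extension. The key point is that both $\langle u_s\rangle$ on the de Rham--Witt side (defined via $\varphi\col {\cal W}_nu_s\owns u_s\lom \theta_n$) and $\langle U_{S(T)^{\nat}}\rangle$ on the crystalline side are Hirsch extensions by the same one-form: under the identification (\ref{eqn:nrlowln}) the class $\theta_n\in {\cal W}_n\wt{\Om}^1_X$ corresponds precisely to the image of $d\log\tau$, so the two morphisms $\varphi$ defining the extensions agree. Because (\ref{coro:hac}) shows that the derived PD-Hirsch extension preserves quasi-isomorphisms and (\ref{prop:acy}) gives the underlying acyclicity, applying $\langle u_s\rangle$ (resp.\ $\langle U_{S(T)^{\nat}}\rangle$) to the middle isomorphism of (\ref{cobro}) yields the middle horizontal isomorphism
$\wt{R}u_{X/{\cal W}_n(\os{\circ}{s})*}(\eps^*(E)\langle u_s\rangle)
\os{\sim}{\lo}
E_n\otimes_{{\cal W}_n({\cal O}_X)}{\cal W}_n\wt{\Om}^{\bul}_X\langle u_s\rangle$.
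I would then pass to the semi-cosimplicial version, using the definition (\ref{defi:cez}) of $H_{\rm zar}$ as
$\wt{R}u_{X^{(\star)}/\os{\circ}{T}*}({\cal O}_{X^{(\star)}/\os{\circ}{T}}\langle U_{S(T)^{\nat}}\rangle)$ and the definition of ${\cal W}_nH_X(E)$ as the single complex of
$E_n\otimes {\cal W}_n\wt{\Om}^{\bul}_{X^{(\star)}}\langle u_s\rangle$, obtaining the top horizontal isomorphism.

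The filtered upgrade is where most care is needed. I would check that the comparison respects the filtrations $P$ by matching the graded pieces: on the $H_{\rm zar}$ side the graded complexes are computed in (\ref{ali:pkgmrm}) via (\ref{ali:pkggrm}), and on the ${\cal W}_nH_X$ side they are computed in (\ref{ali:pkgmrm}) (the $\del(L,P)$-graded formula built from (\ref{prop:efe}) and (\ref{ali:pkggrm})). Both reduce to the same direct sum of terms
$a_{\ul{\lam}\cup\ul{\mu}*}({\cal W}_n\Om^{\bul}_{\os{\circ}{X}_{\ul{\lam}\cup\ul{\mu}}}\otimes_{\mab Z}\vp_{{\rm zar},\ul{\mu}})$, so the filtered morphism is a filtered isomorphism by a gradedwise comparison; here I would invoke Etesse's comparison theorem, exactly as used in the proof of (\ref{theo:citt}). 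The commutativity of (\ref{ali:mcdc}) then follows from the compatibility of (\ref{eqn:nruowln}) with (\ref{eqn:nrlowln}) (i.e.\ the diagram relating ${\cal W}_n\wt{\Om}$ and ${\cal W}_n\Om$ through the exact sequence (\ref{ali:wnii})) together with the commutativity of (\ref{cd:efnt}) and (\ref{eqn:cp}).

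The main obstacle I anticipate is the sign and compatibility bookkeeping in the boundary morphisms of the semi-cosimplicial double complexes: the crystalline construction uses the convention of (\ref{cd:pppoex}) while the de Rham--Witt construction of $s(C(W\wt{\om}\langle u_s\rangle))$ follows the Kim--Hain convention, and these differ by the shift function $\eps$ introduced before (\ref{ali:xxst}). I would handle this exactly as in \S\ref{sec:cwt}: introduce the auxiliary complexes $H'_{\rm zar}$ and match signs via the reindexing $\om\mapsto (-1)^{\eps(m-1,q)}\om$, then transport the resulting isomorphism back. The contravariant functoriality and the compatibility with the projections $R$ then follow formally from (\ref{theo:ccm}), from the compatibility statement in (\ref{cobro}) and (\ref{coaro}), and from (\ref{prop:pjc}), which guarantees that $R$ preserves the filtration $P$; these are routine once the central filtered isomorphism is in place.
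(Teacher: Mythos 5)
Your proposal is correct and follows essentially the same route as the paper: the comparison morphism is induced by the map ${\cal W}_n(X_{\bul})\lo \ol{\cal P}_{\bul}$ extending the simplicial immersion (i.e.\ the mechanism of (\ref{theo:citt})/(\ref{cobro})), one applies the PD-Hirsch extension $\langle u_s\rangle$ to obtain the filtered morphism $(H_{\rm zar}(X/{\cal W}_n(\os{\circ}{s}),E),P)\lo ({\cal W}_nH_X(E),P)$, and one verifies it is a filtered isomorphism by identifying the graded pieces on both sides and invoking Etesse's comparison theorem. The only cosmetic difference is your extra caution about the sign conventions via $H'_{\rm zar}$ and the function $\eps$; in the paper this is not needed here because ${\cal W}_nH_X(E)$ is defined with the paper's own conventions for the semi-cosimplicial double complex, which already match the crystalline side.
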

\begin{proof}
Let $X_{\bul}$ be the \v{C}ech diagram obtained by 
an affine open covering of $X$ and let $X_{\bul}\os{\sus}{\lo} \ol{\cal P}_{\bul}$ 
be the simplicial immersion into a log smooth 
scheme over $\ol{{\cal W}_n(s)}$ as before. 
Since the immersion $X_0 \os{\sus}{\lo} {\cal W}_n(X_0)$ is nilpotent 
and since $\ol{\cal P}_{\bul}$ is log smooth over $\ol{{\cal W}_n(s)}$, 
we have a morphism ${\cal W}_n(X_0)\lo \ol{\cal P}_{\bul}$ extending 
the immersion $X_{\bul}\os{\sus}{\lo} \ol{\cal P}_{\bul}$. 
Let 
$$\pi \col ((X_{\bul})_{\rm zar},f^{-1}_{\bul}({\cal W}_n)) 
\lo (X_{\rm zar},f^{-1}({\cal W}_n))$$ 
be the natural morphism of ringed topoi.
The filtered morphism (\ref{ali:pwkpp}) induces the following filtered morphism
\begin{align*}
({\cal E}^{\bul}_n\otimes_{{\cal O}_{{\cal P}_{\bul}}}
\Om^{\bul}_{{\cal P}_{\bul \ul{\lam}}/{\cal W}_n(\os{\circ}{s})}\langle u \rangle,P) 
\lo 
(E^{\bul}_n\otimes_{{\cal O}_{{\cal P}_{\bul}}}
\pi^{-1}(W_n\wt{\Om}{}^{\bul}_{X_{\ul{\lam}}})\langle u \rangle,P).
\tag{21.25.3}\label{ali:pwakpp}
\end{align*}
This morphism induces the following filtered morphism 
\begin{align*}
(H_{\rm zar}(X/{\cal W}_n(\os{\circ}{s}),E),P)
\lo ({\cal W}_nH_X(E),P).
\tag{21.25.4}\label{ali:pwbckpp}
\end{align*} 
The following induced morphism by this morphism 
\begin{align*}
{\rm gr}^P_kH_{\rm zar}(X/{\cal W}_n(\os{\circ}{s}),E)
{\lo} {\rm gr}^P_k{\cal W}_nH_X(E). 
\end{align*}
is equal to the following morphism by (\ref{eqn:ee}) and (\ref{ali:pkpm}): 
\begin{align*} 
&\bigoplus_{m\geq 0}
\bigoplus_{\# \ul{\lam}=m+1}
\bigoplus_{j\geq 0}
\bigoplus_{\# \ul{\mu}=k+m-2j}
a_{\ul{\lam}\cup \ul{\mu}*}
Ru_{\os{\circ}{X}_{\ul{\lam}\cup \ul{\mu}}
/{\cal W}_n(\os{\circ}{s})*}
(E_{\os{\circ}{X}_{\ul{\lam}\cup \ul{\mu}}
/{\cal W}_n(\os{\circ}{s})}
\otimes_{\mab Z}
\vp_{{\rm crys},\ul{\mu}}(\os{\circ}{X}/{\cal W}_n(\os{\circ}{s}))) \\
& \{-(k+m-2j)\}[-m](-(k+m-j);v)
\\
&\lo \bigoplus_{m=0}^{\inf}\bigoplus_{j=0}^{\inf}
\bigoplus_{\# \ul{\lam}=m}
\bigoplus_{\# \ul{\mu}=k+m-2j}
{\cal W}_nu^{[j]}\otimes_{{\cal W}_n}
a_{\ul{\lam}\cup \ul{\mu}*}
(E_n\otimes_{{\cal W}_n({\cal O}_{X})}
{\cal W}_n\Om^{\bul -k}_{\os{\circ}{X}_{\ul{\lam}\cup \ul{\mu}}}
\otimes_{\mab Z}\vp_{{\rm zar},\ul{\mu}}(\os{\circ}{X}/\os{\circ}{s})).
\end{align*} 
By \cite[III (1.5)]{ir} and Etesse's theorem \cite[I Theorem 2.1]{et} this is an isomorphism. 
\par 
We leave the proofs of the commutativity of the diagram 
(\ref{ali:mcdc}) and the 
contravariant functoriality and the compatibility 
with the projections to the reader. 
\end{proof}

\begin{rema}\label{rema:n}
Let $({\cal W}_nA_{X}(E),P)$ be the filtered complex defined in \cite[(2.3.21)]{nb}. 
Then we can construct a filtered morphism 
(which we should call the {\it log de Rham-Witt Fujisawa's morphism}): 
\begin{align*} 
\psi^{\rm dRW} \col ({\cal W}_nA_{X}(E),P)\lo ({\cal W}_nH_X(E),P)
\end{align*} 
whose underlying morphism 
\begin{align*} 
\psi^{\rm dRW} \col {\cal W}_nA_{X}(E)\lo {\cal W}_nH_X(E)
\end{align*}
is an isomorphism fitting into the following commutative diagram 
\begin{equation*} 
\begin{CD}
{\cal W}_nA_X(E)@>{\psi^{\rm dRW},\sim}>>{\cal W}_nH_X(E)\\
@A{\theta \wedge}AA @AA{\simeq}A\\
E\otimes_{{\cal W}_n({\cal O}_X)}{\cal W}_n\Om^{\bul}_X
@. E\otimes_{{\cal W}_n({\cal O}_X)}{\cal W}_n\wt{\Om}{}^{\bul}_X\langle u \rangle\\
@| @VV{\simeq}V\\
E\otimes_{{\cal W}_n({\cal O}_X)}{\cal W}_n\Om^{\bul}_X
@=E\otimes_{{\cal W}_n({\cal O}_X)}{\cal W}_n\Om^{\bul}_X. 
\end{CD}
\tag{21.26.1}\label{cd:ssbati}
\end{equation*} 
The filtered morphism $\psi^{\rm dRW}$ is compatible with the projections. 
\par 
As in \S\ref{sec:csc}, we can also construct the ``semi-cosimplicial version'' 
$({\cal W}_nA_{X^{(\star)}}(E),P)$ of 
this filtered complex ${\cal W}_nA_{X}(E)$ and we can prove that 
the following diagram is commutative$:$
\begin{equation*} 
\begin{CD}
(H_{\rm zar}(X/{\cal W}_n(\os{\circ}{s}),E),P)
@>{\sim}>> ({\cal W}_nH_X(E),P)\\
@VVV @VVV\\
(A_{\rm zar}(X^{(\star)}/{\cal W}_n(\os{\circ}{s}),E),P)
@>{\sim}>>({\cal W}_nA_{X^{(\star)}}(E),P)\\
@AAA @AAA\\
(A_{\rm zar}(X/{\cal W}_n(\os{\circ}{s}),E),P)
@>{\sim}>>({\cal W}_nA_{X}(E),P). 
\end{CD}\tag{21.26.2}\label{ali:mawcc}
\end{equation*}
This commutative diagram is compatible with the projections.
\end{rema}

By the same argument for the definition of the product (\ref{ali:te}), 
we obtain the following morphism 
\begin{align*} 
\cup \col ({\cal W}_nH_X(E),P)\otimes_{{\cal W}_n}({\cal W}_nH_X(E'),P)
\lo 
({\cal W}_nH_X(E\otimes_{{\cal O}_{\os{\circ}{X}/{\cal W}_n}}E'),P),
\tag{21.26.3}\label{ali:mpwcc}
\end{align*} 
where $E'$ is an analogous crystal of 
${\cal O}_{\os{\circ}{X}/{\cal W}_n(\os{\circ}{s})}$-modules to $E$.  
By the local calculation, we obtain the following: 

\begin{prop}\label{prop:pdccc}
Assume that $\os{\circ}{X}$ is quasi-compact. 
Then the following diagram is commutative$:$
\begin{equation*} 
\begin{CD}
(H_{\rm zar}(X/{\cal W}_n(\os{\circ}{s}),E),P)\otimes^L_{f^{-1}({\cal W}_n)}
(H_{\rm zar}(X/{\cal W}_n(\os{\circ}{s}),E'),P)
@>{\cup}>> (H_{\rm zar}(X/{\cal W}_n(\os{\circ}{s}),E
\otimes_{{\cal O}_{\os{\circ}{X}/{\cal W}_n(\os{\circ}{s})}}E',P)
\\
@V{\simeq}VV @VV{\simeq}V\\
({\cal W}_nH_X(E),P)\otimes_{f^{-1}({\cal W}_n)} ({\cal W}_nH_X(E'),P)
@>{\cup}>>({\cal W}_nH_X
(E\otimes_{{\cal O}_{\os{\circ}{X}/{\cal W}_n(\os{\circ}{s})}}E'),P).  
\end{CD}\tag{21.27.1}\label{ali:mwcc}
\end{equation*}
The commutative diagram  {\rm (\ref{ali:mwcc})}  is contravariantly functorial  
for a morphism $g\col X\lo Y$ satisfying 
the condition {\rm (8.1.6)} for the case $S=s$ and $T={\cal W}_n(s)$ 
and morphisms $g_{\rm crys}^*(F)\lo E$ 
and $g_{\rm crys}^*(F')\lo E'$
of ${\cal O}_{\os{\circ}{X}/{\cal W}_n(\os{\circ}{s})}$-modules 
for analogous sheaves $F$ and $F'$ in $(\os{\circ}{Y}/{\cal W}_n(\os{\circ}{s}))$ to $E$ 
and $E'$, respectively.  
The commutative diagram  {\rm (\ref{ali:mwcc})} is 
compatible with respect to the projections.
\end{prop}

If one consider the case where $E$ and $E'$ are trivial, then we obtain
the commutative diagram (\ref{cd:ixwhwctt}).

\begin{defi}
Set 
\begin{align*} 
({\cal W}H_{X,{\mab Q}}, P):=
s_{\rm TW}(a^{(\bul)}_*(({\cal W}\wt{\Om}{}^{\bul}_{X^{(\bul)}}\langle u\rangle_{\mab Q}), P)). 
\end{align*}  
We call $({\cal W}H_{X,{\mab Q}}, P)$ {\it Kim-Hain's filtered dga} of $X/{\cal W}$. 
\end{defi}

We conclude this book by giving the following: 

\begin{theo}\label{theo:wccc}
Set ${\cal W}{\Om}{}^{\bul}_{X^{(\star)},{\mab Q}}:=
s_{\rm TW}(a^{(\bul)}_*({\cal W}{\Om}{}^{\bul}_{X^{(\bul)},{\mab Q}}))$. 
There exists a canonical filtered isomorphism 
\begin{align*} 
(H_{{\rm zar},{\rm TW}}(X/{\cal W}(\os{\circ}{s}))_{\mab Q}, P)
\os{\sim}{\lo} ({\cal W}H_{X,{\mab Q}}, P) 
\tag{21.29.1}\label{ali:mcac}
\end{align*}
in $D({\rm A}^{\geq 0}{\rm F}(f^{-1}({\cal W})))$ 
fitting into the following commutative diagram 
\begin{equation*} 
\begin{CD}
R_{\rm TW}
u_{X^{(\star)}/{\cal W}(s)*}({\cal O}_{X^{(\star)}/{\cal W}(s)})_{\mab Q}
@>{\sim}>>
{\cal W}{\Om}{}^{\bul}_{X^{(\star),{\mab Q}}}\\
@A{\simeq}AA @AA{\simeq}A \\
H_{{\rm zar},{\rm TW}}(X/{\cal W}(\os{\circ}{s}))_{\mab Q}
@>{\sim}>> {\cal W}H_{X,{\mab Q}}\\
@A{\simeq}AA @AA{\simeq}A\\
\wt{R}_{\rm TW}u_{X/{\cal W}(\os{\circ}{s})*}
({\cal O}_{X_{\os{\circ}{T}_0}/{\cal W}(\os{\circ}{s})}\langle u \rangle)_{\mab Q}
@>{\sim}>> ({\cal W}\wt{\Om}^{\bul}_{X}\langle u \rangle)_{\mab Q}\\
@V{\simeq}VV @VV{\simeq}V\\
R_{\rm TW}u_{X/{\cal W}(s)*}({\cal O}_{X/{\cal W}(s)})_{\mab Q}
@>{\sim}>>
{\cal W}\Om^{\bul}_{X,{\mab Q}}. 
\end{CD}\tag{21.29.2}\label{ali:mcadc}
\end{equation*}
The isomorphism {\rm (\ref{ali:mcac})} and the commutative diagram 
{\rm (\ref{ali:mcadc})} are contravariantly functorial  
for a morphism $g\col X\lo Y$ satisfying 
the condition {\rm (8.1.6)} for the case $S=s$ and $T={\cal W}(s)$. 
The isomorphism {\rm (\ref{ali:mcac})} and the commutative diagram 
{\rm (\ref{ali:mcadc})} are compatible with respect to the projections.
\end{theo}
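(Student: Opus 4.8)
\textbf{Proof proposal for (\ref{theo:wccc}).}
The plan is to deduce the Thom--Whitney version of the comparison from the already-established complex-level comparison (\ref{theo:cap}) together with the general relationship between $(H_{{\rm zar},{\rm TW}},P)$ and $(H_{\rm zar},P)$ recorded in (\ref{prop:ncz}). First I would recall that by construction $(H_{{\rm zar},{\rm TW}}(X/\os{\circ}{S}),P)$ is obtained by applying the Thom--Whitney single functor $s_{\rm TW}$ of \cite{nav} to the semi-cosimplicial filtered dga $\{(\wt{R}_{\rm TW}u_{X^{(m)}/\os{\circ}{T}*}({\cal O}\langle U_{S(T)^{\nat}}\rangle),P)\}_m$, whereas the target $({\cal W}_nH_X,P)$ is the inverse limit over $n$ of $s(E_n\otimes_{{\cal W}_n({\cal O}_X)}{\cal W}_n\wt{\Om}^{\bul}_{X^{(\star)}}\langle u_s\rangle)$. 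The key point is that the dga structure on $E_n\otimes_{{\cal W}_n({\cal O}_X)}{\cal W}_n\wt{\Om}^{\bul}_{X^{(\star)}}\langle u_s\rangle$ constructed in (\ref{prop:cpp}) and (\ref{prop:cpp})'s filtered refinements is the genuine multiplicative model computing the crystalline cohomology, so it plays the role of the ``strictified'' object that $s_{\rm TW}$ is designed to produce.

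The steps I would carry out, in order, are as follows. First I would produce the level-$n$ isomorphism of filtered dga's (not merely of filtered complexes): starting from (\ref{theo:cap}), upgrade the filtered quasi-isomorphism $(H_{\rm zar}(X/{\cal W}_n(\os{\circ}{s})),P)\os{\sim}{\lo}({\cal W}_nH_X,P)$ to the $s_{\rm TW}$ level by applying $s_{\rm TW}$ to the semi-cosimplicial system and invoking the comparison $\Phi\circ s_{\rm TW}=s$ of \cite[(6.3),(6.14)]{nav}, exactly as in the proof of (\ref{prop:ncz}). Concretely, one checks that the crystalline log de Rham--Witt model ${\cal W}_n\wt{\Om}^{\bul}_{X_{\ul{\lam}}}\langle u_s\rangle$ is a bounded-below dga over $f^{-1}({\cal W}_n)$ whose underlying filtered complex is $(\ref{ali:mcc})$'s target, and that the Godement-resolution-plus-$s_{\rm TW}$ construction defining $\wt{R}_{\rm TW}u_{X^{(\star)}/\os{\circ}{T}*}({\cal O}\langle U\rangle)$ maps to it compatibly with products. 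This uses (\ref{theo:citt}), (\ref{cobro}), (\ref{theo:crvsdw}) and (\ref{coaro}) to identify the underlying complexes at each semi-cosimplicial degree, together with the multiplicativity established in (\ref{prop:cpp}). Second I would verify the commutativity of (\ref{ali:mcadc}): the top square comes from applying $s_{\rm TW}$ to the isomorphisms (\ref{eqn:exnfte}) and (\ref{ali:mcc}), while the bottom square is $s_{\rm TW}$ applied to (\ref{eqn:nsln}) in the case $\ul{\lam}=\emptyset$ versus the semi-cosimplicial version, i.e.\ the $s_{\rm TW}$-analogue of (\ref{eqn:ewbte}). Third, pass to the inverse limit over $n$, using that the projections $R$ are filtered and surjective by (\ref{prop:pjc}) and (\ref{coro:sj}), so that the limit of filtered isomorphisms is again a filtered isomorphism in ${\rm D}({\rm A}^{\geq 0}{\rm F})$.

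For contravariant functoriality and compatibility with projections, I would invoke (\ref{theo:ccm}) (4)---the Thom--Whitney contravariant functoriality---on the source side and the transitivity of the log de Rham--Witt pull-back on the target side, matching them through the level-$n$ isomorphism; the verification is the dga-level refinement of the argument already used for (\ref{theo:cap}), so it reduces to checking that the pull-back maps are compatible with the chosen multiplicative models, which is routine once the isomorphism is established.

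The main obstacle I expect is the strict multiplicativity at the Thom--Whitney level. Unlike the complex-level statement (\ref{theo:cap}), where one only needs a filtered quasi-isomorphism, here one must produce an isomorphism in the localized category $D({\rm A}^{\geq 0}{\rm F}(f^{-1}({\cal O}_S)\otimes_{\mab Z}{\mab Q}))$ of filtered dga's, so the comparison map must respect products up to coherent homotopy. The delicate issue is that the crystalline log de Rham--Witt complex ${\cal W}_n\wt{\Om}^{\bul}_{X_{\ul{\lam}}}$ is defined via cohomology sheaves (\ref{ali:wnoi}), and its product (\ref{ali:oom12}) is defined on cohomology classes; one must confirm that the comparison morphism (\ref{ali:pwakpp}), which is built from the genuine de Rham complexes ${\cal E}^{\bul}_n\otimes\Om^{\bul}_{{\cal P}_{\ul{\lam},\bul}}\langle u_s\rangle$ before passing to cohomology, is multiplicative on the nose and not merely up to homotopy. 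The resolution is that $s_{\rm TW}$ is precisely the device (\cite{nav}) for turning a homotopy-multiplicative map through Godement resolutions into a genuine morphism in the derived category of dga's, so the Thom--Whitney formalism absorbs this difficulty; the remaining verification is that the Alexander--Whitney-type product (\ref{eqn:dps}) used to define the semi-cosimplicial cup product is compatible, under the comparison, with the wedge product (\ref{ali:oom12}) on the log de Rham--Witt side, which I expect to follow from the local description via (\ref{prop:cpp})'s proof and the explicit formula for $s_n$ in (\ref{eqn:sn00}).
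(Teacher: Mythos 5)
Your proposal is correct and follows essentially the same route as the paper: the paper's proof simply observes that the composite morphism (\ref{ali:xwkabpp}) (with $E$ trivial) is a filtered morphism of dga's, so it induces the Thom--Whitney-level morphism, and this is a filtered isomorphism by (\ref{theo:cap}); your discussion of multiplicativity via the de Rham-level model and the $s_{\rm TW}$ formalism is exactly the point the paper relies on. (One small inaccuracy: $({\cal W}_nH_X,P)$ is the level-$n$ object, not an inverse limit, so your step of passing to the limit over $n$ is not needed for the statement as given.)
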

\begin{proof} 
The morphism (\ref{ali:xwkabpp}) 
in the case where $E$ is trivial is a filtered morphism of dga's. 
Hence we have the filtered morphism (\ref{ali:mcac}). 
%This is a filtered isomorphism by (\ref{theo:cap}). 
\par 
We leave the proofs of the commutativity of the diagram 
(\ref{ali:mcadc}) and the 
contravariant functoriality and the compatibility 
with projections to the reader. 
\end{proof} 

%\section{Clemens-Schmid sequences and duality} 

\bigskip
\bigskip
\parno
Yukiyoshi Nakkajima 
\parno
Department of Mathematics,
Tokyo Denki University,
5 Asahi-cho Senju Adachi-ku,
Tokyo 120--8551, Japan. 
\parno
{\it E-mail address\/}: 
nakayuki@cck.dendai.ac.jp

\end{document}